\newcommand{\col}{\colon}
\newcommand{\Dis}{\caspar{Dis}}
\newcommand{\DisC}{\Dis(\C)}
\newcommand{\Con}{\caspar{Con}}
\newcommand{\ConC}{\Con(\C)}
\newcommand{\orth}{\downarrow}
\newcommand{\ra}{\rightarrow}
\newcommand{\rel}{\mathrel{\longrightarrow\hspace{-11pt}\mapstochar\hspace{11pt}}}
\newcommand{\Ra}{\Rightarrow}
\newcommand{\adj}{\dashv}
\newcommand{\tens}{\otimes}
\newcommand{\lan}{\langle}
\newcommand{\ran}{\rangle}
\newcommand{\incl}{\subseteq}
\newcommand{\eps}{\varepsilon}
\newcommand{\cat}{\mathbb}
\newcommand{\dcat}{\mc}
\newcommand{\fl}[1]{#1^\deux}
\newcommand{\caspar}{\mathsf}
\newcommand{\mc}{\mathcal}
\newcommand{\op}{{^{\mathrm{op}}}}
\newcommand{\A}{{\cat{A}}}
\newcommand{\B}{{\cat{B}}}
\newcommand{\C}{{\dcat{C}}}
\newcommand{\D}{{\dcat{D}}}
\newcommand{\G}{{\cat{G}}}
\renewcommand{\H}{{\cat{H}}}
\newcommand{\V}{{\mc{V}}}
\newcommand{\deux}{\mathsf{2}}
\newcommand{\flc}{{\C^\deux}}
\newcommand{\flv}{{\V^\deux}}
\newcommand{\flcp}{{\C^\deux_{+}}}
\newcommand{\Ens}{\caspar{Set}}
\newcommand{\Ensp}{{\Ens^*}}
\newcommand{\Ab}{\caspar{Ab}}
\newcommand{\CMon}{\caspar{AbMon}}
\newcommand{\Cat}{\caspar{Cat}}
\newcommand{\CGS}{\caspar{2}\text{-}\caspar{SGp}}
\newcommand{\CG}{\caspar{2}\text{-}\caspar{Gp}}
\newcommand{\dCMon}{\caspar{2}\text{-}\caspar{SMon}}
\newcommand{\dMon}{\caspar{2}\text{-}\caspar{Mon}}
\newcommand{\dMod}{\caspar{2}\text{-}\caspar{Mod}}
\newcommand{\Gpd}{\caspar{Gpd}}
\newcommand{\Gpdp}{\Gpd^*}
\newcommand{\Top}{\caspar{Top}}
\newcommand{\E}{\mc{E}}
\newcommand{\W}{{\mc{W}}}
\newcommand{\M}{\mc{M}}
\newcommand{\Inj}{\caspar{Inj}}
\newcommand{\zInj}{\mathrm{0}\text{-}\caspar{Inj}}
\newcommand{\Fid}{\caspar{Faith}}
\newcommand{\Fidnorm}{\caspar{NormFaith}}
\newcommand{\Cofid}{\caspar{Cofaith}}
\newcommand{\Cofidnorm}{\caspar{NormCofaith}}
\newcommand{\PlFid}{\caspar{FullFaith}}
\newcommand{\PlFidnorm}{\caspar{NormFullFaith}}
\newcommand{\PlCofid}{\caspar{FullCofaith}}
\newcommand{\PlCofidnorm}{\caspar{NormFullCofaith}}
\newcommand{\zFid}{\mathrm{0}\text{-}\caspar{Faith}}
\newcommand{\zPlFid}{\mathrm{0}\text{-}\caspar{FullFaith}}
\newcommand{\Mono}{\caspar{Mono}}
\newcommand{\Monoreg}{\caspar{RegMono}}
\newcommand{\Mononorm}{\caspar{NormMono}}
\newcommand{\zMono}{\mathrm{0}\text{-}\caspar{Mono}}
\newcommand{\KMono}{K\text{-}\Mono}
\newcommand{\Surj}{\caspar{Surj}}
\newcommand{\PlSurj}{\caspar{FullSurj}}
\newcommand{\Epi}{\caspar{Epi}}
\newcommand{\zEpi}{\mathrm{0}\text{-}\caspar{Epi}}
\newcommand{\Epireg}{\caspar{RegEpi}}
\newcommand{\Epinorm}{\caspar{NormEpi}}
\newcommand{\KEpireg}{K\text{-}\Epireg}
\newcommand{\Iso}{\caspar{Iso}}
\newcommand{\Bij}{\caspar{Bij}}
\newcommand{\Equ}{\caspar{Equ}}
\newcommand{\Cong}{\caspar{Cong}}
\newcommand{\EM}{(\E,\M)}
\DeclareMathOperator{\Ker}{\mathrm{Ker}}
\DeclareMathOperator{\KRel}{\mathrm{KRel}}
\DeclareMathOperator{\Quot}{\mathrm{Quot}}
\DeclareMathOperator{\im}{\mathrm{Im}}
\DeclareMathOperator{\Coker}{\mathrm{Coker}}
\DeclareMathOperator{\Pip}{\mathrm{Pip}}
\DeclareMathOperator{\Copip}{\mathrm{Copip}}
\DeclareMathOperator{\Root}{\mathrm{Root}}
\DeclareMathOperator{\Coroot}{\mathrm{Coroot}}
\newcommand{\Bimon}{\mathrm{Bimon}}
\newcommand{\dis}{{}_\mathrm{dis}}
\newcommand{\con}{{}_\mathrm{con}}
\newcommand{\Z}{\mathbb{Z}}
\newcommand{\spf}{(\Surj,\PlFid)}
\newcommand{\psf}{(\PlSurj,\Fid)}
\newcommand{\pl}{{}_{\mathrm{full}}}
\DeclareMathOperator{\impl}{\mathrm{Im}\pl}
\newcommand{\Hom}{\mathrm{Hom}}
\newcommand{\rf}{R[f]}
\newcommand{\vervec}[2]{{\left (\substack{{}^{\scriptstyle #1} \\ {}_{\scriptstyle #2}}\right )}}
\newcommand{\eqdef}{: =}
\newtheoremstyle{nmd}
     {\topsep}
     {\topsep}
     {\itshape}
     {}
     {\scshape}
     {.}
     {1em}
     {\thmnumber{#2}\hspace{8pt}\thmname{#1}\thmnote{ (#3)}}
\newtheoremstyle{nmdintro}
     {\topsep}
     {\topsep}
     {\itshape}
     {}
     {\scshape}
     {.}
     {1em}
     {\thmnote{#3}\hspace{8pt}\thmname{#1}}
\newtheoremstyle{dfi}
     {\topsep}
     {\topsep}
     {\upshape}
     {}
     {\scshape}
     {.}
     {1em}
     {\thmnumber{#2}\hspace{8pt}\thmname{#1}\thmnote{ (#3)}}
\newtheoremstyle{axi}
     {\topsep}
     {\topsep}
     {\itshape}
     {}
     {\scshape}
     {.}
     {1em}
     {\thmnumber{#2}\hspace{8pt}\thmname{#1}\thmnote{ #3}}
\theoremstyle{nmd}
\newtheorem{thm}{Theorem}
\newtheorem{pon}[thm]{Proposition}
\newtheorem{lemm}[thm]{Lemma}
\newtheorem{coro}[thm]{Corollary}
\theoremstyle{nmdintro}
\newtheorem{thmintro}{Theorem}
\theoremstyle{dfi}
\newtheorem{df}[thm]{Definition}
\newtheorem{rem}[thm]{Remark}
\newtheorem{ex}[thm]{Example}
\newcounter{eqnum}
\newenvironment{eqn}{\stepcounter{eqnum}\begin{equation}}{\end{equation}\ignorespacesafterend}
\newenvironment{xym}{\begin{eqn}\begin{gathered}}
                    {\end{gathered}\end{eqn}\ignorespacesafterend}
\newenvironment{xyml}{\begin{eqn}}
                    {\end{eqn}\ignorespacesafterend}
\renewcommand{\cleardoublepage}
   {\clearpage\ifodd\c@page\else%
   \hbox{}%
   \thispagestyle{empty}%
   \newpage%
   \fi}
\begin{document}

\renewcommand{\proofname}{{\sc Proof}}

\CompileMatrices

\frontmatter
\thispagestyle{empty}
\mbox{ }\vspace{5.5cm}
\begin{center}
{\huge Abelian categories in dimension 2}
\end{center}
\mbox{}\vspace{4cm}
\begin{flushright}{\Large Mathieu Dupont}\end{flushright}
\mbox{}\vspace{7cm}

\noindent This is the English version of my PhD thesis (director: Enrico Vitale) defended on 30 June 2008 at the Université catholique de Louvain, in Louvain-la-Neuve. The original French version can be found at {\tt \url{http://edoc.bib.ucl.ac.be:81/ETD-db/collection/available/BelnUcetd-06112008-231800/}}.  Contact: {\tt \url{http://breckes.org}}.
\clearpage
\eject

\thispagestyle{empty}
\noindent {\large Abelian categories in dimension 2}
\par\bigskip\noindent {Mathieu Dupont}
\par\bigskip\noindent Dissertation pr\'esent\'ee en vue de l'obtention du grade de Docteur en Sciences, le 30 juin 2008, au D\'epartement de Math\'ematique de la
Facult\'e des Sciences de l'Universit\'e Catholique de Louvain, \`a
Louvain-la-Neuve.
\par\medskip\noindent Promoteur: Enrico Vitale
\par\medskip\noindent\hspace{-3.3mm}
\begin{tabular}{rl}
Jury: & Francis Borceux (UCL) \\
      & Dominique Bourn (Université du Littoral, Calais) \\
      & Yves Félix (UCL)\\
      & Jean Mawhin (UCL, président)\\
      & Isar Stubbe (Universiteit Antwerpen)\\
      & Enrico Vitale (UCL)
\end{tabular}
\par
\vfill
\par\noindent AMS Subject Classification (2000): 18A20 (epimorphisms, monomorphisms, special classes of morphisms, null morphisms), 18A22 (special properties of functors (faithful, full, etc.)), 18A32 (factorization of morphisms, substructures, quotient structures, congruences, amalgams), 18B40 (groupoids, semigroupoids, semigroups, groups), 18D05 (double categories, 2-categories, bicategories and generalizations), 18D10 (monoidal categories, symmetric monoidal categories, braided categories), 18E05 (preadditive, additive categories), 18E10 (exact categories, abelian categories), 18G35 (chain complexes)
\clearpage
\eject

\chapter*{Abstract}
\addcontentsline{toc}{chapter}{Abstract}
\markboth{Abstract}{Abstract}

In 2-dimensional algebra, symmetric 2-groups (symmetric monoidal groupoids in which every object has an inverse up to isomorphism) play a similar rôle to that of abelian groups in 1-dimensional algebra.  Since abelian categories are defined in the image of the category of abelian groups, a 2-dimensional version of the notion of abelian category should be a solution to the equation
\begin{equation*}
	\frac{\txt{abelian categories}}{\txt{abelian groups}}
	~=~ \frac{??}{\txt{symmetric 2-groups}}.
\end{equation*}
I give two solutions to this equation.

The first — \emph{abelian} groupoid enriched categories — is a generalisation of ordinary abelian categories.  In such a context, we can develop the theory of exact sequences and homology in a way close to homology in an abelian category: we prove several classical diagram lemmas as well as the existence of the long exact sequence of homology corresponding to an extension of chain complexes.  This generalises known results for symmetric 2-groups \cite{Bourn2002a,Rio2005a}.

The other solution — \emph{2-abelian} groupoid enriched categories, which are also abelian in the sense of the previous paragraph — mimics the specifically 2-dimensional properties of symmetric 2-groups, in particular the existence of two factorisation systems: surjective/full and faithful, and full and surjective/faithful \cite{Kasangian2000a}. Moreover, in a 2-abelian groupoid enriched category, the category of discrete objects is equivalent to that of connected objects, and these categories are abelian.  This is close to the project of Marco Grandis of “developing homotopical 
algebra as an enriched version of homological algebra” \cite{Grandis2001b}.

The examples include, in addition to symmetric 2-groups, the “2-modules” on a “2-ring”, which form a 2-abelian groupoid enriched category.  Moreover, internal groupoids, internal functors and internal natural transformations in an abelian category $\C$ (which includes as a special case Baez-Crans 2-vector spaces on a field $K$ \cite{Baez2004c}) form a 2-abelian groupoid-enriched category if and only if the axiom of choice holds in $\C$.

\chapter*{Acknowledgments}
\addcontentsline{toc}{chapter}{Acknowledgments}
\markboth{Acknowledgments}{Acknowledgments}

Je remercie tout d'abord Enrico, qui m'a proposé ce sujet de recherche, et qui a eu la patience d'abord d'attendre (de longues années) puis de lire minutieusement ce long travail.  Il est toujours de bonne humeur et toujours plus content que moi de ce que je lui montre, ce qui est rassurant.

Ensuite, je remercie les membres du jury pour avoir lu ce travail malgré sa longueur, son excès de définitions et les dangereux serpents qui s'y cachent, et pour leurs recommandations qui ont permis de rendre ce travail plus accessible.

Je remercie également mes collègues de bureau au Cyclotron. Le premier fut l'étrange insecte qui ornait le tableau de mon ancien bureau et qui périt lors de son invasion par Vincent et Julien.  Le deuxième fut Isar qui, suite à ces tragiques événements, m'hébergea dans son bureau mais surtout qui, dès mes débuts en tant qu'assistant, joua son rôle d'aîné en me guidant par ses conversations mathématiques et non-mathématiques qui se poursuivent encore.  Le troisième fut Nicolas, dont le célèbre lemme orne toujours la face cachée de mon armoire.  Enfin, apparut étonnamment un autre Mathieu Du… qui égaya cette dernière année et qui survécut à mon utilisation excessive du mot “catégorie”.

Je salue aussi tous mes collègues actuels et anciens, dont les rires chaleureux résonnent dans les couloirs de ma tête et du Cyclotron, en particulier ceux que j'ai le plus fréquentés : Julien, Mélanie, Vincent, Saji, et puis tous les jeunes, ainsi que les participants au séminaire de catégories de Louvain-la-Neuve et de Bruxelles : Claudia, Francis, Jean-Roger, Rudger, Tim, Tomas, …

Je salue également mes amis non-mathématiciens : Martin, toujours là après presque trois lustres de conversations dans la bibliothèque ou les cafés, Bertrand, Anna, les habitants et habitués (en particulier Françoise) anciens et actuels de la Ferme du Biéreau, ce lieu qui eut une grande importance pour moi ces sept dernières années, ma famille qui m'a vu de moins en moins au fur et à mesure que s'achevait ma thèse, en particulier la petite Méline (à qui, je n'en doute pas, ses parents liront pour l'endormir l'une ou l'autre page de ce livre) dont le grand sourire apparut sur Terre il y a un an, et tous les autres.

Pour terminer, je remercie pour leur fantaisie les animaux qui volent et/ou chantent dans l'espace aérien au-delà de la fenêtre de mon bureau : les insectes ciselés, la chauve-souris du soir, les dinoiseaures, et les montgolfières.

\tableofcontents

\mainmatter
\chapter*{Introduction}
\addcontentsline{toc}{chapter}{Introduction}
\markboth{Introduction}{Introduction}

\section*{Symmetric 2-groups}
\addcontentsline{toc}{section}{Symmetric 2-groups}
\markright{Symmetric 2-groups}

\subsection*{2-dimensional algebra}

Ordinary algebra (which we call here \emph{1-dimensional algebra}) is the study of algebraic structures on sets, i.e.\ of sets equipped with certain operations satisfying certain equations.  For example, we study pointed sets, monoids, commutative monoids, groups, abelian groups, rings, modules on a ring, etc.

This work takes place in the context of \emph{2-dimension algebra}, in the sense of the study of algebraic structures on groupoids.  A \emph{groupoid} is a category in which every arrow is invertible.  Groupoids generalise sets, because every set can be seen as a groupoid, with one arrow $a\ra b$ if and only if $a=b$.  Since equality is symmetric, we do get a groupoid.
In a set, there are as many objects as you want, but there are at most one arrow between two objects: there is only one degree of freedom.  On the other hand, in a groupoid, there are as many objects as you want and, between two objects $A$ and $B$, there are as many arrows as you want; groupoids are thus objects with two dimensions: the dimension of objects and the dimension of arrows between these objects.  That is why we speak of “2-dimensional algebra”.

There are two differences between algebraic structures on a groupoid and algebraic structures on a set.  Firstly, the operations must act not only on the objects, but also on the arrows of the groupoid; these are thus functors defined on the groupoid.  Secondly, these operations must satisfy some axioms, but only up to natural isomorphism.  These natural isomorphisms are themselves required to satisfy some equations. A well-known example of such a functorial operation is the tensor product of modules on a commutative ring $R$, which is associative up to natural isomorphism and has a neutral object (the ring $R$ itself) up to natural isomorphism.  In the following table, we compare 1-dimensional and 2-dimensional algebraic structures.

\begin{center}
  \begin{tabular}{@{} lcc @{}}
    \toprule
    & dimension 1 & dimension 2 \\ 
    \midrule
    basis & set & groupoid \\ 
    operations & functions & functors \\
    axioms & equations & natural iso. \\
    “2-axioms” & — & equations\\
    \bottomrule
  \end{tabular}
\end{center}

\subsection*{2-groups}

The more important 2-dimensional algebraic structure in this work is the notion of symmetric 2-group, which plays in dimension 2 the rôle that the notion of abelian group played in dimension 1.  A 2-group is a groupoid with a group structure or, in other words, a monoidal category where both the objects and the arrows are invertible.

A \emph{monoidal category} is a category $\cat{C}$ equipped with a constant $I$ (which is an object of $\cat{C}$), with a binary operation $-\tens -\col\cat{C}\times\cat{C}\ra\cat{C}$ (which is a functor) and with natural isomorphisms $A\tens I\simeq A\simeq I\tens A$ ($I$ is a unit up to isomorphism for $\tens$) and $(A\tens B)\tens C\simeq A\tens (B\tens C)$ ($\tens$ is associative up to isomorphism); these natural transformations must satisfy certain axioms (diagrams \ref{axmonassoc} and \ref{axmonunit}).  An example of monoidal category is the category of modules on a commutative ring $R$, equipped with the tensor product and the ring itself as unit $I$.

A \emph{2-group} (Definition \ref{defdgroup})
is then a monoidal groupoid $\A$ where, for each object $A$, there exists an object $A^*$ such that $A\tens A^*\simeq I$ and $A^*\tens A\simeq I$ ($A$ has an inverse $A^*$ up to isomorphism).   There is an introduction to 2-groups by John Baez and Aaron Lauda \cite{Baez2004a}.  See also \cite{Breen1992a}.

In dimension 2, there are two levels of commutativity.  Firstly, \emph{braided} 2-groups are 2-groups equipped with an isomorphism $c_{AB}\col A\tens B\ra B\tens A$ ($\tens$ is commutative up to isomorphism) which must satisfy certain axioms (diagrams \ref{axmontresi} and \ref{axmontresii}).  Secondly, there is a stronger notion of commutativity, the notion of \emph{symmetric} 2-group: this is a braided 2-group where $c_{AB}\circ c_{BA} = \mathrm{id}$.
We need symmetry to recover some properties of abelian groups.  For example, the set of morphisms between two abelian groups has itself an abelian group structure; this is not the case for the set of morphisms between two non-abelian groups.  In the same way, the groupoid of morphisms between two symmetric 2-groups has a symmetric 2-group structure; and this is not the case for 2-groups or even for braided 2-groups (see Subsection \ref{secdefbimon}).  Another example is that we need symmetry in order that every sub-2-group be the kernel of its cokernel.  For these reasons, we can consider that symmetric 2-groups play in dimension 2 the rôle that abelian groups play in dimension 1.  The bibliography of \cite{Vitale2002a} contains a list of references about symmetric 2-groups.  Symmetric 2-groups are also called \emph{symmetric $\Cat$-groups} or \emph{Picard categories}.

Here is a few examples of (symmetric) 2-groups.
\begin{enumerate}
	\item If $G$ is a group, we can see $G$ as a 2-group in the following way:
		we see $G$ as a groupoid with a unique arrow $a\ra b$ if $a=b$,
		and we take for $I$ the neutral element
		of the group and for $\tens$ the group operation.  In this situation,
		we say that we see $G$ as
		a \emph{discrete} 2-group (and we denote this 2-group by $G\dis$).  If $G$ is
		abelian, then $G\dis$ is symmetric.
	\item If $A$ is an abelian group, we can see $A$ as a 2-group in another way: this time
		the 2-group has only one object $I$ and the set of arrows $I\ra I$ is $A$; the
		composition (and the action of $\tens$ on the arrows) is the group operation;
		the identity on $I$ is the neutral element of the group. We say that we see
		$A$ as a \emph{connected} (or \emph{one-object}) 2-group
		 and we denote this 2-group by $A\con$.
\end{enumerate}

Conversely, if $\A$ is a 2-group, we can associate to it two groups corresponding to each of the dimensions of $\A$: on the one hand we have $\pi_0(\A)$, which is the group of objects of $\A$ up to isomorphism, equipped with $\tens$ as the operation and with $I$ as the neutral element.  If $\A$ is symmetric, then $\pi_0(\A)$ is abelian.  On the other hand, we have $\pi_1(\A)$, which is the group $\A(I,I)$ of arrows from $I$ to $I$ in $\A$, with the composition as the operation; this group is always abelian.  For any object $A$ of a (connected or not) 2-group $\A$, the abelian group $\A(A,A)$ is isomorphic to $\pi_1(\A)$.

The following examples are cases where one well-known group turns out to be the $\pi_0$ (or the $\pi_1$) of a 2-group.
\begin{enumerate}
	\item[3.] If $\cat{C}$ is a monoidal category, we can extract from it a 2-group by keeping only the objects which are invertible up to isomorphisms and the isomorphisms between them.  We call this 2-group the \emph{Picard 2-group}%
		\index{Picard 2-group}\index{2-group!Picard}
		of $\cat{C}$ (and we denote it by
		$\caspar{Pic}(\cat{C})$).\index{Pic(C)@$\caspar{Pic}(\cat{C})$}
		If $\cat{C}$ is a symmetric monoidal category, then
		$\caspar{Pic}(\cat{C})$ is symmetric.
		If $\cat{C}$ is the category of modules on a commutative ring $R$, then
		$\pi_0(\caspar{Pic}(\caspar{Mod}_R))$ is the ordinary Picard group of $R$
		and $\pi_1(\caspar{Pic}(\caspar{Mod}_R))$ is the group
		of invertible elements of $R$.  The Brauer group of $R$ can also be seen
		as the $\pi_0$ of a certain 2-group. See \cite{Vitale2002a} for more details.
	\item[4.] More generally, if $\C$ is a 2-category and $C$ is an objet
		of $\C$, we can define a 2-group $\caspar{Aut}(C)$ whose objects are
		the equivalences from $C$ to $C$ and whose arrows are the invertible
		2-arrows between them.
	\item[5.] If $\C$ is an abelian category and $A,B$ are objects of $\C$,
		then the extensions $0\ra A\ra E\ra B\ra 0$, with the morphisms of extensions
		whose first and last components are identities, form a symmetric 2-group 
		whose operation is the Baer sum.  The
		$\pi_0$ of this 2-group is the ordinary group of extensions $\mathrm{Ext}(A,B)$.
		See \cite{Grothendieck1968a,Vitale2003a}.
	\item[6.] If $X$ is a pointed topological space (with $x_0$ as the distinguished point),
		we can define the fundamental 2-group $\Pi_1(X)$\footnote{Usually, this 2-group is
		denoted by $\Pi_2(X)$; so the numbers of the following 2-groups are all shifted
		by 1.},
		whose objects are the paths from $x_0$ to
		$x_0$ and whose arrows are the path homotopies up to 2-homotopy. The
		$\pi_0$ of this 2-group is the ordinary fundamental group $\pi_1(X)$
		and its $\pi_1$ is the abelian group $\pi_2(X)$.  More generally, we can
		define $\Pi_k(X)\eqdef \Pi_1(\Omega^{k-1} X)$; then $\pi_0(\Pi_k(X))$ is
		$\pi_k(X)$ and $\pi_1(\Pi_k(X))$ is $\pi_{k+1}(X)$.
		As for the homotopy groups, there is a gradual increase of commutativity:
		$\Pi_1(X)$ is a 2-group, $\Pi_2(X)$ is a braided 2-group
		and $\Pi_k(X)$, for $k\geq 3$, is a symmetric 2-group \cite{Garzon2002a}.
	\item[7.] To each morphism of abelian groups $f\col A\ra B$, corresponds a symmetric
		2-group whose objects are the elements of $B$ and for which an arrow
		$b\ra b'$ is an element $a$ in $A$ such that $b = f(a)+b'$.
		The 2-group operation is given on objects by the addition of $B$.
		Its $\pi_0$ is $\Coker f$ and its $\pi_1$ is $\Ker f$.  More generally,
		to each crossed module of groups corresponds a (in general non symmetric) 2-group.
\end{enumerate}
We can also use 2-groups as coefficients for cohomology \cite{Ulbrich1984a,Carrasco2004b}; and stacks of symmetric 2-groups, often called Picard stacks, are used in algebraic geometry \cite{Deligne1973a} or in differential geometry \cite{Breen2005a}.

\subsection*{Groupoid enriched categories}

Sets equipped with some algebraic structure form a category, with the functions preserving that structure as arrows.  It is a set enriched category (or \emph{$\Ens$-category}), 
because the arrows between two sets with this structure form a set.  The groupoids equipped with a certain algebraic structure form a groupoid enriched category (or \emph{$\Gpd$-category}; Definition \ref{defgpdcat}), with the functors preserving the structure as arrows and the natural transformations compatible with this structure as arrows between arrows (called \emph{2-arrows}).

In a $\Gpd$-category, there are three levels: there are objects (in the case we are interested in, these are groupoids with a certain structure), between the objects there are arrows (which will be here in general functors preserving the structure), and between the arrows there are 2-arrows (which will be here natural transformations compatible with the structure).  The composition of arrows is a functor (which gives a second composition for 2-arrows).  A $\Gpd$-category is a 2-category in which every 2-arrow is invertible.

The main examples of $\Gpd$-categories we will talk about are:
\begin{enumerate}
	\item every $\Ens$-category (ordinary category) is a $\Gpd$-category,
		with exactly one 2-arrow $f\Ra g$ if and only if $f=g$
		(we will speak of a \emph{locally discrete} $\Gpd$-category);
	\item the $\Gpd$-category of groupoids, which we denote by $\Gpd$: the objects
		are the groupoids, the arrows are the functors and the 2-arrows are the natural
		transformations;
	\item the $\Gpd$-category of pointed groupoids, denoted by $\Gpdp$ (Definition
		\ref{deftoutpointe}): the objects are the groupoids equipped with a distinguished
		object $I$, the arrows are the functors preserving $I$ up to isomorphism and the
		2-arrows are the natural transformations whose component at $I$ is compatible
		with the isomorphisms of the functors;
	\item the $\Gpd$-category of monoidal groupoids (or \emph{2-monoids}), denoted
		by $\dMon$ (Proposition \ref{pondefdMon}): the objects are the monoidal groupoids,
		the arrows are the \emph{monoidal} functors, i.e.\ the functors preserving
		$\tens$ and $I$ up to isomorphism, and the 2-arrows are the \emph{monoidal} natural
		transformations, i.e.\ the natural transformations compatible with the isomorphisms
		of monoidal functor;
	\item the $\Gpd$-category of symmetric monoidal groupoids, denoted by $\dCMon$
		(Definition \ref{defdCMon}): the objects are the symmetric monoidal groupoids,
		the arrows are the \emph{symmetric monoidal} functors (the monoidal functors
		compatible with the symmetry) and the 2-arrows are the monoidal natural 
		transformations;
	\item the $\Gpd$-category of 2-groups, denoted by $\CG$, which is the full
		sub-$\Gpd$-category of $\dMon$ whose objects are the 2-groups;
	\item the $\Gpd$-category of symmetric 2-groups, denoted by $\CGS$, which
		is the full sub-$\Gpd$-category of $\dCMon$ whose objects are the symmetric
		2-groups.
\end{enumerate}
Other examples of $\Gpd$-categories are the $\Gpd$-category of topological spaces, with the continuous maps and the homotopies up to 2-homotopy, and the $\Gpd$-category of pointed topological spaces, with the continuous maps preserving the distinguished point and the homotopies (up to 2-homotopy) which are constant on the point.

Abelian categories (which appeared in the 1950's in a paper by David Buchsbaum \cite{Buchsbaum1955a} (with the name “exact categories”) and in the famous “Tôhoku” paper by Alexander Grothendieck \cite{Grothendieck1957a}) are categories sharing certain properties 
with the category of abelian groups, allowing to develop in them homology theory.
The categories of modules on a ring and the categories of sheaves of modules are abelian.  The goal of this work is to define a notion of {(2{-})a}belian $\Gpd$-category whose properties mimic those of the $\Gpd$-category of symmetric 2-groups, in such a way that we recover, on the one hand, the usual properties of abelian categories and, on the other hand, the specifically 2-dimensional properties of $\CGS$.

A category is abelian if it satisfies the following properties:
\begin{enumerate}
	\item it is additive (enriched in $\Ab$, with finite biproducts);
	\item it is Puppe-exact \cite{Grandis1992a} (every arrow factors as the cokernel of
		its kernel followed by the kernel of its cokernel).
\end{enumerate}
We will now review each of these properties and their 2-dimensional versions.

\section*{Additivity}
\addcontentsline{toc}{section}{Additivity}
\markright{Additivity}

\subsection*{Definition}

In dimension 1, abelian categories are in particular additive categories: they are enriched in the category $\Ab$ of abelian groups and they have all finite biproducts.

An $\Ab$-enriched category (or \emph{preadditive} category) is a category $\C$ such that, for all objects $A,B$ in $\C$, the set $\C(A,B)$ is equipped with an abelian group structure and such that, for every morphism $f$ the functions of composition with $f$ ($f\circ-$ and $-\circ f$) are group morphisms (this means that addition of arrows is distributive with respect to composition).  We know that the category of abelian groups itself is enriched in $\Ab$, as well as the categories of modules on a ring.  On the other hand, as it was recalled above, the set of morphisms between two non-abelian groups cannot be, in general, equipped we a natural group structure, and the category of groups is not additive.

In dimension 2, we will study categories enriched in $\CGS$ (which we call \emph{preadditive} $\Gpd$-categories; Definition \ref{defpreadd}): for all objects $A,B$ in $\C$, the groupoid $\C(A,B)$ is equipped with a symmetric 2-group structure, for every morphism $f$, the functors of composition with $f$ are symmetric monoidal functors and the natural transformations of the structures of $\Gpd$-category and symmetric monoidal functor are monoidal.
As we noticed above, it is necessary here to work with symmetric 2-groups in order that the $\Gpd$-category of symmetric 2-groups be itself preadditive.

In a preadditive $\Gpd$-category, the finite products coincide with the finite coproducts, if these (co)limits exist (then we speak of \emph{biproduct}) (Proposition \ref{caracbiprod}). We call \emph{additive} $\Gpd$-category a preadditive $\Gpd$-category with all finite biproducts. 

\subsection*{2-rings and 2-modules}

In dimension 1, a one-object preadditive category is in fact a ring.  Indeed, to give such a category $\C$ amounts to give an object $*$ and an abelian group of arrows $(\C(*,*),+,0)$,
with a composition $\C(*,*)\times\C(*,*)\ra\C(*,*)$ which is associative, has a neutral element $1_*$, and is such that addition is distributive with respect to composition.  Thus $R\eqdef\C(*,*)$ is a ring.

There is a notion of morphism between preadditive categories: an \emph{additive functor} is a functor preserving the abelian group structure at the level of arrows.  If $R$ is a ring seen as a one-object preadditive category, an $R$-module is nothing else than an additive functor from $R$ to $\Ab$ (the unique object $*$ is mapped to an abelian group $M$, and each scalar $r$ in $R$ is mapped to a group morphism $r\cdot -\col M\ra M$, which gives scalar multiplication).

In the same way, in dimension 2, we can call “\emph{2-ring}” a one-object preadditive $\Gpd$-category $\C$.  This is another kind of algebraic structure on a groupoid: we have, on the groupoid $\C(*,*)$, a symmetric 2-group structure (the addition, given by the enrichment in $\CGS$) and a monoidal groupoid structure (the multiplication, which is the composition of the $\Gpd$-category), and addition is distributive up to isomorphism with respect to multiplication.  The 2-rings are already well-known: they appear in Nguyen Tien Quang \cite{Quang2007a} or Mamuka Jibladze and Teimuraz Pirashvili \cite{Jibladze2007a} in the groupoidal case and it is a special case of categories with a semi-ring structure studied by
Miguel Laplaza \cite{Laplaza1972a} or Mikhail Kapranov and Vladimir Voevodsky \cite{Kapranov1994a}.

We can also define a notion of additive $\Gpd$-functor.  A \emph{$\Gpd$-functor} $F$ from a $\Gpd$-category $\C$ to a $\Gpd$-category $\D$ consists, for every object $C$ in $\C$, of an object $FC$ in $\D$ and, for all objects $C,C'$ in $\C$, of a functor $F_{C,C'}$ from the groupoid $\C(C,C')$ to the groupoid $\D(FC,FC')$ (so it acts on arrows and on 2-arrows); it must preserve composition and the identities up to isomorphism (Definition \ref{dfGpdfct}).  An \emph{additive} $\Gpd$-functor between preadditive $\Gpd$-categories is a $\Gpd$-functor such that each functor $F_{C,C'}$ between the symmetric 2-groups of arrows is symmetric monoidal (Definition \ref{dfgpdfctadd}).  Then we define a 2-module on a 2-ring $\cat{R}$ (seen as a one-object preadditive $\Gpd$-category) as an additive $\Gpd$-functor from $\cat{R}$ to $\CGS$, by analogy with the one-dimensional case.  This is thus a symmetric 2-group $\cat{M}$ equipped, for every $R$ in $\cat{R}$, with a symmetric monoidal functor $R\cdot -\col\cat{M}\ra\cat{M}$.  The axioms of modules hold, as usual, up to isomorphism. The 2-modules on a 2-ring $\cat{R}$ form an additive $\Gpd$-category $\dMod_{\cat{R}}$ which will be an example of 2-abelian $\Gpd$-category.

If $\C$ is an abelian category, the internal groupoids, internal functors and internal natural transformations in $\C$ form a $\Gpd$-category $\Gpd(\C)$, which is equivalent to the $\Gpd$-category $\flcp$ of arrows in $\C$, commutative squares between them, and chain 
complexes homotopies.  If $\C$ is $\caspar{Vect}_K$, the category of vector spaces on a field $K$, we get what John Baez and Alissa Crans \cite{Baez2004c} call “\emph{2-vector spaces on $K$}”.  We can also see $K$ as a discrete 2-ring $K\dis$, in the usual way.  Thus we can also study the $\Gpd$-category $\dMod_{K\dis}$ of 2-modules on $K\dis$.  There is actually a $\Gpd$-functor
\begin{eqn}
	(\caspar{Vect}_K)^{\deux}_+\ra \dMod_{K\dis},
\end{eqn}
which is defined in a way similar to the construction of a symmetric 2-group from a morphism of abelian groups (seventh example of symmetric 2-group above).

On the other hand, Kapranov-Voevodsky 2-vector spaces on $K$
\cite{Kapranov1994a} are 2-modules, but on the category of ordinary vector spaces on $K$, which is equipped with a semi-ring structure.  Thus they do not form a $\Gpd$-category and a fortiori they do not form an additive $\Gpd$-category.

\section*{Puppe-exactness}
\addcontentsline{toc}{section}{Puppe-exactness}
\markright{Puppe-exactness}

\subsection*{In dimension 1}

If $\C$ has all the kernels and cokernels, we can construct from an arrow $f$ in $\C$, on the one hand, the cokernel of the kernel of $f$ and, on the other hand, the kernel of the cokernel of $f$.  Then there exists a comparison arrow $w_f\col\Coker(\Ker f)\ra\Ker(\Coker f)$ which makes the following diagram commute.
\begin{xym}\xymatrix@=30pt{
	{\Ker f}\ar[r]^-{k_f}
	&A\ar[r]^f\ar[d]_{e_f}
	&B\ar[r]^-{q_f}
	&{\Coker f}
	\\ &{\Coker(\Ker f)}\ar[r]_-{w_f}
	&{\Ker(\Coker f)}\ar[u]_{m_f}
}\end{xym}
A \emph{Puppe-exact} category is a category with a zero object, all the kernels and cokernels, and where, for every arrow $f$, $w_f$ is an isomorphism.  In other words, every arrow factors as the cokernel of its kernel followed by the kernel of its cokernel.

In fact, we can deduce the additivity from the Puppe-exactness and the existence of finite products and coproducts.

\subsection*{Kernels and cokernels in dimension 2}

To speak of kernel and cokernel, we need a notion of zero arrow.  We work thus in a category enriched in pointed groupoids (or $\Gpdp$-category): this is a $\Gpd$-category $\C$ equipped, for all objects $A$, $B$, with an arrow $0\col A\ra B$ which is an absorbing (up to isomorphism) element for composition.

The \emph{kernel} of an arrow $f\col A\ra B$ in a $\Gpdp$-category consists of an objet $\Ker f$, an arrow $k_f$ and a 2-arrow $\kappa_f$, as in the following diagram, satisfying a universal property which characterises it up to equivalence (Definition \ref{defkernel}).
\begin{xym}
	\xymatrix@=40pt{K\ar[r]^k_f\rrlowertwocell_0<-9>{<2.7>\;\;\;\kappa_f} &A\ar[r]^f &B}
\end{xym}

In $\CGS$, the kernel of a symmetric monoidal functor $F\col\A\ra\B$ is, like for abelian groups, given by the objects of $\A$ mapped by $F$ to the neutral element $I$ of $B$.  The difference is that this must be the case up to isomorphism.  We can describe $\Ker F$ in the following way (see Definition \ref{dfdescker}):  
\begin{itemize}
	\item an object consists of an objet $A$ in $\A$ and an isomorphism
		$b\col FA\ra I$;
	\item an arrow from $(A,b)$ to $(A',b')$ consists of an arrow $f\col A\ra A'$
		in $\A$ such that $b'(Ff)=b$;
	\item the product of $(A,b)$ and $(A',b')$ is $(A\tens A',b'')$, where $b''$ is the composite
		$F(A\tens A')\simeq FA\tens FA'\xrightarrow{b\tens b'}I\tens I\simeq I$.
\end{itemize}
This construction is similar to the homotopy kernel of a map between pointed topological spaces.  But the homotopy kernel satisfies a weaker universal property than the one we use here.

Let us compute an example of kernel.  Let $f\col A\ra B$ be a morphism of abelian groups.  This morphism induces a symmetric monoidal functor $f\con\col A\con\ra B\con$ between the one-object symmetric 2-groups induced by the abelian groups.  By the above construction, an object of the kernel of $f\con$ consists of an object in $A\con$ (which is necessarily $I$) and of an arrow $b\col f\con(I)\ra I$ in $B\con$, i.e.\ an element $b$ in $B$.  An arrow $b\ra b'$ is an arrow $a\col I\ra I$ in $A\con$ (i.e.\ an element of $A$) such that $b'+ f(a) = b$.  The symmetric 2-group $\Ker f\con$ is thus the symmetric 2-group corresponding to the morphism of abelian groups $f$ (see the seventh example of symmetric 2-groups given above).

If we see the same morphism of abelian groups as a symmetric monoidal functor between discrete 2-groups $f\dis\col A\dis\ra B\dis$, its kernel is $(\Ker f)\dis$.  Indeed, an object of the kernel of $f\dis$ is simply an element $a$ of $A$ together with an arrow $f(a)\ra 0$ in $B\dis$, i.e.\ we have $f(a)=0$ in $B$.  It is thus an element of the kernel of $f$.  An arrow $a\ra a'$ is an arrow $a\ra a'$ in $A\dis$, i.e.\ we have $a=a'$ in $A$.

A last example, which is important for the following, is the symmetric mo\-no\-i\-dal functor $0\col 0\ra \A$ from the zero symmetric 2-group (with one object and one arrow) to any symmetric 2-group $\A$, which maps the unique object of $0$ to $I$.  An object of the kernel of this functor consists of an object of $0$ (which is necessarily $I$) together with an arrow $I\ra I$ in $\A$; this amounts to give an element $a$ of $\pi_1(\A)$.  An arrow $(I,a)\ra (I,a')$ in the kernel is an arrow $I\ra I$ in $0$ (this is necessarily $1_I$) such that $a'\circ F1_I = a$; so there is an arrow $a\ra a'$ if and only if $a=a'$ in $\pi_1\A$.  So the kernel of this functor is the symmetric 2-group $(\pi_1(\A))\dis$.  We denote it by $\Omega(\A)$.  More generally, in a $\Gpdp$-category $\C$ with zero object (object $0$ such that the groupoids $\C(X,0)$ and $\C(0,X)$ are equivalent to the groupoid $1$ for every object $X$ of $\C$), we denote by $\Omega A$ the kernel of the arrow $0\col 0\ra A$ (by analogy with the loop space of a topological space, which is constructed in a similar way with the help of the homotopy kernel).

The \emph{cokernel} of an arrow in a $\Gpdp$-category is defined by the dual universal property.  The principle to construct a quotient in dimension 2 is to add arrows between the objects we want to identify (in dimension 1, we add “equalities” between them).  In $\CGS$, the cokernel of $F\col\A\ra\B$ will have thus the same objects of $\B$ and there will be an arrow from $B$ to $B'$ if they are isomorphic up to an object of the form $FA$.  This leads to define $\Coker F$ in the following way (see Definition \ref{dfdesccoker}):
\begin{itemize}
	\item its objects are those of $\B$;
	\item an arrow $B\ra B'$ consists of an object $A$ of $\A$ and an arrow
		$g\col B\ra FA\tens B'$;
	\item two arrows $(A,g)$ and $(A',g')\col B\ra B'$ are equal if there exists
		an arrow $a\col A\ra A'$ compatible with $g$ and $g'$;
	\item the product of $B$ and $B'$ is their product in $\B$.
\end{itemize}

Let us compute an example of cokernel.  Let be again a morphism of abelian groups $f\col A\ra B$.  We consider this time the symmetric monoidal functors induced between the discrete symmetric 2-groups corresponding to the abelian groups, $f\dis\col A\dis\ra B\dis$.  An object of $\Coker f\dis$ is an object of $B\dis$, i.e.\ an element of $B$.  An arrow from $b$ to $b'$ consists of an object of $A\dis$ (i.e.\ an element of $A$) and of an arrow $b\ra fa\tens b'$ in $B\dis$, which means that we must have $b = f(a) + b'$.  Thus we find again the symmetric 2-group corresponding to the arrow $f$ from the seventh example above.

The cokernel of the symmetric monoidal functor $0\col \A\ra 0$, where $\A$ is any symmetric 2-group, is denoted by $\Sigma\A$.  Its objects are those of $0$, in other words there is only one object $I$.  An arrow $I\ra I$ is an object $A$ of $\A$, with an isomorphism $I\ra 0(A)\tens I$ in $0$ (this isomorphism is necessarily $1_I$).  Two arrows $A$ and $A'$ are equal if there exists an arrow $a\col A\ra A'$ in $\A$.  In other words, $\Sigma\A$ is $(\pi_0\A)\con$.  More generally, in a $\Gpdp$-category with zero object, we denote by $\Sigma A$ the cokernel of the arrow $0\col A\ra 0$, by analogy with the suspension of a topological space, which is constructed in a similar way using the homotopy cokernel.

\subsection*{2-Puppe-exactness}

We can now compute the cokernel of the kernel and the kernel of the cokernel of a morphism of symmetric 2-groups $F$.  But, as Enrico Vitale and Stefano Kasangian have noticed \cite{Kasangian2000a}, unlike what happens in dimension 1, there is in general no comparison functor $\Coker(\Ker F)\ra \Ker(\Coker F)$.
Thus we cannot hope that this functor be an equivalence.  There is in fact in general no equivalence between those two constructions: the cokernel of the kernel of $F$ determines a factorisation of $F$ as a surjective (up to isomorphism) functor followed by a full and faithful functor, and the kernel of the cokernel of $F$ determines a factorisation of $F$ as a full and surjective functor followed by a faithful functor.  Here appears a typical phenomenon of dimension 2: a symmetric monoidal functor has two images.

We will nevertheless be able to construct these images both as the quotient of a kernel and as the coquotient of a cokernel but, for that, we need to introduce new notions of kernel and quotient.

The new kind of kernel is called “pip” (see Definition \ref{defpepin}).  The \emph{pip} of an arrow $f\col A\ra B$ in a $\Gpdp$-category is an object $\Pip f$, with a loop $\pi_f\col0\Ra 0\col\Pip f\ra A$, universal for the property $f\pi_f=1_0$.  In $\CGS$, the pip of an arrow $F\col\A\ra\B$ is the abelian group $\pi_1(\Ker F)$ seen as a discrete symmetric 2-group: its objects are the arrows $a\col I\ra I$ in $\A$ such that $Fa=1_{FI}$ and there is one arrow $a\ra a'$ if $a=a'$ in $\A$.  For example, if $f\col A\ra B$ is morphism of abelian groups, the pip of $f\dis$ is $0$ (because the only arrow $I\ra I$ in $A\dis$ is $1_I$) and the pip of $f\con$ is $(\Ker f)\dis$, since the arrows $I\ra I$ in $A\con$ mapped by $f\dis$ to $1_I$ are the element of the kernel of $f$.

There is a notion of quotient corresponding to this new notion of kernel: the \emph{coroot} of a loop $\pi\col 0\Ra 0\col A\ra B$ is an object $\Coroot\pi$ and an arrow $r_\pi\col B\ra\Coroot\pi$ universal for the property $r_\pi\pi=1_0$ (Definition \ref{defroot}).
Dually, we define copips and roots.

Thanks to these new notions, we do have in $\CGS$ a coincidence between two constructions of the factorisations: by a colimit of a limit and a limit of a colimit.  But, unlike Puppe-exact categories where it is the dual constructions that coincide, we have here a crossed situation: on the one hand, the cokernel of the kernel coincides with the root of the copip and, on the other hand, the coroot of the pip coincides with the kernel of the cokernel.

In general, for every arrow $f$ in a $\Gpdp$-category with all kernels and cokernels (the (co)pips and (co)roots can be constructed using kernels and cokernels), there is a comparison arrow $\bar{w}_f$ between the cokernel of the kernel of $f$ and the root of the copip of $f$.
\begin{xym}\label{factcokerkerraccopeppourintro}\xymatrix@=30pt{
		{\Ker f}\ar[r]^-{k_f}
		& A\ar[r]^f\ar[d]_{\bar{e}^1_f}\rtwocell\omit\omit{_<4.75>\;\;\,\bar{\omega}_f}
		& B\rtwocell^0_0{\;\;\rho_f}
		&{}\save[]+<21pt,0pt>*{\Copip f}\restore
		\\ &{\Coker k_f}\ar[r]_-{\bar{w}_f}
		&{\Root\rho_f}\ar[u]_{\bar{m}^2_f}
	}\end{xym}
Dually, we can construct the coroot of the pip of $f$ and the kernel of the cokernel of $f$ and, again, there is a comparison arrow $w_f$ between them.
	\begin{xym}\label{factcoracpepkercokerpourintro}\xymatrix@=30pt{
		{~}\save[]+<-11pt,0pt>*{\Pip f}\restore\rtwocell^0_0{\;\,\pi_f}
		& A\ar[r]^f\ar[d]_{e^1_f}\rtwocell\omit\omit{_<4.75>\;\;\,\omega_f}
		& B\ar[r]^-{q_f}
		&{\Coker f}
		\\ &{\Coroot\pi_f}\ar[r]_-{w_f}
		&{\Ker q_f}\ar[u]_{m^2_f}
	}\end{xym}
By analogy with the definition of Puppe-exact category, we can thus define a \emph{2-Puppe-exact} $\Gpdp$-category as being a $\Gpdp$-category $\C$ with a zero object, all kernels and cokernels and where, for every arrow $f$, the comparison arrows $\bar{w}_f$ and $w_f$ are equivalences (see Proposition \ref{caractwopupex}).  If, moreover, $\C$ has all finite products and coproducts, we say that $\C$ is a \emph{2-abelian} $\Gpd$-category.

One of the main results of this work is the following theorem (Corollary \ref{2abPexadd}), whose version for abelian categories is well-known.
\begin{thmintro}[A]
	Every 2-abelian $\Gpd$-category is additive.
\end{thmintro}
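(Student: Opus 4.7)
My plan is to lift to dimension 2 the classical 1-dimensional argument that a Puppe-exact category with finite products and coproducts is additive, in three successive steps: produce biproducts, induce a symmetric monoidal structure on each hom-groupoid, then upgrade this structure to a symmetric 2-group using 2-Puppe-exactness.

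For the first step, I would show that for every pair $A,B$ the canonical comparison 1-cell $\varphi\col A\sqcup B\ra A\times B$ (whose matrix entries are identities on the diagonal and zero off the diagonal) is an equivalence. The zero object, together with the universal properties of $\sqcup$ and $\times$, allows one to compute directly that $\Ker\varphi$, $\Coker\varphi$, $\Pip\varphi$, and $\Copip\varphi$ are all equivalent to $0$. The 2-Puppe-exactness hypothesis — that $w_\varphi$ and $\bar w_\varphi$ are equivalences — then forces $\varphi$ itself to be an equivalence, by following the factorisation diagrams displayed before the statement. With biproducts in hand, I would define $f\tens g\eqdef\nabla_B\circ(f\oplus g)\circ\Delta_A$ on each $\C(A,B)$ and use the canonical comparisons between biproducts to produce associativity, unit, and symmetry constraints of a symmetric monoidal structure; composition functors inherit monoidal structures in the same way, with coherence 2-cells supplied by uniqueness (up to canonical isomorphism) of 1-cells out of and into biproducts.

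The main obstacle is the third step: showing that each $\C(A,B)$ is actually a symmetric 2-group, i.e., that every $f\col A\ra B$ has an inverse $-f$ together with a coherent 2-cell $f\tens(-f)\Ra 0$. Imitating the classical triangular-matrix trick, I would consider the endomorphism $\tau_f\col A\oplus B\ra A\oplus B$ with matrix entries $1_A,0,f,1_B$ and aim to prove that it is an equivalence. As in the first step, computations with the biproduct universal properties should yield $\Pip\tau_f\simeq\Copip\tau_f\simeq\Ker\tau_f\simeq\Coker\tau_f\simeq 0$, so that by 2-Puppe-exactness $\tau_f$ itself is an equivalence; reading off the matrix entries of a quasi-inverse produces $-f$ together with the required 2-cell. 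The remaining compatibilities needed for enrichment in $\CGS$ — that composition functors are symmetric monoidal and that all constraints satisfy the enrichment axioms — again reduce to the universal properties of biproducts.
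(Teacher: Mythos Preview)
Your three-step plan (biproducts, then semiadditivity from biproducts, then inverses via a triangular matrix) is exactly the route the paper takes, and your step 3 with $\tau_f=\begin{pmatrix}1_A&0\\ f&1_B\end{pmatrix}$ works by the same mechanism as the paper's choice $\begin{pmatrix}1_A&1_A\\0&1_A\end{pmatrix}$ on $A\oplus A$ (the paper's variant has the small advantage of producing a single antipode $n_A\col A\ra A$ rather than an inverse for each $f$ separately, but both arguments show the matrix is fully 0-faithful by solving one coordinate trivially and then the other via invertibility of 2-cells).

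There is one inaccuracy in your step 1. You assert that ``the zero object, together with the universal properties of $\sqcup$ and $\times$, allows one to compute directly that $\Ker\varphi\simeq 0$'' and only then invoke 2-Puppe-exactness. That separation does not hold: knowing $(1\;0)x\simeq 0$ and $(0\;1)x\simeq 0$ gives you no control over $x\col X\ra A\sqcup B$ from the coproduct universal property alone, since that property describes arrows \emph{out} of $A\sqcup B$, not into it. The paper's proof (Proposition~\ref{gpd2abbiprod}) already uses 2-Puppe-exactness \emph{inside} this computation: one first observes that $i_1$ is faithful and $(0\;1_B)=\Coker i_1$, and then invokes the $\Ker$-preexactness of 2-Puppe-exact categories to conclude $i_1=\Ker(0\;1_B)$, which is what lets one factor $x$ through $i_1$. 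So your strategy is right, but 2-Puppe-exactness enters the biproduct argument earlier than you indicate, not merely at the level of the comparison arrows $w_\varphi,\bar w_\varphi$.
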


The examples of 2-abelian $\Gpd$-categories known up to now are:
\begin{enumerate}
	\item the $\Gpd$-category of symmetric 2-groups (Proposition \ref{theocgsdab});
	\item the $\Gpd$-categories of 2-modules on a 2-ring and, more generally,
		the $\Gpd$-categories of (additive) $\Gpd$-functors to a 2-abelian $\Gpd$-category
		(Proposition \ref{pondmoddab});
	\item if $\C$ is an abelian category, the $\Gpd$-category $\flcp$ defined above is
		2-abelian if and only if $\C$ satisfies the axiom of choice (every epimorphism splits)
		 (Theorem \ref{caracdabflabplus});
	\item if $\D$ is a 2-abelian $\Gpd$-category, the full sub-$\Gpd$-category
		whose objects are the “discretely presentable” objects of $\D$ is also
		2-abelian (Proposition
		\ref{dispresdab}).
\end{enumerate}

On the other hand, $\Gpdp$ and $\Top^*$ are not 2-abelian $\Gpd$-categories.

\subsection*{Epimorphisms and monomorphisms in dimension 2}

A distinctive phenomenon of dimension 2 is that many notions which were unique in dimension 1 split.  The additional degree of freedom of groupoids with respect to sets allows more gradation in the definition of ordinary categorical notions.

We have already met this phenomenon in two forms: on the one hand, the unique factorisation of abelian groups splits into two non-equivalent factorisations in 2-abelian $\Gpd$-categories and, on the other hand, the unique notion of kernel splits into kernel and pip (and, consequently, the notion of cokernel (seen as the quotient corresponding to the kernel) splits into cokernel and coroot).  The same phenomenon happens also for monomorphisms and epimorphisms.

In dimension 1, these two notions are defined using the notion of injectivity for sets:
an arrow 
$f\col A\ra B$ in a category $\C$ is a \emph{monomorphism} if, for every object $X$ in $\C$, the function $f\circ - \col\C(X,A)\ra\C(X,B)$ is injective; dually, the arrow $f$ is an \emph{epimorphism} if, for every object $Y$ in $\C$, the function $-\circ f\col\C(B,Y)\ra\C(A,Y)$ is injective.

In dimension 2, there are two levels of injectivity for functors: a functor $F\col\A\ra\B$ (between groupoids or symmetric 2-groups) can be faithful (injective at the level of arrows) or fully faithful (bijective at the level of arrows\footnote{Surjectivity at the level of arrows can also be seen as a kind of injectivity at the level of objects; indeed, it implies that, if $FA\simeq FB$, then $A\simeq B$.}).  Thus we can also define two kinds of monomorphism in a $\Gpd$-category $\C$: an arrow $f\col A\ra B$ is
\begin{enumerate}
	\item \emph{faithful} if, for every object $X$ in $\C$, the functor
		$f\circ - \col\C(X,A)\ra\C(X,B)$ is faithful;
	\item \emph{fully faithful} if, for every object $X$ in $\C$, the functor
		$f\circ - \col\C(X,A)\ra\C(X,B)$ is full and faithful.
\end{enumerate}
In $\Gpd$ or $\CGS$, the faithful arrows are the faithful functors and the fully faithful arrows are the full and faithful functors.

Dually, we can define two notions of epimorphism: the \emph{cofaithful} arrows (when the composition functors $-\circ f$ are faithful) and the \emph{fully cofaithful} arrows (when they are full and faithful).  In $\Gpd$ and $\CGS$, the cofaithful functors are the surjective (up to isomorphism) functors and the fully cofaithful functors are the full and surjective (up to isomorphism) functors.  On the other hand, this is not the case in the 2-category of categories.

This splitting of the notions of monomorphism and epimorphism is connected to the other splittings already remarked: in the factorisation cokernel of the kernel/root of the copip (diagram \ref{factcokerkerraccopeppourintro}), $\bar{e}^1_f$ is cofaithful and $\bar{m}^2_f$ is fully faithful; and, in the factorisation coroot of the pip/kernel of the cokernel (diagram \ref{factcoracpepkercokerpourintro}), $e^1_f$ is fully cofaithful and $m^2_f$ is faithful.  In a 2-abelian $\Gpd$-category, every arrow factors thus, on the one hand, as a cofaithful arrow followed by a fully faithful arrow and, on the other hand, as a fully cofaithful arrow followed by a faithful arrow.

Moreover, in a 2-abelian $\Gpd$-category, every faithful arrow is the kernel of its cokernel, every fully faithful arrow is the root of its copip, every cofaithful arrow is the cokernel of its kernel, and every fully cofaithful arrow is the coroot of its pip.

In a 2-abelian $\Gpd$-category, we can also classify the properties of arrows with the help of the different kinds of kernels and cokernels, like in dimension 1: an arrow is faithful if and only if its pip is zero, an arrow is fully faithful if and only if its kernel is zero, and we also have the dual properties.

A last important property of 2-abelian $\Gpd$-categories concerning epimorphisms and monomorphisms is regularity: cofaithful and fully cofaithful arrows are stable under pullback, and the dual property also hold (Proposition \ref{gpdcatdabreg}).

We can also generalise the notion of full functor to a $\Gpd$-category (we speak then of a \emph{full} arrow; Subsection \ref{sectflechplen}) in such a way that, in $\Gpd$ and $\CGS$, the full arrows are exactly the full functors (but this is not the case in $\Cat$) and that, in a 2-abelian $\Gpd$-category, we have:
\begin{enumerate}
	\item fully faithful = full + faithful;
	\item fully cofaithful = full + cofaithful;
	\item equivalence = fully faithful + cofaithful = faithful + fully cofaithful = faithful + full + cofaithful.
\end{enumerate}

\subsection*{Discrete and connected objects}

In a $\Gpd$-category $\C$, a \emph{discrete} object is an object $D$ such that there is at most one 2-arrow between two arrows $d,d'\col X\rightrightarrows D$.
In other words, for every object $X$ in $\C$, the groupoid $\C(X,D)$ is discrete.  If $\C$ has a zero object, this is equivalent to the arrow $0\col D\ra 0$ being faithful.
In $\Gpd$, the discrete objects are the sets seen as discrete groupoids and, in $\CGS$, the discrete objects are the abelian groups seen as discrete symmetric 2-groups.  The discrete objects in $\C$ form a category (because the groupoid of arrows between two objects is in fact a set), which we denote by $\DisC$.

The notion of \emph{connected} object is defined dually.  This notion is relevant when there is a zero object.  Then an object $C$ is connected if the arrow $0\col 0\ra C$ is cofaithful.
In $\CGS$, the connected objects are the one-object symmetric 2-groups (the abelian groups seen as connected symmetric 2-groups).  The connected objects in $\C$ also form a category, denoted by $\ConC$.

An important question is: which is the link between the notion of 2-abelian $\Gpd$-category and the notion of abelian $\Ens$-category?  Every $\Ens$-category can be seen as a $\Gpd$-category (with exactly one 2-arrow $f\Ra g$ if $f=g$) and we could ask what are the 2-abelian $\Ens$-categories.  The answer is that the only 2-abelian $\Ens$-category is $1$.  Indeed, in a $\Ens$-category, every arrow is faithful (because the 2-arrows are all equal), so every object is discrete; dually, every arrow is cofaithful, so every object is connected.  But, in a 2-abelian $\Gpd$-category, an object which is both discrete and connected is necessarily zero.  Therefore, in a 2-abelian $\Ens$-category, every object is equivalent to $0$.

On the other hand, we will see that every 2-abelian $\Gpd$-category contains two equivalent abelian categories, $\DisC$ and $\ConC$ (they are equivalent as categories, but not as full sub-$\Gpd$-categories of $\C$, since their intersection is $\{0\}$).

Above we defined $\Omega A$, the kernel of the arrow $0\col 0\ra A$, which we can think of as being $A$ where the arrows have become the objects (in $\CGS$, $\Omega\A$ is the discrete 2-group whose objects are the elements of $\pi_1\A$).  This defines on objects a $\Gpd$-functor $\Omega\col\C\ra\C$.  Dually, we defined $\Sigma A$, the cokernel of the arrow $0\col A\ra 0$, which we can think of as being $A$ where the objects have become the arrows (in $\CGS$, $\Sigma\A$ is the one-object 2-group whose arrows are the elements of $\pi_0\A$).  This also defines a $\Gpd$-functor $\Sigma\col\C\ra\C$.  These two $\Gpd$-functors are adjoint to each other: $\Sigma\adj\Omega$ (Subsection \ref{sssectsigadjom}).

In a 2-abelian $\Gpd$-category, the connected objects are exactly the objects $C$ such that the counit of this adjunction at $C$ is an equivalence: $\Sigma\Omega C\simeq C$.  Dually, the discrete objects are exactly the objects $D$ such that the unit of this adjunction is an equivalence: $D\simeq\Omega\Sigma D$.  So the adjunction $\Sigma\adj\Omega$ restricts to an equivalence 
\begin{eqn}
	\DisC\simeq\ConC
\end{eqn}
(Subsection \ref{sssectequdiscconc}).
The sub-$\Gpd$-category $\DisC$ is reflective, with reflection $\pi_0\eqdef \Omega\Sigma$, and $\ConC$ is a coreflective sub-$\Gpd$-category, with coreflection $\pi_1\eqdef\Sigma\Omega$.

\begin{xym}\xymatrix@=50pt{
   	{\C}\ar@<-2mm>[r]_-{\Omega}\ar@{}[r]|-\perp
		\ar@<-2mm>[d]_{\pi_1}\ar@{}[d]|-\vdash
	&{\C}\ar@<-2mm>[l]_-{\Sigma}\ar@<-2mm>[d]_{\pi_0}\ar@{}[d]|-\dashv
    \\ {\ConC}\ar@<-2mm>[r]_-{\Omega}
		\ar@{}[r]|-\simeq\ar@<-2mm>[u]_i
	&{\DisC}\ar@<-2mm>[l]_-{\Sigma}\ar@<-2mm>[u]_i
}\end{xym}

In $\CGS$, we can take $\Ab$ with the inclusion $(-)\dis$ for $\Dis(\CGS)$ and $\Ab$ with the inclusion $(-)\con$ for $\Con(\CGS)$.

The link between the notion of 2-abelian $\Gpd$-category and the notion of abelian $\Ens$-category can now be expressed by the following theorem (Corollary \ref{discconcabel}).
\begin{thmintro}[B]
	If $\C$ is a 2-abelian $\Gpd$-category, 
	the category $\DisC\simeq\ConC$ is abelian.
\end{thmintro}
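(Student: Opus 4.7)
The plan is to verify the classical axioms of an abelian category for $\DisC$: $\Ab$-enrichment, finite biproducts, kernels, cokernels, and the Puppe-exact comparison. Via the equivalence $\Sigma : \DisC \xrightarrow{\sim} \ConC$, the conclusion for $\ConC$ follows dually.

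\emph{Additivity, kernels, and cokernels.} By Theorem A, $\C$ is preadditive (enriched in $\CGS$), and for $D, D' \in \DisC$ the hom-2-group $\C(D, D')$ is discrete, hence an abelian group, so $\DisC$ is $\Ab$-enriched. Reflectivity of $\DisC$ in $\C$ (with reflector $\pi_0 = \Omega\Sigma$) makes $\DisC$ closed under limits, so biproducts and kernels inherited from $\C$ lie in $\DisC$; the universal 2-cell of a $\C$-kernel becomes identity when the test object is discrete, so such a $\C$-kernel is a 1-kernel in $\DisC$. Cokernels in $\DisC$ are defined by $\Coker^{\DisC} f \eqdef \pi_0\,\Coker^\C f$; the universal property follows from the adjunction $\pi_0 \adj \iota$ together with the fact that the defining 2-cell $gf \Ra 0$ of the $\C$-cokernel is forced to be identity when the codomain is discrete.

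\emph{Puppe-exactness.} For $f : D \to D'$ in $\DisC$, I use the two factorizations of $f$ in $\C$ supplied by 2-Puppe-exactness. Since $D$ is discrete, $\Pip f = 0$, whence $\Coroot \pi_f \simeq D$ and the fully cofaithful/faithful factorization of $f$ reduces to the equivalence $w_f : D \xrightarrow{\sim} \Ker^\C q_f$ followed by the faithful arrow $m^2_f : \Ker^\C q_f \to D'$. Dually, passing via $\Sigma$ to $\ConC$ and using $\Copip(\Sigma f) = 0$, the cofaithful/fully faithful factorization of $\Sigma f$ collapses on the opposite side. Applying $\pi_0$ (respectively $\Omega$) to these $\C$-factorizations yields the kernel-of-cokernel and cokernel-of-kernel factorizations of $f$ in $\DisC$. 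The $\C$-level comparisons $w_f$ and $\bar{w}_f$, which are equivalences by 2-Puppe-exactness, then induce the desired 1-dimensional isomorphism $\Coker^{\DisC}(\Ker^{\DisC} f) \xrightarrow{\sim} \Ker^{\DisC}(\Coker^{\DisC} f)$, so $\DisC$ is Puppe-exact and hence abelian.

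\emph{Main obstacle.} The essential subtlety is that $\Ker^\C(\Coker^\C f)$ is not the same as $\Ker^{\DisC}(\Coker^{\DisC} f) = \Ker^\C(\pi_0\,\Coker^\C f)$ in general, because the reflector $\pi_0$ collapses the loops of $\Coker^\C f$ and so enlarges its 1-kernel. Consequently, the $\C$-level equivalences $w_f$ and $\bar{w}_f$ do not directly yield the Puppe-exact comparison in $\DisC$; one has to pass through both sides of the equivalence $\DisC \simeq \ConC$, exploiting that $\Pip$ vanishes on morphisms in $\DisC$ while $\Copip$ vanishes on morphisms in $\ConC$, so that the two 2-dimensional factorizations each project cleanly onto one side of the Puppe-exact factorization in $\DisC$.
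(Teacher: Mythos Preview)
Your setup (additivity, biproducts, kernels and cokernels in $\DisC$) is correct and matches the paper. The gap is in your Puppe-exactness argument: you correctly identify the obstacle---that $\Ker^{\C}(\Coker^{\C} f)$ differs from $\Ker^{\DisC}(\Coker^{\DisC} f)$---but your proposed resolution via ``passing through both sides of the equivalence $\DisC\simeq\ConC$'' is left vague, and the sentence ``Applying $\pi_0$ (respectively $\Omega$) to these $\C$-factorizations yields the kernel-of-cokernel and cokernel-of-kernel factorizations of $f$ in $\DisC$'' does not actually hold: $\pi_0$ is not exact, and the collapsed factorization $D\simeq\Ker^{\C}q_f$ from $\Pip f=0$ only says $f$ is normal faithful in $\C$, which contributes nothing toward the $\DisC$-level Puppe comparison.

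The paper's resolution is much simpler and uses only the \emph{single} comparison $\bar{w}_f$. The key point you are missing is the alternative description of $\Root(\Copip f)$ given earlier in the paper:
\[
\Root(\Copip f)\;=\;\Ker^{\C}\bigl(\eta_{\Coker^{\C} f}\circ q_f\bigr).
\]
But $\eta_{\Coker^{\C} f}\circ q_f$ \emph{is} the cokernel of $f$ in $\DisC$, so the right-hand side is exactly $\Ker^{\DisC}(\Coker^{\DisC} f)$. Hence the $\C$-level equivalence $\bar{w}_f:\Coker^{\C} k_f\to\Root(\Copip f)$ already has the correct target. Since that target is discrete, the source $\Coker^{\C} k_f$ is discrete too, so $\eta_{\Coker^{\C} k_f}$ is an equivalence and $\Coker^{\C} k_f\simeq\pi_0\Coker^{\C} k_f=\Coker^{\DisC}(\Ker^{\DisC} f)$. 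Thus $\bar{w}_f$ \emph{is} the $\DisC$-level Puppe comparison, and no detour through $\ConC$ or the other factorization $w_f$ is needed.
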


With the help of $\Omega$ and $\Sigma$ (which are (co)kernels), we can construct the (co)pip and the (co)root by using only the kernel and the cokernel.  This allows to construct the two canonical factorisations of an arrow in a $\Gpdp$-category without using pips and coroots.  Indeed, the root of the copip of an arrow $f$ coincides with the kernel of the $\pi_0$ of the cokernel of $f$ (see the following diagram).
	\begin{xym}\xymatrix@=30pt{
		{\Ker f}\ar[r]^-{k_f}
		& A\ar[r]^f\ar[d]_{\bar{e}^1_f}\rtwocell\omit\omit{_<4.75>\;\;\,\bar{\omega}_f}
		& B\ar[r]^-{q_f}
		&{\Coker f}\ar[r]^-{\eta}
		&{\pi_0\Coker f}
		\\ &{\Coker k_f}\ar[r]_-{\bar{w}_f}
		&{\Ker(\eta q_f)}\ar[u]_{\bar{m}^2_f}
	}\end{xym}
Dually, the coroot of the pip of $f$ coincides with the cokernel of the $\pi_1$ of the kernel
of $f$ (see the following diagram).
	\begin{xym}\xymatrix@=30pt{
		{\pi_1\Ker f}\ar[r]^-{\varepsilon}
		&{\Ker f}\ar[r]^-{k_f}
		& A\ar[r]^f\ar[d]_{e^1_f}\rtwocell\omit\omit{_<4.75>\;\;\,\omega_f}
		& B\ar[r]^-{q_f}
		&{\Coker f}
		\\ &&{\Coker(k_f\varepsilon)}\ar[r]_-{w_f}
		&{\Ker q_f}\ar[u]_{m^2_f}
	}\end{xym}
So we can define 2-Puppe-exact $\Gpdp$-categories (and thus also 2-abelian $\Gpd$-categories) purely in terms of kernel and cokernel, by asking (in addition to the existence of a zero object, all the kernels and cokernels) that the comparison arrows $\bar{w}_f$ and $w_f$ of the previous diagrams be equivalences.

\section*{Homology}
\addcontentsline{toc}{section}{Homology}
\markright{Homology}

\subsection*{Exact and relative exact sequences}

Another new phenomenon in dimension 2 is that the notion of exact sequence, which was unique in dimension 1, splits into two almost disjoint kinds of sequences: the exact sequences and the relative exact sequences.  The classical examples of exact sequences fall, in dimension 2, in one or the other class.  In both cases, we consider a sequence of the following shape.
\begin{xym}\label{biformgensuitex}\xymatrix@=30pt{
	\ldots\ar[r]
	&A_{n-2}\ar[r]_{a_{n-2}}\rruppertwocell<9>^0{^<-2.7>\alpha_{n-2}\;\;\;\;\;\;}
	&A_{n-1}\ar[r]^{a_{n-1}}\rrlowertwocell<-9>_0{_<2.7>\;\;\;\;\;\;\,\alpha_{n-1}}
	&A_n\ar[r]^{a_n}\rruppertwocell<9>^0{^<-2.7>\alpha_{n}\;\;}
	&A_{n+1}\ar[r]_{a_{n+1}}
	&A_{n+2}\ar[r]
	&\ldots
}\end{xym}

We say that this sequence is \emph{exact} at $A_n$ if the kernel of $a_n$ is canonically equivalent to the kernel of the cokernel of $a_{n-1}$ (Proposition \ref{defsequencex}).  Here is a few typical examples of exact sequences:
\begin{enumerate}
	\item
		the Puppe exact sequence (Proposition \ref{sequencePuppe}), well-known in homotopy
		theory, which is the 2-dimensional version of the sequence
		 $0\longrightarrow \Ker f\overset{k}\longrightarrow A
		\overset{f}\longrightarrow B\overset{q}\longrightarrow \Coker f\longrightarrow 0$
		(which, in dimension 2, is not exact at $\Ker f$ and $\Coker f$):
		\begin{multline}\stepcounter{eqnum}
			0\longrightarrow\Pip f\overset{\Omega k}\longrightarrow\Omega A\overset{\Omega f}
			\longrightarrow\Omega B\overset{d}\longrightarrow \Ker f\overset{k}\longrightarrow A\overset{f}
			\longrightarrow B\overset{q}\longrightarrow \\
			\longrightarrow \Coker f\overset{d'}\longrightarrow \Sigma A\overset{\Sigma f}
			\longrightarrow\Sigma B\overset{\Sigma q}\longrightarrow\Copip f\longrightarrow 0;
		\end{multline}
	\item the long exact sequence of homology constructed from an extension of chain
		complexes (which exists in an abelian $\Gpd$-category; see later);
	\item
		every continuous map between topological spaces $f\col X\ra Y$ induces
		a long exact sequence
		\cite{Grandis2002a}
		\begin{eqn}
			\cdots \Pi_{n+1}Kf\rightarrow\Pi_{n+1} X\rightarrow\Pi_{n+1}Y
			\rightarrow \Pi_{n}Kf\rightarrow\Pi_{n} X\rightarrow\Pi_{n}Y\cdots,
		\end{eqn}
		where $Kf$ is the homotopy kernel of $f$; this sequence is a sequence
		of symmetric 2-groups when $n\geq 3$.
\end{enumerate}
		
On the other hand, the notion of relative exact sequence (introduced in \cite{Rio2005a}) applies to a sequence of the form \ref{biformgensuitex} such that the adjacent 2-arrows
are compatible: for every $n$, the composite $\alpha_n a_{n-1}\circ a_{n+1}\alpha_{n-1}^{-1}\col 0\Ra 0$ must be equal to $1_0$.  This is the definition of chain complex used for symmetric 2-groups \cite{Takeuchi1983a,Rio2005a}.  In this case, we say that the sequence is \emph{relative exact} at $A_n$ if the relative kernel of $a_n$ and $\alpha_n$ is the kernel of the relative cokernel of $a_{n-1}$ and $\alpha_{n-2}$ (Proposition \ref{caracrelex}). The \emph{relative kernel} of $a_n$ and $\alpha_n$ consists of an arrow $k\col K\ra A_n$ and a 2-arrow $\kappa\col a_nk\Ra 0$ \emph{compatible} with $\alpha_n$, satisfying a universal property similar to that of the kernel. 

The typical examples of relative exact sequences are the extensions, i.e.\ the sequences of the form
\begin{xym}\xymatrix@=40pt{
	0\ar[r]^0\rrlowertwocell<-9>_0{_<2.7>\;\,1_0}
	&A\ar[r]_a\rruppertwocell<9>^0{^<-2.7>\alpha\,}
	&B\ar[r]_b\rrlowertwocell<-9>_0{_<2.7>\;\,1_0}
	&C\ar[r]^0
	&0
},\end{xym}
where $(a,\alpha)=\Ker b$ and $(b,\alpha)=\Coker a$.  Such a sequence is not exact, but relative exact.  Moreover, extensions compose to give relative exact sequences.  In particular, projective resolutions (for an appropriate notion of projective object) being created by joining extensions, would be relative exact sequences in dimension 2.

\subsection*{Good 2-abelian $\Gpd$-categories}

Between $\CGS$ and $\Ab$, the $\Gpd$-functors $\pi_0$ and $\pi_1$ preserve exact sequences (but not relative exact sequences) and we can characterise the different kinds of arrows of $\CGS$ by properties of their images under $\pi_0$ and $\pi_1$ \cite{Kasangian2000a}: if $F$ is a morphism of $\CGS$, then:
\begin{enumerate}
	\item $F$ is faithful if and only if $\pi_1 F$ is injective;
	\item $F$ is full if and only if $\pi_1 F$ is surjective and $\pi_0 F$ is injective;
	\item $F$ is surjective if and only if $\pi_0 F$ is surjective.
\end{enumerate}

It seems that it is not possible to deduce these properties from the definition of 2-abelian $\Gpd$-category.  Thus we define a \emph{good 2-abelian $\Gpd$-category} (see Subsection \ref{sectbondpex}) as being a 2-abelian $\Gpd$-category $\C$ where $\pi_0$ and $\pi_1$ preserve exact sequences.  We can deduce from that the characterisation of the properties of arrows of $\C$ in terms of their (co)reflection in $\DisC$ or $\ConC$.  The question of the independence of this condition from the others remains open, all known examples of 2-abelian $\Gpd$-category being good.

An important property of good 2-abelian $\Gpd$-categories is that, for every arrow $f$, the 2-arrow $\mu_f$, equal to the composite
	\begin{xym}\xymatrix@=40pt{
		{\Ker f}\ar[r]_-{k_f}\rrlowertwocell<10>^0{\;\;\;\;\,\kappa_f^{-1}}
		&A\ar[r]^f\rruppertwocell<-10>_0{_{\;\;\,\zeta_f}}
		&B\ar[r]^-{q_f}
		&{\Coker f},
	}\end{xym}
is exact (i.e.\ the sequence $\Ker f\ra 0\ra \Coker f$, with the 2-arrow $\mu_f$, is exact at $0$) (Proposition \ref{mufestexact}).
We can deduce from this result that, in a good 2-abelian $\Gpd$-category, the Puppe exact sequence and the long exact sequence of homology have an additional property:
for every $n$, the composite $\alpha_n a_{n-1}\circ a_{n+1}\alpha_{n-1}^{-1}\col
0\Ra 0$ of adjacent 2-arrows is exact.  If this is the case, we say that the sequence \ref{biformgensuitex} is \emph{perfectly exact}.

The perfectly exact sequences and the relative exact sequences are totally disjoint: in a 2-abelian $\Gpd$-category, a sequence being both perfectly exact and relative exact at each point is zero at each point (by Lemma \ref{lempourcaracfull}).

\subsection*{Abelian $\Gpd$-categories and diagram lemmas}

A way of presenting the notion of Puppe-exact category in dimension 1 is to ask that, for the exact sequence
\begin{eqn}
	0\ra K\overset{k}\longrightarrow A\overset{f}\longrightarrow B\overset{q}\longrightarrow Q\ra 0,
\end{eqn}
where $k$ is the kernel of $f$ and $q$ is the cokernel of $f$, the comparison arrow between the cokernel of $k$ and the kernel of $q$ be an isomorphism.  In dimension 2, as we have seen, this does not work any more: there is simply no comparison arrow between the cokernel of $k$ and the kernel of $q$, and the above sequence is exact only at $A$ and $B$ (to get an exact sequence at each point from $f$, we must take the Puppe sequence).

The solution we used above to define 2-Puppe-exact $\Gpdp$-categories was to compare the cokernel of the kernel with the root of the copip and the kernel of the cokernel with the coroot of the pip.  But the notion of relative exact sequence suggests another possibility: in a $\Gpdp$-category, if we start with a sequence like in the following diagram:
\begin{xym}\xymatrix@=40pt{
	0\ar[r]
	&A\ar[r]^a\ar@/^2pc/[rr]^0
		\rruppertwocell\omit{^<-2.7>\alpha}
	& B\ar[r]^b\ar@/_2pc/[rr]_0
		\rrtwocell\omit\omit{_<2.7>\beta}
	& C\ar[r]^c
	& D\ar[r]
	&0,
}\end{xym}
where $(a,\alpha)$ is the relative kernel of $b$ and $\beta$, and where $(c,\beta)$ is the relative cokernel of $b$ and $\alpha$, there always exists a comparison arrow between the cokernel of $a$ and the kernel of $c$.  We say that a $\Gpdp$-category is \emph{Puppe-exact} if, for every arrow $f$, this comparison arrow is an equivalence and if some kinds of arrows are stable under composition (Definition \ref{defpuppex}).

This is a generalisation of the usual notion of Puppe-exact category, because a $\Ens$-category is Puppe-exact in the 2-dimensional sense if and only if it is Puppe-exact in the 1-dimensional sense.  This is why we do not use here the prefix “2-”.

In a Puppe-exact $\Gpdp$-category, we recover the usual properties of exact sequences (distributed among exact sequences and relative exact sequences).  In particular, we can prove in such a $\Gpdp$-category that every relative exact sequence factors into extensions (Proposition \ref{suitrelexdecompenext}); the relative exact sequences in a Puppe-exact $\Gpdp$-category are thus exactly the sequences we get by combining extensions.

Moreover, in a Puppe-exact $\Gpdp$-category, we can prove that the two dual constructions of the \emph{homology} of a sequence (with compatible 2-arrows or not) at a point are equivalent (Proposition \ref{homology}).  This allows to study the (relative) exactness of a sequence by computing the homology objects of this sequence.

Besides, we can define a property of arrows of a $\Gpdp$-category which gives back in the 1-dimensional cases the monomorphisms and, in 2-abelian $\Gpd$-categories, the faithful arrows.  A \emph{monomorphism} is an arrow $f$ which is fully faithful for the 2-arrows compatible with the canonical 2-arrow $\zeta_f$ of its cokernel (Definition \ref{dfmonomorphs}).  In a Puppe-exact $\Gpdp$-category, we can prove that every monomorphism is the kernel of its cokernel and, dually, every epimorphism is the cokernel of its kernel.

We say that an additive $\Gpd$-category is \emph{abelian} if every monomorphism is the kernel of its cokernel, every epimorphism is the cokernel of its kernel, and if monomorphisms and fully faithful arrows are stable under pushouts and if epimorphisms and fully cofaithful arrows are stable under pullbacks (Proposition \ref{dfgpdab}).  Abelian $\Gpd$-categories are Puppe-exact.  The usual proofs that Puppe-exactness plus the existence of finite products and coproducts imply additivity and regularity do not work; the question remains open.  That is why we impose here additivity in the definition of abelian $\Gpd$-category; in the main text, the question is left undecided, and additivity is not incorporated into the definition, but it should be there eventually when the open questions will be answered.

We can generalise to abelian $\Gpd$-categories the known diagram lemmas of abelian categories, by replacing “exact sequence” by “exact sequence” or “relative exact sequence”, as the case may be.  In this way we prove the $3\times 3$ lemma (Proposition \ref{lemmtrxtr}), the short five lemma (Proposition \ref{petitlemmcinq}), the kernels lemma (Proposition \ref{lemmnoygen}), the snake lemma (which has here a long form, called “anaconda lemma”: Proposition \ref{lemmanacond}).  This allows to prove the following theorem (see Theorem \ref{lngsequencexhom} for a precise formulation) showing that an abelian $\Gpd$-category is an adequate context for homology.  It has been proved for symmetric 2-groups by Aurora Del Río, Juan Martínez-Moreno and Enrico Vitale \cite{Rio2005a}.
\begin{thmintro}[C]
	In an abelian $\Gpd$-category, every extension of chain complexes induces
	a long exact sequence of homology.
\end{thmintro}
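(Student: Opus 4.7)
The plan is to mirror the classical derivation of the long exact sequence in homology, with two dimension-2 adjustments forced by the splitting of ``exact sequence'' into \emph{exact} and \emph{relative exact}. An extension of chain complexes is a degreewise extension $0\to A_\bullet\to B_\bullet\to C_\bullet\to 0$ together with coherent 2-cells encoding that the differentials commute with the extension maps; each row is then a short relative exact sequence, while the target long sequence of homology objects must be \emph{exact} in the sense of Proposition~\ref{defsequencex} (with alternating orientations of the successive 2-cells, as in the Puppe exact sequence). In a Puppe-exact $\Gpd$-category I may freely use either of the two equivalent descriptions of $H_n$ provided by Proposition~\ref{homology} — cokernel of a relative kernel, or kernel of a relative cokernel — so that the induced arrows $H_n(a_\bullet)\colon H_n(A_\bullet)\to H_n(B_\bullet)$ and $H_n(b_\bullet)\colon H_n(B_\bullet)\to H_n(C_\bullet)$ appear without additional effort.

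The construction of the connecting morphism $\partial\colon H_n(C_\bullet)\to H_{n-1}(A_\bullet)$ is the snake step. I would feed the three consecutive rows of the extension, pasted with the differentials of $B_\bullet$ and $C_\bullet$, into the anaconda lemma (Proposition~\ref{lemmanacond}); its output produces both $\partial$ itself and the coherent 2-cells witnessing the exactness around $\partial$. The classical ``lift, differentiate, lift back'' diagram chase is available here only as a blueprint: every lift has to be realised by a pullback against the fully cofaithful arrow $b_n\colon B_n\to C_n$ (regularity, Proposition~\ref{gpdcatdabreg}), and every equation has to be promoted to a canonical 2-cell supplied by the universal property of the relative kernel at $d^A_{n-1}$.

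Once $\partial$ is built, exactness at each position is checked by the standard diagram-chase patterns translated into universal-property arguments. At $H_n(B_\bullet)$ I would apply the kernels lemma (Proposition~\ref{lemmnoygen}) to the degree-$n$ extension combined with the relative exactness of the rows. Exactness at $H_n(C_\bullet)$ and at $H_{n-1}(A_\bullet)$ falls out of the same anaconda diagram that produced $\partial$: its conclusion asserts precisely these two exactness statements, once the 2-cells produced by the lemma are verified to paste to the canonical ones attached to the homology objects.

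The main obstacle, I expect, is not the algebraic content but the 2-categorical bookkeeping. Every arrow in sight carries coherence 2-cells, many intermediate objects are defined only up to equivalence, and the pivotal claim that the successive composites $H_n(b_\bullet)\circ H_n(a_\bullet)$, $\partial\circ H_n(b_\bullet)$ and $H_{n-1}(a_\bullet)\circ\partial$ come equipped with \emph{canonical} 2-cells to $0$ — which is what promotes the result to an exact sequence in the sense of Proposition~\ref{defsequencex}, and not merely a sequence of kernel/image equivalences — is where the argument has to be written most carefully. The whole proof is enabled by the abelian $\Gpd$-category axioms: monomorphisms are kernels of their cokernels, epimorphisms are cokernels of their kernels, and fully (co)faithful arrows are stable under (co)pullback, which together make each intermediate construction well-defined up to a canonical equivalence and each diagram lemma applicable as it is.
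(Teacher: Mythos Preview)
Your overall strategy — reduce to a snake-type lemma — is the right one, but the specific set-up you describe has a gap. Feeding ``three consecutive rows of the extension'' into the anaconda lemma, with the differentials as vertical arrows, produces an exact sequence whose terms are $\Pip d_n$, $\Ker d_n$, $\Coker d_n$, $\Copip d_n$ for the three complexes. These are \emph{not} the homology objects: $H_n(A_\bullet)$ is neither $\Ker d^A_n$ nor $\Coker d^A_{n+1}$ but a further quotient/subobject of these, built using the \emph{relative} kernel and cokernel (Proposition~\ref{homology}). You would still need to pass from the anaconda output to the homology sequence, and that passage is precisely the hard part; your proposal does not explain how to make it. The separate invocation of the kernels lemma for exactness at $H_n(B_\bullet)$ suffers from the same problem: it deals with ordinary kernels, not with the relative constructions that define $H_n$. (A smaller slip: $b_n$ is normal cofaithful, not fully cofaithful, and the regularity you cite is Proposition~\ref{gpdcatdabreg}, which is for 2-abelian, not abelian, $\Gpd$-categories; in the abelian setting the needed stability is an axiom, Definition~\ref{dfgpdab}.)

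The paper's trick is to arrange the snake diagram so that the homology objects appear \emph{directly} as kernels and cokernels of the vertical arrows. For each complex one has a comparison arrow $h_n\colon Q_n\to K_n$ (between the relative cokernel and the relative kernel at degree $n$), and by construction $\Ker h_n\simeq H_n$ and $\Coker h_n\simeq H_{n+1}$. One first applies the relative kernels lemma (Corollary~\ref{lemmnoyrel}) and its dual to show that the rows $Q_n(A_\bullet)\to Q_n(B_\bullet)\to Q_n(C_\bullet)$ and $K_n(A_\bullet)\to K_n(B_\bullet)\to K_n(C_\bullet)$ inherit the required kernel/cokernel structure from the original extension. Then the \emph{generalised} snake lemma (Proposition~\ref{lemmserpgen}) — the version whose rows need only satisfy $(g,\eta)=\Coker f$ and $(f',\eta')=\Ker g'$, not be full extensions — applied to the square with vertical arrows $h_n$ yields the long exact sequence in one stroke, connecting morphism and all. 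This is both shorter and avoids the bridging problem entirely.
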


As for Puppe-exact $\Gpdp$-categories, abelian $\Gpd$-categories generalise ordinary abelian categories: a $\Ens$-category is abelian in the 2-dimensional sense if and only if it is abelian in the 1-dimensional sense.  This fact, together with the following theorem (Corollary \ref{twoabimplab}) shows that the notion of abelian $\Gpd$-category is a generalisation both of the notion of ordinary abelian category and of the notion of 2-abelian $\Gpd$-category (let us recall that these notions are disjoint).  The question of the existence of abelian $\Gpd$-categories which are neither $\Ens$-categories nor 2-abelian remains open.

\begin{thmintro}[D]
	Every 2-abelian $\Gpd$-category is abelian.
\end{thmintro}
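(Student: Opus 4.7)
The plan is to verify the four clauses in the definition of abelian $\Gpd$-category (Proposition \ref{dfgpdab}) one by one, using the characterisations of arrows in a 2-abelian $\Gpd$-category already advertised in the introduction. Additivity is already granted: by Theorem A (Corollary \ref{2abPexadd}), a 2-abelian $\Gpd$-category is additive, so we may concentrate on the condition that every monomorphism be the kernel of its cokernel, the dual statement for epimorphisms, and the two stability clauses.

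The key step is to show that, in a 2-abelian $\Gpd$-category $\C$, the monomorphisms of Definition \ref{dfmonomorphs} are exactly the faithful arrows. One direction is straightforward: a faithful arrow $f$ makes each functor $f\circ-$ faithful on \emph{all} 2-arrows, hence in particular on those compatible with $\zeta_f$, and one checks fullness on this restricted class by factoring through $\Ker f$ and using that $\Ker f\simeq 0$ when $f$ is faithful (recall that in a 2-abelian $\Gpd$-category an arrow is faithful iff its pip is zero, and more strongly the kernel-based characterisations listed in the introduction apply). For the converse, given a monomorphism $f$, I would take its canonical coroot-of-pip/kernel-of-cokernel factorisation $f \simeq m_f^2\,e_f^1$ (diagram \ref{factcoracpepkercokerpourintro}), where $e_f^1$ is fully cofaithful and $m_f^2$ is faithful, and show that the fully-faithful-on-compatible-2-arrows hypothesis forces $\Pip f\simeq 0$, so $e_f^1$ is an equivalence and $f$ itself is faithful. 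Once this identification is in place, the first clause follows because every faithful arrow in $\C$ is the kernel of its cokernel; the dual argument, replacing pip by copip, yields the epimorphism clause.

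For the two stability clauses I would invoke the regularity property of 2-abelian $\Gpd$-categories (Proposition \ref{gpdcatdabreg}): cofaithful and fully cofaithful arrows are stable under pullback, and dually faithful and fully faithful arrows are stable under pushout. Combined with the identifications monomorphism = faithful and epimorphism = cofaithful established in the previous step, this immediately gives stability of monomorphisms and fully faithful arrows under pushouts, and of epimorphisms and fully cofaithful arrows under pullbacks. Together with additivity and the two factorisation clauses, this exhibits $\C$ as an abelian $\Gpd$-category.

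The main obstacle I anticipate is precisely the identification of monomorphisms with faithful arrows. The subtlety is that Definition \ref{dfmonomorphs} only asks for fullness and faithfulness on 2-arrows compatible with $\zeta_f$, which a priori is a strictly weaker condition than faithfulness of $f\circ -$ on all 2-arrows; the proof that the 2-abelian factorisation collapses this distinction requires a careful tracking of how a general 2-arrow into the source can be lifted against $\zeta_f$, and is the place where the specific structure of 2-abelian $\Gpd$-categories — rather than the more general Puppe-exact setting — is used in an essential way.
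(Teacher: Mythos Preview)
Your overall strategy matches the paper's: identify the relevant monomorphism-type arrows with the faithful arrows (hence with the normal faithful arrows, by 2-Puppe-exactness), do the dual, and then invoke the regularity result (Proposition \ref{gpdcatdabreg}) for the stability clauses. However, your execution of the key identification contains both an error and an unnecessary detour.

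The error is the claim that $\Ker f\simeq 0$ when $f$ is faithful. This is false: $\Ker f\simeq 0$ characterises \emph{fully} faithful arrows, not faithful ones (your own parenthetical correctly recalls that faithful means $\Pip f\simeq 0$, i.e.\ $\Ker f$ is discrete, which is a weaker condition). So your proposed argument for ``faithful $\Rightarrow$ monomorphism'' via factoring through a trivial kernel does not go through. The correct argument is simpler and does not need any elementwise lifting: in a 2-abelian $\Gpd$-category every faithful arrow is canonically the kernel of its cokernel, i.e.\ $(f,\zeta_f)=\Ker q_f$, and the second clause in the definition of kernel says precisely that $f$ is $\zeta_f$-fully faithful, which is the definition of monomorphism.

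The detour is your treatment of the converse. You propose to show ``monomorphism $\Rightarrow$ faithful'' by examining the coroot-of-pip factorisation and forcing $\Pip f\simeq 0$, and you flag this as the main obstacle. But Proposition \ref{implicrelplfid} already gives monomorphism $\Rightarrow$ faithful in \emph{any} $\Gpdp$-category, with no 2-abelian hypothesis needed; the implication you worry about is the trivial direction. The paper's proof (Corollary \ref{twoabimplab}) is accordingly very short: it works directly with $0$-monomorphisms (which is what Definition \ref{dfgpdab} actually uses), observes that $0$-monomorphism $\Rightarrow$ $0$-faithful $\Leftrightarrow$ faithful $\Rightarrow$ normal faithful, and then cites Proposition \ref{gpdcatdabreg} for the stability conditions.
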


The following diagram represents the known implications between the different notions explained in this introduction.  Remark: additivity is included in the definition of abelian $\Gpd$-category, as it was in this introduction and unlike what is done in the main text.  Moreover, in this diagram appears the notion of Grandis homological $\Gpdp$-category, which did not appear in the introduction, but is defined in the main text (Definition \ref{defhomogran}).

\begin{xym}\xymatrix@!@=-40pt{
	&&{\txt{good 2-abelian\\ $\Gpd$-cat.}}\ar@{=>}[d]
	\\ {\txt{abelian\\ $\Ens$-cat.}}\ar@{=>}[dr]
	&&{\txt{2-abelian\\ $\Gpd$-cat.}}\ar@{=>}[dl]\ar@{=>}[dr]
	\\ &{\txt{abelian\\ $\Gpd$-cat.}}\ar@{=>}[dl]\ar@{=>}[dr]
	&&{\txt{2-Puppe-exact\\ $\Gpdp$-cat.}}
	\\ {\txt{additive\\ $\Gpd$-cat.}}\ar@{=>}[d]
	&&{\txt{Puppe-exact\\ $\Gpdp$-cat.}}\ar@{=>}[d]
	\\ {\txt{preadditive\\ $\Gpd$-cat.}}&&{\txt{homological\\ $\Gpdp$-cat.}}
}\end{xym}

\section*{Open questions}
\addcontentsline{toc}{section}{Open questions}
\markright{Open questions}

Here are a few open questions.

\begin{enumerate}
	\item A first problem is to precise the relations between the different given
		definitions. For example, are there examples of non-good 2-abelian $\Gpd$-categories?
		Can we deduce additivity and regularity from the definition of Puppe-exact
		$\Gpd$-category and the existence of finite products and coproducts?
		What is the link between the condition of Remark \ref{remcondplusfortqueppex}
		and the notion of (good) 2-Puppe-exact $\Gpd$-category?
	\item Another question concerns Tierney theorem, according to which a category
		is abelian if and only if it is Barr exact and additive.  Can we prove an analogue
		of Tierney theorem for 2-abelian $\Gpd$-categories?  The last chapter
		gives the first steps towards an answer.
	\item A third problem is the search of additional examples.  Two kinds of
		$\Gpd$-categories could give new examples: on the one hand, stacks of symmetric
		2-groups on a (2-)site (or Picard stacks) and, on the other hand, internal
		symmetric 2-groups in a $\Gpd$-category \cite{Garzon2002a}.
		We can also mention the $\Gpd$-category of internal groupoids, internal
		anafunctors and internal ananatural transformations in an abelian category
		(see the end of Subsection \ref{secobjdiscpres}).
	\item A fourth question is: can we define (2-)-semi-abelian $\Gpd$-categories,
		which would be to (not symmetric in general) 2-groups what semi-abelian categories
		are to groups?  We can also ask a similar question with braided 2-groups.
	\item A last question concerns the subobjects of an object of a 2-abelian
		$\Gpd$-category. We know that the subobjects of an object of an abelian category
		form a modular lattice.  This is already the case for a Puppe-exact $\Ensp$-category
		\cite{Grandis1992a}.  In a 2-abelian $\Gpd$-category $\C$, we call subobjects of
		an object $A$ the faithful arrows with codomain $A$.  They form a full
		sub-$\Gpd$-category of $\C/A$, denoted by $\caspar{Sub}(A)$,
		which is in fact a category, because faithfulness implies the unicity of 2-arrows.
		In the special case where $A$ is a discrete object, $\caspar{Sub}(A)$
		is nothing else than the category $\DisC/A$ (where $\DisC$ is the abelian
		category of discrete objects in $\C$).
		
		But Aurelio Carboni \cite{Carboni1989a} introduced the notion of \emph{modular
		category}, which is a category with finite limits satisfying two conditions, one
		of them being a categorified version of the modularity law for lattices.  And he
		proves that, for every additive category $\C$ with kernel, the categories
		$\C/A$, for $A\col\C$, are modular.  Therefore, if $\C$ is a 2-abelian
		$\Gpd$-category and $A\col\DisC$, the category of subobjects of $A$ is modular.
		The question is thus: is $\caspar{Sub}(A)$ modular for any $A\col\C$?
\end{enumerate}

\section*{Warning}
\addcontentsline{toc}{section}{Warning}
\markright{Warning}

Two remarks about notations and terminology.

I use here the symbol “$\col$” to introduce variables: “let be $x\col A$” means “let $x$ be an object of $A$”, where $A$ is a set, a category, a groupoid, a 2-category, etc.; I use it also in quantifications: “for all $x\col A$” or “there exists $x\col A$”.  I reserve the use of the symbol “$\in$” to express the belonging relation between objects of a set and subsets of this set or between objects of a (2-)category and full sub(-2-)categories of this (2-)category.  This last relation is defined here in the following way: an object $C$ of a category $\C$ \emph{belongs to} a full subcategory $i\col\D\rightarrowtail\C$ (i.e.\ a fully faithful functor $i$) if there exist an object $D\col\D$ and an isomorphism $iD\simeq C$.  Therefore every full subcategory is “replete” by definition of the belonging relation.

The 2-categorical notions will be by default the weakest possible ones (i.e.\ the notions using equality only at the level of 2-arrows).  But we will use, when it is useful, strictified descriptions of these notions.  There is a consequence regarding terminology: I won't burden myself with the prefixes “pseudo-” or “bi-”.  I use the same name for the 2-dimensional notions as for the corresponding 1-dimensional notion when the 2-dimensional one reduces to the 1-dimensional one in a category seen as a locally discrete 2-category (in this case the 2-dimensional notion is a generalisation of the 1-dimensional), and I add the prefix “2-” when the 2-dimensional notion is only an analogue of the 1-dimensional one, but not a generalisation: for example, the kernel of an arrow in a category seen as a $\Gpd$-category coincides with the ordinary kernel, so I don't use the prefix “2-”; on the other hand, the $\Ens$-categories which are 2-abelian when seen as locally discrete $\Gpd$-categories are not the ordinary abelian $\Ens$-categories, so in this case I use the prefix “2-”.

\chapter{Kernel-quotient systems}

\begin{quote}{\it
	In this chapter, after basic definitions about groupoid enriched categories, the notions of
	factorisation system and kernel-quotient system are introduced.  They provide a general framework for the following chapters.  We study exactness conditions on a category equipped with a kernel-quotient system. 
}\end{quote}

\section{Factorisation systems}

\subsection{Groupoids and groupoid enriched categories}

	The 2-order of groupoids, functors and natural transformations will be denoted by $\Gpd$\index{Gpd@$\Gpd$}.  Since each arrow in a groupoid is invertible, the 2-cells in $\Gpd$ are all invertible.  Thus, for any two groupoids $\A$ and $\B$, $\Gpd(\A,\B)$ is a groupoid.  This turns $\Gpd$ into a groupoid enriched category in the sense of the following definition, i.e.\ a 2-category whose 2-arrows are invertible.  By definition, associativity holds only up to isomorphism (it's a weak 2-category or bicategory), but we will usually apply the coherence result according to which we can always assume that the associativity and neutrality natural transformations are identities. 
See \cite{Benabou1967a}, \cite{Borceux1994b},
	\cite{Kelly1974a} for more details about 2-categories.
	
	\begin{df}\label{defgpdcat}\index{Gpd-category@$\Gpd$-category}
		A \emph{groupoid enriched category} (or, for short, a \emph{$\Gpd$-category}) $\C$ is 		a collection of objects equipped with:
		\begin{enumerate}
			\item for all $A,B\col\C$, a groupoid $\C(A,B)$ (an object $f$ of this
groupoid
				will be written $f\col A\ra B$, whereas an arrow $\alpha$
				from $f$ to $g$ will be written $\alpha\col f\Ra g$);
			\item for all $A\col\C$, a functor $1\ra\C(A,A)$, i.e.
				an identity arrow $1_A\col A\ra A$;
			\item for all $A,B,C\col\C$, a functor
				$\mathrm{comp}\col \C(A,B)\times\C(B,C)\ra\C(A,C)$, which associates to each
				$A\overset{f}\ra B\overset{g}\ra C$ an arrow $g\circ f\col A\ra C$,
				and to $\alpha\col f\Ra f'$ and $\beta\col g\Ra g'$ a 2-arrow
				$\beta * \alpha \col gf\Ra g'f'$;
			\item for all $A,B,C,D\col\C$, a natural transformation
				\begin{xym}\label{diagassocgpdcat}\xymatrix@=30pt{
					{\C(A,B)\times\C(B,C)\times\C(C,D)}
						\drtwocell\omit\omit{\alpha}
						\ar[d]_{\mathrm{comp}\times 1}\ar[r]^-{1\times\mathrm{comp}\;}
					&{\C(A,B)\times\C(B,D)}\ar[d]^{\mathrm{comp}}
					\\ {\C(A,C)\times\C(C,D)}\ar[r]_-{\mathrm{comp}}
					& {\C(A,D)}
				}\end{xym}
				(thus we have, for $A\overset{f}\ra B\overset{g}\ra C\overset{h}\ra D$,
				a 2-arrow $\alpha_{hgf}\col(h\circ g)\circ f\Ra h\circ (g\circ f)$,
				natural in $f,g,h$);
			\item for all $A,B\col\C$, natural transformations:
					\begin{xym}\label{diagneutrdroitgpdcat}\xymatrix@=30pt{
						{1\times\C(A,B)}\ar[r]^-{\mathrm{id}\times 1}
							\ar@{=}[dr]\drtwocell\omit\omit{_<-2.5>{\rho}}
						&{\C(A,A)\times\C(A,B)}\ar[d]^{\mathrm{comp}}
						\\ &{\C(A,B)}
					}\end{xym}
					\begin{xym}\label{diagneutrgauchgpdcat}\xymatrix@=30pt{
						{\C(A,B)\times\C(B,B)}\ar[d]_{\mathrm{comp}}
						&{\C(A,B)\times 1}\ar[l]_-{1\times\mathrm{id}}
							\ar@{=}[dl]\dltwocell\omit\omit{^<2.5>{\lambda}}
						\\ {\C(A,B)}
					}\end{xym}
					(thus we have, for $A\overset{f}\ra B$, two 2-arrows
					$\rho_f\col f\circ 1_A\Ra f$ and $\lambda_f\col 1_B\circ f\Ra f$,
					natural in $f$).
		\end{enumerate}
		These data must make commute the following diagrams:
		\begin{xym}\label{axbicatassoc}\xymatrix@C=15pt@R=60pt{
			&(k\circ h)\circ(g\circ f)\ar[dr]^-{\alpha_{k,h,g\circ f}}
			\\ ((k\circ h)\circ g)\circ f\ar[ur]^-{\alpha_{k\circ h,g,f}}
			&& k\circ (h\circ(g\circ f))
			\\ {}\save[]+<1.5cm,0cm>*{(k\circ (h\circ g))\circ f\,}="a"
				\ar@{<-}[u]^-{\alpha_{khg} f}\restore
			&&{}\save[]+<-1.5cm,0cm>*{\,k\circ ((h\circ g)\circ f)}="b"
				\ar[u]_-{k \alpha_{hgf}}\restore
				\ar"a";"b"_-{\alpha_{k,h\circ g,f}}
		}\end{xym}
		\begin{xym}\label{axbicatunit}\xymatrix{
			(g\circ 1_B)\circ f\ar[rr]^{a_{g1_Bf}}\ar[dr]_{\rho_g f}
			&& g\circ(1_B\circ f)\ar[dl]^{g\lambda_f}
			\\ &g\circ f
		}\end{xym}
	\end{df}
	
	If we have $\A\overset{F}\ra\B\overset{G}\ra\cat{C}
	\overset{H}\ra\cat{D}$ in the $\Gpd$-category $\Gpd$ of groupoids,
	what are the functors $(H\circ G)\circ F$ and 
	$H\circ (G\circ F)$ ?  On the one hand, for each $A\col\A$, we have
	\begin{eqn}
		((H\circ G)\circ F) (A)\equiv H(G(F(A)))\equiv (H\circ (G\circ F))(A);
	\end{eqn}
	on the other hand, for each $a\col A_0\ra A_1$ in $\A$, we have
	\begin{eqn}
		((H\circ G)\circ F) (a)\equiv H(G(F(a)))\equiv (H\circ (G\circ F))(a).
	\end{eqn}
	Thus we have $(H\circ G)\circ F\equiv H\circ (G\circ F)$, and we take for $\alpha$
	the identity.  In the same way, we take for $\rho$ and $\lambda$
	the identity.
	
	The $\Gpd$-category $\Gpd$ is thus
	\emph{strictly described}\index{Gpd-category@$\Gpd$-category!strictly described},
	in the sense that
	we have $(k\circ h)\circ g\equiv k\circ (h\circ g)$, $f\circ 1_A\equiv f$ and
	$1_B\circ f\equiv f$, and that we have taken for $\alpha$, $\rho$ and $\lambda$
	the identity.
	
	\begin{df}\label{dfGpdfct}\index{Gpd-functor@$\Gpd$-functor}
		Let $\C$, $\D$ be two $\Gpd$-categories.
		A \emph{$\Gpd$-functor}\footnote{or pseudo-functor, or 
		homomorphism of bicategories} $F$ from $\C$ to $\D$ consists of the following data:
		\begin{enumerate}
			\item for all $A\col\C$, an object $FA\col\D$,
			\item for all $A,B\col\C$, a functor $F\col\C(A,B)\ra\D(FA,FB)$,
			\item for all $A\overset{f}\ra B\overset{g}\ra C$, a natural transformation
				$\varphi^F_{g,f}\col Fg\circ Ff\Ra F(g\circ f)$
				(we omit the superscript and subscripts
				when they can be implied from the context),
			\item for all $A\col\C$, a 2-arrow $\varphi_A\col 1_{FA}\Ra F1_A$,
		\end{enumerate}
		such that the following diagrams commute.
		\begin{xym}\label{axgpdfoncass}\xymatrix@=50pt{
			{}\save[]+<1.5cm,0cm>*{(Fh\circ Fg)\circ Ff\,}="m"
				\ar[d]_-{\varphi (Ff)}\restore
			&&{}\save[]+<-1.5cm,0cm>*{\,Fh\circ(Fg\circ Ff)}="n"
				\ar[d]^-{(Fh)\varphi}\restore
				\ar "m";"n" ^-{\alpha}
			\\ F(h\circ g)\circ Ff
			&& Fh\circ F(g\circ f)
			\\ {}\save[]+<1.5cm,0cm>*{F((h\circ g)\circ f)\,}="a"
				\ar@{<-}[u]^-{\varphi}\restore
			&&{}\save[]+<-1.5cm,0cm>*{\,F(h\circ (g\circ f))}="b"
				\ar@{<-}[u]_-{\varphi}\restore
				\ar "a";"b"_-{F\alpha}
		}\end{xym}
		\begin{xym}\label{axgpdfoncunite}\xymatrix@=40pt{
			Ff\circ 1_{FA}\ar[r]^-{\rho_{FA}}\ar[d]_{(Ff)\varphi_A}
			&Ff
			\\ Ff\circ F1_A\ar[r]_-{\varphi_{f,1_A}}
			&F(f\circ 1_A)\ar[u]_{F\rho_f}
		}\;\;\;
		\xymatrix@=40pt{
			1_{FB}\circ Ff\ar[r]^-{\lambda_{Ff}}\ar[d]_{\varphi_B (Ff)}
			&Ff
			\\ F1_B\circ Ff\ar[r]_-{\varphi_{1_B,f}}
			&F(1_B\circ f)\ar[u]_{F\lambda_f}
		}\end{xym}
	\end{df}
	
	\begin{df}\index{Gpd-natural transformation@$\Gpd$-natural transformation}
		Let $F,G\col\C\ra\D$ be two $\Gpd$-functors between $\Gpd$-categories.
		A \emph{$\Gpd$-natural transformation} $\mu$ from $F$ to $G$ consists of
		the following data:
		\begin{enumerate}
			\item for all $A\col\C$, an arrow $\mu_A\col FA\ra GA$ in $\D$;
			\item for all $A\overset{f}\ra B$ in $\C$, a 2-arrow
				$\mu_f\col\mu_B\circ Ff\Ra Gf\circ\mu_A$, natural in $f$,
		\end{enumerate}
		such that the following equations hold.
		\begin{xyml}\begin{gathered}\xymatrix@=40pt{
			FA\drtwocell\omit\omit{_{\;\;\,\mu_f}}\ar[r]^-{Ff}\ar[d]_{\mu_A}
			&FB\ar[r]^-{Fg}\ar[d]^{\mu_B}\drtwocell\omit\omit{\;\;\,\mu_g}
			&FC\ar[d]^{\mu_C}
			\\ GA\ar[r]_-{Gf}\rrlowertwocell<-9>_{G(gf)}{_<2.7>{\;\;\;\;\,\varphi^G_{g,f}}}
			&GB\ar[r]_{Gg}
			&GC
		}\end{gathered}\;\;=\;\;\begin{gathered}\xymatrix@=40pt{
			FA\ar[r]^-{Ff}\ar[d]_{\mu_A}
				\rrlowertwocell<-9>_{F(gf)}{_<2.7>{\;\;\;\;\,\varphi^F_{g,f}}}
			&FB\ar[r]^-{Fg}
			&FC\ar[d]^{\mu_C}
			\\ GA\rrlowertwocell<-9>_{G(gf)}{_<-0.8>{\;\;\;\,\mu_{gf}}}
			&&GC
		}\end{gathered}\end{xyml}
		\begin{xyml}\begin{gathered}\xymatrix@R=40pt@C=60pt{
			FA\ar@{=}[r]\ar[d]_{\mu_A}
			&FA\ar[d]^{\mu_A}
			\\ GA\ar@{=}[r]\rlowertwocell<-9>_{G1_A}{_<2.3>{\;\;\;\varphi^G_A}}
			&GA
		}\end{gathered}\;\;=\;\;\begin{gathered}\xymatrix@R=40pt@C=60pt{
			FA\ar@{=}[r]\ar[d]_{\mu_A}\rlowertwocell<-9>_{F1_A}{_<2.3>{\;\;\;\varphi^F_A}}
			&FA\ar[d]^{\mu_A}
			\\ GA\rlowertwocell<-9>_{G1_A}{_<-0.8>{\;\;\;\;\mu_{1_A}}}
			&GA
		}\end{gathered}\end{xyml}
	\end{df}
	
	\begin{df}\index{Gpd-modification@$\Gpd$-modification}\index{modification}
		Let $\mu,\nu\col F\Ra G\col\C\ra\D$ be two $\Gpd$-natural transformations.
		A \emph{$\Gpd$-modification} $\aleph\col\mu\Rrightarrow\nu$ associates to
		each $A\col\C$, a 2-arrow $\aleph_A\col\mu_A\Ra\nu_A$ such that, for all
		$A\overset{f}\ra B$, the following equation holds.
		\begin{xyml}\begin{gathered}\xymatrix@=40pt{
			FA\ar[r]^{Ff}\dtwocell_{\nu_A\;\;}^{\;\;\;\mu_A}{\aleph_A}
				\drtwocell\omit\omit{\;\;\;\mu_f}
			&FB\ar[d]^{\mu_B}
			\\ GA\ar[r]_{Gf}
			&GB
		}\end{gathered}\;\;=\;\;\begin{gathered}\xymatrix@=40pt{
			FA\ar[r]^{Ff}
				\drtwocell\omit\omit{\nu_f\;\;\;\;\;\;\;\;\;\;\;\;}\ar[d]_{\nu_A}
			&FB\dtwocell_{\nu_B\;\;}^{\;\;\;\mu_B}{\aleph_B}
			\\ GA\ar[r]_{Gf}
			&GB
		}\end{gathered}\end{xyml}
	\end{df}
	
	\begin{pon}
		Let $\C$, $\D$ be two $\Gpd$-categories.  The $\Gpd$-functors from $\C$ to $\D$,
		$\Gpd$-natural transformations between them and $\Gpd$-modifications
		between them form a $\Gpd$-category $[\C,\D]$, which is strictly described
		if $\D$ is.
	\end{pon}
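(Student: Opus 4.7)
The plan is to explicitly assemble the data of a $\Gpd$-category on $[\C,\D]$ and then verify the axioms, with the observation that every piece of structure we must introduce will be canonically built out of the corresponding structure of $\D$; strict description of $\D$ will then immediately give strict description of $[\C,\D]$. For readability I would break the construction into four steps.

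First I would construct, for any two $\Gpd$-functors $F,G\col\C\ra\D$, the hom-groupoid $[\C,\D](F,G)$. Its objects are $\Gpd$-natural transformations $\mu\col F\Ra G$ and its arrows are $\Gpd$-modifications. Composition of modifications is defined componentwise by vertical composition of 2-arrows in $\D$, the identity on $\mu$ has components $1_{\mu_A}$, and the modification axiom is preserved by these operations by a trivial pasting argument. The fact that this is a \emph{groupoid}, not just a category, follows because every component $\aleph_A$ is a 2-arrow in $\D$ and hence invertible; a direct diagram chase shows that $(\aleph^{-1}_A)_{A\col\C}$ still satisfies the modification axiom (invert both sides of the square in the definition of modification).

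Second I would define identities and composition. The identity $1_F\col F\Ra F$ has components $(1_F)_A=1_{FA}$ and its naturality 2-arrow $(1_F)_f\col 1_{FB}\circ Ff\Ra Ff\circ 1_{FA}$ is the canonical $\rho_{Ff}^{-1}\cdot\lambda_{Ff}$; the two axioms for a $\Gpd$-natural transformation then reduce to the coherence axioms \ref{axbicatassoc} and \ref{axbicatunit} in $\D$ together with the unit axiom \ref{axgpdfoncunite} for $F$. For the composition functor $\mathrm{comp}\col[\C,\D](F,G)\times[\C,\D](G,H)\ra[\C,\D](F,H)$ I would set $(\nu\circ\mu)_A=\nu_A\circ\mu_A$ and define $(\nu\circ\mu)_f$ by pasting $\mu_f$, $\nu_f$, and an associator of $\D$; composition of modifications is componentwise horizontal composition of 2-arrows. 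The naturality axiom for $\nu\circ\mu$, and the functoriality in modifications, again follow by repeated use of the interchange law and associator coherence in $\D$.

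Third, associativity and unit constraints for composition in $[\C,\D]$ are defined componentwise from the $\alpha$, $\rho$, $\lambda$ of $\D$: $\alpha^{[\C,\D]}_{\xi,\nu,\mu}$ has $A$-component $\alpha_{\xi_A,\nu_A,\mu_A}$, and similarly for $\rho$ and $\lambda$. The verification that these are $\Gpd$-modifications is a pasting diagram whose commutativity follows from naturality of $\alpha$, $\rho$, $\lambda$ in $\D$. The coherence axioms \ref{axbicatassoc}--\ref{axbicatunit} for $[\C,\D]$ then hold componentwise from those of $\D$. In particular, if $\D$ is strictly described, then at each object $A$ the 2-arrows $\alpha_{\xi_A,\nu_A,\mu_A}$, $\rho_{\mu_A}$, $\lambda_{\mu_A}$ are identities, the underlying equalities $(\xi\circ\nu)\circ\mu\equiv\xi\circ(\nu\circ\mu)$ etc.\ hold on the nose, and so $[\C,\D]$ is strictly described.

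The main obstacle is not conceptual but combinatorial: one has to exhibit a fairly large number of commutative pasting diagrams to check that each formula lands in the correct groupoid and that the axioms of a $\Gpd$-category are satisfied. All of them, however, reduce either to interchange in $\D$, to naturality of the constraints of $\D$, or to the coherence axioms for the individual $\Gpd$-functors $F$, $G$, $H$. No genuinely new coherence appears beyond those inherited from $\D$, which is precisely why strict description of $\D$ transfers to strict description of $[\C,\D]$.
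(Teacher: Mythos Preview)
Your proposal is correct and follows exactly the standard componentwise construction one expects; the paper in fact states this proposition without proof, treating it as routine. Your sketch fills in precisely the verification the author chose to omit, and the key observation you make---that every constraint of $[\C,\D]$ is defined componentwise from the corresponding constraint of $\D$, so strictness transfers---is the essential point.
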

	
	For each object $A$ in a $\Gpd$-category $\C$,
	there is a \emph{representable $\Gpd$-functor}\index{representable $\Gpd$-functor}%
	\index{Gpd-functor@$\Gpd$-functor!representable} $\C(A,-)\col\C\ra\Gpd$, which maps
	\begin{itemize}
		\item an object $B$ to the groupoid $\C(A,B)$;
		\item an arrow $b\col B\ra B'$ to the functor $\C(A,b)=b\circ -
			\col\C(A,B)\ra\C(A,B')$, which maps $f\col A\ra B$ to
			$bf$ and $\varphi\col f\Ra f'$ to $b\varphi\col bf\Ra bf'$;
		\item a 2-arrow $\beta\col b\Ra b'\col B\ra B'$ to the natural transformation
			$\C(A,\beta)=\beta*-\col b\circ -\Ra b'\circ -\col\C(A,B)\ra\C(A,B')$
			which is defined by $\beta f\col bf\Ra b'f$ at the point $f\col A\ra B$.
	\end{itemize}
	Dually, we define
	a representable $\Gpd$-functor $\C(-,B)\col\C\op\ra\Gpd$ for each object $B$ in $\C$.
	
	\begin{df}
		Let $\C$ be a $\Gpd$-category.  The \emph{Yoneda $\Gpd$-functor}%
		\index{Yoneda $\Gpd$-functor}\index{Gpd-functor@$\Gpd$-functor!Yoneda}
		$Y_{\C}\col\C\ra[\C\op,\Gpd]$ is the $\Gpd$-functor which maps an object $A\col\C$
		to the representable $\Gpd$-functor $\C(-,A)\col\C\op\ra\Gpd$,
		an arrow $a\col A\ra A'$ to the representable $\Gpd$-natural transformation
		$\C(-,a)\col\C(-,A)\ra \C(-,A')$ and a 2-arrow $\alpha\col a\Ra a'$
		to the $\Gpd$-modification $\C(-,\alpha)\col\C(-,a)\Ra\C(-,a')$.
	\end{df}
	
	\begin{pon}
		For any $\Gpd$-category $\C$, $Y_{\C}$ is fully faithful
		(in the sense that the functors $(Y_\C)_{A,A'}$ are equivalences).
	\end{pon}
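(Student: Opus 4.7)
The plan is to show, for each pair of objects $A,A'\col\C$, that the functor
\begin{equation*}
(Y_\C)_{A,A'}\col\C(A,A')\ra[\C\op,\Gpd](\C(-,A),\C(-,A'))
\end{equation*}
is an equivalence of groupoids, i.e.\ essentially surjective on objects and bijective on 2-arrows. A quasi-inverse is given by evaluation at the identity: send a $\Gpd$-natural transformation $\mu$ to $\mu_A(1_A)\col A\ra A'$, and send a $\Gpd$-modification to its component at $A$ evaluated at $1_A$, corrected by the neutrality 2-arrow $\rho$.

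For essential surjectivity, given $\mu\col\C(-,A)\Ra\C(-,A')$, set $a\eqdef\mu_A(1_A)$. For each $X\col\C$ and $f\col X\ra A$, the $\Gpd$-naturality 2-arrow
\begin{equation*}
\mu_f\col\mu_X\circ\C(f,A)\Ra\C(f,A')\circ\mu_A
\end{equation*}
has, at the object $1_A\col A\ra A$, a component $\mu_X(1_A\circ f)\Ra a\circ f$; pre-composing with $\mu_X(\lambda_f)^{-1}$ yields an invertible 2-arrow $\theta_X(f)\col\mu_X(f)\Ra a\circ f=\C(X,a)(f)$. I would then check that this family is natural in $f$, giving a natural transformation $\theta_X\col\mu_X\Ra\C(X,a)$, and that the $\theta_X$ assemble into an invertible $\Gpd$-modification $\theta\col\mu\Rrightarrow\C(-,a)$. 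Both facts follow from the naturality of $\mu_{(-)}$ in its argument and from the coherence axioms for $\Gpd$-natural transformations.

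For the bijection on 2-arrows, fix $a,a'\col A\ra A'$. Injectivity of $\alpha\mapsto\C(-,\alpha)$ is immediate: the component of $\C(-,\alpha)$ at $A$ applied to $1_A$ equals $\alpha\cdot 1_A$, which recovers $\alpha$ via $\rho_a$ and $\rho_{a'}$. For surjectivity, given a $\Gpd$-modification $\aleph\col\C(-,a)\Rrightarrow\C(-,a')$, define $\alpha\col a\Ra a'$ from $\aleph_A(1_A)\col a\circ 1_A\Ra a'\circ 1_A$ by conjugation with $\rho$. The modification axiom applied to $f\col X\ra A$, combined with the naturality of $\aleph_X$ in $f$, then forces $\aleph_X(f)=\alpha f$ modulo the $\lambda$ and $\rho$ 2-arrows, so $\aleph=\C(-,\alpha)$.

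The main obstacle is coherence bookkeeping: verifying that $\theta_X(f)$ is natural in $f$, that $\theta$ satisfies the modification axiom, and that in the surjectivity argument $\aleph$ coincides on the nose with $\C(-,\alpha)$. These are routine diagram chases using the axioms \ref{axbicatassoc} and \ref{axbicatunit} together with the naturality of $\mu_{(-)}$, and they are simplified by the fact that $\Gpd$ is strictly described, so the target representables $\C(-,a)$ involve no non-trivial coherence 2-arrows.
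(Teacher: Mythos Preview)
Your approach is correct and is the standard proof of the bicategorical Yoneda lemma: evaluation at $1_A$ gives a quasi-inverse to $(Y_\C)_{A,A'}$, and the coherence checks you outline go through. Note, however, that the paper does not actually supply a proof of this proposition; it is stated without proof as a well-known fact (the subsequent paragraph cites \cite[Theorem 1.5.15]{Leinster2004a} for the consequence that every $\Gpd$-category is biequivalent to a strict one). So there is no argument in the paper to compare yours against.
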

	
	A well-known consequence (for example, see \cite[Theorem 1.5.15]{Leinster2004a})
	is that any $\Gpd$-category $\C$ is equivalent to its full image
	in $[\C\op,\Gpd]$ which, as we saw before, is strictly described.  Thus we can
	always assume that a $\Gpd$-category is strictly described.
	We will do that in the remaining of this text, except at a few places.
	
	Let us recall the definition of adjunctions (usually called
	biadjunctions \cite{Street1980a}) between $\Gpd$-functors.
	
	\begin{df}\index{adjunction}
		Let $\C$ and $\D$ be $\Gpd$-categories and $F\col\C\ra\D$ and $G\col\D\ra\C$
		be $\Gpd$-functors.  We call $F$ a \emph{left adjoint of $G$}
		and $G$ a \emph{right adjoint of $F$} (we denote this by $F\adj G$)
		if, for all $C\col\C$ and $D\col\D$, there is an equivalence
		\begin{eqn}
			\Phi_{C,D}\col \D(FC,D)\ra\C(C,GD)
		\end{eqn}
		$\Gpd$-natural in $C$ and $D$.
		The $\Gpd$-natural transformation $\eta\col 1_{\C}\Ra GF$
		whose component at an object $C\col\C$ is
		\begin{eqn}
			\eta_C\eqdef \Phi_{C,FC}(1_{FC})
		\end{eqn}
		is called the \emph{unit} of the adjunction. The $\Gpd$-natural transformation
		$\varepsilon\col FG\Ra 1_{\D}$	whose component at an object $D\col\D$ is
		\begin{eqn}
			\varepsilon_D\eqdef \Phi_{GD,D}^{-1}(1_{GD})
		\end{eqn}
		is called the \emph{counit} of the adjunction.
	\end{df}

	For such an adjunction, there exist modifications (the triangle identities)
	\begin{eqn}
		(F\xLongrightarrow{F\eta} FGF\xLongrightarrow{\varepsilon F}F)\;\Rrightarrow\;
		(F\xLongrightarrow{1_F} F)
	\end{eqn}
	and
	\begin{eqn}
		(G\xLongrightarrow{\eta G} GFG\xLongrightarrow{G\varepsilon}G)\;\Rrightarrow\;
		(G\xLongrightarrow{1_G} G).
	\end{eqn}

The following proposition is proved in \cite{Betti1999a} and can be found in \cite{Lambek1986a} (Lemma 4.3 of Part 0) for 1-categories.
The name “idempotent adjunction” has been suggested by Peter Johnstone on the \texttt{categories} list (21 November 2001);
it is justified by the fact that the monad and the comonad induced by the adjunction are idempotent.  The name “exact adjunction” is also used \cite{Grandis2001b}.

\begin{pon}\label{caracdefidemp}
	Let $F\adj G\col\D\ra\C$ be an adjunction of $\Gpd$-functors
	(with counit $\varepsilon$ and unit $\eta$).
 	The following conditions are equivalent. When they hold, we call
	the adjunction $F\adj G$ \emph{idempotent}\index{adjunction!idempotent}%
	\index{idempotent adjunction}.
	\begin{enumerate}
		\item $G\varepsilon $ is an equivalence;
		\item $\varepsilon F$ is an equivalence;
		\item $F\eta$ is an equivalence;
		\item $\eta G$ is an equivalence.
	\end{enumerate}
\end{pon}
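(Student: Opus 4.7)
The plan is to exploit the triangle identities to pair (2) with (3) and (1) with (4), and then to cross-link the two pairs through a short naturality argument together with a bicategorical uniqueness lemma for pseudo-inverses.

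First, the triangle identity $\varepsilon F\cdot F\eta\simeq 1_F$ says that for every $C$, the 1-cell $F\eta_C$ is a right pseudo-inverse of $\varepsilon_{FC}$, while $G\varepsilon\cdot\eta G\simeq 1_G$ says that $\eta_{GD}$ is a right pseudo-inverse of $G\varepsilon_D$. A standard fact about bicategories, which I would record as a preliminary lemma, is that any one-sided pseudo-inverse of an equivalence is canonically isomorphic to its two-sided pseudo-inverse, and is therefore itself an equivalence. Applied to each triangle identity, this immediately gives (2)$\Leftrightarrow$(3) and (1)$\Leftrightarrow$(4).

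To link the two pairs I would prove (2)$\Ra$(4); the symmetric argument will then give (4)$\Ra$(2), and all four conditions become equivalent. Assuming (2), and hence (3), the 1-cell $F\eta_{GD}$ is an equivalence for every $D$. Two \emph{left} pseudo-inverses of $F\eta_{GD}$ now present themselves: first $\varepsilon_{FGD}$ (from the triangle identity at $C=GD$), and second $FG\varepsilon_D$, since by the other triangle and functoriality of $F$ we have $FG\varepsilon_D\circ F\eta_{GD}\simeq F(G\varepsilon_D\circ \eta_{GD})\simeq 1_{FGD}$. The uniqueness lemma then produces a 2-iso $\varepsilon_{FGD}\simeq FG\varepsilon_D$, and whiskering with $G$ gives $G\varepsilon_{FGD}\simeq GFG\varepsilon_D$.

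Finally, the naturality square of $\eta$ at the arrow $G\varepsilon_D\col GFGD\ra GD$ provides an iso $\eta_{GD}\circ G\varepsilon_D\simeq GFG\varepsilon_D\circ \eta_{GFGD}$; substituting the iso just obtained and then invoking the triangle identity at the object $FGD$ in the form $G\varepsilon_{FGD}\circ \eta_{GFGD}\simeq 1_{GFGD}$, one concludes $\eta_{GD}\circ G\varepsilon_D\simeq 1_{GFGD}$. Combined with the triangle identity $G\varepsilon_D\circ \eta_{GD}\simeq 1_{GD}$, this exhibits $\eta_{GD}$ as an equivalence, which is condition (4). Dualising (swap $F/G$ and $\eta/\varepsilon$, and the two triangle identities with each other) gives (4)$\Ra$(2), closing the loop. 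The main obstacle — beyond keeping the coherence bookkeeping honest, which I would handle by working with the strict description of the $\Gpd$-categories involved — is the bicategorical uniqueness lemma for pseudo-inverses of equivalences, which has to be applied in two distinct places; once it is isolated, the remainder of the proof is a direct chase of triangle identities and naturality squares.
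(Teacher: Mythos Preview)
Your proof is correct. The paper does not actually supply a proof of this proposition; it simply cites \cite{Betti1999a} for the bicategorical case and \cite{Lambek1986a} for the 1-categorical version. Your argument is the standard one: the triangle identities give the pairings (2)$\Leftrightarrow$(3) and (1)$\Leftrightarrow$(4) immediately via uniqueness of pseudo-inverses, and your cross-link (2)$\Rightarrow$(4) (and its dual) is the usual trick of exhibiting two left pseudo-inverses of $F\eta_{GD}$ and then pushing the resulting isomorphism through the naturality square of $\eta$. The coherence bookkeeping you flag is indeed the only subtlety beyond the 1-categorical case, and your plan to work in a strictly described $\Gpd$-category (as the paper does throughout, after the Yoneda embedding remark) handles it.
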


	It is possible to define a notion of limit\index{limits} in groupoid enriched categories. 
	All limits will be here “pseudo-bilimits” \cite{Borceux1994b};
	they are unique up to equivalence.
	I won't give a general definition of limit, but only the definition of each special
	case of limit that will be used.  One of the most useful cases is the pullback.

	\begin{df}\label{defprodfib}\index{pullback}
		Let $\C$ be a $\Gpd$-category.
		Let us consider the solid part of the following diagram in $\C$.
		\begin{xym}\xymatrix@=20pt{
			P\ar@{-->}[rr]^{p_1}\ar@{-->}[dd]_{p_2}\ddrrtwocell\omit\omit{_{\pi}}
			&&A\ar[dd]^f
			\\ &&&{}
			\\ B\ar[rr]_g
			&&C
		}\end{xym}
		The dashed part (together with $\pi$) is a \emph{pullback 
		of $f$ and $g$} if
		\begin{enumerate}
			\item for all $X\col\C$, $a\col X\ra A$, $b\col X\ra B$, and 
				$\gamma\col fa\Ra gb$,
				there exist an arrow $(a,\gamma,b)\col X\ra P$, $\pi_1\col a\Ra p_1(a,\gamma,b)$
				and $\pi_2\col b\Ra p_2(a,\gamma,b)$ such that
				\begin{xyml}\begin{gathered}\xymatrix@=20pt{
					X\ar[dr]|{(a,\gamma,b)}\ar@/^1pc/[drrr]^a\ar@/_1pc/[dddr]_b 
					&&{}\ar@{}[dl]|(0.6){\pi_1\!\dir{=>}}
					\\ &P\ar[rr]^{p_1}\ar[dd]_{p_2}\ddrrtwocell\omit\omit{_{\pi}}
						\ar@{}[dl]|{\pi_2^{-1}\!\!\!\dir{=>}}
					&&A\ar[dd]^f
					\\ {}
					\\ &B\ar[rr]_g
					&&C
				}\end{gathered}\;\;=\;\;\gamma;
				\end{xyml}
			\item for all $X\col\C$, $x,x'\col X\ra P$, $\chi_1\col p_1x\Ra p_1x'$
				and $\chi_2\col p_2x\Ra p_2x'$ such that
				\begin{xym}\xymatrix@!{
					&&A\ar[dr]^f
					&&&P\ar[dr]^{p_1}
					\\ &P\ar[ur]^{p_1}\ar[dr]^{p_2}\rrtwocell\omit\omit{\pi}
					&&C
					&X\ar[ur]^{x}\ar[dr]_{x'}\rrtwocell\omit\omit{\;\;\chi_1}
					&&A\ar[dr]^f
					\\ X\ar[ur]^{x}\ar[dr]_{x'}\rrtwocell\omit\omit{\;\;\chi_2}
					&&B\ar[ur]_g \ar@{}[rrr]|-{}="a" \save "a"*{=}\restore
					&&&P\ar[ur]_{p_1}\ar[dr]_{p_2}\rrtwocell\omit\omit{\pi}
					&&C,
					\\ &P\ar[ur]_{p_2}
					&&&&&B\ar[ur]_g
				}\end{xym}
				there exists a unique $\chi\col x\Ra x'$ such that $p_1\chi=\chi_1$
				and $p_2\chi=\chi_2$.
		\end{enumerate}
	\end{df}
	
	In $\Gpd$, the pullback\index{pullback!in $\Gpd$}
	of $\A\overset{F}\longrightarrow\cat{C}
	\overset{G}\longleftarrow\B$ can be described in the following way.
	\begin{itemize}
		\item {\it Objects.} An object consists of $(A,\gamma,B)$ where $A\col\A$, $B\col\B$
			and $\gamma\col FA\ra GB$ in $\cat{C}$.
		\item {\it Arrows.} An arrow $(A,\gamma,B)\ra(A',\gamma',B')$ consists of
				$(a,b)$, where $a\col A\ra A'\col\A$ and $b\col B\ra B'\col\B$ are such that
				 $Gb\circ\gamma=\gamma'\circ Fa$.  The identities and composition are
				 defined pointwise.
		\item {\it Equality.}  Two arrows $(a,b)$ and $(a',b')$ are equal if
			$a=a'$ in $\A$ and $b=b'$ in $\B$.
	\end{itemize}
	A construction of the pullback and the pushout for symmetric 2-groups is given
	in \cite{Bourn2002a}.

\subsection{Factorisation systems on a $\Gpd$-category}

	If $\C$ is a $\Gpd$-category, we will denote by $\flc$\index{C2@$\flc$}
	the $\Gpd$-category of arrows in $\C$, described in the following way.
	\begin{itemize}
		\item {\it Objects. } An object of $\flc$ consists of two objects
				$A, B\col\C$ and an arrow $A\overset{f}\ra B$.
		\item {\it Arrows. } A morphism $(A,f,B)\ra(A',f',B')$ consists of
			two arrows $A\overset{a}\ra A'$ and $B\overset{b}\ra B'$ and a
			2-arrow $\psi\col bf\Ra f'a$.
		\item {\it 2-arrows.} A 2-arrow $(a,\psi,b)\Ra (a',\psi',b')
			\col(A,f,B)\ra(A',f',B')$ consists of two 2-arrows $\alpha\col a\Ra a'$
			and $\beta\col b\Ra b'$ such that $f'\alpha\circ\psi=\psi'\circ\beta f$.
	\end{itemize}
	We denote by $\partial_0\col\flc\ra \C$\index{d0@$\partial_0$} the domain $\Gpd$-functor
	 (which maps $(A,f,B)$ to $A$, $(a,\psi,b)$ to $a$ and $(\alpha,\beta)$ to
	$\alpha$), and $\partial_1\col\flc \ra\C$ the codomain $\Gpd$-functor.
	We denote by $\delta\col\partial_0\Ra\partial_1$\index{d1@$\partial_1$} the
	$\Gpd$-natural transformation defined by
	$\delta_{(A,f,B)}\eqdef f\col \partial_0(A,f,B)\ra\partial_1(A,f,B)$.
	We denote by $\Equ$ the full sub-$\Gpd$-category of $\flc$ whose objects
	are the equivalences in $\C$.

	We will take as the definition of factorisation system the following one, which is a variant of the notion of Eilenberg-Moore factorisation system of Mareli Korostenski and Walter Tholen \cite{Korostenski1993a}.
	Their proof that this definition is equivalent to the usual definition in terms of orthogonality (see \cite{Johnstone1981a} or \cite{Kasangian2000a} for the 2-dimensional case) will be generalized to dimension 2.

	\begin{df}\index{factorisation system}\index{system!factorisation|see{factorisation system}}
		Let $\C$ be a $\Gpd$-category.
		A \emph{factorisation system} on $\C$ consists of $(\E,\M,\im,e,m,\varphi)$, where
		\begin{itemize}
			\item $\E$ and $\M$ are full sub-$\Gpd$-categories of $\flc$;
			\item $\im$ is a $\Gpd$-functor, $e$ and $m$ are 
				$\Gpd$-natural transformations, and
				\begin{xyml}\label{eqfact}\varphi\col\begin{gathered}\xymatrix@=40pt{
					{\flc}\rtwocell^{\partial_0}_{\partial_1}{\delta}
					&{\C}
				}\end{gathered}\Rrightarrow\begin{gathered}\xymatrix@=40pt{
					{\flc}\ar[r]|-{\im}\ruppertwocell<8>^{\partial_0}{_<-2.2>e}
						\rlowertwocell<-8>_{\partial_1}{_<2.2>\,\;m}
					&{\C}
				}\end{gathered}.\end{xyml}
				Thus every arrow $f\col\flc$ factors in the following way.
				\begin{xym}\xymatrix@C=20pt@R=30pt{
					A\ar[rr]^f\ar[dr]_-{e_f}\rrtwocell\omit\omit{_<3.3>\;\;\varphi_f}
					&&B
					\\ &{\im f}\ar[ur]_-{m_f}
				}\end{xym}
		\end{itemize}
		These data must satisfy the following conditions: for all $f\col\flc$,
		\begin{enumerate}
			\item $e_f\in\E$ and $m_f\in\M$;
			\item if $f\in\E$, then $m_f\in\Equ$;
			\item if $f\in\M$, then $e_f\in\Equ$;
			\item if $f\in\Equ$, then $e_f, m_f\in\Equ$.
		\end{enumerate}
	\end{df}
	
	By the following proposition, one can replace condition 4 by
	\begin{enumerate}
		\item[4'.] $\Equ\incl\E\cap\M$.
	\end{enumerate}
	
	\begin{pon}\label{propfacdefem}
		If $(\E,\M,\im,e,m,\varphi)$ is a factorisation system on $\C$, 
		\begin{align}\stepcounter{eqnum}
			\E &=\{f\col\flc\,|\,m_f\in\Equ\}\text{ and}\\ \stepcounter{eqnum}
			\M &=\{f\col\flc\,|\,e_f\in\Equ\}.
		\end{align}
	\end{pon}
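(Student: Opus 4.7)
The plan is to prove each equation by inclusions in both directions. The forward inclusions come directly from axioms~2 and~3 of the factorisation system, and the reverse inclusions exploit the fact that the full sub-$\Gpd$-categories $\E$ and $\M$ of $\flc$ are replete (closed under equivalence of objects), by the convention on full subcategories explained at the end of the introduction.

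For the first equation, the inclusion $\E\incl\{f\col\flc\,|\,m_f\in\Equ\}$ is simply axiom~2 applied to $f\in\E$. For the reverse inclusion, suppose $f\col A\ra B$ satisfies $m_f\in\Equ$. The invertible factorisation 2-cell $\varphi_f$ (invertible because every 2-arrow in a $\Gpd$-category is) together with $1_A$ and $m_f$ assembles into a morphism $e_f\ra f$ of $\flc$ whose two components are $1_A$ and $m_f$, both equivalences in $\C$. Such a morphism is an equivalence in $\flc$, so $e_f\simeq f$ as objects of $\flc$. Since $e_f\in\E$ by axiom~1 and $\E$ is a full sub-$\Gpd$-category of $\flc$, repleteness yields $f\in\E$.

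The second equation follows by the dual argument: axiom~3 gives the inclusion $\M\incl\{f\,|\,e_f\in\Equ\}$, and for the reverse inclusion one uses the morphism $f\ra m_f$ in $\flc$ with components $e_f$ (assumed in $\Equ$) and $1_B$; it is again an equivalence in $\flc$, so $m_f\simeq f$, and since $m_f\in\M$ by axiom~1 the repleteness of $\M$ forces $f\in\M$.

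The only non-formal ingredient, and what I expect to be the main (though still routine) technical point, is the auxiliary claim that a morphism $(a,\psi,b)$ in $\flc$ with both $a$ and $b$ equivalences in $\C$ is itself an equivalence in $\flc$. A quasi-inverse is built by choosing quasi-inverses $a^*,b^*$ in $\C$ and defining the required 2-cell $b^*\circ f'\Ra f\circ a^*$ by pasting $\psi^{-1}$ with the units and counits of the chosen equivalences; the two composites in $\flc$ are then identities up to invertible 2-arrows coming from the triangle identities of the component equivalences. Once this lemma is recorded, the proposition is a formal consequence of the four axioms.
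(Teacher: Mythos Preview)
Your proof is correct and follows essentially the same approach as the paper: the paper also uses condition~2 for the forward inclusion and, for the reverse, exhibits the explicit equivalence $(1_A,\varphi_f^{-1},m_f)\col e_f\ra f$ in $\flc$, then concludes $f\simeq e_f\in\E$ by condition~1 and repleteness. Your auxiliary paragraph on why a morphism in $\flc$ with equivalence components is an equivalence is more detail than the paper gives, but it is the right justification and the paper is simply taking it for granted.
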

	
		\begin{proof}
			By condition 2, if $f\in\E$, then $m_f\in\Equ$.  Conversely, if
			$m_f$ is an equivalence, then $(1_A,\varphi_f^{-1},m_f)\col e_f\ra f$
			is an equivalence in $\flc$ and so $f\simeq e_f\in\E$, by
			condition 1.  The proof for $\M$ is dual.
		\end{proof}
	
	\begin{pon}\label{caracsimpsyf}
		To give a factorisation system $(\E,\M,\im,e,m,\varphi)$ on $\C$ is equivalent to
		give $(\im,e,m,\varphi)$ as in diagram \ref{eqfact} such that,
		if we set
		\begin{align}\stepcounter{eqnum}
			\E &\eqdef\{f\col\flc\,|\,m_f\in\Equ\}\text{ and}\\ \stepcounter{eqnum}
			\M &\eqdef\{f\col\flc\,|\,e_f\in\Equ\},
		\end{align}
		we have:
		\begin{enumerate}
			\item for all $f\col\flc$, $e_f\in\E$ and $m_f\in\M$;
			\item $\Equ\incl\E\cap\M$.
		\end{enumerate}
	\end{pon}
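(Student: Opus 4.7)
The plan is to unpack definitions; the content of Proposition \ref{propfacdefem} handles one direction almost immediately.  For the forward implication, suppose $(\E,\M,\im,e,m,\varphi)$ is a factorisation system.  Proposition \ref{propfacdefem} already identifies $\E$ with $\{f\col\flc\,|\,m_f\in\Equ\}$ and $\M$ with $\{f\col\flc\,|\,e_f\in\Equ\}$, so the classes defined as in the statement agree with the given ones.  Condition (i) of the proposition is then exactly condition 1 of the definition, and condition (ii) follows from condition 4: for $f\in\Equ$, both $m_f$ and $e_f$ are equivalences, so $f$ belongs to $\E\cap\M$ by the characterisation just recalled.

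For the reverse implication, I would start from data $(\im,e,m,\varphi)$ satisfying (i) and (ii), define $\E$ and $\M$ as in the statement, and verify conditions 1--4 of the definition of factorisation system.  Condition 1 is exactly (i); conditions 2 and 3 are tautological given how $\E$ and $\M$ are defined; and condition 4 is immediate from (ii), since any $f\in\Equ$ belongs to $\E\cap\M$, which by the very definition of these classes means both $m_f\in\Equ$ and $e_f\in\Equ$.

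The only step that is not pure book-keeping --- and the one I expect to require a short argument --- is checking that $\E$ and $\M$ as defined really are full sub-$\Gpd$-categories of $\flc$, i.e.\ that the conditions ``$m_f\in\Equ$'' and ``$e_f\in\Equ$'' are invariant under equivalence in $\flc$.  This should follow from the $\Gpd$-functoriality of $\im$ (which preserves equivalences) together with the $\Gpd$-naturality of $m$ and $e$: an equivalence $(a,\psi,b)\col f\ra g$ in $\flc$ produces via $\im$ an equivalence $\im(a,\psi,b)\col \im f\simeq \im g$, and naturality provides a canonical 2-cell relating $m_g\circ\im(a,\psi,b)$ to $b\circ m_f$; since $b$ and $\im(a,\psi,b)$ are equivalences, $m_f$ is an equivalence iff $m_g$ is, and the same argument handles $e$.
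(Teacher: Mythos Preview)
Your proof is correct and matches the paper's argument essentially line for line: both directions proceed exactly as you describe, invoking Proposition~\ref{propfacdefem} for the forward direction and reading off conditions 1--4 (or 1--3 and 4') from the definitions for the converse.  The only difference is your final paragraph on invariance under equivalence: in the paper's conventions (see the Warning section and the footnote to Proposition~\ref{propsyforth}), full sub-$\Gpd$-categories are replete by definition of the belonging relation, so this check is taken for granted---but your argument for it is correct and does no harm.
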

	
		\begin{proof}
			If we start with a factorisation system,
			it satisfies conditions 1 and 2, by Proposition \ref{propfacdefem}.
			
			Conversely, if we have $(\im,e,m,\varphi)$ satisfying conditions 1 and 2,
			then, by defining $\E$ and $\M$ as above, we get a factorisation system:
			condition 1 of factorisation system
			is condition 1 of this proposition, conditions 2 and 3 follow from the
			definition of $\E$ and $\M$, and condition 4' of factorisation system
			is condition 2 of this proposition.
		\end{proof}
		
	The factorisation systems on $\C$ form an order, denoted by
	$\caspar{Fac}(\C)$\index{Fac(C)@$\caspar{Fac}(\C)$},
	and defined in the following way.
	\begin{itemize}
		\item {\it Objects. } These are the factorisation systems on $\C$.
		\item {\it Order. } We will say that $(\E,\M,\im,e,m,\varphi)\leq
			(\E',\M',\im',e',m',\varphi')$ if the following conditions hold:
			\begin{enumerate}
				\item $\E\incl\E'$;
				\item $\M\supseteq \M'$;
				\item there exists a $\Gpd$-natural transformation $d\col\im\ra\im'$
					and modifications $\varepsilon\col de\Rrightarrow e'$
					and $\mu\col m\Rrightarrow m'd$ such that, for all $f\col\flc$,
					\begin{xyml}\label{eqordfac}\begin{gathered}\xymatrix@C=30pt@R=30pt{
						&{\im f}\ar[dr]^-{m_f}\ar[dd]_(0,4){d_f}
						&{}\ar@{}[dl]^(0.86){\mu_f\!\!\dir{=>}}
						\\A\ar[dr]_-{e'_f}\ar[ur]^{e_f}
						&{}\ar@{}[dl]_(0.3){\varepsilon_f\!\!\dir{=>}} &B
						\\ {}&{\im' f}\ar[ur]_-{m'_f}
					}\end{gathered}\;\;=\;\;\begin{gathered}\xymatrix@C=30pt@R=30pt{
						&{\im f}\ar[dr]^-{m_f}
						\\A\ar[rr]|f\ar[dr]_-{e'_f}
							\rrtwocell\omit\omit{_<3.8>\;\;\varphi'_f}\ar[ur]^-{e_f}
							\rrtwocell\omit\omit{_<-3.8>\;\;\;\;\,\varphi^{-1}_f}
						&&B
						\\ &{\im' f}\ar[ur]_-{m'_f}
					}\end{gathered}.\end{xyml}
			\end{enumerate}
	\end{itemize}

	Let us now define orthogonality in dimension 2 (see \cite{Street1982b},
	\cite{Kasangian2000a} and \cite{Dupont2003a}).
	Let us first remark, by using the construction of pullbacks in $\Gpd$
	given above, that if $\C$ is a $\Gpd$-category and $A\overset{f}\ra B$,
	$A'\overset{f'}\ra B'$ are arrows in $\C$,
	the following square is a pullback in $\Gpd$.
	\begin{xym}\label{flcprodfib}\xymatrix@=40pt{
		{\flc(f,f')}\ar[r]^-{\mathrm{proj}}\ar[d]_-{\mathrm{proj}}
			\drtwocell\omit\omit{^{}}
		&{\C(A,A')}\ar[d]^{f'\circ-}
		\\{\C(B,B')}\ar[r]_-{-\circ f}
		&{\C(A,B')}
	}\end{xym}
	We will denote by $\lan f,f'\ran\col\C(B,A')\ra\flc(f,f')$ the functor
	which maps 
	\begin{itemize}
		\item $c\col B\ra A'$ to $(cf,1_{f'cf},f'c)\col f\ra f'$, and
		\item $\gamma\col c\ra c'$ to $(\gamma f,f'\gamma)\col (cf,1_{f'cf},f'c)
			\Ra (c'f,1_{f'c'f},f'c')$.
	\end{itemize}

	\begin{pon}\label{deforthog}
		Let $\C$ be a $\Gpd$-category, $A\overset{f}\ra B$ and
		$A'\overset{f'}\ra B'$ be arrows in $\C$.  The following conditions are
		equivalent. When they hold, we say that \emph{$f$ is
		orthogonal to $f'$}\index{orthogonal arrows} (and we write $f\orth f'$).
		\begin{enumerate}
			\item The following square is a pullback in $\Gpd$.
				\begin{xym}\xymatrix@=40pt{
					{\C(B,A')}\ar[r]^-{-\circ f}\ar[d]_-{f'\circ -}
					&{\C(A,A')}\ar[d]^{f'\circ-}
					\\{\C(B,B')}\ar[r]_-{-\circ f}
					&{\C(A,B')}
				}\end{xym}
			\item The functor $\lan f,f'\ran\col\C(B,A')\ra\flc(f,f')$ is an equivalence.
			\item \begin{enumerate}
					\item For all $(a,\psi,b)\col f\ra f'$, there exist an arrow $c$
						and 2-arrows $\mu$ and $\nu$ such that
						\begin{xyml}\begin{gathered}\xymatrix@=40pt{
							A\ar[r]^f\ar[d]_a="a"
							&B\ar[d]^b="b"\ar[dl]_(0.4){c}
							\\ A'\ar[r]_{f'}
							&B'
							\ar@{}"b";"a"^(0.35){\overset{\;\;\text{$\nu$}}{\dir{=>}}}
							\ar@{}"b";"a"_(0.7){\underset{\;\;\text{$\mu$}}{\dir{=>}}}
						}\end{gathered}\;\;=\;\;\begin{gathered}\xymatrix@=40pt{
							A\ar[r]^f\ar[d]_a\drtwocell\omit\omit{\psi}
							&B\ar[d]^b
							\\ A'\ar[r]_{f'}
							&B'
						}\end{gathered}.\end{xyml}
					\item For all $c,c'\col B\ra A'$, $\alpha\col cf\Ra c'f$
						and $\beta\col f'c\Ra f'c'$ such that $f'\alpha=\beta f$, there exists
						a unique $\gamma\col c\Ra c'$ such that $\alpha=\gamma f$
						and $\beta = f'\gamma$.
				\end{enumerate}
		\end{enumerate}
	\end{pon}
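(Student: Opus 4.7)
The plan is to establish the chain (1) $\Leftrightarrow$ (2) $\Leftrightarrow$ (3), using the diagram \ref{flcprodfib} for the first equivalence and the characterisation of equivalences in $\Gpd$ (essentially surjective and fully faithful functors) for the second.

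For (1) $\Leftrightarrow$ (2), I would observe that diagram \ref{flcprodfib} already presents $\flc(f,f')$ as the pullback in $\Gpd$ of $-\circ f$ along $f'\circ -$. The functor $\lan f,f'\ran$ is precisely the comparison functor from $\C(B,A')$ — equipped with its two legs $f'\circ -\col \C(B,A')\to\C(B,B')$ and $-\circ f\col\C(B,A')\to\C(A,A')$ (whose composites with the pullback-projections into $\C(A,B')$ agree up to the canonical identity $(f'c)\circ f = f'\circ(cf)$) — to the pullback $\flc(f,f')$. Pullbacks in a $\Gpd$-category are unique up to equivalence (Definition \ref{defprodfib}), so $\C(B,A')$ together with these two legs is itself a pullback if and only if the comparison $\lan f,f'\ran$ is an equivalence of groupoids.

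For (2) $\Leftrightarrow$ (3), I would unpack that a functor between groupoids is an equivalence if and only if it is essentially surjective and fully faithful. First, essential surjectivity of $\lan f,f'\ran$ states that every object $(a,\psi,b)$ of $\flc(f,f')$ is isomorphic in $\flc(f,f')$ to some $(cf,1_{f'cf},f'c)$; an iso in $\flc(f,f')$ consists of a pair $(\mu,\nu)$ with $\mu\col a\Ra cf$ and $\nu\col b\Ra f'c$ such that $f'\mu\circ\psi = 1_{f'cf}\circ\nu f = \nu f$. Rewriting this compatibility as the pasting equation in condition (3a) (i.e.\ the composite $f'\mu\circ\nu f$ recovers $\psi$) gives exactly (3a). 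Second, fully faithfulness of $\lan f,f'\ran$ at objects $c,c'\col B\to A'$ says that $\C(B,A')(c,c')\to\flc(f,f')(\lan f,f'\ran c,\lan f,f'\ran c')$ is a bijection; a 2-arrow in the codomain is a pair $(\alpha,\beta)$ with $\alpha\col cf\Ra c'f$, $\beta\col f'c\Ra f'c'$ satisfying $f'\alpha = \beta f$ (from the compatibility in $\flc$ with the $1$-components), and its preimage is a unique $\gamma\col c\Ra c'$ with $\alpha=\gamma f$ and $\beta=f'\gamma$ — exactly (3b).

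The arguments are mostly bookkeeping; the only subtlety is matching orientations of the 2-cells $\mu$, $\nu$ in (3a) with the direction of the isomorphism in $\flc(f,f')$ and checking the coherence equation against the identification $(f'c)\circ f \equiv f'\circ(cf)$ coming from the strict description of $\C$ used throughout. Since this identification is an equality (we assumed $\C$ strictly described), the pasting in (3a) reduces literally to the equation $f'\mu\circ\nu f=\psi$, and the translations above are direct. The main (mild) obstacle is therefore just keeping the orientations and whiskerings straight; no further structural input is required beyond the universal property of pullbacks in $\Gpd$ and the standard characterisation of equivalences in $\Gpd$.
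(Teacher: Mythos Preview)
Your proposal is correct and follows essentially the same approach as the paper's proof: the paper also obtains (1) $\Leftrightarrow$ (2) from the fact that diagram \ref{flcprodfib} is a pullback, and (2) $\Leftrightarrow$ (3) by noting that (3a) and (3b) are precisely the statements that $\lan f,f'\ran$ is surjective and fully faithful. Your write-up simply supplies more detail on the bookkeeping of orientations and the identification of arrows in $\flc(f,f')$, which the paper leaves implicit.
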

	
		\begin{proof}
			Condition 2 is equivalent to condition 1, because the square
			\ref{flcprodfib} is a pullback.  Condition 3 expresses
			in elementary terms the fact that $\lan f,g\ran$ is surjective
			(a) and fully faithful (b), which is equivalent to condition 2.
		\end{proof}
		
	Here is now the
	(almost\footnote{We do not require here neither the stability of arrows of $\E$ and $\M$
	under composition with equivalences, nor their stability under isomorphism (these
	conditions amount to require that the full sub-$\Gpd$-categories $\E$ and $\M$ be
	“replete”).
	The reason is that the definition of the belonging relation between objects of $\C$ and full sub-$\Gpd$-categories $\mathcal{M}\overset{M}\ra \C$ implies that, if $A\simeq B\col\C$
	and $B\in M$, then necessarily $A\in\M$ (in other words, the full sub-($\Gpd$)-categories
	are always “replete”).  Moreover, we use here only categorical properties (i.e.\ properties of objects of a category stable under isomorphism or properties of objects of a $\Gpd$-category stable under equivalence)
	and so, if an arrow $f$ satisfies the property which defines $\E$ or $\M$,
	every arrow equivalent to $g$ also satisfies this property.}) usual definition of orthogonal factorisation system.

	\begin{df}\index{factorisation system!orthogonal}
		An \emph{orthogonal factorisation system} on a $\Gpd$-category $\C$
		consists of $(\E,\M)$, where $\E$ and $\M$ are two full sub-$\Gpd$-categories
		of $\flc$, such that
		\begin{enumerate}
			\item for all $f\col\flc$, there exist arrows $e_f\in\E$, $m_f\in\M$ and
				a 2-arrow $\varphi_f\col f\Ra m_fe_f$;
			\item for all $e\in\E$ and $m\in\M$, $e\orth m$.
		\end{enumerate}
	\end{df}

	Orthogonal factorisation systems possess the following properties
	(see \cite{Kasangian2000a}, \cite{Milius2001a}).
	
	\begin{pon}\label{propsyforth}
	Let $\EM$ be an orthogonal factorisation system. Then the following properties hold:
	\begin{enumerate}
		\item $f\in\E$ if and only if for all $m\in\M$, $f\orth m$; dually,
			$f\in\M$ if and only if for all $e\in\E$, $e\orth f$;
		\item $\E\cap\M=\Equ$;
		\item $\E$ and $\M$ are stable under composition;
		\item for all arrows $A\overset{f}\ra B\overset{g}\ra C$, if $gf, g\in\M$,
			then $f\in\M$, and if $gf,f\in\E$, then $g\in\E$;
		\item $\E$ is stable under weighted colimits and $\M$ is stable under weighted limits;
		\item $\E$ is stable under pushout and $\M$ is stable under pullback.
	\end{enumerate}
	\end{pon}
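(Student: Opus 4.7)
The plan is to establish (1) through (6) in order, with each item serving as infrastructure for the next, following the classical proof scheme for orthogonal factorisation systems (e.g.\ \cite{Kasangian2000a}) but invoking the 2-dimensional version of orthogonality from Proposition \ref{deforthog}. The essential new ingredient compared to dimension 1 is that orthogonality now delivers a diagonal arrow together with two specified 2-cells, and the uniqueness clause 3(b) is \emph{strict} at the 2-cell level; this strict uniqueness is what will make the standard diagram chases go through.

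For (1), the forward implication is axiom 2 of orthogonal factorisation system. Conversely, if $f\orth m$ for every $m\in\M$, factor $f\simeq m_fe_f$ by axiom 1 and apply $f\orth m_f$ to the natural square $(e_f,\varphi_f,1)\col f\ra m_f$; comparing two candidate fillers via 3(b) forces $m_f$ to be an equivalence, so $f\simeq e_f\in\E$ by repleteness of full sub-$\Gpd$-categories. Item (2) follows immediately: $\Equ\incl\E\cap\M$ is a consequence of (1) since any equivalence is orthogonal to any arrow, and conversely $f\in\E\cap\M$ gives $f\orth f$, which applied to the identity square produces a two-sided inverse of $f$ via strict 2-cell uniqueness.

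For (3), to prove $gf\in\E$ when $f,g\in\E$, I verify $gf\orth m$ for any $m\in\M$ by lifting a square $(a,\psi,b)\col gf\ra m$ in two stages: first against $g\orth m$ at the intermediate object, then against $f\orth m$; the pasting of the resulting 2-cells gives the filler, and uniqueness of the composite follows again from 3(b). Item (4) uses essential uniqueness of factorisations together with (3): if $gf,g\in\M$, factor $f\simeq m_fe_f$; then $gm_f\in\M$ by (3), so the two factorisations $gf\simeq (g)\circ(f)\simeq (gm_f)\circ(e_f)$ together with $gf\simeq(gf)\circ 1$ yield, via the lift of a natural square against $e_f\orth gm_f$, that $e_f$ is an equivalence, whence $f\simeq m_f\in\M$. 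The dual statements are proved by reversing arrows.

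For (5), $\E$ is stable under weighted colimits because orthogonality against a fixed $m\in\M$ is a limit-type condition on the hom-$\Gpd$-categories in diagram \ref{flcprodfib}, preserved by colimits in the first variable; weighted limits for $\M$ are dual. Item (6) is a specialisation: a pushout of $e\in\E$ is shown orthogonal to $m\in\M$ by lifting squares against $e$ on one side and extending via the pushout universal property, with 2-cell uniqueness again supplied by 3(b). The main obstacle I foresee lies in items (3) and (4), where 2-cells produced by successive applications of orthogonality must be pasted coherently into a single compatible 2-cell matching the original square; in dimension 1 this step is trivial, but here it is precisely where the strict uniqueness of diagonal fillers at the 2-cell level is indispensable and must be invoked with care.
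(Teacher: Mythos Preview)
The paper does not prove this proposition: it is stated with a reference to \cite{Kasangian2000a} and \cite{Milius2001a}, and no argument is given. Your sketch follows exactly the standard route taken in those references, so there is nothing to compare; your proposal is correct and is in fact the expected proof.
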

	
	Orthogonal factorisation systems on $\C$ form an order (denoted by
	$\caspar{OrthFac}(\C)$\index{OrthFac(C)@$\caspar{OrthFac}(\C)$}) described in the following way.
	\begin{itemize}
		\item {\it Objects. } These are the orthogonal factorisation systems on $\C$.
		\item {\it Order. } We say that $(\E,\M)\leq(\E',\M')$ if 
			$\E\incl\E'$ or, equivalently (by property 1 of Proposition
			\ref{propsyforth}),
			$\M\supseteq\M'$.
	\end{itemize}
	
	The proof of the following proposition is the 2-dimensional version of
	the proof of Korostenski and Tholen for the 1-dimensional case \cite[Theorem A]{Korostenski1993a}.
	
	\begin{pon}
		Let $\C$ be a $\Gpd$-category. Then
		\begin{eqn}
			\caspar{Fac}(\C)\simeq \caspar{OrthFac}(\C).
		\end{eqn}
	\end{pon}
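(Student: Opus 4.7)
The plan is to build order-preserving maps $\Phi\col\caspar{Fac}(\C)\to\caspar{OrthFac}(\C)$ and $\Psi\col\caspar{OrthFac}(\C)\to\caspar{Fac}(\C)$ and check that they are mutually inverse up to preorder equivalence. The forgetful map $\Phi$ sends $(\E,\M,\im,e,m,\varphi)$ to $(\E,\M)$. The section $\Psi$ chooses, for each arrow $f$, a factorisation $f\simeq m_fe_f$ with $e_f\in\E$ and $m_f\in\M$, and uses orthogonality to extend this choice to a $\Gpd$-functor $\im$ with transformations $e$, $m$ and modification $\varphi$. The substantive work is to verify orthogonality of $(\E,\M)$ from the factorisation system data, and to verify the $\Gpd$-functoriality of $\im$ from orthogonality.

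For $\Phi$, I would establish $e\orth m$ for $e\in\E$, $m\in\M$ via the elementary criterion of Proposition \ref{deforthog}(3). Given $(a,\psi,b)\col f\ra g$ with $f\in\E$ and $g\in\M$, apply $\im$ to obtain $\im(a,\psi,b)\col\im f\ra\im g$. Conditions (2) and (3) of factorisation system force $m_f$ and $e_g$ to be equivalences; let $m_f^\star$ and $e_g^\star$ be quasi-inverses. Then $c\eqdef e_g^\star\circ\im(a,\psi,b)\circ m_f^\star$ is the required diagonal $B\ra A'$, and the coherence 2-arrows $cf\Ra a$ and $gc\Ra b$ are assembled from $\varphi_f$, $\varphi_g$, the $\Gpd$-naturality of $e$ and $m$ at $(a,\psi,b)$, and the triangle identities of the quasi-inverse data; their compatibility with $\psi$ follows from the $\Gpd$-modification axiom on $\varphi$. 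The uniqueness clause of Proposition \ref{deforthog}(3)(b) follows because the equivalences $m_f$ and $e_g$ reflect 2-arrows bijectively, so any two diagonal fills are compared by transporting through $m_f^\star$ and $e_g$ and using the essential uniqueness of the lift $\im(a,\psi,b)$.

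For $\Psi$, the orthogonality $e_f\orth m_{f'}$ provides, for every $(a,\psi,b)\col f\ra f'$, a diagonal fill $\im(a,\psi,b)\col\im f\ra\im f'$ unique up to unique 2-iso; this uniqueness promotes the assignment to a $\Gpd$-functor and furnishes the required $\Gpd$-natural transformations $e$, $m$ and modification $\varphi$. I would then apply Proposition \ref{caracsimpsyf}: condition~(1) is by construction; the characterisation $\E=\{f\,|\,m_f\in\Equ\}$ follows because $f\in\E$ with $m_fe_f\simeq f$ and $e_f\in\E$ gives $m_f\in\E$ by Proposition \ref{propsyforth}(4), whence $m_f\in\E\cap\M=\Equ$ by Proposition \ref{propsyforth}(2), and conversely $m_f\in\Equ\incl\E$ combined with $e_f\in\E$ and stability under composition gives $f\in\E$; the dual yields $\M=\{f\,|\,e_f\in\Equ\}$; condition~(2) of Proposition~\ref{caracsimpsyf} is $\Equ\incl\E\cap\M$, which is Proposition~\ref{propsyforth}(2). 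To see $\Phi$ and $\Psi$ are mutually inverse up to equivalence, the round trip $\Phi\Psi$ is the identity on $\caspar{OrthFac}(\C)$, and $\Psi\Phi$ produces $(\E,\M,\im',e',m',\varphi')$ from $(\E,\M,\im,e,m,\varphi)$; the comparison $d\col\im\ra\im'$ and its inverse required by the definition of $\leq$ in $\caspar{Fac}(\C)$ are obtained by diagonal-filling the identity morphism of each $f$ in $\flc$, with the modifications $\varepsilon$, $\mu$ following from the uniqueness of the fills. Order preservation is immediate, using Proposition~\ref{propsyforth}(1) to see that $\E\incl\E'$ is equivalent to $\M\supseteq\M'$ in the orthogonal case.

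The main obstacle will be the orthogonality verification in $\Phi$, particularly the uniqueness clause, which demands a careful 2-categorical assembly of the modification and naturality data and hinges on the fact that equivalences reflect 2-arrows bijectively. Everything else reduces to formal bookkeeping, following the 1-dimensional Korostenski--Tholen argument adapted to the $\Gpd$-enriched setting.
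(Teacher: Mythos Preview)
Your overall architecture matches the paper's (itself the 2-dimensional Korostenski--Tholen argument): the paper phrases it as showing a single map $\Phi$ is surjective and fully faithful rather than building an explicit $\Psi$, but that is cosmetic, and your use of Proposition~\ref{caracsimpsyf} together with Proposition~\ref{propsyforth} to verify that $\Psi$ lands in $\caspar{Fac}(\C)$ is fine.

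The one place your sketch is genuinely unclear is clause~(b) of orthogonality. That clause is not about comparing two diagonal \emph{fills}; it asks you, given $c,c'\col B\ra A'$ and compatible $\alpha\col cf\Ra c'f$, $\beta\col f'c\Ra f'c'$, to \emph{produce} a unique $\gamma\col c\Ra c'$ with $\gamma f=\alpha$ and $f'\gamma=\beta$. In the $\Phi$-direction $\im$ is a fixed $\Gpd$-functor, so there is no ``essential uniqueness of a lift'' to invoke. More substantively, transporting through $m_f^\star$ and $e_{f'}$ alone does not suffice: from $\alpha$ you can only extract $\gamma m_f e_f$, and $e_f$ is not an equivalence; from $\beta$ you get $m_{f'}e_{f'}\gamma$, and $m_{f'}$ is not an equivalence. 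The paper's construction observes that $(\alpha,\beta)$ is a 2-cell $(cf,1,f'c)\Ra(c'f,1,f'c')$ in $\flc$, applies the $\Gpd$-functor $\im$ to it, and then bridges back to a 2-cell $c\Ra c'$ via a composite that uses not only $m_f^{-1}$ and $e_{f'}^{-1}$ but also the equivalences $e_{1_B},m_{1_B},e_{1_{A'}},m_{1_{A'}}$ supplied by condition~(4) of the definition (applied to identities), together with the $\Gpd$-functoriality 2-cells of $\im$ linking $\im(c,1_c,c)$ to $\im(cf,1,f'c)$. Uniqueness is then immediate: any $\gamma'$ satisfying the constraints equals that same composite. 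Your sketch omits both the action of $\im$ on 2-cells and the role of condition~(4); once you supply those, the argument goes through as you intend.
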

	
		\begin{proof}
			{\it Definition of $\Phi\col\caspar{Fac}(\C)\ra
			\caspar{OrthFac}(\C)$. } Let $(\E,\allowbreak\M,\allowbreak\im,\allowbreak e,\allowbreak m,\allowbreak \varphi)$
			be a factorisation system. Then $(\E,\M)$ is an orthogonal factorisation system.
			The first condition obviously hold. It remains to prove that, for all arrows $A\overset{f}\ra B$ and $A'\overset{f'}\ra B'$,
			if $f\in\E$ and $f'\in\M$, then $f\orth f'$. We check the two parts
			of condition 3 of Proposition \ref{deforthog}.
			\renewcommand{\theenumi}{\alph{enumi}}\renewcommand{\labelenumi}{(\theenumi)}
			\begin{enumerate}
				\item Let be $(a,\psi,b)\col f\ra f'$ in $\flc$.  If we apply
					$\im$, $e$, $m$ and $\varphi$ to $f$, $f'$ and $(a,\psi,b)$, we get
					the following diagram, whose composite is equal to $\psi$.
					\begin{xym}\label{diagpremifact}\xymatrix@=40pt{
						A\ar[r]^-{e_f}\ar[d]_a\drtwocell\omit\omit{}
							\rruppertwocell<9>^f{_<-2.7>\;\;\,\varphi_f}
						&{\im f}\ar[r]^-{m_f}_-{\sim}\ar[d]|{\im(a,\psi,b)}
							\drtwocell\omit\omit{}\ar@{}[dl]_{e_{(a,\psi,b)}^{-1}}
						&B\ar[d]^b\ar@{}[dl]^{m_{(a,\psi,b)}^{-1}}
						\\ A'\ar[r]_-{e_{f'}}^-{\sim}
							\rrlowertwocell<-9>_{f'}{_<2.7>\;\;\;\;\;\varphi_{f'}^{-1}}
						&{\im f'}\ar[r]_-{m_{f'}}
						&B'
					}\end{xym}
					By conditions 2 and 3 of the definition of factorisation system, 
					$m_f$ is invertible (because $f\in\E$) and $e_{f'}$ is invertible
					(because $f'\in\M$).  Thus $c\eqdef e_{f'}^{-1}\im(a,\psi,b)m_f^{-1}$
					is an arrow $B\ra A'$,
					and we have 2-arrows $\mu\col cf\Ra a$ and $\nu\col b\Ra f'c$
					whose composite is $\psi$.
				\item Let be $c,c'\col B\ra A'$, $\alpha\col cf\Ra c'f$ and
					$\beta\col f'c\Ra f'c'$ such that $f'\alpha=\beta f$.  Then
					$(\alpha,\beta)$ is a 2-arrow
					$(cf,1_{f'cf},f'c)\Ra(c'f,1_{f'c'f},f'c')$
					in $\flc$. Moreover, $m_f$ and $e_{f'}$ are invertible, as well as
					$e_{1_B}$, $m_{1_B}$, $e_{1_{A'}}$ and $m_{1_{A'}}$, by condition
					4 of the definition of factorisation system.  We can define $\gamma$ as
					the composite of the following diagram.
					We check that $\gamma f=\alpha$ and $f'\gamma=\beta$.
					Moreover, $\gamma$ is unique because, if we have $\gamma'$ such that 
					$\gamma' f=\alpha$ and $f'\gamma'=\beta$, then $\gamma'$ is equal
					to the composite of the following diagram and so to $\gamma$.
			\end{enumerate}
					\begin{xym}\xymatrix@=40pt{
						&&A'\ar[dr]^-{m_{1_{A'}}^{-1}}\ar@{=}@/^3pc/[ddrrr]
							\ddrrtwocell\omit\omit{_<-5.5>}
						\\ &{}\rtwocell\omit\omit{_<-4>}
						&{\im 1_B}\ar[r]^-{\im(c,1_c,c)}
						&{\im 1_{A'}}\ar[dr]|-{\im(1_{A'},1_{f'},f')}
							\ar@/^/[drr]^-{e_{1_{A'}}^{-1}}\drrtwocell\omit\omit{_<1.5>}
						\\ B\ar[r]|-{m_f^{-1}}
							\ar@/^/[urr]^-{m_{1_B}^{-1}}\ar@/^2.3pc/[uurr]^-{c}
							\ar@/_/[drr]_-{m_{1_B}^{-1}}\ar@/_2.3pc/[ddrr]_-{c'}
							\urrtwocell\omit\omit{_<1.5>}\drrtwocell\omit\omit{_<-1.5>}
						&{\im f}\ar[ur]|-{\im(f,1_f,1_B)}\ar[dr]|-{\im(f,1_f,1_B)}
							\ar@/^1pc/[rrr]^-{\im(cf,1_{f'cf},f'c)}
							\ar@/_1pc/[rrr]_-{\im(c'f,1_{f'c'f},f'c')}
						&{}\rtwocell\omit\omit{\;\;\;\;\;\;\;\;\;\;\;\im(\alpha,\beta)}
							\rtwocell\omit\omit{_<-7.7>}\rtwocell\omit\omit{_<7.7>}
						&{}&{\im f'}\ar[r]|-{e_{f'}^{-1}}
						&A'
						\\ &{}\rtwocell\omit\omit{_<4>}
						&{\im 1_B}\ar[r]_-{\im(c',1_{c'},c')}
						&{\im 1_{A'}}\ar[ur]|-{\im(1_{A'},1_{f'},f')}
							\ar@/_/[urr]_-{e_{1_{A'}}^{-1}}\urrtwocell\omit\omit{_<-1.5>}
						\\ &&A'\ar[ur]_-{m_{1_{A'}}^{-1}}\ar@{=}@/_3pc/[uurrr]
							\uurrtwocell\omit\omit{_<5.5>}
					}\end{xym}
			\renewcommand{\theenumi}{\arabic{enumi}}\renewcommand{\labelenumi}{\theenumi .}
			It is obvious that the functor $\Phi$ which maps $(\E,\M,\im,e,m,\varphi)$
			to $\EM$ preserves the order.
			
			{\it $\Phi$ is surjective. } Let $(\E,\M)$ be an orthogonal factorisation system.
			We must define $\im$, $e$, $m$ and $\varphi$.  By the first condition
			of orthogonal factorisation system, they are already defined on objects.
			We extend this definition to arrows and 2-arrows thanks to orthogonality.
			So, if $(a,\psi,b)\col f\ra f'$ is an arrow in $\flc$, since
			$e_f\orth m_{f'}$, there exist an arrow $\im(a,\psi,b)\col\im f\ra \im f'$
			and 2-arrows $e_{(a,\psi,b)}$ and $m_{(a,\psi,b)}$ such that the composite
			of diagram \ref{diagpremifact} is equal to $\psi$.
			We check the $\Gpd$-functoriality
			of $\im$, the $\Gpd$-naturality of $e$ and $m$, and the fact that
			$\varphi$ is a modification also by using orthogonality.
			
			{\it $\Phi$ is fully faithful. } If $(\E,\M,\im,e,m,\varphi)$ and
			$(\E',\M',\im',e',m',\varphi')$ are two factorisation systems
			such that $\E\incl\E'$ and $\M'\supseteq\M$, we define a $\Gpd$-natural transformation
			$d\col\im\ra\im'$ and modifications $\varepsilon$
			and $\mu$ thanks to orthogonality: $e_f\in\E\incl\E'$ is orthogonal
			to $m'_f\in\M'$, so there exist $d_f$, $\varepsilon_f$ and $\mu_f$ such
			that equation \ref{eqordfac} hold.  We establish the $\Gpd$-naturality
			of $d$ and the fact that $\varepsilon$ and $\mu$ are modifications by using condition 3(b) of Proposition \ref{deforthog}.
		\end{proof}

\section{Kernel-quotient systems}\label{sectsysnoyquot}

\subsection{One-dimensional examples}

In this subsection, we will follow the same scheme to present some usual one-dimensional notions.  In each case, a notion of congruence and an adjunction between some kind of quotient and some kind of kernel are introduced.  Notions of monomorphism, regular epimorphism, etc., are then defined in terms of this adjunction.  This scheme will be generalized later in this section under the name of “kernel-quotient system”.  The proofs are omitted because the results are usually well-known and because they will be given in the general case.

The first example concerns $\Ens$-categories, with equivalence relations as congruences, the kernel relation (or kernel pair) as kernel, and the usual quotient of equivalence relations as quotient.

\begin{df}
	Let $\C$ be a $\Ens$-category.  We denote by $\Equ(\C)$ the category defined
	in the following way.
	\begin{itemize}
		\item {\it Objects. } These are the \emph{equivalence relations}
			$d_0,d_1\col R\rightrightarrows A$ in $\C$.
		\item {\it Arrows. } An arrow $(R,A,d_0,d_1)\ra(R',A',d'_0,d'_1)$ consists of
			two arrows $g\col R\ra R'$ and $f\col A\ra A'$ such that $d'_0g=fd_0$ and $d'_1g=fd_1$.
	\end{itemize}
\end{df}

The kernel of an arrow is its kernel relation (or kernel pair), which is the pullback of the arrow $f$ along itself.

\begin{df}\index{kernel relation}
	Let $A\overset{f}\ra B$ be an arrow in a $\Ens$-category.
	A \emph{kernel relation} of $f$
	consists of an object $\KRel(f)$\index{KRel(f)@$\KRel(f)$}
	and of two arrows $d_0,d_1\col \KRel(f)\rightrightarrows A$, such that $fd_0=fd_1$,
	and satisfying the universal property for this condition.
\end{df}

\begin{pon}
	The kernel relation of an arrow $f$ is an equivalence relation.
\end{pon}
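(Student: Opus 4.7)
The plan is to verify that the kernel pair $(d_0, d_1)\col \KRel(f) \rightrightarrows A$ satisfies the three clauses of an internal equivalence relation — joint monicity together with reflexivity, symmetry, and transitivity structure maps — by repeatedly invoking the universal property of $\KRel(f)$ as the pullback of $f$ along itself. In each case the structure map arises as the arrow classifying an appropriate pair $(g,h)\col X\rightrightarrows A$ satisfying $fg=fh$.

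First I would observe that joint monicity of $(d_0,d_1)$ is automatic: by the universal property, arrows $X\ra \KRel(f)$ are in natural bijection with pairs $(g,h)\col X\rightrightarrows A$ with $fg=fh$, the bijection sending $k$ to $(d_0 k, d_1 k)$. Hence two parallel arrows into $\KRel(f)$ that agree after composition with $d_0$ and with $d_1$ must coincide. For reflexivity I would apply the universal property to the pair $(1_A,1_A)$, which trivially equalises $f$ with itself, producing a unique $r\col A\ra \KRel(f)$ with $d_0 r = 1_A = d_1 r$. For symmetry I would apply it to the pair $(d_1, d_0)$, using $fd_1=fd_0$, to get $s\col \KRel(f)\ra \KRel(f)$ with $d_0 s=d_1$ and $d_1 s=d_0$; a second application combined with joint monicity yields $s^2 = 1_{\KRel(f)}$.

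For transitivity I would form the pullback $P$ of $d_0$ along $d_1$, with projections $p_1,p_2\col P\rightrightarrows \KRel(f)$ satisfying $d_1 p_1 = d_0 p_2$. Then
\begin{equation*}
f(d_0 p_1) \;=\; f(d_1 p_1) \;=\; f(d_0 p_2) \;=\; f(d_1 p_2),
\end{equation*}
so the universal property yields a composition map $t\col P\ra \KRel(f)$ with $d_0 t = d_0 p_1$ and $d_1 t = d_1 p_2$. The only subtle point — not really an obstacle — is that this last step as stated requires the pullback $\KRel(f)\times_A \KRel(f)$ to exist in $\C$; if one does not want to assume this, the very same argument applied to arbitrary generalised elements $x,y\col T\rightrightarrows \KRel(f)$ with $d_1 x = d_0 y$ delivers transitivity in its pullback-free form, producing $z\col T\ra \KRel(f)$ with $d_0 z = d_0 x$ and $d_1 z = d_1 y$.
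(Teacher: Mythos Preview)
Your proof is correct and follows the standard argument; the paper in fact omits the proof of this proposition entirely, noting that such results in this subsection are well-known and will be given in the general case later. Your handling of the transitivity clause, including the pullback-free variant via generalised elements, is appropriate.
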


The notion of quotient corresponding to this notion of kernel is the usual quotient of equivalence relations, which is the coequalizer of the pair of arrows.

\begin{df}\index{quotient!in $\Ens$-categories}
	Let $d_0,d_1\col R\rightrightarrows A$
	be an equivalence relation in a category $\C$.
	A \emph{quotient} of this relation consists of an
	object $\Quot (R)\col\C$ and an arrow $q\col A\ra \Quot (R)$ such that $qd_0=qd_1$,
	satisfying the universal property for this condition.
\end{df}

When $\C$ has all the kernel relations and quotients of equivalence relations, $\KRel$ and $\Quot$ are functors between $\Equ(\C)$ and $\flc$ which form an adjunction:
\begin{xym}\xymatrix@=50pt{
	{\Equ(\C)}\ar@<2mm>[r]^-{\Quot} \ar@{}[r]|-\perp
	&{\fl{\C}}\ar@<2mm>[l]^-{\KRel}
}.\end{xym}
We will denote the unit of this adjunction by $\eta_R\col R\ra \KRel(\Quot R)$ and its counit by $\varepsilon_f\col \Quot(\KRel f)\ra f$; we can describe the counit in more details: the arrow $f$ factors in the following way through the quotient of its kernel relation; then $\varepsilon_f = (1_A,m_f)$.  We call this factorisation the \emph{regular factorisation}\index{factorisation!regular}\index{regular factorisation} of $f$.
\begin{xym}\xymatrix@C=15pt@R=30pt{
	{\KRel f}\ar@<-1mm>[rr]_-{d_1}\ar@<1mm>[rr]^-{d_0}
	&& A\ar[rr]^-f\ar[dr]_-{e_f}
	&& B
	\\ &&&{\Quot (\KRel f)}\ar[ur]_-{m_f}
}\end{xym}

We can define the monomorphisms in terms of the adjunction $\Quot\adj \KRel$.
Let us remark that, for all arrow $A\overset{f}\ra B$ in a category $\C$, there is a canonical arrow $(1_A,f)\col 1_A\ra f$ in $\fl{\C}$.  We will say that $f$ has \emph{trivial kernel relation}\index{kernel relation!trivial} if $\KRel(1_A,f)$ is an isomorphism $\KRel(1_A)\simeq \KRel(f)$, i.e.\ if $1_A, 1_A\col A\rightrightarrows A$ is a kernel relation of $f$.

\begin{df}\index{monomorphism!in a category}
	An arrow $f$ in a category $\C$ is a \emph{monomorphism} if its kernel relation exists and is trivial. We denote by $\Mono(\C)$\index{Mono(C)@$\Mono(\C)$} the full subcategory of $\fl{\C}$ whose objects are the monomorphisms.
\end{df}

The regular epimorphisms corresponding to the adjunction $\Quot\adj\KRel$ are the ordinary regular epimorphisms; these are the arrows which are canonically the quotient of their kernel relation.

\begin{df}\index{epimorphism!regular}\index{regular epimorphism}
	An arrow $f$ in $\C$ is called a \emph{regular epimorphism} 
	if the counit $\varepsilon_f\col \Quot(\KRel f)\ra f$ is an isomorphism
	(i.e.\ if $m_f$ 
	is an isomorphism).   We denote by $\Epireg(\C)$\index{regepi(C)@$\Epireg(\C)$}
	the full subcategory of $\fl{\C}$ whose objects are the regular epimorphisms.
\end{df}

As Renato Betti, Dietmar Schumacher and Ross Street have observed
(see \cite{Betti1999a,Betti1997a}), it is possible to define various interesting exactness conditions on a category $\C$ in terms of the adjunction between quotient and kernel.

The following equivalences are well-known (\cite[Proposition 2.7]{Kelly1969a}).

\begin{pon}\label{caracfactorisable}
	Let $\C$ be a category which has all kernel relations and quotients of equivalence relations.  Then the following conditions are equivalent.
	When they hold, we will say that $\C$ is 
	\emph{$\KRel$-factorisable}\index{KRel-factorisable category@$\KRel$-factorisable category}%
	\index{factorisable!KRel@$\KRel$-}.
	\begin{enumerate}
		\item For all $f\col\fl{\C}$, $m_f$ is a monomorphism.
		\item Regular epimorphisms in $\C$ are stable under composition.
	\end{enumerate}
\end{pon}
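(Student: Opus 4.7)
The plan is to prove both implications using the interplay between regular factorisation and kernel relations, together with the auxiliary lemma: \emph{if $g$ is a regular epimorphism and $g=\mu\nu$ with $\mu$ a monomorphism, then $\mu$ is an isomorphism}. This lemma follows because $\nu$ coequalises $\KRel(g)$ (a pair equalised by $g=\mu\nu$ is equalised by $\nu$, since $\mu$ is mono), so $\nu$ factors through the coequaliser $g$ as $\nu=kg$; then $g=\mu k g$ forces $\mu k=1$ by epi-cancellation, making $\mu$ a split epimorphism as well as a monomorphism, hence iso. As a direct corollary, $e_f$ is an isomorphism precisely when $f$ is a monomorphism, since $e_f$ is iso iff $\KRel(f)$ is the trivial relation.

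For (1) $\Rightarrow$ (2), I take regular epimorphisms $f\col A\ra B$ and $g\col B\ra C$ and consider the regular factorisation $gf=m_{gf}e_{gf}$. The canonical map $\KRel f\ra \KRel(gf)$ (a pair killed by $f$ is killed by $gf$) means $e_{gf}$ coequalises $\KRel f$; since $f$ is a regular epimorphism and therefore itself the coequaliser of $\KRel f$, there is a unique $h\col B\ra \Quot(\KRel(gf))$ with $e_{gf}=hf$. Then $gf=m_{gf}hf$ and $f$ being epi yields $g=m_{gf}h$. By hypothesis $m_{gf}$ is a monomorphism, so the auxiliary lemma applied to the regular epi $g$ factoring through the mono $m_{gf}$ shows $m_{gf}$ is an isomorphism, i.e.\ $gf$ is a regular epimorphism.

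For (2) $\Rightarrow$ (1), I factor $f=m_fe_f$ and then further factor $m_f=m_{m_f}e_{m_f}$, obtaining $f=m_{m_f}(e_{m_f}e_f)$. By hypothesis (2), the composite $\epsilon\eqdef e_{m_f}e_f$ of two regular epimorphisms is itself a regular epimorphism. Since $\KRel\epsilon\incl\KRel f$, the map $e_f$ coequalises $\KRel\epsilon$ and therefore factors through $\epsilon$: one obtains $e_f=s\epsilon=se_{m_f}e_f$, from which $se_{m_f}=1$ by epi-cancellation. Thus $e_{m_f}$ is simultaneously a split monomorphism and a regular epimorphism, hence an isomorphism; by the corollary of the auxiliary lemma this is exactly the statement that $m_f$ is a monomorphism.

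The only mildly delicate point is the bookkeeping of inclusions between kernel relations and the repeated use of ``coequalises, therefore factors through''; both directions ultimately rest on the same mono-meets-regular-epi principle, once applied directly and once via its corollary. No step demands more than careful use of universal properties.
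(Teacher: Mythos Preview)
Your proof is correct. The paper does not prove this proposition directly (it cites \cite{Kelly1969a}), but it later proves the general $\Gpd$-categorical version as Proposition~\ref{defcaracfactor}, and your argument is the natural $1$-dimensional specialisation of that strategy: for $(2)\Rightarrow(1)$, factor $m_f$ once more, use closure under composition to make $e_{m_f}e_f$ a regular epimorphism, and cancel; for $(1)\Rightarrow(2)$, push the factorisation of $gf$ past $f$ and invoke the mono-meets-regular-epi lemma. The one point you use without comment is that $e_f$ and $e_{m_f}$ are themselves regular epimorphisms, so that condition~(2) applies to their composite; this is automatic in dimension~$1$ (since $f=m_fe_f$ forces $\KRel(e_f)=\KRel(f)$ as subobjects of $A\times A$, with maps both ways over the product), whereas in the paper's general setting it appears as the separate $K$-idempotence hypothesis of Proposition~\ref{defcaracfactor}.
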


In a $\KRel$-factorisable category, $(\Epireg(\C),\Mono(\C))$ is a factorisation system.

\begin{df}\index{KRel-preexact categorie@$\KRel$-preexact category}\index{preexact!KRel@$\KRel$-}
	A category $\C$ is \emph{$\KRel$-preexact} if $\eta$ is an isomorphism
	(i.e.\ if every equivalence relation in
	$\C$ is canonically the kernel relation of its quotient).
\end{df}

As every adjunction, the adjunction $\Quot\adj \KRel$ induces an equivalence between the objects of $\flc$ where the counit is an isomorphisme (i.e.\ the regular epimorphisms) and the objects of $\Equ(\C)$ where the unit is an isomorphism. So, for a $\KRel$-preexact category, we have an equivalence
\begin{xym}\xymatrix@=50pt{
	{\Equ(\C)}\ar@<2mm>[r]^-{\Quot} \ar@{}[r]|-\simeq
	&{\fl{\C}}\ar@<2mm>[l]^-{\KRel}
}.\end{xym}

An important property is that every quotient is an epimorphism and every coquotient is a monomorphism.
In every category which have all the involved limits and colimits, there is a comparison arrow $w_f$ between the quotient of the kernel relation of an arrow $f$ and the coquotient of its cokernel corelation (denoted by $\mathrm{CokRel}\,f$).
\begin{xym}\xymatrix@R=40pt@C=20pt{
	{\KRel f}\ar@<-1mm>[r]_-{d_1}\ar@<1mm>[r]^-{d_0}
	&A\ar[r]^-f\ar[dr]^(0.3){e_f^2}\ar[d]_{e_f^1}
	&B\ar@<-1mm>[r]_-{i_1}\ar@<1mm>[r]^-{i_0}
	&{\mathrm{CokRel}\, f}
	\\ &{\Quot(\KRel f)}\ar[r]_-{w_f}\ar[ur]^(0.7){m_f^1}
	&{ \mathrm{Coquot}(\mathrm{CokRel}\, f)}\ar[u]_{m_f^2}
}\end{xym}

The following proposition characterises the categories for which this comparison arrow is always an isomorphism.

\begin{pon}\label{propkrelparf}
	Let $\C$ be a category in which all the kernel relations, cokernel corelations,
	quotients of equivalence relations and coquotients of coequivalence corelations exist. The following conditions are equivalent:
	\begin{enumerate}
		\item for all $f$, $w_f$ is an isomorphism;
		\item each arrow factors as a regular epimorphism followed by a
			regular mo\-no\-mor\-phism;
		\item every monomorphism is regular and every epimorphism is regular.
	\end{enumerate}
	When these conditions hold, we will say that $\C$ is \emph{$\KRel$-$\mathrm{CokRel}$-perfect}%
	\index{perfect!KRel-CokRel@$\KRel$-$\mathrm{CokRel}$}%
	\index{KRel-CokRel-perfect category@$\KRel$-$\mathrm{CokRel}$-perfect category}.
\end{pon}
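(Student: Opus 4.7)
The plan is to establish the three-way equivalence via the cyclic chain $(1)\Rightarrow(2)\Rightarrow(3)\Rightarrow(1)$, drawing on the orthogonality of regular epimorphisms to monomorphisms and the uniqueness (up to isomorphism) of factorisations.

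For $(1)\Rightarrow(2)$, I read from the defining diagram of $w_f$ the identities $m_f^1 = m_f^2\circ w_f$ and $e_f^2 = w_f\circ e_f^1$. Since $m_f^2$ is a regular monomorphism (the equalizer of the cokernel pair $i_0, i_1$) and $w_f$ is assumed to be an isomorphism, the composite $m_f^2\circ w_f$ is again a regular monomorphism. Paired with $e_f^1$, which is by construction a regular epimorphism (coequalizer of the kernel pair), this gives $f = m_f^1\circ e_f^1$ as the required factorisation.

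For $(2)\Rightarrow(3)$, let $f$ be a monomorphism, and use (2) to write $f = m\circ e$ with $m$ regular mono and $e$ regular epi. Then $e$ inherits mono-ness from $f$. Any morphism that is simultaneously a regular epimorphism and a monomorphism is an isomorphism: if $e=\mathrm{coeq}(u,v)$ is mono, the equation $e u = e v$ forces $u = v$, so $e$ coequalises a trivial pair and is invertible. Hence $f\simeq m$ is regular mono. The case of epimorphisms is strictly dual.

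For $(3)\Rightarrow(1)$, the strategy is to promote both canonical factorisations $f = m_f^1\circ e_f^1$ and $f = m_f^2\circ e_f^2$ to (regular epi, regular mono)-factorisations, then conclude $w_f$ is an isomorphism from the uniqueness of such factorisations up to canonical iso. Orthogonality of the regular epi $e_f^1$ to the monomorphism $m_f^2$ gives $w_f$ as the unique diagonal filling the square $m_f^2 e_f^2 = m_f^1 e_f^1$; to invert it, I need the dual diagonal, which is provided by the orthogonality of $e_f^2$ (once shown to be epi, hence regular epi by (3)) to $m_f^1$ (once shown to be mono, hence regular mono by (3)). The two mediating arrows are then mutually inverse by uniqueness of diagonals.

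The main obstacle is proving that, under (3), $m_f^1$ is a monomorphism and $e_f^2$ is an epimorphism. My plan for $m_f^1$ is to factor it via its kernel pair: write $m_f^1 = m^{(1)}\circ e^{(1)}$ with $e^{(1)}$ the coequalizer of the kernel pair of $m_f^1$ (hence regular epi) and $m^{(1)}$ the induced map. Then $f = m^{(1)}\circ(e^{(1)}\circ e_f^1)$, and the composite $e^{(1)}\circ e_f^1$ of two regular epis is an epi, hence (by (3)) a regular epi. Using that $f$ has cokernel pair $(i_0,i_1)$ and that $m^{(1)}\circ(e^{(1)}e_f^1) = f$ is coequalised by $(i_0,i_1)$, the fact that $e^{(1)}e_f^1$ is epi forces $i_0\circ m^{(1)} = i_1\circ m^{(1)}$, so $m^{(1)}$ factors uniquely through $m_f^2 = \mathrm{eq}(i_0,i_1)$. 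This compatibility, combined with the analogous dual argument, pins down $\Quot(\KRel f)$ and $\mathrm{Coquot}(\mathrm{CokRel}\,f)$ as isomorphic via $w_f$ and yields that $m_f^1$ is mono, $e_f^2$ is epi, closing the loop.
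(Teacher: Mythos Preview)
Your $(1)\Rightarrow(2)$ and $(2)\Rightarrow(3)$ are correct and essentially coincide with the paper's general argument (Theorem~\ref{defcaracparf}).

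The gap is in your $(3)\Rightarrow(1)$. Your strategy (show $m_f^1$ mono and $e_f^2$ epi, then invert $w_f$ by orthogonality) is exactly the right one, but the last paragraph does not deliver the claim that $m_f^1$ is mono. Knowing that $m^{(1)}$ factors through $m_f^2$ only gives $m^{(1)}=m_f^2\,u$; since you have no control over $u$, this does not force $m^{(1)}$, let alone $m_f^1$, to be mono. The closing sentence (``this compatibility, combined with the analogous dual argument, pins down\dots'') is an assertion, not an argument: you have produced maps $u\colon Q_1'\to Q_2$ and $v\colon Q_1\to Q_2'$, neither of which goes from $Q_2$ back to $Q_1$.

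The fix is already contained in your own observation. You noted that under (3) the composite $e^{(1)}e_f^1$ of two regular epis is again a regular epi; in other words, regular epimorphisms are closed under composition. That is precisely condition~2 of Proposition~\ref{caracfactorisable}, equivalent to $\KRel$-factorisability, i.e.\ to $m_f^1$ being a monomorphism for every $f$. Dually $e_f^2$ is epi. Then from $m_f^2 w_f=m_f^1$ one gets $w_f$ mono, from $w_f e_f^1=e_f^2$ one gets $w_f$ epi, and by (3) $w_f$ is simultaneously regular mono and regular epi, hence an isomorphism. This is the route the paper takes in the general case. If you prefer to finish your direct argument instead of citing Proposition~\ref{caracfactorisable}: the kernel pair of $e':=e^{(1)}e_f^1$ coincides with that of $f$ (one inclusion because $f=m^{(1)}e'$, the other because $e_f^1$ already coequalises the kernel pair of $f$), so $e'$ and $e_f^1$ are both coequalisers of $(d_0,d_1)$; the comparison isomorphism over $A$ is exactly $e^{(1)}$, whence $m_f^1$ is mono.
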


In a $\KRel$-$\mathrm{CokRel}$-perfect category, 
\begin{eqn}
	(\Epireg(\C),\Monoreg(\C)) = (\Epi(\C),\Mono(\C))
\end{eqn}
is a factorisation system; $\KRel$-$\mathrm{CokRel}$-perfect categories are both $\KRel$-fac\-to\-ri\-sable and $\mathrm{CokRel}$-factorisable (the dual of $\KRel$-factorisable).

The adjunction $\Quot\adj\KRel$ is directly connected to the factorisation system $(\Surj,\Inj)$ on the category of sets: we can recover the factorisation $(\Surj,\Inj)$ from the adjunction $\Quot\adj \KRel$.

\begin{pon}~
	\begin{enumerate}
		\item $\Inj = \Mono(\Ens)$;
		\item $\Surj = \Epireg(\Ens)$.
	\end{enumerate}
\end{pon}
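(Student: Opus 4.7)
The plan is to compute explicitly, in the category of sets, both the kernel relation of an arbitrary function and the regular factorisation determined by the adjunction $\Quot\adj\KRel$, and then to compare the results with the concrete definitions of monomorphism and regular epimorphism given above. Both halves of the proposition reduce to unwinding these universal properties in $\Ens$.

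For statement (1), I would first recall that the kernel relation of a function $f\col A\ra B$ in $\Ens$ is the fibered product $\KRel(f)=\{(a,a')\in A\times A\mid f(a)=f(a')\}$ equipped with its two projections; this always exists. The canonical arrow $\KRel(1_A)\ra\KRel(f)$ induced by $(1_A,f)\col 1_A\ra f$ is, under the identification $\KRel(1_A)\simeq A$, the diagonal $a\mapsto(a,a)$. It is a bijection precisely when every element of $\KRel(f)$ lies on the diagonal, i.e.\ when $f(a)=f(a')$ implies $a=a'$, which is precisely injectivity of $f$. Hence $f\in\Inj$ iff $f\in\Mono(\Ens)$.

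For statement (2), I would compute the regular factorisation of $f$ explicitly. The quotient $\Quot(\KRel f)$ is (up to canonical bijection) the set $A/{\sim}$ of equivalence classes for the relation $a\sim a'\iff f(a)=f(a')$, with $e_f\col A\ra A/{\sim}$ sending $a$ to $[a]$ and with $m_f\col A/{\sim}\ra B$ sending $[a]$ to $f(a)$. By construction $m_f$ is injective, and its image is $f(A)$; therefore $m_f$ is a bijection if and only if $f$ is surjective. By definition of regular epimorphism, this gives $\Surj=\Epireg(\Ens)$. The only mild verification is that $A/{\sim}$ together with the projection genuinely satisfies the universal property of the coequaliser of $d_0,d_1\col\KRel(f)\rightrightarrows A$, so that it is the quotient in the sense used here; this is standard and there is no substantive obstacle.
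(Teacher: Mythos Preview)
Your proof is correct. Note that the paper itself omits the proof of this proposition entirely: at the start of the subsection on one-dimensional examples it states that ``the proofs are omitted because the results are usually well-known and because they will be given in the general case.'' Your argument is the standard concrete unwinding of the definitions in $\Ens$, and it matches the paper's definitions of $\Mono$ (trivial kernel relation) and $\Epireg$ ($m_f$ invertible) precisely.
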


In particular, $\Ens$ is $\KRel$-factorisable.
The notion of congruence which we use can be justified by the fact that $\Ens$ is $\KRel$-preexact. 

\begin{pon}
	For every equivalence relation $d_0,d_1\col R\rightrightarrows A$ in $\Ens$, the unit
	$\eta_R$ of the adjunction $\Quot\adj \Ker$ is an isomorphism.
\end{pon}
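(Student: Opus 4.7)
The plan is to compute the unit $\eta_R$ explicitly and check that it is a bijection; since we are in $\Ens$, both sides can be described set-theoretically.

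Because $(d_0,d_1)\col R\rightrightarrows A$ is an equivalence relation, the pair $(d_0,d_1)\col R\ra A\times A$ is injective, so we may identify $R$ with a subset of $A\times A$ which is reflexive, symmetric, and transitive, and $d_0,d_1$ with the two projections restricted to $R$. First I would construct $\Quot R$: in $\Ens$ the coequaliser of $d_0,d_1$ is the set $A/R$ of equivalence classes together with the canonical surjection $q\col A\ra A/R$ sending $a$ to its class $[a]$.

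Next I would compute $\KRel(q)$: by definition this is the pullback of $q$ along itself, which in $\Ens$ is
\begin{equation*}
  \KRel(q) \;=\; \{(a,a')\in A\times A \mid q(a)=q(a')\}
\end{equation*}
equipped with the two projections to $A$. But $q(a)=q(a')$ means $[a]=[a']$, and by definition of the equivalence classes this holds if and only if $(a,a')\in R$. Hence $\KRel(q)=R$ as subobjects of $A\times A$, with matching projections $d_0,d_1$. The comparison arrow $\eta_R\col R\ra \KRel(q)$ is, by its universal property, the unique map commuting with the two projections to $A$, so under these identifications it is the identity, hence an isomorphism.

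There is no real obstacle: the only content is that the three axioms of an equivalence relation are exactly what is needed to ensure that the fibres of $q$ recover $R$, i.e.\ reflexivity and symmetry and transitivity together imply $R=\{(a,a')\mid [a]=[a']\}$. Conversely, this is the usual reason one restricts $\KRel$ to equivalence relations in the first place, making the adjunction $\Quot\adj\KRel$ restrict to an equivalence on this side.
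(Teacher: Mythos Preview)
Your proof is correct and is the standard argument. The paper itself omits the proof of this proposition, remarking at the start of the subsection that ``the proofs are omitted because the results are usually well-known and because they will be given in the general case.'' Your computation---identifying $R$ with a subset of $A\times A$, constructing $A/R$, and checking that the kernel pair of the quotient map recovers $R$---is exactly the elementary verification one would expect, and there is nothing to add.
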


We can also directly define the monomorphisms in terms of the factorisation system $(\Surj,\Inj)$.

\begin{pon}\index{monomorphism!characterisation}
	An arrow $f$ in a category $\C$ is a monomorphism if and only if, for all
	$X\col\C$, $\C(X,f)\in\Inj$.
\end{pon}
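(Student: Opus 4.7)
The plan is to unfold both sides of the equivalence through the explicit universal property of the kernel relation. By definition, $f$ is a monomorphism when the kernel relation exists and is trivial, i.e.\ when $(1_A,1_A) \col A\rightrightarrows A$ itself satisfies the universal property of a kernel relation of $f$ (equivalently, the canonical comparison $A \to \KRel(f)$ induced by $f\cdot 1_A = f\cdot 1_A$ is an isomorphism).

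For the implication from left to right, I would take arbitrary $g,h\col X\to A$ with $fg = fh$. Applying the universal property of the trivial kernel relation $1_A,1_A$, I get a unique $k\col X\to A$ satisfying $1_A k = g$ and $1_A k = h$; but these equations force $g = k = h$, which shows that $\C(X,f)$ is injective.

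For the converse, assume that $\C(X,f)$ is injective for every $X$. I would verify directly that $1_A,1_A\col A\rightrightarrows A$ is a kernel relation of $f$. First, $f\cdot 1_A = f\cdot 1_A$ holds trivially. Second, given $u,v\col X\to A$ with $fu = fv$, injectivity of $\C(X,f)$ yields $u = v$; this common arrow $k \eqdef u = v$ is then the unique morphism $X\to A$ with $1_A k = u$ and $1_A k = v$, which establishes the required universal property. Hence $\KRel(f)$ exists and is trivial, so $f$ is a monomorphism in the sense of the paper's definition.

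The argument is essentially formal; the only point requiring a little care is interpreting \emph{canonically} the trivial kernel relation — that is, remembering that saying $\KRel(1_A,f)\col \KRel(1_A) \to \KRel(f)$ is an isomorphism is the same as saying $(1_A,1_A)$ satisfies the kernel-relation universal property for $f$. Once that identification is in place, the proof is immediate, and no use of factorisation systems or of the adjunction $\Quot\adj\KRel$ is needed.
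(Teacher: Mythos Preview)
Your proof is correct. The paper in fact omits the proof of this proposition (the surrounding results are declared well-known, with proofs deferred to the general kernel-quotient framework, where this particular characterisation is only mentioned in passing), so there is nothing to compare against; your direct verification that $(1_A,1_A)$ satisfies the universal property of the kernel relation precisely when each $\C(X,f)$ is injective is the standard elementary argument and is complete as written.
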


The notions defined in terms of the dual adjunction, between the cokernel corelation and the coquotient, also induce the factorisation system $(\Surj,\Inj)$.

\begin{pon}~
	\begin{enumerate}
		\item $\Surj=\Epi(\Ens)$;
		\item $\Inj=\Monoreg(\Ens)$.
	\end{enumerate}
\end{pon}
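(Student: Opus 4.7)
The plan is to deduce the two statements by combining the previous proposition (which gave $\Surj=\Epireg(\Ens)$ and $\Inj=\Mono(\Ens)$) with two classical facts: epimorphisms in $\Ens$ are surjective, and every injection is the equalizer of its cokernel pair. Both halves are then just one inclusion plus a dual sanity check.

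For statement 1, first note the easy inclusion $\Surj\incl\Epi(\Ens)$: by the previous proposition $\Surj=\Epireg(\Ens)$, and every regular epimorphism is an epimorphism. For the converse $\Epi(\Ens)\incl\Surj$, I would argue by contradiction. Given $f\col A\ra B$ epi, suppose there exists $b_0\in B\setminus f(A)$. Define $g,h\col B\ra\{0,1\}$ with $g$ constantly $0$ and $h$ the characteristic function of $\{b_0\}$. Then $gf=hf=0$ but $g\neq h$, contradicting that $f$ is epi.

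For statement 2, the inclusion $\Monoreg(\Ens)\incl\Inj$ is automatic: every regular monomorphism is in particular a monomorphism, and by the previous proposition $\Mono(\Ens)=\Inj$. The real content is $\Inj\incl\Monoreg(\Ens)$. Given an injection $f\col A\ra B$, I would construct the cokernel corelation (pushout of $f$ against itself) $i_0,i_1\col B\rightrightarrows B\sqcup_A B$, described explicitly as the quotient of $B\sqcup B$ that identifies $f(a)$ in the first copy with $f(a)$ in the second copy for each $a\in A$. Then I would verify that $f$ is (up to iso) the coquotient (equalizer) of this pair, i.e.\ that the counit of the dual adjunction $\mathrm{CokRel}\adj\mathrm{Coquot}$ is an isomorphism at $f$.

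The main computational step is this last verification. It decomposes into two claims: (a) $i_0(b)=i_1(b)$ iff $b$ lies in $f(A)$, which uses injectivity of $f$ crucially, since without it distinct elements of $A$ could get conflated and the fibre over a point of $f(A)$ would no longer be a singleton; and (b) given any $g\col X\ra B$ with $i_0g=i_1g$, the image of $g$ lies in $f(A)$, so $g$ factors uniquely through $f$ (again using injectivity for uniqueness). I expect (a) to be the one point where some care is needed, since it requires understanding the equivalence relation generated on $B\sqcup B$ by the identifications coming from $A$; once that is settled, everything else is immediate, and combining with the already-established inclusion completes both statements.
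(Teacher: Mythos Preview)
Your argument is correct and follows the standard route. Note that the paper omits the proof of this proposition, stating at the outset of the subsection that ``the proofs are omitted because the results are usually well-known and because they will be given in the general case.'' So there is no paper proof to compare against; your approach is exactly the classical one.

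One small clarification on where injectivity enters in part 2: claim (a) as you state it --- that $i_0(b)=i_1(b)$ iff $b\in f(A)$ --- actually holds for \emph{any} map $f$, not just injections, since the equivalence relation on $B\sqcup B$ only identifies pairs $(0,f(a))\sim(1,f(a))$ and never touches points outside $f(A)$. Injectivity is needed precisely where you invoke it second: to conclude that $f$ itself, rather than the inclusion $f(A)\hookrightarrow B$, is the equalizer. Without injectivity the equalizer of $i_0,i_1$ is still $f(A)$, but the comparison map $A\to f(A)$ fails to be an isomorphism. Your claim (b) captures this correctly; only the parenthetical remark attached to (a) is slightly misplaced.
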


\bigskip
In $\Ensp$-categories\index{Set*-category@$\Ensp$-category} (categories enriched in the category $\Ensp$\index{Set*@$\Ensp$} of pointed sets), we can study the same adjunction $\Quot\adj\KRel$ as in $\Ens$ and everything works in the same way as for ordinary $\Ens$-categories.
In the following of this subsection, we study the adjunction $\Coker\adj\Ker$ between the cokernel and the kernel.

For this adjunction, the congruences are the monomorphisms.  The congruences in $\C$ form thus a category $\Mono(\C)$, which is a full subcategory of the category $\flc$ of the arrows in $\C$.

\begin{df}\index{kernel!in a $\Ensp$-category}
	Let $A\overset{f}\ra B$ be an arrow in a $\Ensp$-category.  A \emph{kernel} of $f$
	consists of an object $\Ker f$ and of an arrow
	$k\col\Ker f\ra A$, such that $fk=0$, satisfying the universal property for this condition.
\end{df}

\begin{pon}
	The kernel of an arrow $f$ is a monomorphism.
\end{pon}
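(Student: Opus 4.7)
The plan is to reduce the claim entirely to the uniqueness clause of the universal property of the kernel. Concretely, I would prove that $k \colon \Ker f \to A$ is a classical monomorphism, that is, $k g = k h$ implies $g = h$ for any parallel pair $g, h \colon X \rightrightarrows \Ker f$; this is equivalent to $k$ having trivial kernel relation $1_{\Ker f}, 1_{\Ker f}$, which is the condition in the definition of monomorphism given earlier.

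First I would observe that the defining equation $f k = 0$ together with the fact that in an $\Ensp$-category zero arrows absorb composition ensures that for any $g \colon X \to \Ker f$ the composite $a \eqdef k g \colon X \to A$ satisfies $f a = (f k) g = 0$. So, given $g, h \colon X \rightrightarrows \Ker f$ with $k g = k h$, the shared composite $a = k g = k h$ lies in the domain of the universal property of $\Ker f$.

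Next I would invoke the universal property directly: there is a \emph{unique} arrow $u \colon X \to \Ker f$ with $k u = a$. Since both $g$ and $h$ satisfy this equation, uniqueness forces $g = h$. Thus $k$ is a monomorphism, and its kernel relation is trivial as required by the definition.

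There is no real obstacle here; the argument is essentially an unpacking of the universal property, with the only substantive point being the observation $f(kg) = 0$, which follows from the $\Ensp$-enrichment. The same argument will serve as the template for the analogous statement in the $\Gpdp$-enriched (2-dimensional) setting later in the paper, once the universal property is suitably weakened up to 2-cells.
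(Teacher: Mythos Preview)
Your argument is correct and is the standard one. The paper actually omits this proof, stating at the beginning of the subsection that ``the proofs are omitted because the results are usually well-known and because they will be given in the general case.'' Your reduction to the uniqueness clause of the universal property is exactly what is intended, and your observation that left-cancellability of $k$ forces $1_{\Ker f},1_{\Ker f}$ to satisfy the universal property of the kernel relation of $k$ (so that the kernel relation exists and is trivial, matching the paper's Definition) is the right way to connect the classical monomorphism condition to the definition used here.
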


The notion of quotient corresponding to this notion of kernel is the cokernel, which is the dual of the kernel. If $\C$ has all kernels and the cokernels of monomorphisms, kernels and cokernels form functors $\Coker\adj\Ker$:
\begin{xym}\xymatrix@=50pt{
	{\Mono(\C)}\ar@<2mm>[r]^-{\Coker} \ar@{}[r]|-\perp
	&{\fl{\C}}\ar@<2mm>[l]^-{\Ker}
}.\end{xym}
We denote by $\eta_i\col i\ra \Ker(\Coker i)$ the unit of this adjunction and its counit
by $\varepsilon_f\col \Coker(\Ker f)\ra f$.  Like for the adjunction $\Quot\adj\KRel$, this adjunction gives a factorisation of each arrow, called the \emph{$\Ker$-regular factorisation}\index{factorisation!Ker-regular@$\Ker$-regular}\index{Ker-regular factorisation@$\Ker$-regular factorisation}, as in the following diagram. Then $\varepsilon_f= (1_A,m_f)$.
\begin{xym}\xymatrix@C=15pt@R=30pt{
	{\Ker f}\ar[rr]^-{k_f}
	&& A\ar[rr]^-f\ar[dr]_-{e_f}
	&& B
	\\ &&&{\Coker (\Ker f)}\ar[ur]_-{m_f}
}\end{xym}

We will say that $A\overset{f}\ra B$ has \emph{trivial kernel}\index{kernel!trivial!in a $\Ensp$-category} if $\Ker(1_A,f)\col \Ker(1_A)\ra \Ker(f)$ is an isomorphism, i.e.\ if $0$ is a kernel of $f$.  This allows us to define a notion of monomorphism in terms of the adjunction $\Coker\adj\Ker$.

\begin{df}\index{0-monomorphism!in a Ens*-category@in a $\Ensp$-category}
	An arrow $f$ in a $\Ensp$-category $\C$ is a \emph{0-monomorphism} if its
	kernel exists and is trivial (i.e.\ if $0$ is a kernel of $f$).
	We denote by $\zMono(\C)$ the full sub-category
	of $\fl{\C}$ whose objects are the 0-monomorphisms.
\end{df}

The dual notion will be called \emph{0-epimorphism}\index{0-epimorphism}.
We can also define a notion of regular epimorphism corresponding to the adjunction $\Coker\adj\Ker$.  This is an arrow which is canonically the cokernel of its kernel.

\begin{df}\index{epimorphism!normal}\index{normal!epimorphism}
	An arrow $f$ in $\C$ is a \emph{normal epimorphism}
	if the counit $\varepsilon_f\col \Coker(\Ker f)\ra f$ is a isomorphism (i.e.\ 
	if $m_f$ is a isomorphism).   We denote by
	$\Epinorm(\C)$ the full subcategory
	of $\fl{\C}$ whose objects are the normal epimorphisms.
\end{df}

The dual notion will be called \emph{normal monomorphism}\index{monomorphism!normal}\index{normal!monomorphism}.  We can now define some exactness conditions in terms of this adjunction, in the same way as for those defined in terms of the adjunction $\Quot\adj\KRel$.

\begin{pon}
	Let $\C$ be a $\Ensp$-category which has the kernels and the cokernels.
	Then the following conditions are equivalent; when they hold, we will say that $\C$ is \emph{$\Ker$-factorisable}%
	\index{factorisable!Ker-@$\Ker$-}\index{Ker-factorisable@$\Ker$-factorisable!Set*-category@$\Ensp$-category}:
	\begin{enumerate}
		\item for all $f\col\fl{\C}$, $m_f$ is a 0-monomorphism;
		\item normal epimorphisms in $\C$ are stable under composition.
	\end{enumerate}
\end{pon}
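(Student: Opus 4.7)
The plan is to mirror the proof of the 1-dimensional analog (Proposition \ref{caracfactorisable}) in the pointed setting, substituting the adjunction $\Coker \adj \Ker$ for $\Quot \adj \KRel$ and, correspondingly, 0-monomorphisms for monomorphisms, normal epimorphisms for regular epimorphisms, and kernels for kernel relations. No genuinely new ideas are required; each step of the 1-dimensional argument uses only universal properties that have exact pointed analogs.

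For (1) $\Rightarrow$ (2), I would start with normal epimorphisms $q_1\col A\ra B$ and $q_2\col B\ra C$ and form the $\Ker$-regular factorization $q_2 q_1 = m e$, where $e = e_{q_2 q_1}$ is a normal epimorphism by construction and $m = m_{q_2 q_1}$ is a 0-monomorphism by hypothesis~(1). From $e\circ k_{q_1} = 0$ (since $k_{q_1}$ factors through $k_{q_2 q_1}$), the cokernel universal property of $q_1$ produces $e'\col B\ra\Coker(\Ker(q_2 q_1))$ with $e = e' q_1$, and epi-cancellation of $q_1$ gives $m e' = q_2$. Then $m\circ(e' k_{q_2}) = q_2 k_{q_2} = 0$, and the 0-monomorphy of $m$ yields $e' k_{q_2} = 0$; the cokernel property of $q_2$ then produces $\tilde{e}'\col C\ra\Coker(\Ker(q_2 q_1))$ with $e' = \tilde{e}' q_2$. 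Two more epi-cancellations (with $q_2$ and with $e$) give $m\tilde{e}' = 1$ and $\tilde{e}' m = 1$, so $m$ is invertible and $q_2 q_1$ is a normal epimorphism.

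For (2) $\Rightarrow$ (1), I would start with an arbitrary $f\col A\ra B$ and iterate the factorization once: $f = m_f e_f$ and $m_f = m_{m_f} e_{m_f}$, so $f = m_{m_f}\circ(e_{m_f} e_f)$. By (2), the composite $e_{m_f} e_f$ is a normal epimorphism, hence the cokernel of its own kernel. The core computation is to check that this kernel is canonically $k_f$: any $h$ with $e_{m_f} e_f h = 0$ satisfies $f h = m_{m_f}\cdot 0 = 0$ and conversely $e_f k_f = 0$ gives $e_{m_f} e_f k_f = 0$, so the two kernels share a universal property and are canonically isomorphic. Consequently $e_{m_f} e_f$ and $e_f$ are both cokernels of $k_f$, which forces the canonical comparison $e_{m_f}$ to be an isomorphism. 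But $e_{m_f}$ is by construction the cokernel of $k_{m_f}$, and a cokernel arrow that is an isomorphism must be a cokernel of the zero arrow, so $k_{m_f} = 0$ and $m_f$ is a 0-monomorphism.

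The routine part is the chain of epi-cancellations and cokernel-factorizations in direction (1) $\Rightarrow$ (2); no obstacle is expected there. The one step asking for a moment of reflection is the concluding move in direction (2) $\Rightarrow$ (1), namely that an isomorphism cokernel arises only from a zero arrow: this is a one-line consequence of the cokernel universal property ($e k = 0$ with $e$ iso forces $k = 0$), and it is precisely the pointed-category feature that makes the whole argument close.
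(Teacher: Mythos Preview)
Your proof is correct and follows the same overall strategy as the paper, which omits the argument here and defers to the general kernel--quotient framework (Proposition~\ref{defcaracfactor}): iterate the factorisation once more for $(2)\Rightarrow(1)$, and use cancellation along normal epimorphisms for $(1)\Rightarrow(2)$. The one point you leave implicit is that $e_f$ and $e_{m_f}$ are themselves normal epimorphisms so that hypothesis~(2) applies to their composite; this holds because in a $\Ensp$-category every cokernel is the cokernel of its kernel (the idempotence of $\Coker\dashv\Ker$, automatic in dimension~1), which is precisely the extra hypothesis that surfaces explicitly in the paper's 2-dimensional generalisation.
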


In a $\Ker$-factorisable $\Ensp$-category, $(\Epinorm(\C),\zMono(\C))$ is a factorisation system.

\begin{df}\index{preexact!Ker@$\Ker$-}\index{Ker-preexact@$\Ker$-preexact!Set*-category@$\Ensp$-category}
	A $\Ensp$-category $\C$ is \emph{$\Ker$-preexact} if every monomorphism in $\C$ is canonically the kernel of its cokernel (i.e.\ if $\eta$ is an isomorphism).
\end{df}

In a $\Ker$-preexact category, the adjunction $\Coker \adj\Ker$ restricts to an equivalence
\begin{xym}\xymatrix@=50pt{
	{\Mono(\C)}\ar@<2mm>[r]^-{\Coker}
		\ar@{}[r]|-\simeq
	&{\Epinorm(\C)}\ar@<2mm>[l]^-{\Ker}
}.\end{xym}

Let us turn to the dual constructions and to the comparison map between the factorisations.  In this case, there is a special phenomenon: the cokernel and the quotient coincide, as well as the coquotient and the kernel.

We can remark that every cokernel is a 0-epimorphism (even an epimorphism) and every kernel is a 0-monomorphism (even a monomorphism).  As for the kernel relation and the quotient, we can construct a comparison arrow $w_f$ between the cokernel of the kernel of an arrow $f$ and the kernel of its cokernel.  We get the following diagram.
\begin{xym}\xymatrix@R=40pt@C=20pt{
	{\Ker f}\ar[r]^-{k_f}
	&A\ar[r]^-f\ar[dr]^(0.3){e_f^2}\ar[d]_{e_f^1}
	&B\ar[r]^-{q_f}
	&{\Coker f}
	\\ &{\Coker(\Ker f)}\ar[r]_-{w_f}\ar[ur]^(0.7){m_f^1}
	&{ \Ker(\Coker f)}\ar[u]_{m_f^2}
}\end{xym}

As in the case of the adjunction $\Quot\adj\KRel$, we can characterise the $\Ensp$-categories for which $w_f$ is an isomorphism for all $f$.  This is Theorem 39.17
of \cite{Herrlich1973a}.  Puppe-exact categories appear in Puppe \cite{Puppe1962a} and in the book of Mitchell \cite{Mitchell1965a} and were studied by Marco Grandis \cite{Grandis1992a}.

\begin{pon}
	Let $\C$ be a $\Ensp$-category in which kernels and cokernels exist.
	The following conditions are equivalent; when they hold, we will say that $\C$ is \emph{$\Ker$-$\Coker$-perfect}
	or \emph{Puppe-exact}\index{Puppe-exact!Set*-category@$\Ensp$-category}%
	\index{Set*-category@$\Ensp$-category!Puppe-exact}%
	\index{Ker-Coker-perfect Set*-category@$\Ker$-$\Coker$-perfect $\Ensp$-category}%
	\index{perfect!Ker-Coker@$\Ker$-$\Coker$-}:
	\begin{enumerate}
		\item for all $f$, $w_f$ is a isomorphism;
		\item each arrow factors as a normal epimorphism followed by a normal mo\-no\-mor\-phism;
		\item every 0-monomorphism is a normal monomorphism
			and every 0-epi\-mor\-phism is a normal epimorphism.
	\end{enumerate}
\end{pon}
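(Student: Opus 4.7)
The plan is to prove the three conditions equivalent via the chain $(1)\Leftrightarrow (2)\Leftrightarrow (3)$, following the template of Proposition~\ref{propkrelparf}.

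For the equivalence $(1)\Leftrightarrow (2)$: if $w_f$ is always an isomorphism, then $f = (m_f^2\circ w_f)\circ e_f^1$ exhibits $f$ as a normal epi followed by a normal mono, since $e_f^1 = \Coker(k_f)$ is a normal epi by construction and $m_f^2\circ w_f$ is iso-conjugate to the kernel $m_f^2 = \Ker(q_f)$. Conversely, given any factorization $f = m\circ e$ with $e$ a normal epi (say $e = \Coker u$) and $m$ a normal mono, since $m$ is mono (kernels are monos) one has $\Ker e\simeq \Ker f$, and since $e$ is a cokernel, $e\simeq \Coker(\Ker e)\simeq e_f^1$; dually $m\simeq m_f^2$. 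Then $m_f^2\circ e_f^1\simeq f = m_f^2\circ w_f\circ e_f^1$, and cancelling the mono $m_f^2$ on the left and the epi $e_f^1$ on the right forces $w_f$ to be an isomorphism.

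For $(2)\Rightarrow (3)$: let $f$ be a 0-monomorphism and factor it as $f = m\circ e$ with $e = \Coker u$ a normal epi and $m$ a normal mono. Since $m$ is mono and $\Ker f = 0$, we get $\Ker e = 0$, so $u$ factors through $\Ker e = 0$ and vanishes, making $e\simeq \Coker(0\to A)\simeq 1_A$ and $f\simeq m$ a normal monomorphism. The argument for 0-epimorphisms is dual.

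For $(3)\Rightarrow (2)$, which is the subtlest implication: the candidate factorization is $f = m_f^1\circ e_f^1$, with $e_f^1 = \Coker(k_f)$ already a normal epi; it therefore suffices to show $m_f^1$ is a 0-monomorphism, since (3) will then upgrade it to a normal mono. To establish $\Ker(m_f^1) = 0$, let $j\col J\to \Coker(k_f)$ be the kernel of $m_f^1$ and $c\col \Coker(k_f)\to C$ its cokernel. Since $j$ is a kernel, $j$ is itself a normal mono, so $\Ker c = j$; and since $m_f^1\circ j = 0$, we may write $m_f^1 = \bar m\circ c$, yielding $f = \bar m\circ (c\circ e_f^1)$. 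The composite $c\circ e_f^1$ is a composite of cokernels, hence an epi, hence a 0-epi, and hypothesis (3) then promotes it to a normal epi $\Coker(\Ker(c\circ e_f^1))$. A direct chase shows $\Ker(c\circ e_f^1)\simeq \Ker f = k_f$ (using $\Ker c = j$ together with $m_f^1\circ j = 0$), so $c\circ e_f^1$ and $e_f^1$ agree as cokernels of $k_f$; cancelling the epi $e_f^1$ gives $c\simeq 1$, whence $j = 0$. The dual argument shows $e_f^2$ is a 0-epimorphism, providing the symmetric factorization. The main obstacle is precisely this kernel/cokernel chase—identifying $\Ker(c\circ e_f^1)$ with $\Ker f$—and it is exactly where the full force of hypothesis (3), promoting the composite $c\circ e_f^1$ to a normal epi, is essentially used.
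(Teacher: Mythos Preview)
Your proof is correct. The paper omits the proof of this specific proposition, deferring to the general version (Theorem~\ref{defcaracparf}), so the relevant comparison is with that argument.

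For $(1)\Leftrightarrow(2)$ and $(2)\Rightarrow(3)$ your arguments are essentially the same as the paper's, only phrased more concretely. The genuine difference is in the direction $(3)\Rightarrow$. The paper argues: condition~(3) makes normal epimorphisms coincide with 0-epimorphisms, which are closed under composition (dual of Corollary~\ref{kmonstabcompsimp}); this yields $\Ker$-factorisability via Proposition~\ref{defcaracfactor}, so $m_f^1$ is a 0-mono and dually $e_f^2$ is a 0-epi; then cancellation forces $w_f$ to be both a 0-mono and a 0-epi, hence (by~(3) again and Proposition~\ref{kmonintkregestequ}) an isomorphism. Your route is instead a direct chase: take $j=\Ker(m_f^1)$, $c=\Coker(j)$, and show $c$ is invertible by identifying $\Ker(c\circ e_f^1)$ with $k_f$ and invoking~(3) on the composite. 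Your argument is more elementary and self-contained, needing only the automatic idempotence of $\Coker\dashv\Ker$ in dimension~1 (every kernel is the kernel of its cokernel); the paper's argument is more structural, exploiting the kernel--quotient system machinery already set up, and it generalises verbatim to the 2-dimensional setting of Theorem~\ref{defcaracparf}, where your chase would require more care.
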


In a Puppe-exact $\Ensp$-category, 
\begin{eqn}
	(\Epinorm,\Mononorm)=(\Epi,\Mono)=(\zEpi,\zMono)
\end{eqn}
is a factorisation system.
With this definition, we have almost left the purely pointed world since, by adding the existence of products or coproducts (one of them suffices), $\C$ is abelian and thus an $\Ab$-category.  There are few examples of non-abelian Puppe-exact categories (see \cite{Carboni1996a} and \cite{Borceux2007a}).

We could ask that in a $\Ensp$-category $\KRel$-regular and  $\Ker$-regular factorisations exist and coincide.  This is the case for semi-abelian categories. 

The adjunction $\Coker\adj\Ker$ is closely related to the factorisation system $(\Bij^*,\zMono)$ on $\Ensp$, that we will describe now.

\begin{df}\index{0-injective}
	A morphism of pointed sets $A\overset{f}\ra B$ is \emph{0-injective} if for
	all $a\col A$, if $f(a)=0$, then $a=0$.  We denote by $\zInj$\index{0-inj@$\zInj$}
	the full subcategory of $\fl{(\Ensp)}$ whose objects are the 0-injective morphisms.
\end{df}

\begin{df}\index{bijective outside the kernel}
	A morphism of pointed sets $A\overset{f}\ra B$ is \emph{bijective outside the kernel}
	if the following conditions are satisfied:
	\begin{enumerate}
		\item $f$ is surjective;
		\item $f$ is injective outside the kernel:
			for all $a,a'\col A$, if $f(a)=f(a')$, then either $f(a)=0=f(a')$,
			or $a=a'$.
	\end{enumerate}
	We denote by $\Bij^*$\index{Bij*@$\Bij^*$} the full subcategory of $\fl{\Ensp}$
	whose objects are the morphisms bijective outside the kernel.
\end{df}

\begin{pon}\label{factzero}
	$(\Bij^*,\zInj)$ is a factorisation system.
\end{pon}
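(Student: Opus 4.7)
The plan is to use the equivalence $\caspar{Fac}(\C)\simeq\caspar{OrthFac}(\C)$ established above and verify that $(\Bij^*,\zInj)$ forms an orthogonal factorisation system on $\Ensp$. Since $\Ensp$ is a $\Ens$-category, hence locally discrete as a $\Gpd$-category, orthogonality reduces to the classical unique-diagonal-filler condition. For the factorisation, given a pointed map $f\col A\ra B$ I would let $\im f$ be the pointed quotient of $A$ obtained by collapsing $\Ker f = f^{-1}(0)$ to the basepoint, take $e_f\col A\ra\im f$ to be the canonical quotient map, and $m_f\col\im f\ra B$ to be the unique pointed map satisfying $m_fe_f=f$ (well-defined because $\Ker f$ is crushed). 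Then $e_f$ is surjective by construction, and if $e_f(a)=e_f(a')\neq 0$ then neither $a$ nor $a'$ is in $\Ker f$, so $a=a'$; thus $e_f\in\Bij^*$. For $m_f\in\zInj$: if $m_f([a])=0$ then $f(a)=0$, so $a\in\Ker f$ and $[a]$ is the basepoint.

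For the orthogonality, consider a commutative square $mu=ve$ with $e\col A\ra I$ in $\Bij^*$ and $m\col X\ra Y$ in $\zInj$. Since $e$ is surjective, any diagonal $d\col I\ra X$ with $de=u$ is forced: for $i=e(a)$ one must set $d(i)=u(a)$. Well-definedness splits into two cases. For $i\neq 0$, the element $a$ with $e(a)=i$ is uniquely determined by the injectivity outside the kernel of $e$, so $d(i)=u(a)$ is unambiguous. For $i=0$, any $a$ with $e(a)=0$ satisfies $m(u(a))=v(e(a))=v(0)=0$, hence $u(a)=0$ by $0$-injectivity of $m$, so the forced value $d(0)=0$ is consistent (and also the required basepoint condition). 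The equations $de=u$ and $md=v$ are then immediate from the definition of $d$ and the surjectivity of $e$, and uniqueness of $d$ is built into the construction.

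The only genuine content of the argument is the $i=0$ case of well-definedness: it is precisely the $0$-injectivity of $m$ that forces $u$ to send the whole kernel-fibre of $e$ to the basepoint, which is exactly what is needed so that the forced assignment $d(0)=0$ actually agrees with $u$ on every $a\in e^{-1}(0)$. Everything else is routine bookkeeping. Once both conditions are verified, the equivalence $\caspar{Fac}(\C)\simeq\caspar{OrthFac}(\C)$ turns $(\Bij^*,\zInj)$ into a factorisation system in the sense used in the chapter.
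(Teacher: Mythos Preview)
Your proof is correct. The factorisation you construct is exactly the one in the paper: collapsing $\Ker f$ to the basepoint is precisely the quotient $A/{\sim}$ with $a\sim a'$ iff $f(a)=0=f(a')$ or $a=a'$. The paper's own proof stops after describing this factorisation and asserting ``it is easy to see that this defines a factorisation system'', leaving the remaining verifications (in particular the functoriality of $\im$ and the Eilenberg--Moore conditions) to the reader. You take a slightly different and more explicit route: rather than checking those conditions directly, you verify the orthogonality $e\orth m$ for $e\in\Bij^*$, $m\in\zInj$ and then invoke the equivalence $\caspar{Fac}(\Ensp)\simeq\caspar{OrthFac}(\Ensp)$ established earlier in the chapter. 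This is a perfectly legitimate use of that equivalence and has the advantage that the functoriality of the factorisation is obtained for free, whereas the paper's phrasing leaves it implicit. The orthogonality argument you give is the standard one and is correct; your identification of the $i=0$ case as the only place where $0$-injectivity of $m$ is genuinely used is exactly right.
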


	\begin{proof}
		The image of $A\overset{f}\ra B$ is $\im^* f= A/{\sim}$,
		where $a\sim a'$ if $f(a)=0=f(a')$ or $a=a'$,
		with $0$ as the distinguished element. We define $m_f\col\im^* f\ra B\col a\mapsto f(a)$ and
		$e_f\col A\ra \im^* f\col a\mapsto a$.
		It is easy to see that $m_f$ is 0-injective, that $e_f$ is bijective outside the kernel and that this defines a factorisation system.
	\end{proof}

The following proposition shows that we can get back the factorisation system $(\Bij^*,\zInj)$ from the adjunction $\Coker\adj\Ker$.  

\begin{pon}~
	\begin{enumerate}
		\item $\zInj = \zMono(\Ensp)$;
		\item $\Bij^* = \Epinorm(\Ensp)$.
	\end{enumerate}
\end{pon}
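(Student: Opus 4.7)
The plan is to compute the relevant kernels and cokernels explicitly in $\Ensp$ and identify them with the data of the factorisation system of Proposition \ref{factzero}.

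First I would verify (1). Given $f\col A\ra B$ in $\Ensp$, the kernel of $f$ is the pointed subset $\Ker f = \{a\col A\,|\,f(a)=0\}$ with the inclusion $k_f\col\Ker f\ra A$ (this is the standard construction and satisfies the universal property since a map $X\ra A$ composes to $0$ with $f$ exactly when its image lies in $\Ker f$). The canonical comparison $\Ker 1_A\ra\Ker f$ is an isomorphism iff $\Ker f = \{0\}$, i.e.\ iff the only preimage of $0$ under $f$ is $0$. This is precisely 0-injectivity, so $\zMono(\Ensp)=\zInj$.

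Next, for (2), I would compute the cokernel of the inclusion $k_f\col\Ker f\ra A$. In $\Ensp$, cokernels are constructed by quotienting by the image: $\Coker k_f = A/{\sim}$, where $a\sim a'$ iff $a=a'$ or both $a,a'\in\Ker f$ (i.e.\ $f(a)=0=f(a')$), with basepoint the class of $0$. But this is exactly the construction of $\im^* f$ from the proof of Proposition \ref{factzero}. The factorisation $f = m_f\circ e_f$ through $\Coker(\Ker f)$ induced by the universal property coincides, under this identification, with the $(\Bij^*,\zInj)$-factorisation: the quotient map $A\ra A/{\sim}$ is $e_f$, and the induced map $A/{\sim}\ra B\col[a]\mapsto f(a)$ is $m_f$.

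Finally, $f$ is a normal epimorphism by definition exactly when this $m_f$ is an isomorphism. Since $(\Bij^*,\zInj)$ is a factorisation system, $m_f$ is an isomorphism iff $f\in\Bij^*$ (one direction: condition 2 of factorisation system applied to $f\in\E=\Bij^*$ gives $m_f\in\Equ$; the other direction: if $m_f$ is an isomorphism, then $f\simeq e_f\in\Bij^*$, so $f\in\Bij^*$ by repleteness of $\E$). Thus $\Epinorm(\Ensp)=\Bij^*$.

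There is no real obstacle here; the only point requiring care is to recognize that the equivalence relation defining $\Coker(\Ker f)$ in $\Ensp$ is literally the relation $\sim$ introduced in the proof of Proposition \ref{factzero}, after which both claims follow by invoking the already-established factorisation system.
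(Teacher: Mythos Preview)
Your proof is correct. The paper actually omits the proof of this proposition, noting that the results are well-known and follow from the general framework developed later; your direct computation---identifying $\Coker(\Ker f)$ with the $\im^* f$ of Proposition~\ref{factzero} and then invoking Proposition~\ref{propfacdefem} for the factorisation system $(\Bij^*,\zInj)$---is exactly the standard argument one would give and is entirely in the spirit of the surrounding discussion.
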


Moreover, the following proposition justifies the definition of congruence we use.

\begin{pon}
	For every monomorphism $i$ in $\Ensp$, the unit
	$\eta_i$ of the adjunction $\Coker\adj\Ker$ is an isomorphism (every monomorphism is normal).
\end{pon}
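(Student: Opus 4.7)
The plan is to compute $\Coker i$ and $\Ker(\Coker i)$ explicitly in $\Ensp$ for a given monomorphism $i\col A\rightarrowtail B$, and then identify the unit $\eta_i$ as the corestriction of $i$ onto its image, which is a bijection of pointed sets.

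First, I would describe the cokernel concretely: in $\Ensp$, $\Coker i$ is the quotient $q\col B \twoheadrightarrow B/i(A)$, whose underlying pointed set collapses all of $i(A)$ to the basepoint and leaves $B\setminus i(A)$ untouched. This is a standard construction and it is easily verified to satisfy the universal property of the cokernel in $\Ensp$: any pointed map $g\col B\to X$ with $g\circ i = 0$ factors uniquely through $q$.

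Next I would compute $\Ker q$. By construction, $q(b) = 0$ if and only if $b \in i(A)$, so $\Ker q = i(A)$ with the inclusion $k\col i(A)\hookrightarrow B$ as the canonical arrow. The unit $\eta_i\col i\ra k$ is then an arrow in $\Mono(\Ensp) \subseteq \fl{\Ensp}$, i.e.\ a pair $(\eta_i^1, \eta_i^2)$ of pointed maps with $k\circ \eta_i^1 = \eta_i^2 \circ i$, and from the description of the unit recalled in the excerpt we have $\eta_i^2 = 1_B$. Thus $\eta_i^1\col A \to i(A)$ is uniquely determined by the condition $k\circ \eta_i^1 = i$, and is nothing but the corestriction $a\mapsto i(a)$ of $i$ onto its image.

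Finally, it remains to check that $\eta_i$ is an isomorphism in $\Mono(\Ensp)$; since this category is the full subcategory of $\fl{\Ensp}$ on the monomorphisms, an iso there is a pair of isos of pointed sets. The component $\eta_i^2 = 1_B$ is obviously an iso, and $\eta_i^1\col A\to i(A)$ is a bijection because $i$ is injective by hypothesis, and it preserves the basepoint since $i(0) = 0$. Hence $\eta_i$ is an isomorphism. There is no real obstacle: the only point that requires care is unwinding what “isomorphism in $\Mono(\Ensp)$” means in terms of the two components, after which the statement reduces to the tautology that an injection corestricts to a bijection onto its image.
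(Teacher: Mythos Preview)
Your proof is correct. Note that the paper explicitly omits the proofs in this subsection, stating that ``the proofs are omitted because the results are usually well-known and because they will be given in the general case.'' Your direct computation---identifying $\Coker i$ as $B/i(A)$, then $\Ker(\Coker i)$ as $i(A)\hookrightarrow B$, and finally recognising $\eta_i^1$ as the corestriction of an injection onto its image---is exactly the standard elementary verification the paper has in mind, so there is nothing to compare.
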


We can also directly define 0-monomorphisms in terms of the factorisation system
 $(\Bij^*,\zInj)$.

\begin{pon}\index{0-monomorphism!characterisation}
	An arrow $f$ in a $\Ensp$-category $\C$ is a 0-monomorphism if and only if
	for all $X\col\C$, $\C(X,f)\in\zInj$.
\end{pon}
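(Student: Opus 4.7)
The plan is to unwind both sides of the biconditional and observe that the universal property characterising trivial kernel is precisely the pointwise 0-injectivity of the representable $\C(X,-)$ at $f$. For the forward direction, I would assume $f$ is a 0-monomorphism, so that the arrow $0 \col 0 \ra A$ from the zero object serves as a kernel of $f$. For any $X$ and any $g \col X \ra A$ with $fg = 0$, the universal property produces an $h \col X \ra 0$ with $g = 0 \circ h$. Since the $\Ensp$-enrichment forces composition with a zero morphism to yield zero, $g = 0$, whence $\C(X,f) \in \zInj$.

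Conversely, supposing $\C(X,f) \in \zInj$ for every $X$, the plan is to verify directly that $0 \col 0 \ra A$ satisfies the universal property of $\Ker f$. Enrichment gives $f \circ 0 = 0$. For a test arrow $g \col X \ra A$ with $fg = 0$, 0-injectivity forces $g = 0$, and taking $h \col X \ra 0$ to be the unique arrow (since $0$ is terminal in $\C$) yields immediately $g = 0 = 0 \circ h$, with uniqueness of $h$ automatic. The statement is essentially a translation between the internal formulation (\emph{$f$ has trivial kernel}) and its external one (\emph{all representables $\C(X,f)$ are 0-injective}), so no step is genuinely hard; the only mildly delicate point is noticing that the phrase ``$0$ is a kernel of $f$'' implicitly assumes a zero object in $\C$, without which neither side of the equivalence is well-formulated.
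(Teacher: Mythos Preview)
Your proof is correct and follows the natural direct approach; the paper in fact omits this proof entirely, remarking that the results of that subsection are well-known and are subsumed by the general kernel-quotient framework of Section~\ref{sectsysnoyquot}. Your argument is exactly the expected unwinding of the universal property.

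One small inaccuracy in your closing remark: the right-hand side, $\C(X,f)\in\zInj$ for all $X$, is perfectly well-formulated without a zero object, since it refers only to the map of pointed sets $f\circ-\col\C(X,A)\ra\C(X,B)$ and uses nothing beyond the $\Ensp$-enrichment. It is only the left-hand side whose formulation (as the paper phrases it, ``$0$ is a kernel of $f$'') presupposes a zero object. In fact one can work slightly more generally: if one reads ``trivial kernel'' as $\Ker(1_A)\simeq\Ker(f)$ via the canonical comparison map, then only the existence of these two kernels is needed, and $\Ker(1_A)$ is then automatically terminal. This does not affect the validity of your argument, which is entirely sound under the assumption of a zero object.
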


For the dual theory, there is a difference with the case of the adjunction $\Quot\adj\KRel$.  The $\Coker$-coregular factorisation in $\Ensp$ doesn't give back the factorisation system $(\Bij^*,\zMono)$, but the system $(\Surj,\Inj)$.

\begin{pon}\label{caraczepiensp}~
	\begin{enumerate}
		\item $\Surj=\zEpi(\Ensp)$;
		\item $\Inj=\Mononorm(\Ensp)$.
	\end{enumerate}
\end{pon}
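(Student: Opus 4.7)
The plan is to compute, for each pointed map $f\col A \ra B$, the cokernel of $f$ in $\Ensp$ and then the kernel of that cokernel, and to read off both equalities directly from these explicit descriptions.

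For part 1, I would first recall that in $\Ensp$ the cokernel of $f\col A\ra B$ is the quotient $B/{\sim}$, where $\sim$ collapses all elements of $f(A)$ to a single class, which becomes the basepoint, and leaves the other elements of $B$ alone. From this description, $\Coker f$ is the zero object $\{0\}$ precisely when $B = f(A)$, i.e.\ when $f$ is surjective. Since $f$ is a $0$-epimorphism exactly when $\Coker f \simeq 0$, this yields $\Surj = \zEpi(\Ensp)$.

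For part 2, I would apply the formula for kernels in $\Ensp$ (namely $\Ker g = g^{-1}(0)$ as a pointed subset of the domain) to the canonical projection $q\col B\ra \Coker f$: by the description of $\Coker f$, one immediately gets $\Ker q = f(A) \subseteq B$. The canonical comparison $e_f\col A\ra \Ker q$ given by the universal property of $\Ker q$ applied to $f$ is then simply $a\mapsto f(a)$, regarded as landing in $f(A)$. Since this map is automatically surjective onto $f(A)$, it is a bijection if and only if $f$ is injective. Hence $f$ is canonically the kernel of its cokernel precisely when $f$ is injective, giving $\Inj = \Mononorm(\Ensp)$.

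The only point requiring any real care — rather than a true obstacle — is careful bookkeeping of basepoints when forming $B/f(A)$ and identifying $f(A)$ as a pointed subset of $B$ (note that $f(A)$ always contains $0_B = f(0_A)$, so it is indeed pointed, and the basepoint of $B/f(A)$ is the collapsed class rather than a pre-existing element); once these conventions are set up, both identifications are immediate and the proof reduces to the two one-line arguments above.
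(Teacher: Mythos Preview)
Your proof is correct. The paper does not actually give a proof of this proposition; it states at the outset of that subsection that ``the proofs are omitted because the results are usually well-known and because they will be given in the general case.'' Your direct computation of $\Coker f = B/f(A)$ and $\Ker(\Coker f) = f(A)$ in $\Ensp$, followed by the observation that the comparison map $A\ra f(A)$ is bijective precisely when $f$ is injective, is exactly the standard argument and is entirely consistent with the framework the paper sets up.
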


\bigskip
A last example is the category $\Ab$\index{Ab@$\Ab$} of abelian groups, where the two factorisations of $\Ensp$ merge: 0-injections and injections coincide.  $\Ab$ is both $\Ker$-$\Coker$-perfect and $\KRel$-$\mathrm{CokRel}$-perfect:
the factorisation $(\Surj,\Inj)$ can be constructed by the cokernel of the kernel and the kernel of the cokernel as well as by the quotient of the kernel relation and the coquotient of the cokernel corelation.  More generally, in an $\Ab$-category, there is an equivalence between $\Equ(\C)$ and $\Mono(\C)$, and this equivalence commutes with, on the one hand, $\Ker$ and $\KRel$ and, on the other hand, $\Coker$ and $\Quot$.  This allows us to translate the notions of factorisability, preexactness or perfection in terms of one of the adjunctions from these notions in terms of the other adjunction.  This is what gives the theorem of Tierney.

\subsection{Definition of kernel-quotient system}\label{soussectdefsysnoyquot}

The definitions and propositions we've just recalled in a parallel way for the adjunction
$\Quot\adj\KRel$ in $\Ens$-categories and for the adjunction $\Coker\adj\Ker$ in $\Ensp$-categories can be developed in a general framework discovered by Renato Betti, Dietmar Schumacher and Ross Street (\cite{Betti1999a}, \cite{Betti1997a}).  
They present this framework in a 2-categorical context but, as they remark in their introduction, we can develop it in an enriched context (see  \cite{Dupont2008a}).

We start with a symmetric monoidal closed category (or, in dimension 2, with a $\Gpd$-category) $\V$, with a factorisation system $\EM$ on $\V$ and with $\W$, a full subcategory $\E$ which generates $\EM$ (in the orthogonality sense, i.e.\ for all $f\col\flv$, $f\in\M$ if and only if, for all $w\in\W$, $w\orth f$).  If the involved limits exist, we can define from $\W$, for a $\V$-category $\C$, an adjunction
	\begin{xym}\xymatrix@C=30pt@R=45pt{
		{[\W\op,\C]}\ar@<2mm>[rr]^-{Q_\W}\ar@{}[rr]|-\perp
		&&{\fl{\C},}\ar@<2mm>[ll]^-{K_\W}
	}\end{xym}
where $[\W\op,\C]$ is the $\V$-category of $\V$-functors $\W\op\ra\C$.  The functor $K_\W$ maps each arrow $f\col\flc$ to its \emph{$\W$-kernel}, which is a functor $\W\op\ra\C$, defined as the limit of $f\col\deux\ra\C$ weighted by the distributor $\phi\col\W\tens\deux\ra\V$ corresponding by adjunction to the inclusion $\W\hookrightarrow\flv$.  Dually, the functor $Q_\W$ maps each functor $H\col\W\op\ra\C$ to its \emph{$\W$-quotient}, which is an arrow in $\C$, defined as the colimit of $H$ weighted by $\phi$.

A functor $H\col\W\op\ra\C$ which becomes a $\W$-kernel in $\V$ under the action of the representable functors (i.e.\ such that, for all $X\col\C$, $\C(X,H)\col\W\op\ra\V$ is a $\W$-kernel) is called a \emph{$\W$-congruence} in $\C$.  We denote by $\W\text{-}\Cong(\C)$ the full sub-$\V$-category of $[\W\op,\C]$ whose objects are the $\W$-congruences.  If $\W$ contains the arrow $1_I$ (identity on the unit of the tensor product of $\V$), to each $\W$-congruence $H$ corresponds an object $\partial H\eqdef H(1_I)$; we say that $H$ is a $\W$-congruence \emph{on} $\partial H$.  

We have thus an adjunction $Q_\W\adj K_\W\col\flc\ra\W\text{-}\Cong(\C)$, with unit $\eta$ and counit $\varepsilon$.
	\begin{xym}\xymatrix@C=30pt@R=45pt{
		{\W\text{-}\Cong(\C)}\ar@<2mm>[rr]^-{Q_\W}\ar@<1mm>[dr]^-{\partial}\ar@{}[rr]|-\perp
		&&{\fl{\C}}\ar@<2mm>[ll]^-{K_\W}\ar@<2mm>[dl]^-{\partial_0}
		\\ &{\C}\ar@<0mm>[ur]^-{\mathrm{1_{-}}}\ar@<1mm>[ul]^-{K_\W1_{-}}
	}\end{xym}
This adjunction satisfies the following properties:
\begin{enumerate}
	\item there exist natural isomorphisms $\alpha\col\partial\overset{\sim}\Ra
		\partial_0 Q_\W$ and $\beta\col \partial_0\overset{\sim}\Ra\partial K_W$
		such that $\beta Q_\W\circ\alpha =\partial\eta$ and $\beta^{-1}\circ\alpha^{-1}K_\W
		= \partial_0\varepsilon$;
	\item $\varepsilon 1_{-}\col QK1_{-}\Ra 1_{-}$ is an isomorphism.
\end{enumerate}
Such an adjunction on $\C$ will be called here a \emph{kernel-quotient system} on $\C$.

In the case where $\V$ is $\Ens$, equipped with the factorisation system $(\Surj,\Inj)$, and where the objects of $\W$ are the unique arrows $2\ra 1$ and $1\ra 1$, the $\W$-kernel is the kernel relation, the $\W$-quotient is the quotient (coequalizer), and the $\W$-congruences are the equivalence relations (because in $\Ens$ every equivalence relation is the kernel relation of its quotient).  The functor $\partial$ maps $d_0,d_1\col R\rightrightarrows A$ to the object $A$ ($R$ is an equivalence relation \emph{on $A$}). We have thus the following kernel-quotient system.
	\begin{xym}\label{syskrelquot}\xymatrix@C=30pt@R=45pt{
		{\Equ(\C)}\ar@<2mm>[rr]^-{\Quot}\ar@<1mm>[dr]^-{\partial}\ar@{}[rr]|-\perp
		&&{\fl{\C}}\ar@<2mm>[ll]^-{\KRel}\ar@<2mm>[dl]^-{\partial_0}
		\\ &{\C}\ar@<0mm>[ur]^-{\mathrm{1_{-}}}\ar@<1mm>[ul]^-{\KRel 1_{-}}
	}\end{xym}

In the case where $\V$ is $\Ensp$, with the factorisation system $(\Bij^*,\zMono)$, and where the objects of $\W$ are the unique arrow $2\ra 1$ and the arrow $1_2\col 2\ra 2$ (where $2$ is the unit of the tensor product of pointed sets, with $0$
as distinguished element), the $\W$-kernel is the ordinary kernel, the $\W$-quotient is the cokernel, and the $\W$-congruences are the monomorphisms (because in $\Ensp$, every monomorphism is the kernel of its cokernel). The functor $\partial$ maps a monomorphism $M\overset{m}\ra A$ to the object $A$ ($m$ is a subobject \emph{of $A$}). Thus we get the following kernel-quotient system.
	\begin{xym}\label{sysnoyconoy}\xymatrix@C=30pt@R=45pt{
		{\Mono(\C)}\ar@<2mm>[rr]^-{\Coker}\ar@<1mm>[dr]^-{\partial_1}\ar@{}[rr]|-\perp
		&&{\fl{\C}}\ar@<2mm>[ll]^-{\Ker}\ar@<2mm>[dl]^-{\partial_0}
		\\ &{\C}\ar@<0mm>[ur]^-{\mathrm{1_{-}}}\ar@<1mm>[ul]^-{\Ker 1_{-}}
	}\end{xym}

From the kernel-quotient system $Q_\W\adj K_\W$, in parallel to what we did in the previous subsection for the systems $\Quot\adj\KRel$ and $\Coker\adj\Ker$, we can define notions of $K_\W$-monorphism (this is an arrow whose $K_\W$-kernel is trivial), of $K_\W$-regular epimorphism (this is  an arrow $f$ which is canonically isomorphic to $Q_\W K_\W f$), of $K_\W$-factorisability, of $K_\W$-preexactness, etc.
We can characterise the $K_\W$-monomorphisms in terms of the factorisation system $\EM$: $f$ is a $K_\W$-monomorphism if and only if, for all $X\col\C$, $\C(X,f)\in\M$.

It is always true that
\begin{eqn}
	\M = K_\W\text{-}\Mono(\V),
\end{eqn}
but it is not always the case that
\begin{eqn}
	\E = K_\W\text{-}\Epireg(\V)
\end{eqn}
(this is not the case, for example, for pointed groupoids, with the adjunction $\Coker\adj\Ker$).
This last property hold if and only if $\W$ is a \emph{dense} subcategory of $\E$.  In this case, we get back the factorisation system from the adjunction $Q_\W\adj K_\W$.

If we have two dense subcategories $\W_1$, $\W_2$ which generate the same factorisation system on $\V$, we can prove that the notions of monomorphism, regular epimorphism, factorisability, preexactness, etc., defined in terms of $\W_1$ and of $\W_2$ are equivalent (see \cite{Dupont2008a}).  This is the case for $\Ab$ where, for the two kernel-quotient systems \ref{syskrelquot} and \ref{sysnoyconoy}, $\Surj=K_\W\text{-}\Epireg(\Ab)$.  Since $\E$ is dense in $\E$, we can conclude that the notions defined in terms of a dense subcategory $\W$ in $\E$ are equivalent to those defined in terms of $\E$ itself; thus $\W$ play the role of an approximation of $\E$, easier to handle since it contains only a few objects.

In the following chapters, we will define congruences, kernels and quotients directly, without starting from a full sub-$\Gpd$-category $\W$.  That is the reason why we won't study in more details Betti-Schumacher-Street's theory.  We will merely develop a theory of kernel-quotient systems on  $\Gpd$-categories (this theory applies in particular to kernel-quotient systems on $\Ens$-categories). 
\bigskip

Let us define now kernel-quotient systems on a $\Gpd$-category.

\begin{df}\label{defsysnoyquot}\index{system!kernel-quotient|see{kernel-quotient system}}\index{kernel-quotient system}
	Let $\C$ be a $\Gpd$-category.  A \emph{kernel-quotient system} on $\C$ consists of:
	\begin{enumerate}
		\item a $\Gpd$-category $\Cong(\C)$ (whose objects will be called
			\emph{congruences}\index{congruence}) together with a $\Gpd$-functor
			$\partial\col\Cong(\C)\ra\C$
			(we will say that $H\col\Cong(\C)$ is a congruence \emph{on $\partial H$});
		\item an adjunction $Q\adj K\col\flc\ra\Cong(\C)$,
			with unit $\eta\col 1_{\Cong(\C)}\Ra KQ$ and counit
			$\varepsilon\col QK\Ra 1_{\flc}$ (we will call the congruence $Kf$
			the \emph{kernel}\index{kernel} of $f$ and the arrow $QH$ the \emph{quotient}%
			\index{quotient} of $H$);
		\item $\Gpd$-natural transformations $\alpha\col\partial\Ra \partial_0 Q$ and
			$\beta\col \partial_0\Ra\partial K$ such that
			$\partial_0\varepsilon\simeq\beta^{-1}\circ\alpha^{-1}K$
			and $\partial\eta\simeq\beta Q\circ\alpha$.
	\end{enumerate}
	Moreover, $\varepsilon 1_{-}$ must be an equivalence.
\end{df}

	\begin{xym}\xymatrix@C=30pt@R=45pt{
		{\Cong(\C)}\ar@<2mm>[rr]^-{Q}\ar@<1mm>[dr]^-{\partial}\ar@{}[rr]|-\perp
		&&{\fl{\C}}\ar@<2mm>[ll]^-{K}\ar@<2mm>[dl]^-{\partial_0}
		\\ &{\C}\ar@<0mm>[ur]^-{\mathrm{1_{-}}}\ar@<1mm>[ul]^-{\mathrm{triv}\eqdef K1_{-}}
	}\end{xym}
	The definition becomes more symmetrical if we see a kernel-quotient system as an adjunction which commutes, on the one hand, with the $\Gpd$-functors “domain”
	$\partial$ and $\partial_0$ to $\C$: we have
	\begin{itemize}
		\item $\partial_0 Q\simeq\partial$ (the domain of the quotient of a congruence on $A$ is $A$) and
		\item $\partial K\simeq \partial_0$ (the kernel of an arrow is a
			congruence on the domain of this arrow),
	\end{itemize}
	and, on the other hand, with the $\Gpd$-functors “trivial arrow” 
	$1_{-}$ (which maps an object $A$ to the “trivial” arrow on $A$, which is $1_A$)
	and “trivial congruence”\index{congruence!trivial} $\mathrm{triv}\eqdef K1_{-}$
	(which maps an object $A$ on the “trivial” congruence on $A$, which is the kernel of $1_A$): we have
	\begin{itemize}
		\item $K1_{-}\simeq \mathrm{triv}$ (the trivial congruence on $A$ is the kernel of $1_A$), and
		\item $Q\mathrm{triv}\simeq 1_{-}$ ($\varepsilon 1_A\col QK1_A\ra 1_A$
			is an equivalence: the quotient of the trivial congruence on $A$
			is $1_A$).
	\end{itemize}
	
	In the examples, the system will always be described semi-strictly, in the sense that
	$\partial\equiv\partial_0Q$ (and $\alpha$ is the identity),
	$\partial_0\equiv\partial K$ (and $\beta$ is the identity) and that
	\begin{eqn}
		\partial_0\varepsilon\equiv \mathrm{id}\text{ and }\partial\eta\equiv\mathrm{id}.
	\end{eqn}
	From now on, we assume that kernel-quotient systems are described strictly, to simplify calculations.
		
	For each arrow $A\overset{f}\ra B$ in $\C$, $\varepsilon_f$ is an arrow
	$QKf\ra f$ in $\flc$, whose first component is $1_A$, since we assume that
	$\partial_0\varepsilon$ is the identity.  We introduce the following notation:
	\begin{xyml}
		\varepsilon_f\;\;=\;\;\begin{gathered}\xymatrix@=40pt{
			A\ar@{=}[r]\ar[d]_{e_f}\drtwocell\omit\omit{\;\;\;\varphi_f}
			&A\ar[d]^f
			\\{\im f}\ar[r]_-{m_f}
			&B
		}\end{gathered} .
	\end{xyml}
	We call the factorisation of $f$ given by this diagram the \emph{$K$-regular factorisation}\index{K-regular factorisation@$K$-regular factorisation}%
	\index{factorisation!K-regular@$K$-regular} of $f$.
	
	In fact we have a $\Gpd$-functor $\im\eqdef \partial_1 QK\col \flc\ra\C$,
	$\Gpd$-natural transformations $e\eqdef \delta QK\col\partial_0\Ra \im$
	and $m\eqdef\partial_1\varepsilon\col\im\Ra \partial_1$,
	together with a modification $\varphi\col\delta\Rrightarrow me$.  Thus we are in the situation of diagram \ref{eqfact}.  But in general this does not define a factorisation system.  In Subsection \ref{sssectcondex} we study under which conditions we actually have a factorisation system.

\subsection{Monomorphisms and regular epimorphisms}

There is a canonical arrow $(1_A,1_f,f)\col 1_A\ra f$ in $\flc$. We use it to define monomorphisms relative to the kernel-quotient system $Q\adj K$ \cite[Definition 3.3]{Betti1999a}.

\begin{df}\label{defkmon}\index{monomorphism!K-@$K$-}\index{K-monomorphism@$K$-monomorphism}
	We say that an arrow $A\overset{f}\ra B$ in $\C$ is a
	\emph{$K$-monomorphism} if $K(1_A,1_f,f)\col K 1_A\ra Kf$ is an equivalence
	(i.e.\ if the kernel of $f$ is trivial).	
	We denote by $\KMono$\index{K-Mono@$\KMono$} the full sub-$\Gpd$-category of $\flc$ whose objects are the $K$-monomorphisms.
\end{df}

In a $\Ens$-category, the $\KRel$-monomorphisms are just the ordinary monomorphisms, whereas in a $\Ensp$-category, the $\Ker$-monomorphisms are the 0-mo\-no\-mor\-phisms.
We will now prove some properties of $K$-monomorphisms, similar to the usual properties of monomorphisms. To begin with, the kernels of two arrows connected by a $K$-monomorphism are equivalent \cite[Remark 3.4 2]{Betti1999a}.

\begin{pon}\label{simplnoysysteme}
	Let $A\overset{g}\ra B\overset{m}\ra C$ and $A\overset{f}\ra C$ be arrows
	in $\C$ and $\varphi\col f\Ra mg$ be a 2-arrow.  If $m$ is a $K$-monomorphism,
	then $K(1_A,\varphi^{-1},m)\col Kg\ra Kf$ is an equivalence.
\end{pon}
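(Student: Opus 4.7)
The plan is to decompose the morphism $(1_A, \varphi^{-1}, m)\col g \ra f$ in $\flc$ as the composite of $(1_A, 1_{mg}, m)\col g \ra mg$ followed by $(1_A, \varphi^{-1}, 1_C)\col mg \ra f$.  The second factor has identity first and third components together with an invertible 2-arrow (every 2-arrow in a $\Gpd$-category being invertible), hence is an equivalence in $\flc$, and the $\Gpd$-functor $K$ sends it to an equivalence in $\Cong(\C)$.  It therefore suffices to show that $K(1_A, 1_{mg}, m)\col Kg \ra K(mg)$ is an equivalence.

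To this end I would exhibit the commutative square in $\flc$
\begin{xym}\xymatrix@=30pt{
	g \ar[r]^-{(1_A, 1_{mg}, m)} \ar[d]_-{(g, 1_g, 1_B)}
	& mg \ar[d]^-{(g, 1_{mg}, 1_C)}
	\\ 1_B \ar[r]_-{(1_B, 1_m, m)}
	& m
}\end{xym}
and argue that it is a pseudo-pullback in $\flc$.  Since pullbacks in $\flc$ are computed componentwise in $\C$, this reduces to verifying that $g\col A \ra B$ together with the displayed projections realises both the pullback of the domain cospan formed by $g$ and $1_B$ into $B$ and the pullback of the codomain cospan formed by $1_C$ and $m$ into $C$; each of these is a trivial pullback along an identity.

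Granting the pullback property, the conclusion follows at once.  The $\Gpd$-functor $K$ is a right adjoint to $Q$ and so preserves pseudo-pullbacks, yielding a pullback square in $\Cong(\C)$.  The bottom edge $K(1_B, 1_m, m)\col K1_B \ra Km$ is an equivalence by the hypothesis that $m$ is a $K$-monomorphism, and pullbacks of equivalences in a $\Gpd$-category are again equivalences; thus the top edge $K(1_A, 1_{mg}, m)$ is an equivalence, as required.  The main technical point is the pseudo-pullback claim in $\flc$: in the strictly described setting this is essentially routine, but must be unpacked with some care to ensure the universal property holds up to coherent 2-isomorphism.
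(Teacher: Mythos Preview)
Your proof is correct and follows essentially the same approach as the paper's.  Both arguments exhibit a commutative square in $\flc$ with the canonical arrow $(1_B,1_m,m)\col 1_B\ra m$ as one side, verify it is a pullback by checking that the domain and codomain squares are (trivial) pullbacks in $\C$, and then use that $K$ is a right adjoint together with the $K$-monomorphism hypothesis to conclude.  Your preliminary reduction from $f$ to $mg$ via the equivalence $(1_A,\varphi^{-1},1_C)$ is a cosmetic simplification; the paper instead keeps $f$ and incorporates $\varphi$ directly into the cube, but the resulting pullback squares are the same up to transposition and composition with that equivalence.
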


	\begin{proof}
		The following cube, where vertical arrows are thought of as objects of $\flc$, is a pullback in $\flc$ (because the upper and lower squares are pullbacks in $\C$).  
		\begin{xym}\xymatrix@=20pt{
			&A\ar[rr]^g\ar@{=}[dl]\ar'[d][dd]_(-0.4){g}
			&&B\ar@{=}[dd]\ar@{=}[dl]
			\\ A\ar[rr]_(0.3){g}\ar[dd]_f
			&&B\ar[dd]_(0.3){m}
			\\ &B\ar@{=}'[r][rr]\ar[dl]^-m
			&{}&B\ar[dl]^-{m}
			\\ C\ar@{=}[rr]
			&&C
		}\end{xym}
		Now, since $K$ is a right adjoint, it
		preserves limits.  Thus, by applying $K$ to this cube, we get the following pullback
		in $\Cong(\C)$.  As $m$ is a $K$-monomorphism, the right side of the square
		is an equivalence.  So, since the square is a pullback, the left side is also
		an equivalence.\qedhere
		\begin{xym}\xymatrix@=40pt{
			Kg\ar[r]^-{K(g,1_g,1_B)}\ar[d]_{K(1_A,\varphi^{-1},m)}\drtwocell\omit\omit{}
			&K1_B\ar[d]^{K(1_B,1_m,m)}
			\\ Kf\ar[r]_-{K(g,\varphi,1_C)}
			&Km
		}\end{xym}
	\end{proof}

A corollary is that $K$-monomorphisms are stable under composition and satisfy the cancellation law \cite[Theorem 3.5]{Betti1999a}.

\begin{coro}\label{kmonstabcompsimp}
	Let $A\overset{m}\ra B\overset{n}\ra C$ be arrows in $\C$.
	\begin{enumerate}
		\item If $m$ and $n$ are $K$-monomorphisms, then $nm$ is a $K$-monomorphism.
		\item If $nm$ and $n$ are $K$-monomorphisms, then $m$ is a
			$K$-monomorphism.
	\end{enumerate}
\end{coro}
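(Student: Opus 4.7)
The plan is to apply Proposition~\ref{simplnoysysteme} to the composable pair $A\overset{m}\ra B\overset{n}\ra C$ together with the composite $nm$ and the identity 2-arrow $\varphi\eqdef 1_{nm}\col nm\Ra n\circ m$: matching the proposition's hypotheses with its ``$m$'' taken to be our~$n$, its ``$g$'' our~$m$, and its ``$f$'' the composite $nm$, the conclusion is that whenever $n$ is a $K$-monomorphism the canonical comparison $K(1_A,1_{nm},n)\col Km\ra K(nm)$ is an equivalence in $\Cong(\C)$. Both parts then follow from this single application combined with $\Gpd$-functoriality of $K$.

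For part~1, I would combine the equivalence $K(1_A,1_{nm},n)\col Km\ra K(nm)$ just obtained with the defining condition that $m$ is a $K$-monomorphism, namely that $K(1_A,1_m,m)\col K1_A\ra Km$ is an equivalence. By $\Gpd$-functoriality of $K$, the composite $K(1_A,1_{nm},n)\circ K(1_A,1_m,m)$ is canonically identified with $K\bigl((1_A,1_{nm},n)\circ(1_A,1_m,m)\bigr)=K(1_A,1_{nm},nm)\col K1_A\ra K(nm)$, which as a composite of equivalences is itself an equivalence; hence $nm$ is a $K$-monomorphism.

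For part~2, the same application of the proposition (which only uses that $n$ is a $K$-monomorphism) gives that $K(1_A,1_{nm},n)\col Km\ra K(nm)$ is an equivalence, while the hypothesis that $nm$ is a $K$-monomorphism gives that $K(1_A,1_{nm},nm)\col K1_A\ra K(nm)$ is also an equivalence. The same functoriality identification writes the latter as $K(1_A,1_{nm},n)\circ K(1_A,1_m,m)$, so two-out-of-three for equivalences in $\Cong(\C)$ forces $K(1_A,1_m,m)\col K1_A\ra Km$ to be an equivalence, i.e., $m$ is a $K$-monomorphism. The only mild obstacle is the bookkeeping verification that the composite $(1_A,1_{nm},n)\circ(1_A,1_m,m)$ in $\flc$ coincides with the canonical arrow $(1_A,1_{nm},nm)\col 1_A\ra nm$; this is a routine 2-cell pasting that collapses to an identity thanks to the strictly-described convention adopted earlier in this chapter.
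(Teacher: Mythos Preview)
Your proof is correct and follows essentially the same approach as the paper: apply Proposition~\ref{simplnoysysteme} to obtain that $K(1_A,1_{nm},n)\col Km\ra K(nm)$ is an equivalence whenever $n$ is a $K$-monomorphism, then use the factorisation $K(1_A,1_{nm},nm)\simeq K(1_A,1_{nm},n)\circ K(1_A,1_m,m)$ together with two-out-of-three for equivalences. The paper packages this as a commuting triangle and a single ``if and only if'' chain, but the underlying argument is identical to yours.
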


	\begin{proof}
		By the previous proposition, if $n$ is a $K$-monomorphism, the upper arrow of the
		following diagram is an equivalence.  Thus $m$ is a $K$-monomorphism if and only
		if the left arrow is an equivalence if and only if the right arrow is an equivalence
		if and only if $nm$ is a $K$-monomorphism.\qedhere
		\begin{xym}\xymatrix@C=20pt@R=30pt{
			Km\ar[rr]^-{K(1_A,1_{nm},n)}_-{\sim}\ar@{}[drr]|(0.47){\underset{~}{\dir{=>}}}
			&&K(nm)
			\\ &K1_A\ar[ul]^-{K(1_A,1_m,m)}\ar[ur]_-{K(1_A,1_{nm},nm)}
			&{}
		}\end{xym}
	\end{proof}

A $K$-regular epimorphism is an arrow which is canonically the quotient of its kernel.  In general, quotients are not $K$-regular epimorphisms, unlike what happens in the 1-dimensional examples.  This will be the case when the adjunction $Q\adj K$ is idempotent (see the following subsection).

\begin{df}\label{defkregepi}\index{epimorphism!K-regular@$K$-regular}\index{K-regular epimorphism@$K$-regular epimorphism}
	An arrow $A\overset{f}\ra B$ in $\C$ is a
	\emph{$K$-regular epimorphism} if $m_f$ is an equivalence
	(i.e.\ if $\varepsilon_f$ is an equivalence).  We denote by
	$\KEpireg$\index{K-regepi@$\KEpireg$} the full sub-$\Gpd$-category of $\flc$
	whose objects are the $K$-regular epimorphism.
\end{df}

In a $\Ens$-category, the $\KRel$-regular epimorphisms are the ordinary regular epimorphisms and, in a $\Ensp$-category, the $\Ker$-regular epimorphisms are the normal epimorphisms.

\begin{pon}\label{kmonintkregestequ}
	$\KMono\cap\KEpireg = \Equ$.
\end{pon}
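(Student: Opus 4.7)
The plan is to read off both inclusions from a single commuting (up to invertible 2-cell) square, namely the $\Gpd$-naturality square of the counit $\varepsilon$ at the canonical arrow $(1_A,1_f,f)\col 1_A\ra f$ of $\flc$:
\begin{xym}\xymatrix@=40pt{
QK1_A\ar[r]^-{QK(1_A,1_f,f)}\ar[d]_{\varepsilon_{1_A}}
\drtwocell\omit\omit{}
&QKf\ar[d]^{\varepsilon_f}
\\ 1_A\ar[r]_-{(1_A,1_f,f)}
&f
}\end{xym}
First I would identify each of the four sides with a condition appearing in the statement. The left arrow $\varepsilon_{1_A}$ is an equivalence by the last clause of Definition \ref{defsysnoyquot}. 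Since the system is described strictly (so that $\partial_0\varepsilon=\mathrm{id}$), the first component of $\varepsilon_f$ is $1_A$ and its second component is $m_f$, whence $\varepsilon_f$ is an equivalence in $\flc$ if and only if $m_f$ is an equivalence in $\C$, i.e.\ $f\in\KEpireg$. Likewise, the bottom arrow $(1_A,1_f,f)$ has first component $1_A$ and second component $f$, so it is an equivalence in $\flc$ if and only if $f\in\Equ$. Finally, $K(1_A,1_f,f)$ being an equivalence is precisely $f\in\KMono$ by Definition \ref{defkmon}, and since $Q$ is a $\Gpd$-functor, this implies that the top arrow is also an equivalence.

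With these identifications at hand, both inclusions follow from a 2-out-of-3 argument on the square of equivalences. If $f\in\Equ$, the bottom and left sides are equivalences; the top side is then an equivalence because $K$ and $Q$ both preserve equivalences (being $\Gpd$-functors), and the square forces the right side to be an equivalence as well, so $f\in\KMono\cap\KEpireg$. Conversely, if $f\in\KMono\cap\KEpireg$, the top, left, and right sides are all equivalences, and the square forces the bottom to be one, so $f\in\Equ$. I expect no genuine obstacle: the only point demanding a little care is the matching of first components, which is handled once and for all by the strict description assumed in Subsection \ref{soussectdefsysnoyquot}.
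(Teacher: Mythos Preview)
Your proof is correct and essentially the same as the paper's. Both use the $\Gpd$-naturality square of $\varepsilon$ at $(1_A,1_f,f)$ and a 2-out-of-3 argument on equivalences; the paper merely splits the inclusion $\Equ\subseteq\KMono\cap\KEpireg$ into two separate paragraphs (one for $\KMono$, one for $\KEpireg$) while you handle them together, but the content is identical.
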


	\begin{proof}
		{\it $\KMono\cap\KEpireg \subseteq \Equ$. } Let $A\overset{f}\ra B$ be an arrow
		which is both a $K$-monomorphism and a $K$-regular epimorphism.
		Then, in the following square, the left arrow is an equivalence by the definition
		of kernel-quotient system, the upper arrow is an equivalence because $K(1_A,1_f,f)$
		is an equivalence, since $f$ is a $K$-monomorphism, and the right arrow is an
		equivalence because $f$ is a $K$-regular epimorphism.  So the upper arrow is
		an equivalence and, as $f\simeq 1_A$, $f$ is an equivalence.
		\begin{xym}\xymatrix@=40pt{
			QK1_A\ar[r]^-{QK(1_A,1_f,f)}\ar[d]_{\varepsilon_{1_A}}\drtwocell\omit\omit{}
			&QKf\ar[d]^{\varepsilon_f}
			\\ 1_A\ar[r]_-{(1_A,1_f,f)}
			&f
		}\end{xym}

		{\it $\Equ\incl\KMono$. } Let $A\overset{f}\ra B$ be an equivalence.
		Then $(1_A,1_f,f)$ is an equivalence in $\flc$ and so $K(1_A,1_f,f)$
		is an equivalence in $\Cong(\C)$ and $f$ is a $K$-monomorphism.
		
		{\it $\Equ\incl\KEpireg$. } Let $f$ be an equivalence. In the above diagram,
		the upper and lower arrow are equivalences because $f$ is and the left arrow
		is an equivalence by the definition of kernel-quotient system.  So $\varepsilon_f$
		is an equivalence and $f$ is a $K$-regular epimorphism.
	\end{proof}

The following proposition is Theorem 4.7 of \cite{Betti1999a}.

\begin{pon}\label{quotestorthamon}
	For every congruence $H$ and every $K$-monomorphism $A\overset{m}\ra B$,
	$QH\orth m$.  In particular, for every $K$-regular epimorphism $e$
	and every $K$-monomorphism $m$, $e\orth m$.
\end{pon}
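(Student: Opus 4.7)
\emph{Proof sketch.} The plan is to establish the general statement $QH\orth m$ directly from the definition of orthogonality; the particular case for $K$-regular epimorphisms then follows because such an $e$ is equivalent to $QKe$ (by Definition \ref{defkregepi}), and orthogonality is invariant under equivalence of the arrows involved.

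Write $q\eqdef QH\col P\ra R$ where $P\eqdef\partial_0 QH$ and $R\eqdef\partial_1 QH$. By condition 2 of Proposition \ref{deforthog}, it suffices to show that the comparison functor $\lan q,m\ran\col\C(R,A)\ra\flc(q,m)$ is an equivalence. The central idea is to factor $\lan q,m\ran$ as
\begin{xyml}
\C(R,A)\xrightarrow{\lan q,1_A\ran}\flc(q,1_A)\xrightarrow{(1_A,1_m,m)\circ -}\flc(q,m),
\end{xyml}
and to show that each factor is an equivalence.

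For the first factor, I would observe that since $1_A$ is an equivalence in $\C$, the diagram of representables obtained by post-composition with $1_A$ yields trivially the pullback of condition 1 of Proposition \ref{deforthog} (any square in which two parallel sides are equivalences is a pullback precisely when the other two sides are too, and here both $1_A\circ -\col\C(P,A)\ra\C(P,A)$ and $1_A\circ -\col\C(R,A)\ra\C(R,A)$ are identities up to the natural isomorphisms of $\Gpd$-category structure). Hence $q\orth 1_A$ and $\lan q,1_A\ran$ is an equivalence. For the second factor, I would invoke the adjunction $Q\adj K$: it gives equivalences
\begin{xyml}
\flc(q,1_A)\simeq\Cong(\C)(H,K1_A)\text{ and }\flc(q,m)\simeq\Cong(\C)(H,Km),
\end{xyml}
$\Gpd$-naturally in the second variable, and under these identifications post-composition with $(1_A,1_m,m)\col 1_A\ra m$ corresponds to post-composition with $K(1_A,1_m,m)\col K1_A\ra Km$. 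Because $m$ is a $K$-monomorphism, the latter is an equivalence (Definition \ref{defkmon}), so post-composition with it is an equivalence of groupoids.

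The main obstacle I anticipate is bookkeeping: verifying that the factorisation displayed above actually agrees (up to natural isomorphism) with $\lan q,m\ran$, and that the adjunction identifications transport $(1_A,1_m,m)\circ -$ to $K(1_A,1_m,m)\circ -$. Both reduce to straightforward naturality of the unit and counit $\eta,\varepsilon$ of the adjunction $Q\adj K$, together with the strict normalisations $\partial_0\varepsilon\equiv\mathrm{id}$ and $\partial\eta\equiv\mathrm{id}$ adopted after Definition \ref{defsysnoyquot}, but it requires some care to make the 2-cells match up. Once the factorisation is verified, combining the two equivalences concludes the proof.
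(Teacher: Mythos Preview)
Your proof is correct and follows essentially the same route as the paper's. The paper arranges the argument as a single commutative diagram in which $\lan QH,m\ran$ factors through $\flc(QH,1_A)$, then passes across the adjunction $Q\adj K$ to the equivalence $K(1_A,1_m,m)\circ -$; the only cosmetic difference is that the paper justifies the equivalence $\C(\partial_1 QH,A)\simeq\flc(QH,1_A)$ by invoking the adjunction $\partial_1\adj 1_{-}$, whereas you argue the equivalent fact $q\orth 1_A$ directly from the pullback characterisation.
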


	\begin{proof}
		In the following diagram, the left upper arrow is an equivalence
		(it comes from the adjunction $\partial_1\adj 1_{-}\col\C\ra\flc$),
		the right upper and right lower arrows are equivalences (they come from
		the adjunction $Q\adj K$), and the right arrow is an equivalence (since
		$m$ is a $K$-monomorphism).  Thus $\lan QH,m\ran$ is an equivalence, i.e.
		$QH\orth m$.\qedhere
		\begin{xym}\xymatrix@R=40pt@C=50pt{
			{\C(\partial_1QH,A)}\ar[r]^-{(-\circ QH,1,1_A)}\ar[dr]_-{\lan QH,m\ran}
			&{\flc(QH,1_A)}\ar[r]^-{\sim}\ar[d]|{(1_A,1_m,m)\circ -}\drtwocell\omit\omit{^{}}
			&{\Cong(\C)(H,K1_A)}\ar[d]^{K(1_A,1_m,m)\circ -}
			\\ &{\flc(QH,m)}\ar[r]_-{\sim}
			&{\Cong(\C)(H,Km)}
		}\end{xym}
	\end{proof}

\subsection{Exactness conditions}\label{sssectcondex}

\begin{df}\index{K-idempotent@$K$-idempotent}\index{idempotent!$K$-}
 	We say that $\C$ is \emph{$K$-idempotent} if the adjunction $Q\adj K$ is idempotent
	(Proposition \ref{caracdefidemp}).
\end{df}

In the 1-dimensional examples, $\Quot\!\adj\KRel$ and $\Coker\adj\Ker$, it is always true that the adjunction is idempotent.  The reason is that, as the adjunction $Q\adj K$ commutes with $\partial_0$ and $\partial$, it restricts to the fibres of these functors, and the adjunction $Q\adj K$ is idempotent if and only if, for all $A\col\C$, the adjunction
	\begin{xym}\xymatrix@C=30pt@R=45pt{
		{\Cong(A)}\ar@<2mm>[rr]^-{Q}\ar@{}[rr]|-\perp
		&&{A\backslash \C,}\ar@<2mm>[ll]^-{K}
	}\end{xym}
between the congruences on $A$ and the arrows of domain $A$ is idempotent.  Now, in the examples of dimension 1 we talked about, these adjunctions between fibres are adjunctions between \emph{orders}, and an adjunction between orders is always idempotent. On the other hand, in the examples of dimension 2, the fibre adjunctions are in general adjunctions between proper \emph{categories}, and so are not always idempotent.  For example, it happens that a cokernel is not the cokernel of its kernel, as shown in Proposition \ref{paskerkercoker}.

When $\C$ is $K$-idempotent, the properties of $K$-monomorphisms and $K$-regular epimorphisms are better. For example the cancellation law hold for $K$-regular epimorphisms.

\begin{pon}\label{loisimpkreg}
	Let us assume that $\C$ is $K$-idempotent and let 
	$A\overset{p}\ra B\overset{q}\ra C$ be arrows in $\C$.  If $p$ and $qp$ are
	$K$-regular epimorphisms, then $q$ is a $K$-regular epimorphism.
\end{pon}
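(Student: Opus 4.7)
The plan is to prove that $m_q$ (the third component of $\varepsilon_q$) is an equivalence; since $\varepsilon_q$ is an equivalence iff $m_q$ is, this will imply $q$ is a $K$-regular epimorphism. The argument hinges on naturality of $\varepsilon$ applied to the canonical $\flc$-morphism $(p,1,1_C)\col qp\ra q$, combined with the consequences of $K$-idempotency.

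First, I would apply naturality of the counit to the $\flc$-morphism $(p,1,1_C)\col qp\ra q$, which makes sense because $qp = q\circ p$. This gives a commutative square in $\flc$:
\[
\varepsilon_q\circ QK(p,1,1_C) \;\simeq\; (p,1,1_C)\circ \varepsilon_{qp}.
\]
Reading third components (recalling that $\varepsilon_f=(1_A,\varphi_f,m_f)$), and letting $\tilde p\col\im(qp)\ra\im q$ denote the third component of $QK(p,1,1_C)$, this yields $m_q\tilde p \simeq m_{qp}$. Since $qp$ is a $K$-regular epimorphism, $m_{qp}$ is an equivalence, so $s:=\tilde p\,m_{qp}^{-1}$ is a section of $m_q$.

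Next, I would use the $K$-idempotency. By Proposition~\ref{caracdefidemp}, $\varepsilon Q$ is an equivalence, so every quotient $QH$ is a $K$-regular epimorphism; in particular $e_q=QKq$ is. By Proposition~\ref{quotestorthamon}, $e_q$ is orthogonal to every $K$-monomorphism. I would then apply the same naturality argument to $(1_A,1,q)\col p\ra qp$, obtaining $m_{qp}\bar q\simeq q\,m_p$ for the third component $\bar q\col\im p\ra\im(qp)$ of $QK(1_A,1,q)$; since $p$ is a $K$-regular epimorphism, $m_p$ is also an equivalence. Combining both naturality equations with $m_{qp}\simeq m_q\tilde p$ produces a second factorisation $q\simeq m_q\,e'$ with $e':=\tilde p\,\bar q\,m_p^{-1}$, which must agree with the canonical factorisation $q\simeq m_qe_q$; chasing this identification through orthogonality forces $s$ to be a two-sided inverse of $m_q$, making $m_q$ an equivalence.

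The hard part will be the final identification $sm_q\simeq 1_{\im q}$, which amounts to ``left-cancelling'' $m_q$ from the equation $m_qe_q\simeq m_qe'$. This cancellation legitimately uses that $m_q$ is a $K$-monomorphism, a fact which in the $K$-idempotent setting should follow from the triangle identities combined with the equivalences $\eta K$ and $K\varepsilon$ (forcing $Km_q$ to be trivial), after which the orthogonality of $e_q$ with $K$-monomorphisms closes the argument. The cleanest conceptual route, and the one I would ultimately pursue, is to first establish that $(\KEpireg,\KMono)$ forms an orthogonal factorisation system in any $K$-idempotent category (via Proposition~\ref{caracsimpsyf}, using Proposition~\ref{kmonintkregestequ} for the intersection clause), and then deduce the present proposition as the cancellation property~4 of Proposition~\ref{propsyforth}.
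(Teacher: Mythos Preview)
Your opening move is fine: naturality of $\varepsilon$ at $(p,1_{qp},1_C)\col qp\ra q$ does give $m_q\tilde p\simeq m_{qp}$, hence a one-sided inverse $s$ for $m_q$.  The problem is everything after that.

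The claim that $m_q$ is a $K$-monomorphism is exactly what $K$-idempotence does \emph{not} give you.  Requiring $m_f\in\KMono$ for every $f$ is the content of Definition~\ref{defkfact} ($K$-factorisability), and Proposition~\ref{KfactKidem} only shows the implication in the other direction.  Your suggestion that the triangle identities together with the equivalences $\eta K$ and $K\varepsilon$ ``force $Km_q$ to be trivial'' does not go through: idempotence tells you that $K\varepsilon_q\col Ke_q\ra Kq$ is an equivalence, but this is a statement about $K$ applied to the morphism $(1_B,\varphi_q,m_q)$ of $\flc$, not about $K(1,1_{m_q},m_q)\col K1_{\im q}\ra Km_q$, which is what you would need.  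Consequently your ``cleanest conceptual route'' is circular: to apply Proposition~\ref{caracsimpsyf} you need $m_f\in\KMono$ for all $f$, i.e.\ $K$-factorisability; and the paper's proof that $K$-factorisability is equivalent to $K$-idempotence plus closure under composition (Proposition~\ref{defcaracfactor}, direction $1\Rightarrow 2$) invokes the very Proposition~\ref{loisimpkreg} you are trying to prove.

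The paper avoids this trap by working one level up, with $\varepsilon_q$ in $\flc$ rather than with its third component $m_q$.  A pushout in $\flc$ (built from $p$, $1_B$, $qp$, $q$) together with the three known equivalences $\varepsilon_p$, $\varepsilon_{1_B}$, $\varepsilon_{qp}$ produces a section $t\col q\ra QKq$ of $\varepsilon_q$.  Then idempotence is used twice, but never in the form ``$m_q$ is a $K$-mono'': first, $K\varepsilon_q$ is an equivalence, so $Kt\,K\varepsilon_q\simeq 1$; second, $\varepsilon_{QKq}$ is an equivalence (since $QKq$ is a quotient), and naturality of $\varepsilon$ then cancels it to yield $t\varepsilon_q\simeq 1$.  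This is what replaces your missing cancellation step.
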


	\begin{proof}
		Let us consider the following diagram in $\flc$.  The front face is a pushout.  
		Moreover, $\varepsilon_{1_B}$ is an equivalence (by the definition of kernel-quotient
		system), and $\varepsilon_p$ and $\varepsilon_{qp}$ are equivalences
		(since $p$ and $qp$ are $K$-regular epimorphisms).  So, by the universal property
		of the pushout, there exists $t\col q\ra QKq$ such that
		$\varepsilon_q t\simeq 1_q$.
		\begin{xym}\xymatrix@=20pt{
			&QKp\ar[rr]^-{QK(p,1_p,1_B)}\ar[dl]_-{\varepsilon_p}
				\ar'[d]|{\;\;\;\;QK(1_A,1_{qp},q)}[dd]
			&&QK1_B\ar[dd]^{QK(1_B,1_q,q)}\ar[dl]^-{\varepsilon_{1_B}}
			\\ p\ar[rr]_(0.25){(p,1_p,1_B)}\ar[dd]_{(1_A,1_{qp},q)}
			&&1_B\ar[dd]|(0.7){(1_B,1_q,q)}
			\\ &QK(qp)\ar'[r][rr]^(0.04){QK(p,1_{qp},1_C)}\ar[dl]_{\varepsilon_{qp}}
			&&QKq\ar[dl]^{\varepsilon_q}
			\\ qp\ar[rr]_-{(p,1_{qp},1_C)}
			&&q
		}\end{xym}
		
		By the idempotence of the adjunction $Q\adj K$, $K\varepsilon_q$ is invertible.
		Thus $Kt  K\varepsilon_q\simeq (K\varepsilon_q)^{-1}  K\varepsilon_q   Kt
		  K\varepsilon_q\simeq (K\varepsilon_q)^{-1}   K\varepsilon_q\simeq 1$.
		So we have $(QKt) (QK\varepsilon_q)\simeq 1$.  Now, in the following diagram,
		$\varepsilon_{QKq}$ is an equivalence, by idempotence.
		So $t\varepsilon_q\simeq 1$ and $\varepsilon_q$ is an equivalence.\qedhere
		\begin{xym}\xymatrix@=40pt{
			QKe_q\ar[r]^-{QK\varepsilon_q}\ar[d]_{\varepsilon_{QKq}}^{\wr}
				\drtwocell\omit\omit{}\ar@{=}@/^1.9pc/[rr]\rruppertwocell\omit{_<-2.7>{}}
			&QKq\ar[r]^-{QKt}\ar[d]^{\varepsilon_q}
				\drtwocell\omit\omit{}
			&QKe_q\ar[d]^{\varepsilon_{QKq}}_{\wr}
			\\ e_q\ar[r]_-{\varepsilon_q}
			&q\ar[r]_-{t}
			&e_q
		}\end{xym}
	\end{proof}

If we apply condition 2 of Proposition \ref{caracdefidemp} to the adjunction $Q\adj K$, we get that $\C$ is $K$-idempotent if and only if every quotient is a $K$-regular epimorphism.  In particular, we get the following proposition.

\begin{pon}\label{efkreg}
	If $\C$ is $K$-idempotent, for all $f\col\flc$, $e_f$ is a $K$-regular epimorphism.
\end{pon}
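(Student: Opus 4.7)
My plan is to derive the statement directly from the characterisation of idempotent adjunctions in Proposition \ref{caracdefidemp}, reading $e_f$ as a value of the left adjoint $Q$.

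First I would unfold the definitions. The $\Gpd$-natural transformation $e$ was defined as $e \eqdef \delta QK$, so $e_f = \delta_{QKf}$; in other words, $e_f$ regarded as an object of $\flc$ is literally $QKf$. Under the semi-strict conventions in force, the first component of $\varepsilon_g\col QKg \to g$ is always an identity, while its second component is by notation the arrow $m_g$. Therefore showing that $e_f$ is a $K$-regular epimorphism in the sense of Definition \ref{defkregepi} is exactly showing that $\varepsilon_{QKf}$ is an equivalence in $\flc$.

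Second, I would invoke the hypothesis. The $K$-idempotence of $\C$ means, by condition 2 of Proposition \ref{caracdefidemp}, that the whiskering $\varepsilon Q\col QKQ \Ra Q$ is an equivalence of $\Gpd$-functors $\Cong(\C)\ra\flc$. Instantiating this at the congruence $H \eqdef Kf$ gives precisely that $(\varepsilon Q)_{Kf} = \varepsilon_{QKf}$ is an equivalence, which by the previous paragraph is the conclusion we want.

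There is no real obstacle here: the argument is formal, once one has identified $e_f$ with $QKf$ as an object of $\flc$. The only subtlety worth mentioning is the translation between ``$\varepsilon_{QKf}$ is an equivalence in $\flc$'' and ``$m_{e_f}$ is an equivalence in $\C$'', which is automatic from the semi-strict description of the kernel-quotient system ($\partial_0 \varepsilon \equiv \mathrm{id}$), ensuring the two components of $\varepsilon_{QKf}$ are $1_A$ and $m_{e_f}$ respectively. Equivalently, one can simply cite the sentence immediately preceding the proposition — idempotence is equivalent to every quotient being a $K$-regular epimorphism — since $e_f = QKf$ is manifestly a quotient.
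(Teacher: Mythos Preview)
Your proof is correct and is exactly the argument the paper intends: the sentence immediately preceding the proposition (``If we apply condition 2 of Proposition \ref{caracdefidemp} to the adjunction $Q\adj K$, we get that $\C$ is $K$-idempotent if and only if every quotient is a $K$-regular epimorphism'') serves as the paper's proof, and your unpacking of $e_f = QKf$ together with $(\varepsilon Q)_{Kf} = \varepsilon_{QKf}$ is precisely the content of that sentence made explicit.
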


\begin{pon}\label{kmonmdesyf}
	If $\C$ is $K$-idempotent, then $A\overset{f}\ra B$ is a $K$-monomorphism if and only
	if $e_f$ is an equivalence.
\end{pon}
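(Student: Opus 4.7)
The plan is to apply $K$ and $Q$ to the canonical morphism $h\eqdef(1_A,1_f,f)\col 1_A\ra f$ in $\flc$ and to use the fact that $QKf$, viewed as an arrow in $\C$, is precisely $e_f\col A\ra\im f$. As a preliminary observation, $\varepsilon_{1_A}$ is an equivalence in $\flc$ by the definition of kernel-quotient system, so $m_{1_A}$ is an equivalence in $\C$ and, via $\varphi_{1_A}\col 1_A\Ra m_{1_A}e_{1_A}$, so is $e_{1_A}$.

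For $(\Rightarrow)$, suppose $f$ is a $K$-monomorphism. Then $Kh$ is an equivalence, so $QKh\col QK 1_A\ra QKf$ is an equivalence in $\flc$ and its codomain component $\partial_1 QKh=\im h$ is an equivalence in $\C$. The $\Gpd$-naturality of $e=\delta QK$ at $h$ gives $\im h\circ e_{1_A}\simeq e_f$; since $e_{1_A}$ is an equivalence, so is $e_f$. This direction does not use the idempotence hypothesis.

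For $(\Leftarrow)$, suppose $e_f$ is an equivalence. Introduce the morphism $k\eqdef(1_A,1,e_f)\col 1_A\ra QKf$ in $\flc$ (with identity middle 2-cell), and verify by direct computation that $\varepsilon_f\circ k\simeq h$ in $\flc$, the compatibility 2-cell being supplied by $\varphi_f$. The domain and codomain components of $k$ are $1_A$ and $e_f$, both equivalences, so $k$ is an equivalence in $\flc$ and $Kk$ is an equivalence in $\Cong(\C)$. By the idempotence of $Q\adj K$ (Proposition \ref{caracdefidemp}), $K\varepsilon$ is an equivalence, hence so is $K\varepsilon_f$; therefore $Kh\simeq K\varepsilon_f\circ Kk$ is an equivalence, which says exactly that $f$ is a $K$-monomorphism. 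The only genuine use of idempotence in the proof lies here: it is needed to invert $K\varepsilon_f$ and propagate the equivalence $Kk$ back to $Kh$.
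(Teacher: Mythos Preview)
Your proof is correct. The converse direction is essentially the paper's argument: the paper works with the triangle
\[
\xymatrix@C=20pt@R=30pt{
	K1_A\ar[rr]^-{K(1_A,1_f,f)}\ar[dr]_-{K(1_A,1_{e_f},e_f)}
		\rrtwocell\omit\omit{_<3.5>{}}
	&&Kf
	\\ &Ke_f\ar[ur]_-{K\varepsilon_f}
}
\]
whose lower-left arrow is exactly your $Kk$, and concludes just as you do that both legs are equivalences (the left because $e_f$ is, the right by idempotence), hence so is the top.

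For the forward direction, however, your argument is genuinely different and in fact sharper than the paper's. The paper uses idempotence twice here: first to know that $K\varepsilon_f$ is an equivalence (giving $Ke_f\simeq K1_A$), and then again to pass from $Ke_f\simeq K1_A$ to $e_f\simeq QKe_f\simeq QK1_A\simeq 1_A$. You bypass all this by applying $Q$ directly to the equivalence $Kh$ and reading off $e_f\simeq(\im h)\circ e_{1_A}$ from the naturality square of $e=\delta QK$. Since $\im h=\partial_1 QKh$ and $e_{1_A}$ are equivalences for elementary reasons (the first because $\Gpd$-functors preserve equivalences, the second because $\varepsilon_{1_A}$ is an equivalence by the axioms of a kernel-quotient system), you obtain the conclusion without invoking idempotence at all. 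So your observation that ``this direction does not use the idempotence hypothesis'' is correct and is a slight strengthening of what the paper proves.
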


	\begin{proof}
		If $f$ is a $K$-monomorphism, then in the following diagram the upper arrow is an
		equivalence.  Morevoer, the right arrow is an equivalence because the adjunction $Q\adj K$ is idempotent.  We have thus $Ke_f\simeq K1_A$.
		So, by using once again the idempotence and the fact that
		$\varepsilon_{1_A}$ is an equivalence, we have
		$e_f\simeq QKe_f\simeq QK1_A\simeq 1_A$. So $e_f$ is an equivalence.
		\begin{xym}\xymatrix@C=20pt@R=30pt{
			K1_A\ar[rr]^-{K(1_A,1_{f},f)}\ar[dr]_-{K(1_A,1_{e_f},e_f)}
				\rrtwocell\omit\omit{_<3.5>{}}
			&&Kf
			\\ &Ke_f\ar[ur]_-{K\varepsilon_f=K(1_A,\varphi_f^{-1},m_f)}
		}\end{xym}
		
		Conversely, let us assume that $e_f$ is an equivalence.  Then, in the previous
		diagram, the left and right arrows are equivalences, so the upper one is
		also an equivalence and $f$ is a $K$-monomorphism.\qedhere
	\end{proof}

Proposition \ref{kmonmdesyf} and the definition of $K$-regular epimorphism show that, if $\C$ is $K$-idempotent, the full sub-$\Gpd$-categories $\E$ and $\M$ defined as in Proposition \ref{caracsimpsyf} from the $K$-regular factorisation are respectively $\KEpireg$ and $\KMono$.  Moreover, condition 2 of that proposition hold, by Proposition \ref{kmonintkregestequ}.  And Proposition \ref{efkreg} above gives the first half of condition 1 of Proposition \ref{caracsimpsyf}.  It remains thus only one condition to have a factorisation system: that, for all $f\col\flc$, $m_f\in\KMono$.  

\begin{df}\label{defkfact}\index{K-factorisable@$K$-factorisable}\index{factorisable!K-@$K$-}
	We say that $\C$ is \emph{$K$-factorisable} if for all $f\col\flc$, $m_f\in\KMono$.
\end{df}

\begin{pon}\label{KfactKidem}
	If $\C$ is $K$-factorisable, then $\C$ is $K$-idempotent.
\end{pon}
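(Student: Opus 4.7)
The plan is to invoke Proposition \ref{caracdefidemp}: it suffices to show that the $\Gpd$-natural transformation $K\varepsilon \col KQK \Ra K$ is an equivalence, and since this is detected componentwise in the $\Gpd$-category of $\Gpd$-natural transformations, the task reduces to verifying that $K\varepsilon_f \col Ke_f \ra Kf$ is an equivalence in $\Cong(\C)$ for every $f\col\flc$.

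For a fixed arrow $A\overset{f}\ra B$, the component $\varepsilon_f \col QKf \ra f$ is, by the convention fixed in Subsection \ref{soussectdefsysnoyquot}, the morphism $(1_A,\varphi_f,m_f)\col e_f\ra f$ of $\flc$ extracted from the $K$-regular factorisation of $f$. Its codomain component is precisely $m_f$, which by $K$-factorisability is a $K$-monomorphism. Proposition \ref{simplnoysysteme} then applies with $g\eqdef e_f$, $m\eqdef m_f$ and the 2-cell $\varphi_f$ supplied by the factorisation (up to inverting it, which is harmless since we are in a $\Gpd$-category), and yields that $K\varepsilon_f = K(1_A,\varphi_f,m_f)$ is an equivalence.

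Collecting these componentwise equivalences, $K\varepsilon$ itself is an equivalence, and Proposition \ref{caracdefidemp} then delivers the idempotence of the adjunction $Q\adj K$, i.e.\ that $\C$ is $K$-idempotent. There is no serious obstacle: the argument is essentially the observation that the codomain component of the counit $\varepsilon_f$ is, by construction, the $m$-part of the $K$-regular factorisation, so the $K$-factorisability hypothesis is exactly what is needed to feed Proposition \ref{simplnoysysteme}. The only point requiring a moment's care is bookkeeping the direction of $\varphi_f$ to match the statement of Proposition \ref{simplnoysysteme}, which is immediate in the groupoid-enriched setting.
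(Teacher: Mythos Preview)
Your proof is correct and is essentially identical to the paper's own argument: apply Proposition~\ref{simplnoysysteme} to the factorisation $\varphi_f\col f\Ra m_fe_f$ using that $m_f$ is a $K$-monomorphism, conclude that $K\varepsilon_f$ is an equivalence for every $f$, and invoke Proposition~\ref{caracdefidemp}. The only difference is that you spell out the componentwise reduction and the bookkeeping on the direction of $\varphi_f$ a bit more explicitly than the paper does.
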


	\begin{proof}
		Let be $f\col\flc$.
		By applying Proposition \ref{simplnoysysteme}
		to $\varphi_f\col f\Ra m_fe_f$, since $m_f$ is a $K$-monomorphism,
		the arrow $K\varepsilon_f=K(1_A,\varphi_f,m_f)\col KQKf\ra Kf$ is an equivalence.
		So $K\varepsilon$ is an equivalence and $\C$ is $K$-idempotent.
	\end{proof}

Thus, if $\C$ is $K$-factorisable, $(\KEpireg,\KMono,\im,e,m,\varphi)$ is a factorisation system.

There is another point of view on factorisation systems, which we won't study in detail here: to give a factorisation system amounts to give a coreflexive full sub-$\Gpd$-category $\E\hookrightarrow\flc$ which is stable under composition and which contains all the identities (see \cite{Im1986a} for the 1-dimensional version).  From this point of view, what the regular factorisation is lacking to form a factorisation system is the stability under composition of $K$-regular epimorphisms. The following proposition shows that we can also use this condition to define $K$-factorisability.

\begin{lemm}\label{lemmpourcarfact}
	Let us assume that $\C$ is $K$-idempotent.
	Let $A\overset{f}\ra B$ be an arrow in $\C$ and $A\overset{e}\ra E$
	a $K$-regular epimorphism.
	Then
	\begin{eqn}
		\varepsilon_f\circ -\col (A\backslash\C)(e,e_f)\ra (A\backslash\C)(e,f)
	\end{eqn}
	is an equivalence.
\end{lemm}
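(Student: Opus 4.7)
The plan is to use the adjunction $Q\adj K$ together with its idempotence to translate both hom-groupoids into hom-groupoids in $\Cong(\C)$, where the statement reduces to postcomposition with an equivalence.

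First, I would restrict the adjunction $Q\adj K$ to an adjunction between the under-$\Gpd$-category $A\backslash\C$ and the full sub-$\Gpd$-category $\Cong(A)\incl\Cong(\C)$ whose objects are the congruences $\mathcal{H}$ with $\partial\mathcal{H}=A$. This restriction is available thanks to the structural relations $\partial_0Q\simeq\partial$ and $\partial K\simeq\partial_0$ of Definition \ref{defsysnoyquot}, together with the standing semi-strict description which makes $\partial\eta$ and $\partial_0\varepsilon$ identities. Since $\C$ is $K$-idempotent, so is the restricted adjunction.

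Second, since $e$ is a $K$-regular epimorphism, the counit component $\varepsilon_e\col QKe\ra e$ is an equivalence in $\flc$; as $\partial_0\varepsilon_e=1_A$ it is moreover an equivalence in $A\backslash\C$. Precomposition with $\varepsilon_e$ therefore yields an equivalence $(A\backslash\C)(e,X)\ra (A\backslash\C)(QKe,X)$ for every $X\col A\backslash\C$. Composing with the adjunction equivalence (given by applying $K$ and then precomposing with the unit $\eta_{Ke}\col Ke\ra KQKe$) produces a $\Gpd$-natural equivalence
\begin{eqn}
	\Psi_X\col(A\backslash\C)(e,X)\simeq\Cong(A)(Ke,KX).
\end{eqn}

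Third, specialising to $X=e_f=QKf$ and to $X=f$, I would verify that the naturality square with horizontal arrows $\varepsilon_f\circ-$ and $K\varepsilon_f\circ-$ (and vertical arrows $\Psi_{e_f}$ and $\Psi_f$) commutes up to canonical modification, by tracing an arrow $\alpha\col e\ra e_f$ through both branches and using $\Gpd$-functoriality of $K$ applied to the composite $\varepsilon_f\circ\alpha\circ\varepsilon_e$. Since $\C$ is $K$-idempotent, $K\varepsilon_f$ is an equivalence (Proposition \ref{caracdefidemp}), hence postcomposition with it is an equivalence $\Cong(A)(Ke,KQKf)\ra\Cong(A)(Ke,Kf)$. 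The conclusion that $\varepsilon_f\circ-$ is an equivalence then follows.

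The main obstacle will be the commutativity check in the third step. It is a formal verification but requires careful bookkeeping of the canonical 2-arrows $\varphi^K$ introduced by the $\Gpd$-functor $K$, along with the triangle identities for $\eta$ and $\varepsilon$. All the other steps are essentially automatic given the structural properties of kernel-quotient systems.
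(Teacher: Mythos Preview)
Your proposal is correct and follows essentially the same strategy as the paper: use that $\varepsilon_e$ is an equivalence (since $e$ is $K$-regular), transport along the adjunction $Q\adj K$, and conclude because $K\varepsilon_f$ is an equivalence by idempotence. The only cosmetic difference is order: you restrict to the fibre $A\backslash\C$ (and $\Cong(A)$) from the outset, whereas the paper first establishes that $\varepsilon_f\circ-\col\flc(e,e_f)\ra\flc(e,f)$ is an equivalence and then descends to $A\backslash\C$ via a pullback-of-hom-groupoids argument.
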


	\begin{proof}
		In the following diagram, the left upper and lower arrows are equivalences, because
		$e$ is a $K$-regular epimorphisms; the right upper and lower arrows are equivalences,
		by the adjunction $Q\adj K$; the right arrow is an equivalence by idempotence
		of the adjunction $Q\adj K$.  So the left arrow is an equivalence.
		\begin{xym}\xymatrix@=40pt{
			{\flc(e,e_f)}\ar[d]_-{\varepsilon_f\circ-}\ar[r]^-{-\circ\varepsilon_e}
				\drtwocell\omit\omit{}
			&{\flc(QKe,e_f)}\ar[d]^-{\varepsilon_f\circ-}\ar[r]^-{\sim}\drtwocell\omit\omit{}
			&{\Cong(\C)(Ke,Ke_f)}\ar[d]^-{K\varepsilon_f\circ-}
			\\ {\flc(e,f)}\ar[r]_-{-\circ\varepsilon_e}
			&{\flc(QKe,f)}\ar[r]_-{\sim}
			&{\Cong(\C)(Ke,Kf)}
		}\end{xym}
		
		Then, in the following diagram, the front and back faces are pullbacks, and the
		three edges between the right lower corner of the back face and the right lower corner of the front face are equivalences.  So the edge from the left upper corner is an equivalence.\qedhere
		\begin{xym}\xymatrix@!@R=-30pt@C=-20pt{
			&{(A\backslash\C)(e,e_f)}\ar[rr]^-{}\ar[dl]_-{\varepsilon_f\circ -}
				\ar'[d][dd]
			&&{\flc(e,e_f)}\ar[dd]^{\partial_0}\ar[dl]_-{\varepsilon_f\circ -}
			\\ {(A\backslash\C)(e,f)}\ar[rr]_(0.25){}\ar[dd]
			&&{\flc(e,f)}\ar[dd]^(0.3){\partial_0}
			\\ &1\ar'[r]^{1_A}[rr]\ar@{=}[dl]
			&&{\C(A,A)}\ar@{=}[dl]
			\\ 1\ar[rr]_-{1_A}
			&&{\C(A,A)}
		}\end{xym}
	\end{proof}

\begin{pon}\label{defcaracfactor}\index{K-factorisable@$K$-factorisable!characterisation}
	The following conditions are equivalent:
	\begin{enumerate}
		\item $\C$ is $K$-factorisable;
		\item $\C$ is $K$-idempotent and $K$-regular epimorphisms are stable under composition.
	\end{enumerate}
\end{pon}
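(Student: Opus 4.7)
The proof splits into two directions. For (1)$\Rightarrow$(2), $K$-idempotence is immediate from Proposition \ref{KfactKidem}. For stability of $\KEpireg$ under composition, I will invoke the discussion preceding Definition \ref{defkfact}: under $K$-factorisability, the data $(\KEpireg,\KMono,\im,e,m,\varphi)$ satisfies the conditions of Proposition \ref{caracsimpsyf} and hence forms a factorisation system. Via the equivalence $\caspar{Fac}(\C)\simeq\caspar{OrthFac}(\C)$, this corresponds to an orthogonal factorisation system, whose left class $\KEpireg$ is automatically closed under composition by item 3 of Proposition \ref{propsyforth}.

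For (2)$\Rightarrow$(1), I will show that for every $f\col A\ra B$ the arrow $e_{m_f}$ is an equivalence; by Proposition \ref{kmonmdesyf} this suffices to conclude that $m_f$ is a $K$-monomorphism, so that $\C$ is $K$-factorisable. Using $K$-idempotence and Proposition \ref{efkreg}, both $e_f\col A\ra\im f$ and $e_{m_f}\col\im f\ra\im(m_f)$ lie in $\KEpireg$. The stability hypothesis then makes $g\eqdef e_{m_f}\circ e_f\col A\ra\im(m_f)$ a $K$-regular epimorphism, and the factorisations $f\simeq m_fe_f$ together with $m_f\simeq m_{m_f}e_{m_f}$ combine to give a canonical morphism $m_{m_f}\col g\ra f$ in the slice $A\backslash\C$.

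I then apply Lemma \ref{lemmpourcarfact} with the $K$-regular epimorphism $g$ to lift $m_{m_f}$ to an object $t\col\im(m_f)\ra\im f$ of $(A\backslash\C)(g,e_f)$, whose defining data supply $tg\simeq e_f$ and $m_ft\simeq m_{m_f}$. Applying the same lemma once more, with the $K$-regular epimorphism $e_f$ still for the arrow $f$, the two objects $te_{m_f}$ and $1_{\im f}$ of $(A\backslash\C)(e_f,e_f)$ have images under $\varepsilon_f\circ-$ that reduce, through the 2-cells $m_ft\simeq m_{m_f}$ and $m_{m_f}e_{m_f}\simeq m_f$, to the canonical morphism $m_f\col e_f\ra f$; the essential surjectivity part of the equivalence then forces $te_{m_f}\simeq 1_{\im f}$. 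Dually, applying the lemma to $m_f$ in place of $f$ with the $K$-regular epimorphism $e_{m_f}$, the objects $e_{m_f}t$ and $1_{\im(m_f)}$ of $(\im f\backslash\C)(e_{m_f},e_{m_f})$ have images under $\varepsilon_{m_f}\circ-$ both equivalent to the canonical $m_{m_f}\col e_{m_f}\ra m_f$, so that $e_{m_f}t\simeq 1_{\im(m_f)}$. Therefore $t$ is a quasi-inverse to $e_{m_f}$, which is an equivalence, as required.

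The main obstacle will be to ensure that the 2-cells produced by the double application of Lemma \ref{lemmpourcarfact} assemble coherently, in particular that the comparisons taking place in the slice groupoids $(A\backslash\C)(e_f,f)$ and $(\im f\backslash\C)(e_{m_f},m_f)$ respect the modifications $\varphi_f$ and $\varphi_{m_f}$ that come with the $K$-regular factorisations. This is essentially 2-categorical bookkeeping, but it must be laid out carefully to legitimately deduce the two identities $te_{m_f}\simeq 1_{\im f}$ and $e_{m_f}t\simeq 1_{\im(m_f)}$ from the essential injectivity of $\varepsilon_f\circ-$ and $\varepsilon_{m_f}\circ-$.
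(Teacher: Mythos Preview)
Your (2)$\Rightarrow$(1) direction is essentially the paper's own argument: factor $m_f$, note that $e_{m_f}e_f$ is a $K$-regular epimorphism by the stability hypothesis, and then apply Lemma~\ref{lemmpourcarfact} three times to exhibit a quasi-inverse to $e_{m_f}$. Two small points of care: the phrase ``essential surjectivity'' is wrong---what you actually use is that $\varepsilon_f\circ-$ and $\varepsilon_{m_f}\circ-$ are \emph{fully faithful}, so that an isomorphism between images lifts to an isomorphism between sources; and the word ``Dually'' for the third application is misleading, since that step is not symmetric to the second but genuinely depends on it (the 2-cell $te_{m_f}\simeq 1_{\im f}$ obtained in step two is precisely what equips $e_{m_f}t$ with the structure of an object of $(\im f\backslash\C)(e_{m_f},e_{m_f})$). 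The paper is explicit about this dependency.

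Your (1)$\Rightarrow$(2) direction, by contrast, differs from the paper's. The paper argues directly: given $K$-regular epimorphisms $p,q$, it uses the functoriality of $\im$ and two applications of the cancellation law (Proposition~\ref{loisimpkreg}) to show that $m_{qp}$ is a $K$-regular epimorphism, hence an equivalence since it is also a $K$-monomorphism by $K$-factorisability. Your route is shorter and more conceptual: you observe (as the paper already records just after Definition~\ref{defkfact}) that $K$-factorisability makes $(\KEpireg,\KMono)$ into a factorisation system, hence an orthogonal one, and then closure of $\KEpireg$ under composition is item~3 of Proposition~\ref{propsyforth}. This is perfectly legitimate and arguably cleaner; the paper's direct argument has the advantage of being self-contained and of making visible exactly which ingredients (idempotence and cancellation) drive the stability.
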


	\begin{proof}
		{1 $\Rightarrow$ 2. } We already know, by Proposition \ref{KfactKidem}, that
		$\C$ is $K$-idempotent.
		
		Then let $A\overset{p}\ra B\overset{q}\ra C$ be $K$-regular epimorphisms.
		Let us consider the following diagram.
		By $K$-idempotence, $e_p$ and $e_{qp}$ are $K$-regular epimorphisms
		(by Proposition \ref{efkreg}). Thus, by Proposition \ref{loisimpkreg},
		$\im(1_A,1_{qp},q)$ is a $K$-regular epimorphism.  Now, since
		$p$ is a $K$-regular epimorphism, $m_p$ is an equivalence. So
		$qm_p\simeq q$ is a $K$-regular epimorphism and, again by Proposition
		\ref{loisimpkreg}, $m_{qp}$ is a $K$-regular epimorphism.  As $\C$
		is $K$-factorisable, $m_{qp}$ is also a $K$-regular epimorphism.
		Thus $m_{qp}$ is an equivalence, by Proposition
		\ref{kmonintkregestequ}, and $qp$ is a $K$-regular epimorphism.
		\begin{xym}\xymatrix@=40pt{
			A\ar@{=}[d]\ar[r]^-{e_p}\drtwocell\omit\omit{}
			&{\im p}\ar[d]|{\im(1_A,1_{qp},q)}\ar[r]^-{m_p}_-{\sim}\drtwocell\omit\omit{}
			&B\ar[d]^q
			\\A\ar[r]_-{e_{qp}}
			&{\im(qp)}\ar[r]_-{m_{qp}}
			&C
		}\end{xym}
		
		{2 $\Rightarrow$ 1.} We apply the regular factoristion to $m\eqdef
		m_f$.  This gives the following diagram where, by idempotence, $e_f$ and $e_m$
		are $K$-regular epimorphisms.  By stability under composition of
		$K$-regular epimorphisms, $e\eqdef e_me_f$ is also a $K$-regular epimorphism.
		\begin{xym}\xymatrix@=40pt{
			A\ar[r]^-f\ar[d]_{e_f}
				\rtwocell\omit\omit{_<2.5>\varphi_f\;\;\;\;\;\;\;\;\;\;\;\;}
			&B
			\\ {\im f}\ar[ur]^(0.35){m_f}\ar[r]_-{e_m}
				\rtwocell\omit\omit{_<-2.5>\;\;\;\;\varphi_m}
			&{\im m}\ar[u]_{m_m}
		}\end{xym}
		
		We apply the previous lemma to $f$ and $e$.  To
		$(1_A,\varphi_f^{-1}\circ\varphi_m^{-1} e_f,m_m)\col e\ra f$ correspond
		$(1_A,\alpha,a)\col e\ra e_f$ and $\omega\col m_fa\Ra m_m$ such that
		$\omega e = \varphi_me_f\circ m_f\alpha$.  We will prove that
		$a$ is an inverse for $e_m$.
		
		We apply again the previous lemma to $f$ and $e_f$.
		Then $\varphi^{-1}_m\circ\omega e_m\col m_fae_m\Ra m_f$ which is a
		2-arrow $\varepsilon_f\circ (1_A,\alpha,ae_m)\Ra \varepsilon_f
		\circ (1_A,1_{e_f},1_{\im f})\col\allowbreak e_f\ra f$; since $\varepsilon_f\circ -$
		is fully faithful, there exists $\omega'\col ae_m\Ra 1$
		such that $\omega e_m=\varphi_m\circ m_f\omega'$.
		
		We apply once again the previous lemma, but to $m_f$ and $e_m$.
		We have $\omega\circ\varphi_m^{-1} a\col m_me_ma\Ra m_m$, which is a
		2-arrow $\varepsilon_m\circ(1_{\im f},e_m\omega',e_ma)\Ra
		\varepsilon_m\circ(1_{\im f},1_{e_m},1_{\im m})\col e_m\Ra m_f$.
		As $\varepsilon_m\circ -$ is fully faithful, there exists an isomorphism $e_ma\Ra 1$.
		
		So $e_m$ is an equivalence and, by Proposition \ref{kmonmdesyf},
		which we can use thanks to the $K$-idempotence, $m_f$ is a $K$-monomorphism.
	\end{proof}

\begin{df}\label{defkpreex}\index{K-preexact@$K$-preexact}\index{preexact!K-@$K$-}
	We say that $\C$ is \emph{$K$-preexact} if for every congruence $H$, the unit
	$\eta_H\col H\ra KQH$
	is an equivalence (every congruence is canonically the kernel of its quotient).
\end{df}

If $\C$ is $K$-preexact, the adjunction $Q\adj K$ restricts to an equivalence
	\begin{xym}\xymatrix@C=30pt@R=45pt{
		{\Cong(\C)}\ar@<2mm>[rr]^-{Q}\ar@{}[rr]|-\simeq
		&&{K\text{-}\Epireg.}\ar@<2mm>[ll]^-{K}
	}\end{xym}
If we restrict to the fibres in $A\col\C$, we have thus an equivalence
	\begin{xym}\xymatrix@C=30pt@R=45pt{
		{\Cong(A)}\ar@<2mm>[rr]^-{Q}\ar@{}[rr]|-\simeq
		&&{A\backslash K\text{-}\Epireg,}\ar@<2mm>[ll]^-{K}
	}\end{xym}
between the congruences on $A$ and the $K$-regular epimorphisms with domain $A$ (the quotients of $A$).

\subsection{Duality}\label{soussectdualsysnoyquot}

Let us come back to the situation of the beginning of Subsection \ref{soussectdefsysnoyquot}, where $\V$ is a symmetric monoidal closed category, $\EM$ is a factorisation system on $\V$, and $\W$ is a full subcategory of $\E$ generating $\EM$. We can define, in a $\V$-category, not only $\W$-kernels, $\W$-quotients, $\W$-congruences and the corresponding notions of monomorphism, regular epimorphism, etc., but also the dual notions: $\W$-cokernels $\mathring{K}_\W$, $\W$-coquotients $\mathring{Q}_\W$, $\W$-cocongruences, $\mathring{K}_\W$-epimorphisms, $\mathring{K}_\W$-regular monomorphisms, etc.

In the case of $\Ens$ with $(\Surj,\Inj)$ and $\W=\left\{\overset{2}{\underset{1}{\downarrow}},\overset{1}{\underset{1}{\downarrow}}\right\}$, there are two remarkable properties:
\begin{enumerate}
	\item in $\Ens$, $f$ is an epimorphism if and only if
		$f$ is a surjection;
	\item every quotient is an epimorphism; every coquotient is a monomorphism.
\end{enumerate}
In the case of $\Ensp$ with $(\Bij^*,\zInj)$ and $\W=\left\{\overset{2}{\underset{1}{\downarrow}},\overset{2}{\underset{2}{\parallel}}\right\}$, there are two slightly less remarkable properties (the first is not an equivalence):
\begin{enumerate}
	\item in $\Ensp$, if $f$ is a bijection outside the kernel,
		then $f$ is a 0-epimorphism (i.e.\ a surjection, by Proposition
		\ref{caraczepiensp});
	\item every cokernel is a 0-epimorphism and every kernel is a 0-monomorphism.
\end{enumerate}

These properties won't hold in dimension 2.  For example for $\Gpdp$-categories, the notion of monomorphism corresponding to the adjunction cokernel $\adj$ kernel is the notion of fully 0-faithful arrow, but it is not true that every cokernel is fully 0-faithful. 
But it was the second of these properties that allowed us to define the comparison arrow between the quotient of the kernel and the coquotient of the cokernel.

In dimension 2, instead of having one factorisation system which induces notions of kernel, quotient, etc., compatible with the dual notions induced by the same factorisation system, we have typically crossed situations: the kernels/quotients of system 1 are compatible with the cokernels/coquotients of system 2 (there is a comparison arrow between the quotient of the kernel 1 and the coquotient of the cokernel 2) and, conversely, the kernels/quotients of system 2 are compatible with the cokernels/coquotients of system 1.

Let us introduce the following definition \cite{Dupont2008a}.

\begin{df}\label{defsyfcouples}\index{factorisation system!precoupled}\index{precoupled!factorisation systems}
	Two factorisation systems $(\E_1,\M_1)$ and $(\E_2,\M_2)$ on $\V$ are
	\emph{precoupled} if the following implications hold\footnote{These two conditions are in fact equivalent.}:
	\begin{align}\stepcounter{eqnum}
		f\in\E_1\; &\Rightarrow\;\text{for all }Y\col\V\text{, }[f,Y]\in\M_2\text{ and}\\ 
		\stepcounter{eqnum}
		f\in\E_2\; &\Rightarrow\;\text{for all }Y\col\V\text{, }[f,Y]\in\M_1.
	\end{align}
	We say that the systems are \emph{coupled}\index{factorisation system!coupled}%
	\index{coupled factorisation systems} if these implications are equivalences.
\end{df}

The first property above for $(\Surj,\Inj)$ on $\Ens$ can be translated in the following way: the factorisation system $(\Surj,\Inj)$ is self-coupled, whereas the corresponding property for $(\Bij^*,\zInj)$ on $\Ensp$ means that $(\Bij^*,\zInj)$ is self-precoupled.  In the 
2-dimensional examples, we will meet situations where we have two different factorisation systems which are coupled or precoupled.  For examples, for groupoids or symmetric 2-groups, the systems $(\PlSurj,\Fid)$ and $(\Surj,\PlFid)$ are coupled (this is proved, for symmetric  2-groups, at the end of Section \ref{sectdeffactcgs} and, for groupoids, in \cite{Dupont2008b}).  In dimension 3, we should expect to have a chain of three factorisation systems, the first and the last ones being (pre)coupled and the central one being self-(pre)coupled.

If $(\E_1,\M_1)$ (with generator $\W_1$)
and $(\E_2,\M_2)$ (with generator $\W_2$) are two precoupled factorisation systems on $V$, we can prove the following properties (see \cite{Dupont2008a}), which are the second properties we talked about at the beginning of this subsection for the 1-dimensional examples:
\begin{enumerate}
	\item for every $\W_1$-congruence $H$, $Q_{\W_1}H$ is a
		$\mathring{K}_{\W_2}$-epimorphism;
	\item for every $\W_2$-cocongruence $H$, $\mathring{Q}_{\W_2}H$ is a
		$K_{\W_1}$-monomorphism.
\end{enumerate}
When a kernel-quotient system and a cokernel-coquotient system (dual of kernel-quotient system) satisfy these properties, we will say that they are \emph{precoupled}.  In such a context we will be able to define the comparison arrow between the regular and coregular factorisations and to develop the theory of perfect categories.  For a cokernel-coquotient system $\mathring{K}_2\adj\mathring{Q}_2$, we will call $\mathring{K}_2$-epimorphism\index{epimorphism!K-@$\mathring{K}$-}\index{K-epimorphism@$\mathring{K}$-epimorphism} the dual notion of $K$-monomorphism.

\begin{df}\label{defsysnoyquconcoqprec}\index{kernel-quotient system!precoupled}\index{precoupled!kernel-quotient systems}
	A kernel-quotient system $Q_1\adj K_1$ and a cokernel-coquotient system
	$\mathring{K}_2\adj\mathring{Q}_2$ are \emph{precoupled} if
	\begin{enumerate}
		\item every quotient $Q_1H$ is a $\mathring{K}_2$-epimorphism;
		\item every coquotient $\mathring{Q}_2H$ is a $K_1$-monomorphism.
	\end{enumerate}
\end{df}

	\begin{xym}\xymatrix@=50pt{
		{\Cong_1(\C)}\ar@<2mm>[r]^-{Q_1}\ar@<-3mm>[dr]_-{\partial}\ar@{}[r]|-\perp
		&{\fl{\C}}\ar@<2mm>[l]^-{K_1}\ar@<-2mm>[d]_-{\partial_0}
			\ar@<2mm>[d]^-{\partial_1}\ar@<2mm>[r]^-{\mathring{K}_2}\ar@{}[r]|-\perp
		&{\caspar{Cocong}_2(\C)}\ar@<2mm>[l]^-{\mathring{Q}_2}
			\ar@<3mm>[dl]^-{\partial}
		\\ &{\C}\ar@<0mm>[u]|-{\mathrm{1_{-}}}
			\ar@<1mm>[ul]|-{\;\;\;\;\,K_1 1_{-}}
			\ar@<-1mm>[ur]|-{\mathring{K}_2 1_{-}\;\;\;\;\;\;}
	}\end{xym}

For the remaining of this section, let us fix a kernel-quotient system and a cokernel-coquotient system which are precoupled.  Each arrow factors in two ways: $K_1$-regular factorisation $f\simeq m^1_fe^1_f$ for $Q_1\adj K_1$ and $\mathring{K}_2$-coregular factorisation $f\simeq m^2_fe^2_f$ for $\mathring{K}_2\adj\mathring{Q}_2$.  Thanks to the precoupling, there is a comparison arrow between these two factorisations.  In fact, $e_f^1$ is $Q_1K_1f$ and is thus a $\mathring{K}_2$-epimorphism, by the precoupling, and $m_f^2$ is the coquotient $\mathring{Q}_2\mathring{K}_2f$; so, by Proposition \ref{quotestorthamon}, $e_f^1\orth m_f^2$.

We have thus an arrow $w_f$ (which forms a $\Gpd$-natural transformation) and 2-arrows $\varepsilon_f$ and $\mu_f$ (which form modifications) such that
\begin{xyml}\begin{gathered}\xymatrix@C=30pt@R=30pt{
	&{\im^1\! f}\ar[dr]^-{m^1_f}\ar[dd]_(0,4){w_f}
	&{}\ar@{}[dl]^(0.86){\mu_f\!\!\dir{=>}}
	\\A\ar[dr]_-{e^2_f}\ar[ur]^{e^1_f}
	&{}\ar@{}[dl]_(0.3){\varepsilon_f\!\!\dir{=>}} &B
	\\ {}&{\im^2\! f}\ar[ur]_-{m^2_f}
}\end{gathered}\;\;=\;\;\begin{gathered}\xymatrix@C=30pt@R=30pt{
	&{\im^1\! f}\ar[dr]^-{m^1_f}
	\\A\ar[rr]|f\ar[dr]_-{e^2_f}
		\rrtwocell\omit\omit{_<3.8>\;\;\;\varphi^2_f}\ar[ur]^-{e^1_f}
		\rrtwocell\omit\omit{^<-3.8>\varphi^1_f\;\;}
	&&B
	\\ &{\im^2\! f}\ar[ur]_-{m^2_f}
}\end{gathered}.\end{xyml}

\begin{thm}\label{defcaracparf}\index{K1-K2-perfect@$K_1$-$\mathring{K}_2$-perfect}\index{perfect!K1-K2@$K_1$-$\mathring{K}_2$-}
	The following conditions are equivalent. When they hold, we say that $\C$ is \emph{$K_1$-$\mathring{K}_2$-perfect}.
	\begin{enumerate}
		\item For every $f\col\flc$, $w_f$ is an equivalence.
		\item Every arrow factors as a $K_1$-regular epimorphism followed by
			$\mathring{K}_2$-regular monomorphism.
		\item Every ${K}_1$-monomorphism is a $\mathring{K}_2$-regular monomorphism
			and every $\mathring{K}_2$-epimorphism is a $K_1$-regular epimorphism.
	\end{enumerate}
\end{thm}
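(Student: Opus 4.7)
The strategy is to prove the cycle $(1)\Rightarrow(2)\Rightarrow(3)\Rightarrow(1)$, using the precoupling hypothesis together with the characterisations of $K_1$-idempotence (Proposition \ref{caracdefidemp}) and $K_1$-factorisability (Proposition \ref{defcaracfactor}), the cancellation law for $K_1$-monomorphisms (Corollary \ref{kmonstabcompsimp}), and the intersection formula $\KMono\cap\KEpireg=\Equ$ (Proposition \ref{kmonintkregestequ}). The duals of these results for $\mathring{K}_2$-cokernels and $\mathring{K}_2$-coquotients will also be used throughout.

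For $(1)\Rightarrow(2)$ I first establish $K_1$-idempotence (and, dually, $\mathring{K}_2$-coidempotence). Applied to an arbitrary quotient $e^1_g$, hypothesis 1 gives that $w_{e^1_g}$ is an equivalence; but by precoupling $e^1_g$ is a $\mathring{K}_2$-epimorphism, so $\mathring{K}_2 e^1_g$ is trivial and thus $m^2_{e^1_g}\simeq 1_{\im^1 g}$ by the dual axiom of kernel-quotient system, whence $m^1_{e^1_g}\simeq m^2_{e^1_g}w_{e^1_g}\simeq w_{e^1_g}$ is an equivalence. Thus every quotient is a $K_1$-regular epimorphism, which is $K_1$-idempotence. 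With both idempotences in hand, $e^1_f$ is a $K_1$-regular epimorphism and $m^2_f$ is a $\mathring{K}_2$-regular monomorphism for every $f$. Since $w_f$ is an equivalence, the canonical arrow $m^2_f w_f\to m^2_f$ in $\flc$ (with first component $w_f$ and second component the identity) is an equivalence, so $m^1_f\simeq m^2_f w_f$ is equivalent in $\flc$ to the $\mathring{K}_2$-regular monomorphism $m^2_f$, and is itself such a monomorphism by repleteness. The factorisation $f\simeq m^1_f e^1_f$ then witnesses condition 2.

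For $(2)\Rightarrow(3)$, given a $K_1$-monomorphism $m$ I factor it as $m\simeq n\circ e$ with $e\in\KEpireg$ and $n$ a $\mathring{K}_2$-regular monomorphism. Because $n$ is (equivalent to) a coquotient, precoupling (Definition \ref{defsysnoyquconcoqprec}) makes $n$ a $K_1$-monomorphism; Corollary \ref{kmonstabcompsimp} then forces $e$ to be a $K_1$-monomorphism as well, and Proposition \ref{kmonintkregestequ} makes $e$ an equivalence. Hence $m\simeq n$ is a $\mathring{K}_2$-regular monomorphism, and the dual argument handles $\mathring{K}_2$-epimorphisms.

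For $(3)\Rightarrow(1)$, every $K_1$-regular epimorphism is equivalent to a quotient, hence to a $\mathring{K}_2$-epimorphism by precoupling, so condition 3 gives the equality of $\KEpireg$ with the class of $\mathring{K}_2$-epimorphisms; this class is stable under composition by the dual of Corollary \ref{kmonstabcompsimp}. Moreover, quotients are $\mathring{K}_2$-epimorphisms hence $K_1$-regular epimorphisms, so Proposition \ref{caracdefidemp} gives $K_1$-idempotence and then Proposition \ref{defcaracfactor} gives $K_1$-factorisability. Thus $m^1_f$ is a $K_1$-monomorphism and, by 3, a $\mathring{K}_2$-regular monomorphism; dually $\C$ is $\mathring{K}_2$-cofactorisable, so the pair $(\mathring{K}_2\text{-epi},\mathring{K}_2\text{-regular mono})$ is a factorisation system. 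Then $f\simeq m^1_f e^1_f$ is a second $\mathring{K}_2$-coregular factorisation of $f$, and uniqueness up to equivalence identifies $m^1_f\simeq m^2_f$ and $e^1_f\simeq e^2_f$, forcing the canonical comparison $w_f$ (built from orthogonality as in the discussion preceding the theorem) to be an equivalence. The most delicate step is the bootstrap of idempotence in $(1)\Rightarrow(2)$, which depends essentially on combining condition 1 with the precoupling identification that makes $m^2_{e^1_g}$ equivalent to an identity.
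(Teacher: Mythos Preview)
Your proof is correct and follows the same $1\Rightarrow 2\Rightarrow 3\Rightarrow 1$ cycle as the paper, with the same key ingredients (precoupling, cancellation for $K_1$-monomorphisms, and Proposition~\ref{kmonintkregestequ}). The only differences are tactical: in $(1)\Rightarrow(2)$ the paper reaches $K_1$-idempotence via factorisability (observing that $m^1_f\simeq m^2_f w_f$ is a coquotient, hence a $K_1$-monomorphism, and invoking Proposition~\ref{KfactKidem}) rather than by your bootstrap through quotients, and in $(3)\Rightarrow(1)$ it shows $w_f$ is an equivalence by direct cancellation---$w_f$ is both a $K_1$-monomorphism (from $m^1_f\simeq m^2_f w_f$) and a $\mathring{K}_2$-epimorphism (from $e^2_f\simeq w_f e^1_f$)---rather than by appealing to essential uniqueness of factorisations.
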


	\begin{proof}
		{\it 1 $\Rightarrow$ 2. } For every $f\col\flc$, $m^2_f$ is a $K_1$-monomorphism,
		thanks to the precoupling.  Now, if $w_f$ is an equivalence, $m^1_f\simeq m^2_f$,
		so $m^1_f$ is also a $K_1$-monomorphism.  Thus $\C$ is $K_1$-factorisable.
		
		Hence, by Proposition \ref{KfactKidem}, $\C$ is $K_1$-idempotent
		and, by Proposition \ref{efkreg}, $e_f^1$ is a $K_1$-regular epimorphism.
		Dually, $m_f^2$ is a $\mathring{K}_2$-regular monomorphism.
		
		{\it 2 $\Rightarrow$ 3. } Let $f$ be a $K_1$-monomorphism.  By condition 2,
		there exist $e\in K_1\text{-}\Epireg$ and $m\in\mathring{K}_2\text{-}\Monoreg
		\incl K_1\text{-}\Mono$ and a 2-arrow $f\Ra me$.
		By the cancellation property of
		$K_1$-monomorphisms (Corollary \ref{kmonstabcompsimp}), $e$ is also a
		$K_1$-monomorphism.  Thus, by Proposition \ref{kmonintkregestequ}, 
		$e$ is an equivalence and $f\simeq m\in\mathring{K}_2\text{-}\Monoreg$.
		The other property is proved dually.
		
		{\it 3 $\Rightarrow$ 1. } Condition 3 implies that $\mathring{K}_2$-regular monomorphisms are stable under composition, because
		$K_1$-monomorphisms are (Corollary \ref{kmonstabcompsimp}).
		Dually, $K_1$-regular epimorphisms are stable under composition.
		
		Then, $\C$ is $K_1$-idempotent because every quotient is a
		$\mathring{K}_2$-epimorphism and thus a $K_1$-regular epimorphism
		($\varepsilon_{Q_1}\col Q_1K_1Q_1\Ra Q_1$ is an equivalence).  Dually, $\C$
		is $\mathring{K}_2$-idempotent.
		
		So, by Proposition \ref{defcaracfactor}, $m^1_f$ is a
		$K_1$-monomorphism and $e^2_f$ is a $\mathring{K}_2$-epimorphism.
		By the cancellation law (Corollary \ref{kmonstabcompsimp}), the arrow $w_f$
		belongs both to $K_1\text{-}\Mono\incl \mathring{K}_2\text{-}\Monoreg$
		and to $\mathring{K}_2\text{-}\Epi$.  Thus $w_f$ is an equivalence, by Proposition	\ref{kmonintkregestequ}.
	\end{proof}

If $\C$ is $K_1$-$\mathring{K}_2$-perfect, $\C$ is $K_1$-factorisable and $\mathring{K}_2$-factorisable and
\begin{eqn}
	(\mathring{K}_2\text{-}\Epi,K_1\text{-}\Mono) =
	(K_1\text{-}\Epireg,\mathring{K}_2\text{-}\Monoreg)
\end{eqn}
is a factorisation system.

\chapter{Kernels and pips}

\begin{quote}
	{\it This chapter introduces the basic notions which we use in the following
	chapters.  After the definition of pointed groupoid enriched categories, we introduce
	different kinds of arrows and objects, and different kinds of limits: (co)kernel,
	(co)pip and (co)root.}
\end{quote}

\section{Generalities}

\subsection{$\Gpdp$-categories}

	Pointed groupoids play in dimension 2 the rôle that pointed sets played in dimension 1. Pointed groupoid enriched categories will be the natural context where it makes sense to speak of kernel, cokernel and the corresponding notions of “monomorphism” and
	“epimorphism”.

	\begin{df}\label{deftoutpointe}~
		\begin{enumerate}
			\item A \emph{pointed groupoid}\index{pointed groupoid}\index{groupoid!pointed}
				consists of $(\A,I)$, where
				$\A$ is a groupoid and $I\col\A$.
			\item A \emph{pointed functor}\index{functor!pointed}\index{pointed functor}
				$(\A,I)\ra(\B,I)$ consists of a functor $F\col\A\ra\B$ and an arrow $\varphi_0\col I\ra FI$.
			\item A \emph{pointed natural transformation}%
				\index{natural transformation!pointed}%
				\index{pointed natural transformation}
				$(F,\varphi_0)\Ra(F',\varphi'_0)\col(\A,I)\ra(\B,I)$ is a 
				natural transformation $\alpha\col F\Ra F'$ such that
				$\alpha_I\varphi_0=\varphi'_0$.
		\end{enumerate}
	\end{df}
	
	\begin{pon}
		Pointed groupoids, pointed functors between them and poin\-ted natural transformations between them form a
		$\Gpd$-category $\Gpdp$\index{Gpd*@$\Gpdp$}, the composition
		of morphisms being defined by
		\begin{eqn}
			(G,\psi_0)\circ(F,\varphi_0)=(GF,G\varphi_0\circ \psi_0)
		\end{eqn}
		and the horizontal and vertical compositions of 2-morphisms being the usual compositions of natural transformations.
	\end{pon}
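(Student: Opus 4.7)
The plan is to verify item-by-item the data and axioms of Definition \ref{defgpdcat}, using the fact that the underlying structure (forgetting base points) is the strictly described $\Gpd$-category $\Gpd$. Since $\Gpd$ itself is strict, the only real content is to check that the pointed structure is preserved by all operations.

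First I would show that for any two pointed groupoids $(\A,I)$ and $(\B,I)$, the pointed functors and pointed natural transformations between them form a groupoid. Identities and vertical composites of pointed natural transformations are clearly pointed (the defining equation $\alpha_I\varphi_0=\varphi'_0$ is stable under these operations). The essential check is that if $\alpha\col(F,\varphi_0)\Ra(F',\varphi'_0)$ is pointed, then its inverse $\alpha^{-1}$ in $\Gpd(\A,\B)$ is again pointed: from $\alpha_I\varphi_0=\varphi'_0$ we get $(\alpha^{-1})_I\varphi'_0=\varphi_0$, which is the pointedness equation for $\alpha^{-1}$.

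Next I would verify that the composition rule $(G,\psi_0)\circ(F,\varphi_0)\eqdef(GF,G\varphi_0\circ\psi_0)$ extends to a functor. Well-definedness on objects (pointed functors) is immediate since $G\varphi_0\circ\psi_0\col I\ra GFI$. The main computation is showing that if $\alpha\col(F,\varphi_0)\Ra(F',\varphi'_0)$ and $\beta\col(G,\psi_0)\Ra(G',\psi'_0)$ are pointed, then $\beta*\alpha$ is pointed for the composite base-point data. This follows from the calculation
\begin{equation*}
(\beta*\alpha)_I\circ G\varphi_0\circ\psi_0
=\beta_{F'I}\circ G(\alpha_I\varphi_0)\circ\psi_0
=\beta_{F'I}\circ G\varphi'_0\circ\psi_0
=G'\varphi'_0\circ\beta_I\circ\psi_0
=G'\varphi'_0\circ\psi'_0,
\end{equation*}
where we used pointedness of $\alpha$, naturality of $\beta$ at $\varphi'_0$, and pointedness of $\beta$. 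Functoriality of composition then follows from functoriality in $\Gpd$.

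Finally I would observe that associativity and the unit laws hold strictly. For associativity, both iterated composites of $(F,\varphi_0),(G,\psi_0),(H,\chi_0)$ give $(HGF, HG\varphi_0\circ H\psi_0\circ\chi_0)$, using strict associativity in $\Gpd$ and functoriality of $H$. For units, the identity $(1_\A,1_I)$ composes on either side of $(F,\varphi_0)$ to give $(F,\varphi_0)$ back on the nose. Therefore the associativity and unit natural transformations $\alpha$, $\rho$, $\lambda$ can all be taken to be identities and the coherence diagrams \ref{axbicatassoc}, \ref{axbicatunit} commute trivially. The main obstacle—such as it is—is just the bookkeeping in the pointedness check for horizontal composites of 2-arrows; everything else reduces to strict equalities inherited from $\Gpd$, so $\Gpdp$ is a strictly described $\Gpd$-category.
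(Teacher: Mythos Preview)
Your proof is correct. The paper states this proposition without proof, treating it as routine; your verification supplies exactly the checks one would make, and the computation for pointedness of $\beta*\alpha$ is the only nontrivial step, which you handle correctly.
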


	We will often use the following coherence result.
	
	\begin{pon}\label{metthcoh}
		For any pointed functor $(F,\varphi_0)\col(\A,I)\ra(\B,I)$
		there exists a functor $F'\col\A\ra\B$ isomorphic to $F$ and described in such a way
		that $F'I$ coincides with $I$ and $\varphi_0$ is the identity
		(we say that $F'$ is
		\emph{strictly described}\index{pointed functor!strictly described}).
	\end{pon}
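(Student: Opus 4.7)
The plan is to modify $F$ only on arrows that touch the distinguished object, conjugating by $\varphi_0$ where needed, while leaving it unchanged everywhere else. Concretely, I would set $F'X \eqdef FX$ for all $X\col\A$ with the single exception that $F'I \eqdef I$, and define $F'$ on an arrow $f\col X\ra Y$ by a four-case rule:
\begin{align*}
F'f &\eqdef Ff &&\text{if } X\neq I \text{ and } Y\neq I,\\
F'f &\eqdef Ff\circ\varphi_0 &&\text{if } X=I \text{ and } Y\neq I,\\
F'f &\eqdef \varphi_0^{-1}\circ Ff &&\text{if } X\neq I \text{ and } Y=I,\\
F'f &\eqdef \varphi_0^{-1}\circ Ff\circ\varphi_0 &&\text{if } X=Y=I.
\end{align*}
This makes sense because $\varphi_0\col I\ra FI$ is invertible (every arrow in a groupoid is), and by construction $F'I$ coincides with $I$ and the pointing arrow $1_I\col I\ra F'I$ is the identity.

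Next I would verify that $F'$ is a functor. On identities, $F'(1_X)=1_{FX}$ for $X\neq I$ is immediate, and $F'(1_I)=\varphi_0^{-1}\circ 1_{FI}\circ\varphi_0=1_I$. For composites, a case split on which of the three endpoints of $f\col X\ra Y$ and $g\col Y\ra Z$ equal $I$ shows that the inserted $\varphi_0$ and $\varphi_0^{-1}$ always telescope: for instance, when $Y=I$ and $X,Z\neq I$, one has $F'g\circ F'f=(Fg\circ\varphi_0)\circ(\varphi_0^{-1}\circ Ff)=F(gf)=F'(gf)$, and the remaining cases are analogous.

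To produce the required isomorphism between $F'$ and $F$, I would define $\alpha\col F'\Ra F$ by $\alpha_I\eqdef\varphi_0$ and $\alpha_X\eqdef 1_{FX}$ for $X\neq I$. Naturality $\alpha_Y\circ F'f=Ff\circ\alpha_X$ follows by checking the same four cases: the two inserted $\varphi_0^{\pm 1}$ in $F'f$ cancel exactly the $\alpha_X,\alpha_Y$ on the other sides of the square. Each component of $\alpha$ is invertible in $\B$, so $\alpha$ is an isomorphism of functors, and the identity $\alpha_I\circ 1_I=\varphi_0$ says precisely that $\alpha\col(F',1_I)\Ra(F,\varphi_0)$ is a pointed natural transformation in the sense of Definition \ref{deftoutpointe}.

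The argument is essentially bookkeeping; the only place requiring any attention is the functoriality of $F'$, where one must be systematic with the eight possible configurations of endpoints (counting whether $X$, $Y$, $Z$ each equal $I$ or not) to confirm that the conjugating factors really do cancel in every configuration. Everything else is a direct verification.
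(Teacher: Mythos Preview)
Your proof is correct and matches the paper's approach exactly: the paper defines $F'$ by the same four-case conjugation rule and the isomorphism $\omega\col F'\Ra F$ (your $\alpha$) by $\omega_I\eqdef\varphi_0$, $\omega_A\eqdef 1_{FA}$ for $A\not\equiv I$. You have in fact been more thorough, since the paper omits the explicit functoriality and naturality checks.
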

	
		\begin{proof}
			We define $F'\col\A\ra\B$, on objects by $F'I\eqdef I$
			and $F'A\eqdef FA$, if $A\not\equiv I$, and on arrows by the following
			composites (for $A, A'\not\equiv I$):
			\begin{eqn}
				F'_{II}\eqdef\A(I,I)\xrightarrow{F_{II}}\B(FI,FI)
				\xrightarrow{\varphi_0^{-1}\circ-\circ\varphi_0}\B(I,I);
			\end{eqn}
			\begin{eqn}
				F'_{AI}\eqdef\A(A,I)\xrightarrow{F_{AI}}\B(FA,FI)
				\xrightarrow{\varphi_0^{-1}\circ-}\B(FA,I);
			\end{eqn}
			\begin{eqn}
				F'_{IA'}\eqdef\A(I,A')\xrightarrow{F_{IA'}}\B(FI,FA')
				\xrightarrow{-\circ\varphi_0}\B(I,FA');
			\end{eqn}
			\begin{eqn}
				F'_{AA'}\eqdef\A(A,A')\xrightarrow{F_{AA'}}\B(FA,FA').
			\end{eqn}
			We define a pointed natural transformation $\omega\col F'\Ra F$
			by $\omega_I\eqdef\varphi_0$ and $\omega_A\eqdef 1_{FA}$, if $A\not\equiv I$.
		\end{proof}
		
	If $F,G\col(\A,I)\ra(\B,I)$ are strictly described pointed functors,
	a pointed natural transformation $\alpha\col F\Ra G$ is simply a
	natural transformation	such that $\alpha_I=1_I$.

	The internal $\Hom$ of $\Gpdp$ is given by the groupoids
	$[\A,\B]\eqdef\Gpdp(\A,\B)$
	with as distinguished object the constant functor $0\col\A\ra\B$, which
	maps every object of $\A$ to $I$ and every arrow of $\A$ to $1_I$.

	\begin{df}\index{functor!bipointed}\index{bipointed functor}
		Let be $\A, \B, \cat{Y}\col\Gpdp$.  A \emph{bipointed functor}
		$F\col\A\times\B\ra\cat{Y}$ is a functor $F\col\A\times\B\ra\cat{Y}$
		with isomorphisms natural in each variable
		\begin{align}\stepcounter{eqnum}
		\begin{split}
			\varphi_0^B &\col I\ra F(I,B),\\
			\psi^0_A &\col I\ra F(A,I),
		\end{split}
		\end{align}
		such that the natural transformations $\varphi_0^-\col 0\Ra F(I,-)$ 
		and $\psi^0_-\col 0\Ra F(-,I)$ are pointed (these two conditions are
		equivalent and mean that
		\begin{eqn}
			\varphi_0^I=\psi^0_I\col I\ra F(I,I)\text{).}
		\end{eqn}
	\end{df}
	
	\begin{df}
		Let be $\A, \B, \cat{Y}\col\Gpdp$. The pointed groupoid
		$\mathrm{Bipt}(\A\times\B,\cat{Y})$%
		\index{Bipt(AxB,Y)@$\mathrm{Bipt}(\A\times\B,\cat{Y})$} is defined in the following way.
		\begin{enumerate}
			\item {\it Objects.} These are the bipointed functors $\A\times\B\ra\cat{Y}$.
			\item {\it Arrows.}  These are the natural transformations
				$\alpha:F\Ra F'\col\A\times\B\ra\cat{Y}$ such that, for all $A\col\A$,
				$\alpha_{(A,-)}\col(F(A,-),\psi^0_A)\Ra (F'(A,-),\psi'^0_A)$ is
				pointed and, for all $B\col\B$, $\alpha_{(-,B)}\col(F(-,B),\varphi_0^B)
				\Ra (F'(-,B),\varphi'^B_0)$ is pointed
				(we say that such an $\alpha$ is a
				\emph{bipointed natural transformation}%
				\index{natural transformation!bipointed}%
				\index{bipointed natural transformation}).
			\item {\it Point.} This is the constant functor $0$.
		\end{enumerate}
	\end{df}
	
	It is very easy to check the following property, which shows that the bipointed functors
	 $\A\times\B\ra\cat{Y}$
	are equivalent to the pointed functors $\A\tens\B\ra\cat{Y}$; in this way we avoid the need to define $\A\tens\B$.
	
	\begin{pon}
		$\mathrm{Bipt}(\A\times\B,\cat{Y})\simeq [\A,[\B,\cat{Y}]]$
	\end{pon}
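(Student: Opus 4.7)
The plan is to construct mutually inverse, pointed-structure-preserving functors $C$ (currying) and $U$ (uncurrying) between $\mathrm{Bipt}(\A\times\B,\cat{Y})$ and $[\A,[\B,\cat{Y}]]$. The idea is the standard one — a bipointed $F\col\A\times\B\ra\cat{Y}$ corresponds to the functor $A\mapsto F(A,-)$ — but the heart of the argument is the observation that the three pieces of bipointed data, namely $\varphi_0^-\col 0\Ra F(I,-)$, $\psi^0_-\col 0\Ra F(-,I)$, and the compatibility $\varphi_0^I=\psi^0_I$, package together into exactly the pointed structure on the curried functor valued in the pointed groupoid $[\B,\cat{Y}]$.

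For the currying direction, given $(F,\varphi_0^-,\psi^0_-)$, I would define $C(F)(A)\eqdef(F(A,-),\psi^0_A)$, which is a pointed functor $\B\ra\cat{Y}$, and $C(F)(a)\eqdef F(a,-)$ for $a\col A\ra A'$; naturality of $\psi^0_-$ in $A$ is precisely what makes $F(a,-)$ a pointed natural transformation, hence an arrow in $[\B,\cat{Y}]$. The pointing of $C(F)$ itself — an arrow $0\ra C(F)(I)=(F(I,-),\psi^0_I)$ in $[\B,\cat{Y}]$ — is supplied by $\varphi_0^-$, and the requirement that this be pointed as a natural transformation is exactly the bipointing compatibility $\varphi_0^I=\psi^0_I$. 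A bipointed natural transformation $\alpha\col F\Ra F'$ maps to the pointed natural transformation with components $C(\alpha)_A\eqdef\alpha_{(A,-)}$; the two pointedness conditions on $\alpha$ (pointed in the $A$-direction and in the $B$-direction) correspond respectively to each $C(\alpha)_A$ being a well-defined arrow of $[\B,\cat{Y}]$ and to $C(\alpha)$ itself being a pointed natural transformation.

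The inverse $U$ is constructed by evaluation: a pointed $G\col\A\ra[\B,\cat{Y}]$ with pointing $\hat\varphi_0\col 0\Ra G(I)$ gives $U(G)(A,B)\eqdef G(A)(B)$, with $\psi^0_A$ the point of the pointed functor $G(A)$ and $\varphi_0^B\eqdef(\hat\varphi_0)_B$; the compatibility $\varphi_0^I=\psi^0_I$ is precisely the pointedness of $\hat\varphi_0$. It is then immediate to check that $U\circ C$ and $C\circ U$ are the identity (or, for non-strictly-described data, canonically isomorphic to it, using Proposition \ref{metthcoh}) and that both $C$ and $U$ send the distinguished constant functor $0$ to the distinguished object, so the equivalence is pointed.

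There is no genuine obstacle: the whole argument is the verification that the evident currying equivalence $[\A\times\B,\cat{Y}]\simeq[\A,[\B,\cat{Y}]]$ of ordinary groupoids descends to the pointed setting. The only point requiring attention is the bookkeeping that the extra layer of pointing present in $[\B,\cat{Y}]$ — which is responsible for turning two separate pointing arrows and a shared-value condition into the single datum of a pointed natural transformation into a pointed functor — matches the bipointed datum exactly on the nose.
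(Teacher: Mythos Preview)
Your proposal is correct and is exactly the intended argument; the paper itself gives no proof at all, merely remarking that the property is ``very easy to check'' and serves to avoid defining $\A\tens\B$. Your currying/uncurrying construction, with the key observation that the bipointing compatibility $\varphi_0^I=\psi^0_I$ is precisely the pointedness of $\varphi_0^-$ as an arrow into the pointed object $(F(I,-),\psi^0_I)$ of $[\B,\cat{Y}]$, is the standard verification and matches in spirit the explicit proof the paper later gives for the analogous (but more involved) symmetric bimonoidal statement $\Bimon(\G\times\H,\cat{K})\simeq[\G,[\H,\cat{K}]]$.
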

	
	We can now give a definition of $\Gpdp$-categories.

	\begin{pon}
		Let $\C$ be a $\Gpd$-category such that, for all $A,B\col\C$,
		the grou\-po\-id $\C(A,B)$ is pointed (we write the point $0^A_B$; we usually omit the superscript and subscripts),
		and equipped with natural transformations
		\begin{align}\begin{split}\stepcounter{eqnum}
			\varphi_0^h&\col 0\Ra h0;\\
			\psi_g^0&\col 0\Ra 0g.
		\end{split}\end{align}
		We say that $\C$ is a \emph{pointed groupoid enriched category} (for short \emph{$\Gpdp$-category}%
		\index{Gpd*-category@$\Gpdp$-category}) if the following equivalent
		conditions 1 and 2 hold.
		\begin{enumerate}
			\item \begin{enumerate}
				\item For all $A,B,C\col\C$, the composition functor
					$\C(A,B)\times\C(B,C)\ra\C(A,C)$, equipped with $\varphi_0^h$ and $\psi_g^0$,
					is bipointed.
				\item For all $A,B,C,D$, the associativity natural transformation $\alpha$ (diagram \ref{diagassocgpdcat}) is tripointed.
				\item For all $A,B$, the neutrality natural transformations $\rho$ and $\lambda$
					(diagrams
					\ref{diagneutrdroitgpdcat} and \ref{diagneutrgauchgpdcat})
					are pointed.
				\end{enumerate}
			\item \begin{enumerate}
				\item For all $A$ and $h\col B\ra C$ in $\C$, the functor
					$h\circ -\col\C(A,B)\ra\C(A,C)$, equipped with $\varphi_0^h$,
					is pointed.
				\item For all $g\col A\ra B$ and $C$ in $\C$, the functor
					$-\circ g\col\C(B,C)\ra\C(A,C)$, equipped with $\psi_g^0$, is pointed.
				\item The transformation $\psi_{-}^0$ is pointed
					(or, equivalently, the transformation
					$\varphi^{-}_0$ is pointed).
				\item For all $g\col B\ra C$ and $h\col C\ra D$, the following natural
						transformation, which expresses a part of the associativity,
					is pointed.
					\begin{xym}\xymatrix@=30pt{
						{\C(A,B)}\ar[r]_{g\circ -}\rruppertwocell^{hg\circ -}<10>
							{_<-3>{\;\;\;\;\;\;\,\alpha_{hg-}}}
						&{\C(A,C)}\ar[r]_{h\circ -}
						&{\C(A,D)}
					}\end{xym}
				\item For all $f\col A\ra B$ and $h\col C\ra D$, the following natural
						transformation, which expresses an other part of the associativity,
					is pointed.
					\begin{xym}\xymatrix@=30pt{
						{\C(B,C)}\ar[r]^{h\circ-}\ar[d]_{-\circ f}
							\drtwocell\omit\omit{\;\;\;\;\;\;\alpha_{h-f}}
						&{\C(B,D)}\ar[d]^{-\circ f}
						\\ {\C(A,C)}\ar[r]_{h\circ-}
						&{\C(A,D)}
					}\end{xym}
				\item For all $f\col A\ra B$ and $g\col B\ra C$, the following natural
						transformation, which expresses the last part of the associativity,
					is pointed.
					\begin{xym}\xymatrix@=30pt{
						{\C(C,D)}\ar[r]^{-\circ g}\rrlowertwocell_{-\circ gf}<-10>
							{_<3>{\;\;\;\;\;\;\,\alpha_{-gf}}}
						&{\C(B,D)}\ar[r]^{-\circ f}
						&{\C(A,D)}
					}\end{xym}
				\item For all $A,B\col\C$, the neutrality natural transformations are pointed.
					\begin{xym}\xymatrix@=30pt{
						{\C(A,B)}\ruppertwocell^{1_B\circ-}{\lambda}\ar@{=}[r]
							\rlowertwocell_{-\circ 1_A}{^{\rho}}
						&{\C(A,B)}
					}\end{xym}
				\end{enumerate}
		\end{enumerate}
	\end{pon}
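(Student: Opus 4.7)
The equivalence of conditions 1 and 2 is a pure unpacking of definitions. Condition 1 groups the coherence data for composition, associator and unitors into ``bipointed'', ``tripointed'' and ``pointed'' statements respectively, while condition 2 lists the atomic components one by one. I would prove each implication by chasing through the definition of bipointed functor recalled just before the statement (and its evident extension to three variables implicit in the word ``tripointed'').

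\textbf{From (1) to (2).} Bipointedness of $\mathrm{comp}\col\C(A,B)\times\C(B,C)\ra\C(A,C)$ breaks into exactly three pieces: naturality in the first argument together with the pointing $\varphi_0^h\col 0\Ra h\circ 0$ for each $h$, which is the pointedness of $h\circ-$ and so gives 2(a); naturality in the second argument together with $\psi_g^0\col 0\Ra 0\circ g$, which is 2(b); and the compatibility $\varphi_0^{0}=\psi_{0}^{0}\col 0\Ra 0\circ 0$ at the distinguished object of $\C(A,B)\times\C(B,C)$, which is exactly 2(c) (the target of the natural transformation $\psi_-^0\col 0\Ra 0\circ-$ has pointing $\varphi_0^0$, so pointedness of $\psi_-^0$ is the identity $\psi_0^0=\varphi_0^0$). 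The tripointedness of the associator in 1(b) similarly unfolds: fixing two of the three variables $f,g,h$ and letting the third vary yields a natural transformation between two pointed functors, and requiring this to be pointed in each of the three cases gives exactly 2(d), 2(e) and 2(f). Finally, 1(c) is literally 2(g).

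\textbf{From (2) to (1).} Conversely, 2(a) and 2(b) provide the pointings of composition in each variable required by bipointedness, and 2(c) supplies the compatibility at the common distinguished point; together these assemble into 1(a). The analogous assembly of 2(d)--(f) into the statement that $\alpha$ is tripointed delivers 1(b), and 2(g) is 1(c).

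\textbf{Main obstacle.} There is no substantive obstacle, only bookkeeping. The only point requiring attention is that the coherence $\varphi_0^0=\psi_0^0$ demanded by bipointedness is precisely what 2(c) asserts, and that the corresponding higher compatibilities between the three slots of the associator are either forced by 2(d)--(f) together with naturality, or (for configurations with the zero morphism in two slots simultaneously) already follow from 2(c) applied to suitable components. Once these identifications are made, the proposition becomes a definitional tautology.
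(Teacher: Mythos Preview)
Your proposal is correct and matches the paper's approach. The paper does not give an explicit proof for this proposition, but it does prove the analogous statement for presemiadditive $\Gpd$-categories (Definition~\ref{semipreadd}), and that proof follows exactly your decomposition: conditions 2(a)--(c) together are equivalent to 1(a) by the definition of bipointed functor, condition 2(g) is equivalent to 1(c), and the tripointedness in 1(b) unpacks as the three pointedness conditions 2(d)--(f) obtained by fixing two of the three variables of $\alpha$.
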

	
	\begin{pon}\index{Gpd*-category@$\Gpdp$-category!strictly described}
		For every $\Gpdp$-category $\C$, we can construct an equivalent $\Gpdp$-category $\C'$
		which is strictly described as a $\Gpd$-category 
		($f(g h)\equiv (f g) h$, $f 1_A\equiv f$ and
		$1_B f\equiv f$) and where the composition functors are strictly described as
		bipointed functors:
		\begin{eqn}
			f 0 \equiv 0 \text{ and } 0 f\equiv 0;
		\end{eqn}
		\begin{eqn}
			\alpha 0 \equiv 1_0 \text{ and } 0\alpha \equiv 1_0.
		\end{eqn}
	\end{pon}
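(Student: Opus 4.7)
The plan is to factor the strictification into two stages. First, I would strictify the underlying $\Gpd$-category of $\C$: applying the $\Gpd$-Yoneda embedding $Y_\C\col \C \to [\C\op,\Gpd]$ (invoked just before Proposition \ref{caracdefidemp}), $\C$ becomes equivalent to its full image in $[\C\op,\Gpd]$, which is already strictly described as a $\Gpd$-category. So from the outset I may assume $f(gh)\equiv(fg)h$, $f\cdot 1_A\equiv f$, and $1_B\cdot f\equiv f$.

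Second — and this is the novel part — I would strictify the bipointed structure of composition. The key auxiliary object is the sub-$\Gpd$-category $\Gpdp^{\mathrm{str}}\subseteq\Gpdp$ whose morphisms are the \emph{strictly} pointed functors (those with $FI\equiv I$ and $\varphi_0\equiv 1_I$). Proposition \ref{metthcoh} shows that this inclusion is an equivalence of $\Gpd$-categories. A direct check shows that $\Gpdp^{\mathrm{str}}$ is strict as a $\Gpd$-category and moreover \emph{strictly bipointed}: for any strictly pointed $F$ one has $F\circ 0\equiv 0$ and $0\circ F\equiv 0$ on the nose (since $F(I)\equiv I$ and $F(1_I)\equiv 1_I$), and likewise the whiskerings $\alpha\circ 1_0\equiv 1_0$, $1_0\circ\alpha\equiv 1_0$ hold for pointed 2-arrows, because a strictly pointed natural transformation satisfies $\alpha_I\equiv 1_I$.

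To transfer these strictness properties back to $\C$, I would then embed $\C$ into $[\C\op,\Gpdp^{\mathrm{str}}]$ by a pointed Yoneda construction: send each object $A$ to a $\Gpd$-functor $\hat{\C}(-,A)\col\C\op\to\Gpdp^{\mathrm{str}}$ obtained from the hom-functor $\C(-,A)$ by replacing each precomposition functor $\C(a,A)=-\circ a$ with an isomorphic strictly pointed functor via Proposition \ref{metthcoh}, and similarly an arrow $f\col A\to A'$ in $\C$ is sent to a strictly pointed version of postcomposition $f\circ-$. Since composition in the functor $\Gpd$-category $[\C\op,\Gpdp^{\mathrm{str}}]$ is computed pointwise in $\Gpdp^{\mathrm{str}}$, it inherits both strict $\Gpd$-associativity/unitality and strict bipointedness. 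Taking $\C'$ to be the full image of this embedding yields a $\Gpdp$-category equivalent to $\C$ satisfying all the required strict equalities.

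The main obstacle will be the coherence bookkeeping in the second stage: one must check that the pointwise strictifications $\hat{\C}(a,A)$ assemble into a genuine $\Gpd$-functor $\C\op\to\Gpdp^{\mathrm{str}}$, that $A\mapsto\hat{\C}(-,A)$ extends to a $\Gpd$-functor $\C\to[\C\op,\Gpdp^{\mathrm{str}}]$ with suitably pointed structure 2-cells, and that this embedding is fully faithful (the last being automatic from ordinary Yoneda once one verifies the replacements are isomorphic to the originals). These verifications are routine but tedious; they rely on the naturality of $\varphi_0^h$ and $\psi^0_g$ in $\C$ together with the explicit formula for the strictification $F'$ constructed in the proof of Proposition \ref{metthcoh}.
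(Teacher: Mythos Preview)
Your approach is correct and genuinely different from the paper's. The paper is much terser: after strictifying $\C$ as a $\Gpd$-category, it applies (a bipointed variant of) Proposition~\ref{metthcoh} directly to each composition functor---in effect setting $g\bullet f:=0$ whenever $f\equiv 0$ or $g\equiv 0$, and $g\bullet f:=g\circ f$ otherwise---and then asserts without argument that strict associativity and neutrality survive. You instead embed into $[\C\op,\Gpdp^{\mathrm{str}}]$, a target that is already strict both as a $\Gpd$-category and in its bipointed structure, and pull both strictnesses back to the full image at once. This costs more bookkeeping but is more robust: the paper's final clause is delicate, since at triples $f,g,h\not\equiv 0$ with $g\circ f\equiv 0$, $h\circ g\not\equiv 0$, and $h\circ 0\not\equiv 0$ (such configurations occur already in $\Gpdp$, taking any non-strictly-pointed $H$ together with suitable strictly pointed $F,G$) one computes $h\bullet(g\bullet f)=h\bullet 0=0$ while $(h\bullet g)\bullet f=(h\circ g)\circ f=h\circ 0\neq 0$, so the naive redefinition can break associativity. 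Your construction sidesteps this because composition in the target is the honest pointwise composition of $\Gpdp^{\mathrm{str}}$ and the embedding need only be a pseudofunctor, not a strict one. The bookkeeping you flag is indeed routine: the $\Gpdp$-category axioms (pointedness of the associativity and unit 2-cells, naturality of $\varphi_0^-$ and $\psi^0_-$) are exactly what forces the functoriality constraints of $\hat\C(-,A)$ and the naturality cells of $f\mapsto\widehat{f\circ-}$ to have identity component at the basepoint, hence to lie in $\Gpdp^{\mathrm{str}}$; full faithfulness then follows from the ordinary $\Gpd$-Yoneda lemma together with the biequivalence $\Gpdp^{\mathrm{str}}\simeq\Gpdp$.
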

	
		\begin{proof}
			We know that every $\Gpd$-category $\C$ is equivalent to a 
			strictly described $\Gpd$-category $\C'$.  The bipointed structure of
			the composition functors can be transferred to $\C'$.  Then we apply
			Proposition \ref{metthcoh} to replace the composition functors by
			strictly described bipointed functors.  This doesn't break the strictness
			of associativity and neutrality.
		\end{proof}
	
	\begin{df}\index{object!zero}\index{zero object}
		A \emph{zero object} in a $\Gpdp$-category $\C$ is an object $0$ such that
		for every
		arrow $f\col X\ra 0$, there exists a unique $\varphi\col f\Ra 0$ and
		for every arrow $g\col 0\ra Y$, there exists a unique $\psi\col g\Ra 0$.
	\end{df}

	\begin{pon}
		An object $0$ is a zero object if and only if $1_0\simeq 0$.
	\end{pon}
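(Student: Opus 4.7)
The plan is to prove each direction directly, using the strictified form of a $\Gpdp$-category in which $0\circ f\equiv 0$, $f\circ 0\equiv 0$, $0\alpha\equiv 1_0$ and $\alpha 0\equiv 1_0$, which is available by the preceding proposition.

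For the forward direction, assume $0$ is a zero object. Apply the defining property to the particular arrow $f=1_0\col 0\ra 0$: there exists a (necessarily invertible, since we are in a $\Gpd$-enrichment) 2-arrow $\iota\col 1_0\Ra 0$. This is exactly the equivalence $1_0\simeq 0$ claimed.

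For the converse, fix an iso $\iota\col 1_0\Ra 0\col 0\ra 0$. Given $f\col X\ra 0$, define $\varphi\eqdef \iota f\col f\Ra 0$, using $1_0 f=f$ and $0 f=0$ from strictness. This settles existence; existence of $\psi\col g\Ra 0$ for $g\col 0\ra Y$ is dual, taking $\psi\eqdef g\iota$. For uniqueness, let $\varphi,\varphi'\col f\Ra 0$ be any two 2-arrows and compute the horizontal composite $\iota* \varphi\col 1_0 f\Ra 0\circ 0$ using the interchange law along its two routes. Along one route, $\iota* \varphi=(\iota\cdot 0)\circ(1_0\cdot \varphi)$; since $\iota$ whiskered by the zero arrow on the right satisfies $\iota 0\equiv 1_0$ (strictness) and $1_0\cdot\varphi=\varphi$ (strict neutrality), this gives $\iota*\varphi=\varphi$. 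Along the other route, $\iota*\varphi=(\iota\cdot f)\circ(0\cdot \varphi)$; since $0\cdot \varphi\equiv 1_0$ strictly, this gives $\iota*\varphi=\iota f$. Hence $\varphi=\iota f$, and the same formula holds for $\varphi'$, so $\varphi=\varphi'$. Uniqueness of the 2-arrow $g\Ra 0$ is established dually.

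The main obstacle is purely bookkeeping: making sure the various strict identities $0f\equiv 0$, $0\alpha\equiv 1_0$ etc.\ are correctly deployed in the interchange law computation, and in the non-strict case threading through the coherence isos $\lambda$, $\psi^0_-$, $\varphi^{-}_0$ instead. Nothing deeper is required; conceptually the statement is just that an object is terminal-and-initial in the $\Gpdp$-sense iff its identity arrow is null.
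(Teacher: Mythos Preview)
Your proof is correct. The paper states this proposition without proof, so there is nothing to compare against; your argument via the interchange law is the natural one and works cleanly in the strictly described setting that the paper has just set up. One tiny notational slip: in your second route the interchange factorisation should read $(0\cdot\varphi)\circ(\iota\cdot f)$ rather than $(\iota\cdot f)\circ(0\cdot\varphi)$, but since $0\cdot\varphi\equiv 1_0$ the conclusion $\iota*\varphi=\iota f$ is unaffected.
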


	Usually we write $0^A\col A\ra 0$ instead of
	$0^A_0$ and $0_B\col 0\ra B$ instead of $0^0_B$.
	In the following of this work, unless explicitly stated otherwise, we always assume that $\Gpdp$-categories
	are strictly described.

\subsection{Taxonomy of arrows, objects and loops}

	In $\Gpd$-categories, there are two notions of monomorphism (faithful and
	fully faithful) and two notions of epimorphism (cofaithful and fully
	cofaithful); the definitions of the first two notions can be found in \cite{Street1982b}; the last two are dual and are studied in \cite{Dupont2003a}.
	
	\begin{df}\label{defsortfleches}
		Let $A\overset{f}\ra B$ be an arrow in a $\Gpd$-category $\C$.  We say that
		\begin{enumerate}
			\item $f$ is \emph{faithful}\index{arrow!faithful}\index{faithful arrow}
				if, for all $X\col \C$, the functor
				$f\circ - \col \C(X,A)\ra\C(X,B)$ is faithful in $\Gpd$,
				i.e.\ if, for all $X\col\C$ and for all
				$\alpha,\alpha'\col a\Ra a'\col X\ra A$, if $f\alpha
				=f\alpha'$, then $\alpha=\alpha'$;
			\item $f$ is \emph{fully faithful}\index{arrow!fully faithful}%
				\index{fully faithful arrow} if, for all $X\col \C$,
				$f\circ -\col \C(X,A)\ra\C(X,B)$ is full and faithful in $\Gpd$,
				i.e.\ if, for all $X\col\C$, for all $a,a'\col X\ra A$
				and for all $\beta\col fa\Ra fa'$, there exists a unique $\alpha\col a\Ra a'$
				such that $\beta=f\alpha$;
			\item $f$ is \emph{cofaithful}\index{arrow!cofaithful}%
				\index{cofaithful arrow} if, for all $Y\col \C$,
				$-\circ f\col \C(B,Y)\ra\C(A,Y)$ is faithful in $\Gpd$;
			\item $f$ is \emph{fully cofaithful}\index{arrow!fully cofaithful}%
				\index{fully cofaithful arrow} if, for all $Y\col \C$,
				$-\circ f\col \C(B,Y)\ra\C(A,Y)$ is full and faithful in $\Gpd$.
		\end{enumerate}
	\end{df}
	
	Since all 2-arrows are invertible in a $\Gpd$-category, we can
	simplify the definition of faithful arrow.
	
	\begin{pon}
		Let $A\overset{f}\ra B$ be an arrow in a $\Gpd$-category $\C$.
		Then $f$ is faithful
		if and only if, for all $\alpha\col a\Ra a\col X\ra A$, 
		if $f\alpha = 1_{fa}$, then $\alpha = 1_a$.
	\end{pon}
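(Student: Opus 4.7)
The plan is to exploit the fact that in a $\Gpd$-category every 2-arrow is invertible, so a faithfulness condition expressed on pairs of parallel 2-arrows can be reduced to a condition on endo-2-arrows (loops) by taking the quotient $\alpha'^{-1} \circ \alpha$.

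The forward direction is immediate: if $f$ is faithful in the sense of Definition \ref{defsortfleches}, then for any $\alpha \col a \Ra a$ with $f\alpha = 1_{fa}$, we have $f\alpha = f 1_a$, and taking $\alpha' \eqdef 1_a$ in the definition of faithful yields $\alpha = 1_a$.

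For the converse, assume the loop condition holds. Given $\alpha, \alpha' \col a \Ra a' \col X \ra A$ with $f\alpha = f\alpha'$, form $\beta \eqdef \alpha'^{-1} \circ \alpha \col a \Ra a$ (which makes sense since $\alpha'$ is invertible in the groupoid $\C(X,A)$). Then
\begin{eqn}
    f\beta \;=\; f\alpha'^{-1} \circ f\alpha \;=\; (f\alpha')^{-1} \circ f\alpha \;=\; (f\alpha')^{-1} \circ f\alpha' \;=\; 1_{fa},
\end{eqn}
where we used that $f \circ -$, being a functor between groupoids, preserves inverses. By hypothesis, $\beta = 1_a$, and hence $\alpha = \alpha'$.

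The only subtle point — which is not really an obstacle but worth noting — is the use of the fact that $f \circ - \col \C(X,A) \ra \C(X,B)$ preserves inverses of 2-arrows, which holds because it is a functor between groupoids. Everything else is purely formal manipulation using the groupoid structure on the hom-groupoids.
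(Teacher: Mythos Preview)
Your proof is correct and is exactly the standard argument one would expect here. The paper in fact states this proposition without proof, presumably regarding it as immediate from the groupoid structure on the hom-groupoids; your argument spells out precisely why.
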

	
	Remark: we say \emph{fully faithful} and not \emph{full and faithful},
	because the condition that, for all $X\col\C$, $\C(X,f)$ be full is not equivalent in $\Gpd$ to $f$ being full. Moreover, in $\Gpd$, this condition implies faithfulness.  We will define (Definition \ref{deffullgpdcat})
	a notion of full arrow in a $\Gpd$-category which, in $\Gpd$ and $\CGS$,
	gives back the ordinary full functors.
	
	In $\Gpd$ and $\CGS$ (see \cite{Dupont2008b} for $\Gpd$ and Section 
	\ref{sectcarcflechcgs} for $\CGS$), the faithful arrows are the ordinary faithful functors,
	the fully faithful arrows are the ordinary fully faithful functors,
	the cofaithful arrows are the surjective functors, and the fully cofaithful arrows
	are the full and surjective functors (these last two properties are not
	true in the 2-category of categories, as the characterisation
	of cofaithful and fully cofaithful arrows in $\Cat$ given by
	Adámek, El Bashir, Sobral and Velebil \cite{Adamek2001b} shows).
	
	In a $\Ens$-category seen as a locally discrete $\Gpd$-category, 
	all arrows are faithful (because all 2-arrows between two given arrows are equal) and
	the fully faithful arrows are the monomorphisms.  Dually, all arrows
	are cofaithful and the fully cofaithful arrows are the epimorphisms.
	
	Faithful arrows possess a cancellation property similar to that of monomorphisms.  The proof is straightforward.
	
	\begin{pon}\label{propsimpfid}
		Let $A\overset{f}\ra B\overset{g}\ra C$ be arrows in a $\Gpd$-category.
		If $gf$ is faithful, then $f$ is faithful.
	\end{pon}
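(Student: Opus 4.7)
The plan is to unfold the definition of faithfulness directly, using the fact that horizontal whiskering is functorial (or equivalently, that composition in a $\Gpd$-category is associative up to the coherent 2-isomorphisms which, by our standing strictness assumption, we may take to be identities).

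Concretely, fix $X\col\C$ and two 2-arrows $\alpha,\alpha'\col a\Ra a'\col X\ra A$ satisfying $f\alpha = f\alpha'$. Whiskering on the left by $g$ gives $g(f\alpha) = g(f\alpha')$, and by associativity of horizontal composition this reads $(gf)\alpha = (gf)\alpha'$. Since $gf$ is faithful by hypothesis, we conclude $\alpha = \alpha'$. This shows that the functor $f\circ -\col\C(X,A)\ra\C(X,B)$ is faithful for each $X$, so $f$ is faithful.

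There is essentially no obstacle here; the only subtlety is making sure the whiskering identity $g(f\alpha) = (gf)\alpha$ is legitimate, but this is immediate from the strictly-described convention adopted just before the statement (or, without strictness, from the coherence isomorphism $\alpha_{gf-}$ of diagram \ref{diagassocgpdcat}, which is a natural isomorphism and therefore respects the equation $f\alpha = f\alpha'$). No hypothesis on $g$ is needed, reflecting the familiar principle that cancellation on the right works for any monomorphism-like class closed under whiskering.
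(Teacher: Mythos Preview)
Your proof is correct and is precisely the straightforward argument the paper alludes to (the paper omits the proof entirely, noting only that it is straightforward). There is nothing to add.
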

		
	\bigskip
	In a $\Gpdp$-category, we can define new kinds of morphisms,
	specific to the pointed case, which are to fully faithful or faithful arrows
	what 0-monomorphisms are to monomorphisms.
	We will give their definitions first in $\Gpdp$, where we assume
	that pointed functors are strictly described ($FI\equiv I$).

	\begin{pon}\label{fidsuro}
		Let $F\col\A\ra\B$ be a pointed functor in $\Gpdp$.
		The following conditions are equivalent:
		\begin{enumerate}
			\item for all $a\col I\ra I$ in $\A$, if $Fa=1_I$, then $a=1_I$;
			\item for all $a,a'\col A\ra I$ in $\A$, if $Fa=Fa'$, then $a=a'$;
			\item for all $a,a'\col A\ra A'$ in $\A$, where $A'\simeq I$,
				if $Fa=Fa'$, then $a=a'$.
		\end{enumerate}
		We call a functor which satisfies these conditions a \emph{0-faithful}%
		\index{functor!0-faithful}\index{0-faithful!functor} functor.
	\end{pon}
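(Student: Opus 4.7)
The implications $3 \Rightarrow 2 \Rightarrow 1$ are essentially formal. For $3 \Rightarrow 2$, simply take $A' \equiv I$, which is trivially isomorphic to itself. For $2 \Rightarrow 1$, take $A \equiv I$ and $a' \eqdef 1_I$; since $F$ is strictly described as a pointed functor we have $F 1_I = 1_{FI} = 1_I$, so the hypothesis $Fa = 1_I$ becomes $Fa = F a'$, and condition 2 forces $a = 1_I$.

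The substantive direction is $1 \Rightarrow 3$, and the plan is to factor this through $1 \Rightarrow 2$ using the fact that $\A$ is a groupoid. Given $a, a' \col A \ra I$ with $Fa = Fa'$, form the composite $a' \circ a^{-1} \col I \ra I$ in $\A$ (which exists since $\A$ is a groupoid). Then
\begin{eqn}
	F(a' \circ a^{-1}) \;=\; Fa' \circ (Fa)^{-1} \;=\; Fa \circ (Fa)^{-1} \;=\; 1_I.
\end{eqn}
By condition 1, $a' \circ a^{-1} = 1_I$, hence $a' = a$. This proves $1 \Rightarrow 2$.

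Finally, for $2 \Rightarrow 3$, pick any isomorphism $\iota \col A' \ra I$ in $\A$ (which exists by hypothesis $A' \simeq I$). Given $a, a' \col A \ra A'$ with $Fa = Fa'$, the composites $\iota a, \iota a' \col A \ra I$ satisfy
\begin{eqn}
	F(\iota a) \;=\; F\iota \circ Fa \;=\; F\iota \circ Fa' \;=\; F(\iota a'),
\end{eqn}
so by condition 2 we get $\iota a = \iota a'$, whence $a = a'$ by composing with $\iota^{-1}$.

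There is no real obstacle here: the only point requiring any care is using the pointedness of $F$ (so that $F1_I = 1_I$) in the step $2 \Rightarrow 1$, and using invertibility of $F\iota$ and $Fa$ (automatic because everything lives in groupoids) in the other two implications. All of this is straightforward manipulation inside the groupoids $\A$ and $\B$.
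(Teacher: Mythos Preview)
Your proof is correct and takes essentially the same approach as the paper: reduce to an endomorphism of $I$ using invertibility in the groupoid, then invoke condition 1. The only cosmetic difference is that the paper goes directly $1 \Rightarrow 3$ by forming the single composite $\varphi \circ a \circ a'^{-1} \circ \varphi^{-1}\col I \ra I$, whereas you factor this as $1 \Rightarrow 2 \Rightarrow 3$.
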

	
		\begin{proof}
			Condition 2 is a special case of condition 3 and condition 1
			is a special case of condition 2. It remains to prove that
			condition 1 implies condition 3.
			
			Let be $a,a'\col A\ra A'$ in $\A$ such that $Fa=Fa'$, with $\varphi\col A'\ra I$.
			We let $\hat{a}$ be equal to the following composite:
			\begin{eqn}
				I\overset{\varphi^{-1}}\longrightarrow A'\overset{a'^{-1}}
				\longrightarrow A\overset{a}\longrightarrow A'\overset{\varphi}
				\longrightarrow I.
			\end{eqn}
			Then $F\hat{a} = 1_I$, so $\hat{a}=1_I$, by condition 1,
			and $a=a'$.
		\end{proof}

	Faithful functors are 0-faithful, but the converse is not true in general.
	It will be the case in $\CGS$ (Proposition \ref{caracfidcgs}).
	Using the representable functors, we can now define a notion of 0-faithful arrow
	in any $\Gpdp$-category.  The previous proposition gives three equivalent
	versions of the definition.
	
	\begin{df}\label{caraczfid}\index{arrow!0-faithful}\index{0-faithful!arrow}
		Let $\C$ be a $\Gpdp$-category and $A\overset{f}\ra B$ be an arrow in $\C$.
		We say that $f$ is \emph{0-faithful} if, for all $X\col\C$, $f\circ -
		\col\C(X,A)\ra\C(X,B)$ is 0-faithful in $\Gpdp$, i.e.
		if the following equivalent conditions hold:
		\begin{enumerate}
			\item for all $X\col\C$ and for all $\alpha\col 0\Ra 0\col X\ra A$, if 
				$f\alpha=1_0$, then $\alpha=1_0$;
			\item for all $X\col\C$ and for all $\alpha, \alpha'\col a\Ra 0\col X\ra A$,
				if $f\alpha=f\alpha'$, then $\alpha=\alpha'$;
			\item for all $X\col\C$ and for all $\alpha, \alpha'\col a\Ra a'\col
				X\ra A$, where
				$a'\simeq 0$, if $f\alpha=f\alpha'$, then $\alpha=\alpha'$.
		\end{enumerate}
		We will denote by $\zFid$\index{0-fid@$\zFid$}
		the full sub-$\Gpd$-category of $\flc$ whose objects are the 0-faithful arrows.
	\end{df}

	It is obvious that every faithful arrow is 0-faithful. We will see that, in every 2-Puppe-exact $\Gpd$-category, the 0-faithful arrows are precisely the faithful arrows (Proposition \ref{fidfidzer}).
	
	In $\Gpdp$, the 0-faithful arrows are the 0-faithful functors.
	In a $\Ensp$-category seen as a locally discrete $\Gpdp$-category,
	all arrows are 0-faithful.
		
	The 0-cofaithful arrows\index{arrow!0-cofaithful}\index{0-cofaithful arrow}
	are defined dually.

	We define now the fully 0-faithful arrows, first in $\Gpdp$
	and then in any $\Gpdp$-category.
	The first three conditions of the following proposition correspond
	to the same conditions of Proposition \ref{fidsuro}.
	
	\begin{pon}\label{carplfidsurzer}
		Let $F\col\A\ra\B$ be a pointed functor in $\Gpdp$.  The following conditions
		are equivalent:
		\begin{enumerate}
			\item \begin{enumerate}
					\item for all $b\col FA\ra I$ in $\B$, there exists
						$a\col A\ra I$ in $\A$ such that $b=Fa$;
					\item for all $a\col I\ra I$ in $\A$, if $Fa=1_I$, then
						$a = 1_I$ (i.e.\ $F$ is 0-faithful);
				\end{enumerate}
			\item for all $b\col FA\ra I$ in $\B$, there exists a unique
				$a\col A\ra I$ in $\A$ such that $b=Fa$;
			\item for all $b\col FA\ra FA'$ in $\B$, where $FA'\simeq I$, there
				exists a unique $a\col A\ra A'$ in $\A$ such that $b=Fa$;
			\item \begin{enumerate}
					\item for all $A\col\A$, if $FA\simeq I$, then $A\simeq I$; 
					\item for all $b\col I\ra I$ in $\B$, there exists a unique
						$a\col I\ra I$ in $\A$ such that $b=Fa$.
				\end{enumerate}
		\end{enumerate}
		When these conditions hold, we say that $F$
		is \emph{fully 0-faithful}\index{functor!fully 0-faithful}%
		\index{fully 0-faithful!functor}.
	\end{pon}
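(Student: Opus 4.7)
The plan is to prove the cycle $1\Rightarrow 2\Rightarrow 3\Rightarrow 4\Rightarrow 1$, exploiting throughout the fact that $\A$ and $\B$ are groupoids (so every arrow is invertible) and that $F$ is strictly described, meaning $FI\equiv I$. This last point makes condition $4$(b) genuinely substantive and, combined with $4$(a), essentially the lifting property that $1$--$3$ assert in varying degrees of packaging.

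First, for $1\Rightarrow 2$, existence of $a\col A\ra I$ with $Fa=b$ is just $1$(a), and uniqueness follows from $1$(b) reformulated via condition $2$ of Proposition~\ref{fidsuro}: if $Fa=Fa'$ for $a,a'\col A\ra I$, then $0$-faithfulness forces $a=a'$. For $2\Rightarrow 3$, given $b\col FA\ra FA'$ with an iso $\varphi\col FA'\ra I$, apply condition~$2$ twice: once to $\varphi b\col FA\ra I$ to obtain $a_1\col A\ra I$ with $Fa_1=\varphi b$, and once to $\varphi\col FA'\ra I$ to obtain $a'\col A'\ra I$ with $Fa'=\varphi$; since $Fa'$ is invertible and $\A$ is a groupoid, $a'$ is itself an iso, so $a\eqdef a'^{-1}a_1\col A\ra A'$ satisfies $Fa=b$, and uniqueness reduces to the uniqueness clause of~$2$ after left-composing with $a'$.

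Next, $3\Rightarrow 4$ is the easiest step: for $4$(a), an iso $FA\simeq I=FI$ lifts by condition~$3$ (with $A'=I$) to some $a\col A\ra I$ whose image is an iso, hence $a$ is itself invertible in $\A$; and $4$(b) is just condition~$3$ applied with $A=A'=I$. Finally, for $4\Rightarrow 1$, given $b\col FA\ra I$, the arrow $b$ is invertible, so $FA\simeq I$; by $4$(a) we can fix an iso $\alpha\col A\ra I$ in $\A$, and then $4$(b) produces a unique $a_0\col I\ra I$ with $Fa_0=b\circ F\alpha^{-1}$, yielding $a\eqdef a_0\alpha$ with $Fa=b$. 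For the $0$-faithfulness clause $1$(b), an arrow $a\col I\ra I$ with $Fa=1_I=F1_I$ must equal $1_I$ by the uniqueness clause of $4$(b).

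I do not expect genuine obstacles; the subtle point is the use of the groupoidal nature of $\A$ in $2\Rightarrow 3$ and $3\Rightarrow 4$ (to upgrade an arrow whose image is invertible to an arrow that is itself invertible), and the careful bookkeeping needed so that the implication $1\Rightarrow 2$ really only uses $0$-faithfulness for arrows with codomain $I$, which is provided by Proposition~\ref{fidsuro}.
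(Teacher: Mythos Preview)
Your proof is correct and uses essentially the same ideas as the paper's. The only difference is organizational: the paper proves the cycle $3\Rightarrow 2\Rightarrow 1\Rightarrow 3$ and then separately $2\Leftrightarrow 4$, whereas you run a single cycle $1\Rightarrow 2\Rightarrow 3\Rightarrow 4\Rightarrow 1$; the individual steps (lifting two arrows to $I$ and composing their preimages, using $4$(a) to pick an iso $A\simeq I$ and then $4$(b) to adjust, etc.) match up almost exactly.
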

	
		\begin{proof}
			{\it 3 $\Rightarrow$ 2. } Condition 2 is a special case of
			condition 3.
			
			{\it 2 $\Rightarrow$ 1. } Condition 1(a) is part of condition 2,
			and condition 1(b) is a special case of the unicity part of condition 2.
			
			{\it 1 $\Rightarrow$ 3. } Let be $A,A'\col\A$ and $b\col FA\ra FA'$,
			with $\psi\col FA'\ra I$.  By condition 1(a), there exist
			$\varphi\col A\ra I$ such that $F\varphi=\psi\circ b$ and $\varphi'\col A'\ra I$
			such that $F\varphi'=\psi$.  If we set $a\eqdef\varphi'^{-1}\circ\varphi$,
			we have $b=Fa$.
			The arrow $a$ is unique because, if $a'\col A\ra A'$ is such that
			$Fa'=b$, then $F(\varphi'\circ a'\circ\varphi^{-1})=1_I$ and, by
			condition 1(b), $\varphi'\circ a'\circ\varphi^{-1}=1_I$, hence
			$a'=a$.
			
			{\it 2 $\Rightarrow$ 4. } Condition 4 follows immediately from condition 2.
			
			{\it 4 $\Rightarrow$ 2. } Let be $A\col\A$ and $b\col FA\ra I$.
			By condition 4(a), there exists an arrow $\varphi\col A\ra I$.
			Then, by condition 4(b), there exists a unique $a'\col I\ra I$
			such that $Fa'=b\circ F\varphi^{-1}$.  If we set
			$a\eqdef a'\circ\varphi$, we have thus $Fa=b$. Moreover,
			$a$ is unique because, if we have $a''\col A\ra I$ such that $Fa''=b$,
			then $F(a''\circ\varphi^{-1})=b\circ F\varphi^{-1}$, so
			$a''\circ\varphi^{-1}=a'$ and $a''=a$.
		\end{proof}
	
	The fully faithful functors are fully 0-faithful,
	but the converse doesn't hold in general.
	It will be the case in $\CGS$ (Proposition \ref{caracplfidcgs}).
	
	\begin{df}\label{caracplzfid}\index{arrow!fully 0-faithful}%
		\index{fully 0-faithful!arrow}
		Let $\C$ be a $\Gpdp$-category and $A\overset{f}\ra B$ an arrow in $\C$.
		We say that $f$ is \emph{fully 0-faithful} if, for all $X\col\C$,
		the pointed functor $f\circ-\col\C(X,A)\ra\C(X,B)$ is fully 0-faithful,
		i.e.\ if
		if the following equivalent conditions hold:
		\begin{enumerate}
			\item \begin{enumerate}
				\item for all $X\col\C$, for all $a\col X\ra A$ and for all 
					$\beta\col fa\Ra 0$, there exists $\alpha\col a\Ra 0$ such that
					$\beta = f\alpha$;
				\item for all $X\col\C$ and for all $\alpha\col 0\Ra 0\col X\ra A$,
					if $f\alpha=1_0$, then $\alpha=1_0$ ($f$ is 0-faithful);
				\end{enumerate}
			\item for all $X\col\C$, for all $a\col X\ra A$ and for all
				$\beta\col fa\Ra 0$, there exists a unique $\alpha\col a\Ra 0$ such that
				$\beta = f \alpha $;
			\item for all $X\col\C$, for all $a,a'\col X\ra A$ and for all
				$\beta\col fa\Ra fa'$, where $fa'\simeq 0$, there exists a unique
				$\alpha\col a\Ra a'$ such that $\beta = f\alpha$;
			\item \begin{enumerate}
					\item for all $X\col\C$, for all $a\col X\ra A$, if $fa\simeq 0$,
						then $a\simeq 0$;
					\item for all $X\col\C$, for all $\beta\col 0\Ra 0\col X\ra B$,
						there exists a unique $\alpha\col 0\Ra 0\col X\ra A$ such that 
						$\beta = f \alpha $.
				\end{enumerate}
		\end{enumerate}
		We denote by $\zPlFid$\index{0-fullfaith@$\zPlFid$} the full sub-$\Gpdp$-category
		of $\flc$ whose objects are the fully 0-faithful arrows.
	\end{df}
	
	Il is obvious that every fully faithful arrow is fully 0-faithful. 
	We will see that in any 2-Puppe-exact
	$\Gpd$-category the fully 0-faithful arrows
	are exactly the fully faithful arrows (Proposition \ref{fidfidzer}). 
	
	In $\Gpdp$, the fully 0-faithful arrows are the fully 0-faithful functors.
	In a $\Ensp$-category seen as a locally discrete $\Gpdp$-category,
	the fully 0-faithful arrows are the 0-monomorphisms.
		
	The fully 0-cofaithful arrows\index{arrow!fully 0-cofaithful}%
	\index{fully 0-cofaithful arrow} are defined dually.

	\bigskip
	Now let us turn to the properties of objects. First recall the notion of discrete (or “bidiscrete” \cite{Street1980a}) object
	 and the dual notion of connected object.

	\begin{df}
		Let $\C$ be a $\Gpd$-category and $A\col\C$.
		\begin{enumerate}
			\item  We say that $A$ is \emph{discrete}\index{object!discrete}%
				\index{discrete object} if, for all $X\col\C$, $\C(X,A)$
				is a set (a discrete groupoid); we denote by $\DisC$\index{Dis(C)@$\DisC$}
				the full sub-$\Gpd$-category of $\C$ of the discrete objects.
			\item We say that $A$ is \emph{connected}\index{object!connected}%
				\index{connected object} if, for all $Y\col\C$, $\C(A,Y)$
				is a set; we denote by $\ConC$\index{Con(C)@$\ConC$}
				the full sub-$\Gpd$-category of $\C$ of the connected objects.
		\end{enumerate}
	\end{df}

	$\DisC$ and $\ConC$ are $\Ens$-categories, since their Hom are sets.
	In $\Gpd$, the discrete objects are the discrete groupoids, i.e.
	the sets, and the only connected object is the empty groupoid (the notion of connected object
	connected is relevant only in the pointed case).
	In $\Gpdp$, the discrete objects are the pointed sets
	seen as discrete pointed groupoids and the connected objects
	are the groups seen as one-object groupoids.
	In $\CG$, the $\Gpd$-category of 2-groups, the discrete objects are the groups seen
	as discrete 2-groups and the connected objects are the abelian groups seen as one-object
	2-groups.

	If $\C$ has a zero object, we can characterise the properties of an object in terms of the properties of the arrow from the object to $0$ or from $0$ to the object.
	
	\begin{pon}
		Let $A$ be an object in a $\Gpdp$-category $\C$ with a zero object.
		The following conditions are equivalent:
		\begin{enumerate}
			\item $A$ is discrete;
			\item for all arrows $a_1, a_2\col X\ra A$, there is at most one
				2-arrow between $a_1$ and $a_2$;
			\item $0^A\col A\ra 0$ is faithful.
		\end{enumerate}
	\end{pon}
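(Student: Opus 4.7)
The plan is to chain the implications through condition (2), which serves as the convenient intermediate form. The key observation underlying everything is that since $0$ is a zero object, $\C(X,0)$ is equivalent to the terminal groupoid $1$ for every $X\col\C$; in particular, between any two parallel arrows with codomain $0$ there is exactly one 2-arrow.

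First I would establish $(1)\Leftrightarrow(2)$ by a purely groupoid-theoretic argument applied pointwise. If $\C(X,A)$ is discrete (a set, with only identity 2-arrows), then clearly between $a_1,a_2\col X\ra A$ there is at most one 2-arrow (namely $1_{a_1}$ if $a_1=a_2$, none otherwise). Conversely, assume (2). For every $a\col X\ra A$, the set of 2-arrows $a\Ra a$ has at most one element, hence equals $\{1_a\}$; and for $\alpha,\beta\col a_1\Ra a_2$, we have $\beta^{-1}\circ\alpha\col a_1\Ra a_1$, which must be $1_{a_1}$, so $\alpha=\beta$. Thus $\C(X,A)$ is thin, hence (equivalent to) a discrete groupoid.

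Next I would prove $(2)\Leftrightarrow(3)$. Faithfulness of $0^A$ means that for every $X\col\C$, the functor $0^A\circ-\col\C(X,A)\ra\C(X,0)$ is faithful, i.e.\ for all 2-arrows $\alpha,\alpha'\col a_1\Ra a_2$ in $\C(X,A)$ the equality $0^A\alpha=0^A\alpha'$ forces $\alpha=\alpha'$. Since $\C(X,0)\simeq 1$, the hypothesis $0^A\alpha=0^A\alpha'$ is automatic (both 2-arrows become the unique endo-2-arrow of the unique object up to equivalence). Therefore $0^A$ is faithful iff any two parallel 2-arrows in $\C(X,A)$ coincide, which is exactly condition (2).

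Since none of the three implications requires more than unfolding the definitions and exploiting $\C(X,0)\simeq 1$, there is no real obstacle; the only point worth articulating carefully is the equivalence between ``discrete groupoid'' and ``thin groupoid'' that underlies $(1)\Leftrightarrow(2)$, which is the standard identification used throughout the paper.
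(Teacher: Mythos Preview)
Your proof is correct. The paper states this proposition without proof, presumably considering it immediate from the definitions, so there is no argument to compare against; your unfolding is the natural one.

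One minor remark: in the $(2)\Rightarrow(1)$ direction, the step using $\beta^{-1}\circ\alpha$ is redundant, since condition (2) already asserts directly that any two 2-arrows $a_1\Ra a_2$ coincide. All that is actually needed is the observation you make at the end, that a thin groupoid is (equivalent to) a discrete one.
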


	\begin{lemm}\label{fiddisc}
		Let $\C$ be a $\Gpdp$-category with a zero object.
		\begin{enumerate}
			\item If $A$ is discrete, then every arrow $f\col A\ra B$ is faithful.
			\item If $f\col A\ra B$ is faithful and $B$ is discrete,
				then $A$ is discrete.
			\item If $0\col A\ra B$ is faithful, then $A$ is discrete.
		\end{enumerate}
	\end{lemm}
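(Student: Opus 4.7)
The plan is to reduce each clause to the characterisation from the proposition just above the lemma, namely that $A$ is discrete if and only if $0^A\col A\ra 0$ is faithful, combined with the cancellation Proposition \ref{propsimpfid}. Throughout I work inside a strictly described $\Gpdp$-category, so that $0_B\circ 0^A$ equals $0\col A\ra B$ on the nose and $0\circ\alpha = 1_0$ for every 2-cell $\alpha$.

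For part 1, discreteness of $A$ means that for every $X\col\C$ the groupoid $\C(X,A)$ has only identity loops, so the simplified faithfulness criterion (for every $\alpha\col a\Ra a$, $f\alpha=1_{fa}$ implies $\alpha=1_a$) is vacuously satisfied. Hence $f$ is faithful.

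For part 2, I first observe that faithful arrows are closed under composition in any $\Gpd$-category, because $\C(X,gf)=\C(X,g)\circ\C(X,f)$ and the composite of two faithful functors between groupoids is faithful. Applying this to the strict factorisation $0^A=0^B\circ f$, with $f$ faithful by hypothesis and $0^B$ faithful because $B$ is discrete, yields that $0^A$ is faithful; the characterisation then gives that $A$ is discrete.

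For part 3, factor $0\col A\ra B$ strictly as $0_B\circ 0^A$. Since this composite is assumed faithful, Proposition \ref{propsimpfid} applied to the factorisation implies that $0^A$ is faithful, so once more $A$ is discrete. No genuine obstacle is expected; the three arguments all hinge on the strict description of $\C$ and on the already-established link between discreteness and faithfulness of $0^A$.
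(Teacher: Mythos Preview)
Your proof is correct and, for parts 2 and 3, identical to the paper's. For part 1 you take a slightly more direct route: you argue that discreteness of $A$ makes every endomorphism 2-cell in $\C(X,A)$ an identity, so the simplified faithfulness criterion is satisfied vacuously; the paper instead observes that $0^A=0^B\circ f$ is faithful (since $A$ is discrete) and then applies the cancellation Proposition~\ref{propsimpfid} to conclude that $f$ is faithful. Both arguments are equally short and valid.
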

	
		\begin{proof}
			{\it 1. } If $A$ is discrete, $0^A = 0^B\circ f$ is faithful and, by
			Proposition \ref{propsimpfid}, $f$ is faithful.
			
			{\it 2. } If $B$ is discrete, then $0^B$ is faithful and so
			$0^A=0^B\circ f$ is faithful, because faithful arrows are stable under composition.
			
			{\it 3. } If $0=0_B\circ 0^A\col A\ra B$ is faithful, then $0^A$ is faithful,
			by Proposition \ref{propsimpfid}.
		\end{proof}

\bigskip
We close this subsection by monomorphism- or epimorphism-like properties for loops in a $\Gpdp$-category $\C$ (i.e.\ for 2-arrows $0\Ra 0$ in $\C$).  Marco Grandis \cite{Grandis1994a} call the monoloops “monics on morphisms”.

	\begin{df}
		Let $\C$ be a $\Gpdp$-category and $\pi$ be a loop in $\C$:
		\begin{xym}\label{diagdhorm}
			\xymatrix@=40pt{A\rtwocell^0_0{\pi} &B.}
		\end{xym}
		We say that:
		\begin{enumerate}
			\item 	
				$\pi$ is a \emph{monoloop}\index{monoloop}  if,
				for all $a_1,a_2\col X\ra A$ such that $\pi a_1=\pi a_2$,
				there exists a unique 2-arrow $a_1\Ra a_2$;
			\item $\pi$ is an \emph{epiloop}\index{epiloop}  if,
				for all $b_1,b_2\col B\ra Y$ such that $b_1\pi=b_2\pi$,
				there exists a unique 2-arrow $b_1\Ra b_2$.
		\end{enumerate}
	\end{df}

The monoloops in $\C$ form a $\Gpd$-category $\caspar{MonoLoop}(\C)$\index{MonoLoop(C)@$\caspar{MonoLoop}(\C)$}, described in the following way.
\begin{itemize}
	\item {\it Objects. } The objects are the monoloops in $\C$.
	\item {\it Arrows. } An arrow $(A,\pi,B)\ra(A',\pi',B')$ consists of
		$a\col A\ra A'$ and $b\col B\ra B'$ such that $b\pi=\pi' a$.
	\item {\it 2-arrows. } A 2-arrow $(a,b)\Ra (a',b')\col (A,\pi,B)\ra(A',\pi',B')$
		consists of $\alpha\col a\Ra a'$ and $\beta\col b\Ra b'$.
\end{itemize}

Dually, we define a $\Gpd$-category $\caspar{EpiLoop}(\C)$\index{EpiLoop(C)@$\caspar{EpiLoop}(\C)$}, whose objects are the epiloops in $\C$.

	\begin{pon}\label{domonobodis}
		In any $\Gpdp$-category $\C$, if $\pi\col 0\Ra 0\col A\ra B$ is a mo\-no\-loop,
		then $A$ is discrete.  
	\end{pon}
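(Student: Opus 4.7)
The plan is to verify discreteness of $A$ via the characterisation just stated before the lemma: it suffices to show that for every $X\col\C$ and every parallel pair $a_1,a_2\col X\ra A$, there is at most one 2-arrow $a_1\Ra a_2$. So I fix such a pair and an arbitrary 2-arrow $\alpha\col a_1\Ra a_2$, and I aim to deduce the hypothesis $\pi a_1=\pi a_2$ of the monoloop property, from which the conclusion is immediate.

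The key step is the interchange law applied to the horizontal composite $\pi * \alpha$, where $\pi\col 0\Ra 0\col A\ra B$ and $\alpha\col a_1\Ra a_2\col X\ra A$. Expanding $\pi*\alpha$ in the two standard ways gives
\begin{eqn}
(\pi a_2)\circ (0\alpha)\;=\;\pi*\alpha\;=\;(0\alpha)\circ (\pi a_1).
\end{eqn}
Because we are working in a strictly described $\Gpdp$-category, the composition functors are strictly bipointed, so $0\alpha\equiv 1_0$. The equality above therefore collapses to $\pi a_2=\pi a_1$ as 2-arrows $0\Ra 0\col X\ra B$.

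With this identity in hand, the monoloop property supplies a unique 2-arrow $a_1\Ra a_2$. Hence any two 2-arrows $\alpha,\alpha'\col a_1\Ra a_2$ must coincide (both witness this uniqueness statement), giving at most one 2-arrow between any parallel pair. This is exactly discreteness of $A$.

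There is no real obstacle here; the only point requiring a small amount of care is the reliance on the strict description of the $\Gpdp$-category to reduce the interchange identity to $\pi a_1=\pi a_2$. If one preferred to avoid strictification, the same conclusion would follow by inserting the coherence isomorphisms $\psi^0_{a_i}\col 0\Ra 0a_i$ and invoking pointedness of $0*\alpha$, but the strict set-up assumed in the preceding subsection makes this unnecessary.
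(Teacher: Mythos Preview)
Your proof is correct and essentially identical to the paper's: both show $\pi a_1=\pi a_2$ via the interchange law (the paper writes out the chain $\pi a_1 = 1_0\circ\pi a_1 = (0\alpha)\circ(\pi a_1) =(\pi a_2)\circ(0\alpha)=\pi a_2\circ 1_0 = \pi a_2$ explicitly), and then invoke the monoloop uniqueness. Your remark about strictification versus inserting the coherence isomorphisms $\psi^0_{a_i}$ is a nice clarification that the paper leaves implicit.
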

	
		\begin{proof}
			If we have $\alpha\col a_1\Ra a_2\col X\ra A$, then $\pi a_1=\pi a_2$, because
			$\pi a_1 = 1_0\circ\pi a_1 = (0\alpha)\circ(\pi a_1)
			=(\pi a_2)\circ(0\alpha)=\pi a_2\circ 1_0 = \pi a_2$.
			So there exists a unique 2-arrow $a_1\Ra a_2$.
		\end{proof}

\section{Kernel-quotient systems on a $\Gpdp$-category}

\subsection{$\Coker\adj\Ker$}

	Let $\C$ be a $\Gpdp$-category $\C$ (strictly described, as we always assume).
	Let us first introduce convenient terminology.
	
	\begin{df}
		Let $\varphi$ and $\beta$ be 2-arrows in $\C$, as in the following diagram.
		 We say that $\beta$ is \emph{compatible with $\varphi$}%
		\index{compatibility of 2-arrows} if the following equation hold.
		\begin{xyml}\label{condphiplfid}\begin{gathered}\xymatrix@=25pt{
			&A\ar[dr]^f\drruppertwocell^0<5>{_<-0.0>\;\;\;\;\varphi^{-1}}
			\\ X\ar[ur]^{a_1}\ar[dr]_{a_2}\rrtwocell\omit\omit{\beta}
			&&B\ar[r]_y &Y
			\\ &A\ar[ur]_f\urrlowertwocell_0<-5>{_<0.0>\varphi}
		}\end{gathered}\;\;=\;\;1_0\end{xyml}
	\end{df}
 
 	In the special case where $a_2$ is $0$, this definition becomes simpler: in the
	situation of the following diagram, $\beta$ is compatible with $\varphi$ if $y\beta=\varphi a$.
	\begin{xym}\label{diagcompati}\xymatrix@=40pt{
		X\ar[r]^a\rrlowertwocell<-9>_0{_<2.7>\beta}
		&A\ar[r]^f\rruppertwocell<9>^0{^<-2.7>\varphi\,}
		&B\ar[r]_y
		&Y
	}\end{xym}

	The kernel of an arrow $f\col\flc$ appears, with the following universal property,
	in \cite{Vitale2002a}.
	
	\begin{df}\label{defkernel}\index{kernel!in a Gpd*-category@in a $\Gpdp$-category}
		Let be $A\overset{f}\ra B$ in $\C$.  We call $K$, $k$ and $\kappa$, as in the following diagram:
		\begin{xym}\label{diagdefkernel}
			\xymatrix@=40pt{K\ar[r]^k\rrlowertwocell_0<-9>{<2.7>\kappa} &A\ar[r]^f &B}
		\end{xym}
		a \emph{kernel} of $f$ if the following properties hold:
		\begin{enumerate}
			\item for all $X\col\C$, $a\col X\ra A$ and $\beta\col fa\Ra 0$,
				there exist $a'\col X\ra K$ and a 2-arrow $\alpha\col a\Ra ka'$
				such that
            	\begin{xyml}\label{kernel1}
				\beta\;\;=\;\;\begin{gathered}\xymatrix@=40pt{
					X\ar[r]_{a'}\rruppertwocell^a<9>{<-2.7>\alpha}
					&K\ar[r]^k\rrlowertwocell_0<-9>{<2.7>\kappa} 
					&A\ar[r]^f 
					&B
				}\end{gathered};\end{xyml}
			\item for all $a_1,a_2\col X\ra K$ and for all $\alpha\col ka_1\Ra ka_2$
				compatible with $\kappa$, there exists a unique $\alpha'\col a_1\Ra a_2$
				such that $\alpha=k\alpha'$.
		\end{enumerate}
		We write this property “$(K,k,\kappa)=\Ker f$”\index{Ker@$\Ker$}.
	\end{df}

	In $\Gpdp$ the kernel of a pointed functor $F\col\A\ra\B$
	(which we assume to be strictly described) can be constructed  in the following way
	(see \cite{Gabriel1967a} for a weaker universal property
	and \cite{Grandis2002a}).
	\begin{itemize}
		\item {\it Objects. } An object consists of $(A,b)$, where $A\col\A$
			and $b\col FA\ra I$ in $\B$.
		\item {\it Arrow. } An arrow $f\col(A,b)\ra(A',b')$ is an
			arrow $f\col A\ra A'$ in $\A$ such that $b'(Ff)=b$.
		\item {\it Point. } This is $(I,1_I)$.
	\end{itemize}

	\begin{pon}
		If $(K,k,\kappa)=\Ker f$, then $k$ is faithful.
	\end{pon}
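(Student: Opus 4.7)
The plan is to derive faithfulness of $k$ directly from the uniqueness half of the kernel's universal property (condition 2 in Definition \ref{defkernel}). So suppose we are given $\alpha,\alpha'\col a_1\Ra a_2\col X\ra K$ with $k\alpha=k\alpha'$. We want to show $\alpha=\alpha'$. Setting $\gamma\eqdef k\alpha=k\alpha'\col ka_1\Ra ka_2$, both $\alpha$ and $\alpha'$ are solutions to the equation ``find $\alpha''\col a_1\Ra a_2$ with $k\alpha''=\gamma$''. If we can verify that $\gamma$ is compatible with $\kappa$ in the sense of the compatibility condition, then condition 2 of Definition \ref{defkernel} guarantees that such a solution is \emph{unique}, forcing $\alpha=\alpha'$.

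The only real work is thus the compatibility check: we must show that
\begin{eqn}
	(\kappa a_2)\circ (f\gamma)\circ (\kappa a_1)^{-1}=1_0.
\end{eqn}
Here $f\gamma=(fk)\alpha$, and $\kappa\col fk\Ra 0$ is a 2-arrow. By the interchange law applied to the horizontal composite $\kappa*\alpha$, we have $(\kappa a_2)\circ (fk)\alpha=(0\alpha)\circ (\kappa a_1)$. Because we are working in a strictly described $\Gpdp$-category, the whiskering $0\alpha$ of the 2-arrow $\alpha$ by the zero arrow $0\col K\ra B$ equals $1_0$ (this is the strictness condition $0\alpha\equiv 1_0$ that follows Proposition \ref{metthcoh}). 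Hence the composite reduces to $1_0\circ(\kappa a_1)=\kappa a_1$, which is exactly the compatibility condition.

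The main (and essentially only) subtlety is this appeal to strictness: without strict description, the whiskering $0\alpha$ is only canonically isomorphic to $1_0$, so the compatibility equation would have to be established up to the pointedness isomorphisms $\varphi_0$, $\psi^0$ of the composition functors rather than on the nose. Since we have already agreed to work in a strictly described setting, this step is immediate, and the proof is essentially a one-line application of the universal property.
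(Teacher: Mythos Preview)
Your proof is correct and follows exactly the same approach as the paper, which simply states ``This is the unicity of condition 2 of the definition of kernel.'' You have supplied the one detail the paper leaves implicit, namely the compatibility check for $\gamma=k\alpha$ via interchange and the strictness of $0\alpha=1_0$.
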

	
		\begin{proof}
			This is the unicity of condition 2 of the definition of kernel.
		\end{proof}

	\begin{lemm}\label{noycodomdisplfid}
		Let $A\overset{f}\ra D$ be an arrow in $\C$ whose codomain is discrete.  If
		$(K,k,\kappa)=\Ker f$, then $k$ is fully faithful.
	\end{lemm}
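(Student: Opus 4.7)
The plan is to use the universal property of the kernel (Definition \ref{defkernel}), specifically condition 2, together with the hypothesis that $D$ is discrete to force the relevant compatibility condition for free. Faithfulness of $k$ is already established in the previous proposition, so only fullness on 2-arrows remains.

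Fix $X\col\C$, arrows $a_1,a_2\col X\ra K$ and a 2-arrow $\alpha\col ka_1\Ra ka_2$; I want to produce a unique $\alpha'\col a_1\Ra a_2$ with $k\alpha'=\alpha$. By condition 2 of the definition of kernel, it suffices to verify that $\alpha$ is compatible with $\kappa$, i.e.\ that the whiskered composite
\begin{xyml}
\begin{gathered}\xymatrix@=25pt{
    &K\ar[dr]^k\drruppertwocell^0<5>{_<-0.0>\;\;\;\;\kappa^{-1}}
    \\ X\ar[ur]^{a_1}\ar[dr]_{a_2}\rrtwocell\omit\omit{\alpha}
    &&A\ar[r]_f &D
    \\ &K\ar[ur]_k\urrlowertwocell_0<-5>{_<0.0>\kappa}
}\end{gathered}
\end{xyml}
equals $1_0$.

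The key observation is that this composite is a 2-arrow $0\Ra 0\col X\ra D$, and by hypothesis $D$ is discrete, so $\C(X,D)$ is a discrete groupoid (i.e.\ a set). In such a groupoid there is at most one 2-arrow between any pair of parallel arrows; since $1_0$ is one such 2-arrow $0\Ra 0$, the above composite must coincide with $1_0$. Thus $\alpha$ is automatically compatible with $\kappa$, and the universal property yields the desired unique $\alpha'$. Applied to all $X$, this says that $k\circ -\col\C(X,K)\ra\C(X,A)$ is full and faithful, so $k$ is fully faithful.

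There is essentially no obstacle here: the entire content of the lemma is the remark that discreteness of the codomain collapses the compatibility requirement in condition 2 of the kernel. The only thing to be a little careful about is checking that the whiskered composite truly lives in $\C(X,D)$ as a 2-arrow between parallel arrows both equal to $0$, which is immediate in the strictly described setting assumed throughout.
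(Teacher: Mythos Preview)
Your proof is correct and follows essentially the same argument as the paper's: both observe that discreteness of $D$ makes every 2-arrow in $\C(X,D)$ unique, so the compatibility of $\alpha$ with $\kappa$ is automatic, and then condition 2 of the kernel's universal property yields the required unique lift.
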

		
		\begin{proof}
			Let be $a_1,a_2\col X\ra K$ and $\alpha\col k a_1\Ra ka_2$.  Since 
			$D$ is discrete, all 2-arrows with codomain $D$ are equal.
			In particular, $\alpha$ is necessarily compatible with $\kappa$.
			Then, by the universal property of the kernel, there exists a unique $\alpha'\col a_1
			\Ra a_2$ such that $\alpha = k\alpha'$.
		\end{proof}

	In dimension 1, the kernel classifies the 0-monomorphisms; in dimension 2, it
	classifies the fully 0-faithful arrows.  We won't give a proof of this proposition here, because we will prove later a more general version (Proposition 
	\ref{clasproprelker}).
	
	\begin{pon}\label{claspropker}
		Let $\C$ be a $\Gpdp$-category with a zero object.
		In the situation of diagram \ref{diagdefkernel} the following conditions are equivalent:
		\begin{enumerate}
			\item $f$ is fully
				0-faithful\index{arrow!fully 0-faithful!characterisation};
			\item if $(K,k,\kappa)=\Ker f$, then there exists
				$\kappa'\col k\Ra 0$ such that $f\kappa'=\kappa$;
			\item $(0,0_A,1_{0_B})=\Ker f$.
		\end{enumerate}
	\end{pon}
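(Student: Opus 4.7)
The plan is to establish the chain $1 \Rightarrow 3 \Rightarrow 2 \Rightarrow 1$, working from the definitions of fully 0-faithful arrow (Definition \ref{caracplzfid}) and kernel (Definition \ref{defkernel}).

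For $1 \Rightarrow 3$, I will verify that $(0, 0_A, 1_{0_B})$ satisfies the universal property of $\Ker f$. Note that $f \cdot 0_A = 0$ since $\C$ is strictly described, so $1_{0_B} \col f\cdot 0_A \Ra 0$ makes sense. For condition 1 of the kernel, given $a\col X\ra A$ and $\beta\col fa\Ra 0$, fully 0-faithfulness (condition 2 of Definition \ref{caracplzfid}) yields a unique $\alpha\col a\Ra 0$ with $f\alpha = \beta$; I take $a' \eqdef 0^X\col X\ra 0$ so that $0_A\cdot a' = 0$, and then $\alpha\col a\Ra ka'$ is the required 2-arrow, with the pasting in equation \ref{kernel1} collapsing to $f\alpha = \beta$ since $\kappa = 1_{0_B}$. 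For condition 2, any two arrows $X\ra 0$ coincide with $0^X$ and admit a unique 2-arrow between them, namely $1_{0^X}$; so the content reduces to showing that every $\alpha\col 0\Ra 0\col X\ra A$ compatible with $\kappa = 1_{0_B}$ is itself $1_0$. Compatibility (equation \ref{condphiplfid}) forces $f\alpha = 1_0$, whence the 0-faithful part of fully 0-faithfulness gives $\alpha = 1_0$.

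For $3 \Rightarrow 2$, let $(K, k, \kappa)$ be any kernel of $f$. I apply condition 1 of the universal property of the kernel $(0, 0_A, 1_{0_B})$ to the datum $k\col K\ra A$ together with $\kappa\col fk\Ra 0$: this produces an arrow $u\col K\ra 0$ (necessarily $\simeq 0^K$) and a 2-arrow $\kappa'\col k\Ra 0_A\cdot u = 0$ such that the defining equation \ref{kernel1} reads $\kappa = (1_{0_B}\cdot u)\circ(f\kappa') = f\kappa'$, as required.

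For $2 \Rightarrow 1$, let $(K, k, \kappa) = \Ker f$ and $\kappa'\col k\Ra 0$ with $f\kappa' = \kappa$. Given $a\col X\ra A$ and $\beta\col fa\Ra 0$, condition 1 of the kernel produces $a'\col X\ra K$ and $\alpha\col a\Ra ka'$ with $\beta = (\kappa\cdot a')\circ(f\alpha)$. Substituting $\kappa = f\kappa'$ and setting $\tilde\alpha \eqdef (\kappa'\cdot a')\circ\alpha\col a\Ra 0$, I get $f\tilde\alpha = \beta$, which is the existence clause of condition 2 of Definition \ref{caracplzfid}. For the corresponding uniqueness, i.e.\ 0-faithfulness of $f$, suppose $\alpha\col 0\Ra 0\col X\ra A$ satisfies $f\alpha = 1_0$. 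Writing $0 = k\cdot 0^X$, the relation $f\kappa' = \kappa$ ensures that $\alpha$ viewed as a 2-arrow $k\cdot 0^X \Ra k\cdot 0^X$ is compatible with $\kappa$ in the sense of equation \ref{condphiplfid}, because the pasting with $\kappa$ factors through $f\alpha = 1_0$. Condition 2 of the kernel then lifts $\alpha$ uniquely to a 2-arrow $0^X\Ra 0^X$ in $\C(X,K)$, which can only be $1_{0^X}$, forcing $\alpha = k\cdot 1_{0^X} = 1_0$.

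The main obstacle is the uniqueness step in $2 \Rightarrow 1$: one must manipulate the compatibility condition of diagram \ref{condphiplfid} carefully, using the bridge $f\kappa' = \kappa$ to translate the hypothesis $f\alpha = 1_0$ into the form demanded by the kernel's two-dimensional universal property. The existence arguments throughout are straightforward unwindings of the definitions; the delicate bookkeeping is purely 2-categorical compatibility with $\kappa$.
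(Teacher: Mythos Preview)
Your arguments for $1\Rightarrow 3$ and $3\Rightarrow 2$, and the existence half of $2\Rightarrow 1$, are correct and agree with the paper's proof (which is deferred to the relative version, Proposition~\ref{clasproprelker}).

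There is, however, a genuine gap in the 0-faithfulness step of $2\Rightarrow 1$. After lifting $\alpha\col 0\Ra 0$ in $\C(X,A)$ to $\alpha'\col 0\Ra 0$ in $\C(X,K)$ via condition~2 of the kernel, you assert that $\alpha'$ ``can only be $1_{0^X}$''. This is unjustified: the loop groupoid $\C(X,K)(0,0)$ need not be trivial. (Incidentally, the compatibility of $\alpha$ with $\kappa$ follows from $f\alpha=1_0$ alone, since $\kappa\cdot 0 = 1_0$ in the strict description; the relation $f\kappa'=\kappa$ is irrelevant at that stage.) What is missing is that you must use $\kappa'$ \emph{after} the lift: applying the interchange law to $\kappa'\col k\Ra 0$ and $\alpha'\col 0\Ra 0$ yields
\[
(0\cdot\alpha')\circ(\kappa'\cdot 0) \;=\; (\kappa'\cdot 0)\circ(k\cdot\alpha'),
\]
i.e.\ $1_0\circ 1_0 = 1_0\circ k\alpha'$, whence $\alpha = k\alpha' = 1_0$. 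Alternatively, lift $\kappa'$ itself through condition~2 of the kernel to obtain a 2-arrow $1_K\Ra 0$, so that $K\simeq 0$, after which $\C(X,K)$ really is trivial.

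The paper's route for $2\Rightarrow 1$ is different: it invokes Proposition~\ref{simpldefplzfidsigm} to dispense with the 0-faithfulness check entirely, leaving only the existence clause. That proposition, however, requires $\Sigma$ (hence cokernels) to exist---a hypothesis present in Chapter~3 where Proposition~\ref{clasproprelker} lives, but absent from the statement of Proposition~\ref{claspropker}. Your direct argument, once patched as above, proves the result under the weaker hypotheses actually stated.
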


	\begin{pon}\label{kerneltrivial}
		Let $\C$ be a $\Gpdp$-category with a zero object.
		In the following diagram, $(0,0_A,1_{0_A})=\Ker 1_A$ and $(A,1_A,1_{0_A})=\Coker 0_A$.
		\begin{xym}
			\xymatrix@=40pt{0\ar[r]^{0_A}\rrlowertwocell_{0_A}<-9>{<2.7>\;\;\;\,1_{0_A}}
				&A\ar[r]^{1_A} &A}
		\end{xym}
	\end{pon}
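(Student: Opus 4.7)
The plan is to verify directly the two universal conditions of Definition \ref{defkernel} for the triple $(0,0_A,1_{0_A})$ with respect to $1_A$, and then note that the cokernel statement follows by duality. The key inputs are the universal property of the zero object (which gives $\C(X,0)\simeq 1$ and $\C(0,Y)\simeq 1$) together with the strict $\Gpdp$-structure, under which $0_A\circ\gamma\equiv 1_0$ and $\gamma\circ 0_A\equiv 1_0$ for any 2-arrow $\gamma$, and $1_A\circ 0_A\equiv 0_A\equiv 0$, so in particular $\kappa=1_{0_A}$ coincides with $1_0\col 1_A\circ 0_A\Ra 0$.

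For the existence condition of $\Ker 1_A$, I start with $a\col X\ra A$ and $\beta\col 1_A\circ a\Ra 0$, i.e.\ $\beta\col a\Ra 0$. Since $0$ is a zero object, there is an arrow $a'\col X\ra 0$, and by strictness $0_A\circ a'=0$, so I may take $\alpha\eqdef\beta\col a\Ra 0=0_Aa'$. The required pasting equals $(\kappa a')\circ(1_A\alpha)=1_0\circ\beta=\beta$, as required by equation \ref{kernel1}.

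For the uniqueness condition, let $a_1,a_2\col X\ra 0$ and $\alpha\col 0_Aa_1\Ra 0_Aa_2$ be compatible with $\kappa$. Since $\kappa=1_0$ and $\kappa a_i=1_0$ (strict whiskering of an identity), the compatibility equation (applied as in diagram \ref{condphiplfid} with $y=1_A$) reduces to $(\kappa a_2)\circ(1_A\alpha)=\kappa a_1$, that is, $\alpha=1_0$. Now $0$ being a zero object gives $\C(X,0)\simeq 1$, so there is a unique 2-arrow $\alpha'\col a_1\Ra a_2$; by strictness of the $\Gpdp$-structure $0_A\alpha'=1_0=\alpha$, and uniqueness of $\alpha'$ in $\C(X,0)$ gives uniqueness of the factorisation $\alpha=0_A\alpha'$.

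The cokernel statement $(A,1_A,1_{0_A})=\Coker 0_A$ is proved dually, working in $\C\op$: given $y\col A\ra Y$ and $\omega\col y\circ 0_A\Ra 0$, note that $\omega$ is a 2-arrow $0\Ra 0$ in $\C(0,Y)\simeq 1$, hence $\omega=1_0$; one then takes $y'\eqdef y$ and $\psi\eqdef 1_y$. Similarly, any pair of 2-arrows $\beta\col y_1\cdot 1_A\Ra y_2\cdot 1_A$ is trivially compatible with $\zeta=1_0$ (the compatibility condition whiskers $\beta$ with $0_A$, giving a loop in $\C(0,Y)\simeq 1$, which is automatically $1_0$), so $\beta'\eqdef\beta$ is the unique required factorisation. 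There is no real obstacle in this proof; the only care needed is in unpacking the compatibility condition \ref{condphiplfid} in the degenerate situation where all the structural 2-arrows collapse to identities under the strict $\Gpdp$-description.
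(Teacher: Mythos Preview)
Your proof is correct. The paper states this proposition without proof, leaving the verification to the reader; your direct check of the two conditions in Definition~\ref{defkernel} is exactly the intended argument, and the dual verification for the cokernel is handled properly.
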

	
	Now we prove that, if $\C$ has the kernels and cokernels, they extend to $\Gpd$-functors $\Ker$ and $\Coker\col\flc\ra\flc$
	which form an adjunction.
	
	\begin{pon}
		Let $\C$ be a $\Gpdp$-category which has the kernels and cokernels.  Then
		the kernels and cokernels extend to
		$\Gpd$-functors $\Ker, \Coker\col\flc\ra\flc$ such that
		\begin{eqn}
			\Coker\adj\Ker.
		\end{eqn}
	\end{pon}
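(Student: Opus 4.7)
The plan is to define $\Ker$ and $\Coker$ on arrows and 2-arrows of $\flc$ using the universal properties given in Definition \ref{defkernel} (and its dual), then obtain the adjunction from these universal properties applied on both sides.

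First, on objects $f\col A\ra B$ of $\flc$, the chosen kernel $(K_f,k_f,\kappa_f)$ gives an object $k_f\col K_f\ra A$ of $\flc$. For an arrow $(a,\psi,b)\col f\ra f'$ in $\flc$, we consider the composite $a k_f\col K_f\ra A'$ together with the 2-arrow
\begin{xym}
\xymatrix@=30pt{
K_f\ar[r]^{k_f}\rrlowertwocell_{0}<-9>{<2.7>\kappa_f}
&A\ar[r]^{f}\ar[d]_a\drtwocell\omit\omit{\;\,\psi}
&B\ar[d]^b
\\ &A'\ar[r]_{f'}
&B'
}
\end{xym}
whose total pasting is a 2-arrow $f'(ak_f)\Ra 0$. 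By condition 1 of the definition of kernel applied to $\Ker f'$, there exist $u\col K_f\ra K_{f'}$ and a 2-arrow $\chi\col ak_f\Ra k_{f'}u$ realising this pasting via $\kappa_{f'}$; we set $\Ker(a,\psi,b)\eqdef(u,\chi,a)\col \Ker f\ra \Ker f'$ in $\flc$. For 2-arrows of $\flc$, we use condition 2 of the same definition. $\Gpd$-functoriality (preservation of composition and identity up to coherent isomorphism) follows from the uniqueness in condition 2: both candidates satisfy the defining equation, hence agree canonically. Dually, $\Coker\col\flc\ra\flc$ is constructed from the universal property of cokernels.

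For the adjunction $\Coker\adj\Ker$, we build the equivalence $\flc(\Coker f,g)\simeq\flc(f,\Ker g)$. Given a morphism $(b,\psi,c)\col \Coker f\ra g$, so $b\col B\ra C$, $c\col Q_f\ra D$ and $\psi\col cq_f\Ra gb$, the composite $g(bf)$ receives a 2-arrow to $0$ by pasting $\psi f$ with $c\zeta_f$ (where $\zeta_f\col q_f f\Ra 0$ is the canonical 2-arrow of $\Coker f$). Condition 1 of the universal property of $\Ker g$ then yields $a\col A\ra K_g$ and $\phi\col bf\Ra k_g a$, producing $(a,\phi,b)\col f\ra\Ker g$. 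The reverse direction is dual, starting from a morphism $(a,\phi,b)\col f\ra\Ker g$ and using the universal property of $\Coker f$. That the two assignments are mutually quasi-inverse follows from the uniqueness clauses (condition 2) of both universal properties; $\Gpd$-naturality in $f$ and $g$ follows for the same reason.

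The main obstacle is purely bookkeeping: carefully pasting the 2-arrow diagrams so that the compatibility conditions (of the form \ref{condphiplfid}) required by condition 2 of Definition \ref{defkernel} actually hold, and checking that the resulting comparisons in $\Cong(\C)\incl\flc$ satisfy the triangle identities of an adjunction. Conceptually there is no difficulty — everything is forced by the universal properties — but one must be systematic about orienting the 2-arrows and verifying that the induced unit $\eta\col 1\Ra \Ker\Coker$ and counit $\varepsilon\col \Coker\Ker\Ra 1$ cohere. Once this is in place, the construction also fits into the kernel-quotient system framework of Definition \ref{defsysnoyquot}, with $\Cong(\C)$ the full sub-$\Gpd$-category of $\flc$ on kernels.
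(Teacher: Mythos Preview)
Your proposal is correct and follows essentially the same approach as the paper: the paper packages the repeated use of the kernel's universal property into an auxiliary functor $\Phi_f^{x,\chi,y}\col\flc(y,f)\ra\flc(x,\Ker f)$ (and dually $\Psi$), then specialises it to define $\Ker$ on morphisms and to build the adjunction equivalences, but the underlying constructions are exactly yours. One small slip: in your closing remark, $\Cong(\C)$ for this kernel-quotient system is taken in the paper to be $\Fid(\C)$ (faithful arrows), not merely the kernels, though this concerns a later proposition rather than the one at hand.
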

	
		\begin{proof}
			{\it Preliminary construction. } Let be $X\overset{x}\ra Y\overset{y}\ra Z$,
			$\chi\col yx\Ra 0$ and $A\overset{f}\ra B$.  We define a functor
			\begin{eqn}
				\Phi_f^{x,\chi,y}\col\flc(y,f)\Ra \flc(x,\Ker f)
			\end{eqn}
			in the following way.
			Let be $(a,\varphi,b)\col y\ra f$ in $\flc$. By the universal property of the kernel, there exist $\hat{a}\col X\ra\Ker f$ and
			$\hat{\varphi}\col ax\Ra k_f\hat{a}$
			such that $\kappa_f \hat{a}\circ f\hat{\varphi}\circ\varphi x = b\chi$.  We set
			$\Phi_f^{x,\chi,y}(a,\varphi,b)\eqdef (\hat{a},\hat{\varphi},a)\col x\Ra k_f$.
			\begin{xym}\xymatrix@=40pt{
				{X}\ar[r]^-{x}\ar[d]_{\hat{a}}\rruppertwocell<9>^0{^<-2.7>\chi\,}
					\drtwocell\omit\omit{\hat{\varphi}}
				&Y\ar[r]^{y}\ar[d]_{a}\drtwocell\omit\omit{_{\varphi}}
				&Z\ar[d]^{b}
				\\ {\Ker f}\ar[r]_-{k_f}\rrlowertwocell<-9>_0{_<2.7>\;\;\;\;\kappa_f}
				&A\ar[r]_{f}
				&B
			}\end{xym}
			Next, let be $(\alpha,\beta)\col(a,\varphi,b)\Ra (a',\varphi',b')\col y\ra f$
			in $\C$.  Then $\hat{\varphi}'\circ\alpha x\circ\hat{\varphi}^{-1}
			\col k_f\hat{a} \Ra k_f\hat{a}'$ is compatible
			with $\kappa_f$ and, by the universal property of the kernel, there exists
			a unique $\hat{\alpha}\col\hat{a}\Ra \hat{a}'$ such that $k_f\hat{\alpha}
			\circ\hat{\varphi} = \hat{\varphi}'\circ\alpha x$.
			Then we set $\Phi_f^{x,\chi,y}(\alpha,\beta)\eqdef (\hat{\alpha},\alpha)$.
			This defines a functor by unicity of $\hat{\alpha}$.
			
			Dually we construct a functor
			\begin{eqn}
				\Psi_f^{x,\chi,y}\col\flc(f,x)\Ra \flc(\Coker f,y).
			\end{eqn}

			{\it Construction of $\Ker$. } The $\Gpd$-functor $\Ker\col\flc\ra\flc$ is already
			defined on objects by the existence of kernels. If
			$(a,\varphi,b)\col f\ra f'$, we set $\Ker(a,\varphi,b)\eqdef 
			\Phi_{f'}^{k_f,\kappa_f,f}(a,\varphi,b)\col k_f\Ra k_{f'}$. Moreover, if
			$(\alpha,\beta)\col(a,\varphi,b)\Ra (a',\varphi',b')$, we set
			$\Ker(\alpha,\beta)\eqdef \Phi_{f'}^{k_f,\kappa_f,f}(\alpha,\beta)$.
			We construct the $\Gpd$-functor structure by using condition 2 of the definition
			of kernel and we check that we get a $\Gpd$-functor by using the unicity
			of this condition 2.
			
			{\it Construction of $\Coker$. } The construction of $\Coker$ is dual.
			
			{\it Adjunction. } Let $A\overset{f}\ra B$ and $C\overset{g}\ra D$
			be two arrows in $\C$. We have functors
			\begin{eqn}
				\Phi_{f,g}\eqdef \Phi_g^{f,\zeta_f,q_f}\col\flc(\Coker f, g)
				\ra\flc(f,\Ker g)
			\end{eqn}
			and
			\begin{eqn}
				\Psi_{f,g}\eqdef \Psi_f^{k_g,\kappa_g,g}\col\flc(f,\Ker g)\ra
				\flc(\Coker f, g).
			\end{eqn}
			By using condition 2 of the definitions of kernel and cokernel, we prove
			that $\Phi_{f,g}$ and $\Psi_{f,g}$ are inverse to each other and that
			$\Phi$ is natural in $f$ and $g$.
		\end{proof}
	
	Let be $A\overset{f}\ra B$.  We construct the kernel of $f$, and then the cokernel of this
	kernel. By the universal property of the cokernel, there exist $\bar{e}^1_f:=\Coker k_f$,
	$\bar{m}^1_f\col\im_{\mathrm{full}}^1 f\eqdef\Coker(\Ker f)\ra B$%
	\index{imfull1f@$\im_{\mathrm{full}}^1 f$} and $\bar{\varphi}^1_f\col f
	\Ra \bar{m}^1_f \bar{e}^1_f$ such that
	$\bar{m}^1_f\zeta_{k_f}\circ\bar{\varphi}^1_fk_f=\kappa_f$.
	\begin{xym}\xymatrix@C=20pt@R=40pt{
			{\Ker f}\ar[rr]^-{k_f}\ar@/_0.78pc/[drrr]_0\ar@/^2pc/[rrrr]^0
			&{}\rrtwocell\omit\omit{^<-2.7>\kappa_f\;\,}
			&A\ar[rr]^f\ar[dr]_-{\bar{e}^1_f}\rrtwocell\omit\omit{_<4>\;\;\;\;\bar{\varphi}^1_f}
				\ar@{}[dll]|(0.32){\zeta_{k_f}\!\!\dir{=>}}
			&&B
			\\ &&&{\im_{\mathrm{full}}^1} f\ar[ur]_-{\bar{m}^1_f}
		}\end{xym}
	The counit of the adjunction $\Coker\adj\Ker$ is then
	\begin{eqn}
		\varepsilon_f\eqdef(1_A,(\bar{\varphi}^1_f)^{-1},\bar{m}^1_f)\col \Coker(\Ker f)\ra f.
	\end{eqn}
	The unit of the adjunction
	is defined dually.
	
	By taking as congruences the faithful arrows (we denote by
	$\Fid(\C)$\index{Faith(C)@$\Fid(\C)$} the full sub-$\Gpd$-category of $\flc$ whose
	objects are the faithful arrows), we get a kernel-quotient system.
	
	\begin{pon}
		Let $\C$ be a $\Gpdp$-category in which all kernels and cokernels exist.  The faithful arrows
		in $\C$, together with the $\Gpd$-functor codomain $\partial_1\col\Fid(\C)\ra\C$,
		and the adjunction $\Coker\adj\Ker$ form a kernel-quotient system.
		\begin{xym}\xymatrix@C=30pt@R=45pt{
			{\Fid(\C)}\ar@<2mm>[rr]^-{\Coker}\ar@<1mm>[dr]^-{\partial_1}\ar@{}[rr]|-\perp
			&&{\fl{\C}}\ar@<2mm>[ll]^-{\Ker}\ar@<2mm>[dl]^-{\partial_0}
			\\ &{\C}\ar@<0mm>[ur]^-{\mathrm{1_{-}}}\ar@<1mm>[ul]^-{\Ker 1_{-}}
		}\end{xym}
	\end{pon}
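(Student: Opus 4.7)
The plan is to verify each item in Definition \ref{defsysnoyquot} for the proposed data, the main work already having been done in the preceding proposition constructing the adjunction $\Coker \adj \Ker\col \flc \ra \flc$. So I would first observe that the kernel $\Gpd$-functor factors through the full sub-$\Gpd$-category $\Fid(\C)\hookrightarrow\flc$, since it was shown that for any $f\col\flc$ the arrow $k_f$ is faithful; hence $\Ker$ restricts to a $\Gpd$-functor $\Ker\col\flc\ra\Fid(\C)$. On the other side, $\Coker$ is simply restricted to $\Coker\col\Fid(\C)\ra\flc$. The adjunction $\Coker\adj\Ker$ (with unit $\eta$ and counit $\varepsilon$ described just before the statement) established between $\flc$ and itself then restricts to an adjunction $\Coker\adj\Ker\col\flc\ra\Fid(\C)$: for any faithful $i$, both $i$ and $\Ker(\Coker i)$ are objects of $\Fid(\C)$, so $\eta_i$ is an arrow in $\Fid(\C)$ because $\Fid(\C)$ is full in $\flc$; and $\varepsilon_f$ is unchanged.

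Next I would identify the two comparison transformations $\alpha$ and $\beta$. Taking $\partial\eqdef\partial_1\col\Fid(\C)\ra\C$, we have strict equalities $\partial_0\Coker\equiv\partial_1\equiv\partial$ on $\Fid(\C)$ (the cokernel of a faithful arrow $i\col K\ra A$ is an arrow out of $A$) and $\partial\Ker\equiv\partial_0$ on $\flc$ (the kernel of $A\overset{f}\ra B$ is a faithful arrow into $A$). Therefore one may take $\alpha$ and $\beta$ to be identities, and the compatibility conditions $\partial_0\varepsilon\simeq\beta^{-1}\circ\alpha^{-1}K$ and $\partial\eta\simeq\beta Q\circ\alpha$ reduce to the observations that $\partial_0\varepsilon_f=1_A$ (since $\varepsilon_f=(1_A,(\bar{\varphi}^1_f)^{-1},\bar{m}^1_f)$ by construction) and dually $\partial_1\eta_i=1_A$ for $i\col K\rightarrowtail A$. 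Both hold by construction.

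It remains to verify that $\varepsilon 1_{-}$ is an equivalence. By Proposition \ref{kerneltrivial}, for any $A\col\C$ we have $\Ker 1_A=(0,0_A,1_{0_A})$, so $k_{1_A}=0_A$; and again by Proposition \ref{kerneltrivial}, $\Coker 0_A=(A,1_A,1_{0_A})$. Thus the $K$-regular factorisation of $1_A$ is strictly $(1_A,1_{1_A},1_A)$, whence $\bar{m}^1_{1_A}=1_A$ and $\varepsilon_{1_A}=(1_A,1_{1_A},1_A)$ is an equivalence (indeed an identity). This completes the verification. No step is expected to be difficult: the essential content is already in the existence of the adjunction and in Proposition \ref{kerneltrivial}; the only point that requires any attention is that the restriction of $\Ker$ lands in $\Fid(\C)$, which is immediate from faithfulness of kernels noted just after Definition \ref{defkernel}.
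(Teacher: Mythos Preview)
Your proposal is correct and follows essentially the same approach as the paper's proof, just with more detail spelled out. The paper's proof is very terse: it simply notes that $\Ker$ and $\Coker$ were constructed so that $\partial_0\Coker\equiv\partial_1$ and $\partial_1\Ker\equiv\partial_0$, hence $\alpha$ and $\beta$ can be taken to be identities, and then invokes Proposition~\ref{kerneltrivial} for the condition on $\varepsilon 1_{-}$; your additional remarks on the restriction of $\Ker$ landing in $\Fid(\C)$ and the explicit unpacking of $\varepsilon_{1_A}$ are accurate elaborations of what the paper leaves implicit.
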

	
		\begin{proof}
			The $\Gpd$-functors $\Ker$ and $\Coker$ were constructed in such a way that
			$\partial_0\Coker\equiv \partial_1$ and $\partial_1\Ker\equiv\partial_0$.
			So we can take $\alpha$ and $\beta$ equal to the identity in the definition
			of kernel-quotient system (Definition \ref{defsysnoyquot}).
			Morevoer, $\varepsilon_{1_A}$ is an equivalence, by Proposition
			\ref{kerneltrivial}. 
		\end{proof}

	We can thus apply the theory of Section \ref{sectsysnoyquot}.
	By Proposition \ref{claspropker}, the $\Ker$-monomorphisms are the
	 fully 0-faithful arrows and, dually, the $\Coker$-epi\-mor\-phisms
	are the fully 0-cofaithful arrows.
	Moreover, we call the $\Ker$-regular epimorphisms \emph{normal cofaithful arrows}\index{arrow!normal cofaithful}\index{normal cofaithful arrow}
	 and, dually, we call the $\Coker$-regular monomorphisms \emph{normal faithful arrows}\index{arrow!normal faithful}\index{normal faithful arrow}.  We denote by
	$\Cofidnorm$\index{NormCofaith@$\Cofidnorm$}
	the full sub-$\Gpd$-category of $\flc$ whose objects are the normal cofaithful arrows
	and $\Fidnorm$\index{NormFaith@$\Fidnorm$} the full sub-$\Gpd$-category whose objects
	are the normal faithful arrows.
	
	We can also define $\Ker$-idempotent $\Gpdp$-categories, which
	Marco Grandis (\cite{Grandis2001b}) call \emph{h-semistable}.
	
	\begin{df}
		We say that a $\Gpdp$-category $\C$ is \emph{$\Ker$-idempotent}%
		\index{idempotent!Ker-@$\Ker$-}%
		\index{Gpd*-category@$\Gpdp$-category!Ker-idempotent@$\Ker$-idempotent}
		\index{Ker-idempotent Gpd*-category@$\Ker$-idempotent $\Gpdp$-category}
		if the adjunction $\Coker\adj\Ker$
		is idempotent, i.e.\ if the following equivalent conditions hold:
		\begin{enumerate}
			\item if $k$ is a kernel and $(q,\zeta)=\Coker k$, then $(k,\zeta)=\Ker q$
				(every kernel is a normal faithful arrow);
			\item if $q$ is a cokernel and $(k,\kappa)=\Ker q$, then $(q,\kappa)=\Coker k$
				(every cokernel is a normal cofaithful arrow).
		\end{enumerate}
	\end{df}
	
	We can express in this context Proposition \ref{defcaracfactor}.
	\begin{pon}\label{carackerfac}
		The following conditions are equivalent (when they hold, we say that
		 $\C$ is $\Ker$-factorisable%
		\index{factorisable!Ker-@$\Ker$-}%
		\index{Gpd*-category@$\Gpdp$-category!Ker-factorisable@$\Ker$-factorisable}%
		\index{Ker-factorisable@$\Ker$-factorisable!Gpd*-catrgorie@$\Gpdp$-category}):
		\begin{enumerate}
			\item for every $f\col\flc$, $\bar{m}^1_f$ is fully 0-faithful;
			\item $\C$ is $\Ker$-idempotent and
				normal cofaithful arrows are stable under composition.
		\end{enumerate}
	\end{pon}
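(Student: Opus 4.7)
The plan is to apply Proposition \ref{defcaracfactor} to the particular kernel-quotient system $\Coker\adj\Ker$ on $\C$ described just above. That general proposition states, for any kernel-quotient system $Q\adj K$ on a $\Gpd$-category, that $\C$ is $K$-factorisable (i.e.\ $m_f\in\KMono$ for every $f$) if and only if $\C$ is $K$-idempotent and $K$-regular epimorphisms are stable under composition.

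Having set up, in the previous subsection, the adjunction $\Coker\adj\Ker\col\flc\ra\Fid(\C)$ as a kernel-quotient system, I would translate each of its pieces through the dictionary we already have available:
\begin{itemize}
\item by Proposition \ref{claspropker}, the $\Ker$-monomorphisms are exactly the fully 0-faithful arrows;
\item by the definition preceding the proposition, the $\Ker$-regular epimorphisms are exactly the normal cofaithful arrows;
\item in the notation of the $\Ker$-regular factorisation, the arrow $m_f$ of Definition \ref{defkfact} is precisely $\bar{m}^1_f$, and the ``$K$-idempotence'' condition specialises to the $\Ker$-idempotence of the $\Gpdp$-category $\C$ defined immediately above.
\end{itemize}

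With these identifications in place, condition 1 of the present proposition is the statement ``$m_f\in\KMono$ for all $f$'', which is the definition of $K$-factorisability (Definition \ref{defkfact}) for $K=\Ker$; and condition 2 is the conjunction of $K$-idempotence and the stability of $K$-regular epimorphisms under composition. The equivalence of these two conditions is then exactly what Proposition \ref{defcaracfactor} provides. There is no further obstacle: the only subtlety is keeping straight that, for kernel-quotient systems on $\Gpdp$-categories with $K=\Ker$, ``monomorphism'' becomes ``fully 0-faithful'' (rather than simply faithful), which is the content of Proposition \ref{claspropker}.
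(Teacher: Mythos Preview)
Your proof is correct and matches the paper's approach exactly: the paper presents this proposition with the sentence ``We can express in this context Proposition \ref{defcaracfactor}'' and gives no further argument, so your explicit unpacking of the dictionary (Ker-monomorphisms $=$ fully 0-faithful arrows via Proposition \ref{claspropker}, Ker-regular epimorphisms $=$ normal cofaithful arrows, $m_f = \bar{m}^1_f$) is precisely the intended justification.
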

	
	In a $\Ker$-factorisable $\Gpdp$-category, $(\Cofidnorm,\zPlFid)$
	is a factorisation system.  We can also apply Definition
	\ref{defkpreex}.
	
\begin{df}%
		\index{preexact!Ker-@$\Ker$-}%
		\index{Gpd*-category@$\Gpdp$-category!Ker-preexact@$\Ker$-preexact}
		\index{Ker-preexact@$\Ker$-preexact!Gpd*-category@$\Gpdp$-category}
	We say that $\C$ is \emph{$\Ker$-preexact} if for every faithful arrow $m$,
	$\eta_m\col m\ra \Ker(\Coker m)$
	is an equivalence (every faithful arrow is normal).
\end{df}

En dimension 1, the adjunction $\Coker\adj\Ker$ gave back in $\Ensp$ the factorisation system $(\Bij^*,\zMono)$.  This doesn't work any more in dimension 2: there is a factorisation system $\EM$ on $\Gpdp$ with $\M=\zFid$, but it is not true that every arrow of $\E$ is canonically the cokernel of its kernel; so $\E\neq\Cofidnorm$.  Actually the adjunction $\Coker\adj\Ker$ is not idempotent in $\Gpdp$, as the following example shows.

\begin{pon}\label{paskerkercoker}
	In $\Gpdp$, it doesn't hold that every kernel is canonically the kernel of its cokernel.
\end{pon}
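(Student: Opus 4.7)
The plan is to exhibit a counterexample. Consider the zero functor $F\col 1\ra (\Z/2)\con$ in $\Gpdp$, where $1$ denotes the trivial pointed groupoid and $(\Z/2)\con$ the one-object pointed groupoid whose loop group at $I$ is $\Z/2$. Using the explicit description of kernels in $\Gpdp$ given just after Definition \ref{defkernel} (objects are pairs $(A,b\col FA\ra I)$, and so on), one computes $\Ker(F)\simeq (\Z/2)\dis$, the discrete pointed groupoid with two objects, and the kernel arrow $k$ is the unique pointed map $k\col (\Z/2)\dis\ra 1$. In particular, $k$ is a kernel in $\Gpdp$.

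Next I would compute $\Coker(k)$ directly from its universal property. For a candidate $(Q,q\col 1\ra Q,\zeta\col qk\Ra 0)$, the pointed functor $q$ is determined on objects by $qI=I_Q$, and $\zeta$ is a natural transformation between two copies of the constant-$I_Q$ functor on $(\Z/2)\dis$. Since $\zeta$ is pointed, its component at the basepoint is forced to be $1_{I_Q}$, so $\zeta$ is freely determined by its component at the remaining object, a single loop at $I_Q$. Translating the universal property shows that pointed functors $\Coker(k)\ra Y$ correspond, up to 2-isomorphism, to loops at $I_Y$ in $Y$, which identifies $\Coker(k)\simeq\Z\con$, the one-object pointed groupoid with loop group $\Z$.

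Finally, compute $\Ker(\Coker(k))=\Ker(q\col 1\ra\Z\con)$ by the same description: objects are pairs $(I,b)$ with $b\col I\ra I$ in $\Z\con$, i.e., $b\in\Z$, and there are no nontrivial arrows, because $1$ has only the identity. Hence $\Ker(\Coker(k))\simeq\Z\dis$. The canonical comparison arrow $\eta_k\col k\ra\Ker(\Coker(k))$ restricts, on domains, to a pointed function $(\Z/2)\dis\ra\Z\dis$ between discrete groupoids with $2$ and countably many objects respectively, which cannot be essentially surjective; so $\eta_k$ is not an equivalence in $\flc$, as required.

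The main obstacle is the computation of $\Coker(k)$. The intuition from the homotopy cofibre sequence --- the cofibre of $\Z/2\ra\ast$ in pointed 1-types has fundamental group $\Z$ --- points directly to the answer, but rigorously verifying the universal property in $\Gpdp$ requires careful bookkeeping of the pointed-ness of the 2-arrow $\zeta$ and of how a nontrivial loop is freely generated in $\Coker(k)$ out of the non-basepoint object of $(\Z/2)\dis$.
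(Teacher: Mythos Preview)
Your proposal is correct and follows essentially the same counterexample and computations as the paper: the functor $0\ra(\Z/2)\con$, its kernel $(\Z/2)\dis\ra 0$, the cokernel $\Z\con$, and the resulting kernel $\Z\dis$. The only difference is presentational---you argue the cokernel computation via the universal property whereas the paper describes it by generators---but the content is the same.
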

	
		\begin{proof}
			Here is a counter-example: we start with the unique pointed functor
			$0\ra(\mathbb{Z}_2)\con$ (where $(\mathbb{Z}_2)\con$ is the one-object pointed groupoid whose group of arrows is $\mathbb{Z}_2$).
			The objects of its kernel are $(I,0)$ (which is the distinguished object) and $(I,1)$ (where $I$ is the unique object of $(\mathbb{Z}_2)\con$)
			and the only arrows are the identities on these objects;
			so the kernel is equivalent to the set $2$.
			The unique object of the cokernel of the kernel of this functor is the unique object of $0$, and its arrows are generated by $\gamma_{(I,0)}$,
			which must be equal to $1_I$ and by $\gamma_{(I,1)}$
			(and its inverse), which has \emph{no} condition to satisfy.  So
			this cokernel is $\mathbb{Z}\con$.  Finally, the kernel of $0\ra\mathbb{Z}\con$
			is $\mathbb{Z}\dis$ (where $\mathbb{Z}\dis$ is the discrete pointed groupoid
			whose objects are the integers, with distinguished element $0$), and not $2$.\qedhere
			\begin{xym}\xymatrix@=40pt{
				{2}\ar[r]
				&0\ar[r]^-F\ar[dr]
				&{(\mathbb{Z}_2)\con}
				\\ {\cat{Z}\dis}\ar[ur]&&{\mathbb{Z}\con}\ar[u]
			}\end{xym}
		\end{proof}

To give a notion of kernel which works well  in $\Gpdp$ and in any $\Gpdp$-category, we should extend the kernel by adjoining objects and arrows containing the missing informations to recover the image from the cokernel of the kernel. The problem is similar to the case of $\Gpd$ where the ordinary kernel-pair doesn't suffice to recover the image by taking the coequalizer; we need to add an object to this kernel-pair (see \cite{Dupont2008b}).  

But we would lose the coincidence of the kernel and the coquotient.  Moreover, our final goal is to work in $\CGS$-categories (see Chapter 5) and, there, the kernel works as we want: the full image of every morphism of symmetric 2-groups is canonically the cokernel of its kernel.  

In the same way, the notion of congruence we use is not justified by the kernels in the basis: it is not true that any faithful functor is the kernel of its cokernel in $\Gpdp$, since the adjunction is not idempotent; but it will be the case in $\CGS$.

\subsection{$\Coroot\adj\Pip$}\label{sectcoradjpep}

	(Co)pips and (co)roots have been introduced in \cite{Dupont2003a}.

	\begin{df}\label{defpepin}\index{pip}\index{copip}
		Let be $A\overset{f}\ra B$ in $\C$.  We call an object $P\col\C$
		equipped with a loop $\pi\col 0\Ra 0\col P\ra A$ such that $f\pi=1_0$
		a \emph{pip} of $f$ if the following conditions hold:
		\begin{enumerate}
			\item
				for all $X\col\C$ and for all $\alpha\col 0\Ra 0\col X\ra A$
				such that $f\alpha = 1_0$, there exists $x\col X\ra P$ such that $\pi x=\alpha$;
			\item
				$\pi$ is a monoloop.
		\end{enumerate}
		We write this property “$(P,\pi)=\Pip f$”.\index{Pip@$\Pip$}
		\begin{xym}\label{diagpep}
		\xymatrix@=40pt{P\rtwocell^0<5>_0<5>{\pi} &A\ar[r]^f &B}
		\end{xym}
	\end{df}
		We can remove condition 2 and add instead to condition 1: “… and, for any other $x'\col X\ra P$ such that $\pi x'=\alpha$, there exists a unique $\chi\col x'\Ra x$”.
		As $\pi$ is a monoloop, the object $P$ is discrete, by Proposition
	\ref{domonobodis}.
	
	We have seen that the kernel classifies the fully 0-faithful arrows.  The pip classifies the 0-faithful arrows.
	
	\begin{pon}\label{pepclaszfid}\index{arrow!0-faithful!characterisation}
		In the situation of diagram \ref{diagpep}, the following conditions are equivalent:
		\begin{enumerate}
			\item $f$ is 0-faithful;
			\item if $(P,\pi)=\Pip f$, then $\pi=1_0$;
			\item $(0,1_0)=\Pip f$.
		\end{enumerate}
	\end{pon}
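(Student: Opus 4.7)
The strategy is to prove the cycle 1~$\Rightarrow$~3~$\Rightarrow$~2~$\Rightarrow$~1, which gives all three equivalences at once.

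For 1~$\Rightarrow$~3, I verify directly that $(0,1_0)$ satisfies the universal property of Definition~\ref{defpepin} for $f$. The equation $f\cdot 1_0 = 1_0$ is automatic (in a strictly described $\Gpdp$-category, $f\circ 0 = 0$ and the whiskering of an identity 2-arrow is an identity). For the first condition of the pip, given $\alpha\col 0\Ra 0\col X\ra A$ with $f\alpha = 1_0$, 0-faithfulness of $f$ forces $\alpha = 1_0$; the required arrow is then $x\eqdef 0^X\col X\ra 0$, for which $1_0\cdot x = 1_{0_A\circ 0^X} = 1_0 = \alpha$. Finally, $1_0\col 0\Ra 0\col 0\ra A$ is a monoloop because $\C(X,0)$ is equivalent to the terminal groupoid for every $X\col\C$, so between any two parallel morphisms into $0$ there is exactly one 2-arrow.

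For 3~$\Rightarrow$~2, let $(P,\pi)$ be any pip of $f$. Then $\pi\col 0\Ra 0\col P\ra A$ satisfies $f\pi = 1_0$, so the universal property of the pip $(0,1_0)$ furnishes an arrow $x\col P\ra 0$ with $1_0\cdot x = \pi$; but the left-hand side simplifies to $1_0$, hence $\pi = 1_0$.

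For 2~$\Rightarrow$~1, let $(P,\pi) = \Pip f$ (as displayed in diagram~\ref{diagpep}); by hypothesis $\pi = 1_0$. Given any $\alpha\col 0\Ra 0\col X\ra A$ with $f\alpha = 1_0$, the existence clause of the pip's universal property gives $x\col X\ra P$ with $\pi\cdot x = \alpha$. Since $\pi = 1_0$, this yields $\alpha = 1_0\cdot x = 1_0$, so $f$ is 0-faithful.

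The only delicate point is the implicit assumption in 2~$\Rightarrow$~1 that a pip of $f$ actually exists — otherwise condition~2 is vacuous. This is built into the framing of the proposition by the reference to diagram~\ref{diagpep}, which posits such a pip; no further work is required. The whole argument is essentially formal, being a combination of the universal property of the pip, the 0-faithfulness of $f$, and the fact that $0$ is a zero object (so that hom-groupoids into $0$ are contractible).
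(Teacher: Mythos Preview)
Your proof is correct and follows essentially the same cycle $1\Rightarrow 3\Rightarrow 2\Rightarrow 1$ as the paper, with the same arguments at each step. Your additional remarks (spelling out why $1_0$ is a monoloop via the zero object, and the aside on existence of a pip in $2\Rightarrow 1$) are harmless elaborations of points the paper leaves implicit.
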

	
		\begin{proof}
			{\it 1 $\Rightarrow$ 3. } Let be $\alpha\col 0\Ra 0\col X\ra A$ such that
			$f\alpha=1_0$.  As $f$ is 0-faithful, $\alpha=1_0$, we have thus
			$0\col X\ra 0$ such that $1_0 0 =\alpha$.  Moreover,
			$1_0\col 0\Ra 0\col 0\ra A$ is obviously a monoloop, since $0$ is a zero object.
			
			{\it 3 $\Rightarrow$ 2. } By the universal property of the pip, there exists
			$x\col P\ra 0$ such that $\pi=1_0 x = 1_0$.
			
			{\it 2 $\Rightarrow$ 1. } Let be $\alpha\col 0\Ra 0\col X\ra A$ such that
			$f\alpha=1_0$. By the universal property of the pip, there exists
			$x\col X\ra P$ such that $\alpha = \pi x = 1_0$.
		\end{proof}

	\begin{df}\label{defroot}\index{root}
		Let $\pi\col 0\Ra 0\col A\ra B$ be a loop in $\C$. We call an object $R\col\C$
		equipped with an arrow $R\overset{r}\ra A$ such that $\pi r=1_0$ 
		a \emph{root} of $\pi$ if the following conditions hold:
		\begin{enumerate}
			\item for all $a\col X\ra A$ such that $\pi a=1_0$, there exists
				$a'\col X\ra R$ and $\alpha\col a\Ra ra'$;
			\item $r$ is fully faithful.
		\end{enumerate}
		We write this property 
		“$(R,r)=\Root \pi$”.\index{root@$\Root$}
	\end{df}
	
	We define coroots\index{coroot} dually.
	
	\begin{pon}\label{pepintrivial}
		In the situation of the following diagram, $(0,1_0) =\Pip 1_A$ and
		$(A,1_A) = \Coroot 1_0$.
		\begin{xym}
		\xymatrix@=40pt{0\rtwocell^0<5>_0<5>{\,1_0} &A\ar[r]^{1_A} &A}
		\end{xym}
	\end{pon}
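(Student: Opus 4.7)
Both equivalences are essentially formal once we unpack the universal properties, and the two claims are dual so I would phrase the plan for the pip and remark that the coroot follows by duality. The key observation is that the condition $f\pi = 1_0$ for the pip becomes $\pi = 1_0$ when $f = 1_A$, which forces the pip to be as small as possible; dually, the condition $c\pi = 1_0$ is automatic when $\pi = 1_0$, so the coroot is as large as possible.

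For $(0, 1_0) = \Pip 1_A$, I would take as candidate the object $0$ with the loop $\pi \colon 0 \Ra 0 \col 0 \ra A$ equal to $1_0$, which satisfies $1_A \cdot 1_0 = 1_0$ trivially. To verify condition~1 of Definition~\ref{defpepin}, let $X \col \C$ and $\alpha \col 0 \Ra 0 \col X \ra A$ with $1_A \alpha = 1_0$; then $\alpha = 1_A\alpha = 1_0$, and taking $x \eqdef 0^X \col X \ra 0$ we get $\pi x = 1_0 \cdot 0^X = 1_0 = \alpha$. For condition~2, that $\pi = 1_0 \col 0 \Ra 0 \col 0 \ra A$ is a monoloop: given $a_1, a_2 \col X \ra 0$, since $0$ is a zero object there exists a unique 2-arrow $a_1 \Ra a_2$ (and in particular the hypothesis $\pi a_1 = \pi a_2$ is automatic). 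Hence the universal property is satisfied.

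For $(A, 1_A) = \Coroot 1_0$, by duality we verify the corresponding universal property of the coroot of the loop $1_0 \col 0 \Ra 0 \col 0 \ra A$. The equation $1_A \cdot 1_0 = 1_0$ holds trivially. For the factorisation condition, any $b \col A \ra Y$ satisfies $b \cdot 1_0 = 1_0$ automatically (since $\Gpdp$-categories are assumed strictly described, $b \circ 0 \equiv 0$), and we factor it via $b$ itself together with the identity 2-arrow $b \Ra b \cdot 1_A = b$. Finally, $1_A$ is an equivalence, hence fully cofaithful, which is the dual of the fully faithful requirement on the root arrow in Definition~\ref{defroot}.

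I do not foresee any genuine obstacle here: each clause reduces to unravelling a definition, using only that $0$ is a zero object, that the $\Gpdp$-category is strictly described (so $1_A \alpha = \alpha$ and $b \cdot 0 = 0$), and that identities are equivalences. The statement is the pip/coroot analogue of Proposition~\ref{kerneltrivial} and is proved in the same spirit.
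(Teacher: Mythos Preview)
Your proof is correct and is exactly the routine verification one would expect; the paper itself states this proposition without proof, treating it as an immediate consequence of the definitions (in the same spirit as Proposition~\ref{kerneltrivial}). Your unpacking of the universal properties is the natural way to fill in the omitted details.
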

	
	\begin{pon}
		Let $\C$ be a $\Gpdp$-category which has all pips and coroots of monoloops.
		Then the pips form a $\Gpd$-functor $\Pip\col\flc\ra\caspar{MonoLoop}(\C)$
		and the coroots form a $\Gpd$-functor
		$\Coroot\col\caspar{MonoLoop}(\C)\ra\flc$. Moreover,
		\begin{eqn}
			\Coroot\adj\Pip.
		\end{eqn}
	\end{pon}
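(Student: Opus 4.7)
The plan is to mirror the earlier proof that $\Coker\adj\Ker$, replacing “kernels and cokernels” by “pips and coroots” and “faithful arrows” by “monoloops”. Recall that in the previous proof we built preliminary comparison functors from the universal properties and then showed that these restrict to quasi-inverse equivalences between appropriate hom-groupoids, natural in both variables.

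First I would set up the preliminary construction. For $A\overset{f}\ra B$ in $\C$ and a monoloop $\pi\col 0\Ra 0\col X\ra Y$ in $\C$, I would build a functor
\begin{eqn}
	\Phi_f^{\pi}\col\flc(\Coroot\pi,f)\ra\caspar{MonoLoop}(\C)(\pi,\Pip f)
\end{eqn}
as follows. Given $(a,\varphi,b)\col r_\pi\ra f$ in $\flc$ (where $r_\pi\col \Coroot\pi\ra Y$ is the canonical arrow with $\pi r_\pi = 1_0$), the loop $\varphi\circ b\pi\circ\varphi^{-1}\col 0\Ra 0\col X\ra B$ composes on the left of $f$ to give something that factors through $1_0$ (using $\pi r_\pi=1_0$); hence by the universal property of $\Pip f$ there exist an arrow $\hat a\col X\ra \Pip f$ and compatibility data producing an arrow $\pi\ra \pi_f$ in $\caspar{MonoLoop}(\C)$. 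On 2-cells, apply the fact that the pip $\pi_f$ is a monoloop to get the unique comparison 2-cell. Dually I would build $\Psi_f^{\pi}$ going the other way, using the universal property of $\Coroot$ (together with the fact that $r_\pi$ is fully faithful).

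Then I would define the $\Gpd$-functor $\Pip$ on an arrow $(a,\varphi,b)\col f\ra f'$ of $\flc$ by applying $\Phi_{f'}^{\pi_f}$ to the induced $(a,\varphi,b)\col r_{\pi_f}\ra f'$, and similarly on 2-arrows; the $\Gpd$-functoriality is forced by the uniqueness part of the universal property of the pip (its monoloop condition). The construction of $\Coroot\col\caspar{MonoLoop}(\C)\ra\flc$ is dual, using the fully faithfulness clause in the root's universal property to extract unique comparison morphisms.

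For the adjunction, I would set
\begin{eqn}
	\Phi_{\pi,f}\eqdef \Phi_f^{\pi_{r_\pi}\eqdef\pi}\col\flc(\Coroot\pi,f)\ra\caspar{MonoLoop}(\C)(\pi,\Pip f)
\end{eqn}
and its would-be inverse $\Psi_{\pi,f}$. The heart of the proof is to verify that $\Phi_{\pi,f}$ and $\Psi_{\pi,f}$ are inverse equivalences and that the resulting equivalence is $\Gpd$-natural in $\pi$ and $f$ separately. Both checks come from repeated application of the uniqueness in the pip's monoloop condition and in the root's fully faithfulness condition—these guarantee that the comparison data we produce agree with any other candidate.

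The main obstacle, as in the $\Coker\adj\Ker$ case, is bookkeeping: tracking the various 2-cells (the pip's loop, the root's equality $\pi r = 1_0$, the 2-cells $\varphi$ in morphisms of $\flc$) and verifying that the compatibility equations demanded by $\caspar{MonoLoop}(\C)$ and by $\flc$ match up. But all identities needed follow by uniqueness from the universal properties, exactly as in the analogous proof for $\Coker\adj\Ker$, so there are no new conceptual difficulties—only the careful pasting of 2-cells.
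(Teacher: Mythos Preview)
Your approach is correct and is essentially the paper's own: build preliminary comparison functors from the universal properties of pip and coroot, use them to make $\Pip$ and $\Coroot$ into $\Gpd$-functors, and then show they give mutually inverse hom-equivalences. A couple of small points to fix before you write it out. First, the coroot arrow points \emph{out} of the codomain: if $\pi\col 0\Ra 0\col X\ra Y$, then $r_\pi\col Y\ra\Coroot\pi$ with $r_\pi\pi = 1_0$ (not $r_\pi\col\Coroot\pi\ra Y$ with $\pi r_\pi = 1_0$). Second, your expression ``$\varphi\circ b\pi\circ\varphi^{-1}$'' does not type-check; what you actually need is $f(a\pi)=(fa)\pi=(br_\pi)\pi=b(r_\pi\pi)=1_0$, where the middle equality uses that in a strictly described $\Gpdp$-category whiskering any 2-cell by a zero arrow yields $1_0$, so the 2-cell $\varphi\col br_\pi\Ra fa$ forces $(br_\pi)\pi=(fa)\pi$.

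One packaging difference: the paper defines the preliminary functor $\Phi_f^{\chi,y}\col\flc(y,f)\ra\caspar{MonoLoop}(\C)(\chi,\pi_f)$ for \emph{any} pair $(\chi,y)$ with $y\chi=1_0$, not just for $(\pi,r_\pi)$. This lets it define $\Pip$ on an arrow $(a,\varphi,b)\col f\ra f'$ directly as $\Phi_{f'}^{\pi_f,f}(a,\varphi,b)$, without your detour through the counit $\varepsilon_f\col r_{\pi_f}\ra f$. Your route works too, but the paper's parametrisation is cleaner and avoids having to check that composing with $\varepsilon_f$ interacts well with the rest of the structure.
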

	
		\begin{proof}
			{\it Preliminary constructions. } We construct first, for all
			2-arrow $\chi\col 0\Ra 0\col X\ra Y$ and arrow
			$Y\overset{y}\ra Z$ such that $y\chi=1_0$
			and for all $A\overset{f}\ra B$, a functor
			\begin{eqn}
				\Phi_f^{\chi,y}\col \flc(y,f)\ra\caspar{MonoLoop}(\C)(\chi,\pi_f).
			\end{eqn}
			Let be $(a,\varphi,b)\col y\ra f$ in $\flc$.  Since $f(a\chi)=1_0$, by
			the universal property of the pip, there exists $\hat{a}\col X\ra \Pip f$
			such that $\pi_f\hat{a} = a\chi$.  We set $\Phi_f^{\chi,y}(a,\varphi,b)
			\eqdef (\hat{a},a)$.
			
			Then, let be $(\alpha,\beta)\col(a,\varphi,b)\Ra (a',\varphi',b')\col y\ra f$.
			Thanks to the existence of $\alpha$, we have $\pi_f\hat{a}=a\chi=a'\chi=\pi_f\hat{a}'$.
			So, as $\pi_f$ is a monoloop, there exists a unique $\hat{\alpha}
			\col\hat{a}\Ra\hat{a}'$. We set $\Phi_f^{\chi,y}(\alpha,\beta)
			\eqdef(\hat{\alpha},\alpha)$.  Moreover, $\Phi_f^{\chi,y}$ is a functor
			thanks to the unicity of $\hat{\alpha}$.
			
			We construct next, for all
			$\chi\col 0\Ra 0\col X\ra Y$, $Y\overset{y}\ra Z$ such that $y\chi=1_0$
			and, for all $\pi\col 0\Ra 0\col A\ra B$ (with coroot
			$r_\pi\col B\ra \Coroot\pi$), a functor
			\begin{eqn}
				\Psi_{\pi}^{\chi,y}\col\caspar{MonoLoop}(\C)(\pi,\chi)\ra
				\flc(\Coroot\pi,y).
			\end{eqn} 
			Let $(a,b) \col \pi\ra\chi$ be an arrow in $\caspar{MonoLoop}(\C)$.  Since $yb\pi=1_0$,
			by the universal property of the coroot, there exist $\check{b}\col\Coroot\pi
			\ra Z$ and $\varphi\col\check{b}r_{\pi}\Ra yb$. We set $\Psi_{\pi}^{\chi,y}
			(a,b)\eqdef (b,\varphi,\check{b})$.
			
			Then, let be $(\alpha,\beta)\col(a,b)\Ra(a',b')\col\pi\ra\chi$.  Since $r_\pi$
			is fully cofaithful, there exists a unique $\check{\beta}\col\check{b}
			\Ra\check{b}'$ such that $\varphi'\circ\check{\beta}r_\pi=y\beta\circ\varphi$.
			We set $\Psi_{\pi}^{\chi,y}(\alpha,\beta)\eqdef(\beta,\check{\beta})$.
			This is a functor thanks to the unicity of $\check{\beta}$.
			
			{\it Construction of $\Pip$. }  It is already defined on objects by the existence of pips. We define it on objects in the following way:
			\begin{eqn}
				\Pip_{f,f'}\eqdef\Phi_{f'}^{\pi_f,f}\col
				\flc(f,f')\ra\caspar{MonoLoop}(\C)(\Pip f,\Pip f')
			\end{eqn}
			We get the $\Gpd$-fonctoriality by using the fact that pips are monoloops.
			
			{\it Construction of $\Coroot$. } It is already defined on objects by the existence of coroots.  We define it on arrows in the following way:
			\begin{eqn}
				\Coroot_{\pi,\pi'}\eqdef\Psi_{\pi}^{\pi', r_{\pi'}}\col
				\caspar{MonoLoop}(\C)(\pi,\pi')\ra \flc(\Coroot\pi,\Coroot\pi').
			\end{eqn}
			We get the $\Gpd$-fonctoriality by using the fact that coroots are fully cofaithful.
			
			{\it Adjunction. } Let $\pi\col 0\Ra 0\col A\ra B$ be a monoloop and $C\overset{f}\ra D$.  We define two functors
			\begin{eqn}
				\Phi_{\pi,f}\eqdef\Phi_f^{\pi,r_{\pi}}
				\col\flc(\Coroot\pi,f)\ra\caspar{MonoLoop}(\C)(\pi,\Pip f)
			\end{eqn}
			and
			\begin{eqn}
				\Psi_{\pi,f}\eqdef\Psi_\pi^{\pi_f,f}
				\col\caspar{MonoLoop}(\C)(\pi,\Pip f)\ra\flc(\Coroot\pi,f).
			\end{eqn}
			By using the fact that pips are monoloops and coroots are fully cofaithful, we prove that these two functors are inverse to each other and we prove the $\Gpd$-naturality of $\Phi$ in $\pi$ and in $f$.
		\end{proof}

	Let be $A\overset{f}\ra B$.  We construct the pip of $f$, then the coroot
	$e^1_f\col A\ra\im^1 f$ of this pip. By the universal property of the coroot,
	there exist $m^1_f\col\im^1 f\ra B$\index{im1f@$\im^1 f$}
	and $\varphi^1_f\col f\Ra m^1_f e^1_f$.
	\begin{xym}\xymatrix@C=20pt@R=40pt{
			{\Pip f}\rrtwocell^0<5>_0<5>{\;\;\pi_f}
			&&A\ar[rr]^f\ar[dr]_-{e^1_f}\rrtwocell\omit\omit{_<4>\;\;\,\varphi^1_f}
			&&B
			\\ &&&\im^1 f\ar[ur]_-{m^1_f}
		}\end{xym}
	The counit of the adjunction $\Coroot\adj\Pip$ is then
	\begin{eqn}
		\varepsilon_f\eqdef
		(1_A,(\varphi^1_f)^{-1},m^1_f)\col \Coroot(\Pip f)\ra f.
	\end{eqn}
	The unit of the adjunction is defined dually.
	
	By taking as congruences the monoloops, we get a kernel-quotient system.
	
	\begin{pon}
		Let $\C$ be a $\Gpdp$-category which has all pips and coroots of monoloops.
		The monoloops in $\C$, together with the $\Gpd$-functor codomain
		$\partial\col\caspar{MonoLoop}(\C)\ra\C$, which maps $\pi\col 0\Ra 0\col A\ra B$
		to $B$, and the adjunction $\Coroot\adj\Pip$ form a kernel-quotient system.
		\begin{xym}\xymatrix@C=30pt@R=45pt{
			{\caspar{MonoLoop}(\C)}\ar@<2mm>[rr]^-{\Coroot}
				\ar@<1mm>[dr]^-{\partial}\ar@{}[rr]|-\perp
			&&{\fl{\C}}\ar@<2mm>[ll]^-{\Pip}\ar@<2mm>[dl]^-{\partial_0}
			\\ &{\C}\ar@<0mm>[ur]^-{\mathrm{1_{-}}}\ar@<1mm>[ul]^-{\Pip 1_{-}}
		}\end{xym}
	\end{pon}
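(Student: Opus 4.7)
The plan is to verify directly the conditions of Definition \ref{defsysnoyquot} using the adjunction $\Coroot\adj\Pip$ constructed in the previous proposition. The structure of the argument mirrors exactly that of the earlier analogous result establishing that $\Coker\adj\Ker$ forms a kernel-quotient system, so the work reduces to checking the three clauses: matching of boundaries (existence of $\alpha$ and $\beta$), the coherence equations they must satisfy, and the equivalence $\varepsilon 1_{-}$.

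First I would observe that the constructions of $\Pip$ and $\Coroot$ given just above were arranged so that $\partial\Pip f\equiv\partial_0 f$: the pip of $f\col A\ra B$ is a monoloop $\pi_f\col 0\Ra 0\col\Pip f\ra A$ whose image under $\partial$ is precisely $A=\partial_0 f$. Dually, for a monoloop $\pi\col 0\Ra 0\col A\ra B$, the coroot is an arrow $r_\pi\col B\ra\Coroot\pi$, so $\partial_0\Coroot\pi\equiv B\equiv\partial\pi$. Consequently one may take both $\Gpd$-natural transformations $\alpha\col\partial\Ra\partial_0\Coroot$ and $\beta\col\partial_0\Ra\partial\Pip$ to be identities. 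The coherence conditions $\partial_0\varepsilon\simeq\beta^{-1}\circ\alpha^{-1}\Pip$ and $\partial\eta\simeq\beta\Coroot\circ\alpha$ then simply require $\partial_0\varepsilon$ and $\partial\eta$ to be identities, which is immediate from the explicit formula $\varepsilon_f=(1_A,(\varphi^1_f)^{-1},m^1_f)$ for the counit given just before the statement (and its dual for the unit).

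It remains to verify that $\varepsilon 1_{-}$ is an equivalence. By Proposition \ref{pepintrivial}, $\Pip 1_A=(0,1_0)$; applying that same proposition once more gives $\Coroot 1_0=(A,1_A)$. Hence $\Coroot\Pip 1_A\equiv 1_A$, and the counit $\varepsilon_{1_A}$ is, up to the canonical comparison, the identity on $1_A$, hence an equivalence.

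The main obstacle is in principle very mild: the adjunction and its structural morphisms have already been built in the preceding proposition, so the only genuine content is the use of Proposition \ref{pepintrivial} to compute $\Coroot\Pip 1_A$. The proof is essentially bookkeeping, relying on the fact that pips and coroots were defined so that their boundary data match strictly, which is what allows both comparisons $\alpha$ and $\beta$ to be trivial.
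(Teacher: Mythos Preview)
Your proof is correct and follows essentially the same approach as the paper's: both take $\alpha$ and $\beta$ to be identities by observing that $\partial_0\Coroot\equiv\partial$ and $\partial\Pip\equiv\partial_0$ hold strictly by construction, and both invoke Proposition~\ref{pepintrivial} for the final clause. Your version is slightly more detailed in spelling out why $\partial_0\varepsilon$ is the identity and in applying Proposition~\ref{pepintrivial} twice to compute $\Coroot\Pip 1_A$, but the substance is the same.
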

	
		\begin{proof}
			The $\Gpd$-functors $\Pip$ and $\Coroot$ have been constructed in such a way that
			$\partial_0\Coroot\equiv \partial$ and $\partial\Pip\equiv\partial_0$.
			So we can take the identity for $\alpha$ and $\beta$ in the definition
			of kernel-quotient system (Definition \ref{defsysnoyquot}).
			Moreover, $\varepsilon_{1_A}$ is an equivalence, by Proposition
			\ref{pepintrivial}. 
		\end{proof}

	So we can apply here the theory of Section \ref{sectsysnoyquot}.
	By Proposition \ref{pepclaszfid}, the $\Pip$-monomorphisms are the
	0-faithful arrows and, dually, the $\Copip$-epimorphisms
	are the 0-cofaithful arrows.
	Moreover, we call \emph{normal fully cofaithful
	arrows}\index{arrow!normal fully cofaithful}%
	\index{normal fully cofaithful arrow} the $\Pip$-regular epimorphisms and,
	dually, we call \emph{normal fully faithful arrows}\index{arrow!normal fully faithful}%
	\index{normal fully faithful arrow} the $\Copip$-regular monomorphisms.
	We denote by $\PlCofidnorm$\index{NormFullCofaith@$\PlCofidnorm$}
	the full sub-$\Gpd$-category of $\flc$ of normal fully cofaithful arrows
	and $\PlFidnorm$\index{NormFullFaith@$\PlFidnorm$} the full sub-$\Gpd$-category 
	of normal fully faithful arrows.
	
	There is a simplification with respect to the kernel-quotient system $\Coker\adj\Ker$:
	every $\Gpdp$-category which has all pips and coroots is
	$\Pip$-idempotent. So, in the Proposition \ref{defcaracfactor}
	applied to $\Coroot\adj\Pip$, we can omit this condition.
	\begin{pon}
		The following conditions are equivalent (when they hold,
		we say that $\C$ is \emph{$\Pip$-factorisable}\index{factorisable!Pip-@$\Pip$-}%
		\index{Gpd*-category@$\Gpdp$-category!Pip-factorisable@$\Pip$-factorisable}%
		\index{Pip-factorisable Gpd*-categorie@$\Pip$-factorisable $\Gpdp$-category}):
		\begin{enumerate}
			\item for every $f\col\flc$, $m^1_f$ is 0-faithful;
			\item normal fully cofaithful arrows are stable under composition.
		\end{enumerate}
	\end{pon}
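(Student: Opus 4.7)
My plan is to reduce this directly to Proposition \ref{defcaracfactor} applied to the kernel-quotient system $\Coroot\adj\Pip$, using the simplification mentioned just before the proposition that every $\Gpdp$-category with all pips and coroots of monoloops is automatically $\Pip$-idempotent.

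First I would translate the relevant notions. In this kernel-quotient system, the $\Pip$-monomorphisms are (by Proposition \ref{pepclaszfid}) exactly the 0-faithful arrows, and the $\Pip$-regular epimorphisms are by definition the normal fully cofaithful arrows. So condition 1 of the statement (that $m^1_f$ is 0-faithful for every $f$) is literally the definition of $\Pip$-factorisability (Definition \ref{defkfact}) in this context, and condition 2 is stability of $\Pip$-regular epimorphisms under composition.

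Next, Proposition \ref{defcaracfactor} gives the equivalence between $K$-factorisability and the conjunction ``$K$-idempotent {\it and} $K$-regular epimorphisms are stable under composition'' for any kernel-quotient system. Applied to $\Coroot\adj\Pip$, this yields the equivalence of condition 1 with the conjunction ``$\C$ is $\Pip$-idempotent and normal fully cofaithful arrows are stable under composition''. Since the first conjunct is automatic from the standing hypothesis (as stated right before the proposition), condition 1 is equivalent to condition 2.

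The only genuinely nontrivial ingredient is the automatic $\Pip$-idempotence, which I would take as given (it is asserted in the paragraph preceding the proposition). Were one to verify it directly, the key step would be to show that for every monoloop $\pi$ the unit $\eta_\pi\col\pi\Ra\Pip(\Coroot\pi)$ is an equivalence, equivalently that the pip of a coroot recovers the original monoloop. This should follow from condition 2 of Proposition \ref{caracdefidemp} applied to the adjunction, exploiting that coroots are by definition fully cofaithful (so any loop on the codomain of a coroot $r$ that becomes trivial under $r$ is already trivial, via the faithfulness of $-\circ r$ on 2-arrows) together with the monoloop universal property of pips. I expect this to be the main obstacle were the idempotence claim not granted, but under the hypotheses given here the proof is essentially a direct invocation of Proposition \ref{defcaracfactor}.
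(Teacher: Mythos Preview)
Your proposal is correct and is exactly the approach the paper takes: the proposition is stated without a separate proof, the preceding paragraph explaining that it is Proposition \ref{defcaracfactor} applied to $\Coroot\adj\Pip$ with the $\Pip$-idempotence hypothesis dropped because it holds automatically.
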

	
	In a $\Pip$-factorisable $\Gpdp$-category, $(\PlCofidnorm,\zFid)$
	is a factorisation system.  We can also apply Definition
	\ref{defkpreex}.
	
\begin{df}
	We say that a $\Gpdp$-category $\C$ in which all pips and coroots of monoloops exist is \emph{$\Pip$-preexact}\index{preexact!Pip-@$\Pip$-}%
		\index{Gpd*-category@$\Gpdp$-category!Pip-preexact@$\Pip$-preexact}%
		\index{Pip-preexact Gpd*-category@$\Pip$-preexact $\Gpdp$-category}
		if for every monoloop $\pi$,
	$\eta_{\pi}\col \pi\ra \Pip(\Coroot\pi)$
	is an equivalence (every monoloop is canonically the pip of its coroot).
\end{df}

\subsection{$\Sigma\adj\Omega$}\label{sssectsigadjom}

	From the adjunction between kernel and cokernel follows an adjunction between
	two functors $\C\ra\C$ which play the rôle of the suspension and loop space functors
	in algebraic topology.  Gabriel and Zisman \cite{Gabriel1967a} define them in general
	in  $\Gpdp$-categories for a notion of kernel with a weaker and stricter universal property (without
	unicity at the level of 2-arrows).  In the case of the $\Gpdp$-category of pointed
	topological spaces with continuous maps and homotopies up to 2-homotopies, 
	they recover the usual suspension and loop space functors.  In our case, because of the unicity
	in the universal property of the kernel, we always have $\Omega\Omega A\simeq 0$
	and $\Sigma\Sigma A\simeq 0$.
	
	Marco Grandis have studied this adjunction in a more general context in
	\cite{Grandis2001b}.  In the beginning, we follow his presentation, but in $\Gpdp$-categories and with the notion of kernel defined above and not with the standard homotopy kernel.
	
	Let $\C$ be a $\Gpdp$-category which has all kernels and cokernels and a zero object.  The adjunction
	$\Coker\adj\Ker$ induces
    adjunctions between the fibres at the point $C\col\C$
    of the domain and codomain $\Gpd$-functors:
	\begin{xym}\xymatrix@=50pt{
         	{C\backslash\C}\ar@<-2mm>[r]_-{\Ker} \ar@{}[r]|-\perp
			&{\C\slash C.}\ar@<-2mm>[l]_-{\Coker}
    }\end{xym}
    In particular, for $C\eqdef 0$, we get, since
    $\C\slash 0\simeq 0\backslash\C\simeq\C$ an adjunction $\Sigma\adj\Omega$
    between $\C$ and $\C$, where $\Sigma C \eqdef \Coker 0^C$\index{s@$\Sigma$}
    and $\Omega C \eqdef \Ker 0_C$\index{o@$\Omega$}.
    The object $\Omega C$ is discrete, since the arrow to $0$ is faithful (it is a
    kernel); dually, $\Sigma C$ is connected.
	\begin{xym}\label{adjsigmomeg}\xymatrix@=50pt{
         	{0\backslash\C}\ar@<-2mm>[r]_-{\Ker} \ar@{}[r]|-\perp\ar@{-}[d]_\wr
			&{\C\slash 0}\ar@<-2mm>[l]_-{\Coker}\ar@{-}[d]^\wr
         	\\ {\C}\ar@<-2mm>[r]_-{\Omega} \ar@{}[r]|-\perp
			&{\C}\ar@<-2mm>[l]_-{\Sigma}
    }\end{xym}
	The unit of the adjunction
    $\Sigma\adj\Omega $ is denoted by $\eta_C$ and its counit by $\eps_C$.  They satisfy the equations $\omega_{\Sigma C}\eta_C = \sigma_C$ and $\eps_C\sigma_{\Omega C} = \omega_C$.
    We set $\pi_0 \eqdef\Omega\Sigma$ and $\pi_1 \eqdef\Sigma\Omega$.%
    \index{p0@$\pi_0$}\index{p1@$\pi_1$}
    \begin{xym}\label{diagunitcounitpiopiu}\xymatrix@C=20pt@R=40pt{
			C\ar[rr]^-{0^C}\ar[dr]_-{\eta_C}
				\ar@/^2pc/[rrrr]^0
			&{}\rrtwocell\omit\omit{^<-2.7>\sigma_C\;\;}
			&0\ar[rr]^-{0_{\Sigma C}}\ar@{}[drr]|(0.32){\dir{=>}\!\omega_{\Sigma_C}}
			&{}
			&\Sigma C
			\\ &\Omega\Sigma C\ar[ur]_-{0}\ar@/_0.76pc/[urrr]_0 &&&{}
		}\;\;\;\;\xymatrix@C=20pt@R=40pt{
			\Omega C\ar[rr]^-{0^{\Omega C}}\ar@/_0.76pc/[drrr]_0\ar@/^2pc/[rrrr]^0
			&{}\rrtwocell\omit\omit{^<-2.7>\omega_C\;\;\;}
			&0\ar[rr]^-{0_C}\ar[dr]_-{0}
				\ar@{}[dll]|(0.32){\sigma_{\Omega C}\!\!\dir{=>}}
			&&C
			\\ {}
			&&&\Sigma\Omega C\ar[ur]_-{\varepsilon_C}
		}\end{xym}

	The $\Gpd$-functors $\Omega$ and $\Sigma$ allow us to reduce loops to arrows and, in particular, to reduce the properties expressed in terms of pip and coroot to properties expressed in terms of kernel and cokernel.

	From the following proposition follows the fact that $\omega_C$ is a monoloop and that $\sigma_C$ is an epiloop.

	\begin{lemm}\label{omegpepzc}
    	$(\Omega C,\omega_C)=\Pip 0^C$ and $(\Sigma C,\sigma_C)=\Copip 0_C$.
    \end{lemm}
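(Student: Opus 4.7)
The plan is to derive both equalities directly from the defining universal properties of the kernel $\Omega C=\Ker 0_C$ and the cokernel $\Sigma C=\Coker 0^C$, by unpacking what those universal properties say when the involved arrows into $0$ or out of $0$ are forced to be zero. I will handle only $(\Omega C,\omega_C)=\Pip 0^C$ in detail, since the other statement is formally dual.

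Recall that $\omega_C$ is, by construction, the canonical 2-arrow $\omega_C\col 0_C\circ 0^{\Omega C}\Ra 0$ attached to the kernel $\Ker 0_C=\Omega C$. Since $\C$ is strictly described and $0$ is a zero object, $0_C\circ 0^{\Omega C}=0\col\Omega C\ra C$, so $\omega_C$ is in fact a loop $0\Ra 0\col\Omega C\ra C$. Moreover $0^C\circ\omega_C$ is a 2-arrow $0\Ra 0\col\Omega C\ra 0$, and the defining property of the zero object forces it to be $1_0$. Hence $\omega_C$ is a candidate for $\Pip 0^C$; it remains to verify the two conditions of Definition~\ref{defpepin}.

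For the universal part, let $X\col\C$ and $\alpha\col 0\Ra 0\col X\ra C$ with $0^C\alpha=1_0$. Feed $a\eqdef 0^X\col X\ra 0$ and $\beta\eqdef\alpha\col 0_C\circ 0^X=0\Ra 0$ into the universal property of $\Ker 0_C$: this returns $x\col X\ra\Omega C$ and a 2-arrow $\alpha_0\col 0^X\Ra 0^{\Omega C} x$ with $\alpha=\omega_C x\circ 0_C\alpha_0$. But $\alpha_0$ is a 2-arrow between two arrows into $0$, hence $\alpha_0=1_{0^X}$ and $0_C\alpha_0=1_0$; thus $\alpha=\omega_C x$, as required. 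For condition~2 (that $\omega_C$ is a monoloop), suppose $x_1,x_2\col X\ra\Omega C$ with $\omega_C x_1=\omega_C x_2$. Consider the 2-arrow $1_{0}\col 0^{\Omega C}x_1\Ra 0^{\Omega C}x_2$ (well-defined because the codomain is $0$). It is compatible with $\omega_C$ exactly because $\omega_C x_1=\omega_C x_2$ (the compatibility condition reduces to this equation, since $0_C\cdot 1_0=1_0$). The unicity clause of the kernel's universal property then yields a unique $\chi\col x_1\Ra x_2$ with $0^{\Omega C}\chi=1_0$; unicity of this $\chi$ is exactly the monoloop condition. This proves $(\Omega C,\omega_C)=\Pip 0^C$.

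The statement $(\Sigma C,\sigma_C)=\Copip 0_C$ is obtained by applying the preceding argument to $\C\op$: the cokernel $\Sigma C=\Coker 0^C$ supplies a 2-arrow $\sigma_C\col 0\Ra 0\circ 0^C$, which becomes a loop $0\Ra 0$ in the strict setting, and the copip axioms translate into the two halves of the universal property of $\Coker 0^C$, exactly as above. The only point that deserves attention in both directions is to keep track of the fact that any 2-arrow between two parallel arrows whose (co)domain is $0$ is forced to be $1_0$; once that is used systematically, the two verifications are essentially immediate from Definitions~\ref{defkernel} and~\ref{defpepin} (and their duals).
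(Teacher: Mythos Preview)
Your proof is correct and follows essentially the same approach as the paper's: both arguments unpack the universal property of the kernel $\Omega C=\Ker 0_C$, using that any 2-arrow between arrows into $0$ is forced to be $1_0$, to verify directly the two clauses of Definition~\ref{defpepin}. Your write-up is simply more explicit about the intermediate steps (checking that $\omega_C$ is a valid candidate, and spelling out why the compatibility condition in the monoloop verification reduces to $\omega_C x_1=\omega_C x_2$), but the underlying argument is the same.
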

    
    	\begin{proof}
			We will prove the first property; the proof of the second is dual.
			Let be $\gamma\col 0\Ra 0\col X\ra C$ (which necessarily satisfies $0^C\gamma =1_0$).
			By the universal property of the kernel, there exists $x\col X\ra\Omega C$ such
			that $\omega_C x = \gamma$.  Moreover, $\omega_C$ is a monoloop
			since, if we have $x,x'\col X\ra\Omega C$ such that $\omega_C x =\omega_C x'$,
			by the second part of the universal property of the kernel, there exists
			a unique $\chi\col x\ra x'$ such that $0\chi = 1_0$.
		\end{proof}
	
	If $\Omega B$ and $\Sigma A$ exist, the universal properties of the kernel and the cokernel give the following equivalences:
	\begin{eqn}\label{equadjomsi}
		\C(\Sigma A,B)\simeq\C(A,B)(0,0)\simeq\C(A,\Omega B).
	\end{eqn}
	Let us make these equivalences explicit: if $\pi\col 0\Ra 0\col A\ra B$, we have,
	on the one hand, $\bar{\pi}\col \Sigma A\ra B$	such that $\pi=\bar{\pi}\sigma_A$
	and, on the other hand, $\tilde{\pi}\col A\ra\Omega B$ such that $\pi=\omega_B \tilde{\pi}$.
	The following diagram illustrates the situation.
	Conversely, if $f\col \Sigma A\ra B$, we have
	$\hat{f}\eqdef f\sigma_A \col  0\Ra 0\col A\ra B$
	and if $g\col A\ra\Omega B$, we have $\check{g}\eqdef\omega_B g\col 0\Ra 0\col A\ra B$.
	\begin{xym}\label{diagnotationsomegsigm}\xymatrix@=40pt{
		&\Omega B\dtwocell_0^0{\omega_B}
		\\ A\rtwocell^0_0{\pi}\dtwocell_0^0{\sigma_A}\ar@<1mm>[ur]^{\tilde{\pi}}
		&B
		\\ \Sigma A\ar@<-2mm>[ur]_{\bar{\pi}}
	}\end{xym}

   \begin{lemm}\label{deuflunfl}
    	In the situation of the following diagram, we have
		\begin{eqn}
			f\alpha = 1_0 \;\Leftrightarrow\; f\bar{\alpha}\simeq 0.
		\end{eqn}
		(The isomorphism $f\bar{\alpha}\simeq 0$ is necessarily unique, because $\Sigma X$
		is connected.)
	\begin{xym}\label{chibarchi}\xymatrix@=40pt{
		X\rtwocell^0_0{\alpha}\dtwocell_0^0{\sigma_X}
		&A\ar[r]^f
		&B
		\\ \Sigma X\ar@<-2mm>[ur]_{\bar{\alpha}}
	}\end{xym}
    \end{lemm}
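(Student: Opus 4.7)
The plan is to reduce the claim directly to the universal property of the copip encoded by $\sigma_X$, using the equivalence \ref{equadjomsi}.

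First I would recall that, by the dual of Lemma \ref{omegpepzc}, $(\Sigma X,\sigma_X)=\Copip 0_X$, so $\sigma_X$ is an epiloop and the copip universal property gives an equivalence of groupoids
\begin{equation*}
	\C(\Sigma X,B)\;\simeq\;\C(X,B)(0,0),\qquad y\longmapsto y\sigma_X.
\end{equation*}
This equivalence is a functor between groupoids, so it preserves and reflects isomorphisms. Moreover, the target $\C(X,B)(0,0)$ is a \emph{set} (a discrete groupoid), so ``isomorphism'' on the right-hand side is literal equality; this also re-explains the parenthetical uniqueness remark in the statement.

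Next I would apply this equivalence to the two objects $f\bar{\alpha}$ and $0$ in $\C(\Sigma X,B)$: we have $f\bar{\alpha}\simeq 0$ if and only if $(f\bar{\alpha})\sigma_X=0\sigma_X$ as elements of $\C(X,B)(0,0)$. The left-hand side equals $f\alpha$ by associativity of whiskering applied to the defining relation $\alpha=\bar{\alpha}\sigma_X$, and the right-hand side equals $1_0$ by the strict pointed description of $\C$ (recall the convention $0\cdot\sigma_X\equiv 1_0$). Combining these two identifications yields the equivalence $f\alpha=1_0\Longleftrightarrow f\bar{\alpha}\simeq 0$.

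There is no real obstacle here; the only point requiring a bit of care is the bookkeeping that identifies $(f\bar{\alpha})\sigma_X$ with $f\alpha$ and that uses strict pointedness to make $0\sigma_X=1_0$ a literal equality rather than an isomorphism, but both are immediate from the setup. If one preferred to avoid invoking \ref{equadjomsi} wholesale, the backward implication could instead be obtained by the interchange computation $\phi\ast\sigma_X=(0\cdot\sigma_X)\circ(\phi\cdot 0)=1_0\circ 1_0=1_0$ on one hand and $\phi\ast\sigma_X=(\phi\cdot 0)\circ((f\bar{\alpha})\cdot\sigma_X)=1_0\circ f\alpha=f\alpha$ on the other.
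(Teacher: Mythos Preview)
Your proof is correct and follows essentially the same approach as the paper's: both directions rest on the identity $\alpha=\bar{\alpha}\sigma_X$ and on the epiloop property of $\sigma_X$, which you have packaged as the equivalence \ref{equadjomsi} reflecting isomorphisms onto equalities in the discrete groupoid $\C(X,B)(0,0)$. The paper's proof simply unwinds this: it computes $f\alpha=f\bar{\alpha}\sigma_X=0\sigma_X=1_0$ for one direction, and for the other notes that $f\bar{\alpha}\sigma_X=1_0=0\sigma_X$ together with $\sigma_X$ being an epiloop forces $f\bar{\alpha}\simeq 0$.
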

    
    	\begin{proof}
			If $f\bar{\alpha}\simeq 0$, then
			$f\alpha = f\bar{\alpha}\sigma_X = 0\sigma_X=1_0$. Conversely,
			if $f\alpha = 1_0$, then $f\bar{\alpha}\sigma_X=f\alpha = 1_0=0\sigma_X$
			and, since $\sigma_X$ is an epiloop, $f\bar{\alpha}\simeq 0$.
		\end{proof}

    \begin{pon}\label{kerpep}
    	Let us consider the situation of the following diagram in a $\Gpdp$-category.
		If $(K,k,\kappa)=\Ker f$, then $(\Omega K, k\omega_{K})=\Pip f$.
		So, if $\C$ has all kernels, $\C$ has all
		pips\index{pip!construction by kernels}%
		\index{copip!construction by cokernels}.
		\begin{xym}\xymatrix@=40pt{
			\Omega {K}\rtwocell^0_0{\;\;\;\,\omega_{K}}
			&K\rruppertwocell^0<9>{^<-2.7>\kappa}\ar[r]_{k}
			&A\ar[r]
			&B
		}\end{xym}
    \end{pon}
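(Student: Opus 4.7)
The plan is to verify directly that $(\Omega K, k\omega_K)$ satisfies the universal property of the pip of $f$ (Definition \ref{defpepin}), using that $(K,k,\kappa) = \Ker f$ together with the characterisation $(\Omega K, \omega_K) = \Pip 0^K$ from Lemma \ref{omegpepzc}.

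First I would check that $k\omega_K$ is indeed a loop $0 \Ra 0\col \Omega K \to A$ with $f(k\omega_K) = 1_0$. Since the $\Gpdp$-category $\C$ is assumed strictly described (so $k\cdot 0 \equiv 0$, $\alpha\cdot 0 \equiv 1_0$ and $0\cdot\alpha \equiv 1_0$), the whiskering $k\omega_K$ is a 2-arrow $0 \Ra 0\col \Omega K \to A$. To obtain $f(k\omega_K) = 1_0$, I would invoke naturality of $\kappa\col fk \Ra 0$ at the arrow $0\col \Omega K \to K$ applied to $\omega_K$; both boundary whiskerings $\kappa\cdot 0$ and $0\cdot\omega_K$ become $1_0$ by strictness, and the naturality square collapses to the desired equation $(fk)\omega_K = 1_0$.

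Next I would verify condition 1 of Definition \ref{defpepin}: given $\alpha\col 0 \Ra 0\col X \to A$ with $f\alpha = 1_0$, produce $x\col X \to \Omega K$ satisfying $k\omega_K x = \alpha$. Viewing $\alpha$ as a 2-cell $k\cdot 0 \Ra k\cdot 0$ (legitimate since $k\cdot 0 \equiv 0$), the key observation is that its compatibility with $\kappa$, in the sense required by clause 2 of Definition \ref{defkernel}, reduces to the equation $\kappa\cdot 0 \circ f\alpha \circ (\kappa\cdot 0)^{-1} = 1_0$, which by strictness is simply $f\alpha = 1_0$ — our hypothesis. Hence the kernel yields a unique $\alpha'\col 0 \Ra 0\col X \to K$ with $k\alpha' = \alpha$. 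Now $\alpha'$ is a loop at $0$ in $\C(X,K)$ which automatically satisfies $0^K\alpha' = 1_0$ since $0$ is a zero object, so the universal property of $\Omega K = \Pip 0^K$ supplies $x\col X \to \Omega K$ with $\omega_K x = \alpha'$, and then $k\omega_K x = k\alpha' = \alpha$.

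Finally I would verify that $k\omega_K$ is a monoloop. Given $x_1, x_2\col X \to \Omega K$ with $(k\omega_K)x_1 = (k\omega_K)x_2$, faithfulness of $k$ (automatic from its being a kernel) gives $\omega_K x_1 = \omega_K x_2$, and then the monoloop property of $\omega_K$ supplies the unique required 2-arrow $x_1 \Ra x_2$. The main subtlety, and the only place where the argument is not a mechanical application of universal properties, is the reduction of compatibility of $\alpha$ with $\kappa$ to the vanishing $f\alpha = 1_0$; everything else unfolds directly once one keeps track of the strict description.
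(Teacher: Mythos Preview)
Your proof is correct, and for the monoloop verification it coincides with the paper's argument verbatim. For the factorisation step (condition~1 of Definition~\ref{defpepin}), however, you take a genuinely different route. The paper transposes the loop $\alpha$ through the adjunction $\Sigma\dashv\Omega$: by Lemma~\ref{deuflunfl}, $f\alpha=1_0$ becomes $f\bar\alpha\simeq 0$ for $\bar\alpha\col\Sigma X\to A$; then condition~1 of the kernel yields $a\col\Sigma X\to K$ with $ka\simeq\bar\alpha$, and transposing back gives $\tilde a\col X\to\Omega K$. You instead stay at the level of 2-arrows and invoke condition~2 of the kernel (the $\kappa$-full-faithfulness of $k$) directly: the compatibility of $\alpha\col k0\Ra k0$ with $\kappa$ collapses, by strictness, to $f\alpha=1_0$, so you lift $\alpha$ to $\alpha'\col 0\Ra 0\col X\to K$ and then apply Lemma~\ref{omegpepzc}. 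Your approach is more elementary in that it never mentions $\Sigma X$, hence does not require cokernels; this matches the slogan ``if $\C$ has all kernels, $\C$ has all pips'' more literally than the paper's own proof, which silently uses the ambient assumption (made at the start of \S\ref{sssectsigadjom}) that cokernels exist.
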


    	\begin{proof}
			Let us consider the following diagram.
			Let $\alpha\col 0\Ra 0\col X\ra A$ be such that $f\alpha = 1_0$.
			By Lemma \ref{deuflunfl}, $f\bar{\alpha}\simeq 0$.
			So, by the universal property of the kernel, there exists a
			factorisation $a\col\Sigma X\ra K$ such that $ka\simeq\bar{\alpha}$.
			To $a$ corresponds under the adjunction $\Sigma\adj\Omega$ an arrow
			$\tilde{a}\col X\ra\Omega K$ such that
			$a\sigma_X = \omega_K\tilde{a}$.  Therefore
			$k\omega_K\tilde{a}=k a\sigma_X =
			\bar{\alpha}\sigma_X = \alpha$.
			\begin{xym}\xymatrix@=40pt{
				X\rtwocell^0_0{{\;\;\;\sigma_X}}\ar[d]_{\tilde{a}}
				& \Sigma X\ar[dr]^{\bar{\alpha}}\ar[d]^a
				\\ {\Omega K}\rtwocell^0_0{{\;\;\;\,\omega_K}}
				&K\ar[r]_{k}
				&A\ar[r]_f
				&B
			}\end{xym}
			
			It remains to prove that $k\omega_K$ is a monoloop. Let
			$x,x'\col X\ra\Omega K$ be such that $k\omega_K x=k\omega_K x'$.
			Since $k$ is faithful (because it is a kernel), we have $\omega_K x=\omega_K x'$.
			And since $\omega_K$ is a monoloop, there exists a unique 2-arrow
			$x\Ra x'$.
		\end{proof}

	In a $\Gpdp$-category which has all kernels and cokernels, we will define  the pip and the copip in the following way:
	\begin{align}\stepcounter{eqnum}
		\Pip f &\eqdef \Omega\Ker f,\\ \stepcounter{eqnum}
		\Copip f &\eqdef \Sigma\Coker f,
	\end{align}
	equipped with, respectively, $\pi_f\eqdef k_f\omega_{\Ker f}$ and
	$\rho_f\eqdef \sigma_{\Coker f}q_f$.
  
	\begin{pon}\label{coraccoker}
		In the situation of diagram \ref{chibarchi}, $f$ is the coroot
		of $\alpha$ if and only if $f$ is the cokernel of $\bar{\alpha}$.
		Therefore, if $\C$ has all cokernels, $\C$ has all coroots.
	\end{pon}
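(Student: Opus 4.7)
The plan is to exploit the bijection from Lemma \ref{deuflunfl} to transport the data of the coroot to the data of the cokernel and back. The key enabling fact, which will make all the compatibility conditions collapse, is that $\Sigma X$ is connected, so between any two parallel arrows $\Sigma X\ra Y$ there is at most one 2-arrow.

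First I would prove the direction $\Coroot\alpha\Rightarrow\Coker\bar{\alpha}$. Suppose $(R,r)=\Coroot\alpha$, so $r\alpha=1_0$ and $r$ is fully cofaithful. By Lemma \ref{deuflunfl}, there is a (necessarily unique, since $\Sigma X$ is connected) isomorphism $\zeta\col r\bar{\alpha}\Ra 0$. To check the first clause of the universal property of $\Coker\bar{\alpha}$, take $b\col A\ra Y$ and $\beta\col b\bar{\alpha}\Ra 0$. By Lemma \ref{deuflunfl} again, $b\alpha=1_0$, so the coroot property gives $b'\col R\ra Y$ together with a 2-arrow $\omega\col b\Ra b'r$. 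The 2-arrow $b'\zeta\circ\omega\bar{\alpha}\col b\bar{\alpha}\Ra 0$ is automatically equal to $\beta$, because both are 2-arrows between parallel morphisms with domain $\Sigma X$. For the second clause (compatibility of 2-arrows), any 2-arrow $\omega\col b_1r\Ra b_2r$ compatible with $\zeta$ is treated by the full cofaithfulness of $r$: there is a unique $\omega'\col b_1\Ra b_2$ with $\omega=\omega'r$.

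For the converse $\Coker\bar{\alpha}\Rightarrow\Coroot\alpha$, assume $(R,r,\zeta)=\Coker\bar{\alpha}$. Since $\zeta$ witnesses $r\bar{\alpha}\simeq 0$, Lemma \ref{deuflunfl} yields $r\alpha=1_0$. Given $b\col A\ra Y$ with $b\alpha=1_0$, the lemma provides a (unique) $\beta\col b\bar{\alpha}\Ra 0$, and the cokernel property supplies the required $b'\col R\ra Y$ and $\omega\col b\Ra b'r$. It remains to check that $r$ is fully cofaithful. Given $b_1,b_2\col R\ra Y$ and $\omega\col b_1r\Ra b_2r$, the compatibility condition of $\omega$ with $\zeta$ required by the second clause of the cokernel universal property is an equation between parallel 2-arrows with domain $\Sigma X$; since $\Sigma X$ is connected, it holds automatically. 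Hence the cokernel property yields the unique $\omega'\col b_1\Ra b_2$ with $\omega=\omega'r$.

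The final sentence follows immediately: if $\C$ has all cokernels and a zero object (so that $\Sigma X$ exists), then the coroot of any loop $\alpha$ can be computed as the cokernel of $\bar{\alpha}$. The main subtlety, and the step I expect to be the most delicate to write down cleanly, is the verification that the various compatibility equations of 2-arrows are automatically satisfied by virtue of $\Sigma X$ being connected; everything else is essentially a direct translation along Lemma \ref{deuflunfl}.
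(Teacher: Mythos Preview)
Your proof is correct and follows essentially the same approach as the paper: the paper's one-line proof (``a coroot-candidate of $\alpha$ is the same as a cokernel-candidate of $\bar{\alpha}$'') is exactly your argument, with the identification of candidates going through Lemma \ref{deuflunfl} and the connectedness of $\Sigma X$ making all compatibility conditions automatic. You have simply unpacked what the paper leaves implicit.
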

	
		\begin{proof}
			Lemma \ref{deuflunfl} says that a coroot-candidate of $\alpha$ is the same
			as a cokernel-candidate of $\bar{\alpha}$.
		\end{proof}

	If $\C$ has all kernels and cokernels, we will define the coroot of a  2-arrow $\pi$
	by $\Coroot \pi\eqdef \Coker \bar{\pi}$ and its root by
	$\Root\pi\eqdef \Ker\tilde{\pi}$.\index{coroot!construction by cokernels}
	\index{root!construction by kernels}
    To sum up, if $\C$ has all kernels and cokernels, $\C$ also has
    all pips, copips, roots and coroots.
     
	\begin{rem}\label{remfactregpepkerraccoker}
	By joining the las two propositions, we get a new construction
	of the $\Pip$-regular factorisation  of an arrow $f$, which is normally constructed by
	taking the coroot of the pip of $f$.  By Proposition \ref{kerpep},
	$k_f\omega_{Kf}$ is a pip of $f$ and the coroot of this pip can be constructed, by
	Proposition \ref{coraccoker}, as the cokernel of $k_f\varepsilon_{Kf}\col\pi_1 Kf\ra A$.
	\begin{xym}\newdir{t}{{}*!/-13pt/\dir{}}\xymatrix@=40pt{
		\Omega Kf\rtwocell^0_0{\;\;\;\;\;\omega_{Kf}}\dtwocell_0<5>^0<5>{\sigma_{\Omega Kf}}
		&Kf\ar[r]^-{k_f}
		&A\ar[r]^f
		&B
		\\ \Sigma\Omega Kf\ar@<-2mm>@{t->}[ur]_{\varepsilon_{Kf}}
	}\end{xym}
	So, in particular, an arrow is normal fully cofaithful (in other words
	a $\Pip$-regular epimorphism) if and only if is
	is canonically the cokernel of the $\pi_1$ of its kernel.
	\end{rem}

	\bigskip
	We can also give characterisations of the 0-faithful and fully 0-faithful arrows
	without using loops $0\Ra 0$.
    The following proposition is an immediate consequence of Lemma
    \ref{deuflunfl}.
    
    \begin{pon}\label{fidconnected}
    	Let $A\overset{f}\ra B$ be an arrow in $\C$ which has all $\Sigma$s.
		The following conditions are equivalent:
		\begin{enumerate}
			\item $f$ is 0-faithful;
			\item for all $X\col\C$ and for all $a\col\Sigma X\ra A$, if
				$fa\simeq 0$, then $a\simeq 0$.
		\end{enumerate}
    \end{pon}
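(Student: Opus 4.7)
The plan is to use the adjunction $\Sigma\adj\Omega$ to convert loops $0\Ra 0\col X\ra A$ into ordinary arrows $\Sigma X\ra A$, and then invoke Lemma \ref{deuflunfl} to translate the condition $f\alpha=1_0$ into $f\bar\alpha\simeq 0$. The point is that condition 1 is stated in terms of loops while condition 2 is stated in terms of arrows out of a $\Sigma$; the two equivalences \eqref{equadjomsi} line up exactly so that ``$\alpha=1_0$'' on one side corresponds to ``$\bar\alpha\simeq 0$'' on the other.

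More precisely, given the adjunction equivalence $\C(\Sigma X,A)\simeq \C(X,A)(0,0)$ with $\bar\alpha\sigma_X=\alpha$, I would first observe that, under this equivalence, the zero loop $1_0\col 0\Ra 0\col X\ra A$ corresponds (up to the unique iso determined by connectedness of $\Sigma X$) to the zero arrow $0\col\Sigma X\ra A$: indeed $0\cdot\sigma_X=1_0$, so $\bar{1_0}\simeq 0$, and conversely if $\bar\alpha\simeq 0$ then $\alpha=\bar\alpha\sigma_X\simeq 0\sigma_X=1_0$. Combined with Lemma \ref{deuflunfl} ($f\alpha=1_0\iff f\bar\alpha\simeq 0$), this gives a bijection between pairs $(\alpha\col 0\Ra 0\col X\ra A,\;f\alpha=1_0)$ and pairs $(a\col\Sigma X\ra A,\;fa\simeq 0)$ under which ``$\alpha=1_0$'' matches ``$a\simeq 0$''.

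For (1)$\Rightarrow$(2), assuming $f$ is 0-faithful, given $a\col\Sigma X\ra A$ with $fa\simeq 0$, set $\alpha\eqdef a\sigma_X$, so $\bar\alpha\simeq a$. By Lemma \ref{deuflunfl}, $f\alpha=1_0$, whence $\alpha=1_0$ by 0-faithfulness, and then the correspondence above forces $a\simeq 0$. For (2)$\Rightarrow$(1), given $\alpha\col 0\Ra 0\col X\ra A$ with $f\alpha=1_0$, apply Lemma \ref{deuflunfl} to get $f\bar\alpha\simeq 0$, then condition 2 yields $\bar\alpha\simeq 0$, and finally $\alpha=\bar\alpha\sigma_X\simeq 0\sigma_X=1_0$. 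Since no step is delicate, there is no real obstacle; the only point that needs a moment's care is verifying that the adjunction equivalence sends the canonical zeroes to each other, and this follows at once from $0\sigma_X=1_0$ together with the uniqueness clause built into the universal property of $\Sigma X=\Copip 0_X$.
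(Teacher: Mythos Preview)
Your proof is correct and follows exactly the approach the paper intends: the paper simply says this proposition ``is an immediate consequence of Lemma~\ref{deuflunfl}'', and you have spelled out that consequence, using the correspondence $\alpha\leftrightarrow\bar\alpha$ (equivalences~\eqref{equadjomsi}) together with $f\alpha=1_0\Leftrightarrow f\bar\alpha\simeq 0$. One small notational slip: in the last step of (2)$\Rightarrow$(1) you write $\alpha=\bar\alpha\sigma_X\simeq 0\sigma_X=1_0$, but $\alpha$ and $1_0$ are 2-arrows, so this should be an equality, not an isomorphism; the equality $\bar\alpha\sigma_X=0\sigma_X$ follows from the interchange law applied to a 2-arrow $\bar\alpha\Ra 0$ and $\sigma_X$ (equivalently, from $\sigma_X$ being an epiloop, Lemma~\ref{omegpepzc}), exactly as you indicate in your closing remark.
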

    
    We deduce from this proposition that, if $\Sigma$ exists, in condition 1 of
    the definition of fully 0-faithful arrows (\ref{caracplzfid}), we can remove
    point (b), i.e.\ 0-faithfulness, because it follows from point (a).
    
    \begin{pon}\label{simpldefplzfidsigm}
    	If $\C$ has all $\Sigma$s, then an arrow $A\overset{f}\ra B$ in $\C$
		is fully 0-faithful
		if and only if, for all $X\col\C$, for all $a\col X\ra A$
		and for all $\beta\col fa\Ra 0$, there exists
		$\alpha\col a\Ra 0$ such that $\beta=f\alpha$.
    \end{pon}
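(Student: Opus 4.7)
The forward implication is immediate from Definition~\ref{caracplzfid}: condition~1(a) is literally the statement to be proved, so every fully 0-faithful arrow satisfies it. The content of the proposition is therefore entirely in the reverse direction, namely that the given factorisation condition already implies 0-faithfulness (clause 1(b) of Definition~\ref{caracplzfid}), which is where the hypothesis that $\C$ has all $\Sigma$s should play its role.

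The plan is to use the characterisation of 0-faithfulness provided by Proposition~\ref{fidconnected}: since $\C$ admits all $\Sigma$s, an arrow $f\col A\ra B$ is 0-faithful if and only if, for every $X\col\C$ and every $a\col\Sigma X\ra A$, $fa\simeq 0$ implies $a\simeq 0$. So assume the factorisation property of the proposition holds, fix an object $X$ and an arrow $a\col\Sigma X\ra A$ such that there exists an isomorphism $\beta\col fa\Ra 0$. Applying the hypothesis with $\Sigma X$ in place of $X$ furnishes a 2-arrow $\alpha\col a\Ra 0$ (in particular, $a\simeq 0$), which by Proposition~\ref{fidconnected} establishes 0-faithfulness.

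Combining this with the assumed clause 1(a) yields both parts of condition~1 of Definition~\ref{caracplzfid}, so $f$ is fully 0-faithful, as required. There is no real obstacle here; the only subtle point is recognising that the extra freedom in the hypothesis — namely that it is quantified over \emph{all} objects $X$, not only over those already having a factorisation $a\simeq 0$ — is exactly what lets us specialise to $\Sigma X$ and thereby absorb clause 1(b) into clause 1(a) via Proposition~\ref{fidconnected}.
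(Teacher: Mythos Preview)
Your proof is correct and follows essentially the same approach as the paper: the forward direction is immediate, and for the converse you invoke Proposition~\ref{fidconnected} to reduce 0-faithfulness to the case of arrows with domain $\Sigma X$, where the hypothesis applies directly to produce the required $\alpha\col a\Ra 0$.
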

    
    	\begin{proof}
			Point (b) of characterisation 1 of the definition
			\ref{caracplzfid}, which is 0-faith\-ful\-ness, follows from point (a), if we apply the previous proposition: if $fa\simeq 0$ (where the domain of $a$ is $\Sigma X$),
			then there exists $\alpha\col a\Ra 0$.
		\end{proof}
 
  \begin{pon}\label{lemmhormzfid}
    		Let be $A\overset{f}\ra B$ and $\pi\col 0\Ra 0\col\Omega A\ra B$ in $\C$
		such that $\pi=f\omega_A$.
		If $\pi$ is a monoloop, then $f$ is $0$-faithful.
    \end{pon}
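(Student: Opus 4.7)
The plan is to peel off the loop $\alpha\col 0\Ra 0\col X\ra A$ witnessing a potential failure of 0-faithfulness through $\omega_A$, transport the equation $f\alpha=1_0$ into an equation of whiskerings of $\pi$, and then use the monoloop property of $\pi$ together with interchange to force $\alpha=1_0$.

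More precisely, to check condition 1 of Definition \ref{caraczfid}, let $\alpha\col 0\Ra 0\col X\ra A$ with $f\alpha=1_0$. Since trivially $0^A\alpha=1_0$, and by Lemma \ref{omegpepzc} $(\Omega A,\omega_A)=\Pip 0^A$, the universal property of the pip produces an arrow $a\col X\ra\Omega A$ such that $\omega_A a=\alpha$. Whiskering on the left by $f$ yields
\begin{eqn}
\pi a \;=\; f\omega_A a \;=\; f\alpha \;=\; 1_0,
\end{eqn}
and by the strictly described bipointed structure we also have $\pi\cdot 0=1_0$, where $0\col X\ra\Omega A$ is the zero arrow. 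Thus $\pi a=\pi\cdot 0$.

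Now I invoke the hypothesis that $\pi$ is a monoloop: this yields a (unique) 2-arrow $\chi\col a\Ra 0\col X\ra\Omega A$. The last step is the interchange computation for the horizontal composite $\omega_A*\chi\col 0\cdot a\Ra 0\cdot 0$ in $\C(X,A)(0,0)$. On the one hand,
\begin{eqn}
\omega_A*\chi \;=\; (\omega_A\cdot 0)\circ(0\cdot\chi) \;=\; 1_0\circ 1_0 \;=\; 1_0,
\end{eqn}
since both whiskerings vanish by the bipointed structure. On the other hand,
\begin{eqn}
\omega_A*\chi \;=\; (0\cdot\chi)\circ(\omega_A\cdot a) \;=\; 1_0\circ\alpha \;=\; \alpha.
\end{eqn}
Comparing gives $\alpha=1_0$, which is exactly 0-faithfulness of $f$.

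There is no real obstacle: the only point that requires a bit of care is invoking interchange in two different ways, which is immediate in the strictly described setting assumed throughout the chapter. No further hypothesis (like existence of coroots or cokernels) is needed beyond the existence of $\Omega A$, which is tacit in the statement.
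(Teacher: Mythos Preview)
Your proof is correct and essentially the same as the paper's: both factor $\alpha$ through $\omega_A$ (you via the pip universal property of Lemma \ref{omegpepzc}, the paper via the equivalences \ref{equadjomsi}), then use the monoloop hypothesis on $\pi$ to get $a\simeq 0$, and conclude $\alpha=1_0$. The only difference is that you spell out the interchange computation explicitly, whereas the paper compresses the final step to ``$\alpha=\omega_A\bar{\alpha}=\omega_A 0=1_0$'' --- the middle equality being precisely your interchange argument.
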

    
    	\begin{proof}
			Let be $\alpha\col 0\Ra 0\col X\ra A$
			such that $f\alpha = 1_0$.  By the equivalences \ref{equadjomsi},
			to $\alpha$ corresponds an arrow
			$\bar{\alpha}\col X\ra\Omega A$ (such that
			$\alpha=\omega_A \bar{\alpha}$).  Then $\pi\bar{\alpha} =
			f\omega_A\bar{\alpha} = f\alpha = 1_0$ and,
			since $\pi$ is a monoloop,
			$\bar{\alpha}\simeq 0$.  Therefore $\alpha=\omega_A\bar{\alpha}=
			\omega_A 0=1_0$.
		\end{proof}

\chapter{Abelian $\Gpd$-categories and homology}\label{chapgpdcatabhom}

\begin{quote}
	{\it In this chapter, after basic definitions and lemmas, we introduce
	Puppe-exact $\Gpdp$-categories, which form a context in which we can study
	exact and relative exact sequences as well as their homology.  Next, in the
	context of abelian $\Gpd$-categories, we prove several classical diagram lemmas and
	the existence of the long exact sequence of homology corresponding to an
	extension of chain complexes (Theorem \ref{lngsequencexhom}).}
\end{quote}

\section{Relative kernel and relative pullback}

For this section we fix a $\Gpdp$-category $\C$, which has all kernels, cokernels and a zero object.

\subsection{Relative fully faithful arrows}

	The notion of fully faithfulness relative to a 2-arrow was introduced
	in \cite{Rio2005a}.

	\begin{df}\index{arrow!relative fully faithful}%
	\index{relative fully faithful arrow}
		Let us consider the following diagram in $\C$.
		We say that $f$ is \emph{$\varphi$-fully faithful} if, for every 
		$X\col\C$, for all $a_1,a_2\col X\ra A$	 and for all $\beta\col fa_1\Ra fa_2$
		compatible with $\varphi$,
		there exists a unique $\alpha\col a_1\Ra a_2$ such that $f\alpha=\beta$.
		\begin{xym}\label{diagwithphi}\xymatrix@=40pt{
			A\ar[r]^f\rrlowertwocell<-9>_0{_<2.7>\varphi}
			&B\ar[r]^y
			&Y
		}\end{xym}
	\end{df}
	
	The second part of the universal property of the kernel of an arrow $f$
	(Definition \ref{defkernel}) says precisely that $k_f$ is $\kappa_f$-fully faithful.
	
	For a given arrow $A\overset{f}\ra B$, we can apply this definition to two canonical
	2-arrows $\varphi$: on the one hand, $1_{0^A}\col 0^Bf\Ra 0^A$ and, 
	on the other hand, the canonical 2-arrow of the cokernel of $f$.
	
	\begin{pon}
		Let $A\overset{f}\ra B$ be an arrow in $\C$.  Then $f$ is 
		$1_{0^A}$-fully faith\-ful if and only if $f$ is fully faithful.
	\end{pon}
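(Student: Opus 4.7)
The plan is to unfold both definitions and observe that, in this particular case, the compatibility condition appearing in the definition of $\varphi$-fully faithfulness becomes vacuous, so that the two notions collapse to the same statement.

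First I would fix the data: here $y = 0^B\col B \ra 0$ (so that $Y = 0$) and $\varphi = 1_{0^A}\col 0^B f \Ra 0^A$ is the identity 2-arrow (which makes sense because, in a strictly described $\Gpdp$-category, $0^B \circ f \equiv 0^A$). Given $a_1, a_2\col X \ra A$ and a 2-arrow $\beta\col fa_1 \Ra fa_2$, the compatibility of $\beta$ with $\varphi$ (equation \ref{condphiplfid}) is the equation
\begin{equation*}
	(\varphi a_2) \circ (0^B \beta) \circ (\varphi^{-1} a_1) \;=\; 1_0.
\end{equation*}
Since $\varphi = 1_{0^A}$, both $\varphi a_1$ and $\varphi a_2$ are identity 2-arrows; and because we work with a strictly described $\Gpdp$-category, the bipointedness conventions yield $0^B \beta \equiv 1_{0}$ for any 2-arrow $\beta$ in $\C$. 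The compatibility equation therefore reduces to $1_0 \circ 1_0 \circ 1_0 = 1_0$, which is automatic. So \emph{every} 2-arrow $\beta\col fa_1 \Ra fa_2$ is compatible with $\varphi = 1_{0^A}$.

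With this in hand, both directions follow by direct inspection. The property "for all $X\col\C$, for all $a_1,a_2\col X\ra A$, and for all $\beta\col fa_1\Ra fa_2$ compatible with $\varphi$, there exists a unique $\alpha\col a_1\Ra a_2$ with $f\alpha = \beta$" becomes, after dropping the vacuous compatibility clause, exactly "for all $X\col\C$, for all $a_1,a_2\col X\ra A$ and for all $\beta\col fa_1\Ra fa_2$, there exists a unique $\alpha\col a_1\Ra a_2$ with $f\alpha = \beta$", which is the definition of fully faithful arrow in Definition \ref{defsortfleches}. The equivalence is therefore a tautology.

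There is no real obstacle: the content lies entirely in recognizing that the compatibility clause is trivially satisfied once $y = 0^B$ and $\varphi$ is the identity. The only minor point to be careful about is appealing correctly to the strict description of $\Gpdp$-categories to guarantee $0^B \beta = 1_0$; this is exactly what was ensured in the coherence proposition preceding this statement, and it is precisely why the lemma holds without any additional hypothesis on $f$ or on $\C$.
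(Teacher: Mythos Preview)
Your proof is correct and takes the same approach as the paper: the key observation is that every 2-arrow $\beta$ is trivially compatible with $1_{0^A}$, which the paper states in a single line while you unfold it explicitly. Your careful appeal to the strict description (so that $0^B\beta \equiv 1_0$ and $\varphi a_i \equiv 1_0$) is exactly what justifies the paper's word ``trivially''.
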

	
		\begin{proof}
			Every 2-arrow $\beta$ is trivially compatible with $1_{0^A}$.
		\end{proof}
	
	\begin{df}\label{dfmonomorphs}%
	\index{monomorphism!in a Gpd*-category@in a $\Gpdp$-category}
		An arrow $A\overset{f}\ra B$ in $\C$ is called a \emph{monomorphism}
		if it is $\zeta_f$-fully faithful (where $\zeta_f\col q_ff\Ra 0$ is the
		canonical arrow of the cokernel of $f$).
	\end{df}
	
	As the following proposition shows, these two canonical cases are the extreme cases.
		
	\begin{pon}\label{implicrelplfid}
		Let $f$ be an arrow and $\varphi$ be a 2-arrow
		as in diagram \ref{diagwithphi}.  We have the following implications:
		\begin{quote}
			$f$ is fully faithful $\Rightarrow$ $f$ is $\varphi$-fully faithful
			$\Rightarrow $	$f$ is a monomorphism $\Rightarrow $ $f$ is faithful.
		\end{quote}
	\end{pon}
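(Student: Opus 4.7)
The plan is to handle the three implications in sequence; each is short, with the middle one requiring the main diagram chase.

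The first implication, that full faithfulness implies $\varphi$-full faithfulness, is immediate: $\varphi$-full faithfulness only asks for unique lifts of those $\beta\col fa_1\Ra fa_2$ that happen to be compatible with $\varphi$, so a fortiori it follows from the unconditional lifting property of full faithfulness. The third implication, from monomorphism to faithfulness, is also routine. Given $\alpha\col a\Ra a$ with $f\alpha = 1_{fa}$, I note that $\beta \eqdef 1_{fa}$ is tautologically compatible with $\zeta_f$ (the equation $\zeta_f a\circ q_f 1_{fa} = \zeta_f a$ is trivial). Both $\alpha$ and $1_a$ are lifts of $\beta$, so the uniqueness clause of $\zeta_f$-full faithfulness forces $\alpha = 1_a$, which is exactly the simplified form of faithfulness recorded earlier in the chapter.

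The core of the proof is the middle implication: $\varphi$-full faithful $\Rightarrow$ $\zeta_f$-full faithful (i.e.\ monomorphism). The strategy is to exploit the universal property of the cokernel $(\Coker f, q_f, \zeta_f)$ to factor $\varphi\col yf\Ra 0$. This gives $\bar y\col\Coker f\ra Y$ together with an isomorphism $\omega\col y\Ra \bar y\, q_f$ such that $\varphi$ is identified with $\bar y\zeta_f \circ \omega f$. I then show that compatibility with $\zeta_f$ entails compatibility with $\varphi$: starting from $\zeta_f a_2\circ q_f\beta = \zeta_f a_1$, I whisker by $\bar y$ to get $\bar y\zeta_f a_2 \circ \bar y q_f\beta = \bar y\zeta_f a_1$, and then use naturality of horizontal composition with $\omega$ (i.e.\ $\omega f a_2\circ y\beta = \bar y q_f\beta\circ \omega f a_1$) to rewrite both sides as $\varphi a_2\circ y\beta$ and $\varphi a_1$ respectively. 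Once this transport of compatibility is established, every $\beta$ compatible with $\zeta_f$ lifts uniquely because $f$ is $\varphi$-fully faithful, and $f$ is therefore a monomorphism.

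The main obstacle will be pedantic bookkeeping of horizontal-composition identities in the middle step. None of it is deep, but one has to be careful to use the right direction of the universal isomorphism and to keep track of which 2-arrows are whiskered on which side. After that diagram chase, the proposition is essentially assembled from formal manipulations.
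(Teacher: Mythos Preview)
Your proof is correct and follows the same approach as the paper: the first and last implications are treated as immediate (the paper simply calls them obvious, while you spell out the faithfulness step), and the middle implication uses the universal property of the cokernel to factor $\varphi$ as $\bar y\zeta_f\circ\omega f$ and then transports compatibility from $\zeta_f$ to $\varphi$. The diagram chase you describe for that transport is exactly what the paper leaves implicit in the phrase ``if $\beta$ is compatible with $\zeta_f$, it is also compatible with $\varphi$''.
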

	
		\begin{proof}
			The first and last implications are obvious.  It remains
			to prove that, if $f$ is $\varphi$-fully faithful, then $f$ is 
			$\zeta_f$-fully faithful.  By the universal property of the cokernel, 
			there exists an arrow $y'\col Qf\ra Y$ and a 2-arrow
			$\iota\col y\Ra y'q_f$ such that $y'\zeta_f\circ\iota f=\varphi$.
			Therefore, if $\beta \col fa_1\Ra fa_2$ is compatible with $\zeta_f$,
			it is also compatible with $\varphi$ and, as $f$ is
			$\varphi$-fully faithful, there exists a unique $\alpha$ such that
			$\beta=f\alpha$.\qedhere
			\begin{xym}\xymatrix@C=20pt@R=40pt{
				A\ar[rr]^-f\ar@/_0.78pc/[drrr]_0\ar@/^2pc/[rrrr]^0
				&{}\rrtwocell\omit\omit{^<-2.7>\varphi\,}
				&B\ar[rr]^-y\ar[dr]_-{q_f}\rrtwocell\omit\omit{_<4>\iota}
					\ar@{}[dll]|(0.32){\zeta_f\!\dir{=>}}
				&{}
				&Y
				\\ {}
				&&&{Qf}\ar[ur]_-{y'}	
			}\end{xym}
		\end{proof}

	In a $\Ensp$-category seen as a locally discrete $\Gpdp$-category, all 2-arrows
	are equal and so several of these notions merge: $f$ is fully faithful if and only if
	$f$ is a monomorphism (in the sense of the above definition), if and only if $f$ is
	a monomorphism (in the usual sense). Moreover, all arrows are faithful.  It is thus in general not
	true that all faithful arrows are monomorphisms.

	On the other hand, Lemma	 5.1 of \cite{Kasangian2000a} says precisely that the monomorphisms in $\CGS$ are the faithful arrows.  More generally, if in $\C$ all faithful
	arrows are canonically the kernel of their cokernel (i.e.\ if $\C$ is $\Ker$-exact, 
	in particular if $\C$ is 2-Puppe-exact), the monomorphisms are the faithful arrows.
	The interest of this notion of monomorphisms is thus that it is relevant both
	in dimension 1 and in dimension 2.
	
	We can also define a relative version of the notion of fully 0-faithful arrow.		
	\begin{df}\index{arrow!relative fully 0-faithful}%
	\index{relative fully 0-faithful arrow}
		Let $f$, $y$ and $\varphi$ be as in diagram \ref{diagwithphi}.
		We say that $f$ is \emph{$\varphi$-fully 0-faithful} if for every 2-arrow
		$\beta\col fa\Ra 0\col X\ra B$ compatible with $\varphi$, there exists a unique
		$\alpha\col a\Ra 0$ such that $f\alpha = \beta$.
		We say that $f$ is a \emph{0-monomorphism}%
		\index{0-monomorphism!in a Gpd*-category@in a $\Gpdp$-category}
		if $f$ is $\zeta_f$-fully 0-faithful.
	\end{df}
	
	If $f$ is $\varphi$-fully faithful, $f$ is also $\varphi$-fully
	0-faithful, and $f$ is $1_0$-fully 0-faithful if and only if $f$
	is fully 0-faithful.  Moreover, the implications corresponding to those of Proposition \ref{implicrelplfid} hold.

\subsection{Relative kernel}

	The notion of relative kernel have been introduced in \cite{Rio2005a} for symmetric 2-groups.
	The difference with the usual kernel is that we ask that $\kappa_f$ be
	compatible with a given 2-arrow $\varphi$.
	
	\begin{df}
		Let $f$, $y$ and $\varphi$ be as in the following diagram.
		\begin{xym}\label{diagrelker}\xymatrix@=40pt{
			{K}\ar[r]^-{k}
				\rrlowertwocell<-9>_0{_<2.7>\kappa}
			&A\ar[r]^f\rruppertwocell<9>^0{^<-2.7>\varphi\,}
			&B\ar[r]_y
			&Y
		}\end{xym}
		We call an object $K$, equipped with an arrow $k$ and a 2-arrow $\kappa$
		compatible with $\varphi$ a \emph{relative kernel}\index{kernel!relative}%
		\index{relative!kernel} of $f$, $y$, $\varphi$
		if the following conditions hold:
		\begin{enumerate}
			\item for all $X\col\C$, $a\col X\ra A$, $\beta\col fa\Ra 0$ such that
				$\beta$ is compatible with $\varphi$, there exist an arrow
				$a'\col X\ra K$ and a 2-arrow $\alpha\col a\Ra ka'$
				such that $\beta = \kappa a'\circ f\alpha$:
				\begin{xyml}\xymatrix@C=20pt{
					X\ar[rr]^a\ar@/_2pc/[rrrr]_0
					&{}\rrtwocell\omit\omit{_<2.7>\,\beta}
					&A\ar[rr]^f
					&{} &B
				}\;=\;\xymatrix@C=20pt@R=40pt{
					X\ar[rr]^a\ar[dr]_{a'}\rrtwocell\omit\omit{_<4>\alpha}
					&&A\ar[rr]^f\ar@{}[drr]|(0.32){\dir{=>}\!\kappa}
					&&B
					\\ &K\ar[ur]_k\ar@/_0.78pc/[urrr]_0 &&&{}
				};\end{xyml}
			\item $k$ is $\kappa$-fully faithful.		
		\end{enumerate}
		We write $(K,k,\kappa)=\Ker(f,\varphi)$ the fact that $(K,k,\kappa)$
		is a kernel of $f$ relative to $\varphi$ (we often omit the object $K$).
	\end{df}
	
	We can construct the kernel of $f$ relative to $\varphi$ from the kernel of $f$, by
	a root.  So $\C$ has all relative kernels, since it has all kernels (and so all roots,
	by the dual of Proposition \ref{coraccoker}).
	
	\begin{pon}
		Let be $A\overset{f}\ra B$ and $\varphi\col yf\Ra 0$ in $\C$.
		Let us denote by $\pi$ the composite $y\kappa_f\circ \varphi^{-1}k_f$ and $r\col\Root\pi\ra Kf$
		the root of $\pi$. Then
		\begin{eqn}
			(\Root \pi,k_fr,\kappa_fr)=\Ker(f,\varphi).
		\end{eqn}
		We denote by $(\Ker(f,\varphi),k_{f,\varphi},\kappa_{f,\varphi})$ this particular
		construction of the relative kernel.
		\begin{xym}\xymatrix@=40pt{
			{\Root \pi}\ar[r]^-r
			&{Kf}\ar[r]^-{k_f}
				\rrlowertwocell<-9>_0{_<2.7>\;\;\;\kappa_f}
			&A\ar[r]^f\rruppertwocell<9>^0{^<-2.7>\varphi\,}
			&B\ar[r]_y
			&Y
		}\end{xym}
	\end{pon}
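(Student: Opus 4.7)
The plan is to verify directly that the triple $(\Root\pi, k_f r, \kappa_f r)$ meets the three requirements for a relative kernel of $(f,\varphi)$: that $\kappa_f r$ is compatible with $\varphi$, the existence part of the universal property, and the $\kappa_f r$-fully-faithfulness of $k_f r$.

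Compatibility of $\kappa_f r$ with $\varphi$ is immediate. By the defining property of the root, $\pi r = 1_0$, i.e.\ $y\kappa_f r \circ \varphi^{-1} k_f r = 1_0$, which after rearrangement is $y(\kappa_f r) = \varphi(k_f r)$; this is exactly the compatibility condition of diagram \ref{diagcompati}.

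For the existence part, fix $a\col X\ra A$ and $\beta\col fa\Ra 0$ compatible with $\varphi$. First I would invoke the universal property of $\Ker f$ to obtain $a''\col X\ra Kf$ and $\alpha''\col a\Ra k_f a''$ with $\beta = \kappa_f a''\circ f\alpha''$. The crux is showing $\pi a'' = 1_0$, which is what lets the root's universal property apply. For this, interchange applied to $\varphi$ and $\alpha''$ gives $\varphi a = \varphi(k_f a'')\circ yf\alpha''$; combined with the compatibility $y\beta = \varphi a$ and the computation $y\beta = y\kappa_f a'' \circ yf\alpha''$, one cancels the invertible $yf\alpha''$ to get $y\kappa_f a'' = \varphi k_f a''$, which is exactly $\pi a'' = 1_0$. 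The universal property of the root then supplies $a'\col X\ra \Root\pi$ and $\alpha'\col a''\Ra ra'$; setting $\alpha \eqdef k_f\alpha'\circ\alpha''\col a\Ra k_f r a'$, a final interchange computation ($\kappa_f r a'\circ fk_f\alpha' = \kappa_f a''$ by naturality of $\kappa_f$) yields $\kappa_f r a'\circ f\alpha = \kappa_f a''\circ f\alpha'' = \beta$, as required.

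For the $\kappa_f r$-fully-faithfulness of $k_f r$, let $a_1,a_2\col X\ra\Root\pi$ and $\beta\col k_f r a_1 \Ra k_f r a_2$ be compatible with $\kappa_f r$. The observation is that compatibility with $\kappa_f r$ is literally the same equation as compatibility of $\beta$, viewed as a 2-arrow $k_f(ra_1)\Ra k_f(ra_2)$, with $\kappa_f$. Hence by the $\kappa_f$-fully-faithfulness of $k_f$ (part of the universal property of $\Ker f$) there is a unique $\alpha'\col ra_1\Ra ra_2$ with $k_f\alpha'=\beta$; then by fully faithfulness of the root $r$ (Definition \ref{defroot}), a unique $\alpha\col a_1\Ra a_2$ with $r\alpha = \alpha'$, giving $k_f r\alpha = \beta$. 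Uniqueness of $\alpha$ follows from faithfulness of $k_f r$ (composite of two faithful arrows).

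The only genuinely delicate step is the interchange-law manipulation that produces $\pi a'' = 1_0$; once that is in place, everything else is a routine application of the universal properties of $\Ker f$ and of the root together with faithfulness cancellations.
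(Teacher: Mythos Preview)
Your proof is correct and follows the same idea as the paper's proof, only spelled out in full detail. The paper's argument is a single sentence: ``To give $X\overset{a}\ra A$ and $\beta\col fa\Ra 0$ compatible with $\varphi$ amounts to give $X\overset{a'}\ra Kf$ such that $\pi a'=1_0$''; your existence and fully-faithfulness paragraphs are precisely the verification of this correspondence, with the interchange computation producing $\pi a''=1_0$ being the heart of the forward direction.
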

	
		\begin{proof}
			To give $X\overset{a}\ra A$ and $\beta\col fa\Ra 0$ compatible with $\varphi$
			amounts to give $X\overset{a'}\ra Kf$ such that
			$\pi a'=1_0$.
		\end{proof}

	We recover as a special case of the relative kernel the usual kernel, the root and the pip; these are the cases where, respectively, $Y$, $B$ and $A$ are $0$ in diagram
	\ref{diagwithphi}.
	\begin{pon}\label{caspartickerrel}
		Let be $A\overset{f}\ra B$ in $\C$.
		\begin{enumerate}
			\item In the situation of the following diagram, $(K,k,\kappa)=\Ker f$
				if and only if $(K,k,\kappa)=\Ker(f,1_{0^A})$.
				\begin{xym}\xymatrix@=40pt{
					{K}\ar[r]^-{k}
						\rrlowertwocell<-9>_0{_<2.7>\;\;\;\kappa}
					&A\ar[r]^f\rruppertwocell<9>^0{\omit}
					&B\ar[r]_0
					&0
				}\end{xym}
			\item In the situation of the following diagram, $(P,\pi)=\Pip f$
				if and only if $(P,0^P,\pi) =\Ker(0_A,1_{0_B})$.
				\begin{xym}\xymatrix@=40pt{
					{P}\ar[r]
						\rrlowertwocell<-9>_0{_<2.7>\;\;\;\pi}
					&0\ar[r]\rruppertwocell<9>^0{\omit}
					&A\ar[r]_f
					&B
				}\end{xym}
		\end{enumerate}
		Let be $\pi\col 0\Ra 0\col A\ra B$.
		\begin{enumerate}
			\item[3.] In the situation of the following diagram, 
				$(R,r)=\Root \pi$ if and only if $(R,r,1_{0^R})=\Ker(0^A,\pi)$.
				\begin{xym}\xymatrix@=40pt{
					{R}\ar[r]^-{r}
						\rrlowertwocell<-9>_0{\omit}
					&A\ar[r]\rruppertwocell<9>^0{^<-2.7>\pi\,}
					&0\ar[r]_0
					&B
				}\end{xym}
		\end{enumerate}
	\end{pon}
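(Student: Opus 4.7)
The plan is to unravel the definition of relative kernel in each of the three specializations and check that the data and axioms coincide, term by term, with those of the ordinary kernel, pip, and root. All three parts are straightforward applications of the convention (available since $\C$ is strictly described) that composition with $0$ gives $0$, that $\C(X,0)$ and $\C(0,Y)$ are equivalent to the terminal groupoid, and that consequently every 2-arrow whose codomain or domain is a zero-object composite is uniquely determined.

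For part 1, set $Y\eqdef 0$ and $y\eqdef 0^B$. Then the fixed 2-arrow $\varphi$ has the form $1_{0^A}\col 0\Ra 0\col A\ra 0$. By definition, a 2-arrow $\beta\col fa\Ra 0$ is compatible with $1_{0^A}$ precisely when the identity $1_0=1_0$ holds, which is automatic. Hence the compatibility conditions in both clauses of the definition of relative kernel become vacuous, and the two clauses read exactly as the two clauses of Definition~\ref{defkernel} for the kernel of $f$. So $(K,k,\kappa)=\Ker f$ iff $(K,k,\kappa)=\Ker(f,1_{0^A})$.

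For part 2, take the arrow called $f$ in the definition of relative kernel to be $0_A\col 0\ra A$, and take the 2-arrow $\varphi$ to be $1_{0_B}\col f\circ 0_A\Ra 0$. An object $k\col K\ra 0$ is uniquely $0^K$, and $\kappa\col 0_A\circ 0^K\Ra 0^K_A$ is a loop $0\Ra 0\col K\ra A$, which we denote $\pi$. The compatibility of $\kappa=\pi$ with $1_{0_B}$ reduces to $f\pi=1_0$, i.e.\ the defining equation of a pip. The first clause of the universal property becomes: for every $X\col\C$ and every loop $\beta\col 0\Ra 0\col X\ra A$ with $f\beta=1_0$, there exists $a'\col X\ra K$ with $\pi a'=\beta$ (the 2-arrow $\alpha$ is trivial because the codomain is~$0$). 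This is exactly the first clause of the definition of pip. The second clause, $\kappa$-full faithfulness of $0^K$, reduces (all 2-arrows into $0$ being trivial) to: for $a_1,a_2\col X\ra K$ with $\pi a_1=\pi a_2$, there is a unique $\alpha\col a_1\Ra a_2$; this is exactly the condition that $\pi$ be a monoloop.

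For part 3, take the relative-kernel arrow $f$ to be $0^A\col A\ra 0$ and $y$ to be $0_B\col 0\ra B$, with fixed 2-arrow $\pi\col 0\Ra 0\col A\ra B$. Then $\kappa\col 0^A r\Ra 0^R$ is forced to equal $1_{0^R}$. The compatibility condition on $1_{0^R}$ with $\pi$ unwinds to $\pi r=1_0$, which is the first defining equation of a root. In the universal property, any $\beta\col 0^A a\Ra 0$ is trivial, and its compatibility with $\pi$ says exactly $\pi a=1_0$, so the first clause becomes: for every $a\col X\ra A$ with $\pi a=1_0$ there exist $a'\col X\ra R$ and $\alpha\col a\Ra ra'$, which is the first clause of the definition of root. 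Finally, $\kappa$-full faithfulness of $r$ with $\kappa=1$ is plain full faithfulness of $r$, the second clause of the definition of root.

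Thus no genuine obstacle appears; the only care required is systematic use of the strictification conventions so that each piece of data in the relative kernel definition collapses, in the specified specialization, to exactly the corresponding piece of data in the target notion.
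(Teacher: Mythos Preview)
Your proof is correct and is precisely the argument the paper has in mind: the proposition is stated without proof because it is exactly this routine unraveling of the relative-kernel definition in the three degenerate cases, using the strictness conventions on zero arrows.
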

	
	The kernel classifies the fully 0-faithful arrows; the relative kernel
	classifies the relative fully 0-faithful arrows.
	
	\begin{pon}\label{clasproprelker}
		In the situation of diagram \ref{diagrelker} the following conditions are equivalent:
		\begin{enumerate}
			\item $f$ is $\varphi$-fully 0-faithful%
				\index{arrow!relative fully 0-faithful!characterisation};
			\item if $(K,k,\kappa)=\Ker(f,\varphi)$, then there exists
				$\kappa'\col k\Ra 0$ such that $f\kappa'=\kappa$;
			\item $(0,0_A,1_{0_B})=\Ker(f,\varphi)$.
		\end{enumerate}
	\end{pon}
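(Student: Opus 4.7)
The plan is to prove the implications $1 \Rightarrow 3 \Rightarrow 2 \Rightarrow 1$. The guiding intuition is that each of the two universal properties of the relative kernel (existence of a factorisation, and $\kappa$-fully faithfulness of $k$) corresponds exactly to one of the two clauses (existence and uniqueness) in the definition of $\varphi$-fully 0-faithfulness, with condition 3 making this correspondence transparent by specialising to the trivial candidate $(0,0_A,1_{0_B})$.

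For $1 \Rightarrow 3$, I would verify both axioms of relative kernel for $(0,0_A,1_{0_B})$. The existence clause is immediate: given $a\col X\ra A$ and $\beta\col fa\Ra 0$ compatible with $\varphi$, the hypothesis supplies $\alpha\col a\Ra 0$ with $f\alpha=\beta$, and one takes $a'\eqdef 0\col X\ra 0$ together with $\alpha$, using strict pointedness to reduce $1_{0_B}a'\circ f\alpha$ to $f\alpha$. The second clause ($0_A$ is $1_{0_B}$-fully faithful) unfolds, after noting that every arrow $X\ra 0$ equals $0$, to the statement that any loop $\beta\col 0\Ra 0\col X\ra A$ with $f\beta=1_0$ (the compatibility condition in this case) must be $1_0$; this follows by applying the uniqueness part of $\varphi$-fully 0-faithfulness at $a\eqdef 0$ and $\beta'\eqdef 1_0$, for which both $\beta$ and $1_0$ are valid lifts. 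For $3 \Rightarrow 2$, one feeds the data $(k,\kappa)$ of an arbitrary relative kernel into the universal property of $(0,0_A,1_{0_B})$: since $\kappa$ is by definition compatible with $\varphi$, condition 1 of the relative kernel $(0,0_A,1_{0_B})$ produces an arrow $K\ra 0$ (necessarily $0$) and a 2-arrow $\kappa'\col k\Ra 0_A\cdot 0=0$ with $\kappa=1_{0_B}\cdot 0\circ f\kappa'=f\kappa'$.

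For $2 \Rightarrow 1$, fix a relative kernel $(K,k,\kappa)$ and $\kappa'\col k\Ra 0$ with $f\kappa'=\kappa$. Given $\beta\col fa\Ra 0$ compatible with $\varphi$, condition 1 of the relative kernel provides $a'\col X\ra K$ and $\alpha_0\col a\Ra ka'$ with $\beta=\kappa a'\circ f\alpha_0$; then $\alpha\eqdef \kappa' a'\circ\alpha_0\col a\Ra 0$ satisfies $f\alpha=f\kappa' a'\circ f\alpha_0=\kappa a'\circ f\alpha_0=\beta$, settling existence. For uniqueness, given $\alpha_1,\alpha_2\col a\Ra 0$ with $f\alpha_i=\beta$, form the loop $\gamma\eqdef\alpha_2\circ\alpha_1^{-1}\col 0\Ra 0\col X\ra A$, for which $f\gamma=\beta\circ\beta^{-1}=1_0$. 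Condition 2 of the relative kernel then applies to $\gamma$ (compatibility with $\kappa$ amounts to $f\gamma=1_0$), yielding $\delta\col 0\Ra 0\col X\ra K$ with $k\delta=\gamma$. The interchange law for the horizontal composite $\kappa'\star\delta\col k\cdot 0\Ra 0\cdot 0$, combined with the strict pointedness identities $\kappa'\cdot 0=1_0$ and $0\cdot\delta=1_0$, forces $\gamma=1_0$, hence $\alpha_1=\alpha_2$.

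The main obstacle is the uniqueness half of $2 \Rightarrow 1$: proving that a loop on $0$ whose image under $f$ is trivial must itself be trivial requires using both the $\kappa$-fully faithfulness built into the relative kernel \emph{and} the existence of $\kappa'$, and the bridge between them is provided by the interchange law under the assumption of strictly pointed composition. If the $\Gpdp$-category were not assumed strictly described, one would need to reintroduce the comparison 2-cells $\varphi_0$ and $\psi^0$ and track them through the interchange argument, which would be tedious but straightforward.
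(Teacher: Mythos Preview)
Your proof is correct and follows the same cycle $1\Rightarrow 3\Rightarrow 2\Rightarrow 1$ as the paper, with essentially identical arguments for $1\Rightarrow 3$ and $3\Rightarrow 2$. (One small imprecision: arrows $X\ra 0$ are not literally all equal to $0$, only uniquely isomorphic; what you actually use is $0_A a_i=0$ and $0_A\chi=1_0$, both immediate from strict pointedness, and the paper handles this in the same way.)

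The genuine difference is in the uniqueness half of $2\Rightarrow 1$. The paper does not argue uniqueness directly: it invokes Proposition~\ref{simpldefplzfidsigm}, whose point is that once $\Sigma$'s exist (which they do here since $\C$ is assumed to have cokernels), existence of lifts already forces $0$-faithfulness, so uniqueness comes for free. Your argument instead extracts uniqueness from the data at hand: you lift the obstructing loop $\gamma$ through $k$ using the $\kappa$-fully faithfulness of the relative kernel, and then kill the lift $\delta$ via the interchange law applied to $\kappa'*\delta$, exploiting $\kappa'0=1_0$ and $0\delta=1_0$. This is a neat and self-contained alternative: it avoids appealing to the existence of $\Sigma$ and makes transparent that the hypothesis $f\kappa'=\kappa$ of condition~2 is precisely what is needed to trivialise such loops. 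The paper's route is shorter in context because the lemma is already available; yours has the advantage of working under weaker hypotheses on $\C$ (only the relative kernel is needed, not cokernels).
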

	
		\begin{proof}
			{\it 1 $\Rightarrow$ 3. }
			For the first part of the universal property of the kernel,
			let be $X\col\C$, $a\col X\ra A$ and $\beta\col fa\Ra 0$ compatible
			with $\varphi$. As $f$ is $\varphi$-fully 0-faithful, there exists
			a unique $\alpha\col a\Ra 0$ such that $\beta=f\alpha$.  We have thus
			$0^X\col X\ra 0$ and $\alpha\col a\Ra 0\equiv 0_A0^X$ such that
			$1_{0_B}0^X\circ f\alpha = \beta$.
			
			It remains to prove that $0_A$ is $1_{0_B}$-fully faithful.
			Let be $x_1,x_2\col X\ra 0$ and $\alpha\col 0_A x_1\Ra 0_Ax_2
			\col X\ra A$ compatible with $1_{0_B}$, i.e.\ such that $f\alpha=1_0$.
			As $f$ is 0-faithful, $\alpha=1_0$.
			So, if $\chi$ is the unique 2-arrow $x_1\Ra x_2$ ($0$ is 
			terminal), $\alpha = 0_A\chi$.
			
			{\it 3 $\Rightarrow$ 2. }
			By the universal property of the kernel, there exist $k'\col K\ra 0$ and
			$\kappa'\col k\Ra 0_Ak'$ such that $\kappa = f\kappa'$.
			
			{\it 2 $\Rightarrow$ 1. }
			We use Proposition \ref{simpldefplzfidsigm}.
			Let be $a\col X\ra A$ and $\beta\col fa\Ra 0$ compatible with $\varphi$.
			By the universal property of the kernel, there exist $a'\col X\ra K$ and
			$\alpha\col a\Ra ka'$ such that $\kappa a'\circ f\alpha=\beta$.
			So $\beta=f(\kappa'a'\circ\alpha)$.
		\end{proof}
	
	This proposition contains as special cases the already known facts that the ordinary
	kernel classifies the fully 0-faithful arrows and that the pip classifies the 0-faithful
	arrows; another special case is that the root classifies the 0-monoloops (the loops $\pi\col 0\Ra 0\col A\ra B$
	such that, if $\pi a=1_0$, then there is a unique 2-arrow $a\Ra 0$):
	\begin{enumerate}
		\item $f$ is fully 0-faithful $\Leftrightarrow$ $f$ is $1_{0^A}$-fully 0-faithful
			$\Leftrightarrow$ $0=\Ker(f,1_{0^A})$ $\Leftrightarrow$ $0=\Ker f$;
		\item $f$ is 0-faithful $\Leftrightarrow$ $0_A$ is $1_{0_B}$-fully 0-faithful
			$\Leftrightarrow$ $0=\Ker(0_A,1_{0_B})$ $\Leftrightarrow$ $0=\Pip f$;
		\item $\pi$ is a 0-monoloop $\Leftrightarrow$ $0^A$ is $\pi$-fully
			0-faithful $\Leftrightarrow$ $0=\Ker(0^A,\pi)$ $\Leftrightarrow$ $0=\Root \pi$.
	\end{enumerate}
	
	In dimension 1, the kernel of the kernel of an arrow $f$ is always $0$.  As
	Lemma \ref{kerkeromegcodom} shows, this is not true any more in dimension 2:
	the kernel of the kernel of $f\col A\ra B$ is $\Omega B$.  But if we take the kernel
	of the kernel relative to the canonical 2-arrow of the kernel, we get $0$.
	
	\begin{coro}\label{kerkerzer}
		If, in the situation of the following diagram, $(K,k,\kappa)=\Ker (f,\varphi)$,
		then $(0,0_K,1_{0_A})=\Ker(k,\kappa)$.
		\begin{xym}\label{diagkerkerzer}\xymatrix@=40pt{
			{0}\ar[r]\rruppertwocell<9>^0{\omit}
			&{K}\ar[r]_-{k}
				\rrlowertwocell<-9>_0{_<2.7>\kappa}
			&A\ar[r]^f\rruppertwocell<9>^0{^<-2.7>\varphi\,}
			&B\ar[r]_y
			&Y
		}\end{xym}
	\end{coro}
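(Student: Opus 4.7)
The plan is to apply Proposition \ref{clasproprelker} in reverse: to establish that $(0,0_K,1_{0_A})=\Ker(k,\kappa)$, it suffices to show that $k$ itself is $\kappa$-fully 0-faithful. Indeed, that proposition, applied with $f$ replaced by $k$, $B$ replaced by $A$, and $\varphi$ replaced by $\kappa$, gives precisely the equivalence between these two statements (note that the condition $1_{0_A}$ being compatible with $\kappa$ — as needed to make the triple $(0,0_K,1_{0_A})$ a candidate relative kernel — is automatic since $0_A \circ 0_K = 0$).

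Now the key observation is that $k$ is $\kappa$-fully 0-faithful essentially for free. By the second clause of the definition of a relative kernel, the arrow $k$ is $\kappa$-fully faithful, meaning that for all $a_1,a_2 \col X\ra K$ and every 2-arrow $\beta\col ka_1\Ra ka_2$ compatible with $\kappa$, there exists a unique $\alpha\col a_1\Ra a_2$ with $k\alpha=\beta$. Specialising to $a_2 = 0$, this is exactly the condition that $k$ be $\kappa$-fully 0-faithful (the analogue here of the first implication in Proposition \ref{implicrelplfid}, applied to the fully 0-faithful variants of the notions).

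Combining these two observations yields $(0,0_K,1_{0_A})=\Ker(k,\kappa)$, as required. There is no real obstacle: the corollary is a direct consequence of the classification of fully 0-faithful arrows by relative kernels, together with the automatic $\kappa$-fully faithfulness of the relative kernel arrow $k$.
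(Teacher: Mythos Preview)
Your proof is correct and follows exactly the same approach as the paper: apply Proposition \ref{clasproprelker} (the equivalence $1 \Leftrightarrow 3$) with $k$ in place of $f$ and $\kappa$ in place of $\varphi$, using that $k$ is $\kappa$-fully faithful by the definition of relative kernel, hence $\kappa$-fully 0-faithful. The paper's proof is a single sentence to this effect; you have simply spelled out the implicit steps.
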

	
		\begin{proof}
			This is a direct application of the previous proposition, since
			$k$ is $\kappa$-fully faithful.
		\end{proof}
	
\subsection{Relative pullback}

	The relative pullback is a generalisation of the pullback, which we recover by
	taking $0$ for $Y$ in the following definition.

	\begin{df}
		Let us consider the solid part of the following diagram.
		\begin{xym}\xymatrix@=20pt{
			P\ar@{-->}[rr]^{p_1}\ar@{-->}[dd]_{p_2}\ddrrtwocell\omit\omit{_{\pi}}
			&&A\ar[dd]^f\ar@/^1pc/[dddr]^0
			\\ &&&{}
			\\ B\ar[rr]_g\ar@/_1pc/[drrr]_0
			&&C\ar[dr]^y\ar@{}[ur]|{\varphi\dir{=>}}\ar@{}[dl]|{\psi\!\dir{=>}}
			\\ &{} &&Y
		}\end{xym}
		The dashed part is a \emph{pullback 
		of $f$ and $g$ relative to $\varphi$ and $\psi$}\index{relative!pullback}%
		\index{pullback!relative}
		if the composite of the diagram is $1_0$\footnote{i.e.\ if
		$\psi p_2\circ y\pi\circ\varphi^{-1} p_1=1_0$; we say then that
		$\pi$ is \emph{compatible with $\varphi$ and $\psi$}%
		\index{compatibility of 2-arrows}}
		and if
		\begin{enumerate}
			\item for all $X\col\C$, $a\col X\ra A$, $b\col X\ra B$, and 
				$\gamma\col fa\Ra gb$ compatible with $\varphi$ and $\psi$,
				there exist $(a,\gamma,b)\col X\ra P$, $\pi_1\col a\Ra p_1(a,\gamma,b)$
				and $\pi_2\col b\Ra p_2(a,\gamma,b)$ such that
				\begin{xyml}\begin{gathered}\xymatrix@=20pt{
					X\ar[dr]|{(a,\gamma,b)}\ar@/^1pc/[drrr]^a\ar@/_1pc/[dddr]_b 
					&&{}\ar@{}[dl]|(0.6){\pi_1\!\dir{=>}}
					\\ &P\ar[rr]^{p_1}\ar[dd]_{p_2}\ddrrtwocell\omit\omit{_{\pi}}
						\ar@{}[dl]|{\pi_2^{-1}\!\!\!\dir{=>}}
					&&A\ar[dd]^f
					\\ {}
					\\ &B\ar[rr]_g
					&&C
				}\end{gathered}\;\;=\;\;\gamma;
				\end{xyml}
			\item condition 2 of Definition \ref{defprodfib} hold.
		\end{enumerate}
	\end{df}
	
	The pullback-candidates of $f$ and $g$ relative to $\varphi$ and $\psi$
	form a $\Gpd$-category $\mathrm{PBCand}(f,g;\varphi,\psi)$ (if $Y$ is $0$,
	we write simply $\mathrm{PBCand}(f,g)$):\index{PBCand@$\mathrm{PBCand}$}
	\begin{itemize}
		\item {\it objects:} an object consists of $X\col\C$, $a\col X\ra A$,
			$b\col X\ra B$ and $\gamma\col fa\Ra gb$ compatible with $\varphi$ and $\psi$;
		\item {\it arrows:} an arrow $(X,a,b,\gamma)\ra(X',a',b',\gamma')$
			consists of $x\col X\ra X'$, $\zeta\col a\Ra a'x$ and
			$\xi\col b\Ra b'x$
			such that $g\xi^{-1}\circ\gamma'x\circ f\zeta=\gamma$;
			the composition and identities are defined in an obvious way;
		\item {\it 2-arrows:} a 2-arrow $(x,\zeta,\xi)\Ra(x',\zeta',\xi')$
			consists of $\chi\col x\Ra x'$ such that $\zeta'= a'\chi \circ \zeta$
			and $\xi'= b'\chi\circ \xi$; the compositions and identities
			are defined as in $\C$.
	\end{itemize}

	\begin{pon}
		A pullback of $f$ and $g$ relative to $\varphi$ and $\psi$ is
		a terminal object in $\mathrm{PBCand}(f,g;\varphi,\psi)$.
	\end{pon}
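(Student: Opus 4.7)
The plan is to verify directly that the two clauses in the definition of relative pullback correspond exactly to the two properties characterizing a terminal object in a $\Gpd$-category: existence of an arrow from every object, and existence of a unique 2-arrow between any two such arrows. So the proof is essentially an unfolding of definitions, using the description of arrows and 2-arrows in $\mathrm{PBCand}(f,g;\varphi,\psi)$ given just before the statement.

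First I would handle existence. Given any object $(X,a,b,\gamma)\col\mathrm{PBCand}(f,g;\varphi,\psi)$, clause 1 of the definition of relative pullback yields an arrow $(a,\gamma,b)\col X\ra P$ together with 2-arrows $\pi_1\col a\Ra p_1(a,\gamma,b)$ and $\pi_2\col b\Ra p_2(a,\gamma,b)$ whose pasting with $\pi$ reconstructs $\gamma$. This pasting equation is precisely the condition $g\pi_2^{-1}\circ\pi(a,\gamma,b)\circ f\pi_1=\gamma$ required for $((a,\gamma,b),\pi_1,\pi_2)$ to be an arrow $(X,a,b,\gamma)\ra(P,p_1,p_2,\pi)$ in $\mathrm{PBCand}(f,g;\varphi,\psi)$.

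Next I would treat unicity at the level of 2-arrows. Given two arrows $(x,\zeta,\xi)$ and $(x',\zeta',\xi')$ from $(X,a,b,\gamma)$ to $(P,p_1,p_2,\pi)$, set $\chi_1\eqdef\zeta'\circ\zeta^{-1}\col p_1x\Ra p_1x'$ and $\chi_2\eqdef\xi'\circ\xi^{-1}\col p_2x\Ra p_2x'$. The equations $g\xi^{-1}\circ\pi x\circ f\zeta=\gamma=g\xi'^{-1}\circ\pi x'\circ f\zeta'$ coming from the two arrows combine to give the compatibility $f\chi_1\circ\pi x=\pi x'\circ g\chi_2$ which is exactly the hypothesis of clause 2 of Definition \ref{defprodfib}. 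Therefore there exists a unique $\chi\col x\Ra x'$ with $p_1\chi=\chi_1$ and $p_2\chi=\chi_2$, which translates back to $\zeta'=p_1\chi\circ\zeta$ and $\xi'=p_2\chi\circ\xi$, i.e.\ to a (necessarily unique) 2-arrow $(x,\zeta,\xi)\Ra(x',\zeta',\xi')$ in $\mathrm{PBCand}(f,g;\varphi,\psi)$.

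There is no real obstacle; the only care needed is bookkeeping of the 2-cells and checking that the compatibility of $\gamma$ with $\varphi,\psi$ is automatic throughout (it is, because both $(X,a,b,\gamma)$ and $(P,p_1,p_2,\pi)$ are objects of $\mathrm{PBCand}(f,g;\varphi,\psi)$ to begin with, so the compatibility in the extended diagram involving $y$ is already built in). For the converse direction, showing that a terminal object in $\mathrm{PBCand}(f,g;\varphi,\psi)$ satisfies clauses 1 and 2 of the definition of relative pullback, one simply reads the above arguments in reverse: an arrow into the terminal object supplies the factorization $(a,\gamma,b)$ together with $\pi_1,\pi_2$, and the uniqueness of 2-arrows into the terminal object gives the lift of compatible pairs $(\chi_1,\chi_2)$ to a unique $\chi$.
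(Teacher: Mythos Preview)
Your argument is correct and is precisely the expected unfolding of definitions; the paper states this proposition without proof, treating it as an immediate reformulation. One small bookkeeping slip: the compatibility you derive should read $\pi x'\circ f\chi_1 = g\chi_2\circ\pi x$ (your version $f\chi_1\circ\pi x=\pi x'\circ g\chi_2$ does not typecheck since $\pi\col fp_1\Ra gp_2$), but the derivation from the two arrow equations is otherwise fine.
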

	
	The relative kernel can be expressed in terms of relative pullback,
	in the same way as the kernel is expressed as a pullback.
	
	\begin{pon}
		Let us consider the situation of the following diagram.  Then $(K,k,\kappa)=\Ker(f,\varphi)$
		if and only if $\kappa$ is a pullback relative to $\varphi$
		and $1_0$.
		\begin{xym}\xymatrix@=20pt{
			K\ar[rr]^{k}\ar[dd]_{0}\ddrrtwocell\omit\omit{_{\kappa}}
			&&A\ar[dd]^f\ar@/^1pc/[dddr]^0
			\\ &&&{}
			\\ 0\ar[rr]_0\ar@/_1pc/[drrr]_0
			&&B\ar[dr]^y\ar@{}[ur]|{\varphi\dir{=>}}\ar@{}[dl]|{1_0\!\dir{=>}}
			\\ &{} &&Y
		}\end{xym}
	\end{pon}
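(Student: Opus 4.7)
The plan is to show that the two universal properties match term by term, exploiting that when the bottom-left corner is the zero object and $\psi = 1_0$, the relative pullback data degenerate to relative kernel data.

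First, I would observe that in the diagram of the proposition, the arrow $p_2\col K\ra 0$ is forced to be (canonically isomorphic to) $0$, and likewise any test arrow $b\col X\ra 0$ is $0$. Hence the 2-arrow $\pi\col fp_1\Ra gp_2$ reduces to $\kappa\col fk\Ra 0$. The pullback compatibility equation $\psi p_2\circ y\pi\circ\varphi^{-1}p_1 = 1_0$ then becomes, using $\psi = 1_0$ and $p_2 = 0$, the equation $y\kappa\circ\varphi^{-1}k = 1_0$, i.e.\ $y\kappa = \varphi k$; by the definition of compatibility (diagram \ref{diagcompati}, with $a_2 = 0$), this is exactly the assertion that $\kappa$ is compatible with $\varphi$.

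Next I would match the first universal properties. Given test data $X$, $a\col X\ra A$, $b\col X\ra 0$ and $\gamma\col fa\Ra gb$ compatible with $\varphi$ and $1_0$, necessarily $b = 0$ and $gb = 0$, so $\gamma$ is a 2-arrow $fa\Ra 0$ whose compatibility reduces to compatibility with $\varphi$. The pullback factorisation yields $(a,\gamma,b)\col X\ra K$ with $\pi_1\col a\Ra k(a,\gamma,b)$ and $\pi_2\col b\Ra p_2(a,\gamma,b)$ (forced to be the unique 2-arrow into $0$); writing $a'\eqdef (a,\gamma,b)$ and $\alpha\eqdef \pi_1$, the pasting equation becomes $\kappa a'\circ f\alpha = \gamma$, which is exactly condition 1 of the relative kernel. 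Conversely, a relative kernel factorisation, completed by $p_2 = 0$ and the forced $\pi_2$, reproduces the pullback factorisation.

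For the second universal property, I would take $x,x'\col X\ra K$ and a 2-arrow $\chi_1\col kx\Ra kx'$, setting $\chi_2\col 0x\Ra 0x'$ to be the unique such 2-arrow (since $0$ is a zero object). The pasting compatibility required by the pullback then unpacks to the compatibility of $\chi_1$ with $\kappa$, and the existence and uniqueness of $\chi\col x\Ra x'$ with $k\chi = \chi_1$ (the relation $0\chi = \chi_2$ being automatic) is exactly the statement that $k$ is $\kappa$-fully faithful. The only bookkeeping obstacle is carefully verifying that the parts of the pullback data and axioms that live over $0$ are forced by the zero-object property and therefore carry no information; once that is dispatched, both implications are immediate translations.
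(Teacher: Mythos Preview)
Your proposal is correct and is precisely the direct verification one expects: the paper states this proposition without proof, treating it as an immediate consequence of the definitions, and your term-by-term matching of the two universal properties (using that everything living over the zero object is forced) is exactly how one fills in that omission.
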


	Let us prove now the cube lemma, which generalises the cancellation property of
	pullbacks and from which follow all the diagram lemmas of the following section.
	
	\begin{pon}[Cube lemma]\label{lemmcub}\index{lemma!cube}\index{cube lemma}
		Let us consider the following diagram.
		\begin{xym}\xymatrix@=20pt{
			&A_1\ar[rr]^e\ar[dl]_{k_1}\ar'[d][dd]^(-0.4){l_1}
			&&A_2\ar[dd]^{l_2}\ar[dl]^{k_2}
			\\ B_1\ar[rr]_(0.3)f\ar[dd]_{m_1}
			&&B_2\ar[dd]^(0.3){m_2}
			\\ &C_1\ar'[r][rr]^(-0.4)g\ar[dl]_(0.4){n_1}
			&&C_2\ar[dl]_{n_2}\ar@/^/[dd]^0\ddtwocell\omit\omit{^<2>\varphi}
			\\ D_1\ar[rr]_h\ar@/_1pc/[drrr]_0
			&&D_2\ar[dr]_y\ar@{}[dl]|{\psi\dir{=>}}
			\\ &&&Y
		}\end{xym}
		The six faces of the cube contain each a 2-arrow, and these 2-arrows satisfy the
		equation
		\begin{xyml}\begin{gathered}\xymatrix@=20pt{
			&A_1\ar[rr]^e\ar[dl]_{k_1}\ar[dd]^{l_1}\ddrrtwocell\omit\omit{\lambda}
			&&A_2\ar[dd]^{l_2}
			\\ B_1\ar[dd]_{m_1}\dtwocell\omit\omit{_<-3.5>\;\;\;\;\;\;\omega_1^{-1}}
			\\ &C_1\ar[rr]^g\ar[dl]^(0.4){n_1}\drtwocell\omit\omit{\nu}
			&&C_2\ar[dl]^{n_2}
			\\ D_1\ar[rr]_h
			&&D_2
		}\end{gathered}\;\;=\;\;\begin{gathered}\xymatrix@=20pt{
			&A_1\ar[rr]^e\ar[dl]_{k_1}\drtwocell\omit\omit{\kappa}
			&&A_2\ar[dd]^{l_2}\ar[dl]_{k_2}
			\\ B_1\ar[rr]_f\ar[dd]_{m_1}\ddrrtwocell\omit\omit{\mu}
			&&B_2\ar[dd]_{m_2}\dtwocell\omit\omit{_<-3.5>\;\;\;\;\;\;\omega_2^{-1}}
			\\ &&{}&C_2\ar[dl]^{n_2}
			\\ D_1\ar[rr]_h
			&&D_2
		}\end{gathered}.\end{xyml}
		If $\nu$ is a pullback relative to $\varphi$ and $\psi$
		and if $\mu$ is a pullback, then the following conditions are equivalent:
		\begin{enumerate}
			\item $\kappa$ is a pullback relative to 
				$\varphi l_2\circ y\omega_2$ and $\psi m_1\circ y\mu$;
			\item $\lambda$ is a pullback.
		\end{enumerate}
	\end{pon}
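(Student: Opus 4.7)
My plan is the 2-dimensional, relative version of the classical pullback-pasting lemma.  In both directions, the front pullback $\mu$ and the universal property of whichever of $\lambda$, $\kappa$ is assumed are pasted together to produce the desired factorisation; the relative pullback $\nu$ intervenes only through the cube coherence equation and its own compatibility with $\varphi$ and $\psi$, which together guarantee that the relativeness data passes correctly between the top face and the bottom face.

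I would prove the direction ``$\lambda$ pullback $\Rightarrow$ $\kappa$ relative pullback'' as follows.  Given test data $a\col X\to A_2$, $b\col X\to B_1$, $\gamma\col k_2 a \Ra fb$ compatible with $\varphi l_2\circ y\omega_2$ and $\psi m_1\circ y\mu$, I would first paste $\gamma$ against $\omega_2^{-1}a$ on the $l_2$-side and $\mu b$ on the $g$-side to form a 2-cell $\delta\col l_2 a \Ra g m_1 b$; the cube coherence equation, combined with the $\nu$-compatibility $\psi n_2\circ y\nu\circ\varphi^{-1}n_1=1_0$, shows that $(a,m_1b,\delta)$ is legitimate input for the plain pullback $\lambda$.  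Its universal property yields $x\col X\to A_1$, $\zeta\col a\Ra ex$ and a 2-cell $\eta\col m_1b\Ra l_1 x$.  I would then invoke the 2-cell part of the universal property of $\mu$ on the two arrows $b,\ k_1 x\col X\to B_1$, feeding it the pair $(\eta,\ \kappa x\circ k_2\zeta\circ\gamma^{-1})$, to obtain a unique $\xi\col b\Ra k_1 x$; the required $\mu$-compatibility of this pair is precisely the cube equation.  A concluding pasting check then verifies that $(\zeta,\xi)$ presents $\gamma$ through $\kappa$.

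The 2-cell part of the universal property of $\kappa$ is handled by the same kind of pasting: given compatible $\chi_e\col ex\Ra ex'$ and $\chi_{k_1}\col k_1x\Ra k_1x'$, push them down to $C_2$ and $C_1$ through $l_2$, $\omega_1$, $\omega_2$, apply the 2-cell clause of $\lambda$ to obtain a unique $\chi\col x\Ra x'$, and then check $k_1\chi=\chi_{k_1}$ via the 2-cell clause of $\mu$.  The converse implication is dual: starting from test data $(a,c,\delta)$ for $\lambda$, apply the front pullback $\mu$ to the pair $(k_2 a,c)$ with 2-cell $\delta\circ\omega_2^{-1}a$ to produce $b\col X\to B_1$ with $fb\simeq k_2 a$ and $m_1b\simeq c$; feeding this into the universal property of $\kappa$ (now assumed a relative pullback) gives the required $x\col X\to A_1$, whose $l_1$-projection is recovered from the $k_1$-projection by composition with $\omega_1$.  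The main obstacle throughout will be the 2-cell bookkeeping: every intermediate pasting must be checked against the cube coherence equation, and the compatibilities with $\varphi,\psi$ must be threaded carefully so that each ``test datum'' produced along the way actually lies in the appropriate (relative) pullback-candidate category.
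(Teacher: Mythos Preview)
Your proposed pasting ``$\delta\col l_2 a \Ra g\,m_1 b$'' does not type-check: $m_1 b$ lands in $D_1$, while $g$ has domain $C_1$, so $g\,m_1 b$ is not an arrow at all.  What the pasting of $\gamma$, $\omega_2^{-1}a$ and $\mu b$ actually produces is a 2-cell
\[
\mu b \circ m_2\gamma \circ (\omega_2 a)^{-1}\;\col\; n_2 l_2 a \Longrightarrow h\, m_1 b
\]
living in $\C(X,D_2)$, not in $\C(X,C_2)$.  To convert this into test data for the back face $\lambda$ you need an arrow $c_1\col X\to C_1$ together with a 2-cell $l_2 a\Ra g c_1$; there is no way to manufacture such a $c_1$ by pasting alone.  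The only tool available is the \emph{universal property} of the relative pullback $\nu$, applied to the pair $(l_2 a,\,m_1 b)$ with the 2-cell above (whose compatibility with $\varphi,\psi$ is exactly what the relative compatibility of $\gamma$ encodes).  The same issue arises in your converse direction: after using $\kappa$ and $\mu$ to produce $x$ and $\xi\col b\Ra k_1 x$, you obtain only $n_1 c\simeq n_1 l_1 x$ and $g c\simeq g l_1 x$; extracting $c\simeq l_1 x$ from this again requires the 2-cell clause of $\nu$'s universal property.

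So the claim that ``$\nu$ intervenes only through the cube coherence equation and its own compatibility with $\varphi$ and $\psi$'' is the essential error: that is merely the statement that $\nu$ is a relative pullback \emph{candidate}, and it is not enough.  The paper's proof makes the role of $\nu$ explicit by packaging the whole argument as an equivalence of categories
\[
\Psi\col \mathrm{PBCand}(k_2,f;\varphi',\psi')\;\longrightarrow\;\mathrm{PBCand}(l_2,g),
\]
where $\Psi$ is \emph{defined} via the universal property of $\nu$ (producing the $c_1$ you were missing), and its essential surjectivity and full faithfulness come from the universal property of $\mu$.  Once $\Psi$ is an equivalence, the equivalence of conditions 1 and 2 is immediate since $\Psi$ sends $(A_1,e,k_1,\kappa)$ to $(A_1,e,l_1,\lambda)$.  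Your direct-verification strategy can be made to work, but only after inserting these uses of $\nu$'s universal property at the two places indicated; at that point it becomes an unpacked version of the paper's argument.
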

	
		\begin{proof}
			Set $\varphi'\eqdef\varphi l_2\circ y\omega_2$
			and $\psi'\eqdef\psi m_1\circ y\mu$. We define a $\Gpd$-functor
			\begin{eqn}
				\Psi\col\mathrm{PBCand}(k_2,f;\varphi',\psi')\ra\mathrm{PBCand}(l_2,g).
			\end{eqn}
			
			{\it Objects.}  Let $(X,a_2,b_1,\beta_2)$ be an object of
			$\mathrm{PBCand}(k_2,f;\varphi',\psi')$
			(we have thus $X\col\C$, $a_2\col X\ra A_2$, $b_1\col X\ra B_1$ and
			$\beta_2\col k_2a_2\Ra fb_1$ compatible with $\varphi'$
			and $\psi'$).  By the universal property of relative pullback
			for $\nu$, there exist $c_1\col X\ra C_1$, $\gamma_2\col l_2a_2\Ra gc_1$ and
			$\delta_1\col m_1b_1\Ra n_1c_1$ (see diagram \ref{tutsuipr}) such that
			\begin{eqn}\label{lahaut}
				h\delta_1\circ \mu b_1\circ m_2\beta_2
				= \nu c_1\circ n_2\gamma_2\circ\omega_2a_2.
			\end{eqn}
			We set $\Psi(X,a_2,b_1,\beta_2)\eqdef (X,a_2,c_1,\gamma_2)$.
			
			{\it Arrows.} Let be $(x,\zeta,\xi)\col(X,a_2,b_1,\beta_2)
			\ra(X',a'_2,b'_1,\beta'_2)$ (i.e.\ $x\col X\ra X'$, 
			$\zeta\col a_2\Ra a'_2x$ and $\xi\col b_1\Ra b'_1x$).
			By the universal property of $\nu$, there exists a
			unique $\theta\col c_1\Ra c'_1x$ such that
			\begin{eqn}\label{eqlahautbis}
				n_1\theta^{-1}\circ\delta'_1 x\circ m_1\xi = \delta_1,
			\end{eqn}
			\begin{eqn}\label{eqlaunpeu}
				g\theta^{-1}\circ\gamma'_2 x\circ l_2\zeta = \gamma_2.
			\end{eqn}
			We set $\Psi(x,\zeta,\xi)\eqdef (x,\zeta,\theta)$, which is
			a morphism in $\mathrm{PBCand}(l_2,g)$, by this last equation.
			
			{\it 2-arrows.} Let be $\chi\col(x,\zeta,\xi)\Ra (x',\zeta',\xi')$.  We
			set $\Psi(\chi)\eqdef\chi$.  We check that it is a 2-arrow
			of $\mathrm{PBCand}(l_2,g)$ by testing it with $g$ and $n_1$, which
			are jointly faithful by the universal property of $\nu$.
			
			\vspace{1em}
			Next, we prove that $\Psi$ is an equivalence.
			
			{\it Surjective.} Let be $(X,a_2,c_1,\gamma_2)\col \mathrm{PBCand}(l_2,g)$.
			By the universal property of pullback of $\mu$, there exist
			$b_1\col X\ra B_1$, $\beta_2\col k_2a_2\Ra fb_1$ and $\delta_1\col m_1b_1
			\Ra n_1c_1$ (see diagram \ref{tutsuipr}) such that equation
			\ref{lahaut} hold.  If we apply the above construction of $\Psi$
			to $(X,a_2,b_1,\beta_2)$, we get $c'_1\col X\ra C_1$,
			$\gamma'_2\col l_2a_2\Ra gc'_1$ and $\delta'_1\col m_1b_1\Ra n_1c'_1$
			satisfying the same equation.  Then, by the universal property
			of relative pullback of $\nu$, there exists a 2-arrow
			$\gamma_1\col c_1\Ra c'_1$ which gives us an isomorphism
			\begin{eqn}
				(1_X,1_{a_2},\gamma_1)\col(X,a_2,c_1,\gamma_2)\Ra
				(X,a_2,c_1',\gamma'_2)=\Psi(X,a_2,b_1,\beta_2).
			\end{eqn}
						
			{\it Full.} Let $(X,a_2,b_1,\beta_2)$ and $(X',a'_2,b'_1,\beta'_2)$
			be objects in $\mathrm{PBCand}(k_2,f;\varphi',\psi')$ and let
			$(x,\zeta,\theta)$ be an arrow between their images by $\Psi$ in 
			$\mathrm{PBCand}(l_2,g)$.  Then, by the universal property of $\mu$,
			there exists $\xi\col b_1\Ra b'_1x$ satisfying equation \ref{eqlahautbis}.
			If we apply $\Psi$ to $(x,\zeta,\xi)$, we get a unique
			$\theta'$ satisfying equations \ref{eqlahautbis} and \ref{eqlaunpeu}.
			As $\theta$ also satisfies these equations, $\theta'=\theta$ and 
			$1_x$ is a isomorphism between $(x,\zeta,\theta')=\Psi(x,\zeta,\xi)$
			and $(x,\zeta,\theta)$.
			
			{\it Faithful.} If $\chi\col\Psi(x,\zeta,\xi)\Ra\Psi(x',\zeta',\xi')$
			is a 2-arrow in $\mathrm{PBCand}(l_2,g)$, then $\chi$ is also
			a 2-arrow $(x,\zeta,\xi)\Ra(x',\zeta',\xi')$: we can check it by testing
			with $f$ and $m_1$, which are jointly faithful since
			$\mu$ is a pullback.
			
			\vspace{1em}
			Finally, since $\Psi(A_1,e,k_1,\kappa)\simeq(A_1,e,l_1,\lambda)$
			(we are in the situation of the proof of the surjectivity of $\Psi$) and
			since $\Psi$ is an equivalence, $(A_1,e,k_1,\kappa)$
			is an initial object if and only if $(A_1,e,l_1,\lambda)$ is an initial object,
			in other words $\kappa$ is a relative pullback if and only
			if $\lambda$ is a pullback.\qedhere
			\begin{xym}\label{tutsuipr}\xymatrix@=20pt{
				&X\ar[rr]^{a_2}\ar[dl]_{b_1}\ar'[d][dd]_(-0.4){c_1}
					\drtwocell\omit\omit{\;\beta_2}
				&&A_2\ar[dd]^{l_2}\ar[dl]^{k_2}
				\\ B_1\ar[rr]_(0.7)f\ar[dd]_{m_1}
					\dtwocell\omit\omit{^<-4>\;\;\;\;\;\;\delta_1}
				&&B_2
				&{}\ar@{}[dl]|{\gamma_2\!\!\dir{=>}~~~~~~~}
				\\ &C_1\ar[rr]_g\ar[dl]^{n_1}
				&{}&C_2
				\\ D_1
			}\end{xym}
		\end{proof}

\subsection{Small diagram lemmas}

	\begin{lemm}\label{lemmun}
		Let be the following diagram in $\C$, where
		\begin{eqn}
			 \varphi=\varphi'b\circ y\nu
			 \text{ and }\kappa = \kappa'a\circ g'\mu\circ\nu f.
		\end{eqn}
		\begin{xym}\xymatrix@C=40pt@R=15pt{
			A\ar[dd]_a\ar[r]^f\ddrtwocell\omit\omit{\mu}
				\drruppertwocell^0<15>{^<-5.6>{\kappa}}
			&B\ar[dr]^g\ar[dd]^b\drruppertwocell^0{^<-1.3>{\varphi}}
				\drtwocell\omit\omit{_<3.5>\nu}
			\\ &&C\ar[r]^y
			&Y
			\\ A'\ar[r]_{f'}\urrlowertwocell_0<-15>{_<5.6>{\;\,\kappa'}}
			&B'\ar[ur]_{g'}\urrlowertwocell_0{_<1.3>{\;\,\varphi'}}
		}\end{xym}
		If $(f',\kappa')=\Ker(g',\varphi')$, then the following conditions are
		equivalent:
		\begin{enumerate}
			\item $\mu$ is a pullback;
			\item $(f,\kappa)=\Ker(g,\varphi)$.
		\end{enumerate}
	\end{lemm}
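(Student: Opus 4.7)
The plan is to deduce this lemma directly from the cube lemma (Proposition \ref{lemmcub}) by recognising each relative kernel as a particular instance of a relative pullback. Recall that, by the reformulation stated just before the cube lemma, $(f,\kappa)=\Ker(g,\varphi)$ is equivalent to $\kappa$ being the pullback of $g\col B\ra C$ and $0\ra C$ relative to $\varphi$ and $1_{0}$; likewise for $(f',\kappa')=\Ker(g',\varphi')$.

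The first step is to arrange the data of Lemma \ref{lemmun} as a cube in the sense of Proposition \ref{lemmcub}. I would take the two columns of the cube to consist of the two relative-kernel diagrams: the ``back'' column will be
\[
A\xrightarrow{f}B\xrightarrow{g}C\xrightarrow{y}Y
\]
(with $\kappa$ and $\varphi$ along the way), and the ``front'' column will be
\[
A'\xrightarrow{f'}B'\xrightarrow{g'}C\xrightarrow{y}Y
\]
(with $\kappa'$ and $\varphi'$). The horizontal edges between columns are $a$, $b$, the identity on $C$, and the identity on $Y$. This exhibits three non-trivial squares (or triangles) linking the two columns: the square with $2$-cell $\mu$, the triangle with $2$-cell $\nu$, and the square with identity comparison on the $C$--$Y$ edge (which is trivially a pullback). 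The hypotheses $\kappa = \kappa' a \circ g'\mu\circ\nu f$ and $\varphi = \varphi' b\circ y\nu$ are precisely the cocycle / compatibility conditions ensuring that the six faces of the resulting cube satisfy the equation required in Proposition \ref{lemmcub}.

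With this setup, the face playing the role of ``$\nu$'' in the cube lemma is the relative pullback witnessing $\kappa' = \Ker(g',\varphi')$, which is a relative pullback by hypothesis. The face playing the role of ``$\mu$'' in the cube lemma is one of the trivially-commuting faces at the $C$--$Y$ boundary, which is automatically a pullback. The cube lemma then yields the equivalence: the face carrying $\mu$ (our $\mu$) is a pullback if and only if the remaining face, carrying $\kappa$, is a relative pullback, i.e.\ $(f,\kappa)=\Ker(g,\varphi)$.

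The only real obstacle is the bookkeeping of realising Lemma \ref{lemmun}'s $2$-dimensional data as a genuine $3$-dimensional cube: one must verify that the compatibility equations given for $\kappa$ and $\varphi$ coincide exactly with the ``pasting equation'' of the six faces of the cube stated in Proposition \ref{lemmcub}, after identifying $\varphi'$ and $\psi$ with $\varphi'$ and $1_{0}$, and after inserting an identity face on the $C\to Y$ side. Once this identification is verified, the lemma is an immediate specialisation of the cube lemma.
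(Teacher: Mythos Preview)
Your strategy is exactly the paper's: apply the cube lemma so that the bottom face is the relative pullback encoding $(f',\kappa')=\Ker(g',\varphi')$, the front face is a trivially commuting square, and the conclusion identifies ``$\kappa$ is a relative pullback'' (i.e.\ $(f,\kappa)=\Ker(g,\varphi)$) with ``$\mu$ is a pullback''.

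However, the cube you describe is not quite right. What you set up is a \emph{ladder} with two columns $A\to B\to C\to Y$ and $A'\to B'\to C\to Y$ joined by $a,b,1_C,1_Y$; that is a planar diagram, not a cube, and in particular $Y$ must not be a vertex of the cube --- it is the external relativity point. You yourself recall that $(f,\kappa)=\Ker(g,\varphi)$ means $\kappa$ is a pullback of $g$ and $0\to C$ relative to $\varphi$ and $1_0$; this $0$ must therefore appear among the cube's vertices. The correct cube (the one the paper draws) has top layer $A,B,0,C$ (encoding the kernel of $g$), bottom layer $A',B',0,C$ (encoding the kernel of $g'$), back face the square $\mu$, front face the trivial square $0\to C$ over $0\to C$ with identity verticals, right face the triangle $\nu$, and $Y$ sitting below as the relativity target for $\varphi'$ and $1_0$. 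With this specific cube the ``bookkeeping'' you mention is immediate, and the compatibility $\varphi=\varphi' b\circ y\nu$ is precisely what makes the induced relative data for the top face equal to $\varphi$.
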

	
		\begin{proof}
			We apply the cube lemma (Proposition \ref{lemmcub}) to the situation of the following
			diagram. The front face is obviously a pullback.  The bottom face of the cube
			is a pullback relative to $\varphi'$ and $1_0$,
			by the hypothesis $(f',\kappa')
			=\Ker(g',\varphi')$.  Thus $\kappa$ is a pullback relative
			to $\varphi'b\circ y\nu(=\varphi)$ and $1_0$
			(i.e.\ $(f,\kappa)=\Ker(g,\varphi)$) if and only if $\mu$
			is a pullback.\qedhere
			\begin{xym}\xymatrix@=20pt{
				&A\ar[rr]^f\ar[dl]_{0}\ar'[d][dd]_(-0.4){a}\drtwocell\omit\omit{\kappa}
				&&B\ar[dd]^{b}\ar[dl]^{g}
				\\ 0\ar[rr]_(0.3)0\ar@{=}[dd]
				&&C\ar@{=}[dd]\dtwocell\omit\omit{^<-4>\nu}
				\\ &A'\ar'[r][rr]^(-0.4){f'}\ar[dl]_(0.4){0}
					\drtwocell\omit\omit{\kappa'}
				&{}&B'\ar[dl]_{g'}\ar@/^/[dd]^0\ddtwocell\omit\omit{^<2>\varphi'}
				\\ 0\ar[rr]_0\ar@/_1pc/[drrr]_0
				&&C\ar[dr]_y
				\\ &&&Y
			}\end{xym}
		\end{proof}

	\begin{lemm}\label{lemmtwo}
		Let us consider the situation of the following diagram, where $k'\eqdef ak$ and where
		$\kappa'\eqdef b\kappa\circ\mu k$.
		\begin{xym}\label{diaglemtwo}\xymatrix@=40pt{
			K\ar[r]^{k}\ar@{=}[d]\rruppertwocell<9>^0{^<-2.7>\kappa\,}
			&A\ar[r]^{f}\ar[d]_{a}\drtwocell\omit\omit{^{\mu}}
			&B\ar[d]^{b}
			\\ K\ar[r]_{k'}\rrlowertwocell<-9>_0{_<2.7>\;\kappa'}
			&A'\ar[r]_{f'}
			&B'
		}\end{xym}
		If $\mu$ is a pullback, then the following conditions are equivalet:
		\begin{enumerate}
			\item $(k,\kappa)=\Ker f$;
			\item $(k',\kappa')=\Ker f'$.
		\end{enumerate}
	\end{lemm}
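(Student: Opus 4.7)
The lemma is a 2-categorical instance of the pasting law for pullbacks applied to kernels: the statement ``$(k,\kappa)=\Ker f$'' is equivalent to saying that the 2-cell $\kappa$ exhibits the square $K\to 0,\ K\overset{k}\to A\overset{f}\to B$ as a pullback of $f$ along $0_B$, and similarly for $\kappa'$. Diagram \ref{diaglemtwo} realizes the kernel-rectangle for $f'$ as the horizontal paste of the kernel-square for $f$ with the square $\mu$. Hence, if $\mu$ is a pullback, the outer paste is a pullback iff the inner square is.

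To formalize one direction I apply the cube lemma (Proposition \ref{lemmcub}) to the cube with $A_1=K,\ A_2=0,\ B_1=A,\ B_2=B,\ C_1=K,\ C_2=0,\ D_1=A',\ D_2=B'$, horizontal arrows $e=0^K$, $f_{\mathrm{cube}}=f$, $g=0^K$, $h=f'$, and connecting arrows $k_1=k$, $k_2=0_B$, $l_1=1_K$, $l_2=1_0$, $m_1=a$, $m_2=b$, $n_1=k'$, $n_2=0_{B'}$. Direct verification shows that the front face carries our $\kappa$, the back face carries our $\kappa'$, the bottom face carries $\mu$, and the top, left, right faces carry identity 2-cells (using $k'=ak$ and that arrows to or from $0$ are unique). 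The top face is a pullback because $l_2=1_0$ is an isomorphism, and the cube's six-face cocycle condition reduces exactly to the defining relation $\kappa'=b\kappa\circ\mu k$ given in the statement. Applying the cube lemma with hypotheses ``$\mu$ is a pullback'' (given) and ``$\kappa'$ is a pullback'' (i.e.\ $(k',\kappa')=\Ker f'$) yields the equivalence of ``front is a pullback'' and ``top is a pullback''; since the latter holds, we conclude $(k,\kappa)=\Ker f$.

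For the converse I argue directly via the universal property of the pullback $\mu$. Given $X$ with $a'\col X\to A'$ and $\beta'\col f'a'\Ra 0$, I apply the pullback of $b$ and $f'$ to the input $(0\col X\to B,\ a'\col X\to A',\ \beta'^{-1}\col 0=b\cdot 0\Ra f'a')$, obtaining $p\col X\to A$ together with 2-cells $\tau\col 0\Ra fp$ and $\sigma\col a'\Ra ap$. Applying $(k,\kappa)=\Ker f$ to $(p,\tau^{-1})$ produces $x\col X\to K$ and $\alpha\col p\Ra kx$, and $\alpha'\eqdef a\alpha\circ\sigma\col a'\Ra akx=k'x$ is the required factorization. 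The $\kappa'$-fully-faithful property of $k'$ is then extracted from the $\kappa$-fully-faithful property of $k$ together with the joint faithfulness of $(f,a)$ induced by the pullback $\mu$: given a 2-cell $\alpha'\col k'x_1\Ra k'x_2$ compatible with $\kappa'$, one lifts $\alpha'$ to a 2-cell $\alpha\col kx_1\Ra kx_2$ compatible with $\kappa$ by reconstructing $f\alpha$ from $\kappa x_1, \kappa x_2$ and $a\alpha$ from $\alpha'$, using the relation $\kappa'=b\kappa\circ\mu k$ to check the pullback compatibility.

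The main obstacle is the 2-categorical bookkeeping at each step: verifying that the 2-cells produced by the pullback and kernel universal properties match up in the right way. All of these compatibilities collapse to the single defining equation $\kappa'=b\kappa\circ\mu k$, so once the cube is set up (respectively, the pullback-lift is performed), the remaining checks are routine.
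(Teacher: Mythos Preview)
Your argument works, but your face labels in the cube are systematically swapped: with your assignments, the \emph{top} face (the square on $e,k_1,k_2,f_{\mathrm{cube}}$ in the notation of Proposition~\ref{lemmcub}) carries $\kappa$, the \emph{front} face carries $\mu$, the \emph{bottom} face carries $\kappa'$, and the \emph{back} face is the identity---you have front$\leftrightarrow$top and back$\leftrightarrow$bottom interchanged throughout. The reasoning survives because the relabelling is consistent: the cube lemma's hypotheses are precisely bottom ($=\kappa'$) and front ($=\mu$), and its conclusion is top ($=\kappa$) iff back ($=$ identity), whence $(k,\kappa)=\Ker f$. But the labels should be corrected.

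More substantively, the paper obtains both directions from a \emph{single} application of the cube lemma, by arranging the faces differently. Instead of spending $\kappa'$ as a hypothesis on the bottom and putting an identity on the back, it takes $B_1=D_1=A'$, $B_2=D_2=B'$ with $m_1=1_{A'}$, $m_2=1_{B'}$: then the front face is the identity square (trivially a pullback), the bottom face is $\mu$ (pullback by hypothesis), and $\kappa'$, $\kappa$ occupy the top and back faces respectively. The cube lemma's iff then reads directly as ``$\kappa'$ is a pullback $\Leftrightarrow$ $\kappa$ is a pullback'', proving $(1)\Leftrightarrow(2)$ in one stroke. Your cube, by contrast, consumes $\kappa'$ as a fixed hypothesis and so only yields $(2)\Rightarrow(1)$, forcing the separate direct argument for $(1)\Rightarrow(2)$; that argument is correct in outline, but the $\kappa$-fully-faithful step is genuinely more work than you let on. The moral: when using the cube lemma to prove an equivalence, place the two conditions to be compared on the top/back pair and the given data on the front/bottom pair.
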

	
		\begin{proof}
			We apply the cube lemma to the situation of the following diagram.
			The front face is obviously a pullback, whereas the bottom face
			 is a pullback by hypothesis.  Therefore $\kappa$
			is a pullback if and only if $\kappa'$ is a pullback.
			\qedhere
			\begin{xym}\xymatrix@=20pt{
				&K\ar[rr]^0\ar[dl]_{k'}\ar'[d][dd]_(-0.4){k}
					\drtwocell\omit\omit{^\kappa'}
				&&0\ar[dd]^{0}\ar[dl]^{0}
				\\ A'\ar[rr]_(0.3){f'}\ar@{=}[dd]
				&&B'\ar@{=}[dd]
				\\ &A\ar'[r][rr]^(-0.4){f}\ar[dl]_(0.4){a}\drtwocell\omit\omit{^\mu}
				&{}&B\ar[dl]^{b}\ar@/^/[dd]^0
				\\ A'\ar[rr]_{f'}\ar@/_1pc/[drrr]_0
				&&B'\ar[dr]_0
				\\ &&&0
			}\end{xym}
		\end{proof}

	\begin{coro}\label{lemmtrois}
		Let $\mu\col f'a\Ra bf$ be a pullback, where $f$ and $f'$ are 
		normal cofaithful arrows.
		Then $\mu$ is also a pushout.
		\begin{xym}\xymatrix@=40pt{
			A\ar@{->>}[r]^{f}\ar[d]_{a}\drtwocell\omit\omit{^{\mu}}
			&B\ar[d]^{b}
			\\ A'\ar@{->>}[r]_{f'}
			&B'
		}\end{xym}
	\end{coro}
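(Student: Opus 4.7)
My plan is to translate the pullback hypothesis into a cokernel statement via Lemma \ref{lemmtwo}, and then read off the pushout property using the universal properties of $f$ and $f'$ as cokernels of their kernels.

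First, let $(k_f,\kappa_f)\eqdef\Ker f$, which exists because $f$ is normal cofaithful; the latter property moreover gives $(f,\kappa_f)=\Coker k_f$. Applying Lemma \ref{lemmtwo} to the pullback $\mu$ (in the direction $(k_f,\kappa_f)=\Ker f\Rightarrow(ak_f,\,b\kappa_f\circ\mu k_f)=\Ker f'$) yields $(ak_f,\,b\kappa_f\circ\mu k_f)=\Ker f'$. Since $f'$ is also normal cofaithful, this forces $(f',\,b\kappa_f\circ\mu k_f)=\Coker(ak_f)$. So the pullback hypothesis has been translated into the identification of $f'$ as a specific cokernel.

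Next I will verify the pushout property directly. Given a cocone $(u\col B\to X,\ v\col A'\to X,\ \sigma\col uf\Ra va)$ on the span $B\leftarrow A\to A'$, the 2-arrow $u\kappa_f\circ\sigma^{-1}k_f\col v(ak_f)\Ra 0$ lets me invoke the cokernel universal property of $(f',\,b\kappa_f\circ\mu k_f)$, producing $w\col B'\to X$ and $\eta\col wf'\Ra v$ satisfying the prescribed compatibility with $\kappa_{f'}$. To extract the remaining 2-arrow $\beta\col wb\Ra u$, I will set $\theta\eqdef \sigma^{-1}\circ\eta a\circ w\mu^{-1}\col wbf\Ra uf$ and check that $\theta$ is $\kappa_f$-compatible; this unfolds to the defining equation of $\eta$ together with $\kappa_{f'}=b\kappa_f\circ\mu k_f$. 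Condition 2 of the cokernel universal property of $(f,\kappa_f)$ then lifts $\theta$ to a unique $\beta$ with $\beta f=\theta$, and the triple $(w,\beta,\eta)$ gives the desired factorisation through the pushout candidate.

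For the uniqueness part of the pushout universal property, any two factorisations $(w,\beta,\eta)$ and $(w',\beta',\eta')$ yield, through their $\eta$-components, a 2-arrow $wf'\Ra w'f'$ which is automatically $\kappa_{f'}$-compatible (by the compatibility of each triple); this lifts uniquely, via the cokernel universal property of $f'=\Coker(ak_f)$, to the required $w\Ra w'$.

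The main obstacle will be the $\kappa_f$-compatibility check on $\theta$: it is a short but fiddly calculation that requires careful tracking of the directions of the various 2-cells around the cube and a clean use of the equation that characterises $\eta$. Once that compatibility is in place, everything else is a mechanical application of the universal properties.
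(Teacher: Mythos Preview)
Your argument is correct and begins exactly as the paper does: take $(k_f,\kappa_f)=\Ker f$, apply Lemma~\ref{lemmtwo} to get $(ak_f,\,b\kappa_f\circ\mu k_f)=\Ker f'$, and then use normal cofaithfulness to identify $f$ and $f'$ as cokernels of $k_f$ and $ak_f$ respectively.  Where you diverge is the final step: the paper simply invokes the dual of Lemma~\ref{lemmun} (with $Y=0$), which says precisely that if the top and bottom arrows are cokernels of the parallel pair $k_f$, $ak_f$ with matching 2-cells, then $\mu$ is a pushout.  Your direct verification of the pushout universal property is an inline reproof of that lemma in this special case; the ``fiddly'' $\kappa_f$-compatibility of $\theta$ that you flag is exactly the computation hidden inside the cube lemma that underlies Lemma~\ref{lemmun}.

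One small point on your uniqueness paragraph: the 2-dimensional part of the pushout property (condition~2 in Definition~\ref{defprodfib}, dualised) asks you to lift a pair $\chi_1\col wb\Ra w'b$, $\chi_2\col wf'\Ra w'f'$ compatible with $\mu$, not merely to compare two factorisations of the same cocone.  Your method still works---lift $\chi_2$ through $f'=\Coker(ak_f)$ to get $\chi\col w\Ra w'$, then check $\chi b=\chi_1$ by lifting through $f=\Coker k_f$---but you should state it that way and not forget the second check.  The paper's appeal to the dual of Lemma~\ref{lemmun} absorbs all of this bookkeeping.
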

	
		\begin{proof}
			Let us consider diagram \ref{diaglemtwo}, where $(K,k,\kappa)$ is
			the kernel of $f$.  As $\mu$ is a pullback, by Lemma
			\ref{lemmtwo}, $(k',\kappa')$ (where $k'\eqdef ak$ and
			$\kappa'\eqdef b\kappa\circ\mu k$) is a kernel of $f'$.
			Then, since $f$ and $f'$ are normal cofaithful,
			$(f,\kappa)=\Coker k$ and $(f',\kappa')=\Coker k'$. So, by the dual of
			Lemma \ref{lemmun} (with $Y\eqdef 0$), $\mu$ is a pushout.
		\end{proof}
	
	Let us prove now a lemma from which will follow the cancellation property of kernels
	and the restricted kernel lemma.
	
	\begin{lemm}\label{lemmquatre}
		Let be the situation of the following diagram, where
		$\kappa'a\circ g'\mu=c\kappa\circ\nu f$.
		\begin{xym}\xymatrix@=40pt{
			A\ar[r]^{f}\ar[d]_a\rruppertwocell<9>^0{^<-2.7>\kappa\,}
				\drtwocell\omit\omit{_{\mu}}
			&B\ar[r]^{g}\ar[d]_{b}\drtwocell\omit\omit{^{\nu}}
			&C\ar[d]^{c}\dduppertwocell<9>^0{^<-2.7>\varphi}
			\\ A'\ar[r]_{f'}\rrlowertwocell<-9>_0{_<2.7>\;\kappa'}
			&B'\ar[r]_{g'}
			&C'\ar[d]_{y}
			\\ &&Y
		}\end{xym}
		If $c$ is $\varphi$-fully 0-faithful and $(f',\kappa')=\Ker g'$,
		then the following conditions are equivalent:
		\begin{enumerate}
			\item $\mu$ is a pullback relative to $\varphi g\circ y\nu$
				and $y\kappa'$;
			\item $(f,\kappa)=\Ker g$.
		\end{enumerate}
	\end{lemm}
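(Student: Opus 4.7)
My plan is to apply the cube lemma (Proposition \ref{lemmcub}) to a suitably chosen cube, in a manner closely parallel to the proof of Lemma \ref{lemmun}. The key preliminary observation is that, by Proposition \ref{clasproprelker}, the hypothesis that $c$ is $\varphi$-fully 0-faithful is equivalent to the statement $(0,0_C,1_{0_{C'}})=\Ker(c,\varphi)$; in other words, the trivial square whose sides are $0 \to C$, $0 \to 0 \to C'$, and $C \xrightarrow{c} C'$, paired with $\varphi$, is a pullback relative to $\varphi$ and $1_0$.

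Concretely, I would build a cube whose top four vertices are $A,0,B,C$ and whose bottom four vertices are $A',0,B',C'$, equipped on the back with the arrow $y\col C'\ra Y$ needed to express the relative pullback conditions. The edges are the obvious ones built from $f,g,0,a,b,c,f',g',y$; the faces carry the 2-arrows $\kappa$ (witnessing $gf\Ra 0$), $\kappa'$ (witnessing $g'f'\Ra 0$), $\mu$ (connecting $f$ and $f'$), $\nu$ (connecting $g$ and $g'$), plus the canonical 2-arrow witnessing $\Ker(c,\varphi)=0$ on the remaining face, with $\varphi$ placed on the edge running out to $Y$. The given hypothesis $\kappa'a\circ g'\mu=c\kappa\circ\nu f$ is precisely the pasting equation required for the cube to be well-defined.

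Once the cube is set up, two of its faces are immediately seen to be (relative) pullbacks: the face carrying the 2-cell of $\Ker(c,\varphi)=0$ is a pullback relative to $\varphi$ and $1_0$ (by the hypothesis on $c$ and Proposition \ref{clasproprelker}), and the face carrying $\kappa'$ is a pullback by the hypothesis $(f',\kappa')=\Ker g'$. The cube lemma then delivers exactly the desired equivalence: the face carrying $\mu$ is a pullback relative to $\varphi g\circ y\nu$ and $y\kappa'$ if and only if the face carrying $\kappa$ is a pullback, the latter meaning $(f,\kappa)=\Ker g$.

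The main obstacle is purely bookkeeping: matching the faces of our cube to the symbols $\kappa,\lambda,\mu,\nu,\omega_1,\omega_2$ of Proposition \ref{lemmcub} so that both the pasting equation and the two pullback hypotheses line up correctly, and verifying that the 2-arrows $\varphi g\circ y\nu$ and $y\kappa'$ arising in the statement of the conclusion are exactly the relative 2-arrows produced by the cube lemma in this configuration. This is analogous to, though a little more intricate than, the arrangements in the proofs of Lemmas \ref{lemmun}, \ref{lemmtwo}, and \ref{lemmtrois}, and I do not expect any new ideas to be needed beyond this careful matching.
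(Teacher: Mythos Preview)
Your approach is correct and is essentially the same as the paper's. The paper also invokes Proposition~\ref{clasproprelker} to reinterpret the hypothesis on $c$ as a relative pullback, then applies the cube lemma with exactly the two hypotheses you name (the front face is a pullback because $(f',\kappa')=\Ker g'$, the bottom face is a relative pullback because $c$ is $\varphi$-fully 0-faithful), obtaining the desired equivalence between the remaining two faces. The only cosmetic difference is the orientation: the paper places $A,B,A',B'$ on the upper layer and $0,C,0,C'$ on the lower (so the primed/unprimed direction is front--back rather than top--bottom), which makes the matching with the symbols $\mu,\nu,\kappa,\lambda$ of Proposition~\ref{lemmcub} immediate; your rotated description would require relabeling but leads to the same argument.
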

	
		\begin{proof}
			We apply the cube lemma (Proposition \ref{lemmcub}) to the situation of the 
			following diagram.  The front face is a pullback, by the hypothesis
			$(f',\kappa')=\Ker g'$; since $c$ is $\varphi$-fully 0-faithful,
			the bottom face is a relative pullback, by Proposition 
			\ref{clasproprelker}. So conditions 1 and 2 are equivalent.\qedhere
			\begin{xym}\xymatrix@=20pt{
				&A\ar[rr]^f\ar[dl]_{a}\ar'[d][dd]_(-0.4){0}\drtwocell\omit\omit{\mu}
				&&B\ar[dd]^{g}\ar[dl]^{b}
				\\ A'\ar[rr]_(0.3){f'}\ar[dd]_0
				&&B'\ar[dd]_(0.3){g'}\dtwocell\omit\omit{^<-4>\nu}
				\\ &0\ar'[r][rr]_(-0.4){0}\ar@{=}[dl]
				&{}&C\ar[dl]_{c}\ar@/^/[dd]^0\ddtwocell\omit\omit{^<2>\varphi}
				\\ 0\ar[rr]_0\ar@/_1pc/[drrr]_0
				&&C'\ar[dr]_y
				\\ &&&Y
			}\end{xym}
		\end{proof}

	We know that in dimension 1, if $f=mf'$, where $m$ is a 0-monomorphism,
	then $\Ker f = \Ker f'$.  Here is the corresponding property for relative kernels.
		
	\begin{coro}\label{lemmsimplnoy}
		Let us consider the following situation, where $\varphi = \varphi' f'\circ  y\mu$,
		$\kappa=m\kappa'\circ\mu k$, and where $m$ is $\varphi'$-fully 0-faithful.
		\begin{xym}\xymatrix@C=20pt@R=40pt{
			K\ar[rr]^k\ar@/_0.78pc/[drrr]_0\ar@/^2pc/[rrrr]^0
			&{}\rrtwocell\omit\omit{^<-2.7>\kappa\,}
			&A\ar[rr]^f\ar[dr]_{f'}\ar@/^2pc/[rrrr]^0\rrtwocell\omit\omit{_<4>\mu}
				\ar@{}[dll]|(0.32){\kappa'\!\!\!\dir{=>}}
			&{}\rrtwocell\omit\omit{^<-2.7>\varphi\,}
			&B\ar[rr]^y\ar@{}[drr]|(0.32){\dir{=>}\!\varphi'}
			&{}
			&Y
			\\ &&&B'\ar[ur]_m\ar@/_0.78pc/[urrr]_0 &&&{}
		}\end{xym}
		Then the following conditions are equivalent:
		\begin{enumerate}
			\item $(k,\kappa)=\Ker(f,\varphi)$;
			\item $(k,\kappa')=\Ker f'$.
		\end{enumerate}
	\end{coro}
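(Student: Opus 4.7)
The strategy is to show directly that the universal properties of $\Ker(f,\varphi)$ and $\Ker f'$ are equivalent characterisations of the datum $(K,k)$, by constructing bijections between the input data of both universal properties and verifying that the factorisations match.

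The key tool is that $m$ being $\varphi'$-fully 0-faithful provides, for any $X$ and $c\col X\to B'$, a bijection between 2-arrows $\beta'\col c\Ra 0$ and 2-arrows $\gamma\col mc\Ra 0$ compatible with $\varphi'$ (i.e.\ satisfying $y\gamma=\varphi' c$). The crucial point, which uses strictness of the $\Gpdp$-category, is that $m\beta'$ is automatically compatible with $\varphi'$ for every $\beta'$: naturality of $\varphi'\col ym\Ra 0$ at the 2-arrow $\beta'\col c\Ra 0$ gives $\varphi'_0\circ ym\beta' = 0\cdot\beta'\circ\varphi' c$, and both $\varphi'_0$ and $0\cdot\beta'$ equal $1_0$ by strictness, so $ym\beta'=\varphi' c$. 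Taking $c\eqdef f'a$ and composing with $\mu a$, I would obtain a bijection $\beta'\leftrightarrow\beta=m\beta'\circ\mu a$ between $\{\beta'\col f'a\Ra 0\}$ and $\{\beta\col fa\Ra 0\mid y\beta=\varphi a\}$; the compatibility of $\beta$ with $\varphi$ follows from the hypothesis $\varphi=\varphi' f'\circ y\mu$ via $y\beta=ym\beta'\circ y\mu a=\varphi' f'a\circ y\mu a=\varphi a$.

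Next I would verify that factorisations match under this bijection. Given $a'\col X\to K$ and $\alpha\col a\Ra ka'$, the relation $\kappa=m\kappa'\circ\mu k$ and the naturality identity $\mu k a'\circ f\alpha = mf'\alpha\circ\mu a$ yield
\[
\kappa a'\circ f\alpha = m(\kappa' a'\circ f'\alpha)\circ \mu a,
\]
so $\beta=\kappa a'\circ f\alpha$ iff $m(\kappa' a'\circ f'\alpha)\circ\mu a = m\beta'\circ\mu a$, which (cancelling $\mu a$ and using the fully 0-faithfulness of $m$ to pass from $m$-equality to $B'$-equality of 2-arrows into $0$) is equivalent to $\beta'=\kappa' a'\circ f'\alpha$. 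An entirely parallel argument shows that for $\alpha\col ka_1\Ra ka_2$ the compatibility equation $\kappa a_2\circ f\alpha = \kappa a_1$ is equivalent to $\kappa' a_2\circ f'\alpha = \kappa' a_1$: both sides rewrite via $\kappa=m\kappa'\circ\mu k$ and naturality of $\mu$ to give $m(\kappa' a_2\circ f'\alpha)\circ\mu ka_1 = m\kappa' a_1\circ\mu ka_1$, and cancellation followed by fully 0-faithfulness of $m$ yields the $\kappa'$-compatibility. This establishes the equivalence of the second parts of the two universal properties (the $\kappa$- and $\kappa'$-full faithfulness of $k$).

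Combining the bijection of inputs with the matching of factorisation data and the matching of compatibility conditions, the universal property defining $(k,\kappa)=\Ker(f,\varphi)$ holds if and only if the universal property defining $(k,\kappa')=\Ker f'$ does, as required. The main subtlety is verifying the automatic compatibility of $m\beta'$ with $\varphi'$; once this is in place, all remaining verifications are bookkeeping using the given relations $\varphi=\varphi' f'\circ y\mu$, $\kappa=m\kappa'\circ\mu k$, and naturality of $\mu\col f\Ra mf'$.
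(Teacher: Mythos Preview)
Your proof is correct, but it takes a different route from the paper. The paper proves this corollary in one line by specialising Lemma~\ref{lemmquatre}: one puts the data into the diagram
\[
\xymatrix@=40pt{
    K\ar[r]^{k}\ar[d]_0\rruppertwocell<9>^0{^<-2.7>\kappa'\,}
        \drtwocell\omit\omit{_{\kappa}}
    &A\ar[r]^{f'}\ar[d]_{f}\drtwocell\omit\omit{^{\mu}}
    &B'\ar[d]^{m}\dduppertwocell<9>^0{^<-2.7>\varphi'}
    \\ 0\ar[r]_{0}\rrlowertwocell<-9>_0{_<2.7>\;1_0}
    &B\ar@{=}[r]
    &B\ar[d]_{y}
    \\ &&Y
}
\]
so that the bottom row is trivially $\Ker 1_B$, the right column is $m$ with its $\varphi'$-full 0-faithfulness, and the lemma then identifies ``$(k,\kappa')=\Ker f'$'' with ``$\kappa$ is a pullback relative to $\varphi' f'\circ y\mu=\varphi$ and $1_0$'', which is exactly $(k,\kappa)=\Ker(f,\varphi)$. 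That lemma in turn rests on the cube lemma (Proposition~\ref{lemmcub}).

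Your approach instead unfolds the two universal properties directly and matches them term by term via the bijection $\beta'\leftrightarrow m\beta'\circ\mu a$. This is entirely self-contained: you never invoke the cube lemma or any relative-pullback machinery. The cost is that you are, in effect, re-proving the relevant special case of Lemma~\ref{lemmquatre} by hand. Within the architecture of the paper the one-line specialisation is more economical, since the cube lemma has already been set up precisely to absorb this kind of bookkeeping; but as a standalone argument yours is perfectly valid and arguably more transparent about \emph{why} the $\varphi'$-full 0-faithfulness of $m$ is exactly what is needed. One small terminological point: what you call ``naturality of $\varphi'$'' is really the interchange law, and your ``$\varphi'_0$'' should be read as the whiskering $\varphi'\cdot 0$; the equations you derive from it are correct.
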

	
		\begin{proof}
			It suffices to apply the previous lemma to the situation of the following
			diagram.\qedhere
			\begin{xym}\xymatrix@=40pt{
				K\ar[r]^{k}\ar[d]_0\rruppertwocell<9>^0{^<-2.7>\kappa'\,}
					\drtwocell\omit\omit{_{\kappa}}
				&A\ar[r]^{f'}\ar[d]_{f}\drtwocell\omit\omit{^{\mu}}
				&B'\ar[d]^{m}\dduppertwocell<9>^0{^<-2.7>\varphi'}
				\\ 0\ar[r]_{0}\rrlowertwocell<-9>_0{_<2.7>\;1_0}
				&B\ar@{=}[r]
				&B\ar[d]_{y}
				\\ &&Y
			}\end{xym}
		\end{proof}
		
	A corollary of this corollary is that relative kernels are kernels.
	
	\begin{coro}\label{coronoyrelnoy}
		Let $f$, $y$, $\varphi$ be as in the following diagram and $f'\col A\ra Ky$ be
		the induced arrow to the kernel of $y$.  Then $(K,k,\kappa)=\Ker(f,\varphi)$
		if and only if $(K,k,\kappa')=\Ker f'$.  Therefore $f$ is
		$\varphi$-fully 0-faithful if and only if $f'$ is fully 0-faithful.
		\begin{xym}\xymatrix@C=20pt@R=40pt{
			K\ar[rr]^k\ar@/_0.78pc/[drrr]_0\ar@/^2pc/[rrrr]^0
			&{}\rrtwocell\omit\omit{^<-2.7>\kappa\,}
			&A\ar[rr]^f\ar[dr]_{f'}\ar@/^2pc/[rrrr]^0\rrtwocell\omit\omit{_<4>\mu}
				\ar@{}[dll]|(0.32){\kappa'\!\!\!\dir{=>}}
			&{}\rrtwocell\omit\omit{^<-2.7>\varphi\,}
			&B\ar[rr]^y\ar@{}[drr]|(0.32){\dir{=>}\!\kappa_y}
			&{}
			&Y
			\\ &&&Ky\ar[ur]_{k_y}\ar@/_0.78pc/[urrr]_0 &&&{}
		}\end{xym}
	\end{coro}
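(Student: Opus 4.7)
The plan is to reduce this corollary directly to Corollary \ref{lemmsimplnoy} by taking $B'\eqdef Ky$, $m\eqdef k_y$ and $\varphi'\eqdef\kappa_y$. In this reading, $f'\col A\ra Ky$ and $\mu\col k_yf'\Ra f$ are exactly the arrow and 2-arrow produced by the universal property of $\Ker y$ applied to $\varphi$, so the compatibility $\varphi=\kappa_y f'\circ y\mu$ required by Corollary \ref{lemmsimplnoy} is built in. Similarly, $\kappa'\col f'k\Ra 0$ is obtained from $\kappa$ by the universal property of the kernel (using that $y(\mu k)\circ y\kappa\cdot\varphi^{-1}k$ is trivial), and the equation $\kappa=k_y\kappa'\circ\mu k$ is just the defining relation for $\kappa'$.

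The only hypothesis of Corollary \ref{lemmsimplnoy} that has to be checked is that $m=k_y$ is $\kappa_y$-fully 0-faithful. This is weaker than being $\kappa_y$-fully faithful, and the latter is literally clause 2 of the definition of kernel (Definition \ref{defkernel}) applied to $(Ky,k_y,\kappa_y)=\Ker y$. So the hypothesis holds for free, and Corollary \ref{lemmsimplnoy} yields at once the equivalence
\[
(K,k,\kappa)=\Ker(f,\varphi)\;\Longleftrightarrow\;(K,k,\kappa')=\Ker f'.
\]

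For the second sentence of the statement, I would chain the characterisations of (relative) fully 0-faithful arrows in terms of their (relative) kernels. By Proposition \ref{clasproprelker}, $f$ is $\varphi$-fully 0-faithful iff $(0,0_A,1_{0_B})=\Ker(f,\varphi)$; by the equivalence just established this is the same as $(0,0_A,1_{0_{Ky}})=\Ker f'$; and by Proposition \ref{claspropker}, this last condition is equivalent to $f'$ being fully 0-faithful.

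There is no genuine obstacle here: the content of the corollary is entirely absorbed into Corollary \ref{lemmsimplnoy} together with the observation that every kernel is automatically fully faithful (hence fully 0-faithful) relative to its own canonical 2-arrow. The only care needed is bookkeeping with the directions and compositions of the various 2-arrows to confirm that the 2-arrow $\kappa'$ arising as the ``induced'' factorisation of $\kappa$ through $k_y$ is the same $\kappa'$ that appears in the statement.
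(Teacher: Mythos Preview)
Your proposal is correct and matches the paper's intended argument: the corollary is stated without proof precisely because it is an immediate application of Corollary \ref{lemmsimplnoy} with $m=k_y$ and $\varphi'=\kappa_y$, using that the kernel arrow $k_y$ is $\kappa_y$-fully faithful by clause~2 of Definition \ref{defkernel}. Your derivation of the second sentence via Propositions \ref{clasproprelker} and \ref{claspropker} is also the natural one.
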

	
	Here is a last lemma concerning the relative kernel and pullback; it will be
	used to prove the $3\times 3$ lemma (Proposition \ref{lemmtrxtr}).
	
	\begin{lemm}\label{lemmcinq}
		Let be the following diagram in $\C$, where $\kappa$ is compatible with $\varphi$
		and $\kappa'a\circ g'\mu = \kappa\circ\nu f$.
		\begin{xym}\xymatrix@C=40pt@R=15pt{
			A\ar[dd]_a\ar[r]^f\ddrtwocell\omit\omit{\mu}
				\drruppertwocell^0<15>{^<-5.6>{\kappa}}
			&B\ar[dr]^g\ar[dd]_b\drruppertwocell^0{^<-1.3>{\varphi}}
				\drtwocell\omit\omit{^<3.5>\nu}
			\\ &&C\ar[r]^y
			&Y
			\\ A'\ar[r]_{f'}\urrlowertwocell_0<-15>{_<5.6>{\;\,\kappa'}}
			&B'\ar[ur]_{g'}
		}\end{xym}
		If $(f',\kappa')=\Ker g'$, then the following properties are equivalent:
		\begin{enumerate}
			\item $\mu$ is a pullback relative to $\varphi\circ y\nu$
				and $y\kappa'$;
			\item $(f,\kappa)=\Ker(g,\varphi)$.
		\end{enumerate}
	\end{lemm}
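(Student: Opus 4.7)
My plan is to deduce the statement as the specialisation of Lemma~\ref{lemmquatre} obtained by taking $C'=C$ and $c=1_C$. Under this identification, the 2-arrow $\varphi\col yc\Ra 0$ of Lemma~\ref{lemmquatre} becomes (since $cg=g$) the 2-arrow $\varphi\col yg\Ra 0$ of the present statement, and $\nu\col g'b\Ra cg$ becomes the $\nu\col g'b\Ra g$ that we have here. Because $c=1_C$ is strictly the identity, the cocycle condition $\kappa' a\circ g'\mu=c\kappa\circ\nu f$ of Lemma~\ref{lemmquatre} reduces exactly to the hypothesis $\kappa' a\circ g'\mu=\kappa\circ\nu f$ we are given. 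Likewise the hypothesis $(f',\kappa')=\Ker g'$ is common to both lemmas.

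The only remaining hypothesis to check before invoking Lemma~\ref{lemmquatre} is that $c=1_C$ is $\varphi$-fully 0-faithful. This is trivial: given $\alpha\col 1_C a_1\Ra 0$ compatible with $\varphi$, the unique $\alpha'\col a_1\Ra 0$ with $1_C\alpha'=\alpha$ is $\alpha'=\alpha$ itself. Consequently Lemma~\ref{lemmquatre} applies, and its conclusion is that $\mu$ is a pullback relative to $\varphi g\circ y\nu$ and $y\kappa'$ if and only if $(f,\kappa)=\Ker g$. Reinterpreted via the equivalence between the relative kernel $\Ker(g,\varphi)$ and the pullback into $0$ relative to $\varphi$ and $1_0$, and via the equality $\varphi g=\varphi$ valid once $c=1_C$, this is exactly the equivalence claimed in Lemma~\ref{lemmcinq}.

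Alternatively, and equivalently, one could apply the cube lemma (Proposition~\ref{lemmcub}) directly to a cube whose front face is the absolute pullback $(f',\kappa')=\Ker g'$, whose bottom face is the trivial relative pullback $0\ra 0\ra C\xrightarrow{y}Y$ (with 2-cell $\varphi$), whose top face carries $\mu$ and whose back face carries $\kappa$; the classifying property of relative kernels (Proposition~\ref{clasproprelker}) applied to $1_C$ plays the same rôle as the $\varphi$-fully-0-faithfulness of $c$ in Lemma~\ref{lemmquatre}. I do not expect any real obstacle: this is a routine degeneration of the preceding lemma, and the only point requiring minor attention is the bookkeeping which identifies the two relative-pullback 2-cells $\varphi g\circ y\nu$ and $\varphi\circ y\nu$ (they coincide when $c=1_C$) and the two 2-cells $y\kappa'$ on the other side (which match verbatim).
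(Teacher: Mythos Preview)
Your specialisation $c=1_C$ does not work: the 2-arrow $\varphi$ in Lemma~\ref{lemmquatre} has domain $yc$, so with $c=1_C$ it becomes a 2-arrow $y\Ra 0$ between arrows $C\to Y$, not a 2-arrow $yg\Ra 0$ between arrows $B\to Y$ as in Lemma~\ref{lemmcinq}. Your parenthetical ``since $cg=g$'' is beside the point; $\varphi$ is attached to $c$, not to $g$, and ``$\varphi g=\varphi$'' is a type error since the two sides have different sources. In particular, you would need an actual 2-arrow $y\Ra 0$ to even state the hypothesis ``$1_C$ is $\varphi$-fully 0-faithful'', and no such 2-arrow is part of the data of Lemma~\ref{lemmcinq}. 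More fundamentally, condition~2 of Lemma~\ref{lemmquatre} is $(f,\kappa)=\Ker g$, an \emph{absolute} kernel, whereas condition~2 here is the \emph{relative} kernel $(f,\kappa)=\Ker(g,\varphi)$; these do not coincide under any choice of $c$ unless you first convert the relative kernel into an absolute one.

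That conversion is exactly what the paper does. One takes the kernel $(Ky,k_y,\kappa_y)$ of $y$, factors $g$ through it as $g\simeq k_y\hat g$ via $\hat\varphi$, and then applies Lemma~\ref{lemmquatre} with $c\eqdef k_y$, $g\eqdef\hat g$, $\varphi\eqdef\kappa_y$ and $\nu\eqdef\hat\varphi\circ\nu$. Here $k_y$ is $\kappa_y$-fully faithful by the kernel's universal property, so the hypothesis of Lemma~\ref{lemmquatre} is satisfied. Its conclusion gives the equivalence between the relative pullback condition on $\mu$ and $(f,\hat\kappa)=\Ker\hat g$; Corollary~\ref{coronoyrelnoy} then identifies $\Ker\hat g$ with $\Ker(g,\varphi)$. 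Your alternative cube-lemma sketch has the same flaw: the ``trivial relative pullback'' you propose for the bottom face would need a 2-arrow $y\Ra 0$, which is not available.
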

	
		\begin{proof}
			Let us consider the following diagram. We construct the kernel
			of $y$.  There is an induced arrow $\hat{g}\col B\ra Ky$
			and a 2-arrow $\hat{\varphi}\col g\Ra k_y\hat{g}$ such that
			$\kappa_y\hat{g}\circ y\hat{\varphi}=\varphi$. We set
			$\hat{\nu}\eqdef \hat{\varphi}\circ\nu$.  There is also an induced
			2-arrow $\hat{\kappa}\col\hat{g}f\Ra 0$ such that
			$k_y\hat{\kappa}\circ\hat{\nu}f=\kappa' a\circ g'\mu$.
			Then, by Lemma \ref{lemmquatre}, since $(f',\kappa')=\Ker g'$
			and $k_y$ is $\kappa_y$-fully faithful (by the universal property
			of the kernel), $\mu$ is a pullback relative to $\kappa_y\hat{g}\circ
			y\hat{\nu}(=\varphi\circ y\nu)$ and $y\kappa'$ if and only if
			$(f,\hat{\kappa})=\Ker\hat{g}$.  By the previous corollary, this last
			property is equivalent to $(f,\kappa)=\Ker(g,\varphi)$.
			\qedhere
			\begin{xym}\xymatrix@=40pt{
				A\ar[r]^{f}\ar[d]_a\rruppertwocell<9>^0{^<-2.7>\hat{\kappa}\,}
					\drtwocell\omit\omit{_{\mu}}
				&B\ar[r]^{\hat{g}}\ar[d]_{b}\drtwocell\omit\omit{^{\hat{\nu}}}
				&Ky\ar[d]^{k_y}\dduppertwocell<9>^0{^<-2.7>\kappa_y}
				\\ A'\ar[r]_{f'}\rrlowertwocell<-9>_0{_<2.7>\;\kappa'}
				&B'\ar[r]_{g'}
				&C\ar[d]_{y}
				\\ &&Y
			}\end{xym}
		\end{proof}

\subsection{Restricted kernels lemma}

	To complete this collection of lemmas always true in a $\Gpdp$-category
	with kernels, here is the restricted kernels lemma and the relative kernels lemma.
	Let us first introduce the following terminology: we say that a
	diagram of the shape of diagram \ref{diagtyptroitroi} \emph{commutes} if the following
	equations hold:
	\begin{align}\stepcounter{eqnum}
		f_3\alpha_1\circ\varphi_2 a_1\circ b_2\varphi_1 &= \beta_1f_1; \\ \stepcounter{eqnum}
		g_3\beta_1\circ\psi_2 b_1\circ c_2\psi_1 &= \gamma_1g_1; \\ \stepcounter{eqnum}
		\eta_2 a_1\circ g_2\varphi_1\circ\psi_1 f_1 &=c_1\eta_1; \\ \stepcounter{eqnum}
		\eta_3 a_2\circ g_3\varphi_2\circ\psi_2 f_2 &=c_2\eta_2.
	\end{align}
	\begin{xym}\label{diagtyptroitroi}\xymatrix@=40pt{
			A_1\ar[r]^{f_1}\ar[d]_{a_1}\rruppertwocell<9>^0{^<-2.7>\eta_1\;}
				\drtwocell\omit\omit{_{\;\,\varphi_1}}
				\ddlowertwocell<-9>_0{_<2.7>\alpha_1}
			&B_1\ar[r]^{g_1}\ar[d]^{b_1}\ar@/^1.89pc/[dd]^(0.44)0
				\drtwocell\omit\omit{_{\;\,\psi_1}}
			&C_1\ar[d]^{c_1}\dduppertwocell<9>^0{^<-2.7>\gamma_1}
			\\ A_2\ar[r]_{f_2}\ar[d]_{a_2}\ar@/_1.87pc/[rr]_(0.45)0
				\drtwocell\omit\omit{_{\;\,\varphi_2}}
			&B_2\ar[r]_(0.62){g_2}\ar[d]^(0.62){b_2}\drtwocell\omit\omit{_{\;\,\psi_2}}
				\ar@{}[d]_(0.25){\eta_2\,\dir{=>}}
				\ar@{}[r]^(0.25){\txt{$\overset{\beta_1\;\;}{\dir{=>}}$}}
			&C_2\ar[d]^{c_2}
			\\ A_3\ar[r]_{f_3}\rrlowertwocell<-9>_0{_<2.7>\;\,\eta_3}
			&B_3\ar[r]_{g_3}
			&C_3
		}\end{xym}
	
	\begin{pon}[Restricted kernels lemma]\label{lemmnoyrestr}%
	\index{lemma!kernels!restricted}\index{kernels lemma!restricted}
		Let us consider the situation of diagram \ref{diagtyptroitroi}.  Let us assume
		that the diagram commutes and that the following conditions hold:
		\begin{enumerate}
			\item $(a_1,\alpha_1)=\Ker a_2$;
			\item $(b_1,\beta_1)=\Ker b_2$;
			\item $c_1$ is $\gamma_1$-fully 0-faithful;
			\item $(f_2,\eta_2)=\Ker g_2$;
			\item $f_3$ is $\eta_3$-fully 0-faithful.
		\end{enumerate}
		Then $(f_1,\eta_1)=\Ker g_1$ and $\varphi_1$ is a pullback
		relative to $\gamma_1g_1\circ c_2\psi_1^{-1}$ and $c_2\eta_2$.
	\end{pon}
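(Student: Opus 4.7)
The plan is to reduce both conclusions to a single relative pullback condition via Lemma \ref{lemmquatre}, and then to verify that pullback condition by a direct construction using the kernel universal properties together with the $\eta_3$-fully 0-faithfulness of $f_3$.

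First, I would apply Lemma \ref{lemmquatre} to the top two rows of the $3\times 3$ diagram, taking $a\eqdef a_1$, $b\eqdef b_1$, $c\eqdef c_1$, $f\eqdef f_1$, $g\eqdef g_1$, $f'\eqdef f_2$, $g'\eqdef g_2$, $y\eqdef c_2$, $\varphi\eqdef\gamma_1$, $\mu\eqdef\varphi_1$, $\nu\eqdef\psi_1^{-1}$, $\kappa\eqdef\eta_1$ and $\kappa'\eqdef\eta_2$. Hypotheses 3 and 4 supply the premises of the lemma, and the commutativity equation $\eta_2 a_1\circ g_2\varphi_1\circ\psi_1 f_1 = c_1\eta_1$ of the $3\times 3$ diagram rearranges to $\eta_2 a_1\circ g_2\varphi_1 = c_1\eta_1\circ\psi_1^{-1} f_1$, which is exactly the required compatibility. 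Lemma \ref{lemmquatre} then establishes the equivalence of the two desired conclusions ``$(f_1,\eta_1)=\Ker g_1$'' and ``$\varphi_1$ is a pullback relative to $\gamma_1 g_1\circ c_2\psi_1^{-1}$ and $c_2\eta_2$''; it therefore suffices to prove the latter.

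To verify the relative pullback property, I would take test data consisting of $X\col\C$, $y\col X\ra A_2$, $x\col X\ra B_1$, and a compatible 2-arrow $\gamma\col f_2 y\Ra b_1 x$, and seek $z\col X\ra A_1$ with $a_1 z\simeq y$ and $f_1 z\simeq x$. From $\gamma$ one manufactures a 2-arrow $\theta\col f_3 a_2 y\Ra 0$ by pasting $\varphi_2^{-1}y$, $b_2\gamma$ and $\beta_1 x$ in succession. The pasting equations $\eta_3 a_2\circ g_3\varphi_2\circ\psi_2 f_2=c_2\eta_2$ and $g_3\beta_1\circ\psi_2 b_1\circ c_2\psi_1=\gamma_1 g_1$ from the commuting $3\times 3$ diagram, combined with the compatibility of $\gamma$ with the relative-pullback 2-arrows $\gamma_1 g_1\circ c_2\psi_1^{-1}$ and $c_2\eta_2$, force $\theta$ to be compatible with $\eta_3$. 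Hypothesis 5 then lifts $\theta$ uniquely to $\theta'\col a_2 y\Ra 0$ with $f_3\theta'=\theta$, and hypothesis 1 produces $z\col X\ra A_1$ together with an isomorphism $a_1 z\simeq y$ absorbing $\theta'$ via $\alpha_1$. The matching isomorphism $f_1 z\simeq x$ is obtained by pasting $\varphi_1 z$ with the previous isomorphism and with $\gamma$ to obtain an isomorphism $b_1 f_1 z\Ra b_1 x$, and then using the $\beta_1$-fully faithfulness of $b_1$ (hypothesis 2) to descend it to $f_1 z\Ra x$.

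The 2-dimensional part of the universal property is entirely analogous: given two candidate lifts $z,z'\col X\ra A_1$ together with compatible 2-cells into $A_2$ and $B_1$, the unique 2-cell $z\Ra z'$ is produced by combining the $\eta_2$-fully faithfulness of $f_2$ (a kernel) with the $\alpha_1$- and $\beta_1$-fully faithfulness of the kernels $a_1$ and $b_1$. The main obstacle is the compatibility check for $\theta$ with $\eta_3$: the composite $g_3\theta\col g_3 f_3 a_2 y\Ra 0$ must be shown equal to $\eta_3(a_2 y)$, which requires reading off the compatibility condition on $\gamma$ and pasting it against the four commuting equations of the $3\times 3$ diagram in the correct order. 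Once this coherence is in place, the remaining verifications are routine.
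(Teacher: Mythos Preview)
Your proof is correct, but it takes a longer route than the paper's. You apply Lemma \ref{lemmquatre} once (to the top two rows, using hypotheses 3 and 4) to reduce both conclusions to the relative-pullback property of $\varphi_1$, and then you verify that relative pullback directly by hand using hypotheses 1, 2 and 5. The paper instead applies Lemma \ref{lemmquatre} \emph{twice}: first to the left two columns (transposed orientation), where hypotheses 1, 2 and 5 play exactly the roles of the lemma's premises and give the relative-pullback property of $\varphi_1$ immediately; then, after transporting the resulting relative-pullback data along the isomorphism $\psi_2$, a second application to the top two rows (using hypotheses 3 and 4) yields $(f_1,\eta_1)=\Ker g_1$. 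Your direct verification is essentially a bespoke re-derivation of the first of these two applications, so it works, but it duplicates effort already packaged in Lemma \ref{lemmquatre}. The paper's approach is shorter and makes the symmetry of the argument visible; your approach has the minor virtue of making the role of each hypothesis explicit at the level of elements, at the cost of the coherence check for $\theta$ that you correctly flag as the main obstacle.
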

	
		\begin{proof}
			By Lemma \ref{lemmquatre}, conditions 1, 2 and 5 imply
			that $\varphi_1$ is a pullback relative to 
			$g_3\beta_1$ and $\eta_3 a_2\circ g_3\varphi_2$. Thanks to the isomorphism
			$\psi_2$, $\varphi_1$ is also a pullback relative to 
			$\gamma_1g_1\circ c_2\psi_1^{-1}$ and $c_2\eta_2$ and, by Lemma
			\ref{lemmquatre}, conditions
			3 and 4 imply that $(f_1,\eta_1)=\Ker g_1$.
		\end{proof}
		
	\begin{coro}[Relative kernels lemma]\label{lemmnoyrel}%
	\index{lemma!relative kernels}\index{relative kernels lemma}
		Let be the following diagram, where the lower two thirds and the
		upper two thirds commute.
		\begin{xym}\xymatrix@=40pt{
			A_1\ar[r]^{f_1}\ar[d]_{a_1}\rruppertwocell<9>^0{^<-2.7>\eta_1\;}
				\drtwocell\omit\omit{_{\;\,\varphi_1}}
				\ddlowertwocell<-9>_0{_<2.7>\alpha_1}
			&B_1\ar[r]^{g_1}\ar[d]_(0.38){b_1}\ar@/_1.89pc/[dd]_(0.56)0
				\drtwocell\omit\omit{_{\;\,\psi_1}}
			&C_1\ar[d]^{c_1}\dduppertwocell<9>^0{^<-2.7>\gamma_1}
			\\ A_2\ar[r]^(0.4){f_2}\ar[d]_{a_2}\ar@/^1.87pc/[rr]^(0.55)0
				\drtwocell\omit\omit{_{\;\,\varphi_2}}
				\ddlowertwocell<-9>_0{_<2.7>\alpha_2}
			&B_2\ar[r]^{g_2}\ar[d]|{\,b_2}\drtwocell\omit\omit{_{\;\,\psi_2}}
				\ar@{}[u]_(0.25){\,\dir{=>}\,\eta_2}\ar@/^1.89pc/[dd]^(0.44)0
				\ar@{}[l]^(0.25){\txt{$\underset{\;\;\beta_1}{\dir{=>}}$}}
			&C_2\ar[d]^{c_2}\dduppertwocell<9>^0{^<-2.7>\gamma_2}
			\\ A_3\ar[r]_{f_3}\ar[d]_{a_3}\ar@/_1.87pc/[rr]_(0.45)0
				\drtwocell\omit\omit{_{\;\,\varphi_3}}
			&B_3\ar[r]_(0.6){g_3}\ar[d]^(0.62){b_3}\drtwocell\omit\omit{_{\;\,\psi_3}}
				\ar@{}[d]_(0.25){\eta_3\,\dir{=>}}
				\ar@{}[r]^(0.25){\txt{$\overset{\beta_2\;\;}{\dir{=>}}$}}
			&C_3\ar[d]^{c_3}
			\\ A_4\ar[r]_{f_4}\rrlowertwocell<-9>_0{_<2.7>\;\,\eta_4}
			&B_4\ar[r]_{g_4}
			&C_4
		}\end{xym}
		Let us assume that the following conditions hold:
		\begin{enumerate}
			\item $(a_1,\alpha_1)=\Ker (a_2,\alpha_2)$;
			\item $(b_1,\beta_1)=\Ker (b_2,\beta_2)$;
			\item $(c_1,\gamma_1)=\Ker (c_2,\gamma_2)$;
			\item $(f_2,\eta_2)=\Ker g_2$;
			\item $(f_3,\eta_3)=\Ker g_3$;
			\item $(f_4,\eta_4)=\Ker g_4$.
		\end{enumerate}
		Then $(f_1,\eta_1)=\Ker g_1$.
	\end{coro}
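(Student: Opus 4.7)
I would reduce the statement to the restricted kernels lemma (Proposition~\ref{lemmnoyrestr}) applied to a $3\times 3$ sub-diagram built from our given $3\times 4$ one. Concretely: keep the top two rows unchanged, and replace the third and fourth rows by the row of kernel objects $Ka_3$, $Kb_3$, $Kc_3$ of the three bottom vertical arrows (all existing since $\C$ is assumed to have every kernel).

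The first task is the construction. Using the 2-arrows $\alpha_2$, $\beta_2$, $\gamma_2$ and the universal property of $k_{a_3}$, $k_{b_3}$, $k_{c_3}$, I obtain factorisations $\tilde a_2 : A_2 \to Ka_3$, $\tilde b_2 : B_2 \to Kb_3$, $\tilde c_2 : C_2 \to Kc_3$ together with canonical 2-arrows realising $a_2\simeq k_{a_3}\tilde a_2$, etc. Next, using the 2-arrow $\beta_3$ and the 2-arrow $a_3 k_{a_3}\Rightarrow 0$ built into the kernel $Ka_3$, the composite $b_3 f_3 k_{a_3}$ is canonically equivalent to $0$, so the universal property of $Kb_3$ produces $\tilde f_3 : Ka_3 \to Kb_3$; dually $\tilde g_3 : Kb_3 \to Kc_3$ and an induced 2-arrow $\tilde \eta_3 : \tilde g_3 \tilde f_3 \Rightarrow 0$. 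Commutativity of the resulting $3\times 3$ diagram is obtained by unfolding the commutativity of the original $3\times 4$ diagram and using the faithfulness of the kernel inclusions.

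The second task is to verify the five hypotheses of Proposition~\ref{lemmnoyrestr}. By Corollary~\ref{coronoyrelnoy}, the relative kernel hypotheses (1), (2), (3) translate into the absolute kernel statements $(a_1,\tilde\alpha_1)=\Ker\tilde a_2$, $(b_1,\tilde\beta_1)=\Ker\tilde b_2$, $(c_1,\tilde\gamma_1)=\Ker\tilde c_2$ for suitable 2-arrows; the first two give conditions (1) and (2) of the restricted lemma, and the third yields condition (3) via Proposition~\ref{clasproprelker}. Condition (4), $(f_2,\eta_2)=\Ker g_2$, is our hypothesis (4). For condition (5), i.e.\ that $\tilde f_3$ is $\tilde\eta_3$-fully 0-faithful, one argues as follows: our hypothesis (5) together with Proposition~\ref{claspropker} gives that $f_3$ is fully 0-faithful, and since $k_{b_3}$ is faithful and $k_{a_3}$ is $\kappa_{a_3}$-fully faithful (by definition of the kernel), any candidate 2-arrow $\tilde f_3 a \Rightarrow 0$ descends through $k_{b_3}$ to one for $f_3 k_{a_3} a \Rightarrow 0$, is trivialised there by full 0-faithfulness of $f_3$, and lifts uniquely back through $k_{a_3}$.

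Applying Proposition~\ref{lemmnoyrestr} to this $3\times 3$ diagram then yields $(f_1,\eta_1)=\Ker g_1$, which is the conclusion. \textbf{The main obstacle} I expect is the technical verification that $\tilde f_3$ inherits full 0-faithfulness from $f_3$, and more generally keeping careful track of the many 2-arrows so that the $3\times 3$ diagram one feeds into Proposition~\ref{lemmnoyrestr} really does commute in the sense required by that proposition; this is essentially a bookkeeping exercise with the universal properties of the kernels $Ka_3$, $Kb_3$, $Kc_3$.
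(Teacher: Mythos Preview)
Your overall construction --- replacing rows 3 and 4 by the row of kernels $Ka_3,\,Kb_3,\,Kc_3$ and then feeding the resulting $3\times 3$ diagram into Proposition~\ref{lemmnoyrestr} --- is exactly what the paper does. The use of Corollary~\ref{coronoyrelnoy} to translate hypotheses (1)--(3) into ordinary kernel statements for $\tilde a_2,\tilde b_2,\tilde c_2$ is also correct.

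The gap is in your verification of condition (5), i.e.\ that $\tilde f_3$ is $\tilde\eta_3$-fully $0$-faithful. Your claim that ``hypothesis (5) together with Proposition~\ref{claspropker} gives that $f_3$ is fully $0$-faithful'' is simply wrong: Proposition~\ref{claspropker} says $f_3$ is fully $0$-faithful iff $\Ker f_3\simeq 0$, whereas hypothesis (5) says $f_3$ \emph{is} a kernel, which is a different thing. What you actually get (via Corollary~\ref{kerkerzer} and Proposition~\ref{clasproprelker}) is only that $f_3$ is $\eta_3$-fully $0$-faithful. With that corrected, your descend-and-lift argument still has a hole: after descending a 2-arrow $\beta\colon\tilde f_3 a\Rightarrow 0$ to $\beta'\colon f_3(k_{a_3}a)\Rightarrow 0$ and using the $\eta_3$-relative property of $f_3$ to obtain $\alpha'\colon k_{a_3}a\Rightarrow 0$, you then need to lift $\alpha'$ back through $k_{a_3}$. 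But $k_{a_3}$ is only $\kappa_{a_3}$-fully faithful, so you must check that $\alpha'$ is compatible with $\kappa_{a_3}$, i.e.\ that $a_3\alpha'=\kappa_{a_3}a$. Nothing in your sketch establishes this; tracing it through, one ends up needing to cancel $f_4$, and for that one needs $f_4$ to be $\eta_4$-fully faithful --- which is exactly hypothesis (6), a hypothesis you never invoke.

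The paper avoids this delicate point by applying Proposition~\ref{lemmnoyrestr} \emph{twice}: first to the diagram with rows $Ka_3\!-\!Kb_3\!-\!Kc_3$, $A_3\!-\!B_3\!-\!C_3$, $A_4\!-\!B_4\!-\!C_4$ (using hypotheses (5) and (6)) to obtain the much stronger conclusion $(\tilde f_3,\tilde\eta_3)=\Ker\tilde g_3$, which immediately gives condition (5); and then a second time to the diagram with rows $A_1\!-\!B_1\!-\!C_1$, $A_2\!-\!B_2\!-\!C_2$, $Ka_3\!-\!Kb_3\!-\!Kc_3$, which is your diagram. So your plan becomes correct once you replace your ad hoc argument for condition (5) by this first application of the restricted kernels lemma.
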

	
		\begin{proof}
			We decompose the diagram in two parts to wich we apply
			the kernels lemma.  We factor $a_2$ by taking
			the kernel of $a_3$, which gives the following diagram, where
			$a''\bar{\alpha}_1\circ\alpha a_1=\alpha_1$
			and $\bar{\alpha}_2a'\circ a_3\alpha=\alpha_2$.
			\begin{xym}\label{diagsittesuu}\xymatrix@C=20pt@R=40pt{
				A_1\ar[rr]^-{a_1}\ar@/_0.78pc/[drrr]_0\ar@/^2pc/[rrrr]^0
				&{}\rrtwocell\omit\omit{^<-2.7>\alpha_1\;\,}
				&A_2\ar[rr]^-{a_2}\ar[dr]_{a'}\ar@/^2pc/[rrrr]^0
					\rrtwocell\omit\omit{_<4>\alpha}
					\ar@{}[dll]|(0.32){\bar{\alpha}_1\!\dir{=>}}
				&{}\rrtwocell\omit\omit{^<-2.7>\alpha_2\;\;}
				&A_3\ar[rr]^-{a_3}\ar@{}[drr]|(0.32){\dir{=>}\!\!\bar{\alpha}_2}
				&{}
				&A_4
				\\ &&&Ka_3\ar[ur]_{a''}\ar@/_0.78pc/[urrr]_0 &&&{}
			}\end{xym}
			We do the same for the other two columns, which gives respectively
			$b'$, $b''$, $\bar{\beta}_1$, $\bar{\beta}_2$, $\beta$, and
			$c'$, $c''$, $\bar{\gamma}_1$, $\bar{\gamma}_2$, $\gamma$,
			satisfying the corresponding conditions.  By the universal property
			of kernels, these constructions induce $f$, $\varphi'$
			and $\varphi''$ (see the following diagrams) which make commute the left
			half of diagram \ref{diagsittesy} and such that
			\begin{eqn}
				\varphi''a'\circ b''\varphi'\circ\beta f_2
				= f_3\alpha\circ\varphi_2,
			\end{eqn}
			as well as $g$, $\psi'$ and $\psi''$, which make commute the right half
			of this diagram, and such that
			\begin{eqn}
				\psi''b'\circ c''\psi'\circ\gamma g_2
				= g_3\beta\circ\psi_2.
			\end{eqn}
			Finally, the universal property of the kernel of $c_3$ induces a 2-arrow
			$\eta\col gf\Ra 0$ such that $\eta_3 a''\circ g_3\varphi''\circ\psi'' f
			= c''\eta$.
			Therefore the restricted kernels lemma applies to the following diagram, 
			and $(f,\eta)=\Ker g$.
			\begin{xym}\label{diagsittesy}\xymatrix@=40pt{
				Ka_3\ar[r]^{f}\ar[d]_{a''}\rruppertwocell<9>^0{^<-2.7>\eta\;}
					\drtwocell\omit\omit{_{\;\,\varphi''}}
					\ddlowertwocell<-9>_0{_<2.7>\bar{\alpha}_2}
				&Kb_3\ar[r]^{g}\ar[d]^{b''}\ar@/^1.89pc/[dd]^(0.44)0
					\drtwocell\omit\omit{_{\;\,\psi''}}
				&Kc_3\ar[d]^{c''}\dduppertwocell<9>^0{^<-2.7>\bar{\gamma}_2}
				\\ A_3\ar[r]_{f_3}\ar[d]_{a_3}\ar@/_1.87pc/[rr]_(0.45)0
					\drtwocell\omit\omit{_{\;\,\varphi_3}}
				&B_3\ar[r]_(0.62){g_3}\ar[d]^(0.62){b_3}
					\drtwocell\omit\omit{_{\;\,\psi_3}}
					\ar@{}[d]_(0.25){\eta_3\,\dir{=>}}
					\ar@{}[r]^(0.22){\txt{$\overset{\bar{\beta}_2\;\;}{\dir{=>}}$}}
				&C_3\ar[d]^{c_3}
				\\ A_4\ar[r]_{f_4}\rrlowertwocell<-9>_0{_<2.7>\;\,\eta_4}
				&B_4\ar[r]_{g_4}
				&C_4
			}\end{xym}
			Finally, by Corollary \ref{coronoyrelnoy} applied to diagram
			\ref{diagsittesuu} and to the corresponding diagrams for the other two
			columns, $(a_1,\bar{\alpha_1})=\Ker a'$, $(b_1,\bar{\beta_1})=\Ker b'$
			and $(c_1,\bar{\gamma_1})=\Ker c'$.  So the restricted kernels lemma applies
			to the following diagram (which commutes, thanks to the faithfulness of $b''$ and $c''$)
			and $(f_1,\eta_1)=\Ker g_1$.\qedhere
			\begin{xym}\xymatrix@=40pt{
				A_1\ar[r]^{f_1}\ar[d]_{a_1}\rruppertwocell<9>^0{^<-2.7>\eta_1\;}
					\drtwocell\omit\omit{_{\;\,\varphi_1}}
					\ddlowertwocell<-9>_0{_<2.7>\bar{\alpha}_1}
				&B_1\ar[r]^{g_1}\ar[d]_(0.38){b_1}\ar@/_1.89pc/[dd]_(0.56)0
					\drtwocell\omit\omit{_{\;\,\psi_1}}
				&C_1\ar[d]^{c_1}\dduppertwocell<9>^0{^<-2.7>\bar{\gamma}_1}
				\\ A_2\ar[r]^(0.4){f_2}\ar[d]_{a'}\ar@/^1.87pc/[rr]^(0.55)0
					\drtwocell\omit\omit{_{\;\,\varphi'}}
				&B_2\ar[r]^{g_2}\ar[d]_{b'}\drtwocell\omit\omit{_{\;\,\psi'}}
					\ar@{}[u]_(0.25){\,\dir{=>}\,\eta_2}
					\ar@{}[l]^(0.22){\txt{$\underset{\;\;\bar{\beta}_1}{\dir{=>}}$}}
				&C_2\ar[d]^{c'}
				\\ Ka_3\ar[r]_{f}\rrlowertwocell<-9>_0{_<2.7>\;\,\eta}
				&Kb_3\ar[r]_{g}
				&Kc_3
			}\end{xym}
		\end{proof}

\section{Exact sequences and Puppe-exact $\Gpdp$-categories}

\subsection{Exact sequences}

	The notion of exact sequence for symmetric 2-groups appears in
	\cite{Vitale2002a}, in the form of conditions 3 and 4 of Proposition
	\ref{caracdexbis}.  Marco Grandis also defined a notion of (homotopical) exactness
	in a more general context \cite{Grandis2001b}.
	
	In order that the two dual definitions of exact sequence be equivalent in dimension 2,
	we must assume that in $\C$ the adjunction $\Coker\adj\Ker$ is idempotent
	(as Marco Grandis \cite{Grandis2001b} shows), i.e.\ that $\C$ be
	$\Ker$-idempotent (Marco Grandis uses the term \emph{semistable} for a $\Ker$-idempotent
	h-category). As we noticed at the beginning of Subsection
	\ref{soussectdualsysnoyquot}, this adjunction is always idempotent in dimension 1,
	but this is not the case any more in dimension 2.
	
	\begin{pon}\label{defsequencex}
		Let us assume that $\C$ is $\Ker$-idempotent. Let us consider the following diagram,
		where we have constructed $(k,\kappa)=\Ker b$ and $(q,\zeta)=\Coker a$;
		this induces arrows $a'$ and $b'$ and 2-arrows $\mu$ and $\nu$ such that
		$\kappa a'\circ b\mu=\alpha = b'\zeta\circ\nu a$.
		\begin{xym}\label{diagsuitdex}\xymatrix@R=40pt@C=20pt{
			A\ar[rr]^a\ar[dr]_-{a'}\ar@/^2pc/[rrrr]^0\rrtwocell\omit\omit{_<4>\mu}
			& {}\rruppertwocell\omit{^<-2.7>\alpha}
			& B\ar[rr]^b\ar[dr]_-{q}\rrtwocell\omit\omit{_<4>\nu}
			& {}
			& C
			\\ &{Kb}\ar[ur]_-{k}
			&&{Qa}\ar[ur]_-{b'}
		}\end{xym}
		The following conditions are equivalent; when they hold, we say that
		the sequence $(a,\alpha,b)$ is \emph{exact (at $B$)}.\index{exact!sequence}%
		\index{sequence!exact}
		\begin{enumerate}
			\item There exists $\omega\col qk\Ra 0$ such that $(k,\omega)=\Ker q$,
				$\omega a'\circ q\mu = \zeta$ and $b'\omega\circ \nu k=\kappa$.
			\item There exists $\omega\col qk\Ra 0$ such that $(q,\omega)=\Coker k$,
				$\omega a'\circ q\mu = \zeta$ and $b'\omega\circ \nu k=\kappa$.
		\end{enumerate}
	\end{pon}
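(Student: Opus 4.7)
The plan is to derive the equivalence directly from $\Ker$-idempotence, by observing that both conditions assert the existence of a single 2-arrow $\omega\col qk\Ra 0$ satisfying exactly the same two compatibility equations $\omega a'\circ q\mu=\zeta$ and $b'\omega\circ\nu k=\kappa$, with only the universal-property clause (kernel vs.\ cokernel) differing. The strategy is therefore to carry the same $\omega$ across the equivalence, using idempotence to upgrade a kernel statement to a cokernel statement and vice versa.

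First I would tackle the implication 1 $\Rightarrow$ 2. Assume $\omega\col qk\Ra 0$ is chosen so that the compatibility equations hold and $(k,\omega)=\Ker q$. The key observation is that $q$ is here the specific arrow $q_a=\Coker a$, hence a cokernel. By the second (equivalent) formulation of $\Ker$-idempotence in the definition of $\Ker$-idempotent $\Gpdp$-category, every cokernel is canonically the cokernel of its kernel; applied to $q$ together with its kernel $(k,\omega)$ just obtained, this produces $(q,\omega)=\Coker k$ for the very same $\omega$. The compatibility equations transfer verbatim, so condition~2 holds.

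The converse 2 $\Rightarrow$ 1 is handled dually. Assume $\omega\col qk\Ra 0$ satisfies the compatibility equations and $(q,\omega)=\Coker k$. This time the relevant observation is that $k=k_b=\Ker b$ is a kernel. By the first formulation of $\Ker$-idempotence, every kernel is canonically the kernel of its cokernel; applied to $k$ together with its cokernel $(q,\omega)$, this yields $(k,\omega)=\Ker q$, again for the same $\omega$. The compatibility equations are unchanged, so condition~1 holds.

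There is essentially no obstacle, because the entire content of the proposition is packaged into the definition of $\Ker$-idempotence. The only subtle point to articulate is that the $\omega$ appearing in each condition is carried over \emph{unchanged} into the other condition, so that the two compatibility equations need no verification or rewriting; the proof thus reduces to invoking the appropriate half of the idempotence condition on $q$ in one direction and on $k$ in the other.
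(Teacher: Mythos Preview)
Your proof is correct and follows essentially the same approach as the paper's own proof: invoke $\Ker$-idempotence on the cokernel $q$ (with its just-constructed kernel $(k,\omega)$) for $1\Rightarrow 2$, and on the kernel $k$ (with its just-constructed cokernel $(q,\omega)$) for $2\Rightarrow 1$, carrying the same $\omega$ and its compatibility equations across unchanged.
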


		\begin{proof}
			Condition 1 implies condition 2 because, if $(k,\omega)=\Ker q$, as
			$q$ is a cokernel, $(q,\omega)=\Coker k$, by $\Ker$-idempotence.
			Dually, condition 2 implies condition 1.
		\end{proof}
	
	The first condition means that the kernel of $b$ is the (faithful) image of $a$
	(the kernel of the cokernel of $a$). The second condition means that the cokernel of $a$ is
	the full image of $b$ (the cokernel of the kernel of $b$).
	
	The special case where $B$ is $0$ gives an exactness notion for loops.
	
	\begin{df}\index{loop!exact}\index{exact!loop}
		We say that a loop $\pi\col 0\Ra 0\col A\ra B$ is \emph{exact} if the sequence
		\begin{xym}\xymatrix@=40pt{
			A\ar[r]\rruppertwocell<9>^0{^<-2.7>\pi}
			&0\ar[r] &B
		}\end{xym}
		is exact.
	\end{df}

	As in dimension 1, if the first arrow of the sequence is the kernel of the second,
	then the sequence is exact.
	
	\begin{pon}\label{qquessequencex}
		Let us assume that $\C$ is $\Ker$-idempotent. In the situation of diagram
		\ref{diagsuitdex}, if $(a,\alpha)=\Ker b$, then the sequence $(a,\alpha,b)$ is exact.
		In particular, the loop $\omega_A\col 0\Ra 0\col \Omega A\ra A$ is exact.
	\end{pon}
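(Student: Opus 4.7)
The plan is to combine the hypothesis with $\Ker$-idempotence: since $(a,\alpha)=\Ker b$ gives a second realisation of $\Ker b$ on top of the one $(k,\kappa)$ appearing in diagram \ref{diagsuitdex}, the universal property of kernels forces the induced comparison $a'\col A\to Kb$ to be an equivalence.  On the other hand, $\Ker$-idempotence applied to the kernel $a$ yields $(a,\zeta)=\Ker q$, where $(q,\zeta)=\Coker a$.  The sought-for 2-arrow $\omega\col qk\Ra 0$ will then be defined so as to transport this kernel structure from $a$ to $k$ along the equivalence $a'$.

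First I would take $\omega$ to be the unique 2-arrow satisfying $\omega a'\circ q\mu=\zeta$; existence and uniqueness are ensured by the fact that, $a'$ being an equivalence, whiskering with $a'$ induces a bijection on the relevant 2-arrow sets.  This choice produces for free one of the two compatibility conditions of condition~1 of Proposition \ref{defsequencex}.

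Next I would show $(k,\omega)=\Ker q$ by transferring the universal property of $(a,\zeta)=\Ker q$ across $a'$.  Given a test pair $(x,\beta\col qx\Ra 0)$, the kernel property of $a$ provides a factorisation $x\Ra a x''$, which one composes with $\mu x''$ to factor through $k$ as $x\Ra k(a'x'')$; the uniqueness at the level of 2-arrows transfers analogously, using faithfulness of $a$ and the defining equation of $\omega$ to convert compatibility with $\omega$ into compatibility with $\zeta$.

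The more delicate step will be the remaining compatibility $b'\omega\circ\nu k=\kappa$.  I would establish this by whiskering both sides with $a'$ and using the interchange identity $\nu ka'\circ b\mu=b'q\mu\circ\nu a$, combined with the defining relations $\omega a'\circ q\mu=\zeta$, $b'\zeta\circ\nu a=\alpha$ and $\kappa a'\circ b\mu=\alpha$; these chain together to give $(b'\omega\circ\nu k)a'=\kappa a'$, and the equivalence $a'$ can then be cancelled.  Finally, the statement about $\omega_A$ is a direct instance of the main claim: by the very construction of $\Omega$ we have $(0^{\Omega A},\omega_A)=\Ker 0_A$, so applying the proposition to the sequence $\Omega A\overset{0^{\Omega A}}{\longrightarrow}0\overset{0_A}{\longrightarrow} A$ equipped with $\omega_A$ yields its exactness at $0$.
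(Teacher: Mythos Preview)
Your proof is correct and follows essentially the same approach as the paper: observe that $a'$ is an equivalence, invoke $\Ker$-idempotence to get $(a,\zeta)=\Ker q$, and transport this along $a'$ to obtain the desired $\omega$. The paper's proof is extremely terse (three sentences, leaving the verification of the two compatibility equations to the reader), whereas you spell out the construction of $\omega$, the transfer of the kernel property, and the interchange-law computation for $b'\omega\circ\nu k=\kappa$; your treatment of the special case $\omega_A$ is also correct.
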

	
		\begin{proof}
			In this case, $a'$ is an equivalence.  Now, by $\Ker$-idempotence,
			$(a,\zeta)=\Ker q$. So there is a 2-arrow $\omega$ satisfying the required
			conditions such that $(k,\omega)=\Ker q$.
		\end{proof}

	On the other hand, if we start with an arrow $f$, we don't get an exact sequence
	$0\ra \Ker f\overset{k}\ra A\overset{f}\ra B\overset{q}\ra \Coker f\ra 0$ any more.
	This sequence is exact at $A$ and $B$ but not at $\Ker f$ nor at $\Coker f$,
	because the kernel of the kernel of $f$ is not in general $0$ (since $\Ker k$ is $0$ if and only if $k$ is fully 0-faithful).

	\begin{lemm}\label{kerkeromegcodom}
		Let be $f\col A\ra B$.  If $(K,k,\kappa)$ is a kernel of $f$, then
		there exist $d\col\Omega B\ra K$ and $\delta\col kd\Ra 0$ such that 
		\begin{eqn}\label{eqlajusosu}
			\omega_B = f\delta\circ \kappa^{-1}d.
		\end{eqn}
		Moreover, for all $d$, $\delta$ satisfying the previous equation,
		$(\Omega B,d,\delta)$ is a kernel of $k$.
		\begin{xym}\xymatrix@=40pt{
			{\Omega B}\ar[r]^{d}\rrlowertwocell<-9>_0{_<2.7>\,\delta}
			&{K}\ar[r]^{k}\rruppertwocell<9>^0{^<-2.7>\kappa\;}
			&A\ar[r]_f
			&B
		}\end{xym}
	\end{lemm}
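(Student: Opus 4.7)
The plan is to establish the lemma in two parts. For the existence of $d$ and $\delta$, I apply condition 1 of the universal property of $(K,k,\kappa)=\Ker f$ (Definition \ref{defkernel}) to the pair $(a,\beta)=(0\col\Omega B\to A,\ \omega_B\col f\cdot 0=0\Ra 0)$. This produces an arrow $d\col\Omega B\to K$ and a 2-arrow $\alpha_0\col 0\Ra kd$ satisfying $\omega_B=\kappa d\circ f\alpha_0$. Setting $\delta\eqdef\alpha_0^{-1}\col kd\Ra 0$ and rearranging (using that $\kappa^{-1}d\circ\kappa d=1_{fkd}$) yields the required relation \ref{eqlajusosu}, equivalently $f\delta=\omega_B\circ\kappa d$.

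For the second part, assume \ref{eqlajusosu} holds; I verify the two conditions of Definition \ref{defkernel} for $(\Omega B,d,\delta)$ to be $\Ker k$. For the factorization condition, let $a\col X\to K$ and $\beta\col ka\Ra 0$ be given, and form the loop $\gamma\eqdef f\beta\circ\kappa^{-1}a\col 0\Ra 0\col X\to B$. By the equivalence $\C(X,\Omega B)\simeq\C(X,B)(0,0)$ of \ref{equadjomsi}, there is a unique $a'\col X\to\Omega B$ with $\omega_B a'=\gamma$. Whiskering \ref{eqlajusosu} by $a'$ gives $f\delta a'=\gamma\circ\kappa(da')$, and a short calculation then shows the 2-arrow $\xi\eqdef(\delta a')^{-1}\circ\beta\col ka\Ra kda'$ satisfies $f\xi=(\kappa(da'))^{-1}\circ\kappa a$, i.e.\ $\xi$ is compatible with $\kappa$. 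By condition 2 of the universal property of $\Ker f$, $\xi$ lifts uniquely to $\alpha\col a\Ra da'$ with $k\alpha=\xi$, and then $\beta=\delta a'\circ k\alpha$ as required.

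For the second condition (that $d$ be $\delta$-fully faithful), let $\alpha\col da_1\Ra da_2$ be compatible with $\delta$. Uniqueness of any lift $\alpha'\col a_1\Ra a_2$ with $d\alpha'=\alpha$ is automatic, since $\Omega B$ is discrete (being $\Ker 0_B$ with faithful structure map to $0$). For existence, compatibility gives $k\alpha=(\delta a_2)^{-1}\circ\delta a_1$; applying $f$ and substituting $f\delta a_i=\omega_B a_i\circ\kappa(da_i)$ from \ref{eqlajusosu} yields one expression for $fk\alpha$, while the interchange law for the 2-cells $\kappa\col fk\Ra 0$ and $\alpha$ independently gives $fk\alpha=(\kappa(da_2))^{-1}\circ\kappa(da_1)$. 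Equating the two expressions and cancelling the $\kappa(da_i)$ factors forces $\omega_B a_1=\omega_B a_2$; the bijection $\C(X,\Omega B)\leftrightarrow\C(X,B)(0,0)$ (a bijection of sets, because $\Omega B$ is discrete) then yields $a_1=a_2$, after which the compatibility becomes $\delta a_1\circ k\alpha=\delta a_1$, so $k\alpha=1$ and $\alpha=1_{da_1}$ by faithfulness of $k$; thus $\alpha'\eqdef 1_{a_1}$ is the unique lift. The main obstacle is precisely this cancellation: extracting $\omega_B a_1=\omega_B a_2$ from the interplay of compatibility of $\alpha$ with $\delta$, the interchange law, and \ref{eqlajusosu} is what ties the kernel of the kernel inclusion $k$ structurally to $\Omega B$.
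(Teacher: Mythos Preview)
Your proof is correct, but it follows a different route from the paper's. The paper dispatches the second part in one line by invoking Lemma~\ref{lemmun}: since $(0_{\Omega B},\omega_B^{-1})=\Ker 0_B$ and $(k,\kappa)=\Ker f$, that lemma (with $Y=0$) says immediately that the square containing $\delta$ is a pullback, which is exactly the statement $(\Omega B,d,\delta)=\Ker k$. This structural argument hides all the bookkeeping inside the cube lemma.

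Your approach instead verifies the two clauses of Definition~\ref{defkernel} by hand, using the bijection $\C(X,\Omega B)\simeq\C(X,B)(0,0)$ from \eqref{equadjomsi} and the monoloop property of $\omega_B$ (Lemma~\ref{omegpepzc}). This is more elementary and self-contained---it does not depend on the relative-pullback machinery of Section~3.1---and it makes explicit where each hypothesis is consumed: the equivalence \eqref{equadjomsi} supplies the factorising arrow $a'$, the $\kappa$-full-faithfulness of $k$ supplies the factorising 2-arrow $\alpha$, and the discreteness of $\Omega B$ together with $\omega_B$ being a monoloop handles the uniqueness clause. The paper's proof is shorter once Lemma~\ref{lemmun} is available, but yours would work equally well if this lemma were placed earlier in the exposition, before the cube-lemma apparatus is developed.
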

	
		\begin{proof}
			First, the universal property of $(k,\kappa)=\Ker f$ induces an arrow
			$d$ and a 2-arrow $\delta$ satisfying \ref{eqlajusosu}.
			Then, by Lemma \ref{lemmun} applied to the following diagram,
			$\delta$ is a pullback, since $(0^B,\omega_B^{-1})=\Ker 0_B$ and
			$(k,\kappa)=\Ker f$.\qedhere
			\begin{xym}\xymatrix@C=40pt@R=15pt{
				{\Omega B}\ar@{-->}[dd]_d\ar[r]^0\ddrtwocell\omit\omit{^\delta}
					\drruppertwocell^0<15>{^<-5.6>{\omega_B^{-1}\;\;\;\;\;}}
				&0\ar[dr]^0\ar[dd]^0
				\\ &&B
				\\ K\ar[r]_{k}\urrlowertwocell_0<-15>{_<5.6>{\;\,\kappa}}
				&A\ar[ur]_{f}
			}\end{xym}
		\end{proof}
	
	The 2-dimensionnel equivalent of the 1-dimensional exact sequence
	$0\ra \Ker f\overset{k}\ra A\overset{f}\ra B\overset{q}\ra \Coker f\ra 0$ is the sequence of the following proposition.
	It is a truncated version (because we always have $\Omega\Omega A\simeq 0$ and
	$\Sigma\Sigma A\simeq 0$) of the Puppe exact sequence
	in homotopy theory.  It was studied in the context of $\Gpdp$-categories
	with a weaker notion of kernel by Gabriel and Zisman
	\cite[Chapter~5]{Gabriel1967a} and, in a more general context, for the standard
	homotopy kernel, by Marco Grandis 
	\cite{Grandis1992a,Grandis1994a,Grandis1997a}. The Puppe sequence for
	symmetric 2-groups is described by Dominique Bourn and Enrico
	Vitale \cite[Proposition 2.6]{Bourn2002a}.
	Let us recall that we construct the pip of $f$ as 
	$\Pip f\eqdef\Omega\Ker f$ and, dually, the copip of $f$ as
	$\Copip f\eqdef\Sigma \Coker f$.  In the following proposition, we abbreviate
	$\Pip f$ by $Pf$, $\Ker f$ by $Kf$, $\Coker f$ by $Qf$ and $\Copip f$ by $Rf$,
	and we denote by $\mu_f$ the composite $\zeta k\circ q\kappa^{-1}$.

	\begin{pon}[Puppe long exact sequence]\label{sequencePuppe}%
	\index{long exact sequence!Puppe}\index{Puppe long exact sequence}
		Let $f\col A\ra B$ be in $\C$, that we assume to be $\Ker$-idempotent.
		Then there exist $d$, $\delta$, $\varepsilon$
		and $d'$, $\delta'$, $\varepsilon'$ such that the following sequence is exact.
		Each arrow to the left of $f$,
		with the adjacent 2-arrow to $0$, is the kernel of the following arrow;
		dually, each arrow to the right of $f$, with the adjacent 2-arrow to $0$,
		is the cokernel of the previous arrow.
		Moreover, the following equalities hold:
		\begin{enumerate}
			\item $\varepsilon\Omega k\circ d\Omega\kappa^{-1}=\omega_{Kf}^{-1}$;
			\item $\delta\Omega f\circ k\varepsilon^{-1}=\omega_A$;
			\item $\kappa d\circ f\delta^{-1} = \omega_B^{-1}$;
			\item $\zeta k\circ q\kappa^{-1} =: \mu_f$;
			\item $\delta' f\circ of \zeta^{-1}=\sigma_A^{-1}$;
			\item $\varepsilon' q\circ (\Sigma f)\delta'^{-1}=\sigma_B$;
			\item $(\Sigma\zeta) of \circ(\Sigma q)\varepsilon'^{-1}=\sigma_{Qf}^{-1}$.
		\end{enumerate}
		\begin{xym}\label{gabzis}\xymatrix@=30pt{
			0\ar[r]^-0\rrlowertwocell<-9>_0{_<2.7>\;\,1_0}
			&Pf\ar[r]^-{\Omega k}\rruppertwocell<9>^0{^<-2.7>\Omega\kappa\;\;\,}
			&\Omega A\ar[r]^-{\Omega f}\rrlowertwocell<-9>_0{_<2.7>\,\varepsilon}
			&\Omega B\ar[r]^-d\rruppertwocell<9>^0{^<-2.7>\delta}
			&Kf\ar[r]^-k\rrlowertwocell<-9>_0{_<2.7>\,\kappa}
			&A\ar[r]^-f
			&{B\cdots}
		}\end{xym}
		\begin{xym}\label{gabzistwo}\xymatrix@=30pt{
			{\cdots A}\ar[r]^-f\rruppertwocell<9>^0{^<-2.7>\zeta}
			&B\ar[r]^-q\rrlowertwocell<-9>_0{_<2.7>\;\,\delta'}
			&Qf\ar[r]^-{d'}\rruppertwocell<9>^0{^<-2.7>\varepsilon'\,}
			&\Sigma A\ar[r]^-{\Sigma f}\rrlowertwocell<-9>_0{_<2.7>\;\;\Sigma\zeta}
			&\Sigma B\ar[r]^-{\Sigma q}\rruppertwocell<9>^0{^<-2.7>1_0\;}
			&Rf\ar[r]^-0
			&0
		}\end{xym}
	\end{pon}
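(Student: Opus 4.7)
The strategy is to build the sequence by iterating Lemma~\ref{kerkeromegcodom} to the left of $f$, dually to its right, arranging that at each interior node the incoming arrow (together with the adjacent $2$-arrow to $0$) is a kernel of the outgoing arrow; exactness at that node is then immediate from Proposition~\ref{qquessequencex}.

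Constructing the left half. Starting from $(k,\kappa)=\Ker f$, a first application of Lemma~\ref{kerkeromegcodom} to $f$ produces $d\col\Omega B\to Kf$ and $\delta\col kd\Ra 0$ satisfying $\omega_B = f\delta\circ\kappa^{-1}d$; this is precisely the third listed equality, and it identifies $(\Omega B,d,\delta)$ with $\Ker k$. A second application of the same lemma, now to $k$ (viewed as an arrow with kernel $(\Omega B,d,\delta)$), produces some arrow $\Omega A\to\Omega B$ together with a $2$-arrow to $0$ after composition with $d$, satisfying the analogous equation with $\omega_A$. I would then identify this arrow with $\Omega f$: since $\Sigma\adj\Omega$ (Subsection~\ref{sssectsigadjom}), the $\Gpd$-functor $\Omega$ preserves kernels, so $(\Omega k,\Omega\kappa)=\Ker(\Omega f)$ and in particular the arrow $\Omega f\col\Omega A\to\Omega B$ is at our disposal. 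Using the ``for all $d,\delta$'' clause of Lemma~\ref{kerkeromegcodom}, it suffices to produce $\varepsilon\col d\,\Omega f\Ra 0$ satisfying the second listed equality, and then $(\Omega f,\varepsilon)$ is automatically $\Ker d$; such an $\varepsilon$ is obtained from the $\kappa$-fully-faithfulness of $k$ applied to the $2$-arrow $\delta\,\Omega f\col kd\,\Omega f\Ra 0$, which is compatible with $\kappa$. This yields equality~2 and exactness at $\Omega B$. Applying $\Omega$ to $(k,\kappa)=\Ker f$ provides $(\Omega k,\Omega\kappa)=\Ker(\Omega f)$ and, via one last use of the same lemma, the first listed equality; exactness at $\Omega A$ is again Proposition~\ref{qquessequencex}, while exactness at $Pf$ amounts to $\Omega k$ being fully $0$-faithful, i.e.\ $\Ker(\Omega k)\simeq 0$. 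The latter holds because $\Ker(\Omega k)\simeq\Omega Pf$ and $Pf=\Omega Kf$ is discrete, so by Lemma~\ref{omegpepzc} and Proposition~\ref{pepclaszfid}, $\Omega Pf\simeq 0$.

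The right half is constructed by the dual argument, starting from $(q,\zeta)=\Coker f$ and iterating the dual of Lemma~\ref{kerkeromegcodom}, which produces $d'$, $\delta'$, $\varepsilon'$, the equalities~5--7, and the exactness at $Qf$, $\Sigma A$, $\Sigma B$, $Rf$. The central exactness at $A$ (respectively at $B$) is immediate from $(k,\kappa)=\Ker f$ (respectively from $(q,\zeta)=\Coker f$, using $\Ker$-idempotence) via Proposition~\ref{qquessequencex}. Equality~4 is just the definition $\mu_f\eqdef\zeta k\circ q\kappa^{-1}$.

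The principal obstacle is the identification, at each iteration step, of the abstract kernel produced by Lemma~\ref{kerkeromegcodom} with the intended $\Omega$-shift (and, on the right, the $\Sigma$-shift); this identification rests on the preservation of kernels by the right adjoint $\Omega$ combined with the uniqueness clause of the lemma, and the bulk of the verification will consist in the careful book-keeping of the resulting $2$-isomorphisms and of the signs appearing in the seven listed equalities.
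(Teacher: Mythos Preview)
Your approach matches the paper's: iterate Lemma~\ref{kerkeromegcodom}, use that $\Omega$ (as a right adjoint) preserves kernels to identify the next step with $\Omega f$ and $(\Omega k,\Omega\kappa)=\Ker(\Omega f)$, and conclude exactness at each node via Proposition~\ref{qquessequencex}. One small correction: the $2$-arrow to which you apply the $\kappa$-full-faithfulness of $k$ must be $\omega_A^{-1}\circ\delta\,\Omega f$, not $\delta\,\Omega f$ alone (compatibility with $\kappa$ uses $\omega_B\,\Omega f = f\omega_A$); also, for exactness at $Pf$ the paper argues more directly via Lemma~\ref{noycodomdisplfid} (since $\Omega A$ is discrete, $\Omega k$ is fully faithful, so $0=\Ker(\Omega k)$), and equality~1 is then deduced from faithfulness of $k$ rather than from a further application of Lemma~\ref{kerkeromegcodom}.
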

	
		\begin{proof}
			Let us deal with the left half; the argument for the right half is dual.
			We will prove that each morphism of the sequence is the kernel of the following morphism;
			the exactness will follow by Proposition \ref{qquessequencex}.
			
			First, $(k,\kappa)=\Ker f$, by definition.  
			Then, by the previous lemma, there exist $d$ and $\delta$ such that
			$(d,\delta)=\Ker k$ and $\omega_B =  f\delta\circ \kappa^{-1}d$.  
			
			Next, $\omega_A^{-1}\circ\delta\Omega f\col kd\Omega f\Ra 0$ is compatible
			with $\kappa$, because $\omega_B\Omega f=f\omega_A$.  So, by the universal
			property of the kernel of $f$, there exists a unique $\varepsilon\col d\Omega f\Ra 0$
			such that
			\begin{eqn}
				\delta\Omega f\circ k\varepsilon^{-1}=\omega_A.
			\end{eqn}
			Then $(\Omega A,\Omega f,\varepsilon)=\Ker d$, by the previous lemma.
			
			Next, $(Pf,\Omega k,\Omega \kappa)=\Ker(\Omega f)$
			because $\Omega$ preserves limits (being right adjoint to $\Sigma$).
			
			Then, $\Omega k$ is fully faithful because the kernel
			of an arrow with discrete co\-do\-main is fully faithful 
			(by Lemma \ref{noycodomdisplfid});
			therefore $0$ is a kernel of $\Omega f$.
			
			To end with, we have $k(d\Omega\kappa\circ\varepsilon^{-1}\Omega k)
			= \omega_A\Omega k = k\omega_{Kf}$.  Since $k$ is faithful, we have 
			$d\Omega\kappa\circ\varepsilon^{-1}\Omega k
			= \omega_{Kf}$.
		\end{proof}
		
	The loops we get by composing the neighbouring 2-arrows in this exact sequence
	are all exact, except perhaps $\mu_f$, which is exact in a good 2-Puppe-exact
	$\Gpdp$-category (Proposition \ref{mufestexact}).
	So, in a good 2-Puppe-exact $\Gpdp$-category, this sequence is
	\emph{perfectly exact}\index{sequence!perfectly exact}%
	\index{perfectly exact sequence} (it is exact at each point and each 2-arrow
	$0\Ra 0$ is exact).

\subsection{Relative exact sequences}

	The notion of relative exact sequence has been introduced 
	by Aurora Del Río, Juan Martínez-Moreno and Enrico Vitale in \cite{Rio2005a}.
	
	\begin{pon}\label{caracrelex}
		Let us assume that $\C$ is $\Ker$-idempotent. Let us consider the
		following diagram, where $\varphi$ and $\alpha$ are compatible, as well as
		$\alpha$ and $\psi$, and where we have constructed $(k,\kappa)=\Ker(b,\psi)$
		and $(q,\zeta)=\Coker(a,\varphi)$.  This induces arrows $a'$ and $b'$
		and 2-arrows $\mu$, $\nu$, $\varphi'$, $\psi'$ such that
		$\kappa a'\circ b\mu=\alpha = b'\zeta\circ\nu a$, $k\varphi'\circ\mu x=\varphi$
		and $\psi'q\circ y\nu=\psi$.
		\begin{xym}\label{diagsuitrelex}\xymatrix@R=40pt@C=20pt{
			X\ar[rr]^x\ar@/^2pc/[rrrr]^0\ar@/_0.78pc/[drrr]_-0
			&{}\rrtwocell\omit\omit{^<-2.7>\varphi\,}
			&A\ar[rr]^a\ar[dr]_-{a'}\ar@/^2pc/[rrrr]^0
				\ar@{}[dll]|(0.28){\dir{=>}\,\varphi'}\rrtwocell\omit\omit{_<4>\mu}
			& {}\rruppertwocell\omit{^<-2.7>\alpha}
			& B\ar[rr]^b\ar[dr]_-{q}\rrtwocell\omit\omit{_<4>\nu}
				\ar@/^2pc/[rrrr]^0
			& {}\rrtwocell\omit\omit{^<-2.7>\psi\,}
			& C\ar[rr]^y\ar@{}[drr]|(0.28){\dir{=>}\!\psi'}
			& {}
			& Y
			\\ {} &&&{K(b,\psi)}\ar[ur]_-{k}
			&&{Q(a,\varphi)}\ar[ur]_-{b'}\ar@/_0.78pc/[urrr]_-0 &&&{}
		}\end{xym}
		The following conditions are equivalent;
		when they hold, we say that the
		 sequence $(x,\varphi,a,\alpha,b,\psi,y)$ is \emph{relative exact at $B$}.%
		\index{sequence!relative exact}\index{relative exact sequence}
		\begin{enumerate}
			\item There exists $\omega\col qk\Ra 0$ such that $(k,\omega)=\Ker q$,
				$\omega a'\circ q\mu = \zeta$ and $b'\omega\circ \nu k=\kappa$.
			\item There exists $\omega\col qk\Ra 0$ such that $(q,\omega)=\Coker k$,
				$\omega a'\circ q\mu = \zeta$ and $b'\omega\circ \nu k=\kappa$.
		\end{enumerate}
	\end{pon}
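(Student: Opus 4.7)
My plan is to mirror the short argument used for Proposition \ref{defsequencex}, exploiting the fact that $\Ker$-idempotence makes the kernel-cokernel adjunction restrict to an equivalence between kernels and normal cofaithful arrows (equivalently, between normal faithful arrows and cokernels). I will prove $1\Rightarrow 2$, with $2\Rightarrow 1$ following by duality.

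First I would observe that, although $k$ is a \emph{relative} kernel and $q$ is a \emph{relative} cokernel, both are still an ordinary kernel and an ordinary cokernel, respectively, of some auxiliary data. Indeed, by the construction of the relative kernel given right after its definition, $k=k_b\circ r$ where $r$ is the root of a certain monoloop. Dually, $q$ is the coroot of a monoloop, which by (the dual of) Proposition \ref{coraccoker} is a cokernel. Hence $q$ is a cokernel in the ordinary sense, and by $\Ker$-idempotence $q$ is normal cofaithful, i.e.\ the canonical comparison arrow identifies $q$ with the cokernel of its own kernel.

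Assuming condition 1, so that $(k,\omega)=\Ker q$, the previous observation immediately gives $(q,\omega)=\Coker k$, with the same 2-arrow $\omega$: this is just the statement that, under $\Ker$-idempotence, the counit $\Coker(\Ker q)\ra q$ is an equivalence whose canonical 2-arrow is precisely the 2-arrow exhibiting $k$ as the kernel of $q$. The two compatibility equations $\omega a'\circ q\mu=\zeta$ and $b'\omega\circ \nu k=\kappa$ are part of the hypothesis of condition 1 and are carried over unchanged to condition 2.

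The main potential obstacle is bookkeeping, namely checking that the canonical 2-arrow produced by $\Ker$-idempotence really is the $\omega$ of condition 1, rather than one differing by an automorphism. This is settled by faithfulness of $k$ (as a kernel) together with the universal property of $q$ as a cokernel: both 2-arrows $qk\Ra 0$ satisfy the same compatibility conditions with $\zeta$ and $\kappa$ (through $\mu$, $\nu$, $a'$ and $b'$), so they are forced to coincide. Once this identification is made, the equivalence of the two conditions is immediate, and the dual argument yields $2\Rightarrow 1$.
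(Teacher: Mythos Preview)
Your overall strategy matches the paper's: reduce to the argument of Proposition \ref{defsequencex} by establishing that the relative kernel $k$ is an ordinary kernel and the relative cokernel $q$ is an ordinary cokernel, then invoke $\Ker$-idempotence. The gap lies in how you justify this reduction.

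The claim that ``$q$ is the coroot of a monoloop'' is not correct. Dualising the construction you cite gives $q=r'\circ q_a$, a coroot composed with the ordinary cokernel $q_a$ of $a$, not a coroot alone; coroots are only the special relative cokernels $\Coroot\pi=\Coker(0,\pi)$ (Proposition \ref{caspartickerrel}), not the general case. Even if you intended to argue that both factors are cokernels (the coroot via Proposition \ref{coraccoker}), the composite of two cokernels need not be a cokernel under mere $\Ker$-idempotence: stability of normal cofaithful arrows under composition is exactly the extra content of $\Ker$-factorisability (Proposition \ref{carackerfac}), which is not assumed here. The same problem affects your observation for $k$: writing $k=k_b\circ r$ as a composite of a kernel and a root does not by itself exhibit $k$ as a kernel.

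The paper closes this gap with Corollary \ref{coronoyrelnoy} and its dual: the relative kernel of $(b,\psi)$ is literally the ordinary kernel of the induced arrow $B\to Ky$, and dually $q$ is the ordinary cokernel of the induced arrow $Qx\to B$. With this in hand your argument goes through verbatim, and the bookkeeping paragraph becomes unnecessary: $\Ker$-idempotence in the form ``every cokernel is the cokernel of its kernel'' transports exactly the 2-arrow $\omega$, with no ambiguity to resolve.
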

	
		\begin{proof}
			The proof is the same as the proof of Proposition \ref{defsequencex}
			because, by Corollary \ref{coronoyrelnoy}, $k$
			is a kernel (thus is the kernel of its cokernel, since $\C$
			is $\Ker$-idempotent), and $q$
			is a cokernel (thus is the cokernel of its kernel).
		\end{proof}
	
	A related notion is defined by Hans-Joachim Baues and Mamuka Jibladze
	in \cite{Baues2006b}: they fix a sub-$\Gpd$-category $\dcat{P}$ of $\C$
	(whose objects are to be thought of as projectives), and say that the sequence of the
	upper row of diagram \ref{diagsuitrelex}
	is \emph{$\dcat{P}$-exact} if, for all $P\col\dcat{P}$,
	$p\col P\ra B$ and $\pi\col bp\Ra 0$ compatible with $\psi$, there exist
	$p'\col P\ra A$, $\pi'\col p\Ra ap'$ such that $\alpha p'\circ b\pi' =\pi$.
		
	The notion of exactness is a special case of the notion of relative exactness (diagram \ref{diagsuitdex}
	is the special case of diagram \ref{diagsuitrelex} where $X$ and $Y$
	are $0$).
		
	\begin{pon} Let us consider the following diagram.
		\begin{xym}\label{sequencesuiv}\xymatrix@=40pt{
			0\ar[r]_0\rruppertwocell<9>^0{^<-2.7>{1_0\;}}
			&A\ar[r]^a\rrlowertwocell<-9>_0{_<2.7>\alpha}
			&B\ar[r]^b\rruppertwocell<9>^0{^<-2.7>{1_0\;}}
			&C\ar[r]_0
			&0
		}\end{xym}
		The sequence $(a,\alpha,b)$ is exact at $B$ if and only if
		the sequence $(0,1_0,a,\alpha,b,1_0,0)$ is relative exact at $B$.
	\end{pon}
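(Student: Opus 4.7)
The plan is to reduce the relative notions appearing in the definition of relative exactness to their ordinary (non-relative) counterparts, so that the two statements collapse to the same pair of conditions.

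First I would invoke Proposition~\ref{caspartickerrel}(1) applied to the arrow $b\col B\ra C$ followed by $0\col C\ra 0$, with outer 2-cell $\psi=1_0\col 0\circ b\Ra 0$. That proposition says precisely that $(k,\kappa)=\Ker b$ if and only if $(k,\kappa)=\Ker(b,1_0)$, so the relative kernel $\Ker(b,\psi)$ in diagram~\ref{diagsuitrelex} coincides with the ordinary kernel $\Ker b$ in diagram~\ref{diagsuitdex}. Dually, applying the dual statement (for cokernels with trivial outer 2-cell on the left) to $0\col 0\ra A$ followed by $a\col A\ra B$ gives that $\Coker(a,\varphi)$ with $\varphi=1_0$ coincides with $\Coker a$.

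Next I would observe that the induced arrows $a'$, $b'$ and 2-arrows $\mu$, $\nu$ in Proposition~\ref{caracrelex} are, under these identifications, literally the same data as the induced $a'$, $b'$, $\mu$, $\nu$ of Proposition~\ref{defsequencex}, while $\varphi'$ and $\psi'$ are forced to be $1_0$ (they are 2-arrows whose domain or codomain is the zero map to or from the zero object). Therefore the conditions in Proposition~\ref{caracrelex}—the existence of $\omega\col qk\Ra 0$ with $(k,\omega)=\Ker q$ (resp.\ $(q,\omega)=\Coker k$) satisfying $\omega a'\circ q\mu=\zeta$ and $b'\omega\circ\nu k=\kappa$—become word for word the conditions in Proposition~\ref{defsequencex}.

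No subtle obstacle is expected here: the whole content lies in Proposition~\ref{caspartickerrel} and its dual, and the match of the remaining data is immediate once one is careful that $\varphi$ and $\psi$ being identities makes the compatibility requirements between $\varphi,\alpha,\psi$ automatic. The only thing worth verifying explicitly is that, in the trivial cases $X=Y=0$, the compatibility conditions appearing in Proposition~\ref{caracrelex} impose no extra constraint beyond those of Proposition~\ref{defsequencex}, which follows from the uniqueness of 2-arrows into or out of the zero object.
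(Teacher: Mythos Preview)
Your proposal is correct and follows the same approach as the paper, which treats the statement as immediate from the observation (made just before the proposition) that diagram~\ref{diagsuitdex} is the special case of diagram~\ref{diagsuitrelex} where $X$ and $Y$ are $0$. Your explicit invocation of Proposition~\ref{caspartickerrel}(1) and its dual is precisely the justification the paper leaves implicit.
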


	On the other hand, long exact sequences are not in general relative exact,
	because there is no reason for the 2-arrows to be compatible.  For example, 
	the loops we get by composing the adjacent 2-arrows of the sequence
	\ref{gabzis}/\ref{gabzistwo} are not in general identities, as we noticed in Proposition \ref{sequencePuppe}.  In particular, the sequence
	$Kf\overset{k}\ra A\overset{f}\ra B\overset{q}\ra Qf$ cannot be relative
	exact, because $\kappa_f$ and $\zeta_f$ are not in general compatible (in a
	good 2-Puppe-exact $\Gpdp$-category, this is the case if and only if $f$
	is full (Proposition \ref{caracfullbondpex})).

	\begin{pon}\label{kerrelrelex}
		If $\C$ is $\Ker$-idempotent and if $(k,\kappa)=\Ker(f,\varphi)$, then
		the sequence of diagram \ref{diagkerkerzer} is relative exact at $0$,
		$K$ and $A$.
	\end{pon}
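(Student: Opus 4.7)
The proof handles the three positions $0$, $K$, $A$ in turn by fitting each local situation into the template of Proposition~\ref{caracrelex}. Before that, one must check that the sequence is a candidate for relative exactness, i.e.\ that adjacent 2-arrows are compatible. The only non-trivial instance is at position $A$, where the condition reads $\varphi k = y\kappa$; this is precisely the compatibility of $\kappa$ with $\varphi$ built into the hypothesis $(k,\kappa) = \Ker(f,\varphi)$.

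Relative exactness at $0$ and at $K$ is then formal. By Corollary~\ref{kerkerzer}, $(0,0_K,1_{0_A}) = \Ker(k,\kappa)$; meanwhile the relative cokernel of a zero arrow equipped with a trivial 2-arrow is the identity on its codomain, giving $(1_K,1_0)$ at position $K$ (and analogously at position $0$). The induced arrows $a'$, $b'$ and 2-arrows $\mu$, $\nu$ of Proposition~\ref{caracrelex} may all be taken as identities, the comparison $\omega$ is $1_0$, and the requirement $(0_K,1_0) = \Ker(1_K)$ holds tautologically.

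The substantive case is position $A$. Here $(k,\kappa) = \Ker(f,\varphi)$ by hypothesis, and the relative cokernel $\Coker(k,1_0)$ coincides with the ordinary cokernel $(q_k,\zeta_k) \eqdef \Coker k$ because the incoming 2-arrow is trivial. Taking $a' = 1_K$, $\mu = 1_k$, and defining $b'$, $\nu\col f \Ra b'q_k$ via the universal property of $\Coker k$ so that $b'\zeta_k \circ \nu k = \kappa$, and setting $\omega \eqdef \zeta_k$, both compatibility equations $\omega a' \circ q_k\mu = \zeta_k$ and $b'\omega \circ \nu k = \kappa$ hold by construction. The remaining requirement $(k,\zeta_k) = \Ker q_k$ states that $k$ is normal; by Corollary~\ref{coronoyrelnoy}, the relative kernel $k$ is the (absolute) kernel of the induced arrow $A \to \Ker y$, and the $\Ker$-idempotence of $\C$ then ensures that this absolute kernel is the kernel of its own cokernel. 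The only real obstacle is the bookkeeping: identifying, in each template instance, which of the many induced arrows and 2-arrows degenerate to identities, after which every verification is mechanical.
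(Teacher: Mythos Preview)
Your proof is correct and follows essentially the same approach as the paper: trivial at $0$, Corollary~\ref{kerkerzer} at $K$, and Corollary~\ref{coronoyrelnoy} together with $\Ker$-idempotence at $A$. You supply more of the bookkeeping (compatibility of $\kappa$ with $\varphi$, explicit identification of the induced arrows in the template of Proposition~\ref{caracrelex}) that the paper leaves implicit, but the argument is the same.
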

	
		\begin{proof}
			First, the sequence is clearly relative exact at $0$.
			Next, by Proposition \ref{kerkerzer}, $(0_K,1_0)=\Ker (k,\kappa)$;
			moreover $(1_K,1_0)=\Coker (0_K,1_0)$.  Hence $(0_K,1_0)=\Ker 1_K$,
			so the sequence is relative exact at $K$.
			
			Finally, by hypothesis, $(k,\kappa)=\Ker (f,\varphi)$, so $k$ is a kernel
			(by Corollary \ref{coronoyrelnoy}) and, by $\Ker$-idempotence,
			$k$ is the kernel of its cokernel.  So the sequence is relative exact at $A$.
		\end{proof}
	
	The importance of relative exact sequences comes from the fact that extensions
	(short exact sequences) are not in general exact sequences
	(for that $a$ should be 
	fully 0-faithful and $b$ fully 0-cofaithful), but are
	relative exact sequences.
	
	\begin{df}\index{extension}
		The sequence \ref{sequencesuiv} is called an \emph{extension} if 
		the following conditions hold:
		\begin{enumerate}
			\item $(a,\alpha)=\Ker b$;
			\item $(b,\alpha)=\Coker a$.
		\end{enumerate}
	\end{df}
	
	In an extension, $a$ is normal faithful and $b$ is normal cofaithful.
	
	\begin{pon}
		If diagram \ref{sequencesuiv} is an extension,
		then it is a relative exact sequence at each point.
	\end{pon}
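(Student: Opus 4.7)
The plan is to derive the five-point relative exactness of an extension by two symmetric applications of Proposition \ref{kerrelrelex}.

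First, I would apply Proposition \ref{kerrelrelex} directly to the kernel clause of the extension. Matching the notation of diagram \ref{diagkerkerzer} with the extension's data, I set $K := A$, $k := a$, $\kappa := \alpha$, $f := b$, and then $B := C$, $Y := 0$, $y := 0$, $\varphi := 1_0$. Since a relative kernel with trivial boundary 2-arrow coincides with the ordinary kernel (Proposition \ref{caspartickerrel}), the hypothesis $(k,\kappa) = \Ker(f,\varphi)$ specialises to $(a,\alpha) = \Ker b$, which is part of the extension data. The conclusion then delivers relative exactness at the leftmost $0$, at $A$, and at $B$.

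Second, I would apply the cokernel-side dual of Proposition \ref{kerrelrelex} to the cokernel clause $(b,\alpha) = \Coker a$, with analogous instantiations on the right; this yields relative exactness at $B$, at $C$, and at the rightmost $0$. Taken together, these two applications cover all five points of the extension.

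The only potential obstacle is reconciling the two certificates of relative exactness at $B$. This is harmless: by Proposition \ref{caracrelex} applied with $\varphi = \psi = 1_0$, the relative (co)kernels at $B$ reduce to the ordinary kernel of $b$ and cokernel of $a$, so relative exactness at $B$ collapses to ordinary exactness of $A \xrightarrow{a} B \xrightarrow{b} C$ at $B$. The latter follows from $(a,\alpha) = \Ker b$ together with $\Ker$-idempotence, via Proposition \ref{qquessequencex}. The induced comparison 2-arrows $\mu$, $\nu$, $\omega$ of Proposition \ref{caracrelex} are then pinned down canonically by the extension data, so the two applications supply compatible witnesses at $B$ and the overall assembly is consistent.
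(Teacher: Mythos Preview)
Your proof is correct and follows essentially the same route as the paper: apply Proposition \ref{kerrelrelex} (using the fact that $(a,\alpha)=\Ker b$ is a special case of a relative kernel via Proposition \ref{caspartickerrel}) to obtain relative exactness at the left $0$, at $A$, and at $B$, then invoke the dual for $C$ and the right $0$. The third paragraph is unnecessary: relative exactness at a point is a \emph{property}, not additional structure, so there is nothing to reconcile at $B$; once one application certifies it, you are done.
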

	
		\begin{proof}
			By the previous proposition, the sequence is relative exact at $0$
			(on the left side), $A$ and $B$.  Dually, it is exact at $C$ and at $0$ (on the
			right side).
		\end{proof}
	
	In dimension 1, if we join two short exact sequences, we get a three-term
	exact sequence, the image of the middle morphism being the meeting point
	of the short exact sequences.  In dimension 2, if we join two extensions, we get
	a relative exact sequence and the meeting point is in general none of the images
	of the middle arrow.
	Therefore a projective resolution in dimension 2 should be a
	relative exact sequence rather than an exact sequence.
	
	\begin{pon}\label{twoextrelex}
		In the following diagram, if $(a,\alpha,b)$ and $(c,\gamma,d)$ are
		extensions, then the upper sequence is relative exact.
		\begin{xym}\xymatrix@C=20pt@R=20pt{
			0\ar[dr]
			&&&&&&0
			\\ &A\ar[dr]_a\ddrrlowertwocell_0<-10>{\alpha}\ar@/^1pc/[drrr]^(0.35)0
			&&{}
			&&E\ar[ur]
			\\ &&B\ar[rr]^{cb}\ar[dr]_b\ar@/^1pc/[urrr]^(0.65)0
				\ar@{}[ur]|(0.55){c\alpha\dir{=>}\;\;\;\;\;\;\;\;\;\;\;\;}
			&&D\ar[ur]_d\ar@{}[ul]|(0.55){\;\;\;\;\;\;\;\;\;\;\;\;\dir{=>}\gamma b}
			&{}
			\\ {}
			&&&C\ar[ur]_c\uurrlowertwocell_0<-10>{\gamma}\ar[dr]
			&&&{}
			\\&&0\ar[ur]
			&&0
		}\end{xym}
	\end{pon}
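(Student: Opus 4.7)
The plan is to verify that the joined sequence $0 \to A \overset{a}{\to} B \overset{cb}{\to} D \overset{d}{\to} E \to 0$, equipped with the trivial 2-arrows at the boundary together with $c\alpha\col cba \Ra 0$ and $\gamma b\col dcb \Ra 0$ in the middle, is relative exact at each of the four non-trivial positions $A, B, D, E$. Before handling the positions, I note that the chain-complex condition (compatibility of adjacent 2-arrows) is automatic: the interchange law in the strictly described $\Gpdp$-category, applied to $\alpha\col ba \Ra 0$ and $\gamma\col dc \Ra 0$, forces $\gamma ba = dc\alpha$ since the whiskerings $0\cdot\alpha$ and $\gamma\cdot 0$ are both identities on $0$.

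For relative exactness at $B$, I apply Corollary \ref{lemmsimplnoy} to the factorisation $cb = c \circ b$ with $y = d$, relative 2-arrow $\varphi = \gamma b$ and reference 2-arrow $\varphi' = \gamma$. Since $(c,\gamma) = \Ker d$, Corollary \ref{kerkerzer} gives $\Ker(c,\gamma) = 0$, so by Proposition \ref{clasproprelker} $c$ is $\gamma$-fully 0-faithful. Corollary \ref{lemmsimplnoy} then identifies the relative kernel $\Ker(cb, \gamma b)$ with the ordinary kernel $\Ker b = (a,\alpha)$. On the other side, the relative cokernel of $(a, 1_0)$ is just the plain cokernel $\Coker a = (b,\alpha)$, whose ordinary kernel is $(a,\alpha) = \Ker b$. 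The two candidates for the image at $B$ coincide, as required by Proposition \ref{caracrelex}. Relative exactness at $D$ follows by the strictly dual argument, using that $b$ is $\alpha$-fully 0-cofaithful (since $(b,\alpha) = \Coker a$) together with the dual of Corollary \ref{lemmsimplnoy}.

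For relative exactness at $A$, the relative cokernel of the boundary arrow $0 \to A$ is $1_A$, whose kernel is $0$; so I must show that the relative kernel of $(a, c\alpha)$ is $0$, equivalently (by Proposition \ref{clasproprelker}) that $a$ is $c\alpha$-fully 0-faithful. A 2-arrow $\beta\col ax \Ra 0$ is $c\alpha$-compatible iff $(cb)\beta = (c\alpha)x$; since $c$ is faithful as a kernel of $d$, this reduces to $b\beta = \alpha x$, i.e.\ $\beta$ is $\alpha$-compatible. The $\alpha$-full faithfulness of $a$, which is part of the universal property of $(a,\alpha) = \Ker b$, then yields a unique $\beta'\col x \Ra 0$ with $a\beta' = \beta$. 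Relative exactness at $E$ is dual: $\gamma b$-compatibility reduces to $\gamma$-compatibility via the cofaithfulness of $b$ (a cokernel), and $d$ is $\gamma$-fully 0-cofaithful because $(d,\gamma) = \Coker c$.

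The main bookkeeping difficulty, and where I would be most careful, is matching the 2-arrows produced by the universal constructions with those prescribed by the joined sequence. Concretely, when Corollary \ref{lemmsimplnoy} produces a relative kernel $(a, \kappa)$ of $(cb, \gamma b)$, I must verify that $\kappa$ coincides with the canonical $\alpha$ from the first extension (up to the explicit isomorphism the corollary produces); only then does the comparison 2-arrow $\omega$ appearing in Proposition \ref{caracrelex} come out to satisfy the required equations, so that the kernel of the relative cokernel and the relative kernel literally agree and exactness (not just coincidence of images up to equivalence) is established.
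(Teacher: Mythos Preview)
Your proof is correct and follows the paper's approach: the key step in both is to use Corollary~\ref{lemmsimplnoy} (with $c$ being $\gamma$-fully faithful because $(c,\gamma)=\Ker d$) to conclude $(a,c\alpha)=\Ker(cb,\gamma b)$, and dually $(d,\gamma b)=\Coker(cb,c\alpha)$. The only difference is a matter of packaging: once this relative-kernel identification is made, the paper invokes Proposition~\ref{kerrelrelex} (and its dual) to obtain relative exactness at $0$, $A$, $B$ (and at $D$, $E$, $0$) in one stroke, whereas you verify each position by hand. Your direct argument at $A$---reducing $c\alpha$-compatibility to $\alpha$-compatibility via the faithfulness of $c$---is exactly what underlies the proof of Proposition~\ref{kerrelrelex} in this instance.

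Regarding your bookkeeping concern in the last paragraph: it resolves cleanly. At $B$, the comparison arrow $a'$ to the relative kernel is the identity (since that kernel \emph{is} $(a,c\alpha)$), the induced arrow $b'$ out of the relative cokernel $\Coker a$ is $c$ with $\nu$ the identity, and the witness $\omega$ is simply $\alpha$; the two equations of Proposition~\ref{caracrelex} then read $\alpha = \alpha$ and $c\alpha = c\alpha$. So there is no hidden obstruction.
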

	
		\begin{proof}
			First, it is easy to see that $c\alpha$ and
			$\gamma b$ are compatibles.  Since $(c,\gamma)=\Ker d$, $c$ is
			$\gamma$-fully faithful and so, by Corollary \ref{lemmsimplnoy},
			the fact that $(a,\alpha)=\Ker b$ implies that
			$(a,c\alpha)=\Ker(cb,\gamma b)$. Dually, $(d,\gamma b)=\Coker(cb,c\alpha)$.
			By applying Proposition \ref{kerrelrelex} and its dual, we get
			the relative exactness of the upper sequence.
		\end{proof}

\subsection{Homological $\Gpdp$-categories and homology}

	In the following proposition, the compatibility of the 2-arrows $\alpha$ and $\beta$ is
	necessary to define the comparison arrow $w$.  
	In particular, in dimension 2, we cannot in general define a comparison arrow
	between $\Coker(\Ker f)$ and $\Ker(\Coker f)$, because the 2-arrows
	$\kappa_f$ and $\zeta_f$ are not compatible (in a good 2-abelian $\Gpd$-category,
	they are compatible if and only if $f$ is full, by
	Proposition \ref{caracfullbondpex}).
	On the other hand, as we will see (diagrams \ref{diagcokerkerraccopep} and 
	\ref{diagcoracpepkercoker}), we can define
	a comparison arrow between $\Coker(\Ker f)$ and $\Root(\Copip f)$ and dually.
	
	\begin{lemm}\label{exenunfl}
		Let $\C$ be a $\Gpdp$-category with all the kernels and cokernels.
		Let be the situation of the upper row of the following diagram, where
		the 2-arrows are compatible.
		\begin{xym}\label{diagsuitmorphex}\xymatrix@=40pt{
			X\ar[r]_x\ar@/^2pc/[rr]^0
				\rrtwocell\omit\omit{^<-2.7>\varphi\,}
			&A\ar[r]^a\ar@/^2pc/[rr]^0\ar@/_0.78pc/[dr]_-0
				\drtwocell\omit\omit{_<-2>{\zeta}}
				\rruppertwocell\omit{^<-2.7>\alpha\,}
			& B\ar[r]^b\ar[d]_{q}
				\ar@/^2pc/[rr]^0\ar[dr]^(0.3){q'}
				\rrtwocell\omit\omit{^<-2.7>\beta\,}
			& C\ar[r]^c
				\ar@/^2pc/[rr]^0
				\rrtwocell\omit\omit{^<-2.7>\psi\,}
			& D\ar[r]_y
			& Y
			\\ &&{Q(a,\varphi)}\ar[r]_-w\ar[ur]^(0.7){k'}
			&{K(c,\psi)}\ar[u]_-{k}
				\ar@/_0.78pc/[ur]_-0\urtwocell\omit\omit{_<-2>{\,\kappa}}
		}\end{xym}
		We construct the relative cokernel $Q(a,\varphi)$ and the relative kernel $K(c,\psi)$.
		Then there exist arrows $q'$, $k'$, $w$, and 2-arrows $\mu$, $\nu$, $\mu'$,
		$\nu'$ such that
		\begin{xyml}\label{eqsusz}\begin{gathered}\xymatrix@=40pt{
			B\ar[r]^b\ar[d]_{q}\ar[dr]^(0.7){q'}
				\drtwocell\omit\omit{_<-3.5>{\mu}}\drtwocell\omit\omit{_<3>{\,\mu'}}
			&C
			\\ {Q(a,\varphi)}\ar[r]_w
			&{K(c,\psi)}\ar[u]_{k}
		}\end{gathered}\; = \; \begin{gathered}\xymatrix@=40pt{
			B\ar[r]^b\ar[d]_{q}
			&C
			\\ {Q(a,\varphi)}\ar[r]_w\ar[ur]_(0.7){k'}
				\urtwocell\omit\omit{_<-3.5>{\;\,\nu}}
				\urtwocell\omit\omit{_<3>{\;\;\,\nu'}}
			&{K(c,\psi)}\ar[u]_{k}
		}\end{gathered}\end{xyml}
		and such that
		\begin{eqn}\label{eqsusu}
			k'\zeta\circ \nu a = \alpha,
		\end{eqn}
		and
		\begin{eqn}\label{eqsusd}
			\kappa q'\circ c\mu = \beta.
		\end{eqn}

		We set $\zeta'\eqdef w\zeta\circ \mu'a\col q'a\Ra 0$ and
		$\kappa'\eqdef\kappa w\circ c\nu'\col ck'\Ra 0$.
		Then the following properties are equivalent:
		\begin{enumerate}
			\item $w$ is an equivalence;
			\item $(k',\kappa')=\Ker(c,\psi)$;
			\item $(q',\zeta')=\Coker(a,\varphi)$.
		\end{enumerate}
	\end{lemm}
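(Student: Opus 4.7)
The plan is to prove the equivalence $1 \Leftrightarrow 2$ directly, and then obtain $1 \Leftrightarrow 3$ by the evident duality in the statement (swap $a \leftrightarrow c$, $\varphi \leftrightarrow \psi$, $q \leftrightarrow k$, etc.). Throughout, the key datum linking the two sides of the square is the pair of isomorphisms $\nu'\col k' \Ra kw$ and $\mu'\col q' \Ra wq$ built into $w$, together with the definitions $\kappa' = \kappa w\circ c\nu'$ and $\zeta' = w\zeta\circ \mu'a$.

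For $1 \Rightarrow 2$, I would argue by transport of the universal property along an equivalence. Since $(k,\kappa) = \Ker(c,\psi)$ by construction and $w$ is assumed to be an equivalence, precomposition with $w$ preserves the universal property, so $(kw,\kappa w)$ is also a relative kernel of $(c,\psi)$. The isomorphism $\nu'\col k' \Ra kw$ and the defining relation $\kappa' = \kappa w \circ c\nu'$ then express $(k',\kappa')$ as the transport of $(kw,\kappa w)$ along $\nu'$, so $(k',\kappa') = \Ker(c,\psi)$ as well. The compatibility of $\kappa'$ with $\psi$ follows from that of $\kappa$ together with the naturality expressed by $\nu'$.

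For $2 \Rightarrow 1$, I would use the uniqueness of the comparison between two relative kernels. Assuming $(k',\kappa') = \Ker(c,\psi)$, we have two relative kernels $(k,\kappa)$ and $(k',\kappa')$ of the same data. Applying the universal property of $(k,\kappa)$ to $(Q(a,\varphi),k',\kappa')$ produces an arrow $u\col Q(a,\varphi) \to K(c,\psi)$ with a 2-arrow $\xi\col k' \Ra ku$ satisfying $\kappa u \circ c\xi = \kappa'$; applying the universal property of $(k',\kappa')$ to $(K(c,\psi),k,\kappa)$ yields $u'$ in the opposite direction, and the uniqueness in the faithfulness clauses forces $u'u \simeq 1$ and $uu' \simeq 1$. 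It then remains to identify $u$ with $w$, which one does by checking that $u$ satisfies the same universal factorisation condition that characterises $w$: both are uniquely determined by the requirement that $ku \simeq k'$ (via a 2-arrow compatible with $\kappa$ and $\kappa'$), and the fully faithfulness part of the universal property of $k$ forces $u \simeq w$.

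The main obstacle, as so often with relative (co)kernels, will be bookkeeping: one must verify that the 2-arrows produced by the universal properties really do match the explicit $\mu, \nu, \mu', \nu'$ of the statement, and that compatibility with $\varphi$ and $\psi$ is preserved at each transfer. Concretely, the nontrivial step is checking that $\kappa'$ (and dually $\zeta'$) is compatible with $\psi$ (resp.\ $\varphi$), which requires combining the compatibility of $\kappa$ with $\psi$, the compatibility of $\alpha$ with $\varphi$ and $\psi$, and equations \ref{eqsusu}--\ref{eqsusd}. Once this bookkeeping is in hand, the comparison arrow produced by the universal property of $\Ker(c,\psi)$ is forced to coincide with $w$, completing the equivalence.
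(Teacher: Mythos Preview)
Your argument for the equivalence $1 \Leftrightarrow 2$ (and, by duality, $1 \Leftrightarrow 3$) is correct and is exactly the kind of transport-along-isomorphism reasoning one expects: $\nu'\col k' \Ra kw$ together with $\kappa' = \kappa w \circ c\nu'$ exhibits $w$ as the comparison arrow from the candidate $(k',\kappa')$ to the universal $(k,\kappa)$, so $w$ is an equivalence precisely when $(k',\kappa')$ is itself universal. The paper in fact leaves this equivalence implicit and does not write it out.

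However, you have skipped what the paper treats as the actual content of the lemma: the \emph{construction} of $q'$, $k'$, $w$, $\mu$, $\nu$, $\mu'$, $\nu'$ satisfying the stated equations. This is not purely formal. One obtains $k'$, $\nu$ by factoring $(b,\alpha)$ through the relative cokernel $Q(a,\varphi)$ (using that $\alpha$ is compatible with $\varphi$), and $q'$, $\mu$ by factoring $(b,\beta)$ through the relative kernel $K(c,\psi)$ (using that $\beta$ is compatible with $\psi$). The non-obvious step is producing $w$: one must first check that $\alpha \circ \mu^{-1}a \col kq'a \Ra 0$ is compatible with $\kappa$, which uses the hypothesis that $\alpha$ and $\beta$ are compatible (i.e.\ $c\alpha = \beta a$); this yields $\zeta'\col q'a \Ra 0$. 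Then one checks $\zeta'$ is compatible with $\varphi$, factors through $Q(a,\varphi)$ to obtain $w$ and $\mu'$, and finally extracts $\nu'$ from the $\zeta$-fully-cofaithfulness of $q$. Your final paragraph gestures at this bookkeeping, but the actual chain of compatibility checks (in particular the role of the compatibility of $\alpha$ and $\beta$) is the substance you are missing.
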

	
		\begin{proof}
			Since $\alpha$ is compatible with $\varphi$, by the universal property
			of the relative cokernel, we have $k'\col Q(a,\varphi)\ra C$
			and $\nu\col b\Ra k'q$ satisfying equation \ref{eqsusu}.
			In the same way, since $\beta$ is compatible with $\psi$, by the universal
			property of the relative kernel, we have $q'\col B\ra K(c,\psi)$
			and $\mu\col b\Ra kq'$ satisfying equation \ref{eqsusd}.
			
			Then $\alpha\circ \mu^{-1}a$ is compatible with $\kappa$
			(thanks to the compatibility of $\alpha$ and $\beta$:
			$c\alpha = \beta a = \kappa q'a\circ c\mu a$) and, by the
			universal property of the relative kernel, there exists
			$\zeta'\col q'a\Ra 0$ such that
			\begin{eqn}
				k\zeta'\circ\mu a=\alpha.
			\end{eqn}
			
			Next, $\zeta'$ is compatible with $\varphi$ and, by the universal
			property of the relative cokernel, there exist $w\col Q(a,\varphi)
			\ra K(c,\psi)$ and $\mu'\col q'\Ra wq$ such that
			\begin{eqn}
				\zeta'=w\zeta\circ \mu'a.
			\end{eqn}
				
			Then the 2-arrow $k\mu'\circ\mu\circ\nu^{-1}\col k'q\Ra kw q$
			is compatible with $\zeta$ and so there exists
			$\nu'\col k'\Ra kw$ satisfying equation \ref{eqsusz}.
		\end{proof}	
	
	The subquotient axiom have been introduced by Marco Grandis in dimension 1
	 \cite{Grandis1992a}.
	
	\begin{pon}\label{propsemihom}
		Let $\C$ be a $\Ker$-idempotent $\Gpdp$-category.
		Then the following conditions are equivalent.
		\begin{enumerate}
			\item For all $B\overset{g}\rightarrowtail C\overset{f}
				\twoheadleftarrow A$ in $\C$, where $g$ is normal faithful
				and $f$ is normal cofaithful,
				the pullback $\pi\col fg'\Ra gf'$ of $f$ and $g$ exists,
				$g'$ is normal faithful, and $f'$ is normal cofaithful.
			\item
				\begin{enumerate}
					\item \emph{Subquotient axiom. }\index{subquotient axiom}%
					\index{axiom!subquotient}%
						In the situation of the following diagram (solid arrows),
						where we set $i\eqdef jk$ and $q\eqdef sr$, where
						$\alpha\col rjk\Ra 0$ and $\beta\col srj\Ra 0$ are compatible,
						and where $(i,\alpha,r)$ and $(j,\beta,q)$ are extensions,
						the comparison arrow $w\col Qk\ra Ks$ given by Lemma
						\ref{exenunfl} is an equivalence.
						\begin{xym}\label{diagsousquot}\xymatrix@=40pt{
							N\ar@{=}[r]\ar[d]_k\ddlowertwocell<-9>\omit{_<2.7>\alpha'}
								\ar@/_1.87pc/@{-->}[dd]_0
							&N\ar[d]^(0.35)i\ar@/^1.89pc/[dd]^(0.56)0
							\\ M\ar[r]^j\ar@{-->}[d]_{q'}\drtwocell\omit\omit{\mu}
								\ar@/^1.87pc/[rr]^(0.45)0
							&A\ar[r]^(0.6)q\ar[d]^r
								\ar@{}[u]^(0.25){\beta\,\dir{=>}}
								\ar@{}[r]_(0.25){\txt{$\underset{\alpha\;\;}{\dir{=>}}$}}
							&Q\ar@{=}[d]
							\\ Ks\ar@{-->}[r]_{k_s}\ar@/_1.87pc/@{-->}[rr]_0
								\rrlowertwocell<-9>\omit{_<2.7>\;\,\kappa_s}
							&R\ar[r]_s
							&Q
						}\end{xym}
					\item Normal cofaithful arrows are stable under
						composition.
				\end{enumerate}
		\end{enumerate}
	\end{pon}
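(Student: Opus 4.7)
The plan is to prove the equivalence by establishing both implications separately, exploiting the setup of Lemma \ref{exenunfl} (which gives the comparison arrow $w$) together with the cube lemma (Proposition \ref{lemmcub}) and the dual of the stability argument in Corollary \ref{lemmtrois}.

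\textbf{For $1 \Rightarrow 2$.} To get (b), suppose $A \overset{p}\ra B \overset{q}\ra C$ are both normal cofaithful. Take kernels $(k_p,\kappa_p) = \Ker p$ and $(k_{qp},\kappa_{qp}) = \Ker qp$. Since $p$ is normal cofaithful, $(p,\kappa_p) = \Coker k_p$, so the induced arrow $a \col K(qp) \to Kp$ fits into a pullback-like square. I would then apply condition 1 to the normal cofaithful $p$ and a suitably chosen normal faithful arrow built from the kernel of $q$, producing a pullback whose upper edge is again normal cofaithful; by the cube lemma this identifies $qp$ with a cokernel of $k_{qp}$, which (via $\Ker$-idempotence) makes $qp$ normal cofaithful. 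To get (a), I would form the pullback of $r$ along $j$ using condition 1: the projections are a normal faithful $j'$ and a normal cofaithful $r'$, producing a factorisation of $rj$ through this pullback. Diagram chasing in the situation of \ref{diagsousquot}, combined with the universal properties of $(i,\alpha) = \Ker r$ and $(q,\beta) = \Coker j$, identifies this pullback precisely with the $w$-comparison object, so $w$ is an equivalence.

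\textbf{For $2 \Rightarrow 1$.} Given $B \overset{g}\rightarrowtail C \overset{f}\twoheadleftarrow A$ with $g$ normal faithful and $f$ normal cofaithful, form $(q,\zeta) = \Coker g$ and $(k,\kappa) = \Ker f$. Since $f$ is normal cofaithful and $g$ normal faithful (and $\C$ is $\Ker$-idempotent), both $(g,\zeta)$ and $(f,\kappa)$ are extensions; their composite $qf \col A \to Qg$ splits through a middle object via the subquotient axiom applied to the obvious two-extension configuration. The object built from the subquotient axiom, together with the induced arrows, will play the role of the pullback: using the relative kernels lemma (Corollary \ref{lemmnoyrel}) and Lemma \ref{exenunfl}, the fact that the comparison arrow $w$ is an equivalence translates exactly into the universal property of a pullback of $f$ and $g$. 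The projections $g'$ and $f'$ are then identified as kernels and cokernels of extensions derived from $g$ and $f$, giving $g'$ normal faithful and $f'$ normal cofaithful; here (b) is needed to recognise the composite of two normal cofaithful arrows that appears in constructing $f'$.

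\textbf{Main obstacle.} The hardest step, in both directions, is to match up the pullback from condition 1 with the comparison arrow $w$ of Lemma \ref{exenunfl}: one must carefully track compatibility of the 2-arrows $\alpha$ and $\beta$ and check, via the cube lemma and the relative kernels lemma, that the same universal object is being described. In particular, the passage $2 \Rightarrow 1$ requires showing that the subquotient axiom's equivalence $w$ yields not merely a bijection of arrows into $C$ but the full pullback universal property (both existence of the factorisation and the 2-dimensional uniqueness clause of Definition \ref{defprodfib}); this uniqueness is where faithfulness of $g$ (from normal faithfulness) and cofaithfulness of $f$ are used decisively, and is the key verification where bookkeeping on compatibility of 2-arrows cannot be shortcut.
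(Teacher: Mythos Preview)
Your overall strategy is in the right neighbourhood, but there are two concrete gaps that would prevent the argument from going through as written.

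\textbf{In $1\Rightarrow 2(a)$}, ``the pullback of $r$ along $j$'' does not typecheck: $j\colon M\to A$ and $r\colon A\to R$ do not form a cospan, so condition~1 cannot be applied to this pair. The correct move (and the one the paper makes) is to take the kernel $(k_s,\kappa_s)=\Ker s$ and observe, via Lemma~\ref{lemmun}, that the induced square $\mu\colon rj\Ra k_sq'$ is already the pullback of the cospan $Ks\overset{k_s}\rightarrowtail R\overset{r}\twoheadleftarrow A$. Condition~1 then says $q'$ is normal cofaithful; Lemma~\ref{lemmtwo} identifies $(k,\alpha')=\Ker q'$, so $(q',\alpha')=\Coker k$ and $w$ is an equivalence. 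No diagram-chasing to ``identify the pullback with the $w$-comparison object'' is needed.

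\textbf{In $2\Rightarrow 1$}, you have the roles of the subquotient axiom and the pullback property reversed. Once you take $g'=\Ker(qf)$ and obtain $f'\colon K(qf)\to B$ from $(g,\zeta)=\Ker q$, the square $\varphi\colon fg'\Ra gf'$ is a pullback \emph{immediately} by Lemma~\ref{lemmun}, since both rows are kernels; neither the subquotient axiom nor the relative kernels lemma is involved, and the full 2-dimensional universal property of Definition~\ref{defprodfib} comes for free. What the subquotient axiom actually buys you is that $f'$ is normal cofaithful: condition~2(b) makes $qf$ normal cofaithful, so $(g',\beta,qf)$ and $(g'k,\alpha,f)$ are both extensions, placing you exactly in diagram~\ref{diagsousquot}; condition~2(a) then gives $(f',\kappa)=\Coker k$, whence $f'$ is normal cofaithful by $\Ker$-idempotence. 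Your ``main obstacle'' is therefore a non-issue, and the genuine work lies in recognising that the subquotient axiom controls the \emph{properties of the projections}, not the existence of the pullback.
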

	
		\begin{proof}
			{\it 1 $\Rightarrow$ 2(a).} In the situation of diagram
			\ref{diagsousquot}, we construct the kernel of $s$, which induces
			$q'\col M\ra Ks$ and $\mu\col rj\Ra k_sq'$
			such that $\kappa_sq'\circ s\mu=\beta$, as well as $\alpha'\col q'k\Ra 0$
			such that $k_s\alpha'\circ\mu k=\alpha$.
			
			By Lemma \ref{lemmun}, $\mu$ is a pullback.  So, by condition
			1, $q'$ is normal cofaithful. Now $(k,\alpha')=\Ker q'$,
			by Lemma \ref{lemmtwo}. So $(q',\alpha')=\Coker k$, which implies that
			$w$ is an equivalence, by the previous lemma.
			
			{\it 1 $\Rightarrow$ 2(b).} Let $A\overset{f}\twoheadrightarrow
			B\overset{g}\twoheadrightarrow C$ be normal cofaithful arrows.
			Let us consider the following diagram, where we have constructed
			the kernels of $g$ and $gf$.
			\begin{xym}\xymatrix@=40pt{
				K(gf)\rruppertwocell<9>^0{^<-2.7>\kappa_{gf}\;\;\,}
					\drtwocell\omit\omit{_{\tau}}\ar[r]_-{k_{gf}}\ar@{->>}[d]_{t}
				&A\ar[r]_-{gf}\ar@{->>}[d]_{f}
				&C\ar@{=}[d]
				\\ Kg\ar[r]_-{k_g}\rrlowertwocell<-9>_0{_<2.7>{\;\;\kappa_g}}
				&B\ar@{->>}[r]_{g}
				&C
			}\end{xym}
			By Lemma \ref{lemmun}, $\tau$ is a pullback.  So, by
			condition 1, $t$ is normal cofaithful.  Therefore, by
			Corollary \ref{lemmtrois}, $\tau$ is also a pushout.
			Finally, by the dual of Lemma \ref{lemmtwo},
			$(gf,\kappa_{gf})=\Coker k_{gf}$.
			
			{\it 2 $\Rightarrow$ 1.} From $f$ and $g$, we construct 
			$(Kf,i,\iota)=\Ker f$, $(q,\zeta)=\Coker g$ and $(g',\beta)=\Ker(qf)$. 
			By the universal property of the kernel of $qf$, there exist $k\col Kf\ra K(qf)$
			and $\theta\col g'k\Ra i$ such that $\beta k = q\alpha$, where
			$\alpha\eqdef\iota \circ f\theta$.	Then $(g'k,\alpha)=\Ker f$.
			(See the following diagram.)
			
			Next, by the universal property of $(g,\zeta)=\Ker q$
			(since $g$ is normal faithful), there exist $f'\col K(qf)\ra B$ and
			$\varphi\col fg'\Ra gf'$ such that $\zeta f'\circ q\varphi=\beta$.
			Finally, since $g$ is $\zeta$-fully faithful, there exists $\kappa\col f'k\Ra 0$
			such that $g\kappa\circ\varphi k = \alpha$.
			\begin{xym}\xymatrix@=40pt{
				Kf\ar@{=}[r]\ar[d]_k\ddlowertwocell<-13>_0{_<4.7>{\;\;\kappa}}
				&Kf\ar[d]_{g'k}\ar@/^1.89pc/[dd]^(0.44)0
				\\ K(qf)\ar[r]^-{g'}\ar[d]_{f'}\drtwocell\omit\omit{_{\varphi}}
					\ar@/_1.87pc/[rr]_(0.48)0
				&A\ar[r]^(0.65){qf}\ar@{->>}[d]^(0.62){f}
					\ar@{}[d]_(0.25){\beta\,\dir{=>}}
					\ar@{}[r]^(0.25){\txt{$\overset{\alpha\;\;}{\dir{=>}}$}}
				&Qg\ar@{=}[d]
				\\ B\ar@{>->}[r]_-{g}\rrlowertwocell<-9>_0{_<2.7>{\;\zeta}}
				&C\ar@{->>}[r]_-{q}
				&Qg
			}
			\end{xym}
			
			By condition 2(b),
			$qf$ is normal cofaithful and $(qf,\beta)=\Coker g'$.
			Moreover, since $f$ is normal cofaithful,
			$(f,\alpha)=\Coker(g'k)$. We are thus in the situation of diagram
			\ref{diagsousquot} and, by condition 2(a), $(f',\kappa)=\Coker k$.
			By Lemma \ref{lemmun}, $\varphi$ is a pullback.
		\end{proof}
		
	The subquotient axiom and homological categories\footnote{This is a different notion
	from Borceux-Bourn homological categories \cite{Borceux2004c}.
	For example, the category of groups is a Borceux-Bourn homological category
	but not a Grandis homological category, because normal monomorphisms are not
	stable under composition.}
	have been introduced by Marco Grandis \cite{Grandis1992a}.  Here is a 2-dimensional
	version.
	
	\begin{df}\label{defhomogran}\index{homological $\Gpdp$-category}%
	\index{Gpd*-category@$\Gpdp$-category!Grandis homological}
		Let $\C$ be a $\Gpdp$-category with all the kernels and cokernels.
		We say that $\C$ is \emph{Grandis homological} if the following conditions 
		(which are equivalent by the previous proposition) hold.
		\begin{enumerate}
			\item
				\begin{enumerate}
					\item For all $B\overset{g}\rightarrowtail C\overset{f}
						\twoheadleftarrow A$ in $\C$, where $g$ is normal faithful
						and $f$ is normal cofaithful,
						the pullback $\pi\col fg'\Ra gf'$ of $f$ and $g$ exists,
						$g'$ is normal faithful, and $f'$ is normal cofaithful;
					\item for all $B\overset{f}\twoheadleftarrow A\overset{g}
						 \rightarrowtail C$ in $\C$, where $g$ is normal faithful
						  and $f$ is normal cofaithful,
						the pushout $\pi\col g'f\Ra f'g$ of $f$ and $g$ exists,
						$g'$ is normal faithful, and $f'$ is normal cofaithful;
					\item $\C$ is $\Ker$-idempotent.
				\end{enumerate}
			\item
				\begin{enumerate}
					\item $\C$ satisfies the subquotient axioms;
					\item normal cofaithful arrows are stable under composition;
					\item normal faithful arrows are stable under
						composition;
					\item $\C$ is $\Ker$-idempotent.
				\end{enumerate}
		\end{enumerate}
	\end{df}
	
	The homological $\Gpdp$-categories satisfy the first isomorphism theorem: we can read
	the subquotient axiom in the following way: if
	$N\overset{i}\ra A$ and $M\overset{j}\ra A$ are normal subobjects of $A$
	(i.e.\ $i$ and $j$ are normal faithful) and if we have
	$N\overset{k}\ra M$ such that $jk\simeq i$ (“$N\incl M$”), then
	\begin{eqn}
		(A/N)/(M/N)\simeq A/M.
	\end{eqn}
	
	Conditions 2(b) and 2(d) mean that $\C$ is $\Ker$-factorisable
	(Proposition \ref{carackerfac}), which implies that in the
	$\Ker$-regular factorisation $f\simeq mq$,
		where $q$ is the cokernel of the kernel of $f$, the arrow
		$m$ is fully 0-faithful. 
	We will now deduce from this
	a more general version of this property.
	Dually, conditions 2(c) and 2(d) mean that $\C$ is $\Coker$-factorisable.
	
	\begin{pon}\label{flcomprel}
		Let $\C$ be a $\Ker$-factorisable $\Gpdp$-category (in particular, $\C$ can
		be homological).  Let be the situation of the following diagram in $\C$, where
		$(k,\kappa)=\Ker(f,\varphi)$, $(q,\zeta)=\Coker k$, $m\zeta\circ\mu k=\kappa$
		and $\varphi'q\circ y\mu=\varphi$. Then $m$ is $\varphi'$-fully 0-faithful.
		\begin{xym}\label{diagflcomprel}\xymatrix@C=20pt@R=40pt{
			K(f,\varphi)\ar[rr]^-k\ar@/_0.76pc/[drrr]_0\ar@/^2pc/[rrrr]^0
			&{}\rrtwocell\omit\omit{^<-2.7>\kappa\,}
			&A\ar[rr]^-f\ar[dr]_-{q}\rrtwocell\omit\omit{_<4>\mu}
				\ar@{}[dll]|(0.32){\zeta\!\dir{=>}}\ar@/^2pc/[rrrr]^0
			&{}\rrtwocell\omit\omit{^<-2.7>\varphi\,}
			&B\ar[rr]^-y\ar@{}[drr]|(0.32){\dir{=>}\!\varphi'}
			&{}
			&Y
			\\ {}
			&&&Qk\ar[ur]_-m\ar@/_0.76pc/[urrr]_0 &&&{}
		}\end{xym}
	\end{pon}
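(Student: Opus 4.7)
The plan is to reduce the relative statement to an absolute one by factoring through the kernel of $y$, and then invoke $\Ker$-factorisability. Specifically, form the kernel $(Ky,k_y,\kappa_y)$ of $y$. The universal property of this kernel, applied to $f$ together with $\varphi$, produces an induced arrow $f'\col A\to Ky$ and a 2-arrow $\chi\col k_y f'\Ra f$ with $\kappa_y f'\circ y\chi=\varphi$. By Corollary \ref{coronoyrelnoy}, the relative kernel $(k,\kappa)=\Ker(f,\varphi)$ corresponds, via a canonical transport through $\chi$, to an absolute kernel $(k,\kappa^\sharp)=\Ker f'$, where $\kappa^\sharp$ is the unique 2-arrow $f'k\Ra 0$ satisfying $k_y\kappa^\sharp\circ\chi k=\kappa$.

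Next I would apply the $\Ker$-regular factorisation to $f'$. Since $(k,\kappa^\sharp)=\Ker f'$ and $(q,\zeta)=\Coker k$ by hypothesis, the arrow $q$ serves as $\bar{e}^1_{f'}$; hence there is an induced arrow $\bar{m}\col Qk\to Ky$ together with a 2-arrow $\bar\mu\col f'\Ra \bar m q$ such that $\bar m\zeta\circ\bar\mu k=\kappa^\sharp$. Because $\C$ is $\Ker$-factorisable, condition~1 of Proposition \ref{carackerfac} tells us that $\bar m$ is fully 0-faithful.

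The third step is to recognise $\bar m$ as the arrow induced from $m$ and $\varphi'$ into $Ky$. The composite 2-arrow $k_y\bar m q\Ra k_y f'\Ra f\Ra m q$ built from $\chi$ and $\bar\mu$ is compatible with $\zeta$, so the universal property of $(q,\zeta)=\Coker k$ provides a unique 2-arrow $\omega\col k_y\bar m\Ra m$. A routine check (using that $q$ is epi on the appropriate 2-cells and the defining equations $m\zeta\circ\mu k=\kappa$ and $\varphi'q\circ y\mu=\varphi$) shows that $\omega$ witnesses $\bar m$ as precisely the arrow induced from $(m,\varphi')$ by the universal property of $\Ker y$. By Corollary \ref{coronoyrelnoy} applied in the reverse direction to $m$ and $\varphi'$, $m$ is $\varphi'$-fully 0-faithful if and only if this induced arrow is fully 0-faithful; since $\bar m$ is fully 0-faithful, we conclude.

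The principal obstacle, and the only non-mechanical step, is matching up the various 2-arrows so that the comparison arrow $\bar m$ coming from the $\Ker$-regular factorisation of $f'$ really coincides, up to the equivalence produced by the universal property of $\Coker k$, with the arrow $m$ fed through $\Ker y$ via $\varphi'$. Once the bookkeeping of Corollary \ref{coronoyrelnoy} is in place (so that the transport between relative and absolute kernels is explicit), this is forced by the joint universal properties of $(q,\zeta)=\Coker k$ and $(k_y,\kappa_y)=\Ker y$; the remainder is a direct application of $\Ker$-factorisability.
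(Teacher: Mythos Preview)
Your proof is correct and follows essentially the same strategy as the paper: pass to the kernel of $y$ and apply $\Ker$-factorisability to the induced arrow $Qk\to Ky$, then use Corollary~\ref{coronoyrelnoy} to translate back. The paper's version is marginally more economical in that it induces $m'\col Qk\to Ky$ directly from $(m,\varphi')$ and invokes Corollary~\ref{lemmsimplnoy} to obtain $(k,m'\zeta)=\Ker(m'q)$, so the identification of your $\bar m$ with the arrow induced from $m$ never needs to be carried out separately; but the content is the same.
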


		\begin{proof}
			We construct $(k_y,\kappa_y)=\Ker y$. By the universal property of kernel,
			there exist $m'$ and $\omega$ such that $\kappa_ym'\circ y\omega = \varphi'$.
			\begin{xym}\xymatrix@C=20pt@R=40pt{
				Qk\ar[rr]^-m\ar[dr]_-{m'}\rrtwocell\omit\omit{_<4>\,\omega}
					\ar@/^2pc/[rrrr]^0
				&{}\rrtwocell\omit\omit{^<-2.7>\varphi'\;}
				&B\ar[rr]^-y\ar@{}[drr]|(0.32){\dir{=>}\!\kappa_y}
				&{}
				&Y
				\\ &Ky\ar[ur]_-{k_y}\ar@/_0.76pc/[urrr]_0 &&&{}
			}\end{xym}
			
			By Corollary \ref{lemmsimplnoy}, since $k_y$ is $\kappa_y$-fully
			faithful, $(k,m'\zeta)=\Ker(m'q)$.  Therefore, since $\C$ is $\Ker$-factorisable,
			 $m'$ is fully 0-faithful.  Finally, by Corollary
			\ref{coronoyrelnoy} applied to the previous diagram, $m$ is $\varphi'$-fully
			0-faithful.
		\end{proof}
		
	In a homological $\Gpdp$-category, the two dual constructions of
	the homology become equivalent.  For symmetric 2-groups, the homology
	was constructed in \cite{Rio2005a}.
	
	\begin{pon}\label{homology}
		Let $\C$ be a homological $\Gpdp$-category.  In the situation of
		Proposition \ref{caracrelex}, there exist an object $H$, arrows
		$q'\col K(b,\psi)\ra H$ and $k'\col H\ra Q(a,\varphi)$ and 2-arrows
		$\zeta'\col q'a'\Ra 0$, $\kappa'\col b'k'\Ra 0$ and $\eta\col qk\Ra k'q'$,
		such that $k'\zeta'\circ\eta a'\circ q\mu = \zeta$,
		$\kappa'q'\circ b'\eta\circ\nu k=\kappa$ and
		\begin{align}\stepcounter{eqnum}
			(H,q',\zeta')&=\Coker (a',\varphi');\\ \stepcounter{eqnum}
			(H,k',\kappa')&=\Ker (b',\psi').
		\end{align}
		We call this object $H$ the \emph{homology at $B$} of the sequence of the upper
		row of the following diagram.\index{homology}
		\begin{xym}\label{grandiaghomo}\xymatrix@R=40pt@C=20pt{
			X\ar[rr]^x\ar@/^2pc/[rrrr]^0\ar@/_0.78pc/[drrr]_-0
			&{}\rrtwocell\omit\omit{^<-2.7>\varphi\,}
			&A\ar[rr]^a\ar[dr]_-{a'}\ar@/^2pc/[rrrr]^0
				\ar@{}[dll]|(0.28){\dir{=>}\,\varphi'}\rrtwocell\omit\omit{_<4>\mu}
				\ddrrlowertwocell\omit{_<3.5>\zeta'}
				\ar@/_2.5pc/@{-->}[ddrr]_0
			& {}\rruppertwocell\omit{^<-2.7>\alpha}
			& B\ar[rr]^b\ar[dr]_-{q}\rrtwocell\omit\omit{_<4>\nu}
				\ar@/^2pc/[rrrr]^0
			& {}\rrtwocell\omit\omit{^<-2.7>\psi\,}
			& C\ar[rr]^y\ar@{}[drr]|(0.28){\dir{=>}\!\psi'}
			& {}
			& Y
			\\ {} &&&K(b,\psi)\ar[ur]_-{k}\ar@{-->}[dr]_-{q'}\rrtwocell\omit\omit{\eta}
			&&Q(a,\varphi)\ar[ur]_-{b'}\ar@/_0.78pc/[urrr]_-0 &&&{}
			\\ &&&&H\ar@{-->}[ur]_-{k'}
				\uurrlowertwocell\omit{_<3.8>\;\;\kappa'}
				\ar@/_2.5pc/@{-->}[uurr]_0
		}\end{xym}
	\end{pon}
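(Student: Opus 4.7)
The plan is to apply Lemma~\ref{exenunfl} to a six-term sequence in order to produce all the data of the proposition in one stroke, and then to show that the resulting comparison arrow is an equivalence by reducing to the subquotient axiom.

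First I would form the composite arrow $qk\col K(b,\psi)\ra Q(a,\varphi)$ together with the 2-arrow $\omega\col qk\Ra 0$ coming from Proposition~\ref{caracrelex}, and insert it in the middle of the longer sequence
\[ X\xrightarrow{x}A\xrightarrow{a'}K(b,\psi)\xrightarrow{qk}Q(a,\varphi)\xrightarrow{b'}C\xrightarrow{y}Y. \]
The four loops $\varphi'$, $\zeta\circ q\mu^{-1}\col qka'\Ra 0$, $\kappa\circ\nu^{-1}k\col b'qk\Ra 0$ and $\psi'$ are pairwise compatible; this is a bookkeeping check using the defining equations of $a'$, $b'$, $\omega$ together with the hypothesised compatibilities of $\varphi$, $\alpha$, $\psi$. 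Lemma~\ref{exenunfl} then furnishes a relative cokernel $(H_1,q',\zeta')\eqdef\Coker(a',\varphi')$, a relative kernel $(H_2,k',\kappa')\eqdef\Ker(b',\psi')$, a comparison arrow $w\col H_1\ra H_2$ and a 2-arrow $\eta\col qk\Ra k'q'$, all satisfying the equations listed in the conclusion; the required $H$ will then be obtained by taking $H=H_1\simeq H_2$ once $w$ is shown to be an equivalence.

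Proving that $w$ is an equivalence is the main obstacle. Two preparatory observations are needed. By Corollary~\ref{coronoyrelnoy} combined with $\Ker$-idempotence, the relative kernel $k$ is itself an ordinary kernel, hence normal faithful; dually $q$ is normal cofaithful. Since $\C$ is homological, it is both $\Ker$- and $\Coker$-factorisable, so I can take the $\Coker$-regular factorisation $a'\simeq\bar m\bar e$ with $\bar m$ normal faithful, and the dual $\Ker$-regular factorisation $b'\simeq m e$ with $e$ normal cofaithful. Stability of normal faithful (resp.\ normal cofaithful) arrows under composition (conditions~2(b) and 2(c) of Definition~\ref{defhomogran}) then makes $k\bar m$ a normal faithful arrow factoring through $k$ and $eq$ a normal cofaithful arrow factoring through $q$.

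These two factorisations fit the shape of diagram~\ref{diagsousquot}: $k\bar m$ plays the role of the normal faithful $i=jk$ and $eq$ that of the normal cofaithful $q=sr$, the required compatible 2-arrow supplying the extension hypotheses coming from $\omega$ together with $\mu$ and $\nu$. The subquotient axiom (Proposition~\ref{propsemihom}, condition~2(a)) then provides an equivalence $\Coker\bar m\simeq\Ker m$. Proposition~\ref{flcomprel} applied on the relative cokernel side (and its dual on the relative kernel side) identifies $\Coker\bar m$ with $\Coker(a',\varphi')=H_1$ and $\Ker m$ with $\Ker(b',\psi')=H_2$, and a direct diagram chase shows that the resulting equivalence agrees, up to canonical 2-isomorphism, with the arrow $w$ produced above. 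Hence $w$ is an equivalence, which yields the desired homology object $H$.
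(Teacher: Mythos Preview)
Your overall strategy --- produce the two candidate homologies $H_1=\Coker(a',\varphi')$ and $H_2=\Ker(b',\psi')$ with a comparison $w$, then use the subquotient axiom to show $w$ is an equivalence --- is the right idea, and in fact is what the paper does. But the specific subquotient diagram you set up does not satisfy the hypotheses of the axiom.

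The subquotient axiom (diagram~\ref{diagsousquot}) requires that $(i,\alpha,r)$ and $(j,\beta,q)$ be \emph{extensions}, not merely that the arrows involved be normal faithful or normal cofaithful. In your setup you take $i=k\bar m$ and $r=q$, so you need $k\bar m=\Ker q$. But $\bar m$ is the ordinary $\Coker$-regular image of $a'$, whereas $\Ker q$ is the object $Kq$ that the paper constructs, with its canonical map $j\col Kq\to K(b,\psi)$. The paper shows that $a'\simeq jt$ where $t$ is only $\varphi''$-fully 0-cofaithful (dual of Proposition~\ref{flcomprel}); since $t$ need not be fully 0-cofaithful in the ordinary sense, $j$ need not coincide with your $\bar m$, and $k\bar m$ need not be $\Ker q$. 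The same obstruction hits the other side: your $eq$ need not equal $\Coker k$, because the residual arrow in the $\Ker$-regular factorisation of $b'$ is only fully 0-faithful, not relatively so with respect to $\psi'$, and there is in general no 2-arrow $ym\Ra 0$ allowing Corollary~\ref{lemmsimplnoy} to transfer $\Ker(b,\psi)$ to $\Ker(eq)$. Your reading of Proposition~\ref{flcomprel} is also off: that proposition does not identify $\Coker\bar m$ with $\Coker(a',\varphi')$; it says the residual arrow in a relative-kernel/ordinary-cokernel factorisation is \emph{relatively} fully 0-faithful.

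The paper's proof avoids this by running the subquotient axiom on the extensions that are automatically there --- namely $(k_q,\kappa_q,q)$ and $(k,\,\cdot\,,q_k)$, built from $\Ker q$ and $\Coker k$ --- to obtain $H=\Coker j\simeq\Ker p$. Only afterwards does it identify $\Coker j$ with $\Coker(a',\varphi')$: it factors $a'\simeq jt$, uses the dual of Proposition~\ref{flcomprel} to show $t$ is $\varphi''$-fully 0-cofaithful, and then the dual of Corollary~\ref{lemmsimplnoy} gives $\Coker(a',\varphi')\simeq\Coker j$. So the order matters: first apply the subquotient axiom where the extensions are guaranteed, then use the relative-cancellation lemmas to land on the relative (co)kernels you actually want.
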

	
		\begin{proof}
			We construct $(Kq,k_q,\kappa_q)=\Ker q$, which induces $j\col Kq\ra K(b,\psi)$
			and $\iota\col k_q\Ra kj$ such that $\kappa j\circ b\iota = 
			b'\kappa_q\circ \nu k_q$. Dually, we construct $(Qk,q_k,\zeta_k)=\Coker k$,
			which induces $p\col Q(a,\varphi)\ra Qk$ and $\pi\col q_k\Ra pq$ such that
			$p\zeta\circ\pi a = \zeta_k a'\circ q_k\mu$.

			Then, by Ker-idempotence, the subquotient axiom applies to the
			following diagram. So there exist $H\eqdef Qj\simeq Kp$,
			$q'\col K(b,\psi)\ra H$ and $\zeta''\col q'j\Ra 0$ such that 
			$(q',\zeta'')=\Coker j$, as well as $k'\col H\ra Q(a,\varphi)$,
			$\kappa''\col pk'\Ra 0$
			such that $(k',\kappa'')=\Ker p$, and also $\eta\col qk\Ra k'q'$.
			\begin{xym}\xymatrix@=40pt{
				Kq\ar@{=}[r]\ar[d]_j\drtwocell\omit\omit{\iota}
					\ddlowertwocell<-13>_0{_<5>\zeta''}
				&Kq\ar[d]^{k_q}\ar@/^1.89pc/[dd]^(0.44)0
				\\ K(b,\psi)\ar[r]^-{k}\ar[d]_{q'}\ar@/_1.87pc/[rr]_(0.48)0
					\drtwocell\omit\omit{_{\eta}}
				&B\ar[r]^(0.62){q_k}\ar[d]^(0.62){q}\drtwocell\omit\omit{\pi}
					\ar@{}[d]_(0.25){\zeta_k\,\dir{=>}}
					\ar@{}[r]^(0.20){\txt{$\overset{\kappa_q\;\;}{\dir{=>}}$}}
				&Qk\ar@{=}[d]
				\\ H\ar[r]_-{k'}\rrlowertwocell<-9>_0{_<2.7>\;\,\kappa''}
				&Q(a,\varphi)\ar[r]_-{p}
				&Qk
			}\end{xym}
			
			Next, by the universal property of the kernel of $q$, there exists an arrow
			$t\col A\ra Kq$, with $\tau\col a\Ra k_q t$ such that
			$\kappa_q t\circ q\tau=\zeta$ and $\tau'\col a'\Ra jt$ such that
			$\iota t\circ \tau=k\tau'\circ \mu$, as well as
			$\varphi''\col tx\Ra 0$ such that $j\varphi''\circ\tau' x=\varphi'$.
			By the dual of Proposition \ref{flcomprel} applied to the following diagram,
			$t$ is $\varphi''$-fully 0-cofaithful.
			\begin{xym}\xymatrix@C=20pt@R=40pt{
				X\ar[rr]^-x\ar@/_0.76pc/[drrr]_0\ar@/^2pc/[rrrr]^0
				&{}\rrtwocell\omit\omit{^<-2.7>\varphi\,}
				&A\ar[rr]^-a\ar[dr]_-{t}\rrtwocell\omit\omit{_<4>\tau}
					\ar@{}[dll]|(0.32){\varphi''\!\dir{=>}}\ar@/^2pc/[rrrr]^0
				&{}\rrtwocell\omit\omit{^<-2.7>\zeta\,}
				&B\ar[rr]^-q\ar@{}[drr]|(0.32){\dir{=>}\!\kappa_q}
				&{}
				&Q(a,\varphi)
				\\ {}
				&&&Kq\ar[ur]_-{k_q}\ar@/_0.76pc/[urrr]_0 &&&{}
			}\end{xym}
			
			Since $(q',\zeta'')=\Coker j$,
			by the dual of Proposition \ref{lemmsimplnoy} applied to the following diagram,
			$(q',\zeta')=\Coker (a',\varphi')$, where 
			$\zeta'\eqdef \zeta'' t\circ q'\tau'$.
			\begin{xym}\xymatrix@C=20pt@R=40pt{
				X\ar[rr]^-x\ar@/_0.76pc/[drrr]_0\ar@/^2pc/[rrrr]^0
				&{}\rrtwocell\omit\omit{^<-2.7>\varphi'\,}
				&A\ar[rr]^-{a'}\ar[dr]_-{t}\rrtwocell\omit\omit{_<4>\;\,\tau'}
					\ar@{}[dll]|(0.32){\varphi''\!\!\dir{=>}}\ar@/^2pc/[rrrr]^0
				&{}\rrtwocell\omit\omit{^<-2.7>\zeta'\,}
				&K(b,\psi)\ar[rr]^-{q'}\ar@{}[drr]|(0.32){\dir{=>}\!\zeta''}
				&{}
				&H
				\\ {}
				&&&Kq\ar[ur]_-{j}\ar@/_0.76pc/[urrr]_0 &&&{}
			}\end{xym}
			
			Dually, we have $\kappa'$
			such that $(k',\kappa')=\Ker (b',\psi')$.
		\end{proof}
		
	This proposition allows us to prove new characterisations of the relative exactness
	of a sequence, which extends Proposition
	\ref{caracrelex}.

	\begin{pon}\label{caracrelexbis}\index{sequence!relative exact!characterisation}%
	\index{relative exact sequence!characterisation}
		Let us consider the situation of diagram \ref{grandiaghomo}, where the 2-arrows of
		the upper row are compatible.
		If $\C$ is $\Ker$-factorisable, then the following conditions are
		equivalent to those of Proposition \ref{caracrelex} (i.e.
		to the relative exactness of the sequence):
		\begin{enumerate}
			\item[3.] $a'$ is $\varphi'$-fully 0-cofaithful;
			\item[4.] $b'$ is $\psi'$-fully 0-faithful.
		\end{enumerate}
		If $\C$ is homological, we can add the following condition:
		\begin{enumerate}
			\item[5.] $H\simeq 0$.
		\end{enumerate}
	\end{pon}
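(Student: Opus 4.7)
The strategy is to reduce the relative conditions on $a'$ and $b'$ to absolute statements using Corollary \ref{coronoyrelnoy}, and then exploit $\Ker$-factorisability through Proposition \ref{flcomprel} together with the invariance property Corollary \ref{lemmsimplnoy}. Since Proposition \ref{caracrelex} already gives $1 \iff 2$, it suffices to prove $1 \iff 4$ and $2 \iff 3$.

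For $1 \iff 4$, I would first apply Corollary \ref{coronoyrelnoy} to $(b,\psi)$: the relative kernel $(k,\kappa)$ coincides with the absolute kernel of the induced arrow $\widetilde{b}\col B \to Ky$, and similarly $b'$ is $\psi'$-fully 0-faithful iff the induced $\widetilde{b'}\col Q(a,\varphi) \to Ky$ is fully 0-faithful. By the universal property of $Ky$, we have $\widetilde{b} = \widetilde{b'} \circ q$. For $4 \Rightarrow 1$: if $\widetilde{b'}$ is fully 0-faithful, Corollary \ref{lemmsimplnoy} gives $\Ker \widetilde{b} = \Ker q$, so $(k,\omega) = \Ker q$ after translating back. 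For $1 \Rightarrow 4$: apply Proposition \ref{flcomprel} to $\widetilde{b}$ (here $\Ker$-factorisability is essential); the canonical factorization $B \twoheadrightarrow \Coker k \hookrightarrow Ky$ has a fully 0-faithful second factor $m$. But condition 1, via the $\Ker$-idempotence coming from $\Ker$-factorisability (Proposition \ref{KfactKidem}), forces $(q,\omega) = \Coker k$, so $\Coker k \simeq Q(a,\varphi)$ and uniqueness identifies $m$ with $\widetilde{b'}$, giving condition 4.

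For $2 \iff 3$, the argument proceeds by the same pattern applied to the cokernel side: use the dual of Corollary \ref{coronoyrelnoy} to replace $\Coker(a',\varphi')$ by $\Coker \check{a'}$ where $\check{a'}\col \Coker x \to K(b,\psi)$, and use the dual of Corollary \ref{lemmsimplnoy} for the direction $3 \Rightarrow 2$. The main obstacle here is the direction $2 \Rightarrow 3$: the most natural route would invoke the dual of Proposition \ref{flcomprel}, which requires $\Coker$-factorisability, not assumed. The way around this is to exploit that when condition 2 holds, $k$ and $q$ already form a genuine absolute kernel–cokernel pair; writing $a$ as $ka'$ up to $\mu$ and $\varphi$ as $k\varphi' \circ \mu x$, the equality $\Coker k = \Coker(a,\varphi)$ of universal properties forces that any $(p,\pi)\col K(b,\psi) \to Z$ killing $a'$ compatibly with $\varphi'$ extends (via $pk$) to a map trivialized by $q = \Coker k$, hence $p \simeq 0$ with the required uniqueness — this is precisely $\Coker(a',\varphi') \simeq 0$.

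For the homological addendum, condition 5 is immediate from Proposition \ref{homology}, which identifies the homology object $H$ with both $\Coker(a',\varphi')$ and $\Ker(b',\psi')$: thus $H \simeq 0$ is equivalent to $a'$ being $\varphi'$-fully 0-cofaithful (by Proposition \ref{clasproprelker} applied on the cokernel side) and dually to $b'$ being $\psi'$-fully 0-faithful, collapsing conditions 3, 4 and 5 into a single statement. The hardest step is the direct justification of $2 \Rightarrow 3$ in the purely $\Ker$-factorisable case, since one must manipulate universal properties of the relative cokernel without access to the dual factorization result.
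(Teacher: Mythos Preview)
Your route for $1 \Leftrightarrow 4$ is essentially the paper's: under $\Ker$-factorisability one has $\Ker$-idempotence (Proposition \ref{KfactKidem}), so condition 1 (equivalently 2) gives $(q,\omega)=\Coker k$; then Proposition \ref{flcomprel} applied with $(k,\kappa)=\Ker(b,\psi)$ makes $b'$ play the role of the arrow $m$ there, yielding condition 4. Corollary \ref{lemmsimplnoy} returns from 4 to condition 1. Your preliminary reduction through Corollary \ref{coronoyrelnoy} is a clean way to organise the bookkeeping but is not strictly necessary. Your treatment of condition 5 via Proposition \ref{homology} and the dual of Proposition \ref{clasproprelker} also matches the paper exactly.

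You correctly identify $2 \Rightarrow 3$ as the direction where the asymmetry of the hypothesis bites: the natural move dualises Proposition \ref{flcomprel}, which would need $\Coker$-factorisability. But your workaround does not go through. The phrase ``extends (via $pk$)'' is ill-typed: $p\col K(b,\psi)\to Z$ and $k\col K(b,\psi)\to B$ share the same domain and cannot be composed. More substantively, a relative-cokernel candidate $(p,\pi)$ for $(a',\varphi')$ is an arrow \emph{out of} $K(b,\psi)$, while the identification $q=\Coker k=\Coker(a,\varphi)$ only constrains arrows \emph{out of} $B$; there is no mechanism to transport one to the other without extending $p$ along $k$, and no such extension is available in general. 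The paper is in fact no more explicit here---it simply writes ``the proof is dual'' for this half of the cycle, so it too tacitly relies on $\Coker$-factorisability for the equivalence with condition 3. This causes no trouble later, since every use of the proposition in the paper is under Grandis-homological, Puppe-exact, or abelian hypotheses, all of which are $\Coker$-factorisable as well.
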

	
		\begin{proof}
			{\it 1 $\Rightarrow$ 4. } If we have $\omega$ satisfying the two required
			equations and such that $(k,\omega)=\Ker q$, then $a'$ is $\varphi'$-fully
			0-cofaithful by Proposition \ref{flcomprel} (which applies because
			$\C$ is $\Ker$-factorisable).
			
			{\it 4 $\Rightarrow$ 2. } If $a'$ is $\varphi'$-fully 0-cofaithful,
			there exists $\omega\col qk\Ra 0$ such that $\omega a'\circ q\mu = \zeta$;
			by using the cofaithfulness of $a'$, we also prove that
			$b'\omega\circ\nu k=\kappa$.
			Then, since $(q,\zeta)=\Coker(a,\varphi)$, we also have
			$(q,\omega)=\Coker k$, by the dual of Corollary \ref{lemmsimplnoy}.
			
			{\it 2  $\Rightarrow$ 3 and 3 $\Rightarrow$ 1. }
			The proof is dual.
			
			{\it 3 $\Leftrightarrow$ 5. } Let us assume that $\C$
			is homological. Since $(H,q',\zeta')=\Coker (a',\varphi')$,
			$H\simeq 0$ if and only if $a'$ is $\varphi'$-fully 0-cofaithful,
			by Proposition \ref{clasproprelker}.
		\end{proof}
		
For the (non relative) exactness, the homology is constructed in the same way as for the relative exactness, with $0$ for $X$ and $Y$.  So we can extend Proposition \ref{defsequencex}.

	\begin{coro}\label{caracdexbis}\index{sequence!exact!characterisation}%
	\index{exact!sequence!characterisation}
		Let us consider the situation of diagram \ref{grandiaghomo}, with $X\eqdef 0$
		and $Y\eqdef 0$.
		If $\C$ is $\Ker$-factorisable, then the following conditions are
		equivalent to those of Proposition \ref{defsequencex} (i.e.
		to the exactness of the sequence):
		\begin{enumerate}
			\item[3.] $a'$ is fully 0-cofaithful;
			\item[4.] $b'$ is fully 0-faithful.
		\end{enumerate}
		If $\C$ is homological, we can add the following condition:
		\begin{enumerate}
			\item[5.] $H\simeq 0$.
		\end{enumerate}
	\end{coro}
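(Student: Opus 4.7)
The plan is to deduce this corollary directly from Proposition \ref{caracrelexbis} by specialising $X$ and $Y$ to the zero object. First I would observe that when $X \equiv 0$ and $Y \equiv 0$ in the setup of diagram \ref{grandiaghomo}, the arrows $x$ and $y$ are forced to be $0$, and the 2-arrows $\varphi\col fx \Ra 0$ and $\psi\col yb \Ra 0$ collapse to $1_0$. Under this collapse, the relative kernel $K(b,\psi)$ coincides with the ordinary kernel $Kb$ and the relative cokernel $Q(a,\varphi)$ coincides with the ordinary cokernel $Qa$ (these being the cases noted in Proposition \ref{caspartickerrel}), so the construction of $q'$, $k'$, $\eta$, $\zeta'$, $\kappa'$ given by Proposition \ref{homology} specialises to the homology of the non-relative sequence.

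Next I would verify that the conditions of Proposition \ref{caracrelexbis} translate correctly. The 2-arrows $\varphi'$ and $\psi'$ induced by the universal properties of the (relative) (co)kernels become trivial: from $\varphi' q \circ y\mu = \varphi = 1_0$ and the cofaithfulness of $q$ (as a cokernel), $\varphi'$ must be $1_0$, and similarly $\psi' = 1_0$. Hence "$a'$ is $\varphi'$-fully 0-cofaithful" becomes simply "$a'$ is fully 0-cofaithful", and "$b'$ is $\psi'$-fully 0-faithful" becomes "$b'$ is fully 0-faithful", which are precisely conditions 3 and 4 of the corollary.

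For the equivalence with the existing conditions of Proposition \ref{defsequencex}, I would note that conditions 1 and 2 there are literally the specialisations of conditions 1 and 2 of Proposition \ref{caracrelexbis}, so no additional work is needed; the chain of equivalences (1 $\Leftrightarrow$ 2 $\Leftrightarrow$ 3 $\Leftrightarrow$ 4) is imported verbatim from the relative case. For condition 5 in the homological case, one appeals to the specialisation of Proposition \ref{homology} just mentioned and to the characterisation of fully 0-cofaithful arrows via their cokernel (Proposition \ref{clasproprelker}): since $(H,q',\zeta') = \Coker a'$, we have $H \simeq 0$ if and only if $a'$ is fully 0-cofaithful.

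The only potential obstacle is bookkeeping: one must check carefully that the induced 2-arrows $\varphi'$ and $\psi'$ produced by the universal properties really are (up to the unique coherence) equal to $1_0$ when $X$ and $Y$ are zero, so that the $\varphi'$-fully 0-cofaithful condition genuinely reduces to the plain notion. This is immediate from the cofaithfulness of cokernels and the faithfulness of kernels, but it is the one verification worth making explicit so that no extra hypothesis slips in through the reduction.
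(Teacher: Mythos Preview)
Your proposal is correct and follows exactly the approach the paper takes: the paper treats this corollary as an immediate specialisation of Proposition \ref{caracrelexbis} with $X\eqdef 0$ and $Y\eqdef 0$, offering no separate proof beyond the remark that the homology is constructed the same way. Your write-up is in fact more explicit than the paper's, spelling out why $\varphi'$ and $\psi'$ collapse to $1_0$ and how each condition reduces; nothing further is needed.
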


\subsection{Puppe-exact $\Gpdp$-categories}

	It is important to distinguish Puppe-exact $\Gpdp$-categories from 2-Puppe-exact
	$\Gpdp$-categories, defined in the following chapter.
	2-Puppe-exactness is a 2-dimensional analogue of Puppe-exactness for $\Ensp$-categories, 
	whereas
	Puppe-exactness for $\Gpdp$-categories is a \emph{generalisation} of
	Puppe-exactness for $\Ensp$-category (a $\Ensp$-category seen as a
	$\Gpdp$-category is Puppe-exact if and only if it is Puppe-exact
	in the 1-dimensional sense), which allows us to give them the same name.

	\begin{df}\label{defpuppex}\index{Puppe-exact!Gpd*-category@$\Gpdp$-category}%
	\index{Gpd*-category@$\Gpdp$-category!Puppe-exact}
		We call a $\Gpdp$-category \emph{Puppe-exact} if the following conditions
		hold:
		\begin{enumerate}
			\item in the following situation, if $(a,\alpha)=\Ker(b,\beta)$ and
				$(c,\beta)=\Coker(b,\alpha)$, then the arrow $w\col Qa\ra Kc$
				given by Lemma \ref{exenunfl} (with $X$ and $Y$ equal to $0$)
				is an equivalence;
				\begin{xym}\xymatrix@=40pt{
					A\ar[r]^a\ar@/^2pc/[rr]^0
						\rruppertwocell\omit{^<-2.7>\alpha}
					& B\ar[r]^b\ar@/_2pc/[rr]_0
						\rrtwocell\omit\omit{_<2.7>\beta}
					& C\ar[r]^c
					& D
				}\end{xym}
			\item 0-monomorphisms and 0-epimorphisms are stable under
				composition.
		\end{enumerate}
	\end{df}

	For a $\Ensp$-category seen as a $\Gpdp$-category, condition 2 is
	always true, and condition 1 becomes ordinary Puppe-exactness: for the sequence
	\begin{eqn}
		0\longrightarrow Kf\overset{k_f}\longrightarrow A\overset{f}\longrightarrow B\overset{q_f}\longrightarrow Qf\longrightarrow 0,
	\end{eqn}
	we have $Q (k_f)\simeq K(q_f)$.
	In dimension 2, we cannot express this condition in a similar form because,
	in general, the 2-arrows $\kappa_f$ and $\zeta_f$
	are not compatible and the comparison arrow $w$ does not exist.

	\begin{pon}\label{zermonregmon}
		In a $\Gpdp$-category which satisfies condition 1 of the definition
		of Puppe-exactness, every 0-monomorphism is normal faithful.
	\end{pon}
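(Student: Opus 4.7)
Given a 0-monomorphism $f\col A\ra B$, construct its cokernel $(q_f,\zeta_f)=\Coker f$ and apply condition 1 of Puppe-exactness to the four-term sequence
\begin{xym}\xymatrix@=40pt{
    0\ar[r]^{0}\rrlowertwocell<-9>_0{_<2.7>\;\,1_0}
    &A\ar[r]^{f}\rruppertwocell<9>^0{^<-2.7>\zeta_f\;}
    &B\ar[r]^{q_f}
    &\Coker f,
}\end{xym}
that is, with $a\eqdef 0\col 0\ra A$, $b\eqdef f$, $c\eqdef q_f$, $\alpha\eqdef 1_0\col f\circ 0\Ra 0$ and $\beta\eqdef \zeta_f$.

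The plan is to verify the two hypotheses of condition 1. First, $(0,1_0)=\Ker(f,\zeta_f)$: this is exactly the content of Proposition \ref{clasproprelker} together with the definition of a 0-monomorphism, since $f$ being a 0-monomorphism means $f$ is $\zeta_f$-fully 0-faithful. Second, $(q_f,\zeta_f)=\Coker(f,1_0)$: because $1_0$ is the trivial 2-arrow, compatibility with $1_0$ is automatic and the relative cokernel $\Coker(f,1_0)$ coincides with the ordinary cokernel $\Coker f$.

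Condition 1 then yields that the comparison arrow $w\col Q(0\col 0\ra A)\ra Kq_f$ furnished by Lemma \ref{exenunfl} is an equivalence. The cokernel of the zero arrow $0\ra A$ can be taken to be $A$ itself, with $q\eqdef 1_A$ and $\zeta\eqdef 1_0$. Tracing through Lemma \ref{exenunfl} in this collapsed situation, the factorization $k'\col A\ra B$ of $b=f$ through $q=1_A$ is (canonically isomorphic to) $f$ itself, with $\nu\col f\Ra f$ trivial; and the 2-arrow $\kappa'=\kappa_{q_f}w\circ q_f\nu'$ becomes $\zeta_f$ after transport along the equivalence $k'\simeq kw\simeq f$. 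By the equivalence of conditions 1 and 2 in Lemma \ref{exenunfl}, the equivalence of $w$ forces $(k',\kappa')=\Ker q_f$, hence $(f,\zeta_f)=\Ker q_f$. This is precisely the condition that $f$ be normal faithful.

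The main bookkeeping hurdle is matching the 2-arrows $\nu'$, $\mu'$, $\kappa'$ from Lemma \ref{exenunfl} with the canonical data $\zeta_f$, $\kappa_{q_f}$ of the cokernel-kernel pair, but this simplifies considerably because the left-hand object of the sequence is $0$ and most of the auxiliary 2-arrows involved are identities.
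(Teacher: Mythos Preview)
Your proof is correct and is essentially the same as the paper's. The paper applies condition 1 of Puppe-exactness to the identical sequence $0\to A\xrightarrow{f}B\xrightarrow{q_f}\Coker f$, verifies the two hypotheses via Proposition \ref{clasproprelker} (for $(0,1_0)=\Ker(f,\zeta_f)$) and Proposition \ref{caspartickerrel} (for $(q_f,\zeta_f)=\Coker(f,1_0)$), and then, using $(A,1_A,1_0)=\Coker 0_A$, reads off from the equivalence $w$ that $(f,\zeta_f)=\Ker q_f$; your version simply spells out more explicitly how this last step unfolds through Lemma \ref{exenunfl}.
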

	
		\begin{proof}
			Let $f$ be a 0-monomorphism.  
			On the one hand, by Proposition \ref{clasproprelker}, $(0,0,1_0)=\Ker(f,\zeta)$
			because $f$ is $\zeta$-fully 0-faithful, and on the other hand $(Qf,q,\zeta)
			=\Coker(f,1_0)$, by Proposition \ref{caspartickerrel}.
			So condition 1 of the definition
			of Puppe-exactness applies to the upper row of the following diagram
			and, since $(A,1_A,1_0)=\Coker 0_A$, the comparison $f\col A\ra B$, with $\zeta$,
			is the kernel of $q$.\qedhere
			\begin{xym}\xymatrix@C=20pt@R=30pt{
				0\ar[rr]^-0\ar@/_0.78pc/[drrr]_0\ar@/^2pc/[rrrr]^0
				&{}\rrtwocell\omit\omit{^<-2.7>1_0\,}
				&A\ar[rr]^-f\ar@{=}[dr]\ar@{}[dll]|(0.32){1_0\!\dir{=>}}
					\ar@/^2pc/[rrrr]^0
				&{}\rrtwocell\omit\omit{^<-2.7>\zeta\,}
				&B\ar[rr]^-q\ar@{}[drr]|(0.32){\dir{=>}\!\zeta}
				&{}
				&Qf
				\\ {}
				&&&A\ar[ur]_-f\ar@/_0.78pc/[urrr]_0 &&&{}
			}\end{xym}
		\end{proof}
		
	\begin{rem}\label{remcondplusfortqueppex}
		A stronger property could be interesting.  In the situation of the
		following diagram, we could ask that $f$ be $\varphi$-fully 0-faithful if
		and only if $(f,\varphi)=\Ker(y,\zeta)$.
		\begin{xym}\xymatrix@=40pt{
			A\ar[r]^f\rrlowertwocell<-9>_0{_<2.7>\,\varphi}
			&B\ar[r]^y\rruppertwocell<9>^0{^<-2.7>\zeta\,}
			&Y\ar[r]_-q
			&Q(y,\varphi)
		}\end{xym}
		This condition contains the property of the previous proposition,
		because the relative cokernel of a cokernel is $0$.
		Moreover, this condition implies that $f$ is fully 0-faithful
		if and only if $f=\Root(\Copip f)$ (this is the case where $Y\equiv 0$), and that
		$\varphi$ is a 0-monoloop if and only if $\varphi=\Pip(\Coroot \varphi)$
		(this is the case where $B\equiv 0$).
		These two properties are false in general in a Puppe-exact $\Gpdp$-category,
		as the example of Puppe-exact $\Ensp$-categories shows.
		But they are true in a 2-Puppe-exact $\Gpdp$-category.
		It remains to establish the link between $\Gpdp$-categories
		satisfying this property and its dual and 2-Puppe-exact $\Gpdp$-categories .
	\end{rem}
	
	\begin{coro}\label{carottesue}
		In a Puppe-exact $\Gpdp$-category, the following properties are
		equivalent for an arrow $f$:
		\begin{enumerate}
			\item $f$ is a 0-monomorphism;
			\item $f$ is a monomorphism;
			\item $f$ is a kernel;
			\item $f$ is a normal faithful arrow.
		\end{enumerate}
	\end{coro}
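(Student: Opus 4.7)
My plan is to establish the four equivalences via the cycle $2 \Rightarrow 1 \Rightarrow 4 \Rightarrow 3 \Rightarrow 2$, using only the results already at hand.

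First, $2 \Rightarrow 1$ is immediate: a monomorphism is by definition $\zeta_f$-fully faithful, and $\zeta_f$-fully faithfulness trivially implies $\zeta_f$-fully 0-faithfulness (the latter being the special case of the former where the codomain of the target 2-arrow is $0$). Next, $1 \Rightarrow 4$ is precisely the content of Proposition \ref{zermonregmon}, which uses condition 1 of the definition of Puppe-exactness and is the step where the hypothesis on $\C$ actually enters.

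Then $4 \Rightarrow 3$ is by unfolding definitions: a normal faithful arrow is a $\Coker$-regular monomorphism, meaning that the unit $\eta_f \col f \to \Ker(\Coker f)$ of the adjunction $\Coker \adj \Ker$ is an equivalence, so $f$ is equivalent to the kernel of its cokernel and in particular is a kernel. Finally, $3 \Rightarrow 2$: if $(f,\kappa) = \Ker g$ for some $g$, then by the universal property of the kernel (condition 2 of Definition \ref{defkernel}), $f$ is $\kappa$-fully faithful; applying Proposition \ref{implicrelplfid}, $f$ is therefore $\zeta_f$-fully faithful, i.e.\ a monomorphism.

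The only nontrivial step is $1 \Rightarrow 4$, and this is already packaged as Proposition \ref{zermonregmon}; the other three implications are essentially formal manipulations of the definitions of the relevant classes of arrows and the universal properties of kernel and cokernel. No further calculation is required beyond chaining these four observations together.
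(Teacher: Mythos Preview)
Your proof is correct and follows essentially the same approach as the paper: both establish the cycle $4 \Rightarrow 3 \Rightarrow 2 \Rightarrow 1$ as formal consequences of the definitions (kernel $\Rightarrow$ $\kappa$-fully faithful $\Rightarrow$ monomorphism $\Rightarrow$ 0-monomorphism), and close it with $1 \Rightarrow 4$ via Proposition \ref{zermonregmon}. You give slightly more detail on the ``always true'' implications than the paper does, but the argument is the same.
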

	
		\begin{proof}
			The implications $4 \Rightarrow 3 \Rightarrow 2 \Rightarrow 1$ are always
			true.  The previous proposition completes the proof.
		\end{proof}
	
	Here is a characterisation of Puppe-exact $\Gpdp$-categories.

	\begin{pon}\label{caracpuppex}
		Let $\C$ be a $\Gpdp$-category. The following conditions are equivalent:
		\begin{enumerate}
			\item $\C$ is Puppe-exact;
			\item \begin{enumerate}
					\item $\C$ is Grandis homological;
					\item every arrow which is both fully 0-faithful and fully
						0-cofaithful is an equivalence.
				\end{enumerate}
		\end{enumerate}
	\end{pon}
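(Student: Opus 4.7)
For the forward direction ($1\Rightarrow 2$), I would first obtain condition~2(b) by applying Puppe-exactness condition~1 to the trivial 4-term sequence $0\ra A\overset{f}\ra B\ra 0$ with identity 2-arrows. Proposition~\ref{claspropker} and its dual show that the hypotheses $(0_A,1_0)=\Ker(f,1_0)$ and $(0^B,1_0)=\Coker(f,1_0)$ are precisely that $f$ is fully 0-faithful and fully 0-cofaithful; the comparison $w\col A\ra B$ produced by Lemma~\ref{exenunfl} is isomorphic to $f$ itself, so Puppe-exactness forces $f$ to be an equivalence. For the Grandis homological structure, I verify characterisation~(2) of Proposition~\ref{propsemihom}: $\Ker$-idempotence holds because every kernel is fully 0-faithful (hence a 0-monomorphism) and Proposition~\ref{zermonregmon} (which uses only condition~1 of Puppe-exactness) makes every 0-monomorphism normal faithful; the two classes therefore coincide, so stability of 0-monomorphisms under composition (Puppe-exactness condition~2) yields stability of normal faithful arrows, and dually for cofaithful arrows.

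The remaining axiom is the subquotient axiom, which I would obtain by applying Puppe-exactness condition~1 to the 4-term sequence $N\overset{k}\ra M\overset{rj}\ra R\overset{s}\ra Q$ of diagram~\ref{diagsousquot}, equipped with the 2-arrows $\alpha\col(rj)k\Ra 0$ and $\beta\col s(rj)\Ra 0$ from the two extensions. The main check is $(k,\alpha)=\Ker(rj,\beta)$: given $x\col X\ra M$ with $\gamma\col rjx\Ra 0$ compatible with $\beta$, I first use $(jk,\alpha)=\Ker r$ (from the extension $(jk,\alpha,r)$) to obtain $y\col X\ra N$ and $\delta\col jx\Ra jky$ with $\alpha y\circ r\delta=\gamma$; then the computation $q\delta=sr\delta=s(\alpha y)^{-1}\circ s\gamma=(\beta ky)^{-1}\circ\beta x$, which uses the hypothesis $s\alpha=\beta k$ and compatibility of $\gamma$ with $\beta$, shows that $\delta$ is compatible with $\beta$. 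The $\beta$-full-faithfulness of $j$ (from $(j,\beta)=\Ker q$) then lifts $\delta$ uniquely to the required $\delta'\col x\Ra ky$. The $\alpha$-full-faithfulness of $k$ itself is obtained by applying $j$ and combining the $\alpha$-full-faithfulness of $jk$ with the faithfulness of $j$. The dual verification gives $(s,\beta)=\Coker(rj,\alpha)$, and Puppe-exactness condition~1 produces the desired equivalence $w\col Qk\ra Ks$.

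For the reverse direction ($2\Rightarrow 1$), I focus on deriving Puppe-exactness condition~1, since condition~2 will then follow by rerunning the end of the first half (Proposition~\ref{zermonregmon} combined with Grandis homological condition~2(c)). Given the Puppe-exactness setup, Corollary~\ref{coronoyrelnoy} identifies $a$ with the ordinary kernel of the induced arrow $b''\col B\ra Kc$ (coming from $\beta\col cb\Ra 0$), and dually $c$ with the ordinary cokernel of the induced $b'''\col Qa\ra C$ (coming from $\alpha$). Since Grandis homological implies both $\Ker$- and $\Coker$-factorisability, I form the $\Ker$-regular factorisation $b''=m_1'q_a$ with $m_1'\col Qa\ra Kc$ fully 0-faithful, and dually the $\Coker$-regular factorisation $b'''=k_c e_2'$ with $e_2'\col Qa\ra Kc$ fully 0-cofaithful. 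The relation $b=k_cb''=b'''q_a$, combined with cofaithfulness of $q_a$ (via its cokernel universal property) and faithfulness of $k_c$ (as a kernel), yields $m_1'=e_2'$; this common arrow is therefore simultaneously fully 0-faithful and fully 0-cofaithful, and condition~2(b) forces it to be an equivalence. By construction $wq_a\simeq q'=b''=m_1'q_a$ in Lemma~\ref{exenunfl}, so cancellation through $q_a$ gives $w\simeq m_1'$, proving $w$ is an equivalence.

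The main obstacle is the $2\Rightarrow 1$ direction: establishing the identification $m_1'=e_2'$ requires carefully tracking the compatibility 2-arrows attached to $b''$ and $b'''$ through Corollary~\ref{coronoyrelnoy} and verifying that the cancellations through $q_a$ and $k_c$ preserve these compatibilities. It is precisely here that condition~2(b) plays its essential role, since Grandis homological alone does not guarantee that the two dual canonical factorisations of an arrow fit together to yield an equivalence at the middle term.
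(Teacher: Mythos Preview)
Your argument is essentially correct but one step is misstated: kernels are not fully 0-faithful (Lemma~\ref{kerkeromegcodom} shows $\Ker k_f\simeq\Omega B$, typically nonzero). What you need is that $k_f$ is $\kappa_f$-fully faithful, hence a monomorphism (Proposition~\ref{implicrelplfid}), hence a 0-monomorphism; then Proposition~\ref{zermonregmon} applies and your $\Ker$-idempotence argument goes through as written. This is the paper's route via Corollary~\ref{carottesue}.

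The approaches diverge elsewhere. Your $1\Rightarrow 2(b)$ via the trivial sequence $0\ra A\ra B\ra 0$ is more direct than the paper's, which observes $\Ker f\simeq 0$ and uses the dual of Proposition~\ref{zermonregmon} to make the fully 0-cofaithful $f$ normal cofaithful, whence $f\simeq\Coker 0_A$. For the subquotient axiom the paper replaces your direct verification by the small-diagram lemmas: Lemma~\ref{lemmun} makes $\mu$ a pullback, Lemma~\ref{lemmtwo} gives $(k,\alpha')=\Ker q'$, and Corollary~\ref{lemmsimplnoy} then yields $(k,\alpha)=\Ker(rj,\beta)$ at once. The substantive divergence is $2\Rightarrow 1$: the paper constructs $\Ker w$, recognises the resulting square as a pullback via Lemma~\ref{lemmun} and Corollary~\ref{lemmsimplnoy}, and invokes the pullback-stability clause of Grandis homological to force $\Ker w\simeq 0$. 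Your factorisation route uses only $\Ker$- and $\Coker$-factorisability, not the subquotient axiom --- a genuine economy --- but the identification $w\simeq m_1'$ is not a naive cancellation of the merely cofaithful $q_a$: you must check that $m_1'$ and $w$ both factor $b''=q'$ through $(q_a,\zeta_a)$ with the \emph{same} 2-arrow $\zeta'=\alpha'$ (which means unwinding Corollary~\ref{coronoyrelnoy} against the construction in Lemma~\ref{exenunfl}), after which the $\zeta_a$-full cofaithfulness of $q_a$ produces the isomorphism.
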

	
		\begin{proof}
			{\it 1 $\Rightarrow$ 2(a). }
			Normal cofaithful arrows are stable under composition since
			the normal cofaithful arrows coincide with the 0-epimorphisms,
			which are stable under composition by hypothesis.  Dually, normal faithful
			arrows are stable under composition.
			
			Next, by the equivalence between conditions 3 and 4 of
			Corollary \ref{carottesue}, $\C$ is $\Ker$-idempotent.
			
			It remains to prove the subquotient axiom. Let us consider diagram
			\ref{diagsousquot}. Since $(k_s,\kappa_s)=\Ker s$ and $(j,\beta)=\Ker q$,
			$\mu$ is a pullback, by Lemma \ref{lemmun}.  And since $(i,\alpha)
			=\Ker r$, we have $(k,\alpha')=\Ker q'$, by Lemma \ref{lemmtwo}.
			Finally, by Corollary \ref{lemmsimplnoy}, $(k,\alpha)=\Ker(rj,\beta)$.
			Dually, $(s,\beta)=\Coker(rj,\alpha)$.  So condition 1 of the definition
			of Puppe-exactness applies to the sequence $(k,\alpha,rj, \beta, s)$, and
			the comparison arrow $w\col Qk\ra Ks$ is an equivalence.
			  
			{\it 1 $\Rightarrow$ 2(b). }
			Let $f\col A\ra B$ be a fully 0-faithful and fully 0-cofaithful arrow.
			Since $f$ is fully 0-faithful, $\Ker f\simeq 0$ and, by Proposition
			\ref{zermonregmon}, the fact that $f$ is a 0-epimorphism implies
			that $f$ is normal cofaithful, i.e.\ that $f$
			is a cokernel of $0_A$. Thus $f$ is an equivalence.
			
			{\it 2 $\Rightarrow$ condition 1 of Definition \ref{defpuppex}. }
			Let us consider the situation of Lemma \ref{exenunfl}, with
			$X$ and $Y$ equal to $0$, and $\varphi$ and $\psi$ equal to $1_0$.
			We assume that $(a,\alpha)=\Ker (b,\beta)$ and $(c,\beta)=\Coker(b,\alpha)$.
			We construct the kernel of $w$, which induces $t$ and $\tau$ such that
			$k\kappa_w t\circ kw\tau\circ\eta a=\alpha.$
			\begin{xym}\xymatrix@=40pt{
				A\ar[r]^a\ar@/^2pc/[rr]^0\ar[d]_t
					\drtwocell\omit\omit{\tau}
					\rruppertwocell\omit{^<-2.7>\alpha}
				& B\ar[r]^b\ar[d]^{q}
					\ar@/^2pc/[rr]^0\rtwocell\omit\omit{_<5.7>\,\eta}
					\rrtwocell\omit\omit{^<-2.7>\beta\,}
				& C\ar[r]^c
				& D
				\\ Kw\ar@/_2pc/[rr]_0\ar[r]_-{k_w}
					\rrlowertwocell\omit{_<2.7>\;\;\,\kappa_w}
				&{Qa}\ar[r]_-w
				&{Kc}\ar[u]_-{k}\urlowertwocell_0<-3>{_<-2>{\kappa}}
			}\end{xym}
			By Corollary \ref{lemmsimplnoy}, $(k_w,k\kappa_w)=
			\Ker(kw,\kappa w)$, since $k$ is 
			$\kappa$-fully faithful.  So, by Lemma
			\ref{lemmun}, since we also have $(a,\alpha)=\Ker(b,\beta)$, $\tau$
			is a pullback.
			
			Now, since $\C$ is $\Ker$-idempotent, $q$ is a normal cofaithful arrow
			and $k_w$ is a normal faithful arrow.  So,
			by condition 1(a) of the definition of homological $\Gpdp$-category,
			$t$ is also normal cofaithful.
			
			Next, $\zeta\circ\tau^{-1}\col k_w t\Ra 0$ is compatible
			with $\kappa_w$ (we see that by testing the condition with $k$, which is faithful).
			Thus, since $k_w$ is $\kappa_w$-fully faithful, there exists
			$\sigma\col t\Ra 0$ such that $k_w\sigma\circ\tau = \zeta$.
			
			Then Lemma \ref{lemmtwo} applies to the following diagram
			and $(1_A,\sigma)=\Ker t$.  So, since $t$ is normal cofaithful,
			$(t,\sigma)=\Coker 1_A$ and $Kw\simeq 0$.
			Thus $w$ is fully 0-faithful.
			\begin{xym}\xymatrix@=40pt{
				A\ar@{=}[r]\ar@{=}[d]\rruppertwocell<9>^0{^<-2.7>\sigma\,}
				&A\ar[r]^-{t}\ar[d]_{a}\drtwocell\omit\omit{^{\tau}}
				&Kw\ar[d]^{k_w}
				\\ A\ar[r]_-{a}\rrlowertwocell<-9>_0{_<2.7>\,\zeta}
				&B\ar[r]_-{q}
				&Qa
			}\end{xym}			
			Dually, we prove that $w$ is fully 0-cofaithful, which allows us
			to conclude that $w$ is an equivalence, by condition 2(b).
			
			{\it 2 $\Rightarrow$ condition 2 of Definition \ref{defpuppex}. }
			By the previous part of this proof, we know that $\C$
			satisfies condition 1 of the definition of Puppe-exactness. So,
			by Proposition \ref{zermonregmon}, the 0-monomorphisms
			and the normal faithful arrows coincide.  Thus conditions
			2(b) and 2(c) of the definition of homological $\Gpdp$-category
			imply condition 2 of the definition of Puppe-exactness.
		\end{proof}

	\begin{coro}
		In a Puppe-exact $\Gpdp$-category, the epimorphisms and the
		fully 0-faithful arrows form a factorisation system.
	\end{coro}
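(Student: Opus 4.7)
The plan is to reduce the statement to the factorisation system already produced by the kernel-quotient system $\Coker\adj\Ker$, which provides the pair $(\Cofidnorm,\zPlFid)$ whenever the ambient $\Gpdp$-category is $\Ker$-factorisable. Concretely, I would argue in three short steps: first invoke Proposition \ref{caracpuppex} to pass from Puppe-exactness to being Grandis homological; then read off $\Ker$-factorisability (conditions 2(b) and 2(d) of Definition \ref{defhomogran}), which by Proposition \ref{carackerfac} yields the factorisation system $(\Cofidnorm,\zPlFid)$; and finally identify $\Cofidnorm$ with the class of epimorphisms in the Puppe-exact setting.

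For the first two steps there is nothing to do beyond quoting the results just referenced, so the only actual content lies in the identification of classes. I would prove $\Cofidnorm=\Epi$ as follows. The inclusion $\Cofidnorm\subseteq\Epi$ is easy: a normal cofaithful arrow is in particular cofaithful, hence an epimorphism. For the reverse inclusion, given an epimorphism $f$, it is a fortiori a $0$-epimorphism (the $0$-epimorphism condition is weaker, using only loops $0\Ra 0$ against $f$). Then the dual of Proposition \ref{zermonregmon}, which applies because Puppe-exact $\Gpdp$-categories satisfy condition 1 of Definition \ref{defpuppex}, forces $f$ to be normal cofaithful. Combining these, $\Epi=\zEpi=\Cofidnorm$ in $\C$, which is also the content of the dual of Corollary \ref{carottesue}.

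With the classes identified, the factorisation system $(\Cofidnorm,\zPlFid)$ becomes $(\Epi,\zPlFid)$, which is exactly the claim. The main (though mild) obstacle is bookkeeping: making sure that the dual of the right-hand identification holds in the form we need, since the preceding text states Proposition \ref{zermonregmon} only for $0$-monomorphisms, and one must verify by direct dualisation that condition 1 of Definition \ref{defpuppex} is self-dual in the relevant sense. Once that is noted, no further calculation is required beyond citing Propositions \ref{caracpuppex}, \ref{carackerfac}, \ref{zermonregmon} and Corollary \ref{carottesue}.
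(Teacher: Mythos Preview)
Your approach is correct and is exactly the intended one: the corollary is stated without proof in the paper precisely because it follows from the preceding material in the way you outline --- Puppe-exact $\Rightarrow$ Grandis homological (Proposition~\ref{caracpuppex}) $\Rightarrow$ $\Ker$-factorisable (conditions 2(b) and 2(d) of Definition~\ref{defhomogran}) $\Rightarrow$ $(\Cofidnorm,\zPlFid)$ is a factorisation system (Proposition~\ref{carackerfac}), together with the identification $\Cofidnorm=\Epi$ from the dual of Corollary~\ref{carottesue}.

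There is one small slip in your argument for $\Cofidnorm\subseteq\Epi$. You write that a normal cofaithful arrow ``is in particular cofaithful, hence an epimorphism'', but the implication runs the other way: by the dual of Proposition~\ref{implicrelplfid}, epimorphism $\Rightarrow$ cofaithful, not conversely. The correct reasoning is that a normal cofaithful arrow is by definition a cokernel (of its kernel), and every cokernel is an epimorphism --- this is the implication $4\Rightarrow 3\Rightarrow 2$ in the dual of Corollary~\ref{carottesue}, which holds in any $\Gpdp$-category (a cokernel $q$ is $\zeta_q$-fully cofaithful by its universal property, hence is an epimorphism by the dual of Proposition~\ref{implicrelplfid}). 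With this correction, your proof goes through. Your remark about self-duality of condition~1 of Definition~\ref{defpuppex} is correct: the condition is manifestly symmetric under exchanging kernel and cokernel, so the dual of Proposition~\ref{zermonregmon} holds without further work.
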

	
	We can generalise condition 1 of the definition of Puppe-exactness
	to the general situation of Lemma \ref{exenunfl}.
	
	\begin{pon}
		Let us consider the situation of Lemma \ref{exenunfl} in a Puppe-exact 
		$\Gpdp$-category.  If the upper row is relative exact at $B$ and $C$, then
		$w$ is an equivalence.
	\end{pon}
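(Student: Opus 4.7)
The strategy is to show that $w$ is both fully 0-faithful and fully 0-cofaithful, then invoke condition 2(b) of Proposition \ref{caracpuppex}, which asserts that any such arrow in a Puppe-exact $\Gpdp$-category is an equivalence. Since $\C$ is Puppe-exact, Proposition \ref{caracpuppex} gives that $\C$ is Grandis homological, so in particular $\Ker$-factorisable, and Proposition \ref{caracrelexbis} applies.

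Recall from the construction in Lemma \ref{exenunfl} that $w$ is characterised (up to 2-iso) by $kw\simeq k'$ and $wq\simeq q'$, where $k=k_{(c,\psi)}$, $q=q_{(a,\varphi)}$, and $k'\col Q(a,\varphi)\ra C$, $q'\col B\ra K(c,\psi)$ are the arrows factorising $b$ through $q$ and through $k$ respectively (via the universal properties of the relative cokernel and relative kernel). Now translate the hypotheses: by condition (4) of Proposition \ref{caracrelexbis} applied to the 5-object sub-sequence $X\ra A\ra B\ra C\ra D$, relative exactness at $B$ is equivalent to $k'$ being $\beta^{\sharp}$-fully 0-faithful, where $\beta^{\sharp}\col ck'\Ra 0$ is the 2-arrow induced from $\beta\col cb\Ra 0$ by the universal property of $q$. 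Dually, by condition (3) applied to the sub-sequence $A\ra B\ra C\ra D\ra Y$, relative exactness at $C$ says $q'$ is $\alpha^{\flat}$-fully 0-cofaithful for the induced 2-arrow $\alpha^{\flat}\col q'a\Ra 0$.

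To deduce $w$ is fully 0-faithful, take $u\col X\ra Q(a,\varphi)$ with a 2-arrow $\beta'\col wu\Ra 0$. Composing with $k$ and using $kw\simeq k'$ produces a 2-arrow $\widetilde{\beta}\col k'u\Ra 0$. A diagram chase using the explicit construction of $\beta^{\sharp}$ and the equation $kw\simeq k'$ (together with the 2-arrow $\kappa_{(c,\psi)}\col ck\Ra 0$) shows that $\widetilde{\beta}$ is compatible with $\beta^{\sharp}$. By the $\beta^{\sharp}$-full 0-faithfulness of $k'$, there is then a unique $\alpha\col u\Ra 0$ with $k'\alpha=\widetilde{\beta}$; because $k$ is $\kappa_{(c,\psi)}$-fully faithful, hence faithful (Proposition \ref{implicrelplfid}), this forces $w\alpha=\beta'$. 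The uniqueness of $\alpha$ transfers from the uniqueness in the universal property of $k'$. Dually, $w$ is fully 0-cofaithful by the symmetric argument, using cancellation through the cofaithful arrow $q$. An application of Proposition \ref{caracpuppex}(2(b)) then concludes that $w$ is an equivalence.

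The main obstacle is the verification of compatibilities of the 2-arrows in the cancellation step: one must carefully track how $\widetilde{\beta}=k\beta'$ interacts with $\beta^{\sharp}$ upon composition with $c$ (using that $ck\Ra 0$ via $\kappa_{(c,\psi)}$) and, dually, how the 2-arrow $\beta'q$ matches $\alpha^{\flat}$. These checks are routine consequences of the defining universal properties of $w$, $k'$, $q'$ and the relative (co)kernels, but they form the computational core of the argument.
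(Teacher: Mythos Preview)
Your argument is correct and takes a genuinely different route from the paper's. The paper introduces the auxiliary objects $K(b,\beta)$ and $Q(b,\alpha)$ and applies the \emph{other} half of Proposition \ref{caracrelexbis} at each point: exactness at $B$ gives that $a'\col A\ra K(b,\beta)$ is $\varphi'$-fully 0-cofaithful, and exactness at $C$ gives that $c'\col Q(b,\alpha)\ra D$ is $\psi'$-fully 0-faithful. From this it identifies $Q(a,\varphi)$ and $K(c,\psi)$ with the cokernel and kernel of a 4-term sequence built from $b$, and then invokes condition~1 of the \emph{definition} of Puppe-exactness directly. You instead stay with the arrows $k'$ and $q'$ already present in Lemma \ref{exenunfl}, use conditions 4 and 3 of Proposition \ref{caracrelexbis} on them, and cancel through the faithful $k$ and cofaithful $q$ to transfer the relative properties to $w$; Proposition \ref{caracpuppex}(2)(b) then finishes. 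Your route avoids the extra constructions and the reduction step; the paper's route avoids the 2-cell cancellation you flag as the computational core. One identification worth making explicit in your write-up: your $\beta^{\sharp}$ is exactly the 2-arrow $\kappa'=\kappa w\circ c\nu'$ of Lemma \ref{exenunfl} (and dually $\alpha^{\flat}=\zeta'$); once this is noted, the compatibility of $\widetilde\beta=k\beta'\circ\nu'u$ with $\kappa'$ reduces by interchange to $ck\beta'=\kappa\,wu$, which holds because both equal the horizontal composite $\kappa*\beta'$ in the strictly described $\Gpdp$-category.
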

	
		\begin{proof}
			We construct $K(b,\beta)$ and $Q(b,\alpha)$; this induces $a'$,
			$b'$, $\mu$, $\nu$, $\varphi'$ and $\psi'$, as in the following diagram,
			such that $\kappa'a'\circ b\mu=\alpha$, $c'\zeta'\circ\nu b=\beta$,
			$k'\varphi'\circ\mu x=\varphi$ and $\psi'q'\circ y\nu=\psi$.
			\begin{xym}\xymatrix@R=35pt@C=17.5pt{
				X\ar[rr]^x\ar@/^2pc/[rrrr]^0\ar@/_0.78pc/[drrr]_-0
				&{}\rrtwocell\omit\omit{^<-2.7>\varphi\,}
				&A\ar[rr]^a\ar[dr]_-{a'}\ar@/^2pc/[rrrr]^0
					\ar@{}[dll]|(0.28){\dir{=>}\,\varphi'}\rrtwocell\omit\omit{_<4>\mu}
				& {}\rruppertwocell\omit{^<-2.7>\alpha}
				& B\ar[rr]^b\ar@/^2pc/[rrrr]^0\ar@{}[drr]_(0.28){\dir{=>}\!\kappa'}
					\ar@/_0.78pc/[drrr]_-0
				& {}\rrtwocell\omit\omit{^<-2.7>\beta\,}
				& C\ar[rr]^c\ar[dr]_-{q'}\rrtwocell\omit\omit{_<4>\nu}
					\ar@/^2pc/[rrrr]^0\ar@{}[dll]^(0.28){\dir{=>}\,\zeta'}
				& {}\rrtwocell\omit\omit{^<-2.7>\psi\,}
				& D\ar[rr]^y\ar@{}[drr]|(0.28){\dir{=>}\!\psi'}
				& {}
				& Y
				\\ {} &&&K(b,\beta)\ar[ur]_-{k'}\ar@/_0.78pc/[urrr]_-0
				&&{}&&Q(b,\alpha)\ar[ur]_-{c'}\ar@/_0.78pc/[urrr]_-0 &&&{}
			}\end{xym}
			
			As the sequence is relative exact at $B$, by Proposition
			\ref{caracrelexbis}, $a'$ is $\varphi'$-fully
			0-cofaithful and, as it is relative exact at $C$, $c'$ is
			$\psi'$-fully 0-faithful.
			Then $Q(a,\varphi)$ is also the cokernel of $k'$ (as $a'$ is
			$\varphi'$-fully 0-cofaithful) and, dually, $K(c,\psi)$ is also the kernel
			of $q'$.
			
			Moreover, since $c'$ is 0-faithful, the relative kernel-candidates of $(b,\beta)$ and of $(b,\zeta')$ coincide. So $(k',\kappa')=\Ker(b,\zeta')$. Dually,
			$(q',\zeta')=\Coker(b,\kappa')$.  Thus condition 1 of the definition of
			Puppe-exactness applies and $w\col Q(a,\varphi)\ra K(c,\psi)$ is an
			equivalence.
		\end{proof}
	
	Now we can prove the converse of Proposition \ref{twoextrelex}: 
	in a Puppe-exact $\Gpdp$-category, every
	relative exact sequence factors into extensions (short exact sequences).
	
	\begin{pon}\label{suitrelexdecompenext}
		Let $(A_n,a_n,\alpha_n)_{n\col\mathbb{Z}}$ be a relative exact sequence,
		where $a_n\!:\allowbreak A_n\ra A_{n+1}$ and $\alpha_n\col a_{n+1}a_n\Ra 0$.
		Then for all $n\col\mathbb{Z}$, there exist an object $I_n$, arrows
		$k_n\col I_n\ra A_{n+1}$ and $q_n\col A_n\ra I_n$, and 2-arrows
		$\varphi_n\col q_{n+1}k_n\Ra 0$ and $\eta_n\col a_n\Ra k_nq_n$ such that
		\begin{enumerate}
			\item $0\longrightarrow I_n\overset{k_n}\longrightarrow A_{n+1}\overset{q_{n+1}}\longrightarrow I_{n+1}\longrightarrow 0$, with
				$\varphi_n$, is an extension;
			\item $k_{n+1}\varphi_n q_n\circ (\eta_{n+1} * \eta_n) = \alpha_n$.
		\end{enumerate}
		\begin{xym}\xymatrix@!@=10pt{
			&0\ar[dr]
			&&0
			&&0\ar[dr]
			&&0
			\\ &&I_{n-1}\ar[dr]^{k_{n-1}}\ar[ur]
			&&{}
			&&I_{n+1}\ar[ur]\ar[dr]^{k_{n+1}}
			\\ {}\ar[r]
			&A_{n-1}\ar[rr]_{a_{n-1}}\ar[ur]^{q_{n-1}}
			&&A_{n}\ar[rr]_{a_n}\ar[dr]_{q_n}
			&&A_{n+1}\ar[rr]_{a_{n+1}}\ar[ur]^{q_{n+1}}
			&{}
			&A_{n+2}\ar@{-}[dr]\ar@{-}[r] &
			\\ {}\ar[ur] &{}
			&&&I_n\ar[ur]_{k_{n}}\ar[dr]
			&&&{} &
			\\&&&0\ar[ur]
			&&0
		}\end{xym}
	\end{pon}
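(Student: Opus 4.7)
The plan is to factor each $a_n$ through the relative kernel $K(a_{n+1}, \alpha_{n+1})$. For each $n$, set $(k_n, \bar{\varphi}_n) := K(a_{n+1}, \alpha_{n+1})$. The compatibility equation $a_{n+2}\alpha_n = \alpha_{n+1} a_n$, which is precisely the defining condition of a relative exact sequence (shifted one index), is exactly what is needed to apply the universal property of the relative kernel to $(a_n, \alpha_n)$. This yields $q_n: A_n \to I_n$ and $\eta_n: a_n \Ra k_n q_n$ satisfying $\alpha_n = \bar{\varphi}_n q_n \circ a_{n+1}\eta_n$.

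Next, consider the 2-cell $\psi_n := \bar{\varphi}_n \circ \eta_{n+1}^{-1} k_n : k_{n+1}q_{n+1}k_n \Ra 0$. The compatibility $a_{n+2}\bar{\varphi}_n = \alpha_{n+1} k_n$ of the relative kernel 2-cell, together with the defining equation of $\eta_{n+1}$, gives (via interchange) $a_{n+2}\psi_n = \bar{\varphi}_{n+1} q_{n+1} k_n$, so $\psi_n$ is compatible with $\bar{\varphi}_{n+1}$. Since $k_{n+1}$ is $\bar{\varphi}_{n+1}$-fully faithful as a relative kernel, $\psi_n$ descends uniquely to $\varphi_n : q_{n+1}k_n \Ra 0$ with $k_{n+1}\varphi_n = \psi_n$. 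Condition~2 of the proposition then reduces by interchange to the defining equation $\alpha_n = \bar{\varphi}_n q_n \circ a_{n+1}\eta_n$.

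It remains to check the extension property. Since $\C$ is homological by Proposition~\ref{caracpuppex}, hence $\Ker$-idempotent, it suffices to verify that $(k_n, \varphi_n) = \Ker q_{n+1}$ and that $q_{n+1}$ is a cokernel; the identity $(q_{n+1}, \varphi_n) = \Coker k_n$ will then follow. For the kernel property, given $x: X \to A_{n+1}$ and $\beta: q_{n+1}x \Ra 0$, interchange shows that $k_{n+1}\beta \circ \eta_{n+1}x : a_{n+1}x \Ra 0$ is compatible with $\alpha_{n+1}$, so it factors through $(k_n, \bar{\varphi}_n)$; naturality of $\eta_{n+1}$ and faithfulness of $k_{n+1}$ then deliver the required factorization through $(k_n, \varphi_n)$, and the $\varphi_n$-full faithfulness of $k_n$ is obtained analogously by transferring $\bar{\varphi}_n$-full faithfulness along $k_{n+1}$.

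The main obstacle is to recognise $q_{n+1}$ as a cokernel. This is the step that uses relative exactness most essentially: applying Lemma~\ref{exenunfl} to the five-arrow window $A_{n-1} \to A_n \to A_{n+1} \to A_{n+2} \to A_{n+3}$, relative exactness at $A_{n+1}$ says that the comparison $w_{n+1}: Q(a_n, \alpha_{n-1}) \to K(a_{n+2}, \alpha_{n+2}) = I_{n+1}$ is an equivalence. The 2-cells $\mu, \nu, \mu', \nu'$ supplied by that lemma identify $q_{n+1}$ (up to isomorphism) with $w_{n+1}$ postcomposed with the canonical relative cokernel map $A_{n+1} \to Q(a_n, \alpha_{n-1})$, which is a non-relative cokernel by the dual of Corollary~\ref{coronoyrelnoy}. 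Postcomposing with the equivalence $w_{n+1}$ preserves being a cokernel, so $q_{n+1}$ is a cokernel, and $\Ker$-idempotence yields $(q_{n+1}, \varphi_n) = \Coker k_n$.
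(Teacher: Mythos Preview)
Your argument is correct and rests on the same key step as the paper's proof: the proposition immediately preceding this one, which says that in a Puppe-exact $\Gpdp$-category the comparison $w\col Q(a,\varphi)\ra K(c,\psi)$ of Lemma~\ref{exenunfl} is an equivalence whenever the sequence is relative exact at the two consecutive middle objects. The organisation differs. The paper fixes one index $n$, applies that proposition at $A_n$ and $A_{n+1}$ to obtain $I_n$ simultaneously as $\Coker(a_{n-1},\alpha_{n-2})$ and $\Ker(a_{n+1},\alpha_{n+1})$, and then recurses on a truncated sequence $\dots\ra A_{n-1}\ra A_n\ra I_n\ra 0$, so that the extension $(k_{n-1},\varphi_{n-1})=\Ker q_n$ falls out of the recursive step. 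You instead define all $I_n$ uniformly as relative kernels, verify $(k_n,\varphi_n)=\Ker q_{n+1}$ by hand, and only invoke the $w$-equivalence at the end to recognise $q_{n+1}$ as a cokernel. Your route is more explicit (it actually checks condition~2, which the paper leaves implicit) at the cost of redoing the kernel verification that the paper gets for free from its recursion.

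Two minor slips to fix. First, Lemma~\ref{exenunfl} only constructs $w$; it is the subsequent proposition that makes $w$ an equivalence, and that result needs relative exactness at \emph{both} $A_{n+1}$ and $A_{n+2}$, not just at $A_{n+1}$ (harmless here, since the whole sequence is relative exact). Second, to match the six-object pattern $X\ra A\ra B\ra C\ra D\ra Y$ of Lemma~\ref{exenunfl} with $B=A_{n+1}$, $C=A_{n+2}$ and $\psi=\alpha_{n+2}$, your window must run from $A_{n-1}$ to $A_{n+4}$.
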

	
		\begin{proof}
			Let $n$ be an integer.
			By applying the previous proposition to the sequence at $A_n$ and $A_{n+1}$,
			we get
			$I_n$, $q_n$, $k_n$, $\eta_n$, as well as $\zeta_n\col q_na_{n-1}\Ra 0$
			and $\kappa_n\col a_{n+1}k_n\Ra 0$ such that
			\begin{enumerate}
				\item $(q_n,\zeta_n)=\Coker (a_{n-1},\alpha_{n-2})$;
				\item $(k_n,\kappa_n)=\Ker (a_{n+1},\alpha_{n+1})$.
			\end{enumerate}

			Then we construct a new relative exact sequence
			$\ldots, A_{n-1},\allowbreak a_{n-1},\allowbreak 	A_n,\allowbreak q_n,\allowbreak I_n, 0$
			(with $\ldots, \alpha_{n-3}, \alpha_{n-2},	\zeta_n$ as 2-arrows).
			By applying again the previous proposition to this sequence
			at $A_{n-1}$ and $A_n$, we get $I_{n-1}$, $q_{n-1}$, $k_{n-1}$, $\eta_{n-1}$,
			as well as $\zeta_{n-1}\col q_{n-1}a_{n-2}\Ra 0$
			and $\varphi_{n-1}\col q_nk_{n-1}\Ra 0$ such that
			\begin{enumerate}
				\item $(q_{n-1},\zeta_{n-1})=\Coker (a_{n-2},\alpha_{n-3})$;
				\item $(k_{n-1},\varphi_{n-1})=\Ker q_n$.
			\end{enumerate}
			And so on.  For indices greater than $n$, the construction is dual.
		\end{proof}
	
	In Puppe-exact $\Gpdp$-categories, we can give a simple characterisation of the relative exactness of some sequences.
	
	\begin{pon}
		Let $\C$ be a Puppe-exact $\Gpdp$-category. Let us consider the sequence of diagram
		\ref{diagkerkerzer}.  Then
		\begin{enumerate}
			\item this sequence is exact at $K$ if and only if $k$ is
				$\kappa$-fully 0-faithful;
			\item this sequence is exact at $K$ and $A$ if and only if
				$(k,\kappa)=\Ker (f,\varphi)$.
		\end{enumerate}
	\end{pon}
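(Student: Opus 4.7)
The strategy is to reduce both parts to properties of a single comparison arrow $k'\col K\to\Ker(f,\varphi)$ coming from the universal property of the relative kernel (which is well-defined once $\kappa$ is compatible with $\varphi$, a compatibility implicit in both parts), and then to invoke the characterization of Puppe-exactness in Proposition~\ref{caracpuppex}.

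For~(1), the exactness at $K$ of the sequence of diagram~\ref{diagkerkerzer} is to be read relative to the 2-arrows actually present: the incoming arrow $0\to K$ has trivial image, while the outgoing arrow $k$ is equipped with the next 2-arrow $\kappa$, so exactness at $K$ amounts to the relative kernel $\Ker(k,\kappa)$ being $0$. By Proposition~\ref{clasproprelker} this is precisely $k$ being $\kappa$-fully 0-faithful, which gives the equivalence immediately.

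For~(2), observe first that $(k,\kappa)=\Ker(f,\varphi)$ is equivalent to $k'$ being an equivalence. By part~(1), exactness at $K$ is equivalent to $k$ being $\kappa$-fully 0-faithful, which via Corollary~\ref{coronoyrelnoy} (together with the fact that the canonical root $\Ker(f,\varphi)\to Kf$ is fully faithful, hence fully 0-faithful, so may be cancelled) is equivalent to $k'$ being fully 0-faithful. Since a Puppe-exact $\Gpdp$-category is homological by Proposition~\ref{caracpuppex}, hence $\Ker$-factorisable, Proposition~\ref{caracrelexbis} applies to the sub-sequence $K\to A\to B$ read at $A$, with previous 2-arrow $1_0$ and next 2-arrow $\varphi$: the induced comparison from $K$ to the relative kernel there is exactly our $k'$, and exactness at $A$ translates to $k'$ being fully 0-cofaithful. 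Condition~2(b) of Proposition~\ref{caracpuppex} then forces $k'$ to be an equivalence, and therefore $(k,\kappa)=\Ker(f,\varphi)$. Conversely, if $(k,\kappa)=\Ker(f,\varphi)$, Proposition~\ref{kerrelrelex} gives relative exactness at $K$ and $A$, which in particular yields the desired exactness in each case.

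The main subtlety is this translation step: identifying the two separate exactness hypotheses with the two dual 0-faithfulness properties of the \emph{same} arrow $k'$, after absorbing the fully faithful root $\Ker(f,\varphi)\to Kf$. Once the translation is in place, the conclusion is just an invocation of the Puppe-exactness axiom that a fully 0-faithful and fully 0-cofaithful arrow is an equivalence.
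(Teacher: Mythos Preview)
Your proof is correct, but it follows a different path from the paper's.

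For part~(1) you and the paper agree: relative exactness at $K$ is immediately the vanishing of $\Ker(k,\kappa)$, which is $\kappa$-full 0-faithfulness of $k$ by Proposition~\ref{clasproprelker}.

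For part~(2) the strategies diverge. You encode both exactness hypotheses as properties of the single comparison arrow $k'\colon K\to\Ker(f,\varphi)$: exactness at $K$ gives $k'$ fully 0-faithful (via Corollary~\ref{coronoyrelnoy} and cancellation of the fully faithful root $\Ker(f,\varphi)\to Kf$), exactness at $A$ gives $k'$ fully 0-cofaithful (condition~3 of Proposition~\ref{caracrelexbis}, with $\varphi'=1_0$ since the preceding arrow is $0$), and then condition~2(b) of Proposition~\ref{caracpuppex} forces $k'$ to be an equivalence. The paper instead factors $f$ through $q=\Coker k$: exactness at $K$ makes $k$ a 0-monomorphism, exactness at $A$ makes the induced $f'\colon\Coker k\to B$ relatively fully 0-faithful (condition~4 of Proposition~\ref{caracrelexbis}), and then Puppe-exactness is invoked as ``every 0-monomorphism is normal faithful'' (Proposition~\ref{zermonregmon}) to get $(k,\zeta)=\Ker q$, from which Corollary~\ref{lemmsimplnoy} yields $(k,\kappa)=\Ker(f,\varphi)$.

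Both arguments consume the same strength of hypothesis: your route uses the characterization of Puppe-exactness, the paper's uses its definition. Your version is more symmetric (two dual conditions on one arrow, then collapse), while the paper's stays closer to the concrete machinery of normal faithfulness and the simplification lemma. One small remark: your citation of Corollary~\ref{coronoyrelnoy} plus the cancellation of the root works, but invoking Corollary~\ref{lemmsimplnoy} directly with $m=k_{f,\varphi}$ (which is $\kappa_{f,\varphi}$-fully faithful) gives $\Ker(k,\kappa)\simeq\Ker k'$ in one step and would streamline that passage.
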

		
		\begin{proof}
			\begin{enumerate}
				\item (This first property is always true.)
					This follows immediately from the fact that $k$ is $\kappa$-fully
					0-faithful if and only if $\Ker(k,\kappa)\simeq 0$.
				\item We have already proved that, if $(k,\kappa)=\Ker (f,\varphi)$, then
					the sequence is relative exact (Proposition \ref{kerrelrelex}).
					Let us assume that the sequence is relative exact at $K$ and $A$. By
					point 1, $k$ is $\kappa$-fully 0-faithful
					and is thus a 0-monomorphism.
					
					Let us consider the following diagram, where $(q,\zeta)=\Coker k$,
					$f'\zeta\circ\mu k =\kappa$ and $\varphi'q\circ y\mu=\varphi$.
					\begin{xym}\xymatrix@C=20pt@R=40pt{
						K\ar[rr]^-k\ar@/_0.76pc/[drrr]_0\ar@/^2pc/[rrrr]^0
						&{}\rrtwocell\omit\omit{^<-2.7>\kappa\,}
						&A\ar[rr]^-f\ar[dr]_-{q}\rrtwocell\omit\omit{_<4>\mu}
							\ar@{}[dll]|(0.32){\zeta\!\dir{=>}}\ar@/^2pc/[rrrr]^0
						&{}\rrtwocell\omit\omit{^<-2.7>\varphi\,}
						&B\ar[rr]^-y\ar@{}[drr]|(0.32){\dir{=>}\!\varphi'}
						&{}
						&Y
						\\ {}
						&&&Qk\ar[ur]_-{f'}\ar@/_0.76pc/[urrr]_0 &&&{}
					}\end{xym}
					Since the sequence is exact at $A$,
					by Proposition \ref{caracrelexbis}, $f'$ is $\varphi'$-fully
					0-faithful.  Now, since $\C$ is Puppe-exact and $k$ is a 0-monomorphism,
					$(k,\zeta)=\Ker q$.  So, by Lemma \ref{lemmsimplnoy},
					$(k,\kappa)=\Ker(f,\varphi)$.\qedhere
				\end{enumerate}
		\end{proof}
	
	\begin{pon} Let $\C$ be a Puppe-exact $\Gpdp$-category. The sequence \ref{sequencesuiv}
		is relative exact if and only if it is an extension.
	\end{pon}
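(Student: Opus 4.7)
The ``if'' direction is already recorded just after the definition of extension (every extension is relative exact at every point). For the ``only if'' direction I will analyze what relative exactness gives at each of the three interior positions $A$, $B$, $C$ of the sequence, and then combine the three pieces of information using the rigidity result for arrows that are simultaneously fully $0$-faithful and fully $0$-cofaithful in a Puppe-exact $\Gpdp$-category.

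Assume the sequence is relative exact. First consider position $B$. Here the flanking $2$-arrows are $1_0\col a\cdot 0\Ra 0$ and $1_0\col 0\cdot b\Ra 0$, so the relative kernel and cokernel in Proposition~\ref{caracrelex} collapse to the ordinary kernel $k_b$ of $b$ and the ordinary cokernel $q_a$ of $a$, and relative exactness at $B$ reduces to ordinary exactness of $(a,\alpha,b)$. A Puppe-exact $\Gpdp$-category is in particular Grandis homological (Proposition~\ref{caracpuppex}), hence $\Ker$-factorisable, so Corollary~\ref{caracdexbis} applies and tells me that the induced factorizations $a'\col A\to\Ker b$ and $b'\col \Coker a\to C$ satisfy: $a'$ is fully $0$-cofaithful and $b'$ is fully $0$-faithful.

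Next I use the endpoint positions to upgrade these properties. At $A$ the diagram of Proposition~\ref{caracrelex} has $\varphi=1_0$ and $\psi=\alpha$; its relative cokernel $\Coker(0,1_0)$ is just $A$, so by Proposition~\ref{caracrelexbis} the relative exactness at $A$ is equivalent to ``$a$ is $\alpha$-fully $0$-faithful''. By Corollary~\ref{coronoyrelnoy} applied to the triangle $A\xrightarrow{a}B\xrightarrow{b}C$, this is in turn equivalent to saying that the induced arrow $a'\col A\to\Ker b$ is fully $0$-faithful. Dually, relative exactness at $C$ gives that $b'\col\Coker a\to C$ is fully $0$-cofaithful.

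Combining the two positions: $a'$ is both fully $0$-faithful and fully $0$-cofaithful, and likewise for $b'$. By Proposition~\ref{caracpuppex}, condition 2(b), any such arrow in a Puppe-exact $\Gpdp$-category is an equivalence. Therefore $(a,\alpha)=\Ker b$ and, dually, $(b,\alpha)=\Coker a$, so the sequence is an extension.

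The only step that requires care is the translation, via Corollary~\ref{coronoyrelnoy}, of the conclusion of relative exactness at the endpoints (``$a$ is $\alpha$-fully $0$-faithful'') into a statement about the canonical factorization through the ordinary kernel; everything else is a direct invocation of the characterizations of (relative) exactness via fully $0$-(co)faithful factorizations and of the ``faithful $+$ cofaithful $=$ equivalence'' rigidity in the Puppe-exact setting.
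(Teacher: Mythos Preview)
Your proof is correct. The paper's one-line proof simply invokes the immediately preceding proposition (and its dual), which says that for a sequence $0\to K\to A\to B\to Y$, relative exactness at $K$ and at $A$ is equivalent to $(k,\kappa)=\Ker(f,\varphi)$; applied here with $Y=0$ this gives $(a,\alpha)=\Ker b$ directly, and dually $(b,\alpha)=\Coker a$.

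Your route is a genuine, if minor, variation. Instead of citing that packaged result, you separate the information coming from the three interior positions: exactness at $B$ makes the comparison $a'\col A\to\Ker b$ fully $0$-cofaithful (Corollary~\ref{caracdexbis}), relative exactness at $A$ makes the same $a'$ fully $0$-faithful (via Corollary~\ref{coronoyrelnoy}), and then the rigidity clause of Proposition~\ref{caracpuppex} forces $a'$ to be an equivalence. The paper's preceding proposition argues instead on the cokernel side: it shows $a$ is a $0$-monomorphism, hence normal faithful, and then uses Lemma~\ref{lemmsimplnoy} to identify $a$ with the relative kernel. Your argument is slightly more symmetric and avoids that lemma, at the cost of invoking Corollary~\ref{caracdexbis} (which needs $\Ker$-factorisability, available here since Puppe-exact implies Grandis homological). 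Either way the essential mechanism is the same: two of the three positions pin down one half of the extension condition.
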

		\begin{proof}
			We apply the previous proposition and its dual.
		\end{proof}

\subsection{Abelian $\Gpd$-categories}

	The notion of abelian $\Gpd$-category, which is a \emph{generalisation} of the usual notion of abelian category, must not be confused with the notion of 2-abelian $\Gpd$-category  (Definition \ref{deftwoab}).  This notion is provisional, because it is not sure that
	we can deduce from it that $\C$ is additive, or even that every fully 0-faithful arrow is fully faithful.
	It will be perhaps necessary to add the condition that $\C$ is enriched in the $\Gpd$-category $\CGS$ of symmetric 2-groups to have an appropriate notion of abelian $\Gpd$-category.

	\begin{df}\label{dfgpdab}\index{abelian!$\Gpd$-category}%
	\index{Gpd-category@$\Gpd$-category!abelian}
		An \emph{abelian $\Gpd$-category} is a $\Gpdp$-category $\C$ with zero object,
		finite products and coproducts, kernels and cokernels, such that:
		\begin{enumerate}
			\item every 0-monomorphism is normal faithful;
			\item every 0-epimorphism is normal cofaithful;
			\item fully 0-faithful arrows and 0-monomorphisms are stable
				under pushout;
			\item fully 0-cofaithful arrows and 0-epimorphisms are stable
				under pullback.
		\end{enumerate}
	\end{df}
	
	A $\Ens$-category is abelian in the usual sense if and only if it is abelian
	as a locally discrete $\Gpd$-category; for a 
	$\Ens$-category, conditions 3 and 4 follow from the first two.
	We will prove in Corollary \ref{twoabimplab} that 2-abelian $\Gpd$-categories
	are also abelian.  Thus the notion of abelian $\Gpd$-category
	covers both the 1-dimensional notion of abelian category and the 2-dimensional notion of 2-abelian  $\Gpd$-category.  A natural question is: does it exist other natural examples of abelian $\Gpd$-category?
	
	\begin{pon}
		Every abelian $\Gpd$-category $\C$ is Puppe-exact.
	\end{pon}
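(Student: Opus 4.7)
The plan is to use the characterisation of Puppe-exact $\Gpdp$-categories in Proposition~\ref{caracpuppex}: it suffices to prove that $\C$ is Grandis homological and that every arrow which is simultaneously fully 0-faithful and fully 0-cofaithful is an equivalence. The latter condition is the easy one: if $f$ is fully 0-faithful then $\Ker f\simeq 0$ by Proposition~\ref{claspropker}, and if $f$ is fully 0-cofaithful it is in particular a 0-epimorphism, so condition~2 in the definition of abelian $\Gpd$-category makes it normal cofaithful, i.e.\ canonically the cokernel of its kernel. Since $\Ker f\simeq 0$ this is the cokernel of $0_A\col 0\to A$, which by Proposition~\ref{kerneltrivial} is $1_A$, so $f$ is an equivalence.

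For Grandis homological I would verify condition~1 of Definition~\ref{defhomogran} directly. Pullbacks and pushouts exist in $\C$ from the existence of finite products, coproducts, kernels and cokernels. In an abelian $\Gpd$-category, normal faithful coincides with 0-monomorphism: the forward direction holds because a $\Coker$-regular monomorphism is in particular $\zeta_f$-fully 0-faithful, and the reverse is exactly condition~1 of abelian; dually, normal cofaithful coincides with 0-epimorphism via condition~2. Conditions~3 and~4 of abelian then translate immediately into the pullback and pushout stability clauses 1(a) and 1(b) of Definition~\ref{defhomogran}.

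The main obstacle is clause 1(c), that is, $\Ker$-idempotence. My approach is to show that every kernel is already a 0-monomorphism, so that condition~1 of abelian promotes it to normal faithfulness, which by definition is exactly the statement that it is canonically the kernel of its cokernel. Given $(k,\kappa_f)=\Ker f$, the universal property of the kernel makes $k$ $\kappa_f$-fully faithful, hence $\kappa_f$-fully 0-faithful. Now let $(q_k,\zeta_k)=\Coker k$; the cokernel universal property produces $f'$ and a 2-arrow $\iota\col f\Ra f'q_k$ with $f'\zeta_k\circ \iota k=\kappa_f$. Adapting the argument in the proof of Proposition~\ref{implicrelplfid}, any 2-arrow $\beta\col ka\Ra 0$ compatible with $\zeta_k$ is then compatible with $\kappa_f$, whence $\kappa_f$-fully 0-faithfulness of $k$ yields a unique $\alpha\col a\Ra 0$ with $k\alpha=\beta$; this is precisely $\zeta_k$-fully 0-faithfulness of $k$.
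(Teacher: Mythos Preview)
Your proof is correct and follows the paper's strategy exactly: verify the two conditions of Proposition~\ref{caracpuppex}. Two remarks. First, your blanket claim that all pullbacks and pushouts exist from finite products, coproducts, kernels and cokernels is not established in the paper for general $\Gpdp$-categories (Proposition~\ref{tradpfnoy} does this only in the additive case, and additivity of abelian $\Gpd$-categories is left open); fortunately only the specific pullbacks of clause~1(a) are needed, and those can be built from kernels alone via Lemma~\ref{lemmun}---the paper glosses over this point as well. Second, your treatment of $\Ker$-idempotence unpacks what the paper compresses into one line: kernels are $\kappa$-fully faithful, hence monomorphisms by Proposition~\ref{implicrelplfid}, hence 0-monomorphisms, hence normal faithful by condition~1 of abelian; you could have simply cited Proposition~\ref{implicrelplfid} rather than re-deriving the compatibility step.
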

	
		\begin{proof}
			We use characterisation \ref{caracpuppex}. First, we prove
			that $\C$ is Grandis homological.  Conditions 1(a) and 1(b)
			of Definition \ref{defhomogran} hold, by conditions 3 and
			4 of the definition of abelian $\Gpd$-category, since the 0-monomorphisms
			and the normal faithful arrows coincide, by condition 1, and since
			the 0-epimorphisms and the normal cofaithful arrows coincide, by
			condition 2.  Moreover, since the kernels are 0-monomorphisms and so, by
			condition 1, normal faithful arrows, $\C$ is $\Ker$-idempotent.
			
			Next, let us prove condition 2(b) of Proposition \ref{caracpuppex}.
			Let $A\overset{f}\ra B$ be a fully 0-faithful and fully 0-cofaithful arrow.
			By Proposition \ref{claspropker},
			$\Ker f \simeq 0$.  And since $f$ is fully 0-cofaithful,
			$f$ is a 0-epimorphism and so, by condition 2, a normal cofaithful arrow.
			Thus $f$ is the cokernel of $0_A$ and is an equivalence.
		\end{proof}

\section{Diagram lemmas}

\subsection{$3 \times 3$ lemma}

	\begin{pon}[$3 \times 3$ lemma]\label{lemmtrxtr}\index{lemma!3 x 3@$3\times 3$}%
	\index{3 x 3 lemma@$3\times 3$ lemma}
		Let $\C$ be a Puppe-exact $\Gpdp$-category and let be the situation of
		diagram \ref{diagtyptroitroi} in $\C$.
		\begin{enumerate}
			\item If the three columns and the two last rows are relative
				exact (are extensions), then the first row is relative
				exact.
			\item If the middle row and the middle column are relative exact, all
				rows and columns are relative exact if and only if
				$\varphi_1$ is a pullback relative to
				$\gamma_1g_1\circ c_2\psi_1^{-1}$ and $c_2\eta_2$,
				$\psi_2$ is a pushout relative
				to $c_1\eta_1\circ \psi_1^{-1}f_1$ and $\beta_1 f_1$, 
				$g_1$ is $\eta_1$-fully 0-cofaithful, $c_1$ is $\gamma_1$-fully
				0-faithful, $a_2$ is $\alpha_1$-fully 0-co\-faith\-ful and $f_3$
				is $\eta_3$-fully 0-faithful.
		\end{enumerate}
	\end{pon}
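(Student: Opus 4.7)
The plan is to deduce both parts from the restricted kernels lemma (Proposition~\ref{lemmnoyrestr}), its dual, and Lemma~\ref{lemmquatre}, together with the characterisation of relative fully 0-faithful arrows (Proposition~\ref{clasproprelker}).

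For Part~1, the hypotheses of Proposition~\ref{lemmnoyrestr} are directly satisfied. The column extensions give $(a_1,\alpha_1)=\Ker a_2$ and $(b_1,\beta_1)=\Ker b_2$, while the column~$C$ extension makes $c_1$ the kernel of $c_2$, hence $\gamma_1$-fully faithful (condition~2 of Definition~\ref{defkernel}) and in particular $\gamma_1$-fully 0-faithful; the row~2 extension gives $(f_2,\eta_2)=\Ker g_2$, and the row~3 extension makes $f_3$ a kernel, hence $\eta_3$-fully 0-faithful. Proposition~\ref{lemmnoyrestr} then yields $(f_1,\eta_1)=\Ker g_1$; applying its dual (the restricted cokernels lemma) yields $(g_1,\eta_1)=\Coker f_1$. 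This is the relative exactness of the first row at $B_1$.

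For Part~2, the forward direction is mostly unpacking. The conditions that $c_1$, $g_1$, $a_2$, and $f_3$ are fully 0-(co)faithful relative to the pertinent 2-cells follow directly from the relative exactness of the corresponding outer row or column, via Proposition~\ref{clasproprelker} and its dual. The relative pullback condition on $\varphi_1$ is exactly the second conclusion of Proposition~\ref{lemmnoyrestr} applied to the top-left $2\times 3$ sub-diagram, and dually for the pushout condition on $\psi_2$. For the backward direction, the six listed conditions together with middle-row and middle-column exactness match Lemma~\ref{lemmquatre} corner by corner: the pullback condition on $\varphi_1$ combined with $c_1$ being $\gamma_1$-fully 0-faithful and $(f_2,\eta_2)=\Ker g_2$ yields $(f_1,\eta_1)=\Ker g_1$; reading the same pullback condition along the column direction, using $f_3$ being $\eta_3$-fully 0-faithful and $(b_1,\beta_1)=\Ker b_2$, yields $(a_1,\alpha_1)=\Ker a_2$. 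The dual of Lemma~\ref{lemmquatre}, applied with the pushout condition on $\psi_2$, using $a_2$ being $\alpha_1$-fully 0-cofaithful and $(g_2,\eta_2)=\Coker f_2$ (resp.\ using $g_1$ being $\eta_1$-fully 0-cofaithful and $(b_2,\beta_1)=\Coker b_1$), yields the remaining cokernel conclusions for the outer rows and columns.

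The main obstacle will be the careful bookkeeping of compatibility 2-cells. Each application of Lemma~\ref{lemmquatre} requires identifying the stated relative pullback or pushout with a specific composite of the diagram's 2-cells, and this matching is especially delicate in the backward direction of Part~2, where the single relative pullback hypothesis on $\varphi_1$ must be seen to deliver both the row-wise kernel and, after re-pasting along the column direction using the commutation equations for diagram~\ref{diagtyptroitroi}, the column-wise kernel as well. Once these coherences are traced, the argument reduces to systematic invocations of the restricted kernels lemma, Lemma~\ref{lemmquatre}, and their duals.
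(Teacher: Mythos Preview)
Your Part~2 is essentially the paper's argument, including the observation that the single relative-pullback hypothesis on $\varphi_1$ must serve both row-wise and column-wise; this works because the two pastings agree via the commutation equations of diagram~\ref{diagtyptroitroi}, exactly the manoeuvre already used in the proof of Proposition~\ref{lemmnoyrestr}.

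Part~1, however, has a genuine gap. The dual of the restricted kernels lemma does \emph{not} yield $(g_1,\eta_1)=\Coker f_1$. Under dualisation the roles of rows~1 and~3 swap: the ``restricted cokernels lemma'' concludes $(g_3,\eta_3)=\Coker f_3$, and among its hypotheses is that $g_1$ be $\eta_1$-fully 0-cofaithful --- which is part of what you are trying to establish about row~1. No reindexing or transposition avoids this: the restricted (co)kernels lemma always places its conclusion on the row opposite the one carrying the weak hypothesis, so it cannot produce both the kernel and the cokernel condition on the \emph{same} row from the given data.

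The paper closes this gap by a route that makes essential use of the Puppe-exactness hypothesis, which your proposal never invokes for Part~1. After obtaining $(f_1,\eta_1)=\Ker g_1$ and the relative pullback/pushout properties of $\varphi_1$ and $\psi_2$, it applies Lemma~\ref{lemmcinq} and its dual to the diagonal composite $c_1g_1\colon B_1\to C_2$, exhibiting $(f_1,c_1\eta_1)=\Ker(c_1g_1,\gamma_1 g_1)$ and $(c_2,\gamma_1 g_1)=\Coker(c_1g_1,c_1\eta_1)$. Since moreover $(c_1,\gamma_1)=\Ker c_2$, condition~1 of Definition~\ref{defpuppex} applied to the sequence $A_1\to B_1\to C_2\to C_3$ forces the comparison $\Coker f_1\to\Ker c_2$ to be an equivalence, whence $(g_1,\eta_1)=\Coker f_1$.
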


		\begin{proof}
			{\it 1. } By the restricted kernels lemma
			(Proposition \ref{lemmnoyrestr}), $\varphi_1$
			is a relative pullback and $(f_1,\eta_1)=\Ker g_1$.
			Then Lemma \ref{lemmcinq} applies to the following diagram
			and $(f_1,c_1\eta_1)=\Ker(c_1g_1,\gamma_1g_1)$.
			\begin{xym}\xymatrix@C=40pt@R=15pt{
				A_1\ar[dd]_{a_1}\ar[r]^{f_1}\ddrtwocell\omit\omit{\;\;\varphi_1}
					\drruppertwocell^0<15>{^<-5.6>{c_1\eta_1\;\;\;\;}}
				&B_1\ar[dr]^{c_1g_1}\ar[dd]_{b_1}\drtwocell\omit\omit{_<3.5>\;\;\psi_1}
					\drruppertwocell^0{^<-1.3>{\;\,\;\;\;\;\;\;\;\;\;\;\;\;\;\gamma_1g_1}}
				\\ &&C_2\ar[r]^{c_2}
				&C_3
				\\ A_2\ar[r]_{f_2}\urrlowertwocell_0<-15>{_<5.6>{\;\,\eta_2}}
				&B_2\ar[ur]_{g_2}
			}\end{xym}
			
			By Lemma
			\ref{lemmquatre}, since $(g_2,\eta_2)=\Coker f_2$, $(g_3,\eta_3)=\Coker f_3$
			and $a_2$ is $\alpha_1$-fully 0-cofaithful, $\psi_2$ is a relative pushout.
			By applying Lemma \ref{lemmcinq} to the dual of the previous diagram,
			we prove that $(c_2,\gamma_1g_1)=\Coker(c_1g_1,c_1\eta_1)$.
			
			Now $(c_1,\gamma_1)=\Ker c_2$.  So, by condition 1 of the definition
			of Puppe-exactness, $(g_1,\eta_1)=\Coker f_1$.
			
			{\it 2. }  If all the rows and columns are exact, then $\varphi_1$
			and $\psi_2$ are relative pullback or pushout, as we have seen in the proof
			 of point 1.  The other properties come from the fact that the
			 outer rows and columns are extensions.
			
			Conversely, we know that
			$(f_2,\eta_2)=\Ker g_2$, $c_1$ is $\gamma_1$-fully
			0-faithful and $\varphi_1$ is a relative pullback. So, by Lemma
			\ref{lemmquatre}, $(f_1,\eta_1)=\Ker g_1$.  And since, moreover, $g_1$
			is $\eta_1$-fully 0-cofaithful, the first row is an extension.
			The proof is similar for the third row and the first and third columns.
		\end{proof}
	
	Thanks to the $3\times 3$ lemma and to the subquotient axiom, we can imitate in a Puppe-exact $\Gpdp$-category the proof of the snake lemma in a Puppe-exact category of Schubert's book \cite{Schubert1972a}.  We get then the following result: in the situation of diagram \ref{diagserp},
	if the two middle rows are extensions and the three columns (extended on each side
	by $0$s) are \emph{relative} exact sequences, then there exist
	an arrow $d$ and 2-arrows $\delta\col d\bar{g}\Ra 0$
	and $\delta'\col\bar{f}'d\Ra 0$ which make the dashed sequence (extended on each side
	by $0$s) a \emph{relative} exact sequence.
	But this property is not the snake lemma we need to construct the
	long exact sequence of homology.  The reason is that, in this case, the columns
	(not extended by $0$s) are not relative exact, but exact.
	And the sequence we get is not relative
	exact, but exact.  The genuine snake lemma is Proposition \ref{lemmserp}.

\subsection{Short 5 lemma}

	The short 5 lemma is always true for (fully) 0-faithful or 0-cofaithful arrows.
	
	\begin{lemm}\label{petitlemmcinqtjrvrai}
		Let $\C$ be a $\Gpdp$-category.  In the situation of the following diagram,
		 where $\eta' a\circ g'\varphi\circ\psi f=c\eta$ and where
		the rows are extensions, we have the following properties:
		\begin{enumerate}
			\item if $a$ and $c$ are 0-faithful, then $b$ is 0-faithful;
			\item if $a$ and $c$ are 0-cofaithful, then $b$ is 0-cofaithful;
			\item if $a$ and $c$ are fully 0-faithful, then $b$ is fully 
				0-faithful;
			\item if $a$ and $c$ are fully 0-cofaithful, then $b$ is
				fully 0-cofaithful.
		\end{enumerate}
		\begin{xym}\label{diagpetitlemmcinq}\xymatrix@=40pt{
			0\ar[r]
			&A\rruppertwocell<9>^0{^<-2.7>\eta\,}
				\drtwocell\omit\omit{_{\,\varphi}}\ar[r]_-{f}\ar[d]_{a}
			&B\ar[r]_-{g}\ar[d]^{b}\drtwocell\omit\omit{\,\psi}
			&C\ar[d]^c\ar[r]
			&0
			\\ 0\ar[r] 
			&A'\ar[r]_-{f'}\rrlowertwocell<-9>_0{_<2.7>{\;\,\eta'}}
			&B'\ar[r]_{g'}
			&C'\ar[r]
			&0
		}\end{xym}
	\end{lemm}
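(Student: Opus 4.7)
The plan is to handle the four statements by chasing the diagram and exploiting the interchange law together with the pointedness of the 2-cells $\varphi$ and $\psi$ (which gives $\varphi\cdot 0=1_0$ and $\psi\cdot 0=1_0$), and to deduce (2) and (4) from (1) and (3) by duality. The ingredients I would rely on are: $(f,\eta)=\Ker g$ with $f$ being $\eta$-fully faithful, $(g,\eta)=\Coker f$ with $g$ being $\eta$-fully cofaithful, and likewise for the primed row; plus the pasting identity $\eta'a\circ g'\varphi\circ\psi f=c\eta$ of the hypothesis.

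For (1), take $\beta\col 0\Ra 0\col X\ra B$ with $b\beta=1_0$. Horizontal composition of $\psi$ with $\beta$, combined with $\psi\cdot 0=1_0$, gives $c(g\beta)=g'(b\beta)=1_0$, so 0-faithfulness of $c$ yields $g\beta=1_0$. Then $\beta\col f\cdot 0\Ra f\cdot 0$ is compatible with $\eta$ (the compatibility collapses to $g\beta=1_0$ via pointedness of $\eta$), so the $\eta$-fully faithfulness of $f$ produces a unique $\alpha\col 0\Ra 0\col X\ra A$ with $f\alpha=\beta$. Horizontal composition of $\varphi$ with $\alpha$ similarly gives $f'(a\alpha)=b(f\alpha)=b\beta=1_0$; since $f'$ is faithful as a kernel, $a\alpha=1_0$, and 0-faithfulness of $a$ forces $\alpha=1_0$, hence $\beta=1_0$. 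Part (2) is the dual chase using $(g,\eta)=\Coker f$ and $(g',\eta')=\Coker f'$.

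For (3), take $\gamma\col bb'\Ra 0\col X\ra B'$ and assume $a,c$ are fully 0-faithful. I would first use fully 0-faithfulness of $c$ to produce a unique $\delta\col gb'\Ra 0$ with $c\delta=g'\gamma\circ\psi b'$; then use $(f,\eta)=\Ker g$ to obtain $a'\col X\ra A$ and $\zeta\col b'\Ra fa'$ with $\delta=\eta a'\circ g\zeta$; then set $\epsilon\eqdef\gamma\circ b\zeta^{-1}\circ\varphi^{-1}a'\col f'(aa')\Ra 0$. A direct interchange computation, substituting the two equations above together with the pasting identity $c\eta=\eta'a\circ g'\varphi\circ\psi f$, yields $g'\epsilon=\eta'(aa')$, so $\epsilon$ is compatible with $\eta'$; the $\eta'$-fully faithfulness of $f'$ then gives a unique $\alpha\col aa'\Ra 0$ with $f'\alpha=\epsilon$, and fully 0-faithfulness of $a$ produces a unique $\tilde\alpha\col a'\Ra 0$ with $a\tilde\alpha=\alpha$. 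Setting $\beta\eqdef f\tilde\alpha\circ\zeta$ and retracing the same interchange identities gives $b\beta=\gamma$. For uniqueness: if $b\beta_1=b\beta_2=\gamma$, then $b(\beta_2\beta_1^{-1})=1_0$ with $\beta_2\beta_1^{-1}\col 0\Ra 0$, and 0-faithfulness of $b$ (granted by part (1), since fully 0-faithful implies 0-faithful) forces $\beta_1=\beta_2$. Part (4) is the dual.

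The main obstacle will be the interchange computation establishing $g'\epsilon=\eta'aa'$ in part (3): one must juggle the 2-cells $\varphi$, $\psi$, $\zeta$ together with the defining equations $c\delta=g'\gamma\circ\psi b'$, $\delta=\eta a'\circ g\zeta$, and $c\eta=\eta'a\circ g'\varphi\circ\psi f$ in the correct order, and cancel the $(\psi fa')$ and $(g'\varphi a')$ factors that appear from the interchange relation between $\psi$ and $\zeta$. Once this compatibility is in place, the rest is mechanical: the kernel (resp.\ cokernel) universal properties and the assumed properties of $a$ and $c$ conclude each case directly.
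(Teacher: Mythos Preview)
Your proposal is correct and follows essentially the same approach as the paper: the same diagram chase for (1), the same sequence of steps for (3) (lift through $c$, factor through the kernel $(f,\eta)$, check compatibility with $\eta'$, lift through $f'$ and then through $a$), and uniqueness in (3) via (1), with (2) and (4) by duality. The only difference is that you spell out the interchange verification for $g'\epsilon=\eta'(aa')$ slightly more explicitly than the paper, which simply asserts the compatibility.
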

	
		\begin{proof}
			We prove properties 1 and 3; the other two are dual.
			
			{\it Property 1. } Let us assume that $a$ and $c$ are 0-faithful. Let
			$\chi\col 0\Ra 0\col X\ra B$ be such that $b\chi =1_0$.  We have
			\begin{eqn}
				b\chi=1_0 \Rightarrow g'b\chi=1_0 \Rightarrow cg\chi=1_0
				\Rightarrow g\chi=1_0,
			\end{eqn}
			because $g'b\simeq cg$ and $c$ is 0-faithful.  As $g\chi=1_0$,
			$\chi\col f0\Ra f0$ is compatible with $\eta$ and, as
			$f$ is $\eta$-fully faithful, there exists $\hat{\chi}\col 0\Ra 0\col X
			\ra A$ such that $f\hat{\chi}=\chi$.  We have then $bf\hat{\chi}=b\chi=1_0$ and
			\begin{eqn}
				bf\hat{\chi}=1_0 \Rightarrow f'a\hat{\chi}=1_0
				\Rightarrow a\hat{\chi}=1_0 \Rightarrow \hat{\chi}=1_0,
			\end{eqn}
			since $f'a\simeq bf$ and since $f'$ and $a$ are 0-faithful.  So
			$\chi=f\hat{\chi}=1_0$.
			
			{\it Property 3. } Let us assume that $a$ and $b$ are fully 0-faithful.
			Let be $X\col\C$, $x\col X\ra B$ and $\chi\col bx\Ra 0$.  Since $c$ is fully
			0-faithful, there exists $\chi'\col gx\Ra 0$ such that
			\begin{eqn}
				g'\chi\circ\psi x=c\chi'.
			\end{eqn}
			Then, since $(f,\eta)=\Ker g$, there exist $x'\col X\ra A$,
			$\chi''\col x\Ra fx'$ such that
			\begin{eqn}
				\eta x'\circ g\chi''=\chi'.
			\end{eqn}
			Then $\chi\circ b\chi''^{-1}\circ\varphi^{-1}x'$ is compatible
			with $\eta'$ and, since $f'$ is $\eta'$-fully faithful, 
			there exists $\bar{\chi}\col ax'\Ra 0$ such that $f'\bar{\chi}$ is equal
			to this composite.
			
			Since $a$ is fully 0-faithful, there exists $\check{\chi}\col x'\Ra 0$
			such that $\bar{\chi}=a\check{\chi}$.  Finally we have $\hat{\chi}
			\eqdef f\check{\chi}\circ\chi''$ such that $b\hat{\chi}=\chi$.
			
			Unicity follows from property 1.
		\end{proof}

	\begin{pon}[Short five lemma]\label{petitlemmcinq}\index{lemma!short five}%
	\index{short five lemma}\index{five, short … lemma}
		In a Puppe-exact $\Gpdp$-category, in the situation of diagram
		\ref{diagpetitlemmcinq}, if $a$ and $c$ are equivalences, then $b$ is an
		equivalence.
	\end{pon}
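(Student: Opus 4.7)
The plan is to reduce the statement to Lemma \ref{petitlemmcinqtjrvrai} combined with the characterisation of Puppe-exact $\Gpdp$-categories given in Proposition \ref{caracpuppex}. The key observation is that an equivalence is both fully $0$-faithful and fully $0$-cofaithful, so the hypothesis on $a$ and $c$ is much stronger than what we need to feed into the previous lemma.

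First, I would remark that any equivalence $e\col X\ra Y$ in a $\Gpd$-category is fully faithful and fully cofaithful (this is immediate, since $e\circ-$ and $-\circ e$ are then equivalences of groupoids, hence full and faithful). In a $\Gpdp$-category, a fully faithful arrow is fully $0$-faithful and a fully cofaithful arrow is fully $0$-cofaithful. So both $a$ and $c$ are simultaneously fully $0$-faithful and fully $0$-cofaithful.

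Next, I would apply properties 3 and 4 of Lemma \ref{petitlemmcinqtjrvrai} to diagram \ref{diagpetitlemmcinq}: since $a$ and $c$ are fully $0$-faithful, $b$ is fully $0$-faithful; and since $a$ and $c$ are fully $0$-cofaithful, $b$ is fully $0$-cofaithful. Thus $b$ is an arrow in $\C$ which is both fully $0$-faithful and fully $0$-cofaithful.

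Finally, $\C$ being Puppe-exact, Proposition \ref{caracpuppex} (implication $1\Rightarrow 2(b)$) tells us that every arrow which is both fully $0$-faithful and fully $0$-cofaithful is an equivalence. Therefore $b$ is an equivalence. There is no real obstacle here; the short five lemma is essentially a corollary, the conceptual work having been done in establishing Lemma \ref{petitlemmcinqtjrvrai} and the characterisation of Puppe-exactness.
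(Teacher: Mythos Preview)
Your proof is correct and follows essentially the same approach as the paper: apply properties 3 and 4 of Lemma \ref{petitlemmcinqtjrvrai} to obtain that $b$ is fully $0$-faithful and fully $0$-cofaithful, then invoke Proposition \ref{caracpuppex} (condition 2(b)) to conclude that $b$ is an equivalence. The paper's proof is just a one-sentence version of what you wrote.
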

	
		\begin{proof}
			Since every arrow in a Puppe-exact $\Gpdp$-category which is both
			fully 0-faithful and fully 0-cofaithful is an equivalence
			(by Proposition \ref{caracpuppex}), this follows from the previous lemma.
		\end{proof}
	
	The short five lemma has been proved for symmetric 2-groups by Dominique Bourn
	and Enrico Vitale \cite[Proposition 2.8]{Bourn2002a}.
	We will prove (Proposition \ref{petitlemmcinqraffine}) a refined version of the short
	5 lemma for good 2-Puppe-exact $\Gpdp$-categories.
	
\subsection{Triangle lemma and two-square lemma}

	\begin{pon}[Generalised kernels lemma]\label{lemmnoygen}%
	\index{lemma!kernels!generalised}\index{kernels lemma!generalised}
		Let $\C$ be an abelian $\Gpd$-category.
		Let us consider the situation of diagram \ref{diagtyptroitroi}.  Let us assume that
		the diagram commutes and that the following conditions hold:
		\begin{enumerate}
			\item $(a_1,\alpha_1)=\Ker a_2$;
			\item $(b_1,\beta_1)=\Ker b_2$;
			\item $c_1$ is $\gamma_1$-fully 0-faithful;
			\item the sequence $(f_2,\eta_2,g_2)$ is exact;
			\item $(f_3,\eta_3)=\Ker g_3$.
		\end{enumerate}
		Then the sequence $(f_1,\eta_1,g_1)$ is exact.
	\end{pon}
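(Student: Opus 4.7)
The strategy is to reduce to the restricted kernels lemma (Proposition \ref{lemmnoyrestr}) by replacing the middle row with a genuine kernel sequence. Since $\C$ is abelian, it is Puppe-exact, hence $\Ker$-factorisable, so Corollary \ref{caracdexbis} yields a factorisation $f_2 \simeq k_2 t_2$, where $(k_2, \hat\eta_2) = \Ker g_2$ and $t_2\col A_2 \ra \Ker g_2$ is fully $0$-cofaithful.

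First, applying the universal property of $(f_3, \eta_3) = \Ker g_3$ to the $2$-arrow $\psi_2 k_2 \circ c_2 \hat\eta_2 \col g_3 b_2 k_2 \Ra 0$ produces an arrow $\tilde a_2 \col \Ker g_2 \ra A_3$ together with $\tilde\varphi_2 \col b_2 k_2 \Ra f_3 \tilde a_2$; faithfulness of $f_3$ then yields a canonical $2$-arrow $\tilde a_2 t_2 \Ra a_2$. Next, I form a modified $3\times 3$ diagram: the top and bottom rows are unchanged, the middle row becomes $\Ker g_2 \xrightarrow{k_2} B_2 \xrightarrow{g_2} C_2$, and the left column becomes $A_1 \xrightarrow{t_2 a_1} \Ker g_2 \xrightarrow{\tilde a_2} A_3$, with all other $2$-arrows induced from the original by compatibility.

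To apply Proposition \ref{lemmnoyrestr} to this modified diagram, observe that conditions (2), (3), (5) are inherited from the hypotheses of the present proposition, and (4) now holds by construction. It remains to verify the kernel condition $(t_2 a_1, \tilde\alpha_1) = \Ker \tilde a_2$ for a suitable $\tilde\alpha_1$; this is the heart of the argument. Since $\tilde a_2 t_2 \simeq a_2$ and $(a_1, \alpha_1) = \Ker a_2$, and since $t_2$ is a normal cofaithful arrow in the Puppe-exact $\Gpdp$-category $\C$ (by Corollary \ref{carottesue}), one can transport the kernel property of $a_1$ from $a_2$ to $\tilde a_2$ via an application of the subquotient axiom (Proposition \ref{propsemihom}). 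Once the kernel condition is in hand, Proposition \ref{lemmnoyrestr} delivers $(f_1, \eta_1) = \Ker g_1$, which by Proposition \ref{qquessequencex} implies that the top row $(f_1, \eta_1, g_1)$ is exact at $B_1$.

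The main obstacle is precisely the verification of the kernel condition for the modified left column. Although $t_2$ is fully $0$-cofaithful, it generally admits no section, so lifting tests along $t_2$ is not automatic; the subquotient axiom, available because $\C$ is Grandis homological, is exactly the tool that permits the transfer of the kernel property across the factorisation $a_2 \simeq \tilde a_2 t_2$.
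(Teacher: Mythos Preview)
Your strategy has a genuine gap at exactly the point you flag as the main obstacle: the claim $(t_2 a_1,\tilde\alpha_1)=\Ker\tilde a_2$ is false, and no amount of homological machinery will rescue it. The subquotient axiom tells you that the \emph{image} of $t_2 a_1$ is $\Ker\tilde a_2$, but $t_2 a_1$ itself need not be faithful, hence cannot be a kernel. A one-dimensional instance already exhibits this: take $A_2=\mathbb Z^2$, $B_2=B_3=A_3=\mathbb Z$, $C_i=0$, $f_2=a_2=(x,y)\mapsto x$, $b_2=f_3=\mathrm{id}$. Then $\Ker g_2=\mathbb Z$ with $k_2=\mathrm{id}$, so $t_2=f_2$ and $\tilde a_2=\mathrm{id}$; now $A_1=\Ker a_2\cong\mathbb Z$ and $t_2 a_1\colon\mathbb Z\to\mathbb Z$ is the zero map, whereas $\Ker\tilde a_2=0$. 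In this same example $f_1\colon\mathbb Z\to 0$ is not a monomorphism, so your conclusion $(f_1,\eta_1)=\Ker g_1$ is strictly stronger than what the proposition asserts and is simply not true under its hypotheses. That the restricted kernels lemma would hand you a kernel rather than mere exactness should already have been a warning sign.

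The paper's argument avoids this by \emph{not} trying to keep $A_1$ at the top of the modified left column. Instead it inserts a new column between the $A_i$ and the $B_i$: one takes $K_2:=\Ker g_2$ as you do, induces $\hat a_2\colon K_2\to A_3$, and then sets $K_1:=\Ker\hat a_2$ (rather than $A_1$). The restricted kernels lemma applied to the columns $(K_i,B_i,C_i)$ yields $(k_1,\kappa_1)=\Ker g_1$ for the genuine kernel $k_1$. One then compares $f_1$ to $k_1$: the induced square between $(a_1,\hat a_1)$ and $(d_1,d_2)$ is a pullback by Lemma~\ref{lemmun}, so $d_1$ inherits full $0$-cofaithfulness from $d_2$ by the pullback-stability clause in the definition of abelian $\Gpd$-category, and Corollary~\ref{caracdexbis} gives exactness of the top row. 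The regularity hypothesis on $\C$ enters precisely here, to transport full $0$-cofaithfulness along the pullback; your proposed route never invokes it, which is another hint that something is missing.
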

	
		\begin{proof}
			We construct the following diagram, where:
			\begin{enumerate}
				\item $(k_2,\kappa_2)=\Ker g_2$;
				\item by the universal property of the kernel of $g_3$, there exist
					$\hat{a}_2$ and $\hat{\varphi}_2$ such that
					$\eta_3\hat{a}_2\circ g_3\hat{\varphi}_2\circ\psi_2k_2=c_2\kappa_2$;
				\item $(\hat{a}_1,\hat{\alpha}_1)=\Ker\hat{a}_2$;
				\item by the universal property of the kernel of $b_2$, there exist
					$k_1$ and $\hat{\varphi}_1$ such that $f_3\hat{\alpha}_1\circ
					\hat{\varphi}_2\hat{a}_1\circ b_2\hat{\varphi}_1 = \beta_1k_1$;
				\item since $c_1$ is $\gamma_1$-fully 0-faithful, there exists
					$\kappa_1$ such that $\kappa_2\hat{a}_1\circ g_2\hat{\varphi}_1
					\circ\psi_1k_1=c_1\kappa_1$ (so the right two thirds commute);
				\item by the universal property of the kernel of $g_2$, there exist
					$d_2$ and $\omega_2$ such that $\eta_2\circ g_2\omega_2=\kappa_2d_2$;
				\item since $f_3$ is $\eta_3$-fully faithful, there exists
					$\delta_2$ such that $f_3\delta_2\circ\hat{\varphi}_2d_2
					=\varphi_2\circ b_2\omega_2$;
				\item by the universal property of the kernel of $\hat{a}_2$, we
					have $d_1$ and $\delta_1$ such that $\alpha_1\circ\delta_2 a_1
					\circ \hat{a}_2\delta_1 = \hat{\alpha}_1d_1$;
				\item since $b_1$ is $\beta_1$-fully faithful, there exists
					$\omega_1$ such that $\omega_2a_1\circ k_2\delta_1\circ\hat{\varphi}_1d_1
					=b_1\omega_1$.
			\end{enumerate}
			\begin{xym}\xymatrix@=40pt{
				A_1\ar[r]^{d_1}\ar[d]_{a_1}\rruppertwocell<9>^{f_1}{^<-2.7>\omega_1\;}
					\drtwocell\omit\omit{_{\;\,\delta_1}}
					\ddlowertwocell<-9>_0{_<2.7>\alpha_1}
				&K_1\ar[r]^{k_1}\ar[d]_{\hat{a}_1}\rruppertwocell<9>^0{^<-2.7>\kappa_1\;}
					\drtwocell\omit\omit{_{\;\,\hat{\varphi}_1}}
					\ar@/_1.89pc/[dd]_(0.44)0
				&B_1\ar[r]^{g_1}\ar[d]^(0.4){b_1}\ar@/^1.89pc/[dd]^(0.56)0
					\drtwocell\omit\omit{_{\;\,\psi_1}}
				&C_1\ar[d]^{c_1}\dduppertwocell<9>^0{^<-2.7>\gamma_1}
				\\ A_2\ar[r]_(0.4){d_2}\ar[d]_{a_2}\ar@/_1.87pc/[rr]_(0.55){f_2}
					\drtwocell\omit\omit{_{\;\,\delta_2}}
				&K_2\ar[r]|{k_2}\ar[d]_(0.6){\hat{a}_2}\ar@{}[d]^(0.25){\dir{=>}\,\omega_2}
					\drtwocell\omit\omit{_{\;\,\hat{\varphi}_2}}\ar@/^1.87pc/[rr]^(0.45)0
					\ar@{}[l]_(0.25){\txt{$\overset{\;\;\hat{\alpha}_1}{\dir{=>}}$}}
				&B_2\ar[r]^(0.6){g_2}\ar[d]^{b_2}\drtwocell\omit\omit{_{\;\,\psi_2}}
					\ar@{}[u]^(0.25){\kappa_2\,\dir{=>}}
					\ar@{}[r]_(0.25){\txt{$\underset{\beta_1\;\;}{\dir{=>}}$}}
				&C_2\ar[d]^{c_2}
				\\ A_3\ar@{=}[r]
				&A_3\ar[r]_{f_3}\rrlowertwocell<-9>_0{_<2.7>\;\,\eta_3}
				&B_3\ar[r]_{g_3}
				&C_3
			}\end{xym}
			
			Then the restricted kernels lemmma (Proposition \ref{lemmnoyrestr})
			applies to the right two thirds of the above diagram and it follows that
			$(k_1,\kappa_1)=\Ker g_1$.  Now, since the sequence $(f_2,\eta_2,g_2)$
			is exact, $d_2$ is fully 0-cofaithful, by Corollary
			\ref{caracdexbis}.  Moreover, by Lemma \ref{lemmun}, $\delta_1$
			is a pullback.  Therefore, since fully 0-cofaithful arrows 
			are stable under pullback (since $\C$ is abelian),
			$d_1$ is fully 0-cofaithful and, by Corollary
			\ref{caracdexbis}, the sequence $(f_1,\eta_1,g_1)$ is exact.
		\end{proof}
		
	To prove the snake lemma, we will follow the 1-dimensional proof of F. Rudolf Beyl \cite{Beyl1979a} and Temple H. Fay, Keith A. Hardie and Peter J. Hilton
	\cite{Fay1989a}.  It is based on two lemmas: the triangle lemma (also called
	“Produktlemma”), and the two-square lemma.
		
	\begin{pon}[Triangle lemma]\label{lemmtriangle}\index{triangle lemma}%
	\index{lemma!triangle}
		Let $\C$ be an abelian $\Gpd$-category.
		Let us consider the following diagram, where for $i=1$, $2$ or $3$ the 2-arrows
		$\kappa_i\col f_ik_i\Ra 0$ and $\zeta_i\col q_if_i\Ra 0$ are not shown.
		\begin{xym}\xymatrix@R=20pt@C=40pt{
			&K_2\ar[dd]^{k_2}\ar[ddrr]^{m_2}
			\\ K_1\ar[dr]_{k_1}\ar[ur]^{m_1}\drtwocell\omit\omit{^<-4.5>\mu_1\;\,}
			&{}\ddrtwocell\omit\omit{^<-4>\mu_2\;\;}
			\\ &A\ar[dd]_{f_2}\ar[dr]^{f_1}\ddtwocell\omit\omit{^<-4>\omega}
			&&K_3\ar[dl]_{k_3}\ar[dd]^d
			\\ &&B\ar[dl]^{f_3}\ar[dr]_{q_1}\ddltwocell\omit\omit{^<-4>\,\nu_1}
			\\ &C\ar[dl]_{q_3}\ar[dd]^{q_2}\dltwocell\omit\omit{^<-4.5>\;\;\nu_2}
			&& Q_1\ar[ddll]^{n_1}
			\\ Q_3 &{}
			\\ &Q_2\ar[ul]^{n_2}
		}\end{xym}
		Let us assume that the following conditions hold:
		\begin{enumerate}
			\item for all $i=1$, $2$ or $3$, $(k_i,\kappa_i)=\Ker f_i$;
			\item for all $i=1$, $2$ or $3$, $(q_i,\zeta_i)=\Coker f_i$;
			\item $f_3\kappa_1\circ \omega k_1 = \kappa_2 m_1\circ f_2\mu_1$;
			\item $\kappa_2 = \kappa_3 m_2\circ f_3\mu_2\circ\omega k_2$;
			\item $n_1\zeta_1\circ\nu_1f_1\circ q_2\omega = \zeta_2$;
			\item $n_2\zeta_2\circ \nu_2 f_2=\zeta_3 f_1\circ q_3\omega$.
		\end{enumerate}
		Then, if we set $d\eqdef q_1k_3$, there exist 2-arrows
		$\mu$, $\delta$, $\delta'$ and $\nu$ such that the following sequence
		is exact.
		\begin{xym}\xymatrix@=40pt{
			K_1\ar[r]^-{m_1}\rrlowertwocell<-9>_0{_<2.7>\,\mu}
			&K_2\ar[r]^-{m_2}\rruppertwocell<9>^0{^<-2.7>\delta}
			&K_3\ar[r]^-{d}\rrlowertwocell<-9>_0{_<2.7>\;\delta'}
			&Q_1\ar[r]^-{n_1}\rruppertwocell<9>^0{^<-2.7>\nu}
			&Q_2\ar[r]^-{n_2}
			&Q_3
		}\end{xym}
		Moreover, the following conditions hold:
		\begin{enumerate}
			\item $(m_1,\mu)=\Ker m_2$;
			\item $(n_2,\nu)=\Coker n_1$;
			\item $\delta m_1\circ d\mu^{-1}= \zeta_1k_1\circ q_1\kappa_1^{-1}$;
			\item $\delta' m_2\circ n_1\delta^{-1}= q_2\kappa_2\circ \zeta_2^{-1}k_2$;
			\item $\nu d\circ n_2\delta'^{-1}= \zeta_3k_3\circ q_3\kappa_3^{-1}$.
		\end{enumerate}
	\end{pon}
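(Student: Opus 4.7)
My plan is to exploit the duality between the kernel and cokernel sides as much as possible, and then to reduce the remaining exactness assertions to the generalised kernels lemma (Proposition~\ref{lemmnoygen}) and its dual, applied to carefully chosen $3\times 3$ sub-diagrams. Since the hypotheses are self-dual (swap $f_i$, $k_i$, $\kappa_i$ with their cokernel counterparts, and $m_i$ with $n_i$), it suffices to establish $(m_1,\mu) = \Ker m_2$, exactness at $K_3$, and the identity $\delta m_1 \circ d\mu^{-1} = \zeta_1 k_1 \circ q_1\kappa_1^{-1}$; the statements about $n_2$, $Q_1$, $Q_2$ and the corresponding compatibility equation will then follow by duality in an abelian $\Gpd$-category.

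First, I would construct $\mu$ and $\delta$. For $\mu\col m_2 m_1 \Ra 0$: the 2-arrows $\mu_1$, $\mu_2$ together with the compatibilities 3--4 express $k_3(m_2 m_1) \Ra f_1 k_2 m_1 \Ra f_1 k_1 \cdot (\text{something}) \Ra 0$ via $\kappa_1$, so the universal property of $(k_3,\kappa_3) = \Ker f_3$ produces a unique $\mu$. For $\delta\col d\,m_2 \Ra 0$: this is $q_1 k_3 m_2 \Ra q_1 f_1 k_2 \Ra 0$ using $\mu_2$ and $\zeta_1 k_2$. Condition~3 of the triangle lemma is then the assertion that, when projected along $k_3$, the composite $\delta m_1 \circ d\mu^{-1}$ matches $\zeta_1 k_1 \circ q_1\kappa_1^{-1}$; one checks this by testing with $k_3$ and using the faithfulness of $k_3$ (Proposition~\ref{claspropker}) together with the compatibilities of conditions 3 and 5.

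Next I would apply the generalised kernels lemma to the commutative rectangle
\begin{xym}\xymatrix@=30pt{
K_1\ar[r]^{m_1}\ar[d]_{k_1}\rruppertwocell<9>^0{\omit}
&K_2\ar[r]^0\ar[d]^{k_2}\ar@/^1.89pc/[dd]^{(0)}
&0\ar[d]\dduppertwocell<9>^0{\omit}
\\ A\ar[r]_{f_1}\ar@{=}[d]
&B\ar[r]^{q_1}\ar[d]^{f_3}
&Q_1\ar[d]
\\ A\ar[r]_{f_2}\rrlowertwocell<-9>_0{\omit}
&C\ar[r]_{0}
&0
}\end{xym}
(with 2-cells filled in from $\omega$, $\kappa_i$, $\zeta_1$). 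The bottom row is exact by $(k_2,\kappa_2)=\Ker f_2$ and $(q_2, \zeta_2)=\Coker f_2$ applied via the sequence $K_2\to A\to C$; the middle column is (canonically) exact; and the left column produces $(m_1,\mu)=\Ker m_2$ via condition~4. This yields simultaneously $(m_1,\mu)=\Ker m_2$ and the exactness of $K_2\to K_3\to Q_1$ at $K_3$ --- the latter being exactly what is needed, because the factorisation of $d$ through $B$ identifies its kernel with those $x\col K_3$ such that $k_3 x$ lifts through $f_1$, which by the kernel property of $f_2$ is the image of $m_2$.

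The principal obstacle is coherence bookkeeping: there are six 2-arrows ($\mu_1,\mu_2,\omega,\nu_1,\nu_2$ and the implicit $\kappa_i,\zeta_i$) which must be pasted into a commutative polygon at each invocation of the kernels lemma, and one must verify both the analogues of the commutativity conditions (equations of diagram \ref{diagtyptroitroi}) and the identities (iii)--(v) relating the constructed 2-arrows $\mu,\delta,\delta',\nu$. The identities (iii) and (v) are dual, so I would prove (iii) by testing with the faithful arrow $k_3$ (reducing to the given equations 3 and 5), and derive (iv) similarly by testing on both sides with $k_3$ and $q_2$ jointly. Once these coherences are checked, dualising yields the $Q$-side statements and completes the proof.
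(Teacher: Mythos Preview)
Your overall plan — construct the 2-arrows, use duality, and reduce the exactness assertions to the generalised kernels lemma — is the right shape, and it is indeed what the paper does. But the concrete diagram you propose for that lemma is wrong, and the single application you describe cannot deliver the two conclusions you claim.

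In your $3\times 3$ rectangle the left column is $(k_1\colon K_1\to A,\;1_A\colon A\to A)$. For the generalised kernels lemma to apply you need $(k_1,\cdot)=\Ker 1_A$, which forces $K_1\simeq 0$; so the hypotheses fail unless $f_1$ is fully faithful. More seriously, $K_3$ does not appear anywhere in your rectangle, so no conclusion about exactness at $K_3$ can possibly come out of it. The top row of your diagram is $K_1\to K_2\to 0$, and exactness there would say that $m_1$ is fully $0$-cofaithful, not that $(m_1,\mu)=\Ker m_2$. Your informal justification (``the factorisation of $d$ through $B$ identifies its kernel with those $x$ such that $k_3 x$ lifts through $f_1$'') is the right intuition, but it is not what your displayed diagram encodes.

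The paper separates the two claims. For exactness at $K_3$ it applies the generalised kernels lemma to the rectangle with top row $K_2\xrightarrow{m_2}K_3\xrightarrow{d}Q_1$, middle row $A\xrightarrow{f_1}B\xrightarrow{q_1}Q_1$, bottom row $C=C\to 0$, and columns $(k_2,f_2)$, $(k_3,f_3)$, $(1_{Q_1},0)$; here the column hypotheses $(k_2,\kappa_2)=\Ker f_2$ and $(k_3,\kappa_3)=\Ker f_3$ are exactly your conditions~1, the middle row is exact because $(q_1,\zeta_1)=\Coker f_1$, and the bottom row is trivially a kernel. The statement $(m_1,\mu)=\Ker m_2$ is proved separately: condition~4 says precisely that $\mu_2$ is compatible with the kernel data, so Lemma~\ref{lemmun} makes $\mu_2$ a pullback, and then Lemma~\ref{lemmtwo} transports the kernel $(k_1,\kappa_1)=\Ker f_1$ along that pullback to $(m_1,\mu)=\Ker m_2$. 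The definitions of $\delta$ and $\delta'$ are also made explicit ($\delta\eqdef\zeta_1 k_2\circ q_1\mu_2^{-1}$, $\delta'\eqdef q_2\zeta_3\circ\nu_1^{-1}k_3$), which makes the verification of identities (3)--(5) a direct unfolding rather than a faithfulness-testing argument.
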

	
		\begin{proof}
			Since $k_3$ is $\kappa_3$-fully faithful, we get $\mu$
			such that $k_3\mu\circ\mu_2 m_1\circ f_1\mu_1 = \kappa_1$. Dually,
			since $q_1$ is $\zeta_1$-fully cofaithful, we have $\nu$
			such that $\nu q_1\circ n_2\nu_1\circ\nu_2 f_3 = \zeta_3$.  Moreover,
			we set $\delta\eqdef \zeta_1 k_2\circ q_1\mu_2^{-1}$
			and $\delta'\eqdef q_2\zeta_3\circ \nu_1^{-1}k_3$.
			
			We apply then the generalised kernels lemma (Proposition \ref{lemmnoygen})
			to the following diagram, which allows us to conclude that the sequence $(m_2,\delta,d)$
			is exact. Dually, the sequence $(d,\delta',n_1)$ is exact.
			\begin{xym}\xymatrix@=40pt{
				K_2\ar[r]^{m_2}\ar[d]_{k_2}\rruppertwocell<9>^0{^<-2.7>\delta\,}
					\drtwocell\omit\omit{^{\mu_2\;\,}}
					\ddlowertwocell<-9>_0{_<2.7>\kappa_2}
				&K_3\ar[r]^{d}\ar[d]^{k_3}\ar@/^1.89pc/[dd]^(0.44)0
				&Q_1\ar@{=}[d]
				\\ A\ar[r]_(0.44){f_1}\ar[d]_{f_2}\ar@/_1.87pc/[rr]_(0.45)0
					\drtwocell\omit\omit{^{\omega}}
				&B\ar[r]_(0.6){q_1}\ar[d]^(0.62){f_3}
					\ar@{}[d]_(0.25){\zeta_1\,\dir{=>}}
					\ar@{}[r]^(0.25){\txt{$\overset{\kappa_3\;\;}{\dir{=>}}$}}
				&Q_1\ar[d]^0
				\\ C\ar@{=}[r]
				&C\ar[r]_0
				&0
			}\end{xym}
			
			Next, by Lemma \ref{lemmun}, $\mu_2$ is a pullback. So,
			by Lemma \ref{lemmtwo}, $(m_1,\mu)=\Ker m_2$, which proves 
			exactness at $K_2$. Dually, $(n_2,\nu)=\Coker n_1$, which proves
			exactness at $Q_2$.
			
			Finally, we easily check equations 3 to 5 using the conditions
			which the involved 2-arrows satisfy.
		\end{proof}
	
	Here is now an ad hoc 2-dimensional version of the two-square lemma
	of Fay, Hardie and Hilton
	\cite{Fay1989a}, which is stated without name by Beyl \cite{Beyl1979a}, and was already
	in Mitchell's book \cite[section VII.1]{Mitchell1965a}.
	
	\begin{pon}[Two-square lemma]\index{two-square lemma}%
	\index{lemma!two-square}
		Let $\C$ be an abelian $\Gpd$-category.  Let us consider
		diagram \ref{diagpetitlemmcinq} in $\C$.  This morphism
		of extensions factors as in the following diagram, where
		\begin{enumerate}
			\item $(i,\iota,j)$ is an extension;
			\item $\varphi_1$ is a pushout and a pullback;
			\item $\psi_2$ is a pullback and a pushout;
			\item $\iota a\circ j\varphi_1\circ\psi_1f=\eta$;
			\item $\eta'\circ g'\varphi_2\circ\psi_2 i=c\iota$;
			\item $\varphi\circ\beta f=\varphi_2a\circ c'\varphi_1$;
			\item $g'\beta\circ\psi_2 a'\circ c\psi_1=\psi$.
		\end{enumerate}
		\begin{xym}\label{diagtwocarres}\xymatrix@=40pt{
			0\ar[r]
			&A\ar[r]^{f}\ar[d]_{a}\rruppertwocell<9>^0{^<-2.7>\eta\,}
				\drtwocell\omit\omit{_{\;\,\varphi_1}}
			&B\ar[r]^{g}\ar[d]^{a'}\ar@/^1.89pc/[dd]^(0.44)b
				\drtwocell\omit\omit{_{\;\,\psi_1}}
			&C\ar@{=}[d]\ar[r]
			&0
			\\ 0\ar[r]
			&A'\ar[r]_{i}\ar@{=}[d]\ar@/_1.87pc/[rr]_(0.45)0
				\drtwocell\omit\omit{_{\;\,\varphi_2}}
			&I\ar[r]_(0.62){j}\ar[d]^(0.62){c'}\drtwocell\omit\omit{_{\;\,\psi_2}}
				\ar@{}[d]_(0.25){\iota\,\dir{=>}}
				\ar@{}[r]^(0.25){\txt{$\overset{\beta\;\;}{\dir{=>}}$}}
			&C\ar[d]^{c}\ar[r]
			&0
			\\ 0\ar[r]
			&A'\ar[r]_{f'}\rrlowertwocell<-9>_0{_<2.7>\;\,\eta'}
			&B'\ar[r]_{g'}
			&C'\ar[r]
			&0
		}\end{xym}
	\end{pon}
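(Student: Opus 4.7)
The plan is to construct the intermediate object $I$ as the pushout of $f$ along $a$, derive the remaining arrows and 2-arrows from universal properties, and then establish the pullback/pushout properties using the short five lemma (Proposition \ref{petitlemmcinq}) together with Corollary \ref{lemmtrois}.

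First, since $\C$ is abelian it admits pushouts; form the pushout of $f$ along $a$ to obtain $I$, arrows $i\col A'\ra I$ and $a'\col B\ra I$, and a 2-arrow $\varphi_1\col a'f\Ra ia$ universal for this property. By condition 3 of Definition \ref{dfgpdab}, fully 0-faithful arrows are stable under pushouts; since $f$ is fully 0-faithful as the kernel of $g$, so is $i$. Then construct $j\col I\ra C$ from the pushout universal property applied to the pair $(g\col B\ra C,\ 0\col A'\ra C)$, compatible via $\eta$: this yields $j$ together with $\iota\col ji\Ra 0$ and a 2-arrow $\psi_1$ (or its inverse) verifying equation 4. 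Analogously, construct $c'\col I\ra B'$ from $(b\col B\ra B',\ f'\col A'\ra B')$ with compatibility $\varphi^{-1}$, obtaining $\varphi_2\col c'i\Ra f'$ and $\beta$ satisfying equation 6. Finally, construct $\psi_2\col cj\Ra g'c'$ by one more application of the pushout universal property: the two parallel arrows $cj,g'c'\col I\ra C'$ are linked by a 2-arrow when precomposed with $i$ (both target $0$, via $c\iota$ and $\eta'\circ g'\varphi_2$) and with $a'$ (both target $cg=g'b$, via $c\psi_1$, $g'\beta$ and $\psi$), with the required compatibility under $\varphi_1$ stemming from equation 4 and the compatibility of $\varphi$ and $\psi$ in the given morphism of extensions. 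Equations 5 and 7 follow.

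Next, verify that $(i,\iota,j)$ is an extension. By the dual of Lemma \ref{lemmtwo}, applied to the two rows $A\xrightarrow{f}B\xrightarrow{g}C$ and $A'\xrightarrow{i}I\xrightarrow{j}C$ joined by the pushout $\varphi_1$, since $(g,\eta)=\Coker f$ one has $(j,\iota)=\Coker i$. Since abelian $\Gpd$-categories are Puppe-exact, hence $\Ker$-idempotent, $(i,\iota)=\Ker j$ follows; together with $i$ being fully 0-faithful, this gives the extension. Turning to the pullback/pushout properties: the square $\varphi_1$ is a pushout by construction, and since both $f$ and $i$ are normal faithful (being kernels), the dual of Corollary \ref{lemmtrois} gives that it is also a pullback. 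For $\psi_2$, form the pullback $P$ of $g'$ along $c$: since $(f',\eta')=\Ker g'$ and kernels are preserved under pullback, $P$ fits into an extension $0\ra A'\ra P\ra C\ra 0$, and the triple $(c',j,\psi_2)$ induces a morphism $I\ra P$ that is the identity on $A'$ and on $C$. By the short five lemma (Proposition \ref{petitlemmcinq}), this induced morphism is an equivalence, so $\psi_2$ is equivalent to the pullback square and is itself a pullback. Finally, since $j$ and $g'$ are normal cofaithful (being cokernels), Corollary \ref{lemmtrois} yields that $\psi_2$ is also a pushout.

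The chief obstacle is the coherent bookkeeping of all the 2-arrows so that equations 4 to 7 hold simultaneously. In particular one must check the compatibility condition under $\varphi_1$ needed to invoke the pushout universal property in the construction of $\psi_2$, and verify that the $\iota$ produced by the dual of Lemma \ref{lemmtwo} agrees with the one determined by the pushout definition of $j$. The pullback-of-$g'$-along-$c$ argument for the pullback property of $\psi_2$ further relies on the fact that $P$ inherits an extension structure with $A'$ as kernel, which itself rests on condition 4 of Definition \ref{dfgpdab} (stability of 0-epimorphisms and fully 0-cofaithful arrows under pullback).
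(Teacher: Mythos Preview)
Your approach is essentially the same as the paper's: construct $I$ as the pushout of $f$ along $a$, derive $j$, $c'$, $\psi_2$ from its universal property, get $(j,\iota)=\Coker i$ from the dual of Lemma \ref{lemmtwo}, show $\psi_2$ is a pullback by comparing $I$ to the genuine pullback $P$ of $g'$ and $c$ via the short five lemma, and finish with Corollary \ref{lemmtrois} and its dual.

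There is, however, one genuine slip. You write that ``$f$ is fully 0-faithful as the kernel of $g$'', and use this together with condition 3 of Definition \ref{dfgpdab} to conclude that $i$ is fully 0-faithful. That is wrong: a kernel is faithful (and $\kappa$-fully faithful), but \emph{not} fully 0-faithful in general; by Lemma \ref{kerkeromegcodom} its ordinary kernel is $\Omega B$, not $0$. What is true is that $f$, being a kernel, is a 0-monomorphism (equivalently, normal faithful, by Corollary \ref{carottesue}), and it is the \emph{other} half of condition 3 of Definition \ref{dfgpdab} that applies: 0-monomorphisms are stable under pushout, so $i$ is a 0-monomorphism, hence normal faithful by Proposition \ref{zermonregmon}. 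This is exactly what you need (and what you in fact use later, when you say ``both $f$ and $i$ are normal faithful (being kernels)'' to apply the dual of Corollary \ref{lemmtrois}). With this correction, your argument that $(i,\iota)=\Ker j$ follows from $i$ being normal faithful, and hence that $(i,\iota,j)$ is an extension, goes through; this is precisely how the paper argues as well.
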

	
		\begin{proof}
			First, we construct the pushout $(I,a',i,\varphi_1)$, which
			induces an arrow $j$ and 2-arrows $\iota$ and $\psi_1$ satisfying
			condition 4, as well as an arrow $c'$ and
			2-arrows $\beta$ and $\varphi_2$ satisfying condition 6.
			The universal property of the pushout
			gives also a 2-arrow $\psi_2$ satisfying conditions 5 and 7.
			
			By the dual of Lemma \ref{lemmtwo}, since $(g,\eta)=\Coker f$, we also have
			$(j,\iota)=\Coker i$.  Since normal faithful arrows are stable
			under pushout (because $\C$ is abelian), the arrow
			$i$ is normal faithful, because $f$ is.  So the sequence $(i,\iota, j)$ is an
			extension.
			
			Next, we construct the following diagram.  First, $(P,p_1,p_2,\pi)$
			is a pullback, which induces an arrow $s$ and 2-arrows $\sigma_1$
			and $\sigma_2$ such that $\eta'\circ g'\sigma_2\circ\pi s=c\sigma_1$, as well
			as an arrow $w$ and 2-arrows $\omega_1$ and $\omega_2$ such that
			$g'\omega_2\circ \pi w\circ c\omega_1 = \psi_2$.  We also have a 2-arrow
			$\omega$ such that $\varphi_2\circ\omega_2 i=\sigma_2\circ p_2\omega$
			and $\sigma_1\circ p_1\omega\circ\omega_1 i =\iota$.
			\begin{xym}\xymatrix@=40pt{
				0\ar[r]
				&A\ar[r]^{f}\ar[d]_{a}\rruppertwocell<9>^0{^<-2.7>\eta\;}
					\drtwocell\omit\omit{_{\;\,\varphi_1}}
				&B\ar[r]^{g}\ar[d]_(0.38){a'}
					\drtwocell\omit\omit{_{\;\,\psi_1}}
				&C\ar@{=}[d]\ar[r]
				&0
				\\ 0\ar[r]
				&A'\ar[r]^{i}\ar@{=}[d]\ar@/^1.87pc/[rr]^(0.55)0
					\drtwocell\omit\omit{_{\;\,\omega}}
				&I\ar[r]^{j}\ar[d]^{w}\drtwocell\omit\omit{_{\;\;\omega_1}}
					\ar@{}[u]_(0.25){\,\dir{=>}\,\iota}\ar@/^1.89pc/[dd]^(0.44){c'}
				&C\ar@{=}[d]\ar[r]
				&0
				\\ 0\ar[r]
				&A'\ar[r]_{s}\ar@{=}[d]\ar@/_1.87pc/[rr]_(0.45)0
					\drtwocell\omit\omit{_{\;\,\sigma_2}}
				&P\ar[r]_(0.6){p_1}\ar[d]^(0.62){p_2}\drtwocell\omit\omit{_{\;\,\pi}}
					\ar@{}[d]_(0.25){\sigma_1\,\dir{=>}}
					\ar@{}[r]^(0.25){\txt{$\overset{\omega_2\;\;}{\dir{=>}}$}}
				&C\ar[d]^{c}\ar[r]
				&0
				\\ 0\ar[r]
				&A'\ar[r]_{f'}\rrlowertwocell<-9>_0{_<2.7>\;\,\eta'}
				&B'\ar[r]_{g'}
				&C'\ar[r]
				&0
			}\end{xym}
			
			Then, by Lemma \ref{lemmtwo}, $(s,\sigma_1)=\Ker p_1$. And, since
			normal cofaithful arrows are stable under pullback
			(since $\C$ is abelian), $p_1$ is
			a normal cofaithful arrow. So the sequence $(s,\sigma_1,p_1)$ is
			an extension. Therefore, by the short 5 lemma
			(Proposition \ref{petitlemmcinq}),
			$w$ is an equivalence and so $\psi_2$ is a pullback.
			Finally, by Lemma \ref{lemmtrois}, $\varphi_1$ is also a pullback
			and $\psi_2$ is also a pushout.
		\end{proof}
	
\subsection{Ophiology}

	To prove the snake lemma\index{lemma!snake}\index{snake lemma},
	we will follow the proofs of
	\cite{Beyl1979a} and \cite{Fay1989a}.  The first version we give is a short one (the snake
	lemma).  The exact sequence we get here
	cannot be extended by $0$ at each ends, because there is no reason in general
	for the kernel of $\bar{f}$ to be $0$ or for the cokernel of $\bar{g}'$
	to be $0$.

	\begin{pon}[Snake lemma]\label{lemmserp}%
	\index{lemma!snake}\index{snake lemma}
		Let $\C$ be an abelian $\Gpd$-category.
		Let us consider the following commutative diagram in $\C$.
		\begin{xym}\label{diagserp}\xymatrix@=40pt{
			Ka\ar@{-->}[r]^{\bar{f}}\ar[d]_{k_a}\rruppertwocell<9>^0{^<-2.7>\bar{\eta}\;}
				\drtwocell\omit\omit{_{\;\,}}
				\ddlowertwocell<-9>_0{_<2.7>\kappa_a}
			&Kb\ar@{-->}[r]^{\bar{g}}\ar[d]_(0.38){k_b}\ar@/_1.89pc/[dd]_(0.56)0
				\drtwocell\omit\omit{_{\;\,}}\ar@{}[dl]_{\bar{\varphi}}
			&Kc\ar[d]^{k_c}\dduppertwocell<9>^0{^<-2.7>\kappa_c}\ar@{-->}[dddll]_(0.1){d}
				\ar@{}[dl]_{\bar{\psi}}
			\\ A\ar[r]^(0.4){f}\ar[d]_{a}\ar@/^1.87pc/[rr]^(0.55)0
				\drtwocell\omit\omit{_{\;\,\varphi}}
				\ddlowertwocell<-9>_0{_<2.7>\;\,\zeta_a}
			&B\ar[r]^(0.6){g}\ar[d]^(0.65){b}\drtwocell\omit\omit{_{\;\,\psi}}
				\ar@{}[u]_(0.25){\,\dir{=>}\,\eta}\ar@/^1.89pc/[dd]^(0.44)0
				\ar@{}[l]^(0.25){\txt{$\underset{\;\;\kappa_b}{\dir{=>}}$}}
			&C\ar[d]^{c}\dduppertwocell<9>^0{^<-2.7>\zeta_c}
			\\ A'\ar[r]_(0.4){f'}\ar[d]_{q_a}\ar@/_1.87pc/[rr]_(0.45)0
				\drtwocell\omit\omit{_{\;\,\bar{\varphi}'}}
			&B'\ar[r]_(0.6){g'}\ar[d]^(0.62){q_b}\drtwocell\omit\omit{_{\;\,\bar{\psi}'}}
				\ar@{}[d]_(0.25){\eta'\,\dir{=>}}
				\ar@{}[r]^(0.25){\txt{$\overset{\zeta_b\;\;}{\dir{=>}}$}}
			&C'\ar[d]^{q_c}
			\\ Qa\ar@{-->}[r]_{\bar{f}'}\rrlowertwocell<-9>_0{_<2.7>\;\,\bar{\eta}'}
			&Qb\ar@{-->}[r]_{\bar{g}'}
			&Qc
		}\end{xym}
		Let us assume that the following conditions hold:
		\begin{enumerate}
			\item $(f,\eta,g)$ is an extension;
			\item $(f',\eta',g')$ is an extension;
			\item $(k_a,\kappa_a)=\Ker a$;
			\item $(k_b,\kappa_b)=\Ker b$;
			\item $(k_c,\kappa_c)=\Ker c$;
			\item $(q_a,\zeta_a)=\Coker a$;
			\item $(q_b,\zeta_b)=\Coker b$;
			\item $(q_c,\zeta_c)=\Coker c$.
		\end{enumerate}
		Then there exist an arrow $d\col Kc\ra Qa$ and 2-arrows $\delta$ and $\delta'$
		such that the following sequence is exact.
		\begin{xym}\xymatrix@=40pt{
			Ka\ar[r]^-{\bar{f}}\rrlowertwocell<-9>_0{_<2.7>\,\bar{\eta}}
			&Kb\ar[r]^-{\bar{g}}\rruppertwocell<9>^0{^<-2.7>\delta}
			&Kc\ar[r]^-{d}\rrlowertwocell<-9>_0{_<2.7>\;\delta'}
			&Qa\ar[r]^-{\bar{f}'}\rruppertwocell<9>^0{^<-2.7>\bar{\eta}'}
			&Qb\ar[r]^-{\bar{g}'}
			&Qc
		}\end{xym}
		Moreover, the following conditions (where $\mu_x\eqdef\zeta_x k_x\circ q_x\kappa_x^{-1}$)
		hold:
		\begin{enumerate}
			\item $(\bar{f},\bar{\eta})=\Ker \bar{g}$;
			\item $(\bar{g}',\bar{\eta}')=\Coker \bar{f}'$;
			\item $\delta\bar{f}\circ d\bar{\eta}^{-1}= \mu_a$;
			\item $\delta' \bar{g}\circ \bar{f}'\delta^{-1} = \mu_b^{-1}$;
			\item $\bar{\eta}' d\circ \bar{g}'\delta'^{-1}=  \mu_c$.
		\end{enumerate}
	\end{pon}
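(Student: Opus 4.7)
The plan is to follow the classical argument of Beyl \cite{Beyl1979a} and Fay-Hardie-Hilton \cite{Fay1989a}, combining the two-square lemma with the triangle lemma (Proposition \ref{lemmtriangle}), both of which have been established in the excerpt. The strategy is to factor the morphism of extensions through an intermediate extension, then apply the triangle lemma to the resulting decomposition $b \simeq c' a'$, and finally transport the resulting 6-term exact sequence along canonical equivalences.

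First, I apply the two-square lemma to the given morphism of extensions $(a,b,c)$, obtaining an intermediate extension $0 \ra A' \xrightarrow{i} I \xrightarrow{j} C \ra 0$ together with arrows $a'\col B \ra I$ and $c'\col I \ra B'$ and a 2-arrow $\beta\col c'a' \Ra b$, such that the upper-left square (with 2-arrow $\varphi_1$) and the lower-right square (with 2-arrow $\psi_2$) are simultaneously pullbacks and pushouts. This factorisation produces exactly the configuration to which the triangle lemma applies.

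Second, I apply the triangle lemma to the triangle $f_1 \eqdef a'$, $f_2 \eqdef b$, $f_3 \eqdef c'$ with $\omega \eqdef \beta^{-1}\col c'a'\Ra b$. The comparison arrows $m_1\col Ka'\ra Kb$, $m_2\col Kb\ra Kc'$, $n_1\col Qa'\ra Qb$, $n_2\col Qb\ra Qc'$ together with the 2-arrows $\mu_1,\mu_2,\nu_1,\nu_2$ are produced from $\omega$ via the universal properties of the involved kernels and cokernels. The triangle lemma then yields an exact sequence $Ka' \ra Kb \ra Kc' \xrightarrow{d_0} Qa' \ra Qb \ra Qc'$ together with connecting 2-arrows satisfying compatibility equations 3--5 of that proposition.

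Third, I transport this exact sequence along four canonical equivalences derived from the two-square lemma. The pullback property of $\varphi_1$ (using that $i$ is fully faithful as the kernel of $j$) implies that the composite $fk_a\col Ka \ra B$ factors through $k_{a'}$ to yield an equivalence $Ka \simeq Ka'$; dually, the pushout property of $\psi_2$ gives $Qc \simeq Qc'$. The pullback property of $\psi_2$ gives $Kc' \simeq Kc$ through $j$, and dually the pushout property of $\varphi_1$ gives $Qa'\simeq Qa$. Transporting the triangle-lemma sequence along these four equivalences, and composing the connecting map accordingly, yields the desired exact sequence $Ka \xrightarrow{\bar f} Kb \xrightarrow{\bar g} Kc \xrightarrow{d} Qa \xrightarrow{\bar f'} Qb \xrightarrow{\bar g'} Qc$ with 2-arrows $\bar\eta, \delta, \delta', \bar\eta'$ obtained by transport. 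Identifying the transported comparison arrows with the canonical maps $\bar f, \bar g, \bar f', \bar g'$ is a straightforward application of the universal properties of the kernels $(k_a,\kappa_a)$, $(k_b,\kappa_b)$, $(k_c,\kappa_c)$ and the cokernels $(q_a,\zeta_a)$, $(q_b,\zeta_b)$, $(q_c,\zeta_c)$.

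The main obstacle will be the careful bookkeeping of 2-arrows. The triangle lemma requires six specific coherence conditions (its items 3--6) relating $\omega$ to $\mu_1,\mu_2,\nu_1,\nu_2$; constructing the latter from the two-square data $\varphi_1,\psi_1,\varphi_2,\psi_2,\beta$ and verifying these conditions requires detailed diagram chasing. Similarly, verifying the final coherence equations 3--5 of the snake lemma, which involve the composites $\mu_x = \zeta_x k_x \circ q_x \kappa_x^{-1}$, demands tracing how the canonical 2-arrows $\kappa_a, \zeta_a, \kappa_b, \zeta_b, \kappa_c, \zeta_c$ transport to the corresponding 2-arrows for $a', b, c'$. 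No new axiomatic input beyond the abelianness of $\C$ is needed, so the difficulty is computational rather than conceptual.
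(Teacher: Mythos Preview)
Your proposal is correct and follows the same overall strategy as the paper: apply the two-square lemma to factor the morphism of extensions through an intermediate extension $(i,\iota,j)$, then invoke the triangle lemma (Proposition~\ref{lemmtriangle}) for the decomposition $b\simeq c'a'$.

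The paper streamlines your third step. Rather than applying the triangle lemma with generic kernels and cokernels $Ka',Kc',Qa',Qc'$ and then transporting along four equivalences, the paper feeds the triangle lemma the \emph{specific} kernel and cokernel arrows it actually wants: it shows (via Lemma~\ref{lemmtwo} and the pullback/pushout properties of $\varphi_1,\psi_2$) that $(fk_a,\ldots)=\Ker a'$, $(k'_c,\ldots)=\Ker c'$, $(q'_a,\ldots)=\Coker a'$, and $(q_cg',\ldots)=\Coker c'$, where $k'_c\colon Kc\to I$ and $q'_a\colon I\to Qa$ are obtained from $\psi_2$ and $\varphi_1$ respectively. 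With these choices, the objects $K_i,Q_j$ in the triangle lemma are literally $Ka,Kb,Kc,Qa,Qb,Qc$, so the output sequence already has the right objects and the comparison arrows $m_1,m_2,n_1,n_2$ are directly identified with $\bar f,\bar g,\bar f',\bar g'$. This eliminates the transport-and-identify phase you describe and makes the verification of conditions 3--5 immediate from the triangle lemma's conclusions. Your route works, but the paper's choice of kernels and cokernels absorbs the equivalences into the input data.
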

	
		\begin{proof}
			First, by the two-square lemma applied to the two central rows of 
			diagram \ref{diagserp}, we get the situation of diagram
			\ref{diagtwocarres}, where $\varphi_1$ and $\psi_2$ are pullbacks and
			pushouts, and where the central row is an extension.
			
			By the universal property of the pullback $\psi_2$, there exist an arrow
			$k'_c$ and 2-arrows $\kappa'_c$ and
			$\xi$ such that $g'\kappa'_c\circ\psi_2 k'_c\circ c\xi
			=\kappa_c$.  By Lemma \ref{lemmtwo} applied to the following diagram, 
			$(k'_c,\kappa'_c)=\Ker c'$.
			\begin{xym}\xymatrix@=40pt{
				Kc\ar[r]^{k'_c}\ar@{=}[d]\rruppertwocell<9>^0{^<-2.7>\kappa'_c\,}
					\drtwocell\omit\omit{^\xi}
				&I\ar[r]^{c'}\ar[d]_{j}\drtwocell\omit\omit{^{\psi_2\;\;}}
				&B'\ar[d]^{g'}
				\\ Kc\ar[r]_{k_c}\rrlowertwocell<-9>_0{_<2.7>\;\;\kappa_c}
				&C\ar[r]_{c}
				&C'
			}\end{xym}
			Moreover, we have a 2-arrow $\xi'\col a'k_b\Ra k'_c\bar{g}$
			such that $\kappa'_c\bar{g}\circ c'\xi'=\kappa_b\circ \beta k_b$
			and $j\xi'\circ \psi_1k_b\circ\bar{\psi}=\xi\bar{g}$.
			
			Dually, there is $q'_a$, $\zeta'_a$ and $\pi$ such that 
			$\zeta_a\circ \pi a\circ q'_a\varphi_1=\zeta'_a f$.
			By the dual of Lemma \ref{lemmtwo} applied to the following diagram,
			$(q'_a,\zeta'_a)=\Coker a'$.
			\begin{xym}\xymatrix@=40pt{
				A\ar[r]^-{a}\ar[d]_f\rruppertwocell<9>^0{^<-2.7>\zeta_a\,}
					\drtwocell\omit\omit{^\varphi_1\;}
				&A'\ar[r]^-{q_a}\ar[d]_{i}\drtwocell\omit\omit{^{\pi}}
				&Qa\ar@{=}[d]
				\\ B\ar[r]_-{a'}\rrlowertwocell<-9>_0{_<2.7>\;\;\zeta'_a}
				&I\ar[r]_-{q'_a}
				&Qa
			}\end{xym}
			Moreover, we have $\pi'\col q_bc'\Ra\bar{f}' q'_a$ such that
			$\bar{f}'\zeta'_a\circ\pi' a'=\zeta_b\circ q_b\beta$ and 
			$\bar{\varphi}'\circ q_b\varphi_2=\bar{f}'\pi\circ \pi' i$.
			
			Next, as $\varphi_1$ is a pullback, by Lemma \ref{lemmtwo},
			$(fk_a,i\kappa_a\circ \varphi_1k_a)=\Ker a'$.  Dually,
			$(q_cg',\zeta_c j\circ q_c\psi_2^{-1})=\Coker c'$.
			
			We apply then the triangle lemma (Proposition \ref{lemmtriangle})
			to the following diagram.  We get the required exact sequence; properties 
			3 to 5 follow from properties 3 to 5 of the triangle lemma.\qedhere
			\begin{xym}\xymatrix@R=20pt@C=40pt{
				&Kb\ar[dd]^{k_b}\ar[ddrr]^{\bar{g}}
				\\ Ka\ar[dr]_{fk_a}\drtwocell\omit\omit{^<-4.5>\bar{\varphi}^{-1}\;\,}
					\ar[ur]^{\bar{f}}
				&{}\ddrtwocell\omit\omit{^<-4>\xi'}
				\\ &B\ar[dd]_{b}\ar[dr]^{a'}\ddtwocell\omit\omit{^<-4>\;\beta^{-1}}
				&&Kc\ar[dl]_{k'_c}\ar[dd]^d
				\\ &&I\ar[dl]^{c'}\ar[dr]_{q'_a}\ddltwocell\omit\omit{^<-4>\;\;\pi'}
				\\ &B'\ar[dl]_{q_cg'}\ar[dd]^{q_b}\dltwocell\omit\omit{^<-4.5>\;\;\bar{\psi}'}
				&& Qa\ar[ddll]^{\bar{f}'}
				\\ Qc &{}
				\\ &Qb\ar[ul]^{\bar{g}'}
			}\end{xym}
		\end{proof}

	If we extend the columns of diagram \ref{diagserp} at both ends by $0$,
	we have neither relative exact sequences ($\kappa_x$ and $\zeta_x$ are not in general
	compatible) nor exact sequences (the kernel of $k_x$ is in general not $0$).
	We get exact sequences by extending the columns in such a way to get the Puppe
	exact sequence (Proposition \ref{sequencePuppe}).
	In this way appears a longer snake (anaconda), which is a
	$\Pip$-$\Ker$-$\Coker$-$\Copip$ exact sequence.
	
	\begin{pon}[Anaconda lemma]\label{lemmanacond}\index{anaconda lemma}\index{lemma!anaconda}
		Let $\C$ be an abelian $\Gpd$-category.
		In the situation of the following diagram (where we omit the 2-arrows to $0$),
		where the two central rows are extensions, the dashed sequence
		(equipped with omitted 2-arrows) is exact.  Moreover, the composite
		$0\Ra 0$ of the adjacent 2-arrows of this sequence are successively (up to the
		sign): $\omega_{\Ker a}$, $\omega_{\Ker b}$, $\omega_{\Ker c}$, $\mu_a$,
		$\mu_b$, $\mu_c$, $\sigma_{\Coker a}$, $\sigma_{\Coker b}$
		and $\sigma_{\Coker c}$.
		\begin{xym}\xymatrix@=40pt{
			&0\ar[d] &0\ar[d] &0\ar[d]
			\\ 0\ar@{-->}[r]
			&\Pip a\ar@{-->}[r]^{\Omega \bar{f}}\ar[d]_{\Omega k_a}\drtwocell\omit\omit{}
			&\Pip b\ar@{-->}[r]^{\Omega \bar{g}}\ar[d]^{\Omega k_b}\drtwocell\omit\omit{}
			&\Pip c\ar[d]^{\Omega k_c}\ar@{-->}[dddll]_(0.1){d'}
			\\ &\Omega A\ar[r]^{\Omega f}\ar[d]_{\Omega a}
				\drtwocell\omit\omit{}
			&\Omega B\ar[r]^(0.6){\Omega g}\ar[d]^(0.65){\Omega b}\drtwocell\omit\omit{}
			&\Omega C\ar[d]^{\Omega c}
			\\ &\Omega A'\ar[r]^(0.4){\Omega f'}\ar[d]_{d_a}
				\drtwocell\omit\omit{}
			&\Omega B'\ar[r]^{\Omega g'}\ar[d]^{d_b}\drtwocell\omit\omit{}
			&\Omega C'\ar[d]^{d_c}
			\\ &\Ker a\ar@{-->}[r]^{\bar{f}}\ar[d]_{k_a}\drtwocell\omit\omit{}
			&\Ker b\ar@{-->}[r]^{\bar{g}}\ar[d]^{k_b}\drtwocell\omit\omit{}
			&\Ker c\ar[d]^{k_c}\ar@{-->}[dddll]_(0.1){d}
			\\ 0\ar[r] &A\ar[r]^{f}\ar[d]_{a}
				\drtwocell\omit\omit{}
			&B\ar[r]^(0.6){g}\ar[d]^(0.65){b}\drtwocell\omit\omit{}
			&C\ar[d]^{c}\ar[r] &0
			\\ 0\ar[r] &A'\ar[r]_(0.4){f'}\ar[d]_{q_a}
				\drtwocell\omit\omit{}
			&B'\ar[r]_{g'}\ar[d]^{q_b}\drtwocell\omit\omit{}
			&C'\ar[d]^{q_c}\ar[r] &0
			\\ &\Coker a\ar@{-->}[r]_{\bar{f}'}\ar[d]_{d'_a}\drtwocell\omit\omit{}
			&\Coker b\ar@{-->}[r]_{\bar{g}'}\ar[d]^{d'_b}\drtwocell\omit\omit{}
			&\Coker c\ar[d]^{d'_c}\ar@{-->}[dddll]_(0.1){d''}
			\\ &\Sigma A\ar[r]_{\Sigma f}\ar[d]_{\Sigma a}
				\drtwocell\omit\omit{}
			&\Sigma B\ar[r]_(0.6){\Sigma g}\ar[d]^(0.65){\Sigma b}\drtwocell\omit\omit{}
			&\Sigma C\ar[d]^{\Sigma c}
			\\ &\Sigma A'\ar[r]_(0.4){\Sigma f'}\ar[d]_{\Sigma q_a}
				\drtwocell\omit\omit{}
			&\Sigma B'\ar[r]_{\Sigma g'}\ar[d]^{\Sigma q_b}\drtwocell\omit\omit{}
			&\Sigma C'\ar[d]^{\Sigma q_c}
			\\ &\Copip a\ar@{-->}[r]_{\Sigma \bar{f}'}\ar[d]
			&\Copip b\ar@{-->}[r]_{\Sigma \bar{g}'}\ar[d]
			&\Copip c\ar@{-->}[r]\ar[d]
			&0
			\\ &0 &0 &0
		}\end{xym}
	\end{pon}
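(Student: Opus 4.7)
The proof combines the snake lemma (Proposition \ref{lemmserp}) applied to the two given central extensions with the Puppe long exact sequences (Proposition \ref{sequencePuppe}) of each of the three columns $a,b,c$, stitched together via the functoriality of $\Omega$ and $\Sigma$ from the adjunction $\Sigma\dashv\Omega$ (Subsection \ref{sssectsigadjom}).

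The first step is to apply the snake lemma directly to the two extensions with vertical maps $a,b,c$. This yields the middle six-term exact sequence $\Ker a\to\Ker b\to\Ker c\xrightarrow{d}\Coker a\to\Coker b\to\Coker c$ together with the identifications $(\bar f,\bar\eta)=\Ker\bar g$ and $(\bar g',\bar\eta')=\Coker\bar f'$, and the crucial formulas $\delta\bar f\circ d\bar\eta^{-1}=\mu_a$, $\delta'\bar g\circ\bar f'\delta^{-1}=\mu_b^{-1}$, $\bar\eta' d\circ\bar g'\delta'^{-1}=\mu_c$. These give the three $\mu_x$ adjacent-loop identifications in the statement.

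For the upper half, one uses that $\Omega$, being a right adjoint, preserves kernels. Applying $\Omega$ to $(\bar f,\bar\eta)=\Ker\bar g$ immediately produces $(\Omega\bar f,\Omega\bar\eta)=\Ker\Omega\bar g$, giving exactness of $0\to\Pip a\to\Pip b\to\Pip c$ at $\Pip a$ and $\Pip b$. To construct the connecting morphism $d'\colon\Pip c\to\Ker a$ and to obtain exactness at $\Pip c$ and $\Ker a$, I would apply the snake lemma a second time to an auxiliary pair of extensions derived from the Puppe sequences: for each column $x\in\{a,b,c\}$, use $\Ker$-idempotence to factor $d_x\colon\Omega X'\to\Ker x$ into a normal cofaithful arrow followed by a normal faithful one, producing an extension $0\to\Im d_x\to\Ker x\to\Im k_x\to 0$, and analogously an extension $0\to\Pip x\to\Omega X'\to\Im d_x\to 0$. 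The morphism between the $a$-, $b$-, $c$-versions of the latter extension is then a morphism of extensions, and the snake lemma applied to it yields $d'$ as its connecting homomorphism, together with exactness at $\Pip c$ and $\Ker a$. The adjacent-composite identifications $\omega_{\Ker x}$ are recovered from point~1 of the Puppe sequence of $\bar f$ (namely $\varepsilon\,\Omega k\circ d\,\Omega\kappa^{-1}=\omega_{Kf}^{-1}$) together with the naturality of $\omega_{-}$.

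The lower half is obtained dually, using that $\Sigma$ (left adjoint) preserves cokernels so that $(\Sigma\bar g',\Sigma\bar\eta')=\Coker\Sigma\bar f'$, and a dual auxiliary snake produces $d''\colon\Coker c\to\Copip a$ with the $\sigma_{\Coker x}$ adjacent composites.

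The principal obstacle is the construction of $d'$ (and dually $d''$) together with the coherent identification of the adjacent 2-arrow composites as the Puppe-sequence loops $\omega_{\Ker x}$ and $\sigma_{\Coker x}$. The subtlety is that the Puppe sequence is exact but not a priori relative exact, so Proposition \ref{suitrelexdecompenext} does not directly decompose it into extensions; one must instead construct the auxiliary extensions above by hand, which requires careful bookkeeping via Lemma \ref{exenunfl} and the triangle lemma (Proposition \ref{lemmtriangle}) to guarantee that the 2-arrows arising from the Puppe sequences match those produced by the snake-lemma connecting procedure at the splicing boundary.
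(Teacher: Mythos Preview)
Your first step matches the paper exactly: apply the snake lemma (Proposition \ref{lemmserp}) to obtain the middle six-term sequence together with $(\bar f,\bar\eta)=\Ker\bar g$, $(\bar g',\bar\eta')=\Coker\bar f'$, and the three $\mu_x$ identifications.

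For the upper and lower extensions, however, you take a detour that the paper avoids entirely. You propose to assemble the Puppe sequences of the three \emph{vertical} arrows $a,b,c$, factor the connecting maps $d_x$ into auxiliary extensions, and run a second snake lemma across those. The paper instead observes that, once $(\bar f,\bar\eta)=\Ker\bar g$ is known, the whole upper part of the anaconda is nothing but the left half of the Puppe sequence of the \emph{single horizontal} arrow $\bar g\colon\Ker b\to\Ker c$. Indeed, for that Puppe sequence one has $K\bar g\simeq\Ker a$, $\Omega(\Ker b)=\Pip b$, $\Omega(\Ker c)=\Pip c$, and $\Pip\bar g=\Omega K\bar g\simeq\Pip a$; the connecting map $d$ of Proposition \ref{sequencePuppe} applied to $\bar g$ is precisely your $d'$, and equations 1--3 of that proposition give the adjacent-loop composites $\omega_{\Ker a}$, $\omega_{\Ker b}$, $\omega_{\Ker c}$ immediately. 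Dually, the lower part is the right half of the Puppe sequence of $\bar f'$, using $(\bar g',\bar\eta')=\Coker\bar f'$.

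Your route is not obviously wrong, but the ``principal obstacle'' you identify (building the auxiliary extensions and matching the 2-arrows at the splice) simply does not arise in the paper's argument: everything you need is already packaged in a single invocation of Proposition \ref{sequencePuppe} for $\bar g$ and for $\bar f'$.
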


		\begin{proof}
			We join the snake lemma (Proposition \ref{lemmserp}) and the 
			Puppe exact sequence (Proposition \ref{sequencePuppe}) constructed on the left
			side starting from $\bar{g}\col \Ker b\ra\Ker c$ (by using the fact
			that $(\bar{f},\bar{\eta})=\Ker\bar{g}$), and the Puppe sequence constructed
			on the right side starting from $\bar{f}'\col\Coker a\ra\Coker b$ (by using
			the fact that $(\bar{g}',\bar{\eta}')=\Coker\bar{f}'$).
		\end{proof}

	To construct the long exact sequence of homology, we need a more general version
	of the snake lemma, where the second row of diagram \ref{diagserp} is not left relative
	exact any more, and the third row is not right relative exact any more.  Then, in the conclusion, we lose the facts that $\bar{f}=\Ker\bar{g}$
	and $\bar{g}'=\Coker\bar{f}'$.

	\begin{pon}[Generalised snake lemma]\label{lemmserpgen}%
	\index{snake lemma!generalised}\index{lemma!snake!generalised}
		Let $\C$ be an abelian $\Gpd$-category.
		Let us consider the commutative diagram \ref{diagserp}.
		Let us assume that the following conditions hold:
		\begin{enumerate}
			\item $(g,\eta)=\Coker f$;
			\item $(f',\eta')=\Ker g'$;
			\item $(k_a,\kappa_a)=\Ker a$;
			\item $(k_b,\kappa_b)=\Ker b$;
			\item $(k_c,\kappa_c)=\Ker c$;
			\item $(q_a,\zeta_a)=\Coker a$;
			\item $(q_b,\zeta_b)=\Coker b$;
			\item $(q_c,\zeta_c)=\Coker c$.
		\end{enumerate}
		Then there exist an arrow $d\col Kc\ra Qa$ and 2-arrows $\delta$ and $\delta'$
		such that the following sequence is exact.
		\begin{xym}\xymatrix@=40pt{
			Ka\ar[r]^-{\bar{f}}\rrlowertwocell<-9>_0{_<2.7>\,\bar{\eta}}
			&Kb\ar[r]^-{\bar{g}}\rruppertwocell<9>^0{^<-2.7>\delta}
			&Kc\ar[r]^-{d}\rrlowertwocell<-9>_0{_<2.7>\;\delta'}
			&Qa\ar[r]^-{\bar{f}'}\rruppertwocell<9>^0{^<-2.7>\bar{\eta}'}
			&Qb\ar[r]^-{\bar{g}'}
			&Qc
		}\end{xym}
		Moreover, the following conditions (where $\mu_x=\zeta_x k_x\circ q_x\kappa_x^{-1}$)
		hold:
		\begin{enumerate}
			\item $\delta\bar{f}\circ d\bar{\eta}^{-1}= \mu_a$;
			\item $\delta' \bar{g}\circ \bar{f}'\delta^{-1} = \mu_b^{-1}$;
			\item $\bar{\eta}' d\circ \bar{g}'\delta'^{-1}=  \mu_c$.
		\end{enumerate}
	\end{pon}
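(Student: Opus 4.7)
The plan is to adapt the proof of the standard snake lemma (Proposition \ref{lemmserp}), which proceeds by applying the two-square lemma followed by the triangle lemma (Proposition \ref{lemmtriangle}). The full extension hypotheses in Proposition \ref{lemmserp} enter only via the two-square lemma, and they are precisely what produces the stronger conclusions (1) and (2) of Proposition \ref{lemmserp} --- that $(\bar f,\bar\eta)=\Ker\bar g$ and $(\bar g',\bar\eta')=\Coker\bar f'$. Since these conclusions are dropped in the generalised statement, the idea is to bypass the two-square lemma and still derive the 6-term exact sequence together with conditions 3, 4, 5 on the composites of adjacent 2-arrows.

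First I would form the pushout $I$ of $a\col A\ra A'$ and $f\col A\ra B$, obtaining $i\col A'\ra I$, $a'\col B\ra I$ and $\varphi_1\col ia\Ra a'f$. Since $(g,\eta)=\Coker f$ gives a compatibility of $g\col B\ra C$ with the zero map $A'\ra C$, the universal property of the pushout yields $j\col I\ra C$ with $ji\simeq 0$ and $ja'\simeq g$, and by the pushout construction $(j,\iota)=\Coker i$ holds automatically. Dually, $(f',\eta')=\Ker g'$ together with the compatibility $\varphi$ yields $c'\col I\ra B'$ with $c'i\simeq f'$, $c'a'\simeq b$, and an induced 2-arrow $\psi_2\col g'c'\Ra cj$. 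The connecting arrow $d\col Kc\ra Qa$ is then obtained as in the classical diagram chase: lift $k_c$ into $I$ using that $(j,\iota)=\Coker i$ makes $j$ a normal cofaithful arrow, transport to $B'$ via $c'$, descend to $A'$ using $(f',\eta')=\Ker g'$, and project via $q_a$ to $Qa$.

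For exactness at the four intermediate points, I would apply the triangle lemma to the configuration built from the intermediate object $I$, in direct analogy with the standard snake proof: set $f_1=a'\col B\ra I$, $f_2=b\col B\ra B'$, $f_3=c'\col I\ra B'$ with the comparison 2-arrow $\omega\col f_3f_1\Ra f_2$ coming from the pushout-derived compatibility of $b$ with $c'a'$. The triangle lemma then requires identifying $(fk_a,\cdot)=\Ker a'$, $(k'_c,\kappa'_c)=\Ker c'$, $(q'_a,\zeta'_a)=\Coker a'$, and $(q_cg',\cdot)=\Coker c'$; in the standard proof these follow immediately from Lemma \ref{lemmtwo} and its dual, using that $\varphi_1$ is a pushout-and-pullback and $\psi_2$ a pullback-and-pushout --- the output of the two-square lemma.

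The main obstacle is precisely that in the generalised setting $\varphi_1$ is a pushout by construction but need not be a pullback, and dually $\psi_2$ need not be a pushout; so the direct appeals to Lemma \ref{lemmtwo} are no longer automatic. The hard part of the proof will be to verify the four kernel/cokernel identifications above using only the weaker hypotheses $(g,\eta)=\Coker f$ and $(f',\eta')=\Ker g'$. I expect this to be achievable by invoking Lemma \ref{lemmquatre} (or its dual) in place of Lemma \ref{lemmtwo}, and by exploiting the subquotient axiom from Proposition \ref{propsemihom} together with the stability of fully 0-faithful arrows under pushout and of fully 0-cofaithful arrows under pullback (conditions 3 and 4 of Definition \ref{dfgpdab}), which together force $c'$ and $a'$ to inherit the specific faithful/cofaithful behaviour needed. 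Once these identifications are in place, the triangle lemma delivers the exact sequence $Ka\ra Kb\ra Kc\ra Qa\ra Qb\ra Qc$, and conditions 3, 4, 5 on the composite 2-arrows follow from the corresponding conditions in the triangle lemma.
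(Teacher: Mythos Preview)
Your approach differs substantially from the paper's, and the obstacle you flag is a genuine gap that your suggested tools do not close.

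The paper's route is a reduction rather than a direct adaptation. One factors $f\col A\ra B$ through $\hat A\eqdef\Ker g$, writing $f\simeq\hat f\circ m$ with $\hat f=k_g$; since $(g,\eta)=\Coker f$ and $\hat f$ is a kernel, $\Ker$-idempotence gives $(g,\cdot)=\Coker\hat f$, so the row $\hat A\overset{\hat f}\ra B\overset{g}\ra C$ is an honest extension. Dually, factor $g'$ through $\hat C'\eqdef\Coker f'$, writing $g'\simeq n'\circ\hat g'$, to obtain an extension $A'\overset{f'}\ra B'\overset{\hat g'}\ra\hat C'$. This induces new vertical maps $\hat a\col\hat A\ra A'$ and $\hat c\col C\ra\hat C'$; because $m$ is fully $0$-cofaithful and $n'$ is fully $0$-faithful (they are the $\Coker$- and $\Ker$-regular parts of the factorisations of $f$ and $g'$, and $\C$ is $\Ker$- and $\Coker$-factorisable since it is Puppe-exact), one has $\Coker\hat a\simeq Qa$ and $\Ker\hat c\simeq Kc$. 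The standard snake lemma (Proposition \ref{lemmserp}) now applies verbatim to the inner diagram with columns $\hat a,b,\hat c$, producing $d\col Kc\ra Qa$, exactness at $Kc$ and $Qa$, and the three compatibility conditions on the adjacent $2$-arrows. Exactness of $(\bar f,\bar\eta,\bar g)$ at $Kb$ and of $(\bar f',\bar\eta',\bar g')$ at $Qb$ is then a direct application of the generalised kernels lemma (Proposition \ref{lemmnoygen}) and its dual.

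Your plan instead runs the two-square/triangle machinery with the pushout $I$ in place, but the identifications needed for the triangle lemma --- in particular $(fk_a,\cdot)=\Ker a'$ and $(q_cg',\cdot)=\Coker c'$ --- hinge on $\varphi_1$ being a pullback and $\psi_2$ a pushout, which is exactly what the two-square lemma supplies and which is unavailable without $(f,\eta)=\Ker g$ and $(g',\eta')=\Coker f'$. Your proposed substitutes do not fill this: Lemma \ref{lemmquatre} trades one pullback for another only under a $\varphi$-fully-$0$-faithfulness hypothesis on a vertical map you have not controlled, and the stability clauses (3)--(4) of Definition \ref{dfgpdab} push fully $0$-faithful arrows forward along pushouts (and dually), which is the wrong direction for what you need. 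The paper's factorisation through $\Ker g$ and $\Coker f'$ sidesteps this entirely by restoring the full extension hypotheses on an inner diagram and invoking Proposition \ref{lemmserp} as a black box.
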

	
		\begin{proof}
			We factor the squares $\bar{\varphi}$ and $\varphi$ as in the proof
			of the generalised kernels lemma (Proposition \ref{lemmnoygen}), and we factor
			the squares $\psi$ and $\bar{\psi}'$ in a dual way.  We get the following
			diagram.
			\begin{xym}\xymatrix@=40pt{
				Ka\ar[r]^{\bar{m}}\ar[d]_{k_a}\drtwocell\omit\omit{_{\;\,}}
					\rruppertwocell<9>^{\bar{f}}{^<-2.7>\bar{\omega}\;}
					\ddlowertwocell<-9>_0{_<2.7>\kappa_a}
				&K\hat{a}\ar[r]^{\hat{\bar{f}}}\ar[d]_{k_{\hat{a}}}
					\drtwocell\omit\omit{_{\;\,}}
					\rruppertwocell<9>^0{^<-2.7>\hat{\bar{\eta}}\;}
					\ar@{}[dl]_{\bar{\mu}}\ar@/_1.89pc/[dd]_(0.44)0
				&Kb\ar[r]^{\bar{g}}\ar[d]_(0.38){k_b}\ar@/_1.89pc/[dd]_(0.44)0
					\drtwocell\omit\omit{_{\;\,}}\ar@{}[dl]_{\hat{\bar{\varphi}}}
				&Kc\ar@{=}[r]\ar[d]^{k_c}\ar@/^1.89pc/[dd]^(0.44)0
					\ar@{}[dl]_{\bar{\psi}}
				&Kc\ar[d]^(0.4){k_c}\dduppertwocell<9>^0{^<-2.7>\kappa_c}
				\\ A\ar[r]_(0.4){m}\ar[d]_{a}\ar@/_1.87pc/[rr]_(0.55)f
					\ddlowertwocell<-9>_0{_<2.7>\;\,\zeta_a}\drtwocell\omit\omit{_{\;\,\mu}}
				&\hat{A}\ar[r]_(0.4){\hat{f}}\ar[d]_(0.62){\hat{a}}\ar@/^1.87pc/[rr]^(0.55)0
					\drtwocell\omit\omit{_{\;\,\hat{\varphi}}}\ar@/_1.89pc/[dd]_(0.56)0
					\ar@{}[d]^(0.25){\dir{=>}\,\omega}
					\ar@{}[l]_(0.24){\txt{$\overset{\;\;\kappa_{\hat{a}}}{\dir{=>}}$}}
				&B\ar[r]^{g}\ar[d]^{b}\drtwocell\omit\omit{_{\;\,\hat{\psi}}}
					\ar@{}[u]_(0.25){\,\dir{=>}\,\hat{\eta}}\ar@/^1.89pc/[dd]^(0.56)0
					\ar@{}[l]_(0.25){\txt{$\overset{\;\;\kappa_b}{\dir{=>}}$}}
				&C\ar@{=}[r]\ar[d]^(0.38){\hat{c}}\ar@/^1.89pc/[dd]^(0.56)0
					\drtwocell\omit\omit{\nu}
					\ar@{}[r]^(0.25){\txt{$\overset{\kappa_{\hat{c}}\;\;}{\dir{=>}}$}}
				&C\ar[d]^{c}\dduppertwocell<9>^0{^<-2.7>\zeta_c}
				\\ A'\ar@{=}[r]\ar[d]_{q_a}
				&A'\ar[r]_{f'}\ar[d]_(0.6){q_a}\ar@/_1.87pc/[rr]_(0.45)0
					\drtwocell\omit\omit{_{\;\,\bar{\varphi}'}}
					\ar@{}[l]^(0.24){\txt{$\underset{\;\;\zeta_{\hat{a}}}{\dir{=>}}$}}
				&B'\ar[r]^(0.6){\hat{g}'}\ar[d]^(0.62){q_b}
					\drtwocell\omit\omit{_{\;\,\hat{\bar{\psi}}'}}
					\ar@{}[d]_(0.25){\hat{\eta}'\,\dir{=>}}\ar@/^1.87pc/[rr]^(0.45){g'}
					\ar@{}[r]_(0.24){\txt{$\underset{\zeta_b\;\;}{\dir{=>}}$}}
				&\hat{C}'\ar[r]^(0.6){n'}\ar[d]^{q_{\hat{c}}}\drtwocell\omit\omit{\bar{\nu}'}
					\ar@{}[u]^(0.25){\omega'\,\dir{=>}}
					\ar@{}[r]_(0.24){\txt{$\underset{\zeta_{\hat{c}}\;\;}{\dir{=>}}$}}
				&C'\ar[d]^{q_c}
				\\ Qa\ar@{=}[r]
				&Qa\ar[r]_{\bar{f}'}\rrlowertwocell<-9>_0{_<2.7>\;\,\hat{\bar{\eta}}'}
				&Qb\ar[r]_{\hat{\bar{g}}'}
					\rrlowertwocell<-9>_{\bar{g}'}{_<2.7>\;\,\bar{\omega}'}
				&Q\hat{c}\ar[r]_{\bar{\nu}'}
				&Qc
			}\end{xym}
			We apply the snake lemma to the two central squares
			(since $(\hat{f},\hat{\eta},g)$ and $(f',\hat{\eta}',\hat{g}')$
			are extensions), and we get an exact sequence
			\begin{xym}\xymatrix@=40pt{
				K\hat{a}\ar[r]^-{\hat{\bar{f}}}\rrlowertwocell<-9>_0{_<2.7>\,\hat{\bar{\eta}}}
				&Kb\ar[r]^-{\bar{g}}\rruppertwocell<9>^0{^<-2.7>\delta}
				&Kc\ar[r]^-{d}\rrlowertwocell<-9>_0{_<2.7>\;\delta'}
				&Qa\ar[r]^-{\bar{f}'}\rruppertwocell<9>^0{^<-2.7>\hat{\bar{\eta}}'}
				&Qb\ar[r]^-{\hat{\bar{g}}'}
				&Q\hat{c}.
			}\end{xym}
			Finally, by the generalised kernels lemma and its dual, the sequences 
			$(\bar{f},\bar{\eta},\bar{g})$ and $(\bar{f}',\bar{\eta}',\bar{g}')$ are exact.
		\end{proof}

\section{Long exact sequence of homology}

\subsection{Chain complexes}

	Chain complexes of symmetric 2-groups appear in \cite{Takeuchi1983a}
	and \cite{Rio2005a}.  In such a complex, adjacent 2-arrows have to be compatible; thus they are
	\emph{pre}relative exact sequences.

	\begin{df}\index{chain complex}\index{complex!chain}
		A \emph{chain complex} in $\C$ is a sequence
		$A_\bullet = (A_n,a_n,\allowbreak\alpha_n)_{n\col\mathbb{Z}}$,
		where $a_n\col A_n\ra A_{n+1}$ and $\alpha_n\col a_{n+1}a_n\Ra 0$, such that, for all $n$,
		the 2-arrows $\alpha_{n}$ and $\alpha_{n+1}$ are compatible.
	\end{df}
	
	The chain complexes in $\C$ form a $\Gpdp$-category.
	\begin{itemize}
		\item {\it Objects. } These are the chain complexes in $\C$.
		\item {\it Arrows. } A \emph{morphism of chain complexes}
			from $A_\bullet = (A_n,a_n,\alpha_n)_{n\col\mathbb{Z}}$ to $B_\bullet = (B_n,b_n,\beta_n)_{n\col\mathbb{Z}}$
			is a sequence $f_\bullet=(f_n,\varphi_n)$, where $f_n\col A_n\ra B_n$
			and $\varphi_n\col b_nf_n\Ra f_{n+1}a_n$, such that, for all $n\col\mathbb{Z}$,
			$f_{n+2}\alpha_n\circ\varphi_{n+1}a_n\circ b_{n+1}\varphi_n=\beta_n f_n$.
			We compose these morphisms in the obvious way.
		\item {\it 2-arrows. } A 2-morphism of chain complexes
			$(f_n,\varphi_n)\Ra (f'_n,\varphi'_n)$ is a sequence
			$(\gamma_n)_{n\col\mathbb{Z}}$, where $\gamma_n\col f_n\Ra f'_n$, such that,
			for all $n\col\mathbb{Z}$, $\gamma_{n+1}a_n\circ\varphi_n=
			\varphi'_n\circ b_n\gamma_n$.
		\item {\it Zero arrows. } The zero arrows are the arrows
			$(0,1_0)_{n\col\mathbb{Z}}$.
	\end{itemize}

	\vspace{1em}
	Let us introduce notations for the construction of the homology of such a
	sequence at $A_n$.
	
	\begin{enumerate}
		\item We construct $(K_n(A_\bullet),a'_n,\alpha'_n)=\Ker (a_{n+1},\alpha_{n+1})$,
			which induces an arrow $\hat{a}_n$ and 2-arrows $\mu_n$ and
			$\hat{\alpha}_{n-1}$ such that $\alpha'_n\hat{a}_n\circ a_{n+1}\mu_n=\alpha_n$
			and $a'_n\hat{\alpha}_{n-1}\circ \mu_n a_{n-1}=\alpha_{n-1}$.
	\end{enumerate}
			\begin{xym}\xymatrix@R=40pt@C=20pt{
				A_{n-1}\ar[rr]^-{a_{n-1}}\ar@/^2pc/[rrrr]^0\ar@/_0.78pc/[drrr]_-0
				& {}\rruppertwocell\omit{^<-2.7>\alpha_{n-1}\;\;\;\;\;\;}
				& A_n\ar[rr]^-{a_n}\ar[dr]_-{\hat{a}_n}\rrtwocell\omit\omit{_<4>\;\,\mu_n}
					\ar@/^2pc/[rrrr]^0\ar@{}[dll]|(0.28){\dir{=>}\,\hat{\alpha}_{n-1}}
				& {}\rrtwocell\omit\omit{^<-2.7>\alpha_{n}\;\,}
				& A_{n+1}\ar[rr]^-{a_{n+1}}\ar@{}[drr]|(0.28){\dir{=>}\!\alpha'_n}
					\ar@/^2pc/[rrrr]^0
				& {}\rrtwocell\omit\omit{^<-2.7>\alpha_{n+1}\;\;\;\;\;\;}
				& A_{n+2}\ar[rr]^-{a_{n+2}}
				&{}
				& A_{n+3}
				\\ &&&K_n(A_\bullet)\ar[ur]_-{a'_n}\ar@/_0.78pc/[urrr]_-0 &&&{}
			}\end{xym}
	\begin{enumerate}
		\item[2.] We construct $(Q_n(A_\bullet),a''_n,\alpha''_{n-1})=
			\Coker(a_{n-1},\alpha_{n-2})$, which induces an arrow $\check{a}_n$ and
			2-arrows $\nu_n$ and $\check{\alpha}_n$ such that
			$\check{a}_n\alpha''_{n-1}\circ\nu_n a_{n-1}=\alpha_{n-1}$
			and $\check{\alpha}_n a''_n\circ a_{n+1}\nu_n = \alpha_n$.
	\end{enumerate}
			\begin{xym}\xymatrix@R=40pt@C=20pt{
				A_{n-2}\ar[rr]^-{a_{n-2}}\ar@/^2pc/[rrrr]^0
				&{}\rrtwocell\omit\omit{^<-2.7>\alpha_{n-2}\;\;\;\;\;\;}
				&A_{n-1}\ar[rr]^-{a_{n-1}}\ar@/^2pc/[rrrr]^0\ar@/_0.78pc/[drrr]_-0
				& {}\rruppertwocell\omit{^<-2.7>\alpha_{n-1}\;\;\;\;\;\;}
				& A_n\ar[rr]^-{a_n}\ar[dr]_-{a''_n}\rrtwocell\omit\omit{_<4>\;\;\nu_n}
					\ar@/^2pc/[rrrr]^0\ar@{}[dll]|(0.28){\dir{=>}\,\alpha''_{n-1}}
				& {}\rrtwocell\omit\omit{^<-2.7>\alpha_{n}\;\,}
				& A_{n+1}\ar[rr]^-{a_{n+1}}\ar@{}[drr]|(0.28){\dir{=>}\!\check{\alpha}_n}
				& {}
				& A_{n+2}
				\\ &&&&&Q_n(A_\bullet)\ar[ur]_-{\check{a}_n}\ar@/_0.78pc/[urrr]_-0 &&&{}
			}\end{xym}
	\begin{enumerate}
		\item[3.] If $\C$ is homological, by Proposition \ref{homology}, there exist an object
			$H_n(A_\bullet)$, arrows $q_n$ and $k_n$ and 2-arrows $\zeta_n$,
			$\eta_n$ and $\kappa_n$ such that
			\begin{enumerate}
				\item $(q_n,\zeta_n)=\Coker(\hat{a}_{n-1},\hat{\alpha}_{n-2})$,
				\item $(k_n,\kappa_n)=\Ker(\check{a}_n,\check{\alpha}_n)$,
			\end{enumerate}
			and such that $k_n\zeta_n\circ \eta_n\hat{a}_{n-1}\circ a''_n\mu_{n-1}
			=\alpha''_{n-1}$ and $\kappa_nq_n\circ\check{a}_n\eta_n\circ\nu_na'_{n-1}
			=\alpha'_{n-1}$. 
	\end{enumerate}
			\begin{xym}\xymatrix@R=30pt@C=9.6pt{
				A_{n-2}\ar[rr]^-{a_{n-2}}\ar@/^2pc/[rrrr]^0\ar@/_0.78pc/[drrr]_-0
				&{}\rrtwocell\omit\omit{^<-2.7>\alpha_{n-2}\;\;\;\;\;\;}
				&A_{n-1}\ar[rr]^-{a_{n-1}}\ar[dr]|-{\hat{a}_{n-1}}\ar@/^2pc/[rrrr]^0
					\ar@{}[dll]|(0.25){\dir{=>}\,\hat{\alpha}_{n-2}}\ar@/_2.5pc/[ddrr]_0
					\rrtwocell\omit\omit{_<3.7>\;\;\;\;\;\;\,\mu_{n-1}}
					\ddrrlowertwocell\omit{_<3.5>\;\;\zeta_n}
				& {}\rruppertwocell\omit{^<-2.7>\alpha_{n-1}\;\;\;\;\;\,}
				& A_{n}\ar[rr]^-{a_{n}}\ar[dr]_-{a''_n}\rrtwocell\omit\omit{_<3.7>\;\;\nu_n}
					\ar@/^2pc/[rrrr]^0
				& {}\rrtwocell\omit\omit{^<-2.7>\alpha_n\;\;}
				& A_{n+1}\ar[rr]^-{a_{n+1}}
					\ar@{}[drr]|(0.33){\dir{=>} \!\check{\alpha}_n}
				& {}
				& A_{n+2}
				\\ {} &&&K_{n-1}(A_\bullet)\ar[ur]_-{a'_{n-1}}
					\ar[dr]_-{q_n}\rrtwocell\omit\omit{\;\;\eta_n}
				&&Q_n(A_\bullet)\ar[ur]_-{\check{a}_n}\ar@/_0.78pc/[urrr]_-0 &&&{}
				\\ &&&&H_n(A_\bullet)\ar[ur]_-{k_n}\ar@/_2.5pc/[uurr]_0
					\uurrlowertwocell\omit{_<3.8>\;\;\kappa_n}
			}\end{xym}
	\begin{enumerate}
		\item[4.] As in Proposition \ref{exenunfl}, there exists an arrow
			$h_n\col Q_n(A_\bullet)\ra K_n(A_\bullet)$ and 2-arrows
			$\mu'_n$ and $\nu'_n$ such that $a'_n\mu'_n\circ \mu_n=\nu'_n a''_n\circ\nu_n$,
			$h_n\alpha''_{n-1}\circ\mu'_na_{n-1}=\hat{\alpha}_{n-1}$
			and $\alpha'_nh_n\circ a_{n+1}\nu'_n=\check{\alpha}_n$.
			Moreover, there exist 2-arrows $\kappa'_n$ and $\zeta'_{n+1}$ such
			that $a'_n\kappa'_n\circ\nu'_n k_n=\kappa_n$ and
			$\zeta'_{n+1}a''_n\circ q_{n+1}\mu'_n=\zeta_{n+1}$.  And, by Corollary
			\ref{lemmsimplnoy}, since $a'_n$ is $\alpha'_n$-fully faithful
			and $a''_n$ is $\alpha''_{n-1}$-fully cofaithful, we have
			\begin{enumerate}
				\item $(H_n(A_\bullet),k_n,\kappa'_n)=\Ker h_n$ and
				\item $(H_{n+1}(A_\bullet),q_{n+1},\zeta'_{n+1})=\Coker h_n$.
			\end{enumerate}
			\begin{xym}\xymatrix@=40pt{
				A_{n-1}\ar[r]^-{a_{n-1}}\ar@/^2pc/[rr]^0\ar@/_0.78pc/[dr]_-0
					\drtwocell\omit\omit{_<-2>{\alpha''_{n-1}\;\;\;\;\;\;\;\;\;\;\;\;\;\;\;}}
					\rruppertwocell\omit{^<-2.7>\alpha_{n-1}\;\;\;\;\;\,}
				&A_{n}\ar[r]^-{a_{n}}\ar[d]_{a''_n}\ar@{}[dr]^(0.72){\dir{=>}\nu'_n}
					\ar@/^2pc/[rr]^0\ar[dr]^(0.3){\hat{a}_n}
					\rrtwocell\omit\omit{^<-2.7>\alpha_{n}\;\;}
				&A_{n+1}\ar[r]^-{a_{n+1}}\ar@{}[dl]_(0.72){\mu'_n\dir{=>}}
				&A_{n+2}
				\\ H_n(A_\bullet)\ar[r]_-{k_n}\rrlowertwocell\omit{_<2.7>\;\;\,\kappa'_n}
					\ar@/_2pc/[rr]_0
				&{Q_n(A_\bullet)}\ar[r]_-{h_n}\ar[ur]^(0.7){\check{a}_n}
					\rrlowertwocell\omit{_<2.7>\;\;\;\;\;\;\,\zeta'_{n+1}}\ar@/_2pc/[rr]_0
				&{K_n(A_\bullet)}\ar[u]_-{a'_n}\ar[r]_-{q_{n+1}}
					\ar@/_0.78pc/[ur]_-0\urtwocell\omit\omit{_<-2>{\;\;\,\alpha'_n}}
				&H_{n+1}(A_\bullet)
			}\end{xym}
	\end{enumerate}

	Now, let $f_\bullet\col A_\bullet\ra B_\bullet$ be a morphism of chain complexes.
	\begin{enumerate}
		\item For any $n\col\mathbb{Z}$, we have
			an arrow $f'_{n-1}$ and a 2-arrow $\varphi_{n-1}'\col b'_{n-1} f'_{n-1}
			\Ra f_na'_{n-1}$ such that $f_{n+1}\alpha'_{n-1}\circ\varphi_n a'_{n-1}
			\circ b_n\varphi'_{n-1}=\beta'_{n-1} f'_{n-1}$.
		\item Then we have, for all $n$, an arrow $f''_{n+1}$ and a 2-arrow
			$\varphi''_{n+1}\col b''_{n+1} f_{n+1}\Ra f''_{n+1}a''_{n+1}$ such
			that $f''_{n+1}\alpha''_n\circ\varphi''_{n+1}a_n\circ b''_{n+1}\varphi_n
			= \beta''_n f_n$.
		\item This induces, by the universal property of $b'_n$, which is the
				relative kernel of $b_{n+1}$,
			a 2-arrow $\hat{\varphi}_n$ such that
	\end{enumerate}
			\begin{xyml}\begin{gathered}\xymatrix@R=25pt@C=0pt{
				&A_n\ar[rr]^{f_n}\ar[dl]_{\hat{a}_n}\ar[dd]^{a_n}
					\ddrrtwocell\omit\omit{\;\,\varphi_n}\ddtwocell\omit\omit{_<2.2>\;\mu_n}
				&&B_n\ar[dd]^{b_n}
				\\ K_n(A_\bullet)\ar[dr]_{a'_n} &&{~~~~~~~~}
				\\ &A_{n+1}\ar[rr]_{f_{n+1}}
				&&B_{n+1}
			}\end{gathered}\;\;=\;\;\begin{gathered}\xymatrix@R=25pt@C=0pt{
				&A_n\ar[rr]^{f_n}\ar[dl]_{\hat{a}_n}\drtwocell\omit\omit{\;\;\hat{\varphi}_n}
				&&B_n\ar[dd]^{b_n}\ar[dl]_{\hat{b}_n}\ddtwocell\omit\omit{_<2.2>\;\mu_n}
				\\ K_n(A_\bullet)\ar[dr]_{a'_n}\ar[rr]_{f'_n}
				&&K_n(B_\bullet)\ar[dr]_{b'_n}\ar@{}[dl]|{\dir{=>}\,\varphi'_n}
				\\ &A_{n+1}\ar[rr]_{f_{n+1}}
				&&B_{n+1}
			}\end{gathered}.\end{xyml}
	\begin{enumerate}
		\item[4.] This induces, by the universal property of $a''_n$, which is the
			 relative cokernel of
			$a_{n-1}$, a 2-arrow $\tilde{\varphi}_n$ such that
	\end{enumerate}
			\begin{xyml}\begin{gathered}\xymatrix@R=25pt@C=-7pt{
				&A_n\ar[rr]^{f_n}\ar[dl]_{a''_n}\ddtwocell\omit\omit{_<2.2>\mu'_n}
					\ddrrtwocell\omit\omit{\;\,\hat{\varphi}_n}\ar[dd]^{\hat{a}_n}
				&&B_n\ar[dd]^{\hat{b}_n}
				\\ Q_n(A_\bullet)\ar[dr]_{h_n}&&{~~~~~~~~~}
				\\ &K_n(A_\bullet)\ar[rr]_-{f'_n}
				&&K_n(B_\bullet)
			}\end{gathered}\;\;=\;\;\begin{gathered}\xymatrix@R=25pt@C=-7,pt{
				&A_n\ar[rr]^{f_n}\ar[dl]_{a''_n}\drtwocell\omit\omit{\;\;\varphi''_n}
				&&B_n\ar[dd]^{\hat{b}_n}\ar[dl]_{b''_n}\ddtwocell\omit\omit{_<2.2>\;\mu'_n}
				\\ Q_n(A_\bullet)\ar[dr]_{h_n}\ar[rr]_{f''_n}
				&&Q_n(B_\bullet)\ar[dr]_{h_n}\ar@{}[dl]|{\dir{=>}\,\tilde{\varphi}_n}
				\\ &K_n(A_\bullet)\ar[rr]_-{f'_n}
				&&K_n(B_\bullet)
			}\end{gathered}.\end{xyml}
	\begin{enumerate}
		\item[5.] Finally, this induces for every $n$ an arrow $H_n(f_\bullet)\col H_n(A_\bullet)
			\ra H_n(B_\bullet)$ and 2-arrows $\dot{\varphi}_n$ and $\ddot{\varphi}_n$
			such that $f'_n\kappa'_n\circ\tilde{\varphi}_n k_n\circ h_n\dot{\varphi}_n
			=\kappa'_n H_n(f_\bullet)$ and $H_{n+1}(f_\bullet)\zeta'_{n+1}\circ
			\ddot{\varphi}_{n+1}h_n\circ q_{n+1}\tilde{\varphi}_n=\zeta'_{n+1}f''_n$
			(see diagram \ref{diagpresequencexhom}).
	\end{enumerate}

\subsection{The long sequence}

	Let $0\longrightarrow A_\bullet\overset{f_\bullet}\longrightarrow B_\bullet\overset{g_\bullet}\longrightarrow C_\bullet
	\longrightarrow 0$, with $\omega_\bullet\col g_\bullet f_\bullet\Ra 0_\bullet$, be an extension
	in the $\Gpdp$-category of chain complexes in $\C$.  Since $\C$ has all the kernels
	and cokernels, the kernels and cokernels in the $\Gpdp$-category of chain complexes
	in $\C$ are computed pointwise. Thus the fact that $(f_\bullet, \omega_\bullet,
	g_\bullet)$ is an extension means that for every $n\col\mathbb{Z}$,
	$(f_n,\omega_n,g_n)$ is an extension in $\C$.
	
	Then we make the following constructions.
	\begin{enumerate}
		\item We apply the relative kernels lemma (Corollary \ref{lemmnoyrel})
			and its dual to the central rows of the following diagram.  Then, for every $n$,
			the following diagram commutes and
			$(f'_{n-1},\omega'_{n-1})=\Ker g'_{n-1}$ and $(g''_{n+2},\omega''_{n+2})
			=\Coker f''_{n+2}$.
			\begin{xym}\xymatrix@R=40pt@C=30pt{
				K_{n-1}(A_\bullet)\ar[r]^{f'_{n-1}}\ar[d]|{a'_{n-1}}
					\rruppertwocell<9>^0{^<-2.7>{\omega'_{n-1}\;\;\;\;\;}}
					\drtwocell\omit\omit{_{\;\;\;\;\;\,\varphi'_{n-1}}}
					\ddlowertwocell<-12>_0{_<4>{\alpha'_{n-1}}}
				&K_{n-1}(B_\bullet)\ar[r]^{g'_{n-1}}\ar@/_1.89pc/[dd]_(0.56)0
					\drtwocell\omit\omit{_{\;\;\;\;\;\;\,\psi'_{n-1}}}\ar[d]|(0.38){b'_{n-1}}
				&K_{n-1}(C_\bullet)\ar[d]|{c'_{n-1}}
					\dduppertwocell<12>^0{^<-4>{\gamma'_{n-1}\;}}
				\\ A_n\ar[r]^(0.4){f_n}\ar[d]_{a_n}\ar@/^1.87pc/[rr]^(0.55)0
					\drtwocell\omit\omit{_{\;\;\,\varphi_n}}
					\ddlowertwocell<-12>_0{_<4>{\alpha_n}}
				&B_n\ar[r]^{g_n}\ar[d]|{\,b_n}\drtwocell\omit\omit{_{\;\;\psi_n}}
					\ar@{}[u]_(0.25){\,\dir{=>}\,\omega_n}\ar@/^1.89pc/[dd]^(0.44)0
					\ar@{}[l]^(0.16){\txt{$\underset{\;\;\,\beta'_{n-1}}{\dir{=>}}$}}
				&C_n\ar[d]^{c_n}\dduppertwocell<12>^0{^<-4>{\gamma_n}}
				\\ A_{n+1}\ar[r]^{f_{n+1}}\ar[d]|{a_{n+1}}\ar@/_1.87pc/[rr]_(0.45)0
					\drtwocell\omit\omit{_{\;\;\;\;\;\;\varphi_{n+1}}}
					\ddlowertwocell<-12>_0{_<4>{\;\;\alpha''_{n+1}}}
				&B_{n+1}\ar[r]_(0.6){g_{n+1}}\ar[d]|(0.62){b_{n+1}}
					\drtwocell\omit\omit{_{\;\;\;\;\;\;\psi_{n+1}}}\ar@/^1.89pc/[dd]^(0.56)0
					\ar@{}[d]_(0.25){\omega_{n+1}\,\dir{=>}}
					\ar@{}[r]^(0.17){\txt{$\overset{\beta_n\;\;}{\dir{=>}}$}}
				&C_{n+1}\ar[d]|{c_{n+1}}\dduppertwocell<12>^0{^<-4>{\gamma''_{n+1}}}
				\\ A_{n+2}\ar[r]^{f_{n+2}}\ar@/_1.87pc/[rr]_(0.4)0\ar[d]|{a''_{n+2}}
					\drtwocell\omit\omit{\varphi''_{n+2}\;\;\;\;\;\;\;\;\;\;\;\;\;\;\;}
				&B_{n+2}\ar[r]^{g_{n+2}}\ar@{}[d]_(0.25){\omega_{n+2}\,\dir{=>}\;\;\;\;}
					\drtwocell\omit\omit{\;\;\;\;\;\;\psi''_{n+2}}\ar[d]|(0.6){b''_{n+2}}
					\ar@{}[r]_(0.17){\txt{$\underset{\beta''_{n+1}\;\;}{\dir{=>}}$}}
				&C_{n+2}\ar[d]|{c''_{n+2}}
				\\ Q_{n+2}(A_\bullet)\ar[r]_{f''_{n+2}}
					\rrlowertwocell<-9>_0{_<2.7>{\;\;\;\;\;\;\;\omega''_{n+2}}}
				&Q_{n+2}(B_\bullet)\ar[r]_{g''_{n+2}}
				&Q_{n+2}(C_\bullet)
			}\end{xym}
		\item We apply the generalised snake lemma (Proposition \ref{lemmserpgen})
			to the following diagram, which gives us the following theorem.
			\begin{xym}\label{diagpresequencexhom}\xymatrix@R=40pt@C=30pt{
				H_n(A_\bullet)\ar[r]^{H_n(f_\bullet)}\drtwocell\omit\omit{_{\;\,}}
					\rruppertwocell<10.9>^0{^<-3.5>H_n(\omega_\bullet)\;\;\;\;\;\;\;\;\;\;}
					\ddlowertwocell<-15>_0{_<5.4>\kappa'_n}\ar[d]_{k_n}
				&H_n(B_\bullet)\ar[r]^{H_n(g_\bullet)}\drtwocell\omit\omit{_{\;\,}}
					\ar@/_2.5pc/[dd]_(0.56)0\ar[d]_(0.38){k_n}\ar@{}[dl]_{\dot{\varphi}_n}
				&H_n(C_\bullet)\ar[d]^{k_n}\dduppertwocell<15>^0{^<-5.4>\;\kappa'_n}
					\ar@{}[dl]_{\dot{\psi}_n}
				\\ Q_n(A_\bullet)\ar[r]^(0.4){f''_n}\ar[d]_{h_n}\ar@/^1.87pc/[rr]^(0.55)0
					\drtwocell\omit\omit{_{\;\,\tilde{\varphi}_n}}
					\ddlowertwocell<-15>_0{_<5.4>\;\,\zeta'_{n+1}}
				&Q_n(B_\bullet)\ar[r]^{g''_n}\ar[d]_{h_n}\ar@/^2.5pc/[dd]^(0.44)0
					\drtwocell\omit\omit{_{\;\,\tilde{\psi}_n}}
					\ar@{}[u]_(0.25){\,\dir{=>}\,\omega''_n}
					\ar@{}[l]^(0.2){\txt{$\underset{\;\;\kappa'_n}{\overset{~}{\dir{=>}}}$}}
				&Q_n(C_\bullet)\ar[d]^{h_n}\dduppertwocell<15>^0{^<-5.4>\zeta'_{n+1}}
				\\ K_n(A_\bullet)\ar[r]_{f'_n}\ar[d]_{q_{n+1}}\ar@/_1.87pc/[rr]_(0.45)0
					\drtwocell\omit\omit{_{\;\;\;\;\;\;\,\ddot{\varphi}_{n+1}}}
				&K_n(B_\bullet)\ar[r]_(0.6){g'_n}\ar@{}[d]_(0.25){\omega'_n\,\dir{=>}}
					\drtwocell\omit\omit{_{\;\;\;\;\;\,\ddot{\psi}_{n+1}}}\ar[d]|(0.62){q_{n+1}}
				   \ar@{}[r]^(0.2){\txt{$\overset{\zeta'_{n+1}\;\;}{\underset{~}{\dir{=>}}}$}}
				&K_n(C_\bullet)\ar[d]^{q_{n+1}}
				\\ H_{n+1}(A_\bullet)\ar[r]_{H_{n+1}(f_\bullet)}\rrlowertwocell<-10.9>_0
					{_<3.5>\;\;\;\;\;\;\;\;\;\;\;\;\;\,H_{n+1}(\omega_\bullet)}
				&H_{n+1}(B_\bullet)\ar[r]_{H_{n+1}(g_\bullet)}
				&H_{n+1}(C_\bullet)
			}\end{xym}
	\end{enumerate}

	\begin{thm}\label{lngsequencexhom}\index{homology!long exact sequence of}%
	\index{long exact sequence!of homology}
		Let $(f_\bullet,\omega_\bullet,g_\bullet)$ be a chain complexes extension  
		in an abelian $\Gpd$-category. For every $n\col\mathbb{Z}$,
		there exist an arrow $d_n$ and 2-arrows $\delta_n$ and $\delta'_n$
		such that the following sequence (where the left lower 2-arrow
		is $H_n(\omega_\bullet)$ and the right upper 2-arrow
		is $H_{n+1}(\omega_{\bullet})$) is exact.
		\begin{xym}\xymatrix@=10pt{
			\cdots H_n(A_\bullet)\ar[r]^-{}
				\rrlowertwocell<-9>_0{_<2.7>{}}
			&H_n(B_\bullet)\ar[r]^-{}\rruppertwocell<9>^0{^<-2.7>\delta_n\;}
			&H_n(C_\bullet)\ar[r]^-{d_n\;\,}\rrlowertwocell<-9>_0{_<2.7>\;\;\,\delta'_n}
			&H_{n+1}(A_\bullet)\ar[r]^-{}
				\rruppertwocell<9>^0{^<-2.7>{}}
			&H_{n+1}(B_\bullet)\ar[r]_-{}
			&H_{n+1}(C_\bullet)\cdots
		}\end{xym}
	\end{thm}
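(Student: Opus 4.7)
The strategy is to apply the generalised snake lemma (Proposition~\ref{lemmserpgen}) to diagram~\ref{diagpresequencexhom} for each $n\col\mathbb{Z}$, producing a six-term exact sequence running from $H_n(A_\bullet)$ to $H_{n+1}(C_\bullet)$, and then to splice consecutive such six-term pieces together. All the machinery we need---the relative kernels $K_n(A_\bullet)$, the relative cokernels $Q_n(A_\bullet)$, the homology objects $H_n(A_\bullet)$, the induced arrows $f'_n$, $g'_n$, $f''_n$, $g''_n$, $H_n(f_\bullet)$, and the comparison $h_n\col Q_n(A_\bullet)\ra K_n(A_\bullet)$ with its factorisations---has already been carried out in the preliminary constructions, so the remaining task is to assemble these pieces.

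First I would verify that diagram~\ref{diagpresequencexhom} fits the hypotheses of Proposition~\ref{lemmserpgen}. The two central rows are $(f''_n,\tilde\varphi_n,g''_n)$ and $(f'_n,\ddot\varphi_{n+1},g'_n)$; the application of the relative kernels lemma (Corollary~\ref{lemmnoyrel}) and its dual carried out in step~1 of the setup shows that $(g''_n,\omega''_n)=\Coker f''_n$ and $(f'_n,\omega'_n)=\Ker g'_n$. The three vertical arrows are the comparison maps $h_n$ for $A_\bullet$, $B_\bullet$ and $C_\bullet$, and step~4 of the homology construction delivers $(k_n,\kappa'_n)=\Ker h_n$ and $(q_{n+1},\zeta'_{n+1})=\Coker h_n$ for each of the three complexes. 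All eight hypotheses of Proposition~\ref{lemmserpgen} are thus satisfied, and it provides an arrow $d_n\col H_n(C_\bullet)\ra H_{n+1}(A_\bullet)$ together with 2-arrows $\delta_n$, $\delta'_n$ making the corresponding six-term sequence from $H_n(A_\bullet)$ to $H_{n+1}(C_\bullet)$ exact with the required compatibilities.

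Next I would splice these six-term sequences for consecutive values of $n$. The last three terms of the $n$-th piece coincide with the first three of the $(n+1)$-st, since both arise by applying the functorial homology construction to $f_\bullet$, $g_\bullet$ and $\omega_\bullet$. Exactness at the middle term $H_{n+1}(B_\bullet)$ of any common triple is given directly by the first snake application, while exactness at $H_{n+1}(A_\bullet)$ and $H_{n+1}(C_\bullet)$ is supplied by the neighbouring pieces; pasting produces the desired biinfinite exact sequence.

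The hard part is pure bookkeeping with 2-cells: one must check that the 2-cells delivered by Proposition~\ref{lemmserpgen} at the overlap of two consecutive six-term sequences agree with those obtained from the functoriality of $H_{n+1}$, so that the 2-cells labelled $H_n(\omega_\bullet)$ and $H_{n+1}(\omega_\bullet)$ in the long sequence really are the homology 2-cells induced by $\omega_\bullet$. This is a naturality argument that invokes only the universal properties of the relative kernels, the relative cokernels, and the factorisation of $f_{n+1}$ through $h_n$, along the lines of the uniqueness half of Proposition~\ref{homology}; no further abelian or homological input is required beyond what was used to assemble diagram~\ref{diagpresequencexhom}.
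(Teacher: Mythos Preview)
Your proposal is correct and follows essentially the same approach as the paper: the paper's proof consists precisely of the two enumerated steps preceding the theorem statement---first applying the relative kernels lemma (Corollary~\ref{lemmnoyrel}) and its dual to verify the kernel/cokernel hypotheses on the $Q_n$ and $K_n$ rows, then applying the generalised snake lemma (Proposition~\ref{lemmserpgen}) to diagram~\ref{diagpresequencexhom} for each $n$. Your additional remarks on splicing consecutive six-term pieces and matching the overlapping 2-cells are correct but go slightly beyond what the paper makes explicit; the paper leaves this implicit, since the arrows $H_n(f_\bullet)$, $H_n(g_\bullet)$ and the 2-arrow $H_n(\omega_\bullet)$ are defined functorially once and for all in step~5 of the homology construction, so the top row of the $n$-th snake diagram and the bottom row of the $(n{-}1)$-st coincide by construction.
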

	
	In a good 2-abelian $\Gpd$-category, we can add to this theorem that the loops
	$\delta_n H_n(f_\bullet)\circ d_n H_n(\omega_\bullet)^{-1}$,
	$\delta'_n H_n(g_\bullet)\circ H_{n+1}(f_\bullet)\delta_n^{-1}$ and
	$H_{n+1}(\omega_\bullet)d_n\circ H_{n+1}(g_\bullet)\delta_n^{\prime-1}$ are exact,
	since the snake lemma tells us that these composites are equal
	to $\mu_{h_n}$ (ou $\mu_{h_n}^{-1}$).

\chapter{2-abelian $\Gpd$-categories}

\begin{quote}
	{\it In this chapter, we define 2-Puppe-exact and 2-abelian $\Gpdp$-ca\-te\-go\-ries, 
	which share many properties with the $\Gpd$-category
	of symmetric 2-groups.  We prove that in a 2-abelian $\Gpd$-category,
	the category of discrete objects is equivalent to the category of connected objects and
	is abelian (Corollary \ref{discconcabel}).  Next, in the context of a
	good 2-Puppe-exact $\Gpdp$-category $\C$, we classify the properties of
	arrows in $\C$ in terms of the properties of their (co)reflexions in $\DisC$
	and $\ConC$.  We also define an internal notion of full arrow which
	generalises the full functors (Definition \ref{deffullgpdcat}).}
\end{quote}

\section{Definition}

We define now a second 2-dimensional version of the notion of Puppe-exact category.  Unlike Puppe-exact $\Gpdp$-categories of Definition \ref{defpuppex}, which was a generalisation of the notion of Puppe-exact category, this one is rather an analogue of Puppe-exact categories: the 2-Puppe-exact $\Gpdp$-categories are the $\Gpdp$-categories where, for an arrow $f$, the coroot of the pip of $f$ and the kernel of the cokernel of $f$ coincide, and dually.  But we will see in the following sections that there is a strong link between 2-Puppe-exact $\Gpdp$-categories and Puppe-exact categories, because the discrete (or connected) objects in a 2-Puppe-exact $\Gpdp$-category form a Puppe-exact category.

We can notice the following properties:
\begin{enumerate}
	\item every cokernel is cofaithful and is thus 0-cofaithful (i.e.\ is a
		$\Copip$-epi\-mor\-phism);
	\item every kernel is faithful and is thus 0-faithful (i.e.\ is a
		 $\Pip$-monomorphism);
	\item every coroot is fully cofaithful and is thus fully 0-cofaithful
		(i.e.\ is a $\Coker$-epimorphism);
	\item every root is fully faithful and is thus fully 0-faithful
		(i.e.\ is a $\Ker$-mo\-no\-mor\-phism).
\end{enumerate}
We have thus the following proposition.

\begin{pon}
	Let $\C$ be a $\Gpdp$-category with all the kernels and cokernels.
	\begin{enumerate}
		\item The kernel-quotient system $\Coker\adj\Ker$
			and the cokernel-coquotient system $\Copip\adj\Root$ are precoupled.
		\item The kernel-quotient system $\Coroot\adj\Pip$ and the
			cokernel-coquotient system $\Coker\adj\Ker$ are precoupled.
	\end{enumerate}
\end{pon}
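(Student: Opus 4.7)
The plan is to verify the two conditions of Definition \ref{defsysnoyquconcoqprec} for each pair by invoking the four observations listed just before the proposition, together with the characterisations of the various $K$-monomorphisms and $\mathring{K}$-epimorphisms established earlier. The task is purely one of assembling known identifications, so the proof will be short.

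First I would set up the dictionary. By Proposition \ref{claspropker}, the $\Ker$-monomorphisms are exactly the fully 0-faithful arrows; dually, the $\Coker$-epimorphisms are exactly the fully 0-cofaithful arrows. By Proposition \ref{pepclaszfid}, the $\Pip$-monomorphisms are exactly the 0-faithful arrows; dually, the $\Copip$-epimorphisms are exactly the 0-cofaithful arrows. With this translation, condition 1 of the definition (every quotient is a $\mathring{K}_2$-epimorphism) and condition 2 (every coquotient is a $K_1$-monomorphism) become, for part~1 of the proposition, the statements that every cokernel is 0-cofaithful and every root is fully 0-faithful; and, for part~2, that every coroot is fully 0-cofaithful and every kernel is 0-faithful.

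For part~1 I would then argue as follows. A cokernel is cofaithful by the universal property (the second clause of Definition \ref{defkernel}, dualised, gives the required joint cancellation of 2-cells), and every cofaithful arrow is manifestly 0-cofaithful; so every cokernel is a $\Copip$-epimorphism. A root is fully faithful by definition (condition~2 of Definition \ref{defroot}), and every fully faithful arrow is fully 0-faithful (immediate from the definitions, as already noted in the remarks following Definition \ref{caracplzfid}); so every root is a $\Ker$-monomorphism. Part~2 is strictly dual: kernels are faithful (the unicity clause of Definition \ref{defkernel}), hence 0-faithful, hence $\Pip$-monomorphisms; coroots are fully cofaithful by the dual of Definition \ref{defroot}, hence fully 0-cofaithful, hence $\Coker$-epimorphisms.

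There is no real obstacle here, since all ingredients are already proved earlier in the chapter; the only thing to be careful about is to cite each of the characterisation propositions in the correct direction, and to remember that the ``fully'' qualifier must travel together on both sides of the precoupling (root goes with $\Ker$-mono, coroot with $\Coker$-epi), while the plain versions (cokernel with $\Copip$-epi, kernel with $\Pip$-mono) match up symmetrically. A single short paragraph per part, with the four pointers to the prior propositions, will suffice.
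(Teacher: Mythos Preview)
Your proposal is correct and follows exactly the approach of the paper: the paper lists precisely the four observations you invoke (cokernels are cofaithful hence 0-cofaithful, kernels are faithful hence 0-faithful, coroots are fully cofaithful hence fully 0-cofaithful, roots are fully faithful hence fully 0-faithful) immediately before the proposition and simply writes ``We have thus the following proposition,'' giving no further proof. Your dictionary setup and the matching of each observation to the appropriate precoupling condition is exactly the content the paper leaves implicit.
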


There exist thus comparison arrows between, on the one hand, the cokernel of the kernel of an arrow and the root of its copip and, on the other hand, the coroot of the pip of an arrow and the kernel of its cokernel.
	\begin{xym}\label{diagcokerkerraccopep}\xymatrix@=30pt{
		{\Ker f}\ar[r]^-{k_f}
		& A\ar[r]^f\ar[d]_{\bar{e}^1_f}\rtwocell\omit\omit{_<4.75>\;\;\,\bar{\omega}_f}
		& B\rtwocell^0_0{\;\;\rho_f}
		&{}\save[]+<21pt,0pt>*{\Copip f}\restore
		\\ &{\Coker k_f}\ar[r]_-{\bar{w}_f}
		&{\Root\rho_f}\ar[u]_{\bar{m}^2_f}
	}\end{xym}
	\begin{xym}\label{diagcoracpepkercoker}\xymatrix@=30pt{
		{~}\save[]+<-11pt,0pt>*{\Pip f}\restore\rtwocell^0_0{\;\,\pi_f}
		& A\ar[r]^f\ar[d]_{e^1_f}\rtwocell\omit\omit{_<4.75>\;\;\,\omega_f}
		& B\ar[r]^-{q_f}
		&{\Coker f}
		\\ &{\Coroot\pi_f}\ar[r]_-{w_f}
		&{\Ker q_f}\ar[u]_{m^2_f}
	}\end{xym}
	
By Remark \ref{remfactregpepkerraccoker}, we can also construct these factorisations by taking the cokernel of the $\pi_1$ of the kernel of $f$  or the kernel of the
	$\pi_0$ of the cokernel of $f$.
	\begin{xym}\label{diagcokerkerraccopepalt}\xymatrix@=30pt{
		{\Ker f}\ar[r]^-{k_f}
		& A\ar[r]^f\ar[d]_{\bar{e}^1_f}\rtwocell\omit\omit{_<4.75>\;\;\,\bar{\omega}_f}
		& B\ar[r]^-{q_f}
		&{\Coker f}\ar[r]^-{\eta}
		&{\pi_0\Coker f}
		\\ &{\Coker k_f}\ar[r]_-{\bar{w}_f}
		&{\Ker(\eta q_f)}\ar[u]_{\bar{m}^2_f}
	}\end{xym}
	\begin{xym}\label{diagcoracpepkercokeralt}\xymatrix@=30pt{
		{\pi_1\Ker f}\ar[r]^-{\varepsilon}
		&{\Ker f}\ar[r]^-{k_f}
		& A\ar[r]^f\ar[d]_{e^1_f}\rtwocell\omit\omit{_<4.75>\;\;\,\omega_f}
		& B\ar[r]^-{q_f}
		&{\Coker f}
		\\ &&{\Coker(k_f\varepsilon)}\ar[r]_-{w_f}
		&{\Ker q_f}\ar[u]_{m^2_f}
	}\end{xym}

	By applying Proposition \ref{defcaracparf} to these two pairs of precoupled systems,
	we get the following proposition, which defines 2-Puppe-exact $\Gpdp$-categories
	as the $\Gpdp$-categories which are both
	$\Ker$-$\Copip$-perfect and $\Pip$-$\Coker$-perfect.

	\begin{pon}\label{caractwopupex}
		Let $\C$ be a $\Gpdp$-category with zero object and all
		the kernels and cokernels.
		The following properties are equivalent (if they
		hold, we say that $\C$ is \emph{2-Puppe-exact}):%
		\index{2-puppe-exact $\Gpdp$-category}%
		\index{Gpd*-category@$\Gpdp$-category!2-Puppe-exact}
		\begin{enumerate}
			\item for every arrow $f\col\flc$, $\bar{w}_f$
				and $w_f$ are equivalences;
			\item every arrow factors as a normal cofaithful arrow followed by
				a normal fully faithful arrow, and as a normal fully
				cofaithful arrow followed by a normal faithful arrow;
			\item
				\begin{enumerate}
					\item every 0-cofaithful arrow is canonically the cokernel of its kernel
						(i.e.\ is normal cofaithful);
					\item every 0-faithful arrow is canonically the kernel of its cokernel
						(i.e.\ is normal faithful);
					\item every fully 0-cofaithful arrow is canonically the coroot
					 	of its pip (i.e.\ is normal fully cofaithful);
					\item every fully 0-faithful arrow is canonically the root
					 	of its copip (i.e.\ is normal fully faithful).
				\end{enumerate}
		\end{enumerate}
	\end{pon}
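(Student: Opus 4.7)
The plan is to apply Theorem \ref{defcaracparf} (the characterisation of $K_1$-$\mathring{K}_2$-perfect $\Gpd$-categories) twice, once for each of the two precoupled pairs of systems identified in the proposition just before this statement. The comparison arrow $\bar{w}_f$ of diagram \ref{diagcokerkerraccopep} is an instance of the abstract comparison arrow for the precoupled pair $\Coker \adj \Ker$ (kernel-quotient) and $\Copip \adj \Root$ (cokernel-coquotient), while $w_f$ of diagram \ref{diagcoracpepkercoker} is the analogue for the pair $\Coroot \adj \Pip$ and $\Coker \adj \Ker$. In each case, Theorem \ref{defcaracparf} provides the equivalence between $w_f$ being an equivalence for all $f$, the existence of a $K_1$-regular epi / $\mathring{K}_2$-regular mono factorisation, and the coincidence of $K_1$-monomorphisms with $\mathring{K}_2$-regular monomorphisms (and dually).

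Next I would translate the generic kernel-quotient terminology into the specific notions used in the proposition. For the pair $\Coker \adj \Ker$ / $\Copip \adj \Root$, Proposition \ref{claspropker} identifies the $\Ker$-monomorphisms with the fully 0-faithful arrows, the $\Coker$-regular epimorphisms are by definition normal cofaithful, and dually (via Proposition \ref{pepclaszfid}) the $\Copip$-epimorphisms are the 0-cofaithful arrows, with $\Root$-regular monomorphisms being the normal fully faithful arrows. For the pair $\Coroot \adj \Pip$ / $\Coker \adj \Ker$, Proposition \ref{pepclaszfid} identifies $\Pip$-monomorphisms with 0-faithful arrows, $\Coroot$-regular epimorphisms are the normal fully cofaithful arrows, while $\Coker$-epimorphisms (dual) are fully 0-cofaithful and $\Ker$-regular monomorphisms are normal faithful.

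With these dictionaries in place, the proof is essentially a conjunction of two invocations of Theorem \ref{defcaracparf}: condition 1 of the proposition splits as ``$\bar{w}_f$ equivalence'' $+$ ``$w_f$ equivalence''; condition 2 splits as the two factorisations; and condition 3 splits as the four normality identifications, two coming from each pair. Each pair of corresponding halves is then an instance of Theorem \ref{defcaracparf}, and taking the conjunction gives the three-way equivalence stated.

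The main obstacle I anticipate is bookkeeping rather than conceptual: one must verify explicitly that the comparison arrows $\bar{w}_f$ and $w_f$ as defined by the diagrams are the comparison arrows produced abstractly by Theorem \ref{defcaracparf} for the respective precoupled pairs, i.e.\ that $\bar{e}_f^1$ is the image of the unit on $Q_1 K_1 f$ and $\bar{m}_f^2$ is the image of the counit on $\mathring{Q}_2 \mathring{K}_2 f$, and similarly for the second pair. Once this identification is checked, no further work is needed beyond citing Theorem \ref{defcaracparf} twice and assembling the conjunctions.
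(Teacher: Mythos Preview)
Your proposal is correct and follows exactly the paper's approach: the paper does not even give a formal proof, but simply states before the proposition that it is obtained ``by applying Proposition \ref{defcaracparf} to these two pairs of precoupled systems'', defining 2-Puppe-exact as $\Ker$-$\Copip$-perfect and $\Pip$-$\Coker$-perfect simultaneously. Your translation dictionary and the conjunction argument make explicit precisely what the paper leaves implicit.
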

	
	A first important property of  2-Puppe-exact $\Gpdp$-categories
	is that the (fully) 0-faithful arrows coincide with the
	(fully) faithful arrows.
	
	\begin{pon}\label{fidfidzer}
		Let be $f\col\flc$, where $\C$ is a 2-Puppe-exact $\Gpdp$-category. We have:
		\begin{enumerate}
			\item $f$ is faithful if and only if $f$ is 0-faithful;
			\item $f$ is cofaithful if and only if $f$ is 0-cofaithful;
			\item $f$ is fully faithful if and only if $f$ is fully
				0-faithful;
			\item $f$ is fully cofaithful if and only if $f$ is fully
				0-cofaithful.
		\end{enumerate}
	\end{pon}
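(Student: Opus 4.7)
The plan is to leverage the characterisation of 2-Puppe-exactness in Proposition \ref{caractwopupex}(3), which tells us exactly that the ``0''-versions of our arrow classes are automatically normal, combined with the basic fact (recalled at the start of the section) that kernels are faithful, cokernels are cofaithful, roots are fully faithful, and coroots are fully cofaithful.

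First I would handle the reverse implication in statement (1). Assume $f$ is 0-faithful. By condition 3(b) of Proposition \ref{caractwopupex}, $f$ is normal faithful, i.e.\ there exists $\zeta \col q_f f \Ra 0$ such that $(f,\zeta) = \Ker q_f$. Since every kernel is faithful (this was the very first observation in Section 2, following Definition \ref{defkernel}), $f$ is faithful. The forward implication is immediate since every faithful arrow is in particular 0-faithful (noted after Definition \ref{caraczfid}).

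Next, for statement (3), assume $f$ is fully 0-faithful. By condition 3(d) of Proposition \ref{caractwopupex}, $f$ is normal fully faithful, i.e.\ $f$ is canonically the root of its copip $\rho_f$. By condition 2 of Definition \ref{defroot}, every root is fully faithful, so $f$ is fully faithful. Again the forward direction holds trivially since fully faithful implies fully 0-faithful.

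Statements (2) and (4) follow by the dual arguments: starting from 0-cofaithful and using 3(a) of Proposition \ref{caractwopupex} gives a normal cofaithful arrow, i.e.\ a cokernel, and cokernels are cofaithful; starting from fully 0-cofaithful and using 3(c) gives a coroot, which is fully cofaithful by the dual of Definition \ref{defroot}. There is essentially no obstacle here — the real work was done in setting up 2-Puppe-exactness in terms of the four normality conditions and the coincidence of the two factorisations. Once Proposition \ref{caractwopupex} is available, this proposition reduces to four one-line implications chaining ``0-property $\Rightarrow$ normal property $\Rightarrow$ property''.
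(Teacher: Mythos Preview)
Your proof is correct and follows exactly the same approach as the paper: use condition 3 of Proposition \ref{caractwopupex} to deduce that a 0-faithful (resp.\ fully 0-faithful) arrow is a kernel (resp.\ a root), hence faithful (resp.\ fully faithful), with the dual for the cofaithful cases. The paper's proof is simply a terser version of yours.
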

	
		\begin{proof}
			If $f$ is 0-faithful, then $f$ is a kernel, and is thus faithful.
			If $f$ is fully 0-faithful, then $f$ is a root, and is
			thus fully faithful.
		\end{proof}
		
	\begin{pon}\label{fidplcofidequ}
		In a 2-Puppe-exact $\Gpdp$-category,
		\begin{enumerate}
			\item every faithful and fully cofaithful arrow is an equivalence;
			\item every fully faithful and cofaithful arrow is an equivalence.
		\end{enumerate}
	\end{pon}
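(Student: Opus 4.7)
The plan is to give a uniform proof using the kernel-quotient system machinery of Section \ref{sectsysnoyquot}, specifically Proposition \ref{kmonintkregestequ} which states that $\KMono\cap\KEpireg=\Equ$ for any kernel-quotient system. Part (1) will apply this to the kernel-quotient system $\Coroot\adj\Pip$, while part (2) will apply it to $\Coker\adj\Ker$.

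For part (1), I would argue as follows. Suppose $f\col A\ra B$ is faithful and fully cofaithful. By Proposition \ref{fidfidzer} (which is available since $\C$ is 2-Puppe-exact), these hypotheses are equivalent to $f$ being 0-faithful and fully 0-cofaithful respectively. Now on the one hand, Proposition \ref{pepclaszfid} says that $f$ being 0-faithful is equivalent to $(0,1_0)=\Pip f$, i.e.\ $\Pip f$ is equivalent to $\Pip 1_A$ (cf.\ Proposition \ref{pepintrivial}), which is precisely the definition (Definition \ref{defkmon}) of $f$ being a $\Pip$-monomorphism. On the other hand, by condition 3(c) of the characterisation of 2-Puppe-exactness (Proposition \ref{caractwopupex}), $f$ being fully 0-cofaithful forces $f$ to be normal fully cofaithful, which is exactly the definition of $f$ being a $\Pip$-regular epimorphism. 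Applying Proposition \ref{kmonintkregestequ} to the kernel-quotient system $\Coroot\adj\Pip$ then yields that $f$ is an equivalence.

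Part (2) is formally dual: if $f$ is fully faithful and cofaithful, then by Proposition \ref{fidfidzer} it is fully 0-faithful and 0-cofaithful. By Proposition \ref{claspropker}, full 0-faithfulness means $(0,0_A,1_{0_B})=\Ker f$, so $f$ is a $\Ker$-monomorphism; and by condition 3(a) of Proposition \ref{caractwopupex}, 0-cofaithfulness forces $f$ to be normal cofaithful, i.e.\ a $\Ker$-regular epimorphism. Proposition \ref{kmonintkregestequ} applied to $\Coker\adj\Ker$ then concludes that $f$ is an equivalence.

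There is no real obstacle — the entire content has been packaged into the 2-Puppe-exactness characterisation (Proposition \ref{caractwopupex}) and the abstract lemma on kernel-quotient systems (Proposition \ref{kmonintkregestequ}). The only point worth double-checking is that the implication ``(fully) 0-cofaithful $\Rightarrow$ normal (fully) cofaithful'' is read off correctly from condition 3 of Proposition \ref{caractwopupex}, and that normal (fully) cofaithful coincides with being a $\Ker$- (resp.\ $\Pip$-) regular epimorphism in the sense of Definition \ref{defkregepi}; both identifications were recorded when the relevant kernel-quotient systems were introduced in Subsections on $\Coker\adj\Ker$ and $\Coroot\adj\Pip$.
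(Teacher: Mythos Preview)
Your proof is correct. The paper states this proposition without proof, and your argument via Proposition~\ref{kmonintkregestequ} applied to the two kernel-quotient systems is exactly the kind of verification the paper's machinery is set up to deliver; an equally short alternative (probably what the author has in mind) is to note that the remark following Theorem~\ref{defcaracparf} says that in a $K_1$-$\mathring{K}_2$-perfect $\Gpdp$-category the pair $(\mathring{K}_2\text{-}\Epi,K_1\text{-}\Mono)$ is a factorisation system, whence $\E\cap\M=\Equ$ by Proposition~\ref{propsyforth}(2).
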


	\begin{pon}
		Let $\C$ be a 2-Puppe-exact $\Gpdp$-category. Then:
		\begin{enumerate}
			\item $\C$ is $\Ker$-preexact and $\Coker$-preexact;
			\item $\C$ is $\Ker$-factorisable and $\Coker$-factorisable;
			\item $\C$ is $\Pip$-factorisable and $\Copip$-factorisable.
		\end{enumerate}
	\end{pon}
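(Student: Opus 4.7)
The plan is to derive each of the three statements from characterisations already established earlier in the chapter, with very little additional computation.

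For Part 1, I would begin with $\Ker$-preexactness. This requires that for every faithful arrow $m$, the unit $\eta_m\col m\ra \Ker(\Coker m)$ is an equivalence, i.e.\ that every faithful arrow is canonically the kernel of its cokernel. In a 2-Puppe-exact $\Gpdp$-category, Proposition \ref{fidfidzer} identifies faithful arrows with 0-faithful arrows. Then condition 3(b) of Proposition \ref{caractwopupex} says precisely that every 0-faithful arrow is canonically the kernel of its cokernel, so $\eta_m$ is an equivalence. The $\Coker$-preexactness follows by the dual argument, using Proposition \ref{fidfidzer}(2) to identify cofaithful with 0-cofaithful, and then condition 3(a).

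For Parts 2 and 3, I would invoke condition 1 of Proposition \ref{caractwopupex}: 2-Puppe-exactness is exactly the requirement that, for every $f\col\flc$, both comparison arrows $\bar{w}_f$ and $w_f$ are equivalences. By the discussion preceding that proposition (diagrams \ref{diagcokerkerraccopep} and \ref{diagcoracpepkercoker}, together with the precoupling established earlier), this says precisely that $\C$ is simultaneously $\Ker$-$\Copip$-perfect and $\Pip$-$\Coker$-perfect in the sense of Theorem \ref{defcaracparf}. That theorem's concluding implication — that $K_1$-$\mathring{K}_2$-perfect categories are both $K_1$-factorisable and $\mathring{K}_2$-factorisable — then yields, from $\Ker$-$\Copip$-perfection, $\Ker$-factorisability and $\Copip$-factorisability; and from $\Pip$-$\Coker$-perfection, $\Pip$-factorisability and $\Coker$-factorisability. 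Together these cover all four factorisability claims in Parts 2 and 3.

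There is no real obstacle in the proof: all the substantive work has been packaged into the perfection/factorisability theory of Section \ref{sectsysnoyquot} (especially Theorem \ref{defcaracparf}) and into the two-sided characterisation of 2-Puppe-exactness in Proposition \ref{caractwopupex}. The argument is simply a matter of quoting the correct clause in each case and, for Part 1, passing through the identification of faithful with 0-faithful arrows supplied by Proposition \ref{fidfidzer}.
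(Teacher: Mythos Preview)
Your proposal is correct and follows essentially the same route as the paper. The only difference is that for Part~1 you pass through Proposition~\ref{fidfidzer} to identify faithful with 0-faithful, whereas the paper simply uses the trivial implication (faithful $\Rightarrow$ 0-faithful) and then applies condition~3(b) of Proposition~\ref{caractwopupex} directly; your detour is harmless but unnecessary.
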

	
		\begin{proof}
			Condition 3(b) of Proposition \ref{caractwopupex}
			implies immediately that $\C$ is $\Ker$-preexact.
			Dually, by condition 3(a), $\C$ is $\Coker$-preexact.
			Next, $\C$ is $\Ker$-factorisable and $\Copip$-factorisable
			since it is $\Ker$-$\Copip$-perfect, and it is $\Pip$-factorisable
			and $\Coker$-factorisable
			since it is $\Pip$-$\Coker$-perfect.
		\end{proof}
	
	We will prove later the $\Pip$-preexactness and the $\Copip$-preexactness
	(Proposition \ref{depupexpippreex}).

	In a 2-Puppe-exact $\Gpdp$-category, there are two factorisations of each
	arrow.   Let us fix for each of these factorisations a construction:
	\begin{enumerate}
		\item $f$ factors as $A\xrightarrow{\hat{e}_f} \impl f
			\xrightarrow{\hat{m}_f} B$, with $\hat{\varphi}_f\col f\Ra \hat{m}_f\hat{e}_f$,
			where $\hat{e}_f$ is cofaithful and $\hat{m}_f$ is fully faithful;
		\item $f$ factors as $A\xrightarrow{e_f} \im f
			\xrightarrow{m_f} B$, with $\varphi_f\col f\Ra m_fe_f$,
			where $e_f$ is fully cofaithful and $m_f$ is faithful.
	\end{enumerate}
	Since $e_f$ is fully cofaithful and $\hat{m}_f$ is faithful, $e_f\orth\hat{m}_f$,
	because $(\PlCofid,\Fid)$ is a factorisation system.  There exist thus an arrow
	$l_f\col\im f\ra\impl f$ and 2-arrows such that
	\begin{xyml}\begin{gathered}\xymatrix@C=30pt@R=30pt{
		&{\im f}\ar[dr]^-{m_f}\ar[dd]_(0,4){l_f}
		&{}\ar@{}[dl]^(0.86){\dir{=>}}
		\\A\ar[dr]_-{\hat{e}_f}\ar[ur]^{e_f}
		&{}\ar@{}[dl]_(0.3){\dir{=>}} &B
		\\ {}&{\impl f}\ar[ur]_-{\hat{m}_f}
	}\end{gathered}\;\;=\;\;\begin{gathered}\xymatrix@C=30pt@R=30pt{
		&{\im f}\ar[dr]^-{m_f}
		\\A\ar[rr]|f\ar[dr]_-{\hat{e}_f}
			\rrtwocell\omit\omit{_<3.8>\;\;\,\hat{\varphi}_f}\ar[ur]^-{e_f}
			\rrtwocell\omit\omit{^<-3.8>\varphi_f\;\;}
		&&B
		\\ &{\impl f}\ar[ur]_-{\hat{m}_f}
	}\end{gathered}.\end{xyml}
	The arrow $l_f$ is faithful, because $m_f$ is faithful, and cofaithful, because $\hat{e}_f$
	is cofaithful.  Thus every arrow in a 2-Puppe-exact $\Gpdp$-category factors in three parts:
	\begin{eqn}\label{diaglf}
		A\xrightarrow{e_f}\im f\xrightarrow{l_f}\impl f\xrightarrow{\hat{m}_f} B,
	\end{eqn}
	where $e_f$ is fully cofaithful, $l_f$ is faithful and cofaithful, and $\hat{m}_f$
	is fully faithful.

	In dimension 1, an abelian category is a Puppe-exact category which has the finite
	products and coproducts.  This explains that there are few examples of non-abelian
	Puppe-exact categories. Up to now, there is no known 2-dimensional example.
	
	\begin{df}\label{deftwoab}\index{Gpd-category@$\Gpd$-category!2-abelian}%
	\index{2-abelian $\Gpd$-category}
		A \emph{2-abelian $\Gpd$-category} is a 2-Puppe-exact $\Gpdp$-category
		which has all the finite products and coproducts.
	\end{df}

\section{Discrete and connected objects}

\subsection{Equivalence of $\DisC$ and $\ConC$}\label{sssectequdiscconc}

	In 2-Puppe-exact $\Gpdp$-categories, there is an alternative construction of $\pi_0$ and $\pi_1$.
	
	\begin{pon}\label{etacoracepsrac}
		Let $C$ be an object in a 2-Puppe-exact $\Gpdp$-category $\C$.
		Then
		\begin{align}\stepcounter{eqnum}
			\eta_C&=\Coroot(\omega_C)\text{ and}\\ \stepcounter{eqnum}
			\varepsilon_C&=\Root(\sigma_C)
		\end{align}
		(where $\eta_C$ is the unit and $\varepsilon_C$, the counit of the adjunction
		$\Sigma\adj\Omega$; see diagrams \ref{diagunitcounitpiopiu}).
		\begin{xym}\xymatrix@=40pt{
			&\pi_1 C\ar[d]^{\varepsilon_C}
			\\ \Omega C\rtwocell^0_0{\;\;\;\,\omega_C}
			&C\ar[r]^-{\eta_C}\dtwocell^0_0{\sigma_C}
			&\pi_0 C
			\\ &\Sigma C
		}\end{xym}\index{skittles lemma}
	\end{pon}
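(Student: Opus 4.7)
The two assertions are dual under the involution reversing 1-cells and 2-cells, so I would prove $\varepsilon_C=\Root(\sigma_C)$ and deduce $\eta_C=\Coroot(\omega_C)$ by the dual argument.

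The decisive move is to translate the root into a kernel. Dualising the argument of Proposition \ref{coraccoker} (which rests on Lemma \ref{deuflunfl}), for any monoloop $\pi\col 0\Ra 0\col A\ra B$ the root-candidates of $\pi$ correspond bijectively to the kernel-candidates of the transpose $\tilde{\pi}\col A\ra\Omega B$; hence $\Root(\pi)=\Ker(\tilde{\pi})$. Applied to $\sigma_C$, the triangle identity $\omega_{\Sigma C}\eta_C=\sigma_C$ from diagram \ref{diagunitcounitpiopiu} identifies $\tilde{\sigma}_C=\eta_C$, so $\Root(\sigma_C)=\Ker(\eta_C)$. The task therefore reduces to exhibiting $\varepsilon_C$, equipped with a suitable 2-cell $\mu\col\eta_C\varepsilon_C\Ra 0$, as the kernel of $\eta_C$.

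Next I would produce the canonical $\mu$. The object $\pi_0 C=\Omega\Sigma C$ is discrete: it is defined as the kernel $\Ker(0_{\Sigma C})$, so $0^{\pi_0 C}$ is faithful and Lemma \ref{fiddisc} yields discreteness. Combined with the equivalence \ref{equadjomsi} for $A:=\Omega C$ and $B:=\Omega\Sigma C$, this gives $\C(\pi_1 C,\pi_0 C)\simeq \C(\Omega C,\Omega\Sigma C)(0,0)=\{1_0\}$, the last equality because loops $0\Ra 0$ into a discrete object are all equal to $1_0$. Consequently $\eta_C\varepsilon_C\simeq 0$, with an essentially unique 2-cell $\mu\col\eta_C\varepsilon_C\Ra 0$.

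The universal property of the kernel is then verified as follows. Given a test pair $(a\col X\ra C,\beta\col\eta_C a\Ra 0)$, the triangle identity $\omega_{\Sigma C}\eta_C=\sigma_C$ shows that whiskering $\beta$ on the left by $0_{\Sigma C}$ and contracting yields the identification $\sigma_C a=1_0$ (Lemma \ref{deuflunfl} again, applied to $\sigma_C a$ and the zero map $\Sigma X\ra\Sigma C$); thus $(a,1_0)$ is a root-candidate for $\sigma_C$. Now $\varepsilon_C$ is, by construction and the triangle identity $\varepsilon_C\sigma_{\Omega C}=\omega_C$, the adjoint transpose $\bar{\omega}_C$, so by the dual identification $\Coroot(\omega_C)=\Coker(\bar{\omega}_C)$ paired with the universal description of root-candidates, every root-candidate of $\sigma_C$ lifts through $\varepsilon_C$. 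This yields the required factor $a'\col X\ra\pi_1 C$ together with a comparison $\alpha\col a\Ra\varepsilon_C a'$, and a routine check using $\mu$ shows compatibility with $\beta$. The faithfulness clause ($\varepsilon_C$ is $\mu$-fully faithful) is dual: any compatible 2-cell $\chi\col\varepsilon_C x\Ra\varepsilon_C x'$ transposes under $\Sigma\adj\Omega$ into a 2-cell in $\C(X,\pi_0 C)$ which, by discreteness, is forced to be the identity, whence $\chi$ comes from a unique $\chi'\col x\Ra x'$.

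The main obstacle is the bookkeeping of 2-cells in the last step: one has to chase the two triangle identities for $\Sigma\adj\Omega$ in tandem with the translation of $\beta$ across the adjunction to confirm that the produced lift $(a',\alpha)$ is genuinely compatible with $\mu$ in the sense required by Definition \ref{defkernel}. Once this coherence is secured, the remaining verifications are direct consequences of the adjunction and the discreteness of $\pi_0 C$.
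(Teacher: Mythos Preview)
Your argument has a genuine gap, and more fundamentally it never invokes the hypothesis that $\C$ is 2-Puppe-exact. Every ingredient you use (the root/kernel translation from the dual of Proposition~\ref{coraccoker}, the triangle identities, the discreteness of $\pi_0 C$, Lemma~\ref{deuflunfl}) holds in any $\Gpdp$-category with kernels and cokernels. But the statement fails without 2-Puppe-exactness, so something must be missing.

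The gap surfaces in the sentence ``every root-candidate of $\sigma_C$ lifts through $\varepsilon_C$''. Your justification is that $\varepsilon_C=\bar{\omega}_C$ and hence $\Coroot(\omega_C)=\Coker(\bar{\omega}_C)=\Coker(\varepsilon_C)$. But this identifies an object equipped with an arrow \emph{out of} $C$, while what you need is a universal property of $\varepsilon_C$ as an arrow \emph{into} $C$, namely that it receives all root-candidates of $\sigma_C$. Knowing the cokernel of $\varepsilon_C$ does not, in general, tell you that arrows into $C$ killed by that cokernel factor through $\varepsilon_C$: that would require $\varepsilon_C$ to be the kernel of its cokernel, which is precisely a 2-Puppe-exactness condition you have not invoked. (Indeed, what you are effectively trying to prove is $\varepsilon_C=\Ker(\eta_C)$, which in the paper is Proposition~\ref{extenspizpiu} and is derived \emph{from} the present proposition, not the other way round.)

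The paper's proof is a one-liner: apply condition~1 of Proposition~\ref{caractwopupex} (that the comparison $w_f$ in diagram~\ref{diagcoracpepkercoker} is an equivalence) directly to $f=0^C\col C\ra 0$. For this $f$, Lemma~\ref{omegpepzc} gives $\Pip f=(\Omega C,\omega_C)$, so the coroot side is $\Coroot(\omega_C)$; on the other side, $\Coker f=\Sigma C$ and $\Ker(\Coker f)=\Omega\Sigma C=\pi_0 C$. The factorisation $C\to\Ker(\Coker f)$ is precisely $\eta_C$ by its defining equation $\omega_{\Sigma C}\eta_C=\sigma_C$ from diagram~\ref{diagunitcounitpiopiu}. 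Since $w_f$ is an equivalence, $\eta_C$ is equally the coroot of $\omega_C$. The second equation is dual.
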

	
		\begin{proof}
			The first equation comes from the fact that, in a 2-Puppe-exact $\Gpdp$-category,
			the kernel of the cokernel of $0^C\col C\ra 0$, which is in fact
			$\Omega\Sigma C = \pi_0 C$, coincides with the coroot of the pip of
			$0^C$, which is $\Omega C$, by Proposition \ref{omegpepzc}.  The second
			equation is proved dually.
		\end{proof}

	\begin{pon}\label{extenspizpiu}
		Let $C$ be an object of a 2-Puppe-exact $\Gpdp$-category $\C$.  Then there
		exists a 2-arrow $\eta_C\varepsilon_C\Ra 0$ which makes
		the following sequence an extension (a relative exact sequence).
		\begin{xym}\xymatrix@=40pt{
			0\ar[r]
			&\pi_1 C\rruppertwocell<-9>_0{_<2.7>{}}\ar[r]^-{\varepsilon_C}
			&C\ar[r]^-{\eta_C}
			&\pi_0 C\ar[r]
			&0
		}\end{xym}
	\end{pon}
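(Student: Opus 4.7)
The plan is to directly verify the two conditions defining an extension: we need a 2-arrow $\alpha\col\eta_C\varepsilon_C\Ra 0$ such that $(\varepsilon_C,\alpha)=\Ker\eta_C$ and $(\eta_C,\alpha)=\Coker\varepsilon_C$. The key input is Proposition \ref{etacoracepsrac}, which presents $\varepsilon_C=\Root(\sigma_C)$ and $\eta_C=\Coroot(\omega_C)$, together with Proposition \ref{coraccoker} (and its dual), which converts (co)roots into (co)kernels.

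First, the triangle identities $\omega_C=\varepsilon_C\sigma_{\Omega C}$ and $\sigma_C=\omega_{\Sigma C}\eta_C$ recorded in Subsection \ref{sssectsigadjom} show that under the adjunction \ref{equadjomsi}, the arrow $\bar{\omega}_C$ classifying $\omega_C$ is $\varepsilon_C$, while $\tilde{\sigma}_C$ classifying $\sigma_C$ is $\eta_C$. Proposition \ref{coraccoker} then yields $\Coroot(\omega_C)=\Coker(\varepsilon_C)$, so there is a 2-arrow $\alpha'\col\eta_C\varepsilon_C\Ra 0$ with $(\eta_C,\alpha')=\Coker(\varepsilon_C)$; dually, $\Root(\sigma_C)=\Ker(\eta_C)$, producing some $\alpha''\col\eta_C\varepsilon_C\Ra 0$ with $(\varepsilon_C,\alpha'')=\Ker(\eta_C)$.

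Finally, I need to exhibit a single 2-arrow $\alpha$ serving both purposes. Since $\C$ is 2-Puppe-exact it is in particular $\Ker$-idempotent (noted just after Proposition \ref{caractwopupex}), so the reasoning of Proposition \ref{defsequencex} applies: from $(\varepsilon_C,\alpha'')=\Ker(\eta_C)$ and the fact that $\eta_C$ is a cokernel (immediate from $\eta_C=\Coroot(\omega_C)=\Coker(\varepsilon_C)$), $\Ker$-idempotence forces $(\eta_C,\alpha'')=\Coker(\varepsilon_C)$ with the \emph{same} 2-arrow. Taking $\alpha\eqdef\alpha''$ therefore makes the sequence an extension. The only real subtlety is correctly tracking the identifications $\bar{\omega}_C=\varepsilon_C$ and $\tilde{\sigma}_C=\eta_C$ from the triangle identities; the rest is formal.
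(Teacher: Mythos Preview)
Your proof is correct and follows essentially the same route as the paper: identify $\bar{\omega}_C=\varepsilon_C$ (and dually $\tilde{\sigma}_C=\eta_C$) via the triangle identities, then invoke Proposition~\ref{coraccoker} to convert $\eta_C=\Coroot(\omega_C)$ into $\eta_C=\Coker(\varepsilon_C)$ and dually $\varepsilon_C=\Ker(\eta_C)$. The only minor difference is in reconciling the two 2-arrows: the paper simply observes that $\pi_1 C=\Sigma\Omega C$ is connected, so any 2-arrow $\eta_C\varepsilon_C\Ra 0$ is unique and $\alpha'=\alpha''$ automatically; your $\Ker$-idempotence argument achieves the same end and is equally valid, just slightly less direct.
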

	
		\begin{proof}
			Proposition \ref{coraccoker} applies to the following diagram, because
			$\varepsilon_C=\bar{\omega}_C$.  Since $\eta_C=\Coroot\omega_C$, by the
			previous proposition, it follows that there is a 2-arrow
			$\eta_C\varepsilon_C\Ra 0$ (unique, because $\pi_1 C=\Sigma\Omega C$
			is connected) such that
			$\eta_C=\Coker \varepsilon_C$.  We prove dually that
			$\varepsilon_C=\Ker \eta_C$.\qedhere
			\begin{xym}\xymatrix@=50pt{
				\Omega C\rtwocell^0_0{\;\;\;\omega_C}
					\dtwocell_0<4.5>^0<4.5>{\sigma_{\Omega C}}
				&C\ar[r]^-{\eta_C}
				&\pi_0C
				\\ \Sigma\Omega C\ar@<-0.5mm>[ur]_{\varepsilon_C}
			}\end{xym}
		\end{proof}

	If we apply conditions 3(a) and 3(b) of Proposition \ref{caractwopupex} to the case
	of arrows with zero (co)domain, we get a characterisation of discrete objects
	as the objects $D$ which are canonically  equivalent to
	$\pi_0(D)=\Omega\Sigma D$,
	and of connected objects as the objects $C$ which are canonically equivalent
	to $\pi_1(C)=\Sigma\Omega C$.

	\begin{pon}\label{caracdispreadd}
		Let $\C$ be a 2-Puppe-exact $\Gpdp$-category
		 and $D\col\C$.  The following conditions are equivalent:
		\begin{enumerate}
			\item $D$ is discrete;
			\item $\eta_D\col D\ra \pi_0 D$ is an equivalence;
			\item $\pi_1 D\simeq 0$;
			\item $0^D\col D\ra 0$ is 0-faithful (for every
				 $\alpha\col 0\Ra 0\col X\ra D$, $\alpha=1_0$);
			\item $0_D\col 0\ra D$ is fully (0-)faithful.
		\end{enumerate}
	\end{pon}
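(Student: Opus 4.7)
The strategy is to show that all five conditions are equivalent to a single core condition, namely $\Omega D\simeq 0$, and to do this essentially by assembling previously proved classification lemmas. Roughly: the hinge between $(1)$ and $(4)$ is Proposition \ref{fidfidzer}, the hinge between $(4)$, $(5)$ and $\Omega D\simeq 0$ is the classification of $0$-faithful and fully $0$-faithful arrows by their (co)kernel objects, and the hinge connecting these with $(2)$ and $(3)$ is the canonical extension $0\to\pi_1 D\to D\to\pi_0 D\to 0$ supplied by Proposition \ref{extenspizpiu}.

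\smallskip

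First I would handle $(1)\Leftrightarrow(4)$. By definition $D$ is discrete exactly when $0^D\col D\to 0$ is faithful, and in a 2-Puppe-exact $\Gpdp$-category Proposition \ref{fidfidzer} identifies faithful arrows with $0$-faithful arrows. Next, I identify the two pivot equivalences with $\Omega D$: Proposition \ref{omegpepzc} says that $(\Omega D,\omega_D)=\Pip 0^D$, so by the pip-classification of $0$-faithful arrows (Proposition \ref{pepclaszfid}), condition $(4)$ holds iff $\Omega D\simeq 0$. Dually, $\Omega D=\Ker 0_D$ by definition, so Proposition \ref{claspropker} gives that $0_D$ is fully $0$-faithful iff $\Omega D\simeq 0$; combined once more with Proposition \ref{fidfidzer}, this yields $(5)\Leftrightarrow\Omega D\simeq 0$.

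\smallskip

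Then I would treat $(3)\Leftrightarrow\Omega D\simeq 0$. One direction is automatic since $\pi_1 D=\Sigma\Omega D$ and $\Sigma 0\simeq 0$. For the converse I would use the extension
\begin{eqn*}
0\longrightarrow \pi_1 D\overset{\varepsilon_D}{\longrightarrow} D\overset{\eta_D}{\longrightarrow}\pi_0 D\longrightarrow 0
\end{eqn*}
from Proposition \ref{extenspizpiu}: if $\pi_1 D\simeq 0$, then $\eta_D$ becomes the cokernel of $0_D$, hence an equivalence, so $D\simeq\pi_0 D=\Omega\Sigma D$ and therefore $\Omega D\simeq\Omega\Omega\Sigma D\simeq 0$ (using $\Omega\Omega\simeq 0$, noted earlier in the text). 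This same argument gives $(3)\Rightarrow(2)$, and conversely $(2)\Rightarrow(3)$ because $\Ker\eta_D\simeq\pi_1 D$ by the same extension, so if $\eta_D$ is an equivalence its kernel is zero. (Alternatively, one can note that $\eta_D$ is a cokernel, hence fully cofaithful, and appeal to Proposition \ref{fidplcofidequ}: it is an equivalence iff it is also fully faithful, iff $\Ker\eta_D\simeq 0$.) Finally $(2)\Rightarrow(1)$ is immediate since $\pi_0 D=\Omega\Sigma D$ is always discrete (the arrow to $0$ is a kernel, hence faithful), and discreteness is preserved by equivalences.

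\smallskip

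This assembles into the cycle $(1)\Leftrightarrow(4)\Leftrightarrow\bigl(\Omega D\simeq 0\bigr)\Leftrightarrow(3)\Leftrightarrow(2)\Rightarrow(1)$, with $(5)$ attached via its own equivalence with $\Omega D\simeq 0$. There is no real obstacle here: every step is a direct citation of a classification or adjunction property established earlier in the chapter. The only point that deserves care is to invoke 2-Puppe-exactness explicitly where needed — specifically, in passing between $0$-faithful and faithful (and fully $0$-faithful and fully faithful) via Proposition \ref{fidfidzer}, and in using Proposition \ref{fidplcofidequ} if one takes the alternative route for $(2)\Leftrightarrow(3)$ — since outside of that setting these identifications would fail.
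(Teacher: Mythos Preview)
Your proof is correct, but the paper organises the argument differently and somewhat more economically. The main structural difference is that the paper does \emph{not} introduce $\Omega D\simeq 0$ as an explicit hub. Instead it observes that condition~(2) is literally the statement ``$0^D$ is canonically the kernel of its cokernel'' (since $\eta_D$ is precisely the comparison arrow $D\to\Ker(0_{\Sigma D})$), and then invokes condition~3(b) of Proposition~\ref{caractwopupex} directly to obtain $(1)\Leftrightarrow(2)\Leftrightarrow(4)$ in one stroke: faithful, $0$-faithful, and normal faithful all coincide for $0^D$. You reach the same conclusion, but by splitting it into $(1)\Leftrightarrow(4)$ via Proposition~\ref{fidfidzer} and then handling $(2)$ separately through the extension and $\Omega\Omega\simeq 0$. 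For $(5)$, the paper uses the factorisation-system viewpoint: $0_D$ factors as $0\to\pi_1 D\overset{\varepsilon_D}\to D$ with the first arrow cofaithful and the second fully faithful, so $0_D$ is fully faithful iff the first factor is an equivalence, i.e.\ iff $\pi_1 D\simeq 0$; you instead argue via $\Ker 0_D=\Omega D$ and Proposition~\ref{claspropker}. Your route has the merit of making the common pivot $\Omega D\simeq 0$ explicit, which is conceptually clarifying; the paper's route is shorter because it exploits the already-packaged equivalences in Proposition~\ref{caractwopupex} and the factorisation system rather than unpacking them again.
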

	
		\begin{proof}
			{\it 1 $\Leftrightarrow$ 2 $\Leftrightarrow$ 4. } 
			Condition 1 means that $0^D$ is faithful; condition 2 means
			that $0^D$ is the kernel of its cokernel. So conditions 1, 2 and 4
			are equivalent by condition 3(b) of Proposition \ref{caractwopupex}.
			
			{\it 2 $\Leftrightarrow$ 3. } 
			This is an immediate consequence of Proposition \ref{extenspizpiu}.
			
			{\it 3 $\Leftrightarrow$ 5. }
			The arrow $0_D$ factors, by taking the cokernel of its kernel,
			as $0\longrightarrow \pi_1D\overset{\varepsilon_D}\longrightarrow D$,
			where $0_{\pi_1D}$ is cofaithful and $\varepsilon_D$ is fully faithful.
			By the properties of factorisation systems, $0_D$ is fully faithful
			if and only if $0_{\pi_1D}$ is an equivalence.
		\end{proof}
	
	Let us return to the adjunction $\Sigma\adj\Omega$ (diagram \ref{adjsigmomeg}).  The
	previous proposition tells us that in a 2-Puppe-exact $\Gpdp$-category,
	the objects of $\C$ where the unit of this adjunction
	is an equivalence are the discrete objects and that the objects of $\C$ where
	the co\-unit is an equivalence are the connected objects.
	So the adjunction $\Sigma\adj\Omega$ restricts to an equivalence between $\ConC$
	and $\DisC$ (let us recall that $\ConC$ is a $\Ens$-category, because there is
	at most one 2-arrow between two arrows, since the objects are connected,
	and that, dually, $\DisC$ is also a $\Ens$-category).
	
	\begin{pon}
		For a 2-Puppe-exact $\Gpdp$-category $\C$,
		\begin{enumerate}
			\item $\pi_1\col\C\ra\ConC$ is right adjoint
				to the inclusion
				$i\col\ConC\hookrightarrow\C$, with counit
				$\varepsilon\col\pi_1\Ra 1_{\C}$;
			\item $\pi_0\col\C\ra\DisC$ is left adjoint to the inclusion
				$i\col\DisC\hookrightarrow\C$, with unit
				$\eta\col 1_{\C}\Ra \pi_0$.
		\end{enumerate}
	\end{pon}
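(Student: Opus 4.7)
The plan is to derive both statements from the idempotence of the adjunction $\Sigma \adj \Omega$ via Proposition \ref{caracdefidemp}, after which the result becomes an instance of the standard bijection between idempotent adjunctions and reflective (resp.\ coreflective) subcategories. I will argue part~2 explicitly; part~1 is obtained by dualising.

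Two observations do the work. First, $\pi_0 C = \Omega \Sigma C$ is discrete for every $C \col \C$: for any $X$, the object $\Omega X$ is discrete because by Lemma \ref{omegpepzc} the 2-arrow $\omega_X$ is a monoloop, and the domain of a monoloop is discrete by Proposition \ref{domonobodis}. Second, by Proposition \ref{caracdispreadd}, an object $D$ is discrete precisely when $\eta_D$ is an equivalence. Combining these two facts, the component $\eta_{\Omega C} \col \Omega C \to \pi_0\Omega C$ is an equivalence for every $C$, so condition~4 of Proposition \ref{caracdefidemp} is met and the adjunction $\Sigma \adj \Omega$ is idempotent.

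Idempotence then allows the $\Gpd$-functor $\pi_0 = \Omega\Sigma$ to be corestricted along $i \col \DisC \hookrightarrow \C$ to a $\Gpd$-functor $\pi_0 \col \C \to \DisC$, and the unit $\eta$ exhibits this corestriction as left adjoint to~$i$. Concretely, given $D \col \DisC$ and $f \col C \to D$, set $\bar f \eqdef \eta_D^{-1} \circ \pi_0 f$ (where $\eta_D^{-1}$ makes sense because $D$ is discrete); naturality of $\eta$ gives $\bar f \circ \eta_C \simeq f$, and uniqueness of $\bar f$ up to a canonical 2-iso follows from the fact that both $\eta_D$ and $\eta_{\pi_0 C}$ are equivalences. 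Part~1 is strictly dual: $\Sigma X$ is connected for every $X$ because $\sigma_X$ is an epiloop (dual of Lemma \ref{omegpepzc}), condition~2 of Proposition \ref{caracdefidemp} is then satisfied in the form $\varepsilon\Sigma$ an equivalence, and the same idempotence argument produces the coreflection $\pi_1 \col \C \to \ConC$ with counit $\varepsilon$. The only real care needed is to interpret the factorisation $\bar f \circ \eta_C \simeq f$ and its uniqueness at the level of 2-isomorphisms rather than equalities, but this coherence is automatic from the cited propositions; no genuine obstacle arises.
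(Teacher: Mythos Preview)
Your proof is correct and uses the same core ingredients as the paper (idempotence of $\Sigma\adj\Omega$ together with Proposition~\ref{caracdispreadd}), but the packaging differs in two minor respects. First, you derive idempotence by observing that $\Omega C$ is always discrete and then applying Proposition~\ref{caracdispreadd} to get that $\eta\Omega$ is an equivalence (condition~4 of Proposition~\ref{caracdefidemp}); the paper instead invokes $\Ker$-idempotence of $\C$ directly, since $\Sigma\adj\Omega$ is the restriction of $\Coker\adj\Ker$ to the fibres over~$0$. Second, you conclude by appealing to the general principle that an idempotent adjunction yields a reflection onto the fixed objects of its monad, sketching the factorisation $\bar f = \eta_D^{-1}\circ\pi_0 f$; the paper instead verifies the adjunction hom-equivalence $\varepsilon_A\circ{-}\col\ConC(C,\pi_1 A)\to\C(C,A)$ by a direct diagram chase. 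Your route is slightly more conceptual and would work verbatim in any context where Proposition~\ref{caracdispreadd} holds; the paper's route is more explicit about which functor realises the equivalence. Both are equally valid.
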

	
		\begin{proof}
			We prove point 1; point 2 is dual.
			We will prove that for all $C\col\ConC$ and $A\col\C$, the
			functor $\varepsilon_A\circ -\col\ConC(C,\pi_1A)\ra\C(C,A)$
			is an equivalence (then we get the counit of the adjunction by taking
			$C\eqdef\pi_1 A$ and by applying this functor $1_{\pi_1A}$; in this way,
			we do get $\varepsilon_A$).
			Let us consider the following diagram.  Since $C$ is connected,
			$\varepsilon_C$ is an equivalence (by the dual of Proposition
			\ref{caracdispreadd}), thus the left horizontal arrows are
			equivalences.  The right horizontal arrows are
			the equivalences of the adjunction $\Sigma\adj\Omega$.  The right vertical arrow
			is an equivalence because the adjunction $\Sigma\adj\Omega$
			is idempotent (since $\C$ is $\Ker$-idempotent, by 2-Puppe-exactness).
			So the left arrow is an equivalence.\qedhere
			\begin{xym}\xymatrix@R=40pt@C=30pt{
				{\ConC(C,\Sigma\Omega A)}\ar[d]_{\varepsilon_A\circ-}
					\ar[r]^-{-\circ\varepsilon_C}
				&{\ConC(\Sigma\Omega C,\Sigma\Omega A)}\ar[d]_{\varepsilon_A\circ-}
					\drtwocell\omit\omit{{}}\ar[r]^-{\sim}
				&{\DisC(\Omega C,\Omega\Sigma\Omega A)}\ar[d]^{\Omega\varepsilon_A\circ -}
				\\ {\C(C,A)}\ar[r]_-{-\circ\varepsilon_C}
				&{\C(\Sigma\Omega C,A)}\ar[r]_-{\sim}
				&{\C(\Omega C,\Omega A)}
			}\end{xym}
		\end{proof}
	
	To sum up, in a 2-Puppe-exact $\Gpdp$-category, we are in the situation
	 of the following diagram.
	
	\begin{xym}\label{diagequcondis}\xymatrix@=50pt{
         	{\C}\ar@<-2mm>[r]_-{\Omega}\ar@{}[r]|-\perp
				\ar@<-2mm>[d]_{\pi_1}\ar@{}[d]|-\vdash
			&{\C}\ar@<-2mm>[l]_-{\Sigma}\ar@<-2mm>[d]_{\pi_0}\ar@{}[d]|-\dashv
         	\\ {\ConC}\ar@<-2mm>[r]_-{\Omega}
				\ar@{}[r]|-\simeq\ar@<-2mm>[u]_i
			&{\DisC}\ar@<-2mm>[l]_-{\Sigma}\ar@<-2mm>[u]_i
    }\end{xym}

Let us recall \cite[Section 1.12]{Borceux1994a} that a \emph{torsion theory}%
\index{torsion theory}\index{theory, torsion} in an abelian
category $\C$ consists of two full subcategories $\mc{T}$, $\mc{F}$ of $\C$ such that
\begin{enumerate}
	\item for all $T\in\mc{T}$ and $F\in\mc{F}$, 
		\begin{eqn}
			\C(T,F)\simeq 0;
		\end{eqn}
	\item for each $C\col\C$, there exists an extension
		\begin{eqn}
			0\longrightarrow T\longrightarrow C\longrightarrow F\longrightarrow 0,
		\end{eqn}
		where $T\in\mc{T}$ and $F\in\mc{F}$.
\end{enumerate}
We will use provisionally the same definition in dimension 2.  But it is possible that in a later study of 2-dimensional torsion theories we will need to add conditions to get the expected properties.

\begin{pon}
	In a 2-Puppe-exact $\Gpdp$-category $\C$, the sub-$\Gpdp$-ca\-te\-go\-ries
	$\ConC$ and $\DisC$ form a torsion theory.
\end{pon}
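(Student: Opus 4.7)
The plan is to verify the two clauses of the torsion-theory definition separately, using the machinery of Section~5.2, especially Proposition~\ref{extenspizpiu} and Proposition~\ref{caracdispreadd}, together with the reflection/coreflection displayed in diagram~\ref{diagequcondis}.

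For the hom-vanishing clause, I would take $T\in\ConC$ and $F\in\DisC$ and show that $\C(T,F)$ is equivalent to the zero pointed groupoid. Because $F$ is discrete, $\C(T,F)$ is already a set, so the task reduces to proving that every arrow $f\col T\to F$ is isomorphic to $0$. Since $F\in\DisC$ and $\DisC$ is reflective in $\C$ with reflection $\pi_0$ and unit $\eta$, the precomposition functor
\begin{equation*}
\eta_T^*\col \DisC(\pi_0T,F)\ra \C(T,F)
\end{equation*}
is an equivalence, so $f\simeq\bar f\eta_T$ for a unique (up to iso) $\bar f\col\pi_0T\to F$. But $T$ is connected, so by the dual of Proposition~\ref{caracdispreadd} we have $\pi_0T\simeq 0$, forcing $\bar f\simeq 0$ and hence $f\simeq 0$. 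Thus $\C(T,F)\simeq 0$.

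For the existence of an extension, I would simply appeal to Proposition~\ref{extenspizpiu}: for every $C\col\C$ there is an extension
\begin{equation*}
0\longrightarrow \pi_1C\overset{\varepsilon_C}\longrightarrow C\overset{\eta_C}\longrightarrow \pi_0C\longrightarrow 0.
\end{equation*}
It remains to observe that $\pi_1C=\Sigma\Omega C$ is connected and $\pi_0C=\Omega\Sigma C$ is discrete. The first holds because every object of the form $\Sigma A$ is connected (its arrow from $0$ is a cokernel, hence cofaithful, as noted right after diagram~\ref{adjsigmomeg}); the second holds dually because every object of the form $\Omega A$ is discrete.

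No part of this is genuinely hard; the only place where care is needed is the application of the reflection $\pi_0\dashv i$ in the first step, since one must use that $\C(T,F)$ and $\DisC(\pi_0T,F)$ are actually sets (which follows from $F$ being discrete) so that the adjunction equivalence lets us transfer the conclusion $\bar f\simeq 0$ back to $f\simeq 0$ without worrying about coherence 2-cells. Everything else is a direct invocation of results already proved in Section~5.2.
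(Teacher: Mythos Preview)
Your proof is correct and follows essentially the same approach as the paper. The only difference is a harmless duality in the hom-vanishing clause: the paper uses the coreflection $i\adj\pi_1$ onto $\ConC$ (writing $\C(T,F)\simeq\ConC(T,\pi_1F)\simeq\ConC(T,0)\simeq 0$ via $\pi_1F\simeq 0$ from Proposition~\ref{caracdispreadd}), whereas you use the reflection $\pi_0\adj i$ onto $\DisC$ and the dual fact $\pi_0T\simeq 0$; both arguments are equally direct.
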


	\begin{proof}
		The first condition hold because, if $C$ is connected and $D$ is discrete,
		by the adjunction $i\adj\pi_1\col\C\ra\ConC$ and by the fact
		that $\pi_1D\simeq 0$ (by Proposition \ref{caracdispreadd}),
		we have $\C(C,D)\simeq\ConC(C,\pi_1D)\simeq\ConC(C,0)\simeq 0$.
		
		The second condition follows from Proposition \ref{extenspizpiu},
		since $\pi_1 C$ is connected and $\pi_0 C$ is discrete.
	\end{proof}

To end this section, let us give two other consequences of Proposition \ref{caracdispreadd}.

	\begin{pon}\label{depupexpippreex}
		Every 2-Puppe-exact $\Gpdp$-category $\C$ is
		$\Pip$-preexact and $\Copip$-preexact.
	\end{pon}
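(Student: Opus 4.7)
By duality (the axioms of 2-Puppe-exactness are self-dual), it suffices to establish $\Pip$-preexactness; $\Copip$-preexactness will follow by applying the same argument to $\C\op$. Fix a monoloop $\pi\col 0\Ra 0\col A\ra B$ with coroot $r_\pi\col B\ra\Coroot\pi$. Using Proposition \ref{kerpep} to identify $\Pip r_\pi=\Omega(\Ker r_\pi)$ with canonical monoloop $\pi_{r_\pi}=k_{r_\pi}\omega_{\Ker r_\pi}$, the unit is $\eta_\pi=(a,1_B)$ with $a\col A\ra\Omega(\Ker r_\pi)$ characterised by $k_{r_\pi}\omega_{\Ker r_\pi}a=\pi$, and the task is to prove that $a$ is an equivalence.

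Since $\pi$ is a monoloop, $A$ is discrete by Proposition \ref{domonobodis}, and $\Omega(\Ker r_\pi)$ is always discrete; via the equivalence $\Sigma\col\DisC\simeq\ConC$ of diagram \ref{diagequcondis}, $a$ will be an equivalence iff $\Sigma a\col\Sigma A\ra\pi_1(\Ker r_\pi)$ is an isomorphism in $\ConC$. To exhibit $\Sigma a$ concretely I use Proposition \ref{coraccoker} to write $r_\pi=\Coker\bar\pi$ for the $\Sigma\adj\Omega$-adjoint $\bar\pi\col\Sigma A\ra B$ of $\pi$ (so $\pi=\bar\pi\sigma_A$). By 2-Puppe-exactness applied to $\bar\pi$ (diagram \ref{diagcoracpepkercoker}), $\bar\pi$ factors as $\bar\pi\simeq k_{r_\pi}\circ e$ with $e\col\Sigma A\ra\Ker r_\pi$ fully cofaithful. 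Comparing $k_{r_\pi}\omega_{\Ker r_\pi}a=\pi=k_{r_\pi}e\sigma_A$ and cancelling the faithful arrow $k_{r_\pi}$ gives $\omega_{\Ker r_\pi}a=e\sigma_A$, which is exactly the statement that $a$ and $e$ correspond under $\Sigma\adj\Omega$: $e\simeq\varepsilon_{\Ker r_\pi}\circ\Sigma a$.

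Next I factor $\Sigma a=m'\circ g'$ with $g'\col\Sigma A\ra I$ fully cofaithful and $m'\col I\ra\pi_1(\Ker r_\pi)$ faithful. Then $e\simeq(\varepsilon_{\Ker r_\pi}\circ m')\circ g'$ is a $(\PlCofid,\Fid)$-factorisation of $e$ since $\varepsilon_{\Ker r_\pi}\circ m'$ is faithful; by the uniqueness of this factorisation applied to the fully cofaithful arrow $e$, the composite $\varepsilon_{\Ker r_\pi}\circ m'$ must be an equivalence. The crucial point is that $\varepsilon_{\Ker r_\pi}$ is a kernel by Proposition \ref{extenspizpiu}, hence fully $0$-faithful by Proposition \ref{claspropker}, hence fully faithful by Proposition \ref{fidfidzer}. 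Setting $\phi=(\varepsilon_{\Ker r_\pi}\circ m')^{-1}\circ\varepsilon_{\Ker r_\pi}$, one checks $\phi\circ m'\simeq 1$ directly, while $\varepsilon_{\Ker r_\pi}\circ m'\circ\phi\simeq\varepsilon_{\Ker r_\pi}\simeq\varepsilon_{\Ker r_\pi}\circ 1$ combined with the fully faithfulness of $\varepsilon_{\Ker r_\pi}$ yields $m'\circ\phi\simeq 1$. Thus $m'$ is an equivalence, and $\Sigma a$ is itself fully cofaithful.

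To conclude, since both $\Sigma A$ and $\pi_1(\Ker r_\pi)$ are connected, for every connected $Y$ the hom-groupoids $\C(\Sigma A,Y)$ and $\C(\pi_1(\Ker r_\pi),Y)$ are sets, so the fully cofaithfulness of $\Sigma a$ specialises to a bijection $\ConC(\pi_1(\Ker r_\pi),Y)\ra\ConC(\Sigma A,Y)$; the Yoneda lemma in $\ConC$ then gives that $\Sigma a$ is an isomorphism in $\ConC$, whence $a$ is an equivalence. The main obstacle is the third step, where the uniqueness of the $(\PlCofid,\Fid)$-factorisation must be combined with the fact that kernels are fully faithful in 2-Puppe-exact $\Gpdp$-categories to promote $\Sigma a$ from an arbitrary arrow to a fully cofaithful one; the subtlety is that a priori one only knows $\varepsilon_{\Ker r_\pi}$ is faithful, and upgrading this to fully faithful via Proposition \ref{fidfidzer} is what makes the cancellation argument for $m'$ go through.
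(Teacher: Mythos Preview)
There is a genuine gap in your final step. Fully cofaithfulness of $\Sigma a$ means that $-\circ\Sigma a$ is full and faithful for every $Y$; when the hom-groupoids are discrete (as they are for connected $Y$), a fully faithful functor between discrete groupoids is \emph{injective} on objects, not bijective. You have thus only shown that $\Sigma a$ is an epimorphism in $\ConC$, and the Yoneda step does not yield an isomorphism. There is also a misattribution earlier: Proposition \ref{claspropker} does not say that kernels are fully $0$-faithful (it characterises fully $0$-faithful arrows as those with trivial kernel); the correct reason $\varepsilon_{\Ker r_\pi}$ is fully faithful is Proposition \ref{etacoracepsrac}, which identifies it as a root. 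The gap can be repaired: one shows $a$ is fully faithful directly from the monoloop hypothesis (if $ax_1\simeq ax_2$ then $\pi x_1=\pi_{r_\pi}ax_1=\pi_{r_\pi}ax_2=\pi x_2$, since precomposing a loop with isomorphic arrows gives equal results, so $x_1\simeq x_2$), and one shows $a$ is cofaithful because $\Sigma$, being a left adjoint, preserves cokernels, whence $\Copip a=\Sigma\Coker a\simeq\Coker\Sigma a\simeq 0$; then Proposition \ref{fidplcofidequ} finishes.

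The paper's proof is considerably more direct and avoids all of this machinery. Since $A$ is discrete, $\eta_A\col A\ra\Omega\Sigma A$ is an equivalence (Proposition \ref{caracdispreadd}); the monoloop $\pi$ transports along it to the monoloop $\bar\pi\omega_{\Sigma A}$ on $\Omega\Sigma A$, and Proposition \ref{lemmhormzfid} then shows that $\bar\pi$ is $0$-faithful. By $2$-Puppe-exactness $\bar\pi$ is the kernel of its cokernel $r_\pi$, and Proposition \ref{kerpep} gives $(A,\pi)\simeq\Pip r_\pi$ at once. In effect, once one sees that $\bar\pi$ is faithful, the arrow $e$ in your factorisation is already an equivalence, so the $\varepsilon$-cancellation argument and the passage through $\Sigma a$ become unnecessary.
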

	
		\begin{proof}
			We prove the $\Pip$-preexactness; the other property is dual.
			Let $\pi\col 0\Ra 0\col A\ra B$ be a monoloop in $\C$.
			Let us consider the following diagram, where $r=\Coroot(\pi)$.  By Proposition
			\ref{coraccoker}, there exists a 2-arrow $\rho\col r\bar{\pi}\Ra 0$
			(unique since $\Sigma A$ is connected) such that $(r,\rho)=\Coker \bar{\pi}$.
			Moreover, as $\pi$ is a monoloop, its domain $A$ is discrete
			(by Proposition \ref{domonobodis}) and thus $\eta_A$ is an equivalence
			(by Proposition \ref{caracdispreadd}).  Then, by Proposition
			\ref{lemmhormzfid}, $\bar{\pi}$ is 0-faithful and, by 2-Puppe-exactness,
			$(\bar{\pi},\rho)=\Ker r$.  Finally, by Proposition \ref{kerpep},
			$\pi=\Pip r$.\qedhere
			\begin{xym}\xymatrix@=50pt{
				&A\rtwocell^0_0{\pi}\dtwocell^0_0{\sigma_A}
				&B\ar[r]^-r
				&{\Coroot\pi}
				\\ \Omega\Sigma A\ar[ur]^-{\eta_A^{-1}}
					\rtwocell^0_0{\;\;\;\;\;\,\omega_{\Sigma A}}
				& \Sigma A\ar@<-0.7mm>[ur]_-{\bar{\pi}}\ar@/_0.5pc/[urr]_0
					\urrtwocell\omit\omit{_<-1.7>\rho}
			}\end{xym}
		\end{proof}

	\begin{pon}\label{fidconnectedplus}
		Let $\C$ be a 2-Puppe-exact $\Gpdp$-category and $A\overset{f}\ra B$ be an arrow
		in $\C$.  Then the following conditions are equivalent:
		\begin{enumerate}
			\item $f$ is faithful;
			\item for every $C\col\ConC$ and for every $a\col C\ra A$, if $fa\simeq 0$,
				then $a\simeq 0$;
			\item for every $C\col\ConC$, $\C(C,f)$ is fully 0-faithful.
		\end{enumerate}
	\end{pon}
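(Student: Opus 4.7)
The plan is to establish the cyclic chain $3 \Rightarrow 2 \Rightarrow 1 \Rightarrow 3$.

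The implication $3 \Rightarrow 2$ is just unpacking definitions: given $a\col C\ra A$ with some witness $\beta\col fa\Ra 0$ of $fa\simeq 0$, full 0-faithfulness of $\C(C,f)$ (clause 1(a) of Proposition \ref{carplfidsurzer}) yields $\alpha\col a\Ra 0$, so $a\simeq 0$. For $2\Rightarrow 1$, I would invoke Proposition \ref{fidfidzer} to reduce "$f$ faithful" to "$f$ 0-faithful" — this is exactly where 2-Puppe-exactness is used — and then apply Proposition \ref{fidconnected}: 0-faithfulness of $f$ is equivalent to the statement that for every $X\col\C$ and every $a\col\Sigma X\ra A$, $fa\simeq 0$ implies $a\simeq 0$. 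Since each suspension $\Sigma X$ is connected (as noted after diagram \ref{adjsigmomeg}), this is precisely condition 2 applied to $C\eqdef\Sigma X$.

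For $1\Rightarrow 3$, fix $C\col\ConC$. By the very definition of connectedness, $\C(C,A)$ and $\C(C,B)$ are discrete, so $\C(C,f)$ is a morphism of discrete pointed groupoids — essentially a morphism of pointed sets. For such a morphism, the characterisations in Proposition \ref{carplfidsurzer} collapse to the single requirement of 0-injectivity, namely that $fa\simeq 0$ entails $a\simeq 0$ (using discreteness to identify $\simeq$ with $=$). To verify this from condition 1, I would exploit the dual of Proposition \ref{caracdispreadd}, which says that in a 2-Puppe-exact $\Gpdp$-category the connectedness of $C$ is equivalent to the counit $\varepsilon_C\col\Sigma\Omega C\ra C$ being an equivalence. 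Setting $a'\eqdef a\circ\varepsilon_C\col\Sigma\Omega C\ra A$, one has $fa'\simeq 0$; Propositions \ref{fidfidzer} and \ref{fidconnected} (applied to $X\eqdef\Omega C$) give $a'\simeq 0$, and precomposing with a quasi-inverse of $\varepsilon_C$ yields $a\simeq 0$.

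The main delicacy I anticipate is keeping straight the two uses of "$\simeq$" — isomorphism in a hom-groupoid $\C(C,-)$ versus equality/isomorphism of 2-arrows in $\C$ — and checking that, for connected $C$, the hom-groupoids are genuinely discrete so that "fully 0-faithful" for $\C(C,f)$ really does reduce to the bare implication in condition 2. Everything else is a bookkeeping exercise combining Propositions \ref{caracdispreadd}, \ref{fidfidzer}, \ref{fidconnected}, and \ref{carplfidsurzer}.
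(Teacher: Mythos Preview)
Your proof is correct and follows essentially the same route as the paper. The paper organises the argument as two biconditionals ($1\Leftrightarrow 2$ via Proposition~\ref{fidconnected} together with the dual of Proposition~\ref{caracdispreadd}, and $2\Leftrightarrow 3$ by noting that condition~2 is part~(a) of characterisation~4 in Definition~\ref{caracplzfid} while part~(b) is automatic for connected $C$), whereas you run a cycle $3\Rightarrow 2\Rightarrow 1\Rightarrow 3$; the ingredients and the key observation --- that connectedness of $C$ makes the hom-groupoids $\C(C,-)$ discrete, so full 0-faithfulness of $\C(C,f)$ collapses to the bare implication in condition~2 --- are identical.
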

	
		\begin{proof}
			{\it 1 $\Leftrightarrow$ 2. } 
			This is an immediate consequence of Proposition \ref{fidconnected}
			since, if $D$ is connected, $D\simeq\pi_1 D = \Sigma\Omega D$,
			by Proposition \ref{caracdispreadd}.
			
			{\it 2 $\Leftrightarrow$ 3. } Condition 2 is part (a) of property 4 of
			Definition \ref{caracplzfid}.
			Part (b) is always true when
			$X$ is connected, which is the case here.
		\end{proof}

\subsection{2-Puppe-exactness of $\DisC\simeq\ConC$}

	We will prove that, if $\C$ is 2-Puppe-exact (or 2-abelian),
	then the $\Ens$-category $\DisC\simeq\ConC$ is Puppe-exact (or abelian).
	In order to do that, we use the following facts:
	\begin{enumerate}
		\item since $\DisC$ is a reflective sub-$\Gpd$-category of $\C$,
			limits in it are computed as in $\C$ (in particular, the kernel
			of an arrow in $\DisC$ is its kernel in $\C$) and colimits
			are computed by applying $\pi_0$ to the colimit in $\C$ (in particular,
			the cokernel of $A\overset{f}\ra B$ is the composite
			\begin{eqn}
				B\xrightarrow{q_f}\Coker f\xrightarrow{\eta_{\Coker f}}\pi_0\Coker f\text{);}
			\end{eqn}
		\item dually, colimits in $\ConC$ are computed as in $\C$
			and limits are computed by applying $\pi_1$ to the limit in $\C$.
	\end{enumerate}

	\begin{thm}\label{puppepuppe}
		If $\C$ is a 2-Puppe-exact $\Gpdp$-category, then $\DisC\simeq\ConC$
		is a Puppe-exact category.
	\end{thm}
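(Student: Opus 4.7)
The plan is to verify directly the three ingredients of the definition of Puppe-exact category for $\DisC$: that it is a pointed category with zero object, that all kernels and cokernels exist in it, and that for every arrow $f$ the canonical comparison $\Coker(\Ker f)\to\Ker(\Coker f)$ is an isomorphism. Pointedness and the zero object are immediate: $0$ is both discrete and connected, and $\DisC$ is already known to be a $\Ens$-category, hence (with the inherited zero maps) a $\Ensp$-category.

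Next I would describe (co)kernels in $\DisC$. Since $\DisC$ is a reflective sub-$\Gpd$-category of $\C$ with reflector $\pi_0$, limits in $\DisC$ are computed as in $\C$: for $f\col A\ra B$ in $\DisC$, the kernel $k_f\col K\ra A$ taken in $\C$ is faithful, and as $A$ is discrete, Lemma \ref{fiddisc} forces $K$ to be discrete too, so $k_f$ is already the kernel of $f$ in $\DisC$. Dually (or simply by the reflection), cokernels in $\DisC$ are obtained by applying $\pi_0$ to cokernels in $\C$: the cokernel of $f$ in $\DisC$ is $\eta_{\Coker f}\circ q_f\col B\ra \pi_0\Coker f$.

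The crux is the comparison. By Proposition \ref{caractwopupex} and the alternative construction of the factorisations in diagram \ref{diagcokerkerraccopepalt}, 2-Puppe-exactness of $\C$ provides a canonical equivalence $\bar{w}_f\col \Coker k_f\xrightarrow{\sim}\Ker(\eta_{\Coker f}\circ q_f)$ in $\C$. Its codomain is a kernel into the discrete object $\pi_0\Coker f$, so by Lemma \ref{fiddisc} it is itself discrete; thus it already represents $\Ker^{\DisC}(\Coker^{\DisC} f)$. Applying the reflector $\pi_0$ to $\bar{w}_f$, and using that $\pi_0$ acts as the identity on discrete objects (i.e.\ $\eta$ is an equivalence on discrete objects, by Proposition \ref{caracdispreadd}), yields
\[
\Coker^{\DisC}(\Ker^{\DisC} f) \;=\; \pi_0\Coker k_f \;\xrightarrow{\;\pi_0\bar{w}_f\;}\; \pi_0\Ker(\eta q_f) \;\simeq\; \Ker(\eta q_f) \;=\; \Ker^{\DisC}(\Coker^{\DisC} f),
\]
which is an isomorphism.

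The only remaining point is to check that this isomorphism actually coincides with the canonical comparison map in $\DisC$, and not merely with some abstract isomorphism between the same objects. I would verify this by inspecting the universal property: both maps are uniquely determined by the composite $B\xrightarrow{q_f}\Coker f\xrightarrow{\eta}\pi_0\Coker f$ vanishing on $f$ together with the fact that $f$ vanishes on $k_f$, followed by passage through the reflection $\pi_0$. I expect this compatibility check to be the main (but routine) obstacle: the content is entirely bookkeeping of 2-cells, and it is made trivial by the fact that $\DisC$ is locally discrete, so 2-cells into a discrete object are unique and all coherence conditions hold automatically. Once that is checked, the three ingredients are in place and $\DisC\simeq\ConC$ is Puppe-exact.
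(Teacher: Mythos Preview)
Your proof is correct and follows essentially the same route as the paper's: identify kernels in $\DisC$ as kernels in $\C$ and cokernels in $\DisC$ as $\pi_0$ of cokernels in $\C$, then use the 2-Puppe-exactness equivalence $\bar{w}_f\col\Coker k_f\to\Ker(\eta q_f)$ together with the discreteness of $\Ker(\eta q_f)$ to conclude that the comparison map in $\DisC$ is an isomorphism. The paper phrases the last step as ``$\Coker k_f$ is discrete, hence $\eta_{\Coker k_f}$ is an equivalence, hence the factoring map $w_f$ is an equivalence'', which is exactly your ``apply $\pi_0$ and use that $\eta$ is invertible on discrete objects''; your additional remark that the identification with the canonical comparison is forced by local discreteness of $\DisC$ is correct and makes explicit what the paper leaves implicit.
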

	
		\begin{proof}
			First, $\DisC$ has a zero object, the kernels and the cokernels, by the remark
			preceding this theorem.  Next, let $A\overset{f}\ra B$ be an arrow in
			$\DisC$.  The cokernel of the kernel of $f$ in $\DisC$ is
			$\eta_{\Coker k_f}\bar{e}^1_f$; the kernel of the cokernel of $f$ is $\bar{m}^2_f$,
			which is the kernel of $\eta q_f$ (the cokernel of $f$ in $\DisC$).  There is a
			comparison arrow $w_f\col\pi_0\Coker k_f\ra \Ker (\eta q_f)$ such
			that $w_f\eta_{\Coker k_f}\simeq \bar{w}_f$.
			
			Then, as $\C$ is 2-Puppe-exact, $\bar{w}_f$ is an equivalence.
			As $\Ker(\eta q_f)$ is discrete, $\Coker k_f$ is also discrete
			and, by Proposition \ref{caracdispreadd}, $\eta_{\Coker k_f}$ is
			an equivalence. This allows to conclude that $w_f$ is an equivalence.\qedhere
			\begin{xym}\xymatrix@=30pt{
				{\Ker f}\ar[r]^-{k_f}
				& A\ar[r]^f\ar[d]_{\bar{e}^1_f}\rtwocell\omit\omit{_<5>\;\;\,\bar{\omega}_f}
				& B\ar[r]^-{q_f}
				&{\Coker f}\ar[r]^{\eta}
				&{\pi_0\Coker f}
				\\ &{\Coker k_f}\ar[r]_-{\bar{w}_f}\ar[d]_{\eta}
				&{\Ker(\eta q_f)}\ar[u]_{\bar{m}^2_f}
				\\ &\pi_0\Coker k_f\ar[ur]_{w_f}
			}\end{xym}
		\end{proof}

	\begin{coro}\label{discconcabel}
		If $\C$ is a 2-abelian $\Gpd$-category, then $\DisC$ (and thus also $\ConC$)
		is an abelian category.
	\end{coro}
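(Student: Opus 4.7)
The plan is to combine the just-proved Theorem \ref{puppepuppe} with the existence of finite (co)products in $\DisC$, then invoke the classical 1-dimensional fact that a Puppe-exact $\Ens$-category equipped with finite products is automatically abelian (additive plus Puppe-exact). Since abelianness is a self-dual property and $\DisC\simeq\ConC$, it suffices to establish it for $\DisC$.

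First I would observe that by Theorem \ref{puppepuppe}, $\DisC$ is already a Puppe-exact $\Ens$-category, so it has a zero object, all kernels, all cokernels, and every arrow factors as a normal epimorphism followed by a normal monomorphism. It remains to equip $\DisC$ with finite products and finite coproducts. For finite products, I will use that $\DisC$ is reflective in $\C$ (diagram \ref{diagequcondis}), so limits in $\DisC$ are computed as in $\C$; concretely, the terminal object $0$ of $\C$ is discrete, and for discrete $D_1,D_2$ the product $D_1\times D_2$ in $\C$ satisfies
\begin{equation*}
	\C(X,D_1\times D_2)\simeq \C(X,D_1)\times\C(X,D_2),
\end{equation*}
which is a set (a product of sets) for every $X$, so $D_1\times D_2$ is again discrete, giving a product in $\DisC$. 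Dually, finite coproducts exist in $\ConC$ by coreflectivity; translating through the equivalence $\DisC\simeq\ConC$ of Subsection \ref{sssectequdiscconc} produces finite coproducts in $\DisC$ (explicitly, the coproduct of $D_1,D_2\col\DisC$ is $\pi_0$ of their coproduct in $\C$).

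Having a zero object, all kernels and cokernels, finite products and coproducts, and the property that every morphism factors as the cokernel of its kernel followed by the kernel of its cokernel, $\DisC$ satisfies the hypotheses of the standard 1-dimensional theorem (as in Grandis \cite{Grandis1992a} or Mitchell \cite{Mitchell1965a}) according to which a Puppe-exact category with finite products and coproducts is abelian: one deduces additivity from these data, and then abelianness is immediate. The equivalence $\Sigma\dashv\Omega$ of diagram \ref{diagequcondis} then transports abelianness from $\DisC$ to $\ConC$.

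The only step with any content is verifying that $\DisC$ is closed under the finite products taken in $\C$, and this is immediate from the representable characterisation of discreteness; there is no genuine obstacle, because the nontrivial exactness content was already concentrated in Theorem \ref{puppepuppe}. The main conceptual point to keep in mind is the asymmetry in how limits versus colimits are computed inside $\DisC$ (limits as in $\C$, colimits via $\pi_0$), which is why it is convenient to appeal to the equivalence $\DisC\simeq\ConC$ rather than try to verify both finite products and finite coproducts directly inside $\DisC$.
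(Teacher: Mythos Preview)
Your proof is correct and follows essentially the same approach as the paper: Theorem \ref{puppepuppe} gives Puppe-exactness, reflectivity of $\DisC$ gives finite products and (via $\pi_0$) finite coproducts, and then the classical 1-dimensional result yields abelianness. Your detour through $\ConC$ for coproducts is unnecessary since you already note the direct construction via $\pi_0$, which is exactly what the paper uses.
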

	
		\begin{proof}
			By the remarks preceding the previous theorem,
			the category $\DisC$ has all finite products and coproducts,
			because $\C$ has them.  And by the previous theorem $\DisC$
			is Puppe-exact.
		\end{proof}

	We can also characterise the monomorphisms and epimorphisms in $\DisC$
	and $\ConC$ in terms of the (fully) (co)faithful arrows in $\C$.

	\begin{pon}\label{epimonconc}
		Let $\C$ be a 2-Puppe-exact $\Gpdp$-category.  
		We have the following equivalences:
		\begin{enumerate}
			\item $f$ is a monomorphism in $\DisC$ if and only if $f$ is
				fully faithful in $\C$;
			\item $f$ is an epimorphism in $\DisC$ if and only if $f$ is
				cofaithful in $\C$;
			\item $f$ is a monomorphism in $\ConC$ if and only if $f$ is
				faithful in $\C$;
			\item $f$ is an epimorphism in $\ConC$ if and only if $f$ is
				fully cofaithful in $\C$.
		\end{enumerate}
	\end{pon}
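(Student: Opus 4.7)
The plan is to exploit the two adjunctions displayed in diagram \ref{diagequcondis}. Because $\DisC\hookrightarrow\C$ is reflective (with reflection $\pi_0$) and $\ConC\hookrightarrow\C$ is coreflective (with coreflection $\pi_1$), we get at once that kernels in $\DisC$ and cokernels in $\ConC$ are computed inside $\C$, while cokernels in $\DisC$ require post-composition with $\pi_0$ and kernels in $\ConC$ require post-composition with $\pi_1$. One also needs the observation that $\Ker f$ is discrete whenever $A$ is discrete (since $0^{\Ker f}=0^A k_f$ is a composite of faithful arrows, so faithful) and dually that $\Coker f$ is connected whenever $B$ is connected.

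For (1) and (4) the argument is then immediate. In case (1), $f\col A\ra B$ in $\DisC$ is mono iff $\Ker f\simeq 0$ (the kernel being taken in $\C$, as just noted), which by Proposition \ref{claspropker} means $f$ is fully $0$-faithful, and by Proposition \ref{fidfidzer} this is equivalent to $f$ being fully faithful in $\C$. Case (4) is dual, using that $\Coker f$ in $\ConC$ coincides with $\Coker^\C f$, together with the dual of Proposition \ref{claspropker} and Proposition \ref{fidfidzer}.

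For (2) and (3) the extra layer of $\pi_0$ or $\pi_1$ has to be unwound; this is where the real work lies. In case (2), $f$ is epi in $\DisC$ iff $\pi_0\Coker f\simeq 0$, i.e.\ $\Omega\Copip f\simeq 0$, where we used $\Copip f=\Sigma\Coker f$. Since $\Copip f$ is connected, Proposition \ref{caracdispreadd} (dual form) identifies it canonically with $\pi_1\Copip f=\Sigma\Omega\Copip f$, so the condition $\Omega\Copip f\simeq 0$ is equivalent to $\Copip f\simeq 0$; by the dual of Proposition \ref{pepclaszfid} together with Proposition \ref{fidfidzer}, this says $f$ is cofaithful. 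Case (3) is strictly dual: $f$ is mono in $\ConC$ iff $\Sigma\Pip f\simeq 0$, and since $\Pip f$ is discrete Proposition \ref{caracdispreadd} gives $\Pip f\simeq\pi_0\Pip f=\Omega\Sigma\Pip f$, so this reduces to $\Pip f\simeq 0$, equivalent to $f$ being faithful.

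The only real obstacle is keeping the $\Sigma/\Omega$ shuffle straight in (2) and (3): one must invoke in the right order that $\Sigma X$ is always connected, $\Omega X$ is always discrete, and that a connected (resp.\ discrete) object is its own $\pi_1$ (resp.\ $\pi_0$) via the unit/counit of $\Sigma\adj\Omega$. Once these identifications are made, the rest is formal bookkeeping.
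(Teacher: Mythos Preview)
Your proof is correct and follows essentially the same approach as the paper: both use that kernels in $\DisC$ are computed in $\C$ (reflectivity) and that cokernels in $\DisC$ are $\pi_0$ of the cokernel in $\C$, then reduce (2) to $\Omega\Copip f\simeq 0$ via $\pi_0\Coker f=\Omega\Sigma\Coker f=\Omega\Copip f$ and conclude $\Copip f\simeq 0$ since $\Copip f$ is connected. You spell out the last step more explicitly than the paper does, but the argument is the same.
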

	
		\begin{proof}
			We give the proof for $\DisC$; the proof for $\ConC$ is dual.
			Let $A\overset{f}\ra B$ be an arrow in $\DisC$.
			
			First, $f$ is a monomorphism in $\DisC$ if and only if
			$\Ker f=0$ in $\DisC$.  Since the kernel in $\DisC$ is computed as in $\C$,
			this is equivalent to $\Ker f= 0$ in $\C$ and thus to $f$ being
			 fully faithful in $\C$.
			
			Next, $f$ is an epimorphism in $\DisC$ if and only if
			$\Coker f= 0$ in $\DisC$.  Since the cokernel of $f$ in $\DisC$ is
			$\pi_0\Coker f$ in $\C$ and since $\pi_0\Coker f =  \Omega\Copip f$, this
			is equivalent to $\Copip f= 0$ in $\C$ and thus to $f$ being
			cofaithful in $\C$.
		\end{proof}

\section{Good 2-Puppe-exact $\Gpdp$-categories}\label{sectbondpex}

	In this section, we study a property which, added to the definition of 2-Puppe-exactness, allows to recover certain properties of the
	$\Gpd$-category $\CGS$ of symmetric 2-groups.  The question of the independance
	of this property with respect to the axioms of 2-Puppe-exact $\Gpd$-category
	remains open, all known examples of 2-Puppe-exact $\Gpd$-categories being good.

\subsection{Good 2-Puppe-exact $\Gpdp$-categories and exactness of $\mu_f$}
	
	Let us begin by introducing some notation.  Let $\C$ be a 2-Puppe-exact $\Gpdp$-category
	and $f\col A\ra B$ be an arrow of $\C$.
	Let us construct the kernel $\Ker\pi_0 f$ in $\DisC$ (which is also the kernel of $\pi_0f$
	in $\C$, since the inclusion $\DisC\hookrightarrow\C$ is a right adjoint) and
	let us denote by $a_f\col \pi_0\Ker f\ra\Ker\pi_0 f$ the comparison arrow
	such that $k_{\pi_0 f}\circ a_f = \pi_0(k_f)$; we set
	$c_f\eqdef a_f\circ\eta_{\Ker f}\col\Ker f\ra\Ker\pi_0 f$.
	We denote by $b_f\col \Coker\pi_1 f\ra\pi_1\Coker f$ the comparison arrow in
	$\ConC$ constructed dually and $d_f\eqdef \varepsilon_{\Coker f}\circ b_f\col
	\Coker\pi_1 f\ra\Coker f$.
	\begin{xym}\label{diagbonnquatr}\xymatrix@=40pt{
		{Kf}\ar[r]^-{k_f}\ar[d]_{\eta_{K f}}\drtwocell\omit\omit{}\ddlowertwocell_{c_f}<-12>{\omit}
		&A\ar[r]^f\ar[d]_{\eta_A}\drtwocell\omit\omit{}
		&B\ar[d]^{\eta_B}
		\\ {\pi_0 K f}\ar[r]_-{\pi_0 k_{f}}\ar[d]_{a_f}
		&\pi_0 A\ar[r]_{\pi_0 f}
		&\pi_0 B
		\\ {K\pi_0 f}\ar[ur]_-{k_{\pi_0 f}}
	}\end{xym}	
	
	A good 2-Puppe-exact $\Gpdp$-category is a 2-Puppe-exact $\Gpdp$-category
	where $\pi_0$ and $\pi_1$ are exact, in the sense that they preserve exact sequences.
	On the other hand, they do not preserve in general relative exact sequences. This would be equivalent to the preservation of kernels and cokernels; but, in general,
	$\pi_0$ preserves cokernels but not kernels and $\pi_1$ preserves
	kernels, but not cokernels.
	
	These properties hold in the $\Gpd$-category of symmetric 2-groups.
	Conditions 1, 2 and 3 are proved in \cite{Vitale2002a}.
	
	\begin{pon}\label{defcaracgood}
		Let $\C$ be a 2-Puppe-exact $\Gpdp$-category.  The following conditions
		are equivalent.  When they hold, we say that $\C$
		is a \emph{good 2-Puppe-exact $\Gpdp$-category}.%
		\index{good 2-Puppe-exact Gpd*-category@good 2-Puppe-exact $\Gpdp$-category}%
		\index{2-Puppe-exact $\Gpdp$-category!good}%
		\index{Gpd*-category@$\Gpdp$-category!2-Puppe-exact!good}
		\begin{enumerate}
			\item $\pi_0\col\C\ra\DisC$ and $\pi_1\col\C\ra\ConC$
				preserve exact sequences.
			\item $\pi_0(\PlFid)\incl\caspar{Mono}$ and
				$\pi_1(\PlCofid)\incl\caspar{Epi}$.
			\item For each $f$, $a_f\col \pi_0\Ker f\ra\Ker\pi_0 f$ is an epimorphism
				in $\DisC$ and
				$b_f\col \Coker\pi_1 f\ra\pi_1\Coker f$ is a monomorphism in $\ConC$.
			\item For each $f$, $c_f$ is cofaithful and $d_f$ is faithful.
		\end{enumerate}
	\end{pon}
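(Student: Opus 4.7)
First, my plan is to prove the equivalences via the cycle $1 \Rightarrow 3 \Rightarrow 2 \Rightarrow 1$, together with $3 \Leftrightarrow 4$. The main tools will be Proposition~\ref{epimonconc} (identifying monomorphisms and epimorphisms in $\DisC$ and $\ConC$ with (fully) faithful and cofaithful arrows in $\C$), Proposition~\ref{extenspizpiu} (the extension $\pi_1 C \to C \to \pi_0 C$), Proposition~\ref{caractwopupex} (the characterisations of 2-Puppe-exactness, in particular that fully (co)faithful arrows coincide with normal fully (co)faithful ones), the Puppe-exactness of $\DisC$ (Theorem~\ref{puppepuppe}), and the fact that $\pi_0$ is a left adjoint (hence preserves cokernels and zero objects) while $\pi_1$ is a right adjoint.

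For $3 \Leftrightarrow 4$: by the definitions of diagram~\ref{diagbonnquatr}, $c_f = a_f \circ \eta_{\Ker f}$ and $d_f = \varepsilon_{\Coker f} \circ b_f$. Proposition~\ref{extenspizpiu} shows $\eta_{\Ker f}$ is a cokernel, hence cofaithful; dually $\varepsilon_{\Coker f}$ is faithful. Since cofaithful arrows compose and satisfy the cancellation law dual to Proposition~\ref{propsimpfid}, $c_f$ is cofaithful if and only if $a_f$ is; and since both source and target of $a_f$ lie in $\DisC$, Proposition~\ref{epimonconc} identifies this with $a_f$ being an epimorphism in $\DisC$. The argument for $d_f$ and $b_f$ is dual.

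For $3 \Rightarrow 2$: if $f$ is fully faithful, then $\Ker f \simeq 0$ by Proposition~\ref{caractwopupex}, so $\pi_0 \Ker f \simeq 0$, and the epimorphism $a_f\col 0 \to \Ker \pi_0 f$ forces $\Ker \pi_0 f \simeq 0$, that is, $\pi_0 f \in \Mono$; dually for $\pi_1$. Conversely, for $2 \Rightarrow 3$: factor $f \simeq \bar{m}^1_f \circ \bar{e}^1_f$ with $\bar{e}^1_f = \Coker k_f$ and $\bar{m}^1_f$ fully faithful. By hypothesis $\pi_0 \bar{m}^1_f$ is monic, and since $\pi_0$ preserves cokernels, $\pi_0 \bar{e}^1_f \simeq \Coker \pi_0 k_f$. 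Hence in the Puppe-exact $\DisC$, $\Ker \pi_0 f \simeq \Ker \pi_0 \bar{e}^1_f \simeq \im \pi_0 k_f$, and the canonical factorisation $\pi_0 k_f = k_{\pi_0 f} \circ a_f$ (with $k_{\pi_0 f}$ monic) then forces $a_f$ to be epic.

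Finally, for $1 \Rightarrow 3$: the sequence $\Ker f \xrightarrow{k_f} A \xrightarrow{f} B$ is exact at $A$ by Proposition~\ref{qquessequencex}; applying condition~1 yields exactness of its image under $\pi_0$ at $\pi_0 A$ in $\DisC$, which reads $\im \pi_0 k_f = \Ker \pi_0 f$ and thus forces $a_f$ epic. For $3 \Rightarrow 1$: given an exact sequence $A \xrightarrow{a} B \xrightarrow{b} C$ at $B$ in $\C$, 2-Puppe-exactness identifies $k_b$ with the faithful image $m_a = \Ker q_a$ of $a$; applying condition~3 to $b$ yields $\Ker \pi_0 b = \im \pi_0 k_b = \im \pi_0 m_a$ in $\DisC$, and the fully cofaithful-faithful factorisation $a \simeq m_a \circ e_a$ (with $e_a$ cofaithful, hence $\pi_0 e_a$ epic) gives $\im \pi_0 a = \im \pi_0 m_a$; thus $\im \pi_0 a = \Ker \pi_0 b$ and the sequence is exact at $\pi_0 B$, and the dual argument handles $\pi_1$. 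The main obstacle will be tracking the canonical comparison arrows precisely through these identifications in the Puppe-exact $\DisC$, in particular ensuring that the equivalence $k_b \simeq m_a$ coming from exactness in $\C$ is invoked together with its canonical 2-arrow so that the resulting equalities of images as subobjects of $\pi_0 B$ are legitimate.
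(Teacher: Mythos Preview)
Your proof is correct and follows essentially the same strategy as the paper: the cycle $1 \Rightarrow 3 \Rightarrow 2 \Rightarrow 1$ together with $3 \Leftrightarrow 4$, with the same arguments for $1 \Rightarrow 3$, $3 \Rightarrow 2$, and $3 \Leftrightarrow 4$ (the paper invokes Proposition~\ref{etacoracepsrac} to see that $\eta_{\Ker f}$ is a coroot, hence fully cofaithful, rather than Proposition~\ref{extenspizpiu}, but either works).

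The one genuine difference is in closing the loop. You prove $2 \Rightarrow 3$ and $3 \Rightarrow 1$ separately, via factorisations and image computations in $\DisC$; the paper instead goes directly $2 \Rightarrow 1$: given an exact sequence $(a,\alpha,b)$, Corollary~\ref{caracdexbis} says the comparison $b'\col \Coker a \ra C$ is fully faithful, so by condition~2 the arrow $\Coker\pi_0 a \simeq \pi_0\Coker a \xrightarrow{\pi_0 b'} \pi_0 C$ is monic, which is exactness at $\pi_0 B$. This is shorter and avoids tracking the identification $k_b \simeq m_a$ through images. Your route works fine; note only that the step ``$e_a$ cofaithful, hence $\pi_0 e_a$ epic'' deserves a word: since $e_a$ is in fact fully cofaithful, $\Coker e_a \simeq 0$, so $\Coker_{\DisC}\pi_0 e_a \simeq \pi_0\Coker e_a \simeq 0$ and $\pi_0 e_a$ is epic in the Puppe-exact $\DisC$.
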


		\begin{proof}
			{\it 1 $\Rightarrow$ 3. } We apply $\pi_0$ to the sequence
			\begin{xym}\xymatrix@=40pt{
				{K f}\ar[r]_-{k_f}\rruppertwocell<9>^0{^<-2.7>\kappa_f\;\;}
				&A\ar[r]_-f &B,
			}\end{xym}
			which is exact, by Proposition \ref{qquessequencex}. By condition
			1, the central row of diagram \ref{diagbonnquatr}
			is thus an exact sequence in the Puppe-exact category $\DisC$.
			By the properties of exact sequences in Puppe-exact categories,
			$a_f$ is an epimorphism in $\DisC$.  The proof is dual for $b_f$.
			
			{\it 3 $\Rightarrow$ 2. }  If $f$ is fully faithful,
			then $\Ker f\simeq 0$, thus $\pi_0\Ker f\simeq 0$ and $a_f$
			is a monomorphism.  By condition 3, $a_f$ is also an epimorphism.
			It is thus un isomorphism in the Puppe-exact category $\DisC$.
			Therefore $\Ker\pi_0 f\simeq 0$ and $\pi_0 f$ is a monomorphism in $\DisC$.
			The proof is dual for $\pi_1$.
			
			{\it 2 $\Rightarrow$ 1. }  Let be an exact sequence in $\C$, as in the upper
			part of diagram \ref{diagsuitdex}.  By Corollary
			\ref{caracdexbis} (which applies because $\C$ is $\Ker$-factorisable), the arrow
			$b'\col\Coker a\ra C$ is fully faithful.  Then, by condition 2,
			the comparison arrow
			\begin{eqn}
				\Coker\pi_0 a\overset{\sim}\longrightarrow
				 \pi_0\Coker a\overset{\pi_0 b'}\longrightarrow\pi_0 C
			\end{eqn}
			is a monomorphism, thus the sequence $(\pi_0 a,\pi_0 b)$
			is exact in $\DisC$.  The proof of the second
			part of condition 1 is dual.
						
			{ 3 $\Leftrightarrow$ 4. }  
			Given that $c_f\equiv a_f\circ\eta_{\Ker f}$ and that
			$\eta_{\Ker f}$ is fully cofaithful, since it is a coroot
			(by Proposition \ref{etacoracepsrac}), by the cancellation property 
			of cofaithful arrows, $c_f$ is cofaithful if
			and only if $a_f$ is cofaithful.  And, by Proposition \ref{epimonconc},
			$a_f$ is cofaithful in $\C$ if and only if $a_f$ is an epimorphism
			in $\DisC$.
		\end{proof}

	At the end of the following subsection, we will learn that
	in a good 2-Puppe-exact $\Gpdp$-category, we have also
	\begin{itemize}
		\item $\pi_0(\Cofid)\incl\caspar{Epi}$,
		\item $\pi_1(\Fid)\incl\caspar{Mono}$,
		\item $\pi_0(\PlCofid)\incl\Iso$, and
		\item $\pi_1(\PlFid)\incl\Iso$.
	\end{itemize}

	An important property of good 2-Puppe-exact $\Gpdp$-categories is that,
	for every arrow $f\col\C$, the 2-arrow $\mu_f\eqdef$ 
	\begin{xym}\xymatrix@=40pt{
		{\Ker f}\ar[r]_-{k_f}\rrlowertwocell<10>^0{\;\;\;\;\,\kappa_f^{-1}}
		&A\ar[r]^f\rruppertwocell<-10>_0{_{\;\;\,\zeta_f}}
		&B\ar[r]^-{q_f}
		&{\Coker f}
	}\end{xym}
	is exact. Let us recall that a 2-arrow $\pi\col 0\Ra 0\col A\ra B$
	is exact if the sequence $A\longrightarrow 0\longrightarrow B$, equipped with the 2-arrow
	$\pi$, is exact.
	
	Starting from a loop $\pi\col 0\Ra 0\col A\ra B$, we construct the following diagram,
	by using the notations of diagram \ref{diagnotationsomegsigm}.
	\begin{xym}\label{loopxacte}\xymatrix@R=40pt@C=20pt{
		A\ar[rr]^0\ar[dr]_-{\tilde{\pi}}\ar@/^2pc/[rrrr]^0\rrtwocell\omit\omit{_<4>{}}
		& {}\rruppertwocell\omit{^<-2.7>\pi}
		& 0\ar[rr]^0\ar[dr]_-{0}\rrtwocell\omit\omit{_<4>{}}
		& {}
		& B
		\\ &{\Omega B}\ar[ur]_-{0}
		&&{\Sigma A}\ar[ur]_-{\bar{\pi}}
	}\end{xym}

	Since 2-Puppe-exact $\Gpdp$-categories are $\Ker$- and $\Coker$-factorisable,
	we can apply Corollary \ref{caracdexbis} to the exactness of $\pi$.
	The following conditions are thus equivalent:
		\begin{enumerate}
			\item $\pi$ is exact;
			\item $\tilde{\pi}$ is fully 0-cofaithful;
			\item $\bar{\pi}$ is fully 0-faithful.
		\end{enumerate}

The following lemma appears in \cite{Grandis1994a}.

\begin{lemm}
	Let $\C$ be a $\Gpdp$-category and $A\overset{f}\ra B$ be an arrow in $\C$. Let us consider
	the Puppe exact sequence constructed from $f$ (Proposition \ref{sequencePuppe}).
	There exist 2-arrows (unique because $\Sigma\Omega B$ and $\Sigma Kf$ are connected),
	shown in the following diagram.
	\begin{xym}\xymatrix@=40pt{
		\Sigma\Omega B\ar[r]^-{\Sigma d}\ar[d]_{\varepsilon_B}\drtwocell\omit\omit{}
		&\Sigma Kf\ar[r]^{\Sigma k}\ar[d]^{\bar{\mu}_f}\drtwocell\omit\omit{}
		&\Sigma A\ar@{=}[d]
		\\ B\ar[r]_q
		&Qf\ar[r]_-{d'}
		&\Sigma A
	}\end{xym}
\end{lemm}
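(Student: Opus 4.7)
The plan is to use the adjunction $\Sigma\adj\Omega$ to translate the desired 2-arrows filling the two squares into 2-arrows between loops $0\Ra 0$, and then verify these equalities using the defining relations of $\omega_B$, $\sigma_A$, $\mu_f$, and the interchange law.

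Uniqueness is immediate once existence is established: $\Sigma\Omega B$ and $\Sigma Kf$ are connected (being images of the $\Gpd$-functor $\Sigma$, which lands in connected objects), so between any two parallel arrows out of them there is at most one 2-arrow.

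For the left square, I would use the equivalence $\C(\Sigma\Omega B,Qf)\simeq\C(\Omega B,Qf)(0,0)$ from \eqref{equadjomsi}. The arrow $q_f\varepsilon_B$ corresponds under this adjunction to the loop $q_f\varepsilon_B\sigma_{\Omega B}=q_f\omega_B$, while $\bar{\mu}_f\Sigma d$ corresponds to $\bar{\mu}_f\sigma_{Kf}d=\mu_fd$. So it suffices to prove $q_f\omega_B=\mu_fd$. Expanding using Lemma \ref{kerkeromegcodom} (which gives $\omega_B=f\delta\circ\kappa_f^{-1}d$) and the definition $\mu_f=\zeta_fk_f\circ q_f\kappa_f^{-1}$, both sides become equal modulo checking $q_ff\delta=\zeta_fk_fd$ as 2-arrows $q_ffk_fd\Ra 0$. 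This last identity is an instance of the interchange law applied to the horizontal composite $\zeta_f*\delta$: evaluating it in the two possible ways and using that in a strictly described $\Gpdp$-category whiskering by $0$ or composing with $0$ yields identities gives both $q_ff\delta$ and $\zeta_fk_fd$ as the single value of $\zeta_f*\delta$. The right square is handled dually: translating $d'\bar{\mu}_f$ and $\Sigma k$ through the adjunction, we must show $d'\mu_f=\sigma_Ak_f$, which reduces via the Puppe-sequence relation $\sigma_A^{-1}=\delta'f\circ d'\zeta_f^{-1}$ to $\delta'fk_f=d'q_f\kappa_f$, again an instance of interchange applied to $\delta'*\kappa_f$.

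The main obstacle is verifying the two interchange-law identities cleanly in the $\Gpdp$-setting: one must rely on the strict description (composition with $0$ is $0$, whiskering of any 2-arrow by $0$ is $1_0$) to turn the a priori four-term middle-four relation into the required two-term equality. Everything else is bookkeeping around the defining equations of $(d,\delta)$, $(d',\delta')$, $\mu_f$, and the (co)units $\omega_B$, $\sigma_A$.
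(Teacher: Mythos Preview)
Your proposal is correct and follows essentially the same argument as the paper: the paper phrases the reduction via ``$\sigma_{\Omega B}$ is an epiloop'' rather than via the adjunction equivalence \eqref{equadjomsi}, but these are the same device, and the core computation $q\omega_B = qf\delta\circ q\kappa^{-1}d = \zeta kd\circ q\kappa^{-1}d = \mu_f d$ (with the middle equality by interchange of $\zeta*\delta$, which the paper leaves implicit) is exactly what you outline. The right square is handled identically, reducing to $d'q\kappa = \delta' fk$ via interchange of $\delta'*\kappa$.
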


	\begin{proof}
		For the 2-arrow of the left square, it suffices, as $\sigma_{\Omega B}$ is
		an epiloop, to prove that $q\varepsilon_B\sigma_{\Omega B} 
		= \bar{\mu}_f(\Sigma d)\sigma_{\Omega B}$.
		By using the definition of $\bar{\mu}_f$ and the equations which
		the 2-arrows of the Puppe sequence satisfy, we have the succession of equalities
		\begin{eqn}
			q\varepsilon_B\sigma_{\Omega B} = q\omega_B = qf\delta\circ q\kappa^{-1} d
			= \zeta kd\circ q\kappa^{-1} d = \mu_f d = \bar{\mu}_f\sigma_{Kf}d
			=\bar{\mu}_f(\Sigma d)\sigma_{\Omega B}.
		\end{eqn}
		
		To get the 2-arrow of the right square, we check in a similar way that
		$d'\bar{\mu}_f\sigma_{Kf}=(\Sigma k)\sigma_{Kf}$.
	\end{proof}
	
	\begin{pon}\label{mufestexact}
		If $\C$ is a good 2-Puppe-exact $\Gpdp$-category, the 2-arrow
		$\mu_f\col 0\Ra 0\col \Ker f\ra\Coker f$ is exact.
	\end{pon}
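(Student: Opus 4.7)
By Corollary~\ref{caracdexbis}, applicable because $\C$, being 2-Puppe-exact, is $\Ker$-factorisable, the exactness of the loop $\mu_f$ (i.e.\ of the sequence $Kf\to 0\to Qf$ equipped with $\mu_f$) is equivalent to the associated arrow $\bar{\mu}_f\col\Sigma Kf\to Qf$ being fully $0$-faithful, i.e.\ (by Proposition~\ref{fidfidzer}) fully faithful. Equivalently, one can aim to show $\tilde{\mu}_f\col Kf\to\Omega Qf$ is fully cofaithful; since by the adjunction $\Sigma\dashv\Omega$ we have $\tilde{\mu}_f\simeq\Omega\bar{\mu}_f\circ\eta_{Kf}$ with $\eta_{Kf}\col Kf\to\pi_0 Kf$ fully cofaithful (it is a coroot, by Proposition~\ref{etacoracepsrac}), it suffices to show that $\Omega\bar{\mu}_f\col\pi_0 Kf\to\Omega Qf$ is an equivalence in the abelian $\Ens$-category $\DisC$ (Theorem~\ref{puppepuppe}).

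The preceding lemma places $\bar{\mu}_f$ as the comparison arrow fitting into the morphism of sequences with top row $\Sigma\Omega B\xrightarrow{\Sigma d}\Sigma Kf\xrightarrow{\Sigma k}\Sigma A$, bottom row $B\xrightarrow{q}Qf\xrightarrow{d'}\Sigma A$, and vertical arrows $\varepsilon_B$, $\bar{\mu}_f$ and the identity. The bottom row is a portion of the Puppe long exact sequence of $f$ (Proposition~\ref{sequencePuppe}), exact at $Qf$ and $\Sigma A$, with $(d',\delta')=\Coker q$.

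The plan is now as follows. Apply $\Omega$ to the whole diagram and use the goodness hypothesis, which by Proposition~\ref{defcaracgood} ensures that $\pi_0$ (and $\pi_1$) preserve exact sequences. Goodness provides, for every arrow $g$, the canonical comparison $c_g\col Kg\to K\pi_0 g$ being cofaithful and $d_g\col\Coker\pi_1 g\to Qg$ faithful; when applied to the Puppe sequence of $f$, this allows us to identify $\pi_0 Kf$ and $\Omega Qf$ as the two extremal terms of a short exact sequence in $\DisC$ obtained by chasing the $\Omega$-image of the Puppe sequence, and $\Omega\bar{\mu}_f$ as the canonical comparison between them. The 1-dimensional five lemma (or short five lemma, Proposition~\ref{petitlemmcinq}) in the Puppe-exact category $\DisC$ then forces $\Omega\bar{\mu}_f$ to be an isomorphism.

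The main obstacle will be the coherence bookkeeping: one must verify that the various $2$-arrows produced by applying $\Omega$ and by the units/counits of $\Sigma\dashv\Omega$ assemble into genuine morphisms of exact sequences in $\DisC$, and that the identification of $\Omega Qf$ with the quotient appearing in the $\Omega$-image of the Puppe sequence is precisely the one induced by $\bar{\mu}_f$. This is where goodness is essential: without the preservation of exactness by $\pi_0$ and $\pi_1$, the $\Omega$-image of the Puppe exact sequence would fail to be exact in $\DisC$, and no such identification could be made. Once $\Omega\bar{\mu}_f$ is shown to be an equivalence, $\tilde{\mu}_f$ is fully cofaithful, hence $\mu_f$ is exact.
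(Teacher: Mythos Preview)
Your approach is correct and essentially the same as the paper's. Both reduce to showing that $\Omega\bar{\mu}_f\col\pi_0 Kf\to\Omega Qf$ is an isomorphism in $\DisC$ by comparing the $\pi_0$- and $\Omega$-images of the Puppe sequence (using goodness to ensure exactness) and then applying a diagram lemma in the Puppe-exact category $\DisC$.

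Two minor points. First, your description of $\pi_0 Kf$ and $\Omega Qf$ as ``the two extremal terms of a short exact sequence'' is not quite right: in the actual comparison they sit as the \emph{middle} terms of two parallel seven-term exact sequences in $\DisC$ (the $\pi_0$ of the Puppe sequence of $f$ on top, the $\Omega$ of it on the bottom), with the other six vertical comparisons being isomorphisms by idempotence of $\Sigma\dashv\Omega$. The paper then invokes a ``7 lemma'' (derivable from the 5-lemma by factoring); your reference to the five lemma works just as well on any five-term window containing the middle position. The short five lemma, however, is not the right tool here, and the citation of Proposition~\ref{petitlemmcinq} is to the 2-dimensional version, not the classical one you need in $\DisC$. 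Second, your final reduction via $\tilde{\mu}_f\simeq\Omega\bar{\mu}_f\circ\eta_{Kf}$ is the dual of the paper's: the paper instead concludes that $\bar{\mu}_f$ is fully faithful from the naturality square of $\varepsilon$ at $\bar{\mu}_f$, using that $\varepsilon_{Qf}$ is a root. Both routes are valid.
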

	
		\begin{proof}
			We apply $\pi_0\col\C\ra\DisC$ and $\Omega\col\C\ra\DisC$
			to the Puppe exact sequence (Proposition \ref{sequencePuppe}).
			Given that $\pi_0$ maps connected objects to zero
			and $\Omega$ maps discrete objects to zero, we get two shorter sequences
			that we can merge in the following way.
			\begin{xym}\label{diagpourmuex}\xymatrix@C=12pt@R=25pt{
				0\ar[r]
				&{\pi_0\Omega Kf}\ar[r]\ar[d]_{\Omega\varepsilon_{Kf}}^\wr
				&\pi_0\Omega A\ar[r]\ar[d]_{\Omega\varepsilon_{A}}^\wr
				&\pi_0\Omega B\ar[r]\ar[d]_{\Omega\varepsilon_{B}}^\wr
				&{\pi_0 Kf}\ar[r]\ar[d]^{\Omega \bar{\mu}_f}
				&\pi_0 A\ar[r]\ar@{=}[d]
				&\pi_0 B\ar[r]\ar@{=}[d]
				&{\pi_0 Qf}\ar[r]\ar@{=}[d]
				&0
				\\ 0\ar[r]
				&{\Omega Kf}\ar[r]
				&\Omega A\ar[r]
				&\Omega B\ar[r]
				&{\Omega Qf}\ar[r]
				&\Omega\Sigma A\ar[r]
				&\Omega\Sigma B\ar[r]
				&{\Omega\Sigma Qf}\ar[r]
				&0
			}\end{xym}
			Since $\C$ is good, $\pi_0$ and $\Omega$ (which is the
			composite of $\pi_1\col \C\ra\ConC$ with the equivalence $\Omega\col \ConC\ra\DisC$)
			preserve exact sequences, thus the two rows are exact
			in $\DisC$. The two left squares commute by $\Gpd$-naturality of $\varepsilon$, the two
			central squares commute by the previous lemma, and the two right
			squares obviously commute.
			
			The three left vertical arrows are isomorphisms,
			because the adjunction $\Sigma\adj\Omega$ is idempotent, and the three right
			vertical arrows are obviously isomorphisms.
			Since $\DisC$ is Puppe-exact, the “7 lemma” (which can be proved from the
			5 lemma by factoring the second and fifth arrows of each
			sequence) holds and ensures that $\Omega\bar{\mu}_f$ is an isomorphism.
			
			Therefore, in the following diagram (which exists by $\Gpd$-naturality
			of $\varepsilon$), the left and upper sides are
			equivalences, whereas $\varepsilon_{Qf}$ is fully faithful,
			since it is a root. So $\bar{\mu}_f$ is fully faithful
			and $\mu_f$ is exact.\qedhere
			\begin{xym}\xymatrix@=40pt{
				\Sigma\Omega\Sigma Kf\ar[r]^-{\varepsilon_{\Sigma Kf}}
					\ar[d]_{\Sigma\Omega\bar{\mu}_f}\drtwocell\omit\omit{^{}}
				&\Sigma Kf\ar[d]^{\bar{\mu}_f}
				\\ \Sigma\Omega Qf\ar[r]_-{\varepsilon_{Qf}}
				&Qf
			}\end{xym}
		\end{proof}

	Diagram \ref{diagpourmuex} gives us an exact sequence (the upper or lower row, as you prefer) in $\DisC$ which is the generalisation
	of the following sequence in $\CGS$, constructed from a symmetric monoidal functor
	$F\col \A\ra\B$ \cite[Corollary 2.7]{Bourn2002a}.
		\begin{multline}\stepcounter{eqnum}
			0\ra \pi_1(\Ker F)\longrightarrow\pi_1(\A)\overset{\pi_1(F)}
			\longrightarrow\pi_1(\B) \longrightarrow\pi_1(\Coker F)\\
			\simeq\pi_0(\Ker F) \longrightarrow\pi_0(\A)\overset{\pi_0(F)} \longrightarrow
			\pi_0(\B) \longrightarrow\pi_0(\Coker F)\ra 0.
		\end{multline}
	In particular, if we start with an extension $(f,\eta,g)$ (like the upper row of
	diagram \ref{diagpetitlemmcinq}), we get the following exact sequence in $\DisC$,
	where the central arrow measures the non-preservation by $\pi_0$ and $\pi_1$ (or $\Omega$)
	of relative exact sequences:
	\begin{eqn}\label{suitdiscextens}
		0\longrightarrow \Omega A\overset{\Omega f}\longrightarrow \Omega B\overset{\Omega g}
		\longrightarrow\Omega C\longrightarrow\pi_0A\overset{\pi_0f}\longrightarrow
		\pi_0B\overset{\pi_0g}\longrightarrow \pi_0C\longrightarrow 0.
	\end{eqn}
	It corresponds under the equivalence $\Sigma\adj\Omega$ to the following exact sequence 
	in $\ConC$:
	\begin{eqn}
		0\longrightarrow \pi_1 A\overset{\pi_1 f}\longrightarrow \pi_1 B\overset{\pi_1 g}
		\longrightarrow\pi_1 C\longrightarrow\Sigma A\overset{\Sigma f}\longrightarrow
		\Sigma B\overset{\Sigma g}\longrightarrow \Sigma C\longrightarrow 0.
	\end{eqn}
	
	We also get characterisations of the $\pi_0$ and the $\pi_1$ of the kernel and of the 
	cokernel of an arrow $f$.  The first and the third properties are always
	true in a 2-Puppe-exact $\Gpdp$-category (for $\pi_0$ is a left adjoint
	and $\pi_1$ a right adjoint), whereas the second is a translation of the fact that
	$\Omega\bar{\mu}_f$ is an equivalence, as we have seen in the proof of the
	previous proposition.
	\begin{enumerate}
		\item $\pi_0\Coker f\simeq\Coker\pi_0 f$.
		\item $\pi_1\Coker f$ is equivalent, under the equivalence $\Sigma\adj\Omega$,
			to $\pi_0\Ker f$.
		\item $\pi_1\Ker f\simeq\Ker\pi_1 f$.
	\end{enumerate}
	These three properties give the three levels of a 3-dimensional object:
	the objects (1) are the objects of the cokernel of $f$; the arrows (2)
	are both the arrows (from $0$ to $0$) of the cokernel of $f$ and the objects
	of the kernel of $f$; the 2-arrows (3) are the arrows of the kernel of $f$.
	In the case of symmetric 2-groups, we can be more specific: the kernel of a
	morphism $F\col\A\ra\B$ combines with its cokernel to form a pointed 2-groupoid
	$\Ker(F\con)$, which is the kernel of the 2-functor $F\con\col\A\con\ra\B\con$ obtained
	by “suspending” the functor $F$ ($\A\con$ is the one-object 2-groupoid whose
	2-group of arrows is $\A$).  We recover the cokernel of $F$
	by taking locally the $\pi_0$ of $\Ker(F\con)$, and we recover the kernel
	of $F$ by taking the 2-group of arrows from $I$ to $I$ in $\Ker(F\con)$.
	
\subsection{Full arrows}\label{sectflechplen}

	Now, let us introduce a notion of full arrow in a $\Gpd$-category
	(which is called a \emph{prefull} arrow in \cite{Dupont2003a}).
	In $\Gpd$ (see \cite{Dupont2008b}),
	$\CGS$ (Proposition \ref{caracfullcgs}),
	and the $\Gpd$-categories of 2-modules on a 2-ring,
	the full arrows in the 2-categorical sense are the full functors in the ordinary
	elementary sense. On the other hand, this is not the case in the 2-category of categories
	$\Cat$, but here we work only with $\Gpd$-categories.

	\begin{df}\label{deffullgpdcat}\index{full arrow}\index{arrow!full}
		Let $A\overset{f}\ra B$ be an arrow in a $\Gpd$-category.
		We say that $f$ is \emph{full}
		if, for all 2-arrows $\alpha$ and $\beta$ as in the following diagram,
		the following equation holds.
		\begin{xym}\label{deffull}\xymatrix@!{
			&&B\ar[dr]^{v_0}
			&&&A\ar[dr]^f
			\\ &A\ar[ur]^f\ar[dr]_f\rrtwocell\omit\omit{\beta}
			&&Y
			&X\ar[ur]^{u_0}\ar[dr]_{u_1}\rrtwocell\omit\omit{\alpha}
			&&B\ar[dr]^{v_0}
			\\ X\ar[ur]^{u_0}\ar[dr]_{u_1}\rrtwocell\omit\omit{\alpha}
			&&B\ar[ur]_{v_1} \ar@{}[rrr]|-{}="a" \save "a"*{=}\restore
			&&&A\ar[ur]^f\ar[dr]_f\rrtwocell\omit\omit{\beta}
			&&Y
			\\ &A\ar[ur]_f
			&&&&&B\ar[ur]_{v_1}
		}\end{xym}
	\end{df}
	
	We can define a pointed version, by taking $u_1=0$ and $v_0=0$. 
	
	\begin{df}\index{0-full arrow}\index{arrow!0-full}
		Let $A\overset{f}\ra B$ be an arrow in a $\Gpd$-category.
		We say that $f$ is \emph{0-full}
		if for all 2-arrows $\alpha$ and $\beta$ as in the following diagram,
		the following equation holds (i.e.\ $\alpha$ and $\beta$ are
		compatible: $v\alpha = \beta u$).
		\begin{xym}\label{deffullpoint}\xymatrix@=40pt{
			X\ar[r]^-{u}\rrlowertwocell<-10>_0{\alpha}
			&A\ar[r]^f\rruppertwocell<10>^0{_{\;\;\;\;\beta^{-1}}}
			&B\ar[r]_-{v}
			&Y
		}\;\;\;=\;\;\;\;\, 1_0\end{xym}
	\end{df}

		In 2-Puppe-exact $\Gpdp$-categories, full arrows behave
		as we expect: fully faithful = full + faithful.
							
		\begin{pon}\label{plmentfideplpamentfid}
			In a 2-Puppe-exact $\Gpdp$-category, $f$ is fully faithful
			if and only if $f$ is full and faithful.  Dually, $f$ is
			fully cofaithful if and only if $f$ is full and cofaithful.
		\end{pon}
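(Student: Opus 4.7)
The ``only if'' direction is immediate: if $f$ is fully faithful then it is trivially faithful (uniqueness is built into the universal property), and it is full because the universal lifting of 2-cells through $f$ makes the equation of Definition \ref{deffullgpdcat} hold by interchange.

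For the ``if'' direction, I would exploit the canonical three-way factorisation $f \simeq \hat m_f \circ l_f \circ e_f$ of \eqref{diaglf}, where $e_f$ is fully cofaithful, $l_f$ is simultaneously faithful and cofaithful, and $\hat m_f$ is fully faithful. Since $f$ is faithful and $\hat m_f$ is in particular faithful, two successive applications of the cancellation law (Proposition \ref{propsimpfid}) show that $e_f$ is faithful; being also fully cofaithful, $e_f$ is then an equivalence by Proposition \ref{fidplcofidequ}. So up to equivalence $f = \hat m_f \circ l_f$, and it suffices to show $l_f$ is an equivalence --- which, since $l_f$ is already faithful and cofaithful, amounts by Proposition \ref{fidplcofidequ} to showing $l_f$ is fully faithful, or equivalently (Proposition \ref{fidfidzer}) that $K l_f \simeq 0$.

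Faithfulness of $l_f$ gives $\Pip l_f \simeq 0$, hence $\Omega K l_f \simeq 0$, hence $\pi_1 K l_f \simeq 0$: the kernel $K l_f$ is discrete. By the torsion extension $\pi_1 K l_f \to K l_f \to \pi_0 K l_f$ of Proposition \ref{extenspizpiu}, it then remains to show $\pi_0 K l_f \simeq 0$, and this is where fullness enters. The fullness of $f$ specialised to the canonical compatibility data $(u_0, v_0, \alpha, \beta) = (k_f, q_f, \kappa_f, \zeta_f)$ yields that $\kappa_f$ and $\zeta_f$ are compatible; this compatibility is inherited by $\kappa_{l_f}$ and $\zeta_{l_f}$ through the factorisation since $e_f$ is an equivalence and $\hat m_f$ is fully faithful. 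Lemma \ref{exenunfl} then produces a comparison arrow $w : \Coker k_{l_f} \to \Ker q_{l_f}$, whose analysis --- combined with cofaithfulness of $l_f$, the Puppe long exact sequence \eqref{gabzis}--\eqref{gabzistwo} for $l_f$, and 2-Puppe-exactness --- should force $\pi_0 K l_f \simeq 0$. The statement for fully cofaithful arrows follows by duality.

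The principal obstacle is this final step: deducing $\pi_0 K l_f \simeq 0$ from compatibility of $\kappa_{l_f}, \zeta_{l_f}$ and the faithfulness and cofaithfulness of $l_f$, using only 2-Puppe-exactness and \emph{not} the ``goodness'' hypothesis of Section \ref{sectbondpex} (under which the argument would be immediate since $\pi_0 f$ becomes injective).
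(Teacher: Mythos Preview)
The ``only if'' direction is fine. The ``if'' direction, however, is genuinely incomplete --- and you correctly identify the obstacle yourself: you cannot conclude $\pi_0 K l_f \simeq 0$ from compatibility of $\kappa_{l_f}$ and $\zeta_{l_f}$ using only 2-Puppe-exactness. Your route through the three-way factorisation and the torsion analysis of $K l_f$ is a detour that never closes without the goodness hypothesis, which is \emph{not} assumed here.

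The paper's argument is much more direct and sidesteps this difficulty entirely. The key observation is that since $f$ is faithful and $\C$ is 2-Puppe-exact, $f$ is canonically the kernel of its cokernel: $(f, \zeta_f) = \Ker q_f$. In particular, by condition 2 of Definition \ref{defkernel}, $f$ is $\zeta_f$-fully faithful: any 2-arrow $\beta : f a_1 \Ra f a_2$ that is \emph{compatible with $\zeta_f$} lifts uniquely through $f$. Now fullness of $f$ (via its 0-full specialisation, Definition \ref{deffullgpdcat} with $u_1 = 0$, $v_0 = 0$) says precisely that \emph{every} $\beta : f a \Ra 0$ is compatible with $\zeta_f$. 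Combining these two facts immediately yields that $f$ is fully 0-faithful, hence fully faithful by Proposition \ref{fidfidzer}. No factorisation, no kernel analysis, no comparison arrow $w$ is needed.

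The moral: rather than trying to show $f$ is fully faithful by annihilating its kernel, recognise that $f$ is already ``fully faithful relative to $\zeta_f$'' (because it \emph{is} a kernel), and that fullness is exactly the statement that this relative condition is vacuous.
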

		
			\begin{proof}
				Let us assume that $f$ is fully faithful. We already know that $f$
				is faithful.  It remains to prove that $f$ is full.  Let us
				consider diagram \ref{deffull}.  Since $f$ is fully
				faithful, there exists $\gamma\col u_0\Ra u_1$ such that $\alpha = f\gamma$.
				Then the two sides of equation \ref{deffull} are equal to
				$\beta *\gamma$ and are thus equal to each other.
				
				Conversely, let us assume that $f$ is full and faithful.
				Let be $\beta\col fa\Ra 0$. Since $f$
				is full, $\beta$ is compatible with $\zeta_f$:
				\begin{xym}\xymatrix@=40pt{
					X\ar[r]^-{a}\rrlowertwocell<-10>_0{\beta}
					&A\ar[r]^f\rruppertwocell<10>^0{_{\;\;\;\;\,\zeta^{-1}_f}}
					&B\ar[r]_-{q_f}
					&{\Coker f}
				}\;\;\;=\;\;\;\;\, 1_0.\end{xym}
				Now, $(f,\zeta_f)=\Ker q_f$, since $f$ is faithful and $\C$ is 2-Puppe-exact.
				So, by the universal property of the kernel of $q_f$, there exists a unique
				$\alpha\col a\Ra 0$ such that $\beta = f\alpha$.
			\end{proof}
			
		We proceed to the main result of this section, which gives
		in a \emph{good} 2-Puppe-exact $\Gpdp$-category a list of
		characterisations of full arrows, some of them being known
		in $\CGS$ (see \cite{Vitale2002a,Kasangian2000a}).
	
	\begin{lemm}\label{lempourcaracfull}
		Let $\pi\col 0\Ra 0\col A\ra B$ be an exact loop.
		The following conditions are equivalent:
		\begin{enumerate}
			\item $\pi=1_0$;
			\item $A$ is connected;
			\item $B$ is discrete.
		\end{enumerate}
	\end{lemm}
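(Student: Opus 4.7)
The plan is to reduce everything to the factorisation of $\pi$ through $\Omega B$ and $\Sigma A$ from diagram~\ref{loopxacte}, exploiting the three equivalent conditions for exactness listed just before Proposition~\ref{mufestexact}: $\pi$ is exact iff $\tilde{\pi}\col A\ra\Omega B$ is fully $0$-cofaithful iff $\bar{\pi}\col\Sigma A\ra B$ is fully $0$-faithful. The other bridge I would use is Lemma~\ref{deuflunfl} (applied with $f=1_B$), which says that $\pi=1_0$ if and only if $\bar{\pi}\simeq 0$, and dually (with the kernel version) that $\pi=1_0$ if and only if $\tilde{\pi}\simeq 0$. Finally I would rewrite the hypotheses on $A$ and $B$ through the dual of Proposition~\ref{caracdispreadd}: $A$ is connected iff $\Sigma A\simeq 0$, and $B$ is discrete iff $\Omega B\simeq 0$.

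For $1\Rightarrow 2$ and $1\Rightarrow 3$, I would start from $\pi=1_0$. Combined with $\pi$ being exact, this gives that $0\col\Sigma A\ra B$ is fully $0$-faithful and that $0\col A\ra\Omega B$ is fully $0$-cofaithful. By Proposition~\ref{claspropker} the kernel of the former is $\Sigma A$, which must then be $0$, so $A$ is connected; dually the cokernel of the latter is $\Omega B$, which must then be $0$, so $B$ is discrete.

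For $2\Rightarrow 1$ and $3\Rightarrow 1$, assume say $A$ is connected, so $\Sigma A\simeq 0$. Then the groupoid $\C(\Sigma A,B)\simeq\C(0,B)$ is terminal, forcing $\bar{\pi}\simeq 0$, and Lemma~\ref{deuflunfl} yields $\pi=1_0$. The argument from $B$ discrete is dual, using $\Omega B\simeq 0$ and $\tilde{\pi}\simeq 0$. Notice that exactness of $\pi$ is not needed for these two implications.

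No real obstacle is expected here: the whole content is a bookkeeping exercise matching the three characterisations of exact loops against the characterisations of discrete/connected objects already assembled in this section, with Lemma~\ref{deuflunfl} translating equality of loops into equivalence of the associated arrows.
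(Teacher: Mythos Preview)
Your implications $2\Rightarrow 1$ and $3\Rightarrow 1$ are correct, though the paper is more direct: it simply observes that if $A$ is connected or $B$ is discrete, then $\C(A,B)(0,0)$ has only one element, which must be $1_0$. No detour through $\bar\pi$ or $\tilde\pi$ is needed.

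Your argument for $1\Rightarrow 2$ and $1\Rightarrow 3$ contains a genuine error. You assert that the kernel of $0\col\Sigma A\ra B$ ``is $\Sigma A$'' (and dually that the cokernel of $0\col A\ra\Omega B$ is $\Omega B$), but this is false in dimension~2: the kernel of a zero arrow is \emph{not} its domain. For instance in $\CGS$ (or in any 2-abelian $\Gpd$-category) one has $\Ker(0\col X\ra Y)\simeq X\times\Omega Y$, since an object of this kernel is a pair of an object of $X$ and a loop $I\ra I$ in $Y$. Proposition~\ref{claspropker} only tells you that the kernel of a fully 0-faithful arrow is $0$; it does not compute $\Ker(0\col\Sigma A\ra B)$ for you.

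Your strategy can be repaired: once you know $0\col\Sigma A\ra B$ is fully 0-faithful, apply characterisation~4(a) of Definition~\ref{caracplzfid} to $a=1_{\Sigma A}$. Since $0\cdot 1_{\Sigma A}\simeq 0$ trivially, you get $1_{\Sigma A}\simeq 0$, hence $\Sigma A\simeq 0$ and $A$ is connected. The paper instead argues via $\tilde\pi$: since $\tilde\pi$ is fully cofaithful, the functor $-\circ\tilde\pi$ is faithful, so from $\omega_B\tilde\pi=\pi=1_0=1_0\tilde\pi$ one cancels $\tilde\pi$ to obtain $\omega_B=1_0$, whence $\Omega B\simeq 0$ (via Proposition~\ref{pepclaszfid}) and $B$ is discrete; the case of $A$ connected is dual.
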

	
		\begin{proof}
			{\it 2 or 3 $\Rightarrow$ 1. }
			If $A$ is connected or $B$ discrete, there is only one 2-arrow
			$0\Ra 0\col A\ra B$, which can only be $1_0$.
			
			{\it 1 $\Rightarrow$ 2. }
			Since $\pi$ is exact, $\tilde{\pi}$ (see diagram \ref{loopxacte})
			is fully cofaithful.  But $\pi=\omega_B\tilde{\pi}$. If $\pi = 1_0$,
			we have thus $\omega_B=1_0$, which implies that $\Omega B\simeq 0$, by
			Proposition \ref{pepclaszfid} (since $\omega_B=\Pip 0^B$).
			So $\pi_1B=\Sigma\Omega B\simeq 0$ and $B$ is discrete, by Proposition
			\ref{caracdispreadd}.
			
			{\it 1 $\Rightarrow$ 3. } The proof is dual.
		\end{proof}

		\begin{pon}\label{caracfullbondpex}
			Let $\C$ be a good 2-Puppe-exact $\Gpdp$-category and $A\overset{f}\ra B$
			be an arrow in $\C$.
			The following conditions are equivalent:
			\begin{enumerate}
				\item $f$ is full;
				\item $f$ is 0-full;
				\item $\mu_f=1_0$;
				\item $\Ker f$ is connected ($\pi_0\Ker f\simeq 0$);
				\item $\Coker f$ is discrete ($\pi_1\Coker f\simeq 0$);
				\item $f$ factors as a fully cofaithful arrow followed by
					a fully faithful arrow;
				\item $l_f\col\im f\ra\im\pl f$ is an equivalence;
				\item $\pi_0 f$ is a monomorphism in $\DisC$
					and $\pi_1 f$ is an epimorphism in $\ConC$.
			\end{enumerate}
		\end{pon}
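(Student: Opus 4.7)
The plan is to establish the eight equivalences by grouping them into three clusters linked by the 2-arrow $\mu_f$ and by the exact sequence associated with $f$ in $\DisC$.

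First I would handle $1 \Leftrightarrow 2 \Leftrightarrow 3$. The implication $1 \Rightarrow 2$ is immediate by specialising diagram \ref{deffull} to $u_1 = 0$, $v_0 = 0$. For $2 \Rightarrow 3$, I would apply $0$-fullness to the canonical data $u = k_f$, $v = q_f$, $\alpha = \kappa_f$, $\beta = \zeta_f$, obtaining the compatibility $q_f\kappa_f = \zeta_f k_f$, which is exactly $\mu_f = 1_0$. For the converse directions $3 \Rightarrow 2$ and $3 \Rightarrow 1$, given arbitrary test data $u, v, \alpha, \beta$, I would factor $\alpha$ through the universal property of $\Ker f$ (writing $\alpha = \kappa_f\hat u \circ f\iota$) and $\beta$ through the universal property of $\Coker f$ (writing $\beta = \check v\zeta_f \circ \pi f$), then use $q_f\kappa_f = \zeta_f k_f$ together with these factorisations to identify the two sides of equation \ref{deffull} (or its pointed specialisation).

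Next I would establish $3 \Leftrightarrow 4 \Leftrightarrow 5$: this is where goodness enters. By Proposition \ref{mufestexact} the loop $\mu_f$ is exact, so Lemma \ref{lempourcaracfull} applied with $\pi = \mu_f$ immediately gives $\mu_f = 1_0$ iff its domain $\Ker f$ is connected iff its codomain $\Coker f$ is discrete; by Proposition \ref{caracdispreadd} the latter two translate into $\pi_0\Ker f \simeq 0$ and $\pi_1\Coker f \simeq 0$.

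Then I would handle $4 \Leftrightarrow 6 \Leftrightarrow 7$. For $6 \Leftrightarrow 7$ I would use the three-part factorisation (\ref{diaglf}): if $l_f$ is an equivalence then $f \simeq (\hat m_f l_f) \circ e_f$ is fully cofaithful followed by fully faithful, while the converse follows from uniqueness of the two factorisation systems $(\PlCofid,\Fid)$ and $(\Cofid,\PlFid)$ up to equivalence. For $4 \Leftrightarrow 7$ I would use the alternative construction from Remark \ref{remfactregpepkerraccoker}: $e_f$ is the cokernel of $k_f\varepsilon_{\Ker f}\col \pi_1\Ker f \to A$ whereas $\hat e_f$ is the cokernel of $k_f$, with $l_f$ the canonical comparison. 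Since $\varepsilon_{\Ker f}$ is a root (Proposition \ref{etacoracepsrac}) it is already fully faithful, so by Proposition \ref{fidplcofidequ} it is an equivalence iff it is fully cofaithful, which by $\Ker$-idempotence happens exactly when the two cokernels coincide, i.e.\ when $l_f$ is an equivalence; but $\varepsilon_{\Ker f}$ is an equivalence iff $\Ker f$ is connected, again by Proposition \ref{caracdispreadd}.

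Finally for $4 \Leftrightarrow 8$ I would exploit the long exact sequence in $\DisC$ extracted from diagram \ref{diagpourmuex}, which is exact because $\C$ is good. Since the connecting map $\Omega\bar\mu_f\col \pi_0\Ker f \to \pi_1\Coker f$ is an isomorphism (established inside the proof of Proposition \ref{mufestexact}), the exact sequence reads $\Omega A \to \Omega B \to \pi_0\Ker f \to \pi_0 A \to \pi_0 B$. Then $\pi_1 f$ epi (equivalently $\Omega f$ epi in $\DisC$) forces $\Omega B \to \pi_0\Ker f$ to be zero and hence $\pi_0\Ker f \to \pi_0 A$ to be a monomorphism, while $\pi_0 f$ mono forces $\pi_0\Ker f \to \pi_0 A$ to be the zero map; a morphism that is simultaneously zero and monic has zero source, so $\pi_0\Ker f \simeq 0$. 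The reverse direction $4 \Rightarrow 8$ is immediate by direct inspection of the same sequence, using also $4 \Leftrightarrow 5$. The main obstacle is the bookkeeping in $3 \Rightarrow 2$, where one must check that the 2-arrows produced by the universal properties assemble into the required compatibility; the rest consists of combining Proposition \ref{mufestexact}, Lemma \ref{lempourcaracfull}, and the long exact sequence.
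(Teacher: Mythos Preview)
Your overall architecture is close to the paper's, and the arguments for $3\Leftrightarrow 4\Leftrightarrow 5$, $6\Leftrightarrow 7$, and $4\Leftrightarrow 8$ are essentially the same. Your direct proof of $3\Rightarrow 2$ (factoring $\alpha$ through $\Ker f$ and $\beta$ through $\Coker f$, then using interchange together with $\zeta_f k_f=q_f\kappa_f$) is correct and is in fact a pleasant alternative to the paper, which never argues $3\Rightarrow 2$ directly.

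The genuine gap is your $3\Rightarrow 1$. The factorisation $\alpha=\kappa_f\hat u\circ f\iota$ only typechecks when $\alpha$ has codomain $0$, i.e.\ in the $0$-full situation; for the general fullness condition of diagram~\ref{deffull} the test $2$-arrow $\alpha\colon fu_0\Rightarrow fu_1$ does not land at $0$, so the kernel universal property gives you nothing to factor through. As written, your three clusters establish $1\Rightarrow\{2,\dots,8\}$ and the mutual equivalence of $2,\dots,8$, but the loop back to condition~1 is broken. The paper closes it via $6\Rightarrow 1$: once $f\simeq m e$ with $e$ fully cofaithful and $m$ fully faithful, one lifts $\alpha$ through $m$ (to get $\gamma\colon eu_0\Rightarrow eu_1$) and $\beta$ through $e$ (to get $\delta\colon v_0m\Rightarrow v_1m$), and then both sides of equation~\ref{deffull} equal the common pasting of $\gamma$ and $\delta$. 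You already reach condition~6 from condition~3 via $4$ and $7$, so inserting this short $6\Rightarrow 1$ argument repairs the proof.

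A smaller point: your justification of $7\Rightarrow 4$ (``by $\Ker$-idempotence the two cokernels coincide exactly when $\varepsilon_{\Ker f}$ is cofaithful'') is not really what $\Ker$-idempotence says. A cleaner route is: $l_f$ an equivalence makes $\hat e_f\simeq l_f e_f$ fully cofaithful, hence $\Coker\hat e_f\simeq 0$; but by the dual of Lemma~\ref{kerkeromegcodom} one has $\Coker\hat e_f\simeq\Sigma\Ker f$, so $\Ker f$ is connected by the dual of Proposition~\ref{caracdispreadd}. (This is equivalent to the paper's $4\Rightarrow 6$ read backwards using the dual of Lemma~\ref{noycodomdisplfid}.)
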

		
			\begin{proof}
				{\it 1 $\Rightarrow$ 2 $\Rightarrow$ 3. }
				Condition 3 is condition 2 applied to $\kappa_f$ and $\zeta_f$,
				whereas condition 2 is a special case of condition 1.
				
				{\it 3 $\Leftrightarrow$ 4 $\Leftrightarrow$ 5. }
				By Proposition \ref{mufestexact}, $\mu_f$ is exact because $\C$ is good.
				Thus the previous lemma applies and gives us the equivalence between
				these properties.
				
				{\it 4 $\Rightarrow$ 6. }
				As $\C$ is 2-Puppe-exact, we have a
				factorisation $f\simeq \hat{e}_f\circ \hat{m}_f$,
				where $\hat{e}_f$ is cofaithful and $\hat{m}_f$ is fully
				faithful.  By construction, $\hat{e}_f$ is the cokernel of $\Ker f$.
				Since $\Ker f$
				is connected, by the dual of Lemma \ref{noycodomdisplfid},
				$\hat{e}_f$ is fully cofaithful.

				{\it 6 $\Rightarrow$ 1. } Let us assume that we have $\varphi\col f\Ra me$, where $e$
				is fully cofaithful and $m$ is fully faithful and let us consider the
				situation of diagram \ref{deffull}. Since $m$ is fully faithful,
				there exists $\gamma\col eu_0\Ra eu_1$ such that
				$\alpha = \varphi^{-1} u_1\circ m\gamma\circ\varphi u_0$ and, since $e$
				is fully cofaithful, there exists
				$\delta\col v_0m\Ra v_1m$ such that $\beta= v_1\varphi^{-1}\circ\delta e
				\circ v_0\varphi$.  Then the two sides of equation \ref{deffull}
				are equal to the composite of the following diagram.
				\begin{xym}\xymatrix@=30pt{
					&A\ar[rr]^f\ar[dr]_e\rrtwocell\omit\omit{_<3.3>\varphi}
					&&B\ar[dr]^{v_0}
					\\ X\ar[ur]^{u_0}\ar[dr]_{u_1}\rrtwocell\omit\omit{\gamma}
					&&I\ar[ur]_m\ar[dr]^m\rrtwocell\omit\omit{\delta}
					&&Y
					\\ &A\ar[rr]_f\ar[ur]^e\rrtwocell\omit\omit{_<-3.3>\;\;\;\;\,\varphi^{-1}}
					&&B\ar[ur]_{v_1}
				}\end{xym}

				{\it 7 $\Rightarrow$ 6. } If the arrow $l_f$ of diagram \ref{diaglf}
				is an equivalence, then $f$ factors as the fully cofaithful arrow
				$l_f e_f$ followed by the fully faithful arrow $\hat{m}_f$.
				
				{\it 6 $\Rightarrow$ 7. } Let us assume that $f$ factors as
				$A\overset{g}\ra I\overset{h}\ra B$, with $f\simeq hg$,
				$g$ fully cofaithful and $h$ fully faithful.
				Since the factorisation of $f$ as a
				cofaithful arrow followed by a fully faithful arrow is unique,
				we have an equivalence $l_1\col I\ra\impl f$.  And since the factorisation
				of $f$ as a fully cofaithful arrow followed by a faithful arrow is
				unique, we have an equivalence $l_2\col I\ra\im f$.  We can check that
				$l_1\simeq l_fl_2$. So $l_f$ is an equivalence.
								
				{\it 4 $\Rightarrow$ 8. }
				Condition 4 implies that $\pi_0 f$ is a monomorphism: 
				if $\Ker f$ is connected, by the dual of Proposition \ref{caracdispreadd},
				$\pi_0\Ker f\simeq 0$; the comparison arrow
				$a_f\col \pi_0\Ker f\ra\Ker\pi_0 f$ has thus a zero domain;
				since it is an epimorphism (for $\C$
				is good), its codomain is also zero and 
				$\pi_0 f$ is a monomorphism.  Dually, condition
				5 (which is equivalent to condition 4) implies that $\pi_1 f$ is an
				epimorphism.
				
				{\it 8 $\Rightarrow$ 4. } We construct the following exact sequence,
				where the left part is the left part of the lower row of diagram
				\ref{diagpourmuex}, and the right part is the right part of
				the upper row of this diagram.  The arrow $\Omega B\ra\pi_0 Kf$
				is the composite $\Omega B\ra \Omega Qf
				\xrightarrow{\Omega\bar{\mu}_f^{-1}}\pi_0 Kf$.
				\begin{xym}\xymatrix@!@C=-14pt@R=-10pt{
					&&&&&&&&0\ar[dr]&&0
					\\ &&&&&&&&&0\ar[ur]\ar[dr]
					\\0\ar[rr]
					&&{\Omega Kf}\ar[rr]
					&&\Omega A\ar[rr]^{\Omega f}
					&&\Omega B\ar[rr]\ar[dr]
					&&{\pi_0 Kf}\ar[rr]\ar[ur]
					&&\pi_0 A\ar[rr]^{\pi_0 f\;}
					&&\pi_0 B\ar[rr]
					&&{\pi_0 Qf}\ar[rr]
					&&0
					\\ &&&&&&&0\ar[dr]\ar[ur]
					\\ &&&&&&0\ar[ur]&&0
				}\end{xym}
				
				Since the sequence is exact at $\Omega B$ and since
				$\Omega f$ is an epimorphism in $\DisC$
				(because $\Sigma\Omega f=\pi_1 f$ is an epimorphism in $\ConC$), the image
				of the arrow $\Omega B\ra\pi_0 Kf$ is $0$. In the same way, since the sequence
				is exact at $\pi_0 A$ and since
				$\pi_0 f$ is a monomorphism, the image of the arrow $\pi_0Kf\ra\pi_0A$
				is $0$.  Finally, since the sequence is exact at $\pi_0 Kf$,
				the oblique sequence is exact, and $\pi_0\Ker f\simeq 0$.
			\end{proof}

	Without assuming that $\C$ is good, we
	can prove characterisations of faithfulness and cofaithfulness
	corresponding to points 4, 5 and 8 of the previous proposition.
	
	\begin{pon}\label{equfid}
		Let $\C$ be a 2-Puppe-exact $\Gpdp$-category and $A\overset{f}\ra B$ be an arrow
		in $\C$. The following conditions are equivalent:
		\begin{enumerate}
			\item $f$ is faithful;
			\item $\Ker f$ is discrete;
			\item $\pi_1 f$ is a monomorphism in $\ConC$.
		\end{enumerate}
	\end{pon}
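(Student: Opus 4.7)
The plan is to chain together known characterisations of faithfulness and discreteness through the object $\Pip f = \Omega\Ker f$.

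First I would reduce condition 1 to a statement about $\Omega\Ker f$. By Proposition \ref{fidfidzer}, in a 2-Puppe-exact $\Gpdp$-category an arrow is faithful if and only if it is 0-faithful, and by Proposition \ref{pepclaszfid} this is in turn equivalent to $\Pip f\simeq 0$. Since in a $\Gpdp$-category with all kernels and cokernels the pip is computed as $\Pip f = \Omega\Ker f$ (the construction just before Remark \ref{remfactregpepkerraccoker}), condition 1 is equivalent to $\Omega\Ker f\simeq 0$.

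Next I would prove 1 $\Leftrightarrow$ 2. By Proposition \ref{caracdispreadd}, $\Ker f$ is discrete if and only if $\pi_1\Ker f\simeq 0$, i.e.\ $\Sigma\Omega\Ker f\simeq 0$. Now $\Omega\Ker f$ is a discrete object (since the arrow to $0$ out of any $\Omega C$ is faithful, as noted right after diagram \ref{adjsigmomeg}), and by the equivalence $\DisC\simeq\ConC$ of diagram \ref{diagequcondis} the functor $\Sigma$ reflects zero objects on discrete objects. Hence $\Sigma\Omega\Ker f\simeq 0$ if and only if $\Omega\Ker f\simeq 0$, establishing 1 $\Leftrightarrow$ 2.

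For 2 $\Leftrightarrow$ 3 I would use that $\pi_1\col\C\ra\ConC$ is right adjoint to the inclusion and therefore preserves kernels. So $\pi_1\Ker f\simeq\Ker(\pi_1 f)$ in $\ConC$. Since $\ConC$ is Puppe-exact (Theorem \ref{puppepuppe}), $\pi_1 f$ is a monomorphism in $\ConC$ exactly when its kernel vanishes, i.e.\ when $\pi_1\Ker f\simeq 0$. By Proposition \ref{caracdispreadd} this is equivalent to $\Ker f$ being discrete, completing the chain.

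The argument is essentially bookkeeping — the only point requiring care is the chain $\Omega\Ker f\simeq 0\Leftrightarrow\pi_1\Ker f\simeq 0$, which relies on $\Omega\Ker f$ being already discrete so that the $\Sigma\adj\Omega$ equivalence between $\DisC$ and $\ConC$ can be invoked. Nothing here needs the "goodness" hypothesis, which is consistent with the proposition being stated merely for 2-Puppe-exact $\Gpdp$-categories.
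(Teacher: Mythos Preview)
Your proof is correct and follows essentially the same route as the paper's: both reduce condition 1 to $\Omega\Ker f\simeq 0$ via the identification $\Pip f=\Omega\Ker f$, then link this to conditions 2 and 3 through $\pi_1\Ker f\simeq\Ker\pi_1 f$ (using that $\pi_1$ is a right adjoint) and the characterisation of discreteness from Proposition \ref{caracdispreadd}. The paper's proof is just more compressed; your version spells out explicitly why $\Omega\Ker f\simeq 0\Leftrightarrow\Sigma\Omega\Ker f\simeq 0$ via the equivalence $\DisC\simeq\ConC$, which the paper leaves implicit.
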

	
		\begin{proof}
			The arrow $f$ is faithful if and only if $\Omega\Ker f\equiv
			\Pip f\simeq 0$, which is the case if and only if 
			$ \Ker\pi_1 f\simeq\pi_1\Ker f\simeq 0$.  On the one hand, by Proposition
			\ref{caracdispreadd}, $\pi_1\Ker f\simeq 0$ if and only if
			 $\Ker f$ is discrete.  On the other hand, $\Ker \pi_1f\simeq 0$
			 if and only if $\pi_1 f$ is a monomorphism in $\ConC$.
		\end{proof}

	By combining the two previous propositions, we get a similar characterisation
	for fully faithful arrows (which are the full and faithful arrows
	by Proposition \ref{plmentfideplpamentfid}).

	\begin{pon}\label{equfulldddfid}
		Let $\C$ be a good 2-Puppe-exact $\Gpdp$-category and $A\overset{f}\ra B$
		be an arrow in $\C$. The following conditions are equivalent:
		\begin{enumerate}
			\item $f$ is fully faithful;
			\item $\Ker f\simeq 0$;
			\item $\pi_0 f$ is a monomorphism in $\DisC$
				and $\pi_1 f$ is an isomorphism in $\ConC$.
		\end{enumerate}
	\end{pon}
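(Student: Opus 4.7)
The plan is to deduce this directly by combining the three preceding propositions, which together already cover all the work. The equivalence $1 \Leftrightarrow 2$ requires no ``good'' hypothesis: by Proposition \ref{fidfidzer} (which rests on 2-Puppe-exactness) an arrow is fully faithful iff it is fully 0-faithful, and by Proposition \ref{claspropker} this in turn is equivalent to $\Ker f \simeq 0$. So this part I would dispatch in one line.

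For the remaining equivalence between $1$ and $3$, the strategy is to use Proposition \ref{plmentfideplpamentfid} to split ``fully faithful'' as ``full $+$ faithful'' and then read each half off from one of the two characterisations already proved. Concretely, for $1 \Rightarrow 3$: full gives, by Proposition \ref{caracfullbondpex}(8), that $\pi_0 f$ is a monomorphism in $\DisC$ and $\pi_1 f$ is an epimorphism in $\ConC$; faithful gives, by Proposition \ref{equfid}(3), that $\pi_1 f$ is a monomorphism in $\ConC$. Combining the two, $\pi_1 f$ is simultaneously a monomorphism and an epimorphism in the Puppe-exact (in fact abelian) category $\ConC \simeq \DisC$ provided by Theorem \ref{puppepuppe}, hence an isomorphism.

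Conversely, for $3 \Rightarrow 1$: an isomorphism in $\ConC$ is in particular a monomorphism, so Proposition \ref{equfid} gives faithfulness of $f$; and it is also an epimorphism, so together with the hypothesis that $\pi_0 f$ is a monomorphism, Proposition \ref{caracfullbondpex}(8) gives fullness of $f$. Proposition \ref{plmentfideplpamentfid} then combines these into ``fully faithful''.

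There is no real obstacle, since all the substantive work (the identification of full arrows through $\mu_f$ and the fact that $\mu_f$ is exact in a good 2-Puppe-exact $\Gpdp$-category) has already been done in Propositions \ref{mufestexact}, \ref{caracfullbondpex}, and \ref{equfid}. The only point worth double-checking is the passage ``mono $+$ epi $\Rightarrow$ iso'' in $\ConC$, which is legitimate because Theorem \ref{puppepuppe} ensures $\ConC$ is Puppe-exact (and in the 2-abelian case outright abelian via Corollary \ref{discconcabel}), so that a morphism which is both mono and epi factors as its cokernel-of-kernel followed by its kernel-of-cokernel, both of which are isomorphisms here.
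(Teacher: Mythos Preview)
Your proof is correct and follows exactly the approach the paper indicates: combine Propositions \ref{caracfullbondpex} and \ref{equfid} via the decomposition fully faithful $=$ full $+$ faithful from Proposition \ref{plmentfideplpamentfid}, with the mono $+$ epi $\Rightarrow$ iso step in $\ConC$ justified by Theorem \ref{puppepuppe}. The only minor difference is that you handle $1\Leftrightarrow 2$ directly via Propositions \ref{fidfidzer} and \ref{claspropker}, whereas the paper implicitly obtains it from $\Ker f$ being both connected (Proposition \ref{caracfullbondpex}(4)) and discrete (Proposition \ref{equfid}(2)); both routes are immediate.
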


	The previous proposition and its dual tell us that $\pi_0\col\C\ra\DisC$
	makes invertible fully cofaithful arrows and $\pi_1\col\C\ra\ConC$
	makes invertible fully faithful arrows:
	\begin{align}\stepcounter{eqnum}
		\pi_0(\PlCofid)&\incl\Iso,\\ \stepcounter{eqnum}
		\pi_1(\PlFid)&\incl\Iso.
	\end{align}
	We can go further: $\DisC$ is the $\Gpd$-category of fractions of $\C$
	for the fully cofaithful arrows and $\ConC$ is the $\Gpd$-category of fractions of
	$\C$ for the fully faithful arrows.  Let us recall first the definition
	of $\Gpd$-categories of fractions \cite{Pronk1996a}.
	
	\begin{df}\index{fractions, $\Gpd$-category of}%
	\index{Gpd-category@$\Gpd$-category!of fractions}
		Let $\C$ be a $\Gpd$-category and $\Sigma$ be a full sub-$\Gpd$-category
		of $\flc$. A \emph{$\Gpd$-category of fractions
		for $\Sigma$} consists of a $\Gpd$-category $\C[\Sigma^{-1}]$ together with a
		$\Gpd$-functor $P\col\C\ra\C[\Sigma^{-1}]$ such that, for every $f\in\Sigma$,
		$Pf$ is an equivalence, satisfying the following universal property:
		for every $\Gpd$-category $\D$, the $\Gpd$-functor
		\begin{eqn}
			-\circ P\col [\C[\Sigma^{-1}],\D]\ra[\C,\D]_{\Sigma^{-1}}
		\end{eqn}
		is an equivalence
		(where $[\C,\D]_{\Sigma^{-1}}$ is the full sub-$\Gpd$-category of $[\C,\D]$
		whose objects are the $\Gpd$-functors which map every $f\in\Sigma$ to an equivalence).
	\end{df}

	\begin{pon}
		If $\C$ is a good 2-Puppe-exact $\Gpd$-category, then $\pi_0\col\allowbreak\C\ra\DisC$
		is a $\Gpd$-category of fractions for $\PlCofid$
		and $\pi_1\col\C\ra\ConC$ is a $\Gpd$-category of fractions for $\PlFid$.
	\end{pon}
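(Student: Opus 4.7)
We focus on the statement about $\pi_0$; the statement about $\pi_1$ is dual. Fix a $\Gpd$-category $\D$. The strategy is to exhibit an explicit inverse (up to equivalence) to the $\Gpd$-functor
\begin{eqn*}
-\circ\pi_0\col [\DisC,\D]\ra[\C,\D]_{\PlCofid^{-1}},
\end{eqn*}
namely \emph{restriction along the inclusion} $i\col\DisC\hookrightarrow\C$. Two ingredients justify this. First, by the dual of Proposition~\ref{equfulldddfid} (together with the preceding remarks), every fully cofaithful arrow of $\C$ is inverted by $\pi_0$, so $-\circ\pi_0$ indeed lands in $[\C,\D]_{\PlCofid^{-1}}$. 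Second, by Proposition~\ref{etacoracepsrac} the unit component $\eta_C=\Coroot(\omega_C)$ is fully cofaithful for every $C\col\C$, so any $\Gpd$-functor $G\col\C\ra\D$ belonging to $[\C,\D]_{\PlCofid^{-1}}$ automatically inverts every $\eta_C$.

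The first step would be to observe that the composite $\pi_0\circ i\col\DisC\ra\DisC$ is naturally equivalent to the identity, because the unit $\eta_D$ is an equivalence for every discrete $D$ (Proposition~\ref{caracdispreadd}); this gives $F\pi_0 i\simeq F$ for every $F\col\DisC\ra\D$, hence an equivalence of one composite with the identity. The second step is the converse: for $G\col\C\ra\D$ inverting $\PlCofid$, the whiskering $G\eta\col G\Ra G\circ i\circ \pi_0$ is a $\Gpd$-natural transformation whose every component $G\eta_C$ is an equivalence in $\D$ (by ingredient two above), hence a $\Gpd$-natural equivalence; this gives $G\simeq (Gi)\pi_0$. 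Together these two steps establish essential surjectivity of $-\circ\pi_0$.

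The third step treats the hom-level. Given $F,F'\col\DisC\ra\D$, we must verify that whiskering with $\pi_0$ induces an equivalence of groupoids
\begin{eqn*}
[\DisC,\D](F,F')\ra[\C,\D](F\pi_0,F'\pi_0).
\end{eqn*}
Restriction along $i$ provides an inverse up to equivalence: starting from a $\Gpd$-natural transformation $\mu\col F\pi_0\Ra F'\pi_0$, restrict to $\mu i\col F\pi_0 i\Ra F'\pi_0 i$ and conjugate by the equivalence $\pi_0 i\simeq 1_{\DisC}$; naturality of $\eta$ and the fact that its components are inverted by $F\pi_0$ and $F'\pi_0$ show that this restriction-then-conjugation is quasi-inverse to whiskering. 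The same pattern, one dimension up, handles modifications, using that modifications between $\Gpd$-natural transformations of $\Gpd$-functors inverting $\PlCofid$ are in bijection with their restrictions along $i$.

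The main obstacle, as usual with $2$-categorical fractions, is bookkeeping rather than substance: one must verify coherence of the several natural isomorphisms $\pi_0 i\simeq 1_{\DisC}$, $G\simeq Gi\pi_0$, and the triangle identities relating them, so that the pseudo-inverse on hom-groupoids is genuinely functorial and satisfies the triangle identities of an equivalence of $\Gpd$-categories. Once this is done, the $2$-categorical universal property in the definition of $\Gpd$-category of fractions is verified. The dual argument, using the adjunction $i\adj\pi_1$, the counit $\varepsilon_C=\Root(\sigma_C)$ which is fully faithful, and the fact that $\pi_1$ inverts fully faithful arrows, shows that $\pi_1\col\C\ra\ConC$ is a $\Gpd$-category of fractions for $\PlFid$.
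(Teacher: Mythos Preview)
Your proposal is correct and follows essentially the same approach as the paper: use restriction along $i\col\DisC\hookrightarrow\C$ as a quasi-inverse to $-\circ\pi_0$, with the two required natural equivalences coming from $\pi_0 i\simeq 1_{\DisC}$ (via Proposition~\ref{caracdispreadd}) and from $G\eta$ being componentwise an equivalence (via Proposition~\ref{etacoracepsrac}). The paper's proof is more economical in that it stops after exhibiting these two natural equivalences between the composites and the identities; your ``third step'' on hom-groupoids is then automatic, since a $\Gpd$-functor with a quasi-inverse is already an equivalence at the level of hom-groupoids.
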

	
		\begin{proof}
			We prove the property for $\pi_0$; the proof for $\pi_1$ is dual.
			First, the dual of Proposition \ref{equfulldddfid} shows that $\pi_0$
			maps fully cofaithful arrows to equivalences in $\DisC$. 
			Next, let $\D$ be a $\Gpd$-category.  We have two $\Gpd$-functors
			\begin{xym}\xymatrix@=40pt{
				{[\DisC,\D]}\ar@<-1mm>[r]_-{-\circ\pi_0}
				&{[\C,\D]_{\PlCofid^{-1}}.}\ar@<-1mm>[l]_-{-\circ i}
			}\end{xym}
			We check that they are inverse to each other.  First, as
			$\pi_0 i\simeq 1_{\DisC}$, $(-\circ i)(-\circ\pi_0)\simeq 1$.  Next, we have
			a $\Gpd$-natural transformation $1\Ra (-\circ\pi_0)(-\circ i)$
			which is defined at $F\col[\C,\D]_{\PlCofid^{-1}}$
			by $F\eta\col F\Ra Fi\pi_0$.  Now, since $\eta$
			is a coroot (Proposition \ref{etacoracepsrac}),
			$\eta$ is fully cofaithful.
			So, since $F$ maps fully cofaithful arrows to equivalences,
			$F\eta$ is an equivalence. Thus $-\circ \pi_0$ is an equivalence.
		\end{proof}

	We conclude this chapter by giving a refined version of the short 5 lemma,
	using the same method as in the proof of the short 5 lemma for symmetric 2-groups
	by Dominique Bourn and Enrico
	Vitale \cite[proposition 2.8]{Bourn2002a}.
	
	\begin{pon}\label{petitlemmcinqraffine}\index{lemma!short five}%
	\index{short five lemma}\index{five, short … lemma}
		Let be a good 2-Puppe exact $\Gpdp$-category.
		In the situation of diagram
		\ref{diagpetitlemmcinq}, where $\eta' a\circ g'\varphi\circ\psi f=c\eta$ and where
		the rows are extensions, we have the following properties:
		\begin{enumerate}
			\item if $a$ and $c$ are faithful, then $b$ is faithful;
			\item if $a$ and $c$ are full, then $b$ is full;
			\item if $a$ and $c$ are cofaithful, then $b$ is cofaithful.
		\end{enumerate}
	\end{pon}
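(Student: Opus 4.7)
The strategy is to reduce each of the three assertions to standard diagram chasing in the Puppe-exact category $\DisC\simeq\ConC$ (Theorem~\ref{puppepuppe}), by applying the functors $\pi_0$ and $\Omega$, which in a good 2-Puppe-exact $\Gpdp$-category preserve exact sequences. The hypothesis that $\C$ is \emph{good} (rather than merely 2-Puppe-exact) is what makes this reduction available.

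First I would translate the hypotheses into abelian-categorical data using the characterisations already established. By Proposition~\ref{equfid}, an arrow $x$ of $\C$ is faithful iff $\Omega x$ is mono in $\DisC$; dually, $x$ is cofaithful iff $\pi_0 x$ is epi in $\DisC$; and by condition~8 of Proposition~\ref{caracfullbondpex}, $x$ is full iff $\pi_0 x$ is mono in $\DisC$ and $\pi_1 x$ is epi in $\ConC$, the latter equivalent (via $\Sigma\dashv\Omega$ and Proposition~\ref{caracdispreadd}) to $\Omega x$ being epi in $\DisC$.

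Next, apply $\pi_0$ and $\Omega$ to both rows of diagram~\ref{diagpetitlemmcinq}. Since $\C$ is good, Proposition~\ref{mufestexact} ensures that $\mu_f$ is exact for each arrow $f$, and consequently each extension $0\to A\to B\to C\to 0$ yields the six-term exact sequence~(\ref{suitdiscextens}) in $\DisC$:
\begin{equation*}
0\to\Omega A\to\Omega B\to\Omega C\to\pi_0 A\to\pi_0 B\to\pi_0 C\to 0.
\end{equation*}
Functoriality of the Puppe sequence (Proposition~\ref{sequencePuppe}) and naturality of the comparison $\bar\mu$ appearing in diagram~\ref{diagpourmuex} produce a commutative ladder between the six-term sequences associated to the two rows, with vertical arrows $\Omega a,\Omega b,\Omega c,\pi_0 a,\pi_0 b,\pi_0 c$. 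Then the three conclusions follow from the $4$-lemma in the Puppe-exact category $\DisC$: for (1), $\Omega a$ and $\Omega c$ mono force $\Omega b$ mono via the left-exact initial segment; (3) is strictly dual on the terminal segment; for (2), fullness of $a$ and $c$ gives $\pi_0 a,\pi_0 c$ mono and $\Omega a,\Omega c$ epi, and two applications of the $4$-lemma —~to $\Omega C\to\pi_0 A\to\pi_0 B\to\pi_0 C$ and to $\Omega A\to\Omega B\to\Omega C\to\pi_0 A$~— yield respectively $\pi_0 b$ mono and $\Omega b$ epi, so that $b$ is full by Proposition~\ref{caracfullbondpex} again.

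The main obstacle is purely bookkeeping: verifying that the connecting morphism $\Omega C\to\pi_0 A$, constructed from $\Omega\bar\mu_f^{-1}$ via diagram~\ref{diagpourmuex}, is natural with respect to morphisms of extensions, so that the middle squares of the ladder commute. This reduces, through the universal properties of the (co)kernels involved and the compatibility equation $\eta' a\circ g'\varphi\circ\psi f=c\eta$, to the analogous naturality of the snake-like connecting maps — a 2-categorical version of a standard homological argument, but the only nontrivial verification in the proof.
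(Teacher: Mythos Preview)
Your argument is correct, and for property~(2) it is exactly the paper's proof: pass to the six-term exact sequence~\eqref{suitdiscextens} in $\DisC$, use Proposition~\ref{caracfullbondpex} to translate fullness of $a$ and $c$ into mono/epi conditions on $\pi_0$ and $\Omega$, apply the $4$-lemma in the Puppe-exact category $\DisC$ (Theorem~\ref{puppepuppe}), and translate back. The naturality of the connecting map, which you rightly flag as the only real bookkeeping, is implicitly used in the paper as well.

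For properties~(1) and~(3), however, the paper takes a shorter route that bypasses $\DisC$ entirely: Lemma~\ref{petitlemmcinqtjrvrai} already proves the $0$-faithful and $0$-cofaithful versions of the short five lemma in \emph{any} $\Gpdp$-category, by a direct element-style chase; then Proposition~\ref{fidfidzer} identifies $0$-(co)faithful with (co)faithful in a 2-Puppe-exact $\Gpdp$-category. So (1) and (3) do not require goodness at all, nor the six-term sequence, nor the $4$-lemma. Your uniform ladder-plus-$4$-lemma approach is conceptually pleasing and does work (the left-exactness of $\Omega$ already suffices for the initial segment you use in~(1)), but it imports more machinery than needed. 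Goodness is genuinely used only for~(2), where the connecting map $\Omega C\to\pi_0 A$ is essential.
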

	
		\begin{proof}
			Properties 1 and 3 are immediate consequences of Lemma
			\ref{petitlemmcinqtjrvrai} since, in a 2-Puppe-exact $\Gpdp$-category,
			(co)faithful and 0-(co)faithful arrows coincide.
			
			For property 2, let us construct the exact sequence \ref{suitdiscextens}
			starting from the two rows of diagram \ref{diagpetitlemmcinq}.
			We get the following diagram in $\DisC$.
			\begin{xym}\newdir{ >}{{}*!/-5pt/\dir{>}}\xymatrix@=25pt{
				0\ar[r]
				&\Omega A\ar[r]^{\Omega f}\ar@{->>}[d]_{\Omega a}
				&\Omega B\ar[r]^{\Omega g}\ar[d]_{\Omega b}
				&\Omega C\ar[r]\ar@{->>}[d]_{\Omega c}
				&\pi_0 A\ar[r]^{\pi_0 f}\ar@{ >->}[d]^{\pi_0 a}
				&\pi_0 B\ar[r]^{\pi_0 g}\ar[d]^{\pi_0 b}
				&\pi_0 C\ar[r]\ar@{ >->}[d]^{\pi_0 c}
				&0
				\\ 0\ar[r]
				&\Omega A'\ar[r]_{\Omega f'}
				&\Omega B'\ar[r]_{\Omega g'}
				&\Omega C'\ar[r]
				&\pi_0 A'\ar[r]_{\pi_0 f'}
				&\pi_0 B'\ar[r]_{\pi_0 g'}
				&\pi_0 C'\ar[r]
				&0
			}\end{xym}
			If $a$ and $c$ are full, by Proposition \ref{caracfullbondpex},
			$\pi_0 a$ and $\pi_0 c$ are monomorphisms 
			and $\pi_1 a$ and $\pi_1 c$ are epimorphisms (thus, by the equivalence
			$\Sigma\adj\Omega$, $\Omega a$ and $\Omega c$ are epimorphisms).
			
			Since $\DisC$ is Puppe-exact, the 4 lemma holds in it
			(see \cite[Proposition 13.5.4]{Schubert1972a}).  By applying this lemma
			and its dual to the above diagram, we have that $\pi_0 b$ is
			a monomorphism and $\Omega b$ is an epimorphism (thus $\pi_1 b$
			is an epimorphism).  By applying Proposition
			\ref{caracfullbondpex} again, we can conclude that $b$ is full.
		\end{proof}

\chapter{Symmetric 2-groups and additive $\Gpd$-categories}\label{CGS}

\begin{quote}{\it This chapter is devoted to the study of additivity in dimension 2. 
	First, we define symmetric 2-groups and 
	$\Gpd$-categories enriched in symmetric 2-groups (or preadditive $\Gpd$-categories), as well as additive $\Gpd$-categories.
	We prove that 2-abelian $\Gpd$-categories are additive (Corollary \ref{2abPexadd}),
	regular (Proposition \ref{gpdcatdabreg}) and abelian in the sense of
	Chapter 3 (Corollary \ref{twoabimplab}),
	which implies that all diagram lemmas of Chapter 3
	(among which the long exact sequence of homology) hold in a 2-abelian
	$\Gpd$-category.
}\end{quote}

\section{Symmetric 2-groups and preadditive $\Gpd$-categories}
\subsection{Symmetric 2-monoids}\label{defdgrab}

	Let us recall first the definition of 2-monoids, which are the monoidal groupoids\index{monoidal!groupoid}, and of their morphisms and 2-morphisms.

	\begin{df}\index{2-monoid}
		A \emph{2-monoid} consists of
		\begin{enumerate}
			\item a groupoid $\G$,
			\item a functor $-\tens - \col \G\times\G\ra\G$,
			\item an object $I\col \G$,
			\item a transformation $a_{A,B,C}\col (A\tens B)\tens C
				 \ra A\tens(B\tens C)$ natural in $A,B,C$,
			\item transformations $l_A\col I\tens A \ra A$ and $r_A\col A\tens I\ra A$
				natural in $A$,
		\end{enumerate}
		such that the following diagrams commute.
		\begin{xym}\label{axmonassoc}\xymatrix@C=15pt@R=60pt{
			&(A\tens B)\tens(C\tens D)\ar[dr]^-{a_{A,B,C\tens D}}
			\\ ((A\tens B)\tens C)\tens D\ar[ur]^-{a_{A\tens B,C,D}}
			&& A\tens (B\tens(C\tens D))
			\\ {}\save[]+<1.5cm,0cm>*{(A\tens (B\tens C))\tens D\,}="a"
				\ar@{<-}[u]^-{a_{ABC}\tens D}\restore
			&&{}\save[]+<-1.5cm,0cm>*{\,A\tens ((B\tens C)\tens D)}="b"
				\ar[u]_-{A\tens a_{BCD}}\restore
				\ar"a";"b"_-{a_{A,B\tens C,D}}
		}\end{xym}
		\begin{xym}\label{axmonunit}\xymatrix{
			(A\tens I)\tens B\ar[rr]^{a_{AIB}}\ar[dr]_{r_A\tens B}
			&& A\tens(I\tens B)\ar[dl]^{A\tens l_B}
			\\ &A\tens B
		}\end{xym}
	\end{df}
	
	As a general rule, in this chapter, we omit the subscripts and superscripts
	of natural transformations when they are clearly determined by the context.
	
	\begin{df}\index{monoidal!functor}\index{functor!monoidal}
		Let $\G$, $\H$ be 2-monoids.
		A \emph{monoidal functor} from $\G$ to $\H$ consists of:
		\begin{enumerate}
			\item a functor $F\col \G\ra\H$,
			\item a transformation	$\varphi^F_{AB}\col FA\tens FB\ra F(A\tens B)$
				natural in $A$ and $B$ (we usually omit the superscript if it 
				can be implied from the context),
			\item a morphism $\varphi_0\col I\ra FI$,
		\end{enumerate}
		such that the following diagrams commute.
		\begin{xym}\label{axfoncmonass}\xymatrix@=50pt{
			{}\save[]+<1.5cm,0cm>*{(FA\tens FB)\tens FC\,}="m"
				\ar[d]_-{\varphi\tens 1}\restore
			&&{}\save[]+<-1.5cm,0cm>*{\,FA\tens(FB\tens FC)}="n"
				\ar[d]^-{1\tens\varphi}\restore
				\ar "m";"n" ^-{a}
			\\ F(A\tens B)\tens FC
			&& FA\tens F(B\tens C)
			\\ {}\save[]+<1.5cm,0cm>*{F((A\tens B)\tens C)\,}="a"
				\ar@{<-}[u]^-{\varphi}\restore
			&&{}\save[]+<-1.5cm,0cm>*{\,F(A\tens (B\tens C))}="b"
				\ar@{<-}[u]_-{\varphi}\restore
				\ar "a";"b"_-{Fa}
		}\end{xym}
		\begin{xym}\label{axfoncmonunite}\xymatrix@=40pt{
			FA\tens I\ar[r]^-{r_{FA}}\ar[d]_{1\tens\varphi_0}
			&FA
			\\ FA\tens FI\ar[r]_-{\varphi_{AI}}
			&F(A\tens I)\ar[u]_{Fr_A}
		}\;\;\;
		\xymatrix@=40pt{
			I\tens FA\ar[r]^-{l_{FA}}\ar[d]_{\varphi_0\tens 1}
			&FA
			\\ FI\tens FA\ar[r]_-{\varphi_{IA}}
			&F(I\tens A)\ar[u]_{Fl_A}
		}\end{xym}
	\end{df}
	
	\begin{df}\index{natural transformation!monoidal}%
	\index{monoidal!natural transformation}
		Let $F,G\col \G\ra\H$ be two monoidal functors between 2-mo\-no\-ids.
		A \emph{monoidal natural transformation} from $F$ to $G$ is a
		natural transformation $\alpha\col F\Ra G$ such that the following diagrams
		commute.
		\begin{xym}\label{axtnmonass}\xymatrix@=40pt
			{FA\tens FB\ar[r]^{\varphi^F_{A,B}}
				\ar[d]_{\alpha_A\tens\alpha_B}
			&F(A\tens B)\ar[d]^{\alpha_{A\tens B}}
			\\ GA\tens GB\ar[r]_{\varphi^G_{A,B}}
			&G(A\tens B)}
		\end{xym}
		\begin{xym}\label{axtnmonunit}\xymatrix@R=20pt@C=40pt{
			&FI\ar[dd]^{\alpha_I}
			\\ I\ar[ur]^{\varphi_0}\ar[dr]_{\varphi_0}
			\\ &GI
		}\end{xym}
	\end{df}
	
	\begin{pon}\label{pondefdMon}\index{2-mon@$\dMon$}
		The 2-monoids, monoidal functors between them and mo\-no\-i\-dal natural transformations
		between them form a $\Gpdp$-category, denoted by $\dMon$.
	\end{pon}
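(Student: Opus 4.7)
The plan is to assemble $\dMon$ level by level, starting from the underlying $\Gpd$-category structure and then verifying that it is naturally pointed.

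First I would define the composition of morphisms. For monoidal functors $F\col\G\ra\H$ and $G\col\H\ra\K$, I take $(G\circ F)(A) \eqdef G(FA)$ on objects and arrows, together with the structural data
\begin{equation*}
\varphi^{GF}_{AB} \eqdef G\varphi^F_{AB} \circ \varphi^G_{FA,FB}
\quad\text{and}\quad
\varphi^{GF}_0 \eqdef G\varphi^F_0 \circ \varphi^G_0.
\end{equation*}
Verifying diagrams (\ref{axfoncmonass}) and (\ref{axfoncmonunite}) for $G\circ F$ reduces, by functoriality of $G$, to pasting together the corresponding diagrams for $F$ and $G$; this is a routine diagram chase. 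The identity functor $1_\G$, equipped with $\varphi = 1$ and $\varphi_0 = 1_I$, is clearly monoidal. For natural transformations, vertical composition is inherited from $\Gpd$, and one checks that axioms (\ref{axtnmonass}) and (\ref{axtnmonunit}) are preserved pointwise. For horizontal composition $\beta * \alpha$ of monoidal natural transformations $\alpha\col F\Ra F'$ and $\beta\col G\Ra G'$, one verifies compatibility with $\varphi^{GF}$ and $\varphi^{GF}_0$ by decomposing each square into a naturality square for $\beta$ and the image under $G'$ of the corresponding square for $\alpha$, and similarly for the unit axiom.

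Next I would check that the $\Gpd$-category data of $\dMon$ (associators $\alpha$, left and right unitors $\rho$, $\lambda$) are themselves monoidal natural transformations; since these are identities on components of composites of monoidal functors, compatibility with the $\varphi$s amounts to coherence identities that follow automatically from the definition of the composite $\varphi$. The axioms \ref{axbicatassoc} and \ref{axbicatunit} are then inherited from $\Gpd$.

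Finally, for the pointed structure, I would take $0^\G_\H\col\G\ra\H$ to be the constant functor at $I$, with $\varphi^0_{AB} \eqdef l_I\col I\tens I\ra I$ (equivalently $r_I$; they agree by the coherence theorem for monoidal categories, or one checks this directly) and $\varphi^0_0 \eqdef 1_I$. Axiom (\ref{axfoncmonass}) for $0$ becomes the triangle (\ref{axmonunit}) in $\H$, and (\ref{axfoncmonunite}) reduces to the compatibility of $r_I, l_I$ with themselves, which is standard. The structural 2-cells $\varphi_0^h\col 0\Ra h\circ 0$ and $\psi^0_g\col 0\Ra 0\circ g$ are given by $\varphi^h_0\col I\ra hI$ on the component at $A$ (for the first) and by $1_I$ (for the second, since both sides are constantly $I$); monoidality of these and of $\alpha$, $\rho$, $\lambda$ with respect to the $0$ argument is the core bookkeeping.

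I expect the main obstacle to be the monoidality of horizontal composition of monoidal natural transformations, together with checking that the structural 2-cells ($\alpha$, $\rho$, $\lambda$ and the pointedness transformations $\varphi_0^h$, $\psi^0_g$) are monoidal: these are not hard individually but require carefully matching several hexagons and triangles involving the $\varphi$s of two or three functors at once. All other verifications are direct applications of the axioms of monoidal functors and naturality. Throughout I would use the coherence theorem to suppress associators and unitors in $\G$ and $\H$ when convenient, which makes the calculations tractable.
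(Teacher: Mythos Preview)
Your proposal is correct and follows essentially the same approach as the paper: both define the composite $(G,\varphi^G,\varphi_0^G)\circ(F,\varphi^F,\varphi_0^F)$ via $\varphi^{GF}_{A,B}=G\varphi^F_{A,B}\circ\varphi^G_{FA,FB}$ and $\varphi_0^{GF}=G\varphi^F_0\circ\varphi^G_0$, take the ordinary compositions for monoidal natural transformations, and use the constant-at-$I$ functor as the zero morphism. The paper's proof is considerably terser, simply recording these definitions and leaving all the coherence verifications you outline to the reader.
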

	
		\begin{proof}
			Horizontal composition of monoidal functors is defined by
			$(G,\varphi^G,\varphi_0^G)\circ(F,\varphi^F,\varphi_0^F)
			=(GF,\varphi^{GF},\varphi_0^{GF})$, where $\varphi^{GF}_{A,B}=$
			\begin{eqn}
				GFA\tens GFB\overset{\varphi^G}\longrightarrow G(FA\tens FB)
				\xrightarrow{G\varphi^F} GF(A\tens B),
			\end{eqn}
			and $\varphi_0^{GF} = I\overset{\varphi_0^G}\longrightarrow GI\xrightarrow{G\varphi_0^F} GFI$.
			Horizontal and vertical compositions of monoidal natural transformations
			are the ordinary compositions of the underlying natural transformations.
			The zero morphism $0\col \G\ra\H$ is the constant functor
			mapping all objects to $I$ and all arrows to $1_I$.
		\end{proof}
	
	The 2-monoids are actually the one-object $\Gpd$-categories
	(Definition \ref{defgpdcat}) and the monoidal functors are the $\Gpd$-functors
	between these one-object $\Gpd$-ca\-te\-go\-ries.
	The coherence metatheorem allowing to replace
	every $\Gpd$-category by an equivalent strictly described $\Gpd$-category
	takes the following form for 2-monoids: every 2-monoid
	can be \emph{strictly described} \index{2-monoid!strictly described}%
	up to equivalence (in $\dMon$)
	 i.e.\ in such a way that
	$A\tens (B\tens C)$ coincides with $(A\tens B)\tens C$, that $A\tens I$ and $I\tens A$
	coincide with $A$, and that the transformations $a$, $l$ and $r$ are identities.
	
	In the same way, every monoidal functor $F\col \G\ra\H$ can be \emph{normally}
	described up to isomorphism
	(in $\dMon(\G,\H)$), i.e.\ in such a way that
	$FI$ coincides with $I$;
	we proceed as in $\Gpdp$ (Proposition \ref{metthcoh}).
	But it is not true that every monoidal functor can be strictly described up to
	isomorphism (an example is given by diagram \ref{exampleequpasequ}).
	
	Let us recall now the definition of braided and symmetric 2-monoids and their
	morphisms \cite{Joyal1986a,Joyal1993a}.
		
	\begin{df}\index{braided 2-monoid}\index{2-monoid!braided}
		A \emph{braided} 2-monoid is a 2-monoid $\G$ equipped with
		\begin{enumerate}
			\item[6.] a transformation $c_{A,B}\col A\tens B\ra B\tens A$ natural
				in $A, B$
		\end{enumerate}
		such that the following diagrams commute.
		\begin{xym}\label{axmontresi}\xymatrix@=60pt{
			{}\save[]+<1.5cm,0cm>*{(A\tens B)\tens C\,}="m"
				\ar[d]_-{a_{ABC}}\restore
			&&{}\save[]+<-1.5cm,0cm>*{\,(B\tens A)\tens C}="n"
				\ar[d]^-{a_{BAC}}\restore
				\ar "m";"n" ^-{c_{AB}\tens C}
			\\ A\tens (B\tens C)
			&& B\tens (A\tens C)
			\\ {}\save[]+<1.5cm,0cm>*{(B\tens C)\tens A\,}="a"
				\ar@{<-}[u]^-{c_{A,B\tens C}}\restore
			&&{}\save[]+<-1.5cm,0cm>*{\,B\tens (C\tens A)}="b"
				\ar@{<-}[u]_-{B\tens c_{AC}}\restore
				\ar "a";"b"_-{a_{BCA}}
		}\end{xym}
		\begin{xym}\label{axmontresii}\xymatrix@=60pt{
			{}\save[]+<1.5cm,0cm>*{(A\tens B)\tens C\,}="m"
				\ar[d]_-{a_{ABC}}\restore
			&&{}\save[]+<-1.5cm,0cm>*{\,(B\tens A)\tens C}="n"
				\ar[d]^-{a_{BAC}}\restore
				\ar "n";"m" _-{c_{BA}\tens C}
			\\ A\tens (B\tens C)
			&& B\tens (A\tens C)
			\\ {}\save[]+<1.5cm,0cm>*{(B\tens C)\tens A\,}="a"
				\ar@{<-}[u]^-{c_{A,B\tens C}}\restore
			&&{}\save[]+<-1.5cm,0cm>*{\,B\tens (C\tens A)}="b"
				\ar[u]_-{B\tens c_{CA}}\restore
				\ar "a";"b"_-{a_{BCA}}
		}\end{xym}
		If moreover 
		\begin{eqn}\label{axmonsym}
			c_{BA}\circ c_{AB}=1_{A\tens B},
		\end{eqn}
		we call $\G$ a \emph{symmetric 2-monoid}.%
		\index{symmetric!2-monoid}\index{2-monoid!symmetric}
	\end{df}
	
	For a symmetric 2-monoid, axiom \ref{axmontresii} follows from the others.
	
	\begin{df}
		Let $\G$, $\H$ be two braided 2-monoids.  A \emph{symmetric monoidal functor}
		from $\G$ to $\H$ is a monoidal functor $(F,\varphi,\varphi_0)\col 
		\G\ra\H$ such that the following diagram commutes.
		\begin{xym}\label{axfonmonsym}\xymatrix@=40pt
			{FA\tens FB\ar[r]^{\varphi_{A,B}}
			\ar[d]_{c_{FA,FB}}
			&F(A\tens B)\ar[d]^{F(c_{A,B})}
			\\ FB\tens FA\ar[r]_{\varphi_{B,A}}
			&F(B\tens A)}
		\end{xym}
	\end{df}
	
	\begin{df}\label{defdCMon}
		The symmetric 2-monoids, symmetric monoidal functors and 
		monoidal natural transformations form a sub-$\Gpdp$-category
		of $\dMon$, denoted by $\dCMon$.\index{2-smon@$\dCMon$}
	\end{df}
	
	There is in general
	no equivalent description (in $\dCMon$) where $A\tens B$ and $B\tens A$	coincide, neither for braided 2-monoids nor for symmetric 2-monoids.

	There is a forgetful $\Gpd$-functor $U\col \dCMon\ra\Gpdp$.
	
\subsection{Symmetric bimonoidal functors}\label{secdefbimon}

	First, let us define the internal $\Hom$ of $\dCMon$.  In dimension 1,
	commutativity was necessary to define the sum of two homomorphisms
	of monoids (or of groups); in dimension 2, we need symmetry.
	
	Indeed, if $F,G\col \A\ra\B$ are monoidal functors, we would define
	their tensor product as the composite $F\tens G \eqdef$
	\begin{eqn}\label{tensfonctmon}
		\A\overset{\Delta}\longrightarrow\A\times\A\xrightarrow{F\times G}\B\times\B
		\xrightarrow{-\tens-}\B,
	\end{eqn}
	where $\Delta$ is the diagonal functor.
	But for this composite to be a monoidal functor, $-\tens-$ should be
	itself a monoidal functor.  Joyal and Street \cite{Joyal1993a}
	have proved that to give a monoidal functor structure on $-\tens -$ amounts to
	give a braiding on $\B$.  In this case, the monoidality natural transformation
	$\varphi_{(A,B),(C,D)}\col (A\tens B)\tens (C\tens D)\ra
	(A\tens C)\tens (B\tens D)$ (which we denote more simply by $\bar{c}_{ABCD}$) is given
	by the composite
	\begin{xym}\label{assocsym}\xymatrix@=20pt{
		(A\tens B)\tens (C\tens D)\ar[d]^{a_{A,B,C\tens D}}
		\\ A\tens (B\tens (C\tens D))\ar[d]^{1\tens a^{-1}_{B,C,D}}
		\\ A\tens ((B\tens C)\tens D)\ar[d]^{1\tens (c_{BC}\tens 1)}
		\\ A\tens ((C\tens B)\tens D)\ar[d]^{1\tens a_{C,B,D}}
		\\ A\tens (C\tens (B\tens D))\ar[d]^{a^{-1}_{A,C,B\tens D}}
		\\ (A\tens C)\tens (B\tens D).
	}\end{xym}
	
	So we must work in the $\Gpd$-category of braided monoidal groupoids and symmetric 
	monoidal functors.  But, in this context, we want that, for two symmetric monoidal 
	functors $F$ and $G$, the composite \ref{tensfonctmon} be also
	a \emph{symmetric} monoidal functor.  But, in order that $-\tens -$ preserves the braiding,
	Joyal and Street have shown that it is necessary and sufficient that $\B$ be symmetric
	(this is a 2-dimensional version of the characterisation of the commutativity
	of an algebraic theory by the fact that the operations are algebra morphisms).
	That is why we can define the internal $\Hom$ only for symmetric 2-monoids.
	
	\begin{df}\label{homintdcmon}
		Let $\A,\B\col \dCMon$.  The symmetric 2-monoid $[\A,\B]$ is the grou\-po\-id of
		symmetric monoidal functors $\A\ra\B$ and monoidal natural transformations
		between them, equipped with the following structure.
		\begin{enumerate}
			\item The tensor product of $F,G\col \A\ra\B$ is defined by the composite
				\ref{tensfonctmon}, i.e.\ on the objects
				by $(F\tens G)(A)=FA\tens GA$ and on the arrows by $(F\tens G)(f)
				=Ff\tens Gf$.  If $A,A'\col \A$, the natural transformation
				$\varphi^{F\tens G}_{A,A'}$ is defined (according to the definition
				of the composite of monoidal functors) by the composite:
				\begin{xym}\label{phiprodtens}\xymatrix@=20pt{
					(FA\tens GA)\tens (FA'\tens GA')\ar[d]^-{\bar{c}}
					\\ (FA\tens FA')\tens (GA\tens GA')\ar[d]^-{\varphi^F\tens\varphi^G}
					\\ F(A\tens A')\tens G(A\tens A'),
				}\end{xym}
				where $\bar{c}$ is the composite \ref{assocsym}.
				And $\varphi_0^{F\tens G}$ is defined by the composite
				$I\xrightarrow{l_I^{-1}=r_I^{-1}} I\tens I
				\xrightarrow{\varphi^F_0\tens \varphi^G_0}FI\tens GI$.
			\item The unit is the zero functor $0\col \A\ra\B$.
			\item The natural transformations $a,l,r$ are defined pointwise.
		\end{enumerate}
	\end{df}
	
	Now we define symmetric bimonoidal functors, which play the same rôle as bilinear
	functions play for abelian monoids (or groups).  This notion have been introduced,
	for stacks of symmetric 2-groups in \cite{Deligne1973a}, with the name
	\emph{biadditive functor}.
	It is possible to define a tensor product on $\dCMon$
	and to show that a symmetric bimonoidal functor
	$\G\times\H\ra\cat{K}$ is the same as a symmetric monoidal functor
	$\G\tens\H\ra\cat{K}$.
		
	\begin{df}\label{defbimon}\index{symmetric bimonoidal functor}%
	\index{functor!symmetric bimonoidal}
		Let be $\G$, $\H$, $\cat{K}\col \dCMon$.  A  \emph{symmetric
		bimonoidal} functor $F\col \G\times\H\ra\cat{K}$ is a functor
		$F\col \G\times\H\ra\cat{K}$, with isomorphisms natural in each
		variable
		\begin{align}\stepcounter{eqnum}
		\begin{split}
			\varphi_{G_1,G_2}^H &\col F(G_1,H)\tens F(G_2,H)\ra F(G_1\tens G_2,H), \\
			\psi_G^{H_1,H_2}&\col F(G,H_1)\tens F(G,H_2)\ra F(G,H_1\tens H_2),\\
			\varphi_0^H &\col I\ra F(I,H),\\
			\psi^0_G &\col I\ra F(G,I),
		\end{split}
		\end{align}
		satisfying the following conditions (conditions 3 and 3' are
		equivalent):
		\begin{enumerate}
			\item for every $G\col \G$, $F(G,-)$, with $\psi_G^{-,-}$ and $\psi_G^0$, is a
			 		symmetric monoidal functor;
			\item for every $H\col \H$, $F(-,H)$, with $\varphi^H_{-,-}$ and $\varphi^H_0$,
				is a symmetric monoidal functor;
			\item for all $G_1,G_2\col \H$, the natural transformations
				$\varphi^{-}_{G_1,G_2}\col 
				F(G_1,-)\tens F(G_2,-)\allowbreak\Ra F(G_1\tens G_2,-)$ and $\varphi^-_0\col 0\Ra F(I,-)$
				are monoidal;
			\item[3'.] for all $H_1,H_2\col \H$, the natural transformations
				$\psi_{-}^{H_1,H_2}\col 	F(-,H_1)\tens F(-,H_2)\allowbreak\Ra F(-,H_1\tens H_2)$
				and $\psi_{-}^0\col 0\Ra F(-,I)$ are monoidal.
		\end{enumerate}
	\end{df}

	The naturality in $G$ of $\psi$ and $\psi^0$ is equivalent
	to the fact that, for every arrow $g$ in $\G$, $F(g,-)$ is a monoidal transformation,
	and the naturality in $H$ of $\varphi$ and $\varphi_0$ is
	equivalent to the fact that, for each arrow $h$ in $\cat{H}$, $F(-,h)$ is a
	monoidal transformation.
	
	Conditions 3 and 3' express the compatibility between
	$\varphi$ and $\psi$.  For example, axiom \ref{axtnmonass} for
	$\psi^{H_1,H_2}_{-}$ can be expressed by
	the commutativity of the following diagram, for each $G_1,G_2\col \G$ and $H_1,H_2\col \H$.
	The left column is the “$\varphi$”
	of the tensor product $F(-,H_1)\tens F(-,H_2)$, as defined in
	diagram \ref{phiprodtens}.  The commutativity of this diagram is equivalent to
	axiom \ref{axtnmonass} for $\varphi^{-}_{G_1,G_2}$, since $\bar{c}^{-1}=\bar{c}$.
	\begin{xym}\xymatrix@R=40pt@C=60pt{
		{\txt{$(F(G_1,H_1)\tens F(G_1,H_2))$
			\\$\tens (F(G_2,H_1)\tens F(G_2,H_2))$}}
			\ar[r]^-{\psi_{G_1}^{H_1,H_2}\tens\psi_{G_2}^{H_1,H_2}}
			\ar[d]_-{\bar{c}}
		&{\txt{$F(G_1,H_1\tens H_2)$
			\\ $\tens F(G_2,H_1\tens H_2)$}}
			\ar[dd]^-{\varphi_{G_1,G_2}^{H_1\tens H_2}}
		\\ {\txt{$(F(G_1,H_1)\tens F(G_2,H_1))$
			\\ $\tens (F(G_1,H_2)\tens F(G_2,H_2))$}}
			\ar[d]_-{\varphi_{G_1,G_2}^{H_1}\tens \varphi_{G_1,G_2}^{H_2}}
		\\ {\txt{$F(G_1\tens G_2,H_1)$
			\\$\tens F(G_1\tens G_2,H_2)$}}
			\ar[r]_-{\psi_{G_1\tens G_2}^{H_1,H_2}}
		&F(G_1\tens G_2,H_1\tens H_2)
	}\end{xym}
	In the same way, we can check that axiom \ref{axtnmonunit} for $\psi^{H_1,H_2}_{-}$
	is equivalent to axiom \ref{axtnmonass} for $\varphi_0^{-}$, that axiom
	\ref{axtnmonass} for $\psi^0_{-}$ is equivalent to axiom
	\ref{axtnmonunit} for $\varphi_{G_1,G_2}^{-}$, and that axioms
	\ref{axtnmonunit} for $\psi^0_{-}$ and $\varphi_0^{-}$ are equivalent, which shows
	that conditions 3' and 3 are equivalent.
		
	\begin{df}\index{Bimon(G x H,K)@$\Bimon(\G\times\H,\cat{K})$}
		The symmetric 2-monoid $\Bimon(\G\times\H,\cat{K})$ is defined in the
		following way:
		\begin{itemize}
			\item {\it Objects.} These are the symmetric bimonoidal functors
				$\G\times\H\ra\cat{K}$.
			\item {\it Arrows.} These are the natural transformations
				$\alpha\col F\Ra F'\col \G\times\H\ra\cat{K}$ such that, for each
				$G\col \G$, $\alpha_{(G,-)}\col F(G,-)\Ra F'(G,-)$ is monoidal
				and, for each $H\col \H$, $\alpha_{(-,H)}\col F(-,H)\Ra F'(-,H)$ is monoidal.
				We will call them \emph{bimonoidal natural transformations}.
			\item {\it Tensor.} $F\tens F'\col \G\times\H\ra\cat{K}$ is defined on
				objects by $(F\tens F')(G,H)\eqdef F(G,H)\tens F'(G,H)$ and on
				arrows by $(F\tens F')(g,h)\eqdef F(g,h)\tens F'(g,h)$.  This is a
				symmetric bimonoidal functor (thanks to symmetry). Moreover,
				$(\alpha\tens\alpha')_{(G,H)}\eqdef \alpha_{(G,H)}\tens\alpha'_{(G,H)}$.
			\item {\it Unit.} This is the constant functor on $I$.
		\end{itemize}
	\end{df}
	
	The following proposition justifies the definition of symmetric bimonoidal functors 
	and of bimonoidal natural transformations.
	
	\begin{pon}
		$\Bimon(\G\times\H,\cat{K})\simeq [\G,[\H,\cat{K}]]$
	\end{pon}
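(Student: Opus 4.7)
The plan is to establish the equivalence by currying/uncurrying, exploiting the observation that the data of a symmetric bimonoidal functor $F\col\G\times\H\ra\cat{K}$ splits naturally into (a) the ``rows'' $F(G,-)$ parameterised by $G$ and (b) the monoidal comparison maps in the first variable, with the three conditions of Definition~\ref{defbimon} matching term-by-term the three layers of data of a symmetric monoidal functor $\tilde{F}\col\G\ra[\H,\cat{K}]$.

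I would first construct a functor $\Phi\col\Bimon(\G\times\H,\cat{K})\ra[\G,[\H,\cat{K}]]$ as follows. Given $F\col\G\times\H\ra\cat{K}$ symmetric bimonoidal, define $(\Phi F)(G)\eqdef F(G,-)$, which is an object of $[\H,\cat{K}]$ by condition~1 of Definition~\ref{defbimon} (with structure maps $\psi_G^{H_1,H_2}$ and $\psi_G^0$). For an arrow $g\col G_1\ra G_2$ in $\G$, set $(\Phi F)(g)\eqdef F(g,-)\col F(G_1,-)\Ra F(G_2,-)$; this is a monoidal natural transformation precisely by the naturality of $\psi$ and $\psi^0$ in the first variable. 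The monoidal structure on $\Phi F$ is given by $\varphi^-_{G_1,G_2}\col F(G_1,-)\tens F(G_2,-)\Ra F(G_1\tens G_2,-)$ and $\varphi^-_0\col 0\Ra F(I,-)$, and condition~3 of Definition~\ref{defbimon} is exactly the statement that these are monoidal natural transformations, so they live in $[\H,\cat{K}]$ as required. Condition~2 of Definition~\ref{defbimon} then translates, diagram-by-diagram, into axioms \ref{axfoncmonass}, \ref{axfoncmonunite} and \ref{axfonmonsym} for $\Phi F$: the axiom for $\varphi^H_{-,-}$ and $\varphi_0^H$ in each fixed $H$ is the component at $H$ of the corresponding axiom for $\Phi F$. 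On arrows, a bimonoidal natural transformation $\alpha\col F\Ra F'$ produces a monoidal natural transformation $\Phi\alpha$ whose component at $G$ is $\alpha_{(G,-)}$ (monoidal by clause~1 of the definition of bimonoidal natural transformation), and the naturality-in-$G$ plus condition for being bimonoidal in the second variable translate exactly to the axiom \ref{axtnmonass}/\ref{axtnmonunit} for $\Phi\alpha$.

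Next I would construct $\Psi\col[\G,[\H,\cat{K}]]\ra\Bimon(\G\times\H,\cat{K})$ by pointwise uncurrying: given $\tilde{F}$, set $(\Psi\tilde{F})(G,H)\eqdef \tilde{F}(G)(H)$, read off $\psi_G^{H_1,H_2}$ and $\psi_G^0$ from the symmetric monoidal structure of $\tilde{F}(G)$, and read off $\varphi^H_{G_1,G_2}$ and $\varphi_0^H$ from the components at $H$ of the monoidal comparison maps of $\tilde{F}$. Conditions~1 and 2 of Definition~\ref{defbimon} are then immediate, and condition~3 is the assertion that the monoidal comparison maps of $\tilde{F}$ take values in $[\H,\cat{K}]$, i.e.\ are monoidal natural transformations. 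The action of $\Psi$ on arrows is the obvious inverse translation.

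Finally, checking $\Phi\Psi\equiv 1$ and $\Psi\Phi\equiv 1$ is bookkeeping: the two constructions are mutually inverse on every component of data, and the equivalence is in fact an isomorphism of symmetric 2-monoids (one verifies that $\Phi$ preserves the pointwise tensor products and units described in Definition~\ref{homintdcmon}, since both are defined pointwise in $\cat{K}$ using the same symmetric structure). The main obstacle, and really the only non-routine step, is matching the three compatibility axioms of Definition~\ref{defbimon} to the axioms of a symmetric monoidal functor into $[\H,\cat{K}]$; this is where the paper's observation that conditions~3 and 3' are equivalent (using the symmetry of $\cat{K}$ encoded in the map $\bar{c}$ of diagram~\ref{assocsym}) becomes essential, since the monoidal structure on $[\H,\cat{K}]$ is precisely defined via $\bar{c}$ in \ref{phiprodtens}. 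Once this matching is checked in one direction, both triangle identities follow automatically.
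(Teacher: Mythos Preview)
Your proposal is correct and follows exactly the same currying/uncurrying approach as the paper: the paper defines $\Phi F$ by $G\mapsto F(G,-)$ with monoidal structure $\varphi^-_{G_1,G_2}$, $\varphi^-_0$ (invoking condition~3 of Definition~\ref{defbimon} to ensure these are monoidal transformations), and defines $\Psi$ by $(G,H)\mapsto (FG)(H)$. Your write-up is in fact considerably more detailed than the paper's, which only sketches the constructions of $\Phi$ and $\Psi$ and omits the verification that they are inverse equivalences.
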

	
		\begin{proof}
			I give only the construction of the symmetric monoidal equivalences
			$\Phi\col \Bimon(\G\times\H,\cat{K})\ra [\G,[\H,\cat{K}]]$
			and $\Psi\col [\G,[\H,\cat{K}]]\ra\Bimon(\G\times\H,\cat{K})$.
			
			In one direction, if $F\col \Bimon(\G\times\H,\cat{K})$, the functor
			$\Phi F\col \G\ra [\H,\cat{K}]$ maps $G$
			to $F(G,-)$ and $g\col G\ra G'$ to $F(g,-)$; $\Phi F$ is symmetric monoidal,
			with the transformations
			$\varphi^{-}_{G_1,G_2}\col  F(G_1,-)\tens F(G_2,-)
			\Ra F(G_1\tens G_2,-)$ and $\varphi^{-}_0\col  0 \Ra F(I,-)$, which are
			morphisms of $[\H,\cat{K}]$,
			i.e.\ monoidal transformations, by condition 3 of
			definition \ref{defbimon}.
			If $\alpha\col F\Ra F'\col \G\times\H\ra\cat{K}$ is a bimonoidal transformation,
			then $(\Phi\alpha)_G \eqdef \alpha_{(G,-)}$.
			
			In the other direction, if $F\col [\G,[\H,\cat{K}]]$, the functor
			$\Psi F\col \G\times\H\ra\cat{K}$ maps $(G,H)$ to $(FG)(H)$
			and $(g,h)\col (G,H)\ra (G',H')$ to $(FG')(h)\circ(Fg)_H$.  And if
			$\alpha\col F\Ra F'\col \G\ra[\H,\cat{K}]$ is a monoidal transformation,
			we set $(\Psi\alpha)_{(G,H)}\eqdef (\alpha_G)_H$.
		\end{proof}

\subsection{$\dCMon$-categories}

	The goal of this section is to define what means for a
	$\Gpd$-category to be \emph{enriched in $\dCMon$}.	 This is a 2-dimensional
	version of preadditivity (enrichment in the category
	of commutative monoids). In dimension 1,
	we can take two points of view on a $\Ens$-category enriched in $\CMon$:
	\begin{enumerate}
		\item it is a category $\C$ whose $\Hom$s are equipped with an abelian monoid
			structure such that, for all $A,B,C\col \C$, the composition function
			$\C(A,B)\times\C(B,C)\ra\C(A,C)$ is bilinear;
		\item it is a category $\C$ whose $\Hom$s are equipped with an abelian monoid
			structure such that, for all $A\col \C$ and $g\col \C(B,C)$,
			the composition function $\C(A,B)\xrightarrow{g\circ -}\C(A,C)$ is linear and,
			 for all $f\col \C(A,B)$ and $C\col \C$, the function
			 $\C(B,C)\xrightarrow{-\circ f}\C(A,C)$ is linear.
	\end{enumerate}
	
	In dimension 2, these two points of view are also available; this gives
	the following definition.
	
	\begin{df}\label{semipreadd}
		Let $\C$ be a (weak in general)  $\Gpd$-category such that, for all $A,B\col \C$,
		the groupoid $\C(A,B)$ is equipped with a symmetric 2-monoid structure
		(the tensor is denoted by $+$, the unit is denoted by $0$, the associativity
		transformation is denoted by $\alpha$,
		the unit transformations are denoted by $\lambda$ and $\rho$, and the symmetry
		transformation is denoted by $\gamma$)
		and with transformations natural in each variable
		\begin{align}\begin{split}\stepcounter{eqnum}
			\varphi_{g_1,g_2}^h&\col hg_1+hg_2\Ra h(g_1+g_2);\\
			\psi_g^{h_1,h_2}&\col h_1g+h_2g\Ra(h_1+h_2)g;\\
			\varphi_0^h&\col 0\Ra h0;\\
			\psi_g^0&\col 0\Ra 0g.
		\end{split}\end{align}
		We say that $\C$ is \emph{presemiadditive}%
		\index{Gpd-category@$\Gpd$-category!presemiadditive}%
		\index{presemiadditive Gpd-category@presemiadditive $\Gpd$-category}
		(or that it is a
		\emph{$\dCMon$-category})\index{2-smon-categorie@$\dCMon$-category}
		 if the equivalent conditions 1 and 2 hold
		 (we put a dot above the natural transformations
		of the structure of $\Gpd$-category to distinguish them from their analogues
		of the 2-monoid structure).
		\begin{enumerate}
			\item \begin{enumerate}
				\item For all $A,B,C\col \C$, the composition functor
					$\C(A,B)\times\C(B,C)\ra\C(A,C)$, with $\varphi_{g_1,g_2}^h$,
					$\psi_g^{h_1,h_2}$, $\varphi_0^h$ and $\psi_g^0$,
					is symmetric bimonoidal.
				\item For all $A,B,C,D$, the associativity natural transformation
					 is trimonoidal.
					\begin{xym}\xymatrix@=30pt{
						{\C(A,B)\times\C(B,C)\times\C(C,D)}
							\drtwocell\omit\omit{\dot{\alpha}}
							\ar[d]_{\mathrm{comp}\times 1}\ar[r]^-{1\times\mathrm{comp}}
						&{\C(A,B)\times\C(B,D)}\ar[d]^{\mathrm{comp}}
						\\ {\C(A,C)\times\C(C,D)}\ar[r]_-{\mathrm{comp}}
						& {\C(A,D)}
					}\end{xym}
				\item For all $A,B$, the unit natural transformations are monoidal.
					\begin{xym}\xymatrix@=30pt{
						{1\times\C(A,B)}\ar[r]^-{\mathrm{id}\times 1}
							\ar@{=}[dr]\drtwocell\omit\omit{_<-2.5>{\dot{\rho}}}
						&{\C(A,A)\times\C(A,B)}\ar[d]^{\mathrm{comp}}
						\\ &{\C(A,B)}
					}\end{xym}
					\begin{xym}\xymatrix@=30pt{
						{\C(A,B)\times\C(B,B)}\ar[d]_{\mathrm{comp}}
						&{\C(A,B)\times 1}\ar[l]_-{1\times\mathrm{id}}
							\ar@{=}[dl]\dltwocell\omit\omit{^<2.5>{\dot{\lambda}}}
						\\ {\C(A,B)}
					}\end{xym}
				\end{enumerate}
			\item \begin{enumerate}
				\item For all $A$ and $h\col B\ra C$ in $\C$, the functor
					$h\circ -\col \C(A,B)\ra\C(A,C)$, with
					$\varphi_{g_1,g_2}^h$ and $\varphi_0^h$,
					is symmetric monoidal.
				\item For all $g\col A\ra B$ and $C$ in $\C$, the functor
					$-\circ g\col \C(B,C)\ra\C(A,C)$, with
					$\psi_g^{h_1,h_2}$ and $\psi_g^0$, is symmetric monoidal.
				\item For all parallel $h_1,h_2$ in $\C$, the transformation
					$\psi_{-}^{h_1,h_2}$ is monoidal and $\psi_{-}^0$ is monoidal
					(or, equivalently,
					for all parallel $g_1,g_2$ in $\C$, the transformation
					$\varphi_{g_1,g_2}^{-}$ is monoidal, and $\varphi^{-}_0$ is monoidal).
				\item For all $g\col B\ra C$ and $h\col C\ra D$, the natural transformation
					expressing a first part of associativity
					is monoidal.
					\begin{xym}\xymatrix@=30pt{
						{\C(A,B)}\ar[r]_{g\circ -}\rruppertwocell^{hg\circ -}<10>
							{_<-3>{\;\;\;\;\;\;\,\dot{\alpha}_{hg-}}}
						&{\C(A,C)}\ar[r]_{h\circ -}
						&{\C(A,D)}
					}\end{xym}
				\item For all $f\col A\ra B$ and $h\col C\ra D$, the natural transformation
					expressing another part of associativity
					is monoidal.
					\begin{xym}\xymatrix@=30pt{
						{\C(B,C)}\ar[r]^{h\circ-}\ar[d]_{-\circ f}
							\drtwocell\omit\omit{\;\;\;\;\;\;\dot{\alpha}_{h-f}}
						&{\C(B,D)}\ar[d]^{-\circ f}
						\\ {\C(A,C)}\ar[r]_{h\circ-}
						&{\C(A,D)}
					}\end{xym}
				\item For all $f\col A\ra B$ and $g\col B\ra C$, the natural transformation
					expressing a last part of associativity
					is monoidal.
					\begin{xym}\xymatrix@=30pt{
						{\C(C,D)}\ar[r]^{-\circ g}\rrlowertwocell_{-\circ gf}<-10>
							{_<3>{\;\;\;\;\;\;\,\dot{\alpha}_{-gf}}}
						&{\C(B,D)}\ar[r]^{-\circ f}
						&{\C(A,D)}
					}\end{xym}
				\item For all $A,B\col \C$, the unit natural transformations
					are monoidal.
					\begin{xym}\xymatrix@=30pt{
						{\C(A,B)}\ruppertwocell^{1_B\circ-}{\dot{\lambda}}\ar@{=}[r]
							\rlowertwocell_{-\circ 1_A}{^{\dot{\rho}}}
						&{\C(A,B)}
					}\end{xym}
				\end{enumerate}
		\end{enumerate}
	\end{df}
	
		\begin{proof}
			It is clear that conditions (a), (b) and (c) of version 2 are
			equivalent to point (a) of version 1, by Definition \ref{defbimon}.
			It is also clear that point (g) of version 2 is equivalent
			to point (c) of version 1.
	
			Condition (b) of version 1, the trimonoidality of
			$\dot{\alpha}_{fgh}\col (hg)f\Ra h(gf)$ means that the three natural
			transformations that we get by fixing two of the variables of $\dot{\alpha}$ are
			monoidal:
			$\dot{\alpha}_{-gh}\col (hg)\circ -\Ra h(g\circ -)$ must be monoidal
			(condition (d) of version 2), $\dot{\alpha}_{f-h}\col (h\circ-)f\Ra h(-\circ f)$
			must be monoidal (condition 2(e)),	and
			$\dot{\alpha}_{fg-}\col (-\circ g)f\Ra -\circ (gf)$ must be monoidal
			(condition 2(f)).  Thus we see that condition 1(b) is
			equivalent to the conjunction of conditions 2(d), 2(e) and 2(f).
		\end{proof}
		
	The advantage of the second version is that it avoids bimonoidal or trimonoidal
	functors and natural transformations.
	
	Here is an elementary translation of the conditions of version 2.  
	
	\renewcommand{\theenumi}{\alph{enumi}}\renewcommand{\labelenumi}{(\theenumi)}
	\begin{enumerate}
		\item \begin{xym}\label{preadda1}\xymatrix@R=40pt@C=50pt{
						{}\save[]+<1.5cm,0cm>*{(hg_1 + hg_2) + hg_3\,}="m"
							\ar[d]_-{\varphi^h_{g_1,g_2}+1}\restore
						&&{}\save[]+<-1.5cm,0cm>*{\,hg_1+(hg_2+hg_3)}="n"
						\ar[d]^-{1+\varphi^h_{g_2,g_3}}\restore
						\ar "m";"n" ^-{\alpha}
						\\ h(g_1 + g_2) + hg_3
						&& hg_1 + h(g_2 + g_3)
						\\ {}\save[]+<1.5cm,0cm>*{h((g_1+g_2)+g_3)\,}="a"
							\ar@{<-}[u]^-{\varphi^h_{g_1+g_2,g_3}}\restore
						&&{}\save[]+<-1.5cm,0cm>*{\,h(g_1+(g_2+g_3))}="b"
							\ar@{<-}[u]_-{\varphi^h_{g_1,g_2+g_3}}\restore
							\ar "a";"b"_-{h\alpha}
					}\end{xym}
			\begin{xym}\label{preadda2}\xymatrix@=40pt{
				hg_1+ hg_2\ar[r]^-{\varphi^h_{g_1,g_2}}\ar[d]_\gamma
				&h(g_1+ g_2)\ar[d]^{h\gamma}
				\\ hg_2+ hg_1\ar[r]_-{\varphi^h_{g_2,g_1}}
				&h(g_2+ g_1)
			}\end{xym}
			\begin{xym}\xymatrix@=40pt{
				hg+ 0\ar[r]^-{\rho_{hg}}\ar[d]_{1+\varphi^h_0}
				&hg
				\\ hg+ h0\ar[r]_-{\varphi^h_{g,0}}
				&h(g+0)\ar[u]_{h\rho_g}
			}\;\;\;
			\xymatrix@=40pt{
				0+hg\ar[r]^-{\lambda_{hg}}\ar[d]_{\varphi^h_0+1}
				&hg
				\\ h0+ hg\ar[r]_-{\varphi^h_{0,g}}
				&h(0+g)\ar[u]_{h\lambda_g}
			}\end{xym}
		\item \begin{xym}\label{preaddb1}\xymatrix@R=40pt@C=50pt{
						{}\save[]+<1.5cm,0cm>*{(h_1g + h_2g) + h_3g\,}="m"
							\ar[d]_-{\psi_g^{h_1,h_2}+1}\restore
						&&{}\save[]+<-1.5cm,0cm>*{\,h_1g+(h_2g+h_3g)}="n"
						\ar[d]^-{1+\psi_g^{h_2,h_3}}\restore
						\ar "m";"n" ^-{\alpha}
						\\ (h_1 + h_2)g + h_3g
						&& h_1g + (h_2 + h_3)g
						\\ {}\save[]+<1.5cm,0cm>*{((h_1+h_2)+h_3)g\,}="a"
							\ar@{<-}[u]^-{\psi_g^{h_1+h_2,h_3}}\restore
						&&{}\save[]+<-1.5cm,0cm>*{\,(h_1+(h_2+h_3))g}="b"
							\ar@{<-}[u]_-{\psi_g^{h_1,h_2+h_3}}\restore
							\ar "a";"b"_-{\alpha g}
					}\end{xym}
			\begin{xym}\label{preaddb2}\xymatrix@=40pt{
				h_1g+ h_2g\ar[r]^-{\psi_g^{h_1,h_2}}\ar[d]_\gamma
				&(h_1+ h_2)g\ar[d]^{\gamma g}
				\\ h_2g+ h_1g\ar[r]_-{\psi_g^{h_2,h_1}}
				&(h_2+ h_1)g
			}\end{xym}
			\begin{xym}\xymatrix@=40pt{
				hg+ 0\ar[r]^-{\rho_{hg}}\ar[d]_{1+\psi_g^0}
				&hg
				\\ hg+ 0g\ar[r]_-{\psi_g^{h,0}}
				&(h+0)g\ar[u]_{\rho_hg}
			}\;\;\;
			\xymatrix@=40pt{
				0+hg\ar[r]^-{\lambda_{hg}}\ar[d]_{\psi_g^0+1}
				&hg
				\\ 0g+ hg\ar[r]_-{\psi_g^{0,h}}
				&(0+h)g\ar[u]_{\lambda_hg}
			}\end{xym}
		\item \begin{xym}\label{preaddc}\xymatrix@R=40pt@C=70pt{
				(h_1g_1+ h_2g_1)+ (h_1g_2+ h_2g_2)
					\ar[r]^-{\psi_{g_1}^{h_1,h_2}+\psi_{g_2}^{h_1,h_2}}
					\ar[d]_-{\bar{\gamma}}
				&(h_1+ h_2)g_1+ (h_1+ h_2)g_2
					\ar[dd]^-{\varphi_{g_1,g_2}^{h_1+ h_2}}
				\\ (h_1g_1+ h_1g_2)+ (h_2g_1+ h_2g_2)
					\ar[d]_-{\varphi_{g_1,g_2}^{h_1}+ \varphi_{g_1,g_2}^{h_2}}
				\\ h_1(g_1+ g_2)+ h_2(g_1+ g_2)
					\ar[r]_-{\psi_{g_1+ g_2}^{h_1,h_2}}
				&(h_1+ h_2)(g_1+ g_2)
			}\end{xym}
			\begin{xym}\label{preaddc2}\xymatrix@=40pt{
				0+0\ar[r]^-{\psi^0_{g_1}+\psi^0_{g_2}}\ar[d]_{\lambda_I=\rho_I}
				&0g_1+0g_2\ar[d]^{\varphi^0_{g_1,g_2}}
				\\ 0\ar[r]_-{\psi^0_{g_1+g_2}}
				&0(g_1+g_2)
			}\;\;\;\xymatrix@=40pt{
				0+0\ar[r]^-{\varphi_0^{h_1}+\varphi_0^{h_2}}\ar[d]_{\lambda_I=\rho_I}
				&h_10+h_20\ar[d]^{\psi_0^{h_1,h_2}}
				\\ 0\ar[r]_-{\varphi_0^{h_1+h_2}}
				&(h_1+h_2)0
			}\end{xym}
			\begin{eqn}\label{preaddc3}
				0\xrightarrow{\varphi_0^0=\psi^0_0} 0\circ 0
			\end{eqn}
		\item \begin{xym}\label{preaddd}\xymatrix@R=40pt@C=50pt{
				{(hg)f_1+(hg)f_2}\ar[dd]_-{\varphi^{hg}_{f_1,f_2}}
					\ar[r]^-{\dot{\alpha}_{hgf_1}+\dot{\alpha}_{hgf_2}\,}
				&{h(gf_1)+h(gf_2)}\ar[d]^-{\varphi^h_{gf_1,gf_2}}
				\\ & h(gf_1+gf_2)
				\\ {(hg)(f_1+f_2)}\ar[r]_-{\dot{\alpha}_{h,g,f_1+f_2}}
				&{h(g(f_1+f_2))}\ar@{<-}[u]_-{h\varphi^g_{f_1,f_2}}
			}\end{xym}
			\begin{xym}\xymatrix@=40pt{
				0\ar[r]^{\varphi^h_0}\ar[d]_{\varphi^{hg}_0}
				&h0\ar[d]^{h\varphi^g_0}
				\\ (hg)0\ar[r]_{\dot{\alpha}_{hg0}}
				&h(g0)
			}\end{xym}
		\item \begin{xym}\label{preadde}\xymatrix@R=40pt@C=50pt{
				{(hg_1)f+(hg_2)f}\ar[d]_-{\psi^{hg_1,hg_2}_{f}}
					\ar[r]^-{\dot{\alpha}_{hg_1f}+\dot{\alpha}_{hg_2f}\,}
				&{h(g_1f)+h(g_2f)}\ar[d]^-{\varphi^h_{g_1f,g_2f}}
				\\ (hg_1+hg_2)f\ar[d]_{\varphi^h_{g_1,g_2}f}
				& h(g_1f+g_2f)
				\\ {(h(g_1+g_2))f}\ar[r]_-{\dot{\alpha}_{h,g_1+g_2,f}}
				&{h((g_1+g_2)f)}\ar@{<-}[u]_-{h\psi^{g_1,g_2}_f}
			}\end{xym}
			\begin{xym}\xymatrix@=40pt{
				0\ar[d]_{\psi^0_f}\ar@/^1.5pc/[dr]^{\varphi^h_0}
				\\ 0f\ar[d]_{\varphi^h_0f}
				&h0\ar[d]^{h\psi^0_f}
				\\ (h0)f\ar[r]_{\dot{\alpha}_{h0f}}
				&h(0f)
			}\end{xym}
		\item \begin{xym}\label{preaddf}\xymatrix@R=40pt@C=50pt{
				{(h_1g)f+(h_2g)f}\ar[d]_-{\psi^{h_1g,h_2g}_{f}}
					\ar[r]^-{\dot{\alpha}_{h_1gf}+\dot{\alpha}_{h_2gf}\,}
				&{h_1(gf)+h_2(gf)}\ar[dd]^-{\psi^{h_1,h_2}_{gf}}
				\\ (h_1g+h_2g)f\ar[d]_{\psi^{h_1,h_2}_{g}f}
				\\ {((h_1+h_2)g)f}\ar[r]_-{\dot{\alpha}_{h_1+h_2,g,f}}
				&{(h_1+h_2)(gf)}
			}\end{xym}
			\begin{xym}\xymatrix@=40pt{
				0f\ar[d]_{\psi_g^0f}
				&0\ar[l]_{\psi^0_f}\ar[d]^{\psi^0_{gf}}
				\\ (0g)f\ar[r]_{\dot{\alpha}_{0gf}}
				&0(gf)
			}\end{xym}
		\item \begin{xym}\label{preaddg}\xymatrix@=40pt{
				1_Bf_1+1_Bf_2\ar[dr]^{\dot{\lambda}_{f_1}+\dot{\lambda}_{f_2}}
					\ar[d]_{\varphi^{1_B}_{f_1,f_2}}
				&&f_11_A+f_21_A\ar[dl]_{\dot{\rho}_{f_1}+\dot{\rho}_{f_2}}
					\ar[d]^{\psi^{f_1,f_2}_{1_A}}
				\\ 1_B(f_1+f_2)\ar[r]_-{\dot{\lambda}_{f_1+f_2}}
				&f_1+f_2
				&(f_1+f_2)1_A\ar[l]^-{\dot{\rho}_{f_1+f_2}}
			}\end{xym}
			\begin{xym}\xymatrix@=40pt{
				0\ar[d]_{\varphi^{1_B}_0}\ar@{=}[dr]
				&&0\ar@{=}[dl]\ar[d]^{\psi^0_{1_A}}
				\\ 1_B0\ar[r]_{\dot{\lambda}_0}
				&0
				&01_A\ar[l]^{\dot{\rho}_0}
			}\end{xym}
	\end{enumerate}
	\renewcommand{\theenumi}{\arabic{enumi}}\renewcommand{\labelenumi}{\theenumi .}

	In the one-object case, we recover exactly the axioms of Laplaza \cite{Laplaza1972a}
	for a category with two monoidal structures, one being distributive
	with respect to the other, except that he does not work with groupoids and
	adds axioms concerning the symmetry of the product (here the composition), which
	we do not assume here; Kapranov and Voevodsky
	\cite{Kapranov1994a}
	call that a “ring category”; they use the category
	of vector spaces on a field $K$, equipped with the tensor product and the direct sum
	to define their 2-vector spaces on $K$;  these categories are also called
	bimonoidal categories or rig-like categories.

	\begin{ex}\begin{enumerate}
		\item A $\Ens$-category seen as a discrete $\Gpd$-category is
			presemiadditive in this 2-dimensional sense if and only if it is
			presemiadditive in the usual 1-dimensional sense.  So there is no possible
			terminological ambiguity.
		\item The $\Gpd$-category $\dCMon$ is itself presemiadditive.
			The symmetric 2-monoid structure of $\dCMon(\G,\H)=[\G,\H]$ has been described
			in Definition \ref{homintdcmon}.  If $G\col \G\ra\H$ and $H_1,H_2\col \H
			\ra\cat{K}$ are morphisms in $\dCMon$, then $\psi_G^{H_1,H_2}$ is
			the identity at each point, whereas if $G_1,G_2\col \G\ra\H$ and
			$H\col \H\ra\cat{K}$, $\varphi_{G_1,G_2}^H$ is $\varphi^H_{G_1-,G_2-}$.
	\end{enumerate}\end{ex}

\subsection{Symmetric 2-groups and $\CGS$-categories}

	2-groups, also called $\Cat$-groups or Gr-categories,
	are 2-monoids where each object is an
	equivalence (if we see the 2-monoid as a one-object $\Gpd$-category).
	See \cite{Vitale2002a,Baez2004a} for general results about 2-groups.

	\begin{df}\index{2-group}\label{defdgroup}
		A \emph{2-group} is a 2-monoid (monoidal groupoid) such that, for
		each object $A\col \G$, there exist an object $A^*\col \G$ and arrows
		$\eps_A\col A\tens A^*\ra I$ and $\eta_A\col I\ra A^*\tens A$ satisfying the
		triangular identities: the two following composites
		are equal respectively to $1_{A^*}$ and to $1_A$.
		\begin{eqn}
			A^*\overset{l^{-1}}\longrightarrow I\tens A^*\xrightarrow{\eta_A\tens 1}
			(A^*\tens A)\tens A^*\overset{a}\longrightarrow
			A^*\tens(A\tens A^*)\xrightarrow{1\tens\eps_A} A^*\tens I\overset{r}\longrightarrow A^*
		\end{eqn}
		\begin{eqn}
			A\overset{r^{-1}}\longrightarrow A\tens I\xrightarrow{1\tens\eta_A}
			A\tens(A^*\tens A)\overset{a^{-1}}\longrightarrow
			(A\tens A^*)\tens A\xrightarrow{\eps_A\tens 1} I\tens A\overset{l}\longrightarrow A
		\end{eqn}
	\end{df}

	A very useful result is that from $\varepsilon_A$ alone we can construct 
	an arrow $\eta_A$ which satisfies the triangular identities with $\varepsilon_A$
	\cite{Joyal1986a}.
	So to turn a 2-monoid into a 2-group, it is sufficient to give for each object $A$
	an object $A^*$ such that $A\tens A^*\simeq I$.
	
	If $\G$ is a 2-group, we can define a functor $(-)^*\col \G\op\ra\G$, which maps
	$A$ to $A^*$ and an arrow $f\col B\ra A$ to the composite, denoted by $f^*$.
	\begin{multline}\stepcounter{eqnum}
		A^*\xrightarrow{l^{-1}}I\tens A^*\xrightarrow{\eta_B\tens 1} (B^*\tens B)\tens A^*
		\xrightarrow{(1\tens f)\tens 1} (B^*\tens A)\tens A^*\\
		\overset{a}\longrightarrow B^*\tens (A\tens A^*)
		\xrightarrow{1\tens\eps_A} B^*\tens I\overset{r}\longrightarrow B^*
	\end{multline}
	Then we can check that $\eta$ and $\eps$ are dinatural transformations.
	
	We can also prove that every monoidal functor preserves the inverse of an object
	(see for example \cite[Proposition 2.3]{Baez2004a}).
	So monoidal functors do not have to satisfy additional properties
	to be morphisms of 2-groups.
	
	\begin{df}
		We denote by $\CG$ the full sub-$\Gpdp$-category of $\dMon$ whose objects
		are the 2-groups.\index{2-gp@$\CG$}
	\end{df}
	
	If $\G$ is equipped with a symmetry, we have a \emph{symmetric 2-group}\index{2-group!symmetric}\index{symmetric!2-group}
	(or symmetric $\caspar{Cat}$-group or Picard category \cite{Deligne1973a}).
	We denote by $\CGS$\index{2-sgp@$\CGS$}
	the full sub-$\Gpdp$-category of $\dCMon$ whose objects are the symmetric 2-groups.
		
	We can simplify the description of arrows and 2-arrows of $\CGS$ in the following
	way.  First, for a symmetric monoidal functor
	$F\col \G\ra\H$, we can prove, thanks to the invertibility of objects, that $\varphi_0 =$
	\begin{multline}\stepcounter{eqnum}
		I\xrightarrow{\eps^{-1}_{FI}}FI\tens FI^*\xrightarrow{Fr^{-1}\tens 1}
		F(I\tens I)\tens FI^*\xrightarrow{\varphi^{-1}\tens 1}
		(FI\tens FI)\tens FI^*\\
		\overset{a}\longrightarrow
		FI\tens (FI\tens FI^*)\xrightarrow{1\tens \eps_{FI}}
		FI\tens I\overset{r}\longrightarrow FI.
	\end{multline}
	And, if we assume that we have only $\varphi_{AB}$, we can recover $\varphi_0$
	by defining it as this composite; then we can check that axiom
	\ref{axfoncmonunite} holds.  We can thus remove from the definition
	$\varphi_0$ and axiom \ref{axfoncmonunite}.	
	Moreover, by using this definition of $\varphi_0$, we can deduce
	axiom \ref{axtnmonunit} of monoidal natural transformation from the axiom
	\ref{axtnmonass} and from the naturality; we can thus also remove this axiom.
	
	In the same way, when $\G,\H, \cat{K}$ are symmetric 2-groups, in the definition
	of symmetric bimonoidal functor $\G\times\H\ra\cat{K}$, we can remove
	$\varphi_0^H$ and $\psi^0_G$, and the parts of condition 3
	(or of the equivalent condition 3') concerning these transformations.
	These simplifications allow to reduce significantly the number of axioms of
	preadditive $\Gpd$-categories.

	\begin{df}
		Let be $\A,\B\col \CGS$.  The symmetric 2-group $[\A,\B]$ is the symmetric
		2-monoid $[\A,\B]$ of Definition \ref{homintdcmon}, whose objects are the
		symmetric monoidal functors $\A\ra\B$ and whose arrows are the
		monoidal natural transformations between them.  If $F\col \A\ra\B$, its inverse
		$F^*\col \A\ra\B$ maps an object $A\col \A$ to $(FA)^*$ and an arrow $f\col A\ra B$
		to $(Ff^*)^{-1}\col FA^*\ra FB^*$; we define $\eps_F \col  F\tens F^*\ra 0$ pointwise.
	\end{df}

	In the same way, when $\G,\H, \cat{K}$ are symmetric 2-groups,
	$\Bimon(\G\times\H,\cat{K})$ is a symmetric 2-group; we can prove that by transfering
	the inverses of objects from $[\G,[\H,\cat{K}]]$, which is
	a symmetric 2-group, as we know by the previous proposition.

	We can now give the definition of preadditive $\Gpd$-categories.
	This notion of preadditivity for 2-categories was introduced (with an axiom missing) by 
	Benjamin Drion in \cite{Drion2002a}.
	
	\begin{df}\label{defpreadd}\index{preadditive Gpd-categorie@preadditive $\Gpd$-category}%
	\index{Gpd-category@$\Gpd$-category!preadditive}%
	\index{2-sgp-categorie@$\CGS$-category}
		A \emph{preadditive} $\Gpd$-category (or a \emph{$\CGS$-category})
		is a presemiadditive $\Gpd$-category $\C$
		such that, for all $A,B\col \C$, $\C(A,B)$ is a symmetric 2-group.
	\end{df}
	
	By using the simplifications above-mentioned, we see that we can remove from this
	definition $\varphi^0$ and $\psi_0$ and the conditions involving them.
	From the list of elementary conditions following Definition \ref{semipreadd} 
	remain only equations \ref{preadda1}, \ref{preadda2}, \ref{preaddb1}, 
	\ref{preaddb2}, \ref{preaddc}, \ref{preaddd}, \ref{preadde}, \ref{preaddf}
	and \ref{preaddg}.

	Version 2 of Definition \ref{semipreadd} minus axiom (c) is equivalent
	to the definition given in \cite{Drion2002a}.  There are a few differences 
	in the presentation:
	instead of asking for the naturality of $\varphi$ in $h$, he asked that
	for each $\beta\col h\Ra h'\col B\ra C$, the natural transformation
	$\beta * -\col h\circ -\Ra h'\circ -$ be monoidal, which is equivalent;
	and dually, instead of asking for the naturality of $\psi$ in $g$, he asked that for each	
	$\alpha\col g\Ra g'\col B\ra C$, the natural transformation
	$\alpha * -\col g\circ -\Ra g'\circ -$ be monoidal.  Moreover, conditions
	(d), (f) and (g) were replaced by the equivalent requirement
	that $\C(A,-)\col \C\ra\CGS$ and $\C(-,D)\col \C\ra\CGS$ be
	$\Gpd$-functors, for each $A, D\col \C$.
		
	A one-object preadditive $\Gpd$-category is what could be called a
	2-ring\index{2-ring}, by analogy with the 1-dimensional case, where a one-object
	additive $\Ens$-category is a ring.  In fact one-object preadditive $\Gpd$-categories
	coincide with \emph{$\mathrm{Ann}$-categories}
	introduced by Nguyen Tien Quang \cite{Quang2007c}. 
	The \emph{categorical rings} defined by Mamuka Jibladze and Teimuraz Pirashvili
	\cite{Jibladze2007a} are according to them equivalent to the $\mathrm{Ann}$-categories of Quang, but it seems to be necessary to add a condition to recover the $\mathrm{Ann}$-categories
	\cite{Quang2007a}.
	
	\begin{ex}~
	\begin{enumerate}
		\item A $\Ens$-category seen as a locally discrete $\Gpd$-category is
			preadditive in this 2-dimensional sense if and only if it is preadditive
			in the usual 1-dimensional sense.
		\item The $\Gpd$-category $\CGS$ is itself preadditive, with the same
			structure as the one of $\dCMon$.
		\item We will prove (Corollary \ref{2abPexadd})
			that every 2-abelian $\Gpd$-category is preadditive.
		\item It seems
			that the $2\text{-}\Ab\text{-}$categories introduced
			by Nelson Martins-Ferreira \cite{Martins-Ferreira2004a}
			coincide with the $\CGS$-categories
			$\C$ where for each $A,B\col \C$, the symmetric 2-group $\C(A,B)$
			is strict (i.e.\ the transformations $\alpha$, $\lambda$, $\rho$,
			$\gamma$ and $\eta$ are identities) and the composition is strict bimonoidal
			(i.e.\ the transformations $\psi$ and $\varphi$ are identities).
	\end{enumerate}\end{ex}

\section{Additive $\Gpd$-categories}

\subsection{First elements of matrix algebra}\label{premelcalmat}

	Let us assume that $\C$ is a $\Gpd$-category with finite products and
	coproducts.  We have thus for each family of objects $(A_k)_{1\leq k\leq n}$
	a product $\prod_{k=1}^n A_k$, with projections $p_k$, and a coproduct
	$\sum_{k=1}^n A_k$, with injections $i_k$.  We assume that, in
	the case of families with one object $A$, we take as product $A$ itself with
	projection $1_A$, and as coproduct $A$ with injection $1_A$.
	
	The universal property of the product gives us,
	for each family of arrows $(X\overset{a_k}\ra A_k)_{1\leq k\leq n}$ an arrow
	\begin{eqn}\begin{pmatrix}
		a_1\\
		a_2\\
		\cdots\\
		a_n
	\end{pmatrix}\col X\ra\prod_{k=1}^n A_k,\end{eqn}
 	with 2-arrows $\pi_{k_0}\col p_{k_0}(a_k)\Ra a_{k_0}$.
	Dually, for each family of arrows
	$(A_k\overset{a_k}\ra Y)_{1\leq k\leq n}$, we have an arrow
	\begin{eqn}\begin{pmatrix}
		a_1 &a_2 &\cdots &a_n
	\end{pmatrix}\col \sum_{k=1}^n A_k\ra Y,\end{eqn}
	and 2-arrows $\iota_{k_0}\col (a_k) i_{k_0}\Ra a_{k_0}$.
	In the case where $n=1$, we take in the horizontal and vertical cases
	$(a_k)_{1\leq k\leq 1}= a_1$, with $\pi_1$ or $\iota_1$ equal to the identity.
	
	Let $(A_k)_{1\leq k\leq n}$ and $(B_j)_{1\leq j\leq m}$ be two finite families
	of objects of $\C$. By the universal property of the product and of the coproduct, the functor
	\begin{eqn}\label{equcoprodprod}
		\C(\sum_{k=1}^n A_k,\prod_{j=1}^m B_j)
		\xrightarrow{(p_j\circ -\circ i_k)_{jk}}
		\prod_{\substack{1\leq k\leq n\\ 1\leq j\leq m}}\C(A_k,B_j)
	\end{eqn}
	is an equivalence of groupoids; let us denote by $\Phi$ its inverse. We denote the image
	of $(f_{jk})_{\substack{1\leq j\leq m \\ 1\leq k\leq n}}$ under $\Phi$
	also by $(f_{jk})$.\index{matrix}  We have thus an isomorphism
	\begin{eqn}
		\eta_{j_0k_0}\col p_{j_0}(f_{jk})i_{k_0}\Ra f_{j_0k_0}
	\end{eqn}
	such that
	\begin{xyml}\begin{gathered}\xymatrix@=50pt{
		{\sum_{k=1}^n A_k}\ar[r]^-{(f_{jk})}\ar[dr]_{(f_{j_0k})}
			\drtwocell\omit\omit{_<-3.5>{\;\;\;\,\pi_{j_0}}}
		&{\prod_{j=1}^m B_j}\ar[d]^{p_{j_0}}
		\\ A_k\ar[u]^{i_{k_0}}\ar[r]_{f_{j_0k_0}}
			\rtwocell\omit\omit{_<-2.5>{\;\;\,\iota_{k_0}}}
		& B_j
	}\end{gathered}=\eta_{j_0k_0}=\begin{gathered}\xymatrix@=50pt{
		{\sum_{k=1}^n A_k}\ar[r]^-{(f_{jk})}
		&{\prod_{j=1}^m B_j}\ar[d]^{p_{j_0}}
		\\ A_k\ar[u]^{i_{k_0}}\ar[r]_{f_{j_0k_0}}
			\rtwocell\omit\omit{_<-2.5>{\;\;\;\pi_{j_0}}}
			\ar[ur]_{(f_{j_0k})}\urtwocell\omit\omit{_<-3.5>{\;\;\;\iota_{k_0}}}
		& B_j
	}\end{gathered}.\end{xyml}
	In the case $m=1$, we take $(f_{jk})=(f_{1k})$, with $\pi_1=1$.
	In the case $n=1$, we take $(f_{jk})=(f_{j1})$, with $\iota_1=1$.
	
\subsection{Finite biproducts}

	From now on, we assume that all $\Gpd$-categories are strictly described and that
	all $\Gpdp$-categories are strictly described.

	The equivalence of conditions 5 to 7 of the following proposition has been
	proved by Benjamin Drion \cite{Drion2002a}.
	
	\begin{pon}\label{caracbiprod}
		Let $\C$ be a $\Gpdp$-category and $A_1, A_2\col \C$.
		The following conditions are equivalent. If they hold,
		we say that the data of condition 4 form a
		\emph{biproduct}\index{biproduct} of $A_1$ and $A_2$.  If for each pair of
		objects of $\C$ these conditions hold, we say that $\C$ \emph{has all
		binary biproducts}.
		\begin{enumerate}
			\item The product $A_1\overset{p_1}\longleftarrow A_1\times A_2
				\overset{p_2}\longrightarrow A_2$ exists and 
				\begin{eqn}
					A_1\overset{\vervec{1}{0}} \longrightarrow A_1\times A_2
					\overset{\vervec{0}{1}}\longleftarrow A_2
				\end{eqn}
				is a coproduct.
			\item The coproduct $A_1\overset{i_1}\longrightarrow A_1 + A_2
				\overset{i_2}\longleftarrow A_2$ exists and 
				\begin{eqn}
					A_1\overset{(1\; 0)}\longleftarrow A_1 + A_2
					\overset{(0\; 1)} \longrightarrow A_2
				\end{eqn}
				is a product.
			\item The product and the coproduct of $A_1$ and $A_2$ exist
				and the identity matrix
				\begin{eqn}\label{matrid}
					\begin{pmatrix}
						1_{A_1} & 0 \\
						0 &1_{A_2}
					\end{pmatrix}\col A_1+A_2 \longrightarrow A_1\times A_2
				\end{eqn}
				is an equivalence.
			\item For $1\leq j,k\leq 2$, there exist an object, arrows and 2-arrows
				\begin{xym}\xymatrix@=20pt{
					&{A_1\oplus A_2}\ar[dr]^{p_j}
					\\ A_k\ar[ur]^{i_k}\ar[rr]_{\delta_{jk}}
						\rrtwocell\omit\omit{_<-2.7>{\;\;\;\eta_{jk}}}
					&& A_j,
				}\end{xym}
				where $\delta_{jk} = 1_{A_k}$ if $j=k$ and $0$ otherwise,
				such that $p_1, p_2$ is a product and $i_1, i_2$ is a coproduct.
		\end{enumerate}
		If $\C$ is presemiadditive, we can add the following conditions.
		\begin{enumerate}
			\item[5.] The product $A_1\times A_2$ exists.
			\item[6.] The coproduct $A_1 + A_2$ exists.
			\item[7.] For $1\leq j,k\leq 2$, there exist an object, arrows and 2-arrows
				\begin{xym}\xymatrix@=20pt{
					&{A_1\oplus A_2}\ar[dr]^{p_j}
					\\ A_k\ar[ur]^{i_k}\ar[rr]_{\delta_{jk}}
						\rrtwocell\omit\omit{_<-2.7>{\;\;\;\eta_{jk}}}
					&& A_j,
				}\end{xym}
				as well as a 2-arrow
				\begin{eqn}
					\omega\col i_1p_1 + i_2p_2 \Ra 1_{A_1\oplus A_2},
				\end{eqn}
				such that the following diagrams commute.
		\end{enumerate}
				\begin{xym}\label{diagomegp}\xymatrix@R=40pt@C=25pt{
					p_1i_1p_1+p_1i_2p_2\ar[r]^-{\varphi}
						\ar[d]_{\substack{\eta_{11}p_1\\ +\eta_{12}p_2}}
					& p_1(i_1p_1+i_2p_2)\ar[d]^{p_1\omega}
					\\ p_1+0\ar[r]_\rho
					&p_1
				}\;\;\;\xymatrix@R=40pt@C=25pt{
					p_2i_1p_1+p_2i_2p_2\ar[r]^-{\varphi}
						\ar[d]_{\substack{\eta_{21}p_1\\ +\eta_{22}p_2}}
					& p_2(i_1p_1+i_2p_2)\ar[d]^{p_2\omega}
					\\ 0+p_2\ar[r]_\lambda
					&p_2
				}\end{xym}
		\begin{enumerate}
			\item[8.] Like condition 7, but with the following diagrams.
		\end{enumerate}
				\begin{xym}\label{diagomegpbis}\xymatrix@R=40pt@C=25pt{
					i_1p_1i_1+i_2p_2i_1\ar[r]^-{\psi}
						\ar[d]_{\substack{i_1\eta_{11}\\ +i_2\eta_{21}}}
					& (i_1p_1+i_2p_2)i_1\ar[d]^{\omega i_1}
					\\ i_1+0\ar[r]_\rho
					&i_1
				}\;\;\;\xymatrix@R=40pt@C=25pt{
					i_1p_1i_2+i_2p_2i_2\ar[r]^-{\psi}
						\ar[d]_{\substack{i_1\eta_{12}\\ +i_2\eta_{22}}}
					& (i_1p_1+i_2p_2)i_2\ar[d]^{\omega i_2}
					\\ 0+i_2\ar[r]_\lambda
					&i_2
				}\end{xym}
	\end{pon}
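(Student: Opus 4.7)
The plan is to prove the equivalences in two stages: first handling conditions 1--4 in an arbitrary $\Gpdp$-category, then leveraging presemiadditivity for conditions 5--8.

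For the general equivalence 1 $\Leftrightarrow$ 2 $\Leftrightarrow$ 3 $\Leftrightarrow$ 4, the ring of implications 1 $\Rightarrow$ 4, 2 $\Rightarrow$ 4, 4 $\Rightarrow$ 1, 4 $\Rightarrow$ 2 is mostly a matter of unwinding universal properties. For 4 $\Rightarrow$ 1, the injections $i_k$ of the coproduct structure coincide with $\vervec{\delta_{1k}}{\delta_{2k}}$ up to canonical isomorphism, since the $\eta_{jk}$ exhibit $i_k$ as the arrow induced by the product universal property with the required components. Dually for 4 $\Rightarrow$ 2. The converses, 1 $\Rightarrow$ 4 and 2 $\Rightarrow$ 4, amount to verifying that the projection-and-injection data are mutually compatible with the 2-cells $\eta_{jk}$ derived from the existing universal properties. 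To tie in 3, one observes that if both product and coproduct exist, then 1 (resp.\ 2) holds iff the comparison matrix \ref{matrid} is an equivalence: indeed, by the universal property of the coproduct, this matrix is the unique arrow whose composites with $i_1,i_2$ are $\vervec{1}{0}$ and $\vervec{0}{1}$, so it is an equivalence exactly when these two arrows form a coproduct cocone, which is condition 1.

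For the presemiadditive part, the pivotal direction is 5 $\Rightarrow$ 7 (dually 6 $\Rightarrow$ 8). Starting from a product $(A_1\times A_2,p_1,p_2)$, set $\delta_{jk}$ and $\eta_{jk}$ to be the arrows and 2-cells produced by $i_k\eqdef (\delta_{1k},\delta_{2k})^t$. The 2-arrow $\omega\col i_1p_1+i_2p_2\Ra 1_{A_1\times A_2}$ must be constructed using the faithfulness of the projection pair $(p_1,p_2)$: the universal property of the product guarantees a unique 2-arrow $\omega$ whose whiskering by $p_j$ equals the composite prescribed by the diagrams \ref{diagomegp}, which exists because the composites $p_j\circ(i_1p_1+i_2p_2)$ simplify via the bimonoidality transformations $\varphi$ combined with the $\eta_{jk}$ to $p_j+0$ (resp.\ $0+p_j$), which in turn map to $p_j$ via $\rho$ and $\lambda$. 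Verifying that the constructed $\omega$ additionally satisfies diagrams \ref{diagomegpbis} — hence that 7 $\Leftrightarrow$ 8 — requires using the presemiadditive coherence axioms (diagrams \ref{preadda1}--\ref{preaddg}); these ensure the projection-whiskered and injection-whiskered conditions characterize the same 2-cell.

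For 7 $\Rightarrow$ 4 (symmetrically 8 $\Rightarrow$ 4), one verifies that $p_1,p_2$ is a product and $i_1,i_2$ a coproduct using $\omega$ as an "inverse" for the matrix \ref{matrid}. Given $(a_1,a_2)\col X\rightrightarrows A_k$, the candidate arrow $a\eqdef i_1a_1+i_2a_2$ satisfies $p_ja\simeq a_j$ via the $\eta_{jk}$ and $\varphi$-transformations. For uniqueness, if $b\col X\ra A_1\oplus A_2$ has $p_jb\simeq a_j$, then $\omega^{-1}b$ exhibits $b\simeq (i_1p_1+i_2p_2)b\simeq i_1a_1+i_2a_2$, using the bimonoidality of composition. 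The diagrams of condition 7 are exactly what is needed to ensure these isomorphisms cohere. Dually for the coproduct structure using condition 8.

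The main obstacle is the bookkeeping in 5 $\Rightarrow$ 7 and 7 $\Rightarrow$ 4: the many presemiadditivity coherence 2-cells ($\varphi$, $\psi$, $\alpha$, $\lambda$, $\rho$, $\dot{\alpha}$, $\dot{\lambda}$, $\dot{\rho}$) must be threaded consistently, and one has to show both that the constructed $\omega$ is \emph{uniquely} determined by its projection-whiskerings and that the same $\omega$ automatically satisfies the dual injection-whiskering conditions. This latter "automatic duality" rests precisely on the distributivity axioms of a $\dCMon$-category, most notably the compatibility hexagon \ref{preaddc} between $\varphi$ and $\psi$, which encodes the interchange needed for $\omega$ to be simultaneously determined by either family of whiskerings.
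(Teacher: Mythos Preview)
Your approach matches the paper's: conditions 1--4 via the identity matrix and universal properties, and 5--8 via constructing $\omega$ from the (co)product and using it to recover the dual structure. One clarification: you frame $7 \Leftrightarrow 8$ as following from the coherence axioms alone, but the paper's actual mechanism (which your own argument for $7 \Rightarrow$ product sets up dually) is to first show that condition 8 forces $i_1,i_2$ to be a coproduct, hence jointly cofaithful, and then test the diagrams of condition 7 by whiskering with $i_1,i_2$ --- the presemiadditive coherence axioms enter only in verifying those whiskered equations, not in bypassing the joint-(co)faithfulness step.
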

	
		\begin{proof}
			{\it 4 $\Rightarrow$ 3. } Indeed, $(\eta_{jk})\col 
			(p_j\circ 1_{A_1\oplus A_2}\circ i_k)\Ra (\delta_{jk})$ is an isomorphism in 
			$\prod_{\substack{1\leq k\leq 2\\ 1\leq j\leq 2}}\C(A_k,A_j)$ and, since
			functor \ref{equcoprodprod} is an equivalence, $1_{A_1\oplus A_2}$
			is isomorphic to the identity matrix.
			
			{\it 3 $\Rightarrow$ 2. } Indeed,
			\begin{align}\begin{split}\stepcounter{eqnum}
				p_1 \begin{pmatrix}1_{A_1}&0 \\ 0 &1_{A_2}\end{pmatrix} &\simeq (1_{A_1}\; 0)\\
				p_2 \begin{pmatrix}1_{A_1}&0 \\ 0 &1_{A_2}\end{pmatrix} &\simeq (0\; 1_{A_2})
			\end{split}\end{align}
			and, since the identity matrix is an equivalence, these two composites also form
			a product.
			
			{\it 3 $\Rightarrow$ 1. } The proof is dual.
			
			{\it 2 $\Rightarrow$ 4. }  It suffices to take $p_1=(1\; 0)$,
			$p_2=(0\; 1)$ and $\eta_{jk}=\pi_j$.
			
			{\it 1 $\Rightarrow$ 4. } The proof is dual.
			
			\vspace{1em}
			Let us assume now that $\C$ is presemiadditive.

			{\it 2 $\Rightarrow$ 6. } Obvious.

			{\it 6 $\Rightarrow$ 8. } 
			We set $p_1 \eqdef (1\; 0)$ and $p_2 \eqdef (0\; 1)$ and
			$\eta_{jk} \eqdef \iota_k$, and diagrams \ref{diagomegpbis}
			induce $\omega$ by the universal property of the coproduct.
			
			{\it 8 $\Rightarrow$ we have a coproduct $A_1\overset{i_1}
			\longrightarrow A_1\oplus A_2\overset{i_2}\longleftarrow A_2$. }
			Indeed, if $A_1\overset{y_1}\longrightarrow Y\overset{y_2}\longleftarrow A_2$
			is a rival, we have $A_1\oplus A_2\xrightarrow{y_1p_1+y_2p_2} Y$, with
			$(y_1p_1+y_2p_2)i_1\xLongrightarrow{\psi^{-1}} y_1p_1i_1+y_2p_2i_1 
			\xLongrightarrow{y_1\eta_{11}+y_2\eta_{21}} y_1+0\overset{\rho}\Longrightarrow y_1$.
			The proof is similar for $i_2$ and $y_2$.  Next, if we have $u,v\col A_1\oplus A_2\ra Y$
			with $\alpha_1\col ui_1\Ra vi_1$ and $\alpha_2\col ui_2\Ra vi_2$,
			we set $\alpha$ equal to the composite
			\begin{multline}\stepcounter{eqnum}
				u\circ 1\xLongrightarrow{u\omega^{-1}}u(i_1p_1+i_2p_2)
				\xLongrightarrow{\varphi^{-1}} ui_1p_1+ui_2p_2 
				\xLongrightarrow{\alpha_1p_1+\alpha_2p_2}vi_1p_1+vi_2p_2
				\\ \overset{\varphi}\Longrightarrow v(i_1p_1+i_2p_2)\xLongrightarrow{v\omega}v\circ 1
			\end{multline}
			and we check, by using two diagrams of condition 8 and of the axioms
			of presemiadditive $\Gpd$-category, that $\alpha i_1 = \alpha_1$ and $\alpha i_2
			=\alpha_2$.  Unicity is easy to check with the help of $\omega$.
			
			{\it 1 $\Rightarrow$ 5 $\Rightarrow$ 7 $\Rightarrow$ 
			we have a product $p_1$, $p_2$. } The proof is dual.
			
			{\it 8 $\Leftrightarrow$ 7. }   We have just proved that
			 condition 8 implies that $i_1$ and $i_2$ form a coproduct.  They are thus
			jointly cofaithful.  Then we test the two diagrams of
			condition 7 with $i_1$ and $i_2$ to check their commutativity.
			Dually, condition 7 implies condition 8.
							
			{\it 8 $\Rightarrow$ 4. } Indeed, $i_1$ and $i_2$
			form a coproduct and, by condition 7,
			$p_1$ and $p_2$ form a product.
		\end{proof}

	A first example of $\Gpdp$-category with biproducts is $\CGS$; the 
	biproduct of two symmetric 2-groups is simply their cartesian product.
	More generally, 2-abelian $\Gpd$-categories
	have all finite biproducts.  To prove that, we need the following lemma.

	\begin{lemm}
		Let $\C$ be a $\Gpdp$-category and $A_1\overset{i_1}\longrightarrow A_1 + A_2
		\overset{i_2}\longleftarrow A_2$ be a coproduct.  
		\begin{enumerate}
			\item The arrows $i_1$ and $i_2$ are faithful.
			\item $(0\;1_{A_2})= \Coker i_1$:
				\begin{xym}\xymatrix@=40pt{
					A_1\ar[r]^-{i_1}\rrlowertwocell_0<-11>{\;\,\iota_1}
					& A_1+A_2\ar[r]^-{(0\;1_{A_2})}
					&A_2
				}\end{xym}
				 By symmetry, $(1_{A_1}\; 0)= \Coker i_2$.
		\end{enumerate}
	\end{lemm}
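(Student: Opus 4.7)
The plan is to exploit the canonical retractions furnished by the coproduct structure together with the zero arrows of the $\Gpdp$-category: set $r_1 \eqdef (1_{A_1}, 0) \col A_1 + A_2 \ra A_1$ and $r_2 \eqdef (0, 1_{A_2}) \col A_1 + A_2 \ra A_2$, with the accompanying 2-arrows $\iota_1^{(1)} \col r_1 i_1 \Ra 1_{A_1}$, $\iota_2^{(1)} \col r_1 i_2 \Ra 0$ and $\iota_1^{(2)} \col r_2 i_1 \Ra 0$, $\iota_2^{(2)} \col r_2 i_2 \Ra 1_{A_2}$ coming from the universal property. The 2-arrow $\iota_1$ appearing in the statement is then $\iota_1^{(2)}$.

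For part 1, I would argue by the standard split-mono argument lifted to dimension 2. Given $\alpha, \beta \col a \Ra a' \col X \ra A_1$ with $i_1 \alpha = i_1 \beta$, whiskering by $r_1$ on the left gives $(r_1 i_1)\alpha = (r_1 i_1)\beta$. By naturality of $\iota_1^{(1)}$ (middle-four interchange), $(r_1 i_1)\alpha$ is conjugated into $\alpha$ by the invertible 2-arrows $\iota_1^{(1)} a$ and $\iota_1^{(1)} a'$, and likewise for $\beta$; hence $\alpha = \beta$. The same argument with $r_2, \iota_2^{(2)}$ handles $i_2$.

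For part 2, I would verify the two clauses of the cokernel universal property for $(r_2, \iota_1 \eqdef \iota_1^{(2)})$. For the existence clause, given $b \col A_1 + A_2 \ra Y$ and $\beta \col b i_1 \Ra 0$, I would take $b' \eqdef b i_2$ and produce $\alpha \col b \Ra b' r_2$ by the joint fully faithfulness part of the coproduct universal property: the whiskering of $\alpha$ at $i_1$ should equal $\beta$ composed with $(b i_2 \iota_1^{(2)})^{-1}$, and at $i_2$ should be the canonical iso from $\iota_2^{(2)}$, and the factorisation equation $\beta = b' \iota_1 \circ \alpha i_1$ is then immediate. For the uniqueness/fullness clause, given $b_1, b_2 \col A_2 \ra Y$ and $\alpha \col b_1 r_2 \Ra b_2 r_2$ compatible with $\iota_1$, I would define $\alpha' \col b_1 \Ra b_2$ as the conjugate of $\alpha i_2 \col b_1 r_2 i_2 \Ra b_2 r_2 i_2$ by $b_1 \iota_2^{(2)}$ and $b_2 \iota_2^{(2)}$. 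The equation $\alpha = \alpha' r_2$ follows by testing with $i_1$ and $i_2$: the $i_1$-component uses the compatibility of $\alpha$ with $\iota_1$ (both sides collapse to $1_0$ through $\iota_1^{(2)}$), and the $i_2$-component holds by construction, so the coproduct's 2-dimensional universal property produces the required equality. Uniqueness of $\alpha'$ then follows from part 1, since $\alpha_1' r_2 = \alpha_2' r_2$ implies $\alpha_1' r_2 i_2 = \alpha_2' r_2 i_2$, whence $\alpha_1' = \alpha_2'$ after cancelling $\iota_2^{(2)}$.

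The only real bookkeeping obstacle is tracking how the canonical 2-arrows $\iota_k^{(j)}$ intertwine with the candidate cokernel 2-arrow $\iota_1$ and with the compatibility condition, and in particular ensuring that the $i_1$-component identity that establishes $\alpha = \alpha' r_2$ is exactly what the compatibility hypothesis provides. By the symmetry $A_1 \leftrightarrow A_2$, the statement for $i_2$ and $r_1$ is automatic.
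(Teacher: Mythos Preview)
Your proposal is correct and follows essentially the same route as the paper: both parts use the retractions $(1_{A_1}\;0)$ and $(0\;1_{A_2})$ together with their canonical 2-arrows $\iota_k$, and the cokernel universal property is verified by building the required 2-arrows componentwise via the 2-dimensional part of the coproduct universal property and then checking equations by testing with $i_1$ and $i_2$. The only cosmetic difference is that the paper is terser and does not spell out the final uniqueness clause (which, incidentally, does not actually need part~1: conjugating by the invertible $\iota_2^{(2)}$ already suffices).
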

	
		\begin{proof}
			As we have $(1_{A_1}\;0)i_1\overset{\iota_1}\Longrightarrow 1_{A_1}$, 
			$i_1$ is faithful; by symmetry, $i_2$ is faithful.
			
			Let us prove point 2.
			Let be $y\col A_1+A_2\ra Y$, with $\varphi\col yi_1\Ra 0$.  We set
			$y'\eqdef y i_2\col A_2\ra Y$.
			The 2-arrows $y'(0\;1)i_1\overset{y'\iota_1\;\,\,}\Longrightarrow y'0=0
			\overset{\varphi^{-1}}\Longrightarrow yi_1$ and
			$y'(0\;1)i_2\overset{y'\iota_2\;\;\,}\Longrightarrow y'=yi_2$ induce, by the
			universal property of the coproduct, a 2-arrow $\gamma\col y'(0\;1)\Ra y$
			such that $\gamma i_1=\varphi^{-1}(y'\iota_1)$ and $\gamma i_2=y'\iota_2$.
			Then $\varphi(\gamma i_1) = \varphi\varphi^{-1}(y'\iota_1)=y'\iota_1$.
			
			Next, if $\gamma\col u(0\;1)\Ra v(0\;1)$ is such that
			$v\iota_1\circ\gamma i_1\circ u\iota_1^{-1} = 1_0$, we set
			$\alpha$ equal to the composite $u\overset{u\iota_2^{-1}}\Longrightarrow
			u(0\;1)i_2\overset{\gamma i_2\;\,}\Longrightarrow v(0\;1)i_2
			\overset{v\iota_2\;\,}\Longrightarrow v$.  We check that
			we have $\gamma=\alpha(0\;1)$ by testing with $i_1$ and $i_2$
			(which are jointly cofaithful, by the universal property
			of the coproduct).
		\end{proof}

	A corollary of this lemma is that, like in dimension 1,
	biproducts form extensions.

	\begin{coro}\label{biprodextens}
		Let $\C$ be a $\Gpdp$-category and $A_1\oplus A_2$ be a biproduct in $\C$.  Then
		the following sequences are extensions.
		\begin{xym}\xymatrix@=40pt{
			A_1\ar[r]^-{i_1}\rrlowertwocell_0<-11>{\;\;\,\eta_{21}}
			& {A_1\oplus A_2}\ar[r]^-{p_2}
			&A_2
		}\end{xym}
		\begin{xym}\xymatrix@=40pt{
			A_2\ar[r]^-{i_2}\rrlowertwocell_0<-11>{\;\;\,\eta_{12}}
			& {A_1\oplus A_2}\ar[r]^-{p_1}
			&A_1
		}\end{xym}
	\end{coro}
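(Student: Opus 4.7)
The plan is to verify, for the first sequence, both the kernel condition $(i_1,\eta_{21})=\Ker p_2$ and the cokernel condition $(p_2,\eta_{21})=\Coker i_1$; the second sequence then follows by swapping $A_1$ and $A_2$. Since a biproduct is simultaneously a product and a coproduct (Proposition \ref{caracbiprod}, conditions 1 and 2), the whole situation is self-dual, so one half reduces to the other after dualising.

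For the cokernel half, I would invoke the preceding lemma directly. By condition 2 of Proposition \ref{caracbiprod} applied to the biproduct, $A_1 \overset{i_1}\longrightarrow A_1\oplus A_2 \overset{i_2}\longleftarrow A_2$ is a coproduct, so point 2 of that lemma gives $(0,1_{A_2}) = \Coker i_1$ with canonical 2-arrow $\iota_1 \col (0,1_{A_2}) i_1 \Ra 0$. It remains to identify this data with $(p_2,\eta_{21})$: under the coproduct structure of the biproduct, $p_2$ is by definition $(0,1_{A_2})$, and the 2-arrow $\eta_{21}$ from condition 4 of Proposition \ref{caracbiprod} is precisely $\iota_1$ — this is how $\eta_{21}\col p_2 i_1 \Ra 0$ was constructed from the coproduct injection's universal property in the implication $2 \Rightarrow 4$ of that proposition.

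For the kernel half, I would dualise the preceding lemma. By condition 1 of Proposition \ref{caracbiprod}, $A_1 \overset{p_1}\longleftarrow A_1\oplus A_2 \overset{p_2}\longrightarrow A_2$ is a product, so the dual lemma yields $p_2$ cofaithful and $(1_{A_1}, 0)^{\mathsf{T}} = \Ker p_2$ with canonical 2-arrow $\pi_2\col p_2\binom{1_{A_1}}{0}\Ra 0$. Under the product structure, $i_1 = \binom{1_{A_1}}{0}$, and once again $\pi_2$ coincides with $\eta_{21}$ by the construction in $1 \Rightarrow 4$ of Proposition \ref{caracbiprod}.

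The main (minor) obstacle is purely bookkeeping: one must be careful that the 2-arrow $\eta_{21}$ supplied by condition 4 of Proposition \ref{caracbiprod} is the same 2-arrow — up to the strict identifications $p_2 \equiv (0,1_{A_2})$ and $i_1 \equiv \binom{1_{A_1}}{0}$ — that is produced both by the cokernel universal property on one side and the kernel universal property on the other. This coherence is immediate from inspecting the proofs of the implications $2\Rightarrow 4$ and $1\Rightarrow 4$ in Proposition \ref{caracbiprod}, where in each case $\eta_{jk}$ is defined to be exactly the relevant $\iota_k$ or $\pi_j$. Once this identification is recorded, both sequences are extensions, and the symmetric statement for the second sequence follows by interchanging the roles of $A_1$ and $A_2$.
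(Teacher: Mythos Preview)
Your proof is correct and matches the paper's intended argument: the paper gives no explicit proof, treating the result as an immediate consequence of the preceding lemma (for the cokernel half) and its dual (for the kernel half), exactly as you do. One minor sharpening of your bookkeeping paragraph: for \emph{arbitrary} biproduct data $(i_k,p_j,\eta_{jk})$ satisfying condition~4, the triple $(p_2,\eta_{21},\eta_{22})$ is itself a valid instance of the coproduct-induced arrow $(0\;1_{A_2})$ together with its $\iota$'s, so the lemma applies directly to $(p_2,\eta_{21})$ without assuming the data arose via the particular constructions $2\Rightarrow 4$ or $1\Rightarrow 4$.
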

	
	\begin{pon}\label{gpd2abbiprod}
		Let $\C$ be a 2-abelian $\Gpd$-category.  Then $\C$ has all binary biproducts.
	\end{pon}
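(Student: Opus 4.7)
The plan is to invoke the characterisation of biproducts in Proposition \ref{caracbiprod}: since $\C$, being 2-abelian, has all finite products and coproducts, it suffices to check condition 3, namely that the identity matrix
\begin{equation*}
	t \eqdef \begin{pmatrix} 1_{A_1} & 0 \\ 0 & 1_{A_2} \end{pmatrix} \col A_1 + A_2 \longrightarrow A_1 \times A_2
\end{equation*}
is an equivalence. I will do this by exhibiting $t$ as the middle arrow of a morphism of extensions whose outer vertical arrows are identities, then applying a short-five-type argument that is available already in the 2-Puppe-exact setting.

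First I assemble the two extensions. By the lemma preceding Corollary \ref{biprodextens}, the injection $i_1 \col A_1 \to A_1 + A_2$ is faithful and $(0 \; 1_{A_2})$, equipped with $\iota_1$, is its cokernel. Since $\C$ is 2-Puppe-exact, condition 3(b) of Proposition \ref{caractwopupex} gives that every faithful arrow is canonically the kernel of its cokernel, so $(i_1, \iota_1) = \Ker (0\;1_{A_2})$ as well. Hence
\begin{equation*}
	0 \longrightarrow A_1 \xrightarrow{i_1} A_1 + A_2 \xrightarrow{(0\;1_{A_2})} A_2 \longrightarrow 0
\end{equation*}
is an extension, and dually so is
\begin{equation*}
	0 \longrightarrow A_1 \xrightarrow{\vervec{1_{A_1}}{0}} A_1 \times A_2 \xrightarrow{p_2} A_2 \longrightarrow 0.
\end{equation*}
From the defining isomorphisms $\eta_{jk}\col p_j t i_k \Ra \delta_{jk}$ of the matrix $t$ (see Subsection \ref{premelcalmat}) one reads off canonical 2-arrows $t i_1 \simeq \vervec{1_{A_1}}{0}$ and $p_2 t \simeq (0 \; 1_{A_2})$, assembling $t$ into a morphism of the two extensions above whose outer vertical components are $1_{A_1}$ and $1_{A_2}$.

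Now I apply Lemma \ref{petitlemmcinqtjrvrai}, which is valid in any $\Gpdp$-category: taking $a = 1_{A_1}$ and $c = 1_{A_2}$, all four conditions of the lemma apply, so $t$ is simultaneously 0-faithful, 0-cofaithful, fully 0-faithful and fully 0-cofaithful. Since $\C$ is 2-Puppe-exact, Proposition \ref{fidfidzer} identifies these with faithful, cofaithful, fully faithful and fully cofaithful respectively. In particular $t$ is fully faithful and cofaithful, hence by Proposition \ref{fidplcofidequ} an equivalence. Condition 3 of Proposition \ref{caracbiprod} then yields a biproduct $A_1 \oplus A_2$.

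The only non-routine ingredient is the compatibility of the identity matrix with the extension structure (i.e.\ producing the required 2-arrows making the two squares commute); this is a direct unpacking of the universal properties and of the isomorphisms $\eta_{jk}$, so no real obstacle arises — once those are written down the short-five argument via Lemma \ref{petitlemmcinqtjrvrai} and Propositions \ref{fidfidzer}, \ref{fidplcofidequ} closes the proof without relying on any of the additivity or abelianness results that depend on this proposition.
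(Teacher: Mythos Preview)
Your proof is correct and takes a genuinely different, more structural route than the paper's. The paper verifies directly that the identity matrix $I$ is fully 0-faithful: given $x\col X\to A_1+A_2$ with $\chi\col Ix\Ra 0$, it uses the fact that $i_1$ (resp.\ $i_2$) is the kernel of $p_2I$ (resp.\ $p_1I$) to produce factorisations $x_1$, $x_2$, and then builds the required 2-arrow $\gamma$ by hand, checking it against $p_1$ and $p_2$. By duality $I$ is also fully 0-cofaithful, hence an equivalence.

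You instead recognise that this diagram chase is exactly an instance of the short-five lemma already available as Lemma~\ref{petitlemmcinqtjrvrai}: you exhibit $t$ as the middle arrow of a morphism between the extension $A_1\xrightarrow{i_1}A_1+A_2\xrightarrow{(0\;1)}A_2$ and its product dual, with identity outer arrows, and then invoke that lemma. Both arguments rest on the same underlying input (the preceding lemma plus 2-Puppe-exactness giving the extension structure), but your approach avoids redoing by hand what Lemma~\ref{petitlemmcinqtjrvrai} already packages, at the cost of having to check the compatibility condition $\eta'a\circ g'\varphi\circ\psi f=c\eta$. That check is indeed routine: with $\varphi$ the matrix-level $\iota_1$ and $\psi$ the inverse of the matrix-level $\pi_2$, the coherence identity $\iota_{k}\circ\pi_{j}i_{k}=\eta_{jk}=\pi_{j}\circ p_{j}\iota_{k}$ of Subsection~\ref{premelcalmat} gives exactly what is needed. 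The paper's direct argument is more self-contained; yours is more modular and makes clearer why the result holds.
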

	
		\begin{proof}
			We prove condition 3 of Proposition \ref{caracbiprod}.
			Let us denote by $I$ the identity matrix. We will prove that $I$ is fully 0-faithful.
			By duality, $I$ will also be fully 0-cofaithful, and thus an equivalence.
			
			Let be $x\col X\ra A_1+A_2$ and $\chi\col Ix\Ra 0$.  By the previous lemma, we
			know that $(0\;1_{A_2})=\Coker i_1$ and that $i_1$ is faithful thus, by
			2-Puppe-exactness of $\C$, $i_1$ with $\iota_1$ is the kernel of $(0\;1_{A_2})$.
			Since we have $\pi_{2}\col p_2 I\Ra (0\;1_{A_2})$, we can deduce
			that $i_1$, with $\eta_{21}=\iota_1\circ\pi_{2}i_1$ is the kernel of $p_2 I$.
			In the same way, $i_2$, with $\eta_{12}$ is the kernel of $p_1 I$.
			
			Therefore $x$, with $p_2\chi$, is a rival of $i_1$ and there exist, by the
			universal property of the kernel, $x_1\col X\ra A_1$ and $\chi_1\col x\Ra i_1x_1$
			such that the lower part of the following diagram is equal to $p_2\chi$.
			In the same way, we have $x_2\col X\ra A_2$ and $\chi_2$ such that the upper part
			of the diagram is equal to $p_1\chi$.
			\begin{xym}\xymatrix@=40pt{
				&A_2\ar[r]^0\ar[d]^{i_2}
				&A_1
				\\ X\ar[r]^-{x}\ar@/^1.4pc/[ur]^{x_2}\ar@/_1.4pc/[dr]_{x_1}
					\rtwocell\omit\omit{_<5>{\,\;\chi_1}}
					\rtwocell\omit\omit{^<-5>{\chi_2}\;\,}
				&A_1+A_2\ar[r]^-I
					\rtwocell\omit\omit{_<5.5>{\,\;\;\;\eta_{21}}}
					\rtwocell\omit\omit{^<-5.5>{\eta_{12}}\,\;\;\;}
				&A_1\times A_2\ar[u]_{p_1}\ar[d]^{p_2}
				\\ &A_1\ar[r]_0\ar[u]_{i_1}
				&A_2
			}\end{xym}
			We set then $\alpha\col x_1\Ra 0$ equal to the composite of the following diagram.
			\begin{xym}\xymatrix@=40pt{
				&A_1\ar@{=}[dr]\drtwocell\omit\omit{<3>{\;\;\,\iota_1^{-1}}}\ar[d]_{i_1}
				\\ X\ar[r]|-x\ar[ur]^{x_1}\rtwocell\omit\omit{<3>{\;\;{\chi_2}}}
					\ar[dr]_{x_2}\rtwocell\omit\omit{<-3>{\;\;\;\;\chi_1^{-1}}}
				&A_1+A_2\ar[r]|-{(1\;0)}
				&A_1
				\\ &A_2\ar[ur]_0\urtwocell\omit\omit{<-3>{\,\iota_2}}\ar[u]^{i_2}
			}\end{xym}
			Then $\gamma$, defined by the composite
			$x\overset{\chi_1\,}\Longrightarrow i_1x_1\overset{i_1\alpha\;\;}
			\Longrightarrow i_1 0=0$, is such that $\chi = I\gamma$; to check that,
			we test this equation with $p_1$ and $p_2$, which are jointly faithful.  
			Since $\C$ has all $\Sigma$s, we can conclude that $I$ is fully
			0-faithful, by Proposition \ref{simpldefplzfidsigm}.
		\end{proof}
	
	Products in $\Gpd$ are usually described with a strict universal property:
	if $\A$ and $\B$ are groupoids and $F\col\cat{X}\ra\A$
	and $G\col\cat{X}\ra\B$ are functors, we have an induced functor
	$(F,G)\col\cat{X}\ra\A\times\B$ such that $p_1(F,G)\equiv F$ and $p_2(F,G)\equiv G$;
	we can thus take the identity for the natural transformations $\pi_1$ and $\pi_2$.
	
	Let $\C$ be a $\Gpdp$-category with zero object and biproducts.
	By taking the full image of the Yoneda embedding
	\begin{eqn}
		\C\overset{Y}\ra[\C\op,\Gpd],
	\end{eqn}
	which is locally an equivalence, we can replace $\C$ by an equivalent $\Gpd$-category
	(the full sub-$\Gpd$-category of $[\C\op,\Gpd]$ whose objects are the functors
	equivalent to a representable, which is stable under products because $\C$
	has all products) where the product also has a strictly described universal property
	because, in $\Gpd$, and
	thus in $[\C\op,\Gpd]$, the product is strictly described.  (On the other hand, we
	cannot require at the same time that the coproduct be strictly described; see
	Baues, Jibladze and Pirashvili \cite[section 5]{Baues2006a}.)
	
	Henceforth we assume that the universal property of the product is strictly described
	in $\C$. For every object $A$ we fix an exponentiation $A\times \ldots \times A$ of $A$
	for each natural exponent $n$.  We take for $A^1$ the object $A$ itself
	and for $A^0$ the terminal object $1$.  And we set
	$i_1 \eqdef \vervec{1}{0}$ and $i_2 \eqdef \vervec{0}{1}$.

\subsection{Bimonoids}
	
	The goal of this subsection and of the following subsections is to prove that
	when a $\Gpd$-category $\C$ has all finite biproducts, it is presemiadditive.
	To do that, in this subsection, we prove that the existence of finite biproducts
	implies that each object of $\C$ is equipped with a bimonoid structure and that
	the $\Gpd$-functor $\Phi\col \C\ra\caspar{Bimon}(\C)$,
	mapping an object of $\C$ to the bimonoid constructed on this object,
	has good properties.
	
	To begin with, let us recall the definition of internal (symmetric) monoids
	in a $\Gpd$-category
	with products (they are defined with the name of (symmetric) pseudomonoids in the context
	of Gray-monoids by Day and Street \cite{Day1997a}; the pseudomonoids in
	$\Cat$ are the monoidal categories; see \cite{Baez2004a} for internal
	(2-)groups in a 2-category with products), and next the definition of internal bimonoids.
	
	\begin{df}\index{symmetric!monoid}\index{monoid!symmetric}
		Let $\C$ be a $\Gpd$-category with finite products.
		A \emph{symmetric monoid} in $\C$ consists of an object $A\col \C$,
		a multiplication $m\col A\times A\ra A$, a unit $e\col 1\ra A$,
		and 2-arrows
		\begin{xym}\xymatrix@=40pt{
			{A\times A\times A}\ar[d]_{1\times m}\ar[r]^-{m\times 1}
				\drtwocell\omit\omit{\alpha}
			&A\times A\ar[d]^{m}
			\\ A\times A\ar[r]_-{m}
			& A
		}\end{xym}
		\begin{xym}\xymatrix@=40pt{
			A\ar[r]^-{e\times 1}\ar@{=}[dr]\drtwocell\omit\omit{_<-2.5>\rho}
			&A\times A\ar[d]_m
			&A\ar[l]_-{1\times e}\ar@{=}[dl]
				\dltwocell\omit\omit{^<2.5>\,\lambda}
			\\ &A
		}\end{xym}
		\begin{xym}\xymatrix@C=40pt@R=20pt{
			A\times A\ar[dd]_{c_{AA}}\ar[dr]^{m}
			\\ {}\rtwocell\omit\omit{\gamma}
			&A
			\\ A\times A\ar[ur]_-m
		}\end{xym}
		(where $c_{AB}\col A\times B\ra B\times A$ is such that $p_1c\equiv p_2$ and $p_2c\equiv p_1$).
		These data must satisfy the following conditions 
		(the regions without 2-arrow commute because the product is strictly described;
		the 2-arrows $\alpha\times 1$ and $1\times\alpha$ (in the first
		condition) and other 2-arrows constructed in a similar way
		do have the indicated domain and codomain thanks to the same strictness; the symbol
		$\times$ is often omitted for objects, to save space).
				\begin{xyml}\label{axmonintassoc}\begin{gathered}\xymatrix@R=20pt@C=10pt{
					AAAA\ar[rr]^-{m\times 1\times 1}
						\ar[dd]_{1\times 1\times m}\ar[dr]|-{1\times m\times 1}
						\ddrrtwocell\omit\omit{_<-4.5>{\;\;\;\;\,\alpha\times 1}}
						\ddrrtwocell\omit\omit{_<6>{\substack{1\times\alpha
						\;\;\;\;\;\;\;\;\;\;\;\;\;\;\; \\ {} \\ {}}}}
					&&AAA\ar[dr]^-{m\times 1}
					\\ &AAA\ar[rr]_-{m\times 1}\ar[dd]^{1\times m}
						\ddrrtwocell\omit\omit{\alpha}
					&&AA\ar[dd]^m
					\\ AAA\ar[dr]_-{1\times m} &&{}
					\\ &AA\ar[rr]_-m
					&&A
				}\end{gathered} \;\;= \;\;
				\begin{gathered}\xymatrix@R=20pt@C=10pt{
					AAAA\ar[rr]^-{m\times 1\times 1}
						\ar[dd]_{1\times 1\times m}
					&&AAA\ar[dr]^-{m\times 1}\ar[dd]_{1\times m}
					\\ &{}\ddrrtwocell\omit\omit{_<5>{\alpha}}
						\ddrrtwocell\omit\omit{_<-6>{\alpha}}
					&&AA\ar[dd]^m
					\\ AAA\ar[dr]_-{1\times m} \ar[rr]^{m\times 1}
					&&AA\ar[dr]^m
					\\ &AA\ar[rr]_-m
					&&A
				}\end{gathered}\end{xyml}
				\begin{xyml}\label{axmonintunit}\begin{gathered}\xymatrix@=40pt{
					AA\ar[r]^-{1\times e\times 1}\ar@{=}[dr]
						\drtwocell\omit\omit{_<-2.8>{\;\;\;\;1\times \rho}}
					&AAA\ar[r]^-{m\times 1}\ar[d]^(0.3){1\times m}
						\drtwocell\omit\omit{\alpha}
					&AA\ar[d]^{m}
					\\ &AA\ar[r]_-m
					&A
				}\end{gathered}\;\;=\;\;\begin{gathered}\xymatrix@=40pt{
					AA\ar[r]^-{1\times e\times 1}\ar@{=}[dr]
						\drtwocell\omit\omit{_<-2.8>{\;\;\;\;\lambda\times 1}}
					&AAA\ar[d]^{m\times 1}\ar[r]^{m\times 1}
					&AA\ar[d]^m
					\\ &AA\ar[r]^-m  &A
				}\end{gathered}\end{xyml}
				\begin{xyml}\label{axmoninttresi}\begin{gathered}\xymatrix@=26pt{
					AAA\ar[dd]_{1\times m}
					&&AAA\ar[ll]_{c_{A,A\times A}}\ar[dd]^{m\times 1}\ar[dl]_{c_{AA}\times 1}
						\ddtwocell\omit\omit{_<4>\gamma}
					\\ &AAA\ar[ul]_{1\times c_{AA}}\ar[dr]_{m\times 1}\ar[dl]^{1\times m}
						\ddtwocell\omit\omit{\alpha}\dltwocell\omit\omit{_<4>\gamma}
					\\AA\ar[dr]_{m}
					&&AA\ar[dl]^{m}
					\\ &A
				}\end{gathered}\;\;=\;\;	\begin{gathered}\xymatrix@=30pt{
					AAA\ar[dd]_{1\times m}
					&&AAA\ar[ll]_{c_{A,A\times A}}\ar[dd]^{m\times 1}
					\\ {}\drtwocell\omit\omit{_<3.5>\alpha}
					&{}\ar@{}[dl]|{\txt{\normalsize \it AA}}="c"
						\ar@{}[dr]|{\txt{\normalsize \it AA}}="d"
						\ddtwocell\omit\omit{\gamma}\ar"d";"c"_{c_{AA}}
					&{}\dltwocell\omit\omit{_<-3.5>\alpha}
					\\AA\ar[dr]_{m}
					&&AA\ar[dl]^{m}
					\\ &A\ar@{<-}"c"^m\ar@{<-}"d"_m\ar@{<-}"c";[uuul]_{m\times 1}
						\ar@{<-}"d";[uuur]^{1\times m}
				}\end{gathered}\end{xyml}
				\begin{xyml}\label{axmonintsym}\begin{gathered}\xymatrix@=40pt{
					A\times A\ar[dr]^-{m}\ar[d]_c\ar@/_3pc/@{=}[dd]
						\drtwocell\omit\omit{_<3>\gamma}
					\\ A\times A\ar[r]^-m\ar[d]_c\rtwocell\omit\omit{_<2.5>\gamma}
					&A
					\\ A\times A\ar[ur]_-m
				}\end{gathered}\;\;= \;\;1_m\end{xyml}
	\end{df}

	In the case $\C=\Gpd$, these four conditions become respectively
	conditions \ref{axmonassoc}, \ref{axmonunit}, \ref{axmontresi} and 
	\ref{axmonsym}.  The symmetric monoids in $\Gpd$ are thus the
	symmetric 2-monoids defined above.  If $\C$ is a $\Ens$-category
	with finite products, a symmetric monoid in $\C$ is simply a commutative monoid
	in $\C$.
	
	\begin{df}\index{morphism!of symmetric monoids}
		Let $A, B$ be two symmetric monoids in $\C$.  A morphism of symmetric monoids
		$A\ra B$ consists of an arrow $f\col A\ra B$ and two 2-arrows
		\begin{xym}\xymatrix@=40pt{
			A\times A\ar[r]^-{f\times f}\ar[d]_{m}\drtwocell\omit\omit{\varphi}
			&B\times B\ar[d]^m
			\\ A\ar[r]_-f
			&B
		}\;\;\;\;\xymatrix@=40pt{
			1\ar[d]_-e\ar[dr]^-e\drtwocell\omit\omit{_<2.3>{\varphi^0}}
			& 
			\\ A\ar[r]_-f
			&B
		}\end{xym}
		satisfying the following conditions.
				\begin{xyml}\begin{gathered}\xymatrix@R=20pt@C=10pt{
					AAA\ar[rr]^-{f\times f\times f}
						\ar[dd]_{1\times m}\ar[dr]|-{m\times 1}
						\ddrrtwocell\omit\omit{_<-4.5>{\;\;\;\;\,\varphi\times f}}
						\ddrrtwocell\omit\omit{_<6>{\substack{\alpha
						\;\;\;\;\;\;\;\;\; \\ {} \\ {}}}}
					&&BBB\ar[dr]^-{m\times 1}
					\\ &AA\ar[rr]_-{f\times f}\ar[dd]^{m}
						\ddrrtwocell\omit\omit{\varphi}
					&&BB\ar[dd]^m
					\\ AA\ar[dr]_-{m} &&{}
					\\ &A\ar[rr]_-f
					&&B
				}\end{gathered}\;\;=\;\;
				\begin{gathered}\xymatrix@R=20pt@C=10pt{
					AAA\ar[rr]^-{f\times f\times f}\ar[dd]_{1\times m}
						\ddrrtwocell\omit\omit{\substack{f\times\varphi
						\;\;\;\;\;\;\;\;\;\;\;\;\;\; \\ {}\\ {}}}
					&&BBB\ar[dr]^-{m\times 1}\ar[dd]_{1\times m}
					\\ &{}\ddrrtwocell\omit\omit{_<5>{\varphi}}
						\ddrrtwocell\omit\omit{_<-6>{\alpha}}
					&&BB\ar[dd]^m
					\\ AA\ar[dr]_-{m}\ar[rr]^{f\times f}
					&&BB\ar[dr]^m
					\\ &A\ar[rr]_-f
					&&B
				}\end{gathered}\end{xyml}
				\stepcounter{eqnum}\begin{subequations}
				\begin{equation}\begin{gathered}\xymatrix@R=20pt@C=10pt{
					A\ar[rr]^-f\ar[dr]^-{1\times e}\ar@{=}@/_1.1pc/[dddr]
					&{}\drtwocell\omit\omit{\;\;\;\;\;\;f\times \varphi^0}
					&B\ar[dr]^-{1\times e}
					\\ {}\drtwocell\omit\omit{\lambda}
					&A\times A\ar[rr]_-{f\times f}\ar[dd]^{m}
						\ddrrtwocell\omit\omit{\varphi}
					&{}&B\times B\ar[dd]^m
					\\ &{}
					\\ &A\ar[rr]_-f
					&&B
				}\end{gathered}\;\;=\;\;\begin{gathered}\xymatrix@R=20pt@C=10pt{
					A\ar[rr]^-f
					&&B\ar[dr]^-{1\times e}\ar@{=}@/_1.1pc/[dddr]
					\\ &&{}\drtwocell\omit\omit{\lambda}
					&B\times B\ar[dd]^m
					\\ {}&&&{}
					\\ &&&B
				}\end{gathered}\end{equation}
				\begin{equation}\begin{gathered}\xymatrix@R=20pt@C=10pt{
					A\ar[rr]^-f\ar[dr]^-{e\times 1}\ar@{=}@/_1.1pc/[dddr]
					&{}\drtwocell\omit\omit{\;\;\;\;\;\;\varphi^0\times f}
					&B\ar[dr]^-{e\times 1}
					\\ {}\drtwocell\omit\omit{\rho}
					&A\times A\ar[rr]_-{f\times f}\ar[dd]^{m}
						\ddrrtwocell\omit\omit{\varphi}
					&{}&B\times B\ar[dd]^m
					\\ &{}
					\\ &A\ar[rr]_-f
					&&B
				}\end{gathered}\;\;=\;\;\begin{gathered}\xymatrix@R=20pt@C=10pt{
					A\ar[rr]^-f
					&&B\ar[dr]^-{e\times 1}\ar@{=}@/_1.1pc/[dddr]
					\\ &&{}\drtwocell\omit\omit{\rho}
					&B\times B\ar[dd]^m
					\\ {}&&&{}
					\\ &&&B
				}\end{gathered}\end{equation}\end{subequations}
				\ignorespacesafterend
				\begin{xyml}\begin{gathered}\xymatrix@=40pt{
					A\times A\ar[r]^{f\times f}\ar[d]_c
					&B\times B\ar[d]_c\dduppertwocell^m<13>{_<-4.7>\gamma}
					\\ A\times A\ar[r]^{f\times f}\ar[d]_m\drtwocell\omit\omit{\varphi}
					&B\times B\ar[d]_m
					\\ A\ar[r]_f
					&B
				}\end{gathered} =\;\; \begin{gathered}\xymatrix@=40pt{
					A\times A\ar[r]^{f\times f}\ar[d]_c\dduppertwocell^m<13>{_<-4.7>\gamma}
					&B\times B\dduppertwocell^m<13>{\omit}\ddtwocell\omit\omit{_<2>\varphi}
					\\ A\times A\ar[d]_m
					\\ A\ar[r]_f
					&B
				}\end{gathered}\end{xyml}
	\end{df}
	
	In the case $\C=\Gpd$, these conditions become respectively
	conditions \ref{axfoncmonass}, \ref{axfoncmonunite} and \ref{axfonmonsym}.
	The morphisms of symmetric monoids in $\Gpd$ are thus the
	symmetric monoidal functors.
	
	\begin{df}
		Let $f,g\col A\ra B$ in $\C$ be morphisms of symmetric monoids.
		A 2-morphism between these morphisms consists of a 2-arrow $\alpha\col f\Ra g$
		such that the following conditions hold.
		\begin{xyml}\begin{gathered}\xymatrix@=40pt{
			A\times A\ar[r]_-{g\times g}\ar[d]_{m}\drtwocell\omit\omit{\varphi}
				\ruppertwocell^{f\times f}{\;\;\;\;\;\;\alpha\times\alpha}
			&B\times B\ar[d]^m
			\\ A\ar[r]_-g
			&B
		}\end{gathered}\;\;=\;\;\begin{gathered}\xymatrix@=40pt{
			A\times A\ar[r]^-{f\times f}\ar[d]_{m}\drtwocell\omit\omit{\varphi}
			&B\times B\ar[d]^m
			\\ A\ar[r]^-f\rlowertwocell_g{\alpha}
			&B
		}\end{gathered}\end{xyml}
		\begin{xyml}\begin{gathered}\xymatrix@=40pt{
			1\ar[d]_-e\ar[dr]^-e\drtwocell\omit\omit{_<2.3>{\varphi^0}}
			& 
			\\ A\ar[r]^-f\rlowertwocell_g{\alpha}
			&B
		}\end{gathered}\;\;=\;\;\begin{gathered}\xymatrix@=40pt{
			1\ar[d]_-e\ar[dr]^-e\drtwocell\omit\omit{_<2.3>{\varphi^0}}
			& 
			\\ A\ar[r]_-g
			&B
		}\end{gathered}\end{xyml}
	\end{df}

	In the case $\C=\Gpd$, these conditions become conditions \ref{axtnmonass}
	and \ref{axtnmonunit}; the
	2-morphisms of symmetric monoids in $\Gpd$ are thus the monoidal natural transformations.
	
\subsection{Structure of bimonoid induced by biproducts}

	We will now prove that in a $\Gpd$-category with biproducts
	the diagonal and the codiagonal give on each object a structure
	of symmetric bimonoid, i.e.\ a structure of symmetric monoid and a structure 
	of symmetric comonoid such that the multiplication and the unit are morphisms
	of comonoids, and the comultiplication and the counit are morphisms of monoids.
	(To really define bimonoids, we should also ask that the 2-arrows
	of associativity and coassociativity, unit and counit, symmetry and cosymmetry be
	2-morphisms of monoid or of comonoid, but we won't need these properties in the following.)
	
	\begin{pon}
		Let be an object $A$ in a $\Gpd$-category with biproducts (we always assume that
		the product is strictly described).  The diagonal $A\overset{\Delta}\longrightarrow A\oplus A$
		and the arrow $A\overset{0}\ra 0$ define a strictly described comonoid structure
		on $A$.
	\end{pon}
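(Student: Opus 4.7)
The plan is to verify that the data $(\Delta\col A\ra A\oplus A,\; 0^A\col A\ra 0)$, together with the zero object $0$ serving as the terminal object, satisfies (the dual of) Definition of symmetric monoid, and that the required 2-arrows of coassociativity, counit and cosymmetry can be chosen as identities wherever the strict description of the product permits. Recall that the diagonal $\Delta$ is defined by the strict universal property of the product as the arrow with $p_1\Delta \equiv 1_A$ and $p_2\Delta \equiv 1_A$.

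First I would handle coassociativity. Both composites $(\Delta\oplus 1_A)\Delta$ and $(1_A\oplus\Delta)\Delta$ are arrows $A\ra A\oplus A\oplus A$, and using the strict equalities $p_i\Delta\equiv 1_A$ together with the strict behaviour of $\Delta\oplus 1_A$ and $1_A\oplus\Delta$ on projections, all three projections of both composites strictly equal $1_A$. By the uniqueness clause of the strict universal property of the triple product, the two composites are literally equal, so I would take the coassociativity 2-arrow $\alpha^\ast$ to be the identity, and the pentagon axiom \ref{axmonintassoc} then collapses to a trivial identity. The same strictness argument immediately gives $c_{A,A}\Delta\equiv\Delta$, so the cosymmetry 2-arrow $\gamma^\ast$ is also the identity, the symmetry axiom \ref{axmonintsym} is trivial, and the hexagon \ref{axmoninttresi} reduces to a tautological composite of identities (testing both sides against the three projections $p_1,p_2,p_3$ of $A\oplus A\oplus A$, which are jointly faithful).

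Next I would treat the counit axiom, which is where the argument is slightly less automatic. The composites under consideration are
\begin{equation*}
	A\xrightarrow{\Delta}A\oplus A\xrightarrow{0^A\oplus 1_A}0\oplus A
	\qquad\text{and}\qquad
	A\xrightarrow{\Delta}A\oplus A\xrightarrow{1_A\oplus 0^A}A\oplus 0.
\end{equation*}
Since $0$ is a zero object, the projections $p_2\col 0\oplus A\ra A$ and $p_1\col A\oplus 0\ra A$ are equivalences (indeed, $0\oplus A$ and $A\oplus 0$ are products of the form $0\times A$ and $A\times 0$, each equivalent to $A$, and strictly so under an appropriate choice of biproduct). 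Using $p_i\Delta\equiv 1_A$ and the strict behaviour of $\oplus$ on projections, the second projection of the first composite equals $1_A$ strictly, and the first projection is an arrow to $0$ which is canonically isomorphic to $0$ by the terminal property. This yields the counit 2-arrows $\lambda^\ast$ and $\rho^\ast$ canonically (in fact identities, once we identify $0\oplus A$ with $A$ strictly as in the setup). The triangle axiom \ref{axmonintunit} then follows by testing both sides with $p_1$ and $p_2$, invoking joint faithfulness.

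The main obstacle is a bookkeeping one: carefully tracking which 2-arrows are identities by virtue of the strict universal property of the product (all those arising from coassociativity and cosymmetry) versus which are canonical isomorphisms induced by the terminal property of $0$ (those entering the counit axiom). Once this is sorted out, every coherence diagram reduces, after testing against the appropriate family of projections, to an identity between composites built from strict equalities and uniquely determined 2-arrows to the zero object, so the verification becomes mechanical. The dual of this proposition — that $A\overset{\nabla}\ra A\oplus A$ (codiagonal) and $0\ra A$ give a strictly described symmetric monoid structure on $A$ — will follow by the same argument applied to the dual $\Gpd$-category, using the strict behaviour of the coproduct in the semi-strict set-up.
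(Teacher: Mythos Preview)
Your proof is correct and follows exactly the paper's approach: the paper's entire proof is the single sentence ``The strictness of the product implies that we can take the identity for $\alpha$, $\lambda$, $\rho$ and $\gamma$'', and you have simply unpacked what this means in detail.

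One warning about your final remark, though: the dual statement does \emph{not} follow by the same argument. The paper is explicit that, while one may assume the product is strictly described (via the Yoneda embedding into $[\C\op,\Gpd]$), one cannot simultaneously make the coproduct strict. Hence the monoid structure given by the codiagonal $\nabla$ is \emph{not} strictly described: in Proposition~\ref{codiagmon} the 2-arrows $\alpha_A$, $\lambda_A$, $\rho_A$, $\gamma_A$ are constructed explicitly from the (non-strict) universal property of the coproduct, and the axioms are verified by testing against the injections $i_1,i_2$. So the asymmetry between the comonoid and monoid cases is real, not an artefact of presentation.
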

	
		\begin{proof}
			The strictness of the product implies that we can take the identity
			for $\alpha$, $\lambda$, $\rho$ and $\gamma$.
		\end{proof}
		
	\begin{pon}\label{codiagmon}
		Let be $A$ in a $\Gpd$-category with biproducts.
		The codiagonal $A\oplus A\overset{\nabla}\ra A$
		and the arrow $0\overset{0}\ra A$ determine a monoid structure on $A$.
	\end{pon}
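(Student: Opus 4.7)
The statement is formally dual to the preceding Proposition, which constructed a comonoid structure from $\Delta$. The previous proof used only the strictness of the product, so the key difference here is that coproducts in $\C$ are \emph{not} assumed to be strictly described: we have only the universal property together with the 2-arrows $\iota_k\col \nabla i_k \Ra 1_A$ defining $\nabla$ as the matrix $(1_A\ 1_A)$, and analogous 2-arrows for the triple coproduct $A\oplus A\oplus A$. So the plan is to produce the required structure 2-arrows $\alpha$, $\lambda$, $\rho$, $\gamma$ by the universal property of the coproduct and then check the four axioms by cofaithfulness of the family of injections.

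First, I would spell out the arrows $\nabla + 1_A$ and $1_A + \nabla$ from $A\oplus A\oplus A$ to $A\oplus A$ as matrices using the coproduct universal property (so that, up to canonical isomorphism, $(\nabla+1_A)j_1 \simeq i_1$, $(\nabla+1_A)j_2\simeq i_1$, $(\nabla+1_A)j_3\simeq i_2$, and dually for $1_A+\nabla$), and similarly for $e+1_A\col A \to A\oplus A$, $1_A+e$, and $c\col A\oplus A\to A\oplus A$. Then the composites $\nabla(\nabla+1_A)$, $\nabla(1_A+\nabla)$, $\nabla(e+1_A)$, $\nabla(1_A+e)$, $\nabla c$ become, when restricted along each injection of the source coproduct, equal to $1_A$ up to a specified composite of $\iota_k$'s and of the structural 2-arrows $\dot\lambda$, $\dot\rho$ of the $\Gpd$-category. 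By the universal property of the (triple or binary) coproduct, each such compatible family of 2-arrows $1_A\Ra 1_A$ extends uniquely to a 2-arrow between the two composites in question; this defines $\alpha$, $\lambda$, $\rho$, $\gamma$.

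It then remains to verify the four axioms \ref{axmonintassoc}, \ref{axmonintunit}, \ref{axmoninttresi}, \ref{axmonintsym}. Each is an equality of 2-arrows whose common domain is an iterated coproduct of copies of $A$. Because the injections of a coproduct are jointly cofaithful, it suffices to precompose each axiom with each injection of the source coproduct and check equality. After such precomposition, both sides of every axiom reduce to a pasting of instances of $\iota_1$, $\iota_2$, of the coherence 2-arrows $\dot\alpha, \dot\lambda, \dot\rho$ of $\C$, and of the naturality squares linking them; the axioms of a $\Gpd$-category (diagrams \ref{axbicatassoc} and \ref{axbicatunit}) make the two pastings agree. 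For the symmetry axiom \ref{axmonintsym} one additionally uses that $c\circ c\equiv 1_{A\oplus A}$ on injections (since on injections both expressions reduce to $i_k$), which follows from the strict description of the product and the universal property of the coproduct.

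The main obstacle is purely bookkeeping: tracking the asymmetry introduced by the non-strictness of the coproduct through the definitions of $\alpha,\lambda,\rho,\gamma$, and then organising the verification of each axiom injection-by-injection. No new idea beyond the universal property of coproducts and the coherence of $\C$ is required, which is why the author states the result as a dual of the preceding proposition without giving a separate proof.
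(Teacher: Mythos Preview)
Your approach is correct and essentially the same as the paper's: define $\alpha$, $\lambda$, $\rho$, $\gamma$ componentwise via the universal property of the coproduct (using the 2-arrows $\delta_k\col\nabla i_k\Ra 1_A$, which you call $\iota_k$), then verify the four axioms by precomposing with the injections, which are jointly cofaithful. Two minor corrections: first, the author \emph{does} give an explicit proof, writing out the defining equations for $\alpha_A i_1$, $\alpha_A i_2$, $\alpha_A i_3$ and for $\gamma_A i_1$, $\gamma_A i_2$ as specific pastings of $\delta_1$, $\delta_2$ and their inverses, and setting $\lambda_A\eqdef\delta_2$, $\rho_A\eqdef\delta_1$ directly; second, since $\C$ is assumed strictly described throughout (so $\dot\alpha$, $\dot\lambda$, $\dot\rho$ are identities), no appeal to the $\Gpd$-category axioms \ref{axbicatassoc}, \ref{axbicatunit} is needed---the verification reduces purely to bookkeeping with the $\delta_k$'s.
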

	
		\begin{proof}
			The codiagonal comes from the universal property of the coproduct and comes with
			two 2-arrows $\delta_1\col \nabla i_1\Ra 1_A$ and
			$\delta_2\col \nabla i_2\Ra 1_A$.
			
			Let us define the coherence 2-arrows.  The 2-arrow
			$\alpha_A\col \nabla(\nabla\oplus A)\Ra \nabla(A\oplus\nabla)$ is defined 
			by the universal property of the coproduct, as the unique 2-arrow such that
			\begin{xyml}\alpha_A i_1 =\begin{gathered}\xymatrix@=40pt{
				A\ar[r]^-{i_1}
				&{A\oplus A}\ar[r]^-{i_1\oplus A}\ar@{=}[dr]
					\drtwocell\omit\omit{_<-3>{\;\;\;\;\;\;\,\delta_1\oplus A}}
				&{A\oplus A\oplus A}\ar[d]^{\nabla\oplus A}
				\\ &&{A\oplus A}\ar[d]^\nabla
				\\ &&A
			}\end{gathered}\end{xyml}
			\begin{xyml}\alpha_A i_2 =\begin{gathered}\xymatrix@=40pt{
				A\ar[r]^-{i_1}\ar@{=}[ddrr]\ar[d]_{i_2}
					\ddrrtwocell\omit\omit{_<-4.5>{\;\delta_1}}
					\ddrrtwocell\omit\omit{_<4.5>{\;\;\;\,\delta_2^{-1}}}
				&{A\oplus A}\ar[r]^-{i_2\oplus A}\ar@{=}[dr]
					\drtwocell\omit\omit{_<-3>{\;\;\;\;\;\;\,\delta_2\oplus A}}
				&{A\oplus A\oplus A}\ar[d]^{\nabla\oplus A}
				\\ {A\oplus A}\ar@{=}[dr]\ar[d]_{A\oplus i_1}
					\drtwocell\omit\omit{_<3.5>{\;\;\;\;\;\;\;\;A\oplus\delta_1^{-1}}}
				&&{A\oplus A}\ar[d]^\nabla
				\\ {A\oplus A\oplus A}\ar[r]_-{A\oplus\nabla}
				&{A\oplus A}\ar[r]_-{\nabla}
				&A
			}\end{gathered}\end{xyml}
			\begin{xyml}\text{and }\alpha_A i_3 =\begin{gathered}\xymatrix@=40pt{
				A\ar[d]_{i_2}
				\\ {A\oplus A}\ar@{=}[dr]\ar[d]_{A\oplus i_2}
					\drtwocell\omit\omit{_<3.5>{\;\;\;\;\;\;\;\;A\oplus\delta_2^{-1}}}
				\\ {A\oplus A\oplus A}\ar[r]_-{A\oplus\nabla}
				&{A\oplus A}\ar[r]_-{\nabla}
				&A
			}\end{gathered}\end{xyml}
			We set $\lambda_A \eqdef \delta_2$ and $\rho_A\eqdef\delta_1$.
			Finally, $\gamma_A\col \nabla
			\Ra\nabla c_{A,A}$ is uniquely determined by the conditions
			\begin{xyml}\gamma_A i_1 = \begin{gathered}\xymatrix@=40pt{
				A\ar[r]^-{i_1}\ar@{=}[dr]\ar[d]_{i_2}\drtwocell\omit\omit{_<-3>{\;\delta_1}}
					\drtwocell\omit\omit{_<3>{\;\;\;\,\delta_2^{-1}}}
				&{A\oplus A}\ar[d]^\nabla
				\\{A\oplus A}\ar[r]_-\nabla &A
			}\end{gathered}\end{xyml}
			\begin{xyml}\text{and }\gamma_A i_2 = \begin{gathered}\xymatrix@=40pt{
				A\ar[r]^-{i_2}\ar@{=}[dr]\ar[d]_{i_1}\drtwocell\omit\omit{_<-3>{\;\delta_2}}
					\drtwocell\omit\omit{_<3>{\;\;\;\,\delta_1^{-1}}}
				&{A\oplus A}\ar[d]^\nabla
				\\{A\oplus A}\ar[r]_-\nabla &A
			}\end{gathered}\end{xyml}
			We check the axioms by testing them with the inclusions of the biproduct.
		\end{proof}
	
		\begin{pon}
			Let $A$ be an object in a $\Gpd$-category with biproducts.  Then
			the diagonal, the codiagonal and the zero arrows from or to $0$ are
			the basis of a “bimonoid” structure on $A$.
		\end{pon}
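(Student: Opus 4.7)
The plan is to verify that the comonoid structure $(\Delta_A,\,0^A\colon A\to 0)$ from the previous proposition and the monoid structure $(\nabla_A,\,0_A\colon 0\to A)$ from Proposition \ref{codiagmon} are compatible, in the sense that $\nabla_A$ and $0_A$ are morphisms of comonoids (equivalently, $\Delta_A$ and $0^A$ are morphisms of monoids). Since the counit/unit $0^A$ and $0_A$ are to/from the zero object, the compatibilities involving them reduce to contractibility statements (any two parallel arrows to or from $0$ are related by a unique 2-arrow), so the substantial work concerns the compatibility of $\nabla$ with $\Delta$.

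Concretely, I first need to exhibit a 2-arrow
\[
\varphi\;\colon\;\Delta_A\circ\nabla_A\;\Longrightarrow\;(\nabla_A\oplus\nabla_A)\circ\sigma\circ(\Delta_{A}\oplus\Delta_{A})\;\colon\;A\oplus A\longrightarrow A\oplus A,
\]
where $\sigma\colon(A\oplus A)\oplus(A\oplus A)\to(A\oplus A)\oplus(A\oplus A)$ is the canonical ``middle four interchange'' isomorphism coming from the universal property of the biproduct (it is determined up to unique 2-arrow by prescribing its composites with the four projections to $A$). I would define $\varphi$ by the universal property of the biproduct $A\oplus A$ in the codomain, specifying its two projections $p_k\varphi$ via the 2-arrows $\delta_k\colon \nabla i_k\Rightarrow 1_A$ of the codiagonal; equivalently, since the domain is also a biproduct, I would pin $\varphi$ down by precomposing with $i_1,i_2$, again using $\delta_1,\delta_2$. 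Either presentation yields the same 2-arrow because $\sigma$ was chosen to interchange the inner copies of $A$. The counit half of the bimonoid data, $0^A\circ\nabla_A\Rightarrow 0^{A\oplus A}$, is the unique 2-arrow between these two arrows landing in $0$.

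Next I would verify the axioms that make $\nabla$ a morphism of comonoids (associativity/coassociativity and counit/unit compatibility) and the symmetry axiom. In each case, both sides of the required equation are 2-arrows between arrows whose codomain is a product of copies of $A$; by the strict universal property of the product in our strictified $\Gpd$-category, it suffices to test them against the projections $p_1,p_2,\dots$. After this reduction, every composite 2-arrow simplifies, using the defining equations of $\varphi$ and the 2-arrows $\delta_k$, into expressions built only from $\delta_1,\delta_2$ and the coherence 2-arrows $\alpha_A,\lambda_A,\rho_A,\gamma_A$ of $\nabla$ defined in the proof of Proposition \ref{codiagmon}. These in turn were themselves defined by universal properties in terms of $\delta_1,\delta_2$, so the axioms collapse to identities that hold automatically. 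The compatibilities where one of the structure maps is $0^A$ or $0_A$ reduce, as noted, to the existence and uniqueness of 2-arrows into or out of the zero object, and hence are automatic.

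The main obstacle is bookkeeping: there are many axioms (associativity, unit, symmetry, plus the dual family making $\Delta$ a morphism of monoids) and each one needs to be projected onto $2^n$ components of a biproduct and then recognised as an instance of a defining equation. The strategy that keeps this manageable is to never write out any of the large 2-cells by hand, but instead to reduce each verification to testing with the four arrows $p_j\circ -\circ i_k$ and to then invoke the definitions of $\varphi$, $\alpha_A$, $\lambda_A$, $\rho_A$, $\gamma_A$. The dual half (that $\Delta_A$ and $0^A$ form morphisms of monoids) is obtained by the same argument with the roles of $i_k$ and $p_k$ swapped, using that the product is also strictly described after passing to the full image of the Yoneda embedding as explained at the end of Subsection \ref{premelcalmat}.
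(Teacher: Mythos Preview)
Your overall strategy—construct the compatibility 2-arrow and then verify the morphism axioms by testing against the projections and injections of the biproduct—is sound and is essentially what the paper does. What you miss is the key simplification that makes the paper's proof short: because the product in $\C$ has been arranged to be strictly described, the compatibility 2-arrow $\varphi$ is \emph{the identity}. Testing both $\Delta_A\nabla_A$ and $(\nabla_A\oplus\nabla_A)(A\oplus c_{AA}\oplus A)(\Delta_A\oplus\Delta_A)$ against $p_1,p_2$ yields $\nabla_A$ on the nose in each case (using only $p_k\Delta_A=1_A$ and the componentwise description of $f\oplus g$), so the square~\eqref{bimoneq} commutes strictly. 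There is therefore nothing to build out of the $\delta_k$'s at this stage; your proposed construction via $\delta_k$ would, if carried through correctly, simply reproduce the identity 2-cell. With $\varphi$ trivial, the comonoid-morphism axioms for $\nabla$ involve only identity 2-cells and are immediate, and for $\Delta$ as a monoid morphism the paper uses the observation $\Delta\mu=(\mu\oplus\mu)\Delta$, again a consequence of strictness of the product.

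Your final paragraph contains a small confusion worth flagging. You propose to obtain the ``dual half'' by swapping the roles of $i_k$ and $p_k$ and invoking strictness, but the paper explicitly remarks that one cannot have both the product \emph{and} the coproduct strictly described at the same time, so that literal duality move is unavailable. Fortunately it is also unnecessary: the single compatibility square~\eqref{bimoneq} simultaneously witnesses that $\nabla$ is a comonoid morphism and that $\Delta$ is a monoid morphism—it is the same diagram read in two directions—so once it commutes strictly there is no separate dual verification of the structure 2-cell to perform.
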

		
			\begin{proof}
				The previous propositions give the monoid and the comonoid structures. It
				remains to check that $\nabla$ and $0^A$ are morphisms of
				comonoids and that $\Delta$ and $0_A$ are morphisms of monoids.
				The object $A\oplus A$ is naturally equipped with a comonoid structure
				induced by $\Delta$: it consists of the zero arrow and of
				\begin{eqn}
					A\oplus A\xrightarrow{\Delta\oplus\Delta} A\oplus A\oplus A\oplus A
					\xrightarrow{A\oplus c_{AA}\oplus A} A\oplus A\oplus A\oplus A.
				\end{eqn}
				Dually, $A\oplus A$ is naturally equipped with a monoid structure
				induced by $\nabla$:
				\begin{eqn}\label{monAplusA}
					A\oplus A\oplus A\oplus A\xrightarrow{A\oplus c_{AA}\oplus A}
					 A\oplus A\oplus A\oplus A\xrightarrow{\nabla\oplus\nabla} A\oplus A.
				\end{eqn}
				The 2-arrow which expresses both the fact that $\Delta$ and that $\nabla$
				are morphisms is then the identity (thanks to the strictness of the product):
				\begin{xym}\label{bimoneq}\xymatrix@R=40pt@C=25pt{
					{A\oplus A}\ar[dd]_\nabla\ar[r]^-{\Delta\oplus\Delta}
					&{A\oplus A\oplus A\oplus A}\ar[dr]^-{A\oplus c_{AA}\oplus A}
					\\&&{A\oplus A\oplus A\oplus A}\ar[d]^{\nabla\oplus\nabla}
					\\A\ar[rr]_-\Delta &&{A\oplus A}
				}\end{xym}
				It is easy to check the axioms of morphism of monoids for
				$\Delta$ (we use the fact that, for a 2-arrow $\mu$, 
				$\Delta\mu=(\mu\oplus\mu)\Delta$)
				and of comonoids for $\nabla$ (only identities are involved).
			\end{proof}
			
		Moreover, each arrow of $\C$ has a structure of morphism of bimonoids
		between the bimonoids so defined, and each 2-arrow is a 2-morphism
		between these morphisms.
		
		\begin{pon}\label{tteflechmorph}
			Let be $f\col A\ra B$ in a $\Gpd$-category with biproducts.  We can
			equipped $f$ with a structure of morphism of bimonoids
			from $(A,\Delta_A,\allowbreak\nabla_A,\allowbreak 0^A,\allowbreak 0_A,\ldots)$ to
			$(B,\Delta_B,\nabla_B,0^B,0_B,\ldots)$.
		\end{pon}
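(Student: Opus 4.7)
The plan is to equip $f$ with four pieces of data (two for the comonoid structure, two for the monoid structure) and then verify the coherence axioms, mostly by testing equations against the coproduct inclusions, exactly in the style of Proposition \ref{codiagmon}.

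For the comonoid part, I would exploit the strictness of the product. Since the universal property of $\oplus$ is strictly described on the product side, we have $(f\oplus f)\Delta_A \equiv \Delta_B f$ and $0^B f \equiv 0^A$ (the latter because $0$ is the zero object coming from the empty biproduct, so arrows into $0$ are unique up to a canonical invertible 2-arrow). Hence the two comonoid-morphism 2-arrows can both be taken to be identities, and the three axioms of morphism of comonoids involve only composites of identities with the strictly described coherence 2-arrows of $(A,\Delta_A,0^A)$ and $(B,\Delta_B,0^B)$, which already matched by construction in the proof of the comonoid structure.

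For the monoid part, I would define $\varphi^0_f\col f0_A \Ra 0_B$ as the unique 2-arrow between arrows with domain the zero object, and define $\varphi_f\col \nabla_B(f\oplus f)\Ra f\nabla_A$ by invoking the universal property of the coproduct $A\oplus A$. Concretely, setting $\varphi_f i_k$ equal to the composite
\begin{equation*}
\nabla_B(f\oplus f)i_k \;\simeq\; \nabla_B i_k f \;\xLongrightarrow{\delta^B_k f}\; f \;\xLongrightarrow{(f\delta^A_k)^{-1}}\; f\nabla_A i_k,
\end{equation*}
where I use the canonical isomorphism $(f\oplus f)i_k \simeq i_k f$ coming from matrix calculus (Subsection \ref{premelcalmat}). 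The uniqueness clause in the universal property of the coproduct produces the required $\varphi_f$; the fact that $i_1,i_2$ are jointly cofaithful makes this $\varphi_f$ unique.

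The verification of the axioms of morphism of monoids amounts to comparing pasting diagrams on $A\oplus A$, $A\oplus A\oplus A$ and $A$ with their $B$-counterparts. For each axiom, I would test on the appropriate family of coproduct inclusions (either $i_1, i_2$ for the unit axioms and symmetry axiom, or $i_1, i_2, i_3$ for the associativity axiom) and reduce to equalities of 2-arrows of the form $\delta_k$, which hold by the construction of the coherence 2-arrows $\alpha_A, \lambda_A, \rho_A, \gamma_A$ in the proof of Proposition \ref{codiagmon}. The main obstacle will be keeping the bookkeeping straight in the associativity axiom, since $\alpha_A$ itself was defined by a case analysis on three inclusions $i_1,i_2,i_3$ of $A\oplus A\oplus A$; however, once one tests the axiom against these inclusions, each case collapses to a manipulation of $\delta_1^A,\delta_2^A,\delta_1^B,\delta_2^B$ and the defining isomorphisms $(f\oplus f)i_k\simeq i_k f$, and joint cofaithfulness of the inclusions closes the argument.
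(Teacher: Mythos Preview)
Your proposal is correct and follows essentially the same approach as the paper: the comonoid-morphism structure is strict by strictness of the product, $\varphi^f$ is defined via the coproduct universal property by specifying $\varphi^f i_k$ as the composite $\delta_k^B f$ followed by $(f\delta_k^A)^{-1}$, $\varphi^0$ is the identity, and all axioms are checked by testing against the inclusions $i_k$. The only minor point is that in the paper's setup the isomorphism $(f\oplus f)i_k\simeq i_k f$ you invoke is actually a strict equality (since $i_k$ is defined via the strictly described product), so no matrix-calculus coherence is needed there.
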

		
			\begin{proof}
				On the one hand, by the strictness of the product, we have
				$(f\oplus f)\Delta = \Delta f$. On the other hand, the universal property of 
				the coproduct implies the existence of a unique
				$\varphi^f\col \nabla(f\oplus f)\Ra f\nabla$ such that, for $k=1$ or $2$,
				\begin{xyml}\varphi^f i_k =\begin{gathered}\xymatrix@=40pt{
					A\ar[r]^f\ar@{=}[dr]\ar[d]_{i_k}
						\drtwocell\omit\omit{_<3>{\;\;\;\,\delta_k^{-1}}}
					&B\ar@{=}[dr]\ar[r]^-{i_k}\drtwocell\omit\omit{_<-3>{\;\delta_k}}
					&{B\oplus B}\ar[d]^{\nabla_B}
					\\{A\oplus A}\ar[r]_-{\nabla_A}
					&A\ar[r]_f &B
				}\end{gathered}\end{xyml}
				We set $\varphi^0 \eqdef 1_0$.
				We check the axioms by testing them with the inclusions of the biproduct.
			\end{proof}
			
		\begin{pon}
			Let be $\mu\col f\Ra g\col A\ra B$ in a $\Gpd$-category with biproducts.
			Then $\mu$ is a 2-morphism between the structures of morphisms of
			bimonoids defined on $f$ and $g$ in the previous proposition.
		\end{pon}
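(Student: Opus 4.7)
The plan is to reduce the statement to four equations (two for the monoid structure given by $\nabla$ and $0\col 0\ra A$, and two for the comonoid structure given by $\Delta$ and $0\col A\ra 0$), and to verify each separately, exploiting the strict description conventions in force in this subsection. Recall that we have normalised so that the product is strictly described and the $\Gpdp$-category is strictly described, meaning in particular that $0\cdot\alpha\equiv 1_0$ and $\alpha\cdot 0\equiv 1_0$ for every 2-arrow $\alpha$, and that whiskering by a projection is strict.

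For the comonoid part, the 2-arrows witnessing that $f$ and $g$ are morphisms of comonoids are identities: $(f\times f)\Delta_A\equiv\Delta_B f$ and $0^B f\equiv 0^A$, and similarly for $g$. The two required equations thus reduce to $(\mu\times\mu)\Delta_A = \Delta_B\mu$ and $0^B\mu = 1_{0^A}$. The first is automatic from the strict description of the product (by testing with $p_1$ and $p_2$ both sides are $\mu$), and the second is the absorption rule $0\cdot\mu\equiv 1_0$ from the strict $\Gpdp$-description. For the monoid part, the second equation (involving $e=0\col 0\ra A$) uses $\varphi_0^f = 1_0 = \varphi_0^g$ by the definition in the proof of Proposition \ref{tteflechmorph}, and the required equality $\mu\cdot 0 = 1_0$ is again the $\Gpdp$ absorption rule.

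The one substantive calculation is the first monoid equation: that
\begin{equation*}
	\varphi^g\circ\bigl(\nabla\cdot(\mu\oplus\mu)\bigr) \;=\; (\mu\nabla)\circ\varphi^f
	\col\nabla(f\oplus f)\Ra g\nabla.
\end{equation*}
Since $i_1,i_2\col A\ra A\oplus A$ are jointly cofaithful (they form a coproduct), it suffices to check the two whiskered equations obtained by composing on the right with $i_k$ for $k=1,2$. The key inputs are then the identity $(\mu\oplus\mu)i_k = i_k\mu$ (forced by the universal property of the coproduct acting on 2-arrows) together with the defining formula
\begin{equation*}
	\varphi^f i_k \;=\; \bigl(f\cdot\delta_k^{-1}\bigr)\circ\bigl(\delta_k\cdot f\bigr),
\end{equation*}
from Proposition \ref{tteflechmorph}, and its analogue for $g$. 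A direct substitution shows both whiskered sides are equal to $(g\delta_k^{-1})\circ(\delta_k g)\circ(\nabla i_k\mu) = (\mu\nabla i_k)\circ(f\delta_k^{-1})\circ(\delta_k f)$, where the middle step uses naturality of $\delta_k$ with respect to $\mu$.

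The main (mild) obstacle is book-keeping in this last verification: one has to keep track of where strictness of the $\Gpdp$-structure allows one to collapse compositions with $0$, and to apply naturality of $\delta_k\col\nabla i_k\Ra 1_A$ at the 2-arrow $\mu$ at exactly the right place. No new construction is needed; the whole proof is a routine check made manageable by joint cofaithfulness of $i_1, i_2$ and by the strictness conventions adopted at the start of the subsection.
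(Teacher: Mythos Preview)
Your proof is correct and follows essentially the same approach as the paper's: the comonoid part is dispatched via the strict description of the product (the paper writes simply $(\mu\oplus\mu)\Delta=\Delta\mu$), and the monoid part is checked by testing with the jointly cofaithful injections $i_1,i_2$. You have merely filled in the details the paper leaves implicit, including the unit/counit equations and the explicit interchange calculation with $\delta_k$.
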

		
			\begin{proof}
				Since $(\mu\oplus\mu)\Delta=\Delta\mu$, $\mu$ is a
				2-morphism of comonoids.  To prove that
				it is a 2-morphism of monoids, it suffices to test the axioms with $i_1$
				and $i_2$.
			\end{proof}
			
		The following propositions mainly show that
		the constructions of the previous propositions define a $\Gpd$-functor
		preserving biproducts
		\begin{eqn}
			\Phi \col  \C\ra\caspar{Bimon}(\C).
		\end{eqn}
		Some details needed to actually establish that $\Phi$ is a $\Gpd$-functor
		preserving biproducts are missing, but they won't be needed in the following.
		
		\begin{pon}\label{PhiGpdfonc}
			The following conditions hold ($\Phi$ is a “$\Gpd$-functor”).
			\begin{align}\stepcounter{eqnum}\begin{gathered}
			\xymatrix@=40pt{
				{A\oplus A}\ar[r]^{f\oplus f}\ar[d]_{\nabla_A}
					\drtwocell\omit\omit{\;\varphi^f}
				&{B\oplus B}\ar[r]^{g\oplus g}\ar[d]^{\nabla_B}
					\drtwocell\omit\omit{\;\,\varphi^g}
				&{C\oplus C}\ar[d]^{\nabla_C}
				\\ A\ar[r]_f &B\ar[r]_g &C
			}\end{gathered} &=\begin{gathered}\xymatrix@=40pt{
				{A\oplus A}\ar[r]^{gf\oplus gf}\ar[d]_{\nabla_A}
					\drtwocell\omit\omit{\;\;\,\varphi^{gf}}
				&{C\oplus C}\ar[d]^{\nabla_C}
				\\ A\ar[r]_{gf} &C
			}\end{gathered}\\
			\stepcounter{eqnum} 1_{\nabla_A} &=\begin{gathered}\xymatrix@=40pt{
				{A\oplus A}\ar@{=}[r]\ar[d]_{\nabla_A}
					\drtwocell\omit\omit{\;\;\,\varphi^{1_A}}
				&{A\oplus A}\ar[d]^{\nabla_A}
				\\ A\ar@{=}[r] &A
			}\end{gathered}\end{align}
		\end{pon}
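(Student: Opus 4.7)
The plan is to verify both equations by appealing to the universal property of the coproduct $A \oplus A$: any two parallel 2-arrows out of $A\oplus A$ that agree after precomposition with both coproduct injections $i_1$ and $i_2$ must coincide. Since both sides of each equation are 2-arrows with domain $A\oplus A$ (or $A \oplus A \to C$ in the composite case), it suffices to test against $i_1$ and $i_2$.

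First I would dispose of the identity equation. By definition (Proposition~\ref{tteflechmorph}), $\varphi^{1_A}$ is the unique 2-arrow $\nabla_A (1_A \oplus 1_A) \Ra 1_A \nabla_A$ such that, for $k = 1,2$, $\varphi^{1_A} i_k$ is the pasting $\delta_k \circ \delta_k^{-1}$, which is the identity on $\nabla_A i_k$. On the other hand, $1_{\nabla_A} i_k = 1_{\nabla_A i_k}$. Both candidates therefore satisfy the defining property of $\varphi^{1_A}$, so by uniqueness they agree.

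Next I would tackle the compositional equation. Call the left-hand side $\Theta$ and the right-hand side $\varphi^{gf}$; both are 2-arrows $\nabla_C (gf \oplus gf) \Ra gf\, \nabla_A$. I would compute $\Theta i_k$ by pasting: using the interchange law and the fact that $(f \oplus f) i_k = i_k f$ (strictness of the coproduct's functoriality on components, i.e.\ the equation $(f\oplus f)\circ i_k \equiv i_k \circ f$), the precomposition of the left-hand pasting with $i_k$ reduces to $(g \delta^B_k \circ g \delta^{B,-1}_k) \star (\delta^C_k \circ \delta^{C,-1}_k)$ after suitable rearrangement. Applying the defining equations of $\varphi^f$ and $\varphi^g$ at $i_k$, this composite collapses to $\delta^C_k \circ \delta^{C,-1}_k$ whiskered by $gf$, which is exactly the defining value of $\varphi^{gf} i_k$. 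So $\Theta i_k = \varphi^{gf} i_k$ for $k=1,2$, and the universal property forces $\Theta = \varphi^{gf}$.

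The main obstacle is purely bookkeeping: the pasting diagrams involve several composites of $\delta$'s and their inverses, whiskered by $f$, $g$, and the biproduct injections, so one must carefully track the interchange law and the strict naturality $(h\oplus h)i_k = i_k h$. There is no genuine mathematical difficulty — everything is forced by the uniqueness clause in the universal property of the coproduct applied to the defining diagrams of $\varphi^{f}$, $\varphi^{g}$, $\varphi^{gf}$, and $\varphi^{1_A}$ from the proof of Proposition~\ref{tteflechmorph}.
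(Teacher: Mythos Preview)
Your approach is correct and is exactly the same as the paper's: the paper's proof consists of the single sentence ``It suffices to test these equations with $i_1$ and $i_2$.'' You have simply spelled out in more detail how that testing goes, using the defining equations of $\varphi^f$, $\varphi^g$, $\varphi^{gf}$, $\varphi^{1_A}$ from Proposition~\ref{tteflechmorph} and the uniqueness clause in the universal property of the coproduct.
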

		
			\begin{proof}
				It suffices to test these equations with $i_1$ and $i_2$.
			\end{proof}
			
		\begin{pon}\label{Phipresbiprod}
			There exists $\omega\col \nabla_{A\oplus A}\Ra
			(\nabla_A\oplus\nabla_A)(A\oplus c_{AA}\oplus A)$ ($\Phi$ “preserves the biproduct”)
			such that the following conditions hold (“$\Phi(f_1\oplus f_2)=
			\Phi(f_1)\oplus \Phi(f_2)$” and “$\Phi(0)=0$”).
			\begin{multline}\stepcounter{eqnum}\begin{gathered}
			\xymatrix@R=40pt@C=50pt{
				{A\oplus A\oplus A\oplus A}\ar[r]^-{f_1\oplus f_2\oplus f_1\oplus f_2}
					\ar[d]_{\nabla_{A\oplus A}}
					\drtwocell\omit\omit{\;\;\;\;\;\;\;\;\,\varphi^{f_1\oplus f_2}}
				&{B\oplus B\oplus B\oplus B}\ar[d]^{\nabla_{B\oplus B}}
				\\ {A\oplus A}\ar[r]_-{f_1\oplus f_2} &{B\oplus B}
			}\end{gathered}\\ =\begin{gathered}\xymatrix@R=40pt@C=50pt{
				{A\oplus A\oplus A\oplus A}\ar[r]^-{f_1\oplus f_2\oplus f_1\oplus f_2}
					\ar[d]^{A\oplus c\oplus A}
					\ar@/_43pt/[dd]_{\nabla_{A\oplus A}}
				&{B\oplus B\oplus B\oplus B}\ar[d]_{B\oplus c\oplus B}
					\ar@/^43pt/[dd]^{\nabla_{B\oplus B}}
				\\ {A\oplus A\oplus A\oplus A}\ar[r]^{f_1\oplus f_1\oplus f_2\oplus f_2}
					\ar[d]^{\nabla_A\oplus\nabla_A}
					\drtwocell\omit\omit
					{\;\;\;\;\;\;\;\;\;\;\;\,\varphi^{f_1}\oplus\varphi^{f_2}}
					\dtwocell\omit\omit{_<3>{\;\,\omega^{-1}}}
				&{B\oplus B\oplus B\oplus B}\ar[d]_(0.3){\nabla_B\oplus \nabla_B}
					\dtwocell\omit\omit{_<-3>\omega}
				\\ {A\oplus A}\ar[r]_{f_1\oplus f_2} &{B\oplus B}
			}\end{gathered}\end{multline}
			\begin{eqn}\label{phizero}
				\varphi^0 = 1_0 \col  \nabla(0\oplus 0)\Ra 0\nabla
			\end{eqn}
		\end{pon}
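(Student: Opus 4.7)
The plan is to construct $\omega$ via the universal property of the coproduct $A \oplus A \oplus A \oplus A$, then verify both equations by testing with the four coproduct injections, which are jointly cofaithful.

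First, I would construct $\omega$. The target arrow $(\nabla_A \oplus \nabla_A)(A \oplus c_{AA} \oplus A)$ takes the four summands to $A$ in the order first-third-second-fourth (then paired by $\nabla$), while $\nabla_{A \oplus A}$ sends each injection $i_k \colon A \oplus A \to A \oplus A \oplus A \oplus A$ back to $1_{A \oplus A}$ up to $\delta_k$. So for each of the four injections $j_1, j_2, j_3, j_4 \colon A \to A \oplus A \oplus A \oplus A$, I would define a candidate 2-arrow using the $\delta_k$'s of the codiagonals on $A \oplus A$ and $A$ (from Proposition \ref{codiagmon}), and then invoke the universal property of the coproduct to glue these into the required $\omega$. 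Concretely, on $j_1$ the composite factors through the first copy of $A$ in each pair, and on $j_3$ it factors through the second copy of the first pair — hence the twist $c_{AA}$ — and so on; in each case, a unique 2-arrow is forced by the $\delta_k$ of Proposition \ref{codiagmon}.

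Next, for equation 1, both sides are 2-arrows with the same domain and codomain between arrows out of $A \oplus A \oplus A \oplus A$. Since this source is a coproduct, the four injections $j_1, \ldots, j_4$ are jointly cofaithful, so it suffices to precompose both sides with each $j_k$ and check equality. On each $j_k$, the left-hand side reduces via the defining equation of $\varphi^{f_1 \oplus f_2}$ (from Proposition \ref{tteflechmorph} applied to the arrow $f_1 \oplus f_2 \colon A \oplus A \to B \oplus B$) to a pasting of $\delta_k$'s on both sides, while the right-hand side reduces via the defining equation of $\omega$ together with $\varphi^{f_1}$ or $\varphi^{f_2}$ (whichever is relevant for that injection) to the same pasting. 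Equation 2 is immediate because $\varphi^0$ was \emph{defined} to be $1_0$ in the proof of Proposition \ref{tteflechmorph}, and both composites become $0 = 0 \circ \nabla$ trivially.

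The main obstacle will be bookkeeping: the diagram involves eight copies of $A$ or $B$ across various biproducts, interleaved by the symmetry $c_{AA}$ in the middle, and the composition $\bar{c}$ on $\dCMon$ (compare diagram \ref{assocsym}) has been traced by Joyal--Street through exactly the same kind of pattern. The verification on each injection $j_k$ is essentially a coherence computation, using only the fact that $\nabla_A, \nabla_B$ satisfy associativity and symmetry up to the $\alpha_A, \gamma_A$ built in Proposition \ref{codiagmon}, and the definition of $\varphi^f$ from Proposition \ref{tteflechmorph}. No genuinely new ingredient appears beyond the strict description of the product and the joint cofaithfulness of the coproduct injections.
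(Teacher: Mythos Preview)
Your proposal is correct and follows essentially the same approach as the paper: construct $\omega$ by specifying its components on the four coproduct injections using the $\delta_k$'s, then verify the stated equations by testing against these injections (which are jointly cofaithful). The paper's proof is exactly this, with the four defining pastings for $\omega i_1,\ldots,\omega i_4$ written out explicitly.
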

		
			\begin{proof}
				By the universal property of the coproduct, we have a unique $\omega$
				such that the following conditions hold.
				\begin{xym}\omega i_1=\begin{gathered}\xymatrix@=40pt{
					A\ar[r]^-{i_1}
					&{A\oplus A}\ar[r]^-{i_{12}}\ar@{=}[dr]\ar[d]_{i_1\oplus A}
						\drtwocell\omit\omit{_<-3>{\;\delta_1}}
						\drtwocell\omit\omit{_<3>{\;\;\;\;\;\;\;\;\;\delta_1^{-1}\oplus A}}
					&{A\oplus A\oplus A\oplus A}\ar[d]^{\nabla_{A\oplus A}}
					\\ &{A\oplus A\oplus A}\ar[r]_-{\nabla_A\oplus A}
					&{A\oplus A}
				}\end{gathered}\end{xym}
				\begin{xym}\omega i_2=\begin{gathered}\xymatrix@=40pt{
					A\ar[r]^-{i_2}
					&{A\oplus A}\ar[r]^-{i_{12}}\ar@{=}[dr]\ar[d]_{A\oplus i_1}
						\drtwocell\omit\omit{_<-3>{\;\delta_1}}
						\drtwocell\omit\omit{_<3>{\;\;\;\;\;\;\;\;\,A\oplus\delta_1^{-1}}}
					&{A\oplus A\oplus A\oplus A}\ar[d]^{\nabla_{A\oplus A}}
					\\ &{A\oplus A\oplus A}\ar[r]_-{A\oplus\nabla_A}
					&{A\oplus A}
				}\end{gathered}\end{xym}
				\begin{xym}\omega i_3=\begin{gathered}\xymatrix@=40pt{
					A\ar[r]^-{i_1}
					&{A\oplus A}\ar[r]^-{i_{34}}\ar@{=}[dr]\ar[d]_{i_2\oplus A}
						\drtwocell\omit\omit{_<-3>{\;\delta_2}}
						\drtwocell\omit\omit{_<3>{\;\;\;\;\;\;\;\;\;\delta_2^{-1}\oplus A}}
					&{A\oplus A\oplus A\oplus A}\ar[d]^{\nabla_{A\oplus A}}
					\\ &{A\oplus A\oplus A}\ar[r]_-{\nabla_A\oplus A}
					&{A\oplus A}
				}\end{gathered}\end{xym}
				\begin{xym}\omega i_4=\begin{gathered}\xymatrix@=40pt{
					A\ar[r]^-{i_2}
					&{A\oplus A}\ar[r]^-{i_{34}}\ar@{=}[dr]\ar[d]_{A\oplus i_2}
						\drtwocell\omit\omit{_<-3>{\;\delta_2}}
						\drtwocell\omit\omit{_<3>{\;\;\;\;\;\;\;\;\,A\oplus\delta_2^{-1}}}
					&{A\oplus A\oplus A\oplus A}\ar[d]^{\nabla_{A\oplus A}}
					\\ &{A\oplus A\oplus A}\ar[r]_-{A\oplus\nabla_A}
					&{A\oplus A}
				}\end{gathered}\end{xym}
				We check the conditions by testing them with $i_1$, $i_2$, $i_3$ and $i_4$.
			\end{proof}

	Since $(A,\nabla,\ldots)$ is a symmetric monoid, $\nabla$ is a morphism of
	monoids from $A\oplus A$ (with multiplication given by the composite
	\ref{monAplusA}) to $A$.  The composites of the two following diagrams are equal
	and define the 2-arrow $\bar{\gamma}_A$ expressing that $\nabla$ is a morphism
	(in the case $\C=\Gpd$, we recover composite \ref{assocsym}).
	
	\begin{xym}\xymatrix@=40pt{
		{A\oplus A\oplus A\oplus A}\ar[r]^-{A\oplus A\oplus\nabla}\ar[dd]_{A\oplus c\oplus A}
			\ar[dr]^-{A\oplus\nabla\oplus A}
			\drtwocell\omit\omit{_<6.5>{\;\;\;\;\;\;\;\;\;\;\;\;{A\oplus\gamma_A\oplus A}}}
			\drrtwocell\omit\omit{\;\;\;\;\;\;\;\;\;A\oplus\alpha_A^{-1}}
		&{A\oplus A\oplus A}\ar[r]^-{\nabla\oplus A}\ar[dr]^{A\oplus\nabla}
			\drrtwocell\omit\omit{\;\;\;\alpha_A}
		&{A\oplus A}\ar[dr]^\nabla
		\\ &{A\oplus A\oplus A}\ar[r]_-{A\oplus\nabla}
		&{A\oplus A}\ar[r]^-\nabla &A
		\\{A\oplus A\oplus A\oplus A}\ar[ur]_-{A\oplus\nabla\oplus A}
			\ar[r]_-{A\oplus A\oplus\nabla}
			\urrtwocell\omit\omit{\;\;\;\;\;\;\;\;\;A\oplus\alpha_A}
		&{A\oplus A\oplus A}\ar[r]_-{\nabla\oplus A}\ar[ur]_{A\oplus\nabla}
			\urrtwocell\omit\omit{\;\;\;\;\;\alpha_A^{-1}}
		&{A\oplus A}\ar[ur]_{\nabla}
	}\end{xym}
	\begin{xym}\xymatrix@=40pt{
		{A\oplus A\oplus A\oplus A}\ar[r]^-{\nabla\oplus A\oplus A}\ar[dd]_{A\oplus c\oplus A}
			\ar[dr]^-{A\oplus\nabla\oplus A}
			\drtwocell\omit\omit{_<6.5>{\;\;\;\;\;\;\;\;\;\;\;\;{A\oplus\gamma_A\oplus A}}}
			\drrtwocell\omit\omit{\;\;\;\;\;\;\;\;\alpha_A\oplus A}
		&{A\oplus A\oplus A}\ar[r]^-{A\oplus\nabla}\ar[dr]^{\nabla\oplus A}
			\drrtwocell\omit\omit{\;\;\;\;\,\alpha_A^{-1}}
		&{A\oplus A}\ar[dr]^\nabla
		\\ &{A\oplus A\oplus A}\ar[r]_-{\nabla\oplus A}
		&{A\oplus A}\ar[r]^-\nabla &A
		\\{A\oplus A\oplus A\oplus A}\ar[ur]_-{A\oplus\nabla\oplus A}
			\ar[r]_-{\nabla\oplus A\oplus A}
			\urrtwocell\omit\omit{\;\;\;\;\;\;\;\;\;\;\,\alpha_A^{-1}\oplus A}
		&{A\oplus A\oplus A}\ar[r]_-{A\oplus\nabla}\ar[ur]_{\nabla\oplus A}
			\urrtwocell\omit\omit{\;\;\;\alpha_A}
		&{A\oplus A}\ar[ur]_{\nabla}
	}\end{xym}
	
	In the same way, since $(A,\nabla,\Delta,\ldots)$ is a bimonoid, $\Delta$ is equipped with a
	structure of morphism of monoids, given by diagram \ref{bimoneq}.
	
	But, by Proposition \ref{tteflechmorph}, $\nabla$ and $\Delta$ are morphisms
	of monoids between $(A\oplus A,\nabla_{A\oplus A},\ldots)$ and $(A, \nabla_A,\ldots)$.
	The following proposition shows that, both for $\Delta$ and for $\nabla$,
	these two structures of morphisms of monoids coincide modulo $\omega$.
	
	\begin{pon}\label{Phipresdelnab}
		The following equations hold.
		\begin{xyml}\begin{gathered}\xymatrix@=40pt{
			{\txt{$A\oplus A$\\ ${}\oplus A\oplus A$}}\ar[r]^-{\nabla_A\oplus\nabla_A}
				\ar[d]^{A\oplus c\oplus A}\ddrtwocell\omit\omit{_<-1.5>{\;\;\,\bar{\gamma}_A}}
				\ar@/_43pt/[dd]_{\nabla_{A\oplus A}}\ddtwocell\omit\omit{_<6>{\omega^{-1}}}
			&{A\oplus A}\ar[dd]^{\nabla_A}
			\\ {\txt{$A\oplus A$\\ ${}\oplus A\oplus A$}}\ar[d]^{\nabla_A\oplus\nabla_A}
			\\ {A\oplus A}\ar[r]_-{\nabla_A} &A
		}\end{gathered}=\begin{gathered}\xymatrix@=40pt{
			{\txt{$A\oplus A$\\ ${}\oplus A\oplus A$}}\ar[r]^-{\nabla_A\oplus\nabla_A}
				\drtwocell\omit\omit{\;\;\;\;\varphi^{\nabla}}
				\ar[d]_{\nabla_{A\oplus A}}
			&{A\oplus A}\ar[d]^{\nabla_A}
			\\ {A\oplus A}\ar[r]_-{\nabla_A} &A
		}\end{gathered}\end{xyml}
		\begin{xyml}\begin{gathered}\xymatrix@=40pt{
				{A\oplus A}\ar[r]^-{\Delta_A\oplus\Delta_A}\ar[dd]_{\nabla_A}
				&{\txt{$A\oplus A$\\ ${}\oplus A\oplus A$}}\ar[d]_{A\oplus c\oplus A}
					\ar@/^43pt/[dd]^(0.3){\nabla_{A\oplus A}}\ddtwocell\omit\omit{_<-6>\omega}
				\\ &{\txt{$A\oplus A$\\ ${}\oplus A\oplus A$}}\ar[d]_{\nabla_A\oplus \nabla_A}
				\\ A\ar[r]_-{\Delta_A}
				&{A\oplus A}
		}\end{gathered}=\begin{gathered}\xymatrix@=40pt{
			{A\oplus A}\ar[r]^-{\Delta_A\oplus\Delta_A}
				\drtwocell\omit\omit{\;\;\;\;\varphi^{\Delta}}
				\ar[d]_{\nabla_{A\oplus A}}
			&{\txt{$A\oplus A$\\ ${}\oplus A\oplus A$}}\ar[d]^{\nabla_A}
			\\ A\ar[r]_-{\Delta_A} &{A\oplus A}
		}\end{gathered}\end{xyml}
	\end{pon}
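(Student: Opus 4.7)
The plan is to prove both equations by reducing to computations on the injections of the fourfold coproduct $A\oplus A\oplus A\oplus A$. For the first equation, both sides are 2-arrows between the same pair of parallel 1-arrows out of $A^{\oplus 4}$, so by the universal property of the coproduct (the four injections $i_1,i_2,i_3,i_4\col A\ra A^{\oplus 4}$ are jointly cofaithful), it suffices to verify equality after precomposition with each $i_k$, for $k=1,2,3,4$.

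Once precomposed with $i_k$, the ingredients become explicitly computable: the value $\omega i_k$ was given in the proof of Proposition~\ref{Phipresbiprod} as a specific pasting of $\delta_1,\delta_2$ and the subinjections $i_k\oplus A$, $A\oplus i_k$; the value $\varphi^{\nabla_A}i_k$ unwinds, via Proposition~\ref{tteflechmorph} applied to $f=\nabla_A$, to a composite of $\delta_k$ and $\delta_k^{-1}$; and the value $\bar{\gamma}_A i_k$ reduces, via the defining diagrams of $\bar{\gamma}_A$ preceding the statement, to a pasting of (restrictions of) $\alpha_A$ and $\gamma_A$ on injections, whose formulas were established in the proof of Proposition~\ref{codiagmon}. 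So each of the four cases becomes a straightforward (if tedious) bookkeeping exercise comparing two composites of the $\delta_k$'s.

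For the second equation, the approach is dual. Both sides are 2-arrows between parallel 1-arrows with target $A\oplus A$, so I will test equality by postcomposing with the two projections $p_1,p_2\col A\oplus A\ra A$, which are jointly faithful (they come from the biproduct, hence are dual to the jointly cofaithful $i_1,i_2$; alternatively, one can again reduce to a computation on the injections of the source, exploiting the strict naturality $(f\oplus f)\Delta_A=\Delta_Af$ of the diagonal which trivialises the half of the diagram involving $\Delta_A\oplus\Delta_A$). Then the same type of bookkeeping, using the dual formulas for $\omega$, $\bar{\gamma}_A$ and $\varphi^{\Delta_A}$, closes the argument.

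The main obstacle will be the sheer combinatorial bookkeeping and keeping track of the distinction between the codiagonal $\nabla_A$ on $A$ and the codiagonal $\nabla_{A\oplus A}$ on $A\oplus A$, along with the induced monoid structure $(\nabla_A\oplus\nabla_A)(A\oplus c_{AA}\oplus A)$ on $A\oplus A$ used to define $\bar{\gamma}_A$. The computations are straightforward given the characterising equations for $\omega i_k$, $\alpha_A i_k$, $\gamma_A i_k$ and $\delta_k$ already recorded, but care is needed to exploit the strictness of the cartesian product (so that $(f\oplus f)\Delta=\Delta f$ strictly and the 2-arrows $\alpha,\lambda,\rho,\gamma$ of the comonoid structure are identities) in order to avoid accumulating spurious coherence data.
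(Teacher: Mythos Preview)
Your approach is correct and matches the paper's own proof, which simply states that it suffices to test these conditions with the inclusions of the biproduct. You have spelled out in more detail which formulas from the preceding propositions are needed for the bookkeeping, but the strategy is identical.
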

	
		\begin{proof}
			It suffices to test these conditions with the inclusions of the biproduct.
		\end{proof}

\subsection{The existence of finite biproducts implies semiadditivity}

	The structures of comonoid on $A$ and of monoid on $B$ induce a monoid structure
	on $\C(A,B)$, which turns it into a symmetric 2-monoid
	(see Day and Street \cite{Day1997a}).
	We apply this principle to the constructions of the previous subsection to
	get the following proposition.

	\begin{pon}
		If $\C$ is a $\Gpd$-category with all finite biproducts, then,
		for $A,B\col \C$, we define a structure of symmetric 2-monoid on $\C(A,B)$
		in the following way:
		\begin{enumerate}
			\item $0\col A\ra B$ is the zero arrow;
			\item if $f,g\col A\ra B$, $f+g\eqdef$
				\begin{eqn}
					A\overset{\Delta}\longrightarrow A\oplus A\xrightarrow{f\oplus g}B\oplus B
					\overset{\nabla}\longrightarrow B,
				\end{eqn}
				and if we have $\alpha\col f\Ra f'\col A\ra B$ and $\beta\col g\Ra g'\col A\ra B$,
				$\alpha+\beta\eqdef\nabla(\alpha\oplus\beta)\Delta$;
			\item if $f,g,h\col A\ra B$, $\alpha_{fgh}$ is the composite of the following diagram;
		\end{enumerate}
				\begin{xym}\xymatrix@R=30pt@C=25pt{
					&{A\oplus A}\ar[rrr]^{(f+g)\oplus h}\ar[dr]^{\Delta\oplus A}
					&&&{B\oplus B}\ar[dr]^-\nabla
					\\ A\ar[ur]^-\Delta\ar[dr]_-\Delta
					&&{A\oplus A\oplus A}\ar[r]^{f\oplus g\oplus h}
					&{B\oplus B\oplus B}\ar[ur]^-{\nabla\oplus B}\ar[dr]_-{B\oplus\nabla}
						\rrtwocell\omit\omit{\;\;\,\alpha_B}
					&&B
					\\ &{A\oplus A}\ar[rrr]_{f\oplus(g+h)}\ar[ur]_-{A\oplus\Delta}
					&&&{B\oplus B}\ar[ur]_-\nabla
				}\end{xym}
		\begin{enumerate}
			\item[4.] if $f\col A\ra B$, $\lambda_f$ is the composite of the following diagram;
				\begin{xym}\xymatrix@C=40pt@R=20pt{
					&{A\oplus A}\ar[r]^{0\oplus f}\ar[dd]^{p_2}
					&{B\oplus B}\ar[dr]^-\nabla
					\\ A\ar@{=}[dr]\ar[ur]^-\Delta
					&&&B
					\\ &A\ar[r]_f
					&B\ar[uu]^{i_2}\ar@{=}[ur]\urtwocell\omit\omit{_<-4>{\;\;\;\lambda_B}}
				}\end{xym}
			\item[5.] if $f\col A\ra B$, $\rho_f$ is the composite of the following diagram;
				\begin{xym}\xymatrix@C=40pt@R=20pt{
					&{A\oplus A}\ar[r]^{f\oplus 0}\ar[dd]^{p_1}
					&{B\oplus B}\ar[dr]^-\nabla
					\\ A\ar@{=}[dr]\ar[ur]^-\Delta
					&&&B
					\\ &A\ar[r]_f
					&B\ar[uu]^{i_1}\ar@{=}[ur]\urtwocell\omit\omit{_<-4>{\;\;\;\rho_B}}
				}\end{xym}
			\item[6.] if $f,g\col A\ra B$, $\gamma_{fg}$ is the composite of the following diagram.
				\begin{xym}\xymatrix@C=40pt@R=20pt{
					&{A\oplus A}\ar[r]^{f\oplus g}\ar[dd]^{c_{AA}}
					&{B\oplus B}\ar[dr]^-\nabla\ar[dd]_{c_{BB}}
						\drtwocell\omit\omit{_<4>{\;\;\gamma_B}}
					\\ A\ar[dr]_-{\Delta}\ar[ur]^-\Delta
					&&&B
					\\ &{A\oplus A}\ar[r]_{g\oplus f}
					&{B\oplus B}\ar[ur]_-\nabla
				}\end{xym}
		\end{enumerate}
	\end{pon}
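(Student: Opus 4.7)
The plan is to verify that the proposed operations indeed endow each groupoid $\C(A,B)$ with a symmetric 2-monoid structure, by exploiting the fact that $B$ is a symmetric monoid (Proposition \ref{codiagmon}) and $A$ is a comonoid (dually). The entire structure is a 2-dimensional instance of the \emph{convolution monoid}: if $A$ is a comonoid and $B$ is a symmetric monoid in a monoidal 2-category, then the hom-groupoid $\C(A,B)$ inherits a symmetric monoid structure by pre- and post-composing with $\Delta_A$ and $\nabla_B$. Here the ambient monoidal structure is the biproduct $\oplus$, and the bimonoid structures come from Propositions \ref{codiagmon} and its dual.

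First I would check that the definitions make sense. Addition of arrows is manifestly a functor $\C(A,B)\times\C(A,B)\ra\C(A,B)$ since $\oplus$ is functorial. The naturality of $\alpha$, $\lambda$, $\rho$, $\gamma$ in $f$, $g$, $h$ follows from the functoriality of $-\oplus-$ together with $\Gpd$-naturality of the coherence 2-arrows $\alpha_B$, $\lambda_B$, $\rho_B$, $\gamma_B$ on $B$ (dual of Proposition \ref{codiagmon}). Then I would verify the four axioms of symmetric 2-monoid in turn.

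For the pentagon axiom \ref{axmonassoc}: expanding both sides of the equation for $\alpha_{f,g,h,k}\col (f+g)+(h+k)$ yields diagrams on $A\oplus A\oplus A\oplus A\xrightarrow{f\oplus g\oplus h\oplus k} B\oplus B\oplus B\oplus B$ whose top and bottom cells are built from $\alpha_B$s; the coherence reduces to the pentagon axiom \ref{axmonintassoc} for $B$ as a symmetric monoid (via $\nabla_B$, $\alpha_B$). For the unit axiom \ref{axmonunit}: the composite of $\rho$ and $\lambda$ reduces via the definitions of $\rho_f$, $\lambda_g$ to the unit triangle \ref{axmonintunit} for $B$. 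For the hexagon \ref{axmontresi}: it reduces to the hexagon \ref{axmoninttresi} for $B$, using that $c_{A\oplus A,A}$ and its relatives are defined coherently via the strictness of the product. For the symmetry axiom \ref{axmonsym}: it reduces to the same axiom for $\gamma_B$, namely that $\gamma_B\circ\gamma_B = 1_{\nabla_B}$.

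The main obstacle is purely bookkeeping: each axiom is a pasting diagram and must be reduced using the already-proven properties of $\nabla_B$, $\Delta_A$, and the compatibility of $\Phi$ with composition (Proposition \ref{PhiGpdfonc}) and with biproducts (Propositions \ref{Phipresbiprod} and \ref{Phipresdelnab}). No single step is conceptually delicate, but the diagrams are large; the cleanest route is to first verify the axioms in the universal case where $A=B$ and $f$, $g$, $h$, $k$ are identity arrows on $B$ (so that the structure reduces to that of the symmetric monoid $(B,\nabla_B,0_B,\alpha_B,\lambda_B,\rho_B,\gamma_B)$ built in Proposition \ref{codiagmon}), and then deduce the general case by naturality of $\Delta_A$ and $\oplus$. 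This is the standard argument for convolution monoids, and one does not need any further hypothesis on $\C$ beyond the existence of finite biproducts.
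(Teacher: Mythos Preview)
Your approach is correct and matches the paper's: the symmetric 2-monoid axioms on $\C(A,B)$ are obtained by whiskering the internal symmetric monoid axioms of $(B,\nabla_B,0_B,\alpha_B,\lambda_B,\rho_B,\gamma_B)$ from Proposition~\ref{codiagmon} (the strict comonoid structure on $A$ via $\Delta_A$ contributes nothing). Two small remarks: the references to Propositions~\ref{PhiGpdfonc}, \ref{Phipresbiprod}, and \ref{Phipresdelnab} are not needed here --- those enter only in the \emph{next} proposition, where distributivity over composition is checked --- and your proposed ``universal case'' with $A=B$ and identity arrows does not literally recover the monoid $(B,\nabla_B,\ldots)$ (you get endomorphisms $\nabla_B\Delta_B$ of $B$, not $\nabla_B$ itself), so the direct axiom-by-axiom whiskering you describe first is the actual argument.
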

	
		\begin{proof}
			The axioms of symmetric 2-monoid of $\C(A,B)$ follows automatically
			from the axioms of internal symmetric monoid of $B$ with multiplication
			$\nabla$ (Proposition \ref{codiagmon}).
		\end{proof}
		
	\begin{pon}
		If $\C$ is a $\Gpd$-category with finite biproducts, $\C$ is presemiadditive,
		with the structure on the $\Hom$s described in the previous proposition, and
		the distributivity 2-arrows defined in the following way:
		\begin{enumerate}
			\item if $f_1,f_2\col A\ra B$ and $g\col B\ra C$, $\varphi^g_{f_1,f_2}$
				is defined by the following composite;
				\begin{xym}\xymatrix@=40pt{
					A\ar[r]^-\Delta
					&{A\oplus A}\ar[r]^{gf_1\oplus gf_2}\ar[dr]_-{f_1\oplus f_2}
					&{C\oplus C}\ar[r]^-\nabla
					&C
					\\ &&{B\oplus B}\ar[u]^-{g\oplus g}\ar[r]_-\nabla
						\urtwocell\omit\omit{\;\;\;\varphi^g}
					&B\ar[u]_g
				}\end{xym}
			\item if $f\col A\ra B$ and $g_1,g_2\col B\ra C$, $\psi^{g_1,g_2}_f$ is defined
				by the following composite (of identities);
				\begin{xym}\xymatrix@=40pt{
					A\ar[r]^-\Delta\ar[d]_f
					&{A\oplus A}\ar[r]^{g_1f\oplus g_2f}\ar[d]_{f\oplus f}
					&{C\oplus C}\ar[r]^-\nabla
					&C
					\\ B\ar[r]_-\Delta
					&{B\oplus B}\ar[ur]_-{g_1\oplus g_2}
				}\end{xym}
			\item $\varphi^h_0\col 0\Ra h0$ and $\psi_g^0\col 0\Ra 0g$ are the identity
				(we have assumed that the $\Gpdp$-category is strictly described).
		\end{enumerate}
	\end{pon}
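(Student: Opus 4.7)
The strategy is to verify conditions (a)--(g) of version~2 of Definition~\ref{semipreadd}, exploiting the bimonoid structure carried by every object of~$\C$. The fundamental observation is that (by the Day--Street principle on pseudomonoids) a symmetric monoid structure on an object $B$ turns each groupoid $\C(A,B)$ into a symmetric 2-monoid and makes $g\circ-\colon\C(A,B)\ra\C(A,B')$ symmetric monoidal whenever $g$ is a morphism of internal monoids; dually for comonoids. Since by Proposition~\ref{tteflechmorph} every arrow of $\C$ is canonically a morphism of bimonoids (with the bimonoid structures coming from $\Delta$ and~$\nabla$), and every 2-arrow is a 2-morphism of bimonoids, the necessary structure is already in place. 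What remains is to check that the monoid structures on the different hom-groupoids, obtained from either $\nabla$ in the codomain or $\Delta$ in the domain, coincide and satisfy the stated compatibilities.

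First I would fix the formulas: $\varphi^g_{f_1,f_2}$ is the 2-arrow $\varphi^g(f_1\oplus f_2)\Delta\colon \nabla(g\oplus g)(f_1\oplus f_2)\Delta\Ra g\nabla(f_1\oplus f_2)\Delta$, obtained by whiskering the morphism-of-monoids 2-arrow $\varphi^g$ of Proposition~\ref{tteflechmorph}; and $\psi^{h_1,h_2}_f$ is the identity, since $\Delta f \equiv (f\oplus f)\Delta$ by strictness of the product. Naturality in the arguments is immediate from functoriality of $\oplus$. Conditions~(a) and~(b) then become instances of general nonsense: the axioms of a symmetric monoidal functor for $h\circ-$ reduce, after whiskering by $(-)\Delta$, precisely to the axioms satisfied by $h$ as a morphism of monoids (Proposition~\ref{tteflechmorph} together with the monoid axioms of Proposition~\ref{codiagmon}); dually, the axioms for $-\circ g$ become the (strict) comonoid-morphism axioms of the diagonal, which hold tautologically because the product is strictly described.

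The core of the proof is condition~(c), the interchange/middle-four equation~\ref{preaddc} expressing that $\psi^{h_1,h_2}_-$ (equivalently $\varphi^-_0$) is monoidal. Elementarily this asserts that composing ``$\nabla(h_1\oplus h_2)\Delta$ after $\nabla(g_1\oplus g_2)\Delta$'' agrees with ``$\nabla$ applied to the matrix of composites $h_ig_j$.'' I would verify it by spelling out both sides through the biproduct and invoking Propositions~\ref{PhiGpdfonc}, \ref{Phipresbiprod} and~\ref{Phipresdelnab}: the 2-arrow $\omega\colon\nabla_{A\oplus A}\Ra(\nabla_A\oplus\nabla_A)(A\oplus c\oplus A)$ of Proposition~\ref{Phipresbiprod} furnishes exactly the ``shuffle'' $\bar{\gamma}$ appearing in diagram~\ref{preaddc}, and the two equations of Proposition~\ref{Phipresdelnab} identify the two ways of factoring through $A\oplus A\oplus A\oplus A$. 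Both sides of \ref{preaddc} can then be tested after precomposing with the four coproduct injections $A\ra A\oplus A\oplus A\oplus A$ of the relevant biproduct, reducing each to a pasting of the bimonoid 2-cells whose equality follows from the construction of $\omega$ and of the morphism-of-monoid structure on $\nabla$. The companion diagrams \ref{preaddc2} and \ref{preaddc3} involving $\varphi_0^h$, $\psi_g^0$ are trivial because these 2-arrows are identities (condition~3 of the statement).

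Conditions~(d)--(g) remain. Since the $\Gpd$-category is strictly described and the product is taken strictly, the associators $\dot\alpha$ and unitors $\dot\lambda,\dot\rho$ of $\C$ are all identities, so (d), (e), (f), (g) reduce to equalities of two formulas for $(hg)\circ-$ versus $h\circ(g\circ-)$ (respectively for $-\circ 1$, $1\circ-$) as symmetric monoidal functors; these equalities are immediate from the definitions of $\varphi^{-}$ and the fact that $\oplus$ and $\Delta,\nabla$ are strictly functorial in the relevant variables (Proposition~\ref{PhiGpdfonc}). The main obstacle is therefore condition~(c): it is essentially the bimonoid compatibility, and while each piece is supplied by Propositions \ref{PhiGpdfonc}--\ref{Phipresdelnab}, assembling the pasting diagrams carefully enough to identify the two sides of~\ref{preaddc} without ambiguity is the step requiring the most diagrammatic care.
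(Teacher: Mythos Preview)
Your plan is correct and follows essentially the same route as the paper: conditions (a) and (b) reduce to the monoid/comonoid-morphism structure on arrows (Proposition~\ref{tteflechmorph}), condition (c) is the heart of the argument and is handled via Propositions~\ref{PhiGpdfonc}, \ref{Phipresbiprod} and~\ref{Phipresdelnab}, and (d)--(g) are routine once you invoke Proposition~\ref{PhiGpdfonc} for~$\varphi^{hg}$ and~$\varphi^{1_B}$. One small tactical remark: for (c) the paper does not test against coproduct injections but rather rewrites $\varphi^{h_1+h_2}_{g_1,g_2}\circ(\psi_{g_1}^{h_1,h_2}+\psi_{g_2}^{h_1,h_2})$ directly as a pasting through $B^{\oplus 4}$ and $C^{\oplus 4}$ (using $\varphi^\Delta$, $\varphi^{h_1\oplus h_2}$, $\varphi^\nabla$ and the comparison $\omega$), then applies \ref{Phipresbiprod} and \ref{Phipresdelnab} to transform it into the other side; the four-fold biproduct that appears is $B\oplus B\oplus B\oplus B$, not $A^{\oplus 4}$, so your ``test with injections of $A$'' would need slight adjustment, but the underlying idea is the same.
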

	
		\begin{proof}
			We follow the numbering of the conditions of point 2 of Definition \ref{semipreadd}
			(and thus of their elementary translation, which follows the proposition).
			\renewcommand{\theenumi}{\alph{enumi}}\renewcommand{\labelenumi}{(\theenumi)}
			\begin{enumerate}
				\item These conditions follow automatically from the fact that
					$h$, with $\varphi^h$, is a morphism of internal symmetric monoids
					(Proposition \ref{tteflechmorph}).
				\item It suffices to write the two sides of these conditions 
					to realize that they are equal.
				\item For the first condition (diagram \ref{preaddc}), we 
					have the following succession of equalities:
					$\varphi^{h_1+h_2}_{g_1,g_2}
						\circ(\psi_{g_1}^{h_1,h_2}+\psi_{g_2}^{h_1,h_2})$
					is equal to diagram \ref{diagun}.  By Proposition
					\ref{PhiGpdfonc}, this diagram is equal to the composite of
					diagram \ref{diagtwo}. By Propositions \ref{Phipresbiprod}
					and \ref{Phipresdelnab}, diagram \ref{diagtwo} is equal to
					diagram \ref{diagthree}, which is equal to
					$\psi^{h_1,h_2}_{g_1+g_2}\circ(\varphi^{h_1}_{g_1,g_2}+
					\varphi^{h_2}_{g_1,g_2})\circ\bar{\gamma}_C$.
			\end{enumerate}
					\begin{xyml}\label{diagun}\begin{gathered}\xymatrix@C=60pt@R=40pt{
						A\ar[r]^-\Delta
						&{A\oplus A}\ar[r]^-{g_1\oplus g_2}
						&{B\oplus B}\ar[r]^-{(h_1+h_2)\oplus (h_1+h_2)}\ar[d]_\nabla
							\drtwocell\omit\omit{\;\;\;\;\;\;\;\;\;\varphi^{h_1+h_2}}
						&{C\oplus C}\ar[d]^\nabla
						\\ &&B\ar[r]_{h_1+h_2} &C
					}\end{gathered}\end{xyml}
					\begin{xyml}\label{diagtwo}\begin{gathered}\xymatrix@C=23pt@R=40pt{
						A\ar[r]^-\Delta
						&{A\oplus A}\ar[r]^-{g_1\oplus g_2}
						&{B\oplus B}\ar[r]^-{\Delta\oplus\Delta}\ar[d]_\nabla
							\drtwocell\omit\omit{\;\;\;\;\;\varphi^\Delta}
						&{\txt{$B\oplus B$\\ ${}\oplus B\oplus B$}}\ar[d]^{\nabla_{B\oplus B}}
							\ar[rr]^-{\substack{h_1\oplus h_2\\ {}\oplus h_1\oplus h_2}}
							\drrtwocell\omit\omit{\;\;\;\;\;\;\;\;\;\varphi^{h_1\oplus h_2}}
						&&{\txt{$C\oplus C$\\ ${}\oplus C\oplus C$}}\ar[d]^{\nabla_{C\oplus C}}
							\ar[r]^-{\nabla\oplus\nabla}
							\drtwocell\omit\omit{\;\;\;\;\;\varphi^\nabla}
						&{C\oplus C}\ar[d]^\nabla
						\\ &&B\ar[r]_-{\Delta}
						&{B\oplus B}\ar[rr]_{h_1\oplus h_2}
						&&{C\oplus C}\ar[r]_-\nabla &C
					}\end{gathered}\end{xyml}
					\begin{xyml}\label{diagthree}\begin{gathered}\xymatrix@C=23pt@R=40pt{
						A\ar[r]^-\Delta
						&{A\oplus A}\ar[r]^-{g_1\oplus g_2}
						&{B\oplus B}\ar[r]^-{\Delta\oplus\Delta}
						&{\txt{$B\oplus B$\\ ${}\oplus B\oplus B$}}\ar[d]_{B\oplus c\oplus B}
							\ar[rr]^-{\substack{h_1\oplus h_2\\ {}\oplus h_1\oplus h_2}}
						&&{\txt{$C\oplus C$\\ ${}\oplus C\oplus C$}}\ar[d]^{C\oplus c\oplus C}
							\ar[r]^-{\nabla\oplus\nabla}
							\ddrtwocell\omit\omit{_<-2>{\;\;\,\bar{\gamma}_C}}
						&{C\oplus C}\ar[dd]^\nabla
						\\&&&{\txt{$B\oplus B$\\ ${}\oplus B\oplus B$}}\ar[d]_{\nabla\oplus\nabla}
							\ar[rr]^-{\substack{h_1\oplus h_1\\ {}\oplus h_2\oplus h_2}}
							\drrtwocell\omit\omit
							{\;\;\;\;\;\;\;\;\;\;\;\;\varphi^{h_1}\oplus\varphi^{h_2}}
						&&{\txt{$C\oplus C$\\ ${}\oplus C\oplus C$}}\ar[d]^{\nabla\oplus\nabla}
						\\ &&&{B\oplus B}\ar[rr]_{h_1\oplus h_2}
						&&{C\oplus C}\ar[r]_-\nabla &C
					}\end{gathered}\end{xyml}
			\begin{enumerate}
				\item[]For the second condition (the left side of diagram \ref{preaddc2}), we have
					on one side $\varphi^0(f_1\oplus f_2)\Delta$, which is the identity
					on $0$, by equation \ref{phizero}.  On the other side, we have
					$\rho_C(0\oplus 0)i_1$ which is the identity on $0$, because
					$0\oplus 0=0$.
					For the third and fourth conditions, all involved 2-arrows
					are identities.
				\item[(d)]The first of the conditions (diagram \ref{preaddd})
					is a direct consequence of Proposition \ref{PhiGpdfonc}.
					The second involves only identities.
				\item[(e)]For the first condition, it suffices to write the two
					terms of the equation to see that they are equal; the second condition
					involves only identities.
				\item[(f)]These conditions involve only identities.
				\item[(g)]For the left part of diagram \ref{preaddg}, we have
					$\varphi^{1_B}_{f_1,f_2}=\varphi^{1_B}(f_1\oplus f_2)\Delta$,
					which is equal to the identity, because $\varphi^{1_B}=1_\nabla$, by
					Proposition \ref{PhiGpdfonc}.  The other conditions involve
					only identities.\qedhere
			\end{enumerate}
			\renewcommand{\theenumi}{\arabic{enumi}}\renewcommand{\labelenumi}{\theenumi .}
		\end{proof}

	Semiadditive $\Gpd$-categories have been defined by Baues and Pirashvili
	\cite{Baues2004a} and additive $\Gpd$-categories by Drion \cite{Drion2002a}.

	\begin{df}
		Let $\C$ a $\Gpd$-category.
		\begin{enumerate}
			\item We say that $\C$ is \emph{semiadditive} if it is presemiadditive
				and has all finite biproducts.\index{semiadditive $\Gpd$-category}%
				\index{Gpd-category@$\Gpd$-category!semiadditive}
			\item We say that $\C$ is \emph{additive} if it is preadditive and has
				all finite biproducts.\index{additive!Gpd-categorie@$\Gpd$-category}%
				\index{Gpd-category@$\Gpd$-category!additive}
		\end{enumerate}
	\end{df}
	
	A corollary of the previous proposition is that, in the definition of semiadditivity,
	we can remove presemiadditivity, which follows from the existence of
	biproducts (that is how Baues and Pirashvili have defined it).
	We can also deduce the following corollary, which shows that
	preadditive $\Gpd$-categories coincide with what Baues and Pirashvili
	\cite{Baues2004a} call “additive track theories” (condition 3 of the corollary)
	and Baues, Jibladze and Pirashvili \cite{Baues2006a}
	call “2-additive track categories” (condition 2).
	We denote by $\mathrm{Ho}\,\C$ the \emph{homotopy category}\index{homotopy category}%
	\index{category!homotopy}\index{Ho(C)@$\mathrm{Ho}\,\C$}
	of $\C$, which has the same objects as $\C$ and such that 
	\begin{eqn}\label{defhochomotop}
		\mathrm{Ho}\,\C(A,B)=\pi_0(\C(A,B)).
	\end{eqn}
	In $\mathrm{Ho}\,\C$ the limits of $\C$ become in general weak limits
	but the biproduct of $\C$ does remain a biproduct in $\mathrm{Ho}\,\C$.

	\begin{coro}
		Let $\C$ be a $\Gpd$-category.  The following conditions are equivalent.
		\begin{enumerate}
			\item $\C$ is additive.
			\item $\C$ has all finite biproducts and, for each $A\overset{f}\ra B$, there
				exists $A\overset{f^*}\ra B$ with an isomorphism
				$\eps\col \nabla(f\oplus f^*)\Delta\Ra 0$.
			\item $\C$ has all finite biproducts and $\mathrm{Ho}\,\C$ is additive.
		\end{enumerate}
	\end{coro}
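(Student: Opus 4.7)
The plan is to observe that all three conditions presuppose the existence of finite biproducts, and then use the previous proposition (``biproducts imply presemiadditivity'') to reduce everything to the question of when each symmetric 2-monoid $\C(A,B)$ is in fact a symmetric 2-group. Throughout, the sum in $\C(A,B)$ is $f+g = \nabla(f\oplus g)\Delta$ and the zero is the zero arrow, as specified in the proposition just proved.

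For $1\Rightarrow 2$, the assumption that $\C$ is preadditive means each $\C(A,B)$ is a symmetric 2-group, so for any $f\col A\ra B$ there exists $f^*$ with an isomorphism $f+f^*\Ra 0$; unpacking the definition of $+$ gives the desired $\eps\col\nabla(f\oplus f^*)\Delta\Ra 0$. For $2\Rightarrow 1$, we first invoke the previous proposition to get that $\C$ is presemiadditive, and then note that condition 2 gives, for every object $f$ of the symmetric 2-monoid $\C(A,B)$, an object $f^*$ together with an isomorphism $f+f^*\simeq 0$; by the remark made at the start of Subsection 5.1.4 (citing Joyal--Street), this is enough to upgrade a symmetric 2-monoid to a symmetric 2-group, with no need to produce $\eta_f$ separately. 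Hence $\C(A,B)$ is a symmetric 2-group for all $A,B$ and $\C$ is preadditive.

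For $1\Leftrightarrow 3$, assume in both directions that $\C$ has finite biproducts; it suffices to show that $\C(A,B)$ is a symmetric 2-group if and only if $\mathrm{Ho}\,\C(A,B)=\pi_0\C(A,B)$ is an abelian group, since biproducts in $\C$ pass to biproducts in $\mathrm{Ho}\,\C$ and the induced addition on $\mathrm{Ho}\,\C(A,B)$ is exactly the addition coming from $\nabla$ and $\Delta$ modulo 2-isomorphism. One direction is immediate: if every $f$ has an inverse $f^*$ up to iso in $\C(A,B)$, then $[f]+[f^*]=[0]$ in $\pi_0\C(A,B)$. Conversely, if $[f]$ has an inverse $[g]$ in $\pi_0\C(A,B)$, then $f+g\simeq 0$ in $\C(A,B)$, giving $f$ an inverse up to isomorphism and thus turning the symmetric 2-monoid $\C(A,B)$ into a symmetric 2-group (again by the Joyal--Street observation).

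There is no real obstacle here: the heart of the argument is the previous proposition, which has already been established, plus the elementary fact that ``inverses up to isomorphism'' suffices to promote a symmetric 2-monoid to a symmetric 2-group. The only point to be careful about is that the passage from $\C$ to $\mathrm{Ho}\,\C$ preserves biproducts and that the induced addition on $\pi_0\C(A,B)$ agrees with the one defined from biproducts in $\mathrm{Ho}\,\C$, both of which follow directly from the definition \ref{defhochomotop} of $\mathrm{Ho}\,\C$ and the biproduct formulas of Proposition \ref{caracbiprod}.
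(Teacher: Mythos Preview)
Your proof is correct and uses essentially the same ingredients as the paper: the previous proposition giving presemiadditivity from biproducts, the Joyal--Street fact that one-sided inverses up to isomorphism suffice for a 2-group, and the observation that biproducts and the induced addition pass to $\mathrm{Ho}\,\C$. The only cosmetic difference is the logical structure: the paper proves the cycle $3\Rightarrow 2\Rightarrow 1\Rightarrow 3$, whereas you prove $1\Leftrightarrow 2$ and $1\Leftrightarrow 3$ separately; the content is the same.
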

	
		\begin{proof}
			{\it 3 $\Rightarrow$ 2. } In $\mathrm{Ho}\,\C$, the
			biproduct is the same as in $\C$, and $f+g$ is equal to
			\begin{eqn}
				A\overset{\Delta}\longrightarrow A\oplus A\xrightarrow{f\oplus g}
				B\oplus B\overset{\nabla} \longrightarrow B.
			\end{eqn}
			As $\mathrm{Ho}\,\C$ is additive, there exists $f^*$ such that $f+f^*=0$.
			But two arrows are equal in $\mathrm{Ho}\,\C$ if there exists an
			isomorphism between them in $\C$.  So
			there is an isomorphism $\nabla(f\oplus f^*)\Delta\Ra 0$.
			
			{\it 2 $\Rightarrow$ 1. } By the previous proposition,
			$\C$ is presemiadditive and the existence of $f^*$ and $\eps$ shows
			that $f^*$ is the opposite of $f$ for the addition of $\C(A,B)$ and thus
			that each $\C(A,B)$ is a symmetric 2-group.
			
			{\it 1 $\Rightarrow$ 3. } The 2-arrows become equalities
			for the axioms of group and of distributivity.
		\end{proof}

\section{Additivity and regularity of 2-abelian $\Gpd$-categories}\label{sectaddregdab}

\subsection{Matrix product}

	In this subsection we will see that in dimension 2 also the matrix product
	corresponds to the composition of arrows.
	
	We work in a semiadditive $\Gpd$-category.  We assume,
	by Mac Lane's coherence theorem  that $\alpha$, $\lambda$ and $\rho$ are
	identities.  In the same way we assume that $\varphi_0$ and $\psi^0$ are
	identities.
	
	\begin{df}
		Let $(A_j)_{1\leq j\leq m}$ and $(B_k)_{1\leq k\leq n}$ be two families
		of objects in a $\Gpd$-category $\C$.  The groupoid of matrices between
		these two families of objects is\index{Mat(C)@$\caspar{Mat}(\C)$}
		\begin{eqn}
			\caspar{Mat}(\C)((A_j),(B_k)) \eqdef
			\prod_{\substack{1\leq k\leq n\\ 1\leq j\leq m}}\C(A_j,B_k).
		\end{eqn}
	\end{df}
	
	\begin{df}
		Let $(A_j)_{1\leq j\leq m}$, $(B_k)_{1\leq k\leq n}$ and  $(C_l)_{1\leq l\leq o}$
		be families of objects in a presemiadditive $\Gpd$-category $\C$.
		The matrix product\index{matrix product}
		\begin{eqn}
			\mathrm{prod}\col \caspar{Mat}(\C)((A_j),(B_k))\times\caspar{Mat}(\C)((B_k),(C_l)) 
			\ra\caspar{Mat}(\C)((A_j),(C_l)) 
		\end{eqn}
		is defined on objects by
		\begin{eqn}
				(g_{lk})_{\substack{1\leq l\leq o\\ 1\leq k\leq n}}
				(f_{kj})_{\substack{1\leq k\leq n\\ 1\leq j\leq m}}
				\eqdef\left(\sum_{k=1}^n g_{lk}\circ f_{kj}\right)_{\substack{1\leq l\leq o\\ 
				1\leq j\leq m}}.
		\end{eqn}
	\end{df}
	
	We will see that, for a semiadditive $\Gpd$-category, this matrix product
	corresponds under the equivalence \ref{equcoprodprod} to the composition 
	\begin{eqn}
		\C\left(\bigoplus_{j=1}^m A_j,\bigoplus_{k=1}^n B_k\right)\times
		\C\left(\bigoplus_{k=1}^n B_k,\bigoplus_{l=1}^o C_l\right)\ra
		\C\left(\bigoplus_{j=1}^m A_j,\bigoplus_{l=1}^o C_l\right).
	\end{eqn}

	\begin{pon}\label{tradprodmatcomp}
		Let $\C$ be a presemiadditive $\Gpd$-category.
		There exists a natural isomorphism $\xi$ as in the following diagram.
		\begin{xym}\xymatrix@C=60pt@R=40pt{
			{\txt{$\C(\bigoplus_{j=1}^m A_j,\bigoplus_{k=1}^n B_k)$\\ ${}\times
				\C(\bigoplus_{k=1}^n B_k,\bigoplus_{l=1}^o C_l)$}}
				\ar[d]_-{\substack{(p_k\circ - \circ i_j)_{k,j}\\ 
				{}\times(p_l\circ - \circ i_k)_{l,k}}}
				\ar[r]^-{\mathrm{comp}}\drtwocell\omit\omit{^\xi}
			&{\C(\bigoplus_{j=1}^m A_j,\bigoplus_{l=1}^o C_l)}
				\ar[d]^-{(p_l\circ - \circ i_j)_{l,j}}
			\\ {\txt{$\caspar{Mat}(\C)((A_j),(B_k))$\\ ${}\times
				\caspar{Mat}(\C)((B_k),(C_l))$}}\ar[r]_-{\mathrm{prod}}
			&{\caspar{Mat}(\C)((A_j),(C_l))}
		}\end{xym}
	\end{pon}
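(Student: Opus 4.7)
The plan is to build $\xi$ by rewriting a composite $g\circ f$ in terms of the biproduct decompositions of the identity, then distribute through the sum using the presemiadditive structure. Concretely, pick objects $f\col\bigoplus_j A_j\ra\bigoplus_k B_k$ and $g\col\bigoplus_k B_k\ra\bigoplus_l C_l$. The key tool is the 2-arrow
\begin{equation*}
	\omega\col\sum_{k=1}^n i_k p_k\Ra 1_{\bigoplus_k B_k}
\end{equation*}
supplied by the iterated biproduct (Proposition \ref{caracbiprod}, condition 7, extended from binary to $n$-ary biproducts in the obvious way). Using $\omega^{-1}$, the composite $gf=g\circ 1\circ f$ acquires a 2-arrow to $g\circ(\sum_k i_kp_k)\circ f$.

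Next I would distribute this sum through $g$ on the left and $f$ on the right, using the natural transformations $\varphi^g_{-,-}$ (iterated, to handle an $n$-fold sum via axioms \ref{preadda1}) and $\psi^{-,-}_f$ (iterated, via axioms \ref{preaddb1}); this produces a 2-arrow
\begin{equation*}
	g\circ\Bigl(\sum_{k=1}^n i_kp_k\Bigr)\circ f\;\Longleftarrow\;\sum_{k=1}^n (gi_k)(p_kf).
\end{equation*}
Pasting with $\omega$, this gives a 2-arrow $\xi'\col gf\Ra\sum_k (gi_k)(p_kf)$. Then I would apply $p_l\circ-\circ i_j$; since these compositions are themselves symmetric monoidal (presemiadditivity, version 2(a) and 2(b)), they commute with the sum up to canonical 2-arrows $\varphi^{p_l}$ and $\psi^{i_j}$. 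This yields
\begin{equation*}
	p_l(gf)i_j\;\Longleftrightarrow\;\sum_{k=1}^n p_lg i_k p_k f i_j,
\end{equation*}
and it remains to recognize the summands as $g_{lk}\circ f_{kj}$ by definition of the matrix entries (which is just unfolding the equivalence \ref{equcoprodprod}). The composite of all these 2-arrows defines the component $\xi_{(f,g)}$ of $\xi$ at a pair of objects; extending $\xi$ to arrows in the source groupoid is automatic since every 2-arrow used is natural in $f$ and $g$.

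For naturality of $\xi$ with respect to morphisms of $\caspar{Mat}(\C)$, the verification reduces, via the definition of morphisms in a product of groupoids, to checking that $\xi$ is natural in each entry $f_{kj}$ and $g_{lk}$ separately; this follows from the naturality of $\varphi^h_{-,-}$, $\psi^{-,-}_g$, and the dinaturality (in $h$ and $g$) of these transformations, which are exactly conditions 2(a)--(c) of Definition \ref{semipreadd}. The main obstacle will be the bookkeeping for the $n$-fold sum: conditions \ref{preadda1} and \ref{preaddb1} only give associativity up to coherent 2-arrow, so one must invoke a Mac Lane-style coherence argument to justify iterating $\varphi^g$ and $\psi^f$ unambiguously, and similarly to check that the final pasting is independent of the order in which the distributivity is applied. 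Once coherence is in hand the verification is essentially formal, and restricting to $m=n=o=1$ recovers the identity (since then $\omega$ is identity by our strict convention and all sums are singletons), which pins down $\xi$ uniquely up to the coherence data.
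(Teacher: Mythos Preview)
Your proposal is correct and follows essentially the same idea as the paper: insert the decomposition $\omega\col\sum_k i_kp_k\Ra 1$ of the identity on $\bigoplus_k B_k$, then use the distributivity 2-arrows $\varphi$ and $\psi$ to pass from $p_l g f i_j$ to the sum $\sum_k p_l g i_k p_k f i_j$. The paper only treats $n=2$ (the single case needed later) and builds $\xi^{f,g}_{lj}$ directly at the component level as the composite $\varphi^{p_lg}$, then $p_lg\,\psi_{fi_j}$, then $p_lg\,\omega\,fi_j$, thereby avoiding the $n$-fold coherence bookkeeping you worry about; your version distributes first at the level of $gf$ and only then applies $p_l$ and $i_j$, which yields a different but coherently equal pasting via axioms \ref{preaddd}--\ref{preaddf}.
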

		
		\begin{proof}
			We prove this for the case $n=2$, which is the only case that we
			use in the following.
			We define $\xi^{f,g}_{lj}$ as being the following composite.
			\begin{multline}\stepcounter{eqnum}
				p_lgi_1p_1fi_j+p_lgi_2p_2fi_j\overset{\varphi}\ra
				p_lg(i_1p_1fi_j+i_2p_2fi_j)\\ \xrightarrow{p_lg\psi}
				p_lg(i_1p_1+i_2p_2)fi_j\xrightarrow{p_lg\omega fi_j} p_lgfi_j
			\end{multline}
			The naturality of $\xi$ follows from that of $\psi$ and $\varphi$.
		\end{proof}

	We also need the natural isomorphism going in the other direction.
	\begin{pon}
		Let $\C$ a presemiadditive $\Gpd$-category.
		There exists a natural isomorphism $\theta$ as in the following diagram.
		\begin{xym}\xymatrix@C=60pt@R=40pt{
			{\txt{$\C(\bigoplus_{j=1}^m A_j,\bigoplus_{k=1}^n B_k)$\\ ${}\times
				\C(\bigoplus_{k=1}^n B_k,\bigoplus_{l=1}^o C_l)$}}
				\ar[r]^-{\mathrm{comp}}
			&{\C(\bigoplus_{j=1}^m A_j,\bigoplus_{l=1}^o C_l)}
			\\ {\txt{$\caspar{Mat}(\C)((A_j),(B_k))$\\ ${}\times
				\caspar{Mat}(\C)((B_k),(C_l))$}}\ar[r]_-{\mathrm{prod}}\ar[u]^{\Phi}
				\urtwocell\omit\omit{^\theta}
			&{\caspar{Mat}(\C)((A_j),(C_l))}\ar[u]_{\Phi}
		}\end{xym}
	\end{pon}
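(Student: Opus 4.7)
The plan is to obtain $\theta$ from $\xi$ by transporting $\xi$ along the equivalence of groupoids
\begin{eqn}
P \col \C\Bigl(\bigoplus_{j=1}^m A_j, \bigoplus_{k=1}^n B_k\Bigr) \ra \caspar{Mat}(\C)((A_j),(B_k)),
\end{eqn}
given by $P = (p_k \circ - \circ i_j)_{k,j}$, whose quasi-inverse is $\Phi$. By construction of $\Phi$ (from subsection \ref{premelcalmat}, using the 2-arrows $\eta_{kj}$), we have a natural isomorphism $v \col P\Phi \Ra 1_{\caspar{Mat}(\C)((A_j),(B_k))}$, assembled pointwise from the $\eta_{kj}$, and a corresponding natural isomorphism $u \col \Phi P \Ra 1$ on the other side (obtained by the universal property of $\Phi$ applied to $v$, using that $P$ is fully faithful). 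The analogous data exist for the pairs $(B_k,C_l)$, $(A_j,C_l)$ and $(A_j,B_k)\times(B_k,C_l)$.

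The key step is to first whisker the natural isomorphism $\xi$ of the previous proposition on the right by $\Phi \times \Phi$, to obtain
\begin{eqn}
\xi(\Phi \times \Phi) \col P \circ \mathrm{comp} \circ (\Phi\times\Phi) \Ra \mathrm{prod} \circ (P\Phi \times P\Phi),
\end{eqn}
and then compose with $\mathrm{prod}(v \times v)$ to land in $\mathrm{prod}$. Call this composite $\tilde\xi \col P\mathrm{comp}(\Phi\times\Phi) \Ra \mathrm{prod}$. Both sides are now functors with codomain $\caspar{Mat}(\C)((A_j),(C_l))$, and since $P$ (for the pair $(A_j,C_l)$) is fully faithful, there exists a unique natural isomorphism
\begin{eqn}
\theta \col \mathrm{comp} \circ (\Phi \times \Phi) \Ra \Phi \circ \mathrm{prod}
\end{eqn}
such that $v_{(A_j),(C_l)} \mathrm{prod} \circ P\theta = \tilde\xi$; equivalently, $\theta$ is $u^{-1}_{\mathrm{comp}(\Phi\times\Phi)}$ followed by $\Phi\tilde\xi$.

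Naturality of $\theta$ follows mechanically from the naturality of $\xi$, $u$, and $v$, together with the interchange law. The verification that $\theta$ is an isomorphism is automatic, since all of $\xi$, $u$, $v$ are isomorphisms and $\Phi$ preserves isomorphisms. I do not expect any serious obstacle: this is a routine transport-along-an-equivalence argument, and the only thing to be slightly careful about is tracking which factor of the product $(A_j)\times(B_k)\times(B_k)\times(C_l)$ each whiskering acts on, but this is purely bookkeeping and does not require a separate argument.
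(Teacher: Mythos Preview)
Your transport-along-an-equivalence argument is correct and yields the general natural isomorphism $\theta$ as stated. The paper, however, takes a quite different route: rather than invoking $\xi$ and the equivalence $(P,\Phi,u,v)$ abstractly, it simply writes down $\theta$ by hand in the two special cases that are actually used later (namely $m=o=1$, $n=2$, and $m=n=1$, $o=2$), giving explicit composites built from $\omega$, $\psi$, $\iota_k$, $\pi_j$.

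Your approach is cleaner and proves the proposition in the generality in which it is stated; the paper's approach is ad hoc and, strictly speaking, only establishes the two instances it needs. The trade-off is that the very next result, Lemma~\ref{lemmforlaregul}, requires manipulating $\theta$ concretely (verifying two associativity-type identities), and for that the paper's explicit formulas are ready to use, whereas your abstractly defined $\theta$ would have to be unwound back through $\xi$, $u$, and $v$ to check those identities. So your argument buys generality and conceptual clarity; the paper's buys immediate computability in the cases that matter downstream. Either is acceptable, but be aware that if you follow your route you will need to do a little extra unpacking when you reach Lemma~\ref{lemmforlaregul}.
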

	
		\begin{proof}
			We define $\theta$ for two special cases with $m,n,o\leq 2$.
			We define first
			\begin{xym}\xymatrix@=40pt{
				A\ar[r]^-{\vervec{b_1}{b_2}}\rrlowertwocell_{c_1b_1+c_2b_2}<-9>{_<2.7>\theta}
				&{B_1\oplus B_2}\ar[r]^-{(c_1\; c_2)}
				&C
			}\end{xym}
			as the composite
			\begin{multline}\stepcounter{eqnum}
				(c_1\; c_2)\vervec{b_1}{b_2}
				\xLongrightarrow{(c_1\; c_2)\omega^{-1}\vervec{b_1}{b_2}}
				(c_1\; c_2)(i_1p_1+i_2p_2)\vervec{b_1}{b_2}
				\xLongrightarrow{(c_1\; c_2)\psi^{-1}}
				\\ (c_1\; c_2)i_1p_1\vervec{b_1}{b_2}+(c_1\; c_2)i_2p_2\vervec{b_1}{b_2}
				\xLongrightarrow{\iota_1*\pi_1+\iota_2*\pi_2\,} c_1b_1+c_2b_2.
			\end{multline}
			Next we define
			\begin{xym}\xymatrix@=40pt{
				A\ar[r]^-{b}\rrlowertwocell_{\overset{~}{\overset{~}{\overset{~}{\overset{~}{\overset{~}{\overset{~}{\vervec{c_1b}{c_2b}}}}}}}}<-9>{_<2.7>\theta}
				&B\ar[r]^-{\vervec{c_1}{c_2}}
				&{C_1\oplus C_2}
			}\end{xym}
			such that $p_1\theta = \pi_1^{-1}\circ\pi_1b$ and $p_2\theta = 
			\pi_2^{-1}\circ\pi_2b$.
		\end{proof}
		
		We will need two little “associativity” properties of $\theta$.
		
	\begin{lemm}\label{lemmforlaregul}
		\begin{xyml}\begin{gathered}\xymatrix@=50pt{
			B\ar[r]^-{\vervec{b_1}{b_2}}\rtwocell\omit\omit{_<3>{\theta}}
			&{C_1\oplus C_2}\ar[d]^{(c_1\; c_2)}
			\\ A\ar[u]^a\ar[ur]|(0.3){\vervec{b_1a}{b_2a}}\ar[r]_-{c_1b_1a+c_2b_2a}
				\rtwocell\omit\omit{_<-3>{\theta}}
			&D
		}\end{gathered}\;\;=\;\;\begin{gathered}\xymatrix@=50pt{
			B\ar[r]^-{\vervec{b_1}{b_2}}\rtwocell\omit\omit{_<3>{\theta}}
				\ar[dr]|{\;c_1b_1+c_2b_2}
			&{C_1\oplus C_2}\ar[d]^{(c_1\; c_2)}
			\\ A\ar[u]^a\ar[r]_-{c_1b_1a+c_2b_2a}
				\rtwocell\omit\omit{_<-3>{\psi}}
			&D
		}\end{gathered}\end{xyml}
		\begin{xyml}\begin{gathered}\xymatrix@=50pt{
			B\ar[r]^-{b}
			&C\ar[d]^{\vervec{c_1}{c_2}}
			\\ A\ar[u]^a\ar[r]_-{\vervec{c_1ba}{c_2ba}}
				\rtwocell\omit\omit{_<-3>{\theta}}
				\ar[ur]^{ba}
			&{D_1\oplus D_2}
		}\end{gathered}	\;\;=\;\;\begin{gathered}\xymatrix@=50pt{
			B\ar[r]^-{b}\rtwocell\omit\omit{_<3>{\theta}}
				\ar[dr]|(0.3){\vervec{c_1b}{c_2b}}
			&C\ar[d]^{\vervec{c_1}{c_2}}
			\\ A\ar[u]^a\ar[r]_-{\vervec{c_1ba}{c_2ba}}
				\rtwocell\omit\omit{_<-3>{\theta}}
			&{D_1\oplus D_2}
		}\end{gathered}\end{xyml}
	\end{lemm}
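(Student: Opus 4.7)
The plan is to prove the two equations separately, using different strategies. The second equation admits a clean proof by testing with the projections of the biproduct, while the first requires unfolding the explicit definition of $\theta$ and using the coherence axioms of the presemiadditive structure.

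First I would prove the second equation. Both sides are 2-arrows from $\vervec{c_1}{c_2}ba$ to $\vervec{c_1ba}{c_2ba}$ into the product $D_1\oplus D_2$, whose projections $p_1,p_2$ are jointly faithful. By the defining property of $\theta$ (case 2), namely $p_k\theta=\pi_k^{-1}\circ\pi_k(-)$, we get $p_k(\text{LHS})=\pi_k^{-1}\circ\pi_k(ba)$ directly. On the right-hand side, $p_k$ applied to the composition of the two $\theta$'s gives $(\pi_k^{-1}\circ\pi_k a)\circ(\pi_k^{-1}\circ\pi_k b)a$, and the middle factor $\pi_k\circ\pi_k^{-1}$ (indexed by the intermediate arrow $\vervec{c_1b}{c_2b}$) cancels, leaving $\pi_k^{-1}\circ\pi_k ba$. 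Since $p_1$ and $p_2$ are jointly faithful and the product is strictly described, the two composites agree.

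For the first equation, there is no universal property of $D$ to exploit, so I would unfold $\theta$ on both sides. The lower $\theta$ on the LHS (case 1 applied to $\vervec{b_1a}{b_2a}$) expands as the composite $(c_1\;c_2)\omega^{-1}\vervec{b_1a}{b_2a}$, then a $\varphi^{-1}$ distributing $(c_1\;c_2)$ over the sum $i_1p_1+i_2p_2$, then $\iota_k\ast\pi_k$ on each summand. Pre-composing with $(c_1\;c_2)\ast\theta^2_a$ (the upper $\theta$, case 2) and using its defining property $p_k\theta^2_a=\pi_k^{-1}\circ\pi_k a$, the $\pi_k$-cells for the composite $\vervec{b_1a}{b_2a}$ cancel with those produced by $\theta^2_a$, and what remains coincides with $\theta^1_{b}$ (case 1 for $\vervec{b_1}{b_2}$) whiskered with $a$, followed by a cell distributing post-composition with $a$ over the binary sum $c_1b_1+c_2b_2$. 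That last cell is exactly $\psi^{c_1b_1,c_2b_2}_a$, giving the RHS. The identification uses the naturality of $\psi$ in its variable arguments, the naturality of $\omega$ with respect to $a$, and the coherence condition relating $\psi$ with composition, namely diagram~\ref{preadde}.

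The main obstacle will be the bookkeeping in the first equation: one must keep track of several layers of coherence cells ($\omega$, $\varphi$, $\psi$, and the $\pi_k$, $\iota_k$ isomorphisms) and ensure that the rearrangement from ``apply $\theta^2_a$ then $\theta^1_{b\cdot a}$'' to ``apply $\theta^1_b$ then $\psi_a$'' is justified at each step by the appropriate presemiadditive axiom. None of these steps is deep — everything is forced by the bimonoidal coherence of Definition~\ref{semipreadd} — but the diagram-chase is lengthy. In practice, both equations should be viewed as expressing naturality of $\theta$ with respect to whiskering on the source, one on each side of the composition: the second on the outgoing product side (where the universal property of the product handles it cleanly), and the first on the incoming coproduct side (where one is forced to rely on the explicit construction of $\theta$).
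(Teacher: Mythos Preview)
The paper states this lemma without proof, so there is no authorial argument to compare against. Your proposal is sound and is the natural way to proceed.

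Your treatment of the second equation is correct and clean: both sides are 2-arrows into the biproduct $D_1\oplus D_2$, the projections $p_1,p_2$ are jointly faithful (the product is strictly described), and the defining property $p_k\theta=\pi_k^{-1}\circ\pi_k(-)$ of the case-2 $\theta$ reduces each $p_k$-component to the same composite $\pi_k^{-1}\circ\pi_k(ba)$ after the obvious cancellation.

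Your plan for the first equation is also correct in outline. One small point: you cite diagram~\ref{preadde} as the coherence needed, but the identification rests primarily on diagram~\ref{preaddf} (the associativity-type axiom for $\psi$ under iterated precomposition), together with the naturality of $\psi$ and the compatibility of $\omega$ with whiskering. Concretely, after inserting the case-2 $\theta$ you must compare $(c_1\;c_2)\omega^{-1}\vervec{b_1a}{b_2a}$ with $(c_1\;c_2)\omega^{-1}\vervec{b_1}{b_2}\,a$ through the intermediate $(i_1p_1+i_2p_2)\theta^2_a$, and the subsequent distribution step must commute with whiskering by $a$; this is exactly what \ref{preaddf} encodes. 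Nothing deep is missing---as you say, every move is forced by the bimonoidal coherence---but be aware that \ref{preaddf} rather than \ref{preadde} carries the weight here.
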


\subsection{Additivity of 2-abelian $\Gpd$-categories}

	In this subsection, we prove that every 2-abelian $\Gpd$-category
	is additive, by following the proof of \cite{Freyd2003a}: we have already proved
	that every 2-abelian $\Gpd$-category has all finite biproducts (Proposition
	\ref{gpd2abbiprod}) and that, if a $\Gpd$-category has all finite biproducts,
	it is presemiadditive.  Thus we already know that every 2-abelian $\Gpd$-category
	is semiadditive. It remains to prove that every arrow $A\overset{f}\ra B$
	has an opposite for the addition of the symmetric 2-monoid $\C(A,B)$.
	
	\begin{lemm}
		Let $\C$ be a semiadditive $\Gpdp$-category where every fully 0-faithful
		and fully 0-cofaithful arrow is an equivalence.  Then for each object
		$A\col \C$, the matrix
		\begin{eqn}\label{matr1101}
			\begin{pmatrix}1_A &1_A \\0 &1_A\end{pmatrix}\col A\oplus A\ra A\oplus A
		\end{eqn}
		is an equivalence.
	\end{lemm}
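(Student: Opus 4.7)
The plan is to show that $M := \begin{pmatrix}1_A & 1_A \\ 0 & 1_A\end{pmatrix}\col A\oplus A\ra A\oplus A$ is both fully 0-faithful and fully 0-cofaithful; the hypothesis of the lemma then forces $M$ to be an equivalence. The two halves are formally dual, so I shall only sketch full 0-faithfulness, the other case going through by swapping products for coproducts and $\xi$ for $\theta$.

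Let $x\col X\ra A\oplus A$ and $\chi\col Mx\Ra 0$ be given. Using the (strict) universal property of the product, set $x_j := p_jx$ so that $x$ is canonically determined by the pair $(x_1,x_2)$. By Proposition \ref{tradprodmatcomp}, composing with $M$ corresponds up to a coherent isomorphism $\xi$ to multiplying matrices, so there are canonical isomorphisms $p_1Mx\simeq x_1+x_2$ and $p_2Mx\simeq x_2$. Composing $p_j\chi$ with these isomorphisms yields 2-arrows $\chi_1\col x_1+x_2\Ra 0$ and $\chi_2\col x_2\Ra 0$. Define $\alpha_2 := \chi_2$ and take $\alpha_1\col x_1\Ra 0$ to be the composite
\[
x_1\xLongrightarrow{\rho^{-1}} x_1+0\xLongrightarrow{1+\chi_2^{-1}} x_1+x_2\xLongrightarrow{\chi_1} 0.
\]
By the universal property of the product, the pair $(\alpha_1,\alpha_2)$ assembles into a unique $\alpha\col x\Ra 0$ with $p_j\alpha$ (canonically) equal to $\alpha_j$. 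It then remains to verify that $M\alpha = \chi$, which by the joint faithfulness of $p_1,p_2$ reduces to checking the equality after composing with each $p_j$; this in turn follows by chasing through the definition of $\xi$ in the proof of Proposition \ref{tradprodmatcomp}, using $\iota_k$, $\pi_j$, $\omega$ and the bimonoidality diagrams of Definition \ref{semipreadd}. The uniqueness clause of fully 0-faithfulness is handled by the same decomposition: if $\alpha,\alpha'\col x\Ra 0$ both satisfy $M\alpha=M\alpha'=\chi$, then applying $p_j$ and unpacking $\xi$ gives $p_j\alpha=p_j\alpha'$ for $j=1,2$, whence $\alpha=\alpha'$.

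Dually, for full 0-cofaithfulness, given $y\col A\oplus A\ra Y$ with $\chi\col yM\Ra 0$, decompose $y$ via the coproduct structure as (essentially) $(y\,i_1,\,y\,i_2)$; here Proposition \ref{tradprodmatcomp} and the dual isomorphism $\theta$ identify $yM$ on the $k$-th injection with $y\,i_1+y\,i_2$ for $k=1$ and with $y\,i_2$ for $k=2$. Extract $\chi^1\col y\,i_1+y\,i_2\Ra 0$ and $\chi^2\col y\,i_2\Ra 0$, construct $\beta_1\col y\,i_1\Ra 0$ as a composite analogous to the one above, and assemble a unique $\beta\col y\Ra 0$ through the universal property of the coproduct; verify $\beta M=\chi$ by testing with the injections.

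The main obstacle is simply the notational bookkeeping of all the distributivity, associativity and coherence 2-arrows ($\xi$, $\theta$, $\varphi$, $\psi$, $\omega$, $\pi_j$, $\iota_k$) involved in making the naive formulas ``$p_1Mx = x_1+x_2$'' and ``$yM\,i_1 = y\,i_1+y\,i_2$'' rigorous; the underlying idea is the elementary observation that the set-theoretic matrix $\bigl(\begin{smallmatrix}1&1\\0&1\end{smallmatrix}\bigr)$ has trivial kernel and cokernel. Once these verifications are in place, the combination of full 0-faithfulness and full 0-cofaithfulness with the standing hypothesis closes the proof.
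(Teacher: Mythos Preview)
Your proof is correct and follows essentially the same approach as the paper: transfer to the matrix side via Proposition~\ref{tradprodmatcomp}, solve the triangular system $\alpha_1+\alpha_2=\chi_1$, $\alpha_2=\chi_2$ by the explicit composite you wrote, and invoke duality for full 0-cofaithfulness. One small slip in your dual sketch: since the second \emph{column} of $M$ is $\vervec{1}{1}$, one has $yMi_1\simeq yi_1$ and $yMi_2\simeq yi_1+yi_2$, not the other way round; the rest of the dual argument then goes through with $\beta_1:=\chi^1$ and $\beta_2$ built from $\chi^1,\chi^2$.
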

	
		\begin{proof}
			We prove that this matrix is fully 0-faithful.  Dually, it will be
			fully 0-cofaithful and thus an equivalence.
			
			Let be $X\col \C$.  We must prove
			that, for each $a\col X\ra A\oplus A$
			and for each $\alpha\col \begin{pmatrix}1_A &1_A \\0 &1_A\end{pmatrix}a\Ra 0$,
			there exists a unique $\alpha'\col a\Ra 0$ such that 
			$\alpha=\begin{pmatrix}1_A &1_A \\0 &1_A\end{pmatrix}\alpha'$.  
			By Proposition
			\ref{tradprodmatcomp}, we can transfer this situation to the matrix side.
			We must prove that for each
			$\begin{pmatrix}a_1\\ a_2\end{pmatrix}\col X\ra (A,A)$ and for each 
			$\begin{pmatrix}\alpha_1\\ \alpha_2\end{pmatrix}\col 
			\begin{pmatrix}1_A &1_A \\0 &1_A\end{pmatrix}\begin{pmatrix}a_1\\ a_2\end{pmatrix}
			\Ra \begin{pmatrix}0\\ 0\end{pmatrix}$, there exists a unique
			$\begin{pmatrix}\alpha'_1\\ \alpha'_2\end{pmatrix}\col 
			\begin{pmatrix}a_1\\ a_2\end{pmatrix}\Ra \begin{pmatrix}0\\ 0\end{pmatrix}$
			such that $\begin{pmatrix}1_A &1_A \\0 &1_A\end{pmatrix}
			\begin{pmatrix}\alpha'_1\\ \alpha'_2\end{pmatrix}=
			\begin{pmatrix}\alpha_1\\ \alpha_2\end{pmatrix}$.
			
			Let be such a situation.  We have $\alpha_1\col a_1+a_2\Ra 0$ and
			$\alpha_2\col a_2\equiv 0+a_2\Ra 0$.  We set $\alpha'_2\eqdef\alpha_2$ and $\alpha'_1$ equal to the
			composite $a_1\equiv a_1+0\xrightarrow{a_1+\alpha_2^{-1}} a_1+a_2\overset{\alpha_1}\longrightarrow 0$.
			Then
			\begin{eqn}
				\begin{pmatrix}1_A &1_A \\0 &1_A\end{pmatrix}
				\begin{pmatrix}\alpha'_1\\ \alpha'_2\end{pmatrix}
				=\begin{pmatrix}\alpha'_1+\alpha'_2\\ \alpha'_2\end{pmatrix}
				=\begin{pmatrix}\alpha_1\\ \alpha_2\end{pmatrix},
			\end{eqn}
			because the following diagram commutes.
			\begin{xym}\xymatrix@=40pt{
				a_1+a_2\ar@{=}[r]\ar@/^2pc/[rrrr]^-{\alpha'_1+\alpha'_2}
				&a_1+0+a_2\ar[r]_-{a_1+0+\alpha_2}\ar@{=}[d]
				&a_1+0+0\ar[r]_-{a_1+\alpha_2^{-1}+0}\ar@{=}[d]
				&a_1+a_2+0\ar[r]_-{\alpha_1}\ar@{=}[d]
				&0
				\\ &a_1+a_2\ar[r]^-{a_1+\alpha_2}\ar@/_1.5pc/@{=}[rr]_{}
				&a_1+0\ar[r]^-{a_1+\alpha_2^{-1}}
				&a_1+a_2
			}\end{xym}
			
			For unicity, let be
			$\begin{pmatrix}\alpha_1\\ \alpha_2\end{pmatrix}\col \begin{pmatrix}0\\ 0\end{pmatrix}
			\Ra\begin{pmatrix}0\\ 0\end{pmatrix}\col X\ra(A,A)$ such that
			\begin{eqn}
				\begin{pmatrix}1_A &1_A \\0 &1_A\end{pmatrix}
				\begin{pmatrix}\alpha_1\\ \alpha_2\end{pmatrix}
				=\begin{pmatrix}1_0\\ 1_0\end{pmatrix}.
			\end{eqn}
			Then we have $\alpha_2=1_0$ and $\alpha_1=\alpha_1+1_0=\alpha_1+\alpha_2=1_0$.
		\end{proof}
		
	\begin{coro}
		Under the hypotheses of the previous lemma, for each $A\col \C$, there exists
		$n_A\col A\ra A$ such that $n_A+1_A\simeq 0$.
	\end{coro}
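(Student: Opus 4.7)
The plan is to extract $n_A$ from an inverse of the matrix in \eqref{matr1101}. By the previous lemma, that matrix is an equivalence, so there exists an arrow $M\col A\oplus A\ra A\oplus A$ together with an isomorphism
\begin{equation*}
   M\circ \begin{pmatrix} 1_A & 1_A \\ 0 & 1_A \end{pmatrix} \;\simeq\; 1_{A\oplus A}.
\end{equation*}

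First I would invoke the equivalence \eqref{equcoprodprod} to replace $M$ (up to isomorphism) by its matrix of components $(m_{jk})_{1\leq j,k\leq 2}$, where $m_{jk}\eqdef p_j\circ M\circ i_k$. Then I would apply Proposition \ref{tradprodmatcomp} (the coherence isomorphism $\xi$ translating composition into matrix product) to rewrite the composite $M\circ \begin{pmatrix} 1_A & 1_A \\ 0 & 1_A \end{pmatrix}$ as the matrix
\begin{equation*}
   \begin{pmatrix} m_{11} & m_{11}+m_{12} \\ m_{21} & m_{21}+m_{22} \end{pmatrix},
\end{equation*}
which by hypothesis is isomorphic to the identity matrix $\begin{pmatrix} 1_A & 0 \\ 0 & 1_A \end{pmatrix}$.

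Reading off the $(1,1)$ and $(1,2)$ entries of this isomorphism gives 2-arrows $m_{11}\simeq 1_A$ and $m_{11}+m_{12}\simeq 0$. Setting $n_A\eqdef m_{12}$, I then compose these: $1_A+n_A \simeq m_{11}+m_{12}\simeq 0$, and pre-composing with the symmetry $\gamma\col n_A+1_A\Ra 1_A+n_A$ of the symmetric 2-monoid structure on $\C(A,A)$ yields $n_A+1_A\simeq 0$, as required.

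The argument is essentially formal; the only thing to handle with care is the bookkeeping needed to confirm that $\xi$ really does convert the single composition equation into the four coordinate equations above, and that passing through the equivalence \eqref{equcoprodprod} does preserve the existence (rather than a strict equality) of the required isomorphism. Since both $\xi$ and the matrix-extraction functor are equivalences of groupoids, this preservation is automatic, so there is no serious obstacle.
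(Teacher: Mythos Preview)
Your proof is correct and follows essentially the same approach as the paper: both invert the matrix \eqref{matr1101} and read off the required isomorphism from the resulting system of four coordinate isomorphisms. The only cosmetic difference is that the paper composes with the inverse on the right rather than the left, which yields $b+1_A\simeq b+d\simeq 0$ directly without the final appeal to symmetry.
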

	
		\begin{proof}
			By the previous lemma, matrix \ref{matr1101} is an equivalence.
			There is thus an inverse matrix:
			\begin{eqn}
				\begin{pmatrix}1_A &1_A \\0 &1_A\end{pmatrix}
				\begin{pmatrix}a &b \\c &d\end{pmatrix}
				\simeq \begin{pmatrix}1_A &0 \\0 &1_A\end{pmatrix}.
			\end{eqn}
			This gives a system of four isomorphisms:
			\begin{align}\stepcounter{eqnum}\begin{split}
				a+c &\simeq 1_A; \\
				b+d &\simeq 0; \\
				c &\simeq 0; \\
				d &\simeq 1_A.
			\end{split}\end{align}
			It follows that $b+1_A\simeq b+d\simeq 0$. We can thus take $n_A\eqdef b$.
		\end{proof}

	The arrow $n_A$ plays the rôle of an antipode for the bimonoid
	$A\overset{\Delta}\ra A\oplus A\overset{\nabla}\ra A$, which becomes a Hopf monoid 
	(the isomorphisms $n_A+1_A\simeq 0$
	and $1_A+n_A\simeq 0$ should satisfy some additional conditions for $n_A$ to be a genuine antipode).
		
	\begin{coro}
		Under the hypotheses of the previous lemma, $\C$ is additive.
	\end{coro}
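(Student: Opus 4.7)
The plan is to invoke condition 2 of the corollary characterising additivity which was stated just before Section 5.4: a semiadditive $\Gpd$-category is additive if and only if every arrow $f\col A\ra B$ admits an arrow $f^*\col A\ra B$ with an isomorphism $\nabla(f\oplus f^*)\Delta\Ra 0$. Since semiadditivity of $\C$ was already established (by Proposition \ref{gpd2abbiprod} together with the fact that biproducts yield presemiadditivity, plus the hypothesis on fully 0-faithful and fully 0-cofaithful arrows is not even used for semiadditivity), the entire task is thus reduced to constructing, for each arrow $f$, a candidate opposite $f^*$ together with an isomorphism $f+f^*\simeq 0$ in the symmetric 2-monoid $\C(A,B)$.

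The idea is to transport the antipode $n_B\col B\ra B$ supplied by the previous corollary along the arrow $f$. Concretely, I would set
\begin{eqn}
f^* \eqdef n_B\circ f\col A\ra B,
\end{eqn}
and construct an isomorphism $f+f^*\simeq 0$ as follows. Using the symmetry $\gamma$ of the symmetric 2-monoid $\C(A,B)$, it suffices to produce $f^* + f\simeq 0$. The distributivity natural transformation $\psi_f^{n_B,1_B}\col n_Bf+1_Bf\Ra (n_B+1_B)f$ of the presemiadditive structure (together with the unitor $\dot{\lambda}_f\col 1_Bf\Ra f$) yields an isomorphism $n_B f+f\simeq(n_B+1_B)f$, and then the isomorphism $n_B+1_B\simeq 0$ from the previous corollary composed with the annihilation $0\circ f\simeq 0$ (provided by $\psi_f^0$, which is the identity in the strict description) gives $(n_B+1_B)f\simeq 0$. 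Composing all these isomorphisms and then applying $\gamma$ produces the required $f+f^*\simeq 0$.

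From this, one recovers an isomorphism $\varepsilon\col \nabla(f\oplus f^*)\Delta\Ra 0$ by unpacking the definition $f+f^*=\nabla(f\oplus f^*)\Delta$ of addition in the semiadditive structure, and the characterisation corollary then delivers additivity: condition 2 of that corollary is satisfied, so $\C$ is additive, meaning in particular that each $\C(A,B)$ is a symmetric 2-group and $\C$ is enriched in $\CGS$.

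I do not expect any real obstacle in this argument: the only non-trivial input is the antipode-like arrow $n_B$ on each object, which is already in hand from the preceding corollary, and the rest is the routine “translation along $f$” familiar from 1-dimensional additivity (where from $-1+1=0$ in a ring one deduces $(-f)+f=0$ for every module map via distributivity). The mild subtlety is keeping track of coherence 2-arrows rather than equalities, but the presemiadditive distributivity transformations $\psi$, $\varphi$, $\dot\lambda$, $\dot\rho$ and $\gamma$ are precisely designed to make this bookkeeping go through.
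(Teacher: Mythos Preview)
Your proposal is correct and essentially identical to the paper's proof: both set $f^*\eqdef n_B f$ and use the distributivity isomorphism $\psi$ to obtain $n_Bf+f\simeq(n_B+1_B)f\simeq 0f\simeq 0$, so that $n_Bf$ is an opposite for $f$. You spell out the coherence 2-arrows in more detail and invoke the characterisation corollary explicitly, but the substance is the same.
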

	
		\begin{proof}
			If $A\overset{f}\ra B$ is an arrow in $\C$, then
			$n_Bf+f=n_Bf+1_Bf\simeq(n_B+1_B)f\simeq 0f\equiv 0$.  Thus $n_Bf$
			is an opposite for $f$.
		\end{proof}
	
	\begin{coro}\label{2abPexadd}
		A $\Gpd$-category is 2-abelian if and only if it
		is 2-Puppe-exacte and additive.
	\end{coro}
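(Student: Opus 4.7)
The plan is to split the biconditional and note that the hard direction (2-abelian implies additive) is essentially a consequence of the previous corollary, while the other direction is immediate from the definitions.

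For the forward direction, suppose $\C$ is 2-abelian. I would first verify that $\C$ satisfies the hypotheses of the previous corollary, namely that $\C$ is semiadditive and every fully 0-faithful and fully 0-cofaithful arrow is an equivalence. Semiadditivity follows from Proposition \ref{gpd2abbiprod}, which gives all binary biproducts, together with the zero object (which exists in any 2-abelian $\Gpd$-category) and the fact that the existence of finite biproducts forces presemiadditivity (established in the previous subsection). For the second hypothesis, I would invoke Proposition \ref{fidfidzer} to identify fully 0-faithful with fully faithful and fully 0-cofaithful with fully cofaithful in the 2-Puppe-exact setting, and then apply Proposition \ref{fidplcofidequ} to conclude that any arrow which is both fully faithful and fully cofaithful (equivalently, fully faithful and cofaithful) is an equivalence. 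The previous corollary then yields additivity.

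For the backward direction, suppose $\C$ is 2-Puppe-exact and additive. Additivity provides all finite biproducts, which in particular gives all finite products and coproducts. Combined with 2-Puppe-exactness, this matches Definition \ref{deftwoab} exactly, so $\C$ is 2-abelian.

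I do not expect a real obstacle here: the corollary is a packaging statement whose substance lies in Proposition \ref{gpd2abbiprod} (biproducts exist in a 2-abelian $\Gpd$-category), in the results of the previous subsection showing that biproducts force presemiadditivity, and in the previous corollary which produces the antipode $n_A$ from the invertibility of the matrix $\begin{pmatrix}1&1\\0&1\end{pmatrix}$. The only care needed is to make sure the hypothesis ``every fully 0-faithful and fully 0-cofaithful arrow is an equivalence'' is correctly extracted from 2-Puppe-exactness via the chain of identifications in Propositions \ref{fidfidzer} and \ref{fidplcofidequ}.
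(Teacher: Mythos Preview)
Your proposal is correct and follows exactly the approach the paper intends: the forward direction assembles semiadditivity from Proposition~\ref{gpd2abbiprod} and the biproducts-imply-presemiadditivity results, then feeds the 2-Puppe-exact consequence (via Propositions~\ref{fidfidzer} and~\ref{fidplcofidequ}) into the preceding corollary to obtain additivity; the backward direction is immediate from Definition~\ref{deftwoab}. One small wording issue: in your parenthetical ``(equivalently, fully faithful and cofaithful)'', the equivalence only holds a posteriori in the 2-Puppe-exact setting, so it would be cleaner to say that fully cofaithful implies cofaithful and then apply item~2 of Proposition~\ref{fidplcofidequ} directly.
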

	
\subsection{Regularity of 2-abelian $\Gpd$-categories}

	In this subsection, we prove that every 2-abelian $\Gpd$-category
	is regular, in the sense that cofaithful and fully
	cofaithful arrows are stable under pullback and, dually, faithful
	and fully faithful arrows are stable under pushout.  We will deduce from that that
	2-abelian $\Gpd$-categories are also abelian.
	In this subsection, we denote by $f^*\col A\ra B$ the opposite of $f\col A\ra B$
	for the addition of arrows.  We assume that symmetric 2-groups are
	strictly described.
	
	We follow the proof of \cite{Mitchell1965a} or \cite{Borceux1994a}
	in dimension 1.
	First, we prove the correspondence between pullbacks and kernels.
	
	\begin{lemm}
		Let $\C$ be an additive $\Gpd$-category and
		let $B_1\overset{g_1}\ra C\overset{g_2}\leftarrow B_2$ be arrows in $\C$,
		to which corresponds an arrow $(g_1\; g_2^*)\col B_1\oplus B_2\ra C$.
		There is an equivalence
		\begin{eqn}
			\Phi\col\mathrm{PBCand}(g_1,g_2)\ra\mathrm{KerCand}((g_1\; g_2^*))
		\end{eqn}
		from the pullback-candidates of $g_1$ and $g_2$ to the kernel-candidates of
		$(g_1\; g_2^*)$.
	\end{lemm}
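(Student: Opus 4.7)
The plan is to exploit the fact that in an additive $\Gpd$-category the equation $g_1a\simeq g_2b$ is, up to the sign conventions of the 2-group structure on $\C(X,C)$, the same as the equation $g_1a+g_2^*b\simeq 0$, and then to translate between these two forms using the matrix/composition isomorphism $\xi$ of Proposition \ref{tradprodmatcomp} and the $\theta$ of its companion proposition.

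First I would define $\Phi$ on objects. Given a pullback-candidate $(X,a,b,\gamma)$ with $\gamma\col g_1a\Ra g_2b$, form the arrow $\vervec{a}{b}\col X\ra B_1\oplus B_2$ and then construct $\kappa\col (g_1\; g_2^*)\vervec{a}{b}\Ra 0$ as the composite
\begin{eqn*}
  (g_1\;g_2^*)\vervec{a}{b}\xLongrightarrow{\xi} g_1a+g_2^*b
  \xLongrightarrow{\gamma+g_2^*b} g_2b+g_2^*b\xLongrightarrow{\varepsilon} 0,
\end{eqn*}
where $\varepsilon$ is the counit coming from the additive structure ($g_2b$ being opposite to $g_2^*b$). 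Set $\Phi(X,a,b,\gamma)\eqdef (X,\vervec{a}{b},\kappa)$. On an arrow $(x,\zeta,\xi)\col (X,a,b,\gamma)\ra (X',a',b',\gamma')$ of $\mathrm{PBCand}$, I would send it to $(x,\vervec{\zeta}{\xi'})$, where $\vervec{\zeta}{\xi}\col\vervec{a}{b}\Ra\vervec{a'}{b'}x$ is built from the universal property of the biproduct; compatibility with $\kappa,\kappa'$ follows from the naturality of $\xi$ and the dinaturality of $\varepsilon$. The assignment on 2-arrows is the identity on the underlying 2-arrow $\chi\col x\Ra x'$ of $\C$.

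Then I would construct the inverse pseudo-functor $\Psi$. Given $(X,k\col X\ra B_1\oplus B_2,\kappa)$, set $a\eqdef p_1k$, $b\eqdef p_2k$; using $\theta$ and the fact that $(g_1\;g_2^*)k\simeq g_1p_1k+g_2^*p_2k$, the 2-arrow $\kappa$ transports to a 2-arrow $g_1a+g_2^*b\Ra 0$, whence (adding $g_2b$ and cancelling with $\varepsilon$) a 2-arrow $\gamma\col g_1a\Ra g_2b$. On arrows, $\Psi$ sends $(x,\mu)\col (X,k,\kappa)\ra (X',k',\kappa')$ with $\mu\col k\Ra k'x$ to $(x,p_1\mu,p_2\mu)$, and on 2-arrows it is again the identity on the underlying 2-cell.

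Finally I would check that $\Psi\Phi\simeq 1$ and $\Phi\Psi\simeq 1$: on objects, $p_1\vervec{a}{b}\simeq a$ and $p_2\vervec{a}{b}\simeq b$ by $\pi_1,\pi_2$, and the unit of the biproduct $\omega\col i_1p_1+i_2p_2\Ra 1$ gives $\vervec{p_1k}{p_2k}\simeq k$; the recovered 2-arrows agree by the coherence of $\xi$ and $\theta$ (Lemma \ref{lemmforlaregul}) together with the triangular identity of the additive structure. The only real obstacle is book-keeping: keeping track of the matrix/composition isomorphisms $\xi$ and $\theta$ when checking compatibility of 2-arrows in condition 2 of the definition of pullback and of kernel, and ensuring the two jointly faithful families $(p_1,p_2)$ and $(g_1,g_2^*)$ translate the unicity clauses correctly. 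Once the coherence bookkeeping is organised, both $\Phi$ and $\Psi$ are manifestly essentially surjective, full and faithful on arrows and 2-arrows, proving the claimed equivalence.
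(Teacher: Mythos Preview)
Your proposal is correct and follows essentially the same approach as the paper: the construction of $\Phi$ is identical (the paper uses $\theta$ rather than $\xi$ for the first step, but this is the isomorphism you intend). The only organisational difference is that the paper, instead of building an explicit pseudo-inverse $\Psi$, verifies directly that $\Phi$ is surjective, locally surjective, and locally fully faithful; the data produced in that verification are exactly your $\Psi$ on objects and arrows, so the two arguments amount to the same thing.
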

	
		\begin{proof}
			The details of the following proof are checked
			by using Lemma \ref{lemmforlaregul} and the naturality of $\theta$.
			
			{\it Construction of $\Phi$. } The $\Gpd$-functor $\Phi$ maps a square
			$\gamma\col g_1b_1\Ra g_2b_2$, where $b_1\col X\ra B_1$ and $b_2\col X\ra B_2$,
			to $\vervec{b_1}{b_2}\col X\ra B_1\oplus B_2$, equipped with the 2-arrow
			$\bar{\gamma}\col (g_1\; g_2^*)\vervec{b_1}{b_2}\Ra 0$ defined as the 
			following composite:
			\begin{eqn}
				(g_1\; g_2^*)\vervec{b_1}{b_2}\overset{\theta}\Longrightarrow g_1b_1+g_2^*b_2
				\overset{\gamma + 1\;\;\,}\Longrightarrow g_2b_2+g_2^*b_2\overset{\psi\,}
				\Longrightarrow (g_2+g_2^*)b_2\overset{\varepsilon b_2\;\,}\Longrightarrow 0.
			\end{eqn}
			
			An arrow $(x,\beta_1,\beta_2)\col(X,b_1,b_2,\gamma)\ra (X',b'_1,b'_2,\gamma')$
			between pullback-can\-di\-dates ($x\col X\ra X'$, $\beta_1\col b_1\Ra b'_1x$
			and $\beta_2\col b_2\Ra b'_2x$ are such that $\gamma'x\circ g_1\beta_1=g_2\beta_2
			\circ\gamma$) is mapped to $x\col X\ra X'$ equipped with the 2-arrow
			$\bar{\beta}$ equal to the composite
			\begin{eqn}
				\vervec{b_1}{b_2}\xLongrightarrow{\vervec{\beta_1}{\beta_2}}
				\vervec{b'_1x}{b'_2x}\overset{\theta^{-1}}\Longrightarrow
				\vervec{b'_1}{b'_2}x.
			\end{eqn}
			
			A 2-arrow $\chi\col(x,\beta_1,\beta_2)\Ra (x',\beta'_1,\beta'_2)$
			is mapped to itself.
			
			The natural transformations of the structure
			of $\Gpd$-functor are identities.
			
			{\it $\Phi$ is surjective. } Let be $X\overset{b}\ra B_1\oplus B_2$
			equipped with a 2-arrow $\bar{\gamma}\col (g_1\; g_2^*)b\Ra 0$.
			We set $\tilde{\gamma}$ equal to the composite
			\begin{multline}\stepcounter{eqnum}
				g_1p_1b+g_2^*p_2b\xLongrightarrow{\iota_1^{-1}p_1b+\iota_2^{-1}p_2b}
				(g_1\; g_2^*)i_1p_1b + (g_1\; g_2^*)i_2p_2b\overset{\varphi}\Longrightarrow
				(g_1\; g_2^*)(i_1p_1b+i_2p_2b)\\ \xLongrightarrow{(g_1\; g_2^*)\psi}
				(g_1\; g_2^*)(i_1p_1+i_2p_2)b \xLongrightarrow{(g_1\; g_2^*)\omega b}
				(g_1\; g_2^*)b\overset{\bar{\gamma}}\Longrightarrow 0.
			\end{multline}
			and $\gamma$ equal to the composite
			\begin{multline}\stepcounter{eqnum}
				g_1p_1b\equiv g_1p_1b + 0p_2b\xLongrightarrow{1+\eta p_2b}
				g_1p_1b+(g_2^*+g_2)p_2b\\ \xLongrightarrow{1+\psi^{-1}}
				g_1p_1b+g_2^*p_2b+g_2p_2b \xLongrightarrow{\tilde{\gamma}+1}
				0+g_2p_2b\equiv g_2p_2b.
			\end{multline}
			Then $B_1\xleftarrow{p_1b} X\xrightarrow{p_2b} B_2$, equipped with $\gamma$,
			is a pullback-candidate of $g_1$ and $g_2$ and $\Phi(X,p_1b,p_2b,\gamma)
			\simeq(X,b,\bar{\gamma})$.
			
			{\it $\Phi$ is locally surjective. } Let $(X,b_1,b_2,\gamma)$
			and $(X',b'_1,b'_2,\gamma')$ be pullback-candidates and
			let be $(x,\beta)\col (X,\vervec{b_1}{b_2},\bar{\gamma})\ra 
			(X',\vervec{b'_1}{b'_2},\bar{\gamma'})$
			in $\mathrm{KerCand}((g_1\; g_2^*))$.
			For $i=1,2$, we set $\beta_i$ equal to the composite
			\begin{eqn}
				b_i\xLongrightarrow{\pi_i^{-1}}p_i\vervec{b_1}{b_2}\xLongrightarrow{p_i\beta}
				p_i\vervec{b'_1}{b'_2}x\overset{\pi_i x}\Longrightarrow b'_ix.
			\end{eqn}
			Then $\Phi(x,\beta_1,\beta_2)\simeq (x,\beta)$.
			
			{\it $\Phi$ is locally full and faithful. }  Local faithfulness is obvious.
			Moreover, if $\chi$ is a 2-arrow in
			$\mathrm{KerCand}((g_1\; g_2^*))$, then it is a 2-arrow in
			$\mathrm{PBCand}(g_1,g_2)$.
		\end{proof}
	
	\begin{pon}\label{tradpfnoy}
		Let $\C$ be an additive $\Gpd$-category.  The square
		\begin{xym}\label{pfregul}\xymatrix@=40pt{
			A\ar[r]^-{f_1}\drtwocell\omit\omit{\gamma}\ar[d]_{f_2}
			&B_1\ar[d]^{g_1}
			\\ B_2\ar[r]_{g_2}
			&C
		}\end{xym}
		is a pullback if and only if $\vervec{f_1}{f_2}\col A\ra B_1\oplus B_2$,
		equipped with $\bar{\gamma}$, is a kernel of $(g_1\; g_2^*)$.
	\end{pon}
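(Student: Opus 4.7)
The plan is to deduce this proposition directly from the preceding lemma, which establishes an equivalence of $\Gpd$-categories
\begin{equation*}
	\Phi\col\mathrm{PBCand}(g_1,g_2)\ra\mathrm{KerCand}((g_1\; g_2^*)).
\end{equation*}
First I will recall that, by the definitions of pullback and kernel as universal objects, a pullback of $g_1$ and $g_2$ is precisely a terminal object in $\mathrm{PBCand}(g_1,g_2)$, and a kernel of $(g_1\; g_2^*)$ is precisely a terminal object in $\mathrm{KerCand}((g_1\; g_2^*))$.

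Next, I will observe that the explicit construction of $\Phi$ given in the proof of the previous lemma sends the pullback-candidate $(A,f_1,f_2,\gamma)$ to the kernel-candidate $(A,\vervec{f_1}{f_2},\bar{\gamma})$, where $\bar{\gamma}$ is the composite defined in that construction. Since $\Phi$ is an equivalence of $\Gpd$-categories, and equivalences preserve and reflect terminal objects (up to equivalence), the object $(A,f_1,f_2,\gamma)$ is terminal in $\mathrm{PBCand}(g_1,g_2)$ if and only if $\Phi(A,f_1,f_2,\gamma)\simeq(A,\vervec{f_1}{f_2},\bar{\gamma})$ is terminal in $\mathrm{KerCand}((g_1\; g_2^*))$. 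This gives exactly the desired equivalence: $\gamma$ is a pullback if and only if $\bar{\gamma}$ exhibits $\vervec{f_1}{f_2}$ as a kernel of $(g_1\; g_2^*)$.

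There is no real obstacle here, since all the work has already been done in the preceding lemma; the only point to check carefully is that the universal property defining a pullback (resp.\ kernel) as a terminal object in $\mathrm{PBCand}$ (resp.\ $\mathrm{KerCand}$) matches the notion used throughout the paper (Definition \ref{defprodfib} and Definition \ref{defkernel}), which was already noted immediately after the definition of relative pullback. The statement is thus essentially a formal consequence of the equivalence $\Phi$, whose nontrivial content was the translation between the 2-arrow $\gamma$ and the 2-arrow $\bar{\gamma}$ via the addition and the antipode on the biproduct $B_1\oplus B_2$.
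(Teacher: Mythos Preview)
Your proof is correct and follows exactly the same approach as the paper: apply the equivalence $\Phi$ from the preceding lemma, note that $\Phi(A,f_1,f_2,\gamma)=(A,\vervec{f_1}{f_2},\bar{\gamma})$, and use that equivalences preserve and reflect terminal objects. The paper's proof is simply a one-sentence version of what you wrote.
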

	
		\begin{proof}
			We use the equivalence $\Phi$ of the previous lemma.
			Since
			$\left(A,\vervec{f_1}{f_2},\bar{\gamma}\right)=\Phi(A,f_1,f_2,\gamma)$,
			the one is an initial object if and only if the other is an initial object.
		\end{proof}
	
	\begin{lemm}
		Let $\C$ be a $\Gpdp$-category and let be the following diagram in $\C$,
		where the row and the column are extensions.
		\begin{xym}\xymatrix@=40pt{
			&A\ar[d]^{f}\ar@/^1.89pc/[dd]^(0.44)0
			\\ C\ar[r]_{h}\ar@/_1.87pc/[rr]_(0.45)0
			&E\ar[r]_(0.62){k}\ar[d]^(0.62){g}
				\ar@{}[d]_(0.25){\nu\,\dir{=>}}
				\ar@{}[r]^(0.22){\txt{$\overset{\mu\;\;}{\dir{=>}}$}}
			&D
			\\ &B
		}\end{xym}
		Then:
		\begin{enumerate}
			\item $gh$ is 0-faithful if and only if $kf$ is 0-faithful;
			\item $gh$ is 0-cofaithful if and only if $kf$ is 0-cofaithful;
			\item $gh$ is fully 0-faithful if and only if $kf$ is
				fully 0-faithful;
			\item $gh$ is fully 0-cofaithful if and only if $kf$ is
				fully 0-cofaithful.
		\end{enumerate}
	\end{lemm}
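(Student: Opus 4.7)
The plan is to exploit the symmetry of the hypothesis: the column $(f,g)$ with $\nu\colon gf\Ra 0$ and the row $(h,k)$ with $\mu\colon kh\Ra 0$ play interchangeable roles in the statement, so it suffices to prove each biconditional in one direction. The only inputs needed are the universal properties of the four extensions: $f$ is $\nu$-fully faithful and $g$ is $\nu$-fully cofaithful, and $h$ is $\mu$-fully faithful and $k$ is $\mu$-fully cofaithful. Properties 1 and 3 (the ``faithful'' sides) will be extracted from the kernel halves ($f=\Ker g$, $h=\Ker k$), while properties 2 and 4 follow by the dual argument using the cokernel halves ($g=\Coker f$, $k=\Coker h$). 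So I focus on 1 and 3.

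For property 3, suppose $kf$ is fully 0-faithful; I would take $c\colon X\to C$ and $\beta\colon ghc\Ra 0$. Since $g(hc)\Ra 0$, the kernel property $(f,\nu)=\Ker g$ produces $a\colon X\to A$ together with $\delta\colon hc\Ra fa$ satisfying $\nu a\circ g\delta=\beta$. The composite $\gamma\eqdef \mu c\circ (k\delta)^{-1}\colon kfa\Ra 0$ then gives, by full 0-faithfulness of $kf$, a unique $\alpha\colon a\Ra 0$ with $kf\alpha=\gamma$. Setting $\beta'\eqdef f\alpha\circ\delta\colon hc\Ra 0$, a direct computation yields $k\beta'=kf\alpha\circ k\delta=\gamma\circ k\delta=\mu c$, which is exactly the compatibility of $\beta'$ with $\mu$. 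Since $h$ is $\mu$-fully faithful (as $h=\Ker k$), there is a unique $\chi\colon c\Ra 0$ with $h\chi=\beta'$; applying $g$ and using the naturality identity $gf\alpha=\nu a$ (for $\alpha\colon a\Ra 0$, since $\nu 0=1_0$) one checks $gh\chi=\beta$, and uniqueness of $\chi$ cascades back through uniqueness of the $\mu$-lift and of $\alpha$.

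For property 1, the same pattern specialises to loops: a $\pi\colon 0\Ra 0\colon X\to C$ with $gh\pi=1_0$ makes $h\pi\colon 0\Ra 0$ automatically compatible with $\nu$, so $\nu$-fully-faithfulness of $f$ yields a unique $\alpha\colon 0\Ra 0\colon X\to A$ with $f\alpha=h\pi$; then $kf\alpha=kh\pi$ equals $1_0$ via $\mu c$, so 0-faithfulness of $kf$ gives $\alpha=1_0$, whence $h\pi=f\cdot 1_0=1_0$; finally, $h$ being a kernel is faithful and therefore 0-faithful, forcing $\pi=1_0$.

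Properties 2 and 4 are obtained by the formally dual argument in $\C^{\mathrm{op}}$: swap the roles of kernel and cokernel universal properties, reverse the orientation of the 2-arrows, and use that $g$ is $\nu$-fully cofaithful and $k$ is $\mu$-fully cofaithful. The main obstacle throughout is purely bookkeeping, namely verifying the compatibility of the intermediate 2-arrow $\beta'$ with the extension 2-arrow (the identity $k\beta'=\mu c$ in the argument for property 3, and its analogues); once this verification is in place, the remaining steps are direct instances of the universal properties of kernels and cokernels and the naturality of $\nu$ and $\mu$.
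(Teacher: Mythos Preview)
Your proof is correct and follows essentially the same approach as the paper: exploit the row/column symmetry to reduce to one direction of each biconditional, then chain the two kernel universal properties ($f$ is $\nu$-fully faithful, $h$ is $\mu$-fully faithful) together with faithfulness of the kernel arrow. The paper proves the direction ``$gh$ (fully) 0-faithful $\Rightarrow$ $kf$ (fully) 0-faithful'' while you prove the opposite direction, but the arguments are mirror images under the symmetry you identified.

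Two small cleanups. In property~1 you write ``equals $1_0$ via $\mu c$'', but there is no $c$ in that argument; the step $kh\pi = 1_0$ follows from the interchange law $(\mu 0)\circ(kh\pi) = (0\pi)\circ(\mu 0)$ together with the strictness $\mu 0 = 1_0$, $0\pi = 1_0$ (exactly parallel to the paper's ``$gf\alpha = 1_0$ because $gf\simeq 0$''). In property~3, the uniqueness of $\chi$ does not literally ``cascade back'' through the construction, since the first step produces $(a,\delta)$ only up to the usual kernel non-uniqueness; the clean way to get uniqueness of $\chi$ is to note that $gh$ is already 0-faithful by property~1 (since $kf$ fully 0-faithful $\Rightarrow$ $kf$ 0-faithful $\Rightarrow$ $gh$ 0-faithful), which is exactly how the paper organises it.
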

	
		\begin{proof}
			Properties 2 and 4 are the dual of properties 1 and 3, thus it suffices
			to prove the latter.  Moreover, for each case the
			situation is symmetric with respect to the line with slope $-1$
			passing through $E$.
			
			{\it $gh$ 0-faithful $\Rightarrow$ $kf$ 0-faithful. } 
			Let $\alpha\col 0\Ra 0\col X\ra A$ be such that $kf\alpha = 1_0$. Then
			$f\alpha\col h0\Ra 0$ is compatible with $\nu$ and, by the
			universal property of the kernel ($h$ is $\nu$-fully faithful),
			there exists $\gamma\col 0\Ra 0\col X\ra C$ such that $h\gamma=f\alpha$.
			Then $gh\gamma=gf\alpha=1_0$, because $gf\simeq 0$.  So $\gamma = 1_0$ since,
			by hypothesis, $gh$ is 0-faithful.  Then $f\alpha = h1_0=1_0$ and, since
			$f$ is faithful, $\alpha=1_0$.
			
			{\it $gh$ fully 0-faithful $\Rightarrow$ $kf$ fully 0-faithful. }
			We already know by the previous part of the proof that $kf$
			is 0-faithful.  It remains to prove that for each
			$\delta\col kfa\Ra 0$,
			where $a\col X\ra A$, there exists $\alpha\col a\Ra 0$ such that $kf\alpha=\delta$.
			First, as $(h,\nu)=\Ker k$, there exist $c\col X\ra C$
			and $\varepsilon\col fa\Ra hc$ such that $\nu c\circ k\varepsilon = \delta$.
			
			Since $gh$ is fully 0-faithful, there exists $\gamma\col c\Ra 0$
			such that $gh\gamma = \mu a\circ g\varepsilon^{-1}$.  Then $h\gamma
			\circ \varepsilon$ is compatible with $\mu$ and there exists
			$\alpha\col a\Ra 0$ such that $f\alpha = h\gamma \circ \varepsilon$.
			Hence we have $kf\alpha=kh\gamma\circ k\varepsilon =
			\nu c\circ k\varepsilon	=\delta$.
		\end{proof}
	
	We can now prove the regularity of 2-abelian $\Gpd$-categories.
	For symmetric 2-groups, this result and its dual have been proved
	by Dominique Bourn and Enrico Vitale \cite[Propositions 5.1 and 5.2]{Bourn2002a}.
	
	\begin{pon}\label{gpdcatdabreg}
		In a 2-abelian $\Gpd$-category, cofaithful and fully cofaithful arrows
		are stable under pullback and faithful and fully cofaithful arrows
		are stable under pushout.
	\end{pon}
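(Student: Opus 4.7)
The plan is to reduce the pullback statement to the preceding lemma by packaging the pullback into an extension of the form
$$A\xrightarrow{\vervec{f_1}{f_2}} B_1\oplus B_2 \xrightarrow{(g_1\;g_2^*)} C,$$
and then recover the pushout statement by duality. Consider a pullback square $\gamma\col g_1 f_1\Ra g_2 f_2$ as in diagram~\ref{pfregul}, with $g_1$ cofaithful (respectively, fully cofaithful). Since $\C$ is additive by Corollary~\ref{2abPexadd}, the arrow $g_2$ has an opposite $g_2^*$, so we may form $(g_1\;g_2^*)\col B_1\oplus B_2\ra C$; by Proposition~\ref{tradpfnoy}, the pullback hypothesis is equivalent to $\vervec{f_1}{f_2}$ being a kernel of $(g_1\;g_2^*)$.

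To make this displayed sequence an extension, I must also show that $(g_1\;g_2^*)$ is the cokernel of $\vervec{f_1}{f_2}$. The key observation is that $(g_1\;g_2^*) i_1\simeq g_1$ by construction, which combined with the cancellation property of cofaithful arrows (whiskering by $i_1$ shows that $\beta (g_1\;g_2^*) = \beta' (g_1\;g_2^*)$ implies $\beta g_1 = \beta' g_1$, whence $\beta = \beta'$) gives that $(g_1\;g_2^*)$ is cofaithful as soon as $g_1$ is. In a 2-Puppe-exact $\Gpd$-category, cofaithful coincides with 0-cofaithful (Proposition~\ref{fidfidzer}) and every 0-cofaithful arrow is normal cofaithful (condition 3(a) of Proposition~\ref{caractwopupex}), so $(g_1\;g_2^*)$ is indeed the cokernel of its kernel $\vervec{f_1}{f_2}$, and the displayed sequence is an extension.

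The preceding lemma now applies with this sequence as the column and the biproduct extension $B_1\xrightarrow{i_1} B_1\oplus B_2\xrightarrow{p_2} B_2$ (Corollary~\ref{biprodextens}) as the row. In its notation, $gh\simeq (g_1\;g_2^*)i_1\simeq g_1$ and $kf\simeq p_2\vervec{f_1}{f_2}\simeq f_2$, so items~2 and 4 of the lemma yield that $g_1$ is cofaithful (respectively, fully cofaithful) if and only if $f_2$ is cofaithful (respectively, fully cofaithful), proving the pullback half. The pushout statement for faithful and fully faithful arrows then follows by the evident self-duality of the axioms defining a 2-abelian $\Gpd$-category (Definition~\ref{deftwoab}). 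The delicate point in this scheme is that the cancellation property only gives cofaithfulness of $(g_1\;g_2^*)$, not fully cofaithfulness; what makes the argument go through in the fully cofaithful case is precisely that mere cofaithfulness is already enough, via 2-Puppe-exactness, to force normality of $(g_1\;g_2^*)$, so that the column becomes an extension and the full strength of the preceding lemma can deliver the fully cofaithful conclusion for $f_2$.
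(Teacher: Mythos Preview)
Your proof is correct and follows essentially the same approach as the paper's: translate the pullback into a kernel via Proposition~\ref{tradpfnoy}, use cofaithfulness of $g_1\simeq(g_1\;g_2^*)i_1$ to make $(g_1\;g_2^*)$ normal cofaithful and hence obtain an extension, then apply the preceding lemma together with the biproduct extension of Corollary~\ref{biprodextens}. The only cosmetic difference is that you assign the roles of row and column oppositely to the paper, which is immaterial since the lemma's conclusions are symmetric ``if and only if'' statements; your closing remark about why mere cofaithfulness of $(g_1\;g_2^*)$ suffices even in the fully cofaithful case is exactly the point the paper is making implicitly.
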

	
		\begin{proof}
			We prove stability under pullback; the proof for pushouts
			is dual.
			Let us assume that diagram \ref{pfregul} is a pullback.
			Then
			$\left(\vervec{f_1}{f_2}\col A\ra B_1\oplus B_2,\bar{\gamma}\right)$
			is a kernel of $(g_1\; g_2^*)$, by Proposition \ref{tradpfnoy}.  
			
			Let us consider the following diagram. If $g_1\simeq (g_1\; g_2^*)i_1$
			is cofaithful or fully cofaithful, then $(g_1\; g_2^*)$ is cofaithful
			and is thus the cokernel of its kernel $\vervec{f_1}{f_2}$.  Since the column
			is an extension (by Corollary \ref{biprodextens}),
			we are in the situation of the previous lemma.  Therefore, if $g_1$ is cofaithful,
			$f_2\simeq p_2\vervec{f_1}{f_2}$ is cofaithful, and if $g_1$ is fully
			cofaithful, $f_2$ is fully cofaithful.\qedhere
			\begin{xym}\xymatrix@=40pt{
				&B_1\ar[d]_{i_1}\ar@/^1.89pc/[dd]^(0.36)0\ar@/^1pc/[dr]^-{g_1}
				\\ A\ar[r]^-{\vervec{f_1}{f_2}}\ar@/_1.87pc/[rr]_(0.45)0\ar@/_1pc/[dr]_-{f_2}
				&B_1\oplus B_2\ar[r]^(0.6){(g_1\; g_2^*)}\ar[d]^(0.62){p_2}
					\ar@{}[d]_(0.25){\bar{\gamma}\,\dir{=>}}
					\ar@{}[r]^(0.18){\txt{$\overset{\eta_{12}\;\;}{\underset{~}{\dir{=>}}}$}}
				&C
				\\ &B_2
			}\end{xym}
		\end{proof}
	
	An important consequence of this proposition is that 2-abelian $\Gpd$-categories
	are abelian and that we can thus prove in them the different
	snake lemmas and construct the long exact sequence of homology.  Since the previous proposition
	uses biproducts, the reasoning we use does not work for 2-Puppe-exact and Puppe-exact
	$\Gpdp$-categories.
		
	\begin{coro}\label{twoabimplab}
		Every 2-abelian $\Gpd$-category is also abelian.
	\end{coro}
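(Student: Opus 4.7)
The plan is to verify each clause of Definition \ref{dfgpdab} in turn for a 2-abelian $\Gpd$-category $\C$. The existence of a zero object, finite products and coproducts, kernels and cokernels is immediate: products and coproducts are built into the definition of 2-abelian $\Gpd$-category, the 2-Puppe-exactness portion supplies kernels, cokernels, and a zero object. So there are only four substantive conditions to check, and the guiding principle is that in a 2-Puppe-exact $\Gpdp$-category all the distinctions between the ``0-versions'' and the ``plain versions'' of faithfulness collapse.

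For conditions 1 and 2, I would proceed directly from the definitional material of 2-Puppe-exactness. Given a 0-monomorphism $f$, Proposition \ref{implicrelplfid} tells us $f$ is 0-faithful, and then condition 3(b) of Proposition \ref{caractwopupex} forces $f$ to be normal faithful. Dually for 0-epimorphisms. So conditions 1 and 2 of Definition \ref{dfgpdab} hold with essentially no work.

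For conditions 3 and 4 I would lean on the regularity result Proposition \ref{gpdcatdabreg}, which states that in a 2-abelian $\Gpd$-category faithful and fully faithful arrows are stable under pushout and, dually, cofaithful and fully cofaithful arrows are stable under pullback. The bridge to Definition \ref{dfgpdab} is Proposition \ref{fidfidzer}: in a 2-Puppe-exact $\Gpdp$-category, fully 0-faithful coincides with fully faithful, so condition 3 is immediate for the ``fully 0-faithful'' clause; dually for condition 4 and ``fully 0-cofaithful''. For the ``0-monomorphism'' clause of condition 3 (and dually for condition 4), I would observe that 0-monomorphisms are precisely the faithful arrows in this setting: a 0-monomorphism is 0-faithful, hence faithful by \ref{fidfidzer}; conversely, a faithful arrow is canonically the kernel of its cokernel by condition 3(b) of Proposition \ref{caractwopupex}, and the universal property of the kernel makes it $\zeta_f$-fully faithful and a fortiori $\zeta_f$-fully 0-faithful, i.e.\ a 0-monomorphism. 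Once this identification is in place, the pushout/pullback stability of 0-monomorphisms and 0-epimorphisms transfers directly from Proposition \ref{gpdcatdabreg}.

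There is no hard part here in the technical sense; the only thing to be careful about is the bookkeeping between the various flavours of (co)faithfulness, making sure that each ``0-''variant is matched with its plain counterpart via Proposition \ref{fidfidzer} and that the 0-monomorphism/faithful identification is stated and used correctly. Since additivity of a 2-abelian $\Gpd$-category is already in hand via Corollary \ref{2abPexadd}, no extra argument is needed on that front (and indeed the version of Definition \ref{dfgpdab} used in the main text does not require additivity).
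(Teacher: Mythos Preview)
Your proposal is correct and follows essentially the same route as the paper's proof: use 2-Puppe-exactness to identify 0-monomorphisms with faithful (hence normal faithful) arrows for conditions 1 and 2, then invoke Proposition \ref{gpdcatdabreg} together with Proposition \ref{fidfidzer} for conditions 3 and 4. One small citation point: Proposition \ref{implicrelplfid} treats the non-0 versions; the implication ``0-monomorphism $\Rightarrow$ 0-faithful'' is asserted in the remark immediately following the definition of 0-monomorphism, not in that proposition itself.
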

	
		\begin{proof}
			Since in a 2-abelian $\Gpd$-category faithful arrows are
			the kernel of their cokernel, they are 0-monomorphisms. So
			the faithful arrows and the 0-monomorphisms coincide and
			condition 1 of Definition \ref{dfgpdab} hold.  Dually, condition 2 hold.
			Moreover, conditions 3 and 4 hold by the previous proposition.
		\end{proof}

\chapter{Examples}

\begin{quote}{\it
	In this chapter, we study a few examples of (good) 2-abelian $\Gpd$-categories.
	First, the $\Gpd$-category of symmetric 2-groups
	and the $\Gpd$-categories of 2-modules on a 2-ring (or, more generally, 
	of additive $\Gpd$-functors from a preadditive $\Gpd$-category to $\CGS$).
	Next, we study the $\Gpd$-category of morphisms, commutative squares and homotopies
	in an abelian category $\C$ (a special case is the notion of Baez-Crans 2-vector space)
	and we prove that it is 2-abelian if and only
	if the axiom of choice holds in $\C$ (Theorem \ref{caracdabflabplus}).
}\end{quote}

\section{Symmetric 2-groups: $\CGS$ is 2-abelian}\label{sectcgsdab}

	In this section we assume that all symmetric monoidal functors are
	described in a “normalised” way (with $FI\equiv I$ and $\varphi^F_0\eqdef 1_I$).

\subsection{Construction of limits and colimits}

	There are two inclusions of the category $\Ab$ of abelian groups in
	the $\Gpd$-category $\CGS$. There is the inclusion as discrete object:
	\begin{eqn}\index{dis@$(-)\dis$}
		(-)\dis\col\Ab\ra\CGS,
	\end{eqn}
	which maps an abelian group $A$ to $A\dis$, which is the set $A$ seen as a
	discrete groupoid, equipped with the product and the unit of $A$.  A homomorphism of
	groups is mapped to itself seen as a symmetric monoidal functor.

	Next, there is the inclusion as connected object:
	\begin{eqn}\index{con@$(-)\con$}
		(-)\con\col\Ab\ra\CGS,
	\end{eqn}
	which maps an abelian group $A$ to $A\con$, which is the one-object symmetric 2-group
	$I$ such that $A\con(I,I)=A$.  A homomorphism $f\col A\ra B$ is mapped to
	the symmetric monoidal functor whose action on arrows
	is defined by $(f\con)_{I,I}\eqdef f\col A\ra B$.
	
	Each of these $\Gpd$-functors has an adjoint.
	The left adjoint of $(-)\dis$ is the $\Gpd$-functor 
	\begin{eqn}\index{p0@$\pi_0$!for symmetric 2-groups}
		\pi_0\col \CGS\ra\Ab,
	\end{eqn}
	which maps a symmetric 2-group $\G$ to the group $\pi_0(\G)$ whose
	objects are those of $\G$, whose equality is defined by $A\simeq B$,
	and whose product is that of $\G$. A symmetric monoidal functor $F\col \G\ra\H$
	is mapped to the group homomorphism $\pi_0(F)\col \pi_0(\G)\ra\pi_0(\H)$
	that it induces.  And if $\alpha\col F\Ra F'$ is a
	monoidal natural transformation, then $\pi_0(F)=\pi_0(F')$, which defines
	$\pi_0$ on 2-arrows.
	
	The right adjoint of $(-)\con$ is the $\Gpd$-functor 
	\begin{eqn}\index{p1@$\pi_1$!for symmetric 2-groups}
		\pi_1\col \CGS\ra\Ab,
	\end{eqn}
	which maps a symmetric 2-group $\G$ to the group $\pi_1(\G)\eqdef\G(I,I)$.
	A symmetric monoidal functor $F\col \G\ra\H$ is mapped to the group homomorphism
	$\pi_1(F)\eqdef F_{I,I}\col \pi_1(\G)\ra\pi_1(\H)$. And if $\alpha\col F\Ra F'$
	is a monoidal natural transformation, then $\pi_1(F)=\pi_1(F')$.
	
	There is a difference of presentation between the situation described above
	and that of diagram \ref{diagequcondis}.
	Here, the two functors $\pi_0$ and $\pi_1$
	go to $\Ab$ (which is by definition $\caspar{Dis}(\CGS)$).
	We can recover the functors $\Omega$, $\Sigma$, $\pi_0$ and $\pi_1\col\CGS\ra\CGS$
	by defining:
	\begin{align*}
		\pi_0&\eqdef (\pi_0 -)\dis; &\pi_1&\eqdef (\pi_1 -)\con;\\ 
		\Sigma &\eqdef (\pi_0 -)\con; &\Omega &\eqdef (\pi_1 -)\dis.
	\end{align*}

	\bigskip
	Let us now recall the constructions in $\CGS$ of biproducts,
	zero object, kernels, cokernels,
	pips, copips, roots and coroots, described in \cite{Kasangian2000a}
	and \cite{Dupont2003a}.
	
	First, the biproduct of two symmetric 2-groups $\A$ and $\B$ is simply
	the cartesian product $\A\times\B$, equipped with the structure of symmetric 2-group
	defined componentwise \cite[Section 4]{Bourn2002a}.%
	\index{biproduct!in 2-sgp@in $\CGS$}
	We need symmetry to prove
	that it is also a coproduct.  The symmetric 2-group with one object and one arrow
	is a zero object.
	
	The kernel and the cokernel of symmetric 2-groups has been defined by Vitale
	\cite{Vitale2002a}.  The kernel is constructed as in $\Gpdp$ (see the construction
	given after Definition \ref{defkernel}).
	
	\begin{df}\label{dfdescker}\index{kernel!in 2-sgp@in $\CGS$}
		The kernel $\Ker F$ of $F\col \A\ra\B$ in $\CGS$ is defined in the following way.
		\begin{itemize}
			\item {\it Objects.} These are the pairs $(A,b)$, where
				$A\col \A$ and $b\col FA\ra I$ in $\B$.
			\item {\it Arrows.} A morphism $(A,b)\ra (A',b')$ is an arrow
				$f\col A\ra A'$ such that $b'(Ff) = b$.  The composition and the identities
				are those of $\A$.
			\item {\it Product.} The product $(A,b)\tens(A',b')$ is $(A\tens A',b'')$,
				where $b''$ is the composite
				\begin{eqn}
					F(A\tens A')\xrightarrow{\varphi^{-1}}FA\tens FA'
					\xrightarrow{b\tens b'}I\tens I\xrightarrow{l_I=r_I}I.
				\end{eqn}
				The product $f\tens f'$ is defined as in $\A$; this is an
				arrow in $\Ker F$ because $\varphi^F$ is natural.
			\item {\it Unit.} This is $(I,1_I)$.
			\item {\it Associativity, neutrality, symmetry.} The natural transformations
				$a$, $l$, $r$ and $c$ are defined as in $\A$
				(for example $l_{(A,b)}\eqdef l_A$).  These are arrows of $\Ker F$ 
				thanks to the axioms of symmetric monoidal functor and they are
				natural because they are in $\A$.  The axioms of symmetric monoidal groupoid
				hold because they do in $\A$.
			\item {\it Inverses.}  The inverse of $(A,b)$ is $(A^*,\tilde{b})$,
				where $\tilde{b}$ is the composite 
				\begin{eqn}
					F(A^*)\simeq (FA)^*\xrightarrow{(b^*)^{-1}}I^*\simeq I.
				\end{eqn}
				The arrow $\varepsilon_{(A,b)}$ is $\varepsilon_A$.
		\end{itemize}
		There is a functor $K_F\col \Ker F\ra \A$, which maps $(A,b)$ to $A$ and
		$f\col (A,b)\ra (A',b')$ to $f\col A\ra A'$.  This is a symmetric monoidal functor,
		with $\varphi^{K_F}_{(A,b),(A',b')}\eqdef 1_{A\tens A'}$ and $\varphi_0^{K_F}\eqdef 1_I$.
		And the monoidal natural transformation
		$\kappa_F\col FK_F\Ra 0$ is defined by $(\kappa_F)_{(A,b)}\eqdef b \col  FA\ra I$.
	\end{df}
	
	We need symmetry to construct the cokernel in the following way.
	
	\begin{df}\label{dfdesccoker}\index{cokernel!in 2-sgp@in $\CGS$}
		The cokernel $\Coker F$ of $F\col \A\ra\B$ in $\CGS$ is defined in the following way.
		\begin{itemize}
			\item {\it Objects.} These are the objects of $\B$.
			\item {\it Arrows.} A morphism $B_0\ra B_1$ is a pair $(A,g)$,
				where $A\col \A$ and $g\col B_0\ra FA\tens B_1$.
			\item {\it Composition.} The composite of $(A_0,g_0)\col B_0\ra B_1$
				and $(A_1,g_1)\col B_1\ra B_2$ is
				$(A_0\tens A_1,g')$, where $g'$
				is the composite
				\begin{multline}\stepcounter{eqnum}
					B_0\overset{g_0}\longrightarrow FA_0\tens B_1\xrightarrow{1\tens g_1}
					FA_0\tens (FA_1\tens B_2)\\ \simeq (FA_0\tens FA_1)\tens B_2
					 \xrightarrow{\varphi\tens 1} F(A_0\tens A_1)\tens B_2.
				\end{multline}
			\item {\it Equality.} Two arrows $(A,g),(A',g')\col B_0\ra B_1$
				are equal if there exists $a\col A\ra A'$ such that the following diagram
				commutes.
				\begin{xym}\xymatrix@=40pt{
					B_0\ar[r]^-{f}\ar[dr]_-{f'}
					&FA\tens B_1\ar[d]^{Fa\tens 1}
					\\ &FA'\tens B_1
				}\end{xym}
			\item {\it Product.} The product $B\tens B'$ is defined as in 
				$\B$.  If we have $(A,g)\col B_0\ra B_1$ and $(A',g')\col B'_0\ra B'_1$,
				the product $(A,g)\tens (A',g')$ is defined by $(A\tens A', g'')$,
				where $g''$ is the following composite.
				\begin{multline}\stepcounter{eqnum}
					B_0\tens B'_0\xrightarrow{g\tens g'}
					(FA\tens B_1)\tens (FA'\tens B'_1)\overset{\bar{c}}\longrightarrow
					(FA\tens FA')\tens (B_1\tens B'_1)\\ \xrightarrow{\varphi\tens 1}
					F(A\tens A')\tens (B_1\tens B'_1)
				\end{multline}
			\item {\it Unit.} The unit is the unit $I$ of $\B$.
			\item {\it Natural transformations of the symmetric monoidal structure.}
				They are given by the object $I$ and the arrow
				$l$ composed with the corresponding transformations in $\B$.
		\end{itemize}
		The cokernel is equipped with a functor $Q_F\col \B\ra\Coker F$, which maps
		$B$ to $B$ and an arrow $g\col B_0\ra B_1$ to $(I,\bar{g})$, where
		$\bar{g}$ is the composite $B_0\overset{g}\longrightarrow B_1\overset{l_{B_1}^{-1}}
		\longrightarrow I\tens B_1\equiv FI\tens B_1$.
		
		The monoidal natural transformation $\zeta_F\col Q_F F\Ra 0$ is defined at
		$A\col \A$ by $\zeta_A \eqdef (A,r_{FA}^{-1})$.
	\end{df}
	
		\begin{proof}
			We only give the construction of the factorisation through the cokernel.
			Let be $G\col \B\ra\cat{Y}$ and $\gamma\col GF\Ra 0$.
			We define a functor $H\col \Coker F\ra\cat{Y}$ by setting
			$H(B)\eqdef G(B)$ and, if $(A,g)\col B_0\ra B_1$, by setting $H(A,g)$ equal to the
			following composite.\qedhere
			\begin{eqn}
				GB_0\overset{Gg}\longrightarrow G(FA\tens B_1)\xrightarrow{\varphi^{-1}}
				GFA\tens GB_1\xrightarrow{\gamma_A\tens 1} I\tens GB_1
				\xrightarrow{l_{GB_1}}GB_1
			\end{eqn}
		\end{proof}
	
	We can check that the kernel of $0\ra\A$ is $\Omega\A = (\pi_1\A)\dis$
	(defined above), and that the cokernel of $\A\ra 0$ is $\Sigma\A = (\pi_0\A)\con$.
	
	As Proposition \ref{kerpep} tells us, we can define the pip of $F$
	as $\Omega\Ker F$, i.e.
	\begin{eqn}\label{eqdescpep}\index{pip!in 2-sgp@in $\CGS$}
		\Pip F \eqdef (\pi_1\Ker f)\dis.
	\end{eqn}
	Thus its objects are the arrows $a\col I\ra I$ in $\A$ such that
	$Fa=1_{FI}$; the only arrows are the identities.  The product is the composition
	in $\A$.  It is equipped with a natural transformation
	$\pi_F\col 0\Ra 0\col \Pip_F\ra\A$, whose component at $a$ is $a$ itself.
	
	Dually, the copip of $F$ is $\Sigma\Coker F$, i.e.
	\begin{eqn}\label{eqdesccopep}\index{copip!in 2-sgp@in $\CGS$}
		\Copip F \eqdef (\pi_0\Coker f)\con.
	\end{eqn}
	So it has a unique object $I$ and the arrows from $I$ to $I$ are the objects $B\col \B$,
	two such objects $B,B'$ being equal if there exist $A\col \A$ and $b\col B\ra FA\tens B'$.
	The natural transformation $\rho_F\col 0\Ra 0\col \B\ra\Copip F$ 
	is defined at $B$ by $B$ itself.
	
	It remains to describe the root and the coroot of a 2-arrow $\alpha
	\col 0\Ra 0\col \A\ra\B$.
	Let us first describe elementarily the functors $\bar{\alpha}\col\Sigma\A\ra\B$ and
	$\tilde{\alpha}\col\A\ra\Omega \B$ corresponding to it (we follow the notations
	of diagram \ref{diagnotationsomegsigm}).  
	\begin{enumerate}
		\item The functor $\bar{\alpha}\col(\pi_0\A)\con\ra\B$ maps the unique object $I$
			to $I$ and an object $A\col\A$ (seen as an arrow $I\ra I$ in $(\pi_0\A)\con$)
			 to $\alpha_A\col I\ra I$ in $\B$.
		\item The functor $\tilde{\alpha}\col\A\ra(\pi_1\B)\dis$ maps an object
			$A$ to the arrow $\alpha_A$ seen as an object of $(\pi_1\B)\dis$.
	\end{enumerate}
	
	Then, by the dual of Proposition \ref{coraccoker}, the root of $\alpha$%
	\index{root!in 2-sgp@in $\CGS$} 
	is the kernel of $\tilde{\alpha}$, in other words its objects are the pairs
	$(A,b)$ where $A\col\A$ and $b\col \alpha_A\ra 1_I$ (i.e.\ $\alpha_A=1_I$
	in $\pi_1\B$) and the arrows $(A,b)\ra (A',b')$ are the arrows
	$a\col A\ra A'$ such that $b'\circ Fa =b$, which is always true
	since $(\pi_1\B)\dis$ is discrete.  Thus we get the following simplified description.
	
	\begin{pon}
		In $\CGS$, the root of $\alpha\col 0\Ra 0\col \A\ra\B$ is $\Root \alpha $,
		the full sub-2-group of $\A$ whose 
		objects are the objects $A\col \A$ such that $\alpha_A=1_I$.
	\end{pon}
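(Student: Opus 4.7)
The plan is to invoke the dual of Proposition \ref{coraccoker}, which identifies $\Root\alpha$ with $\Ker\tilde\alpha$, and then unwind the explicit description of kernels in $\CGS$ (Definition \ref{dfdescker}) for the special case of a functor whose codomain is the discrete symmetric 2-group $(\pi_1\B)\dis$. Recall that $\tilde\alpha\col\A\ra(\pi_1\B)\dis$ is described, just before the proposition, as sending an object $A$ of $\A$ to the element $\alpha_A\col I\ra I$ of $\pi_1\B$ seen as an object of the discrete 2-group $(\pi_1\B)\dis$.

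First I would spell out the objects of $\Ker\tilde\alpha$: by Definition \ref{dfdescker} they are pairs $(A,b)$ with $A\col\A$ and $b\col\tilde\alpha(A)\ra I$ in $(\pi_1\B)\dis$. Since $(\pi_1\B)\dis$ is discrete, such a $b$ exists iff $\tilde\alpha(A)=I$ in $\pi_1\B$, i.e.\ iff $\alpha_A=1_I$, and in that case $b$ is forced to be the identity. Hence the objects of $\Ker\tilde\alpha$ are in bijection with the objects $A\col\A$ satisfying $\alpha_A=1_I$. Next I would handle the arrows: a morphism $(A,b)\ra(A',b')$ in $\Ker\tilde\alpha$ is an arrow $a\col A\ra A'$ in $\A$ such that $b'\circ\tilde\alpha(a)=b$, a condition which is automatic since both sides live in the discrete groupoid $(\pi_1\B)\dis$. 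Thus arrows in $\Ker\tilde\alpha$ between two such pairs are exactly the arrows of $\A$ between the underlying objects, giving the desired identification with the full sub-groupoid of $\A$ on the objects with $\alpha_A=1_I$.

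It remains to match the symmetric 2-group structures. The tensor product on $\Ker\tilde\alpha$ sends $(A,b)\tens(A',b')$ to $(A\tens A',b'')$ where $b''$ is built from $b$, $b'$ and $\varphi^{\tilde\alpha}_{A,A'}$; since $\tilde\alpha$ lands in a discrete 2-group this just reproduces the tensor product of $\A$, and the set of objects with $\alpha_A=1_I$ is closed under tensor and contains $I$ because $\alpha$ is a monoidal natural transformation (so $\alpha_{A\tens A'}$ is determined by $\alpha_A$ and $\alpha_{A'}$, and $\alpha_I=1_I$ by the unit axiom \ref{axtnmonunit}). The associativity, unit, symmetry and duality data restrict from $\A$ in the obvious way, so $\Root\alpha$ inherits from $\A$ exactly the structure of a full sub-2-group.

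I do not expect a serious obstacle here; the argument is essentially an unwinding of Definition \ref{dfdescker} in the discrete-codomain case, together with the dual of Proposition \ref{coraccoker}. The only slightly delicate point is verifying that the set of objects with $\alpha_A=1_I$ is closed under $\tens$ and contains $I$ and $A^*$, which requires that $\alpha$ be a \emph{monoidal} natural transformation rather than an arbitrary one; the relevant coherence equations \ref{axtnmonass}--\ref{axtnmonunit} for $\alpha$ give exactly this.
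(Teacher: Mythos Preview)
Your proposal is correct and follows essentially the same approach as the paper: the paper's argument (given in the paragraph preceding the proposition) likewise invokes the dual of Proposition \ref{coraccoker} to identify $\Root\alpha$ with $\Ker\tilde\alpha$, then unwinds Definition \ref{dfdescker} using the discreteness of $(\pi_1\B)\dis$ to see that the object datum $b$ is forced and the arrow compatibility condition is vacuous. Your additional remarks checking closure of $\{A\mid\alpha_A=1_I\}$ under $\tens$, $I$, and $(-)^*$ via the monoidality of $\alpha$ are a welcome bit of care that the paper leaves implicit.
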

		
	In the same way, by Proposition \ref{coraccoker}, the coroot of $\alpha $%
	\index{coroot!in 2-sgp@in $\CGS$} is the cokernel
	of $\bar{\alpha}$, in other words its objects are those of $\B$, an arrow $B_0\ra B_1$
	is a pair $(A,g)$ where $A\col(\pi_0\A)\con$ (so $A$ can only be $I$)
	and $g\col B_0\ra A\tens B_1$.  The equality between arrows is defined using $\alpha$.
	 By simplifying this description, we get
	the following proposition.
	
	\begin{pon}\label{descorac}
		In $\CGS$, the coroot of $\alpha\col 0\Ra 0\col \A\ra\B$ can be described in the
		following way: the objects, the arrows and the tensor of $\Coroot \alpha $
		are those of $\B$; two arrows $g,g'\col B_0\ra B_1$ are equal
		if there exists an object $A\col\A$ such that the following diagram commutes.
		\begin{xym}\xymatrix@=40pt{
			B_0\ar[d]_g
			& B_0\tens I\ar[l]_{r}\ar[d]^{g'\tens \alpha_A}
			\\ B_1
			&B_1\tens I\ar[l]^r
		}\end{xym}
	\end{pon}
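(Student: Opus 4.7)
The plan is to derive this description directly from the two earlier results in the section: Proposition \ref{coraccoker} (which constructs the coroot as a cokernel) and Definition \ref{dfdesccoker} (the explicit description of cokernels in $\CGS$), specialised to the particular functor $\bar\alpha$.

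First I would invoke Proposition \ref{coraccoker} to get $\Coroot\alpha \simeq \Coker\bar\alpha$, where $\bar\alpha\col \Sigma\A\ra\B$ is the symmetric monoidal functor corresponding to $\alpha$ under the adjunction $\Sigma\adj\Omega$, explicitly described just before the statement: $\Sigma\A = (\pi_0\A)\con$ has a unique object $I$, its arrows $I\ra I$ are (represented by) objects of $\A$, and $\bar\alpha$ sends $I$ to $I$ and an arrow $A\col I\ra I$ to $\alpha_A\col I\ra I$ in $\B$. In particular $\bar\alpha(I)=I$.

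Next I would specialise Definition \ref{dfdesccoker} to $F=\bar\alpha$ and simplify term by term. The objects of $\Coker\bar\alpha$ are by definition those of $\B$, giving the first clause of the claim. An arrow $B_0\ra B_1$ of $\Coker\bar\alpha$ is a pair $(X,g)$ with $X\col\Sigma\A$ and $g\col B_0\ra\bar\alpha(X)\tens B_1$; but $X$ must equal $I$ and $\bar\alpha(I)=I$, so the data is just an arrow $g\col B_0\ra I\tens B_1$, which via the right unit isomorphism $r_{B_1}$ is the same datum as an arrow $B_0\ra B_1$ of $\B$. One checks, using the associator and the unit coherence together with the natural transformations $\psi$, $\lambda$, $\rho$ appearing in Definition \ref{dfdesccoker}, that this bijection sends the composition and the tensor product of $\Coker\bar\alpha$ to the composition and the tensor of $\B$, giving the second and third clauses.

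It remains to translate the equality relation of Definition \ref{dfdesccoker}. Two arrows $(I,g_1),(I,g_2)\col B_0\ra B_1$ of $\Coker\bar\alpha$ are equal iff there exists $a\col I\ra I$ in $\Sigma\A$ --- that is, an object $A$ of $\A$ --- such that $(\bar\alpha(a)\tens 1_{B_1})\circ g_1 = g_2$, i.e.\ $(\alpha_A\tens 1_{B_1})\circ g_1=g_2$. Transporting this along $r_{B_0},r_{B_1}$ (and using naturality of $r$, together with the coherence identity $l_I=r_I$ in $\B$, to turn $\alpha_A\tens 1_{B_1}$ on $I\tens B_1$ into the tensor $g'\tens\alpha_A$ on $B_0\tens I\ra B_1\tens I$ that appears in the displayed square) yields precisely the commuting diagram shown in the statement.

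The main obstacle is the very last step: matching the “abstract” equality relation $(\alpha_A\tens 1_{B_1})\circ g_1=g_2$ from the cokernel definition, which naturally lives on $I\tens B_i$ with the unit on the left, to the form in the statement, which lives on $B_i\tens I$ with the unit on the right and tensors on the $\B$-side arrow $g'$. This is pure bookkeeping with the coherence axioms (the equality $l_I=r_I$, naturality of $r$, and functoriality of $\tens$) rather than genuine content, but it requires choosing the right identification $I\tens B_1\simeq B_1$ at the outset, which is why I use $r$ in the second step above.
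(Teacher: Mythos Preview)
Your approach is correct and is exactly the one the paper takes: the paragraph preceding the proposition already says that by Proposition~\ref{coraccoker} the coroot is $\Coker\bar\alpha$, then unpacks Definition~\ref{dfdesccoker} for $F=\bar\alpha$ (noting that $\Sigma\A$ has a single object $I$), and the proposition is presented as the simplified result. One small slip: when you first identify an arrow $g\col B_0\ra I\tens B_1$ of $\Coker\bar\alpha$ with an arrow of $\B$, the relevant unit isomorphism is $l_{B_1}$, not $r_{B_1}$; the passage from the $l$-side (coming from the cokernel) to the $r$-side (as in the displayed square) is precisely the coherence bookkeeping you describe at the end.
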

	
\subsection{(Co)faithful, fully (co)faithful and full arrows in $\CGS$}\label{sectcarcflechcgs}

In this subsection, we will check, on the one hand, that the faithful, fully faithful and full arrows in $\CGS$ are the symmetric monoidal functors which have the usual properties with the same name (this justifies the terminology) and, on the other hand,
that the (fully) cofaithful arrows are the (full and) surjective symmetric monoidal functors. In each case, the method will be the same: we will use the characterisation of these kinds of arrows in terms of the triviality of the kernel, cokernel, pip or copip.

In parallel, we will check that in $\CGS$ every (fully) 0-faithful arrow is (fully) faithful and that every (fully) 0-cofaithful arrow is (fully) cofaithful. Therefore, this will be the case in all $\CGS$-categories .

The equivalence between (fully) faithful arrows in the $\Gpd$-categorical sense and in the elementary sense is proved in \cite{Kasangian2000a}.

	\begin{pon}\label{caracfidcgs}\index{faithful arrow!characterisation in $\CGS$}
		Let be $F\col \A\ra\B$ in $\CGS$.  The following properties are equivalent:
		\begin{enumerate}
			\item $F$ is faithful (in the sense of Definition \ref{defsortfleches});
			\item $F$ is 0-faithful (in the sense of Definition \ref{caraczfid});
			\item $F$ is 0-faithful (in the elementary sense of Proposition \ref{fidsuro});
			\item $F$ is faithful (in the elementary sense).
		\end{enumerate}
	\end{pon}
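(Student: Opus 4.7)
The plan is to close the cycle $1 \Rightarrow 2 \Leftrightarrow 3 \Rightarrow 4 \Rightarrow 1$, with the pip description of $\CGS$ doing the middle equivalence and the 2-group structure of $\A$ doing the crucial $3 \Rightarrow 4$ step.

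The implications $1 \Rightarrow 2$ and $4 \Rightarrow 1$ are essentially definitional. The first is a direct specialisation: 0-faithfulness is the faithfulness condition restricted to 2-arrows of the form $\alpha\col 0\Ra 0$. The second follows componentwise; a monoidal natural transformation $\alpha\col G\Ra G'\col\cat{X}\ra\A$ has components $\alpha_x$ in $\A$, and $F\alpha = F\alpha'$ amounts to $F(\alpha_x) = F(\alpha'_x)$ for every $x\col\cat{X}$, so condition~4 applied pointwise gives $\alpha = \alpha'$, hence $F\circ -$ is faithful in $\Gpd$ for each $\cat{X}$.

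For $2 \Leftrightarrow 3$, I would invoke Proposition~\ref{pepclaszfid}, which characterises 0-faithful arrows in any $\Gpdp$-category by the vanishing of the pip: condition~2 holds if and only if $\Pip F \simeq 0$. By the explicit description~\ref{eqdescpep} of pips in $\CGS$, $\Pip F \equiv (\pi_1\Ker F)\dis$, and an inspection of Definition~\ref{dfdescker} shows that $\pi_1\Ker F$ is precisely the group of loops $a\col I\ra I$ in $\A$ satisfying $Fa = 1_I$. This group is trivial exactly under condition~3.

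The principal step is $3 \Rightarrow 4$, where the elementary hypothesis on $\A(I,I)$ must be propagated to an arbitrary hom-groupoid $\A(A,A')$ by exploiting the 2-group structure of $\A$. Given $a,a'\col A\ra A'$ with $Fa = Fa'$, I would first form $f\eqdef a'^{-1}\circ a\col A\ra A$, which satisfies $Ff = 1_{FA}$, and then transport $f$ to a loop at $I$ by conjugating with the duality unit: set $\hat{f}\eqdef \eta_A^{-1}\circ(1_{A^*}\tens f)\circ\eta_A\col I\ra I$. Using the coherence of the symmetric monoidal functor $F$ --- which in particular preserves duals up to canonical isomorphism --- one verifies that $F\hat{f} = 1_I$; condition~3 then yields $\hat{f} = 1_I$. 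Reversing the construction, using that $\eta_A$ is invertible in the groupoid $\A$ and that $A^*\tens -\col\A\ra\A$ is an equivalence, forces $f = 1_A$, whence $a = a'$. The only delicate point in the whole argument is the verification $F\hat{f} = 1_I$, which is a routine unwinding of the axioms of symmetric monoidal functor; all remaining steps are either formal or immediate.
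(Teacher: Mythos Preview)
Your proof is correct and follows essentially the same route as the paper: the cycle $1 \Rightarrow 2 \Rightarrow 3 \Rightarrow 4 \Rightarrow 1$, with the pip description handling $2 \Rightarrow 3$ and the conjugation $\hat{f}=\eta_A^{-1}\circ(1_{A^*}\tens f)\circ\eta_A$ doing the key step $3 \Rightarrow 4$. The only cosmetic difference is that the paper recovers $f$ from $\hat{f}$ via the explicit triangle-identity formula $f = r_A\circ(1_A\tens\hat{f})\circ r_A^{-1}$, whereas you invoke the faithfulness of $A^*\tens -$; both are equally valid.
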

	
		\begin{proof}
			{\it 1 $\Rightarrow$ 2. }This is obvious.
			
			{\it 2 $\Rightarrow$ 3. }Let us assume that $F$ is 0-faithful as an arrow
			in a $\Gpdp$-category.  By Proposition \ref{pepclaszfid},
			$\pi_F = 1_0$, where $\pi_F\col 0\Ra 0\col \Pip F\ra \A$ is the pip of $F$.
			By the description of the pip given above (equation \ref{eqdescpep}),
			this means that, for every $a\col I\ra I$ such that $Fa=1_I$,
			$a = (\pi_F)_a = 1_I$.  So $F$ is 0-faithful in the elementary sense.
						
			{\it 3 $\Rightarrow$ 4. }Let be $f\col A\ra A$ in $\A$ such that $Ff=1_{FA}$.
			We define $\hat{f}$ to be equal to the composite
			\begin{eqn}
				I\overset{\eta_A}\longrightarrow A^*\tens A\xrightarrow{1_{A^*}\tens f}
				A^*\tens A\overset{\eta_A^{-1}}\longrightarrow I.
			\end{eqn}
			Then $F\hat{f}=F\eta_A^{-1}\circ\varphi^{-1}_{A^*A}\circ 
			(1_{FA^*}\tens Ff)\circ	\varphi_{A^*A}\circ F\eta_A = 1_I$, 
			because $Ff=1_{FA}$. So, since $F$ is 0-faithful (in the elementary sense), $\hat{f}=1_I$.
			Finally, since $f = r_A\circ (1_A\tens \hat{f})\circ r_A^{-1}$ (by the triangular
			identities that $\eta_A$ and $\varepsilon_A\col A\tens A^*\ra I$ satisfy),
			$f=1_A$.
			
			{\it 4 $\Rightarrow$ 1. }Let be $\cat{X}$, $G,H\col \cat{X}\ra \A$
			and $\alpha,\alpha'\col G\Ra H$ in $\CGS$ such that $F\alpha= F\alpha'$.
			For every $X\col\cat{X}$, $F\alpha_X=F\alpha'_X$ and, since $F$
			is faithful in the elementary sense, $\alpha_X=\alpha'_X$. So $\alpha=\alpha'$.
		\end{proof}

	Since faithful and 0-faithful arrows are defined through the representables, we get
	the following proposition.
	\begin{pon}\label{caracfidcgscat}
		Let $\C$ be a $\CGS$-category and $f\col \flc$.  Then $f$ is faithful
		if and only if $f$ is 0-faithful.
	\end{pon}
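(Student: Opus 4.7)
The plan is to reduce this proposition to Proposition \ref{caracfidcgs} via the representable $\Gpd$-functors, which is exactly what the informal remark preceding the statement suggests.

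First I would unpack the definitions. By Definition \ref{defsortfleches}, $f\col A\ra B$ is faithful in $\C$ if and only if for every $X\col\C$ the functor $f\circ-\col\C(X,A)\ra\C(X,B)$ is faithful as an arrow of $\Gpd$. By Definition \ref{caraczfid}, $f$ is 0-faithful in $\C$ if and only if for every $X\col\C$ the same functor $f\circ-$ is 0-faithful as an arrow of $\Gpdp$. So it is enough, for each fixed $X\col\C$, to prove that the symmetric monoidal functor $f\circ-\col\C(X,A)\ra\C(X,B)$ is faithful in the $\Gpd$-categorical sense if and only if it is 0-faithful in the $\Gpdp$-categorical sense.

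The key observation is that since $\C$ is enriched in $\CGS$, the groupoids $\C(X,A)$ and $\C(X,B)$ are symmetric 2-groups and $f\circ-$ is a symmetric monoidal functor between them; in other words, $f\circ-$ is an arrow in the $\Gpd$-category $\CGS$. I would then invoke Proposition \ref{caracfidcgs} applied to this arrow: that proposition precisely asserts the equivalence between conditions 1 (faithfulness viewed inside $\CGS$) and 2 (0-faithfulness viewed inside the underlying $\Gpdp$-category of $\CGS$), and the proof of the implications 2 $\Rightarrow$ 3 $\Rightarrow$ 4 $\Rightarrow$ 1 established there makes no use of $\CGS$ being the ambient $\Gpd$-category — only of the fact that one is dealing with a symmetric monoidal functor. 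The same elementary conditions characterise (0-)faithfulness whether one thinks of $f\circ-$ as living in $\Gpd$ or in $\CGS$, because these conditions only talk about the underlying groupoid-level behaviour of the functor.

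The only step requiring attention, which I expect to be essentially book-keeping rather than a genuine obstacle, is verifying that ``faithful in $\Gpd$'' and ``0-faithful in $\Gpdp$'' for the underlying functor of $f\circ-$ coincide respectively with the notions used in Proposition \ref{caracfidcgs}. Once that is noted, the equivalence for every $X$ follows directly, and quantifying over $X\col\C$ yields the stated equivalence in $\C$. Thus the proof consists of a single paragraph: for each $X$, apply Proposition \ref{caracfidcgs} to the representable image $f\circ-$, and then use the definitions of faithfulness and 0-faithfulness in a $\Gpd$-category to conclude.
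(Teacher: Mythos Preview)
Your approach is correct and is essentially the same as the paper's one-line argument (``Since faithful and 0-faithful arrows are defined through the representables, we get the following proposition''). The only slight imprecision is that the equivalence from Proposition~\ref{caracfidcgs} you actually need is $3\Leftrightarrow 4$ (the elementary conditions), not $1\Leftrightarrow 2$: the definitions of faithful and 0-faithful in $\C$ reduce to $\C(X,f)$ being faithful in $\Gpd$ and 0-faithful in $\Gpdp$ respectively, which are exactly the elementary notions---but you correctly identify and address this point in your final paragraph.
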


	Let us turn now to fully (0-)faithful arrows.

	\begin{pon}\label{caracplfidcgs}\index{fully faithful arrow!characterisation in $\CGS$}
		Let be $F\col \A\ra\B$ in $\CGS$.  The following properties are equivalent:
		\begin{enumerate}
			\item $F$ is fully faithful (in the sense of Definition \ref{defsortfleches});
			\item $F$ is fully 0-faithful (in the sense of Definition \ref{caracplzfid});
			\item $F$ is fully 0-faithful
				(in the elementary sense of Proposition \ref{carplfidsurzer});
			\item $F$ is fully faithful (in the elementary sense).
		\end{enumerate}
	\end{pon}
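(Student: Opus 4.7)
The plan is to mirror the structure of Proposition \ref{caracfidcgs}, exploiting the explicit description of kernels in $\CGS$ (Definition \ref{dfdescker}) and the classifying property of the kernel (Proposition \ref{claspropker}) to pass between the abstract $\Gpdp$-categorical notion and the elementary notion, and then to use the 2-group structure (invertibility of objects) to strengthen from ``fully 0-faithful'' to ``fully faithful''.

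First, $1\Rightarrow 2$ is immediate: every fully faithful arrow in a $\Gpdp$-category is fully 0-faithful. For $2\Rightarrow 3$, I would apply Proposition \ref{claspropker}: $F$ is fully 0-faithful if and only if $(0,0_{\A},1_{0_{\B}})=\Ker F$, i.e., if and only if $\Ker F\simeq 0$. Unpacking the description of $\Ker F$ given in Definition \ref{dfdescker}, an equivalence $\Ker F\simeq 0$ says exactly that for every object $(A,b)$ with $A\col\A$ and $b\col FA\ra I$, there exists a unique arrow $(A,b)\ra (I,1_I)$ in $\Ker F$, i.e., a unique $a\col A\ra I$ in $\A$ such that $Fa=b$. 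This is precisely condition 2 of Proposition \ref{carplfidsurzer}, i.e., elementary fully 0-faithfulness.

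For $3\Rightarrow 4$, I would use the 2-group structure of $\A$ and $\B$ to reduce the general case to the case where $FA'\simeq I$. Given any $b\col FA\ra FA'$ in $\B$, form the composite
\begin{eqn}
\hat b\col F(A\tens A'^*)\xrightarrow{\varphi^{-1}} FA\tens FA'^*\xrightarrow{b\tens 1} FA'\tens FA'^*\xrightarrow{\varepsilon_{FA'}} I,
\end{eqn}
using that monoidal functors preserve inverses up to canonical isomorphism. By elementary fully 0-faithfulness, there is a unique $\hat a\col A\tens A'^*\ra I$ with $F\hat a=\hat b$. Transposing across $\varepsilon_{A'}$ and $\eta_{A'}$ (as in the proof $3\Rightarrow 4$ of Proposition \ref{caracfidcgs}) produces a unique $a\col A\ra A'$ with $Fa=b$; unicity follows from the unicity of $\hat a$ and the triangular identities.

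Finally, for $4\Rightarrow 1$, I would argue pointwise: given $\cat X$, $G,H\col\cat X\ra\A$ in $\CGS$ and a monoidal $\beta\col FG\Ra FH$, the elementary full faithfulness of $F$ provides, for each $X\col\cat X$, a unique $\alpha_X\col GX\ra HX$ with $F\alpha_X=\beta_X$; naturality of $\alpha$ and monoidality of $\alpha$ then follow from the corresponding properties of $\beta$ together with the unicity of the lifts. I expect the main obstacle to be the $3\Rightarrow 4$ step, since it requires carefully keeping track of the coherence isomorphisms (monoidality, preservation of inverses, triangular identities for $\eta$ and $\varepsilon$) to verify that the constructed $a$ is well-defined and really satisfies $Fa=b$; the other implications are essentially direct translations via the kernel description and Yoneda-style arguments.
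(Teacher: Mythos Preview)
Your proposal is correct and follows essentially the same route as the paper. The only cosmetic difference is in $2\Rightarrow 3$: the paper invokes condition 2 of Proposition \ref{claspropker} (the existence of $\omega\col K_F\Ra 0$ with $F\omega=\kappa_F$) and reads off conditions 1(a) and 1(b) of Proposition \ref{carplfidsurzer} from the components and the naturality of $\omega$, whereas you invoke condition 3 ($\Ker F\simeq 0$) and read off condition 2 of Proposition \ref{carplfidsurzer} directly from the terminal-object property; these are equivalent unpackings of the same fact.
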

	
		\begin{proof}
			{\it 1 $\Rightarrow$ 2. }This is obvious.
			
			{\it 2 $\Rightarrow$ 3. }Let us assume that $F$ is fully 0-faithful as
			an arrow in a $\Gpdp$-category.  By Proposition \ref{claspropker},
			there exists a monoidal natural transformation $\omega\col K_F\Ra 0$ such that
			$F\omega = \kappa_F$, where $(K_F,\kappa_F)$ is the kernel of $F$.  By
			the description of Definition \ref{dfdescker}, this means that, for
			every $b\col FA\ra I$, there is an arrow $\omega_{(A,b)}\col A\ra I$
			such that $F\omega_{(A,b)}=b$.  This proves condition 1(a) of
			Proposition \ref{carplfidsurzer}.
			
			To prove condition 1(b), let $a\col I\ra I$ be an arrow in $\A$
			such that $Fa=1_I$.  Then $a$ is an arrow $(I,1_I)\ra (I,1_I)$ in $\Ker F$.
			The naturality of $\omega$ tells us then
			that $\omega_{(I,1_I)}\circ a = \omega_{(I,1_I)}$ and, so, that $a=1_I$.
									
			{\it 3 $\Rightarrow$ 4. }To prove that $F$ is full, let be
			$b\col FA'\ra FA$ in $\B$.  We set $\hat{b}$ equal to the composite
			\begin{eqn}
				F(A'\tens A^*)\overset{\varphi^{-1}}\longrightarrow FA'\tens FA^*
				\xrightarrow{b\tens 1_{FA^*}}FA\tens FA^*\overset{\varphi}\longrightarrow
				F(A\tens A^*)\overset{F\varepsilon_A}\longrightarrow FI\equiv I.
			\end{eqn}
			As $F$ is fully 0-faithful in the elementary sense, there exists an
			arrow $\hat{a}\col A'\tens A^*\ra I$ such that $\hat{b}= F\hat{a}$.
			We set then $a$ equal to the composite
			\begin{eqn}
				A'\overset{r^{-1}_{A'}}\longrightarrow A'\tens I
				\xrightarrow{1_{A'}\tens\eta_A}A'\tens(A^*\tens A)\overset{a}\longrightarrow
				(A'\tens A^*)\tens A\xrightarrow{\hat{a}\tens 1_A}I\tens A\overset{l_A}
				\longrightarrow A.
			\end{eqn}
			We check that $b=Fa$ by using the axioms of monoidal functor and the
			triangular identities.
			
			Moreover, $F$ is faithful by the implication {\it 3 $\Rightarrow$ 4}
			of Proposition \ref{caracfidcgs}.
			
			{\it 4 $\Rightarrow$ 1. }Let be $\cat{X}$, $G,H\col \cat{X}\ra \A$
			and $\beta\col FG\Ra FH$ in $\CGS$.  Since $F$ is full, 
			for every $X\col\cat{X}$, there exists $\alpha_X\col GX\ra HX$
			such that $\beta_X=F\alpha_X$. We check that to give $\alpha_X$ for every $X$
			defines a monoidal natural transformation,
			by using faithfulness of $F$.  If $\alpha'\col G\Ra H$ is another
			monoidal natural transformation such that $\beta=F\alpha'$,
			then, for every $X\col\cat{X}$, $F\alpha'_X=F\alpha_X$ and thus $\alpha'_X
			=\alpha_X$, because $F$ is faithful. So $\alpha'=\alpha$.
		\end{proof}

	Again, the equivalence between fully faithful and fully 0-faithful arrows
	generalises to $\CGS$-categories .

	\begin{pon}\label{caracplfidcgscat}
		Let $\C$ be a $\CGS$-category and $f\col \flc$.  Then $f$ is fully faithful
		if and only if $f$ is fully 0-faithful.
	\end{pon}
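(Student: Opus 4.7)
The proof I would give reduces immediately to Proposition \ref{caracplfidcgs} by passing to representables, exactly as in the preceding Proposition \ref{caracfidcgscat}. The key observation is that in a $\CGS$-category $\C$, for every object $X\col\C$ and every arrow $f\col A\ra B$ in $\C$, the composition functor $f\circ -\col\C(X,A)\ra\C(X,B)$ is not merely a pointed functor between pointed groupoids but is itself a symmetric monoidal functor between symmetric 2-groups. This is built into the definition of preadditive $\Gpd$-category (Definition \ref{defpreadd}): the transformations $\varphi^f_{g_1,g_2}\col fg_1+fg_2\Ra f(g_1+g_2)$ and $\varphi^f_0\col 0\Ra f0$ give $f\circ -$ the structure of a symmetric monoidal functor, with the required coherence axioms guaranteed by conditions (a), (d) and (g) of the elementary description.

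With this in hand, the plan is to observe that both properties in the statement are defined pointwise through the representables: $f$ is fully faithful in $\C$ iff for every $X\col\C$ the functor $f\circ-$ is a fully faithful functor between groupoids, and $f$ is fully 0-faithful in $\C$ iff for every $X\col\C$ the pointed functor $f\circ-$ is fully 0-faithful in $\Gpdp$. Since $f\circ-$ is a symmetric monoidal functor in $\CGS$, Proposition \ref{caracplfidcgs} applies to it: its fully faithful character as a functor of groupoids is equivalent to its fully 0-faithful character as a pointed functor. Quantifying over $X\col\C$ then yields the equivalence in the statement.

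The only step that requires a brief verification is that the fully faithful/fully 0-faithful properties for $f\circ-$ viewed \emph{as a symmetric monoidal functor} coincide with the same properties for the underlying functor/pointed functor. But this is transparent, since conditions 1 and 4 of Proposition \ref{caracplfidcgs} are phrased purely in terms of the underlying functor and pointed functor, and the implication $1\Rightarrow 2\Rightarrow 3\Rightarrow 4$ proved there does not use anything beyond the monoidal structure. Thus there is no real obstacle here; the proposition is essentially a representable-wise corollary of \ref{caracplfidcgs}, in complete parallel with the way \ref{caracfidcgscat} is obtained from \ref{caracfidcgs}.
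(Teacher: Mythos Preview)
Your proof is correct and takes exactly the approach the paper intends: the proposition is stated without explicit proof, with the remark ``Again, the equivalence between fully faithful and fully 0-faithful arrows generalises to $\CGS$-categories'', in direct parallel with Proposition~\ref{caracfidcgscat}. Your reduction via representables to Proposition~\ref{caracplfidcgs}, using that $f\circ-$ is a symmetric monoidal functor by the preadditive structure, is precisely this intended argument.
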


The equivalence between cofaithful and surjective has been proved in \cite{Dupont2003a}.
 
 \begin{pon}\label{caraccofidcgs}\index{cofaithful arrow!characterisation in $\CGS$}
 	Let be $F\col \A\ra\B$ in $\CGS$.  The following properties are equivalent:
	\begin{enumerate}
		\item $F$ is cofaithful;
		\item $F$ is 0-cofaithful;
		\item $F$ is surjective.
	\end{enumerate}
 \end{pon}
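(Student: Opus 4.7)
I would mimic the proof strategies used in Propositions \ref{caracfidcgs} and \ref{caracplfidcgs}, exploiting the elementary description of the copip of $F$ given in equation \ref{eqdesccopep}, namely $\Copip F \equiv (\pi_0\Coker F)\con$, together with the dual of Proposition \ref{pepclaszfid}: $F$ is 0-cofaithful if and only if $\rho_F = 1_0$.

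The implication $1 \Rightarrow 2$ is immediate since every fully 0-cofaithful arrow property is a special case of the cofaithful one (loops are particular 2-arrows). For $2 \Rightarrow 3$, I would unwind $\rho_F = 1_0$ through the description of the copip. Recall $\rho_F\col 0\Ra 0\col \B \ra \Copip F$ is defined at $B\col\B$ by the object $B$ itself, viewed as an arrow $I\ra I$ in $(\pi_0\Coker F)\con$, i.e., as the equivalence class of $B$ in $\pi_0\Coker F$. Saying $(\rho_F)_B = 1_I$ in $\Copip F$ amounts, by the description of arrows in $\Coker F$ (Definition \ref{dfdesccoker}), to the existence of an object $A\col\A$ and an arrow $B\ra FA\tens I$ in $\B$, which via the right unit gives an isomorphism $B \simeq FA$. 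Hence $F$ is surjective on objects up to isomorphism.

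For $3 \Rightarrow 1$, I would argue directly that $-\circ F\col \CGS(\B,\cat{Y})\ra\CGS(\A,\cat{Y})$ is faithful for every $\cat{Y}$. Given monoidal natural transformations $\beta,\beta'\col G\Ra H\col\B\ra\cat{Y}$ with $\beta F = \beta' F$, one must show $\beta_B = \beta'_B$ for every $B\col\B$. By surjectivity of $F$ (up to iso), pick $A\col\A$ and an isomorphism $\psi\col FA\ra B$ in $\B$. Using the naturality of $\beta$ and $\beta'$ at $\psi$, we have $\beta_B\circ G\psi = H\psi\circ\beta_{FA} = H\psi\circ\beta'_{FA} = \beta'_B\circ G\psi$, and since $G\psi$ is an isomorphism, $\beta_B = \beta'_B$.

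I do not expect any serious obstacle: everything reduces to the already-established description of the copip and to a routine use of naturality plus invertibility in the groupoid $\B$. The only point requiring a little care is in $2 \Rightarrow 3$, where one must correctly match the trivialization $(\rho_F)_B = 1_I$ in $\Copip F \equiv (\pi_0\Coker F)\con$ with the equivalence relation defining equality of arrows in $\Coker F$ from Definition \ref{dfdesccoker}; but once the description of $\Copip F$ is unpacked, this is just the statement that $B$ and $I$ represent the same element of $\pi_0\Coker F$, which by construction of the cokernel produces the object $A$ and the isomorphism $B\simeq FA$.
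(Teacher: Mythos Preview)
Your proposal is correct and follows essentially the same approach as the paper. The only minor difference is in how the cycle of implications is closed: the paper proves $1\Leftrightarrow 2$ by invoking Proposition~\ref{caracfidcgscat} (faithful $=$ 0-faithful in any $\CGS$-category) applied to $\CGS\op$, and then shows $3\Rightarrow 2$ via naturality of a loop $\gamma\col 0\Ra 0$; you instead prove $3\Rightarrow 1$ directly by the same naturality argument for arbitrary monoidal natural transformations, which is equally valid and slightly more self-contained. The $2\Rightarrow 3$ step is identical in both.
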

 
 	\begin{proof}
		{\it 1 $\Leftrightarrow$ 2. }We have proved above
		(Proposition \ref{caracfidcgscat}) that the faithful and 0-faithful arrows coincide in
		$\CGS$-categories, so in particular this is the case in $\CGS\op$.

		{\it 2 $\Rightarrow$ 3. }Let us assume that $F$ is 0-cofaithful. By the dual of Proposition \ref{pepclaszfid},
		$\rho_F = 1_0$, where $\rho_F\col 0\Ra 0\col \B\ra\Copip F$ is the copip of $F$.
		By the description of the copip given above (equation \ref{eqdesccopep}),
		this means that, for every $B\col \B$, $B = I$ in $\Copip F$,
		i.e.\ that there exist $A\col \A$ and $b\col B\ra FA\tens I$.
		So $F$ is surjective.

		{\it 3 $\Rightarrow$ 2. }Let be $\gamma\col 0\Ra 0\col \B\ra\cat{Y}$ in $\CGS$
		such that $\gamma F=1_0$.  For every $B\col\B$, there exist $A\col\A$
		and $b\col FA\ra B$.  Since $\gamma$ is natural, we have
		$\gamma_B=\gamma_{FA}=1_I$.
	\end{proof}

The equivalence between fully cofaithful and full and surjective has been proved in \cite{Kasangian2000a}.

\begin{pon}\label{caracplcofidcgs}\index{fully cofaithful arrow!characterisation in $\CGS$}
	Let be $F\col \A\ra\B$ in $\CGS$. The following properties are equivalent:
	\begin{enumerate}
		\item $F$ is fully cofaithful;
		\item $F$ is fully 0-cofaithful;
		\item $F$ is full and surjective.
	\end{enumerate}
\end{pon}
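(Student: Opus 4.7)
The plan is to mirror the structure of Propositions \ref{caracplfidcgs} and \ref{caraccofidcgs}, dualising and using the explicit description of the cokernel in $\CGS$ (Definition \ref{dfdesccoker}) in place of the kernel. For the equivalence 1 $\Leftrightarrow$ 2, I will observe that the opposite of a $\CGS$-category is again a $\CGS$-category (the hom-groupoids are unchanged and composition in $\C\op$ is bimonoidal because composition in $\C$ is), and that an arrow is fully (0-)cofaithful in $\C$ exactly when it is fully (0-)faithful in $\C\op$; Proposition \ref{caracplfidcgscat} applied to $\CGS\op$ then yields the equivalence.

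For 2 $\Rightarrow$ 3, assume $F$ is fully 0-cofaithful. By the dual of Proposition \ref{claspropker}, there is a monoidal natural transformation $\omega\col Q_F\Ra 0$ with $\omega F=\zeta_F$, where $(Q_F,\zeta_F)=\Coker F$. Unfolding Definition \ref{dfdesccoker}, each component $\omega_B$ is a pair $(A_B,g_B)$ with $g_B\col B\ra FA_B\tens I$, and post-composing with $r$ yields an arrow $B\ra FA_B$ in $\B$; this gives surjectivity of $F$. For fullness, fix $b\col FA\ra FA'$: the constraint $\omega F=\zeta_F$ forces $\omega_{FA}=(A,r_{FA}^{-1})$ and $\omega_{FA'}=(A',r_{FA'}^{-1})$, so naturality of $\omega$ at $b$ becomes an equality of two pair-arrows in $\Coker F$, and invoking the equality rule of Definition \ref{dfdesccoker} produces (after clearing the coherence isomorphisms $l,r,\varphi$) an arrow $a\col A\ra A'$ in $\A$ with $Fa=b$.

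For 3 $\Rightarrow$ 1, I will show directly that, for every $\cat{Y}\col\CGS$, the functor $-\circ F\col\CGS(\B,\cat{Y})\ra\CGS(\A,\cat{Y})$ is full and faithful. Faithfulness is routine: if $\beta,\beta'\col G\Ra G'$ satisfy $\beta F=\beta' F$, then for each $B\col\B$ surjectivity supplies $A\col\A$ and $b\col B\ra FA$, and naturality expresses $\beta_B$ (respectively $\beta'_B$) as $G'(b)^{-1}\circ\beta_{FA}\circ G(b)$, forcing $\beta=\beta'$. For fullness, given $\gamma\col GF\Ra G'F$, choose for each $B$ a pair $(A_B,b_B\col B\ra FA_B)$ and set $\beta_B\eqdef G'(b_B)^{-1}\circ\gamma_{A_B}\circ G(b_B)$; independence of the choice, naturality in $B$, and compatibility with $\varphi^G$, $\varphi^{G'}$ all rest on fullness of $F$ (to lift any isomorphism between $FA$'s back to an arrow of $\A$) together with naturality and monoidality of $\gamma$.

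The main obstacle is the verification in 2 $\Rightarrow$ 3 that the equality of pair-arrows in $\Coker F$ really yields the required arrow $a\col A\ra A'$ in $\A$ with $Fa=b$; unwinding the equivalence relation of Definition \ref{dfdesccoker} through the coherence isomorphisms needs careful bookkeeping, though it is strictly analogous to the step 3 $\Rightarrow$ 4 of Proposition \ref{caracplfidcgs}. All remaining verifications are either direct dualisations of arguments already carried out in the excerpt or routine coherence checks in $\CGS$.
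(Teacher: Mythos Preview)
Your proposal is correct and follows essentially the same approach as the paper: the equivalence $1\Leftrightarrow 2$ via Proposition~\ref{caracplfidcgscat} applied to $\CGS\op$, the implication $2\Rightarrow 3$ by unfolding the cokernel description from the transformation $\omega\col Q_F\Ra 0$ supplied by the dual of Proposition~\ref{claspropker}, and the remaining implication by constructing $\beta_B$ from a chosen lift $b_B\col B\ra FA_B$ and the given $\gamma$. The only cosmetic difference is that the paper closes the cycle as $3\Rightarrow 2$ (giving only the fullness half explicitly, since faithfulness is already Proposition~\ref{caraccofidcgs}), whereas you do $3\Rightarrow 1$ and spell out both halves; the arguments are the same.
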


	\begin{proof}
		{\it 1 $\Leftrightarrow$ 2. }It suffices to apply Proposition
		\ref{caracplfidcgscat} to $\CGS\op$.
		
		{\it 2 $\Rightarrow$ 3. }Let us assume that $F$ is fully 0-cofaithful.  By the dual of
		Proposition \ref{claspropker},
			there exists a monoidal natural transformation $\omega\col Q_F\Ra 0$ such that
			$\omega F= \zeta_F$, where $(Q_F,\zeta_F)$ is the cokernel of $F$.  By
			the description of Definition \ref{dfdesccoker}, this means that,
		for every $B\col \B$, there exists an arrow $\omega_B = (A_B,f_B)\col  B\ra I$
		in $\Coker F$, where $A_B\col \A$ and $f_B\col B\ra FA_B\tens I$.
		So $F$ is surjective.
		
		Next, let be $g\col FA\ra FA'$ in $\B$.  By the naturality of $\omega$,
		we have $\omega_{FA'}\circ Q_F g = \omega_{FA}$ in $\Coker F$. But
		$\omega F =\zeta_F$, thus this equality becomes
		$(A',r_{FA'}^{-1}g) = (A,r_{FA}^{-1})$ in $\Coker F$. Therefore, by the definition
		of equality between the arrows of $\Coker F$, there exists
		$f\col A\ra A'$ such that $g=Ff$. So $F$ is full.
		
		{\it 3 $\Rightarrow$ 2. }Let be $V,V'\col \B\ra\cat{Y}$ and 
		$\gamma\col VF\Ra V'F$ in $\CGS$.  Let be $B\col\B$.  Since $F$ is surjective,
		there exist $A_B\col\A$ and $f_B\col B\to FA_B$ in $\B$.  We set $\alpha_B$
		equal to the composite $VB\xrightarrow{Vf_B}VFA_B\xrightarrow{\gamma_{A_B}}
		V'FA_B\xrightarrow{V'f^{-1}_B}V'B$.  To prove the naturality of $\alpha$,
		let be $b\col B\to B'$ in $\B$.  Since $F$ is full, there exists
		$a_b\col A_B\to A_{B'}$ such that $Fa_b$ is equal to the composite
		$FA_B\xrightarrow{f^{-1}_B}B\overset{b}\longrightarrow B'\xrightarrow{f_{B'}}FA_{B'}$.
		Then $V'b\circ \alpha_B = \alpha_{B'}\circ Vb$ thanks to the naturality of $\gamma$.
		Finally, to prove $\gamma=\alpha F$, let be $A\col\A$. Since $F$ is full, 
		there exists $g_{A}\col A\to A_{FA}$ such that
		$Fg_A=f_{FA}\col FA\to FA_{FA}$; thus $\alpha_{FA}=\gamma_A$, thanks to the
		naturality of $\gamma$.
	\end{proof}

Finally, we prove that the full arrows are exactly the full functors.  Let us recall that 
$\mu_F\eqdef \zeta_F K_F \circ Q_F\kappa_F^{-1}$.

\begin{pon}\label{caracfullcgs}\index{full arrow!characterisation in $\CGS$}
	Let be $F\col \A\ra\B$ in $\CGS$. The following properties are equivalent:
	\begin{enumerate}
		\item $F$ is full (in the sense of Definition \ref{deffullgpdcat});
		\item $\mu_F = 1_0$ ($\zeta_F K_F = Q_F\kappa_F$);
		\item $F$ is full (in the elementary sense).
	\end{enumerate}
\end{pon}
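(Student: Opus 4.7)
The plan is to prove the cycle $1 \Rightarrow 2 \Rightarrow 3 \Rightarrow 1$, following the same template as Propositions \ref{caracfidcgs}--\ref{caracplcofidcgs}: reduce each statement to its elementary content via the explicit descriptions of $\Ker F$ (Definition \ref{dfdescker}) and $\Coker F$ (Definition \ref{dfdesccoker}), and exploit the group structure of $\A$ (via $\eta_A$, $\varepsilon_A$) to move between codomain~$I$ and general codomains.

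For $1 \Rightarrow 2$, the equation $\mu_F = 1_0$, that is $\zeta_F K_F = Q_F \kappa_F$, is exactly the equation of Definition \ref{deffullgpdcat} applied with $X := \Ker F$, $Y := \Coker F$, $u_0 := K_F$, $v_0 := Q_F$ (and the other slots filled by zero arrows), specialised to the universal 2-cells $\kappa_F$ and $\zeta_F$. This mirrors the step $1 \Rightarrow 3$ in the proof of Proposition \ref{caracfullbondpex}.

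For $2 \Rightarrow 3$, I will unwind $(\mu_F)_{(A,b)}$ at an object $(A,b)$ of $\Ker F$. Using the description of composition in $\Coker F$ (Definition \ref{dfdesccoker}), $(\mu_F)_{(A,b)}$ is an arrow $I \to I$ in $\Coker F$, namely the composite of $Q_F(b^{-1}) = (I, l_I^{-1} b^{-1}) \col I \to FA$ and $(\zeta_F)_A = (A, r_{FA}^{-1}) \col FA \to I$. Equality with $1_I$ in $\Coker F$, by the definition of equality of arrows there, amounts to the existence of an arrow $a\col A \to I$ in $\A$ such that $Fa = b$ modulo the coherence isomorphisms of the monoidal structure. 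Thus condition 2 is equivalent to the assertion: \emph{for every $A\col\A$ and every $b\col FA \to I$ in $\B$, there exists $a\col A \to I$ in $\A$ with $Fa = b$}. To reach the general elementary fullness statement, given $b\col FA' \to FA$ in $\B$ I form
\begin{eqn}
	\tilde{b} \eqdef F\varepsilon_A \circ \varphi_{A,A^*} \circ (b \tens 1_{FA^*}) \circ \varphi_{A',A^*}^{-1} \col F(A' \tens A^*) \to I,
\end{eqn}
apply the special case to obtain $\tilde{a}\col A' \tens A^* \to I$ in $\A$ with $F\tilde{a} = \tilde{b}$, and recover $a\col A' \to A$ as the composite
\begin{eqn}
	A' \xrightarrow{r_{A'}^{-1}} A' \tens I \xrightarrow{1 \tens \eta_A} A' \tens (A^* \tens A) \simeq (A' \tens A^*) \tens A \xrightarrow{\tilde{a} \tens 1_A} I \tens A \xrightarrow{l_A} A,
\end{eqn}
checking $Fa = b$ by the monoidal functor axioms and the triangular identity for $(\eta_A, \varepsilon_A)$, exactly as in the proof of Proposition \ref{caracplfidcgs}.

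For $3 \Rightarrow 1$, I will check the equation of Definition \ref{deffullgpdcat} pointwise in $\CGS(X,Y)$. At each $x\col X$, the two pastings evaluate to parallel arrows in $\B$ built out of the components $\alpha_x$ and the component of $\beta$ at the relevant object of $\A$. Elementary fullness lifts each such component of $\beta$ to an arrow of $\A$, and the Godement interchange law in $\A$ applied to $\alpha$ and that lift, followed by $F$ and pre/postcomposition with $u_i, v_j$, yields the desired identity in $\B$.

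The main obstacle will be the last implication $3 \Rightarrow 1$: elementary fullness only lifts individual arrows, not natural transformations, so one must argue pointwise rather than by producing a coherent lift of the entire 2-cell $\beta$. Getting the bookkeeping of indices right, and verifying that the componentwise lifts are enough to deduce the compatibility equation, is where the care is needed; the $2 \Rightarrow 3$ step is essentially a repetition of the argument already given for fully faithful functors in Proposition \ref{caracplfidcgs}.
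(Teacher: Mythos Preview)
Your implications $1 \Rightarrow 2$ and $2 \Rightarrow 3$ are correct and match the paper's proof essentially verbatim (the paper simply says ``condition 2 is a special case of condition 1'' and then unwinds $\mu_F$ at $(A,b)$ exactly as you do, deferring the passage from codomain $I$ to arbitrary codomain to the argument of Proposition~\ref{caracplfidcgs}).

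Your sketch of $3 \Rightarrow 1$, however, has the wrong 2-cell being lifted. With $U,U'\col\cat{X}\ra\A$, $V,V'\col\B\ra\cat{Y}$, $\alpha\col FU\Ra FU'$ and $\beta\col VF\Ra V'F$, the two pastings live in $\cat{Y}$, not in $\B$; and the components $\beta_A\col VFA\ra V'FA$ are arrows of $\cat{Y}$, so elementary fullness of $F\col\A\ra\B$ gives you no handle on them. What \emph{can} be lifted are the components $\alpha_X\col F(UX)\ra F(U'X)$, which are arrows of $\B$ between objects in the image of $F$: fullness yields $\gamma_X\col UX\ra U'X$ in $\A$ with $F\gamma_X=\alpha_X$. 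The equation to check at $X$ is
\[
	V'\alpha_X\circ\beta_{UX}=\beta_{U'X}\circ V\alpha_X,
\]
and once $\alpha_X=F\gamma_X$ this is exactly the naturality square of $\beta$ at the arrow $\gamma_X$ of $\A$. No Godement interchange is needed, and no coherent lift of a whole natural transformation is required: the pointwise lift of $\alpha$ plus naturality of $\beta$ suffices. This is precisely the paper's argument.
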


	\begin{proof}
		{\it 1 $\Rightarrow$ 2. }Condition 2 is a special case of condition 1.

		{\it 2 $\Rightarrow$ 3. }Elementarily, condition 2 means that,
		for every $b\col FA\ra I$ (i.e.\ $(A,b)\col \Ker F$),
		$(Q_F\kappa_F)_{(A,b)} = Q_F(b) = (b,I)$
		is equal in $\Coker F$ to $(\zeta_F)_{K_F(A,b)} = (\zeta_F)_A = (1_{FA},A)$.
		This means, by the definition of equality between arrows in
		$\Coker F$, that there exists $a\col A\ra I$ such that
		$Fa \circ 1_{FA}=b$.  To sum up, for every $b\col FA\ra I$, there exists $a\col A\ra I$
		such that $b=Fa$. We deduce that $F$ is full by following the reasoning
		used to prove the implication {\it 3 $\Rightarrow$ 4} of Proposition
		\ref{caracplfidcgs}.
		
		{\it 3 $\Rightarrow$ 1. }Let be $\cat{X}$ and $\cat{Y}$,
		$U,U'\col\cat{X}\ra\A$ and $V,V'\col\B\ra\cat{Y}$, $\alpha\col FU\Ra FU'$
		 and $\beta\col VF\Ra V'F$ in $\CGS$.  Since $F$ is full in the elementary sense,
		 for every $X\col\cat{X}$, there exists $\gamma\col UX\ra U'X$ such that $\alpha_X
		 =F\gamma_X$.  Then, for every $X\col\cat{X}$, we have:
		 \begin{eqn}
		 	V'\alpha_X\circ\beta_{UX} = V'F\gamma_X\circ\beta_{UX}
			= \beta_{U'X}\circ VF\gamma_X = \beta_{U'X}\circ V\alpha_X,
		 \end{eqn}
		 thus $F$ is full in the sense of Definition \ref{deffullgpdcat}.
	\end{proof}

\subsection{The factorisations $\spf$ and $\psf$ on $\CGS$}\label{sectdeffactcgs}

	The symmetric monoidal functors between symmetric 2-groups factor
	either as a surjective functor followed by a fully faithful functor, or
	as a full and surjective functor followed by a faithful functor \cite{Kasangian2000a}.
	  We first describe these factorisations in the $\Gpd$-category of
	groupoids $\Gpd$.
		
	\begin{pon}\label{descrfacpl}
		Every functor $F\col \A\ra\B$ in $\Gpd$ factors
		as the following composite, where $\hat{E}_F$ is surjective, $\hat{\Omega}_F$
		is an equivalence, and $\hat{M}_F$ is full and faithful:
		\begin{eqn}
			\A\overset{\hat{E}_F}\longrightarrow\mathrm{Im}_{\mathrm{pl}}^1\, F\overset{\hat{\Omega}_F}
			\longrightarrow\mathrm{Im}_{\mathrm{pl}}^2\, F\overset{\hat{M}_F} \longrightarrow\B.
		\end{eqn}
		We call the groupoid $\mathrm{Im}_{\mathrm{pl}}^1\, F\simeq\mathrm{Im}_{\mathrm{pl}}^2\, F$
		the \emph{full image} of $F$.\index{full image!in 2-sgp@in $\CGS$}
	\end{pon}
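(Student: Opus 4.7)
The plan is to construct the two groupoids $\mathrm{Im}_{\mathrm{pl}}^1\, F$ and $\mathrm{Im}_{\mathrm{pl}}^2\, F$ explicitly and verify the three required properties. For $\mathrm{Im}_{\mathrm{pl}}^2\, F$, the natural choice is the full sub-groupoid of $\B$ whose objects are the $B$ such that $B\simeq FA$ for some $A\col\A$, with $\hat{M}_F$ the inclusion; this makes $\hat{M}_F$ full and faithful by construction. For $\mathrm{Im}_{\mathrm{pl}}^1\, F$, I would take the groupoid whose objects are those of $\A$ and whose hom-sets are
\[
    (\mathrm{Im}_{\mathrm{pl}}^1\, F)(A,A') \eqdef \B(FA,FA'),
\]
with composition and identities inherited from $\B$ (this is a groupoid because $\B$ is). Define $\hat{E}_F$ to be the identity on objects and $f\mapsto Ff$ on arrows; it is then surjective (in fact bijective) on objects.

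Next, I would define $\hat{\Omega}_F\col \mathrm{Im}_{\mathrm{pl}}^1\, F\ra\mathrm{Im}_{\mathrm{pl}}^2\, F$ to send an object $A$ to $FA$ (which belongs to $\mathrm{Im}_{\mathrm{pl}}^2\, F$ by construction) and to act as the identity on hom-sets, so that a morphism $g\col FA\ra FA'$ viewed in $\mathrm{Im}_{\mathrm{pl}}^1\, F$ is sent to the same morphism $g$ viewed in $\mathrm{Im}_{\mathrm{pl}}^2\, F$. Then $\hat{\Omega}_F$ is fully faithful by the definition of the hom-sets in $\mathrm{Im}_{\mathrm{pl}}^1\, F$ and essentially surjective because every object of $\mathrm{Im}_{\mathrm{pl}}^2\, F$ is isomorphic to some $FA$, hence an equivalence.

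Finally, I would verify that $\hat{M}_F\circ\hat{\Omega}_F\circ\hat{E}_F$ recovers $F$: on an object $A$ the composite gives $A\mapsto A\mapsto FA\mapsto FA$, and on an arrow $f\col A\ra A'$ it gives $f\mapsto Ff\mapsto Ff\mapsto Ff$, as required (up to the canonical structural isomorphisms coming from the strictification of $\Gpd$). There is essentially no obstacle here; the only point requiring some care is noting that $\mathrm{Im}_{\mathrm{pl}}^1\, F$ inherits a groupoid structure from $\B$ (every morphism in $\B(FA,FA')$ is invertible) and that the three functors respect composition and identities, which is immediate from the definitions. This factorisation is clearly characterised up to equivalence by the orthogonality of surjective and fully faithful functors in $\Gpd$.
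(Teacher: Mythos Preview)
Your proof is correct and follows essentially the same approach as the paper: the construction of $\mathrm{Im}_{\mathrm{pl}}^1\,F$ and of $\hat{E}_F$, $\hat{\Omega}_F$, $\hat{M}_F$ is identical. The only difference is that the paper describes $\mathrm{Im}_{\mathrm{pl}}^2\,F$ not as the full subgroupoid on objects isomorphic to some $FA$, but as the equivalent groupoid of triples $(A,\varphi,B)$ with $\varphi\col FA\ra B$ (and hom-sets $\B(B,B')$), precisely so that the inverse equivalence $\hat{\Omega}_F^{-1}$ can be written down constructively without choosing, for each such $B$, a witnessing object $A$ and isomorphism; the paper itself remarks in a footnote that your description is equivalent.
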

	
		\begin{proof}
			Let us first describe the two variants of the full image of $F$.
			
			The groupoid $\mathrm{Im}_{\mathrm{pl}}^1\,F$ is described in the following way.
			\begin{itemize}
				\item {\it Objects.} These are the objects of $\A$.
				\item {\it Arrows.} $(\mathrm{Im}_{\mathrm{pl}}^1\,F)(A,A')=\B(FA,FA')$.
					The composition and the identities are those of $\B$.
			\end{itemize}
			
			The groupoid $\mathrm{Im}_{\mathrm{pl}}^2\,F$ is described in the following
			way\footnote{It is clearly equivalent to the full subgroupoid
			of $\B$ whose objects are the objects isomorphic to $FA$ for some $A\col \A$. 
			The advantage
			of the description given here is that $\hat{\Omega}^{-1}_F$ is
			defined constructively.}.
			\begin{itemize}
				\item {\it Objects.} These are the triples $(A,\varphi,B)$,
					where $A\col \A$, $B\col \B$ and $\varphi\col FA\ra B$.
				\item {\it Arrows.} $(\mathrm{Im}_{\mathrm{pl}}^2\,F)((A,\varphi,B),(A',\varphi',B'))
					=\B(B,B')$. The composition and the identities are those of $\B$.
			\end{itemize}
	
			Then we define the functor $\hat{\Omega}_F$.  It maps $A$ to
			$(A,1_{FA},FA)$ and $g\col FA\ra FA'$ to $g$.  We can also define
			$\hat{\Omega}^{-1}_F\col \mathrm{Im}_{\mathrm{pl}}^2\,F\ra\mathrm{Im}_{\mathrm{pl}}^1\,F$, which maps
			$(A,\varphi,B)$ to $A$ and $g\col B\ra B'$ to the composite
			\begin{eqn}
				FA\overset{\varphi} \longrightarrow B\overset{g} \longrightarrow
				 B'\overset{\varphi'^{-1}} \longrightarrow FA'.
			\end{eqn}
			It is then obvious that $\hat{\Omega}^{-1}_F\circ\hat{\Omega}_F\equiv 1$ and
			that $\hat{\Omega}_F\circ\hat{\Omega}^{-1}_F\simeq 1$.
			So $\hat{\Omega}_F$ is an equivalence.
			
			Finally, we define the surjective functor $\hat{E}_F$,
			which maps $A$ to $A$ and $f\col A\ra A'$ to $Ff$,
			and the full and faithful functor $\hat{M}_F$,
			which maps $(A,\varphi,B)$ to $B$ and is the identity on arrows.
			Then we have $F\equiv \hat{M}_F\circ\hat{\Omega}_F\circ \hat{E}_F$.
		\end{proof}

	\begin{pon}\label{factimfid}
		Every functor $F\col \A\ra\B$ in $\Gpd$ factors
		as the following composite, where $E_F$ is full and surjective, $\Omega_F$
		is an equivalence, and $M_F$ is faithful:
		\begin{eqn}
			\A\overset{E_F} \longrightarrow\im^1F\overset{\Omega_F} 
			\longrightarrow\im^2F\overset{M_F} \longrightarrow\B.
		\end{eqn}
		We call the groupoid $\im^1F\simeq\im^2F$
		the \emph{image} of $F$ (or \emph{faithful image} of $F$).
	\end{pon}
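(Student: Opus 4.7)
The plan is to dualise the construction of Proposition \ref{descrfacpl}: there the full image was obtained by \emph{enlarging} the Hom-sets (taking all arrows of $\B$ between images), so here we shall obtain the faithful image by \emph{quotienting} the Hom-sets of $\A$, identifying arrows which become equal after applying $F$.

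Concretely, I would define $\im^1 F$ to have the same objects as $\A$, with $(\im^1 F)(A,A') := \A(A,A')/{\sim}$, where $f \sim f'$ iff $Ff = Ff'$; composition and identities descend from $\A$ because $F$ is a functor. I would define $\im^2 F$ to have the same objects as $\mathrm{Im}_{\mathrm{pl}}^2 F$, namely triples $(A,\varphi,B)$ with $\varphi\col FA\ra B$, but with arrows $(A,\varphi,B)\ra(A',\varphi',B')$ being those $g\col B\ra B'$ in $\B$ of the form $g = \varphi'\circ Ff\circ\varphi^{-1}$ for some $f\col A\ra A'$. Then I would define $E_F$ as the identity on objects sending $f$ to its class $[f]$; the functor $\Omega_F$ sending $A$ to $(A,1_{FA},FA)$ and $[f]$ to $Ff$; and $M_F$ sending $(A,\varphi,B)$ to $B$ and $g$ to $g$. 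Clearly $M_F \circ \Omega_F \circ E_F \equiv F$.

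The verifications then go as follows. The functor $E_F$ is surjective on objects by construction and full since each class has a representative. The functor $M_F$ is faithful by the very definition of arrows in $\im^2 F$. For $\Omega_F$: it is well-defined because $[f]=[f']$ forces $Ff=Ff'$; it is faithful because $Ff=Ff'$ forces $[f]=[f']$; it is full because every arrow of $\im^2 F$ between $(A,1_{FA},FA)$ and $(A',1_{FA'},FA')$ is, by definition, of the shape $Ff$; and it is essentially surjective because, given $(A,\varphi,B)$, the arrow $\varphi\col FA\ra B$ is an arrow $\Omega_F(A)\ra(A,\varphi,B)$ in $\im^2 F$ (using $f=1_A$), and it is invertible there.

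The main obstacle is calibrating the definition of the arrows of $\im^2 F$ correctly: one must choose them in $\B$ (so that $M_F$ is automatically faithful) yet constrain them to be exactly those that lift through $F$ (so that $\Omega_F$ remains full). A naive variant—taking equivalence classes of arrows of $\A$ as arrows of $\im^2 F$—would make $M_F$ fail to be faithful, while taking all arrows of $\B$ would make it full-and-faithful rather than merely faithful. Once this point is settled, the remaining checks are routine bookkeeping of equivalence classes and composites.
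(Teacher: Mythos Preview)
Your proof is correct and takes essentially the same approach as the paper's. The only difference is presentational: the paper describes the arrows of $\im^2 F$ as pairs $(f,g)$ with $g\circ\varphi = \varphi'\circ Ff$, subject to the equality $(f,g)=(f',g')$ iff $g=g'$, whereas you pass directly to the quotient (arrows are those $g$ admitting such an $f$); keeping the witness $f$ in the data lets the paper write down an explicit inverse $\Omega_F^{-1}$ (sending $(f,g)$ to $f$) rather than checking essential surjectivity as you do, but the two descriptions define the same groupoid.
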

	
		\begin{proof}
			Let us first describe the two variants of the image of $F$.
			
			The groupoid $\im^1F$ is described in the following way\footnote{We can
			remark that $(\im^1F)(A,A')=\im^1F_{A,A'}$,
			the image of $F_{A,A'}\col \A(A,A')\ra\B(FA,FA')$ in $\Ens$.}.
			\begin{itemize}
				\item {\it Objects.} These are the objects of $\A$.
				\item {\it Arrows.} These are the arrows of $\A$. The composition
					and the identities are those of $\A$.
				\item {\it Equality.} Two arrows $f,f'\col A_0\ra A_1$ are
					equal ($f\simeq f'$) if $Ff=Ff'$.
			\end{itemize}
			
			The groupoid $\im^2F$ is described in the following way.
			\begin{itemize}
				\item {\it Objects.} These are the triples $(A,\varphi,B)$,
					where $A\col \A$, $B\col \B$ and $\varphi\col FA\ra B$.
				\item {\it Arrows.} The arrows of $(A,\varphi,B)$ to
					$(A',\varphi',B')$ are the pairs $(f,g)$, where $f\col A\ra A'$
					and $g\col B\ra B'$, such that the following diagram commutes.
					\begin{xym}\xymatrix@=40pt{
						FA\ar[r]^{\varphi}\ar[d]_{Ff}
						&B\ar[d]^g
						\\ FA'\ar[r]_{\varphi'}
						&B'
					}\end{xym}
					The identity on $(A,\varphi,B)$ is $(1_A,1_B)$. Composition is defined componentwise. The inverse of $(f,g)$
					is $(f^{-1},g^{-1})$.
				\item {\it Equality.} Two arrows $(f,g), (f',g')\col (A,\varphi,B)
					\ra (A',\varphi',B')$ are equal if $g=g'$ (or, equivalently, $Ff=Ff'$).
			\end{itemize}
			
			Then we define the functor $\Omega_F$.  It maps $A$ to
			$(A,1_{FA},FA)$ and $f\col A\ra A'$ to $(f,Ff)$.  We can also define
			$\Omega^{-1}_F\col \im^2F\ra\im^1F$, which maps
			$(A,\varphi,B)$ to $A$ and $(f,g)$ to $f$; it is well defined, because
			$(f,g)=(f',g')$ in $\im^2F$ if and only if $Ff=Ff'$,
			i.e.\ if and only if $f\simeq f'$ in $\im^1F$.
			It is then obvious that $\Omega^{-1}_F\circ\Omega_F\equiv 1$.
			Besides, $\Omega_F\circ\Omega^{-1}_F\simeq 1$, because the following diagram
			gives an isomorphism $(A,1_{FA},FA)\simeq (A,\varphi,B)$.
			\begin{xym}\xymatrix@=40pt{
				FA\ar@{=}[r]\ar@{=}[d]
				&FA\ar[d]^{\varphi}
				\\ FA\ar[r]_{\varphi}
				&B
			}\end{xym}
			So $\Omega_F$ is an equivalence.
			
			Finally, we define the full and surjective functor $E_F$,
			which maps $A$ to $A$ and $f\col A\ra A'$ to $f$.
			And we define $M_F$, which maps $(A,\varphi,B)$ to $B$ and $(f,g)$
			to $g$.  This functor is faithful, because, by definition, $(f,g)=(f',g')$
			in $\im^2F$ if and only if $g=g'$ in $\B$.
			It is then clear that $F\equiv M_F\circ\Omega_F\circ E_F$.
		\end{proof}

	Let us turn now to $\CGS$.  We review the constructions of the following propositions
	and add to them the structures of symmetric 2-group,
	symmetric monoidal functor, and monoidal natural transformation.

	\begin{pon}\label{precedente}
		Every symmetric monoidal functor $F\col \A\ra\B$ in $\CGS$ factors
		as the following composite, where $\hat{E}_F$ is surjective, $\hat{\Omega}_F$
		is an equivalence, and $\hat{M}_F$ is full and faithful:
		\begin{eqn}
			\A\overset{\hat{E}_F} \longrightarrow\mathrm{Im}_{\mathrm{pl}}^1\,F\overset{\hat{\Omega}_F}
			\longrightarrow\mathrm{Im}_{\mathrm{pl}}^2\,F\overset{\hat{M}_F} \longrightarrow\B.
		\end{eqn}
	\end{pon}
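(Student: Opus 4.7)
The plan is to lift the construction of Proposition \ref{descrfacpl} from $\Gpd$ to $\CGS$ by equipping the full images $\mathrm{Im}_{\mathrm{pl}}^1\,F$ and $\mathrm{Im}_{\mathrm{pl}}^2\,F$ with symmetric 2-group structures in such a way that $\hat{E}_F$, $\hat{\Omega}_F$ and $\hat{M}_F$ become symmetric monoidal functors. Since the underlying groupoid-level properties (surjective, equivalence, full and faithful) were already proved in Proposition \ref{descrfacpl} and depend only on the underlying functor, the characterisations of Propositions \ref{caracplfidcgs} and \ref{caraccofidcgs} will deliver the desired conclusion in $\CGS$ once the monoidal enrichment is in place.

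First, I would equip $\mathrm{Im}_{\mathrm{pl}}^1\,F$ with a symmetric 2-group structure whose underlying objects and tensor on objects are those of $\A$, whose unit is $I$, and whose tensor on arrows transports the tensor of $\B$ along the coherence of $F$: given $g\col FA\to FA'$ and $g'\col FB\to FB'$, set $g\tens g'$ equal to
\[
F(A\tens B)\xrightarrow{\varphi^{-1}}FA\tens FB\xrightarrow{g\tens g'}FA'\tens FB'\xrightarrow{\varphi}F(A'\tens B').
\]
The associator, left and right unitors, symmetry and inverses are inherited componentwise from $\A$, and the relevant diagrams commute thanks to the corresponding diagrams in $\A$ together with the naturality and coherence axioms of $\varphi^F$. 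The functor $\hat{E}_F\col\A\to\mathrm{Im}_{\mathrm{pl}}^1\,F$ is then symmetric monoidal with $\varphi^{\hat{E}_F}\eqdef 1$, precisely because it sends $f\col A\to A'$ to $Ff$ and the tensor on $\mathrm{Im}_{\mathrm{pl}}^1\,F$ was designed to make this compatible.

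Next, I would equip $\mathrm{Im}_{\mathrm{pl}}^2\,F$ with the symmetric 2-group structure in which $(A,\varphi,B)\tens(A',\varphi',B')\eqdef (A\tens A',\varphi'',B\tens B')$, where
\[
\varphi''\eqdef F(A\tens A')\xrightarrow{(\varphi^F)^{-1}}FA\tens FA'\xrightarrow{\varphi\tens\varphi'}B\tens B',
\]
the unit is $(I,1_I,I)$, the tensor on arrows and all coherence data are taken from $\B$, and inverses are given by $(A^*,\tilde{\varphi},B^*)$ with $\tilde\varphi$ the obvious composite. The functor $\hat{M}_F$ becomes symmetric monoidal with $\varphi^{\hat{M}_F}_{(A,\varphi,B),(A',\varphi',B')}=1_{B\tens B'}$, and the equivalence $\hat{\Omega}_F$ inherits the structure with $\varphi^{\hat{\Omega}_F}$ essentially given by $\varphi^F$ (so that $\hat{M}_F\hat{\Omega}_F$ recovers $F$ monoidally). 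The composite $\hat{M}_F\circ\hat{\Omega}_F\circ\hat{E}_F$ is then monoidally equivalent to $F$ in $\CGS$.

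The main obstacle I expect is bookkeeping: verifying the pentagon, triangle and hexagon for the structures on $\mathrm{Im}_{\mathrm{pl}}^1\,F$ and $\mathrm{Im}_{\mathrm{pl}}^2\,F$, and verifying axioms \ref{axfoncmonass} and \ref{axfonmonsym} for each of $\hat{E}_F$, $\hat{\Omega}_F$ and $\hat{M}_F$. None of these is genuinely hard — every diagram reduces, after inserting and cancelling the appropriate $\varphi^F$'s, to a coherence diagram valid in $\A$, $\B$, or a coherence axiom of $F$ itself — but there are many of them. Once this routine check is done, surjectivity of $\hat{E}_F$, full-faithfulness of $\hat{M}_F$ (in the elementary sense, and thus in $\CGS$ by Propositions \ref{caracplfidcgs} and \ref{caraccofidcgs}), and the fact that $\hat{\Omega}_F$ is an equivalence all follow immediately from Proposition \ref{descrfacpl} applied to the underlying functors.
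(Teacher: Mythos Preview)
Your proposal is correct and follows essentially the same approach as the paper: lift the groupoid-level factorisation of Proposition~\ref{descrfacpl} by equipping $\mathrm{Im}_{\mathrm{pl}}^1\,F$ and $\mathrm{Im}_{\mathrm{pl}}^2\,F$ with the symmetric 2-group structures you describe, and give $\hat{E}_F$, $\hat{\Omega}_F$, $\hat{M}_F$ the monoidal structures $\varphi^{\hat{E}_F}=1$, $\varphi^{\hat{\Omega}_F}=\varphi^F$, $\varphi^{\hat{M}_F}=1$. The only point where the paper is slightly more explicit is that the coherence isomorphisms on $\mathrm{Im}_{\mathrm{pl}}^1\,F$ are not literally those of $\A$ but their images under $F$ (e.g.\ $\tilde a_{A,A',A''}\eqdef F(a_{A,A',A''})$), which is presumably what you mean by ``inherited from $\A$''.
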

	
		\begin{proof}
			We define the symmetric 2-group structure on
			$\mathrm{Im}_{\mathrm{pl}}^1\,F$.  If $A,A'\col \mathrm{Im}_{\mathrm{pl}}^1\,F$, then $A\tens A'$
			is defined as in $\A$; if we have $g\col FA_0\ra FA_1$ and $g'\col FA'_0\ra FA'_1$,
			which are arrows in $\mathrm{Im}_{\mathrm{pl}}^1\,F$,
			respectively $A_0\ra A_1$ and $A'_0\ra A'_1$,
			then $g\tens g'\col A_0\tens A'_0\ra A_1\tens A'_1$
			is defined by the following composite:
			\begin{eqn}
				F(A_0\tens A'_0)\xrightarrow{\varphi^{-1}}FA_0\tens FA'_0
				\xrightarrow{g\tens g'}FA_1\tens FA'_1\overset{\varphi} \longrightarrow
				F(A_1\tens A'_1).
			\end{eqn}
			It is easy to check that $-\tens -$ is a functor.
			We take as unit the object $I$ of $\A$.
			
			The associativity, neutrality and symmetry natural transformations
			are defined as the image by $F$ of the corresponding transformations
			in $\A$: $\tilde{a}_{A,A',A''}\eqdef F(a_{A,A',A''})$,
			$\tilde{l}_{A}\eqdef F(l_A)$, $\tilde{r}_A\eqdef F(r_A)$ and $\tilde{c}_{A,A'}\eqdef
			F(c_{A,A'})$.  Their naturality follows from the compatibility of $\varphi^F$
			with these transformations (because $F$ is monoidal symmetric)
			and from the naturality of the corresponding transformations in $\B$.
			
			The axioms of symmetric monoidal groupoid follow from those of
			$\A$ by applying to them the functor $F$.  Every object $A$ has an inverse,
			which is nothing else than the inverse $A^*$ in $\A$; we can define 
			as above $\tilde{\varepsilon}_A\eqdef F(\varepsilon_A)\col A\tens A^*\ra I$.
			
			Next, let us define the structure of symmetric 2-group on
			$\mathrm{Im}_{\mathrm{pl}}^2\,F$.  The tensor product
			is defined by $(A,\gamma,B)\tens (A',\gamma',B')\eqdef(A\tens A',\gamma'',
			B\tens B')$, where $\gamma''$ is the composite
			\begin{eqn}
				F(A\tens A')\xrightarrow{\varphi^{-1}}FA\tens FA'\xrightarrow{\gamma
				\tens\gamma'}B\tens B'.
			\end{eqn}
			On arrows, $g\tens g'$ is defined as in $\B$, and this 
			defines a functor because $-\tens-$ is a functor in $\B$.
			The unit is $(I,1_I,I)$ and the natural transformations
			of symmetric monoidal groupoid are those of $\B$ (for example,
			$l_{(A,\gamma,B)}\eqdef l_B$), and are natural because they are in $\B$.
			The axioms hold because they do in $\B$.
			The inverse of $(A,\gamma,B)$ is $(A^*,\tilde{\gamma},B^*)$, where
			$\tilde{\gamma}$ is the composite $F(A^*)\simeq (FA)^*
			\xrightarrow{(\gamma^*)^{-1}} B^*$; we define $\varepsilon_{(A,\gamma,B)}
			\eqdef\varepsilon_B$.
			
			The monoidal structure of $\hat{E}_F$ is given by
			$\varphi^{\hat{E}_F}_{A,A'}\eqdef 1_{F(A\tens A')}$, which is natural
			 because $\varphi^F$ is natural. The axioms
			of symmetric monoidal functor are trivially true.
			For $\hat{\Omega}_F$, we set $\varphi^{\hat{\Omega}_F}_{A,A'}
			\eqdef\varphi^F_{A,A'}$.
			The axioms
			of symmetric monoidal functor follow from these axioms for $F$.
			For $\hat{M}_F$, $\varphi^{\hat{M}_F}$ is the identity
			and the naturality and the axioms are trivially true.
		\end{proof}

	\begin{pon}\label{factimfidcgs}
		Every symmetric monoidal functor $F\col \A\ra\B$ in $\CGS$ factors
		as the following composite, where $E_F$ is full and surjective, $\Omega_F$
		is an equivalence, and $M_F$ is faithful:
		\begin{eqn}
			\A\overset{E_F} \longrightarrow\im^1F\overset{\Omega_F} \longrightarrow
			\im^2F\overset{M_F} \longrightarrow\B.
		\end{eqn}
	\end{pon}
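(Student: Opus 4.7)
The plan is to upgrade the $\Gpd$-level factorisation of Proposition \ref{factimfid} to a $\CGS$-level factorisation by transporting symmetric 2-group structures across the construction, just as was done for the full image in Proposition \ref{precedente}. The underlying groupoids $\im^1 F$ and $\im^2 F$, as well as the functors $E_F$, $\Omega_F$, $M_F$, are already built; what remains is to equip each of the two groupoids with a symmetric 2-group structure and each of the three functors with a symmetric monoidal structure, compatibly with the symmetric monoidal structure of $F$.

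First I would define the tensor on $\im^1 F$ on objects by that of $\A$ and on arrows by that of $\A$ as well. The key point to verify is that $\tens$ is well-defined on the quotient that defines equality of arrows in $\im^1 F$: if $Ff=Ff'$ and $Fg=Fg'$ in $\B$, then $F(f\tens g)=F(f'\tens g')$, which follows from the naturality of $\varphi^F$ together with $F(f\tens g)=\varphi^F\circ(Ff\tens Fg)\circ(\varphi^F)^{-1}$. The unit is $I$, and the associativity, unit and symmetry transformations are simply those of $\A$ (viewed in $\im^1 F$ via $E_F$), whose naturality descends to the quotient for the same reason. The axioms of symmetric monoidal groupoid hold in $\im^1 F$ because they hold in $\A$, and inverses are inherited from $\A$.

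Next I would define, as in the proof of Proposition \ref{precedente}, the tensor on $\im^2 F$ by $(A,\gamma,B)\tens(A',\gamma',B')\eqdef(A\tens A',\gamma'',B\tens B')$ with $\gamma''$ the composite $F(A\tens A')\xrightarrow{(\varphi^F)^{-1}}FA\tens FA'\xrightarrow{\gamma\tens\gamma'}B\tens B'$, and on arrows componentwise as in $\A$ and $\B$. The unit is $(I,1_I,I)$, the associativity, neutrality and symmetry transformations are those of $\A$ on the first component and of $\B$ on the third (compatible with $\gamma''$ by naturality of $\varphi^F$ and the monoidal coherence of $F$). Inverses are given by $(A,\gamma,B)^*\eqdef(A^*,\tilde\gamma,B^*)$ with $\tilde\gamma$ obtained from $\gamma$ as in the proof of Proposition \ref{precedente}. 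Then I would define the monoidal structures: $\varphi^{E_F}\eqdef 1$ (trivial), $\varphi^{\Omega_F}\eqdef 1$ since the tensor on $\im^1 F$ matches that on $\im^2 F$ after passing through $\Omega_F$ (using $\varphi^F$ to identify the distinguished isomorphisms), and $\varphi^{M_F}$ is the identity as well; alternatively $\varphi^{\Omega_F}_{A,A'}\eqdef\varphi^F_{A,A'}$ in the spirit of Proposition \ref{precedente}. The axioms of symmetric monoidal functor for these three functors reduce to the coherence axioms for $F$ itself.

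The only genuine subtlety — and what I expect to be the main obstacle — is bookkeeping: one must check that all the transformations defined in $\im^1 F$ and $\im^2 F$ are well-typed with respect to the definitions of equality of arrows, and that $\Omega_F$ is not merely an equivalence of groupoids but an equivalence in $\CGS$ (i.e.\ a symmetric monoidal equivalence). The latter follows because the quasi-inverse $\Omega^{-1}_F$ constructed in Proposition \ref{factimfid} inherits a monoidal structure from $\varphi^F$ by the same recipe, and the unit and counit $\Omega^{-1}_F\Omega_F\equiv 1$ and $\Omega_F\Omega^{-1}_F\simeq 1$ are monoidal natural transformations. Finally, $E_F$ is full and surjective, $M_F$ is faithful, and $\Omega_F$ is an equivalence already at the level of underlying functors by Proposition \ref{factimfid}, and these properties transfer unchanged to $\CGS$ by Propositions \ref{caracfidcgs}, \ref{caraccofidcgs} and \ref{caracplcofidcgs}.
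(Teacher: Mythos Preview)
Your proposal is correct and follows essentially the same approach as the paper: transport the symmetric 2-group structure from $\A$ to $\im^1 F$ (checking well-definedness on the quotient via naturality of $\varphi^F$), define the structure on $\im^2 F$ objectwise as for $\mathrm{Im}_{\mathrm{pl}}^2\,F$ with the coherence transformations given componentwise as pairs $(r_A,r_B)$, etc., and equip the three functors with the obvious monoidal data. One small precision: since arrows in $\im^2 F$ are pairs $(f,g)$, the correct monoidal structure on $\Omega_F$ is $\varphi^{\Omega_F}_{A,A'}=(1_{A\tens A'},\varphi^F_{A,A'})$ rather than the identity or $\varphi^F$ alone --- your ``alternatively'' is the right instinct, just remember the $\A$-component.
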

	
		\begin{proof}
			The tensor product of $\im^1F$ is defined on objects and on
			arrows as in $\A$; it preserves equality between arrows
			thanks to the naturality of $\varphi^F$.  The natural transformations
			of symmetric monoidal groupoid are defined as in $\A$ and are
			natural and satisfy the axioms because they do in $\A$.  The inverse
			is defined as in $\A$, as well as $\varepsilon_A\col A\tens A^*\ra I$.
			
			In $\im^2F$, the tensor product
			of $(A,\gamma,B)$ and $(A',\gamma',B')$ is defined as
			for $\mathrm{Im}_{\mathrm{pl}}^2\,F$ (see Proposition \ref{precedente}), whereas
			the tensor product of $(f,g)$ and $(f',g')$ is simply
			$(f\tens f',g\tens g')$, which is an arrow of $\im^2F$,
			thanks to the naturality of $\varphi^F$ and because $(f,g)$ and $(f',g')$
			are themselves morphisms.
			
			The natural transformations $a$, $l$, $r$, $c$ are each defined
			as being the pair of the corresponding natural transformations
			in $\A$ and in $\B$ (for example, $r_{(A,\gamma,B)}\eqdef(r_A,r_B)$);
			these are arrows in $\im^2F$, thanks to the compatibility
			of $\varphi^F$ with  this natural transformation, and to the naturality
			of the corresponding natural transformation in $\B$, and they
			are natural because they are in $\B$.  In the same way, the axioms
			of symmetric monoidal groupoid hold because they do in
			$\B$. The inverse of $(A,\gamma, B)$ is defined as in $\mathrm{Im}_{\mathrm{pl}}^2\,F$
			and $\varepsilon_{(A,\gamma,B)}=(\varepsilon_A,\varepsilon_B)$.
			
			For $\Omega_F$, the natural transformation $\varphi^{\Omega_F}_{A,A'}$
			is defined by $(1_{A\tens A'},\varphi^F_{A,A'})$. For
			$E_F$ and $M_F$, the definitions are as in the case of the full image
			(Proposition \ref{precedente}).
		\end{proof}

	We can check that $(\Surj,\PlFid)$ and $(\PlSurj,\Fid)$ are factorisation
	systems on $\CGS$ (see \cite{Kasangian2000a}).  Propositions
	\ref{caraccofidcgs} and \ref{caracplcofidcgs} tell us that these
	factorisation systems are coupled
	in the sense of Definition \ref{defsyfcouples}: let be $F\col\A\ra\B$ a symmetric
	monoidal functor between symmetric 2-groups; then:
	\begin{enumerate}
		\item $F$ is surjective if and only if, for every $\cat{Y}\col\CGS$,
			$-\circ F\col [\cat{Y},\B]\ra[\cat{Y},\A]$ is faithful;
		\item $F$ is full and surjective if and only if, for every $\cat{Y}\col\CGS$,
			$-\circ F\col [\cat{Y},\B]\ra[\cat{Y},\A]$ is full and faithful.
	\end{enumerate}

\subsection{$\CGS$ is 2-abelian}

	The goal of this subsection is to recall that in $\CGS$ the factorisations
	described in the previous subsection can be computed,
	for the full image, by taking the cokernel of the kernel
	\cite{Kasangian2000a} or the root of the copip \cite{Dupont2003a}
	and, for the faithful image, by taking the coroot of the pip \cite{Dupont2003a}
	or the kernel of the cokernel \cite{Kasangian2000a}.  This will prove
	that $\CGS$ is 2-Puppe-exact.
	Let us begin by the exactness properties on the “left” side of the two
	factorisations in $\CGS$.

	\begin{pon}
		For each symmetric monoidal functor $F\col \A\ra\B$ in $\CGS$,
		$\hat{E}_F\col\A\ra\mathrm{Im}_{\mathrm{pl}}^1\, F$ is the cokernel of the kernel of $F$.
	\end{pon}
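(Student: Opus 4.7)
The plan is to verify the universal property of the cokernel of $K_F$ directly for $\hat{E}_F$, equipped with a canonical 2-arrow that I will construct first.

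The 2-arrow $\hat{\zeta}\colon \hat{E}_F K_F \Ra 0$ is forced by the description of $\mathrm{Im}_{\mathrm{pl}}^1\, F$: since $(\hat{E}_F K_F)(A,b) = A$ and $0$ sends everything to $I$, a component $\hat{\zeta}_{(A,b)}$ must be an arrow $A\to I$ in $\mathrm{Im}_{\mathrm{pl}}^1\, F$, which by definition is an arrow $FA\to FI\equiv I$ in $\B$; I take $\hat{\zeta}_{(A,b)}\eqdef b$. Naturality of $\hat{\zeta}$ in $(A,b)$ is exactly the defining condition $b'\circ Ff = b$ on arrows of $\Ker F$, and monoidality of $\hat{\zeta}$ follows directly from the recipe for the tensor on $\Ker F$ in Definition \ref{dfdescker}.

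For the universal property, given $G\colon \A\to \cat{Y}$ in $\CGS$ together with $\gamma\colon GK_F\Ra 0$, I would construct the factoring $H\colon \mathrm{Im}_{\mathrm{pl}}^1\, F \to \cat{Y}$ as follows. On objects set $HA\eqdef GA$; on an arrow $g\colon FA\to FA'$ of $\B$ (i.e.\ an arrow $A\to A'$ in $\mathrm{Im}_{\mathrm{pl}}^1\, F$), I use the closed structure of the symmetric 2-group $\B$ to convert $g$ into an arrow $b_g\colon F(A\tens A'^*)\simeq FA\tens (FA')^*\to I$, obtained by tensoring $g$ on the right by $1_{(FA')^*}$ and composing with $\varepsilon_{FA'}$. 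This yields an object $(A\tens A'^*, b_g)\col \Ker F$, and then $Hg\colon GA\to GA'$ is defined by applying the same closed-structure correspondence in $\cat{Y}$ to $\gamma_{(A\tens A'^*,b_g)}\colon G(A\tens A'^*)\simeq GA\tens (GA')^*\to I$. The factorisation $H\hat{E}_F\simeq G$ on arrows $f\colon A\to A'$ of $\A$ reduces to the fact that for $g\eqdef Ff$, the object $(A\tens A'^*, b_{Ff})$ is canonically isomorphic to an image under $K_F$ of a $\Ker F$-structure already accounted for by $G$ and $\gamma$, so naturality of $\gamma$ gives back $Gf$; compatibility with $\hat{\zeta}$ and $\gamma$ follows from the same conversion applied to $(A,b)$ directly.

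The principal obstacle is the bookkeeping of closed-structure translations: one must verify that the operation $g\mapsto (A\tens A'^*, b_g)$ is compatible with composition in the sense that the kernel object attached to $g'\circ g$ is canonically isomorphic (through the tensor product of $\Ker F$) to $(A\tens A'^*, b_g)\tens(A'\tens A''^*, b_{g'})$ modulo cancellation of $A'\tens A'^*$ via $\varepsilon_{A'}$, and that this isomorphism, combined with the monoidality of $\gamma$, produces precisely the composite $Hg'\circ Hg$. Symmetric monoidality of $H$ is similar: the tensor of two arrows in $\mathrm{Im}_{\mathrm{pl}}^1\, F$ corresponds to the tensor of two kernel objects under the symmetry of $\tens$ on $\Ker F$. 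Once this bookkeeping is completed, uniqueness of $H$ up to canonical monoidal isomorphism follows from the fact that $\hat{E}_F$ is surjective, hence fully cofaithful on the 2-arrows compatible with $\hat{\zeta}$ thanks to the universal property being verified, so any rival factorisation is forced onto $H$ by its action on objects together with the compatibility with $\hat{\zeta}$ and $\gamma$.
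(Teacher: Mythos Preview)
Your approach is genuinely different from the paper's. The paper does not verify the universal property of $\hat{E}_F$ directly; instead it uses the explicit description of $\Coker K_F$ already available from Definition \ref{dfdesccoker} and shows that the comparison functor $\Phi\colon \Coker K_F\to\mathrm{Im}_{\mathrm{pl}}^1\, F$ induced by that universal property is an equivalence (surjective, full, faithful). The key construction --- associating to an arrow $g\colon FA\to FA'$ of $\B$ the kernel object $(A\tens A'^*, b_g)$ --- appears in both arguments, but in the paper it is used to prove fullness of $\Phi$, while you use it to \emph{define} the factorisation $H$ on arrows. The paper's route has the advantage that ``equivalence'' is a symmetric notion: once $\Phi$ is an equivalence, both clauses of the cokernel universal property transfer automatically from the known cokernel.

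There is a gap in your handling of the second clause. You write that ``$\hat{E}_F$ is surjective, hence fully cofaithful on the 2-arrows compatible with $\hat{\zeta}$''. Surjectivity gives only \emph{cofaithfulness} (uniqueness of a lift), not the existence half of $\hat{\zeta}$-full cofaithfulness. Concretely: given $u,v\colon\mathrm{Im}_{\mathrm{pl}}^1\, F\to\cat{Y}$ and a monoidal 2-arrow $\beta\colon u\hat{E}_F\Ra v\hat{E}_F$ compatible with $\hat{\zeta}$, you must produce $\alpha\colon u\Ra v$ with $\alpha\hat{E}_F=\beta$. Since $\hat{E}_F$ is the identity on objects, the components $\alpha_A\eqdef\beta_A$ are forced; the problem is naturality of $\alpha$ at an arbitrary arrow $g\colon FA\to FA'$ of $\mathrm{Im}_{\mathrm{pl}}^1\, F$, which need not be of the form $Ff$. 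This naturality does follow from the compatibility of $\beta$ with $\hat{\zeta}$, again via your closed-structure trick applied at the object $(A\tens A'^*,b_g)\in\Ker F$, but that argument must actually be carried out and is of the same order of difficulty as the functoriality bookkeeping you already flag for $H$. So the proposal is on the right track, but the final sentence papers over a step that is as substantial as the rest.
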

	
		\begin{proof}
			First, there is a monoidal natural transformation
			$\delta\col \hat{E}_FK_F\Ra 0$, where $\delta_{(A,b)}\eqdef b\col FA\ra I\equiv FI$.
			We will prove that the functor $\Phi\col \Coker K_F\ra \mathrm{Im}_{\mathrm{pl}}^1\,F$
			induced by the universal property of the cokernel is an equivalence.
			
			Let us describe $\Coker K_F$.
			\begin{itemize}
				\item {\it Objects.} These are the objects of $\A$.
				\item {\it Arrows.} An arrow $A_0\ra A_1$ is given
					by $(N,b,f)$, where $N\col \A$, $b\col FN\ra I$ in $\B$
					and $f\col A_0\ra N\tens A_1$ in $\A$.
				\item {\it Equality.} Two arrows $(N,b,f)$ and $(N',b',f')$
					are equal if there exists $n\col N\ra N'$ such that the following diagrams
					commute.
					\begin{xyml}\label{eqcokker}\begin{gathered}\xymatrix@R=20pt@C=40pt{
						&N\tens A_1\ar[dd]^{n\tens 1}
						\\ A_0\ar[ur]^-{f}\ar[dr]_-{f'}
						\\ &N'\tens A_1
					}\end{gathered}\;\;\;\begin{gathered}\xymatrix@R=20pt@C=40pt{
						FN\ar[dd]_{Fn}\ar[dr]^-b
						\\ &I
						\\ FN'\ar[ur]_-{b'}
					}\end{gathered}\end{xyml}
			\end{itemize}
			The functor $\Phi$ maps $A$ to $A$, the arrow $(N,b,f)\col A_0\ra A_1$
			is mapped to $\Phi(N,b,f)\col A_0\ra A_1$ in $\mathrm{Im}_{\mathrm{pl}}^1\,F$, which
			is the following composite in $\B$:
			\begin{eqn}
				FA_0\overset{Ff} \longrightarrow F(N\tens A_1)\overset{\varphi^{-1}} \longrightarrow
				FN\tens FA_1\xrightarrow{b\tens 1}I\tens FA_1\overset{l_{FA_1}} \longrightarrow FA_1.
			\end{eqn}
			The functor $\Phi$ is of course surjective.  It remains to prove
			that it is full and faithful. Let $g\col FA_0\ra FA_1$ be an arrow
			of $\mathrm{Im}_{\mathrm{pl}}^1\,F(A_0,A_1)$.  We set $N\eqdef A_0\tens A_1^*$,
			$b$ equal to the following composite: 
			\begin{eqn}
				F(A_0\tens A_1^*)\simeq FA_0\tens F(A_1^*)\xrightarrow{g\tens 1}
				FA_1\tens F(A_1^*)\simeq F(A_1\tens A_1^{*})
				\xrightarrow{F\varepsilon_{A_1}}FI\equiv I
			\end{eqn}
			and $f$ equal to the following composite: 
			\begin{eqn}
				A_0\simeq A_0\tens I\xrightarrow{1\tens\eta_{A_1}}
				A_0\tens(A_1^*\tens A_1)\simeq (A_0\tens A_1^*)\tens A_1.
			\end{eqn}
			Then $\Phi(N,b,f)=g$. So $\Phi$ is full.
			
			Finally, $\Phi$ is faithful, because if $\Phi(N,b,f)=\Phi(N',b',f')$,
			we set $n$ equal to the composite
			\begin{multline}\stepcounter{eqnum}
				N\simeq N\tens I\xrightarrow{1\tens\varepsilon^{-1}_{A_1}}N\tens (A_1\tens A_1^*)
				\simeq (N\tens A_1)\tens A_1^*\xrightarrow{f^{-1}\tens 1}
				A_0\tens A_1^* \\
				\xrightarrow{f'\tens 1} (N'\tens A_1)\tens A_1^*	\simeq
				N'\tens (A_1\tens A_1^*)\xrightarrow{1\tens\varepsilon_{A_1}}
				N'\tens I\simeq N'.
			\end{multline}
			Then $n$ satisfies equations \ref{eqcokker}
			and thus $(N,b,f)=(N',b',f')$.
		\end{proof}
	
	The following corollary has been proved in \cite{Kasangian2000a}.
		
	\begin{coro}
		In $\CGS$, every symmetric monoidal surjective functor is canonically
		the cokernel of its kernel.
	\end{coro}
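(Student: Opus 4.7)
The plan is to deduce this corollary directly from the preceding proposition together with the factorisation system $(\Surj,\PlFid)$ on $\CGS$ established in Section \ref{sectdeffactcgs}.

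First, I would recall that for an arbitrary symmetric monoidal functor $F\col\A\ra\B$, Proposition \ref{factimfidcgs} (applied with the variant of Proposition \ref{precedente}) gives a factorisation
\begin{eqn*}
	\A\overset{\hat{E}_F}\longrightarrow\mathrm{Im}_{\mathrm{pl}}^1\,F\overset{\hat{\Omega}_F}\longrightarrow
	\mathrm{Im}_{\mathrm{pl}}^2\,F\overset{\hat{M}_F}\longrightarrow\B,
\end{eqn*}
where $\hat{E}_F$ is surjective, $\hat{\Omega}_F$ is an equivalence, and $\hat{M}_F$ is full and faithful. Composing, $\hat{M}_F\hat{\Omega}_F$ is fully faithful (the fully faithful arrows being stable under composition in a factorisation system), so this exhibits $F$ as a $(\Surj,\PlFid)$-factorisation.

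Next, I would invoke the previous proposition to conclude that $\hat{E}_F$ is (canonically) the cokernel of the kernel of $F$. Now suppose that $F$ itself is surjective. Since $(\Surj,\PlFid)$ is a factorisation system (Kasangian--Vitale, as recalled at the end of Section \ref{sectdeffactcgs}), Proposition \ref{propfacdefem} (or condition 2 of the definition of factorisation system) gives that whenever $F\in\Surj$, the $\M$-component of any such factorisation is an equivalence. Hence $\hat{M}_F\hat{\Omega}_F$ is an equivalence, which means that $F$ is canonically equivalent to $\hat{E}_F$ as an object of the slice $\A\backslash\CGS$.

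Since equivalence with a cokernel of the kernel is precisely being canonically the cokernel of its own kernel, this yields the claim. The only subtle point, but one that is entirely routine, is to verify that the equivalence $F\simeq\hat{E}_F$ is compatible with the 2-arrows $\zeta_F$ and the canonical 2-arrow coming from the cokernel construction of $\hat{E}_F$ in the previous proposition; this follows from unwinding the definition of $\hat{E}_F$ and the identification $\varphi^{\hat{E}_F}_{A,A'}\equiv 1$, together with the natural 2-arrow coming from $\hat{\Omega}_F$. I expect no real obstacle, as the argument is essentially a formal consequence of the uniqueness part of factorisation systems combined with the explicit computation already carried out in the previous proposition.
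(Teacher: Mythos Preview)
Your proposal is correct and is exactly the intended deduction: the paper states this corollary without proof immediately after the proposition showing that $\hat{E}_F$ is the cokernel of the kernel of $F$, so the implicit argument is precisely that when $F$ is surjective the fully faithful part $\hat{M}_F\hat{\Omega}_F$ of the $(\Surj,\PlFid)$-factorisation is an equivalence, whence $F\simeq\hat{E}_F$. One minor slip: the factorisation you need is that of Proposition~\ref{precedente} (the full image, with $\hat{E}_F$, $\hat{\Omega}_F$, $\hat{M}_F$), not Proposition~\ref{factimfidcgs} (which is the faithful image); your parenthetical suggests you already realised this, but the citation should simply be to Proposition~\ref{precedente}.
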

	
	\begin{pon}
		For each symmetric monoidal functor $F\col \A\ra\B$ in $\CGS$,
		$E_F\col\A\ra\im^1 F$ is the coroot of the pip of $F$.
	\end{pon}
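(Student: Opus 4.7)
The plan is to use the explicit description of the coroot in $\CGS$ (Proposition \ref{descorac}) and the explicit description of $\im^1 F$ (Proposition \ref{factimfidcgs}), and show that the comparison arrow between them is an equivalence. The key observation is that in a symmetric 2-group $\A$, for any object $A_0$ the canonical map $\pi_1(\A)\to \A(A_0,A_0)$ sending $a\col I\ra I$ to $r_{A_0}\circ(1_{A_0}\tens a)\circ r_{A_0}^{-1}$ is an isomorphism of abelian groups, thanks to the adjunction $A_0\tens -\adj A_0^*\tens -$ and $A_0\tens A_0^*\simeq I$. This is precisely what is needed to identify the equivalence relation used in $\Coroot \pi_F$ with the one used in $\im^1 F$.

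First I would verify that $E_F\pi_F = 1_0$: for any $a\col \Pip F$ (i.e.\ $a\col I\ra I$ in $\A$ with $Fa=1_I$), $(E_F\pi_F)_a = E_F(a) = a$, which is equal to $1_I$ in $\im^1 F$ precisely because $Fa = F1_I$. By the universal property of the coroot (Definition \ref{defroot} dualised), this gives a comparison arrow $\Phi\col \Coroot \pi_F \ra \im^1 F$. Since $\Coroot \pi_F$ and $\im^1 F$ both have the objects and arrows of $\A$, the functor $\Phi$ is the identity on objects and sends an arrow $f$ to $f$; it remains to show $\Phi$ is fully faithful, i.e.\ that the two equivalence relations on $\A(A_0,A_1)$ coincide.

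The forward direction is routine: if $f, f'\col A_0\ra A_1$ satisfy $f\circ r_{A_0} = r_{A_1}\circ(f'\tens a)$ for some $a\col \Pip F$, then $Ff = Ff'$ follows immediately by applying $F$ and using naturality of $\varphi^F$ and the fact that $Fa = 1_I$. For the backward direction, assume $Ff = Ff'$ and set $h := f'^{-1}\circ f\col A_0\ra A_0$ (valid since $\A$ is a groupoid); then $Fh = 1_{FA_0}$. Under the canonical isomorphism $\A(A_0, A_0)\simeq \pi_1(\A)$ induced by the duality $A_0\tens A_0^*\simeq I$, the endomorphism $h$ corresponds to a unique $a\col I\ra I$ with $h = r_{A_0}\circ(1_{A_0}\tens a)\circ r_{A_0}^{-1}$, and naturality of this isomorphism with respect to $F$ (which preserves duality up to $\varphi^F$) forces $Fa = 1_I$, so $a\col \Pip F$. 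Rewriting $f = f'\circ h$ using naturality of $r$ yields $f\circ r_{A_0} = r_{A_1}\circ (f'\tens a)$, which is exactly the equality in $\Coroot \pi_F$.

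The main obstacle, though not very severe, is checking carefully that the map $a\mapsto r\circ (1\tens a)\circ r^{-1}$ is indeed the content of the equality relation in $\Coroot \pi_F$ (it requires unpacking Proposition \ref{descorac} with $\alpha = \pi_F$) and that the identification $\A(A_0,A_0)\simeq \pi_1(\A)$ is compatible with $F$ so that $Fh = 1$ transports to $Fa = 1_I$. Once this bookkeeping is done, $\Phi$ is an identity on objects, bijective on arrows modulo equivalence, and straightforwardly symmetric monoidal, hence an equivalence of symmetric 2-groups.
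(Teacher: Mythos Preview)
Your proposal is correct and follows essentially the same approach as the paper: both arguments reduce to showing that the two equivalence relations on $\A(A_0,A_1)$ coincide, using the duality structure of the 2-group to produce from the condition $Ff=Ff'$ an element $a\col I\ra I$ with $Fa=1_I$. The paper writes this $a$ down directly as the composite $\eta_{A_1}^{-1}\circ((f'^*)^{-1}\tens f)\circ\eta_{A_0}$, whereas you first form $h=f'^{-1}f$ and then invoke the isomorphism $\pi_1(\A)\simeq\A(A_0,A_0)$; these are minor repackagings of the same computation.
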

	
		\begin{proof}
			Let us describe the coroot of the pip of $F$.
			\begin{itemize}
				\item {\it Objects.} These are the objects of $\A$.
				\item {\it Arrows.} These are the arrows of $\A$.
				\item {\it Equality.} Two arrows $f,f'\col A_0\ra A_1$ are equal
					if there exists $a\col I\ra I$ in $\A$ such that $Fa=1_{FI}$
					and $f=r_{A_1}(f'\tens a) r^{-1}_{A_0}$.
			\end{itemize}
			On the one hand, if $f=f'$ in $\Coroot(\pi_F)$, 
			$Ff=r_{FA_1}(Ff'\tens Fa)r_{FA_0}^{-1}= Ff'$, because $Fa=1_I$ 
			and thus $f=f'$ in $\im^1F$.
			On the other hand, if $Ff=Ff'$, we set $a$ equal to the composite
			\begin{eqn}
				I\xrightarrow{\eta_{A_0}} A_0^*\tens A_0
				\xrightarrow{(f'^*)^{-1}\tens f}A_1^*\tens A_1
				\xrightarrow{\eta_{A_1}^{-1}} I.
			\end{eqn}
			Then $Fa=1_I$, because $Ff=Ff'$, and it is obvious that
			$f=r_{A_1}(f'\tens a) r^{-1}_{A_0}$.
			So the equality of $\Coroot(\pi_F)$ is equivalent
			to the equality of $\im^1F$, and $\Coroot(\pi_F)\equiv\im^1F$.
		\end{proof}

	The following corollary appears in \cite{Dupont2003a}.
	
	\begin{coro}
		In $\CGS$, every full and surjective functor is canonically the coroot
		of its pip.
	\end{coro}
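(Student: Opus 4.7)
The plan is to deduce this corollary directly from the preceding proposition by exploiting the factorisation system $(\PlSurj, \Fid)$. By the proposition, for any symmetric monoidal functor $F\col \A\ra\B$, the canonical construction of the coroot of $\Pip F$ yields $E_F\col \A\ra \im^1 F$, and $F$ factors as $F\equiv M_F\circ\Omega_F\circ E_F$, where $\Omega_F$ is an equivalence, $E_F$ is full and surjective, and $M_F$ is faithful. The composite $M_F\circ\Omega_F$ is therefore faithful (faithful arrows being stable under composition with equivalences).

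First I would observe that if $F$ itself is full and surjective, then by Proposition \ref{propfacdefem} applied to the factorisation system $(\PlSurj, \Fid)$ on $\CGS$ (established at the end of Subsection \ref{sectdeffactcgs}), the ``$m$''-part of any such factorisation of an $\E$-arrow must be an equivalence. Concretely, since $F\simeq (M_F\Omega_F)\circ E_F$ with $E_F\in\PlSurj$ and $M_F\Omega_F\in\Fid$, the latter is forced to be an equivalence.

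Then I would conclude that $F$, being equivalent to $E_F$ via a symmetric monoidal equivalence that composes on the left with $E_F$, is itself a coroot-candidate of $\Pip F$ canonically equivalent to the canonical coroot $E_F$. More precisely, the triple $(\B, F, \pi_F)$ becomes, up to the equivalence $M_F\Omega_F\col \im^1 F\ra \B$, the canonical coroot $(\im^1 F, E_F, \pi_F)$ constructed in the proposition, and this comparison equivalence is determined by the factorisation of $F$ itself. Hence $F$ is canonically the coroot of its pip.

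There is no real obstacle here: the argument is entirely formal, the only point requiring care being the verification that the equivalence $M_F\Omega_F$ intertwines the structure maps (the functor $E_F$ and the $2$-arrow $\pi_F$) in the way required to make $F$ literally (not merely abstractly) a coroot of $\Pip F$; this is immediate from the definitions given in Proposition \ref{factimfidcgs}.
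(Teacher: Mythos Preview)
Your proof is correct and is exactly the natural deduction from the preceding proposition. The paper itself does not give a proof of this corollary; it is stated without argument as an immediate consequence of the proposition showing that $E_F=\Coroot(\Pip F)$, together with the $(\PlSurj,\Fid)$-factorisation of Proposition~\ref{factimfidcgs}. Your write-up simply makes explicit the one step the paper leaves to the reader: since $F\equiv (M_F\Omega_F)\circ E_F$ with $E_F\in\PlSurj$ and $M_F\Omega_F\in\Fid$, if $F$ itself lies in $\PlSurj$ then the faithful part is forced to be an equivalence, so the comparison between $F$ and the canonical coroot $E_F$ is an equivalence.
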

	
	Let us prove now the dual properties, i.e.\ that the
	factorisations in $\CGS$ can also be constructed by taking a coquotient
	of a cokernel, namely the root of the copip for the full image, and the kernel
	of the cokernel for the faithful image.  See \cite{Kasangian2000a} and \cite{Dupont2003a}.
	
	\begin{pon}
		Let $F\col \A\ra\B$ be a symmetric monoidal functor in $\CGS$.
		Then $M_F\col\im^2 F\ra\B$ is the kernel of the cokernel of $F$.
	\end{pon}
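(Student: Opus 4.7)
The plan is to produce an explicit equivalence between $\im^2 F$ and $\Ker Q_F$ that commutes with the canonical arrows to $\B$, in a way dual to the previous proposition. First I would construct the 2-arrow $\mu_F\col Q_FM_F\Ra 0$: since $F\equiv M_F\Omega_FE_F$ and $\zeta_F\col Q_FF\Ra 0$, and since $E_F$ is fully cofaithful (Proposition \ref{caracplcofidcgs}) and $\Omega_F$ is an equivalence, $\zeta_F$ descends to a unique 2-arrow $\mu_F\col Q_FM_F\Ra 0$ with $\mu_F\Omega_FE_F\equiv\zeta_F$ (after whiskering by $\Omega_F^{-1}$). The universal property of the kernel then yields a comparison symmetric monoidal functor $\Phi\col\im^2F\ra\Ker Q_F$ commuting with $M_F$ and $k_{Q_F}$ up to canonical 2-arrow.

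Next I would unpack the descriptions: an object of $\Ker Q_F$ is a pair $(B,b)$ with $b\col Q_FB\ra I$ in $\Coker F$; by the description of $\Coker F$ (Definition \ref{dfdesccoker}), such a $b$ is the data $(A,g)$ with $A\col\A$ and $g\col B\ra FA\tens I$. Using the unitor $r_{FA}\col FA\tens I\simeq FA$ and the invertibility of $g$ in the groupoid $\B$, this is equivalent data to a triple $(A,\gamma,B)$ with $\gamma\col FA\ra B$, which is precisely an object of $\im^2 F$. This yields essential surjectivity.

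For fullness and faithfulness of $\Phi$, I would translate arrows carefully. An arrow $(B,(A,g))\ra(B',(A',g'))$ in $\Ker Q_F$ is an arrow $h\col B\ra B'$ in $\B$ with $(A',g')\circ Q_Fh = (A,g)$ in $\Coker F(B,I)$; by the equality relation on arrows of $\Coker F$, this amounts to the existence of an arrow $f\col A\ra A'$ making the appropriate coherence square commute — which, after applying the unitor and the inversion, is precisely the commutativity condition $h\gamma=\gamma'\circ Ff$ defining an arrow $(f,h)\col(A,\gamma,B)\ra(A',\gamma',B')$ in $\im^2 F$. Moreover, equality of arrows in $\Ker Q_F$ is determined by $h$ (since $k_{Q_F}$ is faithful), matching exactly the convention that arrows in $\im^2 F$ are identified when they agree on the $\B$-component. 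Thus $\Phi$ is fully faithful.

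The main technical obstacle will be bookkeeping of the coherence isomorphisms — in particular, verifying that the composition $(A',g')\circ Q_F h$ computed in $\Coker F$, once unwound through the associator and unitors of $\B$, produces exactly the equation $h\gamma=\gamma'\circ Ff$ (modulo $r$ and $\varphi^F$) that defines morphisms in $\im^2F$, and similarly that the induced symmetric monoidal structure on $\Phi$ is compatible on both sides. Once this diagrammatic check is done, $\Phi$ is an equivalence of symmetric 2-groups commuting (up to canonical 2-isomorphism) with the structural maps, establishing $(M_F,\mu_F)\simeq\Ker Q_F$.
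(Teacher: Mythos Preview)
Your proposal is correct and follows essentially the same route as the paper: the paper unpacks $\Ker Q_F$ explicitly as triples $(B,A,b)$ with $b\col B\ra FA\tens I$ (coming from the description of arrows in $\Coker F$), with morphisms $(g,f)$ identified when $g=g'$, and then observes that the comparison $\Phi\col\im^2F\ra\Ker Q_F$ sending $(A,\gamma,B)\mapsto(B,A,r_{FA}^{-1}\gamma^{-1})$ and $(f,g)\mapsto(g,f)$ is obviously an equivalence. Your treatment is simply more explicit about constructing the 2-arrow $Q_FM_F\Ra 0$ and about the bookkeeping for arrows, but the underlying identification is identical.
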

	
		\begin{proof}
			Let us first describe $\Ker Q_F$.
			\begin{itemize}
				\item {\it Objects.} These are the triples $(B,A,b)$, where
					$B\col \B$, $A\col \A$ and $b\col B\ra FA\tens I$.
				\item {\it Arrows.} An arrow $(B,A,b)\ra (B',A',b')$
					is an arrow $g\col B\ra B'$ equipped with $f\col A\ra A'$
					such that the following diagram commutes.
					\begin{xym}\xymatrix@=40pt{
						B\ar[r]^-{b}\ar[d]_g
						&FA\tens I\ar[d]^{Ff\tens 1_I}
						\\ B'\ar[r]_-{b'} &FA'\tens I
					}\end{xym}
				\item {\it Equality.} $(g,f)=(g',f')$ if and only if $g=g'$.
			\end{itemize}
			There is a symmetric monoidal functor (induced by the universal property
			of the kernel) $\Phi\col \im^2F\ra\Ker Q_F$, which maps $(A,b,B)$
			(where $b\col FA\ra B$) to $(B,A,\tilde{b})$, where $\tilde{b}$ is the composite
			$B\xrightarrow{b^{-1}} FA\xrightarrow{r_{FA}^{-1}} FA\tens I$.
			If $(f,g)\col (A,b,B)\ra (A',b',B')$, then $\Phi(f,g)=(g,f)$; $\Phi$
			preserves equality. $\Phi$ is obviously an equivalence.
		\end{proof}
	
	The following corollary has been proved in \cite{Kasangian2000a}.
	
	\begin{coro}
		In $\CGS$, every faithful functor is canonically the kernel of its
		cokernel.
	\end{coro}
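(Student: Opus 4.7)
The plan is to exploit the factorisation $F \simeq M_F\Omega_F E_F$ established in Proposition~\ref{factimfidcgs} together with the preceding proposition identifying $M_F$ as the kernel of $Q_F$. Given a faithful $F\col\A\ra\B$, my goal is to show that the comparison arrow $F\ra\Ker(\Coker F)$ induced by the unit of the adjunction $\Coker\adj\Ker$ is an equivalence in $\flc$.

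First I would observe that the intermediate piece $M_F\Omega_F\col \im^1 F\ra\B$ is itself a kernel of $\Coker F$: indeed, $M_F$ is such a kernel by the proposition just proved, $\Omega_F$ is an equivalence by Proposition~\ref{factimfidcgs}, and kernels are stable under precomposition with equivalences. So it is enough to show that the remaining factor $E_F\col\A\ra\im^1 F$ is an equivalence, for then $F\simeq M_F\Omega_F E_F$ becomes equivalent to the kernel $M_F\Omega_F$ via $E_F$, and the resulting equivalence $F\simeq\Ker(\Coker F)$ in $\flc$ is the canonical one (it is the comparison arrow forced by the universal property of the kernel).

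To see that $E_F$ is an equivalence under the assumption that $F$ is faithful, I would argue as follows. By Proposition~\ref{factimfidcgs}, $E_F$ is full and surjective and $\Omega_F$ is an equivalence, while $M_F$ is faithful; hence $M_F\Omega_F$ is faithful. Since $F\simeq M_F\Omega_F E_F$ is faithful by hypothesis, the cancellation property of faithful arrows (Proposition~\ref{propsimpfid}) implies that $E_F$ is faithful. Combining this with the characterisations of (co)faithful morphisms in $\CGS$ (Propositions~\ref{caracfidcgs} and~\ref{caracplcofidcgs}), $E_F$ is simultaneously faithful, full and surjective, which at the level of underlying groupoids means essentially surjective (in fact it is the identity on objects by construction), full and faithful, and hence an equivalence of groupoids; as $E_F$ is already a symmetric monoidal functor, it is an equivalence in $\CGS$.

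The main obstacle is a bookkeeping one rather than a conceptual one: one must verify that the equivalence $F\simeq M_F\Omega_F$ obtained in this way really agrees, up to a 2-cell in $\flc$, with the canonical comparison $\eta_F\col F\ra \Ker(\Coker F)$ coming from the adjunction $\Coker\adj\Ker$. This is handled by appealing to the universal property of $\Ker(\Coker F)$: the 2-arrow $\zeta_F\col Q_FF\Ra 0$ factors through $M_F$ via $\Omega_F E_F$, so the induced comparison agrees with the inverse of $E_F$ on $\im^1 F$, which is precisely the equivalence we produced. No further computation is needed beyond checking the compatibility of the monoidal 2-cells, which follows from the definitions of $\Omega_F$, $E_F$ and $M_F$ given in the proof of Proposition~\ref{factimfidcgs}.
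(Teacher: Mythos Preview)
Your proof is correct and is exactly the intended argument: the paper states this corollary without proof because it follows immediately from the preceding proposition together with the factorisation of Proposition~\ref{factimfidcgs}, by observing that when $F$ is faithful the full-and-surjective piece $E_F$ becomes also faithful (via cancellation, Proposition~\ref{propsimpfid}) and hence an equivalence. Your handling of the ``canonical'' part via the universal property of the kernel is appropriate; one could even shortcut the argument by noting that $E_F$ is literally the identity on objects and arrows and only changes the equality relation, so its faithfulness is \emph{precisely} the elementary faithfulness of $F$.
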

	
	\begin{pon}
		Let $F\col \A\ra\B$ in $\CGS$.  Then $\hat{M}_F\col\mathrm{Im}_{\mathrm{pl}}^2\,F\ra\B$
		is the root of the copip of $F$.
	\end{pon}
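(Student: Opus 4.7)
My plan mirrors the strategy of the previous three propositions in this subsection: give an explicit description of $\Root\rho_F$ using the characterisations already established, and exhibit an equivalence to $\mathrm{Im}_{\mathrm{pl}}^2\,F$ compatible with the maps to $\B$.

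First, I would unfold the description of $\Root\rho_F$. By the proposition describing roots in $\CGS$ (roots are full sub-2-groups), $\Root\rho_F$ is the full sub-2-group of $\B$ whose objects are those $B\col\B$ with $(\rho_F)_B = 1_I$ in $\Copip F$. Using the formula $\Copip F = (\pi_0\Coker F)\con$ and the description of $(\rho_F)_B = B$, this condition translates, via the definition of $\Coker F$, to: there exist $A\col\A$ and $g\col B\ra FA\tens I$ in $\B$. Equivalently (composing with $r_{FA}$ and inverting), there exists a morphism $\varphi\col FA\ra B$ in $\B$.

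Second, I would construct a symmetric monoidal functor $\Psi\col\mathrm{Im}_{\mathrm{pl}}^2\,F\ra\Root\rho_F$ sending $(A,\varphi,B)$ to $B$ (which lies in $\Root\rho_F$ precisely because $\varphi$ furnishes the required witness, via $g\eqdef r_{FA}^{-1}\varphi^{-1}$), acting as the identity on arrows, and preserving the tensor structure since both categories have their tensor defined componentwise in $B$ from $\B$. The structural 2-cells $\varphi^\Psi$ may be taken to be identities, as the tensor on the $B$-component of $\mathrm{Im}_{\mathrm{pl}}^2\,F$ coincides strictly with that of $\Root\rho_F$.

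Third, I would check $\Psi$ is an equivalence. Essential surjectivity follows from the unfolded description of $\Root\rho_F$: any object $B$ in it comes with witnessing data $(A,g)$, which produces $(A,\varphi,B)\col\mathrm{Im}_{\mathrm{pl}}^2\,F$ with $\Psi(A,\varphi,B) = B$. Fullness and faithfulness are immediate because, in both $\mathrm{Im}_{\mathrm{pl}}^2\,F((A,\varphi,B),(A',\varphi',B'))$ and $\Root\rho_F(B,B')$, the hom-groupoid is just $\B(B,B')$. Finally, by construction $\hat{M}_F$ factors as the inclusion $\Root\rho_F\hookrightarrow\B$ composed with $\Psi$, identifying $\hat{M}_F$ with the root of the copip as required.

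The only delicate point is the bookkeeping of the $\tens I$ discrepancy between the cokernel-based witness $g\col B\ra FA\tens I$ and the image-based datum $\varphi\col FA\ra B$; once this is tidied up via the unit isomorphism $r_{FA}$, the rest is a straightforward transcription of the pattern already used for $M_F$, $E_F$ and $\hat{E}_F$.
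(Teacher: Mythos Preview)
Your proof is correct and follows exactly the paper's approach: unfold the description of $\Root\rho_F$ as the full sub-2-group of $\B$ on objects $B$ admitting some $A\col\A$ and $g\col B\ra FA\tens I$, and identify this with $\mathrm{Im}_{\mathrm{pl}}^2\,F$. The paper's proof is the one-line version of what you wrote; your explicit equivalence $\Psi$ and the checks of essential surjectivity and full faithfulness just make precise what the paper means by ``this is thus exactly $\mathrm{Im}_{\mathrm{pl}}^2\,F$''.
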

	
		\begin{proof}
			$\Root(\zeta_F)$ is the full sub-2-group
			of $\B$ whose objects are the objects $B$ such that $B = I$ in $\Copip F$,
			i.e.\ such that there exist $A\col \A$ and $g\col B\ra FA\tens I$.
			This is thus exactly $\mathrm{Im}_{\mathrm{pl}}^2\,F$.
		\end{proof}

	The following corollary appears in \cite{Dupont2003a}.
		
	\begin{coro}
		In $\CGS$, every full and faithful functor is canonically the root
		of its copip.
	\end{coro}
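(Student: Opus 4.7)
The plan is to leverage the previous proposition, which identifies $\hat{M}_F \colon \mathrm{Im}_{\mathrm{pl}}^2 F \to \B$ with the root of the copip of $F$, together with the factorisation system $(\Surj, \PlFid)$ on $\CGS$ from Subsection \ref{sectdeffactcgs}. The essential observation is that if $F$ is already a full and faithful functor, then in the canonical factorisation $F \equiv \hat{M}_F \circ \hat{\Omega}_F \circ \hat{E}_F$ from Proposition \ref{precedente}, the surjective component $\hat{E}_F$ must be an equivalence, forcing $F$ itself to coincide (up to equivalence) with $\hat{M}_F$.

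First I would recall that the factorisation system $(\Surj, \PlFid)$ has $\hat{E}_F$ in $\Surj$ (surjective) and $\hat{M}_F$ in $\PlFid$ (full and faithful), while $\hat{\Omega}_F$ is an equivalence, so in particular also in $\PlFid$. Since full and faithful functors are stable under composition, the composite $\hat{M}_F \circ \hat{\Omega}_F$ lies in $\PlFid$, so the hypothesis that $F$ is full and faithful gives a factorisation $F \simeq (\hat{M}_F \circ \hat{\Omega}_F) \circ \hat{E}_F$ with the left factor in $\PlFid$ and the right factor in $\Surj$. Applying Proposition \ref{propfacdefem} (or equivalently condition 3 of the definition of factorisation system) to the hypothesis $F \in \PlFid$, we conclude that $\hat{E}_F$ is an equivalence.

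Next I would combine $\hat{E}_F \in \Equ$ with the fact that $\hat{\Omega}_F \in \Equ$ (established in the proof of Proposition \ref{descrfacpl}, which transfers directly to $\CGS$ as noted in Proposition \ref{precedente}) to obtain a canonical equivalence $F \simeq \hat{M}_F$ in the slice $\CGS / \B$. Composing this equivalence with the identification from the previous proposition, $\hat{M}_F = \Root(\zeta_F)$ (as a sub-2-group of $\B$), yields the desired conclusion that $F$ itself is canonically the root of its copip $\zeta_F \colon Q_F F \Ra 0$.

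There is no real obstacle: everything reduces to routine manipulation of the factorisation system and the previously established identification. The only point requiring a little care is the word \emph{canonically}: one must check that the equivalence $F \simeq \Root(\zeta_F)$ we obtain is indeed the one induced by the universal property of the root applied to $\zeta_F$, rather than some incidental equivalence. This follows because both $\hat{M}_F \hat{\Omega}_F \hat{E}_F \simeq F$ and the identification $\hat{M}_F = \Root(\zeta_F)$ are obtained from the same $2$-arrow $\zeta_F$ via the universal property, so the induced comparison is the canonical one.
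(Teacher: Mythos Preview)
Your argument is correct and is exactly the intended deduction: the paper states this corollary without proof, leaving it as the obvious consequence of the preceding proposition together with the factorisation system $(\Surj,\PlFid)$. Your reasoning that $F\in\PlFid$ forces $\hat{E}_F$ to be an equivalence, hence $F\simeq\hat{M}_F\simeq\Root(\rho_F)$, is precisely what is meant.

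One notational slip: in your final sentence you call $\zeta_F\colon Q_F F\Ra 0$ the copip of $F$. It is not; $\zeta_F$ is the canonical $2$-arrow of the \emph{cokernel}. The copip is the loop $\rho_F\colon 0\Ra 0\colon\B\ra\Copip F$ (recall $\Copip F=\Sigma\Coker F$ and $\rho_F=\sigma_{\Coker F}\,q_F$), and it is $\Root(\rho_F)$ that one is computing. The paper's own proof of the preceding proposition writes ``$\Root(\zeta_F)$'', which appears to be a typo for $\Root(\rho_F)$, so you have inherited the confusion; but since the root is only defined for loops $0\Ra 0$, you should correct this in your write-up.
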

	
	Finally, if we combine all corollaries of this subsection with
	the equivalences of subsection \ref{sectcarcflechcgs}, we get the following proposition.
	
	\begin{pon}\label{theocgsdab}
		$\CGS$ is a 2-abelian $\Gpd$-category.
	\end{pon}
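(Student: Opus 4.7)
The strategy is to verify the conditions of 2-abelianness directly from the explicit constructions and equivalences of arrow-classes established in this section. By Definition \ref{deftwoab}, it suffices to show that $\CGS$ is a 2-Puppe-exact $\Gpdp$-category with all finite products and coproducts. The existence of finite biproducts is immediate from the fact that the cartesian product $\A\times\B$, endowed componentwise with a symmetric 2-group structure, serves as both product and coproduct in $\CGS$, and the trivial 2-group is a zero object. The existence of kernels and cokernels has been established by Definitions \ref{dfdescker} and \ref{dfdesccoker}; pips, copips, roots and coroots then exist automatically by the formulas $\Pip F\simeq(\pi_1\Ker F)\dis$, $\Copip F\simeq(\pi_0\Coker F)\con$, and by Proposition \ref{coraccoker} and its dual.

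For 2-Puppe-exactness, the plan is to verify condition 3 of Proposition \ref{caractwopupex}, namely that the four types of 0-(co)faithful arrows in $\CGS$ are all of the corresponding ``normal'' kind. First, the four characterisations from Subsection \ref{sectcarcflechcgs} identify:
\begin{itemize}
\item the 0-faithful arrows with the faithful functors (Proposition \ref{caracfidcgs});
\item the fully 0-faithful arrows with the fully faithful functors (Proposition \ref{caracplfidcgs});
\item the 0-cofaithful arrows with the surjective functors (Proposition \ref{caraccofidcgs});
\item the fully 0-cofaithful arrows with the full and surjective functors (Proposition \ref{caracplcofidcgs}).
\end{itemize}
Next, the four corollaries immediately preceding the theorem assert precisely that these four classes of arrows are canonically the kernel of their cokernel, the root of their copip, the cokernel of their kernel, and the coroot of their pip, respectively. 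Combining these equivalences with these corollaries establishes all four parts of condition 3 of Proposition \ref{caractwopupex}, so $\CGS$ is 2-Puppe-exact.

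In fact, the real substantive work has already been done upstream: the four propositions identifying $E_F$, $\hat{E}_F$, $M_F$, $\hat{M}_F$ with coroots, cokernels, kernels, and roots, give the two factorisations $(\Surj,\PlFid)$ and $(\PlSurj,\Fid)$ via condition 2 of Proposition \ref{caractwopupex}; and this is the hardest part, since one must explicitly exhibit the comparison equivalences $\Coker K_F\simeq\mathrm{Im}_{\mathrm{pl}}^1\,F$, $\Coroot\pi_F\equiv\im^1 F$, $\Ker Q_F\simeq\im^2 F$, and $\Root\rho_F\equiv\mathrm{Im}_{\mathrm{pl}}^2\,F$ at the level of the elementary descriptions of these 2-groups. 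With all of this in hand, the proof of the proposition is essentially an assembly: adduce the zero object, biproducts, kernels and cokernels; invoke the characterisations of Subsection \ref{sectcarcflechcgs} to translate the $\Gpd$-categorical (co)faithfulness notions into the elementary ones; and apply the four corollaries to conclude condition 3 of Proposition \ref{caractwopupex}. No further obstacle arises.
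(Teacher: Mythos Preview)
Your proposal is correct and follows exactly the approach of the paper: the paper's own proof is the single sentence ``if we combine all corollaries of this subsection with the equivalences of subsection \ref{sectcarcflechcgs}, we get the following proposition,'' and your write-up simply spells out this assembly in detail, invoking biproducts, the zero object, kernels/cokernels, the four characterisations of (fully) 0-(co)faithful arrows, and the four corollaries to verify condition 3 of Proposition \ref{caractwopupex}.
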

	
	We can improve this result a little.
	
	\begin{pon}
		$\CGS$ is a good 2-abelian $\Gpd$-category.
	\end{pon}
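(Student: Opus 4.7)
The plan is to invoke Proposition \ref{theocgsdab}, which already gives that $\CGS$ is 2-abelian, and then to verify one of the equivalent conditions of Proposition \ref{defcaracgood}; the most economical is condition 2, namely
\begin{eqn}
  \pi_0(\PlFid)\subseteq\caspar{Mono}(\Ab)\quad\text{and}\quad\pi_1(\PlCofid)\subseteq\caspar{Epi}(\Ab).
\end{eqn}
The elementary descriptions of the two classes $\PlFid$ and $\PlCofid$ in $\CGS$ have already been established (Propositions \ref{caracplfidcgs} and \ref{caracplcofidcgs}): a fully faithful arrow is a full and faithful symmetric monoidal functor in the usual sense, and a fully cofaithful arrow is a full and surjective symmetric monoidal functor.

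Assuming these descriptions, the two inclusions are short arguments. For the first, let $F\col\A\ra\B$ be fully faithful and suppose $\pi_0(F)([A])=\pi_0(F)([A'])$, i.e.\ there exists an arrow $b\col FA\ra FA'$ in $\B$. Fullness of $F$ produces $a\col A\ra A'$ with $Fa=b$, so $[A]=[A']$ in $\pi_0(\A)$, and $\pi_0(F)$ is injective. For the second, let $F\col\A\ra\B$ be fully cofaithful and let $b\col I\ra I$ represent an element of $\pi_1(\B)$. Since we work with normalised symmetric monoidal functors ($FI\equiv I$), $b$ is an arrow $FI\ra FI$ and fullness of $F$ provides $a\col I\ra I$ with $Fa=b$; thus $\pi_1(F)([a])=[b]$ and $\pi_1(F)$ is surjective.

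Putting the two verifications together with Proposition \ref{defcaracgood} and Proposition \ref{theocgsdab} yields that $\CGS$ is a good 2-abelian $\Gpd$-category. There is no real obstacle here: the entire content of the statement is already encoded in the elementary characterisations of $\PlFid$ and $\PlCofid$ in $\CGS$, which do all the heavy lifting; the only care needed is to notice that the normalisation convention adopted at the beginning of the section makes the second verification an immediate consequence of fullness rather than of fullness plus invertibility of $\varphi_0$.
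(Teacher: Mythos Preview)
Your proof is correct and follows essentially the same approach as the paper: both verify condition~2 of Proposition~\ref{defcaracgood} by invoking the elementary characterisations of fully faithful and fully cofaithful arrows in $\CGS$ (Propositions~\ref{caracplfidcgs} and~\ref{caracplcofidcgs}), and both deduce injectivity of $\pi_0F$ and surjectivity of $\pi_1F$ directly from fullness together with the normalisation $FI\equiv I$.
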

	
		\begin{proof}
			We prove condition 2 of Proposition \ref{defcaracgood}.
			Let $F\col\A\ra\B$ be a fully faithful functor
			in $\CGS$.  Then the homomorphism $\pi_0F\col\pi_0\A\ra\pi_0\B$ is an
			injection: if we have $A,A'\col\A$ such that $FA\simeq FA'$, then $A\simeq A'$,
			since $F$ is full.
			
			Next, let $F\col\A\ra\B$ be a fully cofaithful functor (i.e.
			full and surjective, by Proposition \ref{caracplcofidcgs}).  The homomorphism
			$\pi_1F\col\pi_1\A\ra\pi_1\B$ is surjective since, if we have
			$b\col I\ra I$ in $\B$, as $I\equiv FI$, there exists $a\col I\ra I$
			such that $b=Fa$, because $F$ is full.
		\end{proof}

\section{2-modules}\label{sectdmod}

In this section, we will prove that the $\Gpd$-category of 2-modules on a 2-ring $\mathbb{R}$ (one-object $\CGS$-category) form a good 2-abelian $\Gpd$-category.  These are the additive $\Gpd$-functors from $\mathbb{R}$ to $\CGS$.  This is a special case of the “module categories” on a “ring category” of Kapranov and Voevodsky \cite{Kapranov1994a}.

More generally, we will prove that if $\C$ is (good) 2-abelian, then the $\Gpd$-category of additive $\Gpd$-functors from a preadditive $\Gpd$-category to $\C$ is (good) 2-abelian.

\subsection{Definition of the $\Gpd$-categories of additive $\Gpd$-functors}

	We define now the additive functors, natural transformations and modifications
	between preadditive $\Gpd$-categories .
	
	\begin{df}\label{dfgpdfctadd}\index{additive!Gpd-functor@$\Gpd$-functor}
	\index{Gpd-functor@$\Gpd$-functor!additive}\index{2-sgp-functor@$\CGS$-functor}
		Let $\C$, $\D$ be two preadditive $\Gpd$-categories .  A $\Gpd$-functor
		$F\col \C\ra\D$ equipped with a transformation $\mu_{fg}\col Ff+Fg\Ra F(f+g)$ natural
		at $f$ and $g$ is \emph{additive} (or is a \emph{$\CGS$-functor})
		if the following conditions hold:
		\begin{enumerate}
			\item for all $A,B\col \C$, the functor $F_{A,B}\col \C(A,B)\ra\D(FA,FB)$,
				equipped with $\mu_{fg}$, is monoidal symmetric;
			\item for all $A,B,C\col \C$, the natural transformation
				\begin{xym}\xymatrix@R=40pt@C=50pt{
					{\C(A,B)\times\C(B,C)}\ar[r]^-{F_{AB}\times F_{BC}}
						\ar[d]_{\mathrm{comp}}\drtwocell\omit\omit{\nu}
					&{\D(FA,FB)\times\D(FB,FC)}\ar[d]^{\mathrm{comp}}
					\\ {\C(A,C)}\ar[r]_{F_{AC}}
					&{\D(FA,FC)}
				}\end{xym}
				is bimonoidal.
		\end{enumerate}
	\end{df}

	These conditions can be translated in elementary terms by the following conditions:
	\begin{enumerate}
		\item \begin{xym}\xymatrix@R=40pt@C=50pt{
				{}\save[]+<1.5cm,0cm>*{(Ff + Fg) + Fh\,}="m"
					\ar[d]_-{\mu_{fg}+1}\restore
				&&{}\save[]+<-1.5cm,0cm>*{\,Ff+(Fg+Fh)}="n"
				\ar[d]^-{1+\mu_{gh}}\restore
				\ar "m";"n" ^-{\alpha}
				\\ F(f+g)+Fh
				&& Ff+F(g+h)
				\\ {}\save[]+<1.5cm,0cm>*{F((f+g)+h)\,}="a"
					\ar@{<-}[u]^-{\mu_{f+g,h}}\restore
				&&{}\save[]+<-1.5cm,0cm>*{\,F(f+(g+h))}="b"
					\ar@{<-}[u]_-{\mu_{f,g+h}}\restore
					\ar "a";"b"_-{F\alpha}
			}\end{xym}
			\begin{xym}\xymatrix@=40pt{
				Ff+Fg\ar[r]^-{\mu_{fg}}\ar[d]_\gamma
				&F(f+g)\ar[d]_{F\gamma}
				\\ Fg+Ff\ar[r]_-{\mu_{gf}}
				&F(g+f)
			}\end{xym}
		\item \begin{xym}\xymatrix@R=40pt@C=50pt{
				{Fg_1Ff+Fg_2Ff}\ar[d]_-{\psi^{Fg_1,Fg_2}_{Ff}}
					\ar[r]^-{\nu_{g_1f}+\nu_{g_2f}\,}
				&{F(g_1f)+F(g_2f)}\ar[d]^-{\mu_{g_1f,g_2f}}
				\\ (Fg_1+Fg_2)Ff\ar[d]_{\mu_{g_1g_2}Ff}
				& F(g_1f+g_2f)
				\\ {F(g_1+g_2)Ff}\ar[r]_-{\nu_{g_1+g_2,f}}
				&{F((g_1+g_2)f)}\ar@{<-}[u]_-{F\psi^{g_1,g_2}_f}
			}\end{xym}
			\begin{xym}\xymatrix@R=40pt@C=50pt{
				{FgFf_1+FgFf_2}\ar[d]_-{\varphi_{Ff_1,Ff_2}^{Fg}}
					\ar[r]^-{\nu_{gf_1}+\nu_{gf_2}\,}
				&{F(gf_1)+F(gf_2)}\ar[d]^-{\mu_{gf_1,gf_2}}
				\\ Fg(Ff_1+Ff_2)\ar[d]_{(Fg)\mu_{f_1f_2}}
				& F(gf_1+gf_2)
				\\ {FgF(f_1+f_2)}\ar[r]_-{\nu_{g,f_1+f_2}}
				&{F(g(f_1+f_2))}\ar@{<-}[u]_-{F\varphi_{f_1,f_2}^g}
			}\end{xym}
	\end{enumerate}
	
	In the case where $\C$ and $\D$ only have one object, we recover the 
	$\mathrm{Ann}$-functors between $\mathrm{Ann}$-categories  of
	\cite{Quang2007c} and the 2-homomorphisms
	between categorical rings of \cite{Jibladze2007a}.
	
	\begin{df}\index{Gpd-natural transformation@$\Gpd$-natural transformation!additive}%
	\index{additive!Gpd-natural transformation@$\Gpd$-natural transformation}
		Let $F,G\col \C\ra\D$ be additive $\Gpd$-functors between preadditive 
		$\Gpd$-categories.  A natural transformation $\kappa\col F\Ra G$ is \emph{additive}
		if, for all $A,B\col \C$, the natural transformation
		\begin{xym}\xymatrix@=40pt{
			{\C(A,B)}\ar[r]^-{F_{AB}}\ar[d]_{G_{AB}}
				\drtwocell\omit\omit{\;\;\;\;\,\kappa_{AB}}
			&{\D(FA,FB)}\ar[d]^{\kappa_B\circ-}
			\\ {\D(GA,GB)}\ar[r]_-{-\circ\kappa_A}
			&{\D(FA,GB)}
		}\end{xym}
		is monoidal.
	\end{df}
	
	This condition amounts to the commutativity of the following diagram for all
	$f_1,f_2\col A\ra B$.
	\begin{xym}\xymatrix@R=40pt@C=50pt{
		{\kappa_B Ff_1+\kappa_B Ff_2}\ar[d]_-{\varphi_{Ff_1,Ff_2}^{\kappa_B}}
			\ar[r]^-{\kappa_{f_1}+\kappa_{f_2}\,}
		&{(Gf_1)\kappa_A+(Gf_2)\kappa_A}\ar[d]^-{\psi^{Gf_1,Gf_2}_{\kappa_A}}
		\\ \kappa_B(Ff_1+Ff_2)\ar[d]_{\kappa_B\mu_{f_1f_2}}
		& (Gf_1+Gf_2)\kappa_A
		\\ {\kappa_B F(f_1+f_2)}\ar[r]_-{\kappa_{f_1+f_2}}
		&{G(f_1+f_2)\kappa_A}\ar@{<-}[u]_-{\mu_{f_1f_2}\kappa_A}
	}\end{xym}
	
	The $\Gpd$-category of additive $\Gpd$-functors between two preadditive $\Gpd$-categories 
	had been introduced in \cite{Drion2002a}, but without
	condition 2 of additive $\Gpd$-functor, and without additivity conditions
	for the natural transformations.
	
	\begin{df}\index{Add(C,D)@$\caspar{Add}(\C,\D)$}
		Let $\C$, $\D$ be two preadditive $\Gpd$-categories . The $\Gpd$-category
		$\caspar{Add}(\C,\D)$ has as objects the additive $\Gpd$-functors from $\C$
		to $\D$, as arrows the additive $\Gpd$-natural transformations
		between them and as 2-arrows the modifications between them.
	\end{df}
	
		\begin{proof}
			We have to check that the composite of two additive natural transformations
			is additive.
			The transformation which expresses the naturality of $\kappa\circ\theta$
			is the following composite,
			which is monoidal because all the involved transformations
			are monoidal, by the definition of preadditive $\Gpd$-categories 
			and of additive natural transformations.
		\end{proof}
			\begin{xym}\xymatrix@=40pt{
				{\C(A,B)}\ar[rr]^{F_{AB}}\ar[dr]^{G_{AB}}\ar[dd]_{H_{AB}}
					\drrtwocell\omit\omit{_<-1>{\;\;\;\;\;\theta_{AB}}}
					\ddrtwocell\omit\omit{_<3.3>{\;\;\;\;\;\kappa_{AB}}}
				&&{\D(FA,FB)}\ar[d]_{\theta_B\circ-}
					\dduppertwocell\omit{_<-7.6>}
					\ar@/^4.1pc/[dd]^{(\kappa_B\theta_B)\circ-}
				\\ &{\D(GA,GB)}\ar[r]^{-\circ\theta_A}\ar[d]_{\kappa_B\circ-}
					\drtwocell\omit\omit
				&{\D(FA,GB)}\ar[d]_{\kappa_B\circ-}
				\\ {\D(HA,HB)}\ar[r]^{-\circ\kappa_A}\rrlowertwocell\omit{_<3>}
					\ar@/_2.2pc/[rr]_{-\circ(\kappa_A\theta_A)}
				&{\D(GA,HB)}\ar[r]^{-\circ\theta_A}
				&{\D(FA,HB)}
			}\end{xym}
	
	In the case where $\C$ has one object (is a “2-ring”), an additive $\Gpd$-functor
	$\C\ra\CGS$ is what could be called a “2-module”\index{2-module}
	on $\C$.  The $\Gpd$-category
	$\dMod_\C\eqdef\caspar{Add}(\C,\CGS)$ is the $\Gpd$-category of 2-modules on $\C$.
	
	Like in dimension 1, we can prove the following results,
	which can be found (modulo the above-mentioned differences in definitions)
	in \cite{Drion2002a}.
	
	\begin{pon}
		Let $\C$, $\D$ be $\Gpd$-categories .  If $\D$ is preadditive,
		then $[\C,\D]$ is also preadditive.
	\end{pon}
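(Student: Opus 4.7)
The plan is to transfer all structure pointwise from $\D$ to $[\C,\D]$ and to exploit the preadditivity axioms of $\D$ (version 2 of Definition \ref{semipreadd}) to verify the corresponding axioms for $[\C,\D]$.

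First, given $\Gpd$-functors $F,G\col\C\ra\D$ and $\Gpd$-natural transformations $\mu,\nu\col F\Ra G$, I define their sum $\mu+\nu$ pointwise: $(\mu+\nu)_A \eqdef \mu_A + \nu_A$ in the symmetric 2-group $\D(FA,GA)$. The naturality 2-arrow $(\mu+\nu)_f\col (\mu_B+\nu_B)\circ Ff\Ra Gf\circ(\mu_A+\nu_A)$ is built from $\mu_f$, $\nu_f$ and the distributivity 2-arrows $\psi^{\mu_B,\nu_B}_{Ff}$ and $\varphi^{Gf}_{\mu_A,\nu_A}$ coming from the $\CGS$-enrichment of $\D$; the two coherence axioms of $\Gpd$-natural transformations for $\mu+\nu$ then reduce to those for $\mu$ and $\nu$ together with axioms (a), (b), (d), (e), (f) relating $\alpha$, $\lambda$, $\rho$, $\varphi$, $\psi$ in $\D$. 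The zero transformation $0\col F\Ra G$ is defined by $0_A\eqdef 0\col FA\ra GA$ with naturality 2-arrows $\psi^0_{Ff}\circ(\varphi^{Gf}_0)^{-1}$; its well-definedness uses axioms (c) and (g). On modifications, the sum $\aleph+\beth$ is defined pointwise by $(\aleph+\beth)_A \eqdef \aleph_A+\beth_A$.

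Next, I equip each hom-groupoid $[\C,\D](F,G)$ with the coherence natural transformations $a$, $l$, $r$, $c$ and the inversion $(-)^*$, all defined pointwise. The axioms of symmetric 2-group (associativity pentagon, unit, symmetry, existence of inverses) are verified component-wise, since each $\D(FA,GA)$ is itself a symmetric 2-group. The naturality conditions for these transformations reduce to checking, at each object $A$ of $\C$, equations that already hold in $\D(FA,GA)$.

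The main work — and the principal obstacle — is verifying that horizontal composition $[\C,\D](F,G)\times[\C,\D](G,H)\ra[\C,\D](F,H)$ is symmetric bimonoidal, together with the additivity of the associativity and unit natural transformations of $[\C,\D]$. I will use version 2 of Definition \ref{semipreadd} and check each of its conditions pointwise. For condition (a) (left distributivity), the bimonoidality 2-arrow $\varphi^{\kappa}_{\mu,\nu}\col\kappa\mu+\kappa\nu\Ra\kappa(\mu+\nu)$ is defined at $A$ by the corresponding 2-arrow $\varphi^{\kappa_A}_{\mu_A,\nu_A}$ of $\D$; its naturality in the variable direction of $\C$ follows from axiom (d) of $\D$ applied to the naturality 2-arrows $\kappa_f$, $\mu_f$, $\nu_f$. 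Condition (b) is dual. Condition (c) (compatibility of $\varphi$ and $\psi$) reduces to the corresponding condition in $\D$ applied pointwise. Conditions (d), (e), (f) are trivial since $\C$ is assumed strictly described, so the associativity 2-arrows of $[\C,\D]$ are identities; and condition (g) is similar for the unit 2-arrows $\dot\lambda$ and $\dot\rho$.

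Throughout, the essential point is that every diagram one must verify for $[\C,\D]$ decomposes, component by component in $\C$, into a diagram that already commutes in $\D$ by the preadditivity of $\D$. The bookkeeping is delicate — particularly ensuring that the pointwise-defined sum really gives a $\Gpd$-natural transformation and that the bimonoidality 2-arrows for composition are themselves modifications — but no genuinely new idea is required beyond the systematic pointwise transfer.
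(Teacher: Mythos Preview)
Your proposal is correct and follows essentially the same pointwise-transfer approach as the paper: define $(\mu+\nu)_A$, the coherence 2-arrows, and the distributivity modifications componentwise in $\D$, then verify each axiom of Definition~\ref{semipreadd} by reducing it to the corresponding axiom in $\D(FA,GA)$. One small slip: the triviality of the associativity 2-arrows $\dot\alpha$ in $[\C,\D]$ comes from $\D$ being strictly described, not $\C$; and even then conditions (d)--(f) are not literally trivial but reduce pointwise to the same conditions in $\D$ (relating $\varphi^{hg}$ to $\varphi^h$ and $\varphi^g$, etc.).
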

	
		\begin{proof}
			Let us first define the structure of symmetric 2-group on the groupoid
			$[\C,\D](F,G)$.  Let be $\theta,\kappa\col F\Ra G$.  We define
			$\theta+\kappa$ on objects by $(\theta+\kappa)_C\eqdef\theta_C+\kappa_C$
			and on arrows by $(\theta+\kappa)_{AB}\eqdef$
			\begin{xym}\xymatrix@=60pt{
				{\C(A,B)}\ar[r]^-{F_{AB}}\ar[d]_{G_{AB}}
					\drtwocell\omit\omit{\substack{\theta_{AB}+\kappa_{AB} \\ {} 
					\\ {}}\;\;\;\;\;\;\;\;\;\;\;\;\;\;\;\;\;\;\;\;\;\;\;\;}
				&{\D(FA,FB)}\ar[d]_{\substack{(\theta_B\circ-)\\ +(\kappa_B\circ-)}}
					\duppertwocell\omit{_<-2.5>\psi^{-1}}
					\ar@/^2pc/[d]^{(\theta_B+\kappa_B)\circ-}
				\\{\D(GA,GB)}\ar[r]^-{(-\circ\theta_A)+(-\circ\kappa_A)}
					\rlowertwocell_{-\circ(\theta_A+\kappa_A)}<-8>{_<2>\varphi}
				&{\D(FA,GB)}
			}\end{xym}
			The associativity $\alpha$ and the neutrality of the identity
			$\lambda$ and $\rho$ are defined pointwise as the corresponding transformations
			of $\D(FC,GC)$.  These are modifications
			and the axioms hold because they do
			in $\D(FC,GC)$.
			
			In the same way the modifications expressing the distributivity
			$\varphi_{\theta,\kappa}^\pi\col \pi\theta+\pi\kappa\Rrightarrow \pi(\theta+\kappa)$ and
			$\psi_\theta^{\xi,\pi}\col \xi\theta+\pi\theta\Rrightarrow(\xi+\pi)\theta$ are
			defined pointwise as in $\D$ and satisfy the required
			axioms because it is the case in $\D$. 
		\end{proof}

	\begin{pon}
		For all preadditive $\Gpd$-categories  $\C$ and $\D$, the $\Gpd$-ca\-te\-go\-ry
		$\caspar{Add}(\C,\D)$ is preadditive.
	\end{pon}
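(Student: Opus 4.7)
The plan is to deduce the proposition from the previous one by showing that the preadditive structure on $[\C,\D]$ restricts along the inclusion $\caspar{Add}(\C,\D)\hookrightarrow[\C,\D]$. Since $\caspar{Add}(\C,\D)$ is a full sub-$\Gpd$-category of $[\C,\D]$ (objects are additive $\Gpd$-functors, arrows are additive $\Gpd$-natural transformations, 2-arrows are all modifications between them), all the associativity, neutrality, symmetry and distributivity modifications defined pointwise in the proof of the previous proposition remain well-defined as soon as we know that the symmetric 2-group structure on each $[\C,\D](F,G)$ restricts to a symmetric 2-group structure on $\caspar{Add}(\C,\D)(F,G)$. Concretely, this amounts to four closure checks: the zero natural transformation $0\col F\Ra G$ is additive, the sum $\theta+\kappa$ of two additive $\Gpd$-natural transformations is additive, the opposite $\theta^{\ast}$ of an additive $\Gpd$-natural transformation is additive, and the composition of two additive natural transformations is additive (this last point is essentially already handled in the previous proposition).

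The main step, and the only non-formal one, is additivity of $\theta+\kappa$. Given $\theta,\kappa\col F\Ra G$ additive, I would expand the definition of $(\theta+\kappa)_{A,B}$ given in the previous proof — it is built from $\theta_{A,B}$, $\kappa_{A,B}$, and the bimonoidality modifications $\varphi$, $\psi$ of $\D$ — and check that the pentagonal naturality diagram characterising additivity of $\theta+\kappa$ at a pair $f_1,f_2\col A\ra B$ decomposes into (i) two instances of the pentagon expressing additivity of $\theta$ and $\kappa$ individually, and (ii) several instances of the elementary coherence diagrams for a preadditive $\Gpd$-category (equations \ref{preadda1}, \ref{preaddb1}, \ref{preaddc}, \ref{preaddd}, \ref{preadde}) that govern how $\varphi^h$, $\psi_g$ and the associativity/symmetry of $+$ interact in $\D$. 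The additivity of $0$ is immediate because the pentagon collapses: both rows become the unit coherence $0+0\xrightarrow{\sim}0$ up to the canonical isomorphisms $\varphi^0_{f_1,f_2}$ and $\psi^{0,0}_{\cdot}$. Additivity of $\theta^{\ast}$ then follows formally from that of $\theta$ together with the invertibility of objects in the hom 2-groups of $\D$ (it is the $(-)^{\ast}$-image of the additivity pentagon for $\theta$).

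Once closure of the hom 2-groups is established, the symmetric 2-group operations on $\caspar{Add}(\C,\D)(F,G)$ are exactly those inherited from $[\C,\D](F,G)$, and the associativity, unit, symmetry, and distributivity modifications are likewise inherited. Checking the axioms of a $\CGS$-enrichment (version 2 of Definition~\ref{semipreadd}) reduces to checking them for the ambient $\Gpd$-category $[\C,\D]$, which is done by the previous proposition. Thus $\caspar{Add}(\C,\D)$ is preadditive.

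The hard part is the additivity of $\theta+\kappa$. The difficulty is not conceptual but combinatorial: the pentagon to verify involves eleven arrows built from $\varphi$, $\psi$, the symmetry $\bar c$, and the components $\theta_{f_i}$, $\kappa_{f_i}$, and one must arrange the diagram so that the two additivity pentagons for $\theta$ and $\kappa$ sit inside it and the remaining cells are closed by axioms (d)–(f) of presemiadditivity. Once this diagram is drawn the verification is mechanical, but finding the right decomposition is where the work lies.
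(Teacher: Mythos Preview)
Your plan is correct and matches the paper's approach: both reduce to checking that $\theta+\kappa$ is additive when $\theta,\kappa$ are, with the rest inherited from $[\C,\D]$. The paper, however, dispatches your ``hard part'' in one line rather than by a diagram chase: recall that additivity of $\theta+\kappa$ means precisely that $(\theta+\kappa)_{A,B}$ is a monoidal natural transformation, and $(\theta+\kappa)_{A,B}$ was defined (in the proof of the previous proposition) as a pasting of $\theta_{AB}+\kappa_{AB}$, $\psi^{-1}$ and $\varphi$. Each of these is monoidal---$\theta_{AB}$ and $\kappa_{AB}$ by additivity of $\theta$ and $\kappa$, hence their sum is monoidal in the internal hom $[\C(A,B),\D(FA,GB)]$, while $\psi$ and $\varphi$ are monoidal by axiom 2(c) of preadditivity---so their composite is monoidal, and no eleven-arrow diagram needs to be assembled.
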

	
		\begin{proof}
			Everything goes as in the previous proposition.  The only thing to check
			is that $\theta+\kappa$ is additive.  This is the case, because all the
			transformations of the previous diagram are monoidal, by the definition
			of preadditive $\Gpd$-categories  and by the additivity of $\theta$
			and $\kappa$.
		\end{proof}

	In \cite{Drion2002a}, since $\varphi$ and $\psi$ were not assumed to be
	monoidal, we cannot prove that $\kappa+\theta$ is additive.  This explains
	why he does not require that the natural transformations between
	additive $\Gpd$-functors satisfy some conditions.

\subsection{Limits and colimits of (additive) $\Gpd$-functors}

	The goal of this section is to prove that limits are constructed pointwise in the
	$\Gpd$-categories  of $\Gpd$-functors (it suffices to do it
	for pullbacks and the zero object).  We will check that this remains true for
	additive $\Gpd$-functors.
	
	\begin{pon}\label{limpointparpoint}
		Let $\C$, $\D$ be $\Gpd$-categories, $F,G,H\col\C\ra\D$ be $\Gpd$-func\-tors
		and $\alpha\col F\Ra H$, $\beta\col G\Ra H$ be $\Gpd$-natural transformations.
		If, for every $C\col\C$, there exist an object $PC$, arrows $\gamma_C$
		and $\delta_C$ and a 2-arrow $\Xi_C$, as in the following diagram,
		\begin{xym}\label{prodfibauds}\xymatrix@=40pt{
			PC\ar[r]^{\gamma_C}\ar[d]_{\delta_C}\drtwocell\omit\omit{\;\;\;\,\Xi_C}
			&FC\ar[d]^{\alpha_C}
			\\ GC\ar[r]_{\beta_C}
			&HC
		}\end{xym}
		which form a pullback, then there exist a $\Gpd$-functor $P\col\C\ra\D$,
		$\Gpd$-natural transformations $\gamma$ and $\delta$ and a modification
		$\Xi$, as in the following diagram, which at an object $C\col\C$
		are defined by diagram \ref{prodfibauds}.
		\begin{xym}\xymatrix@=40pt{
			P\ar[r]^{\gamma}\ar[d]_{\delta}\drtwocell\omit\omit{\;\,\Xi}
			&F\ar[d]^{\alpha}
			\\ G\ar[r]_{\beta}
			&H
		}\end{xym}
		Moreover, for each diagram in $[\C,\D]$ of the form of the previous diagram,
		if at each point $C\col\C$ diagram \ref{prodfibauds} is a pullback
		in $\D$, then $\Xi$ is a pullback in $[\C,\D]$.
	\end{pon}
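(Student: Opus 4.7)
The plan is to build $P$, $\gamma$, $\delta$, $\Xi$ componentwise from the given pointwise pullbacks and to check the remaining data and axioms via the two halves of the universal property of pullbacks (Definition \ref{defprodfib}). For each arrow $c\col C\ra C'$ in $\C$, the 2-arrow
\begin{xym}\xymatrix@=30pt{
PC\ar[r]^{\gamma_C}\ar[d]_{\delta_C}\drtwocell\omit\omit{\Xi_C}
&FC\ar[r]^{Fc}\ar[d]_{\alpha_C}\drtwocell\omit\omit{\alpha_c}
&FC'\ar[d]^{\alpha_{C'}}
\\ GC\ar[r]_{\beta_C}\ar[d]_{Gc}\drtwocell\omit\omit{\beta_c^{-1}}
&HC\ar[d]^{Hc}
\\ GC'\ar[r]_{\beta_{C'}}
&HC'
}\end{xym}
provides a cone over the pullback at $C'$, so condition 1 of Definition \ref{defprodfib} applied to $\Xi_{C'}$ yields an arrow $Pc\col PC\ra PC'$ together with 2-arrows $\gamma_c\col Fc\circ\gamma_C\Ra\gamma_{C'}\circ Pc$ and $\delta_c\col Gc\circ\delta_C\Ra\delta_{C'}\circ Pc$ compatible with $\Xi_C$ and $\Xi_{C'}$. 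For a 2-arrow $\kappa\col c\Ra c'$, the two families $(Fc)\gamma_C\Ra\gamma_{C'}(Pc')$ and $(Gc)\delta_C\Ra\delta_{C'}(Pc')$ obtained by whiskering with $F\kappa$ and $G\kappa$ are compatible in the sense required by condition 2 of Definition \ref{defprodfib}; this produces a unique $P\kappa\col Pc\Ra Pc'$. The uniqueness clause of that same condition gives functoriality in $\kappa$ and the coherence 2-isomorphisms $\varphi^P_{c',c}\col Pc'\circ Pc\Ra P(c'\circ c)$ and $\varphi^P_C\col 1_{PC}\Ra P1_C$, along with the axioms \ref{axgpdfoncass} and \ref{axgpdfoncunite}.

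Having $P$ in hand, $\gamma$ and $\delta$ are $\Gpd$-natural transformations by the very definition of $\gamma_c$ and $\delta_c$, and their $\Gpd$-naturality axioms again reduce to the uniqueness part of the universal property tested on $F$ and $G$. The data $\Xi$ is a modification by construction, since for every $c$ the required equation $\alpha_{C'}\gamma_c\circ\Xi_{C'}Pc=\beta_{C'}\delta_c\circ\beta_c\delta_C\circ Hc\Xi_C\circ\alpha_c^{-1}\gamma_C$ was built into the induction of $Pc$.

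For the universality of $\Xi$ in $[\C,\D]$, suppose given $Q\col\C\ra\D$ with $\Gpd$-natural $u\col Q\Ra F$, $v\col Q\Ra G$ and a modification $\mu\col\alpha u\Rrightarrow\beta v$. At each $C$, condition 1 of Definition \ref{defprodfib} for $\Xi_C$ produces $(u_C,\mu_C,v_C)\col QC\ra PC$ with 2-arrows $\pi^1_C\col u_C\Ra\gamma_C(u,\mu,v)_C$ and $\pi^2_C\col v_C\Ra\delta_C(u,\mu,v)_C$. To promote $C\mapsto(u_C,\mu_C,v_C)$ to a $\Gpd$-natural transformation $Q\Ra P$, one applies condition 2 to the two competing comparison arrows $Pc\circ(u,\mu,v)_C$ and $(u,\mu,v)_{C'}\circ Qc$: both share the same image under $\gamma$ and $\delta$ (witnessed by the naturality 2-cells $u_c$, $v_c$ composed with $\gamma_c$, $\delta_c$ and the $\pi^i$), whence the joint faithfulness yields a unique 2-arrow making $(u,\mu,v)$ natural. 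The analogous one-step argument handles modifications between two such factorisations.

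The main obstacle is purely bookkeeping: verifying the various commuting pentagons and triangles (coherence of $P$, naturality of $\gamma_c$, $\delta_c$ in $c$, the modification equations) is a long but mechanical check in which each desired equality of 2-arrows into $PC'$ is reduced, via the faithfulness in condition 2 of Definition \ref{defprodfib}, to two equalities of 2-arrows into $FC'$ and $GC'$, which then hold by the $\Gpd$-functoriality and $\Gpd$-naturality of the given data. The second assertion of the proposition is immediate from this construction: if $\Xi\col\alpha\gamma\Rrightarrow\beta\delta$ in $[\C,\D]$ is such that each component $\Xi_C$ is a pullback, then the $P$ constructed from those components is equivalent to the domain of $\gamma$ and $\delta$ (apply the universal property at each $C$), and this equivalence is $\Gpd$-natural by the same uniqueness argument.
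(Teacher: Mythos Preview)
Your proposal is correct and follows essentially the same approach as the paper: both construct $P$, $\gamma$, $\delta$, $\Xi$ on arrows and 2-arrows by invoking the two parts of the pullback universal property at $C'$, and both reduce every coherence/naturality check to an equality of 2-arrows testable via the joint faithfulness of $\gamma_{C'}$ and $\delta_{C'}$. The only cosmetic difference is that the paper verifies the universal property of $\Xi$ in $[\C,\D]$ directly rather than phrasing the second assertion as ``the constructed $P$ is equivalent to the given domain''.
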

	
		\begin{proof}
			{\it Constructions. }$P$, $\gamma$, $\delta$ and $\Xi$ are already defined
			on objects.
			
			If $C\overset{c}\ra C'$ is an arrow in $\C$, by the universal property
			of the pullback, there exists an arrow $Pc$ and 2-arrows $\gamma_c$
			and $\delta_c$ such that
			\begin{xyml}\begin{gathered}\xymatrix@R=20pt@C=10pt{
				PC\ar[rr]^-{\gamma_C}\ar[dd]_{\delta_C}
					\ddrrtwocell\omit\omit{\;\;\;\;\Xi_C}
				&&FC\ar[dr]^-{Fc}\ar[dd]^{\alpha_C}
				\\ &{}\ddrrtwocell\omit\omit{_<-5>{\;\;\alpha_c}}
					\ddrrtwocell\omit\omit{_<5>{\;\;\;\;\,\beta_c^{-1}}}
				&&FC'\ar[dd]^{\alpha_{C'}}
				\\ GC\ar[dr]_-{Gc} \ar[rr]_{\beta_C}
				&&HC\ar[dr]^(0.44){Hc}
				\\ &GC'\ar[rr]_-{\beta_{C'}}
				&&HC'
			}\end{gathered} \;\;= \;\;
			\begin{gathered}\xymatrix@R=20pt@C=10pt{
				PC\ar[rr]^-{\gamma_C}
					\ar[dd]_{\delta_C}\ar@{-->}[dr]^-{Pc}
					\ddrrtwocell\omit\omit{_<-5>{\;\;\;\;\gamma_c^{-1}}}
					\ddrrtwocell\omit\omit{_<5>{\;\delta_c}}
				&&FC\ar[dr]^-{Fc}
				\\ &PC'\ar[rr]_-{\gamma_{C'}}\ar[dd]^{\delta_{C'}}
					\ddrrtwocell\omit\omit{\;\;\;\;\;\Xi_{C'}}
				&&FC'\ar[dd]^{\alpha_{C'}}
				\\ GC\ar[dr]_-{Gc} &&{}
				\\ &GC'\ar[rr]_-{\beta_{C'}}
				&&HC'
			}\end{gathered}.\end{xyml}
			This defines $P$, $\gamma$ and $\delta$ on arrows and prove
			that $\Xi$ is a modification.
			
			Next, if $\chi\col c\Ra c'\col C\ra C'$ is a 2-arrow in $\C$,
			by the universal property of the pullback, there exists a unique 2-arrow
			$P\chi\col Pc\Ra Pc'$ such that $\gamma_{c'}\circ\gamma_{C'}(P\chi) = 
			(F\chi)\gamma_C\circ\gamma_c$ and $\delta_{c'}\circ\delta_{C'}(P\chi) = 
			(G\chi)\delta_C\circ\delta_c$, which defines $P$ on 2-arrows and proves
			that $\gamma_c$ and $\delta_c$ are natural.
			
			By using again the universal property of the pullback, we get
			2-arrows $\varphi^P_{c',c}:\allowbreak (Pc')(Pc)\Ra P(c'c)$ and $\varphi^P_C
			\col 1_{PC}\Ra P1_C$ satisfying
			conditions expressing the $\Gpd$-naturality of $\gamma$ and $\delta$.
						
			We check the naturality of $\varphi^P$ and the $\Gpd$-functoriality
			of $P$ by using the fact that, for every $C\col\C$,
			the arrows $\gamma_C$ and $\delta_C$ are jointly faithful.
			
			{\it Universal property. }For the first part
			of the universal property, let be the following diagram in $[\C,\D]$.
			\begin{xym}\xymatrix@=40pt{
				Q\ar[r]^{\varepsilon}\ar[d]_{\eta}\drtwocell\omit\omit{\;\Theta}
				&F\ar[d]^{\alpha}
				\\ G\ar[r]_{\beta}
				&H
			}\end{xym}
			For every $C\col\C$, by the universal property of the pullback
			in $\D$, there exist an arrow $\chi_C\col QC\ra PC$ and 2-arrows
			$\Upsilon_C\col\varepsilon_C\Ra \gamma_C\chi_C$ and $\Upsilon'_C\col\eta_C
			\Ra\delta_C\chi_C$ such that $\Xi_C\chi_C\circ\alpha_C\Upsilon_C
			= \beta_C\Upsilon'_C\circ\Theta_C$.  By using again the universal property
			of the pullback, we define for each $C\overset{c}\ra C'$
			a 2-arrow $\chi_c$ satisfying a condition expressing that
			$\Upsilon$ and $\Upsilon'$ are modifications. We check that $\chi$
			is a $\Gpd$-natural transformation by using the fact that
			$\gamma_C$ and $\delta_C$ are jointly faithful.
			
			For the second part of the universal property, let be
			$\Theta\col\gamma\chi\Rrightarrow\gamma\chi'$ and
			$\Theta'\col\delta\chi\Rrightarrow\delta\chi'$
			such that $\Xi\chi'\circ\alpha\Theta=\beta\Theta'\circ\Xi\chi$.
			Then by the universal property at each point $C\col\C$, there exists
			a unique $\Upsilon_C\col\chi_C\Ra \chi'_C$ such that $\gamma_C\Upsilon_C=\Theta_C$
			and $\delta_C\Upsilon_C=\Theta'_C$.  We check that $\Upsilon$ is a
			modification by using the fact that $\gamma_C$ and $\delta_C$ are jointly
			faithful.
		\end{proof}
		
	\begin{pon}\label{limpointparpointadd}
		Let $\C$, $\D$ be preadditive $\Gpd$-categories ,
		$F,G,H\col\C\ra\D$ be additive $\Gpd$-functors
		and $\alpha\col F\Ra H$, $\beta\col G\Ra H$ be additive $\Gpd$-natural transformations.
		If for every $C\col\C$, there exist an object $PC$, arrows $\gamma_C$
		and $\delta_C$ and a 2-arrow $\Xi_C$, as in the following diagram,
		\begin{xym}\label{prodfibaudsu}\xymatrix@=40pt{
			PC\ar[r]^{\gamma_C}\ar[d]_{\delta_C}\drtwocell\omit\omit{\;\;\;\,\Xi_C}
			&FC\ar[d]^{\alpha_C}
			\\ GC\ar[r]_{\beta_C}
			&HC
		}\end{xym}
		which form a pullback, then there exist an additive $\Gpd$-functor
		$P\col\C\ra\D$,
		additive $\Gpd$-natural transformations $\gamma$ and $\delta$ and a
		modification $\Xi$, as in the following diagram, which, at an object $C\col\C$
		are defined by diagram \ref{prodfibaudsu}.
		\begin{xym}\xymatrix@=40pt{
			P\ar[r]^{\gamma}\ar[d]_{\delta}\drtwocell\omit\omit{\;\,\Xi}
			&F\ar[d]^{\alpha}
			\\ G\ar[r]_{\beta}
			&H
		}\end{xym}
		Moreover, for each diagram in $\caspar{Add}(\C,\D)$ of the form of the previous
		 diagram, if at each point $C\col\C$ diagram \ref{prodfibaudsu} is a pullback
		in $\D$, then $\Xi$ is a pullback in $\caspar{Add}(\C,\D)$.
	\end{pon}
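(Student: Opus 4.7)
The plan is to lift Proposition \ref{limpointparpoint} from the $\Gpd$-enriched setting to the $\CGS$-enriched setting by transporting the additive structure through the pointwise pullback. First I would apply Proposition \ref{limpointparpoint} to the underlying $\Gpd$-categorical data: since additive $\Gpd$-functors are in particular $\Gpd$-functors and additive $\Gpd$-natural transformations are in particular $\Gpd$-natural transformations, this gives us a $\Gpd$-functor $P\col\C\ra\D$, $\Gpd$-natural transformations $\gamma,\delta$, and a modification $\Xi$, with the property that at each object $C\col\C$ these are the pointwise pullback data, and with the property that $\Xi$ is a pullback in $[\C,\D]$. What remains is (i) equipping $P$ with an additive structure $\mu^P$, (ii) showing $\gamma$ and $\delta$ are additive, and (iii) checking that $\Xi$ is a pullback in $\caspar{Add}(\C,\D)$ rather than merely in $[\C,\D]$.

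For step (i), given $c,c'\col C\ra C'$ in $\C$, I would construct $\mu^P_{c,c'}\col Pc+Pc'\Ra P(c+c')$ using the universal property of the pullback in $\D(PC,PC')$. Concretely, the pair of composites $\gamma_{C'}(Pc+Pc')\xRightarrow{\varphi^{\gamma_{C'}}}\gamma_{C'}Pc+\gamma_{C'}Pc'\xRightarrow{\gamma_c+\gamma_{c'}} (Fc+Fc')\gamma_C\xRightarrow{\psi^{Fc,Fc'}_{\gamma_C}\circ(\mu^F_{c,c'}\gamma_C)} F(c+c')\gamma_C$ and the analogous composite through $\delta$ and $G$ are compatible with $\Xi_C$ and $\Xi_{C'}$ (using that $\alpha$ and $\beta$ are additive, and that $F\xRightarrow{\alpha}H\xLeftarrow{\beta}G$ commutes with $\mu^F,\mu^G,\mu^H$); applying the universal property of $\Xi$ at $c+c'$ then yields the required $\mu^P_{c,c'}$, unique with the property that $\gamma_{C'}\mu^P_{c,c'}$ and $\delta_{C'}\mu^P_{c,c'}$ are the stated composites. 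Naturality of $\mu^P$ in $c,c'$ follows by the uniqueness clause of the pullback.

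Step (ii) is then routine: additivity of $\gamma$ amounts to the equation relating $\mu^P$, $\mu^F$, $\varphi^{\gamma_{C'}}$, $\psi^{-}_{\gamma_C}$ and the $\gamma_c$'s, and it holds by construction of $\mu^P$; symmetrically for $\delta$. For the axioms of additive $\Gpd$-functor on $P$ (the symmetric monoidal axioms for each $P_{A,B}$ and the bimonoidality of $\nu^P$), I would verify each by post-composing with $\gamma_{C'}$ and $\delta_{C'}$: since these are jointly faithful (as a pullback cone), the verification reduces to the corresponding axioms already satisfied by the additive $\Gpd$-functors $F$, $G$, $H$ and the additive naturality of $\alpha$, $\beta$. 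For (iii), given $Q\col\caspar{Add}(\C,\D)$ with additive $\varepsilon,\eta$ and modification $\Theta$ satisfying the pullback cone equation, Proposition \ref{limpointparpoint} already supplies a unique (up to invertible modification) $\Gpd$-natural transformation $\chi\col Q\Ra P$ together with the witnessing $\Upsilon,\Upsilon'$; it remains to check that $\chi$ is additive, which again reduces by joint faithfulness of $\gamma,\delta$ to the additivity of $\varepsilon$ and $\eta$ together with the additivity of $\gamma$ and $\delta$ established in (ii). Modifications between additive natural transformations carry no extra data, so the second half of the universal property transfers directly from Proposition \ref{limpointparpoint}.

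The main obstacle is step (i) together with the axiom verifications in (ii), which are intrinsically diagrammatic: each coherence axiom for $\mu^P$ and each bimonoidality condition for $\nu^P$ expands into a large pasting diagram, and the argument that testing with $\gamma$ and $\delta$ reduces it to the corresponding axiom for $F,G,H$ relies throughout on the additivity of $\alpha$ and $\beta$ (to ensure the compatibility 2-cells on the two legs match after post-composition with the cone projections). Once the construction of $\mu^P$ is in hand, however, all further verifications are formal applications of the pullback's universal property.
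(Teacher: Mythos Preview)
Your proposal is correct and follows essentially the same approach as the paper: reduce to Proposition \ref{limpointparpoint}, construct $\mu^P$ from $\mu^F$ and $\mu^G$ via the universal property of the pointwise pullback, and verify all remaining axioms (symmetric monoidality of $P_{C,C'}$, additivity of $\gamma$, $\delta$, and of the comparison $\chi$) by testing against the jointly faithful projections $\gamma_{C'}$, $\delta_{C'}$. The paper's proof is terser but identical in structure; your explicit description of the composites defining $\mu^P$ and your remark that modifications carry no extra additive data are correct elaborations of what the paper leaves implicit.
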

	
		\begin{proof}
			{\it Constructions. } Everything goes as in the proof of the previous
			 proposition.  We just need to check that $P$, $\gamma$ and $\delta$ are additive.
			
			By the universal property of the pullback,
			we get, from the 2-arrows $\mu^F_{f,g}$ and $\mu^G_{f,g}$,
			a 2-arrow $\mu^P_{f,g}\col Pf+Pg\Ra P(f+g)$ satisfying two conditions
			expressing the additivity of $\gamma$ and $\delta$.  
			Then $\mu^P$ is natural because $\gamma_c$, $\delta_c$, $\mu^F$, $\mu^G$,
			$\varphi$ and $\psi$ are.
			
			We check that the functor $P_{C,C'}$,
			equipped with $\mu^P$, is symmetric monoidal, and that the axioms of additive 
			$\Gpd$-functor hold by using the fact that $\gamma_{C'}$ and $\delta_{C'}$
			are jointly faithful.
			
			{\it Universal property. }  The only new thing to check with respect to
			 the proof of the previous proposition is that the $\Gpd$-natural transformation
			$\chi\col Q\Ra P$ is additive.  We check the additivity axiom
			by using the fact that $\gamma_C$ and $\delta_C$ are jointly faithful.
		\end{proof}
	
	The following proposition is very easy to prove.
			
	\begin{pon}\label{objzerofonct}
		Let $\C$ be a $\Gpd$-category and $\D$ be a $\Gpdp$-category with zero object.
		Then the constant $\Gpd$-functor $0\col\C\ra\D$ which maps every object to $0$,
		every arrow to $1_0$ and every 2-arrow to $1_{1_0}$ is a zero object
		in $[\C,\D]$.  If moreover $\C$ and $\D$ are preadditive, then
		$0$ is a zero object in $\caspar{Add}(\C,\D)$.
	\end{pon}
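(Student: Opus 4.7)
My plan is to invoke the characterization (in the subsection on $\Gpdp$-categories, just before Definition~\ref{deftoutpointe} is followed by the one on zero objects) stating that an object $X$ in a $\Gpdp$-category is a zero object if and only if $1_X\simeq 0^X_X$. The $\Gpd$-category $[\C,\D]$ inherits a $\Gpdp$-structure from $\D$ pointwise: the zero $\Gpd$-natural transformation $0\col F\Ra G$ has component $0^{FC}_{GC}$ at $C\col\C$, with the required naturality data $\mu_f$ for $0$ provided by $\varphi_0$ and $\psi^0$ of $\D$. It thus suffices to exhibit an invertible modification between the identity $\Gpd$-natural transformation on the constant $\Gpd$-functor $0\col\C\ra\D$ and the zero $\Gpd$-natural transformation $0\Ra 0$.

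Component-wise, the identity has component $1_0\col 0\ra 0$ at each $C$, while the zero natural transformation has component $0^0_0$. Since $0$ is a zero object of $\D$, the same characterization applied in $\D$ yields $1_0\simeq 0^0_0$ in $\D(0,0)$, providing a distinguished invertible 2-arrow $\aleph_C\col 1_0\Ra 0$ at each $C$. The modification axiom, which for an arrow $f\col C\ra C'$ in $\C$ combines $\aleph_C$, $\aleph_{C'}$ and the data $0(f)=1_0$ of the constant functor, reduces entirely to an equation between 2-arrows of the form $0\Ra 0$ in $\D(0,0)$; this equation holds by the uniqueness clause in the definition of a zero object in $\D$. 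The same uniqueness verifies the two equations needed for $\aleph$ to be a $\Gpd$-modification (the ones involving $\varphi^F_{g,f}$ and $\varphi^F_A$), since everything collapses to 2-arrows in $\D(0,0)$.

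For the refinement to $\caspar{Add}(\C,\D)$, I would first observe that the constant $\Gpd$-functor $0\col\C\ra\D$ is additive when $\C$ and $\D$ are preadditive: the required transformation $\mu_{f,g}\col 0(f)+0(g)\Ra 0(f+g)$ at each pair $f,g$ is a 2-arrow between parallel arrows $0\ra 0$ in $\D$, and both the existence and the additivity axioms (of Definition~\ref{dfgpdfctadd}) reduce to equations of 2-arrows in the symmetric 2-group $\D(0,0)$, hence hold automatically. Likewise, the modification $\aleph\col 1\Rrightarrow 0$ constructed above automatically satisfies the additivity compatibility for the same reason. No genuine obstacle arises: the entire statement is a direct consequence of the fact that $\D(0,0)$ is essentially trivial when $0$ is a zero object, so every diagram of 2-arrows landing in $\D(0,0)$ commutes.
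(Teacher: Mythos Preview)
Your proposal is correct. The paper does not give a proof of this proposition at all; it simply remarks beforehand that it ``is very easy to prove'' and moves on. Your route via the characterisation ``$0$ is a zero object if and only if $1_0\simeq 0$'' is a perfectly good way to fill this in: since $\D(0,0)$ is a contractible groupoid (every arrow $0\ra 0$ has a unique 2-arrow to $0$), all coherence checks for the modification $\aleph$ and for the additive refinement collapse automatically, exactly as you say.
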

	
	It follows from the two previous propositions that, in $[\C,\D]$
	(or in $\caspar{Add}(\C,\D)$, if $\C$ and $\D$ are preadditive), the products
	(case where $H$ is $0$), the coproducts, the kernels (case where $G$ is $0$) and
	cokernels are constructed pointwise.  Given that the pips, roots, copips and
	coroots can be constructed with kernels and cokernels, we can also construct them
	pointwise.  This allows to prove the following corollary.
	
	\begin{coro}\label{caraczfidfonct}
		Let $\C$ and $\D$ be two $\Gpd$-categories .  Let us assume that $\D$ has all kernels.
		\begin{enumerate}
			\item An arrow $\alpha\col F\Ra G$ in $[\C,\D]$ is 0-faithful if and only
				if for every $C\col\C$, $\alpha_C\col FC\ra GC$ is 0-faithful in $\D$.
			\item An arrow $\alpha\col F\Ra G$ in $[\C,\D]$ is fully
				0-faithful if and only if for every $C\col\C$, $\alpha_C\col FC\ra GC$
				is fully 0-faithful in $\D$.
		\end{enumerate}
		If $\C$ and $\D$ are preadditive, the same properties hold
		in $\caspar{Add}(\C,\D)$.
	\end{coro}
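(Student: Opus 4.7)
The plan is to reduce both equivalences to the pointwise construction of kernels, using the known characterisations of the two classes of arrows in terms of triviality of $\Pip$ and $\Ker$ (Propositions \ref{pepclaszfid} and \ref{claspropker}).

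First I would observe that, since $\D$ has all kernels and a zero object, Proposition \ref{limpointparpoint} applied with codomain $G$, source $F$, and the second leg $0 \Ra G$ (using the constant zero $\Gpd$-functor from Proposition \ref{objzerofonct}) shows that the kernel of any $\alpha\col F\Ra G$ in $[\C,\D]$ is constructed pointwise, so that $(\Ker\alpha)(C)\simeq\Ker(\alpha_C)$ in $\D$ for every $C\col\C$. Moreover, an equivalence in $[\C,\D]$ is pointwise and, by Proposition \ref{objzerofonct}, the zero object of $[\C,\D]$ is the constant zero functor. Hence $\Ker\alpha\simeq 0$ in $[\C,\D]$ if and only if $\Ker(\alpha_C)\simeq 0$ in $\D$ for every $C\col\C$. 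Applying Proposition \ref{claspropker} in $[\C,\D]$ on one side and in $\D$ on the other yields point~2.

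For point~1, I would invoke Proposition \ref{kerpep}, which expresses $\Pip f\simeq\Omega\Ker f$ whenever the relevant kernels exist. Since $\Omega B=\Ker(0\ra B)$ is itself a kernel, it too is computed pointwise by the same Proposition \ref{limpointparpoint}. Therefore $(\Pip\alpha)(C)\simeq \Pip(\alpha_C)$, and $\Pip\alpha\simeq 0$ in $[\C,\D]$ if and only if $\Pip(\alpha_C)\simeq 0$ in $\D$ for every $C$. Proposition \ref{pepclaszfid} then delivers the characterisation.

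For the preadditive variant, the same argument goes through verbatim, with Proposition \ref{limpointparpointadd} replacing Proposition \ref{limpointparpoint} so that the pointwise kernel inherits an additive $\Gpd$-functor structure and the comparison natural transformations are additive, and with the additive half of Proposition \ref{objzerofonct} providing the zero object in $\caspar{Add}(\C,\D)$. There is no real obstacle; the only point worth noting is that “$\Ker\alpha\simeq 0$” has the same meaning whether computed in $[\C,\D]$ or in $\caspar{Add}(\C,\D)$, which is immediate since in both cases kernels and zero object are pointwise.
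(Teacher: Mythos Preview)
Your proposal is correct and follows essentially the same route as the paper: compute the relevant limit (pip for point~1, kernel for point~2) pointwise via Proposition~\ref{limpointparpoint}, then invoke the classification Propositions~\ref{pepclaszfid} and~\ref{claspropker} in $[\C,\D]$ and in $\D$ simultaneously. The paper phrases point~1 directly in terms of the pip 2-arrow $\varpi$ being $1_0$ componentwise rather than the pip object being zero, but this is the same characterisation from Proposition~\ref{pepclaszfid}, and your slightly more explicit treatment of the preadditive case via Proposition~\ref{limpointparpointadd} is a welcome addition.
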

	
		\begin{proof}
			We prove point 1. The proof of point 2 is similar.
			Let $\alpha\col F\Ra G$ be a $\Gpd$-natural transformation.  Since $\D$
			has all kernels, $\D$ has all pips and so, by Proposition
			\ref{limpointparpoint}, the pip of $\alpha$ exists (let us denote it by
			$\varpi\col 0\Rrightarrow 0\col P\Ra F$)
			and is computed pointwise.  If $\alpha$ is 0-faithful, $\varpi=1_0$,
			by Proposition \ref{pepclaszfid}.
			So, for every $C\col\C$, $\varpi_C=1_0$ and $\alpha_C$ is 0-faithful,
			again by Proposition \ref{pepclaszfid}. Conversely, if for every
			$C\col\C$, $\varpi_C=1_0$, then $\varpi=1_0$ and $\alpha$ is 0-faithful.
		\end{proof}
	
\subsection{$[\C,\D]$ and $\caspar{Add}(\C,\D)$ are 2-abelian if $\D$ is}

	\begin{pon}\label{pondmoddab}~
		\begin{enumerate}
			\item Let $\C$ be a $\Gpd$-category and $\D$ be a 2-abelian $\Gpd$-category.
				Then $[\C,\D]$ is 2-abelian; if moreover $\D$ is good, then 
				$[\C,\D]$ is also good.
			\item Let $\C$ be a preadditive $\Gpd$-category
				and $\D$ be a 2-abelian $\Gpd$-category.
				Then $\caspar{Add}(\C,\D)$ is 2-abelian; if  moreover $\D$ is good, then 
				$\caspar{Add}(\C,\D)$ is also good.
		\end{enumerate}
	\end{pon}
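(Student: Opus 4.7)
The plan is to reduce everything to the pointwise structure. The key observation is that in $[\C,\D]$ (and, with a little more care, in $\caspar{Add}(\C,\D)$) all finite limits and colimits---in particular zero objects, products, coproducts, kernels, and cokernels---are computed pointwise by Propositions~\ref{limpointparpoint}, \ref{limpointparpointadd}, and~\ref{objzerofonct} together with their duals. Since pips, copips, roots, and coroots are built out of kernels and cokernels (Propositions~\ref{kerpep} and~\ref{coraccoker} and their duals), they too are computed pointwise; consequently, for a $\Gpd$-natural transformation $\alpha\col F\Ra G$, the comparison arrows $\bar{w}_\alpha$ and $w_\alpha$ of diagrams~\ref{diagcokerkerraccopep} and~\ref{diagcoracpepkercoker} coincide, at each $C\col\C$, with $\bar{w}_{\alpha_C}$ and $w_{\alpha_C}$ in $\D$.

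From this I would deduce 2-Puppe-exactness via condition~1 of Proposition~\ref{caractwopupex}. Indeed, because $\D$ is 2-Puppe-exact, each $\bar{w}_{\alpha_C}$ and $w_{\alpha_C}$ is an equivalence in $\D$; and a $\Gpd$-natural transformation is an equivalence iff it is a pointwise equivalence, so $\bar{w}_\alpha$ and $w_\alpha$ are equivalences in $[\C,\D]$. Together with the finite biproducts (which $\D$ possesses by Proposition~\ref{gpd2abbiprod} and which also exist pointwise), this gives 2-abelianness of $[\C,\D]$. For $\caspar{Add}(\C,\D)$, Proposition~\ref{limpointparpointadd} already preserves additivity on the limit side; I would supply the straightforward dual for cokernels and coproducts by transporting the bimonoidal structure on composition from $\D$ at each object exactly as in that proof.

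For the goodness claim, I would invoke characterization~4 of Proposition~\ref{defcaracgood} and show that $c_\alpha$ is cofaithful and $d_\alpha$ is faithful for every arrow $\alpha$. Because $\Omega$ and $\Sigma$ are (co)kernels of zero arrows (Lemma~\ref{omegpepzc} and its dual) and $\pi_0\eqdef\Omega\Sigma$, $\pi_1\eqdef\Sigma\Omega$, the arrows $c_\alpha$ and $d_\alpha$ are again pointwise. Goodness of $\D$ gives cofaithfulness of each $c_{\alpha_C}$ and faithfulness of each $d_{\alpha_C}$ in $\D$; Corollary~\ref{caraczfidfonct} (for faithfulness) and its dual for cofaithfulness (proved identically, using that copips and cokernels are pointwise), together with Proposition~\ref{fidfidzer} which identifies (co)faithful arrows with their $0$-variants in the 2-Puppe-exact $\D$, then lift these properties to $[\C,\D]$ and to $\caspar{Add}(\C,\D)$.

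The main obstacle I foresee is the additive version of the pointwise principle on the colimit side: Proposition~\ref{limpointparpointadd} is stated only for pullbacks, and one needs the dual that the pointwise cokernel of an additive $\Gpd$-natural transformation carries a canonical additive structure making it a cokernel in $\caspar{Add}(\C,\D)$. This follows by the same bookkeeping as for the limit case---defining the bimonoidal structure on the composition functors of the pointwise colimit from the data of $\D$ and checking the universal property with respect to additive $\Gpd$-natural transformations---but it is the most calculation-heavy step. Once this is in place, the 2-abelianness and goodness of both $[\C,\D]$ and $\caspar{Add}(\C,\D)$ follow uniformly by the pointwise reduction outlined above.
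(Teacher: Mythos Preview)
Your proposal is correct and follows the same pointwise-reduction strategy as the paper. The only minor difference is that the paper invokes condition~3 of Proposition~\ref{caractwopupex} (showing directly that a $0$-faithful $\alpha$ is the kernel of its cokernel via Corollary~\ref{caraczfidfonct} and the pointwise computation of (co)limits) rather than your condition~1 via the comparison arrows $\bar w_\alpha$, $w_\alpha$, but both routes amount to the same pointwise check, and the goodness argument is identical.
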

	
		\begin{proof}
			We prove point 1. Point 2 is proved in the same way, by
			using Proposition \ref{limpointparpointadd}.
			
			First, by Proposition \ref{limpointparpoint} and Proposition
			\ref{objzerofonct}, $[\C,\D]$ has a zero object, all finite
			products and coproducts,
			and all kernels and cokernels, because $\D$ has these limits.
			
			Next, let $\alpha\col F\Ra G$ be a 0-faithful $\Gpd$-natural transformation,
			and let $\rho\col G\Ra Q$,
			with $\Upsilon\col \rho\alpha\Rrightarrow 0$ be the cokernel of $\alpha$, which is
			computed pointwise.  By Corollary \ref{caraczfidfonct},
			for every $C\col\C$, $\alpha_C$ is 0-faithful. So, since $\D$ is 2-abelian,
			$(\alpha_C,\Upsilon_C)=\Ker \rho_C$.  Then, by Proposition
			\ref{limpointparpoint}, $(\alpha,\Upsilon)=\Ker \rho$.
			
			We prove in the same way that every fully 0-faithful arrow is
			canonically the root of its copip.  The dual properties are proved dually.
			So $[\C,\D]$ is 2-abelian.
			
			Let us assume now that $\D$ is a good 2-abelian $\Gpd$-category.
			We use condition 4 of Proposition \ref{defcaracgood}.
			Let be $\alpha\col F\Ra G$ in $[\C,\D]$.
			Diagram \ref{diagbonnquatr} for $\alpha$ is constructed with  kernels
			and cokernels (thus pointwise).  Let us denote by $\gamma_\alpha$
			the comparison arrow $\Ker \alpha\Ra\Ker\pi_0\alpha$.  Since
			$\D$ is a good 2-abelian $\Gpd$-category, for every $C\col\C$,
			$(\gamma_\alpha)_C$ is cofaithful and thus 0-cofaithful. So, by Corollary
			\ref{caraczfidfonct}, $\gamma_\alpha$ is 0-cofaithful and thus cofaithful, 
			since we have already proved that $[\C,\D]$ is 2-abelian.  We prove dually
			that $\delta_\alpha$ is faithful.
		\end{proof}
		
	In particular, this proposition shows that if $\mathbb{R}$ is a 2-ring
	(a one-object $\CGS$-category), the $\Gpd$-category of 2-modules on $\mathbb{R}$,
	which is 
	\begin{eqn}
		\dMod_{\mathbb{R}}\eqdef\caspar{Add}(\mathbb{R},\CGS),
	\end{eqn}
	is 2-abelian.  We will prove
	that the abelian category of discrete objects (or of connected objects) in
	$\dMod_{\mathbb{R}}$ is nothing else than the category
	of modules on $\pi_0(\mathbb{R})$ (the ring with the same objects as $\mathbb{R}$,
	where two objects are equal if they are isomorphic in $\mathbb{R}$):
	\begin{eqn}\label{deuxmodmod}
		\Dis(\dMod_{\mathbb{R}})\simeq \caspar{Mod}_{\pi_0(\mathbb{R})}.
	\end{eqn}
	In particular, if $\mathbb{R}$ is $R\dis$ (the discrete 2-ring whose ring
	of objects is a ring $R$), the discrete 2-modules on $R$ are the ordinary
	modules on $R$.
	
	If $\C$ is a $\Gpd$-category or a $\CGS$-category,
	let us recall that the we denote by $\mathrm{Ho}(\C)$ the result of the application
	of $\pi_0$ to the groupoids (or symmetric 2-groups) of arrows between two objects
	(equation \ref{defhochomotop}).
	The category
	(or $\Ab$-category) $\mathrm{Ho}(\C)$\index{Ho(C)@$\mathrm{Ho}\,\C$}
	can be described in the following way.
	\begin{itemize}
		\item {\it Objects. } These are the objects of $\C$.
		\item {\it Arrows. } These are the arrows of $\C$.
		\item {\it Equality. } Two arrows $c,c'\col C\ra C'$ in $\pi_0(\C)$ are equal
			if there exists a 2-arrow $\gamma\col c\Ra c'$ in $\C$.
	\end{itemize}
	If $\C$ is a $\CGS$-category, the 2-arrows of the preadditive structure
	become equalities showing that $\mathrm{Ho}(\C)$ is a preadditive category.
	
	\begin{lemm}~
		\begin{enumerate}
			\item If $\C$ is a $\Gpd$-category and $\D$ is a $\Ens$-category, then 
				\begin{eqn}
					[\C,\D]\simeq [\mathrm{Ho}(\C),\D].
				\end{eqn}
			\item If $\C$ is a $\CGS$-category and $\D$ is an $\Ab$-category, then 
				\begin{eqn}
					\caspar{Add}(\C,\D)\simeq \caspar{Add}(\mathrm{Ho}(\C),\D).
				\end{eqn}
		\end{enumerate}
	\end{lemm}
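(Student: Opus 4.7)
The plan is to construct the equivalence in both cases as precomposition with the canonical quotient $\Gpd$-functor $P\col\C\ra\mathrm{Ho}(\C)$, and to verify essential surjectivity and local equivalence using the fact that $\D$, viewed as a $\Gpd$-category (resp.\ $\CGS$-category), is locally discrete.

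First I would observe that the assignment sending each object of $\C$ to itself, each arrow to itself, and each 2-arrow to the identity, defines a $\Gpd$-functor $P\col\C\ra\mathrm{Ho}(\C)$ (with $\mathrm{Ho}(\C)$ seen as a locally discrete $\Gpd$-category): the coherence 2-arrows $\varphi^P_{g,f}$ and $\varphi^P_A$ are identities because composition in $\mathrm{Ho}(\C)$ is strictly associative and unital. Precomposition with $P$ gives a $\Gpd$-functor $-\circ P\col[\mathrm{Ho}(\C),\D]\ra[\C,\D]$, and my goal is to show it is an equivalence. The key observation is that $\D$, having discrete Hom sets, admits only identity 2-arrows; so for any $\Gpd$-functor $F\col\C\ra\D$, the coherence natural transformations $\varphi^F_{g,f}$ and $\varphi^F_A$ are identities (forcing $F(gf)=FgFf$ and $F1_A=1_{FA}$), and for every 2-arrow $\gamma\col c\Ra c'$ in $\C$ the identity $F\gamma\col Fc\Ra Fc'$ forces $Fc=Fc'$ in $\D$.

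Consequently, $F$ factors uniquely as $F=\overline{F}\circ P$ for an ordinary functor $\overline{F}\col\mathrm{Ho}(\C)\ra\D$; this shows that $-\circ P$ is essentially surjective. For the local equivalence, let $F,G\col\mathrm{Ho}(\C)\ra\D$ and consider a $\Gpd$-natural transformation $\mu\col FP\Ra GP$: its components $\mu_A\col FA\ra GA$ together with the condition $\mu_f\col\mu_B\circ Ff\Ra Gf\circ\mu_A$ reduce, by local discreteness of $\D$, to the plain naturality equation $\mu_B\circ Ff=Gf\circ\mu_A$; so $\mu$ descends to a natural transformation $\overline{F}\Ra\overline{G}$. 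Similarly, a $\Gpd$-modification between two such is uniquely the identity family, so $[\mathrm{Ho}(\C),\D](F,G)\ra[\C,\D](FP,GP)$ is an isomorphism of sets (both are discrete groupoids, since $\D$ is a $\Ens$-category).

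For part 2, the same $P$ lifts to an additive $\Gpd$-functor once $\mathrm{Ho}(\C)$ is equipped with the addition inherited from $\C$—this is well-defined because the 2-arrows $\varphi$, $\psi$, $\varphi_0$, $\psi^0$ of the $\CGS$-enrichment of $\C$ witness bilinearity of composition in $\mathrm{Ho}(\C)$, and if $\alpha\col f\Ra f'$ and $\beta\col g\Ra g'$ then $\alpha+\beta\col f+g\Ra f'+g'$ shows that addition descends to $\pi_0$. If $F\col\C\ra\D$ is additive, the 2-arrow $\mu^F_{fg}\col Ff+Fg\Ra F(f+g)$ must be the identity (still by local discreteness), so $\overline{F}$ strictly preserves the additive structure, hence is an additive functor in the 1-dimensional sense; similarly additive $\Gpd$-natural transformations become additive natural transformations between the factored functors. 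The main (mild) obstacle is the bookkeeping verification that the addition on arrows in $\C$ really descends to a well-defined addition on $\mathrm{Ho}(\C)$ making $P$ additive, but this follows immediately from the fact that $\pi_0$ of a symmetric 2-group is an abelian group and that the composition's bimonoidal structure witnesses bilinearity after passing to $\pi_0$.
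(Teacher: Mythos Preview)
Your proof is correct and takes essentially the same approach as the paper: the paper's functor $\Phi\col[\mathrm{Ho}(\C),\D]\ra[\C,\D]$ is exactly your precomposition $-\circ P$, and its inverse $G\mapsto\hat{G}$ is your $F\mapsto\overline{F}$, both relying on the local discreteness of $\D$ to force all coherence 2-arrows to be identities. The paper handles part 2 with the same observation that additivity of $F$ and $\hat{G}$ transfer through this correspondence.
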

	
		\begin{proof}
			Let $\Phi\col [\mathrm{Ho}(\C),\D]\ra [\C,\D]$ be the $\Gpd$-functor
			which maps a $\Gpd$-functor $F\col\mathrm{Ho}(\C)
			\ra\D$ to the $\Gpd$-functor $\Phi(F)$, which maps an object $C$ to $FC$,
			an arrow $C\overset{c}\ra C'$ to $Fc$, and a 2-arrow $\gamma\col c\Ra c'$
			to the equality $Fc = Fc'$ (because $c=c'$ in $\mathrm{Ho}(\C)$ and thus $Fc=Fc'$
			in the $\Ens$-category $\D$).  A $\Gpd$-natural transformation
			$\alpha\col F\Ra F'$ is mapped to the transformation $\Phi(\alpha)$
			which takes the value $\alpha_C$ at the point $C\col\C$.
			
			The $\Gpd$-functor $\Phi$ is surjective because, if $G\col\C\ra\D$ is a $\Gpd$-functor,
			we can define $\hat{G}\col\mathrm{Ho}(\C)\ra\D$, which maps $C$ to $GC$ and
			$C\overset{c}\ra C'$ to $Gc$;	$\hat{G}$ preserves equality between arrows,
			because if $c=c'$ in $\mathrm{Ho}(\C)$, we have $\gamma\col c\Ra c'$ in $\C$
			which is mapped by $G$ to the equality $Gc=Gc'$.  It is clear
			that $\Phi(\hat{G})\equiv G$. We easily check
			that $\Phi$ is full and faithful.
			
			If $\C$ and $\D$ are preadditive, we must check that $\Phi$ restricts
			to an equivalence between additive functors.  It is obvious
			that the additivity of $F$ implies the additivity of $\Phi(F)$ and that
			the additivity of $G$ implies that of $\hat{G}$.
		\end{proof}
	
	By applying the following proposition to $\C=\mathbb{R}$ and $\D=\CGS$, we get
	equation \ref{deuxmodmod}.
	
	\begin{pon}
		Let $\C$ be a preadditive $\Gpd$-category and $\D$ a 2-abelian $\Gpd$-category.
		Then
		\begin{eqn}
			\Dis(\caspar{Add}(\C,\D))\simeq\caspar{Add}(\mathrm{Ho}(\C),\Dis(\D)).
		\end{eqn}
	\end{pon}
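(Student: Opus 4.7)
The plan is to show that an additive $\Gpd$-functor $F\col\C\ra\D$ is discrete in $\caspar{Add}(\C,\D)$ if and only if it factors through the inclusion $\Dis(\D)\hookrightarrow\D$, so that $\Dis(\caspar{Add}(\C,\D))$ is equivalent to $\caspar{Add}(\C,\Dis(\D))$. The result then follows by applying the lemma immediately preceding the statement, since $\Dis(\D)$ is an $\Ab$-category (it is abelian, by Corollary \ref{discconcabel}, because $\D$ is 2-abelian).

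For the first step, I would use Proposition \ref{caracdispreadd}: an object $F$ of a 2-Puppe-exact $\Gpdp$-category is discrete exactly when the arrow $0^F\col F\ra 0$ is faithful. By Proposition \ref{pondmoddab}, $\caspar{Add}(\C,\D)$ is 2-abelian (and in particular 2-Puppe-exact), so this criterion applies. Since faithfulness coincides with 0-faithfulness in a 2-abelian $\Gpd$-category (Proposition \ref{fidfidzer}), Corollary \ref{caraczfidfonct} tells us that the 0-faithfulness of $0^F\col F\Ra 0$ in $\caspar{Add}(\C,\D)$ is equivalent to the 0-faithfulness of $(0^F)_C=0^{FC}$ at every object $C\col\C$, i.e.\ to the discreteness of $FC$ in $\D$ for every $C$. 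Thus $F$ is discrete iff it factors through $\Dis(\D)$.

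Next, this factorisation is essentially unique and compatible with additivity: since $\Dis(\D)\hookrightarrow\D$ is fully faithful and closed under the relevant operations (kernels, products, and so on), an additive $\Gpd$-functor $\C\ra\D$ landing pointwise in $\Dis(\D)$ corresponds to an additive $\Gpd$-functor $\C\ra\Dis(\D)$, and the additive natural transformations between such functors are the same whether computed in $\D$ or in $\Dis(\D)$. Moreover, since $\Dis(\D)$ is a $\Ens$-category (in fact an $\Ab$-category), any 2-arrow between parallel arrows of $\Dis(\caspar{Add}(\C,\D))$ is trivial, consistently with $\Dis(\caspar{Add}(\C,\D))$ being a $\Ens$-category. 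This gives the equivalence $\Dis(\caspar{Add}(\C,\D))\simeq\caspar{Add}(\C,\Dis(\D))$.

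Finally, applying point 2 of the lemma with $\D$ replaced by $\Dis(\D)$, which is an $\Ab$-category, yields $\caspar{Add}(\C,\Dis(\D))\simeq\caspar{Add}(\mathrm{Ho}(\C),\Dis(\D))$, and composing the two equivalences gives the statement. The main subtlety will be the first step: verifying that $F$ is a discrete object of $\caspar{Add}(\C,\D)$ can be tested pointwise. This reduces cleanly to Corollary \ref{caraczfidfonct} once one knows that $\caspar{Add}(\C,\D)$ is 2-Puppe-exact, and from there the rest is essentially formal.
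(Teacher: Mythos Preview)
Your proof is correct and follows essentially the same approach as the paper: reduce discreteness of $F$ to pointwise discreteness via the equivalence between faithful and 0-faithful arrows in a 2-abelian $\Gpd$-category together with Corollary \ref{caraczfidfonct}, deduce $\Dis(\caspar{Add}(\C,\D))\simeq\caspar{Add}(\C,\Dis(\D))$, and then apply the preceding lemma with $\Dis(\D)$. The paper's version is terser, but the key steps and references are identical.
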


		\begin{proof}
			An additive $\Gpd$-functor $F\col\C\ra\D$ is discrete in
			$\caspar{Add}(\C,\D)$ if the arrow $F\Ra 0$ is faithful.  Since
			$\caspar{Add}(\C,\D)$ is 2-abelian, this is the case if and only if it
			is 0-faithful, which is the case if and only if for every $C\col \C$,
			the arrow $FC\ra 0$ is 0-faithful, by Corollary
			\ref{caraczfidfonct}.  So the discrete objects in
			$\caspar{Add}(\C,\D)$ are the additive $\Gpd$-functors whose image
			lies in $\Dis(\D)$.  We have thus the equivalence
			\begin{eqn}
				\Dis(\caspar{Add}(\C,\D))\simeq\caspar{Add}(\C,\Dis(\D)).
			\end{eqn}
			The conclusion follows by the previous lemma.
		\end{proof}

\section{Baez-Crans 2-vector spaces}\label{catintcatab}

	The goal of this section is to study the $\Gpd$-category of 2-vector spaces on 
	a field $K$ in the sense of Baez-Crans \cite{Baez2004c}.%
	\index{2-vector space, Baez-Crans}
	This is the $\Gpd$-category of internal groupoids, internal functors
	and internal natural transformations in the category of vector spaces on $K$.
	More generally, we will study the $\Gpd$-category of internal groupoids
	in any abelian category $\C$, which is equivalent to the $\Gpd$-category
	of chain complexes of length 1 in $\C$, with the morphisms of chain complexes
	and the homotopies of chain complexes (which is studied in part	in \cite{Grandis2001b}).  We will see that this $\Gpd$-category is
	2-abelian if and only if the axiom of choice holds in $\C$.  The problem is
	that the arrows which are faithful, full and cofaithful are not in general 
	equivalences.

\subsection{Definition}

	Let us fix an abelian abelian category $\C$. We won't work with the $\Gpd$-category
	$\Gpd(\C)$ of internal groupoids, internal functors and internal
	natural transformations in $\C$, but with the $\Gpd$-category $\flcp$, which is equivalent.\index{C2+@$\flcp$}
	
	\begin{df}
		Let $\C$ be an $\Ab$-category.  The $\Gpdp$-category
		$\flcp$ is defined in the following way.
		\begin{enumerate}
			\item {\it Objects.} These are the arrows $A_1\overset{f}\ra A_0$.
			\item {\it Arrows.} The arrows from $A_1\overset{f}\ra A_0$
				to $B_1\overset{g}\ra B_0$ are the commutative squares
				\begin{xym}\label{flechedeflc}\xymatrix@=30pt{
					A_1\ar[r]^{u_1}\ar[d]_f
					&B_1\ar[d]^g
					\\ A_0\ar[r]_{u_0} &B_0.
				}\end{xym}
			\item {\it 2-arrows.} A 2-arrow
				$\alpha\col(u_1,u_0)\Ra (u'_1,u'_0)\col f\ra g$ is an arrow
				$\alpha\col A_0\ra B_1$ in $\C$ (see the following diagram) such that
				\begin{align}\begin{split}\stepcounter{eqnum}
					u_1-u'_1 &=\alpha f;\\
					u_0-u'_0 &= g\alpha.
				\end{split}\end{align}
				In particular, a 2-arrow $(u_1,u_0)\Ra (0,0)$
				is an arrow $\alpha\col A_0\ra B_1$ such that
				$u_1=\alpha f$ and $u_0=g\alpha$, and a 2-arrow
				$(0,0)\Ra (0,0)\col f\ra g$ is an arrow $\alpha$ such that 
				$\alpha f=0$ and $g\alpha =0$.
			\item {\it Composition.} Composition of 1-arrows is defined
				componentwise:
				$(v_1,v_0)\circ(u_1,u_0)=(v_1u_1,v_0u_0)$.  The identity on
				$A_1\overset{f}\ra A_0$ is $(1_{A_1},1_{A_0})$. 
				For 2-arrows, the composite of
				$(u_1,u_0)\overset{\alpha}\Longrightarrow (u'_1,u'_0)
				\overset{\alpha'}\Longrightarrow (u''_1,u''_0)$ is
				$\alpha'+\alpha$; the identity on $(u_1,u_0)$ is $0$; and each
				2-arrow $\alpha$ has $-\alpha$ as its inverse.
				It remains to define the horizontal composition of 2-arrows.
				If we have $\alpha\col (u_1,u_0)\Ra (u'_1,u'_0)$ and 
				$\beta\col (v_1,v_0)\Ra (v'_1,v'_0)$ as in the following diagram,
				we define
				\begin{eqn}
					\beta * \alpha = v'_1\alpha+\beta u_0 = \beta u'_0 + v_1\alpha.
				\end{eqn}
				Consequently
				\begin{align}\begin{split}\stepcounter{eqnum}
					(v_1,v_0)*\alpha &= v_1\alpha;\\
					\beta * (u_1,u_0) &= \beta u_0.
				\end{split}\end{align}
				\begin{xym}\xymatrix@=40pt{
					A_1\ar@<1.2mm>[r]^{u_1}\ar@<-1.2mm>[r]_{u'_1}\ar[d]_f
					&B_1\ar[d]^g\ar@<1.2mm>[r]^{v_1}\ar@<-1.2mm>[r]_{v'_1}
					&C_1\ar[d]^h
					\\ A_0\ar@<1.2mm>[r]^{u_0}\ar@<-1.2mm>[r]_{u'_0}
						\ar[ur]_(0.6)\alpha
					&B_0\ar@<1.2mm>[r]^{v_0}\ar@<-1.2mm>[r]_{v'_0}\ar[ur]_(0.6)\beta
					&C_0
				}\end{xym}
			\item {\it Zero arrows.} The zero arrow between two arrows
				is simply the pair $(0,0)$.  Moreover, the arrow
				$0\ra 0$ is a zero object.
		\end{enumerate}
	\end{df}

\subsection{Kinds of arrows}

	To each arrow $(u_1,u_0)\col f\ra g$ in $\flcp$
	(diagram \eqref{flechedeflc}) corresponds a sequence whose composite is $0$:
	\begin{eqn}\label{carrecomsuit}
		A_1\xrightarrow{\vervec{-f}{u_1}} 
		A_0\oplus B_1\xrightarrow{(u_0\;\, g)} B_0.
	\end{eqn} 
	
	We will characterise the (fully) faithful, (fully) cofaithful
	and full morphisms of $\flcp$ in terms of the exactness of this sequence.
	
	\begin{pon}\label{caracfidflc}\index{faithful arrow!in C2+@in $\flcp$}
		Let $(u_1,u_0)\col f\ra g$ be a morphism in $\flcp$.  The following conditions
		are equivalent:
		\begin{enumerate}
			\item $(u_1,u_0)$ is faithful;
			\item $(u_1,u_0)$ is 0-faithful;
			\item $u_1$ and $f$ are jointly monomorphic;
			\item $\vervec{-f}{u_1}$ is a monomorphism;
			\item the sequence $0\longrightarrow A_1\xrightarrow{\vervec{-f}{u_1}} A_0\oplus B_1$
				is exact.
		\end{enumerate}
	\end{pon}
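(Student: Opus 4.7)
The strategy is to establish the chain $1\Leftrightarrow 2\Leftrightarrow 3\Leftrightarrow 4\Leftrightarrow 5$, working directly from the elementary description of $\flcp$ rather than invoking any structural theorem about $\C$. The observation driving the whole argument is that, in $\flcp$, the parallel 2-arrows between any two fixed 1-arrows form an abelian group under $+$, so that ``detecting equality of 2-arrows'' and ``detecting the $0$ 2-arrow'' differ by a translation.

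First I would prove $1\Leftrightarrow 2$. Given parallel 2-arrows $\alpha,\alpha'\col (a_1,a_0)\Ra(a'_1,a'_0)\col h\ra f$, the difference $\alpha-\alpha'$ satisfies $(\alpha-\alpha')h=0$ and $f(\alpha-\alpha')=0$, hence is a 2-arrow $0\Ra 0\col h\ra f$. Since $(u_1,u_0)*(\alpha-\alpha')=u_1(\alpha-\alpha')$, faithfulness is equivalent to: every 2-arrow $\beta\col 0\Ra 0\col h\ra f$ with $u_1\beta=0$ is itself $0$; this is precisely 0-faithfulness.

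Next, $2\Leftrightarrow 3$ is a direct translation. Unwinding, condition $2$ reads: for every $h\col X_1\ra X_0$ and every $\alpha\col X_0\ra A_1$ with $\alpha h=0$, $f\alpha=0$ and $u_1\alpha=0$, one has $\alpha=0$. Joint monicity of $u_1$ and $f$ is the statement that $f\alpha=u_1\alpha=0$ forces $\alpha=0$ for any $\alpha$ with arbitrary domain. The implication $3\Rightarrow 2$ is immediate. For $2\Rightarrow 3$, given $\alpha\col X\ra A_1$ with $f\alpha=u_1\alpha=0$, one takes $h\eqdef 0\col X\ra X$ (or any $h$ with $\alpha h=0$, e.g.\ $X_1\eqdef 0$), which makes $\alpha$ a legitimate 2-arrow $0\Ra 0\col h\ra f$ satisfying the hypothesis of $2$, so $\alpha=0$.

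Finally, $3\Leftrightarrow 4$ is the universal property of the product in $\C$: two arrows are jointly monomorphic if and only if the induced arrow into their product is a monomorphism, and replacing $f$ by $-f$ is harmless because $-1\col A_0\ra A_0$ is an isomorphism. And $4\Leftrightarrow 5$ is the standard characterisation of monomorphisms in a pointed category with kernels: $\vervec{-f}{u_1}$ is a monomorphism exactly when its kernel is $0$, which is exactness of $0\longrightarrow A_1\xrightarrow{\vervec{-f}{u_1}}A_0\oplus B_1$ at $A_1$. No step is genuinely hard; the only subtle point is the use of a trivial choice of $h$ (e.g.\ $h=0$) in $2\Rightarrow 3$ to absorb the spurious condition $\alpha h=0$ that appears in the definition of 0-faithfulness but not in joint monicity.
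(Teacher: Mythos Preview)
Your proof is correct and follows essentially the same approach as the paper: the paper does the cycle $1\Rightarrow 2\Rightarrow 3\Rightarrow 1$ with $3\Leftrightarrow 4\Leftrightarrow 5$ declared obvious, while you do the chain $1\Leftrightarrow 2\Leftrightarrow 3\Leftrightarrow 4\Leftrightarrow 5$, but the substantive steps coincide. In particular, your trick for $2\Rightarrow 3$ of absorbing the condition $\alpha h=0$ by choosing $h=0$ (equivalently $X_1=0$) is exactly what the paper does, where it uses the object $0_X\col 0\ra X$.
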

	
		\begin{proof}
			It is clear that condition 1 implies condition 2 and that conditions
			3, 4 and 5 are equivalent.
			
			{\it 2 $\Rightarrow$ 3. }Let be $\alpha,\beta\col X\ra A_1$
			such that $u_1\alpha= u_1\beta$
			and $f\alpha=f\beta$.  Then $\alpha-\beta$ is a 2-arrow
			$(0,0)\Ra (0,0)\col 0_X\ra f$ and
			$(u_1,u_0)*(\alpha-\beta) = 1_{(0,0)}$.  Then, by
			condition 2, $\alpha-\beta = 1_{(0,0)} = 0$, thus $\alpha = \beta$.
			
			{\it 3 $\Rightarrow$ 1. }Let be $\alpha, \beta \col (v_1,v_0)\Ra (v'_1,v'_0)\col 
			x\ra f$ in $\flcp$
			such that $(u_1,u_0)*\alpha = (u_1,u_0)*\beta$.
			We have thus $v_1-v'_1=\alpha x=\beta x$, $v_0-v'_0=f\alpha=f\beta$,
			and $u_1\alpha = u_1\beta$.  So, by condition 3,
			$\alpha = \beta$.
		\end{proof}

	The discrete objects being those for which the unique arrow to $0$ is
	faithful, we get the following corollary.
	
	\begin{coro}\label{caracdiscreteflc}\index{discrete object!in C2+@in $\flcp$}
		Let $A_1\overset{f}\longrightarrow A_0$ be an object of $\flcp$. Then $f$ is discrete
		if and only if $f$ is a monomorphism in $\C$.  Dually,
		$f$ is connected if and only if $f$ is an epimorphism in $\C$.
	\end{coro}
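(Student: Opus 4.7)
The plan is to reduce the statement to Proposition \ref{caracfidflc} and its dual. By definition, an object $A_1\overset{f}\longrightarrow A_0$ of $\flcp$ is discrete if and only if the unique arrow $0^f\colon f\to 0$ (where $0$ denotes the zero object $0\to 0$) is faithful, and is connected if and only if $0_f\colon 0\to f$ is cofaithful. So everything boils down to applying the characterisation of faithfulness (respectively its dual) to these specific arrows.

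For the first equivalence, I would instantiate Proposition \ref{caracfidflc} at $(u_1,u_0)=(0,0)\colon f\to (0\to 0)$. The condition of that proposition becomes: $u_1=0$ and $f$ are jointly monomorphic. Since $0\colon A_1\to 0$ carries no information, this reduces to the single condition that $f$ itself is a monomorphism in $\C$. Hence $f$ is discrete iff $f$ is monic.

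For the dual, I would argue that Proposition \ref{caracfidflc} admits a dual version in $\flcp$ characterising cofaithful/0-cofaithful arrows $(u_1,u_0)\colon f\to g$ by the condition that $u_0$ and $g$ are jointly epimorphic (equivalently, $(u_0\;g)$ is epi, equivalently the right half of sequence \ref{carrecomsuit} is exact on the right). This dual is obtained either by running the same argument with horizontal whiskers replaced by the appropriate vertical ones, or more slickly by noting that $(\C\op)^\deux_+\simeq(\flcp)\op$ since $\C\op$ is abelian whenever $\C$ is. Instantiating this dual at $(0,0)\colon(0\to 0)\to f$ reduces the joint epimorphy of $u_0=0$ and $g=f$ to the condition that $f$ is epi, giving the second equivalence. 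The only genuinely new work is the dualisation of Proposition \ref{caracfidflc}, which is straightforward once one observes that faithful/0-faithful in $\flcp$ were read off from sequence \ref{carrecomsuit} on its left, so cofaithful/0-cofaithful are read off on its right by the same reasoning.
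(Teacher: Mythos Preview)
Your proof is correct and follows the same approach as the paper: apply Proposition~\ref{caracfidflc} (and its dual) to the unique arrow $(0,0)$ to/from the zero object. The paper's proof is just the one-line remark preceding the corollary, so your version simply spells out more of the dualisation than the paper bothers to.
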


	Let us turn now to fully faithful arrows.

	\begin{pon}\label{caracplfidflc}\index{fully faithful arrow!in C2+@in $\flcp$}
		Let $(u_1,u_0)\col f\ra g$ be a morphism in $\flcp$.  The following conditions
		are equivalent:
		\begin{enumerate}
			\item $(u_1,u_0)$ is fully faithful;
			\item $(u_1,u_0)$ is fully 0-faithful;
			\item the square \eqref{flechedeflc} is a pullback;
			\item $\vervec{-f}{u_1}$ is the kernel of $(u_0\;\, g)$;
			\item the sequence 	$0\longrightarrow A_1\xrightarrow{\vervec{-f}{u_1}}
				A_0\oplus B_1\xrightarrow{(u_0\;\, g)} B_0$ is exact.
		\end{enumerate}
	\end{pon}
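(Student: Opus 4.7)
The plan is to prove the cycle $1 \Rightarrow 2 \Rightarrow 3 \Rightarrow 1$ and to treat the equivalences $3 \Leftrightarrow 4 \Leftrightarrow 5$ separately as standard abelian category facts. Note first that the commutativity of the square \eqref{flechedeflc} means $u_0 f = g u_1$, so the composite $(u_0\;g)\vervec{-f}{u_1} = -u_0 f + g u_1$ is zero; thus $\vervec{-f}{u_1}$ is always a candidate kernel of $(u_0\;g)$. From this, conditions 4 and 5 say the same thing (exactness at $A_1$ plus zero kernel on the left), and condition 3 is just the standard translation of the kernel of $(u_0\;g)$ into the pullback of $u_0$ and $g$ (the sign on $f$ disappears up to isomorphism).

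For $1 \Rightarrow 2$: this is immediate from the definitions, since fully 0-faithful is nothing but the specialisation of fully faithful to 2-arrows whose codomain is the zero arrow. For $2 \Rightarrow 1$, I exploit the additive structure on 2-arrows of $\flcp$: given $(v_1,v_0), (v'_1,v'_0)\col x\ra f$ and a 2-arrow $\beta\col (u_1,u_0)(v_1,v_0)\Ra(u_1,u_0)(v'_1,v'_0)$, the arrow $\beta\col X_0\ra B_1$ can be read as a 2-arrow $(u_1,u_0)\bigl((v_1,v_0)-(v'_1,v'_0)\bigr)\Ra 0$; condition 2 produces a unique $\alpha\col (v_1,v_0)-(v'_1,v'_0)\Ra 0$ lifting it, and subtracting back gives the required unique lift in the sense of condition 1.

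The core step is $2 \Leftrightarrow 3$. For $2 \Rightarrow 3$: given a cone $a\col X\ra A_0$, $b\col X\ra B_1$ with $u_0 a = g b$, I apply the fully 0-faithful property to the zero arrow $x = 0\col 0\ra X$, with the morphism $(v_1,v_0) = (0,a)\col x\ra f$ (the equation $f\cdot 0 = a\cdot 0$ is trivial) and the 2-arrow $\beta = b$ (the compatibility equations reduce precisely to $u_0 a = gb$). The unique $\alpha\col X\ra A_1$ produced by condition 2 satisfies $f\alpha = a$ and $u_1\alpha = b$, which is exactly the universal property of the pullback. For $3 \Rightarrow 2$: given $(v_1,v_0)\col x\ra f$ and $\beta\col X_0\ra B_1$ with $u_1 v_1 = \beta x$ and $u_0 v_0 = g\beta$, the pair $(v_0,\beta)$ is a cone over $u_0,g$, so the pullback gives a unique $\alpha\col X_0\ra A_1$ with $f\alpha = v_0$ and $u_1\alpha = \beta$.

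The only delicate point, which is the closest thing to an obstacle, is verifying the remaining identity $v_1 = \alpha x$ needed to make $\alpha$ a genuine 2-arrow $(v_1,v_0)\Ra 0$. This follows from the joint monicity of $(f, u_1)$, which is part of the pullback property: one computes $f(\alpha x) = v_0 x = f v_1$ and $u_1(\alpha x) = \beta x = u_1 v_1$, so the two factorisations $\alpha x$ and $v_1$ through the pullback agree. Uniqueness of $\alpha$ over the data of condition 2 then also follows from the uniqueness part of the pullback. Beyond this sign and monicity bookkeeping, the argument is entirely formal.
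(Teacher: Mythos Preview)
Your proof is correct and follows essentially the same route as the paper: the paper also treats $3\Leftrightarrow 4\Leftrightarrow 5$ as standard abelian-category facts, proves $2\Rightarrow 3$ via the object $0\to X$ exactly as you do, and uses the joint monicity of $(f,u_1)$ from the pullback to recover the missing equation. The only cosmetic difference is that the paper proves $3\Rightarrow 1$ in a single step (subtracting the two parallel morphisms and applying the pullback to the difference), whereas you split this into $3\Rightarrow 2$ followed by your additive $2\Rightarrow 1$ reduction.
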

	
		\begin{proof}
			It is obvious that condition 1 implies condition 2 and that conditions
			3 to 5 are equivalent, since $\C$ is abelian.
			
			{\it 2 $\Rightarrow$ 3. }Let be $A_0\overset{v}\leftarrow X\overset{\gamma}
			\ra B_1$ such that $u_0v=g\gamma$.  We have then an object $0\overset{0_X}\ra X$,
			an arrow $(0,v)\col 0_X\ra f$ and a 2-arrow
			$\gamma\col(u_1,u_0)(0,v)\Ra (0,0)$ in $\flcp$.
			Since $(u_1,u_0)$ is fully 0-faithful,
			there exists a unique $\alpha\col X\ra A_1$ such that
			$f\alpha=v$ and $u_1\alpha=\gamma$.
			
			{\it 3 $\Rightarrow$ 1. }Let be $X_1\overset{x}\ra X_0$ an object of
			$\flcp$, arrows $(v_1,v_0)$ and $(v'_1,v'_0)\col x\ra f$, and
			a 2-arrow $\gamma\col(u_1,u_0)(v_1,v_0)\Ra (u_1,u_0)(v'_1,v'_0)$
			(i.e.\ $\gamma\col X_0\ra B_1$ such that
			$u_1v_1-u_1v'_1=\gamma x$ and $u_0v_0-u_0v'_0=g\gamma$).  Then
			$A_0\xleftarrow{v_0-v'_0}X_0\overset{\gamma}\longrightarrow B_1$ is a rival of
			the pullback and there exists a unique $\alpha\col X_0\ra A_1$ such
			that $f\alpha = v_0-v'_0$ and $u_1\alpha=\gamma$.  We check that
			$v_1-v'_1=\alpha x$ by testing this equality with  $f$ and $u_1$, which are
			jointly monomorphic.
		\end{proof}
	
	We can also characterise the full arrows in $\flcp$.  To do this,
	we use anticipatively the construction of the kernel and of the cokernel
	(described in diagrams \ref{diagnumeroun} and \ref{diagnumerodeux}).

	\begin{pon}\index{full arrow!in C2+@in $\flcp$}
		Let $(u_1,u_0)\col f\ra g$ be a morphism in $\flcp$.  The following conditions
		are equivalent:
		\begin{enumerate}
			\item $(u_1,u_0)$ is full;
			\item $(u_1,u_0)$ is 0-full;
			\item in diagram \ref{diagnumerodeux}, $\zeta k=q\kappa$;
			\item the square \eqref{flechedeflc} is exact;
			\item the sequence 	$A_1\xrightarrow{\vervec{-f}{u_1}}
				A_0\oplus B_1\xrightarrow{(u_0\;\, g)} B_0$ is exact.
		\end{enumerate}
	\end{pon}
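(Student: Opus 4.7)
The plan is to establish the cycle $1 \Rightarrow 2 \Rightarrow 3 \Rightarrow 5 \Rightarrow 1$, together with $4 \Leftrightarrow 5$. The equivalence $4 \Leftrightarrow 5$ is just the definition of an exact commutative square in an abelian category: with $(u_0\; g)\vervec{-f}{u_1}=0$ automatic from the commutativity of \ref{flechedeflc}, the square is exact exactly when the sequence \ref{carrecomsuit} is exact at the middle term. The implication $1 \Rightarrow 2$ is immediate, since 0-fullness is the specialisation of fullness to the zero arrows.

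For $2 \Rightarrow 3$ I would argue as in Proposition \ref{caracfullbondpex}: the loop $\mu_{(u_1,u_0)} = \zeta k \circ (q\kappa)^{-1}$ is a 2-arrow $0 \Ra 0\col \Ker(u_1,u_0)\ra\Coker(u_1,u_0)$ built from the canonical $\kappa$ and $\zeta$, and 0-fullness applied to these canonical 2-arrows forces $\mu_{(u_1,u_0)}=1_0$, which is exactly condition 3.

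For $3 \Leftrightarrow 5$, the key step is to compute the kernel and cokernel of $(u_1,u_0)$ in $\flcp$ explicitly. I would show that the kernel is $A_1\xrightarrow{\vervec{f}{u_1}} P$, where $P$ is the pullback of $u_0$ and $g$ in $\C$, with structure arrow $k=(1_{A_1},p_A)$ and structure 2-cell $\kappa=p_B$; dually, the cokernel is $P'\xrightarrow{[g,u_0]} B_0$, where $P'$ is the pushout of $f$ and $u_1$, with $q=(i_B,1_{B_0})$ and $\zeta=i_A$. Tracing the horizontal composition of 2-arrows in $\flcp$, one finds that $\zeta k$ is represented by $i_A p_A\col P\ra P'$ and $q\kappa$ by $i_B p_B\col P\ra P'$. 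The equation $\zeta k=q\kappa$ therefore amounts to the vanishing of the composite $P\hookrightarrow A_0\oplus B_1\twoheadrightarrow P'$ (canonical kernel inclusion of $(u_0\; g)$ followed by canonical cokernel projection modulo $\im\vervec{-f}{u_1}$), modulo the sign convention identifying $P$ with $\Ker(u_0\; g)$. Since this composite vanishes iff $\Ker(u_0\; g)\subseteq\im\vervec{-f}{u_1}$, and the reverse inclusion is automatic (the sequence is a complex), $\zeta k=q\kappa$ is equivalent to exactness at $A_0\oplus B_1$.

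Finally, for $5 \Rightarrow 1$ I would verify fullness in the sense of Definition \ref{deffullgpdcat} directly. Unpacking the 2-arrows $\alpha$ and $\beta$ of diagram \ref{deffull} into their elementary data as arrows of $\C$ satisfying the cocycle relations of \ref{flechedeflc}, the required equation between the two composite 2-cells becomes an equation of arrows $X_0\ra Y_1$ in $\C$; exactness of \ref{carrecomsuit} provides precisely the lift needed to close it. The main obstacle in this plan is the explicit computation at step $3 \Leftrightarrow 5$: one has to track carefully the identifications between 2-arrows in $\flcp$ (which are themselves arrows in $\C$) and the pullback/pushout data, keeping the sign conventions straight so that the condition $\mu=1_0$ matches the exactness condition on \ref{carrecomsuit} on the nose.
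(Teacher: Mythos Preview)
Your proposal is correct and follows the same implication structure as the paper, with two small differences worth noting.

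First, your $3 \Leftrightarrow 5$ is more elaborate than necessary. The paper exploits diagram \ref{diagnumeroun} directly: there $\vervec{-k}{\kappa}$ is already defined as the kernel of $(u_0\;\, g)$ and $(\zeta\;\, q)$ as the cokernel of $\vervec{-f}{u_1}$, so exactness of the sequence at $A_0\oplus B_1$ is equivalent to $(\zeta\;\, q)\vervec{-k}{\kappa}=0$, which is $\zeta k = q\kappa$ on the nose. Your computation with pullback/pushout projections and injections is the same argument in heavier notation; the sign worry you flag disappears once you use the paper's matrix conventions.

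Second, you close the cycle with $5 \Rightarrow 1$, lifting $(-(a_0-b_0),\alpha)\in\ker(u_0\;\, g)$ through $\vervec{-f}{u_1}$ by exactness. The paper instead proves $3 \Rightarrow 1$: it factors $\alpha$ through the kernel ($\bar{\alpha}\col X_0\ra K$ with $k\bar{\alpha}=a_0-b_0$, $\kappa\bar{\alpha}=\alpha$) and $\beta$ through the cokernel ($\bar{\beta}\col Q\ra Y_1$ with $\bar{\beta}q=c_1-d_1$, $\bar{\beta}\zeta=\beta$), and then reads off $(c_1-d_1)\alpha=\bar{\beta}q\kappa\bar{\alpha}=\bar{\beta}\zeta k\bar{\alpha}=\beta(a_0-b_0)$ directly from $\zeta k=q\kappa$. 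Your single-lift argument is arguably slicker; the paper's version makes the role of condition 3 more transparent and avoids invoking exactness a second time. Both are valid.
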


		\begin{proof}
			It is obvious that condition 1 implies condition 2 and that
			conditions 4 and 5 are equivalent, because $\C$ is abelian.
			
			{\it 2 $\Rightarrow$ 3. }It suffices to apply condition 2 to diagram
			\ref{diagnumerotrois}.
			
			{\it 3 $\Rightarrow$ 1. }Let $X_1\overset{x}\ra X_0$ and $Y_1\overset{y}\ra Y_0$
			be two objects of $\flcp$, $(a_1,a_0)$ and $(b_1,b_0)\col x\ra f$, $(c_1,c_0)$
			and $(d_1,d_0)\col g\ra y$ be two arrows in $\flcp$, and let
			$\alpha\col (u_1,u_0)(a_1,a_0)\Ra(u_1,u_0)(b_1,b_0)$ and $\beta\col
			(c_1,c_0)(u_1,u_0)\Ra (d_1,d_0)(u_1,u_0)$ be two 2-arrows in $\flcp$.
			We must prove that $d_1\alpha+\beta a_0=\beta b_0+c_1\alpha$.
			
			Since $\alpha\col X_0\ra B_1$ is a 2-arrow of $\flcp$, we have
			$u_0(a_0-b_0)=g\alpha$.  There is thus a factorisation
			$\bar{\alpha}\col X_0\ra K$ such that $k\bar{\alpha}=a_0-b_0$
			and $\kappa\bar{\alpha}=\alpha$.  Dually, we have a factorisation
			$\bar{\beta}\col Q\ra Y_1$ such that $\bar{\beta}q=c_1-d_1$
			and $\bar{\beta}\zeta=\beta$.  Then, by condition 3, the following diagram
			commutes and we have the required equation.
			\begin{xym}\xymatrix@=20pt{
				&&B_1\ar[dr]^{q}\ar@/^0.8pc/[drr]^{c_1-d_1}
				\\ X_0\ar[r]^{\bar{\alpha}}\ar@/^0.8pc/[urr]^{\alpha}
					\ar@/_0.8pc/[drr]_{a_0-b_0}
				&K\ar[ur]^{\kappa}\ar[dr]_k
				&&Q\ar[r]^{\bar{\beta}}
				&Y_1
				\\ &&A_0\ar[ur]_{\zeta}\ar@/_0.8pc/[urr]_(0.4){\beta}
			}\end{xym}
			
			{\it 3 $\Leftrightarrow$ 5. }The sequence is exact if and only if we have
			 $(\zeta\;\, q)\vervec{-k}{\kappa}=0$ in diagram \ref{diagnumeroun},
			which is the case if and only if $\zeta k =q\kappa$.
		\end{proof}
	
	We notice that, in $\flcp$, an arrow is fully faithful if and only
	if it is full and faithful. 
	We can now combine the previous propositions.
	
	\begin{coro}\label{pasequ}
		Let $(u_1,u_0)\col f\ra g$ be a morphism in $\flcp$.  The following conditions
		are equivalent:
		\begin{enumerate}
			\item $(u_1,u_0)$ is faithful and fully cofaithful;
			\item $(u_1,u_0)$ is faithful, full and cofaithful;
			\item $(u_1,u_0)$ is fully faithful and cofaithful;
			\item the square \eqref{flechedeflc} is cartesian;
			\item the sequence 	$0\longrightarrow A_1\xrightarrow{\vervec{-f}{u_1}}
				A_0\oplus B_1\xrightarrow{(u_0\;\, g)} B_0 \longrightarrow 0$ is exact.
		\end{enumerate}
	\end{coro}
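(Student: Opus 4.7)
The strategy is to read off each of the six conditions as an exactness statement about (parts of) the sequence $A_1\xrightarrow{\vervec{-f}{u_1}} A_0\oplus B_1\xrightarrow{(u_0\; g)} B_0$, and then observe that conditions 1, 2, 3, 5 all amount to the same short exact sequence, while condition 4 is the standard ``bicartesian $\Leftrightarrow$ short exact Mayer--Vietoris sequence'' fact in an abelian category.

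First I would collect the pieces already established. Proposition \ref{caracfidflc} says $(u_1,u_0)$ is faithful iff $\vervec{-f}{u_1}$ is a monomorphism, i.e.\ iff $0\to A_1\to A_0\oplus B_1$ is exact, and Proposition \ref{caracplfidflc} says $(u_1,u_0)$ is fully faithful iff $0\to A_1\to A_0\oplus B_1\to B_0$ is exact. The just-proved characterisation of full arrows gives that $(u_1,u_0)$ is full iff $A_1\to A_0\oplus B_1\to B_0$ is exact at the middle. Applying the two dual statements (Proposition \ref{caracfidflc} and \ref{caracplfidflc} in $\C\op$, which is again abelian, so $\flcp\op$ has the analogous description) gives: $(u_1,u_0)$ is cofaithful iff $A_0\oplus B_1\to B_0\to 0$ is exact, and fully cofaithful iff $A_1\to A_0\oplus B_1\to B_0\to 0$ is exact.

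Second, I would combine these equivalences. Condition 5 (exactness of $0\to A_1\to A_0\oplus B_1\to B_0\to 0$) is the conjunction of exactness at each of the four positions. Condition 1 gives exactness at $A_1$ (faithful) plus exactness at $A_1$, $A_0\oplus B_1$ and $B_0$ (fully cofaithful), hence the entire short exact sequence; conversely, from condition 5 one reads off faithfulness and fully cofaithfulness. Symmetrically, condition 3 gives exactness at $A_1$, $A_0\oplus B_1$ and $B_0$ (fully faithful) plus exactness at $B_0$ (cofaithful), again exactly condition 5; and condition 2 gives the three partial exactnesses (faithful, full, cofaithful) whose union is again the full short exact sequence. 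So $1\Leftrightarrow 2\Leftrightarrow 3\Leftrightarrow 5$.

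Finally, for $4\Leftrightarrow 5$, I would invoke the classical fact in abelian categories: a commutative square $u_0f=gu_1$ is both a pullback and a pushout (i.e.\ bicartesian, which is what ``cartesian'' abbreviates here in view of the self-dual list 1--3) iff the associated Mayer--Vietoris sequence \eqref{carrecomsuit} is a short exact sequence; pullback-ness is exactness at $A_1$ and $A_0\oplus B_1$ (condition 3 of Proposition \ref{caracplfidflc}) and pushout-ness is exactness at $A_0\oplus B_1$ and $B_0$ (the dual). Putting everything together yields the six-fold equivalence. The only nontrivial bookkeeping is matching the direct ``$f$ and $u_1$ jointly monic'', ``square is pullback'', etc., against the sequence-level statements, which is already done in the preceding propositions; so no real obstacle remains.
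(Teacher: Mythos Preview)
Your approach is correct and is exactly what the paper intends: the corollary is stated without proof, with only the remark ``We can now combine the previous propositions,'' and your combination of Propositions \ref{caracfidflc}, \ref{caracplfidflc}, the full-arrow characterisation, and their duals is precisely that combination. Two cosmetic slips to fix: you miscount the conditions as six (there are five), and in your bookkeeping you attribute exactness at $A_1$ to fully cofaithful and exactness at $B_0$ to fully faithful, whereas fully cofaithful gives exactness only at $A_0\oplus B_1$ and $B_0$, and fully faithful only at $A_1$ and $A_0\oplus B_1$; the unions still cover everything, so the argument is unaffected.
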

	
	By comparing the previous corollary and the following proposition, it becomes clear
	that in general $\flcp$ is not 2-abelian,
	because a faithful and fully cofaithful arrow is not always an equivalence
	(which must be the case in a 2-abelian $\Gpd$-category). The equivalences
	correspond to the cases where the exact sequence of condition 5 of the previous
	corollary \emph{splits}.
	
	\begin{pon}\label{caracequflc}
		Let $(u_1,u_0)\col f\ra g$ be a morphism in $\flcp$.  The following conditions
		are equivalent:
		\begin{enumerate}
			\item $(u_1,u_0)$ is an equivalence;
			\item there exist arrows $v_1, v_0, \varepsilon, \eta$, as
				in the following diagram:
				\begin{xym}\xymatrix@=40pt{
					A_1\ar[r]_{u_1}\ar[d]^f
					&B_1\ar[d]_g\ar@/_1pc/[l]_{v_1}
					\\ A_0\ar[r]^{u_0}\ar@/^1pc/[u]^\eta
					&B_0\ar@/_1pc/[u]_\varepsilon\ar@/^1pc/[l]^{v_0}
				}\end{xym}
				such that
				\begin{align*}
					gu_1&=u_0f, &fv_1&=v_0g,\\
					\varepsilon u_0 &= u_1\eta,  &\eta v_0 &= v_1\varepsilon,
				\end{align*}\begin{align*}
					\eta f+v_1u_1&=1_{A_1}, &f\eta + v_0 u_0 &= 1_{A_0},\\
					\varepsilon g+u_1 v_1 &=1_{B_1}, &g\varepsilon + u_0v_0 &=1_{B_0};
				\end{align*}
			\item the sequence $0\longrightarrow A_1\xrightarrow{\vervec{-f}{u_1}}
				A_0\oplus B_1\xrightarrow{(u_0\;\, g)} B_0\longrightarrow 0$ is split exact.
		\end{enumerate}
	\end{pon}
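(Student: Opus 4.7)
The plan is to close the cycle 2 $\Rightarrow$ 1 $\Rightarrow$ 3 $\Rightarrow$ 2.

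The implication 2 $\Rightarrow$ 1 is by unpacking definitions: the equation $fv_1 = v_0 g$ makes $(v_1, v_0)\col g \to f$ a morphism of $\flcp$; the equations $\eta f + v_1 u_1 = 1_{A_1}$ and $f\eta + v_0 u_0 = 1_{A_0}$ turn $\eta$ into a 2-cell $1_f \Ra (v_1,v_0)(u_1,u_0)$ in $\flcp$, and similarly $\varepsilon$ gives a 2-cell $1_g \Ra (u_1, u_0)(v_1, v_0)$. Both are automatically invertible because 2-cells in $\flcp$ form abelian groups, so $(u_1, u_0)$ is an equivalence.

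For 1 $\Rightarrow$ 3, every equivalence in $\flcp$ is in particular faithful and fully cofaithful, so by Corollary \ref{pasequ} the sequence is already exact. For splitness, fix any pseudo-inverse $(v_1, v_0)$ together with a 2-iso $\varepsilon$ witnessing $1_g \simeq (u_1, u_0)(v_1, v_0)$ and oriented so that $g\varepsilon + u_0 v_0 = 1_{B_0}$. Then $s \eqdef \vervec{v_0}{\varepsilon} \col B_0 \to A_0 \oplus B_1$ is a section of $(u_0\;\, g)$, which suffices to split the sequence in the abelian category $\C$.

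For 3 $\Rightarrow$ 2, a splitting yields a direct-sum decomposition $A_0 \oplus B_1 \cong A_1 \oplus B_0$ via a section $s = \vervec{v_0}{\varepsilon}$ of $(u_0\;\, g)$ together with the associated retraction $r = (-\eta\;\, v_1)$ of $\vervec{-f}{u_1}$. This decomposition amounts to the mutual inverseness of the two matrices
\begin{eqn}
M = \begin{pmatrix}-f & v_0 \\ u_1 & \varepsilon\end{pmatrix}, \quad N = \begin{pmatrix}-\eta & v_1 \\ u_0 & g\end{pmatrix};
\end{eqn}
expanding $MN = 1_{A_0 \oplus B_1}$ and $NM = 1_{A_1 \oplus B_0}$ produces exactly the eight equations of condition 2: the diagonal entries give the four section/retraction identities $\eta f + v_1 u_1 = 1_{A_1}$, $f\eta + v_0 u_0 = 1_{A_0}$, $\varepsilon g + u_1 v_1 = 1_{B_1}$, $g\varepsilon + u_0 v_0 = 1_{B_0}$, while the off-diagonal zero entries give the morphism conditions $gu_1 = u_0 f$ and $fv_1 = v_0 g$ together with the triangle-identity-type conditions $\varepsilon u_0 = u_1 \eta$ and $\eta v_0 = v_1 \varepsilon$.

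The main obstacle is the sign and orientation bookkeeping, specifically ensuring that the two triangle-like identities $\varepsilon u_0 = u_1 \eta$ and $\eta v_0 = v_1 \varepsilon$ — the additional compatibilities that distinguish condition 2 from the data of a mere equivalence — fall out automatically in the proof of 3 $\Rightarrow$ 2 as off-diagonal entries of the matrix product, rather than needing to be arranged separately via the standard promotion of an equivalence to an adjoint equivalence.
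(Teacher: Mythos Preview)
Your proof is correct. The paper's proof is extremely terse and proceeds via two biconditionals rather than a cycle: it observes that condition 2 is exactly the data of an \emph{adjoint} equivalence (the equations $\varepsilon u_0 = u_1\eta$ and $\eta v_0 = v_1\varepsilon$ being precisely the triangular identities), and separately that condition 2 is exactly the data of a biproduct decomposition $A_0\oplus B_1 \cong A_1 \oplus B_0$ compatible with the given sequence. Your cycle $2\Rightarrow 1\Rightarrow 3\Rightarrow 2$ covers the same ground but, as you note, routes around the need to invoke the promotion of an equivalence to an adjoint equivalence: you only extract $v_0$ and $\varepsilon$ from the equivalence in $1\Rightarrow 3$, and then the associated retraction in $3\Rightarrow 2$ hands you back $\eta$ and $v_1$ with the triangle identities built in as off-diagonal zeros. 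The paper's approach is shorter because it treats the adjoint-equivalence fact as known; yours is more self-contained at the cost of invoking Corollary~\ref{pasequ} for exactness.
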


		\begin{proof}
			To give an inverse $(v_1,v_0)\col g\ra f$ and 2-arrows
			$\eta\col 1_f\Ra(v_1,v_0)\circ(u_1,u_0)$ and
			$- \varepsilon\col (u_1,u_0)\circ(v_1,v_0)\Ra 1_g$ satisfying the
			triangular identities amounts exactly to give the data of condition 2.
			
			Moreover, to give arrows
			$B_0\xrightarrow{\vervec{v_0}{-\varepsilon}}
			A_0\oplus B_1\xrightarrow{(\eta\;\, v_1)} A_1$
			which forming with sequence \eqref{carrecomsuit} a biproduct
			also amounts to give the data of condition 2.
		\end{proof}

\subsection{Construction of limits and colimits}

	We will now construct the (co)kernel, (co)pip, (co)root, as well as
	$\Omega$, $\Sigma$, $\pi_0$ and $\pi_1$ in $\flcp$, and characterise the
	arrows which are the kernel of their cokernel and the root of their copip.
	As the characterisation of equivalences suggests, what we must add
	to (co)faithful and fully (co)faithful, to get normal (co)faithful and normal fully
	(co)faithful, is that the corresponding exact sequence split.
	
	First, if $A_1\overset{f}\ra A_0$ and $B_1\overset{g}\ra B_0$ are
	two objects of $\flcp$, their biproduct exists: it is computed pointwise
	(the object is $A_1\oplus B_1\xrightarrow{f\oplus g}A_0\oplus B_0$).
	
	Let us give now a construction of the kernel and of the cokernel
	(which appear in \cite{Grandis2001b}).
	In the following diagram, $\vervec{-k}{\kappa}$ is
	the kernel of $(u_0, g)$ and $(\zeta, q)$ is the cokernel of
	$\vervec{-f}{u_1}$.  Moreover, there exists
	comparison arrows $k'$ and $q'$ making the triangles commute.
	
	\begin{xym}\label{diagnumeroun}\xymatrix@=40pt{
			&K\ar[d]^{\vervec{-k}{\kappa}}
			\\ A_1\ar[r]_-{\vervec{-f}{u_1}}\ar[ur]^{k'}
			&A_0\oplus B_1\ar[d]_-{(u_0\;\, g)}
				\ar[r]^-{(\zeta\;\,q)}
			&Q\ar[dl]^{q'}
			\\ &B_0
	}\end{xym} 
	
	This gives the following sequence of arrows and 2-arrows.  In this diagram,
	the left square is the kernel of $(u_1,u_0)$ and the right square
	is its cokernel.  We can also notice that $(K,k,\kappa)$ is the pullback
	 of $u_0$ and $g$, whereas dually $(Q,q,\zeta)$ is the pushout of $u_1$ and $f$.
	 \index{kernel!in C2+@in $\flcp$}\index{cokernel!in C2+@in $\flcp$}
	\begin{xym}\label{diagnumerodeux}\xymatrix@=40pt{
		A_1\ar@{=}[r]\ar[d]_{k'}
		&A_1\ar[d]_(0.3)f\ar[r]^{u_1}
		&B_1\ar[d]^(0.7)g\ar[r]^q
		&Q\ar[d]^{q'}
		\\ K\ar[r]_k\ar[urr]^(0.3)\kappa
		&A_0\ar[r]_{u_0}\ar[urr]_(0.7)\zeta
		&B_0\ar@{=}[r]
		&B_0
	}\end{xym}
	We can also present this in the following form.
	\begin{xym}\label{diagnumerotrois}\xymatrix@=40pt{
		k'\ar[r]^{(1,k)}\rrlowertwocell_0<-10>{_<2.8>\kappa}
		& f\ar[r]^{(u_1,u_0)}\rruppertwocell^0<10>{^<-2.8>\zeta}
		& g\ar[r]_{(q,1)}
		& q'
	}\end{xym}
	
	It is then possible to make explicit the definition of exact sequence and the construction of homology.
	This has been done by Grandis \cite{Grandis2001b}; the equivalence between the two
	constructions of homology is a form of the two-square lemma, as it was stated in \cite{Fay1989a}.
	
	Let us now construct the kernel of the cokernel of an arrow $(u_1,u_0)$.
	In the following diagram, the right square is the cokernel of $(u_1,u_0)$,
	the right front face of the prism is the kernel of this cokernel, and the left front face is the comparison morphism between $(u_1,u_0)$ and the kernel of its cokernel.
	\begin{xym}\xymatrix@=20pt{
		A_1\ar[dd]_f\ar[rr]^{u_1}\ar[dr]_{u_1}
		&&B_1\ar[dd]^g\ar[rr]^q
		&&Q\ar[dd]^{q'}
		\\ &B_1\ar[dd]^(0.3)q\ar@{=}[ur]
		\\ A_0\ar'[r]^{u_0}[rr]\ar[dr]_\zeta
		&&B_0\ar@{=}[rr]
		&&B_0
		\\ &Q\ar[ur]_{q'}
	}\end{xym}
	
	\begin{pon}\label{flcnoyconoy}
		Let $(u_1,u_0)\col f\ra g$ be a morphism in $\flcp$.  The following conditions
		are equivalent:
		\begin{enumerate}
			\item $(u_1,u_0)$ is normal faithful;
			\item $u_1$ and $f$ are “jointly split monomorphic”,
				i.e.\ there exist $A_0\overset{\eta}\ra A_1$ and
				$B_1\overset{v_1}\ra A_1$ such that
				\begin{eqn}
					\eta f+v_1u_1=1_{A_1};
				\end{eqn}
			\item $\vervec{-f}{u_1}$ is a split monomorphism;
			\item the sequence $0\longrightarrow A_1\xrightarrow{\vervec{-f}{u_1}} A_0\oplus B_1$
				is split exact.
		\end{enumerate}
	\end{pon}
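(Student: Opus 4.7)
\emph{Plan.} The equivalences $2 \Leftrightarrow 3 \Leftrightarrow 4$ are a matter of elementary linear algebra: a retraction $r \col A_0 \oplus B_1 \to A_1$ of $\vervec{-f}{u_1}$ decomposes uniquely as a pair $(-\eta, v_1)$, and the condition $r \circ \vervec{-f}{u_1} = 1_{A_1}$ becomes exactly $\eta f + v_1 u_1 = 1_{A_1}$. Hence condition $2$ is precisely the assertion that $\vervec{-f}{u_1}$ is a split monomorphism, which is the content of conditions $3$ and $4$.

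The heart of the proof is $1 \Leftrightarrow 3$, and for this I would first identify the kernel of the cokernel of $(u_1, u_0)$ together with the canonical comparison morphism. From diagram \ref{diagnumerodeux}, $\Coker(u_1, u_0) = (q, 1_{B_0}) \col g \to q'$, where $q \col B_1 \to Q$ and $q' \col Q \to B_0$ come from the pushout construction of diagram \ref{diagnumeroun}. Applying the kernel construction to $(q, 1_{B_0})$ — which in $\flcp$ is formed by a pullback in the codomain component — and using that the pullback of $1_{B_0}$ along $q'$ is trivially $(Q, q', 1_Q)$, one sees that $\Ker(\Coker(u_1, u_0))$ is the object $q \col B_1 \to Q$, equipped with the morphism $(1_{B_1}, q') \col q \to g$.

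The canonical comparison $\overline{w} \col f \to q$ is then the unique morphism in $\flcp$ whose composite with $(1_{B_1}, q')$ recovers $(u_1, u_0)$. Using the pushout identities $q u_1 = \zeta f$ (which guarantees that $(u_1, \zeta)$ is a bona fide morphism of $\flcp$) and $q'\zeta = u_0$ (from the construction of $q'$ as factorisation through the cokernel), one verifies that $\overline{w} = (u_1, \zeta)$. By definition, $(u_1, u_0)$ is normal faithful precisely when $\overline{w}$ is an equivalence. Applying Proposition \ref{caracequflc} to $\overline{w} = (u_1, \zeta) \col f \to q$, this is equivalent to split exactness of
$$0 \longrightarrow A_1 \xrightarrow{\vervec{-f}{u_1}} A_0 \oplus B_1 \xrightarrow{(\zeta\; q)} Q \longrightarrow 0.$$
By construction (diagram \ref{diagnumeroun}), $(\zeta\; q)$ is the cokernel of $\vervec{-f}{u_1}$, so the sequence is automatically exact, and it splits if and only if $\vervec{-f}{u_1}$ is a split monomorphism — that is, condition $3$.

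The main obstacle is the explicit identification of $\overline{w}$ as $(u_1, \zeta)$ (together with its 2-cell), which requires carefully tracking the universal properties through the two successive (co)limit constructions and reconciling the pushout data $q u_1 = \zeta f$ and $q' \zeta = u_0$. Once this is in hand, the characterisation of equivalences in $\flcp$ via Proposition \ref{caracequflc} delivers the conclusion immediately.
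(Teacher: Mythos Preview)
Your proposal is correct and follows essentially the same approach as the paper: identify the comparison morphism to $\Ker(\Coker(u_1,u_0))$ as $(u_1,\zeta)\col f\ra q$, apply Proposition~\ref{caracequflc} to translate its being an equivalence into split exactness of the sequence with $(\zeta\;q)$, and then observe that since $(\zeta\;q)$ is by construction the cokernel of $\vervec{-f}{u_1}$, split exactness reduces to $\vervec{-f}{u_1}$ being a split monomorphism. Your write-up is more detailed than the paper's in spelling out why the comparison arrow is $(u_1,\zeta)$, but the argument is the same.
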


		\begin{proof}
			The morphism $(u_1,u_0)$ is normal faithful if and only
			if the comparison arrow $(u_1,\zeta)\col f\ra q$ is an equivalence.
			By Proposition \ref{caracequflc}, this is the case if and only
			if the sequence
			\begin{eqn}
				0\longrightarrow A_1\xrightarrow{\vervec{-f}{u_1}}
				A_0\oplus B_1\xrightarrow{(\zeta\;\, q)} Q\longrightarrow 0
			\end{eqn}
			is split exact, which is the case if and only if
			$\vervec{-f}{u_1}$ is a split monomorphism, since this sequence
			is always right exact.  Condition 2 is a translation of
			condition 3.
		\end{proof}

	The constructions of $\Omega$ and $\Sigma$ are special cases of those of the kernel and of the cokernel. First, $\Omega(A_1\overset{f}\ra A_0)$ is 
	given by the following diagram.
	\begin{xym}\xymatrix@=30pt{
		0\ar[r]\ar[d]
		&A_1\ar[d]^f
		\\ {\Ker f}\ar[ur]^-{k_f}\ar[r]_-0
		&A_0
	}\end{xym}
	And, dually,
	$\Sigma(A_1\overset{f}\ra A_0)$ is the arrow $\Coker f\ra 0$, with
	$q_f\col 0\Ra 0\col f\ra \Sigma f$.
	
	So $\pi_1 (f)$ is the arrow $\Ker f\ra 0$ and $\pi_0 (f)$ is
	the arrow $0\ra\Coker f$.  In the following diagram, the left commutative square
	is $\varepsilon_f$ and the right one is $\eta_f$.
	\begin{xym}\xymatrix@=30pt{
		{\Ker f}\ar[r]^-{k_f}\ar[d]
		&A_1\ar[d]^f\ar[r]
		&0\ar[d]
		\\ 0\ar[r]
		&A_0\ar[r]_-{q_f}
		&{\Coker f}
	}\end{xym}
	
	From the previous proposition follows a corollary characterising the
	objects canonically equivalent to their $\pi_0$.  These are not, in general, all
	discrete objects
	(which are the $f\col\flcp$ which are monomorphisms in $\C$, by
	Corollary \ref{caracdiscreteflc}), but
	only the split ones.
	
	\begin{coro}\label{flcaracpiz}
		Let be $A_1\overset{f}\ra A_0$ in $\flcp$. The following conditions
		are equivalent:
		\begin{enumerate}
			\item $\eta_f$ is an equivalence $f\simeq\pi_0 f$;
			\item $f$ is a split monomorphism in $\C$;
			\item $f$ is equivalent to $0\ra B$ for some $B$.
		\end{enumerate}
	\end{coro}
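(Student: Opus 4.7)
The plan is to establish the cycle $1\Rightarrow 2 \Rightarrow 3 \Rightarrow 1$, using the description of $\pi_0$ and the characterisation of equivalences in $\flcp$.

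First I unpack $\eta_f$. Since $\pi_0f = (0\ra \Coker f)$ (by the construction of $\Sigma$ and $\Omega$ given just before Proposition \ref{flcaracpiz}), the unit $\eta_f\col f\ra\pi_0f$ is the commutative square with upper arrow $0\col A_1\ra 0$ and lower arrow $q_f\col A_0\ra\Coker f$.

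For $1\Rightarrow 2$: apply Proposition \ref{caracequflc} (condition 3) to $\eta_f=(0,q_f)$. The associated sequence is
\begin{eqn}
	0\longrightarrow A_1\xrightarrow{\vervec{-f}{0}} A_0\oplus 0\xrightarrow{(q_f\;\,0)}
	\Coker f\longrightarrow 0,
\end{eqn}
which, after identifying $A_0\oplus 0\simeq A_0$, is just $0\ra A_1\xrightarrow{-f}A_0\xrightarrow{q_f}\Coker f\ra 0$. Since $q_f=\Coker f$ this is automatically right exact, so split exactness reduces exactly to $f$ being a split monomorphism in $\C$.

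For $2\Rightarrow 3$: if $f$ is split monic with retraction $r\col A_0\ra A_1$, a standard argument in the abelian category $\C$ yields a direct sum decomposition $A_0\simeq A_1\oplus\Coker f$ under which $f$ becomes the canonical injection $\vervec{1}{0}$. Writing $B\eqdef\Coker f$, I build an equivalence $(u_1,u_0)\col f\ra (0\ra B)$ in $\flcp$ by taking $u_1=0$, $u_0=p_2$, $v_1=0$, $v_0=i_2$, $\eta=p_1$, $\varepsilon=0$ and checking the eight equations of condition 2 of Proposition \ref{caracequflc}; these all collapse to the standard biproduct identities $i_1p_1+i_2p_2=1$, $p_1i_1=1$, $p_2i_2=1$ and $p_1i_2=0=p_2i_1$.

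For $3\Rightarrow 1$: the object $B'\eqdef (0\ra B)$ is visibly isomorphic to its own $\pi_0$, since $\Coker(0\col 0\ra B)=B$ and the unit $\eta_{B'}$ is the identity square. Hence $\eta_{B'}$ is an equivalence. Since $\eta$ is a $\Gpd$-natural transformation and $f\simeq B'$, the naturality square forces $\eta_f$ to be equivalent to $\eta_{B'}$ and therefore an equivalence as well.

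The only step requiring any calculation is $2\Rightarrow 3$, and even there the work is just verifying the split biproduct identities; no step is a real obstacle.
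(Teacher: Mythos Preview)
Your proof is correct and follows essentially the same route as the paper. The paper derives the corollary from Proposition~\ref{flcnoyconoy} applied to the arrow $0^f\col f\ra 0$; since the comparison arrow appearing in the proof of Proposition~\ref{flcnoyconoy} is in that case exactly $\eta_f=(0,q_f)$, this amounts to applying Proposition~\ref{caracequflc} directly to $\eta_f$, which is what you do for $1\Rightarrow 2$. Your arguments for $2\Rightarrow 3$ and $3\Rightarrow 1$ are the natural ones and the paper gives no further detail for them.
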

	
	Let us turn now to the pip and the root.  Let be an arrow $(u_1,u_0)\col f\ra g$
	in $\flcp$.  Its pip is the diagonal of the left square of the following diagram.
	\begin{xym}\xymatrix@=40pt{
		0\ar[r]\ar[d]
		&A_1\ar[d]^f\ar[r]^{u_1}
		&B_1\ar[d]^g
		\\ {\Ker\vervec{-f}{u_1}}\ar[ur]^-{\pi}\ar[r]_0
		&A_0\ar[r]_-{u_0}
		&B_0
	}\end{xym}
	
	Let be now $\alpha\col (0,0)\Ra (0,0)\col f\ra g$.  The root of $\alpha$
	is the left square of the following diagram, where $f'$ is induced
	by the fact that $\alpha f=0$.
	\begin{xym}\xymatrix@=40pt{
		A_1\ar@{=}[r]\ar[d]_{f'}
		&A_1\ar[d]_f\ar[r]^0
		&B_1\ar[d]^g
		\\ {\Ker\alpha}\ar[r]_{k_\alpha}
		&A_0\ar[r]_0\ar[ur]^\alpha
		&B_0
	}\end{xym}
	
	The following diagram describes the root of the copip of $(u_1,u_0)$.
	The diagonal of the right square is its copip, the right front face of the prism
	is the root of the copip; we get $\im(u_0\;\,g)$, because
	it is the kernel of $\rho$, which is itself the cokernel of $(u_0\;\,g)$.
	The left front face of the prism is the comparison morphism between $(u_1,u_0)$
	and the root of its copip.
	\begin{xym}\xymatrix@=20pt{
		A_1\ar[dd]_f\ar[rr]^{u_1}\ar[dr]_{u_1}
		&&B_1\ar[dd]^g\ar[rr]^-0
		&&{\Coker(u_0\;\,g)}\ar[dd]
		\\ &B_1\ar[dd]^(0.3){g'}\ar@{=}[ur]
		\\ A_0\ar[dr]_{u'_0}\ar'[r]^{u_0}[rr]\ar[dr]
		&&B_0\ar[rr]\ar[uurr]_{\rho}
		&&0
		\\ &{\im (u_0\;\,g)}\ar[ur]_-{k}
	}\end{xym}
	
	\begin{pon}\label{flcraccopep}
		Let $(u_1,u_0)\col f\ra g$ be a morphism in $\flcp$.  The following conditions
		are equivalent:
		\begin{enumerate}
			\item $(u_1,u_0)$ is normal fully faithful;
			\item the square \eqref{flechedeflc} is a “split pullback”
				(i.e.\ it is a pullback and 
				$u_1$ and $f$ are “jointly split monomorphic”);
			\item $\vervec{-f}{u_1}$ is the kernel of $(u_0\;\, g)$
				and is split;
			\item the sequence 	$0\longrightarrow A_1\xrightarrow{\vervec{-f}{u_1}}
				A_0\oplus B_1\xrightarrow{(u_0\;\, g)} B_0$ is split exact.
		\end{enumerate}
	\end{pon}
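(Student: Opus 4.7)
The plan is to establish the equivalence of conditions 2, 3, and 4 by elementary algebraic manipulations, and then prove $1 \Leftrightarrow 2$ using the characterisation of equivalences in $\flcp$ (Proposition \ref{caracequflc}) applied to the comparison morphism between $(u_1,u_0)$ and the root of its copip, following the template of Proposition \ref{flcnoyconoy}.

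First I would observe that conditions 3 and 4 are literal reformulations of each other: a sequence $0 \to A_1 \to A_0\oplus B_1 \to B_0$ is split exact iff its first map is split monic and is the kernel of its second map, which, since $\C$ is abelian, is exactly condition 3.  Condition 2 is in turn the elementary translation of condition 3: by Proposition \ref{caracplfidflc}, the pullback half of ``split pullback'' is equivalent to $\vervec{-f}{u_1}$ being the kernel of $(u_0\;\, g)$, and the joint split monomorphism condition is precisely $\vervec{-f}{u_1}$ being split monic, as in Proposition \ref{flcnoyconoy}.

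For the implication $1 \Leftrightarrow 2$, I would use the prism diagram above the statement.  By definition, $(u_1,u_0)$ is normal fully faithful iff the comparison morphism $(u_1,u'_0)\colon f \to g'$ to the root of its copip is an equivalence, where $g'\colon B_1 \to \im(u_0\;\, g)$ and $u'_0$ is the epi-factorisation of $u_0$ through $\im(u_0\;\, g)$.  Applying Proposition \ref{caracequflc} to this comparison, it is an equivalence iff the sequence
\begin{eqn}
0 \longrightarrow A_1 \xrightarrow{\vervec{-f}{u_1}} A_0\oplus B_1 \xrightarrow{(u'_0\;\, g')} \im(u_0\;\,g)\longrightarrow 0
\end{eqn}
is split exact.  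Now $(u'_0\;\, g')$ is always epimorphic (it is the image-factorisation of $(u_0\;\, g)$), and since $\im(u_0\;\, g) \hookrightarrow B_0$ is monic, $\vervec{-f}{u_1}$ is the kernel of $(u'_0\;\, g')$ iff it is the kernel of $(u_0\;\, g)$, that is, iff the square \eqref{flechedeflc} is a pullback.  Hence split exactness of this three-term sequence reduces to: (i)~the square is a pullback, and (ii)~$\vervec{-f}{u_1}$ is split monic—which is exactly condition 2, and the split monicity is manifestly invariant under replacing the codomain $\im(u_0\;\, g)$ by $B_0$, which yields condition 4 directly.

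The main obstacle will be the bookkeeping around the comparison morphism: one must verify carefully that the comparison $(u_1,u'_0)$ produced by the prism diagram has $u_1$ as its first component and $u'_0$ as its second, so that Proposition \ref{caracequflc} really does produce the short exact sequence whose first map is $\vervec{-f}{u_1}$, and to show that the splitting conditions for the two sequences (ending in $\im(u_0\;\,g)$ versus in $B_0$) carry exactly the same information once the pullback condition is imposed.
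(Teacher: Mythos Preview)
Your proposal is correct and follows essentially the same approach as the paper: both identify normal full faithfulness with the comparison morphism $(u_1,u'_0)\colon f\to g'$ being an equivalence, apply Proposition~\ref{caracequflc} to obtain split exactness of the sequence ending in $\im(u_0\;\, g)$, and then use that $k\colon\im(u_0\;\, g)\hookrightarrow B_0$ is monic to pass to condition~4. Your treatment is slightly more detailed in spelling out the equivalence of 2, 3, 4 and in justifying why the image-factorisation makes $(u'_0\;\, g')$ epimorphic, but the argument is the same.
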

	
		\begin{proof}
			The morphism $(u_1,u_0)$ is normal fully faithful if and only
			if the comparison arrow $(u_1,u'_0)\col f\ra g'$ of the previous diagram
			is an equivalence. By Proposition \ref{caracequflc},
			this is the case if and only if the upper sequence
			of the following diagram is split exact, which is equivalent
			to the lower sequence being split exact, because $k$ is a
			monomorphism.
		\end{proof}
			\begin{xym}\xymatrix@=40pt{
				0\ar[r]
				&A_1\ar[r]^-{\vervec{-f}{u_1}}
				&A_0\oplus B_1\ar@{->>}[r]^-{(u'_0\;\,g')}\ar[dr]_-{(u_0\;\,g)}
				&{\im(u_0\;\,g)}\ar@{>->}[d]^k\ar[r]
				&0
				\\ &&&B_0
			}\end{xym}

\subsection{Characterisation of 2-abelianness of $\flcp$}\label{secaxchflc}

	We say that an arrow $A_1\overset{f}\ra A_0$ in $\C$
	is \emph{split} \cite{Borceux2004a}\index{arrow!split}\index{split arrow}
	if there exists $A_0\overset{g}\ra A_1$ such that
	\begin{eqn}
		fgf = f \text{ and } gfg = g.
	\end{eqn}
	Then, a monomorphism (or an epimorphism) is split (in the usual sense) if and only if
	it is split in this sense.
	
	We can characterise the split arrows as objects of the
	$\Gpdp$-category $\flcp$.
	
	\begin{pon}\label{caracflechscind}
		Let be $A_1\overset{f}\ra A_0$ in $\flcp$. The following conditions
		are equivalent:
		\begin{enumerate}
			\item $f$ is split;
			\item there exists $A_0\overset{g}\ra A_1$ such that $fgf=f$;
			\item $f$ is equivalent in $\flcp$ to an arrow $0$;
			\item $f$ is equivalent in $\flcp$ to $\Ker f\overset{0}\ra\Coker f$;
			\item $f$ is equivalent to $\pi_0 f\oplus \pi_1 f$.
		\end{enumerate}
	\end{pon}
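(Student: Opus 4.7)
The plan is to establish the cycle $(1) \Leftrightarrow (2) \Rightarrow (5) \Leftrightarrow (4) \Rightarrow (3) \Rightarrow (2)$. The equivalence $(1) \Leftrightarrow (2)$ is a classical algebraic fact: if $fgf = f$, then $g' \eqdef gfg$ satisfies both $fg'f = f$ and $g'fg' = g'$. Since biproducts in $\flcp$ are computed pointwise (Proposition \ref{limpointparpoint} and Proposition \ref{objzerofonct}), one has
\begin{eqnarray*}
\pi_0 f \oplus \pi_1 f &=& (0 \to \Coker f) \oplus (\Ker f \to 0) \\
&=& (\Ker f \overset{0}\longrightarrow \Coker f),
\end{eqnarray*}
so $(4) \Leftrightarrow (5)$, and $(4) \Rightarrow (3)$ is trivial.

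The heart of the argument is $(2) \Rightarrow (4)$. Starting from $g$ with $fgf = f$, I first replace $g$ by $gfg$ (using $(1) \Leftrightarrow (2)$) so that both $fgf = f$ and $gfg = g$ hold. Then $p \eqdef gf$ and $q \eqdef fg$ are idempotents, which split in the abelian category $\C$, giving $A_1 \cong \Ker p \oplus \im p$ and $A_0 \cong \Ker q \oplus \im q$. Routine identifications yield $\Ker p = \Ker f$ (if $gfa = 0$ then $fa = fgfa = 0$), $\im p = \im g$, $\im q = \im f$ (since $\im f = \im(fgf) \subseteq \im(fg) \subseteq \im f$), and $\Ker q \cong \Coker f$ (as $A_0/\im f = A_0/\im q$). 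With respect to these decompositions, $f$ is block-diagonal of the form $\left(\begin{smallmatrix} 0 & 0 \\ 0 & \varphi \end{smallmatrix}\right)$ where $\varphi \col \im g \to \im f$ is an isomorphism (the relations $fgf = f$ and $gfg = g$ give $\varphi$ and $g|_{\im f}$ as mutual inverses). Since biproducts in $\flcp$ are pointwise, $f \simeq (\Ker f \overset{0}\to \Coker f) \oplus (\im g \overset{\varphi}\to \im f)$, and the second summand is equivalent to $0 \to 0$ in $\flcp$ (an arrow $(0,0)$ has the 2-arrow $-\varphi^{-1}$ as inverse data; alternatively, apply Corollary \ref{flcaracpiz} and its dual to $\varphi$). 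Hence $f \simeq (\Ker f \overset{0}\to \Coker f)$.

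Finally, for $(3) \Rightarrow (2)$: if $f$ is equivalent to a zero arrow $0 \col X \to Y$, then Proposition \ref{caracequflc} yields arrows $u_0, u_1, v_0, v_1, \eta, \varepsilon$ with the listed identities. The commutation $0 \cdot u_1 = u_0 f$ gives $u_0 f = 0$, and the identity $f \eta + v_0 u_0 = 1_{A_0}$ then produces $f \eta f = (1_{A_0} - v_0 u_0) f = f - v_0(u_0 f) = f$, so $g \eqdef \eta$ witnesses condition $(2)$.

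The main obstacle is $(2) \Rightarrow (4)$: correctly identifying the summands in the idempotent splittings with $\Ker f$, $\im g$, $\im f$, $\Coker f$, and verifying that $f$ is block-diagonal with isomorphic lower-right block in these decompositions. Everything else is formal, relying crucially on the abelianness of $\C$ (so idempotents split) and on the pointwise nature of biproducts and kernels/cokernels in $\flcp$.
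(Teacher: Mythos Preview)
Your proof is correct and follows the same overall cycle as the paper, with the same arguments for $(1)\Leftrightarrow(2)$, $(4)\Leftrightarrow(5)$, $(4)\Rightarrow(3)$, and $(3)\Rightarrow(2)$. The only genuine difference is in the key step $(2)\Rightarrow(4)$.

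The paper does not pass through idempotent splittings. Instead, it directly constructs the equivalence data of Proposition~\ref{caracequflc}: starting from $\eta$ with $f\eta f=f$, it sets $v_1\eqdef k_f$, $u_0\eqdef q_f$, obtains $u_1$ from $f(1-\eta f)=0$ (factoring through $\Ker f$), $v_0$ from $(1-f\eta)f=0$ (factoring through $\Coker f$), and $\varepsilon$ from $u_1\eta f=0$; then it checks the eight identities of Proposition~\ref{caracequflc} by hand. Your route is more structural: you split the idempotents $gf$ and $fg$, identify the summands with $\Ker f$, $\im g$, $\Coker f$, $\im f$, and observe that $f$ becomes block-diagonal with an invertible block that can be contracted. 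This buys you a cleaner picture of \emph{why} the equivalence holds (it is literally a direct-sum decomposition), at the cost of invoking idempotent splitting in abelian categories and a small extra step to kill the isomorphism summand. The paper's approach is more self-contained within the explicit framework of Proposition~\ref{caracequflc} and avoids any auxiliary decomposition, but is computationally heavier. Both are perfectly valid; your parenthetical about ``the 2-arrow $-\varphi^{-1}$'' is a bit loose, but the reference to Corollary~\ref{flcaracpiz} (an isomorphism is a split monomorphism, hence equivalent to its $\pi_0$, which is $0\to 0$) is a clean justification.
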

	
		\begin{proof}
			{\it 1 $\Leftrightarrow$ 2. }Clearly condition
			1 implies condition 2 and, conversely,
			if there exists $g$ such that $fgf=f$, then $g'\eqdef gfg$ will be such that $fg'f=f$
			and $g'fg'=g'$.
			
			{\it 4 $\Rightarrow$ 3. }This is obvious.
			
			{\it 3 $\Rightarrow$ 2. }If we are in the situation of condition 2
			of Proposition \ref{caracequflc}, with $g=0$, we have
			\begin{eqn}
				f\eta f=(1-v_0u_0)f=f-v_0u_0f=f-v_00u_1=f.
			\end{eqn}
			
			{\it 2 $\Rightarrow$ 4. }We start from the situation
			of the following diagram, with $f\eta f=f$, $v_1=\Ker f$ and $u_0=\Coker f$.
			\begin{xym}\xymatrix@=30pt{
				A_1\ar[d]^f
				&{\Ker f}\ar[l]_-{v_1}\ar[d]^0
				\\ A_0\ar@/^1pc/[u]^\eta\ar[r]_-{u_0}
				&{\Coker f}
			}\end{xym}
			Since $f(1-\eta f)=0$, there is a factorisation $u_1\col A_1\ra\Ker f$ such
			that $v_1u_1 = 1-\eta f$. Dually, we have $v_0\col \Coker f\ra A_0$
			such that $v_0u_0=1-f\eta$.  Finally, since $v_1$ is a monomorphism and
			$v_1u_1\eta f=(1-\eta f)\eta f=\eta f-\eta f\eta f=0=v_10$,
			we have $u_1\eta f =0$, which induces a factorisation
			$\varepsilon\col\Coker f\ra\Ker f$ such that $\varepsilon u_0=u_1\eta$.
			It is easy to check the
			other conditions of condition 2 of Proposition \ref{caracequflc},
			which shows that we have constructed an equivalence between $f$
			and $\Ker f\overset{0}\ra\Coker f$.
			
			{\it 4 $\Leftrightarrow$ 5. }Condition 5 is a direct translation of condition 4, by using
			the construction of biproducts in $\flcp$.
		\end{proof}
	
	Remark: condition 5 has the advantage of being expressed in purely $\Gpdp$-categorical
	terms, it has a meaning independently of the special structure
	of $\flcp$.  In the context of 2-groups, Elgueta \cite[Section 2.8]{Elgueta2006a} defines
	a \emph{split} 2-group as a 2-group $\cat{G}$ such that the following sequence
	is split exact:
	\begin{eqn}
		0\longrightarrow\pi_1\G\overset{\varepsilon_\G}\longrightarrow\G\overset{\eta_\G}\longrightarrow\pi_0\G\longrightarrow 0.
	\end{eqn}
	
	We can now give the following characterisation of the abelian categories
	$\C$ such that $\flcp$ is 2-abelian. These are the abelian categories
	satisfying the \emph{Von Neumann axiom} (condition 1),\index{axiom!Von Neumann}%
	\index{Von Neumann axiom} in the sense
	of Borceux-Rosický \cite{Borceux2004a}, which is equivalent to the axiom of choice%
	\index{choice, axiom of}\index{axiom!of choice}
	(condition 3) for an abelian category.  Most of the equivalences
	are well-known. 
	
	Let us assume that the axiom of choice holds in the category of sets.
	If $\C$ is $\caspar{Mod}_R$ for a ring $R$,
	$\C$ satisfies the axiom of choice if and only if the ring $R$ is
	semisimple (see \cite[Corollary 13.10]{Anderson1992a}
	or \cite[Theorem 5.2.13]{Hazewinkel2004a}).
	In particular, this is the case for a field $k$. So the Baez-Crans 2-vector spaces
	form a 2-abelian $\Gpdp$-category if we accept
	the axiom of choice.
	
	\begin{thm}\label{caracdabflabplus}
		Let $\C$ be an abelian category. The following conditions are
		equivalent.
		\begin{enumerate}
			\item Every arrow in $\C$ splits.
			\item Every monomorphism in $\C$ splits.
			\item Every epimorphism in $\C$ splits.
			\item Every short exact sequence in $\C$ splits.
			\item Every object of $\C$ is projective.
			\item Every object of $\C$ is injective.
			\item $\flcp$ is 2-abelian.
		\end{enumerate}
	\end{thm}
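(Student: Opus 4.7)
My plan is to first establish the equivalence of the classical conditions 1--6, which are standard facts about abelian categories, and then prove that these are equivalent to condition 7 by exploiting the characterisations of (fully) (co)faithful and normal (fully) (co)faithful arrows of $\flcp$ obtained in Propositions~\ref{caracfidflc}, \ref{caracplfidflc}, \ref{flcnoyconoy} and \ref{flcraccopep}.

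For the equivalences among 1--6: conditions 5 and 3 are equivalent by the defining property of projectives (every object projective iff for every epi the identity lifts iff every epi has a section), and dually 6 and 2. The splitting lemma in the abelian category $\C$ yields $2 \Leftrightarrow 3 \Leftrightarrow 4$ by considering the SES $0\to\Ker f\to X\to C\to 0$ associated to a given epi or mono. Condition 1 follows from conditions 2 and 3 by taking the image factorisation $f=me$: if $r$ retracts $m$ and $s$ sections $e$, then $g\eqdef sr$ satisfies $fgf=f$ and can be replaced by $gfg$ to also satisfy $gfg=g$. Conversely, $1\Rightarrow 2$ by cancelling the mono $m$ in $mgm=m$, and dually $1\Rightarrow 3$.

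For $7 \Rightarrow 4$: given a short exact sequence $0\to A\overset{m}\to B\overset{p}\to C\to 0$ in $\C$, consider the morphism in $\flcp$ from $(A\overset{m}\to B)$ to $(0\to C)$ whose commutative square has $u_1=0\col A\to 0$ and $u_0=p\col B\to C$. The sequence \eqref{carrecomsuit} associated to this square is (up to a sign) the original SES, and in particular is exact; by Corollary~\ref{pasequ} the morphism is simultaneously faithful and fully cofaithful. If $\flcp$ is 2-abelian, then Proposition~\ref{fidplcofidequ} forces this morphism to be an equivalence, and Proposition~\ref{caracequflc} then yields that the sequence splits.

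For $4\Rightarrow 7$: the $\Gpdp$-category $\flcp$ has finite biproducts computed pointwise because $\C$ does, so it remains to verify the four conditions of point~3 of Proposition~\ref{caractwopupex}. Propositions~\ref{caracfidflc} and~\ref{caracplfidflc} and their duals show that in $\flcp$ the notions of 0-faithful, 0-cofaithful, fully 0-faithful and fully 0-cofaithful coincide respectively with faithful, cofaithful, fully faithful and fully cofaithful. Propositions~\ref{flcnoyconoy} and~\ref{flcraccopep} and their duals show that normal (fully) (co)faithfulness differs from (fully) (co)faithfulness exactly by the requirement that the monomorphism $\vervec{-f}{u_1}$ (respectively the epimorphism $(u_0\ g)$) of sequence~\eqref{carrecomsuit} be split. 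Under condition~4, every monomorphism and every epimorphism in $\C$ splits, so this extra requirement is automatic and all four conditions hold, whence $\flcp$ is 2-Puppe-exact and thus 2-abelian.

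The main obstacle is the direction $7\Rightarrow 4$, and specifically identifying the right square in $\flcp$ to turn an arbitrary short exact sequence of $\C$ into a faithful and fully cofaithful arrow; once this translation is set up, Corollary~\ref{pasequ}, Proposition~\ref{fidplcofidequ} and Proposition~\ref{caracequflc} reduce the 2-categorical condition of 2-abelianness to the 1-categorical splitting condition.
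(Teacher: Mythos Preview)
Your argument is correct, and the implication $4\Rightarrow 7$ is essentially identical to the paper's $2,3\Rightarrow 7$. The only genuine difference lies in the reverse direction: the paper proves $7\Rightarrow 2$ by invoking the general fact (Proposition~\ref{caracdispreadd}) that in a 2-abelian $\Gpd$-category every discrete object satisfies $\eta_D\col D\simeq\pi_0 D$, applies this to an arbitrary monomorphism $f$ in $\C$ (which is a discrete object of $\flcp$ by Corollary~\ref{caracdiscreteflc}), and reads off from Corollary~\ref{flcaracpiz} that $f$ is a split monomorphism. Your route $7\Rightarrow 4$ instead builds, from a given short exact sequence, an explicit arrow of $\flcp$ whose associated sequence~\eqref{carrecomsuit} is that very SES, then appeals directly to Corollary~\ref{pasequ}, Proposition~\ref{fidplcofidequ} and Proposition~\ref{caracequflc}. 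Your approach is slightly more self-contained, exploiting precisely the gap between Corollary~\ref{pasequ} and Proposition~\ref{caracequflc} that the paper had already flagged as the obstruction to 2-abelianness; the paper's approach is shorter but relies on the structural result about discrete objects in 2-abelian $\Gpd$-categories.
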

	
		\begin{proof}
			{\it 1 $\Rightarrow$ 2,3. }This is obvious.
			
			{\it 2 $\Leftrightarrow$ 3 $\Leftrightarrow$ 4. }This follows from the fact that a short exact sequence splits if and only
			if one of the two sides splits.

			{\it 2,3 $\Rightarrow$ 1. }In the factorisation $f=me$ of $f$ as an
			epimorphism followed by a
			monomorphism, $m$ and $e$ split, by conditions 2 and 3:
			$ea=1$ and $bm=1$, and by setting $g\eqdef ab$, we have $fgf=meabme=me=f$.
			
			{\it 4 $\Leftrightarrow$ 5. }This follows from the fact that an
			object $P$ is projective if and only if every short exact sequence of the form
			$0\ra A\ra B\ra P\ra 0$ splits.
			
			{\it 4 $\Leftrightarrow$ 6. }The proof is dual.

			{\it 7 $\Rightarrow$ 2. }By Proposition \ref{caracdispreadd}, if $\flcp$
			is 2-abelian, every discrete object in $\flcp$ (i.e.
			every monomorphism) is equivalent to its $\pi_0$ (i.e.
			is a split monomorphism, by Corollary \ref{flcaracpiz}).
			
			{\it 2,3 $\Rightarrow$ 7. }	By Proposition \ref{caracfidflc}, if an arrow
			$(u_1,u_0)\col f\ra g$ is 0-faithful, $\vervec{-f}{u_1}$ is a monomorphism.
			Then, by condition 2, $\vervec{-f}{u_1}$ is a split monomorphism and,
			by Proposition \ref{flcnoyconoy}, 
			$(u_1,u_0)$ is normal faithful.  We proceed in the same way
			to prove that every fully 0-faithful arrow is normal
			by using Propositions \ref{caracplfidflc} and \ref{flcraccopep}.
			Dually, we prove that every 0-cofaithful arrow is normal and that
			every fully 0-cofaithful arrow is normal
			by using condition 3.
		\end{proof}
	
	In the case where these equivalent conditions hold, we also get
	that $\Con(\flcp)\simeq\Dis(\flcp)\simeq\C$, because the discrete objects in $\flcp$
	are then equivalent to the arrows with zero domain, by Proposition
	\ref{caracflechscind}.
	
	\begin{pon}
		If $\C$ is an abelian category in which the axiom of choice holds, then $\flcp$
		is a good 2-abelian $\Gpd$-category.
	\end{pon}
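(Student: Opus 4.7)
The main content of the statement is already essentially contained in Theorem~\ref{caracdabflabplus}: under the axiom of choice, $\flcp$ is 2-abelian. The remaining task is therefore only to verify that this 2-abelian $\Gpd$-category is \emph{good}, in the sense of Proposition~\ref{defcaracgood}. The cleanest criterion to check is condition~2 of that proposition, namely $\pi_0(\PlFid) \subseteq \caspar{Mono}$ and $\pi_1(\PlCofid) \subseteq \caspar{Epi}$.

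The plan is first to identify the $\Gpd$-functors $\pi_0, \pi_1 \colon \flcp \to \C$ explicitly. From the construction of $\Sigma$ and $\Omega$ (applied to the kernel and cokernel in $\flcp$ described in diagram~\ref{diagnumerodeux}), we obtain that $\pi_0(A_1 \xrightarrow{f} A_0)$ is the object $0 \to \Coker f$ and $\pi_1(A_1 \xrightarrow{f} A_0)$ is the object $\Ker f \to 0$. Via the equivalences $\Dis(\flcp) \simeq \C \simeq \Con(\flcp)$ (valid under the axiom of choice, by Corollary~\ref{flcaracpiz}, Proposition~\ref{caracflechscind}, and the equivalent conditions of Theorem~\ref{caracdabflabplus}), $\pi_0$ becomes the cokernel functor $\flcp \to \C$ and $\pi_1$ becomes the kernel functor, both acting on morphisms in the obvious way via functoriality of (co)kernels on commutative squares in $\C$.

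Next, I would invoke Propositions~\ref{caracplfidflc} and its dual: a morphism $(u_1,u_0) \colon f \to g$ in $\flcp$ is fully faithful exactly when the corresponding commutative square in $\C$ is a pullback, and fully cofaithful exactly when it is a pushout. So condition~2 of Proposition~\ref{defcaracgood} reduces to two classical statements about the ambient abelian category $\C$:
\begin{enumerate}
    \item In a pullback square in $\C$, the induced map $\Coker f \to \Coker g$ is a monomorphism.
    \item In a pushout square in $\C$, the induced map $\Ker f \to \Ker g$ is an epimorphism.
\end{enumerate}

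These are standard and can be proved by a short snake-lemma/four-lemma chase. For the first, given the pullback, one already has the exact sequence $0 \to A_1 \to A_0 \oplus B_1 \to B_0$; computing $\Ker(\Coker f \to \Coker g)$ by applying the snake lemma to the evident diagram whose rows are the four-term exact sequences $\Ker f \to A_1 \to A_0 \to \Coker f$ and $\Ker g \hookrightarrow B_1 \to B_0 \to \Coker g$ (using that the pullback property forces $\Ker f \xrightarrow{\sim} \Ker g$) shows that this kernel is zero. The second statement is dual. The principal technical step is just this lemma on abelian categories; it uses no further hypothesis on $\C$ and in particular does \emph{not} require choice, so only 2-abelianness of $\flcp$ relies on the axiom of choice, while goodness is automatic. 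This completes the verification of condition~2 of Proposition~\ref{defcaracgood} and hence the proposition.
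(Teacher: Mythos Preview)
Your proof is correct and takes a genuinely different route from the paper's. You verify condition~2 of Proposition~\ref{defcaracgood} (that $\pi_0$ sends fully faithful arrows to monomorphisms and $\pi_1$ sends fully cofaithful arrows to epimorphisms), reducing via Proposition~\ref{caracplfidflc} to the standard abelian-category fact that in a pullback square the induced map on cokernels is a monomorphism. The paper instead checks condition~3 (that the comparison arrow $a_{(u_1,u_0)}\colon \pi_0\Ker(u_1,u_0)\to\Ker\pi_0(u_1,u_0)$ is an epimorphism): it uses the axiom of choice to replace every object of $\flcp$ by a zero morphism (Proposition~\ref{caracflechscind}), and then computes the comparison arrow explicitly as a product projection, which is visibly an epimorphism.

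Your approach is more conceptual and, as you note, shows that goodness requires nothing beyond abelianness of $\C$; the axiom of choice is used only for 2-abelianness. The paper's approach leans on the extra structure available under choice to make the computation trivially explicit, at the cost of obscuring that goodness itself is unconditional. One minor remark: your snake-lemma sketch with four-term rows is a bit informal; the cleanest phrasing is that the pullback condition gives $\Ker f\xrightarrow{\sim}\Ker g$ and then the long exact kernel--cokernel sequence (or a direct element-style argument in the abelian category) yields injectivity of $\Coker f\to\Coker g$. But this is a well-known lemma and the reduction is sound.
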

	
		\begin{proof}
			Let $(u_1,u_0)$ be an arrow in $\flcp$. We must prove that
			the comparison arrow $\pi_0\Ker(u_1,u_0)\ra\Ker\pi_0(u_1,u_0)$
			is an epimorphism (and dually, the comparison arrow $\Coker\pi_1(u_1,u_0)
			\ra\pi_1\Coker(u_1,u_0)$ must be a monomorphism).  Since
			every object in $\flcp$ is split, by Proposition \ref{caracflechscind},
			we can assume that the domain
			and the codomain of $(u_1,u_0)$ are zero morphisms.
			
			Let us consider the following diagram. The first two rows
			show the arrow $(u_1,u_0)$ and its kernel. The third row is the result
			of the application of $\pi_0$ to these first two rows ($\pi_0$
			being simply the cokernel).  So $\Ker u_0\oplus\Coker u_1$
			is $\pi_0\Ker(u_1,u_0)$ and $\Ker u_0$ is $\Ker\pi_0(u_1,u_0)$;
			the comparison arrow is $p_1$, which is an epimorphism.
		\end{proof}
			\begin{xym}\xymatrix@=30pt{
				A_1\ar@{=}[r]\ar[d]_{\vervec{0}{u_1}}
				&A_1\ar[d]^(0.6)0\ar[r]^{u_1}
				&B_1\ar[d]^0
				\\ {\Ker u_0\oplus B_1}\ar[urr]^-(0.3){p_2}
					\ar[r]_-{kp_1}\ar[d]_{1\oplus q}
				&A_0\ar[r]_{u_0}\ar@{=}[d]
				&B_0\ar@{=}[d]
				\\ {\Ker u_0\oplus\Coker u_1}\ar[r]^-{kp_1}\ar@{->>}[d]_-{p_1}
				&A_0\ar[r]_{u_0}
				&B_0
				\\ {\Ker u_0}\ar@{>->}[ur]_-k
			}\end{xym}

\subsection{Discretely presentable objects}\label{secobjdiscpres}

	To understand why $\flcp$ is not in general 2-abelian, we consider
	the case where $\C$ is $\caspar{Dis}(\D)$, for a 2-abelian $\Gpd$-category $\D$.
	We can restrict the equivalence between
	$\Fid(\D)$ and $\Cofid(\D)$ (given by the kernel and the cokernel)
	to the faithful arrows with discrete codomain
	and to the cofaithful arrows with discrete domain.  By Proposition \ref{fiddisc},
	a faithful arrow with discrete codomain also has a discrete domain and so
	is simply an arrow in $\caspar{Dis}(\D)$.  We have thus an equivalence between
	categories 
	\begin{xym}\label{diagequfldisddis}\xymatrix@=40pt{
		{\fl{\caspar{Dis}(\D)}}\ar@<+2mm>[r]^-{\Coker}\ar@{}[r]|-{\simeq}
		&{\D^{\mathrm{dis}},}\ar@<+2mm>[l]^-{\Ker}
	}\end{xym}
	where $\D^{\mathrm{dis}}$ is the full sub-$\Gpd$-category of $\Cofid(\D)$ whose
	objects are the cofaithful arrows
	with discrete domain; it is a category, because there is at most one 2-arrow between
	two arrows, by the discreteness of the domains and the cofaithfulness of the arrows.  We
	call the objects of $\D^{\mathrm{dis}}$ the \emph{discretely presented objects}%
	\index{object!discretely presented}\index{discretely presented object}
	of $\D$, because they are the quotients of an arrow between discrete objects.
	These equivalent categories are abelian.
	 	
	In $\CGS$, a discretely presented object is a surjective symmetric monoidal functor
	$A\col A_0\twoheadrightarrow\A$ where $A_0$ is an abelian group seen as a discrete
	2-group and where $\A$ is a symmetric 2-group.
	By taking the image $\mathrm{Im}_{\mathrm{pl}}^1\, A$, we can see the surjective functor
	as being the identity at the level of objects.  In other words, to give a discretely presented object in $\CGS$ amounts to give an abelian group $A_0$ and a symmetric 2-group
	$\A$ whose objects are those of $A_0$ and the structure of symmetric 2-group
	is that of $A_0$
	(it is thus strictly described); this is what we generally call a strict symmetric
	2-group.  
	The above equivalence becomes the known equivalence between strict symmetric 2-groups
	and arrows in $\Ab$.  The cokernel in $\CGS$
	of an arrow in $\Ab$ is the “realisation” of the arrow as a symmetric 2-group.
	
	There is a forgetful functor $U\col\D^{\mathrm{dis}}\ra\D$, which forgets the domain.
	This forgetful functor factors through two images:
	\begin{eqn}
		\D^{\mathrm{dis}}\longrightarrow\D^{\mathrm{dis}}_{+}\longrightarrow
		\D^{\mathrm{dis}}_{++}\longrightarrow\D.
	\end{eqn}
	The first, that the we denote by $\D^{\mathrm{dis}}_{+}$, has as objects and arrows those of 
	$\D^{\mathrm{dis}}$ and as 2-arrows the arrows of $\D$.  In other words,
	$\D^{\mathrm{dis}}_{+}$ can be described in the following way:
	\begin{itemize}
		\item {\it objects: } these are the arrows $A_0\overset{a}\twoheadrightarrow A$
			in $\D$, where $A_0$ is discrete and $a$ is cofaithful;
		\item {\it arrows: } an arrow from $A_0\overset{a}\twoheadrightarrow A$
			to $B_0\overset{b}\twoheadrightarrow B$ is given by arrows
			$f_0\col A_0\ra B_0$ and $f\col A\ra B$ and a 2-arrow 
			$\varphi\col fa\Ra bf_0$:
		\item {\it 2-arrows: } a 2-arrow $(f_0,\varphi,f)\Ra (f'_0,\varphi',f')$
			is simply a 2-arrow $\alpha\col f\Ra f'$ in $\D$.
	\end{itemize}
	There is no compatibility condition with  $\varphi$
	and $\varphi'$ for $\alpha$ (it is this condition which makes the 2-arrows in
	$\D^{\mathrm{dis}}$ unique).
	
	We can transfer $\D^{\mathrm{dis}}_{+}$
	to the other side of the equivalence \ref{diagequfldisddis}: actually, the $\Gpd$-category
	$\D^{\mathrm{dis}}_{+}$ is equivalent to $\caspar{Dis}(\D)^\deux_{+}$.  To
	see this, it suffices to add to equivalence \ref{diagequfldisddis}
	a correspondance at the level of 2-arrows: by the universal property
	of the cokernel, a homotopy in $\caspar{Dis}(\D)^\deux_{+}$ induces a natural transformation
	in $\D^{\mathrm{dis}}_{+}$ and, by the universal property of the kernel,
	a natural transformation induces a homotopy.  If $K$ is a field
	and $K\dis$ is the field $K$ seen as a discrete 2-ring, we can take
	$\D\eqdef \dMod_{K\dis}$.  Then $\D^{\mathrm{dis}}_{+}\simeq\Dis(\D)^{\deux}_+$ is the
	$\Gpd$-category of Baez-Crans 2-vector spaces on $K$.
	
	The reason for which $\D^{\mathrm{dis}}_{+}$ 
	(and thus $\caspar{Dis}(\D)^\deux_{+}$) is not in general
	2-abelian is clear, as we will see.
	It is easy to check that an arrow $(f_0,\varphi,f)$ in $\D^{\mathrm{dis}}_{+}$
	is faithful [resp.\ fully faithful, cofaithful, fully cofaithful] if and only
	if $f$ is faithful [resp.\ fully faithful, cofaithful, fully cofaithful] in $\D$.
	Therefore $(f_0,\varphi,f)$ is fully faithful and cofaithful if and only if $f$
	is fully faithful and cofaithful in $\D$, i.e.\ if and only if $f$
	is an equivalence in $\D$.  But this does not imply in general that $(f_0,\varphi,f)$
	is an equivalence in $\D^{\mathrm{dis}}_{+}$, even if $f$ has an inverse $g$, because, 
	to define an inverse for $(f_0,\varphi,f)$, we also need a compatible arrow
	$g_0$ at the level of discrete objects.
	
	In $\CGS$, this is simply the fact that a strict symmetric monoidal functor
	between strict symmetric 2-groups can be an equivalence without its
	inverse being strict.
	This is the reflection in $\CGS^{\mathrm{dis}}_+$ of the gap between Corollary \ref{pasequ} and
	Proposition \ref{caracequflc}.  We can use these propositions to give an
	example of strict monoidal functor which is an equivalence in $\CGS$ but not in
	$\CGS^{\mathrm{dis}}_+$: the exact sequence
	\begin{eqn}
		0\ra\Z\xrightarrow{2\cdot-}\Z\overset{q}\longrightarrow\Z_2\ra 0
	\end{eqn}
	does not split and so, in $\Ab^{\deux}_{+}$, the corresponding cartesian square
	\begin{xym}\xymatrix@=30pt{
		{\Z}\ar[r]\ar[d]_{2\cdot -}
		&0\ar[d]
		\\ {\Z}\ar[r]_-q
		&{\Z_2}
	}\end{xym}
	is not an equivalence.  If we translate this in $\CGS^{\mathrm{dis}}_+$
	by computing the cokernels in $\CGS$ of these arrows, we get a functor
	\begin{eqn}\label{exampleequpasequ}
		\Coker(2\cdot-)\ra(\Z_2)\dis
	\end{eqn}
	where the left 2-group has as objects the integers and where there is exactly
	one arrow from $m$ to $n$ if they have the same parity, otherwise there is no arrow.
	The functor maps $n$ to $n\mod 2$.  This is a strict symmetric monoidal equivalence,
	but there is no strict inverse, because for that we would need a homomorphism
	$\alpha\col\Z_2\ra\Z$ such that $q\alpha = 1$, which does not exist.

\bigskip
Let us turn now to the second image of the forgetful functor $U$, which we denote by
$\D^{\mathrm{dis}}_{++}$, whose objects are those of $\D^{\mathrm{dis}}$ and whose
arrows and 2-arrows are those of $\D$.  In other words, $\D^{\mathrm{dis}}_{++}$
is the full sub-$\Gpd$-category of $\D$ whose objects are the \emph{discretely presentable} objects.%
\index{object!discretely presentable}\index{discretely presentable object}

\begin{pon}\label{dispresdab}
	Let $\D$ be a 2-abelian $\Gpd$-category. Then $\D^{\mathrm{dis}}_{++}$
	is 2-abelian.
\end{pon}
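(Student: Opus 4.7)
The plan is to show that $\D^{\mathrm{dis}}_{++}$, being a full sub-$\Gpd$-category of the 2-abelian $\Gpd$-category $\D$, inherits all the required structure (zero object, finite biproducts, kernels, cokernels) computed as in $\D$, and that 2-Puppe-exactness transfers automatically. Since the limits and colimits are computed in $\D$ and $\D$ is 2-Puppe-exact, the comparison arrows $\bar{w}_f$ and $w_f$ (see Proposition \ref{caractwopupex}) are equivalences in $\D$ and hence in $\D^{\mathrm{dis}}_{++}$. So the essential work is to verify that $\D^{\mathrm{dis}}_{++}$ is closed under these four constructions.

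First, I would handle the easy closures. The zero object $0$ is discretely presentable since $1_0 \colon 0 \twoheadrightarrow 0$ is cofaithful with discrete domain. For the biproduct $A \oplus B$ of two discretely presentable objects with witnesses $A_0 \twoheadrightarrow A$ and $B_0 \twoheadrightarrow B$, I would use that discrete objects are closed under biproducts in $\D$ (they form the reflective abelian sub-$\Gpd$-category $\DisC$, equivalent to $\ConC$ by Section \ref{sssectequdiscconc}) and that the biproduct of two cofaithful arrows is again cofaithful; this gives $A_0 \oplus B_0 \twoheadrightarrow A \oplus B$ as the required witness. For cokernels, given $f \col A \to B$ between discretely presentable objects with witness $B_0 \twoheadrightarrow B$ cofaithful, the composite $B_0 \twoheadrightarrow B \overset{q_f}{\twoheadrightarrow} \Coker f$ is cofaithful since cofaithful arrows are stable under composition in a 2-Puppe-exact $\Gpd$-category (Proposition \ref{carackerfac} applied together with Proposition \ref{fidfidzer}); since $B_0$ is discrete, $\Coker f$ is discretely presentable.

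The main obstacle, and the step requiring the most care, is closure under kernels. Given $f \col A \to B$ between discretely presentable objects with witness $A_0 \twoheadrightarrow A$ cofaithful and $A_0$ discrete, I would form the pullback
\begin{xym}\xymatrix@=30pt{
	P\ar[r]\ar@{->>}[d]
	&A_0\ar@{->>}[d]
	\\ \Ker f\ar[r]_-{k_f}
	&A
}\end{xym}
and use regularity of 2-abelian $\Gpd$-categories (Proposition \ref{gpdcatdabreg}) to deduce that $P \twoheadrightarrow \Ker f$ is cofaithful. The outer pullback rectangle then identifies $P$ with $\Ker(A_0 \to B)$, whose structural arrow $P \to A_0$ is a kernel and hence faithful; since $A_0$ is discrete, Lemma \ref{fiddisc} gives that $P$ is discrete. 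Thus $\Ker f$ is discretely presentable.

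Finally, since $\D^{\mathrm{dis}}_{++}$ is a full sub-$\Gpd$-category of $\D$ in which zero, biproducts, kernels, and cokernels coincide with those of $\D$, every construction entering the 2-Puppe-exactness conditions of Proposition \ref{caractwopupex}—kernels, cokernels, and their comparison arrows $\bar{w}_f$ and $w_f$—is the same as in $\D$. The equivalences in $\D$ witnessing its 2-Puppe-exactness therefore witness the 2-Puppe-exactness of $\D^{\mathrm{dis}}_{++}$, and combined with the closure under finite biproducts, this yields that $\D^{\mathrm{dis}}_{++}$ is 2-abelian.
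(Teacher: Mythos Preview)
Your proof is correct and follows essentially the same strategy as the paper's: verify closure of $\D^{\mathrm{dis}}_{++}$ under zero, biproducts, kernels (via pullback along the discrete cover, using regularity and discreteness of the pulled-back domain), and cokernels (via composition of cofaithful arrows), then observe that full inclusion into $\D$ transfers 2-Puppe-exactness. The only cosmetic differences are that the paper phrases the kernel and cokernel cases as the more general ``every subobject'' and ``every quotient'' of a discretely presentable object, and checks discreteness of the pullback via joint faithfulness of the projections rather than your identification of $P$ with $\Ker(fa)$; both arguments are valid.
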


	\begin{proof}
		We check that this full sub-$\Gpd$-category of $\D$ is stable
		under the limits which interest us.  First, since $0$
		is discrete, the object $0$ is discretely presentable, through the identity
		$0\overset{1_0}\ra 0$.  Next, since $\mathrm{Dis}(\D)$ is a reflective
		sub-$\Gpd$-category of $\D$, the biproduct of two discrete objects
		is always discrete; moreover the biproduct of two cofaithful arrows
		is always cofaithful; thus the biproduct of two discretely presentable objects
		is discretely presentable.
		
		Next, every subobject of a discretely presentable object is discretely
		presentable, because if $A_0\overset{a}\twoheadrightarrow A$ is a cofaithful arrow
		with discrete domain and $B\overset{m}\rightarrowtail A$ is a faithful arrow,
		then the pullback of $a$ along $m$ is a cofaithful arrow
		(by the regularity of 2-abelian $\Gpd$-categories)
		$B_0\overset{b}\ra B$ with discrete domain (which we can check by using
		the facts that $A_0$ is discrete, that $m$ is faithful, and that the projections of the
		pullback are jointly faithful).  In particular, the kernel of an arrow
		between discretely presentable objects is discretely presentable.
		
		Finally, every quotient of a discretely presentable object is discretely
		presentable, because if $A_0\overset{a}\twoheadrightarrow A$ is a cofaithful arrow
		with discrete domain and $A\overset{p}\twoheadrightarrow B$ is a cofaithful arrow,
		then $A_0\xrightarrow{pa}B$ is a cofaithful arrow with discrete domain.
		In particular, the cokernel of an arrow between discretely presentable objects
		is discretely presentable.
		
		Therefore, the (fully) (co)faithful arrows in $\D^{\mathrm{dis}}_{++}$
		are the (fully) (co)faithful arrows of $\D$ between discretely
		presentable objects, since we can characterise them in terms of (co)kernel or (co)pip.
		And the conditions of 2-abelian $\Gpd$-category hold because they do
		in $\D$.
	\end{proof}

Now, we wish to complete the following diagram by defining internally in $\caspar{Dis}(\D)$ a $\Gpd$-category $\caspar{Dis}(\D)^\deux_{++}$ equivalent
to $\D^{\mathrm{dis}}_{++}$.
\begin{xym}\xymatrix@=30pt{
	{\caspar{Dis}(\D)^\deux}\ar@{-}[d]_{\wr}\ar[r]
	&{\caspar{Dis}(\D)^\deux_{+}}\ar[r]\ar@{-}[d]_{\wr}
	&{\caspar{Dis}(\D)^\deux_{++}}\ar[r]\ar@{-}[d]^{\wr}
	&{\D}\ar@{=}[d]
	\\ {\D^{\mathrm{dis}}}\ar[r]
	&{\D^{\mathrm{dis}}_{+}}\ar[r]
	&{\D^{\mathrm{dis}}_{++}}\ar[r]
	&{\D}
}\end{xym}
The solution should be the analogue for chain complexes of length 1 of the internal anafunctors and ananatural transformations. These notions have been introduced 
by Michael Makkai \cite{Makkai1996a} in the case of internal categories in $\Ens$, and
generalised to internal categories in more general categories by Toby Bartels \cite{Bartels2006a}.
Bartels needed to use internal anafunctors to solve a problem very similar to the one which interests us: he wanted to define a 2-category
of 2-spaces as the 2-category of internal categories, internal functors and
internal natural transformations in a category of spaces, but this 2-category has not the desired properties,
because the internal functors which are full, faithful and surjective are not always
equivalences.

This suggests the following program: 
\begin{enumerate}
	\item to define internally in any abelian category $\C$ a
		$\Gpd$-category $\C^\deux_{++}$;
	\item to check that $\caspar{Dis}(\D)^\deux_{++}\simeq\D^{\mathrm{dis}}_{++}$,
		for any 2-abelian $\Gpd$-category $\D$;
	\item to check that $\C^\deux_{++}$ is 2-abelian.
\end{enumerate}
 If everything works, this would give for each abelian category $\C$ a 2-abelian $\Gpd$-category $\D$ such that $\C\simeq\caspar{Dis}(\D)\simeq\Con(\D)$.

\chapter{Towards Tierney theorem}

An important property of abelian categories is that a category is abelian (in the sense that it has a zero object, finite products and coproducts, kernels and cokernels, that every monomorphism is normal and every epimorphism is normal) if and only if it is additive and Barr-exact (Tierney equation).  This is due to the fact that, in an additive category $\C$ with kernels, there is an equivalence between the category of equivalence relations in $\C$ and the category of monomorphisms in $\C$ and that this equivalence commutes with, on the one hand, the kernel and the kernel relation and, on the other hand, the cokernel and the quotient.  In other words, there is an equivalence between the kernel-quotient system $\Coker\!\adj\Ker$ and the kernel-quotient system $\Quot\adj\KRel$, allowing to translate all notions expressed in terms of one of these systems into notions in terms of the other.

Here is the program to give a 2-dimensional version of this theorem:
\begin{enumerate}
	\item  to give a 2-dimensional version of the kernel-quotient system
	 	$\Quot\adj\KRel$, with the equivalence 2-relations as congruences;
	\item to prove that equivalence 2-relations in a $\CGS$-category $\C$ with
		all the necessary limits are equivalent to reflexive 2-relations in $\C$
		and that the latter are equivalent to faithful arrows in $\C$, these equivalences
		commuting with the appropriate kernels; then the kernel-quotient system $\Coker\adj\Ker$ 
	will be equivalent to the system $\Quot\adj 2\text{-}\!\KRel$;
\item to use this equivalence to translate the conditions expressed in terms of the system
	 $\Coker\!\adj\Ker$ into conditions expressed in terms of the other kernel-quotient system.
\end{enumerate}
We must do the same three stages for a non-pointed version of the kernel-quotient system $\Coroot\adj\Pip$.

The last two stages are in progress, but the first stage is already completed. Here is a summary of this stage.

	Ross Street has defined a notion of exactness for 2-categories \cite{Street1982b}.
	This notion does not work as such for $\Gpd$-categories because in $\Gpd$ not every
	congruence (in the sense of Street) is the kernel of its quotient.  The problem
	is that the appropriate congruences for 2-categories are in some way order 2-relations,
	whereas the appropriate congruences for $\Gpd$-categories are equivalence 2-relations.
	
	\begin{df}\index{2-relation}
		Let $\C$ be a $\Gpd$-category and $A,B$ be objects of $\C$.  A \emph{2-relation}
		$R\col A\rel B$
		is a jointly faithful span $A\overset{d_0}\longleftarrow R\overset{d_1}\longrightarrow B$.
	\end{df}
	
	Toby Bartels \cite{Bartels2006a} defines 2-relations as being jointly fully faithful spans, which is not general enough for kernel 2-relations to be 2-relations.  The full faithfulness allows him to define in a simplified way the equivalence 2-relations, since in this case the arrows $r$, $t$, $s$ of the following definition are unique and the 2-arrows $\upsilon_0$,
	$\upsilon_1$, $\alpha$ and $\gamma$ necessarily exist.
	For a 2-relation $C\overset{d_0}\longleftarrow R\overset{d_1}\longrightarrow C$, 
	let us denote the pullback
	of $d_1$ and $d_0$ by $\varphi: d_1 p_0\Ra d_0 p_1$, where 
	$R\overset{p_0}\longleftarrow R^2\overset{p_1}\longrightarrow R$, and let us denote the pullback of $p_1$ and $p_0$ by $\psi: p_1 q_0\Ra p_0 q_1$, where 
	$R^2\overset{q_0}\longleftarrow R^3\overset{q_1} \longrightarrow R^2$.
		
	\begin{df}\index{2-relation!equivalence}%
	\index{equivalence 2-relation}
		Let $\C$ be a $\Gpd$-category with all pullbacks.
		An \emph{equivalence 2-relation} on $C\col \C$
		is a 2-relation $C\overset{d_0}\longleftarrow R\overset{d_1}\longrightarrow C$
		equipped with arrows $r, t, s$ and 2-arrows $\rho_0,\rho_1,\tau_0,\tau_1, \sigma_0,\sigma_1$,
		as in the following diagrams,
		\begin{xyml}\begin{gathered}\xymatrix@R=20pt@C=40pt{
			&C\ar@{=}[dl]\ar@{=}[dr]\ar[dd]_r\dltwocell\omit\omit{_<-3.5>{\rho_0\;}}
				\drtwocell\omit\omit{^<3.5>{\rho_1\;}}
			\\ C &&C
			\\ &R\ar[ul]^{d_0}\ar[ur]_{d_1}
		}\end{gathered}\;\;\;\;\begin{gathered}\xymatrix@=40pt{
			R\ar[d]_{d_0}
			&R^2\ar[l]_{p_0}\ar[r]^{p_1}\ar[d]_t\dltwocell\omit\omit{\tau_0\,}
				\drtwocell\omit\omit{^{\tau_1\;}}
			&R\ar[d]^{d_1}
			\\ C
			&R\ar[l]^{d_0}\ar[r]_{d_1}
			&C
		}\end{gathered}\end{xyml}
		\begin{xym}\xymatrix@R=20pt@C=40pt{
			&R\ar[dl]_{d_1}\ar[dr]^{d_0}\ar[dd]_s
				\dltwocell\omit\omit{_<-3.5>{\sigma_0\;}}
				\drtwocell\omit\omit{^<3.5>{\sigma_1\;}}
			\\ C &&C
			\\ &R\ar[ul]^{d_0}\ar[ur]_{d_1}
		}\end{xym}
		such that there exist 2-arrows
		\begin{xym}\xymatrix@=40pt{
			R\ar@{=}[dr]\ar[r]^{(1_R,rd_1)}\drtwocell\omit\omit{_<-3>{\;\,\upsilon_0}}
			&R^2\ar[d]^t
			&R\ar@{=}[dl]\ar[l]_{(rd_0,1_R)}\dltwocell\omit\omit{^<3>{\upsilon_1}}
			\\ &R
		}\end{xym}
		\begin{xyml}\begin{gathered}\xymatrix@=40pt{
			R^3\ar[r]^{(tq_0,p_2)}\ar[d]_{(p_0,tq_1)}\drtwocell\omit\omit{\alpha}
			&R^2\ar[d]^t
			\\ R^2\ar[r]_t &R
		}\end{gathered}\;\;\text{ and }\;\;\begin{gathered}\xymatrix@=40pt{
			R\ar[r]^{(1_R,s)}\ar[d]_{d_0}\drtwocell\omit\omit{\gamma}
			&R^2\ar[d]^t
			\\ C\ar[r]_r &R
		}\end{gathered}\end{xyml}
		satisfying the following conditions:
		\begin{enumerate}
			\item $d_0\upsilon_0 = d_0\pi_0\circ \tau_0(1_R,rd_1)$;
			\item $d_1\upsilon_0 = \rho_1 d_1\circ d_1\pi_1\circ\tau_1(1_R,rd_1)$;
			\item $d_0\upsilon_1 = \rho_0 d_0\circ d_0\pi_0\circ\tau_0(rd_0,1_R)$;
			\item $d_1\upsilon_1 = d_1\pi_1\circ \tau_1(rd_0,1_R)$;
			\item $d_0\pi_0^{-1}\circ d_0\pi_0\circ\tau_0(p_0,tq_1)\circ d_0\alpha 
				= \tau_0 q_0\circ d_0\pi_0\circ\tau_0(tq_0,p_2)$;
			\item $\tau_1 q_1\circ d_1\pi_1\circ\tau_1(p_0,tq_1)\circ d_1\alpha
				= d_1\pi_1^{-1}\circ d_1\pi_1\circ\tau_1(tq_0,p_2)$;
			\item $\rho_0 d_0\circ d_0\gamma = d_0\pi_0\circ\tau_0(1_R,s)$;
			\item $\rho_1d_0\circ d_1\gamma = \sigma_1\circ d_1\pi_1\circ\tau_1(1_R,s)$.
		\end{enumerate}
	\end{df}
	
	Thanks to the faithfulness, the 2-arrows $\upsilon_0$, $\upsilon_1$, $\alpha$
	and $\gamma$ are unique and we do not need to ask that they satisfy coherence conditions.
	In a $\Ens$-category seen as a locally discrete $\Gpd$-category, an equivalence 2-relation
	is nothing else than an internal groupoid.

		\begin{df}
			Let $\C$ be a $\Gpd$-category and $(R,d_0,d_1,\ldots)$ be an equivalence 2-relation.  A \emph{quotient}
			of $R$ consists of an arrow $q\col C\ra Q$ and a 2-arrow
			$\xi\col qd_0\Ra qd_1$, such that the following conditions hold:
			\begin{xyml}\label{condquotun}
			\begin{gathered}\xymatrix@=20pt{
				&&C\ar[dr]^q
				\\ C\ar[r]^r\ar@{=}@/^2.5ex/[urr]\ar@{=}@/_2.5ex/[drr]
					\urrtwocell\omit\omit{{\;\;\;\;\,\rho_0^{-1}}}
					\drrtwocell\omit\omit{\;\,\rho_1}
				& R\ar[ur]_{d_0}\ar[dr]^{d_1}\rrtwocell\omit\omit{\xi}
				&& Q
				\\ &&C\ar[ur]_q
			}\end{gathered}\;\;=\;\;
			\begin{gathered}\xymatrix@=30pt{C\ar[r]^q &Q}\end{gathered}
			\end{xyml}
			\begin{xyml}\label{condquotdeux}
			\begin{gathered}\xymatrix@!@R=15pt@C=10pt{
				&R\ar[rr]^{d_0}\ar[dr]^{d_1}
				&&C\ar[dr]^{q}\ar@{}[dl]|{\dir{=>}\;\xi}
				\\ R^2\ar[ur]^{p_0}\ar[dr]_{p_1}\rrtwocell\omit\omit{\varphi}
				&&C\ar[rr]^q\ar@{}[dr]|{\dir{=>}\xi}
				&&Q
				\\ &R\ar[ur]_{d_0}\ar[rr]_{d_1}
				&&C\ar[ur]_q
			}\end{gathered}\;\;=\;\;\begin{gathered}\xymatrix@!@R=15pt@C=10pt{
				&R\ar[rr]^{d_0}\ar@{}[dr]|{\dir{=>}\tau_0^{-1}}
				&&C\ar[dr]^q
				\\ R^2\ar[ur]^{p_0}\ar[dr]_{p_1}\ar[rr]^t
				&&R\ar[ur]^{d_0}\ar[dr]_{d_1}\rrtwocell\omit\omit{\xi}
					\ar@{}[dl]|{\dir{=>}\;\tau_1}
				&&Q
				\\ &R\ar[rr]_{d_1}
				&&C\ar[ur]_q
			}\end{gathered}
			\end{xyml}
			\begin{xyml}\label{condquottrois}\begin{gathered}\xymatrix@!@=20pt{
				&&C\ar[dr]^q
				\\ R\ar[r]^s\ar@/^2.5ex/[urr]^{d_1}\ar@/_2.5ex/[drr]_{d_0}
					\urrtwocell\omit\omit{\;\;\;\;\,\sigma_0^{-1}}
					\drrtwocell\omit\omit{\;\,\sigma_1}
				& R\ar[ur]_{d_0}\ar[dr]^{d_1}\rrtwocell\omit\omit{\xi}
				&& Q 
				\\ &&C\ar[ur]_q
			}\end{gathered}\;\;=\;\;\begin{gathered}\xymatrix@=20pt{
				&C\ar[dr]^q
				\\ R\ar[ur]^{d_1}\ar[dr]_{d_0}\rrtwocell\omit\omit{\;\;\;\;\xi^{-1}}
				&&Q
				\\ &C\ar[ur]_q
			}\end{gathered}\end{xyml}
			and such that
			\begin{enumerate}
				\item for every other pair $(q',\xi')$ satisfying the same
					 conditions, there exists a factorisation $q''\col Q\ra Q'$
					with a 2-arrow $\chi\col q''q\Ra q'$ such that
					$\chi d_1\circ q''\xi =  \xi'\circ \chi d_0$;
				\item for every pair $u,v \col  Q\ra Y$ and for every 2-arrow
					$\beta\col uq\Ra vq$ such that
					\begin{xym}\label{propunivquot}\xymatrix@!{
						&&Q\ar[dr]^u
						&&&C\ar[dr]^q
						\\ &C\ar[ur]^q\ar[dr]_q\rrtwocell\omit\omit{\beta}
						&&Y
						&R\ar[ur]^{d_0}\ar[dr]_{d_1}\rrtwocell\omit\omit{\xi}
						&&Q\ar[dr]^u
						\\ R\ar[ur]^{d_0}\ar[dr]_{d_1}\rrtwocell\omit\omit{\xi}
						&&Q\ar[ur]_v \ar@{}[rrr]|-{}="a" \save "a"*{=}\restore
						&&&C\ar[ur]^q\ar[dr]_q\rrtwocell\omit\omit{\beta}
						&&Y
						\\ &C\ar[ur]_q
						&&&&&Q\ar[ur]_v
					}\end{xym}
					there exists a unique $\alpha\col u\Ra v$ such that $\beta = \alpha q$.
			\end{enumerate}
		\end{df}

	Actually condition \ref{condquottrois} follows from the others and can be removed.  The quotient is thus the codescent object \cite{Lack2002a} of the diagram of the equivalence 2-relation.

If $A\overset{f}\ra B$ is an arrow in a $\Gpd$-category $\C$, we construct its \emph{kernel 2-relation} by taking the pullback of $f$ with itself:
	\begin{xym}\xymatrix@=40pt{
		\rf\ar[r]^-{d_0}\ar[d]_{d_1}\drtwocell\omit\omit{\kappa}
		& A\ar[d]^f
		\\ A\ar[r]_f
		&B.
	}\end{xym}
Then the universal property of the pullback induces arrows $r$, $t$, $s$ and the associated 2-arrows, as well as 2-arrows $\upsilon_0$, $\upsilon_1$, $\alpha$ and $\gamma$ satisfying the conditions of the definition of equivalence 2-relation. 

In a $\Gpd$-category with all kernel 2-relations and quotients of equivalence 2-relations, the quotient is left adjoint to the kernel 2-relation and they form a kernel-quotient system.  In $\Gpd$, every equivalence 2-relation is canonically the kernel 2-relation of its quotient, and every surjective functor is canonically the quotient of its kernel 2-relation (see \cite{Dupont2008b}).

\backmatter
\begin {thebibliography}{30}

\bibitem{Adamek2001b}
{\sc J.~Adámek, R.~El~Bashir, M.~Sobral, and J.~Velebil}, {\em On functors
  which are lax epimorphisms}, Theory Appl. Categ., 8 (2001), pp.~509--521
  ({\tt http://www.tac.mta.ca/tac/volumes/8/n20/8-20abs.html}).

\bibitem{Anderson1992a}
{\sc F.~W. Anderson and K.~R. Fuller}, {\em Rings and categories of modules},
  Graduate Texts in Mathematics, vol.~13, Springer-Verlag, second~ed., 1992.

\bibitem{Baez2004a}
{\sc J.~C. Baez and A.~D. Lauda}, {\em Higher-dimensional algebra {V}:
  2-groups}, Theory Appl. Categ., 12 (2004), pp.~423--491 ({\tt http://www.tac.mta.ca/tac/volumes/12/14/12-14abs.html}).

\bibitem{Baez2004c}
{\sc J.~C. Baez and A.~S. Crans}, {\em Higher-dimensional algebra {VI}: {L}ie
  2-algebras}, Theory Appl. Categ., 12 (2004), pp.~492--538 ({\tt http://tac.mta.ca/tac/volumes/12/15/12-15abs.html}).

\bibitem{Bartels2006a}
{\sc T.~Bartels}, {\em Higher gauge theory {I}: 2-bundles}, preprint, 2006 ({\tt arXiv:math/0410328v3}).

\bibitem{Baues2004a}
{\sc H.-J. Baues and T.~Pirashvili}, {\em Shukla cohomology and additive track
  theories}, preprint, 2004 ({\tt arXiv:math/0401158v1}).

\bibitem{Baues2006b}
{\sc H.-J. Baues and M.~Jibladze}, {\em Secondary derived functors and the
  {A}dams spectral sequence}, Topology, 45 (2006), pp.~295--324.

\bibitem{Baues2006a}
{\sc H.-J. Baues, M.~Jibladze, and T.~Pirashvili}, {\em Third {M}ac {L}ane
  cohomology}, Math. Proc. Cambridge Philos. Soc., 144 (2008), pp.~337--367.

\bibitem{Benabou1967a}
{\sc J.~Bénabou}, {\em Introduction to bicategories}, in Reports of the
  Midwest Category Seminar, Lecture Notes in Math., vol.~47, Springer, 1967, pp.~1--77.

\bibitem{Betti1997a}
{\sc R.~Betti}, {\em Adjointness in descent theory}, Journal of Pure and
  Applied Algebra, 116 (1997), pp.~41--47.

\bibitem{Betti1999a}
{\sc R.~Betti, D.~Schumacher, and R.~Street}, {\em Factorizations in
  bicategories}, preprint, Dip. Mat. Politecnico di Milano No. 22/R, 1999.

\bibitem{Beyl1979a}
{\sc F.~R. Beyl}, {\em The connecting morphism in the {K}ernel-{C}okernel
  sequence}, Arch. Math. (Basel), 32 (1979), pp.~305--308.

\bibitem{Borceux1994b}
{\sc F.~Borceux}, {\em Handbook of categorical algebra 1. Basic category theory}, Encyclopedia of Mathematics and its Applications, vol.~50, Cambridge University Press,
  1994.

\bibitem{Borceux1994a}
\leavevmode\vrule height 2pt depth -1.6pt width 23pt, {\em Handbook of
  categorical algebra 2. Categories and structures}, Encyclopedia of Mathematics and its
  Applications, vol.~51, Cambridge University Press, 1994.

\bibitem{Borceux2004c}
{\sc F.~Borceux and D.~Bourn}, {\em Mal'cev, protomodular, homological and
  semi-abelian categories}, Mathematics and its Applications, vol.~566, 
  Kluwer Academic Publishers, 2004.

\bibitem{Borceux2004a}
{\sc F.~Borceux and J.~Rosický}, {\em On von {N}eumann varieties}, Theory
  Appl. Categ., 13 (2004), pp.~5--26 ({\tt http://tac.mta.ca/tac/volumes/13/1/13-01abs.html}).

\bibitem{Borceux2007a}
{\sc F.~Borceux and M.~Grandis}, {\em Jordan-{H}ölder, modularity and
  distributivity in non-commutative algebra}, J. Pure Appl. Algebra, 208 (2007), pp.~665--689.

\bibitem{Bourn2002a}
{\sc D.~Bourn and E.~M. Vitale}, {\em Extensions of symmetric cat-groups},
  Homology, Homotopy and Applications, 4 (2002), pp.~103--162 ({\tt http://www.intlpress.com/HHA/v4/n1/a8/}).

\bibitem{Breen1992a}
{\sc L.~Breen}, {\em Th\'eorie de {S}chreier sup\'erieure}, Ann. Sci. \'Ecole
  Norm. Sup. (4), 25 (1992), pp.~465--514.

\bibitem{Breen2005a}
{\sc L.~Breen and W.~Messing}, {\em Differential geometry of gerbes}, Adv.
  Math., 198 (2005), pp.~732--846.

\bibitem{Buchsbaum1955a}
{\sc D.~A. Buchsbaum}, {\em Exact categories and duality}, Trans. Amer. Math.
  Soc., 80 (1955), pp.~1--34.

\bibitem{Carboni1989a}
{\sc A.~Carboni}, {\em Categories of affine spaces}, J. Pure Appl. Algebra, 61
  (1989), pp.~243--250.

\bibitem{Carboni1996a}
{\sc A.~Carboni and M.~Grandis}, {\em Categories of projective spaces}, J. Pure
  Appl. Algebra, 110 (1996), pp.~241--258.

\bibitem{Carrasco2004b}
{\sc P.~Carrasco and J.~Martínez-Moreno}, {\em Simplicial cohomology with
  coefficients in symmetric categorical groups}, Applied Categorical
  Structures, 12 (2004), pp.~257--285.

\bibitem{Day1997a}
{\sc B.~Day and R.~Street}, {\em Monoidal bicategories and {H}opf algebroids},
  Adv. Math., 129 (1997), pp.~99--157.

\bibitem{Rio2005a}
{\sc A.~del Río, J.~Martínez-Moreno, and E.~M. Vitale}, {\em Chain complexes
  of symmetric categorical groups}, J. Pure Appl. Algebra, 196 (2005),
  pp.~279--312.

\bibitem{Deligne1973a}
{\sc P.~Deligne}, {\em La formule de dualité globale}, in SGA4, Lecture Notes in Math., vol.~305, Springer-Verlag, 1973, exposé XVIII.

\bibitem{Drion2002a}
{\sc B.~Drion}, {\em À la recherche de la définition de 2-catégorie
  2-abélienne}, mémoire de licence, Université catholique de Louvain,
  2002.

\bibitem{Dupont2003a}
{\sc M.~Dupont and E.~M. Vitale}, {\em Proper factorization systems in
  2-categories}, J. Pure Appl. Algebra, 179 (2003), pp.~65--86.

\bibitem{Dupont2008a}
{\sc M.~Dupont}, {\em Propri{é}t{é}s
  d'exactitude in un cadre enrichi}, in preparation, 2008.

\bibitem{Dupont2008b}
\leavevmode\vrule height 2pt depth -1.6pt width 23pt, {\em Factorisation systems for groupoids}, in preparation, 2008.

\bibitem{Elgueta2006a}
{\sc J.~Elgueta}, {\em Generalized 2-vector spaces and general linear
  2-groups}, J. Pure Appl. Algebra, 212 (2008), pp.~2069--2091.

\bibitem{Fay1989a}
{\sc T.~H. Fay, K.~A. Hardie, and P.~J. Hilton}, {\em The two-square lemma},
  Publ. Mat., 33 (1989), pp.~133--137.

\bibitem{Freyd2003a}
{\sc P.~J. Freyd}, {\em Abelian Categories}, Harper and Row, 1964.  Republished in~: Reprints in Theory and
  Applications of Categories, No. 3 (2003), pp.~1--190 ({\tt http://www.tac.mta.ca/tac/reprints/articles/3/tr3abs.html}).

\bibitem{Gabriel1967a}
{\sc P.~Gabriel and M.~Zisman}, {\em Calculus of fractions and homotopy
  theory}, Ergebnisse der Mathematik und ihrer Grenzgebiete, Band 35,
  Springer-Verlag, 1967.

\bibitem{Garzon2002a}
{\sc A.~R. Garzón, J.~G. Miranda, and A.~del R{\'{\i}}o}, {\em Tensor
  structures on homotopy groupoids of topological spaces}, Int. Math. J., 2
  (2002), pp.~407--431.

\bibitem{Grandis1992a}
{\sc M.~Grandis}, {\em On the categorical foundations of homological and
  homotopical algebra}, Cahiers Topologie G\'eom. Diff\'erentielle Cat\'eg., 33
  (1992), pp.~135--175.

\bibitem{Grandis1994a}
\leavevmode\vrule height 2pt depth -1.6pt width 23pt, {\em Homotopical algebra in homotopical categories}, Applied
  Categorical Structures, 2 (1994), pp.~351--406.

\bibitem{Grandis1997a}
\leavevmode\vrule height 2pt depth -1.6pt width 23pt, {\em Categorically
  algebraic foundations for homotopical algebra}, Applied Categorical
  Structures, 5 (1997), pp.~363--413.

\bibitem{Grandis2001b}
\leavevmode\vrule height 2pt depth -1.6pt width 23pt, {\em A note on exactness
  and stability in homotopical algebra}, Theory Appl. Categ., 9 (2001),
  pp.~17--42 ({\tt http://www.tac.mta.ca/tac/volumes/9/n2/9-02abs.html}).

\bibitem{Grandis2002a}
{\sc M.~Grandis and E.~M. Vitale}, {\em A higher dimensional homotopy
  sequence}, Homology, Homotopy and Applications, 4 (2002), pp.~59--69
  ({\tt http://www.intlpress.com/HHA/v4/n1/}).

\bibitem{Grothendieck1957a}
{\sc A.~Grothendieck}, {\em Sur quelques points d'alg\`ebre homologique},
  T\^ohoku Math. J. (2), 9 (1957), pp.~119--221.

\bibitem{Grothendieck1968a}
{\sc A.~Grothendieck}, {\em Cat\'egories cofibr\'ees additives et complexe
  cotangent relatif}, Lecture Notes in Mathematics, vol. 79, Springer-Verlag, 1968.

\bibitem{Hazewinkel2004a}
{\sc M.~Hazewinkel, N.~Gubareni, and V.~V. Kirichenko}, {\em Algebras, rings
  and modules. {V}ol. 1}, Mathematics and its Applications, vol.~575, Kluwer
  Academic Publishers, 2004.

\bibitem{Herrlich1973a}
{\sc H.~Herrlich and G.~E. Strecker}, {\em Category theory: an introduction},
  Allyn and Bacon Inc., 1973.

\bibitem{Im1986a}
{\sc G.~B. Im and G.~M. Kelly}, {\em On classes of morphisms closed under
  limits}, J. Korean Math. Soc., 23 (1986), pp.~1--18.

\bibitem{Jibladze2007a}
{\sc M.~Jibladze and T.~Pirashvili}, {\em Third {M}ac {L}ane cohomology via
  categorical rings}, J. Homotopy Relat. Struct., 2 (2007), pp.~187--216.

\bibitem{Johnstone1981a}
{\sc P.~T. Johnstone}, {\em Factorization theorems for geometric morphisms
  {I}}, Cahiers Topologie G\'eom. Diff\'erentielle, 22 (1981), pp.~3--17.

\bibitem{Joyal1986a}
{\sc A.~Joyal and R.~Street}, {\em Braided monoidal categories}, Macquarie Math. Reports 860081, 1986.

\bibitem{Joyal1993a}
\leavevmode\vrule height 2pt depth -1.6pt width 23pt, {\em Braided tensor
  categories}, Adv. Math., 102 (1993), pp.~20--78.

\bibitem{Kapranov1994a}
{\sc M.~M. Kapranov and V.~A. Voevodsky}, {\em {$2$}-categories and
  {Z}amolodchikov tetrahedra equations}, in Algebraic groups and their
  generalizations: quantum and infinite-dimensional methods (University Park,
  PA, 1991), Proc. Sympos. Pure Math., vol.~56, Amer. Math. Soc., 1994, pp.~177--259.

\bibitem{Kasangian2000a}
{\sc S.~Kasangian and E.~M. Vitale}, {\em Factorization systems for symmetric
  cat-groups}, Theory Appl. Categ., 7 (2000), pp.~47--70 ({\tt http://www.tac.mta.ca/tac/volumes/7/n5/7-05abs.html}).

\bibitem{Kelly1969a}
{\sc G.~M. Kelly}, {\em Monomorphisms, epimorphisms, and pull-backs}, J.
  Austral. Math. Soc., 9 (1969), pp.~124--142.

\bibitem{Kelly1974a}
{\sc G.~M. Kelly and R.~Street}, {\em Review of the elements of
  {$2$}-categories}, in Category Seminar (Proc. Sem., Sydney, 1972/1973),
  Lecture Notes in Math., vol.~420, Springer, 1974, pp.~75--103.

\bibitem{Korostenski1993a}
{\sc M.~Korostenski and W.~Tholen}, {\em Factorization systems as
  {E}ilenberg-{M}oore algebras}, J. Pure Appl. Algebra, 85 (1993), pp.~57--72.

\bibitem{Lack2002a}
{\sc S.~Lack}, {\em Codescent objects and coherence}, J. Pure Appl. Algebra,
  175 (2002), pp.~223--241.

\bibitem{Lambek1986a}
{\sc J.~Lambek and P.~J. Scott}, {\em Introduction to higher order categorical
  logic}, Cambridge Studies in Advanced Mathematics, vol.~7, Cambridge
  University Press, 1986.

\bibitem{Laplaza1972a}
{\sc M.~L. Laplaza}, {\em Coherence for distributivity}, in Coherence in
  categories, Lecture Notes in Math., vol.~281, Springer, 1972,
  pp.~29--65.

\bibitem{Leinster2004a}
{\sc T.~Leinster}, {\em Higher operads, higher categories}, London
  Mathematical Society Lecture Note Series, vol.~298, Cambridge University Press, 2004.

\bibitem{Makkai1996a}
{\sc M.~Makkai}, {\em Avoiding the axiom of choice in general category theory},
  J. Pure Appl. Algebra, 108 (1996), pp.~109--173.

\bibitem{Martins-Ferreira2004a}
{\sc N.~Martins-Ferreira}, {\em Weak categories in additive 2-categories with
  kernels}, Fields Institute Communications, 43 (2004), pp.~387--410.

\bibitem{Milius2001a}
{\sc S.~Milius}, {\em Factorization systems in 2-categories}, preprint, 2001.

\bibitem{Mitchell1965a}
{\sc B.~Mitchell}, {\em Theory of categories}, Pure and Applied Mathematics,
  vol. XVII, Academic Press, 1965.

\bibitem{Pronk1996a}
{\sc D.~A. Pronk}, {\em Etendues and stacks as bicategories of fractions},
  Compositio Math., 102 (1996), pp.~243--303.

\bibitem{Puppe1962a}
{\sc D.~Puppe}, {\em Korrespondenzen in abelschen {K}ategorien}, Math. Ann.,
  148 (1962), pp.~1--30.

\bibitem{Quang2007c}
{\sc N.~T. Quang}, {\em Introduction to {A}nn-categories}, T\d ap ch\'\i\
  To\'an h\d oc, 15 (1987), pp.~14--24.

\bibitem{Quang2007a}
{\sc N.~T. Quang, D.~D. Hanh, and N.~T. Thuy}, {\em On the axiomatics of
  {A}nn-categories}, 2007 ({\tt arXiv:0711.3659v1}).

\bibitem{Schubert1972a}
{\sc H.~Schubert}, {\em Categories}, Springer-Verlag, 1972.

\bibitem{Street1980a}
{\sc R.~Street}, {\em Fibrations in bicategories}, Cahiers Topologie G\'eom.
  Diff\'erentielle, 21 (1980), pp.~111--160.

\bibitem{Street1982b}
\leavevmode\vrule height 2pt depth -1.6pt width 23pt, {\em Characterization of
  bicategories of stacks}, in Category theory (Gummersbach, 1981), Lecture Notes in Math., vol.~962, Springer, 1982, pp.~282--291.

\bibitem{Takeuchi1983a}
{\sc M.~Takeuchi and K.-H. Ulbrich}, {\em Complexes of categories with abelian
  group structure}, J. Pure Appl. Algebra, 27 (1983), pp.~61--73.

\bibitem{Ulbrich1984a}
{\sc K.-H. Ulbrich}, {\em Group cohomology for {P}icard categories}, J.
  Algebra, 91 (1984), pp.~464--498.

\bibitem{Vitale2002a}
{\sc E.~M. Vitale}, {\em A {P}icard-{B}rauer exact sequence of categorical
  groups}, J. Pure Appl. Algebra, 175 (2002), pp.~383--408.

\bibitem{Vitale2003a}
{\sc E.~M. Vitale}, {\em On the categorical structure of {$H\sp 2$}}, J. Pure
  Appl. Algebra, 177 (2003), pp.~303--308.

\end{thebibliography}
\cleardoublepage
\printindex
\cleardoublepage

\end{document}